\numberwithin{figure}{section}
\numberwithin{equation}{section}
\numberwithin{equation}{section}
\newtheorem{theorem}{Theorem}[section]
\newtheorem{proposition}[theorem]{Proposition}
\newtheorem{corollary}[theorem]{Corollary}
\newtheorem{lemma}[theorem]{Lemma}
\theoremstyle{definition}
\newtheorem{remark}[theorem]{Remark}
\newtheorem{example}[theorem]{Example}
\newtheorem{definition}[theorem]{Definition}
\newtheorem{convention}[theorem]{Convention}
\newtheorem{notation}[theorem]{Notation}
\newcommand{\hooklongrightarrow}{\lhook\joinrel\longrightarrow}
\definecolor{ASTRAL}{RGB}{46,116,181}
\newcommand{\overbar}[1]{\mkern 1.5mu\overline{\mkern-1.5mu#1\mkern-1.5mu}\mkern 1.5mu}
\newcommand{\myencircle}[1]{\tikz \node[anchor=south west, draw,circle, inner sep=-0.5, minimum size=2.5mm]{#1};}
\author{Benjamin Enriquez}
\address{IRMA (UMR 7501) ET DÉPARTEMENT DE MATHÉMATIQUES, UNIVERSITÉ DE STRASBOURG, 7 RUE RENÉ DESCARTES, 67084 STRASBOURG (FRANCE)}
\email{b.enriquez@math.unistra.fr}
\author{Anderson Vera}
\address{Center for Geometry and Physics, Institute for Basic Science (POSTECH Campus)
79, Jigok-ro 127 beon-gil, Nam-gu, Pohang-si, Gyeongsangbuk-do, Korea 37673}
\email{andersonvera@ibs.re.kr}
\subjclass[2020]{57K16, 57K31}
\keywords{Knot, tangles, 3-manifold, surgery presentation, Kirby calculus, quantum invariants, Kontsevich integral, Le-Murakami-Ohtsuki invariant}
\date{\today}
\title{A survey on the Le-Murakami-Ohtsuki invariant for closed 3-manifolds}
\begin{document}

\begin{abstract} We review the original approach to the \emph{Le-Murakami-Ohtsuki (LMO) invariant} of closed 3-manifolds (as opposed to the later approach based on the Aarhus integral). Following the ideas of surgery presentation, we introduce a class of combinatorial structures, called  \emph{Kirby structures}, which we prove to yield multiplicative 3-manifold invariants. We illustrate this with the Reshetikhin-Turaev invariants. We then introduce a class of combinatorial structures, called \emph{pre-LMO structures}, and prove that they give rise to Kirby structures. We show how the Kontsevich integral can be used to construct a pre-LMO structure. This yields two families of multiplicative 3-manifolds invariants $\{\Omega_n^{\mathfrak{c}}\}_{n\geq 1}$ and $\{\Omega_n^{\mathfrak{d}}\}_{n\geq 1}$. We review the elimination of redundant information in the latter family, leading to the construction of the LMO invariant. We also provide uniqueness results of some aspects of the LMO construction. The family of invariants $\{\Omega_n^{\mathfrak{c}}\}_{n\geq 1}$ is not discussed explicitly in the literature; whereas $\Omega_n^{\mathfrak{c}}$ enables one to recover $\Omega_n^{\mathfrak{d}}$ for any $n \geq 1$, we show that these invariants coincide for $n = 1$.
\end{abstract}

\maketitle

\tableofcontents

\section*{Introduction}

A \emph{knot} (resp. \emph{link}) is the isotopy class of an embedding of the circle $S^1$ (resp. several copies of $S^1$) into the 3-dimensional sphere  $S^3$. The study of these objects is known as \emph{knot theory}. This theory has relations with many other areas, in particular, it is closely related with the study of the topology of  3-manifolds. (From now on, we use the word 3-manifold to mean compact, connected oriented 3-manifold.) One instance of these relations is the following: starting with a link, we can remove a neighborhood (solid torus) of each  of its components  and paste them again using some parameters (\emph{framing}). This gives rise to a new 3-manifold, the framed link used in the process is known as a \emph{surgery presentation} of the 3-manifold. Some remarkable facts are that every 3-manifold admits a surgery presentation (Lickorish-Wallace Theorem) and that the links underlying  two surgery presentations of the same 3-manifold can be related by a finite sequence of standard modifications called  \emph{Kirby moves} (Kirby Theorem), see \cite{Lick,Wal,Kir,Lu}.

Until the 80s, the main approach to construct invariants for knots and 3-manifolds was based on tools of algebraic topology (e.g. homotopy, homology, covering spaces, etc).  The close relationship between knots and 3-manifolds was not really used to obtain invariants of the latter using invariants of the former. A new perspective arose in 1984 with the introduction of a new knot invariant: the \emph{Jones polynomial}  \cite{Jon}. This  was followed by  the discovery of a great variety of new knot invariants following Jones' ideas and interpretations of it \cite{FYHLMO, PT, Kau1, Tur1988, Tur1989, RT1}, notably HOMFLY-PT and Kauffman polynomials, which established connections with other areas of mathematics: quantum groups, Lie algebras, mathematical physics, etc.  This was the starting point of the so-called \emph{quantum invariants} of knots. 
 Around the same time,  Witten  \cite{Wit} gave an interpretation of the Jones polynomial using physics  ideas which allowed him to  suggest  the existence of  new invariants of 3-manifolds.

One of the possible mathematical constructions of the invariants proposed by  Witten was first obtained by  N. Reshetikhin and V. Turaev using representations of quantum groups \cite{RT}. They  showed that the above-mentioned surgery description of 3-manifolds, based in the Lickorish-Wallace and Kirby theorems, enables one to build up 3-manifold invariants out of certain quantum knot invariants (see the surveys \cite{LickBook,PS,Tur}). These constructions were generalized to categorical settings in a series of works, see for instance \cite{Tur}.

Around the 90s, Vassiliev \cite{Vas} proposed another approach for  studying knots, based on the study of the topology of the space of all knots and  leading to the introduction of an algorithm for constructing knot invariants.  In  \cite{BirLin}, it was shown that the invariants obtained in  \cite{Vas} satisfy the vanishing of certain alternating sums, and it was proved that the set of invariants satisfying this vanishing condition contains the information of the Jones, HOMFLY-PT and Kauffman polynomials. In the independent work \cite{Gou1, Gou2}, the same set of invariants  was introduced, and its relation with the HOMFLY-PT polynomial  was also obtained. The invariants of this set are called \emph{finite-type invariants} in \cite{Gou1}, and commonly referred to as \emph{Vassiliev} or \emph{Vassiliev-Goussarov} invariants.

In  \cite{K} (see also \cite{BN2} where details are worked out), an explicit formula for a knot invariant is given, which is called the \emph{Kontsevich integral}; this invariant takes its values in a degree completed vector space $\mathcal{A}^{\mathrm{chords}}= \prod_{n\geq 0}\mathcal{A}^{\mathrm{chords}}_n$ of \emph{chord diagrams} on a circle. The Kontsevich integral is a \emph{universal Vassiliev invariant}, that is,  for any $n \geq 0$ and any $\mathbb{Q}$-valued Vassiliev invariant $V$ of degree $n$ there exists a linear map $\varphi_V: \mathcal{A}^{\mathrm{chords}}_{\leq n}\to \mathbb{Q}$ such that $V$ is the  composition of the Kontsevich integral with the map $\mathcal{A}^{\mathrm{chords}}\xrightarrow{\mathrm{proj}}\mathcal{A}^{\mathrm{chords}}_{\leq n}\xrightarrow{\varphi_V}\mathbb{Q}$.  The Kontsevich integral is also a \emph{universal quantum invariant}, as any such invariant can be recovered by the choice of a \emph{weight system} (constructed from the same data used to construct the quantum invariant: a semi-simple Lie algebra and one of its finite dimensional representations).

As mentioned above, the Reshetikhin-Turaev 3-manifold invariants are constructed using surgery presentations and quantum invariants of knots. 
Then the question arose of the similar  construction of 3-manifold invariants, replacing the quantum invariants of knots by the Kontsevich integral.  
This question was solved in a series of works. A key step toward a solution is the understanding of the behavior of the Kontsevich integral under Kirby moves;  this was accomplished in   \cite{LeMu97}, where Le and Murakami introduced a normalization $\check{Z}$ of the Kontsevich integral and described its behavior under the second Kirby move.  In \cite{LMMO95, LMMO99}, a quotient $\mathcal A^{\mathrm{chords}}/(P^2+\mathrm{CO}\mathcal A+L^{<2})$ of $\mathcal A^{\mathrm{chords}}$ was constructed\footnote{In  \cite{LMMO95, LMMO99}, the relations $P^2$ and $\mathrm{CO}\mathcal A$ are called $3\mathrm{T}$-relation and $\mathrm{OI}$-relation, respectively. The relation  $L^{<2}$ consists of the $0$-vanishing and $1$-vanishing relations in  \cite{LMMO95, LMMO99}.}, and the corresponding reduction of $\check{Z}$ was shown to be invariant under the second Kirby move. Then  a normalization of this reduction, using the signature of the linking matrix,  gives rise  to a 3-manifold invariant valued in $\mathcal A^{\mathrm{chords}}/P^2+\mathrm{CO}\mathcal A+L^{<2}$.

 In \cite{LMO98}, this construction was extended to more general quotients  ${\mathcal A}^{\mathrm{chords}}/P^{n+1}+\mathrm{CO}\mathcal A+L^{<2n}+(\mathrm{deg}>n)$  giving rise, for each positive integer $n$, to a 3-manifold invariant  taking values in a quotient of a space of \emph{Jacobi diagrams} (trivalent graphs with additional structure up to some local relations), usually denoted by $\mathcal A(\emptyset)/(\mathrm{deg}>n)$. These invariants were then reassembled into a single invariant, the \emph{Le-Murakami-Ohtsuki} invariant, taking their values in the space $\mathcal A(\emptyset)$. These works are surveyed in  \cite{Ohts,Le:notes}.

To sum up, the construction in \cite{LMO98}  of the LMO invariant proceeds according to the following steps, where we use the notation of the present work: 
\begin{enumerate}
\item Construction of a family of invariants $\{\Omega^{\mathfrak{e}}_n\}_{n\geq 1}$
\begin{enumerate}
\item description of the fibers of the surgery map $\{$framed oriented links in $S^3\}\to\{$3-manifolds$\}$  in terms of the Kirby moves KI and KII; 
\item study of the behavior of a normalized version of the Kontsevich invariant of links with respect to the second Kirby move;
\item using (b) and a normalization utilizing the signature of the linking matrix to construct a map $\Omega^{\mathfrak{e}}_n:
\{$framed oriented  links in $S^3\}\to\mathcal{A}_0/(\mathrm{deg}>n)$ for each integer $n>0$, invariant under the Kirby moves, and which is therefore an invariant of 3-manifolds;
\end{enumerate}
\item Elimination of redundant information
\begin{enumerate}
\item establishing comparison properties of the various $\Omega^{\mathfrak{e}}_n$ for $n>0$, based on coproduct operations of Jacobi diagrams; 
\item constructing the LMO invariant $Z^{\mathrm{LMO}}$ by putting together the  information provided by each $\Omega^{\mathfrak{e}}_n$, 
$n>0$ which is new with respect to the $\Omega^{\mathfrak{e}}_k$ with $k<n$.   
\end{enumerate}
\end{enumerate}

There is an  alternative approach to this construction, based on a diagrammatic version of the Gaussian integral (called \emph{Aarhus integral}), proposed by the authors of 
\cite{BNGRTI, BNGRT, BNGRTIII}.  One of the main differences with the original approach is that the Aarhus integral gives the LMO  in one fell swoop, that is, without going through the construction of an invariant at each degree. This approach only works for rational homology spheres, which is not a weakness since the LMO invariant is particularly interesting for those types of 3-manifolds, as it is known that the restriction of the LMO invariant to non-$\mathbb{Q}$-homology spheres can be expressed using classical invariants.

A recurring idea in quantum topology is the identification of the set of homeomorphism classes of closed 3-manifolds with the set of endomorphisms of the empty surface in a category of cobordisms between surfaces. An extension of a given invariant of closed  3-manifolds to a functor with source a category of cobordisms, which recovers it by restriction to the endomorphism set of the empty surface, is called a TQFT (topological quantum field theory) extending this invariant. TQFT extensions of the Reshetikhin-Turaev invariants were constructed in \cite{RT2,TurBook}. Similarly, a kind of TQFT extension of the LMO invariant was first obtained in  \cite{MuOht} using the original LMO techniques. In \cite{CHM} a TQFT extension of the LMO invariant was constructed  using the Aarhus integral, recovering the previous extension from~\cite{ChLe}.

The purpose of the present paper is to review the original definition of the LMO invariant. We clarify some of the combinatorial backgrounds of the construction of 3-manifolds invariants using surgery presentation, mainly of the  LMO construction, either by relating it with known constructions (Brauer algebras) or  introducing appropriate notions (Kirby monoid/structures). We also give a unified and detailed account of some parts of the construction, filling in some gaps. We also provide uniqueness results of some aspects of the LMO construction (see \S\ref{sec:3:7}).  Let us describe in detail the contents of each section. 

In \S\ref{sec::1}, we make explicit the combinatorial structure underlying  the theory of tangles, which we show to be naturally related to that of Brauer category \cite{Leh}. In \S\ref{sec::2}, we formalize the application of the Lickorish-Wallace and Kirby (LW-K) theorems to the construction of 3-manifold invariants, via the introduction of the notion of \emph{Kirby structure}; these invariants are multiplicative with respect to the connected sum of 3-manifolds. This is illustrated with the Kirby structures giving rise to Reshetikhin-Turaev invariants. We then introduce the notion of \emph{pre-LMO structure}, which naturally yields a Kirby structure. In \S\ref{sec:2}, following literature, we show how the Kontsevich integral and its compatibilities with certain operations (change of orientation, cabling) give rise to a pre-LMO structure.   In \S\ref{ch4}, following the ideas of the source papers, we consider a sequence of maps between different quotient spaces of Jacobi diagrams,  which yields two families of multiplicative 3-manifold invariants $\{\Omega_n^{\mathfrak{c}}\}_{n\geq 1}$ and $\{\Omega_n^{\mathfrak{d}}\}_{n\geq 1}$, see Theorem~\ref{r2022-04-12theinvariants}. The invariance property of  $\Omega_n^{\mathfrak{c}}$ is new and implies   the invariance property for  $\Omega_n^{\mathfrak{d}}$.

In \S\ref{sec::5} we construct relations between the various $\Omega_n^{\mathfrak{c}}$  as well as between the  various $\Omega_n^{\mathfrak{d}}$. We show that the space $\mathfrak{d}(n)$ where $\Omega_n^{\mathfrak{d}}$ takes its values is isomorphic to $\mathfrak{e}(n):=\mathcal{A}_0/(\mathrm{deg}>n)$, we denote by 
$\Omega_n^{\mathfrak{e}}$ the image of  $\Omega_n^{\mathfrak{d}}$  by this isomorphism. The resulting relations between the  $\{\Omega_n^{\mathfrak{e}}\}_{n\geq 1}$  account for overlap of the information contained in the various $\Omega_n^{\mathfrak{e}}$. The LMO invariant is constructed out of the family $\{\Omega_n^{\mathfrak{e}}\}_{n\geq 1}$  by eliminating the redundant information.

For more details, we refer to the local introductions of each section.

\subsection*{Acknowledgements} The research of B.E. has been partially funded by ANR grant “Project
HighAGT ANR20-CE40-0016”. The research of A.V.  is supported by the grant IBS-R003-Y002. We thank sincerely   Jun Murakami and  Yuta Nozaki and for helpful and stimulating discussions.

\subsection*{Notations} The following is a list of common notations used throughout this work. Additionally, we refer readers to the index of notations at the end of the paper.

\begin{itemize}

\item For an integer $l\geq$ we denote by $[\![1,l]\!]$ the set of integers $k$ such that $1\leq k \leq l$. In particular , $[\![1,0]\!]=\emptyset$.

\item For a finite set $X$ we denote by $|X|$ its cardinality; by $\mathfrak{S}_X$ the symmetric group on $X$ and  by $\mathrm{FPFI}(X)$ the set of fixed-point free involutions of $X$.

\item  For $V$ a $\mathbb{C}$-vector space and a subset $S\subset V$, we denote by $\mathrm{Vec}_{\mathbb{C}}S$ the subspace of $V$ generated by $S$, i.e., the smallest subspace of $V$ containing $S$. If $V$ is  complete graded and the subset $S$ consists of homogeneous elements, then $\widehat{\mathrm{Vec}}_{\mathbb{C}}S$ denotes the complete graded subspace of $V$ generated by~$S$.

\item For a set $X$, we denote by $\mathcal P(X)$ its power set, i.e., the set of all subsets of $X$. 
 
\item For $X$ a set, we denote by $\mathbb{C}X$ the free $\mathbb{C}$-vector space on $X$.

\item $\emptyset$ empty word. 

\item $\varnothing$ , $\vec{\varnothing}$ empty Brauer diagram, empty tangle, empty link.

\item Let $\mathcal Set_f$ (resp. $\mathcal Set_f^*$) be the category whose objects are (resp. pointed) finite sets and set of morphisms given by (resp. pointed) bijections.

\item Let $\mathcal Vect$ denote the category of $\mathbb{C}$-vector spaces.
\end{itemize}

\section{Partitions and (oriented) Brauer diagrams}\label{sec::1}

Consider a \emph{tangle} in a cube $[-1,1]^3$, that is, an isotopy class of a proper embedding $T:P\to[-1,1]^3$ of a compact 1-manifold $P$ such that $i(\partial P)$ is uniformly distributed in $\{0\}\times (-1,1)\times \{\pm 1\}$. The abstract underlying compact 1-manifold $P$ is sometimes referred as \emph{skeleton} or \emph{pattern} of the tangle (it does not depend on the isotopy class) and will be called here its \emph{skeletization}. One can define operations of  composition and of tensor product on tangles. Similar operations can be defined on skeleta, and they are compatible with skeletization. The \emph{Kontsevich integral} is an invariant of parenthesized tangles which associates to  such a tangle an element of a complete vector space constructed by glueing Jacobi diagrams to its skeletization, and is  compatible with the tangle operations of composition and tensor product. One of its main features is that it is \emph{universal} for both quantum and Vassiliev invariants, in the sense that any quantum invariant (an invariant associated to a semi-simple Lie algebra and one of its finite dimension representations) or any rational-valued Vassiliev invariant, can be recovered from the Kontsevich integral \cite{K,BN,LM1,AltD,Ca,CDM,Ohts,JacksonMoffat}.

A skeleton $P$ can be decomposed as the disjoint union of ``circle" and ``segment" parts: $P=P_{\mathrm{cir}} \sqcup P_{\mathrm{seg}}$, where $P_{\mathrm{cir}} $ consists of the connected components of $P$ with empty boundary (homeomorphic to circles) and $P_{\mathrm{seg}}$ consists of those connected components with non-empty boundary (homeomorphic to closed intervals). The operations of composition and tensor product on skeleta can be expressed as follows 
$$(P \circ Q)_{\mathrm{seg}}=P_{\mathrm{seg}} \circ Q_{\mathrm{seg}}, \qquad   (P \circ Q)_{\mathrm{cir}}=P_{\mathrm{cir}} \sqcup Q_{\mathrm{cir}} \sqcup \mathrm{Cir}(P_{\mathrm{seg}},Q_{\mathrm{seg}}),$$
$$(P \otimes Q)_{\mathrm{seg}}=P_{\mathrm{seg}} \otimes Q_{\mathrm{seg}}, \qquad   (P \otimes Q)_{\mathrm{cir}}=P_{\mathrm{seg}} \sqcup Q_{\mathrm{cir}},$$ where in the right-hand sides, $\circ$ and $\otimes$ are operations on skeleta with empty circle parts, and $\mathrm{Cir}(P_{\mathrm{seg}},Q_{\mathrm{seg}})$ is a finite set of circle components attached to a pair $(P_{\mathrm{seg}},Q_{\mathrm{seg}})$ of skeleta with empty circle parts.

For each $\delta$ in a unital commutative ring $R$, there is a \emph{Brauer category} $\mathcal B(\delta)$, see  \cite{Leh}. Its objects are non-negative integers, and for non-negative integers $n$ and $m$, the set of morphisms from $n$ to $m$ is the $R$-module freely generated by the partitions of $[\![1,n]\!] \sqcup [\![1,m]\!]$ consisting of subsets of cardinality $2$. The operations $\circ$, $\otimes$ and $\mathrm{Cir}(-,-)$ on skeleta with empty circle parts are closely related with $\mathcal B(\delta)$. Namely, the set of morphisms in $\mathcal B(\delta)$ can be identified with the $R$-module freely generated by the set of skeleta with empty circle 
parts, the tensor product in $\mathcal B(\delta)$ then coincides with $\otimes$, and for $P$ and $Q$ skeleta with empty circle parts, the composition $\circ_{\delta}$ in $\mathcal B(\delta)$ is given by 
$P \circ_{\delta} Q:= \delta^{|{\mathrm{Cir}}(P,Q)|}P \circ Q$. In particular, 
$P \circ Q=P \circ_1 Q$, so that the category of skeleta with empty circle parts equipped with $(\circ, \otimes)$ coincides with the category $\mathcal B(1)$, which will be denoted $\underline{\mathcal Br}$ in the present paper.   This relation with $\mathcal B(1)$ motivates   our treatment of the notion of skeleton, in which the segment and circle parts are given different statuses: the segments of the skeleta will be identified with morphisms in  $\underline{\mathcal{B}r}$, while the circle parts will just be viewed as finite sets.  

The section is organized as follows. In  \S\ref{sec:1-1} we introduce  the category  of partitions $\underline{\mathcal{P}art}$ and in \S\ref{sec:1-2} we stablish a classification of finite  $\mathbb{Z}_2*\mathbb{Z}_2$-sets.  The definition of the category $\underline{\mathcal Br}$ of Brauer diagrams and of the operation $\mathrm{Cir} : \underline{\mathcal{B}r}\times \underline{\mathcal{B}r} \to  \mathcal{S}et_f$ is introduced in  \S\ref{sec:1-3};  based on the results of  \S\ref{sec:1-1} and \S\ref{sec:1-2}, we give a proof of the associativity of the composition of $\underline{\mathcal Br}$, as well as of cocycle properties of $\mathrm{Cir}$, which are generally understood in the literature.  In \S\ref{sec:1-4}, we study various operations on these objects (tensor product, doubling). We introduce the notion of  oriented Brauer diagrams in \S\ref{sec:1-5} and study its operations and properties in  \S\ref{sec:1-6} and \S\ref{sec:1-7}. Finally, in \S\ref{sec:1-8}, we explain how the abstract notions introduced in the section can be encoded in the familiar diagrams.

\subsection{A category  \texorpdfstring{$\underline{\mathcal{P}art}$}{Part}  of partitions}\label{sec:1-1}

For a set $X$ we denote by $\mathcal P(X)$\index[notation]{P(X)@$\mathcal P(X)$} its power set, i.e., the set of all subsets of $X$. 

\begin{definition} Let $X$ be a set, the set $\mathrm{Part}(X)$\index[notation]{Part(X)@$\mathrm{Part}(X)$} of  partitions of $X$ is the subset of $\mathcal P(X)$ consisting of all $A$ such that
\begin{itemize}
\item[$(a)$] for all $\xi \in A$, $\xi \neq \emptyset$ 
\item[$(b)$] for all $\xi \neq \eta \in A$, $\xi \cap \eta=\emptyset$
\item[$(c)$] $X=\bigcup_{\xi \in A}\xi$.
\end{itemize}
The set $\mathrm{Part}(X)$ is endowed with a lattice structure with partial order $\leq$  given by: for $A, B\in \mathrm{Part}(X)$, we say $A \leq B$ if  for all $\omega\in A$ there exists $\xi\in B$ such that $\omega \subset \xi$.

For $A, B\in \mathrm{Part}(X)$, the \emph{join} $A\vee B$ of $A$ and $B$ is defined as $\mathrm{sup}\{A,B\}$, i.e., the least partition of $X$ which is greater or equal than $A$ and $B$. 
\end{definition}

The join operation is associative, i.e., $(A\vee B)\vee C = A\vee (B\vee C)$. 

\begin{definition} Let $X$ and $Y$ be sets and $f:X\to Y$ be a map. 
\begin{itemize}
\item[$(a)$] Define a map $f_*:\mathrm{Part}(X)\to \mathrm{Part}(Y)$ as follows: for  $A\in\mathrm{Part}(X)$,  $f_*A$ is the least partition of $Y$ such that for all $\omega\in A$ there exists $\xi\in f_*A$ with $f(\omega)\subset \xi$.  Explicitly, $y\equiv y'$ mod $f_*A$ iff $y=y'$ or there exists 
$x_1,\ldots,x_{2n+1}$ in $X$ such that $y=f(x_1)$, $y'=f(x_{2n+1})$ and 
$x_{2i+1}\equiv x_{2i+2}$ (mod $A$) for $i=0,\ldots,n-1$ and $f(x_{2i})=f(x_{2i+1})$
for $i=1,\ldots,n$. 

Notice that $ \{ \{y\} \ | \ y\in Y \setminus  f(X)\} \subset f_*A$ for any $A\in \mathrm{Part}(X)$.

When $f$ is injective, we have $$f_*A=\bigsqcup_{\omega\in A}\{f(\omega)\}\sqcup \bigsqcup_{y\in Y\setminus f(X)}\{\{y\}\},$$
explicitly,  $y\equiv y'$ mod $f_*A$ iff $y=y'$ or there exist 
$x,x'\in X$ such that $x\equiv x'$ (mod~$A$) and $y=fx$, $y'=fx'$. 
\item[$(b)$] Define a map $f^*:\mathrm{Part}(Y)\to \mathrm{Part}(X)$ as follows: for  $B\in\mathrm{Part}(Y)$,  $f^*B$ is the least partition of $X$ such that for all $\xi\in B$ there exists $\omega\in f^*B$ with $f^{-1}(\xi)\subset \omega$.  Explicitly, $x\equiv x'$ mod $f^*B$ iff $fx\equiv fx'$ mod $B$. 
\end{itemize}
\end{definition}

If $f$ is injective, then $f^*f_*A=A$. In general, we have $A\subset f^*f_*A$ and $f^*f_*A=A\vee P_f$, where $P_f\in\mathrm{Part}(X)$ is the partition induced by $f$. For sets $X$, $Y$ and $Z$ and maps $f:X\to Y$ and $g:Y\to Z$ one checks that $(g\circ f)^*=f^*\circ g^*$ and $(g\circ f)_ *=g_*\circ f_*$.

\begin{notation}\label{notation0} Let $S_1, S_2, \ldots S_n$ be sets. For $i_1,\ldots i_k\in [\![1,n]\!]$ with $i_1<\cdots <i_k$ we denote by $\mathrm{can}_{S_{i_1}\cdots S_{i_k}}^{S_1\cdots S_n}: S_{i_1}\sqcup \cdots \sqcup S_{i_k}\to S_1\sqcup \cdots \sqcup S_n$\index[notation]{can@$\mathrm{can}_{S_{i_1}\cdots S_{i_k}}^{S_1\cdots S_n}$} the canonical inclusion of $S_{i_1}\sqcup \cdots \sqcup S_{i_k}$ into $S_1\sqcup \cdots \sqcup S_n$.

\end{notation}

\begin{definition} Let $X$ be a set and $X'\subset X$. For $A\in\mathrm{Part}(X)$ define the set, denoted $A_{|X'}$ and called \emph{restriction of $A$ to $X'$}, by $$A_{|X'}:=\{a\in A \ | \  a\subset X'\}\subset A.$$
\end{definition}
Notice that in general $A_{|X'}$ is not a partition of $X'$.

\begin{definition}\label{def:2022-09-13-defPart} Let $X$, $Y$ and $Z$ be sets. \begin{itemize}
\item[$(a)$]  Define $\underline{\mathcal Part}(X,Y):=\mathrm{Part}(X \sqcup Y)$\index[notation]{Part(X,Y)@$\underline{\mathcal Part}(X,Y)$}.  

\item[$(b)$] Define a composition map
$$\underline{\mathcal Part}(X,Y)\times\underline{\mathcal Part}(Y,Z)\to\underline{\mathcal Part}(X,Z),$$
by
$$(A,B)\mapsto B \circ A:=(\mathrm{can}^{XYZ}_{XZ})^*\Big((\mathrm{can}^{XYZ}_{XY})_*(A)\vee(\mathrm{can}^{XYZ}_{YZ})_*(B)\Big).$$
\item[$(c)$] For $A\in\underline{\mathcal Part}(X,Y)$ and $B\in\underline{\mathcal Part}(Y,Z)$, define the set $\mathrm{Cir}(B,A)$\index[notation]{Cir(B,A)@$\mathrm{Cir}(B,A)$} by
$$\mathrm{Cir}(B,A):=\Big( (\mathrm{can}_{XY}^{XYZ})_*(A) \vee (\mathrm{can}_{YZ}^{XYZ})_*(B)\Big)_{|Y}.$$
\end{itemize}
\end{definition}

\begin{lemma}\label{r:r2-2022-09-01} Let $X$ and $Y$ be sets and $Y'\subset Y$. Then for any $U\in\mathrm{Part}(X\sqcup Y)$ there is a bijection $\big((\mathrm{can}_Y^{XY})^*(U)\big)_{|Y'}\simeq U_{|X\sqcup Y'}\setminus U_{|X}$.
\end{lemma}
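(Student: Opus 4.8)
The plan is to unwind all the definitions involved so that both sides become explicitly described subfamilies of the partition $U$, and then to exhibit the bijection directly.

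First I would compute $(\mathrm{can}_Y^{XY})^*(U)$. By the explicit description of $f^*$ given just after its definition, for $y,y'\in Y$ one has $y\equiv y'$ mod $(\mathrm{can}_Y^{XY})^*(U)$ iff $y\equiv y'$ mod $U$; hence the block of $y\in Y$ in $(\mathrm{can}_Y^{XY})^*(U)$ is $\xi\cap Y$, where $\xi\in U$ is the $U$-block of $y$. Therefore
$$(\mathrm{can}_Y^{XY})^*(U)=\{\,\xi\cap Y \ | \ \xi\in U,\ \xi\cap Y\neq\emptyset\,\},$$
and since $(-)_{|Y'}$ keeps exactly the blocks contained in $Y'$,
$$\big((\mathrm{can}_Y^{XY})^*(U)\big)_{|Y'}=\{\,\xi\cap Y \ | \ \xi\in U,\ \emptyset\neq\xi\cap Y\subset Y'\,\}.$$
On the other hand, $U_{|X\sqcup Y'}\setminus U_{|X}=\{\,\xi\in U \ | \ \xi\subset X\sqcup Y',\ \xi\not\subset X\,\}$ by definition of the restriction, and the elementary equivalences $\xi\subset X\sqcup Y'\Leftrightarrow\xi\cap Y\subset Y'$ (using $Y'\subset Y$) and $\xi\not\subset X\Leftrightarrow\xi\cap Y\neq\emptyset$ rewrite this as $\{\,\xi\in U \ | \ \emptyset\neq\xi\cap Y\subset Y'\,\}$.

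Finally I would check that the assignment $\xi\mapsto\xi\cap Y$ is a bijection between these two sets. Surjectivity is immediate from the two displayed descriptions. For injectivity, if $\xi,\xi'\in U$ satisfy $\emptyset\neq\xi\cap Y=\xi'\cap Y$, then $\xi$ and $\xi'$ share an element of $Y$, and distinct blocks of a partition are disjoint, so $\xi=\xi'$; the inverse map sends a block $a$ to the $U$-block containing any chosen element of $a$. The argument is essentially bookkeeping: the only point requiring a little care is the interplay between $Y$ and $Y'$, namely the equivalence $\xi\subset X\sqcup Y'\Leftrightarrow\xi\cap Y\subset Y'$ and the matching of the condition "$\xi\not\subset X$" on the right with "$\xi\cap Y\neq\emptyset$" on the left — this is exactly what prevents the blocks of $U$ lying entirely inside $X$ from contributing to either side. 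I do not foresee any genuine obstacle.
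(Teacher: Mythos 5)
Your proposal is correct and follows essentially the same route as the paper's proof: both sides are unwound via the explicit description of $f^*$ and of restriction into subfamilies indexed by blocks $\xi\in U$ with $\emptyset\neq\xi\cap Y\subset Y'$, and the bijection is the map $\xi\mapsto\xi\cap Y$, with injectivity coming from disjointness of distinct blocks of the partition $U$. The only cosmetic difference is that you rewrite both sides to a single common description before matching them, whereas the paper checks well-definedness, surjectivity and injectivity of $\varphi(a)=a\cap Y$ directly.
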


\begin{proof}
By definition we have
\begin{equation}\label{equ:equ1-2022-08-29}
\big((\mathrm{can}^{XY}_Y)^*(U)\big)_{|Y'}=\{b\in\mathcal{P}(Y) \ | \ \emptyset\not= b \subset Y' \text{ and } \exists a\in U \text{ such that } b=a\cap Y \text{ and } b\subset Y'\}.
\end{equation}
Besides, 
\begin{equation*}
\begin{split}
U_{|X\sqcup Y'}&=\{a\in U \ | \ a\subset  X\sqcup Y'\}\\
 & = \{a\in U \ | \ a\subset X\} \sqcup \{a\in U  \ |\ a\subset X\sqcup Y' \text{ and } a\cap Y'\not= \emptyset\} \\
 & = (U_{|X}) \sqcup \{a\in U  \ |\ a\subset X\sqcup Y' \text{ and } a\cap Y'\not= \emptyset\}.
\end{split}
\end{equation*}
Therefore,
\begin{equation}\label{equ:equ2-2022-08-29}
U_{|X\sqcup Y'}\setminus  U_{|_X}= \{a\in U  \ |\ a\subset X\sqcup Y' \text{ and } a\cap Y'\not= \emptyset\}.
\end{equation}

Define $\varphi: (U_{|X\sqcup Y'}\setminus  U_{|_X})\rightarrow \big((\mathrm{can}^{XY}_Y)^*(U)\big)_{|Y'}$ by $\varphi(a):=a\cap Y$ for any $a\in U_{|X\sqcup Y'}\setminus  U_{|_X}$.

Let us show that $\varphi$ is well-defined. If $a\in (U_{|X\sqcup Y'}\setminus  U_{|_X})$, then $\varphi(a)=a \cap Y \in \mathcal P(Y)$. Moreover, $\varphi(a) \supset a \cap Y' \neq \emptyset$ by \eqref{equ:equ2-2022-08-29}. Therefore $\varphi(a) \neq \emptyset$. Also $\varphi(a)=a \cap Y \subset Y'$ since $a \subset X \sqcup Y'$ (see \eqref{equ:equ2-2022-08-29}). Finally, $\varphi(a)=a \cap Y$ with $a \in U$. All this implies that $\varphi(a) \in \big((\mathrm{can}^{XY}_Y)^*(U)\big)_{|Y'}$. Besides, if $b\in \big((\mathrm{can}^{XY}_Y)^*(U)\big)_{|Y'}$, then by \eqref{equ:equ1-2022-08-29}, there exists $a\in U$ with $b=a\cap Y$. Let us show that $a\in(U_{|X\sqcup Y'}\setminus  U_{|_X})$. By assumption on $b$, one has $b \subset Y'$, which together with $b=a \cap Y$ implies $a \subset X \sqcup Y'$. Moreover, the same facts imply $b=a \cap Y'$; since $b \neq \emptyset$ by~\eqref{equ:equ1-2022-08-29}, then  $a \cap Y' \neq \emptyset$. It follows that $a\in(U_{|X\sqcup Y'}\setminus  U_{|_X})$, therefore $b=a \cap Y=\varphi(a)$, i.e., $\varphi$ is surjective.   Finally, if $a,a'\in  U_{|X\sqcup Y'}\setminus  U_{|_X}$ are such that $\varphi(a)=\varphi(a')$, then $\emptyset\not=\varphi(a)=\varphi(a')\subset a\cap a'$. As $U$ is a partition, this implies $a=a'$, i.e., $\varphi$ is injective. 
\end{proof}

\begin{lemma}\label{equ:2022-08-22-equ2} Let $X$, $Y$, $Z$ and $T$ be sets. Then for any $U\in\mathrm{Part}(X\sqcup Y\sqcup Z)$ and any $V\in\mathrm{Part}(Z\sqcup T)$ we have
\begin{equation*}
(\mathrm{can}^{XZT}_{XZ})_*\Big((\mathrm{can}^{XYZ}_{XZ})^*(U)\Big) \vee (\mathrm{can}^{XZT}_{ZT})_*(V) = (\mathrm{can}^{XYZT}_{XZT})^*\Big((\mathrm{can}^{XYZT}_{XYZ})_*(U) \vee (\mathrm{can}^{XYZT}_{ZT})_*(V)\Big).
\end{equation*}
\end{lemma}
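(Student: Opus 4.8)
The plan is to recast both sides as equivalence relations on $X\sqcup Z\sqcup T$ and compare them via ``chains''. Recall that a partition of a set is the same datum as an equivalence relation; that for an injective map $f$ a pair lies in $f_*A$ iff its two entries coincide or they are the images of two $A$-equivalent elements; that $x\equiv x'\bmod f^*B$ iff $fx\equiv fx'\bmod B$; and that $A\vee B$ is the transitive closure of $A\cup B$, so that $s\equiv s'\bmod A\vee B$ iff there is a finite sequence $s=s_0,\dots,s_n=s'$ each of whose consecutive pairs lies in $A$ or in $B$. Since both members of the asserted identity are partitions of $X\sqcup Z\sqcup T$ and the lattice order is antisymmetric, it is enough to show that the two associated equivalence relations coincide, and I would do this by proving the two inclusions separately.

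Put $W:=X\sqcup Y\sqcup Z\sqcup T$, $\alpha:=\mathrm{can}_{XYZ}^{XYZT}$, $\beta:=\mathrm{can}_{ZT}^{XYZT}$ and $\gamma:=\mathrm{can}_{XZT}^{XYZT}$, so that the right-hand side is $\gamma^*(\alpha_*U\vee\beta_*V)$. Call a pair in $W$ a \emph{$U$-step} if it lies in $\alpha_*U$ (equivalently, its entries coincide or both lie in $X\sqcup Y\sqcup Z$ and are $U$-equivalent) and a \emph{$V$-step} if it lies in $\beta_*V$ (equivalently, its entries coincide or both lie in $Z\sqcup T$ and are $V$-equivalent). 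Unwinding the definitions, $p,q\in X\sqcup Z\sqcup T$ are identified by the right-hand side iff there is a chain in $W$ from $p$ to $q$ each of whose consecutive pairs is a $U$-step or a $V$-step. On the other hand, the first summand $(\mathrm{can}_{XZ}^{XZT})_*\big((\mathrm{can}_{XZ}^{XYZ})^*U\big)$ of the left-hand side has, on $X\sqcup Z$, the blocks of $U$ intersected with $X\sqcup Z$, and the second summand $(\mathrm{can}_{ZT}^{XZT})_*V$ restricts to $V$ on $Z\sqcup T$; using the explicit description of $(\mathrm{can}_{XZ}^{XYZ})^*$ one checks that, for vertices in $X\sqcup Z\sqcup T$, a pair lies in the first summand iff it is a $U$-step and in the second iff it is a $V$-step. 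Hence $p,q$ are identified by the left-hand side iff there is a chain from $p$ to $q$, each of whose consecutive pairs is a $U$-step or a $V$-step, \emph{all of whose vertices lie in} $X\sqcup Z\sqcup T$.

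With these two descriptions in hand, ``left-hand side $\subseteq$ right-hand side'' is immediate, since a chain with vertices in $X\sqcup Z\sqcup T$ is in particular a chain in $W$. For the reverse inclusion I would start from a chain in $W$ witnessing the right-hand side, discard its trivial steps, and, using transitivity of $\alpha_*U$ and of $\beta_*V$, iteratively merge each maximal run of consecutive steps of the same type, arriving after finitely many operations at a chain that strictly alternates non-trivial $U$-steps and $V$-steps (or at the empty chain, if $p=q$). Every interior vertex of this chain is shared by a non-trivial $U$-step and a non-trivial $V$-step, hence lies in $(X\sqcup Y\sqcup Z)\cap(Z\sqcup T)=Z$; and its two extremities are $p$ and $q$, which lie in $X\sqcup Z\sqcup T$. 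Thus all vertices of the normalized chain lie in $X\sqcup Z\sqcup T$, so it is a chain of the kind appearing in the left-hand-side description, and it witnesses that $p,q$ are identified by the left-hand side.

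The one step requiring care is this normalization — concretely, the fact that after merging, an alternating chain can dip out of $X\sqcup Z\sqcup T$ only inside a $U$-step, so that the vertices where it returns (the junctions between a $U$-step and a $V$-step) lie in $(X\sqcup Y\sqcup Z)\cap(Z\sqcup T)=Z$. This is exactly where the standing hypothesis is used, namely that $V$ lives on $Z\sqcup T$ and hence every step of the chain other than a $U$-step has both endpoints in $Z\sqcup T$, disjoint from $Y$. Everything else is mechanical unwinding of the formulas for $f_*$, $f^*$ and $\vee$; alternatively one may first establish the base-change identity $\gamma^*\alpha_*=(\mathrm{can}_{XZ}^{XZT})_*(\mathrm{can}_{XZ}^{XYZ})^*$ for the pullback square with corner $X\sqcup Z$ together with $\gamma^*\gamma_*=\mathrm{id}$, reducing the lemma to $\gamma^*(\alpha_*U\vee\beta_*V)=\gamma^*(\alpha_*U)\vee\gamma^*(\beta_*V)$, but the same chain bookkeeping is then needed to prove that equality (which, as the general failure of compatibility of $f^*$ with $\vee$ shows, is special to this configuration).
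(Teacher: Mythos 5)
Your proof is correct and follows essentially the same route as the paper: both rest on describing the relevant joins by zig-zag chains that alternate $U$-relations and $V$-relations with all intermediate vertices forced into $Z$, which is exactly why pulling back along $X\sqcup Z\sqcup T\hookrightarrow X\sqcup Y\sqcup Z\sqcup T$ loses nothing. The paper merely packages this as a general explicit chain description of $(\mathrm{can}_{WZ}^{WZT})_*(U')\vee(\mathrm{can}_{ZT}^{WZT})_*(V)$ applied to both sides, whereas you prove the two inclusions directly via your chain normalization.
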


\begin{proof}
In general,  for any set $W$ and any $U'\in\mathrm{Part}(W\sqcup Z)$ and $V\in\mathrm{Part}(Z\sqcup T)$ the partition
$(\mathrm{can}_{WZ}^{WZT})_*(U')\vee (\mathrm{can}^{WZT}_{ZT})_*(V)\in\mathrm{Part}(W\sqcup Z\sqcup T)$ can be described explicitly as follows.  Let $\sim_{U'}$ (resp. $\sim_V$) denote the equivalence relation on $W\sqcup Z$ (resp. $Z\sqcup T$) induced by the partition $U'$ (resp. $V$). Define a relation $\sim$ on $W\sqcup Z\sqcup T$ by:
\begin{itemize}
\item if $w,w'\in W$, then $w\sim w'$ if and only if there exist $n\geq 0$ and $z_1,\ldots, z_{2n}\in Z$ such that $w\sim_{U'} z_1 \sim_{V} z_2\sim_{U'}\cdots \sim_{V}z_{2n}\sim_{U'} w'$,

\item if $z,z'\in Z$, then $z\sim z'$ if and only if there exist $n\geq 0$ and $z_1,\ldots, z_{2n}\in Z$ such that $z\sim_{U'} z_1 \sim_{V} z_2\sim_{U'}\cdots \sim_{V}z_{2n}\sim_{U'} z'$,

\item if $t,t'\in T$, then $t\sim t'$ if and only if there exist $n\geq 0$ and $z_1,\ldots, z_{2n}\in Z$ such that $t\sim_{V} z_1 \sim_{U'} z_2\sim_{V}\cdots \sim_{U'}z_{2n}\sim_{V} t'$,

\item if $w\in W$, $z\in Z$, then $w\sim z$ if and only if there exist $n\geq 0$ and $z_1,\ldots, z_{2n+1}\in Z$ such that $w\sim_{U'} z_1 \sim_{V} z_2\sim_{U'}\cdots \sim_{U'}z_{2n+1}\sim_{V} z$,

\item if $w\in W$, $t\in T$, then $w\sim t$ if and only if there exist $n\geq 0$ and $z_1,\ldots, z_{2n+1}\in Z$ such that $w\sim_{U'} z_1 \sim_{V} z_2\sim_{U'}\cdots \sim_{U'}z_{2n+1}\sim_{V} t$,

\item if $z\in Z$, $t\in T$, then $t\sim z$ if and only if there exist $n\geq 0$ and $z_1,\ldots, z_{2n+1}\in Z$ such that $t\sim_{V} z_1 \sim_{U'} z_2\sim_{V}\cdots \sim_{V}z_{2n+1}\sim_{U'} z$.

\end{itemize}
One checks that $\sim$ is an equivalence relation. Let $P\in \mathrm{Part}(W\sqcup Z\sqcup T)$  be the partition associated to $\sim$. We have $(\mathrm{can}_{WZ}^{WZT})_*(U')\leq P$ and  $(\mathrm{can}^{WZT}_{ZT})_*(V)\leq P$. One also checks that $P\leq (\mathrm{can}_{WZ}^{WZT})_*(U')\vee (\mathrm{can}^{WZT}_{ZT})_*(V)\in\mathrm{Part}(W\sqcup Z\sqcup T)$ and therefore $P = (\mathrm{can}_{WZ}^{WZT})_*(U')\vee (\mathrm{can}^{WZT}_{ZT})_*(V)\in\mathrm{Part}(W\sqcup Z\sqcup T)$.

Let us come back to our particular case ($W:=X\sqcup Y$). Notice that $a,a'\in X\sqcup Z$ are such that $a\sim_{(\mathrm{can}_{XZ}^{XYZ})^*(U)} a'$ if and only if $a\sim_{U} a'$  we then see that the explicit description of both sides of the equality in the statement of the lemma is the same. 
\end{proof}

\begin{proposition}\label{r:2022-09-13-associativityinPart} Let $X$, $Y$, $Z$ and $T$ be sets. Let $A\in\underline{\mathcal Part}(X,Y)$, $B\in\underline{\mathcal Part}(Y,Z)$, $C\in\underline{\mathcal Part}(Z,T)$. Then $C \circ (B\circ A) = (C\circ B) \circ A$. In particular, $(\underline{\mathcal Part} , \circ)$ is a category, we call it \emph{category of partitions}.
\end{proposition}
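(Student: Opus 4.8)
The plan is to reduce the associativity of $\circ$ in $\underline{\mathcal{P}art}$ to a statement purely about joins of pushed-forward partitions on the set $X \sqcup Y \sqcup Z \sqcup T$, and then to apply Lemma~\ref{equ:2022-08-22-equ2} (suitably instantiated) together with the associativity of the join operation. Concretely, unwinding the definition of $\circ$ from Definition~\ref{def:2022-09-13-defPart}(b), the composite $C \circ (B \circ A)$ is obtained by: first forming $B \circ A = (\mathrm{can}^{XYZ}_{XZ})^*\big((\mathrm{can}^{XYZ}_{XY})_*(A) \vee (\mathrm{can}^{XYZ}_{YZ})_*(B)\big)$ on $X \sqcup Z$, then pushing it and $C$ into $X \sqcup Z \sqcup T$, joining, and pulling back to $X \sqcup T$. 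So the first step is to write both $C \circ (B \circ A)$ and $(C \circ B) \circ A$ as $(\mathrm{can}^{X\cdots T}_{XT})^*$ of some join built out of the three partitions $A$, $B$, $C$ viewed on the big set $X \sqcup Y \sqcup Z \sqcup T$.

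The key intermediate claim I would isolate is: for $A \in \underline{\mathcal{P}art}(X,Y)$, $B \in \underline{\mathcal{P}art}(Y,Z)$, $C \in \underline{\mathcal{P}art}(Z,T)$, one has
\begin{equation*}
C \circ (B \circ A) = (\mathrm{can}^{XYZT}_{XT})^*\Big((\mathrm{can}^{XYZT}_{XY})_*(A) \vee (\mathrm{can}^{XYZT}_{YZ})_*(B) \vee (\mathrm{can}^{XYZT}_{ZT})_*(C)\Big),
\end{equation*}
and by the symmetric computation the right-hand side also equals $(C \circ B) \circ A$, whence the two agree. To prove this claim, I would first use that $\mathrm{can}^{XYZT}_{XT} = \mathrm{can}^{XYZT}_{XZT} \circ \mathrm{can}^{XZT}_{XT}$ together with functoriality $(g \circ f)^* = f^* \circ g^*$ to rewrite $C \circ (B \circ A)$ as $(\mathrm{can}^{XZT}_{XT})^*$ applied to
\begin{equation*}
(\mathrm{can}^{XZT}_{XZ})_*\Big((\mathrm{can}^{XYZ}_{XZ})^*\big((\mathrm{can}^{XYZ}_{XY})_*(A) \vee (\mathrm{can}^{XYZ}_{YZ})_*(B)\big)\Big) \vee (\mathrm{can}^{XZT}_{ZT})_*(C).
\end{equation*}
Now I would invoke Lemma~\ref{equ:2022-08-22-equ2} with the substitutions $X \rightsquigarrow X$, $Y \rightsquigarrow Y$, $Z \rightsquigarrow Z$, $T \rightsquigarrow T$ and $U := (\mathrm{can}^{XYZ}_{XY})_*(A) \vee (\mathrm{can}^{XYZ}_{YZ})_*(B) \in \mathrm{Part}(X \sqcup Y \sqcup Z)$, $V := C$, which converts the above expression into $(\mathrm{can}^{XZT}_{XT})^* \circ (\mathrm{can}^{XYZT}_{XZT})^*$ applied to $(\mathrm{can}^{XYZT}_{XYZ})_*(U) \vee (\mathrm{can}^{XYZT}_{ZT})_*(C)$; using functoriality again the two pullbacks compose to $(\mathrm{can}^{XYZT}_{XT})^*$, and using $(g \circ f)_* = g_* \circ f_*$ one sees $(\mathrm{can}^{XYZT}_{XYZ})_*(U) = (\mathrm{can}^{XYZT}_{XY})_*(A) \vee (\mathrm{can}^{XYZT}_{YZ})_*(B)$, since $\mathrm{can}^{XYZT}_{XY} = \mathrm{can}^{XYZT}_{XYZ} \circ \mathrm{can}^{XYZ}_{XY}$ and likewise for $YZ$, and $(-)_*$ commutes with $\vee$ (which follows from its definition as a left adjoint / least-upper-bound-preserving map, or can be checked directly). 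This establishes the intermediate claim.

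For the symmetric side, I would apply the same argument with $(X,Y,Z,T)$ replaced by the ``mirrored'' grouping: compute $(C \circ B) \circ A$ by first forming $C \circ B$ on $Y \sqcup T$ and then composing with $A$; here one uses Lemma~\ref{equ:2022-08-22-equ2} in the form obtained by relabeling (the lemma is proved for a generic four-tuple of sets, so one instantiates it with the roles of the outer and inner variables swapped, or equivalently re-runs the explicit $\sim$-description argument). Both computations land on the same symmetric join of the three pushed-forward partitions on $X \sqcup Y \sqcup Z \sqcup T$, pulled back to $X \sqcup T$; associativity of $\vee$ (stated right after its definition) guarantees the triple join is unambiguous, so the two sides coincide. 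The final sentence of the proposition — that $(\underline{\mathcal{P}art}, \circ)$ is a category — then requires only exhibiting identity morphisms, for which the obvious candidate in $\underline{\mathcal{P}art}(X,X) = \mathrm{Part}(X \sqcup X)$ is the partition $\{\{x', x''\} : x \in X\}$ pairing the two copies of each element; checking the unit axioms is a short direct computation from the definition of $\circ$ (the set $\mathrm{Cir}$ produced is empty, and the pullback-of-pushforward collapses correctly), which I would state and leave to the reader.

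I expect the main obstacle to be bookkeeping rather than conceptual: namely, correctly tracking the several canonical inclusions $\mathrm{can}$ and verifying that $(-)_*$ distributes over $\vee$ and that the composites of inclusions behave as claimed, so that Lemma~\ref{equ:2022-08-22-equ2} can be applied with exactly the right instantiation. A secondary subtlety is making sure the ``mirrored'' application of Lemma~\ref{equ:2022-08-22-equ2} is legitimate — since the lemma as stated privileges the triple $(X,Y,Z)$ on one side and the pair $(Z,T)$ on the other, one should either note that its proof via the explicit equivalence relation $\sim$ is manifestly symmetric under the required relabeling, or re-derive the mirrored instance directly from that $\sim$-description. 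Once these compatibilities are in hand, both sides reduce to the same expression and associativity is immediate.
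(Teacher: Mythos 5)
Your argument is essentially the paper's own proof: both sides are rewritten as $(\mathrm{can}^{XYZT}_{XT})^*\big[(\mathrm{can}^{XYZT}_{XY})_*(A)\vee(\mathrm{can}^{XYZT}_{YZ})_*(B)\vee(\mathrm{can}^{XYZT}_{ZT})_*(C)\big]$ using the same identities among canonical inclusions, join-preservation of pushforward, and Lemma~\ref{equ:2022-08-22-equ2} with exactly the instantiation $U=(\mathrm{can}^{XYZ}_{XY})_*(A)\vee(\mathrm{can}^{XYZ}_{YZ})_*(B)$, $V=C$, with the mirrored side handled by the symmetric computation. Your extra remarks (the explicit identity morphisms and the symmetry of the $\sim$-description justifying the mirrored application) are correct and only supplement details the paper leaves implicit.
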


\begin{proof}
We show that both $C \circ (B\circ A)$ and $(C\circ B) \circ A$ can  be expressed as
\begin{equation}\label{equ:2022-08-22-equ1}
(\mathrm{can}^{XYZT}_{XT})^*\Big[(\mathrm{can}^{XYZT}_{XY})_*(A) \vee (\mathrm{can}^{XYZT}_{YZ})_*(B) \vee (\mathrm{can}^{XYZT}_{ZT})_*(C)\Big].
\end{equation}
We have
\begin{equation}\label{equ:equ2022-09-13}
\begin{split}
C\circ (B\circ A) & = (\mathrm{can}^{XZT}_{XT})^*\big((\mathrm{can}^{XZT}_{XZ})_*(B\circ A)\vee(\mathrm{can}^{XZT}_{ZT})_*(C)\big)\\
& = (\mathrm{can}^{XZT}_{XT})^*\Big[(\mathrm{can}^{XZT}_{XZ})_*\Big((\mathrm{can}^{XYZ}_{XZ})^*\big((\mathrm{can}^{XYZ}_{XY})_*(A) \vee(\mathrm{can}^{XYZ}_{YZ})_*(B)\big)\Big)\\
& \qquad \qquad \qquad \qquad \qquad \vee (\mathrm{can}^{XZT}_{ZT})_*(C)\Big],
\end{split}
\end{equation}
where the first equality is by the definition by of $C \circ(B \circ A)$ and the second one by the definition of $B\circ A$.

Besides, \eqref{equ:2022-08-22-equ1} can be expressed as
\begin{equation}\label{equ:equ2-2022-09-13} 
\begin{split}
(\mathrm{can}^{XYZT}_{XT})^*&\Big[(\mathrm{can}^{XYZT}_{XY})_*(A) \vee (\mathrm{can}^{XYZT}_{YZ})_*(B) \vee (\mathrm{can}^{XYZT}_{ZT})_*(C)\Big] \\
&=(\mathrm{can}^{XYZT}_{XT})^*\Big[(\mathrm{can}^{XYZT}_{XYZ})_*\Big((\mathrm{can}^{XYZ}_{XY})_*(A)\vee (\mathrm{can}^{XYZ}_{YZ})_*(B)\Big) \vee (\mathrm{can}^{XYZT}_{ZT})_*(C)\Big]\\
& = \big((\mathrm{can}^{XZT}_{XT})^*\circ  (\mathrm{can}^{XYZT}_{XZT})^*\big)\Big[(\mathrm{can}^{XYZT}_{XYZ})_*\Big((\mathrm{can}^{XYZ}_{XY})_*(A)\vee (\mathrm{can}^{XYZ}_{YZ})_*(B)\Big)\\
& \qquad \qquad \qquad \qquad \qquad \qquad \qquad \qquad \vee (\mathrm{can}^{XYZT}_{ZT})_*(C)\Big].
\end{split}
\end{equation}
In the first equality we have used the identities  $\mathrm{can}^{XYZT}_{XY} =  \mathrm{can}^{XYZT}_{XYZ}\circ \mathrm{can}^{XYZ}_{XY}$ and $\mathrm{can}^{XYZT}_{YZ} = \mathrm{can}^{XYZT}_{XYZ}\circ \mathrm{can}^{XYZ}_{YZ}$ and the fact that $(\mathrm{can}^{XYZT}_{XYZ})_*$ is  join-preserving. In the second equality we have used the identity $\mathrm{can}_{XT}^{XYZT}= \mathrm{can}^{XYZT}_{XZT}\circ \mathrm{can}^{XZT}_{XT}$.

Applying $(\mathrm{can}_{XT}^{XZT})^*$ to equality in Lemma~\ref{equ:2022-08-22-equ2} with $U:=(\mathrm{can}_{XY}^{XYZ})_*(A) \vee (\mathrm{can}_{YZ}^{XYZ})_*(B)$ and $V:=C$ we obtain that the last term in \eqref{equ:equ2022-09-13} is equal to the last term in \eqref{equ:equ2-2022-09-13}. That is, $C\circ (B\circ A)$ can be expressed as in  \eqref{equ:2022-08-22-equ1}. A similar reasoning allows us to show that $(C\circ B)\circ A$ is also expressed as in \eqref{equ:2022-08-22-equ1}. Therefore $C\circ (B\circ A) = (C\circ B)\circ A$.
\end{proof}

\begin{lemma}\label{r:r1-2022-09-01} Let $X$, $Y$, $Z$ and $T$ be sets. Let $U\in\mathrm{Part}(X\sqcup Y\sqcup Z)$ and $C\in\mathrm{Part}(Z\sqcup T)$.  Then 
\begin{itemize}
\item [$(a)$] the assignment  $\psi: U_{|Y}\to (\mathrm{can}_{XYZ}^{XYZT})_*(U) \vee (\mathrm{can}_{ZT}^{XYZT})_*(C)$ taking $a\in U_{|Y}$ to the unique class of $(\mathrm{can}_{XYZ}^{XYZT})_*(U) \vee (\mathrm{can}_{ZT}^{XYZT})_*(C)$ containing  $\mathrm{can}_{XYZ}^{XYZT}(a)$, denoted $\widetilde{a}$, is well-defined and takes values in $\Big((\mathrm{can}_{XYZ}^{XYZT})_*(U) \vee (\mathrm{can}_{ZT}^{XYZT})_*(C)\Big)_{|Y}$,
\item[$(b)$] the induced map  
$$\psi:  U_{|Y}\longrightarrow \Big((\mathrm{can}_{XYZ}^{XYZT})_*(U) \vee (\mathrm{can}_{ZT}^{XYZT})_*(C)\Big)_{|Y}$$
by $(a)$ is a bijection.
\end{itemize}
\end{lemma}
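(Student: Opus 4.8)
The plan is to set $P := (\mathrm{can}_{XYZ}^{XYZT})_*(U) \vee (\mathrm{can}_{ZT}^{XYZT})_*(C) \in \mathrm{Part}(X\sqcup Y\sqcup Z\sqcup T)$ and use the explicit description of the join of two pushed-forward partitions established in the proof of Lemma~\ref{equ:2022-08-22-equ2} (with $W := X\sqcup Y$, $U' := U$, $V := C$). From that description, the equivalence relation $\sim$ defining $P$ has the crucial property that any chain witnessing $a \sim b$ with $a,b \in X\sqcup Y$ alternates $\sim_U$ and $\sim_C$ steps \emph{passing only through elements of $Z$}; in particular, when we restrict attention to elements lying in $Y$, the first and last steps are $\sim_U$-steps and $C$ never moves anything into $Y$. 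This is the structural fact that makes the whole statement work, and establishing it cleanly is the main obstacle: one must argue that a $P$-class meeting $Y$ is obtained from a single $U$-class meeting $Y$ by adjoining the $Z$- and $T$-elements that get $C$-glued to it, and that two distinct $U$-classes meeting $Y$ cannot be merged in $P$ — which holds precisely because $C$ only acts on $Z\sqcup T$, so no $C$-step can connect two $Y$-elements directly or relink through $Y$.

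For part $(a)$: given $a \in U_{|Y}$, so $a \subset Y$ and $a \in U$, I would first note that $\mathrm{can}_{XYZ}^{XYZT}(a) \subset Y$ is contained in a unique $P$-class $\widetilde a$ (well-definedness is automatic since $P$ is a partition), giving a well-defined map $U_{|Y} \to P$. To see $\widetilde a \in P_{|Y}$, i.e. $\widetilde a \subset Y$, I use the explicit $\sim$: any $b \in \widetilde a$ satisfies $b \sim y$ for some $y \in a \subset Y$; inspecting the six bullet cases of $\sim$, an element of $X$, $Z$, or $T$ can be $\sim$-equivalent to an element of $Y$ only via a chain whose consecutive steps alternate $\sim_U$, $\sim_C$ through $Z$ — but to land in $Y$ one needs a final $\sim_U$-step from $Z$ into $Y$, and more to the point $Y$-elements only appear in $\sim_U$ since $C\in\mathrm{Part}(Z\sqcup T)$ cannot relate anything to a $Y$-element. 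Hence $b \sim y$ forces $b \sim_U y$ (possibly through a $U$–$C$ chain that, if it ever leaves $U$, must pass through $Z$ and can only come back to $Y$ via $U$); carefully this gives $b\in Y$. So $\psi(a) := \widetilde a$ lands in $P_{|Y}$.

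For part $(b)$: surjectivity — take $c \in P_{|Y}$, pick $y \in c \subset Y$, and let $a \in U$ be the $U$-class of $y$. Since $a \ni y \in Y$ we must check $a \subset Y$; an element $x \in a$ with $x \notin Y$ would lie in $X$ or $Z$, but then $x \sim_U y$, hence $x \sim y$ in $P$, contradicting $c \subset Y$ (here we again invoke that $P_{|Y}$-membership of $c$ forbids non-$Y$ elements). So $a \in U_{|Y}$ and $\psi(a)$ is the $P$-class of $y$, which is $c$. Injectivity — if $\psi(a) = \psi(a')$ for $a, a' \in U_{|Y}$, pick $y \in a$, $y' \in a'$; then $y \sim y'$, and since both lie in $Y$, the defining chain for $y \sim y'$ (first/last steps $\sim_U$, intermediate elements in $Z$, $C$-steps never touching $Y$) forces the chain to be trivial on $Y$, i.e. $y \sim_U y'$, so $a = a'$. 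The one genuinely delicate point throughout is the repeated claim "a $\sim$-chain between two $Y$-elements reduces to a $\sim_U$-relation between them"; I would isolate this as a preliminary observation, proved by induction on chain length using that every alternating chain visits $Y$ only at its two endpoints (as all interior vertices lie in $Z$ by the bullet-point description) and that $C$ contributes no relation incident to $Y$. Granting that observation, both bijectivity arguments are short.
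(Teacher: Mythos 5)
Your proposal is correct and follows essentially the same route as the paper: both work with the explicit chain description of $(\mathrm{can}_{XYZ}^{XYZT})_*(U)\vee(\mathrm{can}_{ZT}^{XYZT})_*(C)$ from the proof of Lemma~\ref{equ:2022-08-22-equ2} (with $W=X\sqcup Y$), observe that $C$ contributes no relation incident to $Y$, and reduce any chain issuing from a class $a\in U_{|Y}$ to a single $\sim_U$-step, yielding $\widetilde a\subset Y$ and then bijectivity. Just note that your isolated observation ``a $\sim$-chain between two $Y$-elements reduces to a $\sim_U$-relation'' is only valid when the $U$-class of an endpoint lies in $Y$ (otherwise the first step may enter $Z$), but in every place you invoke it that hypothesis is indeed available, so the argument goes through.
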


\begin{proof} Let denote by $\sim_{P}$, $\sim_U$ and $\sim_C$ the equivalence relations on $X\sqcup Y\sqcup Z\sqcup T$, on $X\sqcup Y\sqcup Z$  and on $Z\sqcup T$ induced by $P:= (\mathrm{can}_{XYZ}^{XYZT})_*(U) \vee (\mathrm{can}_{ZT}^{XYZT})_*(C)$,  by $U$ and by $C$, respectively. We also recall from the proof of Lemma~\ref{equ:2022-08-22-equ2} that (taking $W:=X\sqcup Y$) there is an explicit description of $\sim_P$.

$(a)$ If $a\in U_{|Y}$ then $\mathrm{can}_{XYZ}^{XYZT}(a)\subset X\sqcup Y\sqcup Z \sqcup T$. Moreover, $\mathrm{can}_{XYZ}^{XYZT}(a)$ is an equivalence class of $(\mathrm{can}_{XYZ}^{XYZT})_*(U)$ and therefore there exists a unique class $\widetilde{a}$ in   $(\mathrm{can}_{XYZ}^{XYZT})_*(U) \vee (\mathrm{can}_{ZT}^{XYZT})_*(C)$ containing $\mathrm{can}_{XYZ}^{XYZT}(a)$. Let us show that $\widetilde{a}\subset Y$, i.e., $\widetilde{a}\in \Big((\mathrm{can}_{XYZ}^{XYZT})_*(U) \vee (\mathrm{can}_{ZT}^{XYZT})_*(C)\Big)_{|Y}$.  Assuming the contrary and using the explicit description of $\sim_P$ we have:
\begin{itemize}
\item If there exists $\widetilde{\alpha}\in \widetilde{a}\cap X$, then for every $\alpha\in a$ we have $\alpha\sim_P \widetilde{\alpha}$. Hence, $\alpha\sim_U \widetilde{\alpha}$. That is, $a\cap X\not= \emptyset$ which contradicts $a\in U_{|Y}$.
\item If there exists $\widetilde{\alpha}\in \widetilde{a}\cap Z$, then for every $\alpha\in a$ we have $\alpha\sim_P \widetilde{\alpha}$. Therefore, there exists $n\geq 0$ and $z_1,\ldots, z_{2n}\in Z$ such that  $\alpha\sim_U  z_1\sim_C z_2\sim_U\cdots \sim_C z_{2n}\sim_U\widetilde{\alpha}$. Hence, $\widetilde{\alpha}$ or $z_1$ belong to $a\cap Z$ which contradicts $a\in U_{|Y}$.

\item If there exists $\widetilde{\alpha}\in \widetilde{a}\cap T$, then for every $\alpha\in a$ we have $\alpha\sim_P \widetilde{\alpha}$. Therefore, there exists $n\geq 0$ and $z_1,\ldots, z_{2n+1}\in Z$ such that  $\alpha\sim_U  z_1\sim_C z_2\sim_U\cdots \sim_U z_{2n+1}\sim_C\widetilde{\alpha}$. Hence,  $z_1\in a\cap Z$ which contradicts $a\in U_{|Y}$.
\end{itemize}
To sum up, $\widetilde{a}\subset Y$.

$(b)$ If $a\in P_{|Y}$, then $a\in P$ and $a\subset Y$. Since $(\mathrm{can}_{XYZ}^{XYZT})_*(U)\leq P$, there exists $A'\subset U$ such that $a=\sqcup_{a'\in A'}a'$. Moreover, for all $a'\in A'$ we have $a'\subset Y$. Hence, $A'\subset U_{|Y}$.

Let $a',a''\in A'$, then for all $\alpha'\in a'\subset Y$ and for all $\alpha''\in a''\subset Y$ we have $\alpha'\sim_P \alpha''$. By the explicit description of $\sim_P$ we then have $\alpha'\sim_U \alpha''$ or there exists $n\geq 0$ and $z_1,\ldots z_{2n}\in Z$ such that $\alpha'\sim_Uz_1\sim_C z_2\sim_U\cdots \sim_C z_{2n}\sim_U \alpha''$. The latter case is discarded because it contradicts $a'\subset Y$. We conclude $a'=a''$, therefore $a\in U_{|Y}$ and $A'=\{a\}$; then  $\psi(a)=a$ which shows the surjectivity of $\psi$.  The same argument shows that $\psi(a)=\mathrm{can}_{XYZ}^{XYZT}(a)$ which using the injectivity of $\mathrm{can}_{XYZ}^{XYZT}$ implies the injectivity of $\psi$.
\end{proof}

\begin{proposition}\label{r:2022-09-13-bijofcirclesPart} Let $X$, $Y$, $Z$ and $T$ be sets. Let $A\in\underline{\mathcal Part}(X,Y)$, $B\in\underline{\mathcal Part}(Y,Z)$, $C\in\underline{\mathcal Part}(Z,T)$. Then there is a bijection
$$\mathrm{bij}_{C,B,A}: \mathrm{Cir}(C,B)\sqcup \mathrm{Cir}(C\circ B, A) \longrightarrow \mathrm{Cir}(B,A)\sqcup \mathrm{Cir}(C, B\circ A).$$
\end{proposition}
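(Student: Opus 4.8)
The statement is a "cocycle property" of the operation $\mathrm{Cir}$, completely analogous to the associativity of composition proved in Proposition~\ref{r:2022-09-13-associativityinPart}. The plan is to establish both sides as being in natural bijection with a single set, namely the set of equivalence classes contained in $Y \sqcup Z$ of the big partition
$$P := (\mathrm{can}^{XYZT}_{XY})_*(A) \vee (\mathrm{can}^{XYZT}_{YZ})_*(B) \vee (\mathrm{can}^{XYZT}_{ZT})_*(C) \in \mathrm{Part}(X\sqcup Y\sqcup Z\sqcup T),$$
that is, with $P_{|Y\sqcup Z}$. More precisely I expect $P_{|Y\sqcup Z} = P_{|Y} \sqcup P_{|Z} \sqcup P_{|Y\sqcup Z}^{\circ}$, where the last piece consists of classes meeting both $Y$ and $Z$, and the claim is that $\mathrm{Cir}(C,B) \sqcup \mathrm{Cir}(C\circ B, A)$ biject with $P_{|Y\sqcup Z}$ via one grouping and $\mathrm{Cir}(B,A) \sqcup \mathrm{Cir}(C, B\circ A)$ via another.

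\textbf{Key steps.} First I would unwind the definitions: $\mathrm{Cir}(B,A) = \big((\mathrm{can}^{XYZ}_{XY})_*(A)\vee(\mathrm{can}^{XYZ}_{YZ})_*(B)\big)_{|Y}$, and $\mathrm{Cir}(C, B\circ A)$ is, by Definition~\ref{def:2022-09-13-defPart}(c) applied to $B\circ A \in \underline{\mathcal Part}(X,Z)$ and $C$, the restriction to $Z$ of $(\mathrm{can}^{XZT}_{XZ})_*(B\circ A)\vee(\mathrm{can}^{XZT}_{ZT})_*(C)$. Second, using Lemma~\ref{equ:2022-08-22-equ2} (with the roles of the variables matched so that the expression $(\mathrm{can}^{XZT}_{XZ})_*(B\circ A)\vee(\mathrm{can}^{XZT}_{ZT})_*(C)$ is rewritten as a pullback along $\mathrm{can}^{XYZT}_{XZT}$ of $(\mathrm{can}^{XYZT}_{XYZ})_*\big((\mathrm{can}^{XYZ}_{XY})_*(A)\vee(\mathrm{can}^{XYZ}_{YZ})_*(B)\big)\vee(\mathrm{can}^{XYZT}_{ZT})_*(C)$, which equals $P$ since $(\mathrm{can}^{XYZT}_{XYZ})_*$ is join-preserving and the canonical inclusions compose), together with Lemma~\ref{r:r1-2022-09-01}, I would identify $\mathrm{Cir}(C, B\circ A)$ with $P_{|Z}$ — more carefully, with those classes of $P$ lying inside $Z$ that are not "visible" after restricting back along $\mathrm{can}^{XZT}_{XZ}$; the point of Lemma~\ref{r:r1-2022-09-01}(b) is exactly that the relevant restriction map is a bijection on the $Z$-supported classes. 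Similarly $\mathrm{Cir}(C,B) = P'_{|Z}$ where $P'$ is the analogous three-fold join living on $X\sqcup Y\sqcup Z\sqcup T$ but one should instead read it directly from $(\mathrm{can}^{YZT}_{YZ})_*(B)\vee(\mathrm{can}^{YZT}_{ZT})_*(C)$ restricted to $Z$, and again via Lemmas~\ref{equ:2022-08-22-equ2}/\ref{r:r1-2022-09-01} this matches $P_{|Z}$-classes that survive. Third, by symmetry of the construction under swapping "$Y$-side" with "$Z$-side" (i.e. interchanging $A\leftrightarrow C$, $X\leftrightarrow T$, reversing $B$), the set $\mathrm{Cir}(B,A)\sqcup \mathrm{Cir}(C\circ B,A)$ — wait, the correct pairing is: $\mathrm{Cir}(B,A)$ gives the $Y$-classes $P_{|Y}$, and $\mathrm{Cir}(C\circ B, A)$, being the restriction to $Y$ of $(\mathrm{can}^{XYT}_{XY})_*(A)\vee(\mathrm{can}^{XYT}_{YT})_*(C\circ B)$, gives (after the same Lemma~\ref{equ:2022-08-22-equ2}-type rewriting, now pulling along $\mathrm{can}^{XYZT}_{XYT}$) the classes of $P$ that meet $Y$ but are not purely inside $Y$ — i.e. the classes meeting both $Y$ and $Z$ (a class meeting $Y$ and $X$ only is excluded, a class meeting $Y$, $Z$, $T$ counts). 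So $\mathrm{Cir}(B,A)\sqcup\mathrm{Cir}(C\circ B,A) \simeq P_{|Y} \sqcup P^{\circ}_{|Y\sqcup Z}$ and $\mathrm{Cir}(C,B)\sqcup\mathrm{Cir}(C,B\circ A) \simeq P_{|Z}\sqcup P^{\circ}_{|Y\sqcup Z}$ — but these two are not obviously equal, so I must have the bookkeeping slightly off. The resolution: one should instead show $\mathrm{Cir}(C,B)\sqcup\mathrm{Cir}(C\circ B,A)\simeq P_{|Y\sqcup Z}$ (with $\mathrm{Cir}(C,B)$ the "pure-$Z$ pieces after collapsing the $Y$-$Z$ links via $B$"... ) — concretely, $\mathrm{Cir}(C,B)$ recovers classes of $P$ contained in $Z$, while $\mathrm{Cir}(C\circ B, A)$ recovers classes of $P$ that meet $Y$; the first term on the other side $\mathrm{Cir}(B,A)$ recovers classes contained in $Y$, and $\mathrm{Cir}(C, B\circ A)$ recovers classes that meet $Z$. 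Since $P_{|Y\sqcup Z} = \{$classes $\subset Z\} \sqcup \{$classes meeting $Y\} = \{$classes $\subset Y\}\sqcup\{$classes meeting $Z\}$ — both equal $P_{|Y\sqcup Z}$ because a class in $Y\sqcup Z$ either lies entirely in $Z$, or meets $Y$; and dually — we get the two required bijections with the common set $P_{|Y\sqcup Z}$, and $\mathrm{bij}_{C,B,A}$ is their composite.

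\textbf{The main obstacle.} The genuinely delicate point is establishing that $\mathrm{Cir}(C\circ B, A)$ is in bijection with the set of $P$-classes that meet $Y$ (rather than, say, the classes contained in $Y\sqcup T$, or some other candidate). This requires (i) the Lemma~\ref{equ:2022-08-22-equ2}-style identity rewriting $(\mathrm{can}^{XYT}_{XY})_*(A)\vee(\mathrm{can}^{XYT}_{YT})_*(C\circ B)$ as a pullback of $P$ along $\mathrm{can}^{XYZT}_{XYT}$ — which in turn needs an instance of that lemma where the "collapsed" variable is $Z$ rather than $Y$, possibly requiring a variant of Lemma~\ref{equ:2022-08-22-equ2} proved by the same explicit-equivalence-relation argument; and (ii) a careful application of Lemma~\ref{r:r1-2022-09-01} (or rather, a mild generalization of it with $Y$ and the "middle" roles permuted) to see that pulling $P$ back from $X\sqcup Y\sqcup Z\sqcup T$ to $X\sqcup Y\sqcup T$ does not merge any two $Y$-meeting classes and loses exactly the classes contained in $Z$. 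I would handle this by first proving a single symmetric lemma — for $U\in\mathrm{Part}(X\sqcup Y\sqcup Z)$, $V\in\mathrm{Part}(Z\sqcup T)$, the set of classes of $(\mathrm{can}_{XYZ}^{XYZT})_*(U)\vee(\mathrm{can}_{ZT}^{XYZT})_*(V)$ contained in $X\sqcup Y\sqcup T$, intersected with those meeting $Y$, is in bijection via pullback with $\{$classes of $U$ meeting $Y\}$ — modelled exactly on Lemmas~\ref{r:r1-2022-09-01} and~\ref{r:r2-2022-09-01}, using the explicit description of $\sim_P$ from the proof of Lemma~\ref{equ:2022-08-22-equ2}. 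With that lemma in hand, both bijections drop out by bookkeeping, and everything is routine; the rest is just carefully matching up which $\mathrm{can}$ maps appear where, which is tedious but not conceptually hard.
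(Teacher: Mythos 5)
Your proposal is correct and follows essentially the same route as the paper: the paper's proof also identifies both sides with the set $\Upsilon_{|Y\sqcup Z}$ of classes of the triple join $(\mathrm{can}_{XY}^{XYZT})_*(A)\vee(\mathrm{can}_{YZ}^{XYZT})_*(B)\vee(\mathrm{can}_{ZT}^{XYZT})_*(C)$ contained in $Y\sqcup Z$, decomposed once as (classes in $Y$) $\sqcup$ (classes meeting $Z$) and once as (classes in $Z$) $\sqcup$ (classes meeting $Y$), using exactly Lemmas~\ref{r:r1-2022-09-01}, \ref{r:r2-2022-09-01} and \ref{equ:2022-08-22-equ2} (with permuted roles for the second decomposition, which the paper also handles by "similarly").
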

\begin{proof}
Let $\Upsilon:=(\mathrm{can}_{XY}^{XYZT})_*(A) \vee (\mathrm{can}_{YZ}^{XYZT})_*(B) \vee (\mathrm{can}_{ZT}^{XYZT})_*(C)\in \mathrm{Part}(X\sqcup Y\sqcup Z\sqcup T)$.
Using the equalities $(\mathrm{can}_{XY}^{XYZT})_* = (\mathrm{can}_{XYZ}^{XYZT}\circ \mathrm{can}_{XY}^{XYZ})_* = (\mathrm{can}_{XYZ}^{XYZT})_*\circ (\mathrm{can}_{XY}^{XYZ})_*$ and  $(\mathrm{can}_{YZ}^{XYZT})_* = (\mathrm{can}_{XYZ}^{XYZT}\circ \mathrm{can}_{YZ}^{XYZ})_* = (\mathrm{can}_{XYZ}^{XYZT})_*\circ (\mathrm{can}_{YZ}^{XYZ})_*$ together with Lemma~\ref{r:r1-2022-09-01} (with $U:=(\mathrm{can}_{XY}^{XYZ})_*(A) \vee (\mathrm{can}_{YZ}^{XYZ})_*(B)$) we have
\begin{equation}\label{equ:2022-09-01-equ2}
\begin{split}
\mathrm{Cir}(B,A)= U_{|Y} &\simeq \Big[(\mathrm{can}_{XYZ}^{XYZT})_*(U)\vee  \mathrm{can}_{ZT}^{XYZT}(C)\Big]_{|Y}\\
 & = \Big[(\mathrm{can}_{XYZ}^{XYZT})_*\Big((\mathrm{can}_{XY}^{XYZ})_*(A) \vee (\mathrm{can}_{YZ}^{XYZ})_*(B)\Big)\vee  (\mathrm{can}_{ZT}^{XYZT})_*(C)\big]_{|Y}\\
 & = \Big[(\mathrm{can}_{XY}^{XYZT})_*(A) \vee (\mathrm{can}_{YZ}^{XYZT})_*(B) \vee (\mathrm{can}_{ZT}^{XYZT})_*(C)\Big]_{|Y}\\
 & = \Upsilon_{|Y}.
\end{split}
\end{equation}

Besides, notice that for any $V\in\mathrm{Part}(X\sqcup Y\sqcup Z)$ we have
\begin{equation}\label{equ:2022-09-01equ1}
\begin{split}
\Big((\mathrm{can}_{XYZ}^{XYZT})_*(V) \vee &(\mathrm{can}_{ZT}^{XYZT})_*(C)\Big)_{Y\sqcup Z} \setminus \Big((\mathrm{can}_{XYZ}^{XYZT})_*(V) \vee (\mathrm{can}_{ZT}^{XYZT})_*(C)\Big)_{Y} \\
& \simeq  \Big[(\mathrm{can}_{XZT}^{XYZT})^*\Big((\mathrm{can}_{XYZ}^{XYZT})_*(V) \vee (\mathrm{can}_{ZT}^{XYZT})_*(C)\Big)\Big]_{|Z}\\
& = \Big[(\mathrm{can}^{XZT}_{XZ})_*\Big((\mathrm{can}^{XYZ}_{XZ})^*(V)\Big) \vee (\mathrm{can}^{XZT}_{ZT})_*(C) \Big]_{|Z}
\end{split}
\end{equation}
where the isomorphism is given by Lemma~\ref{r:r2-2022-09-01} and the equality is given by Lemma~\ref{equ:2022-08-22-equ2}.

Considering the instance of \eqref{equ:2022-09-01equ1} with $V:= (\mathrm{can}_{XY}^{XYZ})_*(A) \vee (\mathrm{can}_{YZ}^{XYZ})_*(B)$ and using the equalities $(\mathrm{can}_{XY}^{XYZT})_* = (\mathrm{can}_{XYZ}^{XYZT}\circ \mathrm{can}_{XY}^{XYZ})_* = (\mathrm{can}_{XYZ}^{XYZT})_*\circ (\mathrm{can}_{XY}^{XYZ})_*$ and  $(\mathrm{can}_{YZ}^{XYZT})_* = (\mathrm{can}_{XYZ}^{XYZT}\circ \mathrm{can}_{YZ}^{XYZ})_* = (\mathrm{can}_{XYZ}^{XYZT})_*\circ (\mathrm{can}_{YZ}^{XYZ})_*$ we obtain
\begin{equation*}
\begin{split}
\Upsilon_{|Y\sqcup Z} \setminus \Upsilon_{|Y} &\simeq \Big[(\mathrm{can}^{XZT}_{XZ})_*\Big((\mathrm{can}^{XYZ}_{XZ})^*((\mathrm{can}_{XY}^{XYZ})_*(A) \vee (\mathrm{can}_{YZ}^{XYZ})_*(B))\Big) \vee (\mathrm{can}^{XZT}_{ZT})_*(C) \Big]_{|Z}\\
 & =  \Big[(\mathrm{can}^{XZT}_{XZ})_*(B\circ A) \vee (\mathrm{can}^{XZT}_{ZT})_*(C)\Big]_{|Z} \\
 & = \mathrm{Cir}(C, B\circ A).
\end{split}
\end{equation*}
Combining the above isomorphism with that in \eqref{equ:2022-09-01-equ2} we get 
$$\Upsilon_{|Y\sqcup Z}\simeq \mathrm{Cir}(B,A)\sqcup \mathrm{Cir}(C, B\circ A).$$
Similarly, we show 
$$\Upsilon_{|Y\sqcup Z}\simeq \mathrm{Cir}(C,B)\sqcup \mathrm{Cir}(C\circ B, A),$$
which finishes the proof.
\end{proof}

\begin{remark}\label{rem:pentagone-gen} By using similar arguments to the ones used in the proof of Proposition~\ref{r:2022-09-13-bijofcirclesPart}, we can show that for any composable objects $A,B,C,D$ in $\underline{\mathcal Part}$ the diagram

\bigskip

\begin{tikzpicture}[commutative diagrams/every diagram]
\tikzstyle{every node}=[minimum width=4cm]
\node (P0) at (4,0) {$\mathrm{Cir}(C,B)\sqcup \mathrm{Cir}(D, (C B))\sqcup \mathrm{Cir}(D (C  B), A)$};
\node (P1) at (0,-2) {$\mathrm{Cir}(D,C)\sqcup \mathrm{Cir}(D C, B)\sqcup \mathrm{Cir}((D  C) B, A)$} ;
\node (P2) at (7,-4) {$\mathrm{Cir}(C,B)\sqcup \mathrm{Cir}(C B, A)\sqcup \mathrm{Cir}(D, (C  B) A)$};
\node (P3) at (0,-6) {$\mathrm{Cir}(D,C)\sqcup \mathrm{Cir}(B, A)\sqcup \mathrm{Cir}(D C, B A)$};
\node (P4) at (4,-8) {$\mathrm{Cir}(B,A)\sqcup \mathrm{Cir}(C, B  A)\sqcup \mathrm{Cir}(D, C  (B  A))$};

\draw[->] (P1.north) -- (P0.255)  node[midway,left] {$\mathrm{bij}_{D,C,B}\sqcup\mathrm{Id}_{\mathrm{Cir}((D C) B, A)}\qquad $};
\draw[->] (P0.295) -- (P2.north) node[midway, right] {$\quad \mathrm{Id}_{\mathrm{Cir}(C,B)} \sqcup \mathrm{bij}_{D,CB,A}$};
\draw[->] (P2.south) -- (P4.75)node[midway, right] {$\quad\mathrm{bij}_{C,B,A}\sqcup\mathrm{Id}_{\mathrm{Cir}(D, (C  B) A)} $};
\draw[->] (P1.south) -- (P3.north)  node[midway,left] {$\mathrm{Id}_{\mathrm{Cir}(D, C)}\sqcup\mathrm{bij}_{DC,B,A}\quad $};
\draw[->] (P3.south) -- (P4.105)node[midway,left] {$\big(\mathrm{Id}_{\mathrm{Cir}(B, A)}\sqcup\mathrm{bij}_{D,C,BA}\big)\circ t \quad \quad$};
\end{tikzpicture}

\bigskip

\noindent commutes, where $t:=t_{(\mathrm{Cir}(D, C), \mathrm{Cir}(B, A))}\sqcup \mathrm{Id}_{\mathrm{Cir}(DC, BA)}: \mathrm{Cir}(D, C)\sqcup\mathrm{Cir}(B, A) \to \mathrm{Cir}(B, A)\sqcup \mathrm{Cir}(D, C)$ is the canonical bijection. By simplicity we have omitted $\circ$ from the notation in the above diagram.
\end{remark}

\begin{remark} In \cite{Del}, Deligne introduces a $\mathbb{Z}[T]$-linear tensor category $\mathrm{Rep}_1(S_T)$ with a particular object $X$.  For $U$ and $V$ finite sets, $\mathrm{Rep}_1(S_T)(X^{\otimes U}, X^{\otimes V})$ is a free $\mathbb{Z}[T]$-module with basis $([P])_{P \in \mathrm{Part}(U \sqcup V)}$ (\cite[\S8.2]{Del}). For $U$, $V$ and $W$ finite sets and $P\in \mathrm{Part}(U \sqcup V)$ and $Q \in \mathrm{Part}(V \sqcup W)$, one has $[Q] \circ [P]=T^{|\mathrm{Cir}(P,Q)|}\langle Q \circ P\rangle_{U\sqcup W}$, where $\langle Q \circ P\rangle_{U\sqcup W}$ coincides with the partition $P\circ Q$ defined in Definition~\ref{def:2022-09-13-defPart}$(b)$. The associativity of the composition in $\mathrm{Rep}_1(S_T)$ is equivalent to the equalities $(R \circ Q) \circ P = R \circ (Q \circ P)$ (see Proposition~\ref{r:2022-09-13-associativityinPart}) and $|\mathrm{Cir}(RQ,P)|+|\mathrm{Cir}(R,Q)|=|\mathrm{Cir}(R,QP)|+|\mathrm{Cir}(Q,P)|$ (which is a consequence of Proposition~\ref{r:2022-09-13-bijofcirclesPart}). See also \cite{ComOs} for more details on the category $\mathrm{Rep}_1(S_T)$.
\end{remark}

\subsection{Classification of finite  \texorpdfstring{$\mathbb{Z}_2*\mathbb{Z}_2$}{Z2*Z2}--sets}\label{sec:1-2}

\begin{notation}\label{notation1} Let $X_1, \ldots, X_n$ be sets. The image of $x\in X_i$  under the injection $X_i \subset X_1 \sqcup \cdots \sqcup X_n$, i.e.,  under the map $\mathrm{can}_{X_i}^{X_1\cdots X_n}: X_i\to X_1\sqcup \cdots \sqcup X_n$ (see Notation~\ref{notation0}), is denoted by using $i-1$ bars over $x$. For instance $X_1\ni x \mapsto x \in X_1\sqcup X_2\sqcup X_3$ and $X_2\ni x \mapsto {\bar{x}} \in X_1\sqcup X_2\sqcup X_3$ and $X_3\ni x \mapsto \bar{\bar{x}} \in  X_1\sqcup X_2\sqcup X_3$
\end{notation}

Consider the group $G:=\mathbb{Z}\rtimes \mathbb{Z}_2=\langle t, c \ | \  c^2=1, \ ct=t^{-1}c \rangle$. For $N\in\mathbb{Z}_{\geq 1}$ define the cofinite subgroups $A_N$, $B_N$ and $C_N$ of $\mathbb{Z}\rtimes \mathbb{Z}_2$ generated by $\{t^N\}$, $\{t^N,c\}$ and $\{t^N, ct\}$, respectively.

\begin{lemma}\label{r:cofinitesubgroupsofG} Define the map
$$F:\mathbb{Z}_{\geq 1}\sqcup (2\mathbb{Z}_{\geq 0}+1)\sqcup (2\mathbb{Z}_{\geq 0}\times \{0,1\})\longrightarrow \{\text{cofinite subgroups of } G\}
$$
by
$F(N):= A_N$, $F(\overbar{2N+1}):= B_{2N+1}$, $F\left(\overbar{\overbar{(2N,0)}}\right):= B_{2N}$ and  $F\left(\overbar{\overbar{(2N,1)}}\right):= C_{2N}$. Then $F$ induces a bijection
 $$[F]:\mathbb{Z}_{\geq 1}\sqcup (2\mathbb{Z}_{\geq 0}+1)\sqcup (2\mathbb{Z}_{\geq 0}\times \{0,1\})\longrightarrow \dfrac{\{\text{cofinite subgroups of } G\}
}{\text{conjugation}}.$$
\end{lemma}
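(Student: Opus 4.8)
The plan is to classify the cofinite subgroups of $G = \mathbb{Z} \rtimes \mathbb{Z}_2$ directly and then match the classification up to conjugacy with the index set. First I would recall that $\mathbb{Z} = \langle t \rangle$ is a normal subgroup of $G$ of index $2$, with $G/\mathbb{Z} = \langle \bar c \rangle \cong \mathbb{Z}_2$. For any subgroup $H \leq G$, intersecting with $\mathbb{Z}$ gives $H \cap \mathbb{Z} = \langle t^N \rangle$ for a unique $N \geq 0$, and $H$ is cofinite iff $N \geq 1$ (since otherwise $H \subseteq \mathbb{Z}$-cosets are too sparse, or more precisely $[G:H] = [G : H\mathbb{Z}]\cdot[H\mathbb{Z} : H]$ and the second factor is $[\mathbb{Z} : H \cap \mathbb{Z}] = N$). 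So fix $N \geq 1$ and split into two cases according to whether $H \subseteq \mathbb{Z}$ or not.

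**The two cases.** If $H \subseteq \mathbb{Z}$, then $H = \langle t^N \rangle = A_N$, and these are the subgroups in the image of $F$ on $\mathbb{Z}_{\geq 1}$. If $H \not\subseteq \mathbb{Z}$, then $H$ contains an element of the form $t^k c$ for some $k \in \mathbb{Z}$; since $t^N \in H$, we may reduce $k$ modulo $N$, so $H = \langle t^N, t^k c\rangle$ with $0 \leq k < N$. Now $(t^k c)^2 = t^k c t^k c = t^k t^{-k} c^2 = 1$, so $t^k c$ is always an involution, and one checks $\langle t^N, t^k c \rangle = \{t^{aN} : a \in \mathbb{Z}\} \cup \{t^{aN+k}c : a \in \mathbb{Z}\}$, which indeed has $H \cap \mathbb{Z} = \langle t^N\rangle$ (using $0 \le k < N$). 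Thus for each $N \geq 1$ there are exactly $N$ cofinite subgroups not contained in $\mathbb{Z}$, parametrized by $k \in \{0, 1, \ldots, N-1\}$; together with $A_N$ this is $N+1$ cofinite subgroups with $H \cap \mathbb{Z} = \langle t^N\rangle$ in total, but of course we want these up to conjugacy.

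**Conjugacy analysis.** The main computation is to determine the conjugacy classes among the $H_{N,k} := \langle t^N, t^k c\rangle$. Conjugation by $t^m$ sends $t^k c \mapsto t^m t^k c t^{-m} = t^{k+2m} c$, so it sends $H_{N,k}$ to $H_{N, k+2m \bmod N}$; conjugation by $c$ sends $t^k c \mapsto c t^k c \cdot c = t^{-k} c$ wait — more carefully $c(t^k c)c^{-1} = c t^k c c c = c t^k = t^{-k} c$, so it sends $H_{N,k}$ to $H_{N, -k \bmod N}$. So the conjugacy class of $H_{N,k}$ is the orbit of $k$ under the group generated by $k \mapsto k + 2$ and $k \mapsto -k$ acting on $\mathbb{Z}/N\mathbb{Z}$. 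When $N$ is odd, $2$ is invertible mod $N$, so $k \mapsto k+2$ already acts transitively, and all $H_{N,k}$ are conjugate to $H_{N,0} = \langle t^N, c\rangle = B_N$ — this gives the single class indexed by $\overline{2N+1} = \overline{N}$ in the middle summand. When $N = 2N'$ is even, the orbit of $k+2$ preserves the parity of $k$, so there are two orbits: the even $k$'s (conjugacy class of $H_{N,0} = B_N$) and the odd $k$'s (conjugacy class of $H_{N,1} = \langle t^N, tc\rangle = C_N$), and the reflection $k \mapsto -k$ preserves parity so does not merge them. This gives exactly the two classes indexed by $\overline{\overline{(2N',0)}}$ and $\overline{\overline{(2N',1)}}$ in the last summand.

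**Assembling and the main obstacle.** Putting these together: $A_N \not\sim H_{N,k}$ for any $k$ (one is inside $\mathbb{Z}$, the other not — conjugation preserves this), and subgroups with different values of $N$ (i.e., different $H \cap \mathbb{Z}$) are never conjugate since $H \cap \mathbb{Z}$ is conjugation-invariant ($\mathbb{Z}$ being normal). Hence $[F]$ is well-defined on conjugacy classes, injective (the three families are mutually disjoint in image by the $N$-invariant and the $\mathbb{Z}$-containment invariant, and within each family we showed distinctness), and surjective (every cofinite subgroup is some $A_N$ or some $H_{N,k}$, hence conjugate to something in the image). I expect the only delicate point is the even/odd dichotomy in the conjugacy computation — namely verifying carefully that when $N$ is even the reflection $k \mapsto -k$ does *not* identify the even-$k$ and odd-$k$ orbits, and dually that no other conjugating element (a general $t^m c$) produces an unexpected identification; this amounts to the elementary but must-be-checked fact that the group $\langle k \mapsto k+2,\ k \mapsto -k\rangle$ on $\mathbb{Z}/N\mathbb{Z}$ has one orbit when $N$ is odd and two when $N$ is even, which I would dispatch by the parity argument above.
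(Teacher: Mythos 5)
Your proof is correct and follows essentially the same route as the paper's: classify each cofinite subgroup by $H\cap\mathbb{Z}=\langle t^N\rangle$ together with the exponent class $k$ of its reflections, observe that conjugation acts on $k$ by $k\mapsto k+2m$ and $k\mapsto -k$ modulo $N$, and conclude by the parity dichotomy (one orbit for $N$ odd, two for $N$ even), which is exactly the paper's parity invariant separating $B_{2N}$ from conjugates of $C_{2N}$. One small correction: for $N>2$ the subgroup $H_{N,1}=\langle t^N,tc\rangle$ is not literally $C_N=\langle t^N,ct\rangle=\langle t^N,t^{-1}c\rangle=H_{N,N-1}$, but since $1$ and $N-1$ are both odd the two are conjugate, so your identification of the odd-$k$ class with $[C_N]$ and hence the whole argument stands.
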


\begin{proof} For $x\in (2\mathbb{Z}_{\geq 0}+1)\sqcup (2\mathbb{Z}_{\geq 0}\times \{0,1\})$ we set $|x|:=x$ if $x\in 2\mathbb{Z}_{\geq 0}+1$ and $|x|:=y$ if $x=(y,\epsilon)\in 2\mathbb{Z}_{\geq 0}\times\{0,1\}$. 

Let us first show the injectivity of $[F]$. Let $z,z'\in \mathbb{Z}_{\geq 1}\sqcup (2\mathbb{Z}_{\geq 0}+1)\sqcup (2\mathbb{Z}_{\geq 0}\times \{0,1\})$ such that $[F]z=[F]z'$. Then $\mathrm{Im}\big(F(z)\subset G\to \mathbb{Z}_2\big)=\mathrm{Im}\big(F(z')\subset G\to \mathbb{Z}_2\big)$ is equal to $\mathbb{Z}_2$ or $\{0\}$. In the first case, $z,z'\in \mathbb{Z}_{\geq 0}$ and so $2z=|G/F(z)|=|G/F(z')|=2z'$. Consequently, $z=z'$. In the second case, $z,z'\in (2\mathbb{Z}_{\geq 0}+1)\sqcup (2\mathbb{Z}_{\geq 0}\times \{0,1\})$ and we have $|z|=|G/F(z)|=|G/F(z')|=|z'|$. Therefore, $z=z'$ or there exists $N\in\mathbb{Z}_{\geq 0}$ with $z=(2N,0)$ and $z'=(2N,1)$. We claim that the latter option is not possible. Indeed, we have $F(2N,0)\cap\{t^kc \ | \ k\in\mathbb{Z}\}=B_{2N}\cap\{t^kc \ | \ k\in\mathbb{Z}\}\subset \{t^kc \ | \ k\in 2\mathbb{Z}\}$. Besides, for any $g\in G$ we have $(gF(2N,1)g^{-1})\cap\{t^kc \ | \ k\in\mathbb{Z}\}=(gC_{2N}g^{-1})\cap\{t^kc \ | \ k\in\mathbb{Z}\}\subset \{t^kc \ | \ k\in 2\mathbb{Z}+1\}$. In consequence, $[F](2N,0)\not = [F](2n,1)$. We then conclude that $z=z'$, i.e., $[F]$ is injective.

We proceed to show the surjectivity of $[F]$.    Let $H$ be a cofinite subgroup of $G$. Suppose that $\mathrm{Im}(H\subset G \to \mathbb{Z}_2)= \{0\}$, then $H\leq \mathrm{ker}(G\to\mathbb{Z}_2)= \mathbb{Z}$ and the map $\mathbb{Z}/H \to G/H$ is injective. Hence, $H$ is a cofinite subgroup of $\mathbb{Z}$, i.e., there exists $N\in\mathbb{Z}_{\geq 1}$ such that $H=\langle t^N\rangle = A_N = F(N)$.

Suppose now that $\mathrm{Im}(H\subset G \to \mathbb{Z}_2)= \mathbb{Z}_2$. The map $\mathbb{Z}/H\cap \mathbb{Z} \to G/H$ is injective and therefore $H\cap \mathbb{Z}$ is a cofinite subgroup of $\mathbb{Z}$, i.e., there exists $M\in\mathbb{Z}_{\geq 1}$ such that $H\cap \mathbb{Z}=\langle t^M\rangle$. Besides, the group $\langle t^kc \ | \ k\in\mathbb{Z} \rangle$ is endowed with a free and transitive action of $\langle t\rangle = \mathbb{Z}$ which induces a free and transitive action of $H\cap \mathbb{Z}$ on $H\cap \langle t^kc \ | \ k\in\mathbb{Z} \rangle$. Therefore, there exists $a\in \mathbb{Z}$ such that $H\cap \langle t^kc \ | \ k\in\mathbb{Z} \rangle=\{t^{a+lM}c\ | \ l\in\mathbb{Z}\}$. Hence, $H=\{t^{lM}\ |\ l\in\mathbb{Z}\}\sqcup \{t^{a+lM}c\ | l\in\mathbb{Z}\ \}$. We  have the following possibilities: 
\begin{itemize}
\item[$(a)$] the integers $M$ and $a$ are both even. Let $b\in\mathbb{Z}$ such that $a=2b$, then $H=t^bB_Mt^{-b}$, i.e., $[H]=[F](\overbar{\overbar{(M,0)}})$.
\item[$(b)$] the integer $M$ is even and $a$ is odd. Let $b\in\mathbb{Z}$ such that $a=2b+1$, then $H=t^bC_Mt^{-b}$, i.e., $[H]=[F](\overbar{\overbar{(M,1)}})$.
\item[$(c)$] the integer $M$ is odd. Then  $H=t^dC_Mt^{-db}$, where $d=a/2$ if $a$ is even and $d=(a+M)/2$ is $a$ is odd. We then have $[H]=[F](\overbar{M})$.
\end{itemize}
\end{proof}

\begin{proposition}\label{r:classificationofGsets} Let $\widetilde{G}:=\mathbb{Z}_2*\mathbb{Z}_2=\langle \alpha, \beta \ |\ \alpha^2=\beta^2=1 \rangle$. Let $X$ be a finite $\widetilde{G}$-set and for $\gamma\in\{\alpha,\beta\}$ define $X^\gamma:=\{x\in X  \ | \  \gamma x=x\}$. Then $(|X^\alpha|,|X^\beta|)\in\{(0,0), (2,0), (1,1), (0,2)\}$ and therefore  $|X^\alpha| + |X^\beta|\in\{0,2\}$.
\end{proposition}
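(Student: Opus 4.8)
The plan is to transport the problem to the group $G = \mathbb{Z} \rtimes \mathbb{Z}_2$ of Lemma~\ref{r:cofinitesubgroupsofG} and then read off the answer from the classification of its cofinite subgroups up to conjugacy. The assignment $\alpha \mapsto c$, $\beta \mapsto ct$ extends to an isomorphism $\widetilde{G} \xrightarrow{\ \sim\ } G$, with inverse $t \mapsto \alpha\beta$, $c \mapsto \alpha$ (the familiar presentation of the infinite dihedral group by two reflections). Since $|X^\alpha|$ and $|X^\beta|$ depend only on the isomorphism class of $X$, I treat the case where $X$ is a transitive (equivalently connected) $\widetilde{G}$-set, which is the situation of interest here — an individual circle component with its ``rotation and reflection'' action — and which one should take to be the scope of the statement. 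So I may assume $X \simeq G/H$ for a subgroup $H \leq G$; finiteness of $X$ is equivalent to cofiniteness of $H$, and replacing $H$ by a conjugate leaves $X$ unchanged up to isomorphism. By Lemma~\ref{r:cofinitesubgroupsofG} I may thus assume $H$ is one of $A_N$ $(N \geq 1)$, $B_{2N+1}$ $(N \geq 0)$, $B_{2N}$ $(N \geq 1)$, $C_{2N}$ $(N \geq 1)$.

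\textbf{Fixed-point counts.} For an involution $\gamma$ and $X = G/H$ one has $X^\gamma = \{\, gH \mid g^{-1}\gamma g \in H \,\}$, and I compute its size case by case. If $H = A_N = \langle t^N \rangle \subseteq \mathbb{Z}$: every conjugate of $c$ and of $ct = \beta$ has the form $t^k c$, hence nontrivial image in $G/\mathbb{Z} \simeq \mathbb{Z}_2$, hence lies outside $A_N$; so $X^\alpha = X^\beta = \emptyset$. If $H = B_N = \langle t^N, c\rangle$ or $H = C_N = \langle t^N, ct\rangle$, then $[G:H] = N$ and $G/H \simeq \mathbb{Z}/N$ with $t$ acting by $k \mapsto k+1$; moreover $c$ acts by $k \mapsto -k$ when $H = B_N$ (since $c \in B_N$) and by $k \mapsto 1-k$ when $H = C_N$ — here one uses the description $C_N = \{\, t^{aN} \mid a \in \mathbb{Z}\,\} \sqcup \{\, t^{aN-1}c \mid a \in \mathbb{Z}\,\}$ to see that $cC_N = tC_N$. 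Hence on $\mathbb{Z}/N$ the generators $\alpha = c$ and $\beta = ct$ act by $k \mapsto -k$ and $k \mapsto -k-1$ in the $B_N$-case, and by $k \mapsto 1-k$ and $k \mapsto -k$ in the $C_N$-case, so $|X^\alpha|$ and $|X^\beta|$ are the numbers of solutions modulo $N$ of the congruences $2k \equiv 0$, $2k \equiv -1$ and $2k \equiv 1$, $2k \equiv 0$ respectively. A congruence $2k \equiv r \pmod N$ has $\gcd(2,N)$ solutions when $r \equiv 0$, and otherwise has one solution if $N$ is odd and none if $N$ is even.

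\textbf{Conclusion.} Feeding in the parities: $H = A_N$ gives $(|X^\alpha|,|X^\beta|) = (0,0)$; $H = B_{2N+1}$ gives $(1,1)$; $H = B_{2N}$ gives $(2,0)$; $H = C_{2N}$ gives $(0,2)$. Since these cases are exhaustive, $(|X^\alpha|,|X^\beta|) \in \{(0,0),(2,0),(1,1),(0,2)\}$, and therefore $|X^\alpha| + |X^\beta| \in \{0,2\}$.

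\textbf{Main obstacle.} The genuinely delicate point is not the parity arithmetic but keeping the coset bookkeeping honest — in particular locating the cosets $cH$ and $(ct)H$ when $H = C_N$, where $c \notin H$; this is exactly where the explicit form of $C_N$ extracted from Lemma~\ref{r:cofinitesubgroupsofG} is used. A slicker, essentially equivalent route avoids all arithmetic: view $X$ as a graph in which each vertex carries one $\alpha$-edge and one $\beta$-edge (a loop when the vertex is a fixed point); a transitive $X$ is then either an alternating cycle — no fixed points, case $(0,0)$ — or an alternating path whose two endpoints are forced to be fixed by the generator not labelling their unique edge, which produces exactly the patterns $(2,0)$, $(0,2)$ and $(1,1)$ according to the labels of the two ends.
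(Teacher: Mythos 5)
Your proof is correct and takes essentially the same route as the paper's: transport the action through an isomorphism $\widetilde{G}\simeq G=\mathbb{Z}\rtimes\mathbb{Z}_2$, reduce via the orbit--stabilizer correspondence and Lemma~\ref{r:cofinitesubgroupsofG} to the cases $H\in\{A_N,B_N,C_N\}$, and count fixed points of the two reflections on $G/H$ by solving congruences $2k\equiv 0,\pm 1 \pmod N$. Your explicit restriction to transitive $X$, and your different choice of isomorphism ($\alpha\mapsto c$, $\beta\mapsto ct$ rather than the paper's $\beta\mapsto c$, $\alpha\mapsto tc$, which merely swaps which of $B_{2N}$, $C_{2N}$ produces $(2,0)$ versus $(0,2)$), are both harmless and consistent with the paper, whose own proof likewise relies on the bijection with cofinite subgroups and hence implicitly treats only transitive sets.
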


\begin{proof}
We use the isomorphism $\widetilde{G}\simeq G=\mathbb{Z}\rtimes \mathbb{Z}_2=\langle t,c \ | \  c^2=1, \ ct =t^{-1}c \rangle$ given by $\alpha \beta\leftrightarrow  t$ and $\beta \leftrightarrow c$. Besides, recall that there is a bijection
$$\dfrac{\{\text{finite } G\text{--sets}\}}{\text{isomorphism}}\simeq \dfrac{\{\text{cofinite subgroups of } G\}}{\text{conjugation}}.$$
By Lemma~\ref{r:cofinitesubgroupsofG} the cofinite subgroups of $G$ are classified in the three families $A_N$, $B_N$ and $C_N$.  Let us consider case by case. If $X=G/A_N$, then $X=\{\bar{1},\bar{t},\ldots, \overbar{t^{N-1}}\}\sqcup \{\bar{c}, \overbar{ct},\ldots, \overbar{ct^{N-1}}\}$ and $c$ acts on $X$ by $c\cdot \overbar{t^m}=\overbar{ct^{m}}$ and $c\cdot \overbar{ct^{m}}=\overbar{t^{m}}$. Hence, $X^c=\emptyset$. Similarly, $tc$ acts on $X$ by $tc\cdot \overbar{t^m}=\overbar{ct^{m-1}}$ and $tc \cdot \overbar{ct^{m}}=\overbar{t^{m-1}}$. Hence, $X^{tc}=\emptyset$. Therefore, $(|X^\alpha|, |X^\beta|) = (|X^{tc}|, |X^{c}|) = (0,0)$.

If $X=G/B_N$, then $X=\{\overbar{1}, \overbar{t},\ldots,\overbar{t^{N-1}}\}$ and  $c$ acts on $X$ by $c\cdot\overbar{t^m}=\overbar{t^{-m}}$. Hence, $X^\beta\simeq X^c\simeq \{x\in\mathbb{Z}_N \ | \ 2x=0\}$. Similarly, $tc$ acts on $X$ by $tc\cdot \overbar{t^m} = \overbar{t^{-m+1}}$. Hence, $X^\alpha\simeq X^{tc}\simeq\{x\in\mathbb{Z}_{N} \ | \ 2x=1\}$. Then, if $N$ is even we have $(|X^\alpha|, |X^{\beta}|) = (0,2)$ and if $N$ is odd  we have $(|X^\alpha|, |X^{\beta}|) = (1,1)$.

Finally, if $X=G/C_N$, then $X=\{\overbar{1}, \overbar{t},\ldots,\overbar{t^{N-1}},\overbar{c}\}$ and  $c$ acts on $X$ by $c\cdot \overbar{c}= \overbar{1}$ and $c\cdot\overbar{t^m}=\overbar{t^{1-m}}$. Hence, $X^{\beta}\simeq X^c\simeq \{x\in\mathbb{Z}_N \ | \ 2x=1\}$. Similarly, $tc$ acts on $X$ by $tc\cdot \overbar{c}= \overbar{t}$, and $tc\cdot \overbar{t^m} = \overbar{t^{2-m}}$. Hence, $X^{\alpha}\simeq X^{tc}\simeq\{x\in\mathbb{Z}_{N} \ | \ 2(m-1)=0\}$. Then, if $N$ is even we have $(|X^{\alpha}|, |X^{\beta}|) = (2,0)$ and if $N$ is odd  we have $(|X^{\alpha}|, |X^{\beta}|) = (1,1)$.
\end{proof}

\subsection{The category  \texorpdfstring{$\underline{\mathcal Br}$}{Br} of Brauer diagrams}\label{sec:1-3}

\begin{definition}  Let $X$, $Y$ be sets. Define the set $\underline{\mathcal Br}'(X,Y) \subset \underline{\mathcal Part}(X,Y)$ to be the subset of $\underline{\mathcal Part}(X,Y)=\mathrm{Part}(X \sqcup Y)$\index[notation]{Br'(X,Y)@$\underline{\mathcal Br}'(X,Y)$} consisting of the partitions where all the constituents are of  cardinality~$2$. 
\end{definition}

For a finite set $X$ we denote by $\mathrm{FPFI}(X)$\index[notation]{FPFI(X)@$\mathrm{FPFI}(X)$} the set of fixed-point free involutions of $X$. Therefore, by definition, we have  a bijection $\underline{\mathcal Br}'(X,Y)\simeq\mathrm{FPFI}(X\sqcup Y)$. For $A\in\mathcal \underline{\mathcal Br}'(X,Y)$, we denote by $\sigma_A$ the corresponding element in $\mathrm{FPFI}(X\sqcup Y)$.

\begin{definition}\label{def:concatenationofperm} Let $X$ and $Y$ be sets. For any pair of permutations  $\sigma\in\mathfrak{S}_X$ and $\tau\in\mathfrak{S}_Y$ we define their \emph{concatenation} $\sigma*\tau\in\mathfrak{S}_{X\sqcup Y}$ by $\sigma*\tau(a):=\sigma(a)$ if $a\in X$ and $\sigma*\tau(\bar{a}):=\overline{\tau(a)}$ if $a\in Y$. Here we use Notation~\ref{notation1}.
\end{definition}

\begin{theorem}\label{r:2022-09-27-1} Let $X$, $Y$, and $Z$ be sets. If $A\in \underline{\mathcal Br}'(X,Y)\subset \underline{\mathcal Part}(X,Y)$ and $B\in \underline{\mathcal Br}'(Y,Z)\subset \underline{\mathcal Part}(X,Z)$ then the element $B\circ A\in\underline{\mathcal Part}(X,Z)$ belongs to $\underline{\mathcal Br}'(X,Z)$. 
\end{theorem}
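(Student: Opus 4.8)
The plan is to show that $B \circ A$ is again a partition into $2$-element blocks by tracking what the composition formula does to the blocks of $A$ and $B$. Recall that $B \circ A = (\mathrm{can}^{XYZ}_{XZ})^*\big((\mathrm{can}^{XYZ}_{XY})_*(A) \vee (\mathrm{can}^{XYZ}_{YZ})_*(B)\big)$. Since $A$ and $B$ consist of $2$-element blocks, the pushed-forward partitions $(\mathrm{can}^{XYZ}_{XY})_*(A)$ and $(\mathrm{can}^{XYZ}_{YZ})_*(B)$ on $X \sqcup Y \sqcup Z$ both consist of $2$-element blocks together with singletons (the blocks of $A$ lying inside $X\sqcup Y$, the singletons $\{\bar{z}\}$ for $z \in Z$; symmetrically for $B$). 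The key structural observation is that the join of these two partitions has a description in terms of a graph: put an edge colored $\alpha$ between the two points of each block of $A$ and an edge colored $\beta$ between the two points of each block of $B$; then the blocks of the join are the connected components of the resulting graph on $X \sqcup Y \sqcup Z$. Each vertex has at most one $\alpha$-edge and at most one $\beta$-edge incident to it (vertices in $X$ have only an $\alpha$-edge, vertices in $Z$ only a $\beta$-edge, vertices in $Y$ one of each), so every connected component is either a path or a cycle with edges alternating between the two colors.

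The next step is to see what $(\mathrm{can}^{XYZ}_{XZ})^*$ does: it intersects each block with $X \sqcup Z$ and keeps the nonempty ones. The cyclic components of the alternating graph lie entirely inside $Y$ (a cycle must alternate colors, so every vertex has both an $\alpha$- and a $\beta$-edge, forcing it into $Y$); these contribute nothing after intersecting with $X \sqcup Z$ — in fact they are exactly the set $\mathrm{Cir}(B,A)$. So $B \circ A$ is obtained by taking the path components of the alternating graph and intersecting with $X \sqcup Z$. A path component has two endpoints; an endpoint has only one incident edge, and a vertex of $Y$ always has two incident edges (one $\alpha$, one $\beta$), so both endpoints of every path lie in $X \sqcup Z$, while all interior vertices of the path lie in $Y$. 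Therefore intersecting a path component with $X \sqcup Z$ leaves exactly its two endpoints, giving a $2$-element set. Hence every block of $B \circ A$ has cardinality exactly $2$, and $\bigcup$ of these blocks is all of $X \sqcup Z$ (every point of $X$ or $Z$ is an endpoint of some path, possibly a trivial length-zero path consisting of a singleton, e.g. a point of $X$ not matched by $A$ into $Y$).

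To make this rigorous without heavy machinery, I would instead argue directly from the explicit description of the join given in the proof of Lemma~\ref{equ:2022-08-22-equ2} (with $W$ empty there, or rather specialized appropriately), or even more simply: take $a$ a block of $B \circ A$ and a point $x \in a \cap (X \sqcup Z)$; by the explicit chain description, $a \cap (X \sqcup Z)$ consists of $x$ together with all $x'$ reachable by an alternating $A$–$B$ zig-zag through $Y$, and since $A$ and $B$ are fixed-point-free involutions, following $\sigma_A$ then $\sigma_B$ then $\sigma_A \dots$ from $x$ is a deterministic walk that terminates precisely when it exits $Y$, i.e. at the unique other point $x' \in X \sqcup Z$; it cannot return to $x$ because that would require $x \in Y$. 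So $a \cap (X \sqcup Z) = \{x, x'\}$ with $x \neq x'$, and $B \circ A \in \underline{\mathcal Br}'(X,Z)$.

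The main obstacle is purely bookkeeping: one must carefully verify that the walk in $X \sqcup Y \sqcup Z$ alternating between $\sigma_A$ and $\sigma_B$ is well-defined and deterministic — this uses that $\sigma_A$ restricted to $Y$-points need not stay in $Y$ (it can land in $X$), and that the walk is a finite sequence because $Y$ is finite and no vertex of $Y$ is revisited (a revisit would create a vertex with two $\alpha$-edges or two $\beta$-edges). Once the alternating-walk picture is set up cleanly, the conclusion that the intersection with $X \sqcup Z$ has exactly two elements is immediate, and connecting the discarded cyclic components to $\mathrm{Cir}(B,A)$ is already implicit in Definition~\ref{def:2022-09-13-defPart}$(c)$.
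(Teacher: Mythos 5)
Your argument is correct and rests on the same structural identification as the paper's proof, but it takes a genuinely more elementary route. The paper also forms the two involutions $\alpha=\sigma_A*\mathrm{Id}_Z$ and $\beta=\mathrm{Id}_X*\sigma_B$ and identifies the join with the orbit set $(X\sqcup Y\sqcup Z)/\langle\alpha,\beta\rangle$, but it then invokes the classification of finite transitive $\mathbb{Z}_2*\mathbb{Z}_2$-sets (Lemma~\ref{r:cofinitesubgroupsofG} and Proposition~\ref{r:classificationofGsets}, i.e.\ the machinery of \S\ref{sec:1-2}) to get $|\omega\cap(X\sqcup Z)|=|\omega^\alpha|+|\omega^\beta|\in\{0,2\}$ for each orbit $\omega$; your alternating path/cycle analysis of the two-colored graph proves exactly this dichotomy directly, so you bypass \S\ref{sec:1-2} entirely, and your identification of the cyclic components with $\mathrm{Cir}(B,A)$ is what the paper packages as the bijection \eqref{eq:toto}. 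What the paper's route buys is a reusable group-theoretic classification (of which your path-or-cycle statement is the combinatorial shadow); what yours buys is self-containedness and transparency. Two points to tidy when writing this up. First, the parenthetical about a ``trivial length-zero path'' is a non-case and, taken literally, would contradict the conclusion: since $\sigma_A$ and $\sigma_B$ are fixed-point free, every point of $X\sqcup Z$ has degree exactly one and every point of $Y$ degree exactly two, so every component is either a cycle inside $Y$ or a path with two distinct endpoints, both in $X\sqcup Z$ --- say this explicitly rather than allowing singletons. Second, your termination argument uses finiteness of $Y$ (for infinite $Y$ a one-ended ray can occur and the dichotomy fails), which the theorem's statement does not literally grant; the paper's proof has the same implicit restriction, since Proposition~\ref{r:classificationofGsets} concerns finite $\mathbb{Z}_2*\mathbb{Z}_2$-sets, and the intended use (Definition~\ref{def:defofSk}) is for finite sets, so you are on equal footing there, but it is worth flagging the assumption.
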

\begin{proof}
Let $A\in \underline{\mathcal Br}'(X,Y)$ and $B\in\underline{\mathcal Br}'(Y,Z)$. 
Let $\alpha:=\sigma_A*\mathrm{Id}_{Z}\in\mathfrak{S}_{X\sqcup Y\sqcup Z}$ and $\beta:=\mathrm{Id}_{X}*\sigma_B\in\mathfrak{S}_{X\sqcup Y\sqcup Z}$, where $\sigma_A$ and $\sigma_B$ are as indicated before Definition~\ref{def:concatenationofperm}. By definition, we have
$$\frac{X\sqcup Y\sqcup Z}{\langle \alpha,\beta\rangle} = (\mathrm{can}_{XY}^{XYZ})_*(A) \vee (\mathrm{can}_{YZ}^{XYZ})_*(B), \quad \mathrm{Cir}(B,A) = \left\{ \omega\in \frac{X\sqcup Y\sqcup Z}{\langle \alpha,\beta\rangle} \  \Big | \  \omega \subset Y\right\}.$$

Besides, if $\omega \in \big((X \sqcup Y \sqcup Z)/\langle \alpha, \beta\rangle\big) \setminus \mathrm{Cir}(B,A)$, then $\omega \cap (X \sqcup Z) \neq \emptyset$. It follows that $\{\omega \ | \ \omega \in \big((X \sqcup Y \sqcup Z)/\langle \alpha,\beta\rangle\big) \setminus \mathrm{Cir}(B,A)\}$ is a partition of $X \sqcup Z \sqcup (Y\setminus \cup_{\omega \in \mathrm{Cir}(B,A)}\omega)$. If $U' \subset U$ is an inclusion of sets, and if $\mathcal E$ is a partition of $U$ such that for any $x \in \mathcal E$, $x \cap U' \neq \emptyset$, then the restriction of the map $\mathcal P(U) \to \mathcal P(U')$, $x \mapsto x \cap U'$ to $\mathcal E$ is injective, and if we denote by $\mathcal E':=\{x \cap U' \ | \ x \in \mathcal E\}$ the image of this map, then the map $\mathcal E \to \mathcal E'$ induced by $x\mapsto x \cap U'$ is a bijection; moreover, $\mathcal E'$ is a partition of $U'$. It follows that $\{\omega \cap (X \sqcup Z) \ |\  \omega \in \big((X \sqcup Y \sqcup Z)/\langle \alpha, \beta\rangle\big) \setminus \mathrm{Cir}(B,A)\}$ is a partition of $X \sqcup Z$. One checks it to coincide with $B \circ A$, therefore 
$$ B\circ A = \left\{ \omega\cap(X\sqcup Z)  \Big | \ \omega\in \frac{X\sqcup Y\sqcup Z}{\langle \alpha,\beta\rangle}\setminus \mathrm{Cir}(B,A)\right\}\in \mathrm{Part}(X\sqcup Z).$$

The combination of the disjoint union decomposition $(X \sqcup Y \sqcup Z)/\langle \alpha,\beta\rangle =\big(\big((X \sqcup Y \sqcup Z)/\langle \alpha, \beta \rangle\big) \setminus \mathrm{Cir}(B,A)\big)\sqcup \mathrm{Cir}(B,A)$ and  of the bijection $\big((X \sqcup Y \sqcup Z)/\langle \alpha,\beta \rangle\big) \setminus \mathrm{Cir}(B,A) \to B \circ A$ gives rise to a natural bijection 
\begin{equation}\label{eq:toto}
\dfrac{X \sqcup Y \sqcup Z}{\langle \alpha, \beta\rangle} \simeq  (B \circ A) \sqcup  \mathrm{Cir}(B,A).
\end{equation}

Let us show that for any $\varphi \in B\circ A$, we have $|\varphi| = 2$ which shows that $B\circ A\in \underline{\mathcal Br}'(X,Z)$. Indeed, if $\omega\in \frac{X\sqcup Y\sqcup Z}{\langle \alpha,\beta\rangle}$, then $\omega^\alpha = \omega \cap X$ and $\omega^\beta = \omega\cap Z$, here we are using the notations from Proposition~\ref{r:classificationofGsets}. By this proposition, the number $|\omega\cap (X\sqcup Z)|= |\omega^\alpha| + |\omega^\beta|$ belongs to $\{0,2\}$. If $\varphi:=\omega\cap (X\sqcup Z)\in B\circ A$, then $\omega\not\in \mathrm{Cir}(B,A)$, that is, $\omega\not\subset Y$. Therefore, $|\varphi|=|\omega\cap (X\sqcup Z)|>0$. Hence, $|\varphi|=2$, which finishes the proof.
\end{proof}

Theorem~\ref{r:2022-09-27-1} allows to define the category $\underline{\mathcal Br}'$ of \emph{abstract Brauer diagrams} with same objects as the category $\underline{\mathcal Part}$ of partitions. 

\begin{corollary}\label{Skprimeisasubcategory} The category $\underline{\mathcal Br}'$ of abstract Brauer diagrams  is a subcategory of the category $\underline{\mathcal Part}$ of partitions.
\end{corollary}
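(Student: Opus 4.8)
The plan is to check directly the defining properties of a subcategory. Since the category $\underline{\mathcal Br}'$ of abstract Brauer diagrams is declared to have the same objects as $\underline{\mathcal Part}$, it will in fact be a \emph{wide} subcategory, so nothing has to be verified at the level of objects; one only has to check that each morphism set of $\underline{\mathcal Br}'$ sits inside the corresponding morphism set of $\underline{\mathcal Part}$, that the composition law of $\underline{\mathcal Br}'$ is the restriction of that of $\underline{\mathcal Part}$, and that these inclusions are compatible with composition and with identities.

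The inclusion $\underline{\mathcal Br}'(X,Y) \subset \underline{\mathcal Part}(X,Y)$ is immediate from the definition of $\underline{\mathcal Br}'(X,Y)$ as the subset of $\mathrm{Part}(X \sqcup Y)$ consisting of those partitions all of whose blocks have cardinality $2$. Compatibility with composition is exactly Theorem~\ref{r:2022-09-27-1}: if $A \in \underline{\mathcal Br}'(X,Y)$ and $B \in \underline{\mathcal Br}'(Y,Z)$, then the composite $B \circ A$ formed in $\underline{\mathcal Part}$ again has all blocks of cardinality $2$, hence lies in $\underline{\mathcal Br}'(X,Z)$; and since $\underline{\mathcal Br}'$ is by construction equipped with the composition inherited from $\underline{\mathcal Part}$, associativity in $\underline{\mathcal Br}'$ is a consequence of Proposition~\ref{r:2022-09-13-associativityinPart}.

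Finally, for each set $X$ one checks that the identity endomorphism of $X$ in $\underline{\mathcal Part}$ is the partition of $X \sqcup X$ whose blocks are the pairs $\{x, \bar x\}$ with $x \in X$ (in the notation of Notation~\ref{notation1}); indeed, this partition is a two-sided unit for $\circ$ by the explicit description of the composition. Each of its blocks has cardinality $2$, so it belongs to $\underline{\mathcal Br}'(X,X)$, which shows that the inclusions of morphism sets also respect identities. Combining the three points yields that $\underline{\mathcal Br}'$ is a (wide) subcategory of $\underline{\mathcal Part}$, as asserted.

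As for the main obstacle: there is none of substance, since the essential content has already been established in Theorem~\ref{r:2022-09-27-1}. The only point that warrants an explicit line of verification is the identification of the identity morphisms of $\underline{\mathcal Part}$ with the pairing partitions, which makes transparent that they consist of $2$-element blocks and hence lie in $\underline{\mathcal Br}'$.
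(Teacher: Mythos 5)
Your proposal is correct and follows essentially the same route as the paper, which deduces the corollary directly from Theorem~\ref{r:2022-09-27-1} (closure of all-blocks-of-size-two partitions under the composition of $\underline{\mathcal Part}$), with associativity inherited from Proposition~\ref{r:2022-09-13-associativityinPart}. Your explicit remark that the identity morphisms of $\underline{\mathcal Part}$ are the pairing partitions $\{\{x,\bar x\}\mid x\in X\}$, hence lie in $\underline{\mathcal Br}'$, is exactly the routine verification the paper leaves implicit.
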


\begin{definition}\label{def:defofSk}
Define a category  $\underline{\mathcal{B}r}$\index[notation]{Br@$\underline{\mathcal{B}r}$}  of \emph{Brauer diagrams} as follows. The set  of objects $\mathrm{Ob}(\underline{\mathcal Br})$ of $\underline{\mathcal Br}$ is~$\mathbb{Z}_{\geq 0}$. For $p,q\in \mathbb{Z}_{\geq 0}$ the set of morphisms $\underline{\mathcal Br}(p,q)$ from $p$ to $q$ is given by $\underline{\mathcal Br}(p,q):=\underline{\mathcal Br}'([\![1,p]\!],[\![1,q]\!])$. In particular, for any $p,q,r\in\mathbb{Z}_{\geq 0}$ there is a map $\mathrm{Cir}: \underline{\mathcal Br}(p,q)\times \underline{\mathcal Br}(q,r)\to \mathcal{S}ets$ satisfying the properties from Proposition~\ref{r:2022-09-13-bijofcirclesPart} and Remark~\ref{rem:pentagone-gen}.
\end{definition}

\begin{definition} 
\begin{itemize}
\item[$(a)$] The set $\mathcal Br$\index[notation]{Br@$\mathcal Br$} of \emph{Brauer diagrams} is defined as $\bigsqcup_{p,q\in\mathbb{Z}_{\geq 0}} \underline{\mathcal Br}(p,q)$, i.e.,
\begin{equation}
\mathcal Br = \bigsqcup_{p,q\in\mathbb{Z}_{\geq 0}}\underline{\mathcal Br}(p,q) \simeq \big\{(p,q,\sigma) \ | \ p,q\in\mathbb{Z}_{\geq 0}, \ \sigma\in\mathrm{FPFI}\big([\![1,p]\!]\sqcup [\![1,q]\!]\big)\big\}.
\end{equation}
\item[$(b)$] Define the \emph{source} and \emph{target} maps $\mathrm{s,t}:\mathcal Br \to \mathbb{Z}_{\geq 0}$ by $\mathrm{s}(p,q,\sigma):= p$ and $\mathrm{t}(p,q,\sigma):= q$.
\item[$(c)$] Let $\mathcal Br \times_{\mathbb Z} \mathcal Br:=\{(a,b) \in \mathcal Br^2 \ | \ \mathrm{t}(a)=\mathrm{s}(b)\}$ and  $\circ_{\mathcal Br}:\mathcal Br \times_{\mathbb Z} \mathcal Br \to \mathcal Br$, $(a,b)\mapsto b \circ_{\mathcal Br} a$\index[notation]{b\circ  a@$b \circ_{\mathcal Br} a$} be the map corresponding to the composition in $\underline{\mathcal Br}$. 
\end{itemize}
By convention, $\underline{\mathcal Br}(0,0)$ consists in one element denoted $\varnothing$\index[notation]{\varnothing @$\varnothing$}.
\end{definition}

\begin{definition}\label{def:piceroforBr} For $(p,q,\sigma)\in \mathcal Br$, we define its set of \emph{connected components}, denoted  $\pi_0(p,q,\sigma)$\index[notation]{\pi_0(p,q,\sigma)@$\pi_0(p,q,\sigma)$}, by $\pi_0(p,q,\sigma):=([\![1,p]\!] \sqcup[ \![1,q]\!])/\langle\sigma\rangle$.
\end{definition}

The map $(p,q,\sigma) \mapsto \pi_0(p,q,\sigma)$ sets up a bijection underline $\underline{\mathcal Br}(p,q) \to \underline {\mathcal Br}'([\![1,p]\!],[\![1,q]\!])$ for any $p,q \in\mathbb{Z}_{\geq 0}$. Moreover, the collection of these maps is compatible with compositions on both sides. In particular, equation \eqref{eq:toto} implies that for any $(p,q,\sigma)$ and $(q,r,\tau)$ in $\mathcal Br$, one has 
\begin{equation}\label{eq:toto2}
[\![1,p]\!] \sqcup [\![1,q]\!] \sqcup [\![1,r]\!]/\langle \alpha,\beta \rangle \simeq \pi_0\big((q,r,\tau)\circ(p,q,\sigma)\big)\sqcup \mathrm{Cir} ((q,r,\tau),(p,q,\sigma))
\end{equation}
where $\alpha$, $\beta$ are the involutions of  $[\![1,p]\!] \sqcup [\![1,q]\!] \sqcup [\![1,r]\!]$ given by $\sigma*\mathrm{Id}_{[\![1,r]\!]}$ and $\mathrm{Id}_{[\![1,p]\!]}*\tau$.

\subsection{Operations on Brauer diagrams}\label{sec:1-4}

\subsubsection{Tensor product of Brauer diagrams}

\begin{definition}\label{def:tensorproductinSk}
Let $p,p',q,q'\in\mathbb{Z}_{\geq 0}$ and $(p,q,\sigma), (p',q',\sigma')\in\mathcal Br$. Consider the identification $[\![1,p+p']\!]\simeq [\![1,p]\!]\sqcup [\![1,p']\!]$ given by $[\![1,p]\!]\ni x\mapsto x$ and $[\![1,p']\!]\ni x\mapsto x+p$. Similarly, consider the identification $[\![{1},{q+q'}]\!]\simeq [\![{1},{q}]\!]\sqcup [\![{1},{q}']\!]$. Therefore,
\begin{equation}\label{equ:identificationofintervalfortensor}
[\![1,p+p']\!]\sqcup [\![{1},{q+q'}]\!]\simeq [\![1,p]\!]\sqcup [\![1,p']\!]\sqcup [\![{1},{q}]\!]\sqcup [\![{1},{q}']\!]\simeq [\![1,p]\!]\sqcup [\![{1},{q}]\!]\sqcup [\![1,p']\!]\sqcup [\![{1},{q}']\!].
\end{equation}
Then $\sigma*\sigma'$ (see Definition~\ref{def:concatenationofperm}) defines a fixed-point free involution of $[\![1,p+p']\!]\sqcup [\![{1},{q+q'}]\!]$. The Brauer diagram $(p+p',q+q',\sigma*\sigma')$ is called the \emph{tensor product} of $(p,q,\sigma)$ and $(p',q',\sigma')$ and it is denoted by  $(p,q,\sigma)\otimes(p',q',\sigma')$.
\end{definition}

The following is straightforward.

\begin{lemma} The  tensor product from Definition~\ref{def:tensorproductinSk} endows the category $\underline{\mathcal Br}$ with a monoidal structure. The unit object is $0 \in \mathbb{Z}_{\geq 0}$ and its identity morphism is $\varnothing$. 
\end{lemma}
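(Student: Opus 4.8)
The plan is to verify the three defining properties of a monoidal category for $(\underline{\mathcal{B}r}, \otimes, 0, \varnothing)$: that $\otimes$ is a bifunctor, that there is an associativity constraint, and that $0$ is a unit object, all of which reduce to routine checks once the combinatorics are set up correctly. First I would check that $\otimes$ is well-defined on morphisms, which is essentially Definition~\ref{def:tensorproductinSk}: given $(p,q,\sigma)$ and $(p',q',\sigma')$, the concatenation $\sigma*\sigma'$ (Definition~\ref{def:concatenationofperm}) is manifestly a fixed-point free involution of $[\![1,p+p']\!]\sqcup[\![1,q+q']\!]$ under the identification \eqref{equ:identificationofintervalfortensor}, hence an element of $\underline{\mathcal{B}r}(p+p',q+q')$. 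On objects $\otimes$ is just addition of non-negative integers.

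Next I would verify functoriality, i.e. that $\otimes$ respects identities and composition. For identities: the identity morphism of $p$ in $\underline{\mathcal{B}r}$ corresponds to the fixed-point free involution of $[\![1,p]\!]\sqcup[\![1,p]\!]$ pairing each $i$ in the source with the corresponding $i$ in the target; concatenating the identity involutions for $p$ and $p'$ yields exactly the identity involution for $p+p'$, so $\mathrm{Id}_p\otimes\mathrm{Id}_{p'}=\mathrm{Id}_{p+p'}$. For compatibility with composition, I need $(b\circ_{\mathcal Br} a)\otimes(b'\circ_{\mathcal Br} a')=(b\otimes b')\circ_{\mathcal Br}(a\otimes a')$ for composable pairs $(a,b)$ and $(a',b')$. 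Here I would unwind the definition of $\circ$ from Definition~\ref{def:2022-09-13-defPart}$(b)$ in terms of joins of partitions, and observe that the disjoint-union identification $[\![1,*+*']\!]\simeq[\![1,*]\!]\sqcup[\![1,*']\!]$ intertwines the join operations on each side; more concretely, one can describe composition of Brauer diagrams by the "cap off the middle strands, delete closed loops" picture (cf. equation \eqref{eq:toto}), and this picture is visibly preserved by placing two diagrams side by side. The set $\mathrm{Cir}$ of closed loops splits as a disjoint union $\mathrm{Cir}(b,a)\sqcup\mathrm{Cir}(b',a')$ under the tensor identification, which is the only point requiring a small argument.

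For the associativity, left unit, and right unit constraints I would take all structure isomorphisms to be identities (the monoidal structure is strict): on objects, $(p+p')+p''=p+(p'+p'')$ and $0+p=p=p+0$ in $\mathbb{Z}_{\geq 0}$, and on morphisms the corresponding reindexing identifications of disjoint unions of intervals are compatible, so $(\sigma*\sigma')*\sigma''=\sigma*(\sigma'*\sigma'')$ and $\varnothing*\sigma=\sigma=\sigma*\varnothing$ as fixed-point free involutions. The pentagon and triangle axioms are then trivially satisfied. The unit object is $0\in\mathbb{Z}_{\geq 0}$ with identity morphism $\varnothing$ (the unique element of $\underline{\mathcal Br}(0,0)$), as claimed.

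The main (and really only) obstacle is the interchange law $(b\otimes b')\circ(a\otimes a')=(b\circ a)\otimes(b'\circ a')$, since composition in $\underline{\mathcal{B}r}$ is defined via the somewhat heavy machinery of pushforwards, pullbacks, and joins of partitions (Definition~\ref{def:2022-09-13-defPart}), and one must check that this machinery commutes with the block-diagonal embedding induced by \eqref{equ:identificationofintervalfortensor}. This is where the hypothesis "the following is straightforward" is doing its work: once one notes that for a disjoint decomposition of the ambient set the join of partitions supported on disjoint "blocks" is computed blockwise, and that $\mathrm{can}$-pushforwards and pullbacks respect such decompositions, the identity follows; alternatively one can argue purely diagrammatically. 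I would present the diagrammatic argument as the main line and remark that it can be made rigorous via the partition formalism using Lemma~\ref{equ:2022-08-22-equ2} and the explicit description of joins given in its proof.
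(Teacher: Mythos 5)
Your proposal is correct, and since the paper offers no proof at all for this lemma (it simply declares it straightforward), your write-up supplies exactly the routine verification that is intended: strictness of associativity and units on objects and on concatenated involutions, identity morphisms behaving well under $*$, and the interchange law as the only point needing genuine attention. Your handling of that point — composition of Brauer diagrams computed blockwise because joins, pushforwards and pullbacks of partitions respect disjoint decompositions of the ambient set, with the circle components splitting as $\mathrm{Cir}(b,a)\sqcup\mathrm{Cir}(b',a')$ — is sound and in fact anticipates what the paper records just afterwards as \eqref{eq:comptensorandcompositioninBr} and Lemma~\ref{r:bijectioncomptensorandcompinBr}; note only that the paper presents \eqref{eq:comptensorandcompositioninBr} as a consequence of the monoidal axioms, whereas, as you correctly observe, it must be verified directly as part of establishing that $\otimes$ is a bifunctor.
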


It follows from the axioms of  a monoidal category that for any pair $(P_i,P'_i)$ $(i=1,2)$ of pairs of composable morphisms in $\underline{\mathcal Br}$, one has 
\begin{equation}\label{eq:comptensorandcompositioninBr}
(P'_1 \otimes P'_2) \circ (P_1 \otimes P_2)=(P'_1 \circ P_1) \otimes (P'_2 \circ P_2).
\end{equation}

\begin{lemma}\label{r:bijectioncomptensorandcompinBr}  Let $P_i,P_i'$ composable morphisms in $\underline{\mathcal Br}$ for $i=1,2$. Then there is a bijection 
$$\mathrm{bij}_{P'_1,P_1,P'_2,P_2} : \mathrm{Cir}(P'_1,P_1) \sqcup \mathrm{Cir}(P'_2,P_2)\longrightarrow \mathrm{Cir}(P'_1 \otimes P'_2,P_1 \otimes P_2).$$
\end{lemma}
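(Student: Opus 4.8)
The statement concerns the tensor product of Brauer diagrams and its effect on the circle sets $\mathrm{Cir}(-,-)$. Since the tensor product of Brauer diagrams is, by Definition~\ref{def:tensorproductinSk}, encoded by the concatenation of fixed-point free involutions, the whole argument should reduce to an identity of partitions under disjoint unions and the join operation. The plan is to set up explicit sets of indices, write down both circle sets as restrictions of joins of pushed-forward partitions (exactly as in the proof of Theorem~\ref{r:2022-09-27-1} via the bijection \eqref{eq:toto}), and exhibit an obvious bijection coming from the fact that the index set for $P_1\otimes P_2$ decomposes as a disjoint union of the index sets for $P_1$ and $P_2$.

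First I would fix notation: write $P_i=(p_i,q_i,\sigma_i)$ and $P'_i=(q_i,r_i,\sigma'_i)$ for $i=1,2$, and set $\alpha_i:=\sigma_i*\mathrm{Id}_{[\![1,r_i]\!]}$, $\beta_i:=\mathrm{Id}_{[\![1,p_i]\!]}*\sigma'_i$, involutions of $W_i:=[\![1,p_i]\!]\sqcup[\![1,q_i]\!]\sqcup[\![1,r_i]\!]$. Then by the description of $\mathrm{Cir}$ recalled just before \eqref{eq:toto2},
$$
\mathrm{Cir}(P'_i,P_i)=\Big\{\omega\in W_i/\langle\alpha_i,\beta_i\rangle \ \Big|\ \omega\subset[\![1,q_i]\!]\Big\}.
$$
For the tensor product, using the identifications in \eqref{equ:identificationofintervalfortensor} one has a canonical bijection $W:=[\![1,p_1+p_2]\!]\sqcup[\![1,q_1+q_2]\!]\sqcup[\![1,r_1+r_2]\!]\simeq W_1\sqcup W_2$ which identifies the "middle" block $[\![1,q_1+q_2]\!]$ with $[\![1,q_1]\!]\sqcup[\![1,q_2]\!]$, and which carries the involutions $\alpha:=(\sigma_1*\sigma_2)*\mathrm{Id}$ and $\beta:=\mathrm{Id}*(\sigma'_1*\sigma'_2)$ attached to $P_1\otimes P_2$, $P'_1\otimes P'_2$ to $\alpha_1*\alpha_2$ and $\beta_1*\beta_2$ respectively (this is the only place where the concatenation bookkeeping of Definition~\ref{def:concatenationofperm} is used). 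Since a concatenation of involutions acts block-diagonally, $\langle\alpha_1*\alpha_2,\beta_1*\beta_2\rangle$ preserves the decomposition $W_1\sqcup W_2$, so there is a natural bijection $W/\langle\alpha,\beta\rangle\simeq (W_1/\langle\alpha_1,\beta_1\rangle)\sqcup(W_2/\langle\alpha_2,\beta_2\rangle)$, and an orbit $\omega$ is contained in the middle block $[\![1,q_1+q_2]\!]$ iff its component in $W_i$ is contained in $[\![1,q_i]\!]$. Restricting the above bijection to the orbits lying in the middle block yields the desired
$$
\mathrm{bij}_{P'_1,P_1,P'_2,P_2}:\mathrm{Cir}(P'_1,P_1)\sqcup\mathrm{Cir}(P'_2,P_2)\xrightarrow{\ \sim\ }\mathrm{Cir}(P'_1\otimes P'_2,P_1\otimes P_2).
$$

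The main obstacle, such as it is, is purely bookkeeping: one must be careful that the various canonical identifications of intervals (the shifts $x\mapsto x+p$, $x\mapsto x+q$, and the reordering of the four blocks in \eqref{equ:identificationofintervalfortensor}) are applied consistently to source, middle, and target simultaneously, so that the concatenated involutions really do correspond as claimed; writing out $\alpha,\beta$ in terms of $\sigma_i,\sigma'_i$ and checking the block-diagonal form is the heart of it. Once that is verified, the block-diagonality of concatenated involutions makes the orbit decomposition and hence the bijection of circle sets immediate. I would also note for later use (it mirrors \eqref{eq:toto2}) that the same argument gives a compatible bijection $\pi_0\big((P'_1\otimes P'_2)\circ(P_1\otimes P_2)\big)\simeq\pi_0(P'_1\circ P_1)\sqcup\pi_0(P'_2\circ P_2)$ refining \eqref{eq:comptensorandcompositioninBr}, though only the circle-set statement is asserted in the lemma.
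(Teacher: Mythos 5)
Your proposal is correct and follows essentially the same route as the paper: the paper's proof simply takes the canonical shift bijection $\varphi$ on $[\![1,|\mathrm{t}(P_1)|]\!]\sqcup[\![1,|\mathrm{t}(P_2)|]\!]\to[\![1,|\mathrm{t}(P_1)|+|\mathrm{t}(P_2)|]\!]$ and asserts (``one checks'') that it induces the bijection of circle sets. Your use of the involution/orbit description of $\mathrm{Cir}$ from Theorem~\ref{r:2022-09-27-1} merely fills in that verification via the block-diagonal action, which is exactly the intended check.
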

\begin{proof}
Let $\varphi:[\![1, |\mathrm{t}(P_1)|]\!]\sqcup[\![1,|\mathrm{t}(P_2)|]\!]\to [\![1, |\mathrm{t}(P_1)| + |\mathrm{t}(P_2)|]\!]$ be the canonical bijection taking $x\in [\![1, |\mathrm{t}(P_1)|]\!]$ to $x$ and $x\in [\![1, |\mathrm{t}(P_2)|]\!]$ to $x+ |\mathrm{t}(P_1)|$. One checks that the induced map $$\mathrm{Cir}(P'_1,P_1) \sqcup \mathrm{Cir}(P'_2,P_2)\longrightarrow \mathrm{Cir}(P'_1 \otimes P'_2,P_1 \otimes P_2)$$
taking $\omega \in \mathrm{Cir}(P'_1,P_1) \sqcup \mathrm{Cir}(P'_2,P_2)$ to $\varphi(\omega)$ is a bijection.
\end{proof}

\subsubsection{Doubling operation for Brauer diagrams}

\begin{definition}\label{def:doublingforskeleta} Let $(p,q,\sigma)\in\mathcal Br$ and $A\subset \pi_0(p,q,\sigma)$. Set $p':=p + |\sqcup_{\omega\in A}\omega\cap [\![1,p]\!]|$ and $q':=q + |\sqcup_{\omega\in A}\omega\cap [\![{1},{q}]\!]|$.
\begin{itemize}
\item[$(a)$]  Let $\mathrm{pr}_{\mathrm{s}}: [\![1,p']\!]\to [\![1,p]\!]$ be the only no-decreasing  map such that $|\mathrm{pr}_{\mathrm{s}}^{-1}(x)|=\begin{cases} 1 & \text{ if } x\notin A\cap [\![1,p]\!]\\
2 & \text{ if } x\in A\cap [\![1,p]\!]\end{cases}$

Similarly we define $\mathrm{pr}_{\mathrm{t}}: [\![{1},{q}']\!]\to [\![{1},{q}]\!]$. Set $\mathrm{pr}:= \mathrm{pr}_{\mathrm{s}}\sqcup \mathrm{pr}_{\mathrm{t}}: [\![1,p']\!]\sqcup [\![{1},{q}']\!]\to [\![1,p]\!]\sqcup [\![{1},{q}]\!]$.
\item[$(b)$] Let $\sigma'\in\mathrm{FPFI}\big([\![1,p']\!]\sqcup [\![1,q']\!]\big)$ be the only involution such that
\begin{itemize}
\item[$\bullet$] $\sigma\circ \mathrm{pr} = \mathrm{pr}\circ \sigma'$. This implies that for any $x\subset [\![1,p]\!]\sqcup [\![{1},{q}]\!]$ there is a map $\sigma'_x:\mathrm{pr}^{-1}(x)\to \mathrm{pr}^{-1}(\sigma(x))$,
\item[$\bullet$] for any $a\in A$, the map $\sigma'_a$ is a bijection which is decreasing if $a\subset [\![1,p]\!]$ or $a\subset [\![{1},{q}]\!]$ and increasing otherwise.
\end{itemize}
\item[$(c)$] The Brauer diagram $(p',q',\sigma')\in\mathcal Br$ is called the \emph{doubling of $(p,q,\sigma)$ along $A$}, and we denote it by $\mathrm{dbl}_{\mathcal Br}((p,q,\sigma),A)$ or $(p,q,\sigma)^A$.
\end{itemize}

\end{definition}

\subsection{Oriented Brauer diagrams}\label{sec:1-5}

\begin{definition}\label{def:vecBr}
\begin{itemize}
\item[$(a)$] Let $p,q\geq 0$ be integers. The set   $\underline{\vec{\mathcal Br}}(p,q)$ of \emph{oriented Brauer diagrams from~$p$ to~$q$} is  defined as the set of tuples $\big((p,q,\sigma), B\big)$ where $(p,q,\sigma)\in \mathcal Br$ and $B\subset [\![1,p]\!]\sqcup [\![{1},{q}]\!]$ such that $\sigma_{|B}: B\to \big([\![1,p]\!]\sqcup [\![{1},{q}]\!]\big)/\langle\sigma\rangle=\pi_0(p,q,\sigma)$ is a bijection. The elements of $B$ are called the \emph{beginning points}  of the diagram $((p,q,\sigma),B)$ and the elements of $E:=\sigma(B)$ its \emph{ending points}.  

\item[$(b)$] The set $\vec{\mathcal Br}$\index[notation]{Br2@$\vec{\mathcal Br}$} of \emph{oriented Brauer diagrams} is defined as $\bigsqcup_{p,q\in\mathbb{Z}_{\geq 0}} \underline{\vec{\mathcal Br}}(p,q)$, i.e.,
\begin{equation*}
\vec{\mathcal Br} = \Big\{\big((p,q,\sigma), B\big) \ \mid \  (p,q,\sigma)\in\mathcal Br, \ B\subset [\![1,p]\!]\sqcup [\![{1},{q}]\!] \text{ such that } B\simeq \big([\![1,p]\!]\sqcup [\![{1},{q}]\!]\big)/\sigma\Big\}.
\end{equation*}
\item[$(c)$] For an oriented Brauer diagram $\big((p,q,\sigma), B\big)$ we define the set of its \emph{connected components}, denoted $\pi_0\big((p,q,\sigma), B\big)$\index[notation]{\pi_0\big((p,q,\sigma), B\big)@$\pi_0\big((p,q,\sigma), B\big)$}, as the set $\pi_0(p,q,\sigma)$.
\end{itemize}
We denote by $\vec{\varnothing}$\index[notation]{\varnothing2@$\vec{\varnothing}$}  the unique element of ${\vec{ \mathcal Br}}(0,0)$.
\end{definition}

If $\big((p,q,\sigma), B\big)\in \vec{\mathcal{B}r}$ and $E:=\sigma(B)$, then $B\sqcup E = [\![1,p]\!]\sqcup [\![{1},{q}]\!]$ and $\sigma_{|B}: B \to E$ is a bijection.

\begin{definition}\label{def:beginningandendofadiagram}
For $x:=((p,q,\sigma),B)$ an oriented Brauer diagram, we denote by $\mathrm{b}_x : \pi_0(x) \to B$ and $\mathrm{e}_x : \pi_0(P) \to E$ the maps such that for any $u \in \pi_0(x)$, one has $u=\{\mathrm{b}_x(u),\mathrm{e}_x(u)\}$.
\end{definition}

The maps $\mathrm{b}_x$ and $\mathrm{e}_x$ are bijections, and the bijection $\sigma_{|B} : B \to E$ is the composition $\mathrm{e}_x \circ \mathrm{b}_x^{-1}$.

\begin{definition}\label{def:compositioninOSk} Two elements 
 $\big((p,q,\sigma), B\big), \big((p',q',\sigma'), B'\big)\in  \vec{\mathcal Br}$ are \emph{composable} if $q=p'$ and $\sigma(B)\cap [\![1,q]\!] = B' \cap [\![1,q]\!]$ (notice that this is equivalent to  $B\cap [\![1,q]\!] = \sigma'(B') \cap [\![1,q]\!]$). In such a case, the \emph{composition} of $\big((p,q,\sigma), B\big)$ and $\big((p',q',\sigma'), B'\big)$, denoted $\big((p',q',\sigma'), B'\big)\circ_{\vec{\mathcal Br}} \big((p,q,\sigma), B\big)$, is defined by
$$\big((p',q',\sigma'), B'\big)\circ_{\vec{\mathcal Br}} \big((p,q,\sigma), B\big):= \big((p',q',\sigma')\circ_{\mathcal Br} (p,q,\sigma), (B\cap[\![1,p]\!]) \sqcup (B'\cap[\![1,q']\!])\big).$$ 
\end{definition}


\begin{proposition}  The operation $\circ_{\vec{\mathcal Br}}$ is associative.
\end{proposition}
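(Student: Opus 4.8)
The plan is to reduce the associativity of $\circ_{\vec{\mathcal Br}}$ to two facts already established: the associativity of $\circ_{\mathcal Br}$ (Corollary~\ref{Skprimeisasubcategory}, via Proposition~\ref{r:2022-09-13-associativityinPart}) for the underlying Brauer-diagram component, and a bookkeeping identity for the ``beginning points'' sets. Concretely, fix three pairwise composable oriented Brauer diagrams $x=((p,q,\sigma),B)$, $y=((q,r,\tau),B')$ and $z=((r,s,\rho),B'')$. Both $z\circ_{\vec{\mathcal Br}}(y\circ_{\vec{\mathcal Br}}x)$ and $(z\circ_{\vec{\mathcal Br}}y)\circ_{\vec{\mathcal Br}}x$ have underlying Brauer diagram $\rho\circ(\tau\circ\sigma)=(\rho\circ\tau)\circ\sigma$, so the only thing to check is that the two resulting subsets of $[\![1,p]\!]\sqcup[\![1,s]\!]$ agree and that each genuinely defines an oriented Brauer diagram (i.e. satisfies the bijectivity condition of Definition~\ref{def:vecBr}(a)).

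First I would verify that composability is itself associative, in the sense that if $x,y$ are composable and $y,z$ are composable, then $y\circ_{\vec{\mathcal Br}}x$ and $z$ are composable and $x$ and $z\circ_{\vec{\mathcal Br}}y$ are composable; this is needed before the two iterated compositions even make sense. This comes down to tracking where beginning points lie along the glued boundary $[\![1,r]\!]$ and using the defining compatibility $\sigma(B)\cap[\![1,q]\!]=B'\cap[\![1,q]\!]$ together with the analogous equation for $(y,z)$. Next I would compute the beginning-point set on both sides directly from Definition~\ref{def:compositioninOSk}: expanding $z\circ_{\vec{\mathcal Br}}(y\circ_{\vec{\mathcal Br}}x)$ gives $\big((B\cap[\![1,p]\!])\sqcup(B'\cap[\![1,r]\!])\big)\cap[\![1,p]\!]$ together with $B''\cap[\![1,s]\!]$, which simplifies to $(B\cap[\![1,p]\!])\sqcup(B''\cap[\![1,s]\!])$; expanding the other association gives $(B\cap[\![1,p]\!])$ together with $\big((B'\cap[\![1,q]\!])\sqcup(B''\cap[\![1,s]\!])\big)\cap[\![1,s]\!]$, which simplifies to the same set. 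So the beginning-point sets coincide, and combined with the associativity of $\circ_{\mathcal Br}$ the two triple compositions are equal.

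The remaining point is that $\big((p',q',\sigma')\circ_{\mathcal Br}(p,q,\sigma),(B\cap[\![1,p]\!])\sqcup(B'\cap[\![1,q']\!])\big)$ really lies in $\vec{\mathcal Br}$, i.e. that $\sigma'_\circ$ restricted to this subset is a bijection onto $\pi_0$ of the composite; this is needed to know $\circ_{\vec{\mathcal Br}}$ is well-defined in the first place, and once it is established, it applies equally to the iterated compositions. Here the natural tool is the bijection \eqref{eq:toto2}, $[\![1,p]\!]\sqcup[\![1,q]\!]\sqcup[\![1,r]\!]/\langle\alpha,\beta\rangle\simeq\pi_0\big((q,r,\tau)\circ(p,q,\sigma)\big)\sqcup\mathrm{Cir}((q,r,\tau),(p,q,\sigma))$: one shows that the beginning points of $x$ lying in $[\![1,p]\!]$ together with those of $y$ lying in $[\![1,r]\!]$ pick out exactly one point from each $\langle\alpha,\beta\rangle$-orbit that is \emph{not} a circle, while the remaining beginning points (those in $B\cap[\![1,q]\!]=\sigma(B)\cap[\![1,q]\!]\cap\ldots$, matched via the composability condition) get ``absorbed'' along the glued middle and each circle orbit also receives exactly one beginning point. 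The main obstacle I anticipate is precisely this last verification: it is a genuine combinatorial argument about orbits of the group generated by two fixed-point-free involutions — showing that along each connected component one can consistently choose a basepoint and that the composability condition is exactly what guarantees the chosen basepoints on the two pieces ``match up'' across the gluing interface — rather than a formal manipulation. Everything else (the set-theoretic simplifications, invoking associativity of $\circ_{\mathcal Br}$) is routine.
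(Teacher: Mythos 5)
Your proposal is correct and follows essentially the same route as the paper: reduce to the associativity of $\circ_{\mathcal Br}$ (Corollary~\ref{Skprimeisasubcategory}) for the underlying Brauer diagrams and then check, by the same set-theoretic simplification you carry out, that both iterated compositions produce the beginning-point set $(B\cap [\![1,p]\!])\sqcup (B''\cap [\![1,s]\!])$. The additional verifications you flag (compatibility of composability and well-definedness of $\circ_{\vec{\mathcal Br}}$) are sound but go beyond what the statement requires, as the paper treats them as part of Definition~\ref{def:compositioninOSk}.
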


\begin{proof}
Let $a:=\big((p,q,\alpha),A\big), b:=\big((q,r,\beta),B\big), c:=\big((r,s,\gamma),C\big)\in\vec{\mathcal Br}$. The equality $c\circ_{\vec{\mathcal Br}}(b \circ_{\vec{\mathcal Br}} a) =(c\circ_{\vec{\mathcal Br}} b) \circ_{\vec{\mathcal Br}} a$ follows from Corollary~\ref{Skprimeisasubcategory}, the definition of $\circ_{\vec{\mathcal Br}}$ and the equalities of sets
\begin{equation*}
\begin{split}
\Big[\big((A\cap [\![1,p]\!])\sqcup (B\cap [\![1,r]\!])\big)\cap [\![1,p]\!]\Big]\sqcup (C\cap [\![1,s]\!])&= (A\cap [\![1,p]\!])\sqcup (C \cap [\![1,s]\!])\\
&= (A\cap [\![1,p]\!])\sqcup \Big[\big((B\cap [\![1,q]\!])\sqcup (C\cap [\![1,s]\!])\big)\cap [\![1,s]\!]\Big].
\end{split}
\end{equation*}
\end{proof}

\subsection{Operations on oriented Brauer diagrams}\label{sec:1-6}

\subsubsection{Tensor product of oriented Brauer diagrams}
\begin{definition}\label{def:tensorproductinOSk} Let $\big((p,q,\sigma),B\big),\big((p',q',\sigma'),B'\big)$ be oriented Brauer diagrams. The \emph{tensor product} of $\big((p,q,\sigma),B\big)$ and $\big((p',q',\sigma'),B'\big)$, denoted $\big((p,q,\sigma),B\big)\otimes\big((p',q',\sigma'),B'\big)$, is defined by
$$\big((p,q,\sigma),B\big)\otimes\big((p',q',\sigma'),B'\big):= \big((p,q,\sigma)\otimes (p',q',\sigma'),B\sqcup B'\big)\in\vec{\mathcal Br}(p+p, q+q'),$$
where we use the identification~\eqref{equ:identificationofintervalfortensor} to see $B$ and $B'$ as subsets of $[\![1,p+p']\!]\sqcup [\![{1},{q+q'}]\!]$.
\end{definition}

\subsubsection{Change of orientation for oriented Brauer diagrams}

\begin{definition}\label{def:changeoforientarionvecBr} Let $\big((p,q,\sigma),B\big)\in \vec{\mathcal Br}$. For $b\in B$, we define an oriented Brauer diagram  by 
$$\mathrm{co}_{\vec{\mathcal Br}}\big(\big((p,q,\sigma),B)\big),b\big):= \big((p,q,\sigma), (B\setminus \{b\})\cup \{\sigma(b)\}\big).$$ 
We say that $\mathrm{co}_{\vec{\mathcal Br}}\big(\big((p,q,\sigma),B)\big),b\big)$\index[notation]{co_Br@$\mathrm{co}_{\vec{\mathcal Br}}$} is obtained from $\big((p,q,\sigma),B\big)$ by \emph{change of orientation of} $b$.
\end{definition}

\subsubsection{Doubling operation for oriented Brauer diagrams}

\begin{definition}\label{def:doublingforvecBr} Let $\big((p,q,\sigma),B\big)\in \vec{\mathcal Br}$ and $A\subset \pi_0\big((p,q,\sigma),B\big)$.  Let $p':=p + |\sqcup_{\omega\in A}\omega\cap [\![1,p]\!]|$ and $q':=q + |\sqcup_{\omega\in A}\omega\cap [\![{1},{q}]\!]|$ and $\mathrm{pr}:[\![1,p']\!]\sqcup [\![{1},{q}']\!]\to [\![1,p]\!]\sqcup [\![1,q]\!]$ as in Definition~\ref{def:doublingforskeleta}$(a)$. The \emph{doubling of $\big((p,q,\sigma),B\big)$ along $A$}, denoted $\mathrm{dbl}_{\vec{\mathcal Br}}\big(\big((p,q,\sigma),B\big),A\big)$ or $\big((p,q,\sigma),B\big)^A$, \index[notation]{dbl_Br@$\mathrm{dbl}_{\vec{\mathcal Br}}$} is the oriented Brauer diagram defined by 
$$\mathrm{dbl}_{\vec{\mathcal Br}}\big(\big((p,q,\sigma),B\big),A\big) := \big(\mathrm{dbl}_{\mathcal Br}((p,q,\sigma),A), \mathrm{pr}^{-1}(B)\big)\in\vec{\mathcal Br}.$$
\end{definition}

\subsection{Properties of operations of oriented Brauer diagrams}\label{sec:1-7}

\begin{definition}
Let $\{+,-\}^*$\index[notation]{\{+,-\}^*@$\{+,-\}^*$} be the free associative monoid over $\{+,-\}$, i.e., the set of words in these letters. It can be 
identified with $\sqcup_{n\geq 0}\{+,-\}^n$, i.e., with the set of pairs $(n,f)$, where $n\in\mathbb{Z}_{\geq 0}$ and $f$ is a map $[\![1,n]\!]\to\{+,-\}$. For $w\in \{+,-\}^*$, we denote by $|w|$ its \emph{length}: $|w|=n\in\mathbb{Z}_{\geq 0}$ if and only if $w$ is identified with a pair $(n,f)$ as above. The empty word (corresponding to $n=0$) will be denoted $\emptyset$.
\end{definition}

\begin{definition}\label{def:sourcetargetorientedskeleta}
Define the \emph{source} and \emph{target} maps $\mathrm{s,t}: \vec{\mathcal Br}\longrightarrow \{+,-\}^*$
by  $\mathrm{s}\big((p,q,\sigma),B\big)^{-1}(+)=\sigma(B) \cap [\![1,p]\!]$  and $\mathrm{s}\big((p,q,\sigma),B\big)^{-1}(-) = B \cap [\![1,p]\!]$; and   $\mathrm{t}\big((p,q,\sigma),B\big)^{-1}(+)=B \cap [\![\bar{1},\bar{q}]\!]$   and $\mathrm{t}\big((p,q,\sigma),B\big)^{-1}(-)=\sigma(B) \cap [\![\bar{1},\bar{q}]\!]$.
\end{definition} 
 
 Notice that two oriented Brauer diagrams $a$ and $b$ are composable (according to Definition~\ref{def:compositioninOSk}) if an only if $\mathrm{t}(a)=\mathrm{s}(b)$.

\begin{definition} The monoidal category $\underline{\vec{\mathcal Br}}$\index[notation]{Br5@$\underline{\vec{\mathcal Br}}$} of oriented Brauer diagrams is defined as follows. The set of objects of $\underline{\vec{\mathcal Br}}$ is $\{+,-\}^*$. For~$u,v\in \{+,-\}^*$, the set of morphisms $\underline{\vec{\mathcal Br}}(u,v)$ is the set oriented Brauer diagrams $a$ such that $\mathrm{s}(a)=u$ and $\mathrm{t}(a)=v$.

For $u,v,w\in\{+,-\}^*$ the composition 
$$\underline{\vec{\mathcal Br}}(u,v)\times \underline{\vec{\mathcal Br}}(v,w) \longrightarrow \underline{\vec{\mathcal Br}}(v,w)$$
is given by $\circ_{\vec{\mathcal Br}}$ as in Definition~\ref{def:compositioninOSk}. The identity $\mathrm{Id}_u\in\underline{\vec{\mathcal Br}}(u,u)$ of $u\in\{+,-\}^*$ is the tuple $(\mathrm{Id}_{|u|}, B)$
where $\mathrm{Id}_{|u|}\in\underline{\mathcal Br}(|u|, |u|)$ is the identity of $|u|$ and $B= \{x\in[\![1,|u|]\!] \ | \ u(x)=-\}\sqcup \{{x}\in[\![{1},{|u|}]\!] \ | \ u(x)=+\}$.

At the level of objects, the tensor product is given by the concatenation of words:  $u\otimes v:=uv$ for any $u,v\in\{+,-\}^*$. For $u,u',v,v'\in\{+,-\}^*$, the tensor product map at the level of morphisms
$$\underline{\vec{\mathcal Br}}(u,v)\times \underline{\vec{\mathcal Br}}(u',v') \longrightarrow \underline{\vec{\mathcal Br}}(uv,u'v')$$
is as in Definition~\ref{def:tensorproductinOSk}. The unit object of $\underline{\vec{\mathcal Br}}$ is $0 \in \mathbb{Z}_{\geq 0}$ and its identity morphism is $\vec{\varnothing}$. 
\end{definition}

In particular, we have the oriented version of \eqref{eq:comptensorandcompositioninBr}:
\begin{equation}\label{eq:comptensorandcompositioninOBr}
(P'_1 \otimes P'_2) \circ (P_1 \otimes P_2)=(P'_1 \circ P_1) \otimes (P'_2 \circ P_2)
\end{equation}
for any $P_i,P_i'$  $(i=1,2)$ composable morphisms in $\underline{\vec{\mathcal Br}}$.

\begin{lemma} There is a canonical monoidal functor $\underline{\vec{\mathcal Br}} \to \underline{\mathcal Br}$ which takes $u\in\{+,-\}^*$ to its length $|u|\in\mathbb{Z}_{\geq 0}$ and  it takes a morphism $((p,q,\sigma),B)$ in $\underline{\vec{\mathcal Br}}$ to the morphism $(p,q,\sigma)$ in $\underline{\mathcal Br}$. 
\end{lemma}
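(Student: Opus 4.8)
The plan is to verify directly that the stated assignment respects all the structure of a strict monoidal functor; the content is entirely a matter of unwinding the relevant definitions.

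First I would check that the assignment is well-defined on morphisms, i.e. that it lands in the correct hom-set. If $a=((p,q,\sigma),B)$ lies in $\underline{\vec{\mathcal Br}}(u,v)$, then by Definition~\ref{def:sourcetargetorientedskeleta} the word $u=\mathrm{s}(a)$ satisfies $\mathrm{s}(a)^{-1}(+)\sqcup\mathrm{s}(a)^{-1}(-)=\big(\sigma(B)\cap[\![1,p]\!]\big)\sqcup\big(B\cap[\![1,p]\!]\big)=[\![1,p]\!]$, since $B\sqcup\sigma(B)=[\![1,p]\!]\sqcup[\![{1},{q}]\!]$; hence $|u|=p$, and similarly $|v|=q$. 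Therefore $(p,q,\sigma)\in\underline{\mathcal Br}'([\![1,|u|]\!],[\![1,|v|]\!])=\underline{\mathcal Br}(|u|,|v|)$, so the assignment on morphisms has source $|u|$ and target $|v|$, as required.

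Next I would check functoriality. For identities, $\mathrm{Id}_u=(\mathrm{Id}_{|u|},B)$ by definition, where $\mathrm{Id}_{|u|}$ is the identity morphism of $|u|$ in $\underline{\mathcal Br}$, so the image of $\mathrm{Id}_u$ is $\mathrm{Id}_{|u|}$. For composition, let $a=((p,q,\sigma),B)$ and $b=((q,r,\tau),B')$ be composable in $\underline{\vec{\mathcal Br}}$; by Definition~\ref{def:compositioninOSk} one has
$$b\circ_{\vec{\mathcal Br}}a=\big((q,r,\tau)\circ_{\mathcal Br}(p,q,\sigma),\,(B\cap[\![1,p]\!])\sqcup(B'\cap[\![1,r]\!])\big),$$
and by Corollary~\ref{Skprimeisasubcategory} the first entry is a well-defined morphism of $\underline{\mathcal Br}$. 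Its underlying Brauer diagram is precisely $(q,r,\tau)\circ_{\mathcal Br}(p,q,\sigma)$, i.e. the composition in $\underline{\mathcal Br}$ of the images of $b$ and $a$. Hence the assignment is a functor.

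Finally I would check monoidality. On objects, $u\otimes v=uv$ is sent to $|uv|=|u|+|v|=|u|\otimes|v|$, and the empty word $\emptyset$ is sent to $0$, the unit object of $\underline{\mathcal Br}$. On morphisms, Definition~\ref{def:tensorproductinOSk} gives $((p,q,\sigma),B)\otimes((p',q',\sigma'),B')=\big((p,q,\sigma)\otimes(p',q',\sigma'),\,B\sqcup B'\big)$, whose underlying diagram $(p,q,\sigma)\otimes(p',q',\sigma')$ is the tensor product in $\underline{\mathcal Br}$ of the images; and $\vec{\varnothing}$ is sent to $\varnothing=\mathrm{Id}_0$. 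Since both $\underline{\vec{\mathcal Br}}$ and $\underline{\mathcal Br}$ are strict monoidal categories, their associativity and unit constraints are identity morphisms and are thus preserved automatically. I do not expect a genuine obstacle here: the only point requiring attention is the identification $p=|u|$, $q=|v|$ in the first step, which guarantees that the forgetful assignment on morphisms lands in the right hom-set; everything else is a direct rewriting of $\circ_{\vec{\mathcal Br}}$, $\otimes$, and the identities in terms of their $\underline{\mathcal Br}$ counterparts.
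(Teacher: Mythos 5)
Your proposal is correct and follows essentially the same route as the paper, whose proof simply states that the result follows from the definitions of the two categories; your write-up just makes explicit the routine checks (that $|u|=p$, $|v|=q$, and that $\circ_{\vec{\mathcal Br}}$, identities, and $\otimes$ are defined so that their underlying Brauer diagrams are the corresponding operations in $\underline{\mathcal Br}$).
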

\begin{proof}
The result follows from the definitions of the categories. 
\end{proof}

\begin{definition}\label{def:CirforvecBr} Let $u,v,w\in\{+,-\}^*$. Define the map $\mathrm{Cir}:\vec{\underline{\mathcal Br}}(u,v)\times \vec{\underline{\mathcal Br}}(v,w)\to \mathcal Sets$ as the composition
$$ \vec{\underline{\mathcal Br}}(u,v)\times \vec{\underline{\mathcal Br}}(v,w)\longrightarrow{\underline{\mathcal Br}}(u,v)\times {\underline{\mathcal Br}}(v,w)\xrightarrow{\mathrm{Cir}} \mathcal Sets,$$
where the last map was defined in Definition~\ref{def:defofSk}.
\end{definition}

Lemma~\ref{r:bijectioncomptensorandcompinBr} generalizes naturally for oriented Brauer diagrams.

\begin{lemma}\label{r:bijectioncomptensorandcompinOBr}  Let $P_i,P_i'$ composable morphisms in $\underline{\vec{\mathcal Br}}$ for $i=1,2$. Then there is a bijection 
$$\mathrm{bij}_{P'_1,P_1,P'_2,P_2} : \mathrm{Cir}(P'_1,P_1) \sqcup \mathrm{Cir}(P'_2,P_2)\longrightarrow \mathrm{Cir}(P'_1 \otimes P'_2,P_1 \otimes P_2).$$
\end{lemma}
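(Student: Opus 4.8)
The plan is to reduce the statement for oriented Brauer diagrams to the already-established unoriented version, Lemma~\ref{r:bijectioncomptensorandcompinBr}, by using the canonical monoidal functor $\underline{\vec{\mathcal Br}} \to \underline{\mathcal Br}$ together with the definition (Definition~\ref{def:CirforvecBr}) of $\mathrm{Cir}$ on oriented diagrams as the composition of this functor with the unoriented $\mathrm{Cir}$. First I would write $P_i = ((p_i,q_i,\sigma_i), B_i)$ and $P_i' = ((q_i,r_i,\sigma_i'), B_i')$ for $i = 1,2$, and let $Q_i := (p_i,q_i,\sigma_i)$ and $Q_i' := (q_i,r_i,\sigma_i')$ denote the underlying (unoriented) Brauer diagrams. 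The key observation is that the image of $P_i' \otimes P_i$ under the forgetful functor is exactly $Q_i' \otimes Q_i$ — this is immediate from Definition~\ref{def:tensorproductinOSk}, which defines the tensor product of oriented diagrams so that its underlying Brauer diagram is the tensor product of the underlying Brauer diagrams. Likewise, the image of the composition $P_i' \circ P_i$ under the functor is $Q_i' \circ Q_i$, by the definition of $\circ_{\vec{\mathcal Br}}$ (Definition~\ref{def:compositioninOSk}).

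Since $\mathrm{Cir}$ on oriented diagrams is by definition obtained by first forgetting orientations, we have canonical identifications $\mathrm{Cir}(P_i', P_i) = \mathrm{Cir}(Q_i', Q_i)$ and $\mathrm{Cir}(P_1' \otimes P_2', P_1 \otimes P_2) = \mathrm{Cir}(Q_1' \otimes Q_2', Q_1 \otimes Q_2)$. Therefore the desired bijection
$$\mathrm{bij}_{P_1', P_1, P_2', P_2} : \mathrm{Cir}(P_1', P_1) \sqcup \mathrm{Cir}(P_2', P_2) \longrightarrow \mathrm{Cir}(P_1' \otimes P_2', P_1 \otimes P_2)$$
can simply be taken to be $\mathrm{bij}_{Q_1', Q_1, Q_2', Q_2}$ from Lemma~\ref{r:bijectioncomptensorandcompinBr}, transported along these identifications. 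Explicitly, as in the proof of Lemma~\ref{r:bijectioncomptensorandcompinBr}, one uses the canonical bijection $\varphi : [\![1,|\mathrm t(Q_1)|]\!] \sqcup [\![1,|\mathrm t(Q_2)|]\!] \to [\![1,|\mathrm t(Q_1)|+|\mathrm t(Q_2)|]\!]$ shifting the second block by $|\mathrm t(Q_1)|$, and checks that it induces the required bijection on the $\mathrm{Cir}$ sets; here $|\mathrm t(Q_i)| = |\mathrm t(P_i)| = q_i$.

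This is essentially a formal bookkeeping argument, so I do not expect any serious obstacle. The only point requiring care — and the one I would write out explicitly — is verifying that the forgetful functor $\underline{\vec{\mathcal Br}} \to \underline{\mathcal Br}$ is genuinely compatible with $\mathrm{Cir}$ in the strong sense that $\mathrm{Cir}(P', P)$ and $\mathrm{Cir}$ of the images are not merely in bijection but \emph{equal} as sets (which is what Definition~\ref{def:CirforvecBr} asserts, since it defines the oriented $\mathrm{Cir}$ as a literal composition with the unoriented one), and that tensor product and composition of oriented diagrams are sent on the nose to those of unoriented diagrams. Once these identities are in place, invoking Lemma~\ref{r:bijectioncomptensorandcompinBr} finishes the proof with no further computation.
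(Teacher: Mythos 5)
Your proposal is correct and follows exactly the paper's route: the paper's own proof of this lemma simply invokes Lemma~\ref{r:bijectioncomptensorandcompinBr}, relying—just as you do—on Definition~\ref{def:CirforvecBr} (oriented $\mathrm{Cir}$ defined by composing with the forgetful map) and on the fact that the forgetful functor $\underline{\vec{\mathcal Br}} \to \underline{\mathcal Br}$ sends $\otimes$ and $\circ$ to $\otimes$ and $\circ$ on the nose. Your write-up merely makes these bookkeeping identifications explicit, which is fine.
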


\begin{proof} The result follows from Lemma~\ref{r:bijectioncomptensorandcompinBr}
\end{proof}

\begin{definition}\label{def:pi1:pi2:24022020}
For $w\in\{+,-\}^*$ and $A \subset [\![1,|w|]\!]$, define the word $\mathrm{dbl}_{\{+,-\}^*}(w,A)$ in $\{+,-\}^*$ (also denoted $w^A$)  \index[notation]{dbl_{+,-}@$\mathrm{dbl}_{\{+,-\}^*}$} by $\mathrm{dbl}_{\{+,-\}^*}(w,A):=(|w|+|A|,w \circ \pi_{|w|,A})$ where $\pi_{|w|,A} : [\![1,|w|+|A|]\!] \to [\![1,|w|]\!]$ is the only non-decreasing {surjective} map such that the fibers of the elements of $A$
(resp. $[\![1,|w|]\!]\setminus A$) have cardinality~$2$ (resp.~$1$). 
\end{definition}

\begin{lemma}\label{lemma:1:18:03032020} Let $P:=((p,q,\sigma),B) \in \vec{\mathcal Br}$ and $A \subset \pi_0(P)$ and $a,b\in B$. Then
\begin{itemize}
\item[$(a)$] $\mathrm{s}(\mathrm{dbl}_{\vec{\mathcal Br}}(P,A)) = \mathrm{dbl}_{\{+,-\}^*}(\mathrm{s}(P),\tilde{A} \cap [\![1,p]\!])$ and  $\mathrm{t}(\mathrm{dbl}_{\vec{\mathcal Br}}(P,A)) = \mathrm{dbl}_{\{+,-\}^*}(\mathrm{t}(P),\tilde{A} \cap [\![1,q]\!])$, where $\tilde{A}$ is the preimage of $A$ by the projection $[\![1,p]\!] \sqcup [\![1,q]\!]\!\to \pi_0(P)$.

\item[$(b)$] $\pi_0(\mathrm{co}_{\vec{\mathcal Br}}(P,a))\simeq \pi_0(P)$ and $\mathrm{co}_{\vec{\mathcal Br}}(\mathrm{co}_{\vec{\mathcal Br}}(P,b),a) = \mathrm{co}_{\vec{\mathcal Br}}(\mathrm{co}_{\vec{\mathcal Br}}(P,a),b)$ and $\mathrm{co}_{\vec{\mathcal Br}}(\mathrm{co}_{\vec{\mathcal Br}}(P,a),a)=P$. 
\item[$(c)$] for any $P'\in\vec{\mathcal Br}$ we have
$\mathrm{dbl}_{\vec{\mathcal Br}}(P\otimes P', A) = \mathrm{dbl}_{\vec{\mathcal Br}}(P, A)\otimes P'$, where in the left-hand  side  we identify $A$ with a subset of $\pi_0(P\otimes P')$, still denoted by $A$, via de inclusion $\pi_0(P)\subset \pi_0(P\otimes P')$.
\end{itemize} 
\end{lemma}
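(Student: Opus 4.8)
The statement collects three essentially bookkeeping facts about the operations $\mathrm{dbl}_{\vec{\mathcal Br}}$ and $\mathrm{co}_{\vec{\mathcal Br}}$, and in each case the plan is to unwind the definitions and reduce to the (non-oriented) Brauer diagram level, where the underlying data are plain fixed-point free involutions and non-decreasing surjections. Concretely, I would fix notation $P=((p,q,\sigma),B)$, write $E=\sigma(B)$, $p'=p+|\sqcup_{\omega\in A}\omega\cap[\![1,p]\!]|$, $q'=q+|\sqcup_{\omega\in A}\omega\cap[\![1,q]\!]|$ and $\mathrm{pr}=\mathrm{pr}_{\mathrm s}\sqcup\mathrm{pr}_{\mathrm t}:[\![1,p']\!]\sqcup[\![1,q']\!]\to[\![1,p]\!]\sqcup[\![1,q]\!]$ as in Definition~\ref{def:doublingforskeleta}, together with the doubled involution $\sigma'$, so that $\mathrm{dbl}_{\vec{\mathcal Br}}(P,A)=((p',q',\sigma'),\mathrm{pr}^{-1}(B))$.

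For part $(a)$: by Definition~\ref{def:sourcetargetorientedskeleta}, computing $\mathrm{s}$ and $\mathrm{t}$ of a doubled diagram amounts to computing $\mathrm{pr}^{-1}(B)\cap[\![1,p']\!]$, $\sigma'(\mathrm{pr}^{-1}(B))\cap[\![1,p']\!]$, and their counterparts on the target side. The key observations are that $\mathrm{pr}^{-1}(B)=\mathrm{pr}^{-1}(B\cap[\![1,p]\!])\sqcup\mathrm{pr}^{-1}(B\cap[\![1,q]\!])$ splits along source/target, that $\sigma'(\mathrm{pr}^{-1}(B))=\mathrm{pr}^{-1}(E)$ because $\sigma\circ\mathrm{pr}=\mathrm{pr}\circ\sigma'$ and $\sigma_{|B}$ is a bijection onto $\pi_0(P)$, and that the preimage under $\mathrm{pr}_{\mathrm s}$ of a subset $S\subset[\![1,p]\!]$ is exactly the doubling of $S$ controlled by $\tilde A\cap[\![1,p]\!]$ in the sense of the surjection $\pi_{p,\tilde A\cap[\![1,p]\!]}$ of Definition~\ref{def:pi1:pi2:24022020}; here $\tilde A=$ preimage of $A$ in $[\![1,p]\!]\sqcup[\![1,q]\!]$. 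Matching these against the definition of $\mathrm{dbl}_{\{+,-\}^*}$ gives both equalities in $(a)$; the only thing to be careful about is the orientation of the letters (which $+$ vs $-$ sits where), but this is forced since $\mathrm{pr}$ is monotone and $\sigma'_a$ is a bijection $\mathrm{pr}^{-1}(b_x(a))\to\mathrm{pr}^{-1}(e_x(a))$, so beginning (resp.\ ending) points double to beginning (resp.\ ending) points.

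Part $(b)$ is the easiest: $\pi_0$ of a change of orientation is $\pi_0(P)$ by definition, since $\mathrm{co}_{\vec{\mathcal Br}}$ leaves $(p,q,\sigma)$ untouched and only modifies $B$. For the commutation of two changes of orientation at distinct $a,b\in B$, note that $\sigma(a)\neq b$ and $\sigma(b)\neq a$ (because $a,b\in B$ while $\sigma(a),\sigma(b)\in E$, and $B\cap E=\emptyset$), hence the two elementary replacements $B\mapsto(B\setminus\{b\})\cup\{\sigma(b)\}$ and $B\mapsto(B\setminus\{a\})\cup\{\sigma(a)\}$ act on disjoint parts of $B$ and commute; an explicit set-theoretic computation of $(B\setminus\{a,b\})\cup\{\sigma(a),\sigma(b)\}$ in both orders closes this. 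The involutivity $\mathrm{co}_{\vec{\mathcal Br}}(\mathrm{co}_{\vec{\mathcal Br}}(P,a),a)=P$ follows since applying change of orientation at $a$ yields a diagram with beginning set $(B\setminus\{a\})\cup\{\sigma(a)\}$ in which $\sigma(a)$ is a beginning point; applying it again at $a\in E$—wait, one must apply it at the new beginning point $\sigma(a)$—and $\sigma(\sigma(a))=a$, so the beginning set returns to $B$.

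For part $(c)$: by Definitions~\ref{def:tensorproductinOSk} and~\ref{def:doublingforvecBr}, both sides are obtained by first taking the underlying $\mathcal Br$-doubling, then prescribing a beginning set. On the underlying level, $\mathrm{dbl}_{\mathcal Br}((p,q,\sigma)\otimes(p',q',\sigma'),A)=\mathrm{dbl}_{\mathcal Br}((p,q,\sigma),A)\otimes(p',q',\sigma')$ because, $A$ being supported in $\pi_0(P)\subset\pi_0(P\otimes P')$, the projection $\mathrm{pr}$ for $P\otimes P'$ restricts to that of $P$ on the $P$-block and is the identity on the $P'$-block, so the doubled involution is $\sigma_{\mathrm{dbl}}*\sigma'$; then one checks $\mathrm{pr}^{-1}(B\sqcup B')=\mathrm{pr}^{-1}(B)\sqcup B'$ under this identification, which is exactly the claimed equality of beginning sets. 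The main obstacle—more a matter of care than of difficulty—is keeping the several identifications of index sets ($[\![1,p+p']\!]\simeq[\![1,p]\!]\sqcup[\![1,p']\!]$ from~\eqref{equ:identificationofintervalfortensor}, the monotone relabellings hidden in $\mathrm{pr}$ and $\pi_{|w|,A}$, and the inclusion $\pi_0(P)\subset\pi_0(P\otimes P')$) mutually compatible, so that the monotonicity conditions defining $\mathrm{pr}_{\mathrm s},\mathrm{pr}_{\mathrm t}$ on $P\otimes P'$ genuinely agree with those on $P$ block by block; once this compatibility is made explicit, all three statements are immediate.
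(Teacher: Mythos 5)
Your argument is correct, and it is exactly the kind of definition-unwinding that the paper itself summarizes as ``Direct verification'': reduce to the data $(p,q,\sigma,B)$, use the intertwining $\sigma\circ\mathrm{pr}=\mathrm{pr}\circ\sigma'$ to get $\sigma'(\mathrm{pr}^{-1}(B))=\mathrm{pr}^{-1}(E)$ for $(a)$, set-theoretic manipulation of $B$ for $(b)$ (including the right reading of the second application of $\mathrm{co}_{\vec{\mathcal Br}}$ at the component of $a$), and block-compatibility of the monotone projections for $(c)$. So you take essentially the same route as the paper, just written out in detail.
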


\begin{proof} Direct verification.
\end{proof}

\begin{lemma}\label{r:relPi0}
\begin{itemize}
\item[$(a)$]  For any $P,P' \in \vec{\mathcal Br}$, one has $\pi_0(P \otimes P') \simeq \pi_0(P) \sqcup \pi_0(P')$.
\item[$(b)$]  For any $P \in \vec{\mathcal Br}$ and $A \subset \pi_0(P)$, one has a natural map $\mathrm{proj}^{\pi_0}_{P,A} : \pi_0(\mathrm{dbl}_{\vec{\mathcal Br}}(P,A))\to \pi_0(P)$.
\item[$(c)$] For any composable pair $(P,P')$ of elements of $\vec{ \mathcal Br}$ there is a natural map 
$\mathrm{proj}^{\pi_0}_{P,P'} : \pi_0(P) \sqcup \pi_0(P') \to \pi_0(P' \circ P) \sqcup \mathrm{Cir}(P',P) $.
\end{itemize}
\end{lemma}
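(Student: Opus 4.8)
The plan is to unwind the definitions from \S\ref{sec:1-5}--\S\ref{sec:1-6} and assemble the three maps from ingredients already established in the excerpt; none of the three parts requires a genuinely new idea, only a careful bookkeeping of which involution/subset is being used where. For part $(a)$: write $P = ((p,q,\sigma),B)$ and $P' = ((p',q',\sigma'),B')$; by Definition~\ref{def:vecBr}$(c)$ we have $\pi_0(P \otimes P') = \pi_0\big((p,q,\sigma)\otimes(p',q',\sigma')\big) = \pi_0(p+p',q+q',\sigma*\sigma')$, and then by Definition~\ref{def:piceroforBr} this is $\big([\![1,p+p']\!]\sqcup[\![1,q+q']\!]\big)/\langle \sigma*\sigma'\rangle$. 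Using the identification \eqref{equ:identificationofintervalfortensor} and the fact that $\sigma * \sigma'$ preserves the decomposition $\big([\![1,p]\!]\sqcup[\![1,q]\!]\big)\sqcup\big([\![1,p']\!]\sqcup[\![1,q']\!]\big)$ (it acts as $\sigma$ on the first block and $\sigma'$ on the second, by Definition~\ref{def:concatenationofperm}), the orbit set splits as the disjoint union of the orbit sets of $\sigma$ and of $\sigma'$, giving the required bijection $\pi_0(P\otimes P')\simeq \pi_0(P)\sqcup\pi_0(P')$. This is the same computation already used implicitly in Lemma~\ref{r:bijectioncomptensorandcompinBr}, restricted to the subsets rather than applied to circles.

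For part $(b)$: write $P = ((p,q,\sigma),B)$ and set $(p',q',\sigma') := \mathrm{dbl}_{\mathcal Br}((p,q,\sigma),A)$, so by Definition~\ref{def:doublingforvecBr} we have $\mathrm{dbl}_{\vec{\mathcal Br}}(P,A) = ((p',q',\sigma'),\mathrm{pr}^{-1}(B))$ and hence $\pi_0(\mathrm{dbl}_{\vec{\mathcal Br}}(P,A)) = \pi_0(p',q',\sigma') = \big([\![1,p']\!]\sqcup[\![1,q']\!]\big)/\langle\sigma'\rangle$. The defining relation $\sigma \circ \mathrm{pr} = \mathrm{pr}\circ\sigma'$ from Definition~\ref{def:doublingforskeleta}$(b)$ means that $\mathrm{pr}$ is $\langle\sigma\rangle$-equivariant, so it descends to the desired map $\mathrm{proj}^{\pi_0}_{P,A}$ on orbit sets. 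I would state it as: $\mathrm{proj}^{\pi_0}_{P,A}$ sends the $\sigma'$-orbit of $x$ to the $\sigma$-orbit of $\mathrm{pr}(x)$, which is well-defined precisely by equivariance. (One need not claim injectivity or surjectivity here, only naturality/well-definedness, which is all the statement asks for.)

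For part $(c)$: write $P = ((p,q,\sigma),B)$ and $P' = ((q,r,\tau),B')$, a composable pair, so $\sigma(B)\cap[\![1,q]\!] = B'\cap[\![1,q]\!]$. By Definition~\ref{def:compositioninOSk}, $P'\circ P = ((q,r,\tau)\circ_{\mathcal Br}(p,q,\sigma),\,\ldots)$, so $\pi_0(P'\circ P) = \pi_0\big((q,r,\tau)\circ_{\mathcal Br}(p,q,\sigma)\big)$, and then equation \eqref{eq:toto2} — applied to the underlying Brauer diagrams $(p,q,\sigma)$ and $(q,r,\tau)$ — gives a bijection $\big([\![1,p]\!]\sqcup[\![1,q]\!]\sqcup[\![1,r]\!]\big)/\langle\alpha,\beta\rangle \simeq \pi_0(P'\circ P)\sqcup \mathrm{Cir}(P',P)$, where $\alpha = \sigma * \mathrm{Id}_{[\![1,r]\!]}$ and $\beta = \mathrm{Id}_{[\![1,p]\!]}*\tau$. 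Since $\langle\sigma\rangle$-orbits in $[\![1,p]\!]\sqcup[\![1,q]\!]$ map to $\langle\alpha,\beta\rangle$-orbits, and likewise for $\langle\tau\rangle$-orbits in $[\![1,q]\!]\sqcup[\![1,r]\!]$, we get a natural map $\pi_0(P)\sqcup\pi_0(P') \to \big([\![1,p]\!]\sqcup[\![1,q]\!]\sqcup[\![1,r]\!]\big)/\langle\alpha,\beta\rangle$; composing with the bijection from \eqref{eq:toto2} yields $\mathrm{proj}^{\pi_0}_{P,P'}$. The orientation subsets $B,B'$ play no role in any of the three maps — they only serve to guarantee composability in $(c)$ — so each map is really inherited from the unoriented situation via the forgetful functor $\underline{\vec{\mathcal Br}}\to\underline{\mathcal Br}$.

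The only mild subtlety — and hence the ``main obstacle'', though it is more of a bookkeeping point than a difficulty — is being consistent about which identifications of index sets are in force: in $(a)$ one must check that the two identifications appearing in \eqref{equ:identificationofintervalfortensor} commute with the splitting of $\sigma * \sigma'$ into $\sigma$ and $\sigma'$, and in $(c)$ one must make sure the composability hypothesis $\sigma(B)\cap[\![1,q]\!] = B'\cap[\![1,q]\!]$ is exactly what is needed for $(q,r,\tau)\circ_{\vec{\mathcal Br}}(p,q,\sigma)$ to be defined so that equation \eqref{eq:toto2} applies verbatim. Since the statement asserts only the \emph{existence} of these (natural) maps, I would present each part as a short paragraph identifying the relevant orbit-set computation and citing Definition~\ref{def:piceroforBr}, Definition~\ref{def:doublingforskeleta}$(b)$, and equation \eqref{eq:toto2} respectively, leaving the verification that they are well-defined to ``direct inspection''.
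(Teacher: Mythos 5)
Your proposal is correct and follows essentially the same route as the paper's proof: reduce to the underlying (unoriented) Brauer diagrams, split the orbit set of $\sigma*\sigma'$ block-wise for $(a)$, use the equivariance relation $\sigma\circ\mathrm{pr}=\mathrm{pr}\circ\sigma'$ from Definition~\ref{def:doublingforskeleta}$(b)$ to descend to orbit sets for $(b)$, and factor the canonical projection through the identification \eqref{eq:toto2} for $(c)$. No gaps.
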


\begin{proof}
It is enough to show the statements for elements in $\mathcal Br$.
$(a)$ Let $P=(p,q,\sigma), P'=(p',q',\sigma')\in \mathcal Br$. By Definition~\ref{def:piceroforBr}, \ref{def:tensorproductinSk} and~\ref{def:concatenationofperm} we have
$$\pi_0(P\otimes P') = \dfrac{[\![1,p+p']\!]\sqcup [\![1,q+q']\!]}{\langle \sigma*\sigma'\rangle} \simeq \dfrac{[\![1,p]\!]\sqcup [\![1,q]\!]}{\langle \sigma\rangle}\sqcup \dfrac{[\![1,p']\!]\sqcup [\![1,q']\!]}{\langle \sigma'\rangle} =\pi_0(P)\sqcup \pi_0(P').$$

$(b)$ Let $P=(p,q,\sigma)\in\mathcal Br$ and  $A\subset \pi_0(P)$. Let $(p',q',\sigma')=\mathrm{dbl}_{\mathcal Br}((p,q,\sigma), A)$ and $\mathrm{pr}:[\![1,p']\!]\sqcup [\![1,q']\!]\to [\![1,p]\!]\sqcup [\![1,q]\!]$ as in Definition~\ref{def:doublingforvecBr}. By definition of $\sigma'$ the map $[\![1,p']\!]\sqcup [\![1,q']\!]\xrightarrow{\mathrm{pr}} [\![1,p]\!]\sqcup [\![1,q]\!]\xrightarrow{\text{proj}}\pi_0(P)$ factors to the quotient $\big([\![1,p']\!]\sqcup [\![1,q']\!])/\langle \sigma'\rangle$ giving the map $\mathrm{proj}^{\pi_0}_{P,A} : \pi_0(\mathrm{dbl}_{\vec{\mathcal Br}}(P,A))\to \pi_0(P)$.

$(c)$    Let $P=(p,q,\sigma), P'=(q,r,\sigma')\in \mathcal Br$ a pair of composable Brauer diagrams. The projection map 
$$\big([\![1,p]\!]\sqcup [\![1,q]\!]\big)\sqcup [\![1,q]\!]\sqcup [\![1,r]\!]\longrightarrow \dfrac{[\![1,p]\!]\sqcup [\![1,q]\!]\sqcup [\![1,r]\!]}{\langle \sigma*\mathrm{Id}_{[\![1,r]\!]}, \mathrm{Id}_{[\![1,p]\!]}*\sigma'\rangle} \simeq \pi_0(P'\circ P)\sqcup \mathrm{Cir}(P',P),$$
see \eqref{eq:toto2} for the last identification, factorizes in the map 
$\mathrm{proj}^{\pi_0}_{P,P'} : \pi_0(P) \sqcup \pi_0(P') \to \pi_0(P' \circ P) \sqcup \mathrm{Cir}(P',P) $.
\end{proof}

\begin{definition} Let $\pm:\{+,-\}\to \{+,-\}$ be the non-identity involution, i.e., $\pm(+)=-$ and $\pm(-)=+$.
\end{definition}

\begin{remark} Let $\mathfrak{B}$ be the set of tuples $(u,v,\sigma)$ where $u,v\in\{+,-\}^*$  and  $\sigma\in \mathrm{FPFI}\big([\![1,|u|]\!]\sqcup [\![1,|v|]\!]\big)$ such that 
\begin{itemize} 
\item $u(\sigma(x))=\pm (u(x))$ if $x,\sigma(x)\in[\![1,|u|]\!]$,
\item $v(\sigma(x))=u(x)$ if $x\in[\![1,|u|]\!]$ and $\sigma(x)\in[\![1,|v|]\!]$,
\item $v(\sigma(x))=\pm (v(x))$ if $x,\sigma(x)\in[\![1,|v|]\!]$,

\item $u(\sigma(x))=v(x)$ if $x\in[\![1,|v|]\!]$ and $\sigma(x)\in[\![1,|u|]\!]$.
\end{itemize}
There is a unique bijection $\vec{\mathcal Br} \to \mathfrak{B}$ given by  $\big((p,q,\sigma),B\big)\mapsto\big(\mathrm{s}\big((p,q,\sigma),B\big),\mathrm{t}\big((p,q,\sigma),B\big), \sigma\big)$ for any $\big((p,q,\sigma),B\big)\in\vec{\mathcal Br}$,  where $\mathrm{s}$ and $\mathrm{t}$ are as in Definition~\ref{def:sourcetargetorientedskeleta}. Indeed, one checks that the map $(u,v,\sigma)\mapsto \big((|u|, |v|, \sigma), B\big)$ where $B:=\{x\in[\![1,|u|]\!] \ | \  u(x)=-\}\sqcup \{x\in[\![1,|v|]\!] \ | \  v(x)=+\}$ defines an inverse.
\end{remark}

\subsection{Schematic representation of Brauer diagrams, oriented Brauer diagrams and their operations}\label{sec:1-8}\label{sec:2-8}

We represent a Brauer diagram $(p,q,\sigma)\in \mathcal Br$  in the square $[-1,1]^2$ as follows. Consider $p$ points labelled by $1,\ldots, p$   uniformly distributed along $[-1,1]\times \{0\}$. Similarly consider~$q$ points labelled $\bar{1},\ldots, \bar{q}$  uniformly distributed along $[-1,1]\times \{1\}$. For any $i\in [\![1,p]\!]\sqcup [\![{1},{q}]\!]$ we join $i$ with $\sigma(i)$ using a path $\overline{i\sigma(i)} = \overline{\sigma(i)i}$ contained in $[-1,1]^2$ and such that its intersection with $\partial [-1,1]^2$ consists only of $i$ and $\sigma(i)$. We suppose that $\bigcap_{i}\overline{i\sigma(i)}$ is a finite set and that all the intersections are transversal.  We consider such a diagram up to homotopy. Reciprocally, such a diagram gives rise to a Brauer diagram. See Figure~\ref{figureEx1_14}$(a)$ and $(b)$ for some examples. 
\begin{figure}[ht!]
										\centering
                        \includegraphics[scale=0.8]{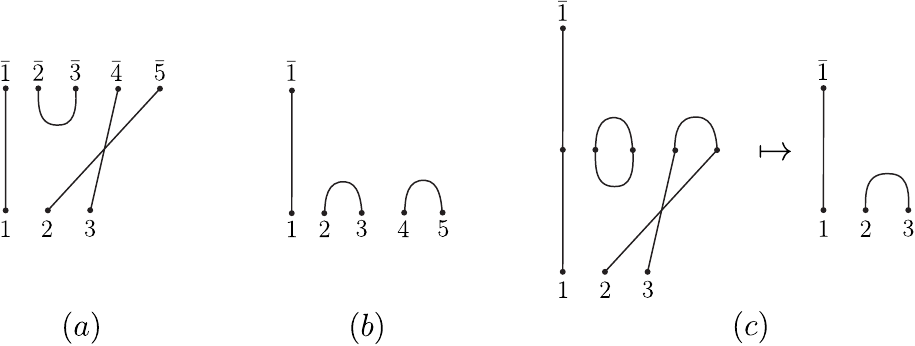}										
\caption{$(a)$ Schematic representation of the Brauer diagram $(3,5,\sigma)$ where $\sigma\in\mathrm{FPFI}\big([\![1,3]\!]\sqcup [\![{1},{5}]\!]\big)$ is given by $\sigma(1)=\bar{1}$, $\sigma(2)=\bar{5}$, $\sigma(3)=\bar{4}$, $\sigma(\bar{2})=\bar{3}$. $(b)$  Schematic representation of the Brauer diagram $(5,1,\tau)$ where $\tau\in\mathrm{FPFI}\big([\![1,5]\!]\sqcup [\![{1},{1}]\!]\big)$ is given by $\tau(1)=\bar{1}$, $\tau(2)=3$, $\tau(4)=5$.   $(c)$  Schematic  representation of the composed Brauer diagram $(5,1,\tau)\circ_{\mathcal Br} (3,5,\sigma)=(3,1,\rho)$ where $\rho\in\mathrm{FPFI}\big([\![1,3]\!]\sqcup [\![{1},{1}]\!]\big)$ is given by $\rho(1)=\bar{1}$ and $\rho(2)=3$.}
\label{figureEx1_14} 								
\end{figure}

In the case of an oriented Brauer diagram $((p,q,\sigma), B)$ we orient the path $\overline{b\sigma(b)}$ from $b$ to $\sigma(b)$ for~$b\in B$. 

For an oriented Brauer diagram $a:=((p,q,\sigma), B)$ we can read the source and target words in $\{+,-\}^*$ in its schematic representation using the following convention: for $i\in[\![1,p]\!]$ we have that $\mathrm{s}(a)(i)=+$  if the orientation of the schematic representation points toward $i$ and $\mathrm{s}(a)(i)=-$ otherwise; and for $i\in[\![{1},{q}]\!]$ we have  $\mathrm{t}(a)(\bar{i})=-$  if the orientation of the schematic representation points towards $\bar{i}$ and $\mathrm{t}(a)(\bar{i})=+$ otherwise. See Figure~\ref{figureEx1_14b} $(a)$ and $(b)$ for some examples.
\begin{figure}[ht!]
										\centering
                        \includegraphics[scale=0.8]{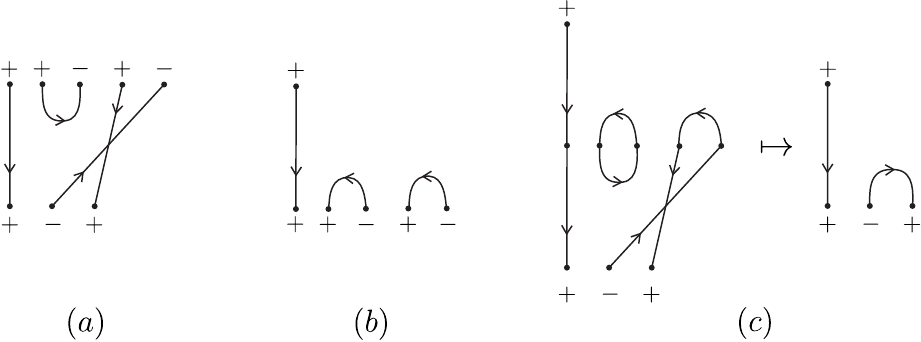}										
\caption{$(a)$ Schematic representation of the oriented Brauer diagram $((3,5,\sigma), B)$ with $\sigma(1)=\bar{1}$, $\sigma(2)=\bar{5}$, $\sigma(3)=\bar{4}$, $\sigma(\bar{2})=\bar{3}$ and $B=\{\bar{1},\bar{2}, \bar{4},2\}$. $(b)$ Schematic representation of the oriented Brauer diagram  $((5,1,\tau), B')$ where $\tau\in\mathrm{FPFI}\big([\![1,5]\!]\sqcup [\![{1},{1}]\!]\big)$ is given by $\tau(1)=\bar{1}$, $\tau(2)=3$, $\tau(4)=5$ and $B'=\{\bar{1},3, 5\}$.   $(c)$  Schematic representation of the composed oriented Brauer diagram $((5,1,\tau),B')\circ_{\vec{\mathcal Br}} ((3,5,\sigma),B)=((3,1,\rho), B'')$ where $\rho\in\mathrm{FPFI}\big([\![1,3]\!]\sqcup [\![{1},{1}]\!]\big)$ is given by $\rho(1)=\bar{1}$ and $\rho(2)=3$ and $B''=\{\bar{1},2\}$.}
\label{figureEx1_14b} 								
\end{figure}

The tensor product is described schematically by horizontal concatenation of the schematic representations and horizontal rescaling. The composition is described schematically  by vertical concatenation of the schematic representations (read from bottom to top), vertical rescaling and deletion of the circle components, see Figure~\ref{figureEx1_14}$(c)$ Figure~\ref{figureEx1_14b}$(c)$ for examples of composition in $\underline{\mathcal Br}$ and $\underline{\vec{\mathcal Br}}$, respectively.

The canonical functor $\underline{\vec{\mathcal Br}}\to \underline{\mathcal Br}$ is described schematically by forgetting the orientation of the paths. For instance, the image under this functor of the oriented Brauer diagrams in Figure~\ref{figureEx1_14b} is the Brauer diagrams of Figure~\ref{figureEx1_14}.

The Brauer diagram $\mathrm{dbl}_{\vec{\mathcal Br}}\big(\big((p,q,\sigma),B\big),A\big)$ obtained by a doubling operation   is described schematically by the parallel doubling of the paths with endpoints in $A$. We represent the paths in $A$ by using thickened lines. The orientation of the new paths after the doubling is compatible with that of the paths before the doubling. In Figure~\ref{figureEx1_19}$(a)$ and $(b)$ we shown an example of doubling for an oriented Brauer diagram.

\begin{figure}[ht!]
										\centering
                        \includegraphics[scale=0.8]{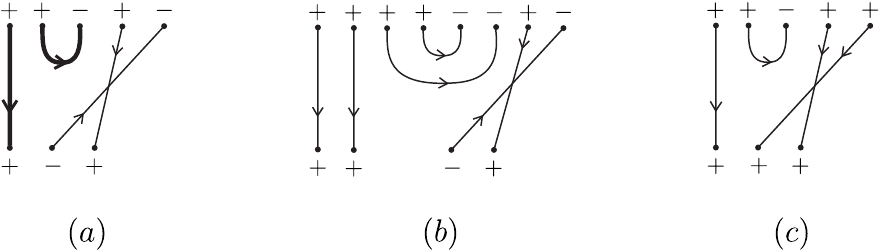}
												
\caption{$(a)$ Schematic representation of the oriented Brauer diagram $((3,5,\sigma), B)$ with $\sigma(1)=\bar{1}$, $\sigma(2)=\bar{5}$, $\sigma(3)=\bar{4}$, $\sigma(\bar{2})=\bar{3}$ and $B=\{\bar{1},\bar{2}, \bar{4},2\}$ and we use thickened lines to represent $A=\{\{1,\bar{1}\}, \{\bar{2},\bar{3}\}\}\subset \pi_0((3,5,\sigma), B)_{\mathrm{seg}}$. $(b)$ Schematic representation of the oriented Brauer diagram $\mathrm{dbl}_{\vec{\mathcal Br}}\big(\big((3,5,\sigma), B\big),A\big)$. $(c)$~schematic representation of $\mathrm{co}_{\vec{\mathcal Br}}\big(\big((3,5,\sigma), B\big),2\big)$.}
\label{figureEx1_19} 								
\end{figure}

The change of orientation operation is described schematically by changing the orientation of the corresponding path, see Figure~\ref{figureEx1_19}$(a)$ and $(c)$ for an example.

\section{Links, tangles, KII map, Kirby bialgebras, pre-LMO structures and invariants of 3-manifolds}\label{sec::2}

We start this section by recalling the description  of 3-manifolds based on surgery along framed oriented links in $S^3$ and the Lickorish-Wallace and Kirby theorems (\S\ref{sect:1:2jan2020}). In \S\ref{sec:3.1} we introduce basic definitions about the set $\vec{\mathcal{T}}$ of framed oriented tangles, its categorical interpretation and its relation with oriented Brauer diagrams. In \S\ref{sect:4:2jan2020} we define  an orientation-reversal operation and the set $\mathrm{CO}\vec{\mathcal{T}}=\sqcup_{k\geq 0}\mathrm{CO}\vec{\mathcal{T}}_k$.  In \S\ref{sect:3:2jan2020} we introduce the doubling map for oriented framed tangles. In \S\ref{sec:new2.5} we define the KII map and the set $\mathrm{KII}\vec{\mathcal T}$. In \S\ref{sec:3.5} we give an alternative description of the set $\mathrm{KII}\vec{\mathcal{T}}$ as well as a description of it using the category of parenthesized framed oriented tangles $\mathcal Pa\vec{\mathcal{T}}$.  In \S\ref{sec:3.7} we use the sets $\mathrm{CO}\vec{\mathcal T}$ and $\mathrm{KII}\vec{\mathcal{T}}$ to define the Kirby monoid which allows to describe the monoid of $3$-manifolds by reinterpretating the Lickorish-Wallace and Kirby theorems. We  introduce the notions of Kirby bialgebra, (semi)Kirby structures and their relation with the construction of invariants of $3$-manifolds. Finally in \S\ref{sec:3.8} we define the notion of pre-LMO structure and we show that it gives rise to a semi-Kirby structure.

\subsection{Presentation of \texorpdfstring{$3$}{3}-manifolds based on links in \texorpdfstring{$S^3$}{S}}\label{sect:1:2jan2020}

\subsubsection{Framed oriented links in \texorpdfstring{$S^3$}{S}}

\begin{definition} 
\begin{itemize}
\item[$(a)$]  An \emph{oriented link} of $l$ components  in $S^3$ is an embedding of $l$ disjoint copies of $S^1$ in $S^3$, 
$\coprod_{i=1}^l{S^1}\hookrightarrow S^3$. We denote the image of the embedding by $L=K_1\cup\cdots\cup K_l$. See 
Figure \ref{fig:example:link}.
\begin{figure}[H]
\centering
\includegraphics[width=35mm]{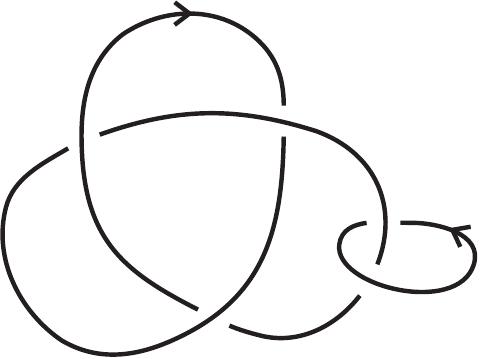}
\caption{Oriented link of two components.}\label{fig:example:link}
\end{figure}
\item[$(b)$] A \emph{framed oriented  link} $L=(L,f)$ is an oriented link $L=K_1\cup\cdots\cup K_l$ with a  \emph{framing} $f=(f_1,\ldots, f_l)$, where $f_i$ is the isotopy class of a section of the projection $\partial N(K_i)\rightarrow K_i$, where $N(K_i)$ is a tubular neighborhood of $K_i$. We orient $f_i$ by using the orientation of $K_i$.
See Figure~\ref{fig:example:framed:link}. 
\begin{figure}[H]
\centering
\includegraphics[width=100mm]{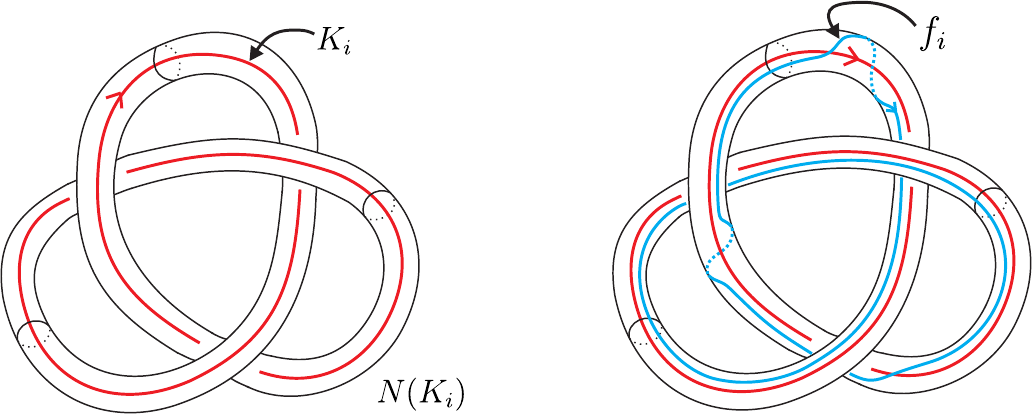}
\caption{Tubular neighborhood and framing of the component $K_i$.}\label{fig:example:framed:link}
\end{figure}
\end{itemize}
\end{definition}

We are interested in framed oriented links up to isotopy. We use the following convention.
\begin{convention}\label{conventionlinks1} If $L$ is a framed oriented link, we denote by $\mathbf{L}$ its class modulo isotopy. Reciprocally, if $\mathbf{L}$ is the isotopy class of a framed oriented link, we write $L$  to denote one of its representatives.
\end{convention}

\subsubsection{Diagrammatic description of framed oriented  links}\label{sect:112}

An {\it oriented link diagram} is a planar diagram which may be obtained as a regular projection of an oriented link. Such a diagram gives rise to a framed oriented  link by the convention that the framing vector at a given point is contained in the plane of the figure, perpendicular to 
the tangent vector at this point, and such that the pair {(tangent vector, framing vector) gives the counterclockwise orientation of the plane.}  

By the Reidemeister theorem (see \cite{BuZie}), (oriented) links modulo isotopy can be coded by (oriented) link diagrams modulo planar isotopy 
and moves denoted RI, RII and RIII. It then follows (see for instance \cite[Thm~1.8]{Ohts}) that framed oriented links modulo isotopy are coded by 
planar diagrams modulo planar isotopy and moves RI', RII and RIII shown in Figure \ref{fig:reid}. 
\begin{figure}[H]
\centering
\includegraphics[scale=0.65]{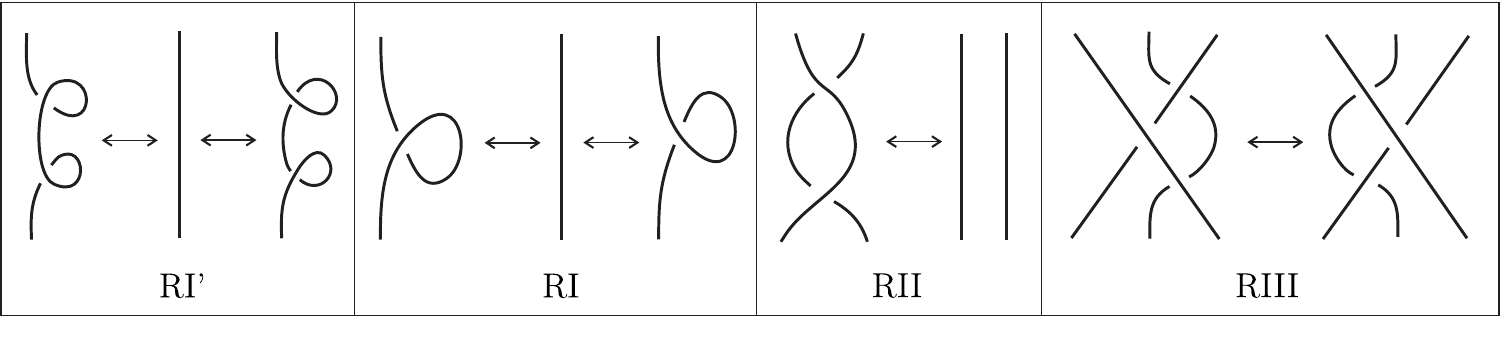}
\caption{Reidemeister moves. The orientation is arbitrary.}\label{fig:reid}
\end{figure}

Summarizing, one gets  a commutative diagram 
$$
\xymatrix{
\dfrac{\{\text{oriented link diagrams}\}}{\text{planar isotopy, RI', RII, RIII}}\ar[d]\ar^-{\sim}[r]&\dfrac{\{\text{framed oriented links in }S^3\}}{\text{isotopy}}\ar[d] \\ 
\dfrac{\{\text{oriented link diagrams}\}}{\text{planar isotopy, RI, RII, RIII}}\ar^-{\sim}[r]& 
\dfrac{\{\text{oriented links in }S^3\}}{\text{isotopy}}. 
} 
$$
The right vertical map decomposes as a disjoint union according to the number $l$ of connected components. For each $l\geq1$, the group 
$\mathbb Z^l$ acts simply on the source of this map and the quotient identifies with its target.

\subsubsection{Linking number}\label{sec:linkingnumber}

Let $K$ and $K'$ be oriented knots with empty intersection, define the \emph{linking number} $\mathrm{Lk}(K,K')\in \mathbb{Z}$\index[notation]{Lk@$\mathrm{Lk}(K,K')$}  of $K$ and $K'$ as follows. Consider a diagram of $K\sqcup K'$, denote by $\mathrm{cr}(K,K')$ the set of crossings of the diagram. For each $c\in\mathrm{cr}(K,K')$ associate a sign $\mathrm{sgn}(c)\in\{+1,-1\}$ according to the rule shown in Figure~\ref{fig:linkingnumber}.
\begin{figure}[H]
\centering
\includegraphics[scale=0.65]{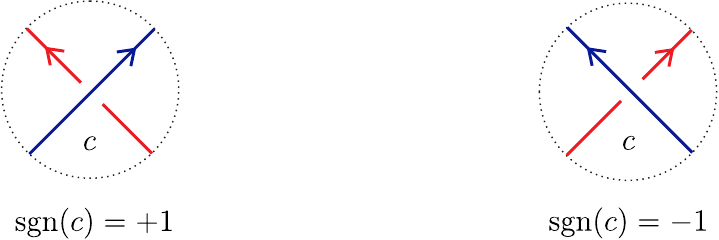}
\caption{Sign of crossings.}\label{fig:linkingnumber}
\end{figure}

Set $$\mathrm{Lk}(K,K')= \frac{1}{2}\sum_{c\in\mathrm{cr}(K,K')}\mathrm{sgn}(c).$$

It can be checked easily that this number is well-defined, i.e., it does not depend on the diagram of $K\sqcup K'$. By definition $\mathrm{Lk}(K,K')= \mathrm{Lk}(K',K)$. For more details and for several equivalent definitions of the linking number we refer to~\cite[Ch.~5]{Rolf}.

For $L$ a framed oriented link, $\pi_0(L)$ is the set of its connected components. For $K \in \pi_0(L)$, define $\mathrm{Lk}(K):=\mathrm{Lk}(K,f_K)$ where $f_K$ is the oriented knot obtaining from $K$ by pushing it along its framing. Then a symmetric matrix $\mathrm{Lk}(L)$ indexed by $\pi_0(L)$ and with integer coefficients is defined by $\mathrm{Lk}(L)_{K,K'}:=\mathrm{Lk}(K,K')$ and $\mathrm{Lk}(L)_{K,K}:=\mathrm{Lk}(K)$.  We call $\mathrm{Lk}(L)$ the \emph{linking matrix} of $L$. This matrix is an isotopy invariant of the framed oriented link $L$, therefore we can also write $\mathrm{Lk}(\mathbf{L})$.

\subsubsection{The surgery map}

Let us recall the \emph{Dehn surgery map} 
\begin{equation}\label{dehn:surg:map}
\begin{array}{rcl}
\dfrac{\{\text{framed oriented  links in }S^3\}}{\text{isotopy}} & \longrightarrow &\dfrac{\{\text{closed connected oriented 3-manifolds}\}}{\text{orientation-preserving homeomorphism}} \\
& & \\
L=(L,f)&\longmapsto&S^3_L
\end{array}
\end{equation}

\noindent (see e.g.  \cite[Ch. 12]{LickBook} and \cite[Ch. 9]{Rolf}). 
Start with a framed oriented  link $L=(L,f)=(K_1\cup\cdots\cup K_l,(f_1,\ldots, f_l))$ in $S^3$. Let $N(L)=\cup_{i=1}^l N(K_i)$ be 
a tubular neighborhood of $L$. 
For $i\in[\![1,l]\!]$, we equip $\partial N(K_i)$ with the orientation induced from that of $S^3\setminus \mathrm{int}(N(K_i))$ by using the convention 
of the first normal vector\footnote{\raggedright For $p\in\partial N(K_i)$, the orientation of $S^3$ is a choice of an element of 
$\pi_0\mathrm{Iso}(\mathbb R,\Lambda^3(T_p S^3))$, therefore in $\pi_0\mathrm{Iso}(T_p\partial N(K_i)^\perp,
\Lambda^2(T_p\partial N(K_i)))$, where $T_p\partial N(K_i)^\perp\subset T_p^* S^3$. An element in 
$\pi_0\mathrm{Iso}(\mathbb R,T_p\partial N(K_i)^\perp)$ is chosen by taking~1 to a linear form which is positive on vectors pointing 
outside $S^3\setminus N(K_i)$ (so inside $N(K_i)$). By composition, this gives an element in $\pi_0\mathrm{Iso}(\mathbb R,\Lambda^2(T_p\partial N(K_i)))$, therefore an orientation of $\partial N(K_i)$.}. We identify $f_i$ with a map $S^1\to\partial N(K_i)$. We also denote by $m_i:S^1\to\partial N(K_i)$ the meridian map, oriented in such a way that (tangent vector of $f_i$, tangent vector of $m_i$) gives the opposite orientation of $\partial N(K_i)$. 

Recall that $\mathbb D^2\times S^1$ has boundary $ S^1\times S^1$. Let $\ell,m:
 S^1\to S^1\times  S^1$ the maps $x\mapsto (x,*)$ and $x\mapsto(*,x)$, where $*$ is a fixed point in $ S^1$. 
For $i\in[\![1,l]\!]$, there is a unique (up to isotopy) homeomorphism
$$
h_i : \partial(\mathbb D^2\times S^1)\simeq S^1\times S^1 { \longrightarrow } \partial N(K_i), 
$$
such that $h_i\circ \ell=m_i$ and $h_i\circ m=f_i$ (see Figure \ref{fig:att:map}). One then sets 
$$
 S^3_L:=\big( S^3 \setminus \cup_{i=1}^l \mathrm{int}(N(K_i))\big)\bigcup_{(h_1,\ldots,h_l)}
\big(\sqcup_{i=1}^l(\mathbb D^2\times S^1)\big). 
$$
\begin{figure}[ht]
\centering
\includegraphics[width=110mm]{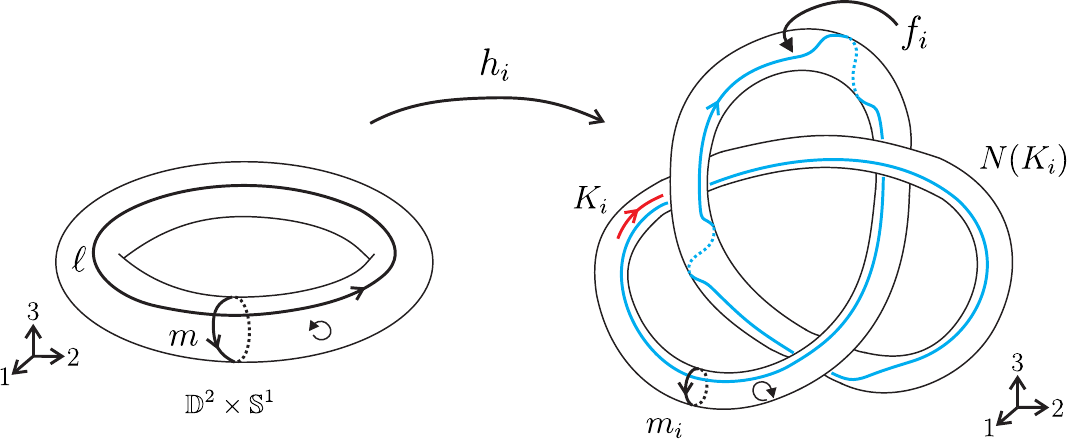}
\caption{Attaching map.}\label{fig:att:map}
\end{figure}

The obtained compact oriented $3$-manifold $S^3_L$ does not depend on the isotopy class of $L$, therefore we can also write $S^3_{\mathbf{L}}$.

\subsubsection{Surjectivity and fibers of the surgery map}

According to the Lickorish-Wallace theorem (\cite{Lick,Wal}), the Dehn surgery map (\ref{dehn:surg:map}) is surjective. 

By contrast, this map is far from being injective. The following theorem allows one to relate the different links giving rise to the same 3-manifold.

\begin{theorem}[\cite{Kir}] \label{thm:kirby}
 Let ${L}$ and ${L}'$ be framed oriented  links in $S^3$. The 3-manifolds $S_{{L}}^3$ and $S_{{L}'}^3$ are homeomorphic if and only if ${L}$ and ${L}'$ are related by a finite sequence of the following moves:

0) Change of orientation of some components.  

1)  \emph{Kirby I (KI) move}. Starting with a framed oriented link $L$, perform a disjoint union with the $\pm1$-framed unknot (see Figure \ref{figuresunknots}). \index[notation]{U^\pm@$U^\pm$}

2)  \emph{Kirby I' (KI') move}. Inverse operation to KI move.  

3)  \emph{Kirby II (KII) move}. Starting with a framed oriented link $L$, select two different components~$L_a$ and~$L_b$. Take a parallel 
copy of~$L_b$ following its framing and label such copy by~$L_{b'}$. Then, choose a way of performing an orientation-compatible band 
sum  between $L_{b'}$ and $L_a$. This results in a new framed oriented link $L'$ (some examples are shown in Figure~\ref{figuraKI10_ell}). 
\end{theorem}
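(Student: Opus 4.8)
The plan is to prove the two implications separately, the forward (``if'') direction being routine and the converse being the substance of the theorem. For the forward direction I would check move by move that $S^3_L$ is unchanged up to orientation-preserving homeomorphism. Move 0) is handled by noting that in the construction of $S^3_L$ the exterior $S^3 \setminus \mathrm{int}(N(L))$ is untouched and the gluing data on $\partial N(K_i)$ consists of the pair (meridian $m_i$, framing curve $f_i$); reversing the orientation of $K_i$ reverses both $m_i$ and $f_i$ simultaneously, which does not affect the homeomorphism type of the regluing. For KI and KI', I would observe that a $\pm 1$-framed unknot $U^{\pm}$ lying in a ball disjoint from $L$ has solid-torus exterior, and that regluing a solid torus sending its meridian to the $(\pm1)$-curve reproduces $S^3$; hence $S^3_{L \sqcup U^{\pm}} \cong S^3_L \# S^3 \cong S^3_L$. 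For KII, I would pass to the $4$-dimensional trace $W_L := B^4 \cup_L (\text{2-handles})$, a compact oriented $4$-manifold with $\partial W_L = S^3_L$, and use that an orientation-compatible band sum of a component $L_a$ with a framed parallel copy of $L_b$ is exactly the effect on attaching circles of sliding the $a$-th $2$-handle over the $b$-th; since a handle slide produces a diffeomorphic $4$-manifold, $\partial W_{L'} \cong \partial W_L = S^3_L$. The one point requiring care here is the bookkeeping for framings: one must check that the framing of the new component and its linking numbers transform exactly as the band sum dictates, i.e.\ that $\mathrm{Lk}(L')$ is obtained from $\mathrm{Lk}(L)$ by a congruence transformation $E\,\mathrm{Lk}(L)\,E^{\mathrm{t}}$ with $E$ an elementary matrix, which is a direct computation with the intersection form.

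The converse is Kirby's theorem proper, and I would only sketch it, following \cite{Kir} (see also the streamlined argument in \cite{Lu}); a complete proof lies beyond the scope of a survey. Given an orientation-preserving homeomorphism $h : S^3_L \to S^3_{L'}$, the idea is to glue the two traces along $h$ to obtain a closed, simply connected, oriented $4$-manifold $X := W_L \cup_h (-W_{L'})$. This $X$ admits a handle decomposition built from a single $0$-handle, a family of $2$-handles attached along (a framed link isotopic to) $L$, a further family of $2$-handles which, read with the decomposition turned upside down, are attached along (a framed link isotopic to) $L'$, and a single $4$-handle; in particular no $1$- or $3$-handles occur, and $L$, $L'$ are the two ``halves'' of one and the same handle decomposition, separated by the middle level $S^3_L \cong S^3_{L'}$. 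One then appeals to Cerf theory: any two handle decompositions of the closed $4$-manifold $X$ — here, the given one and the one obtained by sweeping the middle level across the handles — differ by a finite sequence of handle isotopies, handle slides, and births/deaths of cancelling handle pairs.

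The final step, and the technical heart of the argument, is to translate these modifications back into moves on framed links in $S^3$: a slide of a $2$-handle over a $2$-handle becomes a KII move; the birth or death of a cancelling $2$/$3$-handle pair in which the $2$-handle is attached along a $\pm 1$-framed unknot becomes a KI or KI' move; and the changes of orientation of attaching circles forced along the way are instances of move 0). The obstacle is precisely in making this dictionary precise while keeping the handle indices under control (so that the decomposition never acquires $1$- or $3$-handles that cannot be read as link data) and in verifying that every Cerf modification can indeed be reduced to the three listed families of moves; this is carried out in detail in \cite{Kir, Lu}.
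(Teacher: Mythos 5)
The paper does not prove this theorem at all: it is quoted from Kirby with the citation \cite{Kir} (and \cite{Lu}), and your outline is precisely the standard argument from those sources — direct verification that the surgery manifold is unchanged under move 0) (simultaneous reversal of meridian and framing curve extends over the reglued solid torus), under KI/KI$'$ (surgery on $U^{\pm}$ gives $S^3$, so the move is connected sum with $S^3$), and under KII (a handle slide on the trace $W_L$, with the linking matrix transforming by an elementary congruence), followed by the Cerf-theoretic argument for the converse. Your sketch is correct as far as it goes; the one step you leave entirely to the references, namely translating the Cerf modifications into the listed moves while keeping $1$- and $3$-handles out of the picture, is indeed the technical heart of Kirby's proof, and it is exactly the point at which the survey itself also simply defers to \cite{Kir,Lu}.
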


\begin{figure}[ht]
										\centering
                        \includegraphics[scale=0.8]{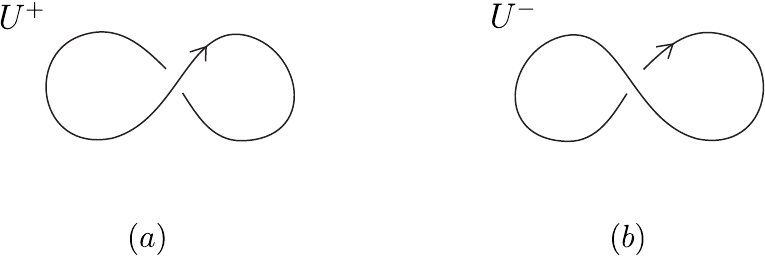}
\caption{The $\pm1$-framed unknots $U^\pm$.}
\label{figuresunknots} 								
\end{figure}

\begin{remark} One checks that $U^+$ and $U^-$ are invariant under the operation of change of orientation.
\end{remark} 

\begin{figure}[ht]
										\centering
                        \includegraphics[scale=0.9]{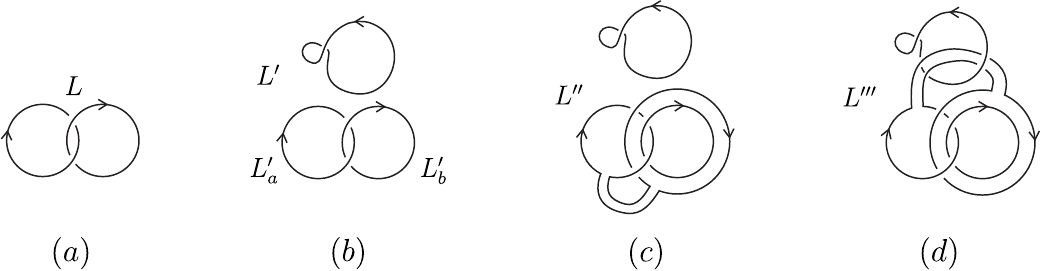}
												
\caption{The link $L'$ is obtained from $L$ by a KI move using a $+1$-framed unknot. Both $L''$ and $L'''$ are obtained from $L'$ by a KII move. These moves are based on different ways of performing band sums, which results in $L''$ and $L'''$ being non-isotopic.}
\label{figuraKI10_ell} 								
\end{figure}

Notice that Theorem~\ref{thm:kirby} can be stated replacing framed oriented links by isotopy classes of framed oriented links.

\begin{remark} By the change of orientation move we can ignore the orientation of the components of a surgery presentation of a 3-manifold.
\end{remark}
\begin{example}
Theorem \ref{thm:kirby} can be used to show combinatorially that the 3-manifolds $S_{H}^3$ and 
$S^3$ are homeomorphic, where $H$ is the $0$-framed Hopf link. That is, by means of Kirby moves we can go from the link $H$ to the empty link. See Figure \ref{figure:Example-Kirby-Calculus}.
\begin{figure}[ht]
										\centering
                        \includegraphics[scale=0.9]{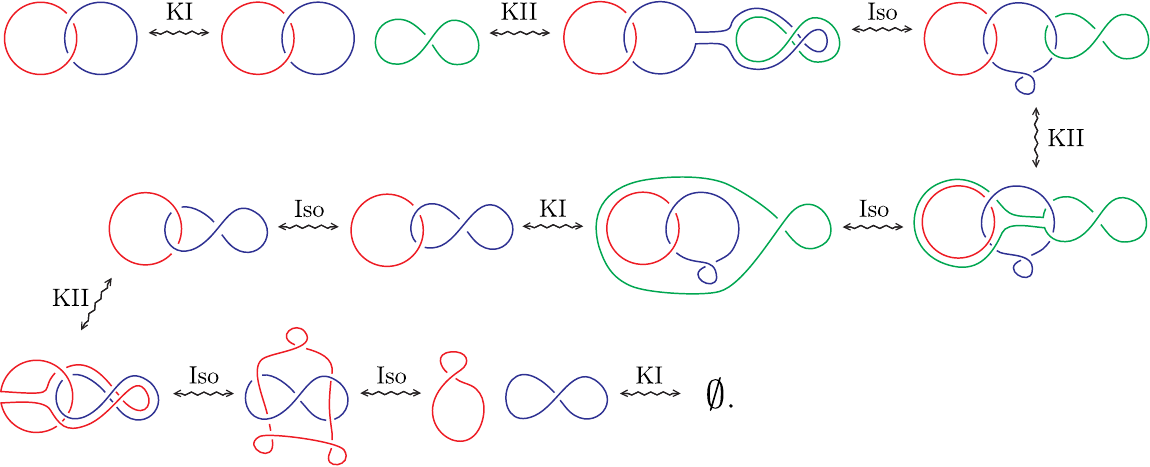}
\caption{Combinatorial proof of the homeomorphism between $S_{H}^3$ and $S^3$. In this figure by ``Iso'' we mean planar isotopies or Reidemeister moves.}		
\label{figure:Example-Kirby-Calculus}						
\end{figure}
\end{example}

\begin{definition}\label{def:disjointandconnectedcum}
$(a)$ Let  $L$ and $L'$ be framed oriented links in $S^3$. We define the \emph{disjoint union} of $L$ and $L'$, denoted by $L\dot{\sqcup} L'$, as the framed oriented link in $S^3$ consisting of $L$ and $L'$ and such that there exists an embedded $3$-ball $B\subset S^3$ with $L\in\mathrm{int}(B)$ and $L'\cap B=\emptyset$. This operation does not depend on the isotopy classes of $L$ and $L'$, therefore we can also write $\mathbf{L}\dot{\sqcup}\mathbf{L}'$.

$(b)$ Let $Y$ and $Y'$ be closed connected oriented $3$-manifolds.  We define the \emph{connected sum} of $Y$ and $Y'$,   denoted by $Y\# Y'$, as the closed connected oriented $3$-manifold $(Y\setminus \mathrm{int}(B))\sqcup_{\partial B\sim \partial B'} (Y'\setminus \mathrm{int}(B'))$ obtained from $Y$ and $Y'$ by removing the interior of embedded $3$-balls $B\subset Y$, $B'\subset Y'$ and identifying the corresponding boundaries with an orientation-compatible gluing map.
\end{definition}

The following is straightforward, see for instance \cite{Hat:notes, LickBook}.

\begin{lemma}\label{def:monoid3man} 
$(a)$ The disjoint union operation $\dot{\sqcup}$ endows the set of isotopy classes of framed oriented links in $S^3$  with a commutative monoid structure with the (isotopy class of the) empty link as unit element. 

$(b)$ The set   $3\text{-}\mathrm{Mfds}$\index[notation]{3Mfds@$3\text{-}\mathrm{Mfds}$}  of homeomorphism classes of closed connected oriented $3$-manifolds has a structure of commutative monoid with product $\#$ given by the connected sum and  unit element the homeomorphism class of~$S^3$.

$(c)$  The surgery map~\eqref{dehn:surg:map} defines a monoid homomorphism
\begin{equation}\label{eq:monoidmorphismlinksto3man}
\left(\dfrac{\{\text{framed oriented links in } S^3\}}{\text{isotopy}}, \dot{\sqcup}\right) \longrightarrow (3\text{-}\mathrm{Mfds}, \# ),
\end{equation}
that is,  if $Y=S^3_L$ and $Y'=S^3_{L'}$ for some framed oriented links $L$ and $L'$ in $S^3$, then $Y\#Y' = S^3_{L\dot{\sqcup} L'}$. More formally, $[Y\#Y']=S^3_{\mathbf{L}\dot{\sqcup}\mathbf{L}'}$, where  $[Y\#Y']$ denotes the homeomorphism class of $Y\#Y'$.
\end{lemma}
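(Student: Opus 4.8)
The plan is to dispatch the three assertions in turn, invoking standard low-dimensional topology for which references are indicated in the statement.

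For $(a)$, I would first check that $\dot\sqcup$ is well-defined on isotopy classes: an embedded $3$-ball $B$ with $L\subset\mathrm{int}(B)$ and $L'\cap B=\emptyset$ exists by general position, and by the isotopy extension theorem any two such choices are related by an ambient isotopy of $S^3$, so that the isotopy class of $L\dot\sqcup L'$ depends only on those of $L$ and $L'$. Commutativity follows because one may interchange $B$ with the complementary ball $S^3\setminus\mathrm{int}(B)$ by an orientation-preserving self-homeomorphism of $S^3$ isotopic to the identity; associativity follows by realizing $L$, $L'$, $L''$ inside three pairwise disjoint balls; and the empty link is evidently a two-sided unit.

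For $(b)$, the key input is the disc theorem: any two orientation-preserving embeddings $\mathbb D^3\hookrightarrow Y$ of a $3$-ball into a connected oriented $3$-manifold are ambient isotopic, and every orientation-reversing self-homeomorphism of $S^2$ is isotopic to a fixed one. This makes $Y\#Y'$ well-defined on orientation-preserving homeomorphism classes, and it is plainly connected and oriented. Commutativity is built into the symmetric definition; associativity again follows by removing three disjoint balls; and $Y\#S^3=Y$ because removing an open ball from $S^3$ leaves a ball, so regluing it undoes the removal on the $Y$-side.

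Part $(c)$ is the substantive point, and the one I expect to require the most care. I would let $Y=S^3_L$ and $Y'=S^3_{L'}$, form $L\dot\sqcup L'$ using a ball $B$ as above, and write $S^3=B\cup_S C$ with $S=\partial B\cong S^2$, where both $B$ and $C:=S^3\setminus\mathrm{int}(B)$ are $3$-balls, $L\subset\mathrm{int}(B)$, $L'\subset\mathrm{int}(C)$. Since Dehn surgery is supported in a tubular neighborhood of the link and $N(L)\subset\mathrm{int}(B)$, $N(L')\subset\mathrm{int}(C)$, one obtains a decomposition $S^3_{L\dot\sqcup L'}=B_L\cup_S C_{L'}$, where $B_L$ (resp.\ $C_{L'}$) denotes the result of carrying out the surgery inside $B$ (resp.\ inside $C$). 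Performing the surgery on $L$ alone gives $S^3_L=B_L\cup_S C$; as $C$ is a $3$-ball, this exhibits $B_L$ as $S^3_L$ with the interior of an embedded $3$-ball removed, and symmetrically $C_{L'}$ as $S^3_{L'}$ minus the interior of a ball. Hence $S^3_{L\dot\sqcup L'}$ is obtained by gluing $S^3_L$ minus an open ball to $S^3_{L'}$ minus an open ball along their boundary $2$-spheres, which is precisely $S^3_L\#S^3_{L'}$; it then remains to verify that the orientations inherited from $S^3$ make this gluing orientation-reversing in the sense of Definition~\ref{def:disjointandconnectedcum}$(b)$, so that the identification is orientation-preserving. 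The homomorphism property in~\eqref{eq:monoidmorphismlinksto3man} will then follow, using in addition that $S^3_{\varnothing}=S^3$ carries the unit to the unit.

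The hard part will be part $(c)$: precisely, justifying the ``localization'' decomposition $S^3_{L\dot\sqcup L'}=B_L\cup_S C_{L'}$ directly from the definition of the surgery map, and tracking the orientation conventions so that the resulting gluing matches the connected-sum convention of Definition~\ref{def:disjointandconnectedcum} exactly. The remaining steps are routine assemblies of classical facts.
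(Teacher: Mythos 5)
Your plan is correct: the paper itself offers no argument, simply declaring the lemma straightforward and citing Hatcher's notes and Lickorish's book, and your sketch (disjoint-ball well-definedness for $\dot\sqcup$, the disc theorem for $\#$, and the localization $S^3_{L\dot\sqcup L'}=B_L\cup_S C_{L'}$ with $B_L\cong S^3_L$ minus an open ball) is exactly the standard argument those references supply. No gaps beyond the orientation bookkeeping you already flag, which is routine.
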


\subsection{Framed oriented tangles: basic definitions}\label{sec:3.1}

In this subsection we introduce the basic terminology about framed oriented tangles as well as some of their operations.

\subsubsection{Framed oriented tangles: first operations} 

\begin{definition}\label{def:framedtangle}

$(a)$  A \emph{tangle} $T$ is  a compact $1$-manifold  properly embedded in the cube $[-1,1]^3\subset\mathbb{R}^3$, 
such that the boundary $\partial T$ (the \emph{endpoints} of $T$) is uniformly distributed along $\{0\}\times (-1,1)\times\{\pm 1\}$.

$(b)$  A tangle $T$ is \emph{oriented} if each of its components is endowed with an orientation.

$(c)$ A \emph{framed tangle} is a tangle $T$ such that each component is endowed with a framing which takes the value $(0,1,0)$   at each endpoint $t\in\partial T$.

$(d)$  A \emph{framed oriented tangle} is an oriented  tangle $T$ such that each component is endowed with a framing which takes the value $\pm(0,1,0)$,  the sign being determined  as follows: if $v$ is the unit tangent vector  and $n_v$ is the unit normal vector at an endpoint, then the tuple $(n_v,v,(1,0,0))$ is a positive frame for $\mathbb{R}^3$.

 We consider framed (oriented) tangles up to isotopy. We denote by $\mathcal{T}$\index[notation]{T1@$\mathcal{T}$} and (resp. $\vec{\mathcal{T}}$\index[notation]{T2@$\vec{\mathcal{T}}$}) the set of framed (resp. framed oriented) tangles  up to isotopy.

Notice that there is a canonical map $\vec{\mathcal T}\to \mathcal T$ which forgets the orientation. 
\end{definition}

\begin{convention}\label{conventionfortangles} If $T$ is a framed (oriented) tangle, we denote by $\mathbf{T}$ its class modulo isotopy.  Similarly, if $\mathbf{T}$ is the isotopy class of a framed (oriented) tangle, we denote by $T$ one of its representatives.
\end{convention}

\begin{definition}  Let $T$ be a framed  tangle.
\begin{itemize}
\item[$(a)$] Denote by $\partial_b T$ (resp. $\partial_t T$) the subset of $\partial T$ contained in $\{0\}\times [-1,1]
\times\{-1\}$ (resp. in $\{0\}\times [-1,1]\times\{+1\}$) and call it the \emph{bottom} (resp. \emph{top}) 
boundary of $T$. The embedding of $\partial T$ into $\{0\}\times [-1,1]\times \{\pm 1\}$ allows to equip $\partial_bT$ and $\partial_tT$ with linear orders. We use these orders to  identify the elements in $\partial_bT$ with the set $[\![1,|\partial_b T|]\!]$ and the elements in $\partial_tT$ with the set $[\![{1},{|\partial_t T|}]\!]$.

\item[$(b)$] Define $\sigma_T\in\mathrm{FPFI}\big([\![1,|\partial_b T|]\!]\sqcup [\![{1},{|\partial_t T|}]\!]\big)$ by $\sigma(a)=b$ if and only if there exists $C\in\pi_0(T)$ such that $\partial C=\{a,b\}$. 

\item[$(c)$] If moreover $T$ is oriented, we define $B_T\subset [\![1,|\partial_b T|]\!]\sqcup [\![{1},{|\partial_t T|}]\!]$  to be the image  under the identifications $\partial_b T\simeq [\![{1},{|\partial_b T|}]\!]$ and  $\partial_t T\simeq [\![{1},{|\partial_t T|}]\!]$ of the beginning points of $\partial T$, i.e., the points where the orientation of $T$ is outgoing.
\end{itemize}
\end{definition}

\begin{definition}\label{definitionsigmaT}
\begin{itemize}
\item[$(a)$] Let $T$ be a framed tangle. Define \emph{the underlying  Brauer diagram of $T$}, denoted~$\mathrm{br}(T)$, by
$\mathrm{br}(T):=(|\partial_b T|, |\partial_t T|, \sigma_T)\in \mathcal Br$.
\item[$(b)$] Let $T$ be a framed oriented tangle. Define \emph{the underlying  oriented Brauer diagram of $T$}, denoted ~$\vec{\mathrm{br}}(T)$, by $ \vec{\mathrm{br}}(T):=\big((|\partial_b T|, |\partial_t T|, \sigma_T), B_T\big)\in\vec{\mathcal Br}$.
\end{itemize}
\end{definition}

\begin{lemma}\label{sourceandtargetfortangles}
There are maps $\mathrm{br}:\mathcal{T}\to \mathcal Br$\index[notation]{br@$\mathrm{br}$} and $\vec{\mathrm{br}}:\vec{\mathcal{T}}\to \vec{\mathcal Br}$\index[notation]{br2@$\vec{\mathrm{br}}$} taking $\mathbf{T}\in \mathcal{T}$ to $(|\partial_bT|,|\partial_tT|,\sigma_T)$ and  $\mathbf{T}\in \vec{\mathcal{T}}$ to $\big((|\partial_bT|,|\partial_tT|,\sigma_T), B_T\big)$, where $T$ is a representative of $\mathbf{T}$. We also denote by $\mathrm{br}$ the composite map $\vec{\mathcal T}\to \mathcal T\xrightarrow{\mathrm{br}}\mathcal Br$. 
\end{lemma}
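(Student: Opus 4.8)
The plan is to show that the data $(|\partial_b T|,|\partial_t T|,\sigma_T)$ and $B_T$ attached to a representative $T$ of a class $\mathbf T$ (in the definition of $\sigma_T$, $B_T$ and in Definition~\ref{definitionsigmaT}) depend only on $\mathbf T$ and not on the choice of $T$; granting this, the assignments $T\mapsto\mathrm{br}(T)$ and $T\mapsto\vec{\mathrm{br}}(T)$ descend to $\mathcal T$ and $\vec{\mathcal T}$, which gives the two claimed maps, and the last sentence of the statement is then just the definition of the composite $\vec{\mathcal T}\to\mathcal T\xrightarrow{\mathrm{br}}\mathcal Br$ (which visibly sends $\mathbf T$ to $(|\partial_b T|,|\partial_t T|,\sigma_T)$, i.e. forgets $B_T$ on top of forgetting the orientation). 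So the entire content is the isotopy-invariance of $\sigma_T$ and of $B_T$.

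First I would recall the working notion of isotopy: two representatives $T,T'$ of the same class are related by an ambient isotopy $(\phi_s)_{s\in[0,1]}$ of $[-1,1]^3$ with $\phi_0$ the identity, $\phi_1(T)=T'$, and $\phi_s$ fixing $\partial([-1,1]^3)$ pointwise for every $s$; in the framed (resp. oriented) case $\phi_1$ is moreover required to carry the framing (resp. the orientation) of $T$ to that of $T'$. Since $T$ and $T'$ are properly embedded, $\partial T=T\cap\partial([-1,1]^3)$ and likewise for $T'$; applying $\phi_1$ and using that it fixes $\partial([-1,1]^3)$ pointwise yields at once $\partial T=\partial T'$ — in particular $|\partial_b T|=|\partial_b T'|$ and $|\partial_t T|=|\partial_t T'|$ — and that $\phi_1$ restricts to the identity on $\partial T=\partial T'$, hence respects the linear orders used to identify $\partial_b T$ with $[\![1,|\partial_b T|]\!]$ and $\partial_t T$ with $[\![1,|\partial_t T|]\!]$.

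Next, $\phi_1$ restricts to a homeomorphism $T\to T'$ sending connected components onto connected components; therefore two boundary points lie on a common component of $T$ if and only if their ($\phi_1$-fixed) images lie on a common component of $T'$. By the definition of $\sigma_T$ this says precisely $\sigma_T=\sigma_{T'}$, so $\mathrm{br}(T)=\mathrm{br}(T')$ in $\mathcal Br$. In the oriented case, because $\phi_1$ takes the orientation of $T$ to that of $T'$, a boundary point of $T$ is outgoing (a beginning point) if and only if the corresponding boundary point of $T'$ is; hence $B_T=B_{T'}$ and $\vec{\mathrm{br}}(T)=\vec{\mathrm{br}}(T')$ in $\vec{\mathcal Br}$. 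Finally one checks, using Convention~\ref{conventionfortangles}, that the resulting maps take on classes the values asserted in the statement, and that $\mathrm{br}$ as a map on $\vec{\mathcal T}$ is indeed the composite through $\mathcal T$.

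There is no serious obstacle here: the argument is a routine unwinding of definitions. The only point that deserves a moment's care is the very first one, namely pinning down the notion of isotopy so that the ambient isotopy witnessing $\mathbf T=\mathbf T'$ fixes the boundary of the cube pointwise (equivalently, that isotopy of tangles is understood rel $\partial$). Once that convention is fixed, the fact that $\phi_1$ is a boundary-fixing homeomorphism $T\to T'$ compatible with the chosen orientations makes the invariance of $|\partial_b T|$, $|\partial_t T|$, $\sigma_T$ and $B_T$ immediate.
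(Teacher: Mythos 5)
Your proposal is correct and follows the same route as the paper, whose proof is simply the terse observation that $\sigma_T$ (and, in the oriented case, $B_T$) is invariant under isotopy; you merely spell out that observation in detail. The only point where you go beyond the paper is the care about isotopies being rel the cube's boundary — in fact it suffices that an isotopy keeps the endpoints uniformly distributed in $\{0\}\times(-1,1)\times\{\pm1\}$, which already preserves the linear orders on $\partial_b T$ and $\partial_t T$, so no genuine gap arises either way.
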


\begin{proof}
For a framed tangle $T$, the involution $\sigma_T$ is invariant under isotopy. Moreover if $T$ is oriented, the set $B_T$ is also invariant under isotopy. That is, the operations $\mathrm{br}$ and $\vec{\mathrm{br}}$  are invariant under isotopy.
\end{proof}

\begin{notation} Let $\mathbf{T}\in\vec{\mathcal T}$. We write $\sigma_{\mathbf{T}}$ for the involution $\sigma_T$ where $T$ is a representative of $\mathbf{T}$.
\end{notation}

\begin{definition}\label{sourceandtargetfortangleswords} Define  maps $\mathrm{s},\mathrm{t}:\vec{\mathcal T}{\to}\{+,-\}^*$  as the compositions $\vec{\mathcal T}\xrightarrow{\vec{\mathrm{br}}}\vec{\mathcal Br}\xrightarrow{\mathrm{s},\mathrm{t}}\{+,-\}^*$.
\end{definition}

\begin{definition}
\begin{itemize}
\item[$(a)$] Let $p,q\in\mathbb{Z}_{\geq 0}$. Define the set $\underline{\mathcal{T}}(p,q)$ by 
$$\underline{\mathcal{T}}(p,q):=\{T \ | \ T \text{ is a framed tangle with } |\partial_b T|=p \text{ and } |\partial_t T|=q \}/\text{isotopy}.$$
\item[$(b)$] Let $u,v\in \{+,-\}^*$. Define the set $\underline{\vec{\mathcal{T}}}(u,v)$ by 
$$\underline{\vec{\mathcal{T}}}(u,v):=\{\mathbf{T} \ | \ \mathbf{T} \text{ is an isotopy class of a framed oriented tangle with } \mathrm{s}(\mathbf{T})=u \text{ and } \mathrm{t}(\mathbf{T})=v\}.$$
\end{itemize}
\end{definition}

Notice that $\mathcal{T}=\bigsqcup_{p,q\geq 0}\underline{\mathcal{T}}(p,q)$ and $\vec{\mathcal{T}}=\bigsqcup_{u,v\in\{+,-\}^*}\underline{\vec{\mathcal{T}}}(u,v)$. The maps from Lemma~\ref{sourceandtargetfortangles} restrict to maps $\mathrm{br}:\underline{\mathcal{T}}(p,q)\to \underline{\mathcal Br}(p,q)$ for any $p,q\in\mathbb{Z}_{\geq 0}$ and to $\vec{\mathrm{br}}:\vec{\underline{\mathcal{T}}}(u,v)\to \vec{\underline{\mathcal Br}}(u,v)$ for any $u,v\in\{+,-\}^*$.

\begin{definition}\label{defpicircandpiseg} Let $T$ be a framed (oriented) tangle. We denote by $\pi_0(T)_\mathrm{cir}$ (resp. $\pi_0(T)_\mathrm{seg}$) the subset of $\pi_0(T)$ consisting of components homeomorphic to the circle $S^1$ (resp. to the segment $[0,1]$).
\end{definition}

\begin{lemma}\label{r:2022-10-24-2} 
\begin{itemize}
\item[$(a)$] Let $T$ and $T'$ be  framed (oriented) tangles. If $T$ and $T'$ are isotopic, then  $\pi_0(T)=\pi_0(T')$,  $\pi_0(T)_\mathrm{cir}=\pi_0(T')_\mathrm{cir}$ and $\pi_0(T)_\mathrm{seg}=\pi_0(T')_\mathrm{seg}$. We then denote such sets by $\pi_0(\mathbf{T})$,  $\pi_0(\mathbf{T})_{\mathrm{cir}}$ and $\pi_0(\mathbf{T})_\mathrm{seg}$ respectively.

\item[$(b)$] For $\mathbf{T}\in\vec{\mathcal T}$ (or $\mathbf{T}\in{\mathcal T}$), the set $\pi_0(\mathbf{T})_\mathrm{seg}$ is canonically identified with the set $\pi_0(\mathrm{br}(\mathbf{T}))$, see Definition~\ref{def:piceroforBr}.
\end{itemize}
\end{lemma}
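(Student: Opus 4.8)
The plan is to reduce both statements to one elementary fact: an isotopy between framed (oriented) tangles $T$ and $T'$ restricts to a homeomorphism $h\colon T\xrightarrow{\sim}T'$ that fixes $\partial T=\partial T'$ pointwise (equivalently, respects the order-identifications $\partial_bT\simeq[\![1,|\partial_bT|]\!]$ and $\partial_tT\simeq[\![1,|\partial_tT|]\!]$); this holds whether one reads the isotopy as an ambient isotopy of $[-1,1]^3$ or, following the introductory discussion of \S\ref{sec::1}, as a path of proper embeddings of a fixed pattern $P$. Together with the isotopy-invariance of $\sigma_T$ already recorded in Lemma~\ref{sourceandtargetfortangles}, this turns both assertions into a direct verification.

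For $(a)$, the homeomorphism $h$ induces a bijection $\pi_0(h)\colon\pi_0(T)\to\pi_0(T')$; since homeomorphisms preserve the topological type of connected components, $\pi_0(h)$ carries circle components to circle components and segment components to segment components, hence restricts to bijections $\pi_0(T)_\mathrm{cir}\xrightarrow{\sim}\pi_0(T')_\mathrm{cir}$ and $\pi_0(T)_\mathrm{seg}\xrightarrow{\sim}\pi_0(T')_\mathrm{seg}$. On the segment parts these bijections are canonical and representative-independent: a segment component is determined by its two boundary points, so via the order-identifications above the set $\pi_0(T)_\mathrm{seg}$ is identified with $\big([\![1,|\partial_bT|]\!]\sqcup[\![1,|\partial_tT|]\!]\big)/\langle\sigma_T\rangle=\pi_0(\mathrm{br}(T))$, which depends only on $\mathbf{T}$ by Lemma~\ref{sourceandtargetfortangles}, and under this identification $\pi_0(h)$ acts as the identity because $h$ fixes $\partial T$. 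On the circle parts, which carry no boundary structure, $\pi_0(T)_\mathrm{cir}$ and $\pi_0(T')_\mathrm{cir}$ are to be identified through $\pi_0(h)$, in line with the convention of \S\ref{sec::1} treating the circle parts $P_{\mathrm{cir}}$ as bare finite sets attached to the tangle. This yields the three claimed equalities and licenses the notations $\pi_0(\mathbf{T})$, $\pi_0(\mathbf{T})_\mathrm{cir}$, $\pi_0(\mathbf{T})_\mathrm{seg}$.

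For $(b)$, I would fix a representative $T$ of $\mathbf{T}$ and define $\Phi\colon\pi_0(\mathbf{T})_\mathrm{seg}\to\pi_0(\mathrm{br}(\mathbf{T}))=\big([\![1,|\partial_bT|]\!]\sqcup[\![1,|\partial_tT|]\!]\big)/\langle\sigma_T\rangle$ (see Definition~\ref{def:piceroforBr}) by sending a segment component $C$ to the two-element subset of $[\![1,|\partial_bT|]\!]\sqcup[\![1,|\partial_tT|]\!]$ corresponding to $\partial C\subset\partial_bT\sqcup\partial_tT$ under the order-identifications; by the very definition of $\sigma_T$ this subset is a $\langle\sigma_T\rangle$-orbit. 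The map $\Phi$ is a bijection: distinct components of a tangle are disjoint, so a $\langle\sigma_T\rangle$-orbit is the boundary of at most one segment component, and by definition of $\sigma_T$ of exactly one. Finally $\Phi$ does not depend on the chosen representative, since by $(a)$ and Lemma~\ref{sourceandtargetfortangles} both $\pi_0(\mathbf{T})_\mathrm{seg}$ and $\sigma_T=\sigma_{\mathbf{T}}$ are determined by $\mathbf{T}$ and $\Phi$ is expressed entirely in their terms; the oriented case follows by precomposing with the forgetful map $\vec{\mathcal{T}}\to\mathcal{T}$. The argument is thus routine; the only point that will need a little care is the precise sense of the equalities in $(a)$ for the label-free circle parts, which is settled once and for all by adopting the convention that the circle components of a tangle are recorded as an abstract finite set transported along isotopies, exactly as $P_{\mathrm{cir}}$ is handled in \S\ref{sec::1}.
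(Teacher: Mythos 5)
Your proof is correct and takes essentially the same approach as the paper, which simply declares $(a)$ a straightforward verification and obtains $(b)$ from Lemma~\ref{sourceandtargetfortangles}$(a)$ together with Definition~\ref{def:piceroforBr} --- i.e.\ exactly the identification of segment components with $\langle\sigma_T\rangle$-orbits of boundary points that you spell out. Your additional care about the precise sense of the equalities for the label-free circle components just makes explicit what the paper leaves implicit.
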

\begin{proof} $(a)$ follows  straightforwardly. $(b)$ follows from Lemma~\ref{sourceandtargetfortangles}$(a)$ and the definition of $\pi_0(P)$ for a Brauer diagram $P$,  see Definition~\ref{def:piceroforBr}.
\end{proof}

\begin{lemma}\label{r2022-07-05}\begin{itemize}
\item[$(a)$] Let $u,v,z\in \{+,-\}^*$ and $\mathbf{T}_1,\mathbf{T}_2\in\vec{\mathcal T}$  with $\mathrm{s}(\mathbf{T}_1)=u$, $\mathrm{t}(\mathbf{T}_1)=v=\mathrm{s}(\mathbf{T}_2)$, $\mathrm{t}(\mathbf{T}_2)=z$. Let $T_1$ and $T_2$ be representatives of $\mathbf{T}_1$ and $\mathbf{T}_2$ respectively. Let $T_2\circ T_1$ be the framed oriented tangle obtained from the union of $T_1$ and $T_2$ by identifying $\partial_t T_1$ with $\partial_b T_2$ and by rescaling the vertical union of two cubes. The  isotopy class of $T_2\circ T_1$ does not depend on the representatives $T_1$ and $T_2$, we then denote it by ${\mathbf{T}}_2\circ \mathbf{T}_1$. We have $\mathrm{s}(\mathbf{T}_2\circ \mathbf{T}_1)=u$ and $\mathrm{t}(\mathbf{T}_2\circ \mathbf{T}_1)=z$.

\item[$(b)$] Let $u,v,u',v'\in \{+,-\}^*$ and $\mathbf{T}_1,\mathbf{T}_2\in\vec{\mathcal T}$ with  $\mathrm{s}(\mathbf{T}_1)=u$, $\mathrm{t}(\mathbf{T}_1)=v$, $\mathrm{s}(\mathbf{T}_2)=u'$, $\mathrm{t}(\mathbf{T}_2)=v'$. Let $T_1$ and $T_2$ be representatives of $\mathbf{T}_1$ and $\mathbf{T}_2$ respectively.  Let $T_1\otimes T_2$ be the framed oriented tangle obtained by horizontal juxtaposition and by rescaling the horizontal union of two cubes. The  isotopy class of $T_1\otimes T_2$ does not depend on the representatives $T_1$ and $T_2$, we then denote it by ${\mathbf{T}}_1\otimes \mathbf{T}_2$. We have $\mathrm{s}(\mathbf{T}_1\otimes \mathbf{T}_2)=u\cdot u'$ and $\mathrm{t}(\mathbf{T}_1\otimes \mathbf{T}_2)=v\cdot v'$.
\end{itemize}
Similar statements hold for the set of non-oriented framed tangles $\mathcal T$.
\end{lemma}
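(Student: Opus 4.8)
The plan is to verify that the construction of $T_2 \circ T_1$ (resp. $T_1 \otimes T_2$) is well-defined on isotopy classes, and then to compute the source and target words. Since the statement is essentially a routine verification that gluing and juxtaposition of tangles descend to isotopy classes, I would proceed as follows.

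\textbf{Well-definedness on isotopy classes.} First I would make precise the gluing operation: given representatives $T_1, T_2$ with $\partial_t T_1$ and $\partial_b T_2$ both identified with $\{0\} \times \{\text{points}\} \times \{\pm 1\}$ in a compatible way (using the linear orders and the identifications $\partial_t T_1 \simeq [\![1,|v|]\!] \simeq \partial_b T_2$), stack the cube of $T_1$ below that of $T_2$, glue along the common face, and rescale $[-1,3]^3 \to [-1,1]^3$ (shrinking the middle coordinate). The framing condition at the glued endpoints matches because for a framed oriented tangle the framing takes the standard value $\pm(0,1,0)$ at every endpoint (Definition~\ref{def:framedtangle}$(c)$--$(d)$), so the framings of $T_1$ and $T_2$ patch smoothly. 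For independence of representatives: if $T_1 \simeq T_1'$ via an ambient isotopy of $[-1,1]^3$ that is the identity near $\{0\}\times(-1,1)\times\{\pm 1\}$ (one may always arrange the isotopy to fix a neighborhood of the boundary, since the boundary points are fixed throughout an isotopy of tangles), then this isotopy can be supported in the lower cube and extended by the identity on the upper cube; the same for $T_2$. Hence $T_2 \circ T_1 \simeq T_2' \circ T_1'$, and we may write $\mathbf{T}_2 \circ \mathbf{T}_1$. The argument for $\otimes$ is identical, juxtaposing cubes side by side and rescaling horizontally; an isotopy of $T_1$ (fixing a boundary neighborhood) extends by the identity on the second cube.

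\textbf{Computation of source and target.} For part $(a)$, the boundary of $T_2 \circ T_1$ is, by construction, $\partial_b(T_2\circ T_1) = \partial_b T_1$ and $\partial_t(T_2 \circ T_1) = \partial_t T_2$. The beginning points (points where the orientation is outgoing into the cube) of $T_2\circ T_1$ on the bottom face are exactly those of $T_1$, and on the top face exactly those of $T_2$; the interior gluing contributes nothing to $\partial(T_2\circ T_1)$. Translating through Definition~\ref{def:sourcetargetorientedskeleta} (applied to $\vec{\mathrm{br}}$ via Definition~\ref{sourceandtargetfortangleswords}), this gives $\mathrm{s}(\mathbf{T}_2 \circ \mathbf{T}_1) = \mathrm{s}(\mathbf{T}_1) = u$ and $\mathrm{t}(\mathbf{T}_2 \circ \mathbf{T}_1) = \mathrm{t}(\mathbf{T}_2) = z$; the hypothesis $\mathrm{t}(\mathbf{T}_1) = v = \mathrm{s}(\mathbf{T}_2)$ is precisely what guarantees the gluing identifies matching orientations (an outgoing endpoint of $T_1$ at the top meets an incoming endpoint of $T_2$ at the bottom), so the gluing is legitimate. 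For part $(b)$, $\partial_b(T_1 \otimes T_2) = \partial_b T_1 \sqcup \partial_b T_2$ and $\partial_t(T_1 \otimes T_2) = \partial_t T_1 \sqcup \partial_t T_2$ with the orders concatenated ($T_1$ first), and the beginning points are the union of those of $T_1$ and $T_2$; hence $\mathrm{s}(\mathbf{T}_1 \otimes \mathbf{T}_2) = \mathrm{s}(\mathbf{T}_1) \cdot \mathrm{s}(\mathbf{T}_2) = u \cdot u'$ and likewise $\mathrm{t}(\mathbf{T}_1 \otimes \mathbf{T}_2) = v \cdot v'$. The statement for unoriented framed tangles $\mathcal{T}$ follows by forgetting orientations, or by the same argument omitting the bookkeeping of beginning points.

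The only mild subtlety — not really an obstacle — is arranging that an isotopy of tangles can be taken to be the identity on a neighborhood of the boundary of the cube, so that it extends by the identity across the gluing/juxtaposition face; this is standard (the boundary points of a tangle are fixed during an isotopy, and one can use a collar of $\partial[-1,1]^3$ to make the isotopy stationary there). Everything else is direct verification.
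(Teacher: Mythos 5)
Your proposal is correct and amounts to spelling out the routine verification that the paper itself dismisses with ``the proof follows straightforwardly'': well-definedness via boundary-fixing isotopies extended by the identity across the gluing face, compatibility of framings at the standard endpoints, and the bookkeeping of beginning points giving the stated source and target words. There is no divergence in approach, only in level of detail.
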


\begin{proof} The proof follows straightforwardly.
\end{proof}

\subsubsection{Properties of operations on framed oriented tangles}

\begin{lemma}\label{r:2022-10-24-03}
\begin{itemize}
\item[$(a)$] For any $\mathbf{T}, \mathbf{T}'\in \vec{\mathcal T}$ we have $$\vec{\mathrm{br}}({\mathbf{T}}\otimes \mathbf{T}') = \vec{\mathrm{br}}(\mathbf{T})\otimes \vec{\mathrm{br}}(\mathbf{T}') \qquad \text{and} \qquad \pi_0(\mathbf{T}\otimes \mathbf{T}')_{\mathrm{cir}} \simeq \pi_0(\mathbf{T})_{\mathrm{cir}}\sqcup \pi_0(\mathbf{T}')_{\mathrm{cir}}.$$
\item[$(b)$] For any composable $\mathbf{T}, \mathbf{T}'\in\vec{\mathcal T}$ we have $$\vec{\mathrm{br}}(\mathbf{T}'\circ \mathbf{T}) = \vec{\mathrm{br}}(\mathbf{T}')\circ_{\vec{\mathcal Br}} \vec{\mathrm{br}}(\mathbf{T}) \qquad \text{and} \qquad \pi_0(\mathbf{T}'\circ \mathbf{T})_{\mathrm{cir}} \simeq \pi_0(\mathbf{T}')_{\mathrm{cir}}\sqcup \pi_0(\mathbf{T})_{\mathrm{cir}}\sqcup \mathrm{Cir}(\vec{\mathrm{br}}(\mathbf{T}'),\vec{\mathrm{br}}(\mathbf{T})),$$
\noindent where the set $\mathrm{Cir}(\vec{\mathrm{br}}(\mathbf{T}'),\vec{\mathrm{br}}(\mathbf{T}))$ is as in Definition~\ref{def:CirforvecBr}.
\end{itemize}
Similar identities hold for isotopy classes of non-oriented framed tangles. 
\end{lemma}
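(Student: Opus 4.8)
The plan is to prove both statements geometrically: choose representatives, then match the connected components of the resulting tangle with the combinatorial constructions already set up for (oriented) Brauer diagrams; independence of the chosen representatives follows from Lemma~\ref{sourceandtargetfortangles} and Lemma~\ref{r:2022-10-24-2}. For part $(a)$, pick representatives $T$ of $\mathbf{T}$ and $T'$ of $\mathbf{T}'$ and recall that $T\otimes T'$ is obtained by horizontal juxtaposition followed by rescaling. Its bottom (resp.\ top) boundary is the ordered concatenation of those of $T$ and $T'$, so under the identification~\eqref{equ:identificationofintervalfortensor} one reads off $\sigma_{T\otimes T'}=\sigma_T*\sigma_{T'}$ and $B_{T\otimes T'}=B_T\sqcup B_{T'}$; comparing with Definition~\ref{def:tensorproductinSk} and Definition~\ref{def:tensorproductinOSk} gives $\vec{\mathrm{br}}(\mathbf{T}\otimes \mathbf{T}')=\vec{\mathrm{br}}(\mathbf{T})\otimes \vec{\mathrm{br}}(\mathbf{T}')$. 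Since horizontal juxtaposition neither creates new components nor merges existing ones, $\pi_0(\mathbf{T}\otimes \mathbf{T}')\simeq \pi_0(\mathbf{T})\sqcup \pi_0(\mathbf{T}')$ compatibly with the circle/segment decomposition, whence the second assertion.

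For part $(b)$, again pick representatives and recall that $\mathbf{T}'\circ \mathbf{T}$ is represented by the tangle $T'\circ T$ obtained by stacking $T$ below $T'$, identifying $\partial_t T$ with $\partial_b T'$ in the order-preserving way, and rescaling. Write $p:=|\partial_b T|$, $q:=|\partial_t T|=|\partial_b T'|$, $r:=|\partial_t T'|$, and let $\alpha:=\sigma_T*\mathrm{Id}_{[\![1,r]\!]}$ and $\beta:=\mathrm{Id}_{[\![1,p]\!]}*\sigma_{T'}$ be the involutions of $[\![1,p]\!]\sqcup[\![1,q]\!]\sqcup[\![1,r]\!]$ appearing in~\eqref{eq:toto2}. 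Each connected component of $T'\circ T$ is of exactly one of three types: a circle component of $T$; a circle component of $T'$; or a component obtained by concatenating segment components of $T$ and of $T'$ that meet the gluing level $[\![1,q]\!]$. Following arcs across the gluing level identifies the set of components of the third type with the set of $\langle\alpha,\beta\rangle$-orbits on $[\![1,p]\!]\sqcup[\![1,q]\!]\sqcup[\![1,r]\!]$, which by~\eqref{eq:toto2} is $\pi_0\big(\vec{\mathrm{br}}(\mathbf{T}')\circ_{\vec{\mathcal Br}} \vec{\mathrm{br}}(\mathbf{T})\big)\sqcup \mathrm{Cir}\big(\vec{\mathrm{br}}(\mathbf{T}'),\vec{\mathrm{br}}(\mathbf{T})\big)$; such an orbit gives a circle of $T'\circ T$ precisely when it is contained in $[\![1,q]\!]$, i.e.\ lies in the $\mathrm{Cir}$ summand, and the remaining orbits are the segment components of $T'\circ T$. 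Adding back the circle components of $T$ and of $T'$ yields the claimed decomposition of $\pi_0(\mathbf{T}'\circ \mathbf{T})_{\mathrm{cir}}$. For the equality of oriented Brauer diagrams, the same bookkeeping shows that $\sigma_{T'\circ T}$ is the fixed-point-free involution underlying $\vec{\mathrm{br}}(\mathbf{T}')\circ_{\vec{\mathcal Br}}\vec{\mathrm{br}}(\mathbf{T})$, while a boundary point of $T'\circ T$ — which lies in $\partial_b T$ or in $\partial_t T'$ — is a beginning point iff it is one for $T$, resp.\ for $T'$, which is exactly the set of beginning points assigned by Definition~\ref{def:compositioninOSk}. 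Hence $\vec{\mathrm{br}}(\mathbf{T}'\circ \mathbf{T})=\vec{\mathrm{br}}(\mathbf{T}')\circ_{\vec{\mathcal Br}}\vec{\mathrm{br}}(\mathbf{T})$.

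The step I expect to require the most care is the identification in the third case of $(b)$: making rigorous the claim that following the geometric arcs of $T$ and $T'$ back and forth across the common level $[\![1,q]\!]$ reproduces exactly the orbit structure of $\langle\alpha,\beta\rangle$, and that the trichotomy of component types is both exhaustive and mutually exclusive. Once this is in place, everything else is a direct comparison with the definitions of $\circ$ and $\otimes$ in $\vec{\mathcal Br}$ and with~\eqref{eq:toto2}. The statements for non-oriented framed tangles follow either by applying the forgetful functor $\vec{\mathcal Br}\to\mathcal Br$ together with its compatibility with $\circ$ and with $\mathrm{Cir}$, or by rerunning the argument above while ignoring the beginning-point data.
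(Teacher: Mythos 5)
Your proof is correct and follows essentially the same route as the paper: the paper simply asserts that the identities hold at the level of representative tangles and then invokes Lemmas~\ref{sourceandtargetfortangles} and~\ref{r:2022-10-24-2} for isotopy invariance, which is exactly your structure, with the representative-level bookkeeping (matching components of $T'\circ T$ with the $\langle\alpha,\beta\rangle$-orbits of \eqref{eq:toto2}) spelled out explicitly. One cosmetic point: in your trichotomy for $(b)$, segment components of $T$ or $T'$ whose endpoints avoid the gluing level should also be counted in the third type (they correspond to two-element orbits contained in $[\![1,p]\!]$ or $[\![1,r]\!]$), as your subsequent orbit argument in fact already treats them.
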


\begin{proof}
The identities clearly hold for framed (oriented) tangles (before taking the quotient by isotopy). Then the result follows from Lemmas~\ref{sourceandtargetfortangles} and~\ref{r:2022-10-24-2}.
\end{proof}

\begin{definition}\label{def1.3} 
The monoidal category of framed oriented tangles $\vec{\underline{\mathcal T}}$\index[notation]{T3@$\vec{\underline{\mathcal T}}$} is defined as follows. The set of objects of $\vec{\underline{\mathcal T}}$ is the free associative monoid $\{+,-\}^*$. For $u,v\in \{+,-\}^*$, the set of morphisms $\vec{\underline{\mathcal T}}(u,v)$\index[notation]{T4@$\vec{\underline{\mathcal T}}(u,v)$} is the set of isotopy classes of framed oriented tangles $\mathbf{T}$ such that  $\mathrm{s}(\mathbf{T})=u$ and $\mathrm{t}(\mathbf{T})=v$. Let  $z\in \{+,-\}^*$,  $\mathbf{T}_1\in\underline{\vec{\mathcal T}}(u,v)$, $\mathbf{T}_2\in\underline{\vec{\mathcal T}}(v,z)$. The composition ${\mathbf{T}}_2\circ \mathbf{T}_1\in \underline{\vec{\mathcal T}}(u,z) $ is defined in terms of representatives as in Lemma~\ref{r2022-07-05}$(a)$.
The monoidal structure is also defined in terms of representatives  as in Lemma~\ref{r2022-07-05}$(b)$.
\end{definition}

\begin{lemma}\label{lemma:1:19:2jan2020} 
There is a monoidal functor $\underline{\vec{\mathrm{br}}}:\underline{\vec{\mathcal T}}\to\underline{\vec{\mathcal Br}}$\index[notation]{br3@$\underline{\vec{\mathrm{br}}}$} given by the identity on objects and by  the map $\vec{\mathrm{br}}:{\vec{\mathcal T}}\to {\vec{\mathcal Br}}$ from Lemma~\ref{sourceandtargetfortangles} at the level of morphisms. 
\end{lemma}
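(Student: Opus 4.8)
The plan is to verify the three defining properties of a monoidal functor directly from the definitions, using that all the relevant structures on $\vec{\mathcal{T}}$ have already been shown (Lemma~\ref{r:2022-10-24-03}, Lemma~\ref{r2022-07-05}) to be computed on representatives in a way compatible with $\vec{\mathrm{br}}$. First I would check well-definedness: by Lemma~\ref{sourceandtargetfortangles} the map $\vec{\mathrm{br}}:\vec{\mathcal{T}}\to\vec{\mathcal{Br}}$ is isotopy-invariant, and by construction it sends $\underline{\vec{\mathcal{T}}}(u,v)$ into $\underline{\vec{\mathcal{Br}}}(u,v)$ since $\mathrm{s}$ and $\mathrm{t}$ on tangles are defined (Definition~\ref{sourceandtargetfortangleswords}) as the composites $\vec{\mathcal{T}}\xrightarrow{\vec{\mathrm{br}}}\vec{\mathcal{Br}}\xrightarrow{\mathrm{s},\mathrm{t}}\{+,-\}^*$. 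So $\underline{\vec{\mathrm{br}}}$ is a well-defined assignment on objects (identity) and morphisms.

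Next I would check functoriality. Compatibility with composition is exactly the first identity of Lemma~\ref{r:2022-10-24-03}$(b)$, namely $\vec{\mathrm{br}}(\mathbf{T}'\circ\mathbf{T})=\vec{\mathrm{br}}(\mathbf{T}')\circ_{\vec{\mathcal{Br}}}\vec{\mathrm{br}}(\mathbf{T})$ for composable $\mathbf{T},\mathbf{T}'$. Preservation of identities amounts to checking that $\vec{\mathrm{br}}(\mathrm{Id}_u)=\mathrm{Id}_u$ in $\underline{\vec{\mathcal{Br}}}$: the identity tangle on $u\in\{+,-\}^*$ consists of $|u|$ vertical strands, so its underlying oriented Brauer diagram has $\sigma$ the involution pairing $i$ with $\bar{i}$, and its set of beginning points $B$ equal to $\{x\in[\![1,|u|]\!]\mid u(x)=-\}\sqcup\{\bar{x}\in[\![\bar 1,\bar{|u|}]\!]\mid u(x)=+\}$, which is precisely the description of $\mathrm{Id}_u$ recorded after Definition~\ref{def:CirforvecBr} (in the definition of $\underline{\vec{\mathcal{Br}}}$).

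Finally I would check the monoidal structure. On objects both categories have monoid $\{+,-\}^*$ with concatenation, and $\underline{\vec{\mathrm{br}}}$ is the identity there, so it strictly preserves $\otimes$ and the unit object $\emptyset$. On morphisms, compatibility with $\otimes$ is the first identity of Lemma~\ref{r:2022-10-24-03}$(a)$: $\vec{\mathrm{br}}(\mathbf{T}\otimes\mathbf{T}')=\vec{\mathrm{br}}(\mathbf{T})\otimes\vec{\mathrm{br}}(\mathbf{T}')$. The unit morphism $\vec{\varnothing}\in\vec{\mathcal{T}}(\emptyset,\emptyset)$ is sent to $\vec{\varnothing}\in\vec{\mathcal{Br}}(\emptyset,\emptyset)$ by construction. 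The coherence isomorphisms (associator and unitors) are identities on both sides since both monoidal structures are strict, so there is nothing further to verify.

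There is essentially no obstacle here: the statement is a bookkeeping assembly of facts already established, and the only mildly non-trivial point is matching the combinatorial description of the identity morphism in $\underline{\vec{\mathcal{Br}}}$ with the oriented Brauer diagram of the identity tangle, which is immediate from the orientation conventions. Accordingly I expect the proof to read simply: the result follows from Lemmas~\ref{sourceandtargetfortangles}, \ref{r2022-07-05} and \ref{r:2022-10-24-03} together with the definitions of the monoidal categories $\underline{\vec{\mathcal{T}}}$ and $\underline{\vec{\mathcal{Br}}}$.
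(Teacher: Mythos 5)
Your proposal is correct and follows the same route as the paper, which simply cites Lemmas~\ref{sourceandtargetfortangles}, \ref{r2022-07-05} and \ref{r:2022-10-24-03}; your additional spelled-out checks (identities, unit, strictness of the monoidal structures) are exactly the bookkeeping those references encapsulate.
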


\begin{proof} The result follows from the combination of Lemmas~\ref{sourceandtargetfortangles}, \ref{r2022-07-05} and~\ref{r:2022-10-24-03}.
\end{proof}

\subsubsection{The monoid $\underline{\vec{\mathcal{T}}}(\emptyset,\emptyset)$, 3-manifolds and linking matrices}\label{sec:1.2_T-monoid}

\begin{lemma}\label{lemma:identif} We have the following.
\begin{itemize}
\item[$(a)$] The set $\underline{\vec{\mathcal T}}(\emptyset,\emptyset)$\index[notation]{T6@$\vec{\underline{\mathcal T}}(\emptyset,\emptyset)$}  has two structures of commutative monoid given by composition and~$\otimes$. These two structures coincide with the disjoint union operation from Definition~\ref{def:disjointandconnectedcum}$(a)$, therefore we will write $(\underline{\vec{\mathcal{T}}}(\emptyset,\emptyset), \dot{\sqcup})$ for this monoid.

\item[$(b)$] The map $\underline{\vec{\mathcal T}}(\emptyset,\emptyset)\to \mathbb{Z}_{\geq 0}$ which takes  $\mathbf{L}\in \underline{\vec{\mathcal T}}(\emptyset,\emptyset)$  to $|\pi_0(\mathbf{L})_{\mathrm{cir}}|$ is a monoid homomorphism.

\item[$(c)$] $\underline{\vec{\mathcal T}}(\emptyset,\emptyset)$ is a $\mathbb{Z}_{\geq 0}$-graded monoid with decomposition $\underline{\vec{\mathcal T}}(\emptyset,\emptyset)=\bigsqcup_{k \geq 0}\underline{\vec{\mathcal T}}(\emptyset,\emptyset)_k$, where $\underline{\vec{\mathcal T}}(\emptyset,\emptyset)_k$
is the preimage of $k$ under the map in $(b)$.

\item[$(d)$] There is monoid isomorphism 
\begin{equation}\label{eq:2022-04-18monoidhomo}
\left(\dfrac{\{\text{framed oriented  links in } S^3\}}{\text{isotopy}}, \dot{\sqcup}\right)\simeq (\underline{\vec{\mathcal T}}(\emptyset,\emptyset),\dot{\sqcup}).
\end{equation}
\end{itemize}
\end{lemma}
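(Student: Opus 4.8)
The plan is to verify the four statements in order, using the correspondences between tangles with empty source and target, links in $S^3$, and their underlying (oriented) Brauer data established in the preceding lemmas. For part $(a)$, observe that a framed oriented tangle $\mathbf{T} \in \underline{\vec{\mathcal T}}(\emptyset,\emptyset)$ has empty boundary, hence is a framed oriented link, and $\vec{\mathrm{br}}(\mathbf{T}) = \vec{\varnothing}$. Given $\mathbf{T}, \mathbf{T}' \in \underline{\vec{\mathcal T}}(\emptyset,\emptyset)$, both $\mathbf{T} \circ \mathbf{T}'$ and $\mathbf{T} \otimes \mathbf{T}'$ are, by the constructions in Lemma~\ref{r2022-07-05}, obtained by placing representatives into disjoint subcubes (one stacked vertically, one side by side) and rescaling; in either case the result is a framed oriented link which is the disjoint union of $\mathbf{T}$ and $\mathbf{T}'$ inside disjoint embedded $3$-balls, so it agrees with $\mathbf{T} \dot\sqcup \mathbf{T}'$ of Definition~\ref{def:disjointandconnectedcum}$(a)$. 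Commutativity and the unit property (with $\vec\varnothing$ the empty link) then follow from the corresponding properties of $\dot\sqcup$, or directly from an isotopy exchanging the two subcubes. For part $(b)$, by Lemma~\ref{r:2022-10-24-03}$(a)$ we have $\pi_0(\mathbf{T} \otimes \mathbf{T}')_{\mathrm{cir}} \simeq \pi_0(\mathbf{T})_{\mathrm{cir}} \sqcup \pi_0(\mathbf{T}')_{\mathrm{cir}}$, and for composable tangles Lemma~\ref{r:2022-10-24-03}$(b)$ gives an extra term $\mathrm{Cir}(\vec{\mathrm{br}}(\mathbf{T}'),\vec{\mathrm{br}}(\mathbf{T}))$; but here $\vec{\mathrm{br}}(\mathbf{T}) = \vec{\mathrm{br}}(\mathbf{T}') = \vec\varnothing$, so $\mathrm{Cir}(\vec\varnothing,\vec\varnothing) = \varnothing$ and the term vanishes. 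Hence $|\pi_0(\mathbf{T} \dot\sqcup \mathbf{T}')_{\mathrm{cir}}| = |\pi_0(\mathbf{T})_{\mathrm{cir}}| + |\pi_0(\mathbf{T}')_{\mathrm{cir}}|$, and since the empty link has no circle components, the map is a monoid homomorphism.

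For part $(c)$, the decomposition $\underline{\vec{\mathcal T}}(\emptyset,\emptyset) = \bigsqcup_{k \geq 0}\underline{\vec{\mathcal T}}(\emptyset,\emptyset)_k$ into preimages of $k$ under the map of $(b)$ is automatic, and the grading property $\underline{\vec{\mathcal T}}(\emptyset,\emptyset)_k \dot\sqcup \underline{\vec{\mathcal T}}(\emptyset,\emptyset)_l \subset \underline{\vec{\mathcal T}}(\emptyset,\emptyset)_{k+l}$ is exactly the additivity just proved in $(b)$; the unit lies in degree~$0$. For part $(d)$, the point is that an element of $\underline{\vec{\mathcal T}}(\emptyset,\emptyset)$ is by definition the isotopy class of a framed oriented tangle with no endpoints, i.e.\ properly embedded in $[-1,1]^3$ with empty boundary, which is precisely a framed oriented link in the open cube; composing with the standard embedding $(-1,1)^3 \hookrightarrow S^3$ and invoking that every framed oriented link in $S^3$ is isotopic into a cube gives a bijection at the level of isotopy classes. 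One checks this bijection carries $\dot\sqcup$ of links (Definition~\ref{def:disjointandconnectedcum}$(a)$, formulated via disjoint embedded $3$-balls) to $\dot\sqcup$ of tangles as identified in $(a)$, since a pair of disjoint $3$-balls in $S^3$ can be isotoped to lie in disjoint subcubes; hence \eqref{eq:2022-04-18monoidhomo} is a monoid isomorphism.

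I do not expect any serious obstacle here: all four parts reduce to the already-established compatibilities of $\vec{\mathrm{br}}$, $\pi_0(-)_{\mathrm{cir}}$ and $\mathrm{Cir}$ with $\circ$ and $\otimes$ (Lemmas~\ref{r:2022-10-24-2} and~\ref{r:2022-10-24-03}), together with elementary isotopy arguments identifying "placed in disjoint subcubes" with "placed in disjoint $3$-balls." The only point requiring a little care is the coincidence of the two monoid structures ($\circ$ and $\otimes$) in $(a)$ with $\dot\sqcup$: here one must note that when source and target words are both empty, the vertical stacking of Lemma~\ref{r2022-07-05}$(a)$ glues along the empty $1$-manifold and therefore creates no new circle components and no interaction between the two pieces, so it literally realizes a disjoint union — the same is true, even more transparently, for the horizontal juxtaposition $\otimes$. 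This is the step I would write out most carefully, but it is still routine.
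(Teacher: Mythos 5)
Your proposal is correct and follows essentially the same route as the paper, which simply declares $(a)$, $(c)$, $(d)$ straightforward and reduces $(b)$ to the identity $\pi_0(\mathbf{L}\dot{\sqcup}\mathbf{L}')=\pi_0(\mathbf{L})\sqcup\pi_0(\mathbf{L}')$ — exactly the additivity you derive from Lemma~\ref{r:2022-10-24-03} together with $\mathrm{Cir}(\vec{\varnothing},\vec{\varnothing})=\emptyset$. Your write-up merely spells out the isotopy arguments (disjoint subcubes versus disjoint $3$-balls) that the paper leaves implicit.
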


\begin{proof}
$(a)$ follows straightforwardly. $(b)$ follows from the identity $\pi_0(\mathbf{L} \dot{\sqcup} \mathbf{L}')=\pi_0(\mathbf{L}) \sqcup \pi_0(\mathbf{L}')$ for any~$\mathbf{L}, \mathbf{L}'\in \underline{\vec{\mathcal T}}(\emptyset,\emptyset)$. The statement in $(c)$ follows from $(b)$ and $(d)$ is straightforward. 
\end{proof}

\begin{definition}\label{def:tanglesTwz-k} For $w,z\in\{+,-\}^*$ and $k\in\mathbb{Z}_{\geq 0}$ define the set  $\underline{\vec{\mathcal{T}}}(w,z)_k$    by
 $$\underline{\vec{\mathcal{T}}}(w,z)_k=\{\mathbf{T}\in \underline{\vec{\mathcal{T}}}(w,z) \ | \ |\pi_0(\mathbf{T})_{\mathrm{cir}}| = k\}.$$

\end{definition}

One checks that $\underline{\vec{\mathcal T}}(\emptyset,\emptyset)_k = \vec{\mathcal T}(\vec{\varnothing})_k$, where 
$\{\vec{\varnothing}\} = \underline{\vec{\mathcal Br}}(\emptyset,\emptyset)=\underline{\vec{\mathcal Br}}(0,0)$, see Definition~\ref{def:vecBr} and Lemma~\ref{lemma:identif}.

\begin{remark} There is a bijection between the nerve  $N(\underline{\vec{\mathcal T}})_1$ of the category $\underline{\vec{\mathcal T}}$ and the set $\vec{\mathcal T}$.
\end{remark}

The composition of \eqref{eq:monoidmorphismlinksto3man} and \eqref{eq:2022-04-18monoidhomo} gives rise to the monoid homomorphism
\begin{equation}\label{dehn:surg:map:monoid}
(\underline{\vec{\mathcal T}}(\emptyset,\emptyset),\dot{\sqcup})\longrightarrow (3\text{-}\mathrm{Mfds},\#).
\end{equation}
in terms of which the Lickorish-Wallace theorem \cite{Lick,Wal} (surjectivity) and Kirby theorem (Theorem~\ref{thm:kirby})~\cite{Kir} (description of the kernel) will be reformulated in Proposition~\ref{categorical:kirby}.

For $k\geq 0$ we denote by $\mathrm{Sym}_k(\mathbb{Z})$\index[notation]{Sym_k@$\mathrm{Sym}_k(\mathbb{Z})$} the set of $k\times k$  symmetric matrices. We denote by $\mathrm{Sym}_k(\mathbb{Z})/\mathfrak{S}_k$ the set of classes of conjugation by the symmetric group~$\mathfrak{S}_k$. Then $\bigsqcup_{k\geq 0} \mathrm{Sym}_k(\mathbb{Z})/\mathfrak{S}_k$ is a graded monoid with product induced by the direct sum of matrices. 

Recall from \S~\ref{sec:linkingnumber} that for a framed oriented link with $k$ components ${L}$, its linking matrix  $\mathrm{Lk}({L})$ is a symmetric matrix of size $k\times k$.

\begin{lemma} There is a graded monoid homomorphism
$$\mathrm{Lk}: \underline{\vec{\mathcal{T}}}(\emptyset,\emptyset)\longrightarrow \bigsqcup_{k\geq 0} \mathrm{Sym}_k(\mathbb{Z})/\mathfrak{S}_k$$
taking the isotopy class $\mathbf{L}$ of a framed oriented link  to  the conjugacy class of the linking matrix $\mathrm{Lk}({L})$, where $L$ is a representative of $\mathbf{L}$.
\end{lemma}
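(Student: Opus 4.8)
The statement asserts that the linking matrix assignment $\mathbf{L} \mapsto [\mathrm{Lk}(L)]$ defines a graded monoid homomorphism from $\underline{\vec{\mathcal{T}}}(\emptyset,\emptyset)$ to $\bigsqcup_{k \geq 0} \mathrm{Sym}_k(\mathbb{Z})/\mathfrak{S}_k$. First I would check that the map is well-defined: given $\mathbf{L} \in \underline{\vec{\mathcal{T}}}(\emptyset,\emptyset)$ with representative $L$, the set $\pi_0(\mathbf{L})_{\mathrm{cir}} = \pi_0(L)$ is an abstract finite set of cardinality $k$ (using Lemma~\ref{r:2022-10-24-2}), and the linking matrix $\mathrm{Lk}(L)$ recalled in \S\ref{sec:linkingnumber} is a symmetric matrix indexed by $\pi_0(L)$; choosing a linear order on $\pi_0(L)$ turns it into an element of $\mathrm{Sym}_k(\mathbb{Z})$, well-defined up to $\mathfrak{S}_k$-conjugation, and independent of the isotopy representative because $\mathrm{Lk}(L)$ is an isotopy invariant (as noted at the end of \S\ref{sec:linkingnumber}). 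This gives a well-defined map $\mathrm{Lk}: \underline{\vec{\mathcal{T}}}(\emptyset,\emptyset) \to \bigsqcup_{k\geq 0}\mathrm{Sym}_k(\mathbb{Z})/\mathfrak{S}_k$.

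Next I would verify compatibility with the gradings. The source is $\mathbb{Z}_{\geq 0}$-graded by $k \mapsto |\pi_0(\mathbf{L})_{\mathrm{cir}}|$ (Lemma~\ref{lemma:identif}(c)), and the target is graded by matrix size; since $\mathrm{Lk}(\mathbf{L})$ lands in $\mathrm{Sym}_{|\pi_0(\mathbf{L})_{\mathrm{cir}}|}(\mathbb{Z})/\mathfrak{S}_{|\pi_0(\mathbf{L})_{\mathrm{cir}}|}$ by construction, the map is graded. For the monoid homomorphism property, recall from Lemma~\ref{lemma:identif}(a) that the monoid operation on $\underline{\vec{\mathcal{T}}}(\emptyset,\emptyset)$ is disjoint union $\dot{\sqcup}$, and the target operation is direct sum of matrices. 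Given $\mathbf{L}, \mathbf{L}' \in \underline{\vec{\mathcal{T}}}(\emptyset,\emptyset)$, one has $\pi_0(\mathbf{L}\dot{\sqcup}\mathbf{L}') \simeq \pi_0(\mathbf{L}) \sqcup \pi_0(\mathbf{L}')$ (used in the proof of Lemma~\ref{lemma:identif}(b)). The key computation is that for components $K, K'$ of $L\dot{\sqcup}L'$, the linking number $\mathrm{Lk}(K,K')$ vanishes whenever $K$ and $K'$ belong to different summands — this is immediate from the definition of $\dot{\sqcup}$, since $L$ and $L'$ are separated by an embedded $3$-ball, so one can choose a diagram of $L\dot{\sqcup}L'$ with no crossings between the two sublinks, whence $\mathrm{cr}(K,K') = \emptyset$ and $\mathrm{Lk}(K,K')=0$. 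For components within the same summand, the self-framing contributions $\mathrm{Lk}(K)$ and the mutual linking numbers agree with those computed inside $L$ (resp. $L'$) alone, again because one may use a split diagram. Hence $\mathrm{Lk}(L\dot{\sqcup}L')$ is block-diagonal, equal to $\mathrm{Lk}(L) \oplus \mathrm{Lk}(L')$ with respect to the decomposition $\pi_0(L)\sqcup\pi_0(L')$, which is precisely the direct sum in $\bigsqcup_{k\geq 0}\mathrm{Sym}_k(\mathbb{Z})/\mathfrak{S}_k$.

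The main obstacle, such as it is, is bookkeeping rather than mathematics: one must be careful that the target monoid structure (direct sum up to simultaneous conjugation by a block permutation) matches the indexing-set disjoint union $\pi_0(\mathbf{L})\sqcup\pi_0(\mathbf{L}')$, and that the choice of linear orders used to represent the classes does not interfere — but this is exactly why the target is taken modulo $\mathfrak{S}_k$, so any two orderings give conjugate matrices. I would also note for completeness that $\mathrm{Lk}$ sends the unit (the empty link $\vec{\varnothing}$, of degree $0$) to the unique $0\times 0$ matrix, the unit of the target monoid. Assembling these observations yields that $\mathrm{Lk}$ is a graded monoid homomorphism, completing the proof.
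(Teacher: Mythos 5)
Your proof is correct and is exactly the "direct verification" that the paper leaves implicit (the paper's proof consists of the single sentence that the result follows from a direct verification). Your write-up simply supplies the routine details — isotopy invariance, the $\mathfrak{S}_k$-quotient handling the choice of ordering, and the block-diagonal form of $\mathrm{Lk}(L\dot{\sqcup}L')$ coming from a split diagram — so there is nothing to correct.
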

\begin{proof}
The result follows from a direct verification.
\end{proof}

\subsubsection{Diagrams of framed (oriented) tangles}

An {\it oriented tangle diagram} is a planar diagram which may be obtained by regular projection  from  an oriented tangle. 
Conversely, such a diagram gives rise to an oriented tangle, which can be equipped with a framing by the convention (compatible with that of 
\S\ref{sect:112}) that the framing vector at each point is normal to the diagram, contained in the plane of the figure, and oriented to the right 
(resp. left) at the endpoints of the diagram marked $+$ (resp. $-$). 
See Figure \ref{figuraKI1_ell} for a diagram of a framed oriented tangle with its source and target.

\begin{figure}[ht!]
										\centering
                        \includegraphics[scale=0.9]{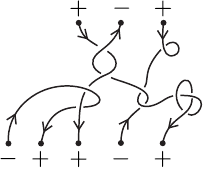}
					
\caption{Oriented tangle diagram representing an element of $\vec{\mathcal T}$ with source $-++-+$ and target $+-+$. 
}
\label{figuraKI1_ell} 								
\end{figure}

As in the case of  isotopy classes framed oriented links, for working with isotopy classes of framed oriented  tangles it is enough to work with their diagrams up to Reidemeister moves, i.e., we have the following bijection:
$$
\dfrac{\{\text{oriented tangle diagrams with blackboard framing}\}}{\text{planar isotopy, RI', RII, RIII}}\simeq \dfrac{\{\text{framed oriented  tangles}\}}{\text{isotopy}}.
$$

\subsection{Orientation-reversal operation for framed oriented tangles}\label{sect:4:2jan2020}

In this subsection we define an orientation-reversal operation for framed oriented tangles and we study its compatibility with the similar operation for oriented Brauer diagrams. We then introduce the set $\mathrm{CO}\vec{\mathcal{T}}=\sqcup_{k\geq 0}\mathrm{CO}\vec{\mathcal{T}}_k$.

Recall from Lemma~\ref{r:2022-10-24-2} that for $\mathbf{T}\in\vec{\mathcal T}$ we have $\pi_0(\mathbf{T})=\pi_0(\mathbf{T})_{\mathrm{cir}}\sqcup \pi_0(\mathbf{T})_{\mathrm{seg}}$ and $\pi_0(\mathbf{T})_{\mathrm{seg}}$ identifies canonically with $\pi_0(\mathrm{br}(\mathbf{T}))$.

\begin{definition}  Let $T$ be a framed oriented tangle and $c \in\pi_0({T})$. Let $\mathrm{co}_{\vec{\mathcal T}}(T,c)$\index[notation]{co_T@$\mathrm{co}_{\vec{\mathcal T}}$}  be the framed oriented tangle obtained from  $T$  by changing the orientation of $c$ as well as its framing into their opposites and leaving all other orientations and framings unchanged.
\end{definition}

\begin{lemma}\label{r:changeoforientationtangles}  Let $\mathbf{T}\in\vec{\mathcal T}$ and $T$ be a representative of $\mathbf{T}$. For any $c\in\pi_0(\mathbf{T})=\pi_0(T)$ (Lemma~\ref{r:2022-10-24-2}) the isotopy class of the framed oriented tangle $\mathrm{co}_{\vec{\mathcal T}}(T,c)$ does not depend on the representative $T$ of $\mathbf{T}$. We denote it by  $\mathrm{co}_{\vec{\mathcal T}}(\mathbf{T},c)$.
\end{lemma}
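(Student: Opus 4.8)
The plan is to deduce the statement from the naturality of the operation $\mathrm{co}_{\vec{\mathcal T}}$ with respect to ambient isotopies of the cube. Let $T$ and $T'$ be two representatives of $\mathbf{T}$. By the definition of the isotopy relation on framed oriented tangles, there is an ambient isotopy $(\phi_s)_{s\in[0,1]}$ of $[-1,1]^3$, fixing $\partial([-1,1]^3)$ pointwise, with $\phi_0=\mathrm{id}$ and such that $\phi_1$ carries $T$ onto $T'$ while matching orientations and framings. In particular $\phi_1$ induces a bijection $\pi_0(T)\to\pi_0(T')$, which coincides with the canonical identification $\pi_0(T)=\pi_0(\mathbf{T})=\pi_0(T')$ of Lemma~\ref{r:2022-10-24-2}. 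For $c\in\pi_0(\mathbf{T})$ I write $c_T$ (resp.\ $c_{T'}$) for the corresponding component of $T$ (resp.\ $T'$), so that $\phi_1(c_T)=c_{T'}$.

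Next I would observe that for any orientation-preserving diffeomorphism $\psi$ of $[-1,1]^3$ restricting to the identity near $\partial([-1,1]^3)$, and any framed oriented tangle $S$ with a component $c_S$, one has $\psi\big(\mathrm{co}_{\vec{\mathcal T}}(S,c_S)\big)=\mathrm{co}_{\vec{\mathcal T}}\big(\psi(S),\psi(c_S)\big)$. Indeed, $\psi$ pushes forward the underlying $1$-manifold, the orientation and the framing vector field of every component; since reversing the orientation of a component together with reversing its framing commutes with this push-forward, the two sides describe the same framed oriented tangle. (One also notes here that reversing both the tangent direction and the framing normal at an endpoint leaves the frame $(n_v,v,(1,0,0))$ positively oriented, so $\mathrm{co}_{\vec{\mathcal T}}(S,c_S)$ is again a legitimate framed oriented tangle in the sense of Definition~\ref{def:framedtangle}.)

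Applying this with $\psi=\phi_1$ and $S=T$ gives $\phi_1\big(\mathrm{co}_{\vec{\mathcal T}}(T,c_T)\big)=\mathrm{co}_{\vec{\mathcal T}}\big(T',\phi_1(c_T)\big)=\mathrm{co}_{\vec{\mathcal T}}(T',c_{T'})$. Since $\phi_1$ is the time-one map of an ambient isotopy fixing the boundary, this exhibits $\mathrm{co}_{\vec{\mathcal T}}(T,c_T)$ and $\mathrm{co}_{\vec{\mathcal T}}(T',c_{T'})$ as isotopic framed oriented tangles; as $c_T$ and $c_{T'}$ both represent the same element $c\in\pi_0(\mathbf{T})$, the isotopy class $\mathrm{co}_{\vec{\mathcal T}}(\mathbf{T},c)$ is well defined. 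The only points requiring care are that an isotopy of framed oriented tangles can be realized by an ambient isotopy rel boundary, and that the bijection it induces on $\pi_0$ agrees with the canonical one of Lemma~\ref{r:2022-10-24-2}; both are standard, and beyond this the argument is a direct verification — which is presumably why the lemma presents no genuine obstacle.
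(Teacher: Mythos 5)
Your argument is correct, and it is exactly the kind of direct verification the paper has in mind (its proof is just "follows straightforwardly"): orientation/framing reversal on a fixed component commutes with carrying the tangle along an isotopy, and the induced identification of components is the canonical one. If anything, you can avoid invoking ambient isotopy/isotopy extension altogether by simply applying $\mathrm{co}_{\vec{\mathcal T}}(\cdot,c)$ at every time of the given isotopy of embeddings, which immediately yields an isotopy from $\mathrm{co}_{\vec{\mathcal T}}(T,c)$ to $\mathrm{co}_{\vec{\mathcal T}}(T',c)$.
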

\begin{proof}
The result follows straightforwardly. 
\end{proof}

\begin{lemma}\label{r:2022-12-05-01} Let $\mathbf{T} \in \vec{\mathcal T}$ and $c \in \pi_0(\mathbf{T})$. 
\begin{itemize}
\item[$(a)$]  We have $\pi_0(\mathrm{co}_{\vec{\mathcal T}}(\mathbf{T},c))_{\mathrm{cir}}  \simeq \pi_0(\mathbf{T})_{\mathrm{cir}}$.

\item[$(b)$] If $c \in\pi_0(\mathbf{T})_{\mathrm{cir}}$, then $\vec{\mathrm{br}}(\mathrm{co}_{\vec{\mathcal T}}(\mathbf{T},c)) = \vec{\mathrm{br}}(\mathbf{T})$.

\item[$(c)$] If $c \in \pi_0(\mathbf{T})_{\mathrm{seg}}$, then $\vec{\mathrm{br}}(\mathrm{co}_{\vec{\mathcal T}}(\mathbf{T},c)) = \mathrm{co}_{\vec{\mathcal Br}}(\vec{\mathrm{br}}(\mathbf{T}),c)$. Here we use the identification  $\pi_0(\mathbf{T})_{\mathrm{seg}}\simeq\pi_0(\mathrm{br}(\mathbf{T}))$ from Lemma~\ref{r:2022-10-24-2} (see Definition~\ref{def:changeoforientarionvecBr} for $\mathrm{co}_{\vec{\mathcal Br}}$).
\end{itemize}
\end{lemma}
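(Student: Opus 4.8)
The plan is to reduce everything to a statement about framed oriented tangles (before quotienting by isotopy) by fixing a representative $T$ of $\mathbf{T}$; this is legitimate because $\mathrm{co}_{\vec{\mathcal T}}$ descends to isotopy classes by Lemma~\ref{r:changeoforientationtangles} and $\vec{\mathrm{br}}$ is an isotopy invariant by Lemma~\ref{sourceandtargetfortangles}. The single observation underlying all three parts is that $\mathrm{co}_{\vec{\mathcal T}}(T,c)$ has exactly the same underlying properly embedded compact $1$-manifold in $[-1,1]^3$ as $T$: only the orientation and the framing of the one component $c$ are replaced by their opposites (the framing reversal being precisely what is needed to keep the tuple $(n_v,v,(1,0,0))$ positive at the endpoints of $c$, cf.\ Definition~\ref{def:framedtangle}$(d)$). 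Consequently $\pi_0(\mathrm{co}_{\vec{\mathcal T}}(T,c))=\pi_0(T)$ as sets, and this identification preserves the property of a component being homeomorphic to a circle or to a segment; this is exactly statement $(a)$ (and, more generally, it identifies $\pi_0(\mathrm{co}_{\vec{\mathcal T}}(\mathbf{T},c))_{\mathrm{seg}}$ with $\pi_0(\mathbf{T})_{\mathrm{seg}}$, which is what is needed to even make sense of $(c)$).

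For $(b)$, if $c\in\pi_0(\mathbf{T})_{\mathrm{cir}}$ then $\partial c=\emptyset$, so $c$ is disjoint from $\partial T$. Hence the set $\partial T$, its decomposition $\partial_b T\sqcup\partial_t T$, the induced linear orders on these, the involution $\sigma_T$ (which only records how the two endpoints of each segment component are paired), and the subset $B_T\subset[\![1,|\partial_bT|]\!]\sqcup[\![\bar 1,\overline{|\partial_tT|}]\!]$ of outgoing boundary points are all literally unchanged when passing from $T$ to $\mathrm{co}_{\vec{\mathcal T}}(T,c)$, since the orientation change takes place away from $\partial T$. By Definition~\ref{definitionsigmaT}$(b)$ this says $\vec{\mathrm{br}}(\mathrm{co}_{\vec{\mathcal T}}(T,c))=\vec{\mathrm{br}}(T)$, proving $(b)$.

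For $(c)$, let $c\in\pi_0(\mathbf{T})_{\mathrm{seg}}$ and, via the identification $\pi_0(\mathbf{T})_{\mathrm{seg}}\simeq\pi_0(\mathrm{br}(\mathbf{T}))$ of Lemma~\ref{r:2022-10-24-2}$(b)$, write the corresponding component of $\vec{\mathrm{br}}(\mathbf{T})$ as $\{a,\sigma_T(a)\}$ with $a=\mathrm{b}_{\vec{\mathrm{br}}(\mathbf{T})}(c)\in B_T$ its beginning (outgoing) endpoint and $\sigma_T(a)=\mathrm{e}_{\vec{\mathrm{br}}(\mathbf{T})}(c)$ its ending endpoint (notation of Definition~\ref{def:beginningandendofadiagram}). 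Reversing the orientation and framing of $c$ (and nothing else) again leaves $\partial T$, its decomposition, its orders and $\sigma_T$ untouched; it only changes which endpoint of $c$ is outgoing, replacing $a$ by $\sigma_T(a)$. Therefore $B_{\mathrm{co}_{\vec{\mathcal T}}(T,c)}=(B_T\setminus\{a\})\cup\{\sigma_T(a)\}$, and comparing with Definition~\ref{def:changeoforientarionvecBr} we get $\vec{\mathrm{br}}(\mathrm{co}_{\vec{\mathcal T}}(T,c))=\mathrm{co}_{\vec{\mathcal Br}}(\vec{\mathrm{br}}(\mathbf{T}),a)$, which is $\mathrm{co}_{\vec{\mathcal Br}}(\vec{\mathrm{br}}(\mathbf{T}),c)$ under the convention that $\mathrm{co}_{\vec{\mathcal Br}}$ applied at a segment component means applying it at the beginning point of that component.

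I do not expect any genuine obstacle here: the argument is pure bookkeeping, and the proof can reasonably be compressed to a direct verification. The only two points deserving a word of care are (i) the passage to isotopy classes, dealt with once and for all by Lemmas~\ref{r:changeoforientationtangles} and~\ref{sourceandtargetfortangles}, and (ii) the mild notational mismatch between Definition~\ref{def:changeoforientarionvecBr}, which takes a beginning point $b\in B$ as input, and the statement of $(c)$, which takes a connected component $c$; these are reconciled through the bijection $\mathrm{b}_x$ of Definition~\ref{def:beginningandendofadiagram}, as indicated above.
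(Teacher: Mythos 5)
Your proof is correct and follows essentially the same route as the paper: verify the identities at the level of representatives (where they are immediate, since only the orientation and framing of the single component $c$ change) and then descend to isotopy classes via Lemmas~\ref{sourceandtargetfortangles}, \ref{r:2022-10-24-2} and~\ref{r:changeoforientationtangles}. The paper compresses this to a one-line "direct verification", so your write-up is just a more explicit version of the same argument, including the correct handling of the component-versus-beginning-point convention in part $(c)$.
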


\begin{proof}
The identities hold for framed oriented tangles (before taking the quotient by isotopy). Then the result follows from Lemmas~\ref{sourceandtargetfortangles}, \ref{r:2022-10-24-2}, and \ref{r:changeoforientationtangles}. 
\end{proof}

Recall the decomposition 
$\underline{\vec{\mathcal T}}(\emptyset,\emptyset)=\sqcup_{k\geq0}{\underline{\vec{\mathcal T}}}(\emptyset,\emptyset)_k$, see Lemma~\ref{lemma:identif}.

\begin{lemma} Let $k\geq 0$ and $\mathbf{T}\in {\underline{\vec{\mathcal T}}}(\emptyset,\emptyset)_k$, then $\mathrm{co}_{\vec{\mathcal T}}(\mathbf{T},c)\in {\underline{\vec{\mathcal T}}}(\emptyset,\emptyset)_k$ 
for any $c\in \pi_0(\mathbf{T})$.
\end{lemma}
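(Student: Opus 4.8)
The plan is to reduce the statement to the already-established invariance of the circle-component count under the change-of-orientation operation on $\vec{\mathcal{T}}$, namely Lemma~\ref{r:2022-12-05-01}(a), which asserts $\pi_0(\mathrm{co}_{\vec{\mathcal T}}(\mathbf{T},c))_{\mathrm{cir}} \simeq \pi_0(\mathbf{T})_{\mathrm{cir}}$. The claim to prove is that $\mathrm{co}_{\vec{\mathcal T}}$ preserves the graded piece $\underline{\vec{\mathcal T}}(\emptyset,\emptyset)_k$, so there are really only two things to check: that $\mathrm{co}_{\vec{\mathcal T}}(\mathbf{T},c)$ still lies in $\underline{\vec{\mathcal T}}(\emptyset,\emptyset)$ (i.e.\ has empty source and target words), and that its number of circle components is still $k$.

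First I would observe that for $\mathbf{T}\in\underline{\vec{\mathcal T}}(\emptyset,\emptyset)$, i.e.\ $\mathrm{s}(\mathbf{T})=\mathrm{t}(\mathbf{T})=\emptyset$, the underlying tangle has empty boundary, so $\pi_0(\mathbf{T})=\pi_0(\mathbf{T})_{\mathrm{cir}}$ and $\pi_0(\mathbf{T})_{\mathrm{seg}}=\emptyset$. The change-of-orientation operation $\mathrm{co}_{\vec{\mathcal T}}(\mathbf{T},c)$ only reverses the orientation and framing of the component $c$ and leaves the underlying (unoriented, unframed up to isotopy of the ambient structure) $1$-manifold embedding untouched; in particular it does not create or destroy boundary points, so $\mathrm{co}_{\vec{\mathcal T}}(\mathbf{T},c)$ again has empty boundary and hence $\mathrm{s}(\mathrm{co}_{\vec{\mathcal T}}(\mathbf{T},c))=\mathrm{t}(\mathrm{co}_{\vec{\mathcal T}}(\mathbf{T},c))=\emptyset$. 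Thus $\mathrm{co}_{\vec{\mathcal T}}(\mathbf{T},c)\in\underline{\vec{\mathcal T}}(\emptyset,\emptyset)$.

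Then I would apply Lemma~\ref{r:2022-12-05-01}(a): since $\mathbf{T}\in\underline{\vec{\mathcal T}}(\emptyset,\emptyset)_k$ means exactly $|\pi_0(\mathbf{T})_{\mathrm{cir}}|=k$ (Definition~\ref{def:tanglesTwz-k}), the bijection $\pi_0(\mathrm{co}_{\vec{\mathcal T}}(\mathbf{T},c))_{\mathrm{cir}}\simeq\pi_0(\mathbf{T})_{\mathrm{cir}}$ gives $|\pi_0(\mathrm{co}_{\vec{\mathcal T}}(\mathbf{T},c))_{\mathrm{cir}}|=k$. Combining this with the previous paragraph yields $\mathrm{co}_{\vec{\mathcal T}}(\mathbf{T},c)\in\underline{\vec{\mathcal T}}(\emptyset,\emptyset)_k$, which is the assertion. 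There is no real obstacle here — the content has already been packaged into Lemma~\ref{r:2022-12-05-01}(a); this lemma is essentially a corollary recording that the grading is respected, so the proof is just a one-line invocation plus the trivial remark that the boundary is unchanged. I would write:

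\begin{proof}
Since $\mathbf{T}\in\underline{\vec{\mathcal T}}(\emptyset,\emptyset)$, a representative $T$ has empty boundary. The operation $\mathrm{co}_{\vec{\mathcal T}}(T,c)$ only changes the orientation and framing of the component $c$, hence the underlying $1$-manifold and its embedding are unchanged; in particular $\mathrm{co}_{\vec{\mathcal T}}(T,c)$ has empty boundary, so $\mathrm{co}_{\vec{\mathcal T}}(\mathbf{T},c)\in\underline{\vec{\mathcal T}}(\emptyset,\emptyset)$. By Lemma~\ref{r:2022-12-05-01}$(a)$ there is a bijection $\pi_0(\mathrm{co}_{\vec{\mathcal T}}(\mathbf{T},c))_{\mathrm{cir}}\simeq\pi_0(\mathbf{T})_{\mathrm{cir}}$, and since $|\pi_0(\mathbf{T})_{\mathrm{cir}}|=k$ by hypothesis, we get $|\pi_0(\mathrm{co}_{\vec{\mathcal T}}(\mathbf{T},c))_{\mathrm{cir}}|=k$. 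Therefore $\mathrm{co}_{\vec{\mathcal T}}(\mathbf{T},c)\in\underline{\vec{\mathcal T}}(\emptyset,\emptyset)_k$.
\end{proof}
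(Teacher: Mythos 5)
Your proof is correct and matches the paper's in substance: the paper also disposes of this in one line, working with a representative and noting that reversing the orientation and framing of one component changes neither the (empty) boundary nor the set of circle components, with well-definedness supplied by Lemma~\ref{r:changeoforientationtangles}. Your invocation of Lemma~\ref{r:2022-12-05-01}$(a)$ for the circle count is just a repackaging of the same observation, so there is nothing to add.
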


\begin{proof} The result follows from Lemma~\ref{r:changeoforientationtangles} by using representatives.
\end{proof}

\subsubsection{The subset $\mathrm{CO}\vec{\mathcal{T}}\subset\underline{\vec{\mathcal{T}}}(\emptyset,\emptyset)\times \underline{\vec{\mathcal{T}}}(\emptyset,\emptyset)$}

It is clear that if $\mathbf{T}\in\underline{\vec{\mathcal T}}(\emptyset,\emptyset)_k$  (for $k\in\mathbb{Z}_{\geq 1}$) and $c \in \pi_0(\mathbf{T})$ then $\mathrm{co}_{\vec{\mathcal T}}(\mathbf{T},c) \in \underline{\vec{\mathcal T}}(\emptyset,\emptyset)_k$.

\begin{definition}\label{def:espaceCOT} For $k\in\mathbb{Z}_{\geq 1}$ define the subset $\mathrm{CO}\vec{\mathcal{T}}_k\subset \underline{\vec{\mathcal{T}}}(\emptyset,\emptyset)_k\times \underline{\vec{\mathcal{T}}}(\emptyset,\emptyset)_k$\index[notation]{COT_k@$\mathrm{CO}\vec{\mathcal{T}}_k$} as
\begin{equation}\label{eq:espaceCOT-k}
\mathrm{CO}\vec{\mathcal{T}}_k =\big\{(\mathbf{T}, \mathrm{co}_{\vec{\mathcal{T}}}(\mathbf{T},c)) \ | \ \mathbf{T}\in \underline{\vec{\mathcal{T}}}(\emptyset,\emptyset)_k, \ c\in\pi_0(\mathbf{T})\big\}.
\end{equation}
We set $\mathrm{CO}\vec{\mathcal{T}}_0=\emptyset$ and
\begin{equation}\label{eq:espaceCOT}
\mathrm{CO}\vec{\mathcal{T}}  := \bigsqcup_{k\geq 0} \mathrm{CO}\vec{\mathcal{T}}_k
 = \big\{(\mathbf{T}, \mathrm{co}_{\vec{\mathcal{T}}}(\mathbf{T},c)) \ | \ \mathbf{T}\in \underline{\vec{\mathcal{T}}}(\emptyset,\emptyset), \ c\in\pi_0(\mathbf{T})\big\} \subset \underline{\vec{\mathcal{T}}}(\emptyset,\emptyset)\times \underline{\vec{\mathcal{T}}}(\emptyset,\emptyset).
\end{equation}
\index[notation]{COT@$\mathrm{CO}\vec{\mathcal{T}}$}
\end{definition}

\subsection{Doubling operations for framed oriented tangles}\label{sect:3:2jan2020}

In this subsection we introduce  a doubling operation  for framed oriented tangles and study its compatibility with the similar operation for oriented Brauer diagrams.

\begin{definition}\label{def:doublinginT} Let $T$ be a framed (oriented) tangle and $X\subset \pi_0(T)_{\mathrm{seg}}$. Let $\mathrm{dbl}_{\vec{\mathcal T}}(T,X)$ (also denoted~$T^X$) \index[notation]{dbl_T@$\mathrm{dbl}_{\vec{\mathcal T}}$} be the framed (oriented) tangle obtained from $T$ by doubling each of the connected components of~$T$ belonging to~$X$ along 
its framing.
\end{definition}

\begin{lemma}\label{lem120} Let $\mathbf{T}\in\vec{\mathcal T}$ and $T$ be a representative of $\mathbf{T}$. For any $X\subset \pi_0(\mathbf{T})_{\mathrm{seg}} = \pi_0({T})_{\mathrm{seg}}$ (Lemma~\ref{r:2022-10-24-2}) the isotopy class of the framed oriented tangle $\mathrm{dbl}_{\vec{\mathcal T}}(T,X)$ does not depend on the representative $T$ of $\mathbf{T}$. We denote it by $\mathrm{dbl}_{\vec{\mathcal T}}(\mathbf{T},X)$.
\end{lemma}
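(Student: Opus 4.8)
The plan is to reduce the claim to a statement about representatives, exactly in the style of the analogous Lemmas~\ref{r:changeoforientationtangles} and~\ref{r2022-07-05}$(a)$. The key point is that the doubling operation $\mathrm{dbl}_{\vec{\mathcal T}}(T,X)$ was defined geometrically (replace each segment component of $T$ lying in $X$ by two parallel copies running along its framing), so I must check that this construction is compatible with ambient isotopy.

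First I would recall that, by Lemma~\ref{r:2022-10-24-2}$(a)$, two isotopic representatives $T$ and $T'$ of $\mathbf{T}$ have the same set of connected components, and in particular $\pi_0(T)_{\mathrm{seg}}=\pi_0(T')_{\mathrm{seg}}=\pi_0(\mathbf{T})_{\mathrm{seg}}$, so the subset $X$ makes sense independently of the chosen representative. Next, given an isotopy $H:[-1,1]^3\times[0,1]\to[-1,1]^3$ carrying $T$ to $T'$ (ambient isotopy, fixing the boundary configuration and respecting the framing condition at the endpoints), I would observe that $H$ carries each framed component $c$ of $T$ to the corresponding framed component of $T'$, hence carries a chosen parallel push-off of $c$ along its framing to a parallel push-off of the corresponding component of $T'$ along its framing. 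Doubling is, by construction, nothing but taking the union of each such $c\in X$ with one such parallel push-off (and leaving the components outside $X$, as well as all circle components, untouched); therefore $H$ restricts/extends to an ambient isotopy from $\mathrm{dbl}_{\vec{\mathcal T}}(T,X)$ to $\mathrm{dbl}_{\vec{\mathcal T}}(T',X)$, showing these are isotopic. This justifies the notation $\mathrm{dbl}_{\vec{\mathcal T}}(\mathbf{T},X)$.

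The one point requiring a little care — and the main (mild) obstacle — is the independence of the result from the auxiliary choice of push-off used inside the definition of doubling for a fixed representative: one must note that any two sections of the normal bundle of $c$ in the given framing isotopy class are isotopic through such sections, and that this isotopy can be realized by an ambient isotopy of $[-1,1]^3$ supported near $c$ and fixing everything else (including the other components and the boundary). This is standard for framed links and tangles; invoking it, together with the transitivity of ``isotopic to'', completes the argument. I would therefore write the proof as: ``The result follows straightforwardly from Lemma~\ref{r:2022-10-24-2}, using that an ambient isotopy between representatives carries framings to framings and hence doublings to doublings; the independence of the auxiliary push-off follows from the standard fact that parallel push-offs in a fixed framing class are ambient-isotopic rel the rest of the tangle.''

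\begin{proof}
By Lemma~\ref{r:2022-10-24-2}$(a)$, for any representative $T$ of $\mathbf{T}$ we have $\pi_0(T)_{\mathrm{seg}}=\pi_0(\mathbf{T})_{\mathrm{seg}}$, so the subset $X\subset\pi_0(\mathbf{T})_{\mathrm{seg}}$ determines, for each such $T$, a well-defined subset of segment components to be doubled. If $T$ and $T'$ are isotopic representatives of $\mathbf{T}$, choose an ambient isotopy of $[-1,1]^3$ carrying $T$ to $T'$ and respecting the framings. It carries each framed component $c$ of $T$, together with a parallel push-off of $c$ along its framing, to the corresponding framed component of $T'$ together with a parallel push-off along its framing; since $\mathrm{dbl}_{\vec{\mathcal T}}(-,X)$ is obtained by replacing each $c\in X$ by the union of $c$ with such a push-off and leaving all other components unchanged, this ambient isotopy carries $\mathrm{dbl}_{\vec{\mathcal T}}(T,X)$ to $\mathrm{dbl}_{\vec{\mathcal T}}(T',X)$. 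Finally, the construction does not depend on the auxiliary choice of push-off, since any two sections of the normal bundle of $c$ in the given framing class are isotopic through such sections, and such an isotopy is realized by an ambient isotopy of $[-1,1]^3$ supported near $c$ and fixing the remaining components and the boundary. Hence the isotopy class $\mathrm{dbl}_{\vec{\mathcal T}}(T,X)$ depends only on $\mathbf{T}$, and we denote it by $\mathrm{dbl}_{\vec{\mathcal T}}(\mathbf{T},X)$.
\end{proof}
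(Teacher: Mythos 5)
Your proof is correct, and since the paper's own proof of this lemma is just the one-line assertion ``The result is a direct verification,'' your argument is simply a fleshed-out version of the same straightforward approach: an ambient isotopy respecting framings carries doublings to doublings, and the auxiliary push-off choice is immaterial. Nothing to change.
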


\begin{proof}
The result is a direct verification.
\end{proof}

\begin{lemma}\label{r:2022-10-31-01} Let $\mathbf{T} \in \vec{\mathcal T}$ and $X \subset \pi_0(\mathbf{T})_{\mathrm{seg}}$. 
\begin{itemize}
\item[$(a)$]   $\pi_0(\mathrm{dbl}_{\vec{\mathcal T}}(\mathbf{T},X))_{\mathrm{cir}}  \simeq \pi_0(\mathbf{T})_{\mathrm{cir}}$.

\item[$(b)$]  $\vec{\mathrm{br}}(\mathrm{dbl}_{\vec{\mathcal T}}(\mathbf{T},X)) = \mathrm{dbl}_{\vec{\mathcal Br}}(\vec{\mathrm{br}}(\mathbf{T}),X)$. Here we use the identification  $\pi_0(\mathbf{T})_{\mathrm{seg}}\simeq\pi_0(\mathrm{br}(\mathbf{T}))$ from Lemma~\ref{r:2022-10-24-2} (see Definition~see Definitions~\ref{def:doublingforvecBr} and \ref{def:doublingforskeleta} for $\mathrm{dbl}_{\vec{\mathcal Br}}$).
\end{itemize}
\end{lemma}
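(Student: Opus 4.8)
The plan is to reduce everything to the combinatorial statements already established for oriented Brauer diagrams, exactly as was done for the other operations on tangles (change of orientation, tensor product, composition) in Lemmas~\ref{r:2022-10-24-03} and~\ref{r:2022-12-05-01}. The key observation is that both sides of each claimed identity are defined purely in terms of the underlying oriented Brauer diagram plus a choice of a subset of segment components, and that the doubling operation on tangles is compatible with the doubling operation on diagrams \emph{before} passing to isotopy classes.

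For part $(a)$: doubling a segment component of a tangle along its framing produces a new segment component (two parallel copies of an arc joined, or rather two parallel arcs), and leaves every circle component untouched; moreover it creates no new circle components, since the doubled copy of a properly embedded arc is again a properly embedded arc. Hence at the level of representatives there is a canonical identification $\pi_0(\mathrm{dbl}_{\vec{\mathcal T}}(T,X))_{\mathrm{cir}} = \pi_0(T)_{\mathrm{cir}}$; by Lemma~\ref{lem120} (independence of representative) and Lemma~\ref{r:2022-10-24-2}$(a)$ this descends to the asserted bijection $\pi_0(\mathrm{dbl}_{\vec{\mathcal T}}(\mathbf{T},X))_{\mathrm{cir}} \simeq \pi_0(\mathbf{T})_{\mathrm{cir}}$. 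First I would spell out this identification on representatives, then invoke the two cited lemmas to pass to isotopy classes.

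For part $(b)$: I would unwind the definitions. The underlying oriented Brauer diagram $\vec{\mathrm{br}}(\mathbf{T})$ records $(|\partial_b T|,|\partial_t T|,\sigma_T)$ together with $B_T$; doubling $T$ along a segment component $c$ inserts, next to each boundary point of $c$, a new boundary point of the parallel copy, with $\sigma$ modified according to whether the two copies run parallel or anti-parallel — which is precisely the rule in Definition~\ref{def:doublingforskeleta}$(b)$ (decreasing if both endpoints of $c$ are at the bottom or both at the top, increasing otherwise), and the beginning-points set $B_T$ pulls back along the projection $\mathrm{pr}$, which is exactly $\mathrm{pr}^{-1}(B_T)$ in Definition~\ref{def:doublingforvecBr}. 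So on representatives one has, by direct inspection of the combinatorics of parallel push-offs, the equality $\vec{\mathrm{br}}(\mathrm{dbl}_{\vec{\mathcal T}}(T,X)) = \mathrm{dbl}_{\vec{\mathcal Br}}(\vec{\mathrm{br}}(T),X)$, using the identification $\pi_0(\mathbf{T})_{\mathrm{seg}}\simeq \pi_0(\mathrm{br}(\mathbf{T}))$ of Lemma~\ref{r:2022-10-24-2}$(b)$ to make sense of $X$ on the right. Then one passes to isotopy classes via Lemmas~\ref{sourceandtargetfortangles}, \ref{lem120} and the compatibility $\vec{\mathrm{br}} = \vec{\mathrm{br}}$ being an isotopy invariant.

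The only mildly delicate point — which I expect to be the main obstacle, though it is really a matter of careful bookkeeping rather than a genuine difficulty — is matching the \emph{orientations} and the \emph{linear orders} on the new boundary points: one must check that the framing convention for framed oriented tangles (Definition~\ref{def:framedtangle}$(d)$, the rule with the positive frame $(n_v,v,(1,0,0))$) makes the orientation of the doubled arc at its endpoints agree with the prescription in Definition~\ref{def:doublingforvecBr} (namely that $B$ pulls back under $\mathrm{pr}$), and that the left-to-right order of the two parallel endpoints on the boundary square matches the non-decreasing projection $\mathrm{pr}_{\mathrm s}$, $\mathrm{pr}_{\mathrm t}$ of Definition~\ref{def:doublingforskeleta}$(a)$. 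Since the push-off is taken along the framing and the framing points ``to the right'' at $+$-endpoints and ``to the left'' at $-$-endpoints (the convention fixed in \S\ref{sect:112} and after Figure~\ref{figuraKI1_ell}), this is consistent; I would state this compatibility explicitly and then conclude. The proof therefore reads: \emph{The identities hold for framed oriented tangles before taking the quotient by isotopy, by inspection of the parallel push-off along the framing and the conventions of Definitions~\ref{def:framedtangle} and~\ref{def:doublingforskeleta}; the result then follows from Lemmas~\ref{sourceandtargetfortangles}, \ref{r:2022-10-24-2} and~\ref{lem120}.}
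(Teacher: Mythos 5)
Your proposal is correct and follows essentially the same route as the paper: the paper's proof simply observes that the identities hold for framed oriented tangles before taking the quotient by isotopy, and then invokes the same lemmas (Lemmas~\ref{sourceandtargetfortangles}, \ref{r:2022-10-24-2} and the independence-of-representative statement) to descend to isotopy classes. Your additional bookkeeping about framings, orientations and the ordering of the doubled endpoints is just an explicit spelling-out of what the paper leaves as a direct verification.
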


\begin{proof}
The identities hold for framed oriented tangles (before taking the quotient by isotopy). Then the result follows from Lemmas~\ref{sourceandtargetfortangles}, \ref{r:2022-10-24-2}, and \ref{r:2022-10-31-01}. 
\end{proof}

\begin{example}\label{example:doubl:T} 
Figure~\ref{figureEx1_25}$(a)$ shows a representative of $\mathbf{T}\in \vec{\mathcal T}$ and $X\subset \pi_0(\mathbf{T})_{\mathrm{seg}}$ indicated using thick lines.  Figure~\ref{figureEx1_25}$(b)$ shows a representative of ${\mathrm{dbl}_{\vec{\mathcal T}}}(\mathbf{T},X)\in\vec{\mathcal T}$. The image in $\vec{\mathcal Br}$ of these elements under the map $\vec{\mathrm{br}}$ are shown in  Figure~\ref{figureEx1_19}$(a)$ and $(b)$ respectively.

\begin{figure}[ht!]
										\centering
                        \includegraphics[scale=0.85]{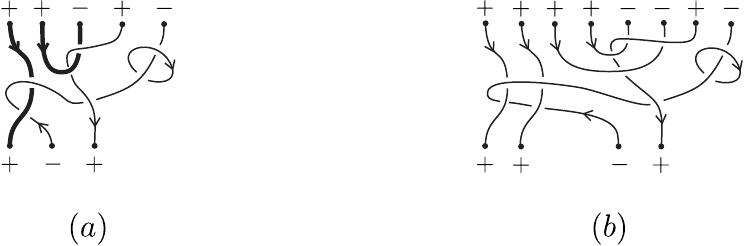}
												
\caption{$(a)$ A representative of an element $(\mathbf{T},X)$ and $(b)$ a representative of ${\mathrm{dbl}_{\vec{\mathcal T}}}(\mathbf{T},X)$.}
\label{figureEx1_25} 								
\end{figure}
\end{example}

\subsection{The space KII}\label{sec:new2.5}

In this subsection we use the doubling operation from the previous subsection and \emph{external tangles} to define a subset $\mathrm{KII}\vec{\mathcal{T}}\subset \underline{\vec{\mathcal{T}}}(\emptyset,\emptyset)\times \underline{\vec{\mathcal{T}}}(\emptyset,\emptyset)$.

\subsubsection{External tangles}
Consider $k$ ordered points $d_1,\cdots, d_k\in \{0\}\times [-1,1]
 \subset [-1,1]^2$ distributed uniformly. Let $b_i \in [-1,1]^2\times\{-1\}$ (resp. $t_i \in [-1,1]^2\times\{1\}$) denote the point in $\partial([-1,1]^3)$ given by $d_i\times\left\{-1\right\}$ (resp. $d_i\times\left\{1\right\}$).

Denote by $[-1,1]^3_{\mathrm{ext}}$ the compact oriented 3-manifold ${S}^3 \setminus \mathrm{int} ([-1,1]^3)$.

\begin{definition}\label{def:externaltangle} $(a)$ An \emph{external  oriented tangle $T$ in $[-1,1]^3_{\mathrm{ext}}$ with $k\geq 1$ non-closed components} is a compact oriented 1-manifold with boundary  $T$ properly embedded into $[-1,1]^3_{\mathrm{ext}}$ such that 
\begin{itemize}
\item $\partial T\subset [-1,1]^2\times\left\{\pm 1\right\}$,
\item $\partial T\cap \left([-1,1]^2\times\left\{ -1\right\}\right) = \{b_1,\cdots,b_k\}$,
\item $\partial T\cap \left([-1,1]^2\times\left\{ 1\right\}\right)  = \{t_1,\cdots,t_k\}$
\item There are exactly $k$ non-closed connected component $T_1,\cdots T_k$  of $T$ such that $\partial T_i = \{b_i,d_i\}$ for each $1\leq i\leq k$.
\end{itemize}
$(b)$ An \emph{external  framed oriented tangle  in $[-1,1]^3_{\mathrm{ext}}$  with $k$ non-closed components} is an external  oriented tangle $T$ in $[-1,1]^3_{\mathrm{ext}}$  with $k$ non-closed components such that each connected component is endowed with a framing whose value at the endpoints of $\partial T$ is $\pm(0,1,0)$,  the sign being determined  as follows: if $v$ is the unit tangent vector and $n_v$  the unit normal vector at an endpoint, then the tuple $(n_v,v,(1,0,0))$ is a positive frame for $\mathbb{R}^3$.

\end{definition}

As in the case of tangles, external (framed, oriented tangles can be represented in two dimensions by considering regular projections on the plane $(\{-1\}\times\mathbb{R}^2)\setminus \mathrm{int}(\{-1\}\times[-1,1]^2)$ giving rise to diagrams and the framing can be encoded using the blackboard framing convention,  see Figure~\ref{figure2posarxiv}.

\begin{figure}[ht!]
			\centering
            \includegraphics[scale=0.8]{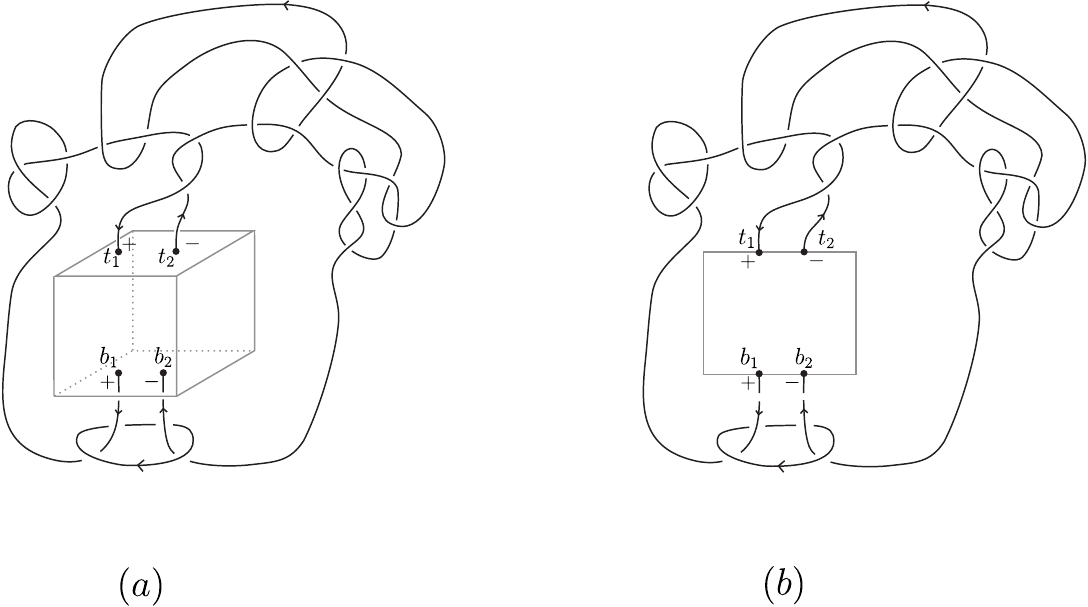}			
            \caption{$(a)$ An example of an external oriented tangle with $2$ non-closed components. $(b)$ a diagram of the external tangle in $(a)$.}
\label{figure2posarxiv} 								
\end{figure}


 \begin{definition} $(a)$ Let ${\vec{\mathcal T}}(+-,+-)^{\mathrm{ext}}$ be the set of isotopy classes (relative to the boundary) of external framed oriented tangles $T$ with $2$ non-closed components in $[-1,1]^3_{\mathrm{ext}}$ such that the orientations of $T$ at $b_1$ and $t_2$ are  outgoing and at $b_2$ and $t_1$ are incoming. See Figure~\ref{figure2posarxiv} for an example.  

 $(b)$ Let  ${\vec{\mathcal T}}(+--,+--)^{\mathrm{ext}}$ be the set of isotopy classes (relative to the boundary) of external framed oriented tangles $T$ with $3$ non-closed components in $[-1,1]^3_{\mathrm{ext}}$  such that the orientations of $T$ at $b_1$, $t_2$ and $t_3$ are outgoing and at $b_2$, $b_3$ and $t_1$ are  incoming. 
 \end{definition}

\subsubsection{Doubling maps for external tangles}
Definition~\ref{def:doublinginT} can be applied in the context of external tangles as follows. 

\begin{definition}\label{def:doublinginext} Define a map 
\begin{equation}
\mathrm{dbl}^{+-}: {\vec{\mathcal T}}(+-,+-)^{\mathrm{ext}} \longrightarrow {\vec{\mathcal T}}(+--+,+--)^{\mathrm{ext}}
\end{equation}
as follows.    Let $\mathbf{T}\in{\vec{\mathcal T}}(+-,+-)^{\mathrm{ext}}$ and let $T$ be one of its representatives. Let $T_2$ be the connected component of $T$ such that $\partial T_2=\{b_2, t_2\}$. Let $T'$ be the external framed oriented tangle with $3$ non-closed components in $[-1,1]^3_{\mathrm{ext}}$ obtained from $T$ by doubling the connected component $T_2$ along its framing and endowing it with the same orientation and framing as those of $T_2$. The isotopy class of $T'$ in $[-1,1]^3_{\mathrm{ext}}$ does not depend on the isotopy class of $T$ and it belongs to ${\vec{\mathcal T}}(+--,+--)^{\mathrm{ext}}$, we set $\mathrm{dbl}^{+-}(\mathbf{T})=\mathbf{T}'$.
\end{definition}

\subsubsection{From external tangles to  links and KII maps}

\begin{definition}\label{def:pairingPu} For $u\in\{+-, +--\}$ define the map
\begin{equation}
    P_u:  {\vec{\mathcal T}}(u,u)^{\mathrm{ext}}\times \underline{\vec{\mathcal T}}(u,u) \longrightarrow \underline{\vec{\mathcal T}}(\emptyset,\emptyset)
\end{equation}
as follows. Let $(\mathbf{T}, \mathbf{S})\in {\vec{\mathcal T}}(u,u)^{\mathrm{ext}}\times \underline{\vec{\mathcal T}}(u,u)$ and let $T$ and $S$ be representatives of $\mathbf{T}$ and $\mathbf{S}$, respectively. Define $P_u(\mathbf{T}, \mathbf{S})$ as the isotopy class in ${S}^3$ of the framed oriented link obtained by gluing the pairs $([-1,1]^3_{\mathrm{ext}}, T\subset  [-1,1]^3_{\mathrm{ext}})$   and $([-1,1]^3, S\subset [-1,1]^3)$ identically along their boundaries. See Figure~\ref{figure1posarxiv} for a schematic explanation.  
\end{definition}

We use a particular case of the above maps as follows.

\begin{definition}\label{def_KIIMaps}

$(a)$ Define the map 
\begin{equation}
    \mathrm{KIImap}^1:  \underline{\vec{\mathcal T}}(+-,+-)^{\mathrm{ext}} \longrightarrow \underline{\vec{\mathcal T}}(\emptyset,\emptyset) 
\end{equation}
    by $\mathrm{KIImap}^1(\mathbf{T}) = P_{+-}(\mathbf{T}, \mathrm{Id}_{+-})$, where $\mathrm{Id}_{+-}\in \underline{\vec{\mathcal T}}(+-,+-)$ is the identity morphism, see Figure~\ref{figure1posarxiv}$(a)$. 

$(b)$ Define the map 
\begin{equation}
    \mathrm{KIImap}^2:  \underline{\vec{\mathcal T}}(+-,+-)^{\mathrm{ext}} \longrightarrow \underline{\vec{\mathcal T}}(\emptyset,\emptyset) 
\end{equation}
    by $\mathrm{KIImap}^2(\mathbf{T}) = P_{+--}(\mathrm{dbl}^{+-}(\mathbf{T}), e_{+--})$, where $e_{+--}\in \underline{\vec{\mathcal T}}(+--,+--)$ is the isotopy class of the blackboard framed oriented tangle in $[-1,1]^3$ shown in red in Figure~\ref{figure1posarxiv}$(b)$, this map is illustrated in the same figure.   
\end{definition}

\begin{figure}[ht!]
			\centering
            \includegraphics[scale=0.8]{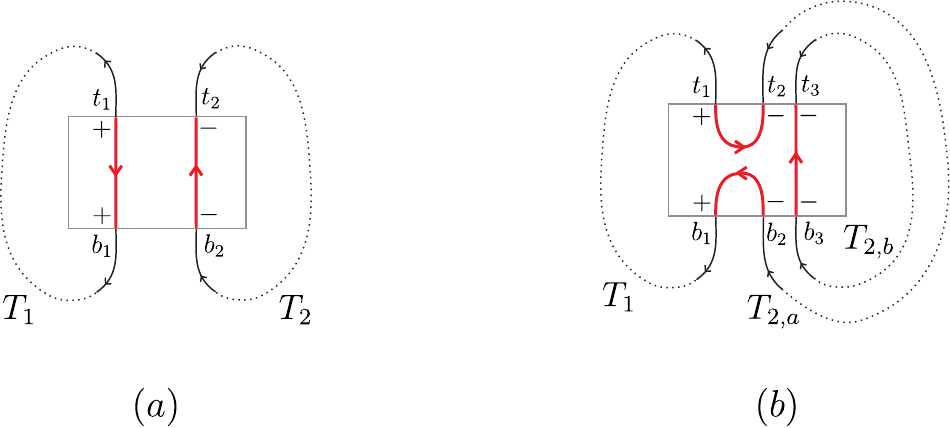}			
            \caption{Schematic representation of the maps $ \mathrm{KIImap}^1$ in $(a)$ and  $\mathrm{KIImap}^2$ in  $(b)$. In both figures there are circle components which are not shown.}
\label{figure1posarxiv} 								
\end{figure}

\subsubsection{The space $\mathrm{KII}\vec{\mathcal{T}}\subset\underline{\vec{\mathcal{T}}}(\emptyset,\emptyset)\times \underline{\vec{\mathcal{T}}}(\emptyset,\emptyset)$ and the Kirby II move}

\begin{definition}\label{def:espaceKIIT} Define the subset $\mathrm{KII}\vec{\mathcal{T}}\subset \underline{\vec{\mathcal{T}}}(\emptyset,\emptyset)\times \underline{\vec{\mathcal{T}}}(\emptyset,\emptyset)$ as follows.
\begin{equation}\label{eq:espaceKIIT}
\mathrm{KII}\vec{\mathcal{T}} =\big\{(\mathrm{KIIMap}^1(\mathbf{T}), \mathrm{KIIMap}^2(\mathbf{T})) \ | \  \mathbf{T}\in {\vec{\mathcal{T}}}(+-,+-)^{\mathrm{ext}}\big\}.
\end{equation}
\end{definition}

The set $\mathrm{KII}\vec{\mathcal{T}}\subset \underline{\vec{\mathcal{T}}}(\emptyset,\emptyset)\times \underline{\vec{\mathcal{T}}}(\emptyset,\emptyset)$ allows to describe the Kirby II move in Theorem~\ref{thm:kirby} as follows.

\begin{proposition} Let $L$ be a framed oriented link in $S^3$ and let $L'$ be obtained from $L$ by a Kirby II move as described in Theorem~\ref{thm:kirby}3). Then there exists an external framed oriented tangle $\mathbf{T}\in {\vec{\mathcal{T}}}(+-,+-)^{\mathrm{ext}}$ such that $\mathbf{L} = \mathrm{KIIMap}^1(\mathbf{T})$ and $\mathbf{L}' = \mathrm{KIIMap}^2(\mathbf{T})$, that is, $(\mathbf{L}, \mathbf{L}')\in \mathrm{KII}\vec{\mathcal{T}}$.
\end{proposition}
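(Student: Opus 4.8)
The plan is to absorb the entire local picture of the Kirby II move into a standardly embedded cube, so that what lies outside the cube becomes the sought external tangle. First, fix a representative $L = K_1 \cup \cdots \cup K_l$ of $\mathbf{L}$ together with the data realizing the move: the two chosen components $L_a$ and $L_b$, the parallel copy $L_{b'}$ of $L_b$ along its framing, and an orientation-compatible band $\gamma \colon [0,1] \times [-1,1] \hookrightarrow S^3$ with $\gamma([0,1]\times\{-1\})$ an arc of $L_a$, $\gamma([0,1]\times\{1\})$ an arc of $L_{b'}$, and $\gamma\big((0,1)\times(-1,1)\big)$ disjoint from $L \cup L_{b'}$, so that $L' = (L \setminus L_a) \cup L_a'$, where $L_a'$ is obtained from $L_a$ and $L_{b'}$ by surgery along $\gamma$. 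Since $\mathbf{L}$ and $\mathbf{L}'$ are isotopy classes, we may freely replace $(L,\gamma)$ by $(\phi(L), \phi\circ\gamma)$ for an ambient isotopy $\phi$ of $S^3$, which replaces $L'$ by the isotopic link $\phi(L')$.

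The heart of the argument is the following localization claim: after such an ambient isotopy there is a cube $C \cong [-1,1]^3$ in $S^3$, with $[-1,1]^3_{\mathrm{ext}} = S^3 \setminus \mathrm{int}(C)$, such that $(i)$ $L \cap C$ consists of exactly two properly embedded arcs, one in $L_a$ and one in $L_b$, and the resulting framed oriented tangle is the trivial tangle $\mathrm{Id}_{+-}$ for a labelling of its four endpoints in which the $L_a$-strand runs from $b_1$ to $t_1$ and the $L_b$-strand from $t_2$ to $b_2$; and $(ii)$ the band $\gamma$, together with the sub-arc of $L_{b'}$ it meets and the parallel copy inside $C$ of the $L_b$-arc, lies in $\mathrm{int}(C)$, and performing the band sum inside $C$ turns the picture of $(i)$ into the fixed model tangle $e_{+--}$. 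To prove this one first shrinks $\gamma$ by an isotopy and then takes $C$ to be a small regular neighborhood of the union of $\gamma$ with short sub-arcs of $L_a$ and of $L_b$, the latter chosen so that its parallel copy along the framing contains $\gamma([0,1]\times\{1\})$; this union lies in a ball inside which $L$, the band $\gamma$, the parallel copy and all the framings can be brought simultaneously to the standard model form. I expect this to be the main obstacle: it is essentially the statement that a Kirby II band sum, once the band is made short, is a purely local modification supported in a cube, after which the rest of the proof reduces to unwinding the definitions of $P_{+-}$, $\mathrm{dbl}^{+-}$ and the maps $\mathrm{KIIMap}^1$, $\mathrm{KIIMap}^2$.

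Granting the claim, set $\mathbf{T}$ to be the isotopy class of the external framed oriented tangle $L \cap [-1,1]^3_{\mathrm{ext}}$, with the endpoint labelling of $(i)$; the orientation-compatibility of the band sum, together with a possible interchange of the roles of top and bottom, ensures that the orientations at $b_1, t_2$ are outgoing and at $b_2, t_1$ incoming, so that $\mathbf{T} \in \vec{\mathcal{T}}(+-,+-)^{\mathrm{ext}}$. To check $\mathrm{KIIMap}^1(\mathbf{T}) = \mathbf{L}$: by $(i)$ there is an identification $(C, L\cap C) \cong ([-1,1]^3, \mathrm{Id}_{+-})$ compatible with the boundary parametrizations, and then $P_{+-}(\mathbf{T}, \mathrm{Id}_{+-})$ reglues $L \cap [-1,1]^3_{\mathrm{ext}}$ to $L \cap C$ along $\partial([-1,1]^3)$, recovering $L$. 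To check $\mathrm{KIIMap}^2(\mathbf{T}) = \mathbf{L}'$: doubling the $L_b$-strand $T_2$ of $\mathbf{T}$ along its framing produces, in $[-1,1]^3_{\mathrm{ext}}$, the exterior arc of $L_b$ together with its framing-parallel copy, which coincides with $L_{b'} \cap [-1,1]^3_{\mathrm{ext}}$ since $L_{b'}$ follows the framing of $L_b$; hence $\mathrm{dbl}^{+-}(\mathbf{T})$ is the external tangle cut out of $S^3$ by $L \cup L_{b'}$, and gluing it with $e_{+--}$ inside $C$, which by $(ii)$ is exactly $L' \cap C$, recovers $L'$. Therefore $(\mathbf{L}, \mathbf{L}') = \big(\mathrm{KIIMap}^1(\mathbf{T}), \mathrm{KIIMap}^2(\mathbf{T})\big) \in \mathrm{KII}\vec{\mathcal{T}}$.
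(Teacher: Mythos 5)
Your proposal is correct and takes essentially the same route as the paper's proof: by an ambient isotopy, localize the data of the KII move inside a standardly embedded cube so that the exterior becomes the external tangle $\mathbf{T}$, and then unwind the definitions of $\mathrm{KIIMap}^1$ and $\mathrm{KIIMap}^2$. The only difference is explicitness: the paper just asserts that, up to isotopy, $L$ meets $[-1,1]^3$ in two standard vertical strands of $L_a$ and $L_b$, whereas you take the cube to be a regular neighborhood of the band together with short arcs of $L_a$ and $L_b$, thereby spelling out the point (necessary since $L'$ depends on the chosen band) that the paper leaves implicit.
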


\begin{proof} Let $L_a$ and $L_b$ the two different components of $L$ as in Theorem~\ref{thm:kirby}. Let $S_1=\{0\}\times\{-1/3\}\times [-1,1]\subset [-1,1]^3$ and $S_2=\{0\}\times\{1/3\}\times [-1,1]\subset [-1,1]^3$. 
Up to isotopy we can suppose that $L\cap [-1,1]^3 = S_1\sqcup S_2$ and $L_a \cap [-1,1]^3 = S_1$ and   $L_b \cap [-1,1]^3 = S_2$. Also, up to isotopy, we can suppose that the orientation of $S_1$ (resp. $S_2$) is downwards (resp. upwards).  Set $T= L\setminus \mathrm{int} (S_1\sqcup S_2) \subset [-1,1]^3_{\mathrm{ext}}$, then the isotopy class $\mathbf{T}$ of $T$ satisfies the conditions stated in the proposition.
\end{proof}

\subsubsection{Decomposition of $\mathrm{KII}\vec{\mathcal{T}}$}

In this subsection we show that the set $\mathrm{KII}\vec{\mathcal T}$ decomposes as a disjoint union $\sqcup_{k\geq 0}\mathrm{KII}\vec{\mathcal T}_k$ such that  $\mathrm{KII}\vec{\mathcal T}_k\subset \underline{\vec{\mathcal T}}(\emptyset,\emptyset)_k\times \underline{\vec{\mathcal T}}(\emptyset,\emptyset)_k$.


\begin{definition}\label{def:espaceTP}

$(a)$  Let $P\in\vec{\mathcal Br}$.
Define the subset $\vec{\mathcal T}(P)$ of $\vec{\mathcal T}$ \index[notation]{T(P)@$\vec{\mathcal T}(P)$} as the preimage of $P$ under the map $\vec{\mathrm{br}}:\vec{\mathcal T}\to \vec{\mathcal Br}$ (see Lemma~\ref{sourceandtargetfortangles}).

 $(b)$ Let $P\in\vec{\mathcal Br}$. For $k\in\mathbb{Z}_{\geq 0}$ define the subset $\vec{\mathcal T}(P)_k$\index[notation]{T(P)_k@$\vec{\mathcal T}(P)_k$} of $\vec{\mathcal T}(P)$ as the preimage of $k$ under the map $\vec{\mathcal T}(P)\to \mathbb{Z}_{\geq 0}$ taking $\mathbf{T}\in \vec{\mathcal T}(P)$ to $|\pi_0(\mathbf{T})_{\mathrm{cir}}|$. We have $\vec{\mathcal T}(P)= \sqcup_{k\geq 0} \vec{\mathcal T}(P)_k$. 

$(b)$ For $w,z\in\{+,-\}^*$ and $k\in\mathbb{Z}_{\geq 0}$ define the set   ${\vec{\mathcal{T}}}(w,z)_k^{\mathrm{ext}}$  by
 $${\vec{\mathcal{T}}}(w,z)_k^{\mathrm{ext}}=\{\mathbf{T}\in {\vec{\mathcal{T}}}(w,z)^{\mathrm{ext}} \ | \ |\pi_0(\mathbf{T})_{\mathrm{cir}}| = k\}.$$

\end{definition}

\begin{lemma}\label{lemma_KImapsrestriction}  The maps ${\mathrm{KIIMap}}^i$ (for $i=1,2$) from Definition~\ref{def_KIIMaps} restrict to well-defined maps 
\begin{equation}\label{eq:def:tildeKII_1maprestriction}
\mathrm{KIIMap}^i_{k}:{\vec{\mathcal T}}(+-,+-)^{\mathrm{ext}}_{k-2}\longrightarrow \underline{\vec{\mathcal{T}}}(\emptyset,\emptyset)_k 
\end{equation}
for any $k\in\mathbb{Z}_{\geq 2}$. 
\end{lemma}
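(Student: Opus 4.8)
The plan is to show that the two maps $\mathrm{KIIMap}^1$ and $\mathrm{KIIMap}^2$ both shift the number of circle components by exactly $2$, independently of the external tangle. First I would analyze $\mathrm{KIIMap}^1(\mathbf{T}) = P_{+-}(\mathbf{T}, \mathrm{Id}_{+-})$: gluing a representative $T$ of $\mathbf{T}\in\vec{\mathcal T}(+-,+-)^{\mathrm{ext}}_{k-2}$ with the identity tangle $\mathrm{Id}_{+-}$ in $[-1,1]^3$ closes the two non-closed components $T_1, T_2$ of $T$ into a single configuration. Since $\mathrm{Id}_{+-}$ consists of two vertical segments joining $b_i$ to $t_i$, gluing reconnects $b_1$ to $t_1$ and $b_2$ to $t_2$; as $\partial T_1 = \{b_1, t_1'\}$-type conditions in Definition~\ref{def:externaltangle}(a) force each $T_i$ to run from $b_i$ to $t_i$ (after relabelling via the external tangle convention), the two arcs $T_1, T_2$ each get capped off into circles. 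Hence $|\pi_0(\mathrm{KIIMap}^1(\mathbf{T}))_{\mathrm{cir}}| = |\pi_0(\mathbf{T})_{\mathrm{cir}}| + 2 = (k-2) + 2 = k$, so $\mathrm{KIIMap}^1(\mathbf{T}) \in \underline{\vec{\mathcal T}}(\emptyset,\emptyset)_k$.

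Next I would treat $\mathrm{KIIMap}^2(\mathbf{T}) = P_{+--}(\mathrm{dbl}^{+-}(\mathbf{T}), e_{+--})$. By Definition~\ref{def:doublinginext}, $\mathrm{dbl}^{+-}$ doubles the component $T_2$ along its framing, so $\mathrm{dbl}^{+-}(\mathbf{T})$ has $3$ non-closed components and the same set of circle components as $\mathbf{T}$; that is, $\mathrm{dbl}^{+-}(\mathbf{T}) \in \vec{\mathcal T}(+--,+--)^{\mathrm{ext}}_{k-2}$ (the doubling operation does not create or destroy circle components, cf. the analogue Lemma~\ref{r:2022-10-31-01}(a) for ordinary tangles). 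Then $P_{+--}$ glues this with the fixed tangle $e_{+--}\in\underline{\vec{\mathcal T}}(+--,+--)$ depicted in Figure~\ref{figure1posarxiv}(b). Here I would examine the skeleton of $e_{+--}$: it is a blackboard-framed tangle on three strands whose underlying Brauer diagram, together with the three non-closed arcs of $\mathrm{dbl}^{+-}(\mathbf{T})$, closes up into exactly two circle components (this is precisely the point of the KII move — the band sum of a parallel copy of $L_b$ with $L_a$ produces the single band-summed component plus the parallel copy, i.e.\ two closed curves from the three external arcs). Combinatorially this can be made precise using Lemma~\ref{r:2022-10-24-03}(b): the circle count of a composition/gluing is the disjoint union of the circle counts of the pieces together with $\mathrm{Cir}$ of the underlying Brauer diagrams, and one computes $|\mathrm{Cir}(\vec{\mathrm{br}}(e_{+--}), \vec{\mathrm{br}}(\mathrm{dbl}^{+-}(\mathbf{T})))| = 2$ since the relevant fixed-point-free involutions on $[\![1,3]\!]\sqcup[\![1,3]\!]$ pair up into two cycles.

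Therefore $|\pi_0(\mathrm{KIIMap}^2(\mathbf{T}))_{\mathrm{cir}}| = |\pi_0(\mathrm{dbl}^{+-}(\mathbf{T}))_{\mathrm{cir}}| + 2 = (k-2) + 2 = k$, giving $\mathrm{KIIMap}^2(\mathbf{T}) \in \underline{\vec{\mathcal T}}(\emptyset,\emptyset)_k$. Combining the two computations shows that both maps restrict as claimed, for every $k\geq 2$; well-definedness on isotopy classes is already built into Definitions~\ref{def:pairingPu}, \ref{def:doublinginext} and \ref{def_KIIMaps}.

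The main obstacle I anticipate is the bookkeeping in the second map: one must carefully track how the three external arcs of $\mathrm{dbl}^{+-}(\mathbf{T})$ connect through the fixed internal tangle $e_{+--}$, and verify that the resulting gluing pattern always yields exactly two new closed components rather than one or three. This is a finite combinatorial check on Brauer diagrams (using the $\mathrm{Cir}$ operation and equation~\eqref{eq:toto2}), but it requires pinning down the precise skeleton of $e_{+--}$ from Figure~\ref{figure1posarxiv}(b); once that is fixed, the count is forced by Proposition~\ref{r:classificationofGsets}-type parity constraints and is independent of $\mathbf{T}$.
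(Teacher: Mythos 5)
Your argument is correct and is essentially the paper's own proof: both maps are shown to add exactly two circle components, by capping $T_1,T_2$ with the two strands of $\mathrm{Id}_{+-}$ for $\mathrm{KIIMap}^1$, and by tracing $T_1,T_{2,a},T_{2,b}$ through $e_{+--}$ (one vertical strand plus a cup and a cap) for $\mathrm{KIIMap}^2$, yielding $|\pi_0(\cdot)_{\mathrm{cir}}|=(k-2)+2=k$ in each case. One caution on your closing remark: parity constraints in the style of Proposition~\ref{r:classificationofGsets} do not by themselves force two circles (the identity pattern on three strands would give three), so the count genuinely requires the explicit skeleton of $e_{+--}$ — which is precisely the tracing the paper carries out.
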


\begin{proof}

Let $\mathbf{T}\in {\vec{\mathcal T}}(+-,+-)^{\mathrm{ext}}_{k-2}$ and let $T$ be one of its representatives. Let $T_i\in\pi_0(T)$ such that $\partial T_i=\{b_i,t_i\}$ (for $i=1,2$). Then $\pi_0(T) = \{T_1, T_2\}\sqcup \pi_0(T)_{\mathrm{cir}}$. 

A representative $L$ of ${\mathrm{KIIMap}}^1(\mathbf{T})=P_{+-}(\mathbf{T},\mathrm{Id}_{+-})$ is obtained from $T$ by inserting in $[-1,1]^3$ a representative of the trivial tangle $\mathrm{Id}_{+-}\in \underline{\vec{\mathcal T}}(+-,+-)$, this operation creates two new circle components, one arising from the non-closed component $T_1$ together with the first vertical segment of $\mathrm{Id}_{+-}$ and  the other one from  $T_2$ together with the second vertical segment of $\mathrm{Id}_{+-}$,  see Figure~\ref{figure1posarxiv}$(a)$. So we have $|\pi_0(L)|=|\pi_0(L)_{\mathrm{cir}}| = 2 + |\pi_0(T)_{\mathrm{cir}}| = k$. It follows that $|\pi_0({\mathrm{KIIMap}}^1(\mathbf{T}))| = |\pi_0({\mathrm{KIIMap}}^1(\mathbf{T}))_{\mathrm{cir}}| = k$, which proves the assertion for ${\mathrm{KIIMap}}^1$.

Let $T'\subset [-1,1]^3_{\mathrm{ext}}$ be the representative of  $\mathrm{dbl}^{+-}(\mathbf{T})$ obtained from $T$ as described in Definition~\ref{def:doublinginext}. This external tangle has three non-closed connected components: $T_1, T_{2,a}, T_{2,b}$, where $T_{2,a}, T_{2,b}$ are the two parallel copies coming from $T_2$. A representative of $L'$ of ${\mathrm{KIIMap}}^2(\mathbf{T})=P_{+-}(\mathbf{T},e_{+--})$ is obtained from $T'$ by inserting in $[-1,1]^3$ a representative of $e_{+--}$, this operation creates two  circle components  from $T_1, T_{2,a}, T_{2,b}$, one coming from the non-closed component $T_{2,b}$ together with the vertical segment  of $e_{+--}$ and the other one from the non-closed components $T_1$, $T_{2,a}$ and the cup and cap components of  $e_{+--}$, see Figure~\ref{figure1posarxiv}$(b)$. Hence $|\pi_0(L')|=|\pi_0(L')_{\mathrm{cir}}| = 2 + |\pi_0(T)_{\mathrm{cir}}| = k$. Therefore $|\pi_0({\mathrm{KIIMap}}^2(\mathbf{T}))| = |\pi_0({\mathrm{KIIMap}}^2(\mathbf{T}))_{\mathrm{cir}}| = k$, which proves the assertion for ${\mathrm{KIIMap}}^2$.

\end{proof}

\begin{definition}\label{def:espaceKIIT-k} For $k\in\mathbb{Z}_{\geq 2}$, define the subset $\mathrm{KII}\vec{\mathcal{T}}_k\subset \underline{\vec{\mathcal{T}}}(\emptyset,\emptyset)_k\times \underline{\vec{\mathcal{T}}}(\emptyset,\emptyset)_k$ by
\begin{equation}\label{eq:espaceKIIT-k}
\mathrm{KII}\vec{\mathcal{T}}_k := \mathrm{Im}\big((\mathrm{KIIMap}^1_{k},\mathrm{KIIMap}^2_{k})\big).
\end{equation}
We set $\mathrm{KII}\vec{\mathcal{T}}_0= \mathrm{KII}\vec{\mathcal{T}}_1 = \emptyset$.
\end{definition}

\begin{proposition} The set $\mathrm{KII}\vec{\mathcal{T}}$ from Definition~\ref{def:espaceKIIT}  decomposes as
\begin{equation}\label{eq:espaceKIITdecomposed}
\mathrm{KII}\vec{\mathcal{T}} =\bigsqcup_{k\in\mathbb{Z}_{\geq 0}} \mathrm{KII}\vec{\mathcal{T}}_k.
\end{equation}
\end{proposition}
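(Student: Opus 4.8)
The plan is to derive the decomposition directly from the grading of the source set of external tangles together with Lemma~\ref{lemma_KImapsrestriction}, the disjointness then being a bookkeeping consequence of the grading on $\underline{\vec{\mathcal T}}(\emptyset,\emptyset)$. First I would record that, by Definition~\ref{def:espaceTP}, the set $\vec{\mathcal{T}}(+-,+-)^{\mathrm{ext}}$ splits as the disjoint union $\vec{\mathcal{T}}(+-,+-)^{\mathrm{ext}}=\bigsqcup_{j\geq 0}\vec{\mathcal{T}}(+-,+-)^{\mathrm{ext}}_j$, where $\vec{\mathcal{T}}(+-,+-)^{\mathrm{ext}}_j$ consists of the external tangles with exactly $j$ circle components. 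Substituting this into Definition~\ref{def:espaceKIIT} yields
\[
\mathrm{KII}\vec{\mathcal{T}} = \bigcup_{j\geq 0}\Bigl\{\bigl(\mathrm{KIIMap}^1(\mathbf{T}),\mathrm{KIIMap}^2(\mathbf{T})\bigr) \ \Big|\ \mathbf{T}\in\vec{\mathcal{T}}(+-,+-)^{\mathrm{ext}}_j\Bigr\}.
\]
By Lemma~\ref{lemma_KImapsrestriction}, for $\mathbf{T}\in\vec{\mathcal{T}}(+-,+-)^{\mathrm{ext}}_j$ one has $\mathrm{KIIMap}^i(\mathbf{T})=\mathrm{KIIMap}^i_{j+2}(\mathbf{T})$, so the $j$-th set in this union is precisely $\mathrm{Im}\bigl((\mathrm{KIIMap}^1_{j+2},\mathrm{KIIMap}^2_{j+2})\bigr)=\mathrm{KII}\vec{\mathcal{T}}_{j+2}$ of Definition~\ref{def:espaceKIIT-k}. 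Re-indexing by $k=j+2$ and adjoining the empty sets $\mathrm{KII}\vec{\mathcal{T}}_0=\mathrm{KII}\vec{\mathcal{T}}_1=\emptyset$ gives $\mathrm{KII}\vec{\mathcal{T}}=\bigcup_{k\geq 0}\mathrm{KII}\vec{\mathcal{T}}_k$ as a (not yet known to be disjoint) union.

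It then remains to verify disjointness. Here I would invoke the inclusion $\mathrm{KII}\vec{\mathcal{T}}_k\subset\underline{\vec{\mathcal{T}}}(\emptyset,\emptyset)_k\times\underline{\vec{\mathcal{T}}}(\emptyset,\emptyset)_k$ from Definition~\ref{def:espaceKIIT-k} (which holds trivially for $k=0,1$), together with Lemma~\ref{lemma:identif}$(c)$, which asserts that $\underline{\vec{\mathcal{T}}}(\emptyset,\emptyset)=\bigsqcup_{k\geq 0}\underline{\vec{\mathcal{T}}}(\emptyset,\emptyset)_k$ is a genuine disjoint union. If $(\mathbf{L},\mathbf{L}')\in\mathrm{KII}\vec{\mathcal{T}}_k\cap\mathrm{KII}\vec{\mathcal{T}}_{k'}$, then its first coordinate $\mathbf{L}$ lies in both $\underline{\vec{\mathcal{T}}}(\emptyset,\emptyset)_k$ and $\underline{\vec{\mathcal{T}}}(\emptyset,\emptyset)_{k'}$, forcing $k=k'$. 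This establishes the disjoint union claimed in the statement.

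The argument is essentially bookkeeping, so I do not expect a genuine obstacle; the one point needing a little care is that the grading of external tangles by number of circle components is propagated in a controlled way by the two maps $\mathrm{KIIMap}^i$, namely each of them adds exactly two circle components. This is exactly the content of Lemma~\ref{lemma_KImapsrestriction}, which follows the circle components created by the gluing operations of Definitions~\ref{def:pairingPu} and~\ref{def_KIIMaps}, so I would simply quote it rather than re-prove it.
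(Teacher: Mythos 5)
Your argument is correct and follows essentially the same route as the paper's proof: decompose $\vec{\mathcal{T}}(+-,+-)^{\mathrm{ext}}$ by number of circle components, invoke Lemma~\ref{lemma_KImapsrestriction} to identify each piece's image with $\mathrm{KII}\vec{\mathcal{T}}_{j+2}$ via Definition~\ref{def:espaceKIIT-k}, and reindex. Your explicit verification of disjointness through the grading $\underline{\vec{\mathcal{T}}}(\emptyset,\emptyset)=\bigsqcup_{k\geq 0}\underline{\vec{\mathcal{T}}}(\emptyset,\emptyset)_k$ (Lemma~\ref{lemma:identif}) only spells out what the paper leaves implicit, so nothing further is needed.
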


\begin{proof}

We have
\begin{equation*}
\begin{split}
\mathrm{KII}\vec{\mathcal{T}} & = \big\{(\mathrm{KIIMap}^{1}(\mathbf{T}), \mathrm{KIIMap}^{2}(\mathbf{T})) \ | \  \mathbf{T}\in {\vec{\mathcal{T}}}(+-,+-)^{\mathrm{ext}}\big\}\\
& =\bigsqcup_{k\in\mathbb{Z}_{\geq 0}} \big\{(\mathrm{KIIMap}^{1}(\mathbf{T}), \mathrm{KIIMap}^{2}(\mathbf{T})) \ | \  \mathbf{T}\in {\vec{\mathcal{T}}}(+-,+-)^{\mathrm{ext}}_k\big\}\\
& =  \bigsqcup_{k\in\mathbb{Z}_{\geq 0}}\mathrm{KII}\vec{\mathcal{T}}_k,
\end{split}
\end{equation*}
in the second equality we use Lemma~\ref{lemma_KImapsrestriction} and  the third one  follows from  Definition~\ref{def:espaceKIIT-k}.
\end{proof}

\subsection{Alternative description of the image \texorpdfstring{$\mathrm{KII}{\vec{\mathcal T}}$}{KIIT} of the \texorpdfstring{$\mathrm{KII}$}{KII} map}\label{sec:3.5}

In Definition~\ref{def:espaceKIIT}, the set $\mathrm{KII}\vec{\mathcal{T}}$  is defined using the map $(\mathrm{KIIMap}^1,\mathrm{KIIMap}^2)$. In this subsection, we give alternative descriptions of this set, first in terms of the category~$\underline{\vec{\mathcal T}}$ (Proposition~\ref{firstequivKIIT}),  then in terms of the category $\underline{\mathcal{P}a\vec{\mathcal{T}}}$ of parenthesized tangles (Proposition~\ref{secondequivKIIT}).  The first alternative description is based on maps $\widetilde{\mathrm{KIIMap}}^i$ for $i=1,2)$ (see \S\ref{sec:1-6-1}). A bijection from ${\vec{\mathcal T}}(\downarrow \uparrow)$ (the source of  the maps $\widetilde{\mathrm{KIIMap}}^i$) to  ${\vec{\mathcal T}}(+-,+-)^{\mathrm{ext}}$ (the source of the maps $\mathrm{KIIMap}^i$) is constructed in \S\ref{sec:1-6-4}. This allows to obtain the first alternative description of $\mathrm{KII}\vec{\mathcal{T}}$. In \S\ref{sec:1-6-5}, we introduce the category $\underline{\mathcal{P}a\vec{\mathcal{T}}}$ and in \S\ref{sec:1-6-6}, we obtain the description of $\mathrm{KII}\vec{\mathcal{T}}$  in terms of maps $\widetilde{\mathrm{KIIMap}}_{\underline{\mathcal{P}a\vec{\mathcal{T}}}}^i \quad (i=1,2)$ involving elementary generators of this category.

\subsubsection{Maps $\widetilde{\mathrm{KIIMap}}^{i}:\vec{\mathcal{T}}(\downarrow \uparrow)\longrightarrow \underline{\vec{\mathcal{T}}}(\emptyset,\emptyset)$  for $i=1,2$.}\label{sec:1-6-1}

\begin{notation}\label{notationarrows}
From now on we use the following notation:  $\downarrow \ :=\mathrm{Id}_+ \in \underline{\vec{\mathcal Br}}(+,+)$, $\uparrow \ :=\mathrm{Id}_- \in \underline{\vec{\mathcal Br}}(-,-)$. Then for  $\epsilon_1,\ldots, \epsilon_n \in \{+,-\}$, one has $\mathrm{Id}_{\epsilon_1\cdots \epsilon_n} = \mathrm{ar}_1 \otimes \cdots \otimes \mathrm{ar}_n\in \underline{\vec{\mathcal Br}}(\epsilon_1\cdots \epsilon_n, \epsilon_1\cdots \epsilon_n)$ (denoted $\mathrm{ar}_1  \cdots  \mathrm{ar}_n$) where $\mathrm{ar}_i \in \{\downarrow,\uparrow\}$ is given by $\mathrm{ar}_i= \ \downarrow$ if $\epsilon_i=+$ and $\mathrm{ar}_i= \ \uparrow$ if $\epsilon_i=-$. For instance, $\downarrow \uparrow \ =\mathrm{Id}_{+-}\in \underline{\vec{\mathcal Br}}(+-,+-)$ and  $\uparrow \downarrow\  =\mathrm{Id}_{-+}\in\underline{\vec{\mathcal Br}}(-+,-+)$.
\end{notation}

\begin{definition}\label{def_KIIMap1}
Define  the map $\widetilde{\mathrm{KIIMap}}^1:{\vec{\mathcal{T}}}(\downarrow\uparrow)\longrightarrow \underline{\vec{\mathcal{T}}}(\emptyset,\emptyset)$ \index[notation]{KIIMap4@$\widetilde{\mathrm{KIIMap}}^1$} as the restriction of the map
\begin{equation}
\widetilde{\mathrm{KIIMap}}^1:\underline{\vec{\mathcal{T}}}(+-,+-)\longrightarrow \underline{\vec{\mathcal{T}}}(\emptyset,\emptyset) 
\end{equation}
given by 
\begin{equation}\label{eq:def:tildeKII_1}
\widetilde{\mathrm{KIIMap}}^1(\mathbf{S})= \mathrm{cap}^{\mathcal T}_{+-}\circ (\mathrm{Id}_{+}\otimes \mathrm{cap}^{\mathcal T}_{-+}\otimes \mathrm{Id}_{-})\circ (\mathrm{Id}_{+-}\otimes \mathbf{S})\circ (\mathrm{Id}_{+}\otimes \mathrm{cup}^{\mathcal T}_{-+}\otimes \mathrm{Id}_{-})\circ \mathrm{cup}^{\mathcal T}_{+-}
\end{equation}
for $\mathbf{S}\in\underline{\vec{\mathcal{T}}}(+-,+-)$.  See Figure~\ref{figureEx1_41_abstract} for a schematic representation of $\widetilde{\mathrm{KIIMap}}^1$ using representatives.   This ma is well-defined since $\underline{\vec{\mathcal T}}$ is a category.
\begin{figure}[ht!]
			\centering
            \includegraphics[scale=0.8]{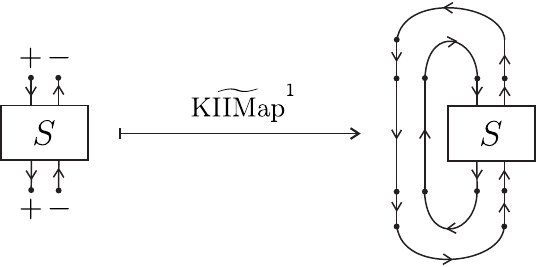}					\caption{Schematic representation of the map $\widetilde{\mathrm{KIIMap}}^1$ using representatives.}
\label{figureEx1_41_abstract} 								
\end{figure}
\end{definition}

%

\begin{lemma}\label{r:20221115-01} Let $\mathbf{S}\in\vec{\mathcal T}(\downarrow\uparrow)\subset \underline{\vec{\mathcal T}}(+-,+-)$. Let $c_{\downarrow}\in\pi_0(\mathbf{S})_{\mathrm{seg}}$ be the connected component corresponding to $\downarrow$ in $\vec{\mathrm{br}}(\mathbf{S})=\downarrow\uparrow$. Then the expression
\begin{equation}\label{tildeKIImap20221113}
\widetilde{\mathrm{KIIMap}}^2(\mathbf{S}): =(\mathrm{cap}^{\mathcal T}_{-+}\otimes \mathrm{cap}^{\mathcal T}_{+-})\circ (\mathrm{Id}_{-}\otimes \mathrm{dbl}_{\vec{\mathcal{T}}}(\mathbf{S}, c_{\downarrow}))\circ(\mathrm{cup}^{\mathcal T}_{-+}\otimes \mathrm{cup}^{\mathcal T}_{+-})
\end{equation}
 \index[notation]{KIIMap6@$\widetilde{\mathrm{KIIMap}}^2$} is well-defined and belongs to $\underline{\vec{\mathcal T}}(\emptyset,\emptyset)$. See Figure~\ref{figureEx1_42_abstract} for a schematic representation of $\widetilde{\mathrm{KIIMap}}^2(\mathbf{S})$ using representatives. 
\begin{figure}[ht!]
			\centering
            \includegraphics[scale=0.8]{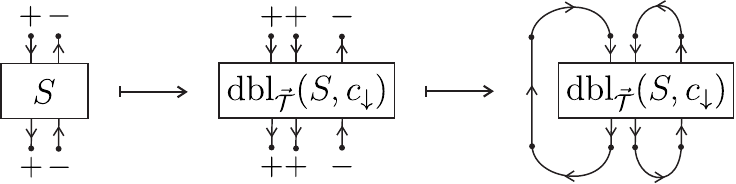}					\caption{Schematic representation of the map $\widetilde{\mathrm{KIIMap}}^2$ using representatives.}
\label{figureEx1_42_abstract} 								
\end{figure}
\end{lemma}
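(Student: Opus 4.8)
The plan is to verify the two claims in Lemma~\ref{r:20221115-01} directly: first that the right-hand side of \eqref{tildeKIImap20221113} is a composable string of morphisms in $\underline{\vec{\mathcal T}}$, and second that the resulting composite lands in $\underline{\vec{\mathcal T}}(\emptyset,\emptyset)$, i.e. has empty source and target. Since $\underline{\vec{\mathcal T}}$ is a category (Definition~\ref{def1.3}), once composability is checked the composite is automatically well-defined; so the whole content is a bookkeeping of source and target words via the functor $\underline{\vec{\mathrm{br}}}:\underline{\vec{\mathcal T}}\to\underline{\vec{\mathcal Br}}$ of Lemma~\ref{lemma:1:19:2jan2020} together with the behaviour of $\mathrm{s},\mathrm{t}$ under $\circ$, $\otimes$ and $\mathrm{dbl}_{\vec{\mathcal T}}$.

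First I would record the source/target data of the elementary pieces. Here $\mathrm{cup}^{\mathcal T}_{\epsilon\epsilon'}$ denotes the framed oriented "cup" tangle with $\mathrm{s}=\emptyset$, $\mathrm{t}=\epsilon\epsilon'$, and $\mathrm{cap}^{\mathcal T}_{\epsilon\epsilon'}$ its mirror with $\mathrm{s}=\epsilon\epsilon'$, $\mathrm{t}=\emptyset$; and $\mathrm{Id}_\epsilon$ has $\mathrm{s}=\mathrm{t}=\epsilon$. Reading from the bottom up: $\mathrm{cup}^{\mathcal T}_{-+}\otimes\mathrm{cup}^{\mathcal T}_{+-}$ has $\mathrm{s}=\emptyset$, $\mathrm{t}=-++-$. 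Next, by Lemma~\ref{lemma:1:18:03032020}$(a)$ applied to $\mathbf S\in\vec{\mathcal T}(\downarrow\uparrow)$ (so $\mathrm{s}(\mathbf S)=+-=\mathrm{t}(\mathbf S)$, where I use the identification $\pi_0(\mathbf S)_{\mathrm{seg}}\simeq\pi_0(\mathrm{br}(\mathbf S))$ of Lemma~\ref{r:2022-10-24-2}), doubling along the single component $c_\downarrow$ corresponding to the first strand gives $\mathrm{s}(\mathrm{dbl}_{\vec{\mathcal T}}(\mathbf S,c_\downarrow))=\mathrm{dbl}_{\{+,-\}^*}(+-,\{1\})=+ +-$ and likewise $\mathrm{t}=+ +-$ — note both copies of $c_\downarrow$ inherit the orientation of $c_\downarrow$, consistent with $\mathrm{dbl}_{\vec{\mathcal Br}}$ being defined so that the new strands carry the orientation of the doubled one. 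Tensoring on the left with $\mathrm{Id}_-$ gives $\mathrm{s}=\mathrm{t}=-++-$, which matches $\mathrm{t}$ of the previous layer, so the composite with the cups is legal and has $\mathrm{s}=\emptyset$, $\mathrm{t}=-++-$. Finally $\mathrm{cap}^{\mathcal T}_{-+}\otimes\mathrm{cap}^{\mathcal T}_{+-}$ has $\mathrm{s}=-++-$, $\mathrm{t}=\emptyset$, so it composes on top and the total composite has $\mathrm{s}=\mathrm{t}=\emptyset$, i.e. lies in $\underline{\vec{\mathcal T}}(\emptyset,\emptyset)$. This establishes both assertions.

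For orientation compatibility I would be slightly careful: Definition~\ref{def:compositioninOSk} (and hence composability in $\underline{\vec{\mathcal T}}$) requires not merely that the underlying unoriented skeleta match, but that the begin/end decomposition of the shared boundary agrees; this is exactly what the matching of words in $\{+,-\}^*$ encodes, so the word computation above already subsumes it, once one checks that $\mathrm{cup}^{\mathcal T}_{-+}$, $\mathrm{cup}^{\mathcal T}_{+-}$ and the two caps are oriented so that their boundary words read $-+$, $+-$ respectively — this is the standard convention recalled in \S\ref{sec:2-8} and is what makes the figure consistent. The only genuinely delicate point, and the one I would spell out, is that the doubling of $\mathbf S$ is performed along $c_\downarrow$ and not along the other segment component: because $\vec{\mathrm{br}}(\mathbf S)=\downarrow\uparrow$, the strand $c_\downarrow$ enters at the first bottom point and exits at the first top point, so Lemma~\ref{lemma:1:18:03032020}$(a)$ gives $\widetilde A\cap[\![1,2]\!]=\{1\}$ for both source and target, which is why both $\mathrm{dbl}_{\{+,-\}^*}$ evaluations insert the extra $+$ in the first slot. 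Everything else is a routine diagram chase through Definitions~\ref{def:sourcetargetorientedskeleta}, \ref{sourceandtargetfortangleswords} and Lemmas~\ref{r:2022-10-24-03}, \ref{r:2022-10-31-01}; I expect the write-up to be short, with the "main obstacle" being purely notational — keeping the left-to-right order of the three strands $-$, $+$(first copy of $c_\downarrow$), $+$, $-$ straight through the tensor factors so that the caps at the top close up the correct pairs of endpoints, matching Figure~\ref{figureEx1_42_abstract}.
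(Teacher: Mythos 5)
Your proposal is correct and follows essentially the same route as the paper: the paper's proof simply invokes Lemma~\ref{r:2022-10-31-01}$(b)$ to place $\mathrm{dbl}_{\vec{\mathcal T}}(\mathbf S,c_{\downarrow})$ in $\vec{\mathcal T}(\downarrow\downarrow\uparrow)\subset\underline{\vec{\mathcal T}}(++-,++-)$ and then concludes from the monoidal category structure of $\underline{\vec{\mathcal T}}$, which is exactly your source/target bookkeeping spelled out more explicitly. No gap; your extra care about which strand is doubled and about the orientation conventions of the cups and caps is fine but not needed beyond what the cited lemmas already provide.
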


\begin{proof} Let $\mathbf{S}\in\vec{\mathcal T}(\downarrow\uparrow)\subset \underline{\vec{\mathcal T}}(+-,+-)$. By Lemma~\ref{r:2022-10-31-01}$(b)$, we have $\mathrm{dbl}_{\vec{\mathcal T}}(\mathbf{S},c_{\downarrow})\in\vec{\mathcal T}(\downarrow\downarrow\uparrow)\subset \underline{\vec{\mathcal T}}(++-,++-)$, then the result follows since $\underline{\vec{\mathcal T}}$ is a monoidal category.
\end{proof}

\begin{definition}\label{altKIIlema2} Define the map
\begin{equation}\label{eq:def:tildeKII_2map}
\widetilde{\mathrm{KIIMap}}^2:{\vec{\mathcal T}}(\downarrow\uparrow)\longrightarrow \underline{\vec{\mathcal T}}(\emptyset,\emptyset) 
\end{equation}
by mapping $\mathbf{S}\in\vec{\mathcal T}(\downarrow\uparrow)$ to  $\widetilde{\mathrm{KIIMap}}^2(\mathbf{S})\in\underline{\vec{\mathcal T}}(\emptyset,\emptyset)$  given as in Lemma~\ref{r:20221115-01}.
\end{definition}
%


%
\subsubsection{Relation between the two versions of the KII maps}\label{sec:1-6-4}

\begin{lemma} Let
\begin{equation}
    \varphi: {\vec{\mathcal T}}(\downarrow\uparrow) \longrightarrow {\vec{\mathcal T}}(+-,+-)^{\mathrm{ext}} \qquad \text{ and }  \qquad  \psi:  {\vec{\mathcal T}}(\downarrow\downarrow\uparrow) \longrightarrow {\vec{\mathcal T}}(+--,+--)^{\mathrm{ext}}
\end{equation}
be the assignations given as shown in Figure~\ref{figure0924posarxiv}$(a)$ and $(b)$ respectively in terms of diagram representatives. Then $\varphi$ and $\psi$ are well-defined maps and they are bijections.

\begin{figure}[ht!]
			\centering
            \includegraphics[scale=0.8]{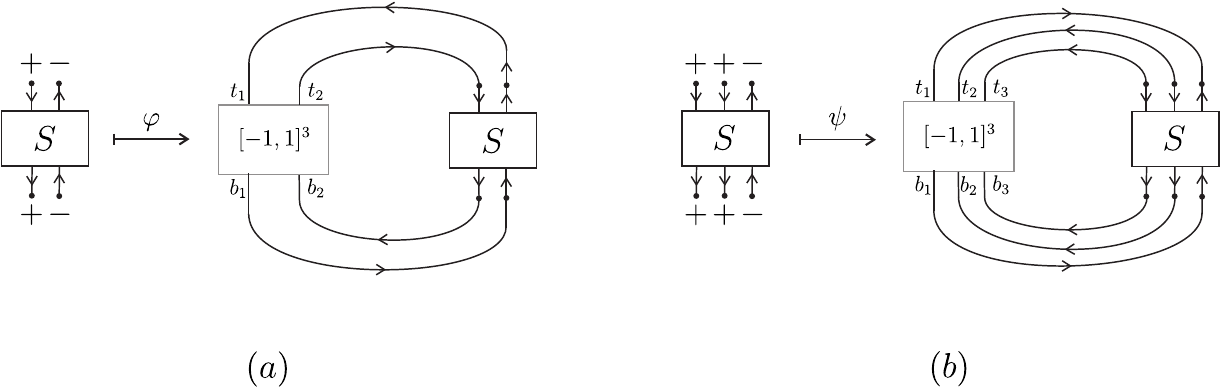} \caption{Schematic representation of the maps   $\varphi: {\vec{\mathcal T}}(\downarrow\uparrow) \rightarrow {\vec{\mathcal T}}(+-,+-)^{\mathrm{ext}}$  and $\psi:  {\vec{\mathcal T}}(\downarrow\downarrow\uparrow) \rightarrow {\vec{\mathcal T}}(+--,+--)^{\mathrm{ext}}$ using representatives.}
\label{figure0924posarxiv} 								
\end{figure}

\end{lemma}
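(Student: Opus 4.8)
The statement asserts that two explicitly drawn assignments $\varphi$ and $\psi$, defined on diagram representatives, are well-defined maps and bijections. The plan is to treat $\varphi$ and $\psi$ in exactly the same way, so I will describe the argument for $\varphi$ and then indicate that $\psi$ is identical up to bookkeeping. The key geometric idea is that gluing back the standard identity tangle $\mathrm{Id}_{+-}$ (living inside the small cube $[-1,1]^3$) along the boundary of $[-1,1]^3_{\mathrm{ext}}$ is the inverse operation to removing the two standard vertical segments $S_1\sqcup S_2$ from a link, which is precisely the move used in the proof that $\mathrm{KIImap}^1$ realizes the Kirby~II move. So $\varphi$ should be the operation $\mathbf{S}\mapsto$ ``fill the complementary cube with $\mathbf{S}$ and keep the external part'', and its inverse should be ``restrict an external tangle to its closed-up link and then re-excise the two standard segments''.

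\textbf{Step 1: well-definedness.} First I would fix once and for all a standard identification of a neighborhood of $\partial[-1,1]^3$ inside $S^3$, so that the two cubes $[-1,1]^3$ and $[-1,1]^3_{\mathrm{ext}}$ glue along their common boundary sphere compatibly with the marked points $b_i,t_i,d_i$. Given a representative $S$ of $\mathbf{S}\in\vec{\mathcal T}(\downarrow\uparrow)\subset\underline{\vec{\mathcal T}}(+-,+-)$, place it in the interior cube; the boundary conditions $\mathrm{s}(\mathbf S)=\mathrm{t}(\mathbf S)=+-$ exactly match the required incoming/outgoing pattern at $b_1,b_2,t_1,t_2$ in Definition of ${\vec{\mathcal T}}(+-,+-)^{\mathrm{ext}}$, and the framing conventions for framed oriented tangles and for external framed oriented tangles are literally the same (both require the value $\pm(0,1,0)$ at endpoints with the frame condition $(n_v,v,(1,0,0))$ positive), so the resulting object is a bona fide element of ${\vec{\mathcal T}}(+-,+-)^{\mathrm{ext}}$. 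An isotopy of $S$ rel $\partial$ inside $[-1,1]^3$ is in particular an isotopy rel $\partial$ of the glued external tangle, so $\varphi(\mathbf S)$ depends only on the isotopy class $\mathbf S$; this gives well-definedness. For $\psi$ one argues identically with three strands and source/target $\downarrow\downarrow\uparrow$ matching the pattern for ${\vec{\mathcal T}}(+--,+--)^{\mathrm{ext}}$.

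\textbf{Step 2: construction of the inverse.} I would define $\varphi^{-1}:{\vec{\mathcal T}}(+-,+-)^{\mathrm{ext}}\to{\vec{\mathcal T}}(\downarrow\uparrow)$ by: given an external tangle $\mathbf T$, isotope a representative rel $\partial$ so that near the gluing sphere it looks standard (two straight radial segments through $b_1d_1$ and $b_2d_2$, say), then cut along the boundary sphere and read off the part lying in the small cube; concretely, after closing $\mathbf T$ up with $\mathrm{Id}_{+-}$ to get the link $\mathrm{KIImap}^1(\mathbf T)$, intersect with $[-1,1]^3$ a standardized copy. The point is that the collar structure near $\partial[-1,1]^3$ makes this restriction well-defined up to isotopy rel $\partial$. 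Then I would check $\varphi^{-1}\circ\varphi=\mathrm{id}$ and $\varphi\circ\varphi^{-1}=\mathrm{id}$: the first is clear because re-inserting the same $S$ and then cutting recovers $S$; the second uses that any external tangle can be isotoped rel $\partial$ to be standard in the collar, so that cutting and re-gluing reproduces it. The same discussion, with $\mathrm{Id}_{+--}$ (or rather the relevant standard filler) in place of $\mathrm{Id}_{+-}$, handles $\psi$.

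\textbf{Main obstacle.} The routine parts are the framing and orientation bookkeeping; the genuinely delicate point is Step 2's claim that the ``restrict to the small cube'' operation is well-defined on isotopy classes — i.e. that two isotopic (rel $\partial$) external tangles, after standardizing near the gluing sphere, give isotopic (rel $\partial$) tangles in $[-1,1]^3$. This requires an isotopy-extension / collar argument: an ambient isotopy of $S^3$ fixing $[-1,1]^3_{\mathrm{ext}}$ setwise can be arranged to fix a collar of the boundary sphere pointwise, after which it restricts to an ambient isotopy of $[-1,1]^3$ rel $\partial$. I expect writing this carefully to be the bulk of the proof; everything else is a direct verification, which is presumably why the paper phrases the lemma tersely and one may reasonably dispatch it with ``the verification is straightforward, using the isotopy extension theorem near the gluing sphere''.
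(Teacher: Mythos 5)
There is a genuine gap, and it starts with the identification of what $\varphi$ actually does. An element of ${\vec{\mathcal T}}(+-,+-)^{\mathrm{ext}}$ is by definition properly embedded in the \emph{exterior} region $[-1,1]^3_{\mathrm{ext}}=S^3\setminus\mathrm{int}([-1,1]^3)$, with its endpoints at the marked points on the faces of the small cube. Your Step 1 "place $S$ in the interior cube" therefore does not produce an external tangle at all — it produces an ordinary tangle in $[-1,1]^3$; the operation of filling the small cube is what the pairing $P_{+-}$ and $\mathrm{KIIMap}^1$ do, not $\varphi$. The map $\varphi$ of the figure goes the other way: it re-embeds the tangle $S$ (together with some standard connecting arcs) inside the exterior region, which is possible precisely because $S^3\setminus\mathrm{int}([-1,1]^3)$ is itself homeomorphic to a cube. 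That homeomorphism is the one fact your write-up never states, and it is the entire content of the paper's bijectivity argument: well-definedness is a direct check of the boundary, orientation and framing conditions in the definition of external tangles, and bijectivity follows by transporting isotopy classes rel boundary through the homeomorphism $S^3\setminus \mathrm{int}([-1,1]^3)\simeq [-1,1]^3$.

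Your proposed inverse is also flawed as written. Closing an external tangle $\mathbf T$ up with $\mathrm{Id}_{+-}$ and then intersecting with $[-1,1]^3$ returns exactly $\mathrm{Id}_{+-}$ — the only part of the resulting link inside the small cube is the identity tangle you just inserted — so this recipe does not recover $\mathbf S$. To make a "cut and read off" inverse work you would first have to isotope the external tangle into a sub-cube of the exterior and argue that the resulting tangle class is independent of all choices; that independence is then the whole difficulty, and your isotopy-extension remark (about ambient isotopies of $S^3$ fixing the exterior setwise restricting to the interior cube rel boundary) addresses a different situation and does not settle it. The repair is simply to drop the closing-up construction and define the inverse by transporting an external tangle through a fixed homeomorphism of the exterior with a cube (normalized near the marked points), which makes both verifications immediate.
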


\begin{proof} If $\mathbf{S}$ belongs to  ${\vec{\mathcal T}}(\downarrow\uparrow)$ (resp. ${\vec{\mathcal T}}(\downarrow\downarrow\uparrow)$) and $S$ is a diagram representative $\mathbf{S}$ then  the embedded framed oriented compact 1-manifold  represented by the diagram shown in Figure~\ref{figure0924posarxiv}$(a)$ (resp. $(b)$) is a diagram of an external framed oriented tangle in $[-1,1]^3_{\mathrm{ext}}$ according to Definition~\ref{def:externaltangle}, in particular, notice the compatibility of framings (see Definition~\ref{def:framedtangle}). It follows that its isotopy class $\varphi(\mathbf{S})$ (resp. $\psi(\mathbf{S}$) belongs to  ${\vec{\mathcal T}}(+-,+-)^{\mathrm{ext}}$ (resp. ${\vec{\mathcal T}}(+--,+--)^{\mathrm{ext}}$). This show that the maps are well-defined. The bijectivity property follows form the homeomorphism $S^3\setminus \mathrm{int}([-1,1]^3)\simeq [-1,1]^3$.

\end{proof}

\begin{lemma}\label{relKII_tildeKII} The following diagrams are commutative
\begin{equation*}
\xymatrix{
\ar^-{\widetilde{\mathrm{KIIMap}}^1}[rrr]\ar[d]_{\varphi}^{\simeq} {\vec{\mathcal T}(\downarrow\uparrow)}
& & & \underline{\vec{\mathcal T}}(\emptyset, \emptyset) \\ 
\ar[rrru]_-{\mathrm{KIIMap}^1}{\vec{\mathcal T}(+-,+-)^{\mathrm{ext}}}
& & & \\ 
} \text{\quad and \quad }  \xymatrix{
\ar^-{\widetilde{\mathrm{KIIMap}}^2}[rrr]\ar[d]_{\varphi}^{\simeq} {\vec{\mathcal T}(\downarrow\uparrow)}
& & & \underline{\vec{\mathcal T}}(\emptyset, \emptyset) \\ 
\ar[rrru]_-{\mathrm{KIIMap}^2}{\vec{\mathcal T}(+-,+-)^{\mathrm{ext}}}
& & & \\ 
}
\end{equation*}
\end{lemma}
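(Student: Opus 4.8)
The plan is to verify the commutativity of both triangles essentially by unwinding the definitions and comparing diagram representatives, since all the maps involved are given by explicit operations on representatives (gluing external tangles into cubes, inserting trivial tangles, doubling, capping/cupping). First I would treat the triangle involving $\widetilde{\mathrm{KIIMap}}^1$ and $\mathrm{KIIMap}^1$. Recall that for $\mathbf{S}\in\vec{\mathcal T}(\downarrow\uparrow)$, the element $\varphi(\mathbf{S})\in\vec{\mathcal T}(+-,+-)^{\mathrm{ext}}$ is the external tangle obtained by the recipe of Figure~\ref{figure0924posarxiv}$(a)$, and $\mathrm{KIIMap}^1(\varphi(\mathbf{S}))=P_{+-}(\varphi(\mathbf{S}),\mathrm{Id}_{+-})$ is obtained by gluing $\varphi(\mathbf{S})$ in $[-1,1]^3_{\mathrm{ext}}$ identically along the boundary with $\mathrm{Id}_{+-}$ in $[-1,1]^3$. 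On the other hand, $\widetilde{\mathrm{KIIMap}}^1(\mathbf{S})$ is the composite \eqref{eq:def:tildeKII_1} built from $\mathbf{S}$ using the cup and cap morphisms. The key step is to check, working with diagram representatives inside $S^3$, that the link in $S^3$ obtained by the gluing procedure of $P_{+-}$ applied to $\varphi(\mathbf{S})$ coincides, up to isotopy, with the link described by the composite \eqref{eq:def:tildeKII_1}: both produce the ``closure'' of $\mathbf{S}$ by the same pattern of caps, cups and a parallel strand, and the homeomorphism $S^3\setminus\mathrm{int}([-1,1]^3)\simeq[-1,1]^3$ used to define $\varphi$ precisely converts the external-tangle picture into the cup/cap picture. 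So I would make explicit the correspondence between the ``ext'' side of Figure~\ref{figure1posarxiv}$(a)$ and the cube-decomposition side of Figure~\ref{figureEx1_41_abstract}, and observe that composing them along the common boundary gives the same closed-up framed oriented link.

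Next I would handle the second triangle, involving $\widetilde{\mathrm{KIIMap}}^2$ and $\mathrm{KIIMap}^2$. Here $\mathrm{KIIMap}^2(\varphi(\mathbf{S}))=P_{+--}(\mathrm{dbl}^{+-}(\varphi(\mathbf{S})),e_{+--})$, so the doubling operation $\mathrm{dbl}^{+-}$ on the external tangle comes into play. The compatibility of $\varphi$ with doubling is exactly the content of the earlier lemma (via $\psi$ and Figure~\ref{figure0924posarxiv}$(b)$): doubling the distinguished strand $T_2$ of $\varphi(\mathbf{S})$ corresponds under $\varphi^{-1}$, or rather under the bijection $\psi$, to the tangle $\mathrm{dbl}_{\vec{\mathcal T}}(\mathbf{S},c_{\downarrow})\in\vec{\mathcal T}(\downarrow\downarrow\uparrow)$ that appears in the definition of $\widetilde{\mathrm{KIIMap}}^2$ in Lemma~\ref{r:20221115-01}. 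So again I would match up the pictures: the external tangle $\mathrm{dbl}^{+-}(\varphi(\mathbf{S}))$ paired against $e_{+--}$ produces, after gluing, the same framed oriented link in $S^3$ as the composite \eqref{tildeKIImap20221113} obtained by capping and cupping $\mathrm{dbl}_{\vec{\mathcal T}}(\mathbf{S},c_{\downarrow})$ with $\mathrm{cup}^{\mathcal T}_{-+}\otimes\mathrm{cup}^{\mathcal T}_{+-}$ and $\mathrm{cap}^{\mathcal T}_{-+}\otimes\mathrm{cap}^{\mathcal T}_{+-}$. The crucial point to check is that the particular pattern of endpoints prescribed by $e_{+--}$ (the red tangle in Figure~\ref{figure1posarxiv}$(b)$) — one vertical strand closing off one doubled copy, and a cup-cap pair closing off the original strand with the other doubled copy — is exactly the pattern encoded by the cup/cap composition in \eqref{tildeKIImap20221113}.

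The main obstacle, and really the only nontrivial part, is the careful bookkeeping of orientations and framings through the gluing along $\partial[-1,1]^3$, and making sure that the conventions in Figure~\ref{figure0924posarxiv}, Figure~\ref{figure1posarxiv}, Figure~\ref{figureEx1_41_abstract} and Figure~\ref{figureEx1_42_abstract} all line up: one must confirm that the framing convention at the endpoints $b_i,t_i$ used for external tangles (Definition~\ref{def:externaltangle}$(b)$) matches the blackboard-framing convention for the cup and cap generators, and that the orientation conditions built into $\vec{\mathcal T}(+-,+-)^{\mathrm{ext}}$ (outgoing at $b_1,t_2$, incoming at $b_2,t_1$) are consistent with the source/target words $+-$ in the composites. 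Once these conventions are pinned down, both triangles commute because each path around the triangle yields literally the same framed oriented link in $S^3$ up to isotopy; there is no genuine computation, only a diagram-chase through the definitions. I would therefore present the proof as: ``Both assertions follow by comparing diagram representatives, using the definition of $\varphi$ together with the homeomorphism $S^3\setminus\mathrm{int}([-1,1]^3)\simeq[-1,1]^3$, and, for the second diagram, the compatibility of $\varphi$ with the doubling operations expressed via $\psi$; the identification of framings and orientations is straightforward from the conventions in Definitions~\ref{def:framedtangle} and~\ref{def:externaltangle}.''
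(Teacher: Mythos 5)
Your proposal is correct and follows essentially the same route as the paper: the paper's proof is exactly a comparison of diagram representatives (its Figures relating $\widetilde{\mathrm{KIIMap}}^i$ to $\mathrm{KIIMap}^i$), i.e.\ the same unwinding of $\varphi$, the gluing $P_u$, and the doubling compatibility that you describe. Your additional remarks on checking orientation and framing conventions are a reasonable elaboration of the same picture-based argument.
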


\begin{proof}
The commutativity of the first (resp. second) diagram is illustrated in Figure~\ref{figure0924-2posarxiv}  (resp. Figure~\ref{figure0924-3posarxiv}) using rerepresentatives.

\begin{figure}[ht!]
			\centering
            \includegraphics[scale=0.8]{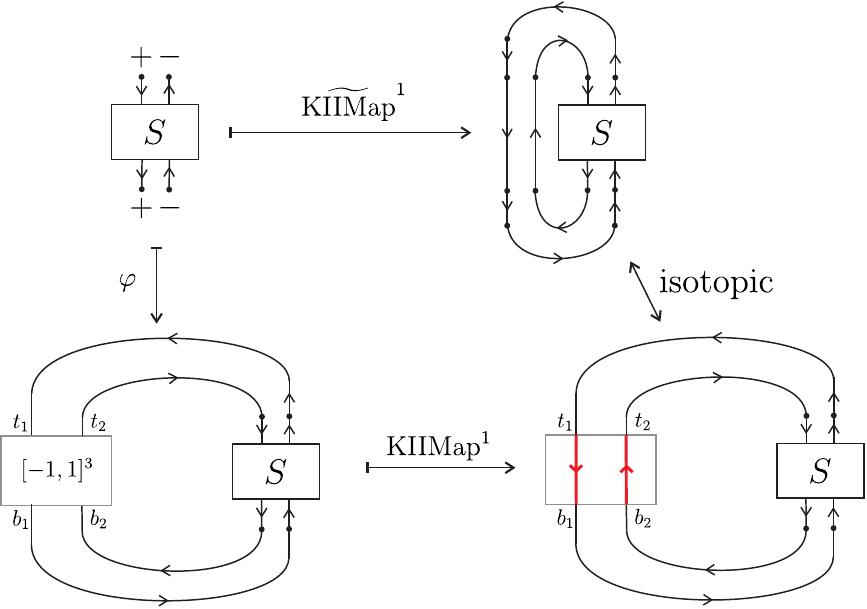} \caption{Relation between $\widetilde{\mathrm{KIIMap}}^1$ and ${\mathrm{KIIMap}}^1$.}
\label{figure0924-2posarxiv} 								
\end{figure}

\begin{figure}[ht!]
			\centering
            \includegraphics[scale=0.8]{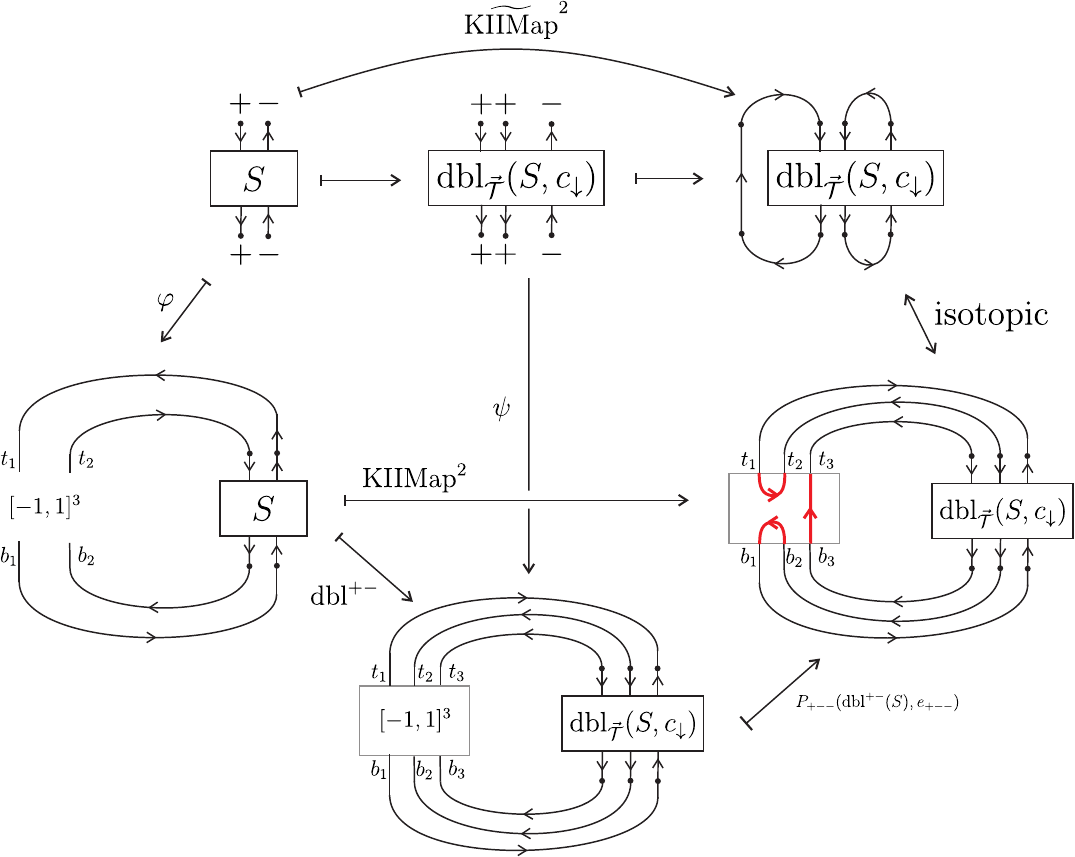} \caption{Relation between $\widetilde{\mathrm{KIIMap}}^2$ and ${\mathrm{KIIMap}}^2$.}
\label{figure0924-3posarxiv} 								
\end{figure}

\end{proof}

\begin{proposition}\label{lemmaA2021.07.08}\label{firstequivKIIT} The set $\mathrm{KII}\vec{\mathcal T}$ from Definition~\ref{def:espaceKIIT}$(b)$ is given by
$$\mathrm{KII}\vec{\mathcal T} = \mathrm{Im}\big(\widetilde{\mathrm{KIIMap}}^1, \widetilde{\mathrm{KIIMap}}^2\big).$$

\end{proposition}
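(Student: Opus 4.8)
The plan is to reduce the claimed equality of sets to the already-established equality $\mathrm{KII}\vec{\mathcal T} = \mathrm{Im}\big((\mathrm{KIIMap}^1,\mathrm{KIIMap}^2)\big)$ from Definition~\ref{def:espaceKIIT}, using the commutative triangles of Lemma~\ref{relKII_tildeKII} together with the bijectivity of the map $\varphi : {\vec{\mathcal T}}(\downarrow\uparrow) \to {\vec{\mathcal T}}(+-,+-)^{\mathrm{ext}}$. The idea is simply that precomposing a pair of maps with a bijection does not change the image of the resulting pair.

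\medskip

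First I would record the two commutativity statements of Lemma~\ref{relKII_tildeKII} in the combined form
\begin{equation*}
\big(\widetilde{\mathrm{KIIMap}}^1, \widetilde{\mathrm{KIIMap}}^2\big) = \big(\mathrm{KIIMap}^1 \circ \varphi,\ \mathrm{KIIMap}^2 \circ \varphi\big) = \big((\mathrm{KIIMap}^1, \mathrm{KIIMap}^2)\circ \varphi\big),
\end{equation*}
where the last expression means the map ${\vec{\mathcal T}}(\downarrow\uparrow) \to \underline{\vec{\mathcal T}}(\emptyset,\emptyset)\times \underline{\vec{\mathcal T}}(\emptyset,\emptyset)$ sending $\mathbf{S}$ to $\big(\mathrm{KIIMap}^1(\varphi(\mathbf{S})), \mathrm{KIIMap}^2(\varphi(\mathbf{S}))\big)$. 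Then I would take images of both sides: since $\varphi$ is surjective (indeed bijective), we have
\begin{equation*}
\mathrm{Im}\big(\widetilde{\mathrm{KIIMap}}^1, \widetilde{\mathrm{KIIMap}}^2\big) = \mathrm{Im}\big((\mathrm{KIIMap}^1, \mathrm{KIIMap}^2)\circ \varphi\big) = \mathrm{Im}\big((\mathrm{KIIMap}^1, \mathrm{KIIMap}^2)\big),
\end{equation*}
the last equality holding because $\mathrm{Im}(g \circ \varphi) = \mathrm{Im}(g)$ for any surjection $\varphi$. By Definition~\ref{def:espaceKIIT} (equation~\eqref{eq:espaceKIIT}), the right-hand side is exactly $\mathrm{KII}\vec{\mathcal T}$, which gives the claimed identity.

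\medskip

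There is essentially no obstacle here; the statement is a formal consequence of the preceding lemma and the bijectivity of $\varphi$. The only point requiring a word of care is to make sure one invokes the correct direction of the triangles in Lemma~\ref{relKII_tildeKII}, namely that $\widetilde{\mathrm{KIIMap}}^i = \mathrm{KIIMap}^i \circ \varphi$ (and not the reverse), and that $\varphi$ in the statement of Lemma~\ref{relKII_tildeKII} is the same map appearing in the bijectivity lemma immediately before it — both are the assignment of Figure~\ref{figure0924posarxiv}$(a)$. Since $\mathrm{KII}\vec{\mathcal T}$ is defined as an image over all $\mathbf{T}\in{\vec{\mathcal T}}(+-,+-)^{\mathrm{ext}}$ and every such $\mathbf{T}$ is $\varphi(\mathbf{S})$ for a unique $\mathbf{S}$, the two descriptions of $\mathrm{KII}\vec{\mathcal T}$ enumerate precisely the same pairs, so the proof is complete.
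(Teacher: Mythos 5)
Your proof is correct and follows essentially the same route as the paper, which deduces the statement directly from Lemma~\ref{relKII_tildeKII}; your only addition is to make explicit the (correct) observation that precomposing with the surjection $\varphi$ does not change the image.
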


\begin{proof}
This result follows directly from Lemma~\ref{relKII_tildeKII}.
\end{proof}

\subsubsection{The monoidal category $\underline{\mathcal Pa\vec{\mathcal T}}$ of parenthesized framed oriented tangles}\label{sec:1-6-5}

Denote by $\{+,-\}^{(*)}$\index[notation]{{+,-}(*)@$\{+,-\}^{(*)}$}  the {\it free magma} in letters in $\{+,-\}$; elements of $\{+,-\}^{(*)}$ are words in $+,-$ together with 
a parenthesization
(for example $(+-)+\in \{+,-\}^{(*)}$). The operation of forgetting parenthesizations defines a map $f:\{+,-\}^{(*)}\to \{+,-\}^*$.

\begin{definition}\begin{itemize}
\item[$(a)$] Define the set $\mathcal Pa\vec{\mathcal T}$\index[notation]{Pa\vec{\mathcal T}@$\mathcal Pa\vec{\mathcal T}$} of \emph{isotopy classes of parenthesized framed oriented tangles} by
$$\mathcal Pa\vec{\mathcal T}:=\big\{(\mathbf{T},p,p') \ | \  \mathbf{T} \in \vec{\mathcal T}  \text{ and } p \text{ and } p' \text{ are parenthesizations of } \mathrm{s}(\mathbf{T}) \text{ and }   \mathrm{s}(\mathbf{T}) \text{ respectively}\big\}.$$ 
\item[$(b)$] Define maps $\mathrm{s},\mathrm{t}:\mathcal Pa\vec{\mathcal T}\to \{+,-\}^{(*)}$ by $\mathrm{s}(\mathbf{T},p,p'):= (\mathrm{s}(T),p)$ and $\mathrm{t}(\mathbf{T},p,p'):= (\mathrm{t}(T),p')$ for any $(\mathbf{T},p,p')\in \mathcal Pa\vec{\mathcal T}$.
\end{itemize}
\end{definition}

An element in ${\mathcal Pa\vec{\mathcal T}}$ can be represented by an oriented tangle diagram together with an indication of the 
parenthesizations for its source and target. In Figure~\ref{figuraKI1_ell:par}, we show a representative of an element of~${\mathcal Pa\vec{\mathcal T}}$ obtained by endowing with parenthesizations the source and target of the 
element in~${\vec{\mathcal T}}$ represented by the tangle diagram from Figure~\ref{figuraKI1_ell}.  

\begin{figure}[ht!]
										\centering
                        \includegraphics[scale=0.9]{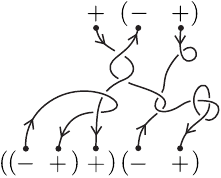}
					
\caption{Representative of an element in ${\mathcal Pa\vec{\mathcal T}}$ with source $((-+)+)(-+)$ and target $+(-+)$.}
\label{figuraKI1_ell:par} 								
\end{figure}

We now define the doubling operation and change of orientation operation for elements of ${\mathcal{P}a\vec{\mathcal T}}$.

\begin{definition}\label{def:DPaT}\begin{itemize}
\item[$(a)$]  Let $(\mathbf{T},p,p') \in \mathcal Pa\vec{\mathcal T}$ and $X \subset \pi_0(\mathbf{T})_{\mathrm{seg}}$, define the element $\mathrm{dbl}_{\mathcal Pa\vec{\mathcal T}}((\mathbf{T},p,p'),X)$\index[notation]{dbl_Pa\vec{\mathcal T}@$\mathrm{dbl}_{\mathcal Pa\vec{\mathcal T}}$} in $\mathcal Pa\vec{\mathcal T}$ by  $\mathrm{dbl}_{\mathcal Pa\vec{\mathcal T}}((\mathbf{T},p,p'),X):=\big(\mathrm{dbl}_{\vec{\mathcal T}},X), p^X, (p')^X\big)$, where $\mathrm{dbl}_{\vec{\mathcal T}}(\mathbf{T},X)=\mathbf{T}^X$ is as in Lemma \ref{lem120} and $p^X$ (resp.~$(p')^X$) is the parenthesization of {$\mathrm{s}(\mathbf{T}^X)$ (resp., $\mathrm{t}(\mathbf{T}^X)$)}  induced by $p$ (resp. $p'$), the map  $\pi_{\mathrm{s}(\mathbf{T}),X\cap\partial_bT}:[\![1,|\mathrm{s}(\mathbf{T}^X)|]\!]\to[\![1,|\mathrm{s}(\mathbf{T})|]\!]$ 
(resp. $\pi_{\mathrm{t}(\mathbf{T}),X\cap\partial_tT}:[\![1,|\mathrm{t}(\mathbf{T}^X)|]\!]\to[\![1,|\mathrm{t}(\mathbf{T})|]\!]$)  and the condition that the letters located in the positions given by a fiber of 
$\pi_{\mathrm{s}(\mathbf{T}),X\cap\partial_bT}$ (resp. $\pi_{\mathrm{t}(\mathbf{T}),X\cap\partial_tT}$) of cardinality~2 do not contain any parenthesis between them. 
\item[$(b)$]  Let $(\mathbf{T},p,p') \in \mathcal Pa\vec{\mathcal T}$ and $a \in \pi_0(\mathbf{T})$, define the element $\mathrm{co}_{\mathcal Pa\vec{\mathcal T}}((\mathbf{T},p,p'),a)\in \mathcal Pa\vec{\mathcal T}$\index[notation]{co_Pa\vec{\mathcal T}@$\mathrm{co}_{\mathcal Pa\vec{\mathcal T}}$} by  $$\mathrm{co}_{\mathcal Pa\vec{\mathcal T}}((\mathbf{T},p,p'),a):=\big(\mathrm{co}_{\vec{\mathcal T}}(\mathbf{T},a), p, p'\big),$$ where $\mathrm{co}_{\vec{\mathcal T}}(\mathbf{T},a)$ is as in Lemma \ref{r:changeoforientationtangles}.
\end{itemize}
\end{definition}

Note that by Lemmas~\ref{lemma:1:19:2jan2020}, \ref{r:2022-10-31-01}$(b)$ and \ref{lemma:1:18:03032020}$(a)$, one has $\mathrm{s}(\mathbf{T}^X)=\mathrm{s}(\mathbf{T})^{X\cap \partial_bT}$, $\mathrm{t}(\mathbf{T}^X)=\mathrm{t}(\mathbf{T})^{X\cap \partial_tT}$.

\begin{definition}
The monoidal category of isotopy classes of parenthesized framed oriented tangles 
$\underline{\mathcal Pa\vec{\mathcal T}}$\index[notation]{Pa\vec{\mathcal T}2@$\underline{\mathcal Pa\vec{\mathcal T}}$} is defined as follows. Its set of objects is the free magma 
$\{+,-\}^{(*)}$ in~$\{+,-\}$. For $u,v\in \{+,-\}^{(*)}$, one has $\underline{\mathcal Pa\vec{\mathcal T}}(u,v):=\underline{\vec{\mathcal T}}(f(u),f(v))$, where $f:\{+,-\}^{(*)}\to \{+,-\}^*$ is the forget-parenthesization map. The composition 
in $\underline{\mathcal Pa\vec{\mathcal T}}$ is defined using the composition in $\underline{\vec{\mathcal T}}$, and the tensor product in  $\underline{\mathcal Pa\vec{\mathcal T}}$ 
is defined using the tensor product in $\underline{\vec{\mathcal T}}$ and the compatibility of $f:\{+,-\}^{(*)}\to \{+,-\}^*$ with the products on both sides. 
\end{definition}

\subsubsection{A description of the subset $\mathrm{KII}\vec{\mathcal T}\subset\underline{\vec{\mathcal T}}(\emptyset,\emptyset)\times \underline{\vec{\mathcal T}}(\emptyset,\emptyset)$ using the category $\underline{\mathcal{P}a\vec{\mathcal T}}$}\label{par:tangles:19fev2020}\label{sec:1-6-6}

Define  the set $\mathcal{P}a\vec{\mathcal T}(\downarrow\uparrow)\subset \underline{\mathcal{P}a\vec{\mathcal T}}((+-),(+-))$\index[notation]{Pa\vec{\mathcal T}(\downarrow\uparrow)@$\mathcal{P}a\vec{\mathcal T}(\downarrow\uparrow)$}
as the  image of $\vec{\mathcal T}(\downarrow\uparrow)\subset \underline{\vec{\mathcal T}}((+-),(+-))$ under the canonical identification $\underline{\vec{\mathcal T}}((+-),(+-))\simeq \underline{\mathcal{P}a\vec{\mathcal T}}((+-),(+-))$. 

\begin{definition} For $i=1,2$, define the maps
\begin{equation}\label{eq:tildeKIImap+-20210804}
\widetilde{\mathrm{KIIMap}}^i_{\underline{\mathcal{P}a\vec{\mathcal T}}}: {\mathcal{P}a\vec{\mathcal T}}(\downarrow \uparrow)\longrightarrow \underline{\vec{\mathcal T}}(\emptyset,\emptyset) 
\end{equation}\index[notation]{KIIMapPaT+-@$\widetilde{\mathrm{KIIMap}}^i_{\underline{\mathcal{P}a\vec{\mathcal T}}}$}  as the composition of the maps $\widetilde{\mathrm{KIIMap}}^i$ from Definition~\ref{def_KIIMap1} and Definition~\ref{altKIIlema2}, with the identification ${\mathcal{P}a\vec{\mathcal T}}(\downarrow \uparrow)\simeq {\vec{\mathcal T}}(\downarrow \uparrow)$. 
\end{definition}

We have therefore the following.
\begin{proposition}\label{secondequivKIIT} The subset $\mathrm{KII}\vec{\mathcal T}\subset \underline{\vec{\mathcal T}}(\emptyset,\emptyset)\times \underline{\vec{\mathcal T}}(\emptyset,\emptyset)$, from Definition~\ref{def:espaceKIIT} can be described as
\begin{equation}
\mathrm{KII}\vec{\mathcal T} =\mathrm{Im}\big((\widetilde{\mathrm{KIIMap}}_{\underline{\mathcal{P}a\vec{\mathcal T}}}^1,\widetilde{\mathrm{KIIMap}}_{\underline{\mathcal{P}a\vec{\mathcal T}}}^2)\big).
\end{equation}
\end{proposition}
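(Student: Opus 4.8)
The plan is to reduce this statement to Proposition~\ref{firstequivKIIT} (i.e., Proposition~\ref{lemmaA2021.07.08}) via the identification ${\mathcal{P}a\vec{\mathcal T}}(\downarrow\uparrow)\simeq{\vec{\mathcal T}}(\downarrow\uparrow)$. The key observation is that, by construction, the maps $\widetilde{\mathrm{KIIMap}}^i_{\underline{\mathcal{P}a\vec{\mathcal T}}}$ are \emph{defined} as the composites of the canonical identification $\iota:{\mathcal{P}a\vec{\mathcal T}}(\downarrow\uparrow)\xrightarrow{\ \simeq\ }{\vec{\mathcal T}}(\downarrow\uparrow)$ with the maps $\widetilde{\mathrm{KIIMap}}^i$ from Definition~\ref{def_KIIMap1} and Definition~\ref{altKIIlema2}.

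\smallskip
\noindent First I would spell out that $\iota$ is a bijection: it is the restriction of the canonical identification $\underline{\vec{\mathcal T}}((+-),(+-))\simeq\underline{\mathcal{P}a\vec{\mathcal T}}((+-),(+-))$ (which holds because $\underline{\mathcal{P}a\vec{\mathcal T}}(u,v):=\underline{\vec{\mathcal T}}(f(u),f(v))$ and $f((+-))=+-$), and by definition ${\mathcal{P}a\vec{\mathcal T}}(\downarrow\uparrow)$ is precisely the image of ${\vec{\mathcal T}}(\downarrow\uparrow)$ under this identification. Hence $\iota$ restricts to a bijection between the two subsets.

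\smallskip
\noindent Next I would use the elementary fact that for bijections, images behave well under precomposition: if $\iota:A\to B$ is a bijection and $g^1,g^2:B\to C$ are maps, then
\[
\mathrm{Im}\big((g^1\circ\iota,\,g^2\circ\iota)\big)=\mathrm{Im}\big((g^1,g^2)\big)
\]
as subsets of $C\times C$, since $(g^1\circ\iota,g^2\circ\iota)=(g^1,g^2)\circ\iota$ and $\iota$ is surjective. Applying this with $A={\mathcal{P}a\vec{\mathcal T}}(\downarrow\uparrow)$, $B={\vec{\mathcal T}}(\downarrow\uparrow)$, $C=\underline{\vec{\mathcal T}}(\emptyset,\emptyset)$, $g^i=\widetilde{\mathrm{KIIMap}}^i$, and $\iota$ the identification above, we get
\[
\mathrm{Im}\big((\widetilde{\mathrm{KIIMap}}_{\underline{\mathcal{P}a\vec{\mathcal T}}}^1,\widetilde{\mathrm{KIIMap}}_{\underline{\mathcal{P}a\vec{\mathcal T}}}^2)\big)=\mathrm{Im}\big((\widetilde{\mathrm{KIIMap}}^1,\widetilde{\mathrm{KIIMap}}^2)\big).
\]
Then Proposition~\ref{firstequivKIIT} identifies the right-hand side with $\mathrm{KII}\vec{\mathcal T}$, which finishes the proof.

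\smallskip
\noindent In this case there is essentially no obstacle: the statement is a bookkeeping consequence of how the parenthesized version was set up, together with the already-established Proposition~\ref{firstequivKIIT}. The only point deserving a sentence of care is the verification that ${\mathcal{P}a\vec{\mathcal T}}(\downarrow\uparrow)$ maps \emph{onto} ${\vec{\mathcal T}}(\downarrow\uparrow)$ under $\iota$ (not merely into it) — but this is immediate from the very definition of ${\mathcal{P}a\vec{\mathcal T}}(\downarrow\uparrow)$ as the image of ${\vec{\mathcal T}}(\downarrow\uparrow)$. So the proof should be just a couple of lines invoking Proposition~\ref{firstequivKIIT} and the definition of $\widetilde{\mathrm{KIIMap}}^i_{\underline{\mathcal{P}a\vec{\mathcal T}}}$.
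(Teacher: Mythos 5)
Your proposal is correct and is essentially the paper's own argument: the paper states the proposition as an immediate consequence ("We have therefore the following") of the definition of $\widetilde{\mathrm{KIIMap}}^i_{\underline{\mathcal{P}a\vec{\mathcal T}}}$ as the composites of $\widetilde{\mathrm{KIIMap}}^i$ with the bijection ${\mathcal{P}a\vec{\mathcal T}}(\downarrow\uparrow)\simeq{\vec{\mathcal T}}(\downarrow\uparrow)$, together with Proposition~\ref{firstequivKIIT}. Your explicit remark that precomposition with a bijection does not change the image is exactly the bookkeeping the paper leaves implicit.
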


We now express the maps $\widetilde{\mathrm{KIIMap}}^i_{\underline{\mathcal{P}a\vec{\mathcal T}}}$ for $i=1,2$ in terms of morphisms in the category~$\underline{\mathcal{P}a\vec{\mathcal T}}$.

\begin{definition}\label{gencupcap_1} Define the elements 
\begin{equation*}
\begin{split}
\mathrm{cup}^{\mathcal T}_{(+-)(+-)}\in\underline{\mathcal{P}a\vec{\mathcal T}}(\emptyset,(+-)(+-)) \quad \quad \text{and} \quad \quad \mathrm{cap}^{\mathcal T}_{(+-)(+-)}\in\underline{\mathcal{P}a\vec{\mathcal T}}((+-)(+-),\emptyset)
\end{split}
\end{equation*}\index[notation]{cup^{\mathcal T}_{(+-)(+-)}@$\mathrm{cup}^{\mathcal T}_{(+-)(+-)}$}\index[notation]{cap^{\mathcal T}_{(+-)(+-)}@$\mathrm{cap}^{\mathcal T}_{(+-)(+-)}$}
as the images of
\begin{equation}
\begin{split}
(\mathrm{Id}_+\otimes\mathrm{cup}^{\mathcal T}_{-+}\otimes\mathrm{Id}_{-})\circ \mathrm{cup}^{\mathcal T}_{+-}\in\underline{\vec{\mathcal T}}(\emptyset,+-+-) &  \quad \text{and} \quad  \mathrm{cap}^{\mathcal T}_{+-}\circ(\mathrm{Id}_+\otimes\mathrm{cap}^{\mathcal T}_{-+}\otimes\mathrm{Id}_{-})\in\underline{\vec{\mathcal T}}(+-+-,\emptyset)
\end{split}
\end{equation}
under the isomorphisms 
\begin{equation}
\begin{split}
\underline{\vec{\mathcal T}}(\emptyset,+-+-)\simeq\underline{\mathcal{P}a\vec{\mathcal T}}(\emptyset,(+-)(+-)) & \quad \text{and} \quad \underline{\vec{\mathcal T}}(+-+-,\emptyset)\simeq\underline{\mathcal{P}a\vec{\mathcal T}}((+-)(+-),\emptyset).
\end{split}
\end{equation}
\end{definition}

\begin{lemma}\label{lemma_B120210709}
\begin{itemize}
\item[$(a)$] There is a map $\underline{\mathcal{P}a\vec{\mathcal T}}((+-),(+-))\longrightarrow \underline{\mathcal{P}a\vec{\mathcal T}}(\emptyset,\emptyset)$ given by


\begin{equation}\label{eq:def:tildeKII_1pat}
\mathbf{S}\longmapsto\mathrm{cap}^{\mathcal T}_{(+-)(+-)}\circ (\mathrm{Id}_{(+-)}\otimes \mathbf{S})\circ  \mathrm{cup}^{\mathcal T}_{(+-)(+-)}
\end{equation}
for $\mathbf{S}\in\underline{\mathcal{P}a\vec{\mathcal T}}((+-),(+-))$. 

\item[$(b)$] The restriction of the composition of the above map with the identification $\underline{\mathcal{P}a\vec{\mathcal T}}(\emptyset,\emptyset) \simeq \underline{\vec{\mathcal T}}(\emptyset,\emptyset)$ to $\mathcal{P}a\vec{\mathcal T}(\downarrow\uparrow)$ is  are equal to the map \eqref{eq:tildeKIImap+-20210804} with $i=1$. 
\end{itemize}
\end{lemma}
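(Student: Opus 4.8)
The plan is to prove Lemma~\ref{lemma_B120210709} by unwinding the definitions of the category $\underline{\mathcal Pa\vec{\mathcal T}}$ and of the elements $\mathrm{cup}^{\mathcal T}_{(+-)(+-)}$ and $\mathrm{cap}^{\mathcal T}_{(+-)(+-)}$ given in Definition~\ref{gencupcap_1}. For part $(a)$, I would first observe that all the morphisms appearing in \eqref{eq:def:tildeKII_1pat} are indeed morphisms in $\underline{\mathcal Pa\vec{\mathcal T}}$ with matching sources and targets: $\mathrm{cup}^{\mathcal T}_{(+-)(+-)}\in\underline{\mathcal Pa\vec{\mathcal T}}(\emptyset,(+-)(+-))$, then $\mathrm{Id}_{(+-)}\otimes\mathbf S\in\underline{\mathcal Pa\vec{\mathcal T}}((+-)(+-),(+-)(+-))$ for $\mathbf S\in\underline{\mathcal Pa\vec{\mathcal T}}((+-),(+-))$, and $\mathrm{cap}^{\mathcal T}_{(+-)(+-)}\in\underline{\mathcal Pa\vec{\mathcal T}}((+-)(+-),\emptyset)$. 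Hence the composite is a well-defined morphism in $\underline{\mathcal Pa\vec{\mathcal T}}(\emptyset,\emptyset)$, and since $\underline{\mathcal Pa\vec{\mathcal T}}$ is a category (composition is associative and the composites are legitimate), the assignment $\mathbf S\mapsto \mathrm{cap}^{\mathcal T}_{(+-)(+-)}\circ(\mathrm{Id}_{(+-)}\otimes\mathbf S)\circ\mathrm{cup}^{\mathcal T}_{(+-)(+-)}$ defines a genuine map of sets. This is essentially bookkeeping with objects in the free magma $\{+,-\}^{(*)}$ and the identification $\underline{\mathcal Pa\vec{\mathcal T}}(u,v)=\underline{\vec{\mathcal T}}(f(u),f(v))$.

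For part $(b)$, the key is to trace through the identification $\underline{\mathcal Pa\vec{\mathcal T}}(\emptyset,\emptyset)\simeq\underline{\vec{\mathcal T}}(\emptyset,\emptyset)$ (which comes from $f$ sending the empty parenthesized word to the empty word) and to substitute the definitions of $\mathrm{cup}^{\mathcal T}_{(+-)(+-)}$ and $\mathrm{cap}^{\mathcal T}_{(+-)(+-)}$ from Definition~\ref{gencupcap_1}. Under the isomorphism $\underline{\vec{\mathcal T}}(\emptyset,+-+-)\simeq\underline{\mathcal Pa\vec{\mathcal T}}(\emptyset,(+-)(+-))$, the element $\mathrm{cup}^{\mathcal T}_{(+-)(+-)}$ corresponds to $(\mathrm{Id}_+\otimes\mathrm{cup}^{\mathcal T}_{-+}\otimes\mathrm{Id}_{-})\circ\mathrm{cup}^{\mathcal T}_{+-}$, and similarly $\mathrm{cap}^{\mathcal T}_{(+-)(+-)}$ corresponds to $\mathrm{cap}^{\mathcal T}_{+-}\circ(\mathrm{Id}_+\otimes\mathrm{cap}^{\mathcal T}_{-+}\otimes\mathrm{Id}_{-})$. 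Moreover, the identification $\mathcal Pa\vec{\mathcal T}(\downarrow\uparrow)\simeq\vec{\mathcal T}(\downarrow\uparrow)$ sends $\mathbf S$ to itself viewed in $\underline{\vec{\mathcal T}}(+-,+-)$, and $\mathrm{Id}_{(+-)}\otimes\mathbf S$ corresponds to $\mathrm{Id}_{+-}\otimes\mathbf S$. Putting these substitutions together and using that the identifications are compatible with composition and tensor product (which follows from the definition of $\underline{\mathcal Pa\vec{\mathcal T}}$ and the fact that $f$ is a monoid morphism), the composite \eqref{eq:def:tildeKII_1pat} becomes
$$
\mathrm{cap}^{\mathcal T}_{+-}\circ(\mathrm{Id}_+\otimes\mathrm{cap}^{\mathcal T}_{-+}\otimes\mathrm{Id}_{-})\circ(\mathrm{Id}_{+-}\otimes\mathbf S)\circ(\mathrm{Id}_+\otimes\mathrm{cup}^{\mathcal T}_{-+}\otimes\mathrm{Id}_{-})\circ\mathrm{cup}^{\mathcal T}_{+-},
$$
which is precisely the formula \eqref{eq:def:tildeKII_1} defining $\widetilde{\mathrm{KIIMap}}^1(\mathbf S)$ in Definition~\ref{def_KIIMap1}. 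Hence the restriction to $\mathcal Pa\vec{\mathcal T}(\downarrow\uparrow)$ coincides with $\widetilde{\mathrm{KIIMap}}^1$ composed with the identification $\mathcal Pa\vec{\mathcal T}(\downarrow\uparrow)\simeq\vec{\mathcal T}(\downarrow\uparrow)$, i.e.\ with the map \eqref{eq:tildeKIImap+-20210804} for $i=1$.

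The main (minor) obstacle is purely notational: one must be careful that all the canonical identifications between $\underline{\vec{\mathcal T}}$-hom-sets and $\underline{\mathcal Pa\vec{\mathcal T}}$-hom-sets are used consistently and that they intertwine $\circ$ with $\circ$ and $\otimes$ with $\otimes$; this is immediate from the definition of $\underline{\mathcal Pa\vec{\mathcal T}}$ (its composition and tensor product are defined to be those of $\underline{\vec{\mathcal T}}$ transported along $f$), so no real difficulty arises. In short, the proof is a direct verification: part $(a)$ is well-definedness by the category axioms, and part $(b)$ is a substitution of Definition~\ref{gencupcap_1} into \eqref{eq:def:tildeKII_1pat} that recovers \eqref{eq:def:tildeKII_1} verbatim.
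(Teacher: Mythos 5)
Your proposal is correct and follows the same route as the paper's proof, which simply notes that part $(a)$ is immediate from the category structure of $\underline{\mathcal Pa\vec{\mathcal T}}$ and that for part $(b)$ the elements produced by \eqref{eq:def:tildeKII_1pat} and \eqref{eq:def:tildeKII_1} are identical once Definition~\ref{gencupcap_1} is substituted. Your write-up just spells out these same identifications in more detail.
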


\begin{proof}
The part $(a)$ of the lemma is obvious. For $(b)$, notice that elements obtained from formulas \eqref{eq:def:tildeKII_1} and \eqref{eq:def:tildeKII_1pat} are identical. Therefore, the maps are equal. 
\end{proof}

\begin{definition}\label{gencupcap_2} Define the elements 
\begin{equation*}
\begin{split}
\mathrm{cup}^{\mathcal T}_{-((++)-)}\in\underline{\mathcal{P}a\vec{\mathcal T}}(\emptyset,-((++)-)), & \quad \quad  \text{and} \quad \mathrm{cap}^{\mathcal T}_{-((++)-)}\in\underline{\mathcal{P}a\vec{\mathcal T}}(-((++)-),\emptyset)
\end{split}
\end{equation*}\index[notation]{cup^{\mathcal T}_{-((++)-)}@$\mathrm{cup}^{\mathcal T}_{-((++)-)}$}\index[notation]{cap^{\mathcal T}_{-((++)-)}@$\mathrm{cap}^{\mathcal T}_{-((++)-)}$}
as the images of
\begin{equation*}
\begin{split}
\mathrm{cup}^{\mathcal T}_{-+}\otimes \mathrm{cup}^{\mathcal T}_{+-}\in\underline{\vec{\mathcal T}}(\emptyset,-++-), & \quad \quad \text{and} \quad \quad \mathrm{cap}^{\mathcal T}_{-+}\otimes\mathrm{cap}^{\mathcal T}_{+-}\in\underline{\vec{\mathcal T}}(-++-,\emptyset)
\end{split}
\end{equation*}
under the isomorphisms 
\begin{equation*}
\begin{split}
\underline{\vec{\mathcal T}}(\emptyset,-++-)\simeq\underline{\mathcal{P}a\vec{\mathcal T}}(\emptyset,-((++)-)), & \quad \quad \quad \text{and} \quad \quad \underline{\vec{\mathcal T}}(-++-,\emptyset)\simeq\underline{\mathcal{P}a\vec{\mathcal T}}(-((++)-),\emptyset).
\end{split}
\end{equation*}
\end{definition}

\begin{lemma}\label{def:tildeKIImap20210715}
The map $\widetilde{\mathrm{KIIMap}}^2_{\underline{\mathcal{P}a\vec{\mathcal T}}}$  from \eqref{eq:tildeKIImap+-20210804} with $i=2$ is given by
\begin{equation}\label{tildeKIImapPaT}
\mathbf{S}\longmapsto\mathrm{cap}^{\mathcal T}_{-((++-)}\circ (\mathrm{Id}_{-}\otimes \mathrm{dbl}_{1}(\mathbf{S}))\circ \mathrm{cup}^{\mathcal T}_{-((++)-)}\in\underline{\vec{\mathcal T}}(\emptyset,\emptyset),
\end{equation}
where $\mathrm{dbl}_1$ denotes the doubling operation $\mathrm{dbl}_{{\mathcal{P}a\vec{\mathcal T}}}$ applied to the elements of ${\mathcal{P}a\vec{\mathcal T}}(\downarrow\uparrow)$  with respect to the connected component corresponding to~$\downarrow$.  
\end{lemma}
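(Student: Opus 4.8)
The plan is to unwind the definition of $\widetilde{\mathrm{KIIMap}}^2_{\underline{\mathcal{P}a\vec{\mathcal T}}}$ given in \eqref{eq:tildeKIImap+-20210804} (for $i=2$) and compare it directly with the right-hand side of \eqref{tildeKIImapPaT}. By construction, $\widetilde{\mathrm{KIIMap}}^2_{\underline{\mathcal{P}a\vec{\mathcal T}}}$ is the composition of $\widetilde{\mathrm{KIIMap}}^2: {\vec{\mathcal T}}(\downarrow\uparrow)\to \underline{\vec{\mathcal T}}(\emptyset,\emptyset)$ from Definition~\ref{altKIIlema2} with the canonical identification ${\mathcal{P}a\vec{\mathcal T}}(\downarrow\uparrow)\simeq {\vec{\mathcal T}}(\downarrow\uparrow)$. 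So the first step is to recall the explicit formula for $\widetilde{\mathrm{KIIMap}}^2(\mathbf{S})$, namely expression \eqref{tildeKIImap20221113} from Lemma~\ref{r:20221115-01}:
\[
\widetilde{\mathrm{KIIMap}}^2(\mathbf{S}) =(\mathrm{cap}^{\mathcal T}_{-+}\otimes \mathrm{cap}^{\mathcal T}_{+-})\circ (\mathrm{Id}_{-}\otimes \mathrm{dbl}_{\vec{\mathcal{T}}}(\mathbf{S}, c_{\downarrow}))\circ(\mathrm{cup}^{\mathcal T}_{-+}\otimes \mathrm{cup}^{\mathcal T}_{+-}).
\]

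The second step is to translate each of the three factors through the isomorphisms $\underline{\vec{\mathcal T}}(\emptyset,-++-)\simeq\underline{\mathcal{P}a\vec{\mathcal T}}(\emptyset,-((++)-))$ and $\underline{\vec{\mathcal T}}(-++-,\emptyset)\simeq\underline{\mathcal{P}a\vec{\mathcal T}}(-((++)-),\emptyset)$, together with the analogous identifications for the middle factor. By Definition~\ref{gencupcap_2}, the element $\mathrm{cup}^{\mathcal T}_{-+}\otimes \mathrm{cup}^{\mathcal T}_{+-}\in\underline{\vec{\mathcal T}}(\emptyset,-++-)$ is exactly the preimage of $\mathrm{cup}^{\mathcal T}_{-((++)-)}\in\underline{\mathcal{P}a\vec{\mathcal T}}(\emptyset,-((++)-))$ under that isomorphism, and similarly $\mathrm{cap}^{\mathcal T}_{-+}\otimes\mathrm{cap}^{\mathcal T}_{+-}$ corresponds to $\mathrm{cap}^{\mathcal T}_{-((++)-)}$; here the typo "$\mathrm{cap}^{\mathcal T}_{-((++-)}$" in the statement is to be read as $\mathrm{cap}^{\mathcal T}_{-((++)-)}$. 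For the middle factor one notes that $\mathrm{dbl}_{\vec{\mathcal T}}(\mathbf{S},c_\downarrow)$ is precisely the underlying tangle of $\mathrm{dbl}_1(\mathbf{S})$: indeed $\mathrm{dbl}_1$ is defined as $\mathrm{dbl}_{{\mathcal{P}a\vec{\mathcal T}}}$ applied along the connected component corresponding to $\downarrow$, and by Definition~\ref{def:DPaT}$(a)$ this sends $(\mathbf{S},(+-),(+-))$ to $(\mathrm{dbl}_{\vec{\mathcal T}}(\mathbf{S},c_\downarrow), (+-)^{c_\downarrow}, (+-)^{c_\downarrow})$; since doubling the first strand of $\mathrm{s}(\mathbf S)=\mathrm{t}(\mathbf S)=+-$ gives the word $++-$ with the induced parenthesization $(++)-$, one gets source and target $((++)-)$, which is exactly the object appearing in \eqref{tildeKIImapPaT}. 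Then $\mathrm{Id}_-\otimes\mathrm{dbl}_1(\mathbf S)$ lives in $\underline{\mathcal Pa\vec{\mathcal T}}(-((++)-),-((++)-))$ and its underlying tangle is $\mathrm{Id}_-\otimes\mathrm{dbl}_{\vec{\mathcal T}}(\mathbf S,c_\downarrow)$, matching the middle factor above.

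The third step is a bookkeeping check: the monoidal functor forgetting parenthesizations $\underline{\mathcal Pa\vec{\mathcal T}}\to\underline{\vec{\mathcal T}}$ is bijective on each hom-set between objects with a fixed underlying word, and it is compatible with composition and tensor product; hence the composite appearing in the right-hand side of \eqref{tildeKIImapPaT} is, after forgetting parenthesizations, literally the composite \eqref{tildeKIImap20221113}. Composing with the identification ${\mathcal{P}a\vec{\mathcal T}}(\downarrow\uparrow)\simeq{\vec{\mathcal T}}(\downarrow\uparrow)$ then yields the claimed equality of maps. I expect no serious obstacle here; the only mildly delicate point is to verify carefully that the parenthesization of $\mathrm{s}(\mathbf{S}^{c_\downarrow})$ induced by $(+-)$ in the sense of Definition~\ref{def:DPaT}$(a)$ — "no parenthesis between the two letters of a doubled fiber" — is indeed $((++)-)$ and not some other grouping, and likewise that the outer $\mathrm{Id}_-\otimes(-)$ produces $-((++)-)$ rather than $(-((++)-))$ or $((-(++))-)$; this is exactly the parenthesization convention built into Definitions~\ref{gencupcap_2} and the statement, so it reduces to matching conventions. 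Once these identifications are in place, the proof is a one-line comparison, so the main content is really the translation of \eqref{tildeKIImap20221113} into $\underline{\mathcal Pa\vec{\mathcal T}}$ via Definitions~\ref{gencupcap_2} and~\ref{def:DPaT}.
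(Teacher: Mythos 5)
Your proposal is correct and follows the same route as the paper: the paper's proof simply observes that the tangles produced by formulas \eqref{tildeKIImap20221113} and \eqref{tildeKIImapPaT} are identical, which is exactly the comparison you carry out (with more detail on the cup/cap identifications of Definition~\ref{gencupcap_2}, the doubling via Definition~\ref{def:DPaT}, and the parenthesization bookkeeping).
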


\begin{proof}
The result follows because the tangles obtained from formulas \eqref{tildeKIImap20221113} and \eqref{tildeKIImapPaT} are identical. 
\end{proof}

\subsection{Kirby bialgebra, (semi)Kirby structures, and invariants of 3-manifolds}\label{sec:3.7}

The purpose of this subsection is to express the surgery presentation of 3-manifolds in terms of some algebraic structures and to apply this to the construction of invariants of 3-manifolds.  In~\S\ref{sec:1-7-1}, we describe the Lickorish-Wallace and Kirby theorems in terms of a monoid obtained from the category of tangles, which we call the Kirby monoid.  In~\S\ref{sec:1-7-2}, we apply this to the construction of invariants of $3$-manifolds. Linear versions of these constructions are done in the next subsections: in \S\ref{sec:1-7-3}, the description of~\S\ref{sec:1-7-1} is reformulated in terms of the bialgebra attached to the Kirby monoid (Kirby bialgebra) and the construction of~\S\ref{sec:1-7-2} is reformulated in terms of the notions of (semi)Kirby structures.

\subsubsection{A description of the monoid of homeomorphism classes of 3-manifolds in terms of tangles}\label{sect:1:5:2jan2020}\label{sec:1-7-1}

\begin{definition}\label{def:equivalence-S-over-R}
Let $S$ be a set and $R \subset S\times S$, define $\langle R \rangle$\index[notation]{\langle R \rangle@$\langle R \rangle$} to be the equivalence relation on $S$ defined by: $s \sim s'$ if and only if there exists $k \geq 1$ and $(s_0,\ldots,s_k) \in S^{k+1}$ such that $(s_i,s_{i+1}) \in R \cup R^{\mathrm{op}}$ for $i=0,\ldots,k-1$ with $s_0=s$ and $s_{k}=s'$. Here  $R^\mathrm{op}\subset S\times S$ is the image of $R$ under the map which permutes the two factors in $S\times S$.    The quotient of $S$ by this relation is then denoted $S/\langle R \rangle$.
\end{definition} 

\begin{definition} Let $M$ be a commutative monoid and $R \subset M \times M$. We say that $R$ is \emph{compatible with the monoid structure} if for any $(a,b) \in R$ and any $c \in M$ we have  $(ac,bc) \in R$.
\end{definition}
 
\begin{lemma}\label{lemma-quootient-M-R} Let $M$ be a commutative monoid and $R \subset M \times M$ compatible with the monoid structure then $M/\langle R \rangle$ has the structure of a commutative monoid and the canonical map $M\to M/\langle R\rangle $ is a monoid homomorphism. 
\end{lemma}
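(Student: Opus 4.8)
The plan is to verify directly that the equivalence relation $\langle R\rangle$ is compatible with the multiplication of $M$, so that the quotient multiplication is well-defined, and then to check the monoid axioms pass to the quotient. First I would unwind Definition~\ref{def:equivalence-S-over-R}: for $a,a'\in M$, one has $a\sim a'$ (modulo $\langle R\rangle$) if and only if there is a chain $a=s_0,s_1,\ldots,s_k=a'$ with each consecutive pair $(s_i,s_{i+1})$ lying in $R\cup R^{\mathrm{op}}$. The key observation is that $R^{\mathrm{op}}$ is also compatible with the monoid structure: if $(a,b)\in R^{\mathrm{op}}$ then $(b,a)\in R$, hence $(bc,ac)\in R$ for all $c$, i.e.\ $(ac,bc)\in R^{\mathrm{op}}$. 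Therefore $R\cup R^{\mathrm{op}}$ is stable under right (equivalently, by commutativity, two-sided) multiplication by any $c\in M$.

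With that in hand, the main step is: if $a\sim a'$ then $ac\sim a'c$ for every $c\in M$. Indeed, given a chain $a=s_0,\ldots,s_k=a'$ realizing $a\sim a'$, multiply it termwise by $c$ to obtain $ac=s_0c,\ldots,s_kc=a'c$; by the previous paragraph each pair $(s_ic,s_{i+1}c)$ still lies in $R\cup R^{\mathrm{op}}$, so this is a valid chain and $ac\sim a'c$. Now suppose $a\sim a'$ and $b\sim b'$. Then $ab\sim a'b$ (multiplying the first relation by $b$ on the right) and $a'b\sim a'b'$ (multiplying the second relation by $a'$ on the left, using commutativity), so by transitivity $ab\sim a'b'$. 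This shows the operation $[a][b]:=[ab]$ on $M/\langle R\rangle$ is well-defined.

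Finally I would note that associativity, commutativity, and the unit property for $M/\langle R\rangle$ are immediate: the class of the unit $1\in M$ is a two-sided identity since $[1][a]=[1\cdot a]=[a]$, and associativity/commutativity of $[\,\cdot\,]$ follow from those of the multiplication of $M$ applied to representatives. By construction the canonical projection $\pi:M\to M/\langle R\rangle$, $a\mapsto [a]$, satisfies $\pi(ab)=[ab]=[a][b]=\pi(a)\pi(b)$ and $\pi(1)=[1]$, hence is a monoid homomorphism. I do not expect any genuine obstacle here; the only point requiring a little care is the stability of $R\cup R^{\mathrm{op}}$ (not just $R$) under multiplication, which is what makes the termwise-multiplication-of-chains argument go through.
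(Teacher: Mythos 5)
Your proof is correct, and it is exactly the routine verification the paper has in mind (the paper simply states "the result follows straightforwardly" and omits it). The one point you rightly flag — stability of $R\cup R^{\mathrm{op}}$, not just $R$, under multiplication so that chains can be multiplied termwise — is the only detail worth writing down, and you handle it correctly.
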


\begin{proof}
The result follows straightforwardly.
\end{proof}

Recall from Lemma~\ref{lemma:identif} the graded monoid structure of  $\underline{\vec{\mathcal T}}(\emptyset,\emptyset)$.

\begin{lemma}\label{compatibilityKIITCOTwithmonoidstructure} The subsets  $\mathrm{KII}\vec{\mathcal T} \subset \underline{\vec{\mathcal T}}(\emptyset,\emptyset)\times \underline{\vec{\mathcal T}}(\emptyset,\emptyset)$ and $\mathrm{CO}\vec{\mathcal T} \subset \underline{\vec{\mathcal T}}(\emptyset,\emptyset)\times \underline{\vec{\mathcal T}}(\emptyset,\emptyset)$ are compatible with the monoid structure of $\underline{\vec{\mathcal T}}(\emptyset,\emptyset)$, where $\mathrm{KII}\vec{\mathcal T}$ and $\mathrm{CO}\vec{\mathcal T}$ are as in Definitions~\ref{def:espaceCOT} and~\ref{def:espaceKIIT}, respectively. 
\end{lemma}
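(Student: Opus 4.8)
The plan is to verify the compatibility condition from the definition directly for each of the two subsets, using the descriptions of $\mathrm{KII}\vec{\mathcal T}$ and $\mathrm{CO}\vec{\mathcal T}$ as images of certain maps together with the key fact (Lemma~\ref{lemma:identif}$(a)$) that the monoid operation on $\underline{\vec{\mathcal T}}(\emptyset,\emptyset)$ coincides with the disjoint union $\dot{\sqcup}$, which is in turn induced by the tensor product $\otimes$ (equivalently the composition $\circ$) of the monoidal category $\underline{\vec{\mathcal T}}$. So I must show: for each $(a,b)$ in the subset and each $c\in\underline{\vec{\mathcal T}}(\emptyset,\emptyset)$, the pair $(a\dot{\sqcup}c,\,b\dot{\sqcup}c)$ again lies in the subset.

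\textbf{The case of $\mathrm{CO}\vec{\mathcal T}$.} Here a typical element is $(\mathbf{T},\mathrm{co}_{\vec{\mathcal T}}(\mathbf{T},c_0))$ with $\mathbf{T}\in\underline{\vec{\mathcal T}}(\emptyset,\emptyset)$ and $c_0\in\pi_0(\mathbf{T})$. Given $\mathbf{C}\in\underline{\vec{\mathcal T}}(\emptyset,\emptyset)$, I would use the identity $\pi_0(\mathbf{T}\dot{\sqcup}\mathbf{C})\simeq\pi_0(\mathbf{T})\sqcup\pi_0(\mathbf{C})$ (Lemma~\ref{r:relPi0}$(a)$ at the level of tangles, or directly from the definition of $\dot{\sqcup}$) and the evident compatibility of the change-of-orientation operation with disjoint union, namely $\mathrm{co}_{\vec{\mathcal T}}(\mathbf{T}\dot{\sqcup}\mathbf{C},c_0)=\mathrm{co}_{\vec{\mathcal T}}(\mathbf{T},c_0)\dot{\sqcup}\mathbf{C}$ for $c_0\in\pi_0(\mathbf{T})\subset\pi_0(\mathbf{T}\dot{\sqcup}\mathbf{C})$. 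This is because changing the orientation and framing of a component of $\mathbf{T}$ does not interact with the part of the tangle lying in a disjoint $3$-ball. Then $(\mathbf{T}\dot{\sqcup}\mathbf{C},\mathrm{co}_{\vec{\mathcal T}}(\mathbf{T},c_0)\dot{\sqcup}\mathbf{C})=(\mathbf{T}\dot{\sqcup}\mathbf{C},\mathrm{co}_{\vec{\mathcal T}}(\mathbf{T}\dot{\sqcup}\mathbf{C},c_0))\in\mathrm{CO}\vec{\mathcal T}$, which is exactly what is needed.

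\textbf{The case of $\mathrm{KII}\vec{\mathcal T}$.} Here a typical element is $(\widetilde{\mathrm{KIIMap}}^1(\mathbf{S}),\widetilde{\mathrm{KIIMap}}^2(\mathbf{S}))$ for $\mathbf{S}\in\vec{\mathcal T}(\downarrow\uparrow)$, using the description in Proposition~\ref{firstequivKIIT}. Given $\mathbf{C}\in\underline{\vec{\mathcal T}}(\emptyset,\emptyset)$, I would set $\mathbf{S}':=\mathbf{S}\otimes\mathbf{C}\in\vec{\mathcal T}(\downarrow\uparrow)$ (the tensor product of a tangle with source/target $+-$ and a tangle with empty source/target again has source/target $+-$, and adds circle components), and check that $\widetilde{\mathrm{KIIMap}}^i(\mathbf{S}\otimes\mathbf{C})=\widetilde{\mathrm{KIIMap}}^i(\mathbf{S})\dot{\sqcup}\mathbf{C}$ for $i=1,2$. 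Indeed, each $\widetilde{\mathrm{KIIMap}}^i$ is built by pre- and post-composing (and tensoring on the left by identities, and, for $i=2$, doubling) with fixed elementary tangles $\mathrm{cup}^{\mathcal T}$, $\mathrm{cap}^{\mathcal T}$; since $\mathbf{C}$ has empty source and target, it can be ``slid out'' past all these operations using the interchange law \eqref{eq:comptensorandcompositioninOBr} (equivalently the axioms of a monoidal category: $(X\otimes\mathbf{C})$ composed with a cap on the $X$-strands equals $(X\text{-cap})\otimes\mathbf{C}$, etc.), and the doubling operation used in $\widetilde{\mathrm{KIIMap}}^2$ only touches the component $c_\downarrow$ which lies inside $\mathbf{S}$, so it commutes with $\dot{\sqcup}\mathbf{C}$ by Lemma~\ref{lemma:1:18:03032020}$(c)$. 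Collecting these identities gives $(\widetilde{\mathrm{KIIMap}}^1(\mathbf{S})\dot{\sqcup}\mathbf{C},\widetilde{\mathrm{KIIMap}}^2(\mathbf{S})\dot{\sqcup}\mathbf{C})\in\mathrm{KII}\vec{\mathcal T}$.

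I do not expect a genuine obstacle here; the statement is essentially bookkeeping. The one point requiring a little care — and the closest thing to a main obstacle — is checking cleanly that $\widetilde{\mathrm{KIIMap}}^i(\mathbf{S}\otimes\mathbf{C})=\widetilde{\mathrm{KIIMap}}^i(\mathbf{S})\dot{\sqcup}\mathbf{C}$, i.e.\ that tensoring on the right by $\mathbf{C}$ commutes with all the cup/cap compositions and the doubling in the definitions \eqref{eq:def:tildeKII_1} and \eqref{tildeKIImap20221113}; this is a direct application of the bifunctoriality of $\otimes$ (the interchange law) and of Lemma~\ref{lemma:1:18:03032020}$(c)$, and can be made transparent with a picture or a short chain of equalities. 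I would write the $\mathrm{CO}$ case first since it is immediate, then the $\mathrm{KII}$ case, and finally note that since the generating relations are preserved, the claim for the subsets follows, so that by Lemma~\ref{lemma-quootient-M-R} the quotients $\underline{\vec{\mathcal T}}(\emptyset,\emptyset)/\langle\mathrm{CO}\vec{\mathcal T}\rangle$ and $\underline{\vec{\mathcal T}}(\emptyset,\emptyset)/\langle\mathrm{CO}\vec{\mathcal T}\sqcup\mathrm{KII}\vec{\mathcal T}\rangle$ inherit commutative monoid structures (this last remark anticipating the definition of the Kirby monoid).
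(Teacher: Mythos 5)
Your proposal is correct and matches the paper's approach: the paper simply states that the lemma "follows from the definitions of these subsets," i.e.\ a direct verification, which is exactly what you carry out (your use of the $\widetilde{\mathrm{KIIMap}}^i$ description from Proposition~\ref{firstequivKIIT} and the interchange law is just a spelled-out version of that bookkeeping). No gaps; your argument supplies the routine details the paper omits.
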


\begin{proof}
The result follows from the definitions of these subsets.
\end{proof}

\begin{corollary}\label{the-monoid-X} The set $\mathrm{Kir}:=\underline{\vec{\mathcal T}}(\emptyset,\emptyset)/\langle\mathrm{KII}{\vec{\mathcal T}} \cup \mathrm{CO}{\vec{\mathcal T}}\rangle$\index[notation]{Kir@$\mathrm{Kir}$}  is a commutative monoid,  which we call the \emph{Kirby monoid}. Moreover, $\mathrm{Kir}$ is $\mathbb{Z}_{\geq 0}$-graded, i.e., $\mathrm{Kir} = \bigsqcup_{k\geq 0} \mathrm{Kir}_k$, where 
\begin{equation} 
\mathrm{Kir}_k:=\dfrac{\underline{\vec{\mathcal T}}(\emptyset,\emptyset)_k}{\langle\mathrm{KII}{\vec{\mathcal T}}_k \cup \mathrm{CO}{\vec{\mathcal T}}_k\rangle},
\end{equation}\index[notation]{Kir_k@$\mathrm{Kir}_k$} 
where $\mathrm{KII}{\vec{\mathcal T}}_k$ and $\mathrm{CO}{\vec{\mathcal T}}_k$ are as in Definitions~\ref{def:espaceKIIT-k} and \ref{def:espaceCOT}, respectively.
\end{corollary}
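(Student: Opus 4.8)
The plan is to deduce the statement directly from the three preceding results: Lemma~\ref{lemma-quootient-M-R}, Lemma~\ref{compatibilityKIITCOTwithmonoidstructure}, and Lemma~\ref{lemma:identif}. First I would record the elementary fact that a union of two subsets of $M\times M$, each compatible with the monoid structure of a commutative monoid $M$, is again compatible: if $(a,b)\in R_1\cup R_2$ and $c\in M$, then $(ac,bc)$ lies in whichever of $R_1$, $R_2$ contained $(a,b)$, hence in $R_1\cup R_2$. Applying this with $R_1=\mathrm{KII}\vec{\mathcal T}$ and $R_2=\mathrm{CO}\vec{\mathcal T}$ — which are compatible with the monoid structure of $\underline{\vec{\mathcal T}}(\emptyset,\emptyset)$ by Lemma~\ref{compatibilityKIITCOTwithmonoidstructure} — shows that $\mathrm{KII}\vec{\mathcal T}\cup\mathrm{CO}\vec{\mathcal T}$ is compatible with that structure. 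Lemma~\ref{lemma-quootient-M-R} then immediately yields that $\mathrm{Kir}=\underline{\vec{\mathcal T}}(\emptyset,\emptyset)/\langle\mathrm{KII}\vec{\mathcal T}\cup\mathrm{CO}\vec{\mathcal T}\rangle$ is a commutative monoid, with the canonical projection a monoid homomorphism.

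For the grading, I would use that the generating relation $R:=\mathrm{KII}\vec{\mathcal T}\cup\mathrm{CO}\vec{\mathcal T}$ is homogeneous for the $\mathbb{Z}_{\geq0}$-grading of Lemma~\ref{lemma:identif}$(c)$. Indeed, from the decomposition $\mathrm{KII}\vec{\mathcal T}=\bigsqcup_{k}\mathrm{KII}\vec{\mathcal T}_k$ with $\mathrm{KII}\vec{\mathcal T}_k\subset\underline{\vec{\mathcal T}}(\emptyset,\emptyset)_k\times\underline{\vec{\mathcal T}}(\emptyset,\emptyset)_k$ (Definition~\ref{def:espaceKIIT-k} and the subsequent decomposition of $\mathrm{KII}\vec{\mathcal T}$), together with the inclusions $\mathrm{CO}\vec{\mathcal T}_k\subset\underline{\vec{\mathcal T}}(\emptyset,\emptyset)_k\times\underline{\vec{\mathcal T}}(\emptyset,\emptyset)_k$ (Definition~\ref{def:espaceCOT} and the observation preceding it), one gets $R=\bigsqcup_{k}R_k$ with $R_k\subset\underline{\vec{\mathcal T}}(\emptyset,\emptyset)_k\times\underline{\vec{\mathcal T}}(\emptyset,\emptyset)_k$. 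Hence any chain realizing an equivalence $\mathbf{L}\sim\mathbf{L}'$ in $\langle R\rangle$ stays inside a single graded piece $\underline{\vec{\mathcal T}}(\emptyset,\emptyset)_k$, so $\langle R\rangle$ restricts to an equivalence relation on each $\underline{\vec{\mathcal T}}(\emptyset,\emptyset)_k$ and the quotient splits as $\mathrm{Kir}=\bigsqcup_{k\geq0}\mathrm{Kir}_k$ with $\mathrm{Kir}_k$ as in the statement. Finally, since $\underline{\vec{\mathcal T}}(\emptyset,\emptyset)$ is a graded monoid (the product of a degree-$k$ and a degree-$l$ element has degree $k+l$ by Lemma~\ref{lemma:identif}$(b)$) and the projection is a monoid morphism, the induced product on $\mathrm{Kir}$ satisfies $\mathrm{Kir}_k\cdot\mathrm{Kir}_l\subset\mathrm{Kir}_{k+l}$, so $\mathrm{Kir}$ is $\mathbb{Z}_{\geq0}$-graded.

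The argument is essentially bookkeeping; the only point requiring a moment's care — which I would flag as the "main obstacle," though it is minor — is verifying homogeneity of the generating relation, namely assembling the inclusions $\mathrm{KII}\vec{\mathcal T}_k,\mathrm{CO}\vec{\mathcal T}_k\subset\underline{\vec{\mathcal T}}(\emptyset,\emptyset)_k\times\underline{\vec{\mathcal T}}(\emptyset,\emptyset)_k$ and noting that an equivalence chain, being built from steps within a fixed degree, cannot change the degree. Everything else is a direct invocation of Lemmas~\ref{lemma-quootient-M-R}, \ref{compatibilityKIITCOTwithmonoidstructure} and~\ref{lemma:identif}.
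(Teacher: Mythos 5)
Your proof is correct and follows essentially the same route as the paper, whose proof simply invokes Lemma~\ref{lemma-quootient-M-R} and Lemma~\ref{compatibilityKIITCOTwithmonoidstructure}. The only difference is that you spell out the grading argument (homogeneity of $\mathrm{KII}\vec{\mathcal T}\cup\mathrm{CO}\vec{\mathcal T}$, so equivalence chains preserve the number of components), which the paper leaves implicit; that is exactly the right bookkeeping.
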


\begin{proof} The result follows from Lemmas~\ref{lemma-quootient-M-R} and~\ref{compatibilityKIITCOTwithmonoidstructure}.
\end{proof}

\begin{lemma}
If $(M,*)$ is a commutative monoid and $m_1,\ldots,m_k \in M$, then the subset $R \subset M \times M$ consisting of pairs $(m,m')$ such that there exist $(a_1,...,a_k)$, $(a'_1,...,a'_k) \in \mathbb{Z}_{\geq 0}^k$ with $m*m_1^{a_1}*...*m_k^{a_k}=m'*m_1^{a'_1}*\ldots*m_k^{a'_k}$ is compatible with the monoid structure. We denote by $M/\mathrm{Eq}(m_1,...,m_k)$ the corresponding quotient monoid given by Lemma~\ref{lemma-quootient-M-R}. 
\end{lemma}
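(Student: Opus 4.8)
The plan is to verify directly the one condition in the definition of compatibility with the monoid structure: given a pair $(m,m')\in R$ and an arbitrary element $c\in M$, we must produce $(c*m,\,c*m')\in R$. There is no need to address reflexivity or symmetry of $R$, since Lemma~\ref{lemma-quootient-M-R} only requires compatibility and then passes to the equivalence relation $\langle R\rangle$ generated by $R$.

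First I would unwind the hypothesis $(m,m')\in R$. By definition of $R$ there exist tuples $(a_1,\dots,a_k)$ and $(a_1',\dots,a_k')$ in $\mathbb{Z}_{\geq 0}^k$ such that
\[
m*m_1^{a_1}*\cdots*m_k^{a_k}=m'*m_1^{a_1'}*\cdots*m_k^{a_k'}.
\]
Multiplying both sides of this identity by $c$ and using associativity and commutativity of $*$ to move $c$ next to $m$ (respectively $m'$), I obtain
\[
(c*m)*m_1^{a_1}*\cdots*m_k^{a_k}=(c*m')*m_1^{a_1'}*\cdots*m_k^{a_k'}.
\]
The very same pair of exponent tuples $(a_1,\dots,a_k)$ and $(a_1',\dots,a_k')$ now witnesses $(c*m,\,c*m')\in R$, which is exactly the required compatibility.

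Finally I would invoke Lemma~\ref{lemma-quootient-M-R} applied to the commutative monoid $M$ and this subset $R$: it yields that the quotient $M/\langle R\rangle$, which by convention we denote $M/\mathrm{Eq}(m_1,\dots,m_k)$, inherits a commutative monoid structure and that the canonical projection $M\to M/\mathrm{Eq}(m_1,\dots,m_k)$ is a monoid homomorphism. I do not expect any genuine obstacle here; the only point needing (minimal) care is the bookkeeping that the exponent tuples are unchanged after multiplying by $c$, which is immediate from commutativity and associativity, so the whole argument is a short direct verification.
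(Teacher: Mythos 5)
Your proof is correct and is precisely the direct verification the paper intends — the paper's own proof is simply stated as "The result follows straightforwardly," and multiplying the witnessing identity by $c$ while reusing the same exponent tuples is exactly that straightforward argument. No issues.
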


\begin{proof} The result follows straightforwardly. 
\end{proof}

\begin{proposition}\label{categorical:kirby} 
The monoid homomorphism  $\underline{\vec{\mathcal T}}(\emptyset,\emptyset)\to 3\text{-}\mathrm{Mfds}$ from \eqref{dehn:surg:map:monoid} induces an isomorphism of commutative monoids 
$$
\frac{\mathrm{Kir}}{\mathrm{Eq}(U^+,U^-)} \simeq 3\text{-}\mathrm{Mfds},
$$\index[notation]{Eq(U^+,U^-)@$\mathrm{Eq}(U^+,U^-)$}  
where $\mathrm{Kir}$ is the Kirby monoid (see Cororally~\ref{the-monoid-X}) and $U^\pm\in\underline{\vec{\mathcal T}}(\emptyset,\emptyset)_1$ are the $\pm 1$-framed unknots shown Figure \ref{figuresunknots}.
\end{proposition}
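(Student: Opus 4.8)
The plan is to analyze the monoid homomorphism $\Phi:\underline{\vec{\mathcal T}}(\emptyset,\emptyset)\to 3\text{-}\mathrm{Mfds}$ of \eqref{dehn:surg:map:monoid}, which sends the isotopy class of a framed oriented link $\mathbf{L}$ to the homeomorphism class of $S^3_{\mathbf{L}}$, and to show that it descends to the quotient $\mathrm{Kir}/\mathrm{Eq}(U^+,U^-)$ and that the descended map is bijective. First I would check that $\Phi$ is constant on the relations generating $\mathrm{Kir}$. For a pair $(\mathbf{T},\mathrm{co}_{\vec{\mathcal T}}(\mathbf{T},c))\in\mathrm{CO}\vec{\mathcal T}$ (Definition~\ref{def:espaceCOT}), the two links differ only by reversing the orientation and framing of one component, an operation which does not change the attaching maps used to build $S^3_{\mathbf{T}}$; hence $\Phi(\mathbf{T})=\Phi(\mathrm{co}_{\vec{\mathcal T}}(\mathbf{T},c))$. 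For a pair $(\mathbf{L},\mathbf{L}')\in\mathrm{KII}\vec{\mathcal T}$ (Definition~\ref{def:espaceKIIT}), the proposition of \S\ref{sec:new2.5} identifying $\mathrm{KII}\vec{\mathcal T}$ with the Kirby~II move shows that $\mathbf{L}'$ is obtained from $\mathbf{L}$ by a KII move, and a KII move preserves the surgered $3$-manifold (part of Theorem~\ref{thm:kirby}), so $\Phi(\mathbf{L})=\Phi(\mathbf{L}')$. Consequently $\Phi$ factors through $\mathrm{Kir}=\underline{\vec{\mathcal T}}(\emptyset,\emptyset)/\langle\mathrm{KII}\vec{\mathcal T}\cup\mathrm{CO}\vec{\mathcal T}\rangle$, yielding a monoid homomorphism $\overline{\Phi}:\mathrm{Kir}\to 3\text{-}\mathrm{Mfds}$.

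Next, since $S^3_{U^{\pm}}\cong S^3$, the classes $[U^{\pm}]\in\mathrm{Kir}$ are sent by $\overline{\Phi}$ to the unit of $3\text{-}\mathrm{Mfds}$; therefore any pair $(m,m')\in\mathrm{Kir}\times\mathrm{Kir}$ with $m\cdot[U^+]^a\cdot[U^-]^b=m'\cdot[U^+]^{a'}\cdot[U^-]^{b'}$ satisfies $\overline{\Phi}(m)=\overline{\Phi}(m')$. Thus $\overline{\Phi}$ factors through $\mathrm{Kir}/\mathrm{Eq}(U^+,U^-)$, producing a monoid homomorphism $\widehat{\Phi}:\mathrm{Kir}/\mathrm{Eq}(U^+,U^-)\to 3\text{-}\mathrm{Mfds}$. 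Surjectivity of $\widehat{\Phi}$ is immediate: the Lickorish–Wallace theorem says that $\Phi$ is already surjective.

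For injectivity, suppose $\widehat{\Phi}([\mathbf{L}])=\widehat{\Phi}([\mathbf{L}'])$, i.e.\ $S^3_{\mathbf{L}}\cong S^3_{\mathbf{L}'}$. By Theorem~\ref{thm:kirby}, $\mathbf{L}$ and $\mathbf{L}'$ are connected by a finite sequence of moves of types $0$ (change of orientation), KI, KI$'$, KII. I would track the class in $\mathrm{Kir}$ along such a sequence, using that the quotient map $\underline{\vec{\mathcal T}}(\emptyset,\emptyset)\to\mathrm{Kir}$ is a monoid homomorphism (Lemma~\ref{compatibilityKIITCOTwithmonoidstructure} and Corollary~\ref{the-monoid-X}): a move of type $0$ or KII leaves the class in $\mathrm{Kir}$ unchanged, since the relevant pair lies in the defining relation by the two computations above; a KI move multiplies the class by $[U^+]$ or $[U^-]$; and a KI$'$ move is the inverse operation of a KI move. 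An induction along the sequence then yields non-negative integers $a,b,a',b'$ with $[\mathbf{L}]\cdot[U^+]^a\cdot[U^-]^b=[\mathbf{L}']\cdot[U^+]^{a'}\cdot[U^-]^{b'}$ in $\mathrm{Kir}$, which is precisely the condition defining $\mathrm{Eq}(U^+,U^-)$; hence $[\mathbf{L}]=[\mathbf{L}']$ in $\mathrm{Kir}/\mathrm{Eq}(U^+,U^-)$. Combining with surjectivity, $\widehat{\Phi}$ is the desired isomorphism.

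The main obstacle is less a computation than aligning two inputs that pull in opposite directions: the well-definedness of $\overline{\Phi}$ relies on the elementary geometric fact that each Kirby-type move preserves the surgered manifold (the ``easy half'' of Kirby calculus, and for KII on the combinatorial identification of $\mathrm{KII}\vec{\mathcal T}$ with the move established in \S\ref{sec:new2.5}), whereas injectivity relies on the full force of Theorem~\ref{thm:kirby} asserting that these moves suffice. The remaining subtlety is purely the bookkeeping of the auxiliary unknot factors $U^{\pm}$: since $\mathrm{Kir}$ is only a monoid one cannot cancel, so the exponents must be carried through the induction rather than divided away, which is exactly the reason the quotient $\mathrm{Eq}(U^+,U^-)$ is built into the statement.
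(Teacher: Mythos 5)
Your proposal is correct and follows essentially the same route as the paper's proof: surjectivity via Lickorish--Wallace, the factorization through $\mathrm{Kir}/\mathrm{Eq}(U^+,U^-)$ via the easy direction of Kirby's theorem (moves preserve the surgered manifold), and injectivity via the hard direction, with the exponents of $U^{\pm}$ tracked through the sequence of moves. The paper states this argument in compressed form, while you spell out the well-definedness on the $\mathrm{CO}$ and $\mathrm{KII}$ relations and the induction along a Kirby sequence, which is exactly what the paper leaves implicit.
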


\begin{proof}
If follows from the Lickorish-Wallace theorem (\cite{Lick,Wal}) that $\underline{\vec{\mathcal T}}(\emptyset,\emptyset)\to 3\text{-}\mathrm{Mfds}$  is surjective. It follows from  Kirby's theorem 
(see Theorem \ref{thm:kirby}) that this homomorphism factors as a composition of monoid homomorphisms $\underline{\vec{\mathcal T}}(\emptyset,\emptyset)\to  \mathrm{Kir} \to \mathrm{Kir}/\mathrm{Eq}(U^+,U^-) \to 3\text{-}\mathrm{Mfds}$  among which the last morphism $\mathrm{Kir}/\mathrm{Eq}(U^+,U^-) \to 3\text{-}\mathrm{Mfds}$ is injective. All this implies that $\mathrm{Kir}/\mathrm{Eq}(U^+,U^-) \to 3\text{-}\mathrm{Mfds}$ is a monoid isomorphism. 
\end{proof}

\subsubsection{Algebraic construction of invariants of 3-manifolds}\label{sec:1-7-2}

\begin{definition}\label{map:pos-neg-eigenvalues-lkmatrix}
Let $\sigma_{\pm},\sigma_0:\underline{\vec{\mathcal T}}(\emptyset,\emptyset)\to\mathbb{Z}_{\geq 0}$ be the maps which take $x$ to $\sigma_\pm(x)$ and $\sigma_0(x)$, where 
$\sigma_+(x)$ (resp. $\sigma_-(x)$ and $\sigma_0(x)$) is the number of positive (resp. negative and zero) eigenvalues of the linking matrix of $x\in \underline{\vec{\mathcal T}}(\emptyset,\emptyset)$.
\end{definition}

\begin{lemma}\label{sec:1.8lemmalinking}
The maps $\sigma_{\pm},\sigma_0:\underline{\vec{\mathcal T}}(\emptyset,\emptyset)\to\mathbb{Z}_{\geq 0}$ are morphisms of commutative monoids which factor through  morphisms  $\mathrm{Kir}\to\mathbb{Z}_{\geq 0}$, where $\mathrm{Kir}$ is the Kirby monoid (see Corollary~\ref{the-monoid-X}).  Moreover, $\sigma_{\epsilon}(U^{\epsilon'}))=\delta_{\epsilon,\epsilon'}$ for $\epsilon,\epsilon'\in\{+,-\}$.
\end{lemma}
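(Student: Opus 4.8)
The plan is to verify each assertion of Lemma~\ref{sec:1.8lemmalinking} by reducing it to the additivity of the linking matrix under disjoint union and to the behavior of the linking matrix under the two types of generating moves (change of orientation and the KII move), together with the compatibility already recorded for $\sigma_\pm$ and $\sigma_0$ with the grading.

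First I would establish that $\sigma_+,\sigma_-,\sigma_0$ are monoid homomorphisms. For $\mathbf{L},\mathbf{L}'\in\underline{\vec{\mathcal T}}(\emptyset,\emptyset)$, the linking matrix of $\mathbf{L}\dot\sqcup\mathbf{L}'$ is (up to simultaneous permutation of rows and columns, i.e. conjugation by a permutation matrix) the block-diagonal matrix $\mathrm{Lk}(\mathbf{L})\oplus\mathrm{Lk}(\mathbf{L}')$, as already noted in the graded monoid homomorphism $\mathrm{Lk}:\underline{\vec{\mathcal T}}(\emptyset,\emptyset)\to\bigsqcup_k\mathrm{Sym}_k(\mathbb Z)/\mathfrak S_k$. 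Since the spectrum of a block-diagonal symmetric matrix is the disjoint union (with multiplicity) of the spectra of the blocks, the counts of positive, negative and zero eigenvalues are additive; moreover the empty link has the empty matrix, whose three eigenvalue counts are all $0$. This gives that each of $\sigma_+,\sigma_-,\sigma_0$ sends $\dot\sqcup$ to $+$ in $\mathbb Z_{\geq 0}$ and the unit to the unit, hence is a monoid homomorphism.

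Next I would show these maps factor through $\mathrm{Kir}=\underline{\vec{\mathcal T}}(\emptyset,\emptyset)/\langle\mathrm{KII}\vec{\mathcal T}\cup\mathrm{CO}\vec{\mathcal T}\rangle$. By the universal property of the quotient monoid (Lemma~\ref{lemma-quootient-M-R}) it suffices to check that $\sigma_\epsilon$ and $\sigma_0$ take equal values on the two components of any pair in $\mathrm{CO}\vec{\mathcal T}$ and of any pair in $\mathrm{KII}\vec{\mathcal T}$. For a pair $(\mathbf{T},\mathrm{co}_{\vec{\mathcal T}}(\mathbf{T},c))\in\mathrm{CO}\vec{\mathcal T}$, reversing the orientation (and framing) of a single component $c$ conjugates the linking matrix by the diagonal $\pm1$-matrix that is $-1$ in the slot of $c$ and $+1$ elsewhere: indeed $\mathrm{Lk}(K,K')$ changes sign if exactly one of $K,K'$ is reoriented and the self-linking $\mathrm{Lk}(K)$ is unchanged since both the component and its framing push-off are reoriented. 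A congruence by an invertible real matrix preserves the signature and the rank, so all three eigenvalue counts agree. For a pair $(\mathrm{KIIMap}^1(\mathbf{T}),\mathrm{KIIMap}^2(\mathbf{T}))\in\mathrm{KII}\vec{\mathcal T}$, the second link is obtained from the first by a handle-slide (band sum of a parallel copy of one component onto another); the classical computation shows that the new linking matrix is $E\,\mathrm{Lk}\,E^{\mathsf T}$ for an integral elementary matrix $E$ (the one adding $\pm$ the $b$-th row/column to the $a$-th), hence again congruent over $\mathbb R$ to the old one, so $\sigma_+,\sigma_-,\sigma_0$ are unchanged. Thus the relations defining $\mathrm{Kir}$ are respected and we obtain induced monoid homomorphisms $\mathrm{Kir}\to\mathbb Z_{\geq 0}$.

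Finally the normalization $\sigma_\epsilon(U^{\epsilon'})=\delta_{\epsilon,\epsilon'}$ is immediate: $U^{\epsilon'}$ has a single component with framing $\epsilon'1$ and no other components, so its linking matrix is the $1\times1$ matrix $(\epsilon'1)$, which has one positive eigenvalue and no negative or zero ones when $\epsilon'=+$, and one negative eigenvalue and no positive or zero ones when $\epsilon'=-$; reading off $\sigma_+,\sigma_-,\sigma_0$ gives the claim, and in particular $\sigma_0(U^\pm)=0$. The only point requiring genuine care is the handle-slide computation for the KII pair — identifying precisely which integral elementary congruence relates the two linking matrices and checking it is invertible over $\mathbb R$; everything else is bookkeeping with block matrices and sign changes. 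I would expect to quote the handle-slide formula for the linking matrix from a standard reference (e.g. \cite{LickBook}) rather than rederive it.
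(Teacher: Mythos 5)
Your proposal is correct and follows essentially the same route as the paper: both arguments rest on the observation that a change of orientation or a KII move transforms the linking matrix by an integral congruence $m\mapsto {}^{t}PmP$ with $P\in\mathrm{GL}_k(\mathbb{Z})$ (diagonal $\pm1$ for CO, elementary for the handle-slide), so Sylvester's law of inertia preserves the counts of positive, negative and zero eigenvalues, while the monoid homomorphism property and the evaluation on $U^{\pm}$ are read off from block-diagonality and the $1\times 1$ matrix $(\pm 1)$. The only difference is that you spell out the specific congruence matrices where the paper simply cites a standard reference for the KII case, which is fine.
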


\begin{proof} The linking matrix of a representative $T$ of $\mathbf{T}\in \underline{\vec{\mathcal T}}(\emptyset,\emptyset)_k$ is a symmetric matrix in $\mathrm{Mat}_k(\mathbb Z)$, which undergoes a transformation 
$m\mapsto \ ^tPmP$ with $P\in\mathrm{GL}_k(\mathbb Z)$ both if $T$ undergoes a KII move (see~\cite[\S3.3]{Sav} for more details) or 
a change of orientation (in that case $P$ is diagonal). Therefore, the number of positive (resp. negative and zero) eigenvalues does not change under the equivalence relation~$\langle \mathrm{KII}\vec{\mathcal T}\cup \mathrm{CO}\vec{\mathcal T}\rangle$ induced by $\mathrm{KII}\vec{\mathcal T}\cup \mathrm{CO}\vec{\mathcal T}$. Therefore, they factor through morphisms $\mathrm{Kir}\to\mathbb{Z}_{\geq 0}$. The last statement of the lemma is obvious.
\end{proof}

\begin{lemma} The monoid morphism $\sigma_0$ factors through $\mathrm{Eq}(U^+,U^-)$, and therefore defines a monoid homomorphism $\sigma_0: \mathrm{Kir}/\mathrm{Eq}(U^+,U^-)\to\mathbb{Z}_{\geq 0}$ which fits in the following commutative diagram
$$
\xymatrix{
\dfrac{\mathrm{Kir}}{\mathrm{Eq}(U^+,U^-)}\ar^-{\sigma_0}[r]\ar_-{\simeq}[d] & \mathbb{Z}_{\geq 0}\\
3\text{-}\mathrm{Mfds}\ar[ru]&  
}
$$
where the vertical map is the surgery map  \eqref{dehn:surg:map} and the diagonal map  takes the homeomorphism class of  a $3$-manifold $Y$ to $\mathrm{dim}_{\mathbb{Q}}(H_1(Y,\mathbb{Q}))$. One has therefore $\sigma_0(L)=\mathrm{dim}_{\mathbb{Q}}(H_1(S^3_L,\mathbb{Q}))$ for any framed oriented link~$L$.
\end{lemma}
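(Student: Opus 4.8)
The plan is to verify the two assertions of the lemma: first that $\sigma_0$ factors through $\mathrm{Eq}(U^+,U^-)$, and second that the resulting map on $3\text{-}\mathrm{Mfds}$ computes $\dim_{\mathbb Q}H_1(Y,\mathbb Q)$. For the first assertion, by Lemma~\ref{sec:1.8lemmalinking} we already know $\sigma_0$ descends to a monoid homomorphism on $\mathrm{Kir}$, and that $\sigma_0(U^\pm)=0$ since $U^\pm$ has $1\times 1$ linking matrix $(\pm 1)$, which has no zero eigenvalue. Recall that $\mathrm{Eq}(U^+,U^-)$ identifies $m$ with $m'$ whenever $m\cdot (U^+)^{a}\cdot(U^-)^{b}=m'\cdot(U^+)^{a'}\cdot(U^-)^{b'}$ in $\mathrm{Kir}$; applying the monoid homomorphism $\sigma_0:\mathrm{Kir}\to\mathbb{Z}_{\geq 0}$ and using $\sigma_0(U^\pm)=0$ together with additivity gives $\sigma_0(m)=\sigma_0(m')$. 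Hence $\sigma_0$ is constant on $\mathrm{Eq}(U^+,U^-)$-classes and descends to $\sigma_0:\mathrm{Kir}/\mathrm{Eq}(U^+,U^-)\to\mathbb Z_{\geq 0}$. Composing with the inverse of the isomorphism of Proposition~\ref{categorical:kirby} yields a monoid homomorphism $3\text{-}\mathrm{Mfds}\to\mathbb Z_{\geq 0}$, which by construction makes the stated triangle commute.

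For the second assertion, I would compute $H_1(S^3_L,\mathbb Q)$ directly from the surgery description. If $L=K_1\cup\cdots\cup K_l$ is a framed link with linking matrix $A=\mathrm{Lk}(L)$, then a standard Mayer--Vietoris (or handle-decomposition) argument gives a presentation of $H_1(S^3_L,\mathbb Z)$ with generators the meridians $\mu_1,\dots,\mu_l$ of the components and relations given by the rows of $A$; that is, $H_1(S^3_L,\mathbb Z)\cong \mathbb Z^l/A\mathbb Z^l=\operatorname{coker}(A:\mathbb Z^l\to\mathbb Z^l)$. Tensoring with $\mathbb Q$, we get $H_1(S^3_L,\mathbb Q)\cong \mathbb Q^l/A\mathbb Q^l$, whose dimension is $l-\operatorname{rank}_{\mathbb Q}(A)$. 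Since $A$ is a symmetric integer (hence real) matrix, its $\mathbb Q$-rank equals the number of its nonzero eigenvalues, so $l-\operatorname{rank}(A)=\sigma_0(L)$. This identity is manifestly invariant under the moves KI, KI$'$, KII and change of orientation (consistently with Lemma~\ref{sec:1.8lemmalinking}), and it shows the diagonal map sends $[Y]$ with $Y=S^3_L$ to $\sigma_0(L)=\dim_{\mathbb Q}H_1(Y,\mathbb Q)$, which is exactly the claimed commutativity.

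The main obstacle, such as it is, is the bookkeeping in the surgery computation of $H_1$: one must be careful that the surgery presentation really produces the linking matrix as the relation matrix (the diagonal entries being the framings, which is why the framed linking matrix $\mathrm{Lk}(L)$ rather than the off-diagonal-only matrix appears), and that this is compatible with the orientation conventions fixed in \S\ref{sect:1:2jan2020}. Once that presentation is in hand, everything else is elementary linear algebra over $\mathbb Q$ (rank equals number of nonzero eigenvalues for a symmetric real matrix) together with the already-established factorization properties of $\sigma_0$. I would present the $H_1$ computation by citing the standard references (e.g. \cite[Ch.~9]{Rolf} or \cite{LickBook}) for the cokernel presentation, and then spell out only the short passage from $\operatorname{coker}(A)\otimes\mathbb Q$ to $\sigma_0(L)$.
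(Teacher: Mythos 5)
Your proof is correct, and it differs from the paper's in the one step that carries topological content. For the factorization through $\mathrm{Eq}(U^+,U^-)$ you argue at the monoid level, using that $\sigma_0$ is a homomorphism on $\mathrm{Kir}$ together with $\sigma_0(U^\pm)=0$; the paper does the same thing concretely via $\mathrm{Lk}(L\,\dot\sqcup\,U^\pm)=\mathrm{Lk}(L)\oplus(\pm1)$, so these are essentially identical. For the identity $\sigma_0(L)=\dim_{\mathbb Q}H_1(S^3_L,\mathbb Q)$, however, you derive $H_1(S^3_L,\mathbb Z)\cong\operatorname{coker}\bigl(\mathrm{Lk}(L):\mathbb Z^l\to\mathbb Z^l\bigr)$ from the classical meridian presentation (Mayer--Vietoris / handle decomposition, as in Rolfsen or Lickorish), whereas the paper instead takes a simply-connected $4$-manifold $W_L$ with $\partial W_L=S^3_L$ and reads the cokernel of $\mathrm{Lk}(L)$ off the exact sequence of the pair $(W_L,S^3_L)$ with $\mathbb Q$-coefficients, citing Saveliev. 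Both routes land on $\operatorname{coker}(\mathrm{Lk}(L))\otimes\mathbb Q\cong H_1(S^3_L,\mathbb Q)$, and the closing linear algebra (symmetric real matrix: rank equals number of nonzero eigenvalues, so the corank is $\sigma_0(L)$) is the same in both. Your version is more elementary and works over $\mathbb Z$, which is slightly more than needed; the paper's version keeps the $4$-dimensional picture in view, which is the standard setting for the signature-type normalizations used later, but for this lemma nothing is lost by your approach. Your cautionary remark about the framing entering the diagonal of the relation matrix is exactly the point one must check, and it is consistent with the conventions of \S\ref{sect:1:2jan2020}.
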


\begin{proof}
For any framed oriented link $L$ we have $\mathrm{Lk}(L\dot{\sqcup} U^\pm) =\mathrm{Lk}(L)\oplus\{\pm1\}$, therefore $\sigma_0(L\dot{\sqcup} U^\pm) =\sigma_0(L)$ and hence $\sigma_0$ factors through $\mathrm{Eq}(U^+,U^-)$. Besides,  there is a simply-connected $4$-manifold $W_L$ with $\partial W_L = S^3_L$ such that $\mathrm{dim}_{\mathbb{Q}}(H_2(W_L;\mathbb{Q})) = \mathrm{dim}_{\mathbb{Q}}(H_2(W_L, S^3_L;\mathbb{Q}))  =|\pi_0(L)|$
and basis of these modules such that the matrix of the morphism $j_*$ in the short exact sequence 
$$0\longrightarrow H_2(S^3_L;\mathbb{Q})\xrightarrow{\  i_* \ } H_2(W_L;\mathbb{Q})\xrightarrow{\ j_* \ }H_2(W_L,S^3_L;\mathbb{Q})\xrightarrow{\ \delta_*\ } H_1(S^3_L;\mathbb{Q})\longrightarrow 0$$
is given by $\mathrm{Lk}(L)$, see for instance \cite[Theorems 6.1~and~6.2]{Sav}. Hence, $\mathrm{coker}(\mathrm{Lk}(L))\simeq H_1(S^3_L;\mathbb{Q})$. Since $\mathrm{Lk}(L)$ is a symmetric matrix, it is diagonalisable. By definition, the multiplicity of the eigenvalue $0$ is $\sigma_0(L)$, therefore  $\mathrm{dim}(\mathrm{coker}(\mathrm{Lk}(L))=\sigma_0(L)$. Hence,  $\sigma_0(L)=\mathrm{dim}_{\mathbb{Q}}(H_1(S^3_L,\mathbb{Q}))$. All the described arguments do not depend on the isotopy class of $L$.
\end{proof}

\begin{proposition}\label{r1-2022-02-16}\label{cor:invt:3-vars:alg-version1} Let $(M,*)$ be a commutative monoid and $f:\mathrm{Kir}\to M$ be a monoid morphism such that $f(U^\pm)$ are invertible in $M$. Then the map $\widetilde{f}: \mathrm{Kir}\to M$ defined by $$\widetilde{f}(x):=(f(U^+))^{-\sigma_+(x)}
*(f(U^-))^{-\sigma_-(x)}*f(x)$$ is a monoid morphism, which moreover factors through a monoid morphism $\mathrm{Kir}/\mathrm{Eq}(U^+,U^-)\to M$  which by composition with the monoid isomorphism of Proposition~\ref{categorical:kirby} gives rise to a monoid homomorphism $3\text{-}\mathrm{Mfds} \to M$.
\end{proposition}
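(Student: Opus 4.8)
The plan is to verify directly that $\widetilde f$ is a morphism of commutative monoids, then to observe that it sends both $U^+$ and $U^-$ to the unit of $M$, and finally to descend $\widetilde f$ along the quotient map $\mathrm{Kir}\to\mathrm{Kir}/\mathrm{Eq}(U^+,U^-)$ and transport it along the isomorphism of Proposition~\ref{categorical:kirby}. All of the substantive input has already been established: the invariance of $\sigma_\pm$ under Kirby moves (so that they define monoid morphisms $\mathrm{Kir}\to\mathbb{Z}_{\geq 0}$) is Lemma~\ref{sec:1.8lemmalinking}, the quotient-monoid machinery is Lemma~\ref{lemma-quootient-M-R}, and the reformulation $\mathrm{Kir}/\mathrm{Eq}(U^+,U^-)\simeq 3\text{-}\mathrm{Mfds}$ is Proposition~\ref{categorical:kirby}; so this proof is essentially bookkeeping.

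First I would check that $\widetilde f$ is a monoid morphism. By Lemma~\ref{sec:1.8lemmalinking} the maps $\sigma_+,\sigma_-$ are morphisms of commutative monoids, hence $\sigma_\pm(x*y)=\sigma_\pm(x)+\sigma_\pm(y)$ and $\sigma_\pm(1)=0$; and since $f$ is a monoid morphism, $f(x*y)=f(x)*f(y)$ and $f(1)=1$. Using that $M$ is commutative and that $f(U^+),f(U^-)$ are invertible to regroup the normalizing factors, one computes
\begin{align*}
\widetilde f(x*y)
&= f(U^+)^{-\sigma_+(x)-\sigma_+(y)} * f(U^-)^{-\sigma_-(x)-\sigma_-(y)} * f(x) * f(y)\\
&= \bigl(f(U^+)^{-\sigma_+(x)} * f(U^-)^{-\sigma_-(x)} * f(x)\bigr) * \bigl(f(U^+)^{-\sigma_+(y)} * f(U^-)^{-\sigma_-(y)} * f(y)\bigr)\\
&= \widetilde f(x) * \widetilde f(y),
\end{align*}
and $\widetilde f(1)=f(U^+)^0*f(U^-)^0*f(1)=1$. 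Next, by the last assertion of Lemma~\ref{sec:1.8lemmalinking} one has $\sigma_\epsilon(U^{\epsilon'})=\delta_{\epsilon,\epsilon'}$ for $\epsilon,\epsilon'\in\{+,-\}$, whence $\widetilde f(U^+)=f(U^+)^{-1}*f(U^-)^{0}*f(U^+)=1$ and likewise $\widetilde f(U^-)=1$.

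From this I would deduce the factorization. If $(x,y)\in\mathrm{Eq}(U^+,U^-)$, i.e.\ there are $(a_+,a_-),(a'_+,a'_-)\in\mathbb{Z}_{\geq 0}^2$ with $x*(U^+)^{a_+}*(U^-)^{a_-}=y*(U^+)^{a'_+}*(U^-)^{a'_-}$ in $\mathrm{Kir}$, then applying the monoid morphism $\widetilde f$ and using $\widetilde f(U^\pm)=1$ gives $\widetilde f(x)=\widetilde f(y)$. Thus $\widetilde f$ is constant on the classes of $\mathrm{Eq}(U^+,U^-)$, so by the universal property of the quotient monoid (Lemma~\ref{lemma-quootient-M-R}) it factors through a monoid morphism $\mathrm{Kir}/\mathrm{Eq}(U^+,U^-)\to M$; precomposing this with the inverse of the monoid isomorphism $\mathrm{Kir}/\mathrm{Eq}(U^+,U^-)\simeq 3\text{-}\mathrm{Mfds}$ of Proposition~\ref{categorical:kirby} yields the desired monoid homomorphism $3\text{-}\mathrm{Mfds}\to M$. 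There is no genuinely hard step here; the only points needing attention are the use of commutativity of $M$ to regroup the factors $f(U^\pm)^{-\sigma_\pm(-)}$, and the remark that the normalization was chosen precisely so that $\widetilde f(U^+)=\widetilde f(U^-)=1$, which is exactly what makes the descent to $\mathrm{Kir}/\mathrm{Eq}(U^+,U^-)$ (equivalently, to $3$-manifolds) work.
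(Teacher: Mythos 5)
Your proof is correct and follows exactly the route the paper intends: the paper's own proof is a one-line appeal to Lemma~\ref{sec:1.8lemmalinking} (monoid-morphism property of $\sigma_\pm$ and $\sigma_\epsilon(U^{\epsilon'})=\delta_{\epsilon,\epsilon'}$) together with the definition of $\mathrm{Kir}/\mathrm{Eq}(U^+,U^-)$, and you have simply written out the routine verification it leaves implicit. No gaps.
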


\begin{proof} The results follows from Lemma~\ref{sec:1.8lemmalinking} and the definition of $\mathrm{Kir}/\mathrm{Eq}(U^+,U^-)$.
\end{proof}

\begin{example}\label{example1} To an  integer $r\geq 3$ and a primitive $4r$-th root of unity, one associates the map $f_0:  \underline{\vec{\mathcal T}}(\emptyset,\emptyset) \to \mathbb{C}$ defined as follows. Let $L\in \underline{\vec{\mathcal T}}(\emptyset,\emptyset)$ and let $D$ be a link diagram of $L$ and $\bar{D}$ the link diagram obtained from $D$ by forgetting the orientations. Let $f_0(L) := \langle \bar{D}^{\omega}\rangle$,
where $\langle \bar{D}^{\omega}\rangle$ is as in \cite[\S8.2]{Ohts} or \cite[Chapter 13]{LickBook}. Then $f_0$ is multiplicative (Kauffman bracket is multiplicative) and factors through a multiplicative map $f: \mathrm{Kir} \to \mathbb{C}$ (see \cite[Lemma 13.5]{LickBook} or \cite[Theorem 8.11]{Ohts}). Moreover, $f(U^{\pm})\not = 0$. Therefore, by Proposition~\ref{r1-2022-02-16}, it induces a monoid homomorphism 
\begin{equation}\label{RT}
\mathrm{RT}_r: 3\text{-Mfds}\to \mathbb{C},
\end{equation}
which coincides with the  \emph{Reshetikhin-Turaev} invariant as defined in  \cite[\S8.2]{Ohts}.
\end{example}

\subsubsection{Kirby bialgebra, (semi)Kirby structures, and invariants of 3-manifolds}\label{sect:1:6:2jan2020}\label{sec:1-7-3}

Let $S$ be a set, then $\mathbb{Z}S$ has a coalgebra structure defined by the condition that any element of $S$ is group-like and with trivial counit. 

Let $S$ be a set and $R \subset S\times S$. Consider the map $R\to \mathbb{Z}S$ given by $(s,s')\mapsto s-s'$. It extends to a morphism of abelian groups $\mathbb{Z}R\to \mathbb{Z}S$. The cokernel of this map is $\mathbb{Z}S/(R)$, where $(R) \subset \mathbb{Z}S$\index[notation]{(R)@$(R)$} is its image. It is equipped with a coalgebra structure, and the projection $\mathbb{Z}S\to \mathbb{Z}S/(R)$ is a morphism of coalgebras. Moreover, the map $S \to \mathbb{Z}S/(R)$ induces a map 
\begin{equation}\label{equ:the-map-S/R-to-ZZS/R}
\frac{S}{\langle R \rangle} \longrightarrow \frac{\mathbb{Z} S}{(R)}
\end{equation} 

If $M$ is a commutative monoid, then $\mathbb{Z}M$ has a bialgebra structure. If moreover $R \subset M\times M$ is compatible with the monoid structure, then $\mathbb{Z}M/(R)$ is a quotient bialgebra of $\mathbb{Z}M$. The map $M \to \mathbb{Z} M/(R)$ is a monoid morphism, which induces a monoid morphism
\begin{equation}\label{equ:the-map-M/R-to-ZZM/R}
\frac{M}{\langle R \rangle} \longrightarrow \frac{\mathbb{Z}M}{(R)}.
\end{equation} 

\begin{lemma}\label{r:2022-04-18-abstractiso}  The map~\eqref{equ:the-map-S/R-to-ZZS/R} induces an isomorphism of coalegbras $\mathbb{Z}(S/\langle R \rangle) \simeq \mathbb{Z}S/(R)$. Similarly, the map~\eqref{equ:the-map-M/R-to-ZZM/R} induces an isomorphism of bialgebras $\mathbb{Z}(M/\langle R\rangle) \to \mathbb{Z}M/(R)$.
\end{lemma}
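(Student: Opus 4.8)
The plan is to check both statements by unwinding the definitions, since the map $S/\langle R\rangle \to \mathbb{Z}S/(R)$ is essentially the "free abelian group" comparison and the only content is that the relation $\langle R\rangle$ on $S$ is exactly as coarse as the subgroup $(R)$ of $\mathbb{Z}S$. First I would recall that for any set $S$ the abelian group $\mathbb{Z}S$ carries the coalgebra structure in which $S$ is the set of group-likes with trivial counit, and that a quotient coalgebra of $\mathbb{Z}S$ by a coideal of the form $(R)$ has $\mathbb{Z}S/(R)$ with the images of the elements of $S$ as group-likes. The map $S \to \mathbb{Z}S/(R)$ factors through $S/\langle R\rangle$ because for $(s,s')\in R$ we have $s-s'\in (R)$, hence $s$ and $s'$ have the same image, and then the chain condition defining $\langle R\rangle$ forces equal images for $\langle R\rangle$-equivalent elements; this is exactly \eqref{equ:the-map-S/R-to-ZZS/R}. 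By the universal property of the free abelian group this extends to a morphism of abelian groups $\mathbb{Z}(S/\langle R\rangle) \to \mathbb{Z}S/(R)$, and since it takes group-likes to group-likes and respects the counit it is a morphism of coalgebras.

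Next I would construct the inverse. The composite $S \to S/\langle R\rangle \to \mathbb{Z}(S/\langle R\rangle)$ extends to a morphism of abelian groups $\mathbb{Z}S \to \mathbb{Z}(S/\langle R\rangle)$; I claim it kills $(R)$. Indeed $(R)$ is by definition the image of $\mathbb{Z}R \to \mathbb{Z}S$, $(s,s')\mapsto s-s'$, and each generator $s-s'$ with $(s,s')\in R$ maps to the difference of the classes of $s$ and $s'$ in $S/\langle R\rangle$, which is $0$ since $s\sim_{\langle R\rangle} s'$. Hence we obtain $\mathbb{Z}S/(R) \to \mathbb{Z}(S/\langle R\rangle)$, again a coalgebra morphism. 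The two constructions are mutually inverse: on the generating set $S/\langle R\rangle$ of the source of the first map one checks directly that the round trip is the identity, and likewise on the images of $S$, which generate $\mathbb{Z}S/(R)$; so both round trips are identity morphisms of abelian groups, proving the first isomorphism.

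For the monoid/bialgebra statement I would run the same argument with the extra multiplicative structure carried along. When $M$ is a commutative monoid, $\mathbb{Z}M$ is a bialgebra (the product is the $\mathbb{Z}$-bilinear extension of the product of $M$, the coproduct makes every element of $M$ group-like), and when $R\subset M\times M$ is compatible with the monoid structure, $(R)$ is a biideal: Lemma~\ref{lemma-quootient-M-R} already gives that $M/\langle R\rangle$ is a monoid and $M\to M/\langle R\rangle$ a monoid morphism, and compatibility ensures that $(R)$ is an ideal for the product, so $\mathbb{Z}M/(R)$ inherits a bialgebra structure. The maps built above are then both algebra morphisms because they are determined by their values on the group-like generators, which are multiplicative by construction, so the coalgebra isomorphism of the first part upgrades to a bialgebra isomorphism $\mathbb{Z}(M/\langle R\rangle) \xrightarrow{\ \sim\ } \mathbb{Z}M/(R)$.

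I do not expect a genuine obstacle here; the only thing that needs a little care is keeping the bookkeeping straight between the two uses of the symbol "$(R)$" (once as a subgroup of $\mathbb{Z}S$, once as a biideal of $\mathbb{Z}M$) and verifying that the relation-chain defining $\langle R\rangle$ — which allows steps in $R$ \emph{and} in $R^{\mathrm{op}}$ — is matched by the fact that $(R)$ is a subgroup (so closed under negation), which is exactly why $s-s'$ and $s'-s$ both lie in $(R)$. The mildly delicate point is therefore checking that the map \eqref{equ:the-map-S/R-to-ZZS/R} is well-defined on $\langle R\rangle$-classes rather than merely on $R$; this is immediate once one notes that having equal image in $\mathbb{Z}S/(R)$ is an equivalence relation on $S$ containing $R$ (and its opposite), hence containing $\langle R\rangle$.
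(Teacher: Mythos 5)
Your proof is correct; the paper itself dismisses this lemma with ``the result follows straightforwardly,'' and your definition-unwinding argument (linear extension of the class map, the inverse obtained because each generator $s-s'$ of $(R)$ dies in $\mathbb{Z}(S/\langle R\rangle)$, telescoping along $R\cup R^{\mathrm{op}}$-chains for well-definedness, and carrying the product along in the monoid case) is exactly the intended verification.
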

\begin{proof}
The result follows straightforwardly.
\end{proof}

\begin{definition}\label{def:Kir-and-Kir-k} We call the bialgebra $\mathfrak{Kir}:=\mathbb{Z}\underline{\vec{\mathcal T}}(\emptyset,\emptyset)/(\mathrm{KII}\vec{\mathcal T} \cup \mathrm{CO}\vec{\mathcal T})$\index[notation]{Kir2@$\mathfrak{Kir}$}    the \emph{Kirby bialgebra}. It is a commutative, associative, cocommutative and coassociative bialgebra. As an algebra $\mathfrak{Kir}$ is $\mathbb{Z}_{\geq 0}$-graded
$$\mathfrak{Kir}=\bigoplus_{k\geq 0} \mathfrak{Kir}_k,$$
where  
$$\mathfrak{Kir}_k=\frac{\mathbb{Z}\underline{\vec{\mathcal T}}(\emptyset,\emptyset)_k}{(\mathrm{KII}\vec{\mathcal T}_k\cup\mathrm{CO}\vec{\mathcal T}_k)} = \frac{\mathbb{Z}\underline{\vec{\mathcal T}}(\emptyset,\emptyset)_k}{(\mathrm{KII}\vec{\mathcal T}_k) + (\mathrm{CO}\vec{\mathcal T}_k)}.$$\index[notation]{Kir2_2@$\mathfrak{Kir}_k$} 
Recall that $\mathrm{KII}\vec{\mathcal T}_0 = \mathrm{KII}\vec{\mathcal T}_1 = \mathrm{CO}\vec{\mathcal T}_0=\emptyset$, therefore $(\mathrm{KII}\vec{\mathcal T}_0) = (\mathrm{KII}\vec{\mathcal T}_1) = (\mathrm{CO}\vec{\mathcal T}_0)=\{0\}$. We denote the coproduct in $\mathfrak{Kir}$ by $\Delta_{\mathfrak{Kir}}$.\index[notation]{\Delta_Kir@$\Delta_{\mathfrak{Kir}}$} 
\end{definition}

\begin{lemma} There is a  graded isomorphism of bialgebras $\mathfrak{Kir}\simeq \mathbb{Z} \mathrm{Kir}$, where $\mathrm{Kir}$ is the Kirby monoid (see Corollary~\ref{the-monoid-X}).
\end{lemma}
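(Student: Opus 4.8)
The plan is to deduce this from the preceding results by pure formal manipulation. First I would invoke Corollary~\ref{the-monoid-X}, which asserts that $\mathrm{Kir}=\underline{\vec{\mathcal T}}(\emptyset,\emptyset)/\langle\mathrm{KII}\vec{\mathcal T}\cup\mathrm{CO}\vec{\mathcal T}\rangle$ is a commutative monoid, and that it is $\mathbb{Z}_{\geq 0}$-graded with $\mathrm{Kir}_k=\underline{\vec{\mathcal T}}(\emptyset,\emptyset)_k/\langle\mathrm{KII}\vec{\mathcal T}_k\cup\mathrm{CO}\vec{\mathcal T}_k\rangle$. Then I would apply the second half of Lemma~\ref{r:2022-04-18-abstractiso}, with $M:=\underline{\vec{\mathcal T}}(\emptyset,\emptyset)$ and $R:=\mathrm{KII}\vec{\mathcal T}\cup\mathrm{CO}\vec{\mathcal T}$: by Lemma~\ref{compatibilityKIITCOTwithmonoidstructure} this $R$ is compatible with the monoid structure, so Lemma~\ref{r:2022-04-18-abstractiso} yields an isomorphism of bialgebras
$$\mathbb{Z}\big(M/\langle R\rangle\big)\xrightarrow{\ \simeq\ }\mathbb{Z}M/(R).$$
The right-hand side is by definition $\mathfrak{Kir}$ (Definition~\ref{def:Kir-and-Kir-k}), and the left-hand side is $\mathbb{Z}\mathrm{Kir}$ by Corollary~\ref{the-monoid-X}. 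This already gives the desired isomorphism of bialgebras $\mathfrak{Kir}\simeq\mathbb{Z}\mathrm{Kir}$.

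Next I would check that this isomorphism is graded. Both $\mathrm{Kir}$ and $\mathfrak{Kir}$ carry $\mathbb{Z}_{\geq 0}$-gradings indexed by the number of circle components, induced from the grading of $\underline{\vec{\mathcal T}}(\emptyset,\emptyset)$ recorded in Lemma~\ref{lemma:identif}. Since $\mathrm{KII}\vec{\mathcal T}=\bigsqcup_k\mathrm{KII}\vec{\mathcal T}_k$ and $\mathrm{CO}\vec{\mathcal T}=\bigsqcup_k\mathrm{CO}\vec{\mathcal T}_k$ with each piece contained in $\underline{\vec{\mathcal T}}(\emptyset,\emptyset)_k\times\underline{\vec{\mathcal T}}(\emptyset,\emptyset)_k$, the ideal $(R)$ is homogeneous, so passing to the quotient respects the grading; correspondingly the equivalence relation $\langle R\rangle$ preserves the grading, so that $\mathbb{Z}\mathrm{Kir}=\bigoplus_k\mathbb{Z}\mathrm{Kir}_k$. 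The isomorphism of Lemma~\ref{r:2022-04-18-abstractiso} is induced by the identity-on-generators map $S\to\mathbb{Z}S/(R)$, hence sends the degree-$k$ part to the degree-$k$ part, so it is an isomorphism of graded bialgebras.

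I do not expect any genuine obstacle here: the statement is a bookkeeping corollary of Lemma~\ref{r:2022-04-18-abstractiso} together with the identifications already made. The only point requiring a word of care is the compatibility of the gradings with the coalgebra structure — one should note that the coproduct $\Delta_{\mathfrak{Kir}}$ is the one making circle-component-count-homogeneous group-like elements group-like, matching the coproduct on $\mathbb{Z}\mathrm{Kir}$ where elements of $\mathrm{Kir}$ are group-like; this is immediate from the construction of the coalgebra structure on $\mathbb{Z}S$ recalled just before Lemma~\ref{r:2022-04-18-abstractiso}. Hence the proof reduces to: "Combine Corollary~\ref{the-monoid-X}, Lemma~\ref{compatibilityKIITCOTwithmonoidstructure}, and Lemma~\ref{r:2022-04-18-abstractiso}, and observe that all maps involved are homogeneous."
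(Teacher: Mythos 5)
Your argument is exactly the paper's: the lemma is proved there as an instance of Lemma~\ref{r:2022-04-18-abstractiso} applied to $M=\underline{\vec{\mathcal T}}(\emptyset,\emptyset)$ and $R=\mathrm{KII}\vec{\mathcal T}\cup\mathrm{CO}\vec{\mathcal T}$, using Lemma~\ref{compatibilityKIITCOTwithmonoidstructure} and Corollary~\ref{the-monoid-X}. Your additional check that the grading is respected (homogeneity of $R$ in the circle-component grading) is a correct and harmless elaboration of what the paper leaves implicit.
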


\begin{proof}
The result is an instance of Lemma~\ref{r:2022-04-18-abstractiso}.
\end{proof}

\begin{definition}\label{def:strB}\label{def:kirby} A \emph{semi-Kirby structure} is a pair $(A,\mu)$ where $\mu : \mathfrak{Kir}\to A$ is an algebra morphism. A semi-Kirby structure  $(A,\mu)$ is called a \emph{Kirby structure} if the elements $\mu(U^\pm) \in A$ are invertible.
\end{definition}

\begin{example}\label{example2} Consider the pair $(\mathbb{C}, \mu_r)$, where $\mu_r:\mathfrak{Kir}\to \mathbb{C}$ is the algebra homomorphism induced by  the map  $f: \mathrm{Kir} \to \mathbb{C}$ from Example~\ref{example1}  (using Lemma~\ref{r:2022-04-18-abstractiso}). Then $(\mathbb{C},\mu_r)$ is a Kirby structure.
\end{example}

\begin{definition}\label{def:catSKSandKS} We define the category of semi-Kirby structures, denoted $\mathcal{SKS}$\index[notation]{SKS@$\mathcal{SKS}$}, as  follows.  The objects of  $\mathcal{SKS}$ are semi-Kirby structures. If $(A,\mu),(B,\nu)\in\mathrm{Ob}(\mathcal{SKS})$, the set of morphisms $\mathcal{SKS}((A,\mu),(B,\nu))$ consists of algebra homomorphism $\phi: A \to B$ such that $\nu=\phi \circ \mu$. The composition in $\mathcal{SKS}$ is induced by the composition of algebra homomorphisms. We denote by $\mathcal{KS}$\index[notation]{KS@$\mathcal{KS}$}  the full subcategory of $\mathcal{SKS}$ of Kirby structures. 
\end{definition}

\begin{lemma}\label{sec:1.8lemma1}\label{sec:1.8lemma3bis} $(a)$ If $(A,\mu) \in \mathrm{Ob}(\mathcal{SKS})$ and $f : A \to B$ is an algebra homomorphism, then $(B,f \circ \mu)$ is in $\mathrm{Ob}(\mathcal{SKS})$, and $f$ induces a morphism  $(A,\mu) \to (B,f \circ \mu)$ in $\mathcal{SKS}$. 

$(b)$ If $(A,\mu) \in \mathrm{Ob}(\mathcal{KS})$ and $f : A\to B$ is an algebra homomorphism, then $(B, f \circ \mu)$ is in $\mathrm{Ob}(\mathcal{KS})$. 

In both cases, we call $(B,f \circ \mu)$ the \emph{push-forward} of $(A,\mu)$  by $f$.
\end{lemma}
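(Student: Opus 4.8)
\textbf{Proof plan for Lemma~\ref{sec:1.8lemma1}.} The statement is entirely formal, so the proof should be short; the plan is simply to unwind the definitions of $\mathcal{SKS}$, $\mathcal{KS}$, and of the push-forward, and to check the required closure and morphism properties directly.

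First I would prove $(a)$. Let $(A,\mu)\in\mathrm{Ob}(\mathcal{SKS})$, so by Definition~\ref{def:strB} the map $\mu:\mathfrak{Kir}\to A$ is an algebra morphism, and let $f:A\to B$ be an algebra homomorphism. Then $f\circ\mu:\mathfrak{Kir}\to B$ is a composition of algebra morphisms, hence an algebra morphism; therefore $(B,f\circ\mu)$ satisfies the defining condition of a semi-Kirby structure, i.e.\ $(B,f\circ\mu)\in\mathrm{Ob}(\mathcal{SKS})$. It remains to check that $f$ defines a morphism $(A,\mu)\to(B,f\circ\mu)$ in $\mathcal{SKS}$: by Definition~\ref{def:catSKSandKS}, a morphism from $(A,\mu)$ to $(B,\nu)$ is an algebra homomorphism $\phi:A\to B$ with $\nu=\phi\circ\mu$. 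Taking $\phi:=f$ and $\nu:=f\circ\mu$, the condition $\nu=\phi\circ\mu$ is the tautology $f\circ\mu=f\circ\mu$, so $f$ is indeed such a morphism.

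Next I would prove $(b)$. Let $(A,\mu)\in\mathrm{Ob}(\mathcal{KS})$, so by Definition~\ref{def:strB} the elements $\mu(U^\pm)\in A$ are invertible, and let $f:A\to B$ be an algebra homomorphism. By part $(a)$, $(B,f\circ\mu)$ is a semi-Kirby structure. For it to be a Kirby structure we must check that $(f\circ\mu)(U^\pm)=f(\mu(U^\pm))$ is invertible in $B$. Since $f$ is an algebra homomorphism it preserves units and products, hence sends invertible elements to invertible elements: if $a\in A$ has inverse $a^{-1}$, then $f(a)f(a^{-1})=f(aa^{-1})=f(1_A)=1_B$ and likewise on the other side, so $f(a)$ is invertible with inverse $f(a^{-1})$. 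Applying this to $a=\mu(U^+)$ and $a=\mu(U^-)$ shows $f(\mu(U^\pm))$ is invertible, so $(B,f\circ\mu)\in\mathrm{Ob}(\mathcal{KS})$.

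Since every step is an immediate consequence of the definitions, there is no genuine obstacle here; the only thing to be careful about is that $\mathcal{KS}$ is by definition the \emph{full} subcategory of $\mathcal{SKS}$ on Kirby structures, so in case $(b)$ nothing further need be said about the morphism $f$ — its being a morphism in $\mathcal{SKS}$, established in $(a)$, automatically makes it a morphism in $\mathcal{KS}$. I would conclude by recording that in both cases $(B,f\circ\mu)$ is called the push-forward of $(A,\mu)$ by $f$, as in the statement.
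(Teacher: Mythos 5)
Your proof is correct and is exactly the routine unwinding of Definitions~\ref{def:strB} and~\ref{def:catSKSandKS} that the paper has in mind (the paper simply states that the lemma follows straightforwardly). Nothing to add.
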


\begin{proof}
This follows straightforwardly.
\end{proof}

\begin{lemma}\label{tensorofSKS} A tensor structure is defined on $\mathcal{SKS}$ by $(A,\mu) \otimes (B,\nu):=(A \otimes B,(\mu \otimes \nu) \circ \Delta_{\mathfrak{Kir}})$ at the level of objects and induced by the tensor product of algebra homomorphisms at the level of morphisms. Moreover, $\mathcal{KS}$ is a tensor subcategory of $\mathcal{SKS}$. 
\end{lemma}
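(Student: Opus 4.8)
The claim is that $\mathcal{SKS}$ carries a tensor structure with $(A,\mu)\otimes(B,\nu):=(A\otimes B,(\mu\otimes\nu)\circ\Delta_{\mathfrak{Kir}})$ on objects, and that $\mathcal{KS}$ is a tensor subcategory. The plan is to verify, in order: (i) the given formula produces a legitimate semi-Kirby structure, i.e. $(\mu\otimes\nu)\circ\Delta_{\mathfrak{Kir}}:\mathfrak{Kir}\to A\otimes B$ is an algebra morphism; (ii) the tensor product of two morphisms in $\mathcal{SKS}$ is again a morphism; (iii) functoriality of $\otimes$ (compatibility with composition and identities); (iv) the associativity and unit constraints of the underlying tensor category of $\mathbb{C}$-algebras restrict to morphisms in $\mathcal{SKS}$, so that $\mathcal{SKS}$ inherits a (symmetric) monoidal structure; (v) the invertibility of $\mu(U^\pm)$ and $\nu(U^\pm)$ implies invertibility of $\big((\mu\otimes\nu)\circ\Delta_{\mathfrak{Kir}}\big)(U^\pm)$, which gives the last sentence.

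\textbf{Key steps.} For (i): $\Delta_{\mathfrak{Kir}}:\mathfrak{Kir}\to\mathfrak{Kir}\otimes\mathfrak{Kir}$ is an algebra morphism since $\mathfrak{Kir}$ is a bialgebra (Definition~\ref{def:Kir-and-Kir-k}), $\mu\otimes\nu:\mathfrak{Kir}\otimes\mathfrak{Kir}\to A\otimes B$ is an algebra morphism because $\mu,\nu$ are and $A\otimes B$ carries the usual tensor-product algebra structure, hence the composite is an algebra morphism; so $(A\otimes B,(\mu\otimes\nu)\circ\Delta_{\mathfrak{Kir}})\in\mathrm{Ob}(\mathcal{SKS})$. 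For (ii): given $\phi:(A,\mu)\to(A',\mu')$ and $\psi:(B,\nu)\to(B',\nu')$ in $\mathcal{SKS}$, so $\mu'=\phi\circ\mu$, $\nu'=\psi\circ\nu$, the map $\phi\otimes\psi:A\otimes B\to A'\otimes B'$ is an algebra morphism and satisfies $(\phi\otimes\psi)\circ(\mu\otimes\nu)\circ\Delta_{\mathfrak{Kir}}=\big((\phi\circ\mu)\otimes(\psi\circ\nu)\big)\circ\Delta_{\mathfrak{Kir}}=(\mu'\otimes\nu')\circ\Delta_{\mathfrak{Kir}}$, which is exactly the condition for $\phi\otimes\psi$ to be a morphism $(A,\mu)\otimes(B,\nu)\to(A',\mu')\otimes(B',\nu')$. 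Step (iii) is immediate from $(\phi'\otimes\psi')\circ(\phi\otimes\psi)=(\phi'\circ\phi)\otimes(\psi'\circ\psi)$ and $\mathrm{Id}_A\otimes\mathrm{Id}_B=\mathrm{Id}_{A\otimes B}$. For (iv): the unit object is $(\mathbb{C},\varepsilon_{\mathfrak{Kir}})$ where $\varepsilon_{\mathfrak{Kir}}$ is the counit; the left/right unitors and the associator are the standard ones of $\mathbb{C}$-algebras, and one checks they are $\mathcal{SKS}$-morphisms using coassociativity and counitality of $\Delta_{\mathfrak{Kir}}$ — e.g. the associator $\alpha_{A,B,C}:(A\otimes B)\otimes C\to A\otimes(B\otimes C)$ intertwines $\big(((\mu\otimes\nu)\circ\Delta)\otimes\rho\big)\circ\Delta$ and $\big(\mu\otimes((\nu\otimes\rho)\circ\Delta)\big)\circ\Delta$ precisely because $(\Delta\otimes\mathrm{Id})\circ\Delta=(\mathrm{Id}\otimes\Delta)\circ\Delta$. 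The pentagon and triangle axioms then hold because they already hold in the tensor category of $\mathbb{C}$-algebras. Finally (v): if $\mu(U^\pm)$ and $\nu(U^\pm)$ are invertible in $A$ and $B$, write $\Delta_{\mathfrak{Kir}}(U^\pm)=U^\pm\otimes U^\pm$ (since $U^\pm$ is group-like in $\mathfrak{Kir}$, being the image of an element of the monoid $\underline{\vec{\mathcal T}}(\emptyset,\emptyset)$); then $\big((\mu\otimes\nu)\circ\Delta_{\mathfrak{Kir}}\big)(U^\pm)=\mu(U^\pm)\otimes\nu(U^\pm)$ is invertible with inverse $\mu(U^\pm)^{-1}\otimes\nu(U^\pm)^{-1}$, so $(A,\mu)\otimes(B,\nu)\in\mathrm{Ob}(\mathcal{KS})$, and since $\mathcal{KS}$ is full in $\mathcal{SKS}$ and closed under $\otimes$ with the same unit, it is a tensor subcategory.

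\textbf{Main obstacle.} None of this is genuinely hard; the only point requiring mild care is (iv), verifying that the structural isomorphisms (associator, unitors) of the monoidal category of algebras are morphisms in $\mathcal{SKS}$, which is where the full bialgebra axioms (coassociativity, counit) of $\mathfrak{Kir}$ — as opposed to just its being a bialgebra at the level of the algebra structure — are used. I would streamline the write-up by invoking a general principle: for any bialgebra $H$ the category of pairs $(A,\mu:H\to A)$ with $\mu$ an algebra map, and morphisms the algebra maps over $H$, is symmetric monoidal via $(A,\mu)\otimes(B,\nu):=(A\otimes B,(\mu\otimes\nu)\circ\Delta_H)$ with unit $(\mathbb{C},\varepsilon_H)$; the present statement is the instance $H=\mathfrak{Kir}$, together with the observation that the Kirby-structure condition (invertibility of $\mu(U^\pm)$) is preserved because $U^\pm$ is group-like. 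The verification of all axioms is then a routine unwinding of definitions, which I would state as "direct verification" rather than carry out in full.
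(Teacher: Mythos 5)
Your proposal is correct and follows essentially the same route as the paper: the paper dismisses the monoidal-structure verification as straightforward (which you carry out in detail, using that $\Delta_{\mathfrak{Kir}}$ is an algebra morphism together with coassociativity/counitality) and derives the statement about $\mathcal{KS}$ from the group-likeness of $U^\pm$ in the bialgebra $\mathfrak{Kir}$, which is exactly your step (v). The only cosmetic caveat is that $\mathfrak{Kir}$ is a $\mathbb{Z}$-bialgebra, so the unit object should be taken over the appropriate base ring rather than $\mathbb{C}$; this does not affect the argument.
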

\begin{proof}
The first claim is straightforward and the fact that $\mathcal{KS}$ is a tensor subcategory follows from group-likeness of $U^\pm$ in the bialgebra $\mathfrak{Kir}$. 
\end{proof}

\begin{definition}\label{def:mult-inv}
An \emph{algebra-valued invariant} of (closed, oriented) $3$-manifolds is a pair $(A,i)$ where $A$ is an algebra and $i : 3\text{-}\mathrm{Mfds}\to (A,\cdot)$ is a monoid morphism, where $(A,\cdot)$  is the underlying multiplicative monoid of $A$. 
\end{definition}

\begin{definition}\label{def:cat-of-invts}
We define the category $\mathcal{I}nv$\index[notation]{Inv@$\mathcal{I}nv$} of algebra-valued invariants of $3$-manifolds as follows. The objects are algebra-valued invariants of 3-manifolds $(A,i)$ as in Definition~\ref{def:mult-inv}. If $(A,i),(B,j)\in\mathrm{Ob}(\mathcal{I}nv)$, the set of morphisms $\mathcal{I}nv((A,i),(B,j))$ consists of algebra homomorphisms $\phi : A\to B$ such that $j=\phi \circ i$. 
\end{definition}

\begin{lemma}
The category $\mathcal{I}nv$ is a monoidal category with tensor product given by $(A,i) \otimes (B,j):=(A \otimes B ,i \otimes j)$ at the level of objects and induced by the tensor product of algebra homomorphisms at the level of morphisms. 
\end{lemma}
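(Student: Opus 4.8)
The plan is to observe that $\mathcal{I}nv$ sits over the symmetric monoidal category of $R$-algebras (tensor product over the ground ring $R$, unit object $R$), and to transport that monoidal structure to $\mathcal{I}nv$ by means of the diagonal of the commutative monoid $3\text{-}\mathrm{Mfds}$ (Lemma~\ref{def:monoid3man}$(b)$), exactly as in the proof of Lemma~\ref{tensorofSKS} where the coproduct $\Delta_{\mathfrak{Kir}}$ plays the role of this diagonal.

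First I would check that the tensor product is well defined on objects. Given $(A,i),(B,j)\in\mathrm{Ob}(\mathcal{I}nv)$, the structure map $i\otimes j\colon 3\text{-}\mathrm{Mfds}\to A\otimes B$ is understood as the composite $3\text{-}\mathrm{Mfds}\xrightarrow{\ \mathrm{diag}\ }3\text{-}\mathrm{Mfds}\times 3\text{-}\mathrm{Mfds}\xrightarrow{\ i\times j\ }(A,\cdot)\times(B,\cdot)\xrightarrow{\ \otimes\ }(A\otimes B,\cdot)$, that is $(i\otimes j)(Y)=i(Y)\otimes j(Y)$. Each of the three maps is a monoid morphism: the diagonal because $3\text{-}\mathrm{Mfds}$ is a monoid, $i\times j$ because $i$ and $j$ are monoid morphisms, and $(a,b)\mapsto a\otimes b$ because the multiplication of $A\otimes B$ is componentwise and $1_A\otimes 1_B=1_{A\otimes B}$. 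Hence $i\otimes j$ is a monoid morphism and $(A\otimes B,i\otimes j)\in\mathrm{Ob}(\mathcal{I}nv)$.

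Next, on morphisms: if $\phi\in\mathcal{I}nv((A,i),(B,j))$ and $\phi'\in\mathcal{I}nv((A',i'),(B',j'))$, then $\phi\otimes\phi'\colon A\otimes A'\to B\otimes B'$ is an algebra homomorphism with $\big((\phi\otimes\phi')\circ(i\otimes i')\big)(Y)=\phi(i(Y))\otimes\phi'(i'(Y))=j(Y)\otimes j'(Y)=(j\otimes j')(Y)$, so it is a morphism $(A,i)\otimes(A',i')\to(B,j)\otimes(B',j')$ in $\mathcal{I}nv$. Functoriality in each variable (compatibility with composition and identities) and the interchange law are inherited from the tensor product of algebra homomorphisms, so $\otimes$ is a bifunctor $\mathcal{I}nv\times\mathcal{I}nv\to\mathcal{I}nv$. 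Finally I would supply the coherence data: the unit object is $(R,\mathbf{1})$, where $\mathbf{1}\colon 3\text{-}\mathrm{Mfds}\to R$ is the constant map with value $1$ (a monoid morphism since $1\cdot 1=1$ and $\mathbf{1}([S^3])=1$). The associativity and unit constraints $(A\otimes B)\otimes C\xrightarrow{\sim}A\otimes(B\otimes C)$ and $R\otimes A\xrightarrow{\sim}A\xrightarrow{\sim}A\otimes R$ of the category of algebras are algebra isomorphisms; one checks directly that they intertwine the relevant maps out of $3\text{-}\mathrm{Mfds}$ — for instance both $(A\otimes B)\otimes C$ and $A\otimes(B\otimes C)$ receive $Y\mapsto i(Y)\otimes j(Y)\otimes k(Y)$ — hence are natural isomorphisms in $\mathcal{I}nv$.

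I do not anticipate a genuine obstacle. The only substantive point is that $i\otimes j$ is again a monoid morphism, which relies on $3\text{-}\mathrm{Mfds}$ being a commutative monoid, so that it carries a coassociative cocommutative diagonal; all the remaining coherence (pentagon and triangle) then holds in $\mathcal{I}nv$ because it holds in the category of $R$-algebras and the forgetful functor $\mathcal{I}nv\to\{R\text{-algebras}\}$ is faithful. Should one prefer not to invoke this faithfulness, the pentagon and triangle identities can instead be verified by hand on elementary tensors, which is routine.
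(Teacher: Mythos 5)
Your verification is correct and is exactly the routine check the paper leaves implicit (its own proof of this lemma is simply ``This follows straightforwardly''), so there is nothing to reconcile between the two. One small remark: the diagonal of \emph{any} monoid is a monoid morphism into the direct product, so the commutativity of $3\text{-}\mathrm{Mfds}$ is not actually needed for $i \otimes j$ to be multiplicative; this does not affect the validity of your argument.
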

\begin{proof}
This follows straightforwardly.
\end{proof}

\begin{proposition}\label{cor:invt:3-vars:alg} 
There is a monoidal functor $\mathcal{KS} \to \mathcal{I}nv$, taking a Kirby structure $(A,\mu)$ to the algebra-valued invariant of $3$-manifolds $(A,Z_{(A,\mu)})$, where 
$Z_{(A,\mu)}:3\text{-}\mathrm{Mfds}\to (A,\cdot)$  takes the homeomorphism class of a $3$-manifold $Y=S^3_L$ to 
 $\mu(U^+)^{-\sigma_+(L)}\mu(U^-)^{-\sigma_-(L)}\mu(L) \in A$ where $L$  is any surgery presentation of $Y$ and $\sigma_{\pm}$ are as in Definition~\ref{map:pos-neg-eigenvalues-lkmatrix}. In particular, for any $3$-manifold $Y$ and any $(A,\mu),(B,\nu)\in\mathcal{KS}$, we have 
\begin{equation}\label{equ:monoidalityofthefunctorinv}
Z_{((A,\mu) \otimes (B,\nu))}(Y)=Z_{(A,\mu)}(Y) \otimes Z_{(B,\nu)}(Y),
\end{equation}
the equality \eqref{equ:monoidalityofthefunctorinv} is in $A\otimes B$.  Moreover, if  $\phi: (A,\mu)\to (B,\nu)$ is a morphism of Kirby structures and $Y\in 3\text{-}\mathrm{Mfds}$, then $Z_{(B,\nu)}(Y)=\phi(Z_{(A,\mu)}(Y))$ as elements of $B$.    
\end{proposition}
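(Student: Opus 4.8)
The plan is to verify directly that the assignment $(A,\mu)\mapsto (A,Z_{(A,\mu)})$ is well-defined, functorial, and monoidal, using the results already established in \S\ref{sec:1-7-1} and \S\ref{sec:1-7-2}. First I would note that Proposition~\ref{categorical:kirby} identifies $3\text{-}\mathrm{Mfds}$ with $\mathrm{Kir}/\mathrm{Eq}(U^+,U^-)$, so it suffices to produce, for each Kirby structure $(A,\mu)$, a monoid morphism out of this quotient into $(A,\cdot)$. Since $\mu:\mathfrak{Kir}\to A$ is an algebra morphism and $\mathfrak{Kir}\simeq\mathbb Z\mathrm{Kir}$ (Lemma after Definition~\ref{def:Kir-and-Kir-k}), restricting $\mu$ to group-like elements gives a monoid morphism $\mathrm{Kir}\to(A,\cdot)$, still denoted $\mu$, with $\mu(U^\pm)$ invertible by the Kirby-structure hypothesis. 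Then I would apply Proposition~\ref{r1-2022-02-16} to $f:=\mu$: it yields the monoid morphism $\widetilde\mu(x)=\mu(U^+)^{-\sigma_+(x)}\mu(U^-)^{-\sigma_-(x)}\mu(x)$ which factors through $\mathrm{Kir}/\mathrm{Eq}(U^+,U^-)$ and hence, via the isomorphism of Proposition~\ref{categorical:kirby}, defines $Z_{(A,\mu)}:3\text{-}\mathrm{Mfds}\to(A,\cdot)$. By construction $Z_{(A,\mu)}(S^3_L)=\mu(U^+)^{-\sigma_+(L)}\mu(U^-)^{-\sigma_-(L)}\mu(L)$, which is the stated formula; its independence of the surgery presentation $L$ is exactly the content of the factorization through $\mathrm{Eq}(U^+,U^-)$.

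Next I would check functoriality. Given a morphism $\phi:(A,\mu)\to(B,\nu)$ in $\mathcal{KS}$, so $\phi$ is an algebra homomorphism with $\nu=\phi\circ\mu$, I must show $\phi$ induces a morphism $(A,Z_{(A,\mu)})\to(B,Z_{(B,\nu)})$ in $\mathcal{I}nv$, i.e.\ $Z_{(B,\nu)}=\phi\circ Z_{(A,\mu)}$. Writing $Y=S^3_L$ and applying $\phi$ to the defining formula, using that $\phi$ is an algebra homomorphism (so it respects products, and sends the inverse $\mu(U^\pm)^{-1}$ to $\nu(U^\pm)^{-1}$ since $\phi(\mu(U^\pm))=\nu(U^\pm)$ is invertible), one gets $\phi(Z_{(A,\mu)}(Y))=\nu(U^+)^{-\sigma_+(L)}\nu(U^-)^{-\sigma_-(L)}\nu(L)=Z_{(B,\nu)}(Y)$. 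Compatibility with composition of morphisms in $\mathcal{KS}$ is immediate from composition of algebra homomorphisms. This also proves the last sentence of the statement.

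Finally I would verify monoidality. The tensor product on $\mathcal{KS}$ is $(A,\mu)\otimes(B,\nu)=(A\otimes B,(\mu\otimes\nu)\circ\Delta_{\mathfrak{Kir}})$ (Lemma~\ref{tensorofSKS}), and on $\mathcal{I}nv$ it is $(A,i)\otimes(B,j)=(A\otimes B,i\otimes j)$. For $Y=S^3_L$, since $L$ is group-like in $\mathfrak{Kir}$ we have $\Delta_{\mathfrak{Kir}}(L)=L\otimes L$, and likewise $\Delta_{\mathfrak{Kir}}(U^\pm)=U^\pm\otimes U^\pm$; hence $((\mu\otimes\nu)\circ\Delta_{\mathfrak{Kir}})(L)=\mu(L)\otimes\nu(L)$ and similarly for $U^\pm$. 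Combining this with the fact that $\sigma_\pm$ depend only on $Y$ (Lemma~\ref{sec:1.8lemmalinking}), a direct computation gives
\begin{equation*}
Z_{((A,\mu)\otimes(B,\nu))}(Y)=\mu(U^+)^{-\sigma_+(L)}\mu(U^-)^{-\sigma_-(L)}\mu(L)\otimes\nu(U^+)^{-\sigma_+(L)}\nu(U^-)^{-\sigma_-(L)}\nu(L)=Z_{(A,\mu)}(Y)\otimes Z_{(B,\nu)}(Y),
\end{equation*}
which is \eqref{equ:monoidalityofthefunctorinv}; naturality of this identification and compatibility with unit and associativity constraints follow from those of $\Delta_{\mathfrak{Kir}}$. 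The only genuinely delicate point is making sure the grouplike/diagonal behavior of $L$ and $U^\pm$ in $\mathfrak{Kir}$ is invoked correctly — that is, that $\Delta_{\mathfrak{Kir}}$ is indeed the coproduct for which links are grouplike, which is guaranteed by Definition~\ref{def:Kir-and-Kir-k} and the coalgebra structure on $\mathbb Z S$ — so that the tensor formula and the factorization through $\mathrm{Eq}(U^+,U^-)$ are simultaneously respected; everything else is bookkeeping.
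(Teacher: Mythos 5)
Your argument is correct and is essentially the paper's own proof with the details written out: well-definedness of $Z_{(A,\mu)}$ comes from Proposition~\ref{r1-2022-02-16} together with Proposition~\ref{categorical:kirby}, and the functoriality and monoidality verifications that the paper leaves as ``one then checks'' are exactly the computations you give, the monoidal one resting on the group-likeness of link classes in $\mathfrak{Kir}$ and on Lemma~\ref{tensorofSKS}. One small correction: your parenthetical claim that $\sigma_\pm$ depend only on $Y$ (citing Lemma~\ref{sec:1.8lemmalinking}) is false --- $\sigma_\pm$ change under KI moves and only $\sigma_0$ descends to a homeomorphism invariant --- but this is harmless, since in the tensor computation you evaluate all three invariants on the same surgery presentation $L$, and independence of the choice of $L$ for each factor is already guaranteed by the first part of your proof.
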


\begin{proof} Let $(A,\mu)$ be a Kirby structure. Then the monoid morphism $Z_{(A,\mu)}$ is well-defined by Proposition~\ref{cor:invt:3-vars:alg-version1}. The assignment $(A,\mu)\to (A,Z_{(A,\mu)})$ therefore takes objects of $\mathcal{KS}$ to objects of $\mathcal{I}nv$. A morphism $\phi: (A,\mu)\to (B,\nu)$ in $\mathcal{KS}$ is an algebra homomorphism $\phi : A \to B$ such that $\nu=\phi \circ \mu$. One then checks that $\phi$ sets up a morphism $(A,Z_{(A,\mu)}) \to (B,Z_{(B,\nu)})$ in $\mathcal{I}nv$, which is defined to be the image of $(A,\mu)\to(B,\nu)$. This defines the announced functor $\mathcal{KS} \to \mathcal{I}nv$ and one then checks that it has the announced monoidal functor property. 
\end{proof}

\begin{example} Let $(\mathbb{C}, \mu_r)$ be the Kirby structure from Example~\ref{example2}. Then the image under the functor $\mathcal{KS}\to \mathcal Inv$  (from Proposition~\ref{cor:invt:3-vars:alg}) is the Reshetikhin-Turaev invariant $\mathrm{RT}_r$ from \eqref{RT}, that is, 
$$Z_{(\mathbb{C},\mu_r)} = \mathrm{RT}_r.$$
\end{example}

\subsection{Kirby structures arising from pre-LMO structures over symmetric monoidal categories}\label{sec:3.8}

In this subsection, we first introduce the notion of pre-LMO structures (\S\ref{sec:1:8:1}). Then  we show that such a structure gives rise to a semi-Kirby structure (\S~\ref{sec:1:8:2}).

\subsubsection{Definition of pre-LMO structure}\label{sec:1:8:1}

Let $\mathcal Set_f$\index[notation]{Set_f@$\mathcal Set_f$}  (resp. $\mathcal Set_f^*$\index[notation]{Set_f^*@$\mathcal Set_f^*$}) be the category whose objects are (resp. pointed) finite sets and set of morphisms given by (resp. pointed) bijections.

\begin{notation}\label{not:twistinX} Let $\tau^{\mathcal Set_f}_{\bullet,\diamond}:\bullet\sqcup \diamond \to \diamond\sqcup \bullet$\index[notation]{tau^{\mathcal Set_f}@$\tau^{\mathcal Set_f}_{\bullet,\diamond}$} denote the natural isomorphism between the functors $\bullet\sqcup\diamond, \diamond\sqcup\bullet:(\mathcal Set_f)^2\to \mathcal Set_f$. in particular, for any finite sets $S$ and $S'$, the map $\tau^{\mathcal Set_f}_{S,S'}:S\sqcup S'\to S'\sqcup S$ is the canonical bijection between $S\sqcup S'$ and $S'\sqcup S$.
\end{notation}

\begin{definition}\label{def:preLMOstructure} A \emph{pre-LMO structure} 
\begin{equation*}
\begin{split}
\mathbf{X}=\Big(\big\{\mathcal X(P,\bullet)\big\}_{P \in \vec{\mathcal  Br}}, \big\{\mathrm{Tens}^{\mathcal X}(P,P')_{\bullet,\bullet'}\big\}_{P,P' \in \vec{\mathcal Br}},  \big\{\mathrm{Comp}^{\mathcal X}(P,P')_{\bullet,\bullet'}\big\}_{P,P' \text{ composable in } \vec{\mathcal Br}},\\
 \big\{\mathrm{co}^{\mathcal X}(P,\bullet)\big\}_{P \in \vec{\mathcal Br}}, \big\{\mathrm{dbl}_{\mathcal X}((P,A),\bullet)\big\}_{P \in \vec{\mathcal Br}, A \subset \pi_0(P)}, Z_{\mathcal X}(\bullet)\Big)
\end{split}
\end{equation*}
consists  of the following. 

\begin{enumerate}
\item[(LMO1)] The data: For each $P\in\vec{\mathcal Br}$ a functor $\mathcal X(P,\bullet):\mathcal Set_f\to \mathcal Vect$.\index[notation]{X(P,\bullet)@$\mathcal X(P,\bullet)$} For each $w\in\{+,-\}^*$ an element $\mathrm{Id}^{\mathcal X}_w\in\mathcal X(\mathrm{Id}_w,\emptyset)$.\index[notation]{Id^{\mathcal X}_w@$\mathrm{Id}^{\mathcal X}_w$} For each pair $P,P'\in\vec{\mathcal Br}$ a natural transformation $\mathrm{Tens}^{\mathcal X}(P,P')_{\bullet, \diamond}: \mathcal X(P,\bullet)\otimes \mathcal X(P',\diamond)\to \mathcal X(P\otimes P',\bullet\sqcup \diamond)$\index[notation]{Tens^{\mathcal X}(P,P')_{\bullet, \diamond}@$\mathrm{Tens}^{\mathcal X}(P,P')_{\bullet, \diamond}$} from the functor $\mathcal X(P,\bullet)\otimes \mathcal X(P',\diamond): \mathcal Set_f\times \mathcal Set_f \to \mathcal Vect$ to the functor $\mathcal X(P\otimes P',\bullet\sqcup \diamond): \mathcal Set_f\times \mathcal Set_f \to \mathcal Vect$. And for every composable  $P,P'\in\vec{\mathcal Br}$  a natural transformation $\mathrm{Comp}^{\mathcal X}(P,P')_{\bullet,\diamond}:\mathcal X(P,\bullet)\otimes \mathcal X(P',\diamond)\to \mathcal X(P'\circ P, \bullet\sqcup \diamond\sqcup \mathrm{Cir}(P',P))$\index[notation]{Comp^{\mathcal X}(P,P')_{\bullet, \diamond}@$\mathrm{Comp}^{\mathcal X}(P,P')_{\bullet, \diamond}$}  from the functor $\mathcal X(P,\bullet)\otimes \mathcal X(P',\diamond): \mathcal Set_f \to \mathcal Vect$ to the functor $\mathcal X(P'\circ P, \bullet\sqcup \diamond\sqcup \mathrm{Cir}(P',P)):\mathcal Set_f \to \mathcal Vect$.

\noindent The conditions:

\begin{itemize}
\item[$(a)$] For any $P\in\vec{\mathcal Br}$, any $S\in\mathcal Set_f$ and any $x\in\mathcal X(P,S)$ we have
$$\mathrm{Comp}^{\mathcal X}(P,\mathrm{Id}_{\mathrm{t}(P)})_{S,\emptyset}(x\otimes \mathrm{Id}_{\mathrm{t}(P)}^{\mathcal X}) = x \qquad \text{and} \qquad \mathrm{Comp}^{\mathcal X}(\mathrm{Id}_{\mathrm{s}(P)}, P)_{\emptyset,S}(\mathrm{Id}_{\mathrm{s}(P)}^{\mathcal X}\otimes x) = x.$$

\item[$(b)$] For any $P,P',P''\in\vec{\mathcal Br}$, we have
\begin{equation*}
\begin{split}
\mathrm{Tens}^{\mathcal X}(P\otimes P',P''&)_{\bullet\sqcup \diamond, \star}\circ \big(\mathrm{Tens}^{\mathcal X}(P,P')_{\bullet, \diamond}\otimes \mathrm{Id}_{\mathcal X(P'',\star)}\big) \\
& = \mathrm{Tens}^{\mathcal X}(P, P'\otimes P'')_{\bullet, \diamond\sqcup \star}\circ \big(\mathrm{Id}_{\mathcal X(P,\bullet)} \otimes\mathrm{Tens}^{\mathcal X}(P',P'')_{\diamond, \star}\big)
\end{split}
\end{equation*}
and
\begin{equation*}
\mathcal X(P,\tau^{\mathcal Set_f}_{\bullet,\diamond})\circ\mathrm{Tens}^{\mathcal X}(\vec{\varnothing},P)_{\bullet,\diamond} =\mathrm{Tens}^{\mathcal X}(P,\vec{\varnothing})_{\diamond,\bullet}\circ\tau^{\mathcal Vect}(\vec{\varnothing}, P)_{\bullet,\diamond}
\end{equation*}
where $\tau^{\mathcal Vect}(P, P')_{\bullet,\diamond}:\mathcal X(P,\bullet) \otimes \mathcal X(P',\diamond) \to   \mathcal X(P',\diamond) \otimes\mathcal X(P,\bullet)$\index[notation]{tau^{\mathcal Vect}(P, P')_{\bullet,\diamond}@$\tau^{\mathcal Vect}(P, P')_{\bullet,\diamond}$} is the natural isomorphism induced by switching components in the tensor product of $\mathcal Vect$.
\item[$(c)$] For  any $P,P',P''\in\vec{\mathcal Br}$ composable, we have
\begin{equation*}
\begin{split}
\mathcal X(P''\circ P'\circ P, \mathrm{Id}_{\bullet\sqcup\diamond\sqcup\star}\sqcup \mathrm{bij}_{P'',P',P})\circ \mathrm{Comp}^{\mathcal X}(P,P''\circ P')_{\bullet,\dagger}\circ\big(\mathrm{Id}_{\mathcal X(P,\bullet)}\otimes \mathrm{Comp}^{\mathcal X}(P',P'')_{\diamond,\star}\big)\\
= \theta\circ \big(\mathrm{Comp}^{\mathcal X}(P'\circ P, P'')_{\bullet\sqcup \diamond\sqcup\mathrm{Cir}(P',P),\star}\big)\circ\big(\mathrm{Comp}^{\mathcal X}(P,P')_{\bullet,\diamond}\otimes \mathrm{Id}_{\mathcal X(P'',\star)}\big)
\end{split}
\end{equation*}
where $\dagger:=\diamond\sqcup \star\sqcup \mathrm{Cir}(P'',P')$ and   $\theta := \mathcal X(P''\circ P'\circ P,\mathrm{Id}_{\bullet\sqcup\diamond}\sqcup \tau^{\mathcal Set_f}_{\mathrm{Cir}(P',P),\star}\sqcup \mathrm{Id}_{\mathrm{Cir}(P'',P'\circ P)})$ (see Notation~\ref{not:twistinX}) and $$\mathrm{bij}_{P'',P',P}: \mathrm{Cir}(P'',P')\sqcup \mathrm{Cir}(P''\circ P',P)\longrightarrow \mathrm{Cir}(P',P)\sqcup \mathrm{Cir}(P'',P'\circ P)$$ is the bijection from Proposition~\ref{r:2022-09-13-bijofcirclesPart}.

\item[$(d)$]  For any $P_i,P_i'\in\vec{\mathcal Br}$ ($i=1,2$) composable, we have
\begin{equation*}
\begin{split}
\mathrm{Comp}^{\mathcal X}(P'_1\otimes P'_2, P_1\otimes P_2)_{\bullet_1\sqcup\bullet_2,\bullet'_1\sqcup\bullet'_2} 
\circ\big(\mathrm{Tens}^{\mathcal X}(P_1,P_2)_{\bullet_1,\bullet_2}\otimes \mathrm{Tens}^{\mathcal X}(P'_1,P'_2)_{\bullet'_1,\bullet'_2}\big)\\
=
\mathcal X((P'_1\otimes P'_2)\circ (P_1\otimes P_2), \mathrm{Id}_{\bullet_1}\sqcup \tau^{\mathcal Set_f}_{\bullet'_1, \bullet_2}\sqcup\mathrm{Id}_{\bullet'_2}\sqcup \mathrm{bij}_{P'_1,P_1,P'_2,P_2})\\
 \circ \mathcal X\big((P'_1\circ P_1)\otimes(P'_2\circ P_2),\mathrm{Id}_{\bullet_1\sqcup\bullet'_1}\sqcup \tau^{\mathcal Set_f}_{\mathrm{Cir}(P'_1,P_1), \bullet_2\sqcup \bullet'_2}\sqcup \mathrm{Id}_{\mathrm{Cir}(P'_2, P_2)}\big)\\
 \circ\big(\mathrm{Tens}^{\mathcal X}(P'_1\circ P_1, P'_2\circ P_2)_{\bullet_1\sqcup\bullet'_1\sqcup\mathrm{Cir}(P'_1,P_1), \bullet_2\sqcup\bullet'_2\sqcup\mathrm{Cir}(P'_2,P_2)}\big)\\ \circ\big(\mathrm{Comp}^{\mathcal X}(P_1,P'_1)_{\bullet_1, \bullet'_1} \otimes \mathrm{Comp}^{\mathcal X}(P_2,P'_2)_{\bullet_2, \bullet'_2}  \big)\circ\big(\mathrm{Id}_{\mathcal X(P_1,\bullet_1)}\otimes \tau^{\mathcal Vect}(P_2,P'_1)_{\bullet_2,\bullet'_1}\otimes \mathrm{Id}_{\mathcal X(P'_2,\bullet'_2)}\big)
\end{split}
\end{equation*}
where $$\mathrm{bij}_{P'_1,P_1,P'_2, P_2}:\mathrm{Cir}(P'_1,P_1)\sqcup \mathrm{Cir}(P'_2,P_2) \longrightarrow \mathrm{Cir}(P'_1\otimes P'_2,P_1\otimes P_2)$$ is the bijection from Lemma~\ref{r:bijectioncomptensorandcompinOBr}. 
\end{itemize}

\item[(LMO2)] The data: For any $P\in\vec{\mathcal Br}$  a natural involution $\mathrm{co}^{\mathcal X}(P,\bullet): \mathcal X(P,\bullet)\to \mathcal X(P,\bullet)$\index[notation]{co^{\mathcal X}(P,\bullet)@$\mathrm{co}^{\mathcal X}(P,\bullet)$}  of the functor $\mathcal X(P,\bullet):\mathcal Set_f^*\to \mathcal Vect$ taking the pointed finite set $(S,s)$ to the vector space $\mathcal X(P,S)$. In particular, for any  $S$ finite set and $s\in S$ there is a linear involution $\mathrm{co}^{\mathcal X}(P,(S,s)): \mathcal X(P,S)\to \mathcal X(P,S)$. 

\noindent  The conditions:

\begin{itemize}
\item[$(a)$] For any $s,s'\in S$, we have $$\mathrm{co}^{\mathcal X}(P,(S,s))\circ \mathrm{co}^{\mathcal X}(P,(S,s')) = \mathrm{co}^{\mathcal X}(P,(S,s'))\circ \mathrm{co}^{\mathcal X}(P,(S,s)).$$

\item[$(b)$] For any finite sets $S$ and $S'$ and $s\in S$, the diagram 
\begin{equation} \label{LMO3:26022020}
\xymatrix{
\mathcal{X}(\vec{\varnothing}, S)\times \mathcal{X}(\vec{\varnothing}, S')\ar_{\mathrm{co}^{\mathcal X}(\vec{\varnothing}, (S,s)) \times\mathrm{Id}_{\mathcal X(\vec{\varnothing}, S')}}[d]
\ar^-{\mathrm{Tens}^{\mathcal X}(\vec{\varnothing},\vec{\varnothing})_{S,S'}}[rrr]& & &\mathcal{X}(\vec{\varnothing}, S\sqcup S')
\ar^{\mathrm{co}^{\mathcal X}(\vec{\varnothing}, (S\sqcup S',s))}[d]\\ 
\mathcal{X}(\vec{\varnothing}, S)\times {\mathcal X}(\vec{\varnothing},S')\ar_-{\mathrm{Tens}^{\mathcal X}(\vec{\varnothing},\vec{\varnothing})_{S,S'}}[rrr]& & &
{\mathcal X}(\vec{\varnothing}, S\sqcup S')}
\end{equation}
commutes, where in the right vertical arrow we identify $s$ with an element of $S \sqcup S'$ via the inclusion  $S\subset S\sqcup S'$.
\end{itemize}

\item[(LMO3)] The data: for any $P\in\vec{\mathcal Br}$ and $A\subset\pi_0(P)$  a natural transformation $$\mathrm{dbl}_{\mathcal X}(P,A)(\bullet): \mathcal X(P,\bullet)\to \mathcal X(\mathrm{dbl}_{\vec{\mathcal Br}}(P,A),\bullet)$$\index[notation]{dbl_{\mathcal X}(P,A)(\bullet)@$\mathrm{dbl}_{\mathcal X}(P,A)(\bullet)$} between the functors $\mathcal X (P,\bullet):\mathcal Set_f\to\mathcal Vect$ and $\mathcal X (\mathrm{dbl}_{\vec{\mathcal Br}}(P,A),\bullet):\mathcal Set_f\to\mathcal Vect$, in particular, for any finite set $S$ there is a linear map $\mathrm{dbl}_{\mathcal X}(P,A)(S): \mathcal X(P,S)\to \mathcal X(\mathrm{dbl}_{\vec{\mathcal Br}}(P,A),S)$. 

\noindent The condition: for any $P,P'\in\vec{\mathcal Br}$ and $A\subset\pi_0(P)$, the diagram of natural transformations
 \begin{equation}\label{diags***23-01-2021}
\xymatrix{
{\mathcal X}(P,\bullet)\times {\mathcal X}(P',\diamond)\ar_{\mathrm{dbl}_{\mathcal X}(P,A)(\bullet)\times\mathrm{Id}_{\mathcal X(P',\diamond)}}[d]
\ar^{\mathrm{Tens}^{\mathcal X}(P,P')_{\bullet,\diamond}}[rr]& &{\mathcal X}(P\otimes P', \bullet\sqcup\diamond)
\ar^{\mathrm{dbl}_{\mathcal X}(P\otimes P', A)(\bullet\sqcup\diamond)}[d]
\\ 
{\mathcal X}(\mathrm{dbl}_{\vec{\mathcal Br}}(P,A),\bullet)\times {\mathcal X}(P',\diamond)\ar_-{\mathrm{Tens}^{\mathcal X}(\mathrm{dbl}_{\vec{\mathcal Br}}(P,A),P')_{\bullet,\diamond}\ \ \ \ \ }[dr]& & {\mathcal X}(\mathrm{dbl}_{\vec{\mathcal Br}}(P\otimes P',A),\bullet\sqcup\diamond)\ar@{=}_{}[dl]\\
 &{\mathcal X}(\mathrm{dbl}_{\vec{\mathcal Br}}(P,A)\otimes P',\bullet\sqcup\diamond) & 
}
\end{equation}
commutes. In the right vertical arrow we identify  $A\subset\pi_0(P)$ as a subset  of $\pi_0(P\otimes P')$, still denoted by $A$, via the inclusion $\pi_0(P)\subset \pi_0(P\otimes P')$. The equality in Diagram~\eqref{diags***23-01-2021}   follows from the equality in Lemma~\ref{lemma:1:18:03032020}$(c)$. 

\item[(LMO4)] The data: an assignment $\mathcal Pa\vec{\mathcal T}\ni\mathbf{T}\mapsto Z_{\mathcal X}(\mathbf{T})\in \mathcal X(\vec{\mathrm{br}}(\mathbf{T}), \pi_0(\mathbf{T})_{\mathrm{circ}})$.

\noindent The conditions: 
\begin{itemize}

\item[$(a)$] For any $w\in\{+,-\}^{(*)}$ we have $Z_{\mathcal X}(\mathrm{Id}_w)= \mathrm{Id}_{f(w)}^{\mathcal X}$, where $f:\{+,-\}^{(*)}\to \{+,-\}^{*}$ is the forgetful parenthesization map. 

\item[$(b)$] For any $\mathbf{T}, \mathbf{T}'\in\mathcal Pa\vec{\mathcal T}$,
$$\mathrm{Tens}^{\mathcal X}(\vec{\mathrm{br}}(\mathbf{T}), \vec{\mathrm{br}}(\mathbf{T}'))_{\pi_0(\mathbf{T})_{\mathrm{cir}}, \pi_0(\mathbf{T}')_{\mathrm{cir}}}(Z_{\mathcal X}(\mathbf{T})\otimes Z_{\mathcal X}(\mathbf{T}')) = Z_{\mathcal X}(\mathbf{T}\otimes \mathbf{T'}),$$
where the ambient vector spaces of both sides are identified using Lemma~\ref{r:2022-10-24-03}$(a)$.

\item[$(c)$] For any composable $\mathbf{T}, \mathbf{T}'\in\mathcal Pa\vec{\mathcal T}$,
$$\mathrm{Comp}^{\mathcal X}(\vec{\mathrm{br}}(\mathbf{T}), \vec{\mathrm{br}}(\mathbf{T}'))_{\pi_0(\mathbf{T})_{\mathrm{cir}}, \pi_0(\mathbf{T}')_{\mathrm{cir}}}(Z_{\mathcal X}(\mathbf{T})\otimes Z_{\mathcal X}(\mathbf{T}')) = Z_{\mathcal X}(\mathbf{T}'\circ \mathbf{T}),$$
where the ambient vector spaces of both sides are identified using Lemma~\ref{r:2022-10-24-03}$(b)$.

\item[$(d)$]  Let $\mathbf{T}\in\mathcal Pa\vec{\mathcal T}$ and  $s\in\pi_0(\mathbf{T})_{\mathrm{cir}}$. Then
$$Z_{\mathcal X}(\mathrm{co}_{\vec{\mathcal T}}(\mathbf{T},s))=\mathrm{co}^{\mathcal X}\big(\vec{\mathrm{br}}(\mathbf{T}),(\pi_0(\mathbf{T})_{\mathrm{cir}},s)\big)(Z_{\mathcal X}(\mathbf{T}))$$
where the ambient vector spaces of both sides in the equality are identified using Lemma~\ref{r:2022-12-05-01}.

\item[$(e)$] Let $\mathbf{T}\in\mathcal Pa \vec{\mathcal T}$ and $A\subset\pi_0(\mathbf{T})_{\mathrm{seg}}\simeq \pi_0(\vec{\mathrm{br}}(\mathbf{T}))$. Then
$$Z_{\mathcal X}(\mathrm{dbl}_{\vec{\mathcal T}}(\mathbf{T},A))=\mathrm{dbl}_{\mathcal X}(\vec{\mathrm{br}}(\mathbf{T}),A)(\pi_0(\mathbf{T})_{\mathrm{cir}})(Z_{\mathcal X}(\mathbf{T})),$$
where the ambient vector spaces of both sides are identified using Lemma~\ref{r:2022-10-31-01}.
\end{itemize}

\end{enumerate}

\end{definition}

\subsubsection{The coinvariant space functor applied to a pre-LMO structure}\label{sec:2-9-2}
 
In this subsection, we fix a pre-LMO structure $\mathbf{X}$.

\begin{definition} Define the category of \emph{vector spaces endowed with a group action}, denoted by $\mathcal Vect\mathcal Gp$, \index[notation]{VectGp@$\mathcal Vect\mathcal Gp$} as follows. The objects are pairs  $(G,V)$ where $G$ is a group and V is a $G$-module. Let $(G,V)$ and $(G',V')$ be objects of  $\mathcal Vect\mathcal Gp$. A morphism  $(G,V) \to (G',V')$ is a pair $(\varphi, f)$. where $\varphi:G \to G'$ is a group homomorphism and $f:V \to V'$ is a linear map, satisfying $f(g\cdot v) = \varphi(g)\cdot f(v)$ for any $g\in G$ and $v\in V$.
\end{definition}

Recall that for $G$ a group acting on a vector space $V$, i.e.   for an object $(G,V)$  of $\mathcal Vect\mathcal Gp$, the space of coinvariants $V_G$ is the quotient of $V$ by the subspace generated by $g\cdot v-v$ where $v \in V$ and $g \in G$. 

\begin{lemma}\label{def:coinvariantspacefunctor}  There is a functor $\mathcal Vect\mathcal Gp \to \mathcal Vect$, which we call the \emph{coinvarant space functor}, defined by $(G,V) \mapsto V_G$ at the level of objects and taking a morphism $(\varphi,f) : (G,V)\to (G',V')$ in $\mathcal Vect \mathcal Gp$ to the unique linear map $f_{\varphi} : V_G \to V'_{G'}$ such that $f_{\varphi} \circ \mathrm{proj}_{(G,V)}=\mathrm{proj}_{(G',V')} \circ f$, where $\mathrm{proj}_{(G,V)} : V \to V_G$ is the canonical  projection. 
\end{lemma}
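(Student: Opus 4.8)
The statement to prove is Lemma~\ref{def:coinvariantspacefunctor}: the existence of the coinvariant space functor $\mathcal Vect\mathcal Gp\to\mathcal Vect$. This is a routine verification, and the plan is to check the three things needed to have a functor: that the object assignment is well-defined, that the morphism assignment is well-defined, and that it respects identities and composition.

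First I would establish that for a morphism $(\varphi,f):(G,V)\to(G',V')$ the claimed linear map $f_\varphi:V_G\to V'_{G'}$ exists and is unique. Uniqueness is immediate since $\mathrm{proj}_{(G,V)}$ is surjective. For existence, I would observe that the composite $\mathrm{proj}_{(G',V')}\circ f:V\to V'_{G'}$ kills the subspace $W_G\subset V$ generated by the elements $g\cdot v-v$ (for $v\in V$, $g\in G$): indeed $\mathrm{proj}_{(G',V')}(f(g\cdot v-v))=\mathrm{proj}_{(G',V')}(\varphi(g)\cdot f(v)-f(v))=0$ by the equivariance condition $f(g\cdot v)=\varphi(g)\cdot f(v)$ and the definition of $V'_{G'}$ as the quotient by the span of $g'\cdot v'-v'$. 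Hence $\mathrm{proj}_{(G',V')}\circ f$ factors uniquely through $V_G=V/W_G$, yielding $f_\varphi$.

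Then I would verify functoriality. For the identity morphism $(\mathrm{id}_G,\mathrm{id}_V)$ the defining equation $f_\varphi\circ\mathrm{proj}_{(G,V)}=\mathrm{proj}_{(G,V)}\circ\mathrm{id}_V$ is satisfied by $\mathrm{id}_{V_G}$, so by uniqueness $(\mathrm{id}_V)_{\mathrm{id}_G}=\mathrm{id}_{V_G}$. For composition, given $(\varphi,f):(G,V)\to(G',V')$ and $(\psi,h):(G',V')\to(G'',V'')$, both $h_\psi\circ f_\varphi$ and $(h\circ f)_{\psi\circ\varphi}$ satisfy the defining equation for the morphism $(\psi\circ\varphi,h\circ f)$ — one checks $h_\psi\circ f_\varphi\circ\mathrm{proj}_{(G,V)}=h_\psi\circ\mathrm{proj}_{(G',V')}\circ f=\mathrm{proj}_{(G'',V'')}\circ h\circ f$ — so by uniqueness they coincide.

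There is no real obstacle here; the only point requiring a moment's care is that $\mathcal Vect\mathcal Gp$ is genuinely a category (composition of morphisms $(\varphi,f)$ and $(\psi,h)$ is $(\psi\circ\varphi,h\circ f)$, and the composite is again equivariant), but this too is a direct check. The proof is therefore a short "follows straightforwardly"-style argument, and I would write it out in the two or three sentences sketched above, emphasizing only the factorization through the quotient.
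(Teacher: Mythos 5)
Your proposal is correct and matches the paper's intent exactly: the paper simply declares the proof straightforward, and your argument (factor $\mathrm{proj}_{(G',V')}\circ f$ through the quotient $V_G$ using equivariance, get uniqueness from surjectivity of the projection, and deduce functoriality from that uniqueness) is precisely the routine verification being left to the reader. No gaps.
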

\begin{proof}
The proof is straightforward. 
\end{proof}

\begin{definition}\label{def:actionPermSonXS} Let $k\in\mathbb{Z}_{\geq 0}$ and $P\in\vec{\mathcal Br}$. The action of $\mathfrak{S}_k$ on $[\![1,k]\!]$  induces an action of $\mathfrak{S}_k$  on the vector space $\mathcal X(P,[\![1,k]\!])$. We set $\mathcal{X}(P)_k:=\mathcal X(P,[\![1,k]\!])_{\mathfrak{S}_k}$ \index[notation]{X(P)_k@$\mathcal{X}(P)_k$} for $k \geq 1$ and $\mathcal X(P)_0:=\mathcal X(P,\emptyset)$. By convention, we denote    $\mathcal X(\vec{\varnothing})_{k}$ by~$\mathcal X_k$. \index[notation]{X_k@$\mathcal{X}_k$}
\end{definition}

\begin{lemma}\label{r:lemmaproj12-29} Let $P\in\vec{\mathcal Br}$ and $S$ be a finite set. The map $\mathrm{proj}^{\mathcal X}_{P,S} : \mathcal X(P,S)\to \mathcal X(P)_{|S|}$\index[notation]{proj^{\mathcal X}_{P,S} @$\mathrm{proj}^{\mathcal X}_{P,S} $} obtained by the composition of the isomorphism $\mathcal X(P,S)\to \mathcal X(P,[\![1,|S|]\!])$ induced by the choice of a bijection $S\simeq [\![1,|S|]\!]$ and of the canonical projection $\mathcal X(P,[\![1,|S|]\!]) \to \mathcal X(P)_{|S|}$ is independent of the choice of such a bijection.
\end{lemma}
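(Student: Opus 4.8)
The statement to prove is Lemma~\ref{r:lemmaproj12-29}: the map $\mathrm{proj}^{\mathcal X}_{P,S}$ is independent of the chosen bijection $S \simeq [\![1,|S|]\!]$.

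\textbf{Plan of proof.} The plan is to compare the two composites obtained from two different bijections $\iota, \iota' : S \to [\![1,|S|]\!]$, and show they agree after projecting to coinvariants. First I would set $\pi := \iota' \circ \iota^{-1} \in \mathfrak{S}_{|S|}$, so that the isomorphism $\mathcal X(P,\iota') : \mathcal X(P,S) \to \mathcal X(P,[\![1,|S|]\!])$ equals $\mathcal X(P,\pi) \circ \mathcal X(P,\iota)$ by functoriality of $\mathcal X(P,\bullet) : \mathcal Set_f \to \mathcal Vect$ (this uses that $\iota' = \pi \circ \iota$ in $\mathcal Set_f$, whose morphisms are bijections). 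Then the two candidate maps $S \to \mathcal X(P)_{|S|}$ are $\mathrm{can} \circ \mathcal X(P,\iota)$ and $\mathrm{can} \circ \mathcal X(P,\pi) \circ \mathcal X(P,\iota)$, where $\mathrm{can} : \mathcal X(P,[\![1,|S|]\!]) \to \mathcal X(P)_{|S|} = \mathcal X(P,[\![1,|S|]\!])_{\mathfrak S_{|S|}}$ is the canonical projection to coinvariants. So it suffices to show $\mathrm{can} \circ \mathcal X(P,\pi) = \mathrm{can}$.

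The key step is therefore: for any $\pi \in \mathfrak{S}_{|S|}$, the endomorphism $\mathcal X(P,\pi)$ of $\mathcal X(P,[\![1,|S|]\!])$ is precisely the action of $\pi$ in the $\mathfrak{S}_{|S|}$-module structure of Definition~\ref{def:actionPermSonXS}. Indeed, that module structure is \emph{defined} as the action induced by functoriality from the permutation action of $\mathfrak{S}_{|S|}$ on $[\![1,|S|]\!]$, i.e. $\pi \cdot x = \mathcal X(P,\pi)(x)$. Hence for $x \in \mathcal X(P,[\![1,|S|]\!])$ we have $\mathcal X(P,\pi)(x) - x = \pi \cdot x - x$, which lies in the subspace defining the coinvariants, so $\mathrm{can}(\mathcal X(P,\pi)(x)) = \mathrm{can}(x)$. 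This gives $\mathrm{can} \circ \mathcal X(P,\pi) = \mathrm{can}$, and composing with $\mathcal X(P,\iota)$ yields $\mathrm{proj}^{\mathcal X}_{P,S}$ computed via $\iota$ equals the one computed via $\iota'$.

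\textbf{Main obstacle.} There is essentially no hard analytic or combinatorial content; the only thing to be careful about is the bookkeeping of which bijections induce which maps, and in particular checking that the $\mathfrak S_{|S|}$-action of Definition~\ref{def:actionPermSonXS} is literally $\pi \mapsto \mathcal X(P,\pi)$ (as opposed to its inverse or a twisted version), so that the difference $\mathcal X(P,\pi)(x)-x$ is exactly of the form $g\cdot v - v$ killed in the coinvariants. Once that identification is made explicit, the proof is a one-line diagram chase, which is why the authors write ``the proof is straightforward.'' I would present it in two or three sentences along the lines above.
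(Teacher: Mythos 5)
Your proof is correct, and since the paper's own proof is just ``Direct verification,'' your argument is exactly the verification intended: identifying $\mathcal X(P,\pi)$ with the $\mathfrak{S}_{|S|}$-action of Definition~\ref{def:actionPermSonXS} and noting that $\mathcal X(P,\pi)(x)-x$ dies in the coinvariants is the whole content. Nothing further is needed.
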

\begin{proof}
Direct verification. 
\end{proof}

\begin{lemma}\label{r:morphismsinduitsurlescoinvariants}
Let $P,P' \in \vec{\mathcal Br}$ and $k,k' \in \mathbb{Z}_{\geq 0}$. 
\begin{itemize}
\item[$(a)$]  The pair of the vector space morphism $$\mathrm{Tens}^{\mathcal X}(P,P')_{[\![1,k]\!],[\![1,k']\!]} :\mathcal X(P,[\![1,k]\!]) \otimes \mathcal X(P',[\![1,k']\!])\longrightarrow \mathcal X(P \otimes P',[\![1,k]\!] \sqcup [\![1,k']\!])$$ and of the group morphism $\mathfrak{S}_k \times \mathfrak{S}_{k'} \to \mathfrak{S}_{k+k'}$ induces a linear map $$\mathcal X(P)_k \otimes \mathcal X(P')_{k'} \longrightarrow \mathcal X(P \otimes P')_{k+k'}$$ which will be denoted $x \otimes x' \mapsto x \otimes_{\mathcal X} x'$ for any $x\in \mathcal X(P)_k$ and $x'\in \mathcal X(P')_{k'}$.

\medskip

\item[$(b)$] Assume that $P,P'$ are composable. The pair of the vector space morphism $$\mathrm{Comp}^{\mathcal X}(P,P')_{[\![1,k]\!],[\![1,k']\!]} : \mathcal X(P,[\![1,k]\!]) \otimes \mathcal X(P',[\![1,k']\!]) \longrightarrow \mathcal X(P \otimes P', [\![1,k]\!] \sqcup [\![1,k']\!] \sqcup \mathrm{Cir}(P',P))$$ and of the group morphism $\mathfrak{S}_{[\![1,k]\!]}\times \mathfrak{S}_{[\![1,k']\!]} \to  \mathfrak{S}_{[\![1,k']\!] \sqcup [\![1,k']\!] \sqcup \mathrm{Cir}(P',P)}$ induces a linear map $$\mathcal X(P)_k \otimes \mathcal X(P')_{k'} \longrightarrow \mathcal X(P' \circ P)_{k+k'+|\mathrm{Cir}(P',P)|}$$ which will be denoted $x \otimes x'\mapsto x' \circ_{\mathcal X} x$ for any $x\in \mathcal X(P)_k$ and $x'\in \mathcal X(P')_{k'}$. 

\end{itemize}
\end{lemma}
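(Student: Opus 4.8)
The plan is to establish Lemma~\ref{r:morphismsinduitsurlescoinvariants} by the standard principle that a natural transformation of functors from $\mathcal Set_f$ to $\mathcal Vect$, once equipped with compatible actions of symmetric groups on source and target, descends to the spaces of coinvariants. Concretely, for part $(a)$ I would first observe that the composite
$$
\mathcal X(P,[\![1,k]\!]) \otimes \mathcal X(P',[\![1,k']\!]) \xrightarrow{\mathrm{Tens}^{\mathcal X}(P,P')_{[\![1,k]\!],[\![1,k']\!]}} \mathcal X(P\otimes P', [\![1,k]\!]\sqcup[\![1,k']\!]) \xrightarrow{\mathrm{proj}} \mathcal X(P\otimes P')_{k+k'}
$$
is a morphism in $\mathcal Vect\mathcal Gp$ from $(\mathfrak S_k\times\mathfrak S_{k'},\, \mathcal X(P,[\![1,k]\!])\otimes\mathcal X(P',[\![1,k']\!]))$ to $(\mathfrak S_{k+k'},\,\mathcal X(P\otimes P')_{k+k'})$, where the group homomorphism is the block inclusion $\mathfrak S_k\times\mathfrak S_{k'}\hookrightarrow\mathfrak S_{k+k'}$ induced by the identification $[\![1,k+k']\!]\simeq[\![1,k]\!]\sqcup[\![1,k']\!]$. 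The equivariance $f(g\cdot v)=\varphi(g)\cdot f(v)$ is precisely the naturality of $\mathrm{Tens}^{\mathcal X}$ with respect to the morphisms of $\mathcal Set_f$ given by permutations of $[\![1,k]\!]$ and $[\![1,k']\!]$, combined with the fact that the target action on $\mathcal X(P\otimes P')_{k+k'}$ is, by Definition~\ref{def:actionPermSonXS}, induced from the $\mathfrak S_{k+k'}$-action on $[\![1,k+k']\!]$. Then I would apply the coinvariant space functor of Lemma~\ref{def:coinvariantspacefunctor}: since the source of the above morphism has coinvariant space $\big(\mathcal X(P,[\![1,k]\!])\otimes\mathcal X(P',[\![1,k']\!])\big)_{\mathfrak S_k\times\mathfrak S_{k'}}\simeq \mathcal X(P)_k\otimes\mathcal X(P')_{k'}$ (coinvariants of a tensor product under a product group factor as the tensor product of coinvariants), the functor produces the desired linear map $\mathcal X(P)_k\otimes\mathcal X(P')_{k'}\to\mathcal X(P\otimes P')_{k+k'}$, which I denote $x\otimes x'\mapsto x\otimes_{\mathcal X}x'$.

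For part $(b)$, the argument is the same with one extra bookkeeping ingredient: the target of $\mathrm{Comp}^{\mathcal X}(P,P')_{[\![1,k]\!],[\![1,k']\!]}$ is $\mathcal X(P'\circ P,\, [\![1,k]\!]\sqcup[\![1,k']\!]\sqcup\mathrm{Cir}(P',P))$, and the relevant group homomorphism is the composite
$$
\mathfrak S_{[\![1,k]\!]}\times\mathfrak S_{[\![1,k']\!]} \hookrightarrow \mathfrak S_{[\![1,k]\!]\sqcup[\![1,k']\!]\sqcup\mathrm{Cir}(P',P)}
$$
acting trivially on the $\mathrm{Cir}(P',P)$ block. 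Naturality of $\mathrm{Comp}^{\mathcal X}$ with respect to permutations of $[\![1,k]\!]$ and $[\![1,k']\!]$ (which fix $\mathrm{Cir}(P',P)$ pointwise) again makes the composite with $\mathrm{proj}^{\mathcal X}_{P'\circ P,\,[\![1,k]\!]\sqcup[\![1,k']\!]\sqcup\mathrm{Cir}(P',P)}$ into a morphism in $\mathcal Vect\mathcal Gp$, and Lemma~\ref{r:lemmaproj12-29} guarantees that the resulting projection onto $\mathcal X(P'\circ P)_{k+k'+|\mathrm{Cir}(P',P)|}$ does not depend on the choice of bijection $[\![1,k]\!]\sqcup[\![1,k']\!]\sqcup\mathrm{Cir}(P',P)\simeq[\![1,k+k'+|\mathrm{Cir}(P',P)|]\!]$. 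Applying the coinvariant space functor as before yields the linear map $\mathcal X(P)_k\otimes\mathcal X(P')_{k'}\to\mathcal X(P'\circ P)_{k+k'+|\mathrm{Cir}(P',P)|}$, denoted $x\otimes x'\mapsto x'\circ_{\mathcal X}x$.

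I expect the main obstacle — or rather, the only point requiring genuine care — to be the verification that the target $\mathfrak S_{k+k'}$-action (resp.\ $\mathfrak S_{k+k'+|\mathrm{Cir}(P',P)|}$-action) on $\mathcal X(P\otimes P',[\![1,k+k']\!])$ is compatible with the source actions \emph{through} the block-inclusion homomorphism, and that this compatibility is exactly an instance of functoriality of $\mathcal X(P\otimes P',\bullet)$ applied to the inclusion of permutation morphisms in $\mathcal Set_f$. This is where one must be careful that $\mathrm{Tens}^{\mathcal X}$ and $\mathrm{Comp}^{\mathcal X}$ are natural transformations of functors on $\mathcal Set_f\times\mathcal Set_f$ and not merely collections of maps; the equivariance then follows from the commuting square expressing naturality. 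Everything else — the identification of coinvariants of a tensor product, the well-definedness via Lemma~\ref{r:lemmaproj12-29}, and the application of Lemma~\ref{def:coinvariantspacefunctor} — is formal. Hence the proof reduces to noting these compatibilities and invoking the coinvariant space functor, so I would write it as a short "direct verification" appealing to naturality and Lemmas~\ref{def:coinvariantspacefunctor} and~\ref{r:lemmaproj12-29}.
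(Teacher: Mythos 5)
Your proposal is correct and follows essentially the same route as the paper: the paper's proof simply notes that the naturality of $\mathrm{Tens}^{\mathcal X}(P,P')_{\bullet,\diamond}$ and $\mathrm{Comp}^{\mathcal X}(P,P')_{\bullet,\diamond}$ makes the stated pairs into morphisms in $\mathcal Vect\mathcal Gp$, and then takes the induced maps on coinvariants via the coinvariant space functor. Your additional remarks (splitting coinvariants of the product group as a tensor product, and invoking Lemma~\ref{r:lemmaproj12-29} for independence of the bijection) just make explicit what the paper leaves implicit.
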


\begin{proof} The fact the mentioned pairs of vector space and group morphisms are morphisms in the category $\mathcal Vect\mathcal Gp$ follows from the naturality of the transformations $\mathrm{Tens}^{\mathcal X}(P,P')_{\bullet,\diamond} $ and 
$\mathrm{Comp}^{\mathcal X}(P,P')_{\bullet,\diamond}$. 
\end{proof}

\begin{lemma}\label{r:tenscompcoinvprop}  The operations $\otimes_{\mathcal X}$\index[notation]{\otimes_{\mathcal X}@$\otimes_{\mathcal X}$} and $\circ_{\mathcal X}$\index[notation]{\circ_{\mathcal X}@$\circ_{\mathcal X}$} have the following properties: 

\begin{itemize}
\item[$(a)$]  For any $P\in\vec{\mathcal Br}$, any $k\in\mathbb{Z}_{\geq 0}$ and any $x\in\mathcal X(P)_k$ we have 
$$x \circ_{\mathcal X} \mathrm{Id}^{\mathcal X}_{\mathrm{t}(P)}=x=\mathrm{Id}^{\mathcal X}_{\mathrm{s}(P)} \circ_{\mathcal X} x.$$
The equality is in $\mathcal X(P)_k$, where $\mathrm{Id}^{\mathcal X}_{\mathrm{s}(P)} \in \mathcal X(\mathrm{Id}_{\mathrm{s}(P)})_0$ and $\mathrm{Id}^{\mathcal X}_{\mathrm{t}(P)} \in \mathcal X(\mathrm{Id}_{\mathrm{t}(P)})_0$.
\item[$(b)$]  For any $P,P',P''\in\vec{\mathcal Br}$, any $k,k',k''\in\mathbb{Z}_{\geq 0}$ and $x \in \mathcal X(P)_k, x' \in \mathcal X(P')_{k'}, x'' \in \mathcal X(P'')_{k''}$ and $y \in \mathcal X_{k'}$ we have

$$(x \otimes_{\mathcal X} x') \otimes_{\mathcal X} x''=x \otimes_{\mathcal X} (x' \otimes_{\mathcal X} x'') \qquad \quad \text{and} \qquad \quad x \otimes_{\mathcal X}y=y \otimes_{\mathcal X}x.$$

The first equality is  in $\mathcal X(P \otimes P' \otimes P'')_{k+k'+k''}$ and the second one  in $\mathcal X(P)_{k+k'}$.

\item[$(c)$]   For  any $P,P',P''\in\vec{\mathcal Br}$ composable, any $k,k',k''\in\mathbb{Z}_{\geq 0}$ and $x \in \mathcal X(P)_k, x' \in \mathcal X(P')_{k'}$ and $x'' \in \mathcal X(P'')_{k''}$ we have
$$ (x'' \circ_{\mathcal X}x') \circ_{\mathcal X} x=x'' \circ_{\mathcal X}(x' \circ_{\mathcal X} x).$$

The equality is  in $\mathcal X(P'' \circ P' \circ P)_{k+k'+k''+|\mathrm{Cir}(P',P)|+|\mathrm{Cir}(P'',P' \circ P)|}$ (note that $$|\mathrm{Cir}(P',P)|+|\mathrm{Cir}(P'',P' \circ P)|=|\mathrm{Cir}(P'',P')|+|\mathrm{Cir}(P'' \circ P',P)|$$ by Proposition~\ref{r:2022-09-13-bijofcirclesPart}).

\item[$(d)$]    For any $P_i,P_i'\in\vec{\mathcal Br}$ ($i=1,2$) composable, any $k_i, k_i'\in\mathbb{Z}_{\geq 0}$ and any  $x_i \in \mathcal X(P_i)_{k_i}, x'_i \in \mathcal X(P'_i)_{k'_i}$  $(i=1,2) $ we have
$$(x'_1 \otimes_{\mathcal X} x'_2) \circ_{\mathcal X} (x_1 \otimes_{\mathcal X} x_2)=(x'_1 \circ_{\mathcal X}x_1) \otimes_{\mathcal X} (x'_2 \circ_{\mathcal X}x_2).$$

The equality is in $\mathcal X((P'_1 \circ P_1) \otimes (P'_2 \circ P_2))_{k_1+k'_1+|\mathrm{Cir}(P'_1,P_1)|+k_2+k'_2+|\mathrm{Cir}(P'_2,P_2)|}$.

\end{itemize}

\end{lemma}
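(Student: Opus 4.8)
The statement is Lemma~\ref{r:tenscompcoinvprop}, asserting that the operations $\otimes_{\mathcal X}$ and $\circ_{\mathcal X}$ on coinvariant spaces inherit the unit, associativity, symmetry and interchange properties that the natural transformations $\mathrm{Tens}^{\mathcal X}$, $\mathrm{Comp}^{\mathcal X}$ satisfy at the level of the functors $\mathcal X(P,\bullet)$. The guiding principle is that each identity in (a)--(d) is the image, under the coinvariant space functor of Lemma~\ref{def:coinvariantspacefunctor}, of the corresponding identity (LMO1)$(a)$--$(d)$ on the functors $\mathcal X(P,\bullet)$, once one keeps careful track of the bijections between finite index sets that appear. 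So the plan is: for each part, lift the claimed equality to a statement about representatives in the spaces $\mathcal X(P,[\![1,k]\!])$ (and similar), apply the relevant condition from Definition~\ref{def:preLMOstructure}, and then observe that the discrepancy between the two sides is precisely a morphism in $\mathcal Vect\mathcal Gp$ acting by permutation on indices, hence becomes the identity after passing to coinvariants.

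\textbf{Step-by-step.} First, for part $(a)$: choose $x\in\mathcal X(P,[\![1,k]\!])$ representing a class in $\mathcal X(P)_k$; by definition of $\circ_{\mathcal X}$ via Lemma~\ref{r:morphismsinduitsurlescoinvariants}$(b)$, the class $x\circ_{\mathcal X}\mathrm{Id}^{\mathcal X}_{\mathrm{t}(P)}$ is the image of $\mathrm{Comp}^{\mathcal X}(P,\mathrm{Id}_{\mathrm t(P)})_{[\![1,k]\!],\emptyset}(x\otimes \mathrm{Id}^{\mathcal X}_{\mathrm t(P)})$ in the coinvariants — and since $\mathrm{Cir}(\mathrm{Id}_{\mathrm t(P)},P)=\emptyset$, this already equals $x$ by condition (LMO1)$(a)$, with no index bookkeeping needed; the other equality is symmetric. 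For part $(b)$, the associativity of $\otimes_{\mathcal X}$ is the image under the coinvariant functor of the first identity in (LMO1)$(b)$, using that the bijection $([\![1,k]\!]\sqcup[\![1,k']\!])\sqcup[\![1,k'']\!]\simeq [\![1,k]\!]\sqcup([\![1,k']\!]\sqcup[\![1,k'']\!])$ is compatible with the group homomorphisms $\mathfrak S_k\times\mathfrak S_{k'}\times\mathfrak S_{k''}\to\mathfrak S_{k+k'+k''}$; the commutativity $x\otimes_{\mathcal X}y = y\otimes_{\mathcal X}x$ for $y\in\mathcal X_{k'}$ follows from the second identity in (LMO1)$(b)$ (the $\tau^{\mathcal Set_f}$/$\tau^{\mathcal Vect}$ relation with $\vec\varnothing$), after noting that the transposition-type permutation $\tau^{\mathcal Set_f}_{[\![1,k]\!],[\![1,k']\!]}$ on the index set of $\mathcal X(P)_{k+k'}$ becomes trivial on coinvariants since $\mathfrak S_{k+k'}$ contains it. For part $(c)$, associativity of $\circ_{\mathcal X}$: apply condition (LMO1)$(c)$ to representatives; the two sides of that condition differ by the automorphism $\mathcal X(P''\circ P'\circ P,\,\mathrm{Id}\sqcup\mathrm{bij}_{P'',P',P}\sqcup\cdots)$ together with the twist $\theta$, all of which are index-reshuffling isomorphisms $\mathcal Vect\mathcal Gp$-compatible with the relevant symmetric group maps; since $|\mathrm{Cir}(P',P)|+|\mathrm{Cir}(P'',P'\circ P)|=|\mathrm{Cir}(P'',P')|+|\mathrm{Cir}(P''\circ P',P)|$ by Proposition~\ref{r:2022-09-13-bijofcirclesPart}, the ambient coinvariant space is the same, and $\mathrm{bij}_{P'',P',P}$ plus the twist act as a permutation of $[\![1,N]\!]$ (for $N$ the total cardinality), hence as the identity after projection. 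Finally, part $(d)$ is the image of condition (LMO1)$(d)$ under the coinvariant functor: the long composite on the right-hand side of (LMO1)$(d)$ is built from $\mathrm{Tens}^{\mathcal X}$, $\mathrm{Comp}^{\mathcal X}$ and several index-permutation isomorphisms (the $\tau^{\mathcal Set_f}$'s, $\tau^{\mathcal Vect}$, and $\mathrm{bij}_{P'_1,P_1,P'_2,P_2}$); all the permutation pieces collapse on coinvariants, leaving exactly $(x'_1\circ_{\mathcal X}x_1)\otimes_{\mathcal X}(x'_2\circ_{\mathcal X}x_2)$ on one side and $(x'_1\otimes_{\mathcal X}x'_2)\circ_{\mathcal X}(x_1\otimes_{\mathcal X}x_2)$ on the other.

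\textbf{Main obstacle.} The routine but genuinely delicate point — and the step I expect to consume most of the write-up — is the careful verification in parts $(c)$ and $(d)$ that each index-set bijection appearing in (LMO1)$(c)$, (LMO1)$(d)$ (namely the various $\tau^{\mathcal Set_f}$'s, the natural maps $\mathcal X(P,\varphi)$ for $\varphi$ a bijection, and the combinatorial bijections $\mathrm{bij}_{P'',P',P}$ and $\mathrm{bij}_{P'_1,P_1,P'_2,P_2}$ from Propositions~\ref{r:2022-09-13-bijofcirclesPart} and Lemma~\ref{r:bijectioncomptensorandcompinOBr}) fits into a morphism in $\mathcal Vect\mathcal Gp$ whose group component is a bijection between the relevant symmetric groups, so that Lemma~\ref{def:coinvariantspacefunctor} applies and the induced map on coinvariants is an (iso)morphism compatible with $\mathrm{proj}^{\mathcal X}$; combined with Lemma~\ref{r:lemmaproj12-29}, which guarantees independence of the chosen bijections $S\simeq[\![1,|S|]\!]$, this forces all such maps to become the identity after projection. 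Everything else is a mechanical transcription of (LMO1)$(a)$--$(d)$ through the functor, so once the bookkeeping lemma for these permutation maps is isolated, the four parts follow uniformly. I would state that bookkeeping observation once, at the start of the proof, and then dispatch (a)--(d) in order, each in a couple of lines.
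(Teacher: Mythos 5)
Your proposal is correct and follows essentially the same route as the paper, whose proof is exactly "apply the coinvariant space functor to the identities of axiom (LMO1)"; your additional bookkeeping observation (that the reindexing maps $\tau^{\mathcal Set_f}$, $\theta$, and the $\mathrm{bij}$'s act by permutations of the index set and hence, by Lemma~\ref{r:lemmaproj12-29} and the definition of coinvariants, induce the identity after projection) is precisely the implicit content of that one-line argument.
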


\begin{proof} The result follows by applying the coinvariant space functor  to the identities from the axiom (LMO1) in Definition~\ref{def:preLMOstructure}.    
\end{proof}

\begin{lemma}\label{r:doublingcoinv}  Let $P \in \vec{\mathcal Br}$ and $A \subset \pi_0(P)$, and $k \in\mathbb{Z}_{\geq 0}$, the map $\mathrm{dbl}_{\mathcal X}(P,A)([\![1,k]\!]) : \mathcal X(P,[\![1,k]\!]) \to \mathcal X(\mathrm{dbl}_{\vec{\mathcal Br}}(P,A),[\![1,k]\!])$ (obtained from axiom (LMO3) in Definition~\ref{def:preLMOstructure}) is $\mathfrak{S}_k$-equivariant; denote by $$\mathrm{dbl}_{\mathcal X}(P,A)_k : \mathcal X(P)_k \longrightarrow \mathcal X(\mathrm{dbl}_{\vec{Br}}(P,A))_k$$ the induced linear map.
\end{lemma}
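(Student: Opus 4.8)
The plan is to deduce the statement from naturality of the transformation $\mathrm{dbl}_{\mathcal X}(P,A)(\bullet)$ provided by axiom (LMO3) of Definition~\ref{def:preLMOstructure}, combined with the functoriality of the two sides in the finite set argument. First I would recall that, by (LMO3), $\mathrm{dbl}_{\mathcal X}(P,A)(\bullet) : \mathcal X(P,\bullet)\to \mathcal X(\mathrm{dbl}_{\vec{\mathcal Br}}(P,A),\bullet)$ is a natural transformation between functors $\mathcal Set_f\to\mathcal Vect$. Fixing $k\in\mathbb{Z}_{\geq 0}$, for every $\pi\in\mathfrak{S}_k$, viewed as an automorphism of $[\![1,k]\!]$ in $\mathcal Set_f$, naturality gives the commuting square
\begin{equation*}
\mathcal X(\mathrm{dbl}_{\vec{\mathcal Br}}(P,A),\pi)\circ \mathrm{dbl}_{\mathcal X}(P,A)([\![1,k]\!]) = \mathrm{dbl}_{\mathcal X}(P,A)([\![1,k]\!])\circ \mathcal X(P,\pi),
\end{equation*}
which is precisely the statement that $\mathrm{dbl}_{\mathcal X}(P,A)([\![1,k]\!])$ is $\mathfrak{S}_k$-equivariant for the actions of Definition~\ref{def:actionPermSonXS}.

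Next, I would package this as a morphism in $\mathcal Vect\mathcal Gp$: the pair $\big(\mathrm{Id}_{\mathfrak{S}_k}, \mathrm{dbl}_{\mathcal X}(P,A)([\![1,k]\!])\big)$ is a morphism $\big(\mathfrak{S}_k,\mathcal X(P,[\![1,k]\!])\big)\to\big(\mathfrak{S}_k,\mathcal X(\mathrm{dbl}_{\vec{\mathcal Br}}(P,A),[\![1,k]\!])\big)$, since the equivariance condition $f(g\cdot v)=\varphi(g)\cdot f(v)$ with $\varphi=\mathrm{Id}$ is exactly the displayed identity. Applying the coinvariant space functor $\mathcal Vect\mathcal Gp\to\mathcal Vect$ from Lemma~\ref{def:coinvariantspacefunctor} then yields a well-defined linear map between the coinvariant spaces
\begin{equation*}
\mathrm{dbl}_{\mathcal X}(P,A)_k : \mathcal X(P)_k = \mathcal X(P,[\![1,k]\!])_{\mathfrak{S}_k} \longrightarrow \mathcal X(\mathrm{dbl}_{\vec{\mathcal Br}}(P,A),[\![1,k]\!])_{\mathfrak{S}_k} = \mathcal X(\mathrm{dbl}_{\vec{\mathcal Br}}(P,A))_k,
\end{equation*}
characterized by $\mathrm{dbl}_{\mathcal X}(P,A)_k\circ\mathrm{proj} = \mathrm{proj}\circ\mathrm{dbl}_{\mathcal X}(P,A)([\![1,k]\!])$. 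For $k=0$ the convention $\mathcal X(P)_0=\mathcal X(P,\emptyset)$ makes the map simply $\mathrm{dbl}_{\mathcal X}(P,A)(\emptyset)$, with no quotient needed, so the statement holds trivially in that case.

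There is essentially no obstacle here: the only mild subtlety is making explicit that the $\mathfrak{S}_k$-action used in Definition~\ref{def:actionPermSonXS} is the one induced by functoriality of $\mathcal X(P,\bullet)$ applied to permutations of $[\![1,k]\!]$, so that ``$\mathfrak{S}_k$-equivariant'' really does coincide with the naturality square above; once this identification is stated, the proof is a one-line invocation of naturality followed by an application of Lemma~\ref{def:coinvariantspacefunctor}. I would therefore simply write: ``The $\mathfrak{S}_k$-equivariance is an instance of the naturality of $\mathrm{dbl}_{\mathcal X}(P,A)(\bullet)$ with respect to the morphisms of $\mathcal Set_f$ given by permutations of $[\![1,k]\!]$; the induced map $\mathrm{dbl}_{\mathcal X}(P,A)_k$ is then obtained by applying the coinvariant space functor of Lemma~\ref{def:coinvariantspacefunctor}.''
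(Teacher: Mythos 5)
Your proposal is correct and follows the same route as the paper: the paper's proof simply cites the naturality of $\mathrm{dbl}_{\mathcal X}(P,A)(\bullet)$, which is exactly your argument made explicit (naturality applied to permutations of $[\![1,k]\!]$, then the coinvariant space functor of Lemma~\ref{def:coinvariantspacefunctor}). No gaps; your elaboration just spells out what the paper leaves implicit.
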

\begin{proof} The result follows from the naturality of the transformation $\mathrm{dbl}_{\mathcal X}(P,A)(\bullet)$.
\end{proof}

\begin{lemma}\label{r:doublingcoinvprop} Let $P,P' \in \vec{\mathcal Br}$, $A \subset \pi_0(P)$, $k,k' \in\mathbb{Z}_{\geq 0}$ and $x\in \mathcal X(P)_k$, $x' \in \mathcal X(P')_k'$. Then

$$\mathrm{dbl}_{\mathcal X}(P \otimes P',A)_{k+k'}(x \otimes_{\mathcal X}x')
=\mathrm{dbl}_{\mathcal X}(P,A)_{k}(x) \otimes_{\mathcal X}x'.$$

On the left-hand side of the equality we see $A\subset \pi_0(P\otimes P')$ through the canonical inclusion $\pi_0(P)\subset \pi_0(P\otimes P')$. The equality is  in $\mathcal X(\mathrm{dbl}_{\vec{\mathcal Br}}(P,A) \otimes P')_{k+k'} =\mathcal X(\mathrm{dbl}_{\vec{\mathcal Br}}(P\otimes P',A))_{k+k'}$ (notice that $\mathrm{dbl}_{\vec{\mathcal Br}}(P,A) \otimes P' =\mathrm{dbl}_{\vec{\mathcal Br}}(P\otimes P',A)$   by Lemma~\ref{lemma:1:18:03032020}$(c)$).
\end{lemma}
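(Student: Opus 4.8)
The statement is a compatibility between the doubling operation on coinvariant spaces and the tensor product, and the natural strategy is to reduce it to the corresponding statement at the level of the pre-LMO structure itself, namely the commutativity of Diagram~\eqref{diags***23-01-2021} in axiom (LMO3), and then to push everything through the coinvariant space functor. First I would unwind the definitions: by Lemma~\ref{r:doublingcoinv}, the map $\mathrm{dbl}_{\mathcal X}(P\otimes P',A)_{k+k'}$ is induced on $\mathfrak{S}_{k+k'}$-coinvariants by $\mathrm{dbl}_{\mathcal X}(P\otimes P',A)([\![1,k+k']\!])$, and by Lemma~\ref{r:morphismsinduitsurlescoinvariants}$(a)$ the operation $\otimes_{\mathcal X}$ is induced by $\mathrm{Tens}^{\mathcal X}(P,P')_{[\![1,k]\!],[\![1,k']\!]}$ together with the group inclusion $\mathfrak{S}_k\times\mathfrak{S}_{k'}\hookrightarrow \mathfrak{S}_{k+k'}$. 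So both sides of the desired identity are images, under the projection $\mathcal X(P\otimes P',[\![1,k]\!]\sqcup[\![1,k']\!])\to \mathcal X(\mathrm{dbl}_{\vec{\mathcal Br}}(P\otimes P',A))_{k+k'}$, of explicit elements built from $x\otimes x'$.

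Next I would invoke Diagram~\eqref{diags***23-01-2021} with $\bullet = [\![1,k]\!]$ and $\diamond = [\![1,k']\!]$: chasing $x\otimes x'$ around the square, the top-then-right path gives $\mathrm{dbl}_{\mathcal X}(P\otimes P',A)([\![1,k]\!]\sqcup[\![1,k']\!])\big(\mathrm{Tens}^{\mathcal X}(P,P')_{[\![1,k]\!],[\![1,k']\!]}(x\otimes x')\big)$, which represents the left-hand side, while the left-then-bottom-diagonal path gives $\mathrm{Tens}^{\mathcal X}(\mathrm{dbl}_{\vec{\mathcal Br}}(P,A),P')_{[\![1,k]\!],[\![1,k']\!]}\big(\mathrm{dbl}_{\mathcal X}(P,A)([\![1,k]\!])(x)\otimes x'\big)$, which, after projecting to coinvariants and using Lemma~\ref{r:morphismsinduitsurlescoinvariants}$(a)$ again, represents the right-hand side. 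The equality of the two representing elements in $\mathcal X(\mathrm{dbl}_{\vec{\mathcal Br}}(P,A)\otimes P',[\![1,k]\!]\sqcup[\![1,k']\!])$ is exactly the commutativity of the diagram, and the identification $\mathrm{dbl}_{\vec{\mathcal Br}}(P,A)\otimes P'=\mathrm{dbl}_{\vec{\mathcal Br}}(P\otimes P',A)$ is Lemma~\ref{lemma:1:18:03032020}$(c)$, which also justifies the equality of ambient spaces asserted in the statement. Passing to $\mathfrak{S}_{k+k'}$-coinvariants then yields the claimed identity in $\mathcal X(\mathrm{dbl}_{\vec{\mathcal Br}}(P\otimes P',A))_{k+k'}$.

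The only genuinely delicate point is bookkeeping of the finite-set indexing: I must check that the bijection $S\simeq [\![1,|S|]\!]$ used to define the projections (Lemma~\ref{r:lemmaproj12-29}) is handled consistently on both sides, i.e.\ that the identification of $[\![1,k]\!]\sqcup[\![1,k']\!]$ with $[\![1,k+k']\!]$ used in forming $\otimes_{\mathcal X}$ is the same one implicitly used when we regard $A\subset\pi_0(P)$ as a subset of $\pi_0(P\otimes P')$ via $\pi_0(P)\subset\pi_0(P\otimes P')$ (Lemma~\ref{r:relPi0}$(a)$). This is where naturality of $\mathrm{dbl}_{\mathcal X}(P,A)(\bullet)$ and of $\mathrm{Tens}^{\mathcal X}(P,P')_{\bullet,\diamond}$ in the $\mathcal Set_f$-variables is used to move between different choices of bijection, and it is the part that requires care rather than ingenuity; everything else is a formal consequence of (LMO3). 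I expect the proof to be short: \emph{The result follows by applying the coinvariant space functor (Lemma~\ref{def:coinvariantspacefunctor}) to the commutative Diagram~\eqref{diags***23-01-2021} of axiom (LMO3), using Lemmas~\ref{r:doublingcoinv}, \ref{r:morphismsinduitsurlescoinvariants}$(a)$ and \ref{lemma:1:18:03032020}$(c)$.}
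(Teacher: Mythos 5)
Your proposal is correct and follows essentially the same route as the paper: the paper's proof is precisely the application of the coinvariant space functor to the commutative diagram~\eqref{diags***23-01-2021} of axiom (LMO3) specialized to $\bullet=[\![1,k]\!]$ and $\diamond=[\![1,k']\!]$, with the identification $\mathrm{dbl}_{\vec{\mathcal Br}}(P,A)\otimes P'=\mathrm{dbl}_{\vec{\mathcal Br}}(P\otimes P',A)$ coming from Lemma~\ref{lemma:1:18:03032020}$(c)$. Your additional remarks on the equivariance and indexing bookkeeping (via Lemmas~\ref{r:doublingcoinv} and~\ref{r:morphismsinduitsurlescoinvariants}$(a)$ and naturality in the $\mathcal Set_f$-variable) are exactly the implicit content of the paper's one-line argument.
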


\begin{proof}
The result follows applying the coinvariant space functor to the equality  arising from the commutative diagram~\eqref{diags***23-01-2021} in  (LMO3) with $\bullet=[\![1,k]\!]$ and $\diamond=[\![1,k']\!]$. 
\end{proof}

\subsubsection{Semi-Kirby structure associated to a pre-LMO structure}\label{sec:1:8:2}

In this subsection, we fix a pre-LMO structure $\mathbf{X}$. From now on, If $\mathbf{T}$ is a parenthesized framed oriented  tangle such that $\pi_0(\mathbf{T})_{\mathrm{cir}}=\emptyset$, we see $Z_{\mathcal X}(\mathbf{T})$ as an element of $\mathcal X(\vec{\mathrm{br}}(\mathbf{T}))_0=\mathcal X(\vec{\mathrm{br}}(\mathbf{T}),\emptyset)$, see Definition~\ref{def:actionPermSonXS}.

\begin{definition}\label{r:2022.12.27KIIMaps1} Let $k\in\mathbb{Z}_{\geq 2}$. Define the map $\mathrm{KIIMap}^1_{\mathcal X,k} : \mathcal X(\downarrow \uparrow)_{k-2}\to \mathcal X_k$\index[notation]{KIIMap^1_{\mathcal X,k}@$\mathrm{KIIMap}^1_{\mathcal X,k}$}
by 
\begin{equation}\label{KIIMap1forX} \mathrm{KIIMap}^1_{\mathcal X,k} (x) := Z_{\mathcal X}(\mathrm{cap}^{\mathcal T}_{{(+-)(+-)}}) \circ_{\mathcal X} (\mathrm{Id}_{+-}^{\mathcal X}\otimes_{\mathcal X} x) \circ Z_{\mathcal X}(\mathrm{cup}^{\mathcal T}_{{(+-)(+-)}})
\end{equation}
for any $x\in \mathcal{X}(\downarrow\uparrow)_{k-2}$. The elements $\mathrm{cap}^{\mathcal T}_{(+-)(+-)}$ and $\mathrm{cup}^{\mathcal T}_{(+-)(+-)}$ in $\mathcal Pa\vec{\mathcal T}$ are as  in Definition~\ref{gencupcap_1}.
\end{definition}
Notice that the above map is well-defined by Lemma~\ref{r:morphismsinduitsurlescoinvariants} and the fact that $$|\mathrm{Cir}(\vec{\mathrm{br}}(\mathrm{cap}^{\mathcal T}_{(+-)(+-)}), \vec{\mathrm{br}}(\mathrm{cup}^{\mathcal T}_{(+-)(+-)}))|= 2.$$

\begin{definition}\label{lemma1.80-20210731} Let $k\in\mathbb{Z}_{\geq 2}$. Define the map $\mathrm{KIIMap}^2_{\mathcal X,k} : \mathcal X(\downarrow \uparrow)_{k-2}\to \mathcal X_k$\index[notation]{KIIMap^2_{\mathcal X,k}@$\mathrm{KIIMap}^2_{\mathcal X,k}$}
by 
\begin{equation}\label{KIIMap2forX} \mathrm{KIIMap}^2_{\mathcal X,k} (x) := Z_{\mathcal X}(\mathrm{cap}^{\mathcal T}_{-((++)-))} \circ_{\mathcal X} (\mathrm{Id}_-^{\mathcal X}\otimes_{\mathcal X}\mathrm{dbl}_{\mathcal X}(\downarrow\uparrow,\downarrow)_{k-2}(x) \circ_{\mathcal X} Z_{\mathcal X}(\mathrm{cup}^{\mathcal T}_{-((++)-))})
\end{equation}
for any $x\in \mathcal{X}(\downarrow\uparrow)_{k-2}$.
The elements   $\mathrm{cap}^{\mathcal T}_{-((++)-)}$ and $\mathrm{cup}^{\mathcal T}_{-((++)-)}$ are as in Definition~\ref{gencupcap_2}. 
\end{definition}

\begin{lemma}\label{def:mapCotoXk}  Let  $k\in\mathbb{Z}_{\geq 1}$.  The vector space $\mathcal{X}(\vec{\varnothing},[\![1,k]\!])$  is equipped both with the action of  $\mathfrak{S}_{[\![2,k]\!]}\simeq \mathfrak{S}_{k-1}$ and  $\mathfrak{S}_k$ as in Definition~\ref{def:actionPermSonXS}.  The linear map 
$$\mathrm{co}^{\mathcal X}(\vec{\varnothing},([\![1,k]\!],1)):\mathcal{X}(\vec{\varnothing},[\![1,k]\!])\longrightarrow 
\mathcal{X}(\vec{\varnothing}, [\![1,k]\!])$$
given by axiom \emph{(LMO2)} together with the group  homomorphism $\mathfrak{S}_{k-1} \simeq \mathfrak{S}_{[\![2,k]\!]}\to \mathfrak{S}_k$ determine a morphism in $\mathcal Vect \mathcal Gp$. Define the  linear map  $$\mathrm{co}^{\mathcal X}_k : \mathcal X(\vec{\varnothing},[\![1,k]\!])_{\mathfrak{S}_{k-1}}\to  \mathcal X_k$$\index[notation]{co^{\mathcal X}_k@$\mathrm{co}^{\mathcal X}_k$} to be its image by the coinvariant space functor.
\end{lemma}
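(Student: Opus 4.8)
\textbf{Proof plan for Lemma~\ref{def:mapCotoXk}.}

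The statement asserts two things: first, that the pair consisting of the linear map $\mathrm{co}^{\mathcal X}(\vec{\varnothing},([\![1,k]\!],1))$ and the group homomorphism $\mathfrak{S}_{k-1}\simeq\mathfrak{S}_{[\![2,k]\!]}\hookrightarrow\mathfrak{S}_k$ defines a morphism in $\mathcal{V}ect\mathcal{G}p$; and second, that applying the coinvariant space functor (Lemma~\ref{def:coinvariantspacefunctor}) to this morphism yields a well-defined linear map $\mathrm{co}^{\mathcal X}_k$ with the stated source and target. The second point is purely formal once the first is established: the coinvariant space functor sends the object $(\mathfrak{S}_{k-1},\mathcal X(\vec{\varnothing},[\![1,k]\!]))$ to $\mathcal X(\vec{\varnothing},[\![1,k]\!])_{\mathfrak{S}_{k-1}}$ and the object $(\mathfrak{S}_k,\mathcal X(\vec{\varnothing},[\![1,k]\!]))$ to $\mathcal X(\vec{\varnothing},[\![1,k]\!])_{\mathfrak{S}_k}=\mathcal X_k$ (using Definition~\ref{def:actionPermSonXS}), so the image morphism is exactly a linear map $\mathcal X(\vec{\varnothing},[\![1,k]\!])_{\mathfrak{S}_{k-1}}\to\mathcal X_k$. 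So the entire content is the verification of the equivariance condition defining a $\mathcal{V}ect\mathcal{G}p$-morphism.

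First I would unwind the definitions: writing $\varphi:\mathfrak{S}_{k-1}\xrightarrow{\sim}\mathfrak{S}_{[\![2,k]\!]}\subset\mathfrak{S}_k$ for the inclusion and $f:=\mathrm{co}^{\mathcal X}(\vec{\varnothing},([\![1,k]\!],1))$ for the linear involution from axiom (LMO2), I must check that $f(g\cdot v)=\varphi(g)\cdot f(v)$ for all $g\in\mathfrak{S}_{k-1}$ and $v\in\mathcal X(\vec{\varnothing},[\![1,k]\!])$. Here the $\mathfrak{S}_{k-1}$-action on the source is the one induced (via $\varphi$) from the $\mathfrak{S}_k$-action of Definition~\ref{def:actionPermSonXS}, i.e.\ $g\cdot v=\mathcal X(\vec{\varnothing},\varphi(g))(v)$ where $\varphi(g)$ is regarded as a bijection $[\![1,k]\!]\to[\![1,k]\!]$ fixing $1$. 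So the condition to verify becomes
\begin{equation*}
f\circ\mathcal X(\vec{\varnothing},\varphi(g))=\mathcal X(\vec{\varnothing},\varphi(g))\circ f
\end{equation*}
for every $g\in\mathfrak{S}_{k-1}$, i.e.\ $f$ commutes with the $\mathfrak{S}_{[\![2,k]\!]}$-action. The key step is to recognize that this commutation is precisely the \emph{naturality} of the transformation $\mathrm{co}^{\mathcal X}(\vec{\varnothing},\bullet)$ as a natural transformation of functors on $\mathcal Set_f^*$, stipulated in axiom (LMO2). Indeed, a bijection $\varphi(g):[\![1,k]\!]\to[\![1,k]\!]$ fixing the basepoint $1$ is a morphism $([\![1,k]\!],1)\to([\![1,k]\!],1)$ in $\mathcal Set_f^*$, and naturality of $\mathrm{co}^{\mathcal X}$ with respect to this morphism gives exactly $\mathcal X(\vec{\varnothing},\varphi(g))\circ\mathrm{co}^{\mathcal X}(\vec{\varnothing},([\![1,k]\!],1))=\mathrm{co}^{\mathcal X}(\vec{\varnothing},([\![1,k]\!],1))\circ\mathcal X(\vec{\varnothing},\varphi(g))$, since both pointed sets in the square are $([\![1,k]\!],1)$ and the top and bottom horizontal arrows are both $\mathrm{co}^{\mathcal X}(\vec{\varnothing},([\![1,k]\!],1))$.

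Having verified the equivariance, I would then invoke Lemma~\ref{def:coinvariantspacefunctor}: the coinvariant space functor applied to the morphism $(\varphi,f):(\mathfrak{S}_{k-1},\mathcal X(\vec{\varnothing},[\![1,k]\!]))\to(\mathfrak{S}_k,\mathcal X(\vec{\varnothing},[\![1,k]\!]))$ produces the unique linear map $f_\varphi:\mathcal X(\vec{\varnothing},[\![1,k]\!])_{\mathfrak{S}_{k-1}}\to\mathcal X(\vec{\varnothing},[\![1,k]\!])_{\mathfrak{S}_k}=\mathcal X_k$ compatible with the projections, and this is by definition $\mathrm{co}^{\mathcal X}_k$. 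I do not expect any genuine obstacle here — the proof is a routine unwinding of the naturality axiom (LMO2) together with the functoriality established in Lemma~\ref{def:coinvariantspacefunctor}; indeed the paper's own proof is likely to just say ``direct verification'' or ``follows from the naturality of $\mathrm{co}^{\mathcal X}$.'' The only mild subtlety worth spelling out is the identification of the two group actions on the common underlying vector space $\mathcal X(\vec{\varnothing},[\![1,k]\!])$ and the fact that the $\mathfrak{S}_{[\![2,k]\!]}$-action is the restriction of the $\mathfrak{S}_k$-action, so that ``$f$ commutes with $\mathfrak{S}_{k-1}$'' is literally an instance of naturality over $\mathcal Set_f^*$.
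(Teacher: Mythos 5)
Your proposal is correct and matches the paper's argument, which simply invokes the functoriality of $\mathcal X(\vec{\varnothing},\bullet)$ and the natural-transformation status of $\mathrm{co}^{\mathcal X}$ from axiom (LMO2); your explicit unwinding — that equivariance under $\mathfrak{S}_{[\![2,k]\!]}$ is exactly naturality with respect to basepoint-preserving bijections of $([\![1,k]\!],1)$, followed by an application of the coinvariant space functor of Lemma~\ref{def:coinvariantspacefunctor} — is precisely the ``direct verification'' the paper leaves implicit.
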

\begin{proof}
The result follows from the functor status of the assignment  $\mathcal X(P,\bullet)$ for any $P\in~\vec{\mathcal Br}$ and of the natural transformation status of $\mathrm{co}^{\mathcal X}$.
\end{proof}

\begin{definition}\label{def:projtoXk}  Let $k\in\mathbb{Z}_{\geq 1}$. We denote by 
$\mathrm{proj}^{\mathcal X}_{1,k-1} :  \mathcal X(\vec{\varnothing},[\![1,k]\!])_{\mathfrak{S}_{k-1}} \to \mathcal X_k $\index[notation]{proj^{\mathcal X}_{1,k-1}@$\mathrm{proj}^{\mathcal X}_{1,k-1} $} the projection induced by the injective homomorphism $\mathfrak{S}_{k-1}\simeq\mathfrak{S}_{[\![2,k]\!]}\hookrightarrow\mathfrak{S}_{k}$ determined by the inclusion $[\![2,k]\!]\subset [\![1,k]\!]$.
\end{definition}

\begin{definition}\label{def:spaceKIIabs}\label{spaceCOabs} 
\begin{enumerate}
\item[$(a)$] For $k\geq 2$, define the subspaces $(\mathrm{KII}\mathcal{X}_k)$\index[notation]{KII\mathcal{X}_k@$(\mathrm{KII}\mathcal{X}_k)$}  of $\mathcal{X}_k$ and $(\mathrm{KII}\mathcal{X})$\index[notation]{KII\mathcal{X}@$(\mathrm{KII}\mathcal{X})$} of $\bigoplus_{k\geq 0}\mathcal{X}_k$  by
\begin{equation}\label{spaceKIIabs}
(\mathrm{KII}\mathcal{X}_k) :=\mathrm{Im}\big({\mathrm{KIIMap}}_{{\mathcal{X}},k}^2 - {\mathrm{KIIMap}}_{{\mathcal{X}},k}^1\big) \subset\mathcal{X}_k
\end{equation}
and
\begin{equation}
(\mathrm{KII}\mathcal{X}) := \bigoplus_{k\geq 2} (\mathrm{KII}\mathcal{X}_k) \subset \bigoplus_{k\geq 0}\mathcal{X}_k. 
\end{equation}

\item[$(b)$]   Let  $k\in\mathbb{Z}_{\geq 1}$. Define  the subspaces $({\mathrm{CO}\mathcal{X}_k})$\index[notation]{CO\mathcal{X}_k@$({\mathrm{CO}\mathcal{X}_k})$} of $\mathcal{X}_k$ and $({\mathrm{CO}\mathcal{X}})$\index[notation]{CO\mathcal{X}@$({\mathrm{CO}\mathcal{X}})$} of $\bigoplus_{k\geq 0}\mathcal{X}_k$  by

$${(\mathrm{CO}\mathcal{X}_k)}:= \mathrm{Im}\big(\mathrm{co}^{\mathcal X}_k -  \mathrm{proj}^{\mathcal X}_{1,k-1}\big)
$$
and
$${(\mathrm{CO}\mathcal{X})}:=\bigoplus_{k\geq 1} (\mathrm{CO}\mathcal{X}_k)\subset \bigoplus_{k\geq 0}\mathcal{X}_k.$$

\end{enumerate}
We set $(\mathrm{KII}\mathcal{X}_0) = (\mathrm{KII}\mathcal{X}_1) = (\mathrm{CO}\mathcal{X}_0) = \{0\}$.
\end{definition}

\begin{lemma}\label{lemma:1207}   Let $A:=\bigoplus_{k \geq 0} \mathcal{X}_k$, $M:=\bigoplus_{k \geq 2} \mathcal{X}(\downarrow \uparrow)_{k-2}$,  and let
$f : M\to A$ be  the linear map defined by
$$f:=\bigoplus_{k\geq 2}\big({\mathrm{KIIMap}}_{{\mathcal{X}},k}^2 - {\mathrm{KIIMap}}_{{\mathcal{X}},k}^1\big).$$
Then $A$ is a commutative algebra, $M$ is a right $A$-modules, and  $f$ is a  morphism of right $A$-modules, the algebra $A$ being equipped with its regular module structure over itself. 
\end{lemma}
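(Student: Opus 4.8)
The statement has three assertions: that $A=\bigoplus_{k\geq 0}\mathcal X_k$ is a commutative algebra, that $M=\bigoplus_{k\geq 2}\mathcal X(\downarrow\uparrow)_{k-2}$ is a right $A$-module, and that $f$ is a morphism of right $A$-modules. The plan is to produce the algebra and module structures explicitly from the operations $\otimes_{\mathcal X}$ and $\circ_{\mathcal X}$ introduced in Lemma~\ref{r:morphismsinduitsurlescoinvariants}, then reduce the module-morphism property to the compatibility axioms (LMO1)--(LMO4) of the pre-LMO structure together with the already-established behaviour of $\mathrm{KIIMap}^i_{\mathcal X,k}$.

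First I would define the product on $A$. For $x\in\mathcal X_k=\mathcal X(\vec\varnothing)_k$ and $y\in\mathcal X_{k'}=\mathcal X(\vec\varnothing)_{k'}$, set $x\cdot y:=x\otimes_{\mathcal X}y\in\mathcal X(\vec\varnothing\otimes\vec\varnothing)_{k+k'}=\mathcal X(\vec\varnothing)_{k+k'}=\mathcal X_{k+k'}$, using that $\vec\varnothing\otimes\vec\varnothing=\vec\varnothing$. Associativity and commutativity are exactly Lemma~\ref{r:tenscompcoinvprop}$(b)$ specialized to $P=P'=P''=\vec\varnothing$ (for commutativity one takes $x\in\mathcal X(\vec\varnothing)_k=\mathcal X_k$ in the role of the general element and $y\in\mathcal X_{k'}$ in the role of the element of $\mathcal X_{k'}$, which is exactly the form of the second identity there); the unit is $\mathrm{Id}^{\mathcal X}_\emptyset=Z_{\mathcal X}(\mathrm{Id}_\emptyset)\in\mathcal X_0$, with the unit axiom coming from Lemma~\ref{r:tenscompcoinvprop}$(a)$ (or directly from (LMO1)$(a)$). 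The right $A$-module structure on $M$ is defined the same way: for $x\in\mathcal X(\downarrow\uparrow)_{k-2}$ and $y\in\mathcal X_{k'}$, set $x\cdot y:=x\otimes_{\mathcal X}y\in\mathcal X(\downarrow\uparrow\otimes\vec\varnothing)_{(k-2)+k'}=\mathcal X(\downarrow\uparrow)_{(k+k')-2}$, again using $\downarrow\uparrow\otimes\vec\varnothing=\downarrow\uparrow$; associativity of the action and compatibility with the product on $A$ are once more Lemma~\ref{r:tenscompcoinvprop}$(b)$ with $P=\downarrow\uparrow$, $P'=P''=\vec\varnothing$, and the unit acts trivially by Lemma~\ref{r:tenscompcoinvprop}$(a)$.

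The substantive part is that $f=\bigoplus_{k\geq 2}\bigl(\mathrm{KIIMap}^2_{\mathcal X,k}-\mathrm{KIIMap}^1_{\mathcal X,k}\bigr)$ is $A$-linear, i.e.\ $\mathrm{KIIMap}^i_{\mathcal X,k+k'}(x\otimes_{\mathcal X}y)=\mathrm{KIIMap}^i_{\mathcal X,k}(x)\otimes_{\mathcal X}y$ for $i=1,2$, $x\in\mathcal X(\downarrow\uparrow)_{k-2}$, $y\in\mathcal X_{k'}$. For $i=1$, I would unfold the definition \eqref{KIIMap1forX}: $\mathrm{KIIMap}^1_{\mathcal X,k+k'}(x\otimes_{\mathcal X}y)=Z_{\mathcal X}(\mathrm{cap}^{\mathcal T}_{(+-)(+-)})\circ_{\mathcal X}(\mathrm{Id}^{\mathcal X}_{+-}\otimes_{\mathcal X}(x\otimes_{\mathcal X}y))\circ_{\mathcal X}Z_{\mathcal X}(\mathrm{cup}^{\mathcal T}_{(+-)(+-)})$, then commute the factor $y$ (which lives over the empty Brauer diagram) past the cup and cap using the interchange law Lemma~\ref{r:tenscompcoinvprop}$(d)$ and the commutativity/associativity from Lemma~\ref{r:tenscompcoinvprop}$(b)$--$(c)$; since $Z_{\mathcal X}(\mathrm{cap}^{\mathcal T}_{(+-)(+-)})\circ_{\mathcal X}(\mathrm{Id}^{\mathcal X}_{+-}\otimes_{\mathcal X}(-))\circ_{\mathcal X}Z_{\mathcal X}(\mathrm{cup}^{\mathcal T}_{(+-)(+-)})$ is built from $\otimes_{\mathcal X}$ and $\circ_{\mathcal X}$ only, the $y$-factor factors out as $\otimes_{\mathcal X}y$. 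For $i=2$ the same argument applies, with the extra ingredient that $\mathrm{dbl}_{\mathcal X}(\downarrow\uparrow,\downarrow)_{(k+k')-2}(x\otimes_{\mathcal X}y)=\mathrm{dbl}_{\mathcal X}(\downarrow\uparrow,\downarrow)_{k-2}(x)\otimes_{\mathcal X}y$, which is precisely Lemma~\ref{r:doublingcoinvprop} with $P=\downarrow\uparrow$, $P'=\vec\varnothing$, $A=\{\downarrow\}$. I expect this last bookkeeping step --- tracking that every auxiliary set of circle components and every twist $\tau^{\mathcal Set_f}$ appearing when one slides $y$ through the composite is absorbed correctly, so that the two sides really are equal and not merely equal up to a relabelling --- to be the main obstacle; it is routine given Lemma~\ref{r:tenscompcoinvprop} but requires care because $\circ_{\mathcal X}$ creates circle components. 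Once both equalities are in hand, $f$ is the difference of two $A$-linear maps, hence $A$-linear, completing the proof.
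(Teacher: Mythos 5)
Your proposal is correct and follows essentially the same route as the paper: the product on $A$ and the action on $M$ are built from $\otimes_{\mathcal X}$ via Lemma~\ref{r:morphismsinduitsurlescoinvariants} and Lemma~\ref{r:tenscompcoinvprop}$(b)$, the $A$-linearity of $\mathrm{KIIMap}^1_{\mathcal X,k}$ comes from Lemma~\ref{r:tenscompcoinvprop}$(b)$ and $(d)$, and that of $\mathrm{KIIMap}^2_{\mathcal X,k}$ uses in addition Lemma~\ref{r:doublingcoinvprop}, exactly as in the paper's argument.
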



\begin{proof}  The direct sum over $k,k'\geq 0$ of the linear maps $\mathcal X_k\otimes \mathcal X_{k'}\to \mathcal X_{k+k'}$ (given as in  Lemma~\ref{r:morphismsinduitsurlescoinvariants}$(a)$ with $P=P'=\vec{\varnothing}$)  induces a graded product on $\bigoplus_{k\geq 0}\mathcal X_k$ which  is associative and commutative by Lemma~\ref{r:tenscompcoinvprop}$(b)$. One could also check that the same product structure on $\bigoplus_{k\geq 0}\mathcal X_k$ is induced by the maps $\circ_{\mathcal X}$ given as in  Lemma~\ref{r:morphismsinduitsurlescoinvariants}$(b)$ with $P=P'=\vec{\varnothing}$. 

 The direct sum over $k\geq 2$ and $k'\geq 0$ of the linear maps $\mathcal X(\downarrow \uparrow)_{k-2} \otimes \mathcal X_{k'} \to  \mathcal X(\downarrow \uparrow)_{k+k'-2}$ (given as in Lemma~\ref{r:morphismsinduitsurlescoinvariants}$(a)$ with $P=\downarrow\uparrow$ and $P'=\vec{\varnothing}$) induces a linear map $M \otimes A \to M$. The fact that it defines a right $A$-module structure over $M$ is a consequence of Lemma~\ref{r:tenscompcoinvprop}$(b)$. Let us show that~$f:M\to A$ is a morphism of right $A$-modules.

Let $k\geq 2$, $l\geq 0$, 
$m \in \mathcal X(\downarrow \uparrow)_{k-2}$  and $a \in \mathcal X_l$. By  the definition of $\mathrm{KIIMap}^1_{\mathcal X, k-2+l}$ and Lemma~\ref{r:tenscompcoinvprop}$(b)$  and $(d)$ we have  
\begin{equation*}
\mathrm{KIIMap}_{{\mathcal X},k-2+l}^1(m\otimes_{\mathcal X} a) = \mathrm{KIIMap}_{{\mathcal X},k-2+l}^1(m) \otimes_{\mathcal X} a 
\end{equation*}

Using in addition Lemma~\ref{r:doublingcoinvprop}, we obtain 
\begin{equation*}
\mathrm{KIIMap}_{{\mathcal X},k-2+l}^2(m\otimes_{\mathcal X} a)
=  \mathrm{KIIMap}_{{\mathcal X},k-2}^2(m)\otimes_{\mathcal X} a.
\end{equation*}
This implies  that for $i=1,2$, the map $\bigoplus_k {\mathrm{KIImap}}^i_{\mathcal X, k}$
defines a right $A$-module homomorphism  $M \to A$. One has $f=\bigoplus_k {\mathrm{KIImap}}^1_{\mathcal X, k}  - \bigoplus_k {\mathrm{KIImap}}^2_{\mathcal X, k}$, therefore $f : M \to A$ is a right $A$-module homomorphism.   
\end{proof}

\begin{lemma}\label{1207b}  Consider the  commutative $\mathbb{Z}_{\geq 0}$-graded algebra $A:=\bigoplus_{k \geq 0} \mathcal{X}_k$ (see Lemma~\ref{lemma:1207}). Then
\begin{itemize}
\item[$(a)$] $N:=\bigoplus_k \mathcal X(\vec{\varnothing},[\![1,k]\!])_{\mathfrak{S}_{k-1}}$ is a right $A$-module.

\item[$(b)$]  Recall the maps $\mathrm{proj}^{\mathcal X}_{1,k-1} :  \mathcal X(\vec{\varnothing},[\![1,k]\!])_{\mathfrak{S}_{k-1}}\to \mathcal X_k$ and $\mathrm{co}^{\mathcal X}_{k} :  \mathcal X(\vec{\varnothing},[\![1,k]\!])_{\mathfrak{S}_{k-1}}\to \mathcal X_k$  from Definition~\ref{def:projtoXk} and Lemma~\ref{def:mapCotoXk}, respectively. Then $\bigoplus_{k\geq 1} \mathrm{proj}^{\mathcal X}_{1,k-1} : N \to A$ and  $\bigoplus_k \mathrm{co}^{\mathcal X}_k : N \to A$ are homomorphisms of  $A$-modules.

\item[$(c)$] $\bigoplus_{k\geq 1}  \big(\mathrm{proj}^{\mathcal X}_k - \mathrm{co}^{\mathcal X}_k\big) : N \to A$ is a homomorphism of $A$-modules.
\end{itemize}
\end{lemma}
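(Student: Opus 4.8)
The plan is to reduce everything to the module-structure statements already established. The key observation is that $N := \bigoplus_k \mathcal X(\vec\varnothing, [\![1,k]\!])_{\mathfrak S_{k-1}}$ carries a right $A$-module structure by the same recipe used in Lemma~\ref{lemma:1207} for $M$: use the maps $\mathcal X(\vec\varnothing, [\![1,k]\!]) \otimes \mathcal X(\vec\varnothing, [\![1,l]\!]) \to \mathcal X(\vec\varnothing, [\![1,k]\!] \sqcup [\![1,l]\!])$ coming from $\mathrm{Tens}^{\mathcal X}(\vec\varnothing, \vec\varnothing)$, together with the group homomorphism $\mathfrak S_{k-1} \times \mathfrak S_l \to \mathfrak S_{k-1+l}$ (acting on $[\![2,k]\!] \sqcup ([\![1,l]\!]+k-1) \subset [\![1,k+l]\!]$), and pass to coinvariants via the coinvariant space functor of Lemma~\ref{def:coinvariantspacefunctor}. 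Associativity and unitality of this action follow from axiom (LMO1)(b) exactly as in the proof of Lemma~\ref{lemma:1207}. This establishes part $(a)$.

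For part $(b)$, I would show each of the two maps $\bigoplus_k \mathrm{proj}^{\mathcal X}_{1,k-1}$ and $\bigoplus_k \mathrm{co}^{\mathcal X}_k$ is $A$-linear. For $\mathrm{proj}^{\mathcal X}_{1,k-1}$: this map is induced on coinvariants by the identity of $\mathcal X(\vec\varnothing, [\![1,k]\!])$ together with the inclusion $\mathfrak S_{k-1} \simeq \mathfrak S_{[\![2,k]\!]} \hookrightarrow \mathfrak S_k$, so compatibility with the $A$-action reduces to the compatibility of $\mathrm{Tens}^{\mathcal X}(\vec\varnothing,\vec\varnothing)$ with itself (associativity from (LMO1)(b)) — i.e. both the source and target module structures are ultimately built from the same natural transformation, so the square commutes before taking coinvariants, hence after. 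For $\mathrm{co}^{\mathcal X}_k$: here the essential input is axiom (LMO2)(b), which asserts precisely that $\mathrm{co}^{\mathcal X}(\vec\varnothing, -)$ commutes with $\mathrm{Tens}^{\mathcal X}(\vec\varnothing, \vec\varnothing)$ when the distinguished point lies in the first factor. Since in the $A$-module structure on $N$ the distinguished point $1$ always stays in the left ($N$-)factor, this diagram specializes to exactly what is needed; naturality of $\mathrm{co}^{\mathcal X}$ and of $\mathrm{Tens}^{\mathcal X}$ handle the identifications of finite sets, and passing to coinvariants via the functor gives $A$-linearity of $\bigoplus_k \mathrm{co}^{\mathcal X}_k$.

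Part $(c)$ is then immediate: the difference of two $A$-module homomorphisms $N \to A$ is an $A$-module homomorphism, since $A$-module homomorphisms between fixed modules form an abelian group under pointwise addition. So $\bigoplus_{k \geq 1}(\mathrm{proj}^{\mathcal X}_{1,k-1} - \mathrm{co}^{\mathcal X}_k) : N \to A$ is $A$-linear. (I note the statement writes $\mathrm{proj}^{\mathcal X}_k$ where the surrounding text uses $\mathrm{proj}^{\mathcal X}_{1,k-1}$; I would keep the notation of Definition~\ref{def:projtoXk}.)

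The main obstacle I anticipate is bookkeeping rather than conceptual: carefully matching the group homomorphisms and the canonical bijections of finite sets ($\tau^{\mathcal Set_f}$, and the inclusions $[\![2,k]\!] \subset [\![1,k]\!]$, $[\![1,k]\!] \sqcup [\![1,l]\!] \simeq [\![1,k+l]\!]$) so that the relevant squares in $\mathcal Vect\mathcal Gp$ genuinely commute before one applies the coinvariant space functor. In particular one must check that the $\mathfrak S_{k-1}$-action used to form $N$ and the $\mathfrak S_k$-action used to form $A$ are compatible under $\mathrm{co}^{\mathcal X}$ and under the product maps in the precise way (LMO2)(b) provides — the potential subtlety being whether the distinguished element stays in the correct tensor factor under the module action, which by construction of the $A$-action on $N$ it does. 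Everything else is a routine application of functoriality of coinvariants together with the axioms (LMO1)(b) and (LMO2)(b).
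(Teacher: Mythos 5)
Your proposal is correct and follows essentially the same route as the paper: the $A$-module structure on $N$ is obtained by applying the coinvariant space functor to $\mathrm{Tens}^{\mathcal X}(\vec\varnothing,\vec\varnothing)$ paired with the group homomorphism $\mathfrak S_{k-1}\times\mathfrak S_{k'}\to\mathfrak S_{k+k'}$ (module property from (LMO1)(b)), $A$-linearity of $\bigoplus_k\mathrm{proj}^{\mathcal X}_{1,k-1}$ and $\bigoplus_k\mathrm{co}^{\mathcal X}_k$ comes from commuting squares in $\mathcal Vect\mathcal Gp$ — the latter being exactly the specialization of (LMO2)(b) with the marked point in the left factor — and (c) is the difference of module maps. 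Only a trivial bookkeeping slip in your shift of indices (the second factor should act on $[\![k+1,k+l]\!]$, landing in $\mathfrak S_{[\![2,k+l]\!]}\simeq\mathfrak S_{k+l-1}$), which does not affect the argument.
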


\begin{proof} 
$(a)$ Let  $k,k'\in\mathbb{Z}_{\geq 0}$. The map $\mathrm{Tens}^{\mathcal X}(\vec{\varnothing},\vec{\varnothing})_{[\![1,k]\!],[\![1,k']\!]}$ is compatible with the  group homomorphism $\mathfrak{S}_{k-1}\times \mathfrak S_{k'} \to \mathfrak S_{k+k'}$ defined by the composition of the inclusion $\mathfrak S_{k-1}\times \mathfrak S_{k'} \subset \mathfrak S_k \times \mathfrak S_{k'}$ with the concatenation group homomorphism $\mathfrak S_k \times \mathfrak S_{k'} \to \mathfrak S_{k+k'}$ and defines therefore a morphism in the category $\mathcal Vect\mathcal Gp$. Taking its image by the coinvariant space functor, we obtain  a linear map $\mathcal X(\vec{\varnothing},[\![1,k]\!])_{\mathfrak{S}_{k-1}}\times \mathcal X_{k'} \to \mathcal X(\vec{\varnothing},[\![1,k]\!])_{\mathfrak{S}_{k+k'-1}}$. The direct sum over $k$ and $k'$ of these maps defines a linear map $N \otimes A \to A$.   The module property of this map follows from the associativity axioms for the natural transformation $\mathrm{Tens}^{\mathcal X}(\vec{\varnothing}, \vec{\varnothing})_{\bullet,\diamond}$, see (LMO1)~$(b)$.

$(b)$ The fact that the map $\bigoplus_{k\geq 1} \mathrm{proj}^{\mathcal X}_{1,k-1} : N \to A$  is a homomorphism of $A$-modules follows from the commutative diagram obtained as the direct sum over $k$ and $k'$ of the images by the coinvariant space functor of the commutative diagram in the category  $\mathcal Vect\mathcal Gp$ with underlying vector spaces and groups diagrams 
\begin{equation*}
\xymatrix{ \mathcal{X}(\vec{\varnothing},[\![1,k]\!])\otimes \mathcal  X(\vec{\varnothing},[\![1,k']\!]) \ar_{\mathrm{Id}\otimes \mathrm{Id}}[d] \ar^-{\mathrm{Tens}^{\mathcal X}(\vec{\varnothing}, \vec{\varnothing})_{[\![1,k]\!],[\![1,k']\!]}}[rrrr]&&&&\mathcal{X}(\vec{\varnothing},[\![1,k+k']\!])\ar^-{\mathrm{Id}}[d]\\
  \mathcal{X}(\vec{\varnothing},[\![1,k]\!])\otimes \mathcal  X(\vec{\varnothing},[\![1,k']\!])\ar_-{\mathrm{Tens}^{\mathcal X}(\vec{\varnothing}, \vec{\varnothing})_{[\![1,k]\!],[\![1,k']\!]}}[rrrr]&&&& \mathcal{X}(\vec{\varnothing},[\![1,k+k']\!])}
\end{equation*}
and
\begin{equation}\label{eq:diagramofgroups12-29}
\xymatrix{ \mathfrak{S}_{k-1}\times \mathfrak{S}_{k'}   \ar^{\mathrm{conc}_{k-1,k'}}[rr] \ar_-{}[d]&&\mathfrak{S}_{k+k'-1}\ar^-{}[d]\\
  \mathfrak{S}_k \times \mathfrak{S}_{k'}\ar_{\mathrm{conc}_{k,k'}}[rr]&& \mathfrak{S}_{k+k'}}
\end{equation}
where   $\mathrm{conc}_{k,l} : \mathfrak{S}_k \times \mathfrak{S}_l \to \mathfrak{S}_{k+l}$\index[notation]{conc_{k,l}@$\mathrm{conc}_{k,l}$} is the concatenation group homomorphism $(\sigma,\tau)\mapsto \sigma*\tau \in \mathfrak{S}_{[\![1,k]\!] \sqcup [\![1,l]\!]}\simeq \mathfrak{S}_{k+l}$, see  Definition~\ref{def:concatenationofperm}. The fact that  $\bigoplus_{k\geq 1} \mathrm{co}^{\mathcal X}_{k} : N \to A$ is a homomorphism of $A$-modules follows from the commutative diagram obtained as the direct sum over $k$ and $k'$ of the images by the coinvariant space functor of the commutative diagrams  in $\mathcal Vect\mathcal Gp$ with underlying groups diagram~\eqref{eq:diagramofgroups12-29} and vector spaces diagram
\begin{equation*}
\xymatrix{ \mathcal{X}(\vec{\varnothing},[\![1,k]\!])\otimes \mathcal  X(\vec{\varnothing},[\![1,k']\!]) \ar_{\mathrm{co}^{\mathcal X}(\vec{\varnothing},([\![1,k]\!],1)) \otimes \mathrm{Id}}[d] \ar^-{\mathrm{Tens}^{\mathcal X}(\vec{\varnothing}, \vec{\varnothing})_{[\![1,k]\!],[\![1,k']\!]}}[rrrr]&&&&\mathcal{X}(\vec{\varnothing},[\![1,k+k']\!])\ar^-{\mathrm{\mathrm{co}^{\mathcal X}(\vec{\varnothing},([\![1,k+k']\!],1))}}[d]\\
  \mathcal{X}(\vec{\varnothing},[\![1,k]\!])\otimes \mathcal  X(\vec{\varnothing},[\![1,k']\!])\ar_-{\mathrm{Tens}^{\mathcal X}(\vec{\varnothing}, \vec{\varnothing})_{[\![1,k]\!],[\![1,k']\!]}}[rrrr]&&&& \mathcal{X}(\vec{\varnothing},[\![1,k+k']\!]).}
\end{equation*}
The latter diagram is commutative because of  axioms (LMO1)~$(b)$ and (LMO2)~$(d)$.

$(c)$ Follows from $(b)$ and the $A$-module homomorphism property of the difference map $A\oplus A\to A$ taking $(x,y)$ to $x-y$ for any $x,y\in A$.

\end{proof}

\begin{proposition}\label{l:LMOinvt} Consider the commutative $\mathbb{Z}_{\geq 0}$-graded algebra $\bigoplus_{k \geq 0} \mathcal{X}_k$ (see Lemma~\ref{lemma:1207}). Then
\begin{itemize} 
\item[$(a)$] The subspaces $(\mathrm{KII}\mathcal{X})$, $(\mathrm{CO}\mathcal{X})$ and $(\mathrm{KII}\mathcal{X}) + (\mathrm{CO}\mathcal{X})$ are $\mathbb{Z}_{\geq 0}$-graded ideals of $\bigoplus_{k \geq 0}\mathcal{X}_k$.
The quotient 
\begin{equation}\label{abstarget} 
\mathfrak{s}(\mathbf{X})=\frac{\bigoplus_{k \geq 0}\mathcal{X}_k}{(\mathrm{KII}\mathcal{X})+(\mathrm{CO}\mathcal{X})}
\end{equation} \index[notation]{s(\mathbf{X})@$\mathfrak{s}(\mathbf{X})$} 
is therefore a $\mathbb{Z}_{\geq 0}$-graded algebra. Explicitly, $\mathfrak{s}(\mathbf{X})=\bigoplus_{k\geq 0} \mathfrak{s}(\mathbf{X})_k$ where
\begin{equation}\label{form:Vk}
\mathfrak{s}(\mathbf{X})_k=\frac{\mathcal{X}_k}{(\mathrm{KII}\mathcal{X}_k) + (\mathrm{CO}\mathcal{X}_k)}.
\end{equation} \index[notation]{s(\mathbf{X})_k@$\mathfrak{s}(\mathbf{X})_k$}

\item[$(b)$] Let $k \in\mathbb{Z}_{\geq 0}$. Let $Z_{\mathcal X,k}: \underline{\vec{\mathcal T}}(\vec{\varnothing},\vec{\varnothing})_k \to \mathcal X_k$ \index[notation]{Z_{\mathcal X,k}@$Z_{\mathcal X,k}$}  be the map such that for any $\mathbf{T} \in \underline{\vec{\mathcal T}}(\vec{\varnothing},\vec{\varnothing})_k$ one has $Z_{\mathcal X,k}(\mathbf{T}):=\mathrm{proj}^{\mathcal X}_{\vec{\varnothing},\pi_0(\mathbf{T})_{\mathrm{cir}}}(Z_{\mathcal X}(\mathbf{T})) $ where $\mathrm{proj}^{\mathcal X}_{\vec{\varnothing},\pi_0(\mathbf{T})_{\mathrm{cir}}} : \mathcal X(\vec{\varnothing},\pi_0(\mathbf{T})_{\mathrm{cir}}) \to \mathcal X_{|\pi_0(\mathbf{T})_{\mathrm{cir}}|}=\mathcal X_k$ is as in Lemma~\ref{r:lemmaproj12-29}. There is a unique linear map $\mu_k: \mathfrak{Kir}_k\to \mathfrak{s}(\mathbf{X})_k$, such that the following diagram of sets commutes,
\begin{equation*}
\xymatrix{
\underline{\vec{\mathcal T}}(\emptyset,\emptyset)_k \ar[d]\ar^{Z_{{\mathcal{X}},k}}[rr] & & \mathcal{X}_k\ar[d]\\
\mathfrak{Kir}_k \ar^{\mu_k}[rr] & &\mathfrak{s}(\mathbf{X})_k,
} 
\end{equation*}
where $\underline{\vec{\mathcal T}}(\emptyset,\emptyset)_k$ is as in Lemma~\ref{lemma:identif}$(c)$. The direct sum over $k \geq 0$ of these maps is an algebra morphism $\mu:\mathfrak{Kir}\to \mathfrak{s}(\mathbf{X})$, i.e, the pair $(\mathfrak{s}(\mathbf{X}),\mu)$ \index[notation]{s(\mathbf{X}),\mu@$(\mathfrak{s}(\mathbf{X}),\mu)$} is a semi-Kirby structure, we call it the \emph{semi-Kirby structure associated to} $\mathbf{X}$. Here $\mathfrak{Kir}$ and $\mathfrak{Kir}_k$ are as in Definition~\ref{def:Kir-and-Kir-k}.
\end{itemize}
\end{proposition}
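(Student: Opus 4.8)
The plan is to first establish part $(a)$, namely that $(\mathrm{KII}\mathcal{X})$, $(\mathrm{CO}\mathcal{X})$ and their sum are graded ideals of $A := \bigoplus_{k\geq 0}\mathcal{X}_k$. Here I would invoke Lemmas~\ref{lemma:1207} and~\ref{1207b}: by Lemma~\ref{lemma:1207}, the map $f = \bigoplus_{k\geq 2}({\mathrm{KIIMap}}^2_{\mathcal X,k} - {\mathrm{KIIMap}}^1_{\mathcal X,k}) : M \to A$ is a morphism of right $A$-modules, so its image $(\mathrm{KII}\mathcal X) = \mathrm{Im}(f)$ is an $A$-submodule of $A$, i.e.\ an ideal; it is graded because $f$ is a direct sum of maps respecting the grading. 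Similarly, by Lemma~\ref{1207b}$(c)$, the map $\bigoplus_{k\geq 1}(\mathrm{proj}^{\mathcal X}_{1,k-1} - \mathrm{co}^{\mathcal X}_k) : N \to A$ is an $A$-module homomorphism, so $(\mathrm{CO}\mathcal X)$ is a graded ideal. The sum of two graded ideals is a graded ideal, hence $\mathfrak{s}(\mathbf X) = A/((\mathrm{KII}\mathcal X) + (\mathrm{CO}\mathcal X))$ is a $\mathbb{Z}_{\geq 0}$-graded algebra, and the degree-$k$ piece is as displayed in~\eqref{form:Vk} since all ideals involved are graded.

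For part $(b)$, I would first check that $Z_{\mathcal X, k}$ is well-defined: for $\mathbf T \in \underline{\vec{\mathcal T}}(\emptyset,\emptyset)_k$ we have $\vec{\mathrm{br}}(\mathbf T) = \vec{\varnothing}$ and $|\pi_0(\mathbf T)_{\mathrm{cir}}| = k$, so $Z_{\mathcal X}(\mathbf T) \in \mathcal X(\vec{\varnothing}, \pi_0(\mathbf T)_{\mathrm{cir}})$ and $\mathrm{proj}^{\mathcal X}_{\vec{\varnothing},\pi_0(\mathbf T)_{\mathrm{cir}}}$ (Lemma~\ref{r:lemmaproj12-29}) lands it in $\mathcal X_k$, independently of choices. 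The crucial point is then to show that the composite $\underline{\vec{\mathcal T}}(\emptyset,\emptyset)_k \xrightarrow{Z_{\mathcal X,k}} \mathcal X_k \to \mathfrak{s}(\mathbf X)_k$ factors through $\mathfrak{Kir}_k = \mathbb{Z}\underline{\vec{\mathcal T}}(\emptyset,\emptyset)_k / ((\mathrm{KII}\vec{\mathcal T}_k) + (\mathrm{CO}\vec{\mathcal T}_k))$. Since $\mathfrak{Kir}_k$ is a quotient of the free abelian group on $\underline{\vec{\mathcal T}}(\emptyset,\emptyset)_k$, it suffices to extend $Z_{\mathcal X,k}$ linearly and check it kills the generators of the two defining submodules. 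For a KII relation: given $\mathbf T \in \vec{\mathcal T}(+-,+-)^{\mathrm{ext}}_{k-2}$, using Proposition~\ref{firstequivKIIT} (or \ref{secondequivKIIT}) together with the identifications to $\vec{\mathcal T}(\downarrow\uparrow)$, and the multiplicativity axioms (LMO4)$(b)$, $(c)$, $(e)$ relating $Z_{\mathcal X}$ to $\mathrm{Tens}^{\mathcal X}$, $\mathrm{Comp}^{\mathcal X}$ and $\mathrm{dbl}_{\mathcal X}$, I would identify $Z_{\mathcal X,k}(\mathrm{KIIMap}^i_k(\mathbf T))$ with $\mathrm{KIIMap}^i_{\mathcal X,k}(Z_{\mathcal X,k-2}(\varphi^{-1}\mathbf T))$ for $i=1,2$ — matching Definitions~\ref{r:2022.12.27KIIMaps1} and~\ref{lemma1.80-20210731} against~\eqref{eq:def:tildeKII_1pat} and~\eqref{tildeKIImapPaT} and the definitions of $\mathrm{cup/cap}$ elements. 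Hence the difference lies in $(\mathrm{KII}\mathcal X_k)$, so maps to $0$ in $\mathfrak{s}(\mathbf X)_k$. For a CO relation: given $\mathbf T \in \underline{\vec{\mathcal T}}(\emptyset,\emptyset)_k$ and $c \in \pi_0(\mathbf T) = \pi_0(\mathbf T)_{\mathrm{cir}}$, axiom (LMO4)$(d)$ gives $Z_{\mathcal X}(\mathrm{co}_{\vec{\mathcal T}}(\mathbf T, c)) = \mathrm{co}^{\mathcal X}(\vec{\varnothing}, (\pi_0(\mathbf T)_{\mathrm{cir}}, c))(Z_{\mathcal X}(\mathbf T))$; choosing a bijection $\pi_0(\mathbf T)_{\mathrm{cir}} \simeq [\![1,k]\!]$ sending $c$ to $1$ and passing to coinvariants, the difference $Z_{\mathcal X,k}(\mathbf T) - Z_{\mathcal X,k}(\mathrm{co}_{\vec{\mathcal T}}(\mathbf T,c))$ becomes the image of a class under $\mathrm{proj}^{\mathcal X}_{1,k-1} - \mathrm{co}^{\mathcal X}_k$, hence lies in $(\mathrm{CO}\mathcal X_k)$.

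With the factorization established, $\mu_k : \mathfrak{Kir}_k \to \mathfrak{s}(\mathbf X)_k$ is the unique induced map, and uniqueness is automatic because $\underline{\vec{\mathcal T}}(\emptyset,\emptyset)_k \to \mathfrak{Kir}_k$ is surjective. Finally, to see $\mu = \bigoplus_k \mu_k$ is an algebra morphism, I would check multiplicativity on representatives: for $\mathbf L, \mathbf L' \in \underline{\vec{\mathcal T}}(\emptyset,\emptyset)$, the product in $\mathfrak{Kir}$ is induced by $\dot\sqcup = \otimes$, and $Z_{\mathcal X}(\mathbf L \dot\sqcup \mathbf L') = \mathrm{Tens}^{\mathcal X}(\vec{\varnothing},\vec{\varnothing})(Z_{\mathcal X}(\mathbf L)\otimes Z_{\mathcal X}(\mathbf L'))$ by (LMO4)$(b)$, which after projection to coinvariants is exactly the algebra product $\otimes_{\mathcal X}$ of Lemma~\ref{r:morphismsinduitsurlescoinvariants}$(a)$ defining the multiplication on $A$ (and hence on $\mathfrak{s}(\mathbf X)$); unitality follows from (LMO4)$(a)$ since the empty link maps to $\mathrm{Id}^{\mathcal X}_{\emptyset}$. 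Thus $(\mathfrak{s}(\mathbf X), \mu)$ is a semi-Kirby structure.

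\textbf{Main obstacle.} The principal difficulty is the KII compatibility: one must carefully trace through the chain of identifications $\vec{\mathcal T}(+-,+-)^{\mathrm{ext}} \simeq \vec{\mathcal T}(\downarrow\uparrow) \simeq \mathcal{P}a\vec{\mathcal T}(\downarrow\uparrow)$ and verify that the concrete cup/cap tangles entering $\widetilde{\mathrm{KIIMap}}^i_{\underline{\mathcal{P}a\vec{\mathcal T}}}$ (Lemmas~\ref{lemma_B120210709}, \ref{def:tildeKIImap20210715}) produce, after applying $Z_{\mathcal X}$ and the multiplicativity axioms (LMO4)$(b)$--$(e)$, precisely the maps $\mathrm{KIIMap}^i_{\mathcal X,k}$ of Definitions~\ref{r:2022.12.27KIIMaps1}--\ref{lemma1.80-20210731}, including the bookkeeping of circle components (the ``$+2$'' shift) and the coinvariance under $\mathfrak{S}_k$. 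Everything else is a formal consequence of the module-homomorphism statements already proved in Lemmas~\ref{lemma:1207} and~\ref{1207b}.
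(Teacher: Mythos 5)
Your proposal is correct and follows essentially the same route as the paper: part $(a)$ via the $A$-module-morphism statements of Lemmas~\ref{lemma:1207} and~\ref{1207b}, and part $(b)$ by extending $Z_{\mathcal X,k}$ linearly and checking, through the description of $(\mathrm{KII}\vec{\mathcal T}_k)$ as the image of $\widetilde{\mathrm{KIIMap}}^2_{\underline{\mathcal Pa\vec{\mathcal T}},k}-\widetilde{\mathrm{KIIMap}}^1_{\underline{\mathcal Pa\vec{\mathcal T}},k}$ (Proposition~\ref{secondequivKIIT}) and axioms (LMO4)$(b)$, $(c)$, $(e)$ for KII and (LMO4)$(d)$ for CO, that the defining relations are sent into $(\mathrm{KII}\mathcal X_k)$ and $(\mathrm{CO}\mathcal X_k)$, with multiplicativity of $\mu$ coming from the compatibility of $Z_{\mathcal X}$ with the tensor product. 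The obstacle you flag (the bookkeeping in the commutative squares comparing the tangle-level and $\mathcal X$-level KII maps) is exactly the content of the commutative diagrams in the paper's proof.
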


\begin{proof} 
$(a)$ Let $A:=\bigoplus_{k \geq 0} \mathcal{X}_k$.  By Definition~\ref{def:spaceKIIabs}, $(\mathrm{KII}\mathcal{X})$ and $(\mathrm{CO}\mathcal{X})$ are $\mathbb{Z}_{\geq 0}$-graded subspaces of $A$. Both subspaces  are defined as images of linear maps with target $A$. By Lemmas~\ref{lemma:1207} and~\ref{1207b}, their sources are equipped with $A$-module structures, and the linear maps are $A$-module morphisms, the target being equipped with the regular $A$-module structure. This implies that $(\mathrm{KII}\mathcal{X})$ and $(\mathrm{CO}\mathcal{X})$ are ideals of~$A$.

$(b)$ The map $Z_{\mathcal X,k}$ extends to a morphism of abelian groups $Z_{\mathcal X,k} : \mathbb{Z}\underline{\vec{\mathcal T}}(\emptyset,\emptyset)_k \to \mathcal X_k $. Let us show that $Z_{\mathcal X,k}((\mathrm{KII}\vec{\mathcal T}_k)) \subset (\mathrm{KII}\mathcal{X}_k)$ and $Z_{\mathcal X,k}((\mathrm{CO}\vec{\mathcal T}_k)) \subset (\mathrm{CO}\mathcal X_k)$, which by Definition~\ref{def:Kir-and-Kir-k} implies the existence of a map $\mu_k:\mathfrak{Kir}_k\to \mathfrak{s}(\mathbf{X})_k$.

For $P \in \vec{\mathcal Br}$, there is a map $Z_{\mathcal X, P, k}: \mathcal{P}a\vec{\mathcal T}(P)_k \to \mathcal X(P)_k$ such that for any $\mathbf{T}\in  \mathcal Pa\vec{\mathcal T}(P)_k$ one has $Z_{\mathcal X,k}(\mathbf{T}):=\mathrm{proj}^{\mathcal X}_{P,\pi_0(\mathbf{T})_{\mathrm{cir}}}(Z_{\mathcal X}(\mathbf{T}))$ where $\mathrm{proj}^{\mathcal X}_{P, \pi_0(\mathbf{T})_{\mathrm{cir}}} : \mathcal X(P,\pi_0(\mathbf{T})_{\mathrm{cir}}) \to \mathcal X(P)_{|\pi_0(\mathbf{T})_{\mathrm{cir}}|}=\mathcal X(P)_k$ is as in Lemma~\ref{r:lemmaproj12-29}. The commutativity  of the diagrams
\begin{equation*}
\xymatrix{
\mathcal{P}a\vec{\mathcal T}(\downarrow \uparrow)_{k-2} \ar^{\ \ \ \widetilde{\mathrm{KIImap}}_{\underline{\mathcal{P}a\vec{\mathcal T}},k}^i}[rrr]
\ar_{Z_{{\mathcal{X}},\downarrow\uparrow,k-2}}[d]& & & \underline{\vec{\mathcal T}}(\emptyset,\emptyset)_k
\ar^{Z_{\mathcal{X},k}}[d]\\ 
\mathcal{X}(\downarrow \uparrow)_{k-2}\ar_-{{\mathrm{KIImap}}_{{\mathcal{X}},k}^i}[rrr]
& & &\mathcal{X}_k} \text{\quad\quad and \quad\quad} \xymatrix{
\mathcal{P}a\vec{\mathcal T}(\uparrow \downarrow)_{k-2} \ar^-{\widetilde{\mathrm{KIImap}}_{\underline{\mathcal{P}a\vec{\mathcal T}},k}^i}[rrr]
\ar_{Z_{\mathcal{X},\uparrow\downarrow,k-2}}[d]& & & \vec{\mathcal T}(\emptyset,\emptyset)_k
\ar^{Z_{\mathcal{X},k}}[d]\\ 
\mathcal{X}(\uparrow \downarrow)_{k-2}\ar_{\ \ \ {\mathrm{KIImap}}_{{\mathcal{X}},k}^i}[rrr]
& & &\mathcal{X}_{k}}
\end{equation*}
follows from axioms (LMO4)~$(a)$, $(b)$ and $(c)$ for $i=1$; and (LMO4) $(a)$, $(b)$, $(c)$ and $(e)$ for $i=2$. Here ${\mathrm{KIImap}}_{{\mathcal{X}},k}^i$  for $i=1,2$ are as in Definitions~\ref{r:2022.12.27KIIMaps1} and~\ref{lemma1.80-20210731}.  The above implies the commutativity of the diagram 
\begin{equation}
\xymatrix{
\mathcal{P}a\vec{\mathcal T}(\downarrow \uparrow)_{k-2}  \ar^-{ f_{k}}[rrr]
\ar_{Z_{\mathcal{X},\downarrow\uparrow,k-2}}[d]& & & \mathbb{Z}\vec{\underline{\mathcal T}}(\emptyset,\emptyset)_{k}
\ar^{Z_{\mathcal{X},k}}[d]\\ 
\mathcal{X}(\downarrow \uparrow)_{k-2}\ar_-{g_{k}}[rrr]
& & &\mathcal{X}_k,}
\end{equation}
where
$$f_{k} = \widetilde{\mathrm{KIImap}}_{\underline{\mathcal{P}a\vec{\mathcal T}},k}^2 - \widetilde{\mathrm{KIImap}}_{\underline{\mathcal{P}a\vec{\mathcal T}},k}^1,\quad \quad \text{and}\quad \quad  g_{k} ={\mathrm{KIImap}}_{{\mathcal{X}},k}^2 - {\mathrm{KIImap}}_{{\mathcal{X}},k}^1.$$

By Proposition~\ref{secondequivKIIT}, $(\mathrm{KII}\vec{\mathcal T}_k) =\mathrm{Im}(f_{k})$. The diagram then implies that $Z_{{\mathcal X},k}((\mathrm{KII}\vec{\mathcal T}_k))$ is contained in $\mathrm{Im}(g_{k})$, which is $(\mathrm{KII}\mathcal{X}_k)$.

Besides, for any $\mathbf{T}\in \underline{\vec{\mathcal T}}(\emptyset,\emptyset)_k$ and $c \in \pi_0(\mathbf{T})_{\mathrm{cir}}$, 
one has 
$$Z_{\mathcal X,k}(\mathrm{co}_{\vec{\mathcal T}}(\mathbf{T},c))-Z_{\mathcal X,k}(\mathbf{T}) 
=(\mathrm{co}^{\mathcal X}(\vec{\varnothing},\pi_0(\mathbf{T})_{\mathrm{cir}})-\mathrm{Id})(Z_{\mathcal X,k}(\mathbf{T}))$$
by (LMO4)~$(d)$, therefore $Z_{\mathcal X,k}(\mathrm{co}_{\vec{\mathcal T}}(\mathbf{T},c))-Z_{\mathcal X,k}(\mathbf{T}) \in \mathrm{CO}\mathcal X_k$. 
It follows that $Z_{\mathcal X,k}((\mathrm{CO}\vec{\mathcal T}_k)) \subset (\mathrm{CO}\mathcal X_k)$. This completes the proof of the first statement. The second statement then follows from (LMO1)~$(b)$. 
\end{proof}

\section{Construction of a pre-LMO structure based on the Kontsevich integral}\label{sec:2}

The purpose of this section is to introduce a pre-LMO structure
\begin{equation*}
\begin{split}
\mathring{\mathbf{A}}=\Big(\big\{\mathring{\mathcal A}^\wedge(P,\bullet)\big\}_{P \in \vec{\mathcal  Br}}, \big\{\mathrm{Tens}^{{\mathcal A}}(P,P')_{\bullet,\bullet'}\big\}_{P,P' \in \vec{\mathcal Br}},  \big\{\mathrm{Comp}^{{\mathcal A}}(P,P')_{\bullet,\bullet'}\big\}_{P,P' \text{ composable in } \vec{\mathcal Br}},\\
 \big\{\mathrm{co}^{{\mathcal A}}(P,\bullet)\big\}_{P \in \vec{\mathcal Br}},\big\{\mathrm{dbl}_{{\mathcal A}}((P,A),\bullet)\big\}_{P \in \vec{\mathcal Br}, A \subset \pi_0(P)}, Z_{{\mathcal A}}(\bullet)\Big)
\end{split}
\end{equation*}
see Definition~\ref{def:preLMOstructure}. We start \S\ref{sec:4-0} with some preliminaries about linear and cyclic orders. In \S\ref{sec:4-1} (resp. \S\ref{sec:4-2}, \S\ref{sec:4-3}, \S\ref{sec:4-4} and \S\ref{sec:4-5}), we introduce the functor $\mathring{\mathcal A}(P,\bullet)$ of \emph{Jacobi diagrams} with underlying oriented Brauer diagram $P$ (resp.  $\mathrm{Tens}^{{\mathcal A}}(P,P')_{\bullet,\bullet'}$, $\mathrm{Comp}^{{\mathcal A}}(P,P')_{\bullet,\bullet'}$, $\mathrm{co}^{{\mathcal A}}(P,\bullet)$, $\mathrm{dbl}_{{\mathcal A}}((P,A),\bullet)$).
In \S\ref{sec:4-6}, we verify that these data satisfy the pre-LMO axioms axioms (LMO1), (LMO2) and (LMO3), see Theorem~\ref{r:thmsummarizingLMO123}. In \S\ref{sec:4-7} we introduce a natural transformation  $\big\{\mathrm{co}^{{\mathcal A}}((P,a),\bullet)\big\}_{P \in \vec{\mathcal Br}, a\in\pi_0(P)}$ and define the assignment $Z_{{\mathcal A}}(\bullet)$ (\emph{Kontsevich integral}). We conclude with \S\ref{sec:4-8} proving axiom (LMO4) and therefore that $\mathring{\mathbf{A}}$ is a pre-LMO structure, see Theorem~\ref{thm:prelmokont}.  Further properties of $\mathring{\mathbf{A}}$ are discussed in \S\ref{sec:4-9}.

\subsection{Preliminaries on linear and cyclic orders}\label{sec:4-0}

Recall that a \emph{linear order} (or \emph{total order}) on a set $X$ 
is a subset $L \subset X^2$ (one writes $a<b$ if and only if $(a,b) \in L$), such that: for any $a$, $a<a$ is not true (irreflexivity); for any $a$, $b$, $c$, $a<b$ and $b<c$ implies $a<c$ (transitivity); and for any distinct $a$, $b$, one has $a<b$ or $b<a$.   A set endowed with a linear order is called \emph{linearly ordered set}.

If $(X,<)$ is a linearly ordered set and $Y\subset X$, then the restriction of  $<$ to $Y$ endows it with a linear order. 

We will need the notion of \emph{cyclic order} on a set $X$. When $X$ is finite, a cyclic order on $X$ is the same as: $(a)$ an orbit of the action of the group of cyclic permutations on the set of linear orders on $X$ or $(b)$ a transitive action of $\mathbb{Z}$ on $X$ by permutations, (see \cite[Chapter 1]{viro2008elementary}). More generally, we have the following. 
 
\begin{definition}{\cite[Chapter 1]{viro2008elementary}}\label{def:definitioncyclicorder} \footnote{Condition $(d)$ in Definition~\ref{def:definitioncyclicorder} is present in the Russian edition, but has been mistakenly erased from the English translation.} Let  $X$ be a set. A \emph{cyclic order} on $X$ is a subset $C\subset X^3$, we write $[a<b<c]$ instead of $(a,b,c)\in C$, such that
\begin{itemize}
\item[$(a)$] for any pairwise distinct $a,b,c\in X$, we have either $[a<b<c]$ or $[b<a<c]$, but not both;
\item[$(b)$] for any $a,b,c\in X$ we have $[a<b<c]$ if and only if $[b<c<a]$;
\item[$(c)$] if $a,b,c,d\in X$ are such that $[a<b<c]$ and $[a<c<d]$ then $[a<b<d]$.
\item[$(d)$] if $a,b,c \in X$ are not pairwise distinct, then $[a<b<c]$ does not hold.
\end{itemize}
\end{definition}

Notice that if $|X| \leq 2$, the only cyclic order on $X$ is $\emptyset \subset X^3$. A set $X$ endowed with a cyclic order is called \emph{cyclically ordered set}.

\begin{lemma}\label{r:2023-01-11-r2}
 \begin{itemize}
 
\item[$(a)$] Let $(I,<_{I})$ be a linearly ordered set and let $\{(A_i,<_i)\}_{i \in I}$ be a family of linearly ordered sets indexed by $I$. Then there is a unique linear order  $\prec$ on $\bigsqcup_{i \in I}A_i$, such that for each $i \in I$, its restriction to $A_i$ coincides with $<_i$, and for any $i,j\in I$ with $i<_{I} j$ and any $x\in A_i$ and $y\in A_j$, one has $x\prec y$.
 
\item[$(b)$] Let $(I,[\cdot <\cdot <\cdot])$ be a cyclically ordered set  and let $\{(A_i,<_i)\}_{i \in I}$ be a family of linearly ordered sets indexed by $I$. Let $A:=\bigsqcup_{i \in I}A_i$ and $\pi:A\to I$ the canonical projection ($\pi(a)= i$ if and only if $a\in A_i$). Then there is a unique cyclic order $[\cdot <\cdot <\cdot]_A$ on $A$ such that for any $a,b,c \in A$, one has $[a<b<c]_A$ if and only if one of the following conditions is satisfied: 
\begin{itemize}
\item[$(1)$] $[\pi(a)<\pi(b)<\pi(c)]$, 
\item[$(2)$] $\pi(a)=\pi(b) \neq \pi(c)$ and $a<_{\pi(a)}b$,
\item[$(3)$] $\pi(b)=\pi(c) \neq \pi(a)$ and $b<_{\pi(b)}c$ 
\item[$(4)$] $\pi(c)=\pi(a) \neq \pi(b)$ and $c<_{\pi(c)}a$ 
\item[$(5)$] $\pi(a)=\pi(b)=\pi(c)$  and one of the following requirements is verified $a<_{\pi(a)}b<_{\pi(a)}c$ or $c<_{\pi(a)}a<_{\pi(a)}b$ or $b<_{\pi(a)}c<_{\pi(a)}a$. 
 \end{itemize}
\end{itemize}
\end{lemma}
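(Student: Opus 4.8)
The statement is purely combinatorial: for each part, one must exhibit a set (a subset of $A^2$ or $A^3$) and verify that it satisfies the axioms of a linear (resp. cyclic) order, and that it is the unique such set with the prescribed restriction/compatibility properties. The natural approach is constructive: first \emph{define} the candidate order by the explicit formula the statement essentially hands us, then \emph{check the axioms}, then \emph{prove uniqueness} by observing that the compatibility conditions force every triple (or pair) into exactly one of the enumerated cases.

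\textbf{Part (a).} First I would set $L:=\{(x,y)\in A^2 \mid \pi(x)<_I\pi(y)\}\cup\{(x,y)\in A^2 \mid \pi(x)=\pi(y)=i \text{ and } x<_i y\}$, where $\pi:\bigsqcup_{i\in I}A_i\to I$ is the canonical projection. Irreflexivity is immediate since $<_I$ and each $<_i$ are irreflexive. For transitivity, given $x\prec y\prec z$, I would split into cases according to whether $\pi(x),\pi(y),\pi(z)$ coincide: if all three indices are equal, transitivity of $<_i$ applies; if not, at least one strict inequality $<_I$ appears among the indices, and transitivity of $<_I$ (together with the fact that indices are weakly increasing along $\prec$) yields $\pi(x)<_I\pi(z)$ or handles the mixed sub-case. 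Totality: for distinct $x,y$, either $\pi(x)\ne\pi(y)$, in which case totality of $<_I$ decides, or $\pi(x)=\pi(y)$, in which case totality of $<_i$ decides. Uniqueness is the cleanest part: any linear order $\prec'$ restricting to $<_i$ on each $A_i$ and satisfying the index-compatibility condition must agree with $L$ on same-index pairs (by the restriction hypothesis) and on different-index pairs (by the compatibility hypothesis), and these two cases exhaust $A^2\setminus\{\text{diagonal}\}$, so $\prec'=L$.

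\textbf{Part (b).} This is the substantive case. I would define $C\subset A^3$ to be precisely the set of triples $(a,b,c)$ satisfying one of conditions $(1)$--$(5)$; note condition $(d)$ of Definition~\ref{def:definitioncyclicorder} is built in because $(1)$--$(5)$ together never hold for a non-pairwise-distinct triple that is not already excluded, and in fact one must double-check that if $a,b,c$ are not pairwise distinct then none of $(1)$--$(5)$ applies (if two of the $a,b,c$ coincide, the relevant $<_{\pi(\cdot)}$ clause fails by irreflexivity, $(1)$ fails by $(d)$ for $I$, and $(5)$ fails similarly). Then I would verify axioms $(a)$--$(c)$ of Definition~\ref{def:definitioncyclicorder}. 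Axiom $(b)$ (invariance under cyclic rotation $a\mapsto b\mapsto c\mapsto a$) is essentially visible from the symmetry of the list: rotating permutes $(2)\leftrightarrow(3)\leftrightarrow(4)$ among themselves, fixes $(1)$ and $(5)$ by the cyclic-rotation axiom for $I$ and for $<_{\pi(a)}$ respectively. Axiom $(a)$ (exactly one of $[a<b<c]$, $[b<a<c]$ for pairwise distinct $a,b,c$) requires a case analysis on the partition of $\{a,b,c\}$ induced by $\pi$: three distinct indices (use axiom $(a)$ for $I$), two indices equal (use totality of the relevant $<_i$), all indices equal (use axiom $(a)$ for $<_{\pi(a)}$ via the linear-order-to-cyclic-order correspondence on a three-element linearly ordered set). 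Axiom $(c)$ (the ``interval'' transitivity $[a<b<c]\wedge[a<c<d]\Rightarrow[a<b<d]$) is the most laborious: I expect to enumerate the possible coincidence patterns among $\pi(a),\pi(b),\pi(c),\pi(d)$ and in each reduce to axiom $(c)$ for $I$, to transitivity of some $<_i$, or to a direct check. Uniqueness follows as in part (a): the five conditions partition (up to the rotation and the distinctness constraints) all the ways a triple can sit relative to the fibers of $\pi$, and any cyclic order with the stated restriction-and-compatibility properties is forced to contain exactly those triples.

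\textbf{Main obstacle.} The genuine work is entirely in Part (b), axiom $(c)$: the four-point compatibility axiom forces a somewhat long bookkeeping over the $\mathrm{Part}(\{a,b,c,d\})$-many coincidence patterns of the indices $\pi(a),\pi(b),\pi(c),\pi(d)$, and within the ``several points share an index'' patterns one must carefully track which clause $(1)$--$(5)$ certifies each hypothesis and assemble the conclusion. None of it is deep, but it is the step where an error is most likely, so I would organize it as a table of cases rather than prose. Everything else reduces mechanically to the axioms for $<_I$, the $<_i$, and the elementary equivalence between cyclic orders on a finite set and orbits of linear orders under rotation recalled before Definition~\ref{def:definitioncyclicorder}.
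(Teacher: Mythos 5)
Your proposal is correct and follows essentially the same route as the paper: the paper's proof simply observes that the relation is uniquely determined by the stated conditions and that verifying the linear/cyclic order axioms is a straightforward case check, which is exactly the plan you carry out (in more detail, particularly for axiom $(c)$ of the cyclic order).
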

\begin{proof}
$(a)$ The binary relation $\prec$ on $\bigsqcup_{i \in I}A_i$ is uniquely determined by the conditions of the statement, i.e., if $x,y\in \bigsqcup_{i \in I}A_i$ are such that $x,y\in A_i$ for some $i\in I$, then $x\prec y$ if and only if $x<_i y$ and if $x\in A_i$ and $y\in A_j$ for $i\neq j$ then $x\prec y$ if and only if $i<_I j$. It is straightforward to check that $\prec$ defines a linear order on $\bigsqcup_{i \in I}A_i$.

$(b)$ The ternary relation $[\cdot <\cdot <\cdot]_A$ on $A:=\bigsqcup_{i \in I}A_i$ is uniquely determined by the conditions of the statement. It is easy to check that it defines a cyclic order on $A$. 
\end{proof}

\begin{definition}\label{def:PDISM}
\begin{itemize}
\item[($a$)] A \emph{partially defined injective self-map} (PDISM for short)  \index[notation]{PDISM@PDISM} on a finite set $X$ is a diagram $X \supset D_{\mathrm{succ}}\xrightarrow{\mathrm{succ}} X$, denoted shortly by $\mathrm{succ}$, where $D_{\mathrm{succ}}$ is a subset of $X$ (the \emph{domain of definition} of  $\mathrm{succ}$) and $\mathrm{succ}: D_{\mathrm{succ}} \to X$ is an injective map. 
\item[($b$)] Let  $\mathrm{succ}$ be a PDISM on a finite set $X$. We set $X_0:=X\setminus D_{\mathrm{succ}}$, that is $X_0$ is the subset of $X$ of elements for which the image of the map $\mathrm{succ}$ is not defined. 
\end{itemize}
\end{definition}

\begin{lemma}\label{r:2023-02-14-r1}
Let $X$ be a finite set and $\mathrm{succ}$ be a \emph{PDISM} on $X$ and $X_0\subset X$ as in Definition~\ref{def:PDISM}. For $x \in X$, let $N(x)$ be the set $\{n \in\mathbb{Z}_{\geq 0} \ | \ \mathrm{succ}^n(x) \text{ is defined} \}$.  Define $\mathrm{orb}(x) \subset X$ as $$\mathrm{orb}(x):=\{\mathrm{succ}^n(x)\ | \ n \in N(x)\} \cup \{ y \in X \ | \  \text{there exists } k\in \mathbb{Z}_{\geq 0} \text{ such that }\mathrm{succ}^k(y)=x\},$$
so $\mathrm{orb}(x) \in \mathcal P(X)$. This defines a map $\mathrm{orb} : X \to \mathcal P(X)$. \index[notation]{orb@$\mathrm{orb}$} We have 
\begin{itemize}
\item[$(a)$] $\mathrm{orb}(X)\subset \mathcal{P}(X)$ is a partition of $X$. 

\item[$(b)$] Let $\mathrm{orb}_{\mathrm{lin}}(X):=\{\omega \in \mathrm{orb}(X)\ | \ \omega \cap X_{0} \neq \emptyset\}$\index[notation]{orb_lin@$\mathrm{orb}_{\mathrm{lin}}$} and  $\mathrm{orb}_{\mathrm{cyc}}(X):=\{\omega \in \mathrm{orb}(X) \ | \ \omega \cap X_{0}=\emptyset\}$.\index[notation]{orb_cyc@$\mathrm{orb}_{\mathrm{cyc}}$} Then  $\mathrm{orb}(X)=\mathrm{orb}(X)_{\mathrm{lin}} \sqcup \mathrm{orb}_{\mathrm{cyc}}(X)$. One also has $\cup_{\omega \in \mathrm{orb}_{\mathrm{lin}}(X)}\omega=\{x \in X\ | \ N(x) \text{ is finite}\}$ and $\cup_{\omega \in \mathrm{orb}_{\mathrm{cyc}}(X)}\omega=\{x \in X\ | \ N(x) \text{ is infinite}\}$.

\item[$(c)$] Let $\omega \in \mathrm{orb}_{\mathrm{lin}}(X)$, then $|\omega\cap X_{0}|=1$, we denote by $\omega_{0}$ the only element in $\omega\cap X_{0}$. The set $\omega$ (viewed as a subset of $X$) is equipped with a linear order $<_{\omega}$, such that for each $x \in \omega\setminus X_{0}$ one has $x <_{\omega} \mathrm{succ}(x)$ and $x<_{\omega}\omega_{0}$.
 
\item[$(d)$] For $\omega \in \mathrm{orb}_{\mathrm{cyc}}(X)$, the restriction to $\omega$ (viewed as a subset of $X$) of the map $\mathrm{succ}$ defines a transitive action of $\mathbb{Z}$ and therefore equips $\omega$ with a cyclic order.
\end{itemize}
\end{lemma}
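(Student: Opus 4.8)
The statement to be proved is Lemma~\ref{r:2023-02-14-r1}, describing the orbit structure of a PDISM $\mathrm{succ}$ on a finite set $X$. The overall strategy is elementary: everything reduces to the fact that a partially defined injective self-map of a finite set decomposes, as an oriented graph, into a disjoint union of finite directed paths and finite directed cycles. I would make this precise by first analyzing the forward and backward orbits under iteration of $\mathrm{succ}$, then assembling the pieces.

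\emph{Step 1 (the relation $\mathrm{orb}$ is an equivalence relation).} Define on $X$ the relation $x \sim y$ iff $y \in \mathrm{orb}(x)$. Unwinding the definition, $y \in \mathrm{orb}(x)$ means either $y = \mathrm{succ}^n(x)$ for some $n \in N(x)$, or $\mathrm{succ}^k(y) = x$ for some $k \geq 0$. Reflexivity is clear ($n = 0$). For symmetry and transitivity, the key observation — which uses injectivity of $\mathrm{succ}$ crucially — is that if $\mathrm{succ}^a(x) = \mathrm{succ}^b(y)$ (both defined), then either $x$ is a forward iterate of $y$ or vice versa: say $a \leq b$, then by injectivity applied $a$ times, $x = \mathrm{succ}^{b-a}(y)$. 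From this one checks directly that $x \sim y$ and $y \sim z$ force $x \sim z$ by case analysis on the four combinations of ``forward/backward''. Hence $\mathrm{orb}(X)$ is the set of equivalence classes, which proves $(a)$.

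\emph{Step 2 (finiteness dichotomy and parts $(b)$, $(c)$, $(d)$).} For $x \in X$, consider $N(x) = \{n \geq 0 : \mathrm{succ}^n(x) \text{ defined}\}$. Since $X$ is finite, either $N(x)$ is finite — equivalently there is a largest $n$ with $\mathrm{succ}^n(x)$ defined, and then $\mathrm{succ}^n(x) \in X_0$ — or $N(x) = \mathbb{Z}_{\geq 0}$, in which case the forward orbit $\{\mathrm{succ}^n(x)\}_{n\geq 0}$ is finite, so by pigeonhole $\mathrm{succ}^i(x) = \mathrm{succ}^j(x)$ for some $i < j$, and injectivity (Step 1's observation) gives $\mathrm{succ}^{j-i}(x) = x$; thus $x$ lies on a genuine cycle and no element of its class can be in $X_0$ (an element of $X_0$ has no image, but every element of the class does, being a forward iterate of a cyclic point). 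This shows that membership in $X_0$ is constant on ``linear'' classes being nonempty intersection, gives the disjoint decomposition $\mathrm{orb}(X) = \mathrm{orb}_{\mathrm{lin}}(X) \sqcup \mathrm{orb}_{\mathrm{cyc}}(X)$, and identifies the two unions with the finite-$N$ and infinite-$N$ loci, proving $(b)$. For $(c)$: in a linear class $\omega$, the endpoint argument shows $\omega \cap X_0 \neq \emptyset$; uniqueness follows because two distinct elements of $X_0$ in the same class would be connected by a chain of $\mathrm{succ}$'s, but an element of $X_0$ cannot be the \emph{source} of a $\mathrm{succ}$-step from another element unless... — more precisely, walking forward from any $y \in \omega$ one reaches $X_0$ in finitely many steps and by injectivity this terminal element is unique, so $|\omega \cap X_0| = 1$; the linear order $<_\omega$ is then defined by $x <_\omega y$ iff $y = \mathrm{succ}^m(x)$ for some $m \geq 1$, and one checks irreflexivity (no cycles in a linear class), transitivity (compose), and totality (any two elements of $\omega$ are comparable because both are forward iterates of a common ancestor, using injectivity to linearly order the ancestors). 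For $(d)$: on a cyclic class $\omega$, $\mathrm{succ}$ restricts to a bijection $\omega \to \omega$ (it is injective and, since every element has an image in $\omega$ and $\omega$ is finite, surjective), and it is a single cycle — if it had two orbits, they would be distinct $\sim$-classes — hence it generates a transitive $\mathbb{Z}$-action, which by the discussion before Definition~\ref{def:definitioncyclicorder} is the same data as a cyclic order on $\omega$.

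\emph{Main obstacle.} There is no deep obstacle here; the lemma is a structural unwinding. The one place requiring care is the repeated use of injectivity of $\mathrm{succ}$ to reconcile two presentations of the same element (``forward from $x$'' versus ``forward from $y$'') — getting the bookkeeping right so that the four-way case analysis in Step 1 and the totality check in $(c)$ are genuinely complete, rather than covering only the ``obvious'' cases, is the only thing that needs attention. Everything else is routine verification of order axioms.
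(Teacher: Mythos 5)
Your proof is correct and follows essentially the same route as the paper, whose own argument is only a brief sketch (partition via the equivalence relation, injectivity for $|\omega\cap X_0|=1$, the path/cycle dichotomy); you simply fill in the verifications the paper leaves as "one checks". The only cosmetic difference is that the paper describes a linear class as the backward iterates $\{\mathrm{succ}^{-i}(\omega_0)\}$ of its unique endpoint, while you define $<_\omega$ via forward iteration — these are the same ordering.
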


\begin{proof}
$(a)$ Clearly we have $X=\cup_{x}\mathrm{orb}(x)$. One checks that  if $\mathrm{orb}(x)\cap \mathrm{orb}(x')\not = \emptyset$ then $\mathrm{orb}(x)=\mathrm{orb}(x')$.

$(b)$ Follows directly from the definitions of $\mathrm{orb}_{\mathrm{lin}}(X)$ and $\mathrm{orb}_{\mathrm{cyc}}(X)$.

$(c)$ The fact that $|\omega\cap X_{0}|=1$ follows from the injectivity of $\mathrm{succ}$. One then has $\omega =\{\mathrm{succ}^{-i}(\omega_{0}) \ | \ i\in[\![0,|\omega|-1]\!]\}$ which is clearly equipped with a linear order as stated in the lemma.

$(d)$ Follows straightforwardly. 
\end{proof}

\begin{lemma}\label{r:2023-02-15r1}  Let $X$ be a finite set , let $\mathrm{succ}$ be a \emph{PDISM} on $X$ and let  $X_0\subset X$ be as in Definition~\ref{def:PDISM}. Set $Y:=X\setminus X_0$, $Y_0:=\mathrm{succ}^{-1}(X_0)$.
\begin{itemize}
\item[$(a)$] There is a unique \emph{PDISM} $\mathrm{succ}_Y$ on $Y$ with domain of definition $Y \setminus Y_0$, given by   $\mathrm{succ}_Y(y):=\mathrm{succ}(y)$ for all $y\in Y\setminus Y_0$. 

\item[$(b)$]  Let $\mathrm{orb}(X)$ (resp. $\mathrm{orb}(Y)$) be the partition of $X$ (resp. $Y$) and $\mathrm{orb}(Y)=\mathrm{orb}_{\mathrm{lin}}(Y) \sqcup \mathrm{orb}_{\mathrm{cyc}}(Y)$  (resp. $\mathrm{orb}(X)=\mathrm{orb}_{\mathrm{lin}}(X) \sqcup \mathrm{orb}_{\mathrm{cyc}}(X)$) be its decomposition obtained by applying Lemma~\ref{r:2023-02-14-r1} to $(X,X_0,\mathrm{succ})$ (resp. $(Y,Y_0,\mathrm{succ}_Y)$).  There are canonical bijections $\mathrm{orb}_{\mathrm{cyc}}(X) \simeq \mathrm{orb}_{\mathrm{cyc}}(Y)$ and $$\mathrm{orb}_{\mathrm{lin}}(X) \simeq \mathrm{orb}_{\mathrm{lin}}(Y) \sqcup \{x \in X_0\ | \ \mathrm{succ}^{-1}(x)=\emptyset\}.$$    
 
\item[$(c)$] There is a canonical  bijection between the underlying set of each element of $\mathrm{orb}_{\mathrm{cyc}}(Y)$ and its image in $\mathrm{orb}_{\mathrm{cyc}}(X)$, compatible with the cyclic orders; 
and there is an injection of the underlying set of each element of $\mathrm{orb}_{\mathrm{lin}}(Y)$ to that of its image in $\mathrm{orb}_{\mathrm{lin}}(X)$, compatible with the linear orders. 
\end{itemize}
\end{lemma}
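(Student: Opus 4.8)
\textbf{Plan of proof for Lemma~\ref{r:2023-02-15r1}.}

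The statement is a collection of compatibility assertions relating the orbit decomposition of a PDISM $\mathrm{succ}$ on $X$ with that of the restricted PDISM $\mathrm{succ}_Y$ on $Y = X \setminus X_0$. The overall strategy is to unwind Definition~\ref{def:PDISM} and Lemma~\ref{r:2023-02-14-r1} directly; nothing deep is needed, only careful bookkeeping of which elements acquire or lose a successor when one passes from $X$ to $Y$. First I would establish part $(a)$: since $\mathrm{succ}$ is injective on $D_{\mathrm{succ}} = X \setminus X_0$, and $Y_0 := \mathrm{succ}^{-1}(X_0)$ is by definition the set of $y \in Y$ whose $\mathrm{succ}$-image lands in $X_0 = X \setminus Y$, the formula $\mathrm{succ}_Y(y) := \mathrm{succ}(y)$ for $y \in Y \setminus Y_0$ does take values in $Y$; injectivity is inherited from $\mathrm{succ}$, and the domain of definition $Y \setminus Y_0$ and the values are forced by the requirement that $\mathrm{succ}_Y$ agree with $\mathrm{succ}$ where both are defined, giving uniqueness.

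For part $(b)$ I would argue orbit by orbit. Given $x \in X$, its orbit $\mathrm{orb}(x)$ is the maximal chain $\cdots \to x' \to \mathrm{succ}(x') \to \cdots$ through $x$. The key observation is that deleting $X_0$ removes exactly the terminal elements of the linear orbits: an orbit $\omega \in \mathrm{orb}_{\mathrm{lin}}(X)$ has a unique element $\omega_0 \in \omega \cap X_0$ by Lemma~\ref{r:2023-02-14-r1}$(c)$, and $\omega \setminus \{\omega_0\}$ is precisely an $\mathrm{orb}(Y)$-orbit \emph{unless} $\omega = \{\omega_0\}$ is a singleton, i.e.\ $\mathrm{succ}^{-1}(\omega_0) = \emptyset$. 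Cyclic orbits of $X$ contain no element of $X_0$ (that is the defining condition $\omega \cap X_0 = \emptyset$), hence lie entirely in $Y$ and are unchanged as sets; one checks $\mathrm{succ}_Y$ restricted to such an $\omega$ equals $\mathrm{succ}$ restricted to $\omega$, so they remain cyclic orbits of $Y$, and conversely a cyclic $\mathrm{orb}(Y)$-orbit cannot be extended in $X$ (its elements all have successors already), so it is a cyclic $\mathrm{orb}(X)$-orbit. This gives the bijection $\mathrm{orb}_{\mathrm{cyc}}(X) \simeq \mathrm{orb}_{\mathrm{cyc}}(Y)$. For the linear orbits, the map $\omega \mapsto \omega \setminus \{\omega_0\}$ sends those $\omega$ with $|\omega| \geq 2$ to $\mathrm{orb}_{\mathrm{lin}}(Y)$ and is a bijection onto it (its inverse reattaches the unique $X_0$-element that is the $\mathrm{succ}$-image of the top of the $Y$-orbit); the singleton linear orbits $\{\omega_0\}$ with $\mathrm{succ}^{-1}(\omega_0) = \emptyset$ are exactly the summand $\{x \in X_0 \mid \mathrm{succ}^{-1}(x) = \emptyset\}$. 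Hence $\mathrm{orb}_{\mathrm{lin}}(X) \simeq \mathrm{orb}_{\mathrm{lin}}(Y) \sqcup \{x \in X_0 \mid \mathrm{succ}^{-1}(x) = \emptyset\}$.

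Part $(c)$ then records that these identifications are order-compatible, which is immediate from how the linear and cyclic orders are built in Lemma~\ref{r:2023-02-14-r1}$(c)$--$(d)$ from iterates of $\mathrm{succ}$: for a cyclic orbit the $\mathbb{Z}$-action via $\mathrm{succ}_Y$ is literally the restriction of the one via $\mathrm{succ}$, so the cyclic orders coincide under the (identity-on-underlying-sets) bijection; for a linear orbit $\omega' \in \mathrm{orb}_{\mathrm{lin}}(Y)$, its linear order is generated by $y <_{\omega'} \mathrm{succ}_Y(y)$, which is a sub-relation of the order on the corresponding $\omega \in \mathrm{orb}_{\mathrm{lin}}(X)$ generated by $x <_\omega \mathrm{succ}(x)$ together with $x <_\omega \omega_0$, so the inclusion $\omega' \hookrightarrow \omega$ is order-preserving. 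I do not anticipate a genuine obstacle here; the only thing requiring care — and the step I would treat as the ``main point'' — is the case analysis in $(b)$ distinguishing singleton linear orbits (which disappear into the $X_0$-summand) from longer ones (which survive as $Y$-orbits), since it is easy to conflate ``elements of $X_0$'' with ``orbits lost'' when in fact each non-singleton linear orbit keeps its non-$X_0$ part as a $Y$-orbit while contributing its $X_0$-endpoint to neither summand. Making that trichotomy precise, and verifying the proposed bijections are mutually inverse, is the bulk of the work.
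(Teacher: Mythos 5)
Your proposal is correct and follows essentially the same route as the paper's (terse) proof: a direct orbit-by-orbit check that cyclic orbits are untouched and that linear $X$-orbits lose exactly their terminal $X_0$-element, with singleton linear orbits accounting for the summand $\{x\in X_0 \mid \mathrm{succ}^{-1}(x)=\emptyset\}$. The only cosmetic difference is that you build the bijection on linear orbits by deleting $\omega_0$ (with reattachment as inverse), whereas the paper defines the inverse map $\omega\mapsto\omega\cup\{\mathrm{succ}(y_0)\}$ from the $Y$-side — the same bijection.
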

\begin{proof}
$(a)$ follows straightforwardly. $(b)$ We can show that $\mathrm{orb}_{\mathrm{cyc}}(X) = \mathrm{orb}_{\mathrm{cyc}}(Y)$. Indeed, if $\omega\in\mathrm{orb}_{\mathrm{cyc}}(X) $, then $\omega =\mathrm{orb}(x)$ for some $x\in X$, since $\omega\cap X_0 =\mathrm{orb}(x)\cap X_0=\emptyset$ then $x\in Y$ and clearly $\omega \cap Y_0=\mathrm{orb}(x)\cap Y_0=\emptyset$. Therefore  $\omega =\mathrm{orb}(x)\in \mathrm{orb}_{\mathrm{cyc}}(Y)$. The converse inclusion is evident. Let us show the second statement in $(b)$, if $\omega\in \mathrm{orb}_{\mathrm{lin}}(Y)$, then $\omega =\mathrm{orb}(y)$ with $y\in Y$. Since $\omega\cap Y_0 =\mathrm{orb}(y)\cap Y_0\not = \emptyset$ , then there exist $y_0\in Y_0$ such that  $\omega =\mathrm{orb}(y_0)$. It follows that $\omega\cup\{\mathrm{succ}(y_0)\}\in \mathrm{orb}(X)$ and $(\omega\cup\{\mathrm{succ}(y_0)\})\cap X_0\not =\emptyset$, that is, $\omega\cup\{\mathrm{succ}(y_0)\}\in \mathrm{orb}_{\mathrm{lin}}(X)$. This defines an injective map  $\mathrm{orb}_{\mathrm{lin}}(Y)\to \mathrm{orb}_{\mathrm{lin}}(X)$ with image $\mathrm{orb}_{\mathrm{lin}}(X)\setminus \{x\in X_0 \ | \ \mathrm{succ}^{-1}(x)=\emptyset\}$.  $(c)$ follows from $(b)$.
\end{proof}

\subsection{The functor \texorpdfstring{$\mathring{\mathcal A}^\wedge(P,\bullet)$}{A(P,.)} of Jacobi diagrams}\label{sec:4-1}

\begin{definition}
 A \emph{vertex-oriented unitrivalent graph} is a finite graph whose vertices are univalent or trivalent, and such that for each trivalent vertex the set of half-edges incident to it is cyclically ordered. {We allow looped edges (without vertices).} For $D$ is a vertex-oriented unitrivalent graph, we denote by $\partial D$ its set of univalent vertices. 
\end{definition}

\begin{definition}\label{def:thesetJacPS} Let $P\in\vec{\mathcal Br}$ be an oriented Brauer diagram  (see Definition~\ref{def:vecBr}) and let $S$ be a finite set. A \emph{Jacobi diagram on $(P,S)$} is a class of a tuple $\underline{D}=(D,\varphi,\{\mathrm{lin}_l\}_{l \in \pi_0(P)}, \{\mathrm{cyc}_s\}_{s \in S})$  where $D$ is a vertex-oriented unitrivalent graph ({loops without vertices are allowed}), $\varphi : \partial D\to \pi_0(P) \sqcup S$ is a map,  and for each $l \in \pi_0(P)$, $\mathrm{lin}_l$ is a linear order on $\varphi^{-1}(l)$ and for each $s \in S$, $\mathrm{cyc}_s$ is a cyclic order on $\varphi^{-1}(s)$.  Two such tuples $\underline{D}=(D,\varphi,\{\mathrm{lin}_l\}_{l \in \pi_0(P)}, \{\mathrm{cyc}_s\}_{s \in S})$ and $\widetilde{\underline{D}}=(\widetilde{D},\widetilde{\varphi},\{\widetilde{\mathrm{lin}}_l\}_{l \in \pi_0(P)}, \{\widetilde{\mathrm{cyc}}_s\}_{s \in S})$ are considered equivalent if there exists a graph isomorphism $\psi: D\to \widetilde{D}$ such that $\widetilde{\varphi}\circ \psi_{|\partial D} = \varphi$ and inducing isomorphisms of linear and cyclic ordered sets  $(\varphi^{-1}(l),\mathrm{lin}_l)\cong  (\widetilde{\varphi}^{-1}(l),\widetilde{\mathrm{lin}}_l)$ and $(\varphi^{-1}(s),\mathrm{cyc}_s)\cong  (\widetilde{\varphi}^{-1}(s),\widetilde{\mathrm{cyc}}_s)$ for any $l\in\pi_0(P)$   and any $s\in S$. We denote by ${\mathring{\mathrm{Jac}}}(P,S)$ the set of Jacobi diagrams on $(P,S)$.  \index[notation]{Jac(P,S)@${\mathring{\mathrm{Jac}}}(P,S)$}
\end{definition}

\begin{definition}\label{def:emptyJacdiagram2} Let $P\in\vec{\mathcal Br}$ be an oriented Brauer diagram  and let $S$ be a finite set.  We denote by $\emptyset_{(P,S)}$ the Jacobi diagram on $(P,S)$ given by the  tuple  with ${D}=\emptyset$. \index[notation]{\emptyset_{(P,S)}@$\emptyset_{(P,S)}$}
\end{definition}

In order to represent a Jacobi diagram $\underline{D}=(D,\varphi,\{\mathrm{lin}_l\}_{l \in \pi_0(P)}, \{\mathrm{cyc}_s\}_{s \in S})$ on $(P,S)$ by a drawing, we use the schematic representation of $P$  (see \S\ref{sec:2-8}) using solid lines,  we draw with solid lines an oriented circle labelled by $s$ for each element $s\in S$, we use dashed lines to represent $D$, and the map $\varphi:\partial D\to \pi_0(P)\sqcup S$ is indicated by attaching the univalent vertices to the solid lines so that the linear orders and cyclic orders coincide with the orders induced by the orientations of the solid lines. The vertex orientation of trivalent vertices of $D$ is counterclockwise by convention. In particular, the schematic representation of $\emptyset_{(P,S)}$ coincides with the schematic representation of the oriented Brauer diagram $P$. See Figure~\ref{figuraJD1_ell} for more examples.
\begin{figure}[ht!]
										\centering
                        \includegraphics[scale=0.8]{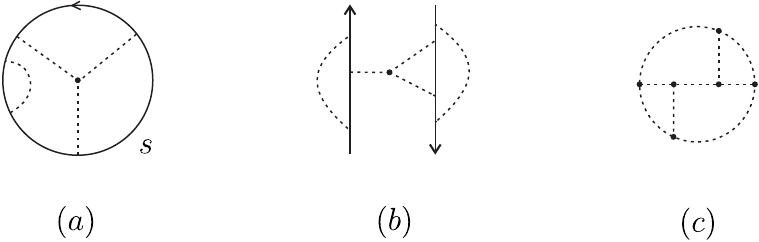}
												\caption{Jacobi diagrams with $(P, S)=( \vec{\varnothing}, \{s\})$ in $(a)$, $(P, S)=( \uparrow \downarrow, \emptyset)$ in $(b)$ and $(P, S)=(\vec{\varnothing}, \emptyset)$ in $(c)$.}
\label{figuraJD1_ell} 										
\end{figure}

\begin{definition}\label{themapm}  Let $P$ be an oriented Brauer diagram and $S$  a finite set. For $x\in\pi_0(P)\sqcup S$, define the map  
$$\mathrm{m}_x : \mathring{\mathrm{Jac}}(P,S)\longrightarrow \mathbb{Z}_{\geq 0}, \quad \quad \quad \underline{D}\mapsto \mathrm{m}_x(\underline{D})$$ \index[notation]{m_x@$\mathrm{m}_x$}
where $\mathrm{m}_x(\underline{D}):=|\varphi^{-1}(x)|$ for any $\underline{D}=(D,\varphi,\{\mathrm{lin}_l\}_{l \in \pi_0(P)}, \{\mathrm{cyc}_s\}_{s \in S})\in \mathring{\mathrm{Jac}}(P,S)$.
\end{definition}

\begin{definition}\label{thespaceAPS}
Let ${P}$ be an oriented Brauer diagram and $S$ a finite set. The \emph{space of Jacobi diagrams on} $({P},S)$ is the $\mathbb{C}$-vector space generated by the set ${\mathring{\mathrm{Jac}}}(P,S)$ of Jacobi diagrams on $(P,S)$ up to the STU, AS, and IHX relations:
$$
\mathring{\mathcal{A}}({P},S):=\frac{{\mathbb{C}}\mathring{\mathrm{Jac}}({P},S)}{\text{(STU, AS, IHX)}}
$$ \index[notation]{A({P},S)@$\mathring{\mathcal{A}}({P},S)$}
where the relations STU,  AS,  IHX  are  the local relations shown in Figure \ref{figuraJD2_ell} using the schematic representation of Jacobi diagrams. In particular, we remark that in the STU relation the solid line could be a portion of the schematic representation of an element of $\pi_0(P)$ or a portion of a circle representing an element of $S$.
\end{definition}

\begin{figure}[ht!]
										\centering
                        \includegraphics[scale=0.8]{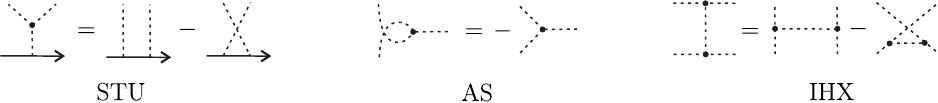}
												
\caption{Relations in $\mathring{\mathcal{A}}(P, S)$. The solid line in the STU relation can be a portion of the schematic representation of an element of $\pi_0(P)$ or a portion of a circle representing an element of $S$.}
\label{figuraJD2_ell} 												
\end{figure}

We define the \emph{degree} of a Jacobi diagram $\underline{D}$ by 
\begin{align}\label{degree-on-APS}
\mathrm{deg}(\underline{D}):= &\  \frac{1}{2}\big((\text{number of trivalent vertices of }D)  + | \partial D|\big)\in\mathbb Z_{\geq0}.
\end{align}\index[notation]{deg@$\mathrm{deg}(\underline{D})$}
One checks that this induces a degree on the space $\mathring{\mathcal{A}}({P},S)$.  We denote by $\mathring{\mathcal{A}}^\wedge({P},S)$\index[notation]{A^\wedge({P},S)@$\mathring{\mathcal{A}}^\wedge({P},S)$} its degree 
completion. From now on, for  a Jacobi diagram $\underline{D}$ on  $({P},S)$, we denote by $[\underline{D}]$ its class in $\mathring{\mathcal{A}}(P,S)\subset \mathring{\mathcal{A}}^\wedge({P},S)$.

\begin{definition}\label{def:emptyJacdiagram} Let $P\in\vec{\mathcal Br}$ be an oriented Brauer diagram  and let $S$ be a finite set.  We denote by $(P,S)_{{\mathcal A}}$  \index[notation]{(P,S)_{{\mathcal A}}@$(P,S)_{{\mathcal A}}$} the class in $\mathring{\mathcal A}(P,S)\subset \mathring{\mathcal A}^\wedge(P,S)$ of  the Jacobi diagram $\emptyset_{(P,S)}$, see Definition~\ref{def:emptyJacdiagram2}. In the case $S=\emptyset$, we just write $P_{{\mathcal A}}$  \index[notation]{P_{{\mathcal A}}@$P_{{\mathcal A}}$} for this class.
\end{definition}

Denote by $\mathcal Compl\mathcal Gr\mathcal Vect$  \index[notation]{Compl\mathcal Gr\mathcal Vect@$\mathcal Compl\mathcal Gr\mathcal Vect$} the category whose objects are complete graded vector spaces  and with morphisms continuous,  grading-preserving, linear maps. 

\begin{lemma} Let $P$ be an oriented Brauer diagram. Then
\begin{itemize}
\item[$(a)$]  The  assignment $S \mapsto \mathring{\mathrm{Jac}}(P,S)$ for $S$ a finite set, and for $f:S\to S'$ a bijection between $S$, $S'$ the map $ \mathring{\mathrm{Jac}}(P,S)\to   \mathring{\mathrm{Jac}}(P,S')$ defined by $$(D,\varphi, \{\mathrm{lin}_{l}\}_{l \in\pi_0(P)}, \{\mathrm{cyc}_{s}\}_{s \in S})\longmapsto (D,(\mathrm{Id}_{\pi_0(P)}\sqcup f)\circ\varphi, \{\mathrm{lin}_{l}\}_{l \in\pi_0(P)}, \{\mathrm{cyc}_{f^{-1}(s')}\}_{s' \in S'})$$ for any $(D,\varphi, \{\mathrm{lin}_{l}\}_{l \in\pi_0(P)}, \{\mathrm{cyc}_{s}\}_{s \in S})\in\mathring{\mathrm{Jac}}(P,S)$ defines  a functor  $F_P:\mathcal Set_f \to \mathcal Set$.

\item[$(b)$] The assignment in $(a)$ composed with the canonical map $\mathring{\mathrm{Jac}}(P,S)\to \mathring{\mathcal A}(P,S)$ (resp. $\mathring{\mathrm{Jac}}(P,S)\to \mathring{\mathcal A}^\wedge(P,S)$) induces a functor $\mathcal Set_f \to  \mathcal Vect$ (resp. $\mathcal Set_f \to  \mathcal Compl\mathcal Gr\mathcal Vect$).
\end{itemize}
\end{lemma}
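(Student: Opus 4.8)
The statement to prove is that, for a fixed oriented Brauer diagram $P$, the assignment $S\mapsto \mathring{\mathrm{Jac}}(P,S)$ on objects, together with the relabelling map on morphisms, defines a functor $F_P:\mathcal Set_f\to\mathcal Set$, and that post-composing with the canonical maps to $\mathring{\mathcal A}(P,S)$ and $\mathring{\mathcal A}^\wedge(P,S)$ yields functors landing in $\mathcal Vect$ and $\mathcal Compl\mathcal Gr\mathcal Vect$ respectively. The whole content is routine bookkeeping, so the plan is mostly to check that the maps are well-defined on equivalence classes and that the functoriality identities hold; the only genuine point requiring a little care is that, for part $(b)$, the relabelling maps descend to the quotients by (STU, AS, IHX) and are continuous/grading-preserving.

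For part $(a)$, I would proceed in the following order. First I would check that the formula given for $F_P(f)$ respects the equivalence relation of Definition~\ref{def:thesetJacPS}: if $\psi:D\to\widetilde D$ realizes an equivalence between two tuples, then the same $\psi$ realizes an equivalence between their images under the relabelling, because conjugating the attaching map $\varphi$ by $\mathrm{Id}_{\pi_0(P)}\sqcup f$ commutes with precomposition by $\psi_{|\partial D}$, the linear orders $\mathrm{lin}_l$ are untouched, and the cyclic orders are merely re-indexed $s\leadsto f^{-1}(s')$ without changing the underlying ordered sets. Hence $F_P(f):\mathring{\mathrm{Jac}}(P,S)\to\mathring{\mathrm{Jac}}(P,S')$ is well-defined. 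Next I would verify $F_P(\mathrm{Id}_S)=\mathrm{Id}_{\mathring{\mathrm{Jac}}(P,S)}$, which is immediate since $\mathrm{Id}_{\pi_0(P)}\sqcup\mathrm{Id}_S=\mathrm{Id}_{\pi_0(P)\sqcup S}$ and $(\mathrm{Id}_S)^{-1}(s')=s'$. Finally I would check $F_P(g\circ f)=F_P(g)\circ F_P(f)$ for bijections $f:S\to S'$, $g:S'\to S''$: on attaching maps this is the identity $(\mathrm{Id}_{\pi_0(P)}\sqcup(g\circ f))\circ\varphi=(\mathrm{Id}_{\pi_0(P)}\sqcup g)\circ(\mathrm{Id}_{\pi_0(P)}\sqcup f)\circ\varphi$, and on the cyclic-order indices it is $(g\circ f)^{-1}(s'')=f^{-1}(g^{-1}(s''))$, so the two sides agree termwise.

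For part $(b)$, the assignment of part $(a)$ is first linearized to a map $\mathbb C\mathring{\mathrm{Jac}}(P,S)\to\mathbb C\mathring{\mathrm{Jac}}(P,S')$. I would then observe that the local relations STU, AS, IHX of Figure~\ref{figuraJD2_ell} are defined purely in terms of the dashed graph $D$ near a vertex or an edge and do not involve the labelling $\varphi$ or the indexing of the cyclic orders by elements of $S$; consequently $F_P(f)$ sends the subspace (STU, AS, IHX)$\subset\mathbb C\mathring{\mathrm{Jac}}(P,S)$ into the corresponding subspace for $S'$, and therefore descends to a linear map $\mathring{\mathcal A}(P,S)\to\mathring{\mathcal A}(P,S')$. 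Since the inverse bijection $f^{-1}$ induces the two-sided inverse, this descended map is an isomorphism. Functoriality for these descended maps follows from functoriality at the level of $\mathbb C\mathring{\mathrm{Jac}}(P,-)$ together with the fact that the quotient maps $\mathbb C\mathring{\mathrm{Jac}}(P,S)\to\mathring{\mathcal A}(P,S)$ are natural. For the completed version, I would note that $\mathrm{deg}(\underline D)$ in \eqref{degree-on-APS} depends only on the graph $D$ (its number of trivalent vertices and of univalent vertices), hence is preserved by relabelling; thus $F_P(f)$ is grading-preserving and extends continuously to the degree completions $\mathring{\mathcal A}^\wedge(P,S)\to\mathring{\mathcal A}^\wedge(P,S')$, giving a functor into $\mathcal Compl\mathcal Gr\mathcal Vect$.

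The main obstacle—if one can call it that—is simply being precise about why the relabelling descends to the quotient: one must be careful to phrase the relations STU, AS and IHX as operations on the isomorphism class of the pair $(D,\varphi)$ local to a half-edge configuration, so that applying $(\mathrm{Id}_{\pi_0(P)}\sqcup f)$ and re-indexing the cyclic orders commutes with each relation. Everything else is formal verification of the functor axioms, which I would dispatch in a couple of lines each, as above.
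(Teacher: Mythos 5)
Your proof is correct and follows essentially the same route as the paper, which simply records the identity $(\mathrm{Id}_{\pi_0(P)}\sqcup (g\circ f))\circ\varphi=(\mathrm{Id}_{\pi_0(P)}\sqcup g)\circ((\mathrm{Id}_{\pi_0(P)}\sqcup f)\circ\varphi)$ for functoriality and notes that the linear extension of $F_P(f)$ is compatible with STU, AS and IHX; your write-up just spells out these routine checks (well-definedness on equivalence classes, grading preservation, passage to the completion) in more detail.
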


\begin{proof}  Let $S,S', S''$ be finite sets and $f:S\to S'$, $g:S'\to S''$ bijections. 

$(a)$ Clearly we have $F_P(\mathrm{Id}_S)= \mathrm{Id}_{\mathring{\mathrm{Jac}}(P,S)}$ and $F_P(f)$ is well-defined because $f$ is a bijection. Let $(D,\varphi, \{\mathrm{lin}_{l}\}_{l \in\pi_0(P)}, \{\mathrm{cyc}_{s}\}_{s \in S})\in\mathring{\mathrm{Jac}}(P,S)$. The equality $(\mathrm{Id}_{\pi_0(P)}\sqcup (g\circ f))\circ \varphi = (\mathrm{Id}_{\pi_0(P)}\sqcup g)\circ \big((\mathrm{Id}_{\pi_0(P)}\sqcup f)\circ \varphi\big)$ and the fact that $f$ and $g$ are bijections imply $F_P(g\circ f) = F_P(g)\circ   F_P(f)$.

$(b)$ One checks that the $\mathbb{C}$-linear extension of the map $F_P(f):\mathring{\mathrm{Jac}}(P,S)\to \mathring{\mathrm{Jac}}(P,S')$ is compatible with the STU, IHX and AS relations which together with $(a)$ imply the statement. 
\end{proof}

\subsection{The natural transformation  \texorpdfstring{$\mathrm{Tens}^{\mathcal A}(P,P')_{\bullet,\bullet'}$}{Tens(P,P')..}}\label{sec:4-2}

\begin{definition}\label{deftensorinJac}  For $P,P'$ oriented Brauer diagrams and $S,S'$ finite sets, define a  map 
\begin{equation}\label{tensor:map:A:PP'Jac}
\mathrm{Tens}^{\mathring{\mathrm{Jac}}}(P,P')_{S,S'}:\mathring{\mathrm{Jac}}(P,S)\times \mathring{\mathrm{Jac}}(P', S')\longrightarrow \mathring{\mathrm{Jac}}(P\otimes P', S\sqcup S'), \quad 
(\underline{D}, \underline{D}')\longmapsto \underline{D}\otimes \underline{D}' 
\end{equation}   \index[notation]{Tens^{\mathring{\mathrm{Jac}}}(P,P')_{S,S'}@$\mathrm{Tens}^{\mathring{\mathrm{Jac}}}(P,P')_{S,S'}$}
where for $\underline{D}=(D,\varphi,\{\mathrm{lin}_l\}_{l \in \pi_0(P)}, \{\mathrm{cyc}_s\}_{s \in S})\in\mathring{\mathrm{Jac}}(P,S)$ and  $\underline{D}'=(D',\varphi',\{\mathrm{lin}'_l\}_{l \in \pi_0(P')}, \{\mathrm{cyc}'_s\}_{s \in S'})\in\mathring{\mathrm{Jac}}(P',S')$ the expression 
$\underline{D}\otimes \underline{D}' $ denotes the Jacobi diagram on ${P} \otimes {P}'$ 
given by $$\big(D\sqcup D',\varphi\sqcup\varphi', \{\mathrm{lin}_l\}_{l \in \pi_0(P)}\sqcup \{\mathrm{lin}'_l\}_{l \in \pi_0(P')}, \{\mathrm{cyc}_s\}_{s \in S}\sqcup \{\mathrm{cyc}'_s\}_{s \in S'}\big).$$ Here we use the canonical identifications $\pi_0(P\otimes P')\simeq \pi_0(P)\sqcup \pi_0(P')$,  and the equality $\partial (D\sqcup D') = \partial D\sqcup \partial D'$.
\end{definition}

\begin{lemma}
The map~\eqref{tensor:map:A:PP'Jac} induces a continuous linear map 
\begin{equation}\label{tensor:map:A:PP'}
\mathrm{Tens}^{\mathcal A}(P,P')_{S,S'} : \mathring{\mathcal A}^{\wedge}(P,S) \otimes \mathring{\mathcal A}^{\wedge}(P',S')\longrightarrow \mathring{\mathcal A}^{\wedge}(P \otimes P',S \sqcup S')
\end{equation}  \index[notation]{Tens^{\mathcal A}(P,P')_{S,S'}@$\mathrm{Tens}^{\mathcal A}(P,P')_{S,S'} $}
for any oriented Brauer diagrams ${P}$ and  ${P}'$ and any finite sets $S$ and $S'$. The assignment $(S,S')\mapsto \mathrm{Tens}^{\mathcal A}(P,P')_{S,S'}$ is a natural transformation relating the functors $\mathcal Set_f^2 \to \mathcal Vect$ given by $(S,S')\mapsto \mathring{\mathcal A}^{\wedge}(P,S) \otimes \mathring{\mathcal A}^{\wedge}(P',S')$ and $(S,S')\mapsto \mathring{\mathcal A}^{\wedge}(P\otimes P', S\sqcup S')$.

\end{lemma}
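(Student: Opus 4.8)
The plan is to establish the claim in two stages: first that the map \eqref{tensor:map:A:PP'Jac} descends to the quotients $\mathring{\mathcal A}^{\wedge}(P,S) \otimes \mathring{\mathcal A}^{\wedge}(P',S')$, giving a well-defined continuous linear map \eqref{tensor:map:A:PP'}; and second that the resulting family is natural in $(S,S')$.

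First I would check well-definedness. The $\mathbb{C}$-bilinear extension of $\mathrm{Tens}^{\mathring{\mathrm{Jac}}}(P,P')_{S,S'}$ gives a linear map $\mathbb{C}\mathring{\mathrm{Jac}}(P,S) \otimes \mathbb{C}\mathring{\mathrm{Jac}}(P',S') \to \mathbb{C}\mathring{\mathrm{Jac}}(P\otimes P', S\sqcup S')$. To see it passes to the quotient by (STU, AS, IHX) in each factor, note that the operation $\underline{D}\otimes\underline{D}'$ takes a disjoint union of the underlying graphs, leaves all linear and cyclic orders untouched except for relabelling via the canonical identification $\pi_0(P\otimes P')\simeq\pi_0(P)\sqcup\pi_0(P')$ and $\partial(D\sqcup D')=\partial D\sqcup\partial D'$. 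Since STU, AS and IHX are local relations on a Jacobi diagram — they modify a bounded piece of the dashed graph (and, for STU, possibly a small portion of a solid line) while the rest is untouched — applying such a relation inside $D$ yields, after taking the disjoint union with a fixed $\underline{D}'$, exactly the corresponding local relation inside $D\sqcup D'$; symmetrically for relations inside $D'$. Hence the map sends the subspace $(\text{STU, AS, IHX}) \otimes \mathbb{C}\mathring{\mathrm{Jac}}(P',S') + \mathbb{C}\mathring{\mathrm{Jac}}(P,S)\otimes(\text{STU, AS, IHX})$ into $(\text{STU, AS, IHX})$ of the target, so it factors through $\mathring{\mathcal A}(P,S)\otimes\mathring{\mathcal A}(P',S')$. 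Moreover $\mathrm{deg}(\underline{D}\otimes\underline{D}') = \mathrm{deg}(\underline{D}) + \mathrm{deg}(\underline{D}')$, since both the number of trivalent vertices and $|\partial D|$ are additive under disjoint union; therefore the map is grading-preserving in the obvious sense and extends continuously to the degree completions, yielding \eqref{tensor:map:A:PP'}.

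Next I would verify naturality. Given bijections $f:S\to S_1$ and $f':S'\to S_1'$ in $\mathcal Set_f$, I must check that the square relating $\mathrm{Tens}^{\mathcal A}(P,P')_{S,S'}$ and $\mathrm{Tens}^{\mathcal A}(P,P')_{S_1,S_1'}$, with vertical arrows $F_P(f)\otimes F_{P'}(f')$ and $F_{P\otimes P'}(f\sqcup f')$, commutes. It suffices to check this on generators $\underline{D}\otimes\underline{D}'$ with $\underline D\in\mathring{\mathrm{Jac}}(P,S)$, $\underline D'\in\mathring{\mathrm{Jac}}(P',S')$, where it reduces to the identity of tuples
\[
\big((\mathrm{Id}_{\pi_0(P\otimes P')}\sqcup(f\sqcup f'))\circ(\varphi\sqcup\varphi'),\ \ldots\big) = \big((\mathrm{Id}_{\pi_0(P)}\sqcup f)\circ\varphi,\ \ldots\big)\otimes\big((\mathrm{Id}_{\pi_0(P')}\sqcup f')\circ\varphi',\ \ldots\big),
\]
which holds because $(\mathrm{Id}\sqcup(f\sqcup f'))\circ(\varphi\sqcup\varphi') = \big((\mathrm{Id}\sqcup f)\circ\varphi\big)\sqcup\big((\mathrm{Id}\sqcup f')\circ\varphi'\big)$ under the identification $\pi_0(P\otimes P')\simeq\pi_0(P)\sqcup\pi_0(P')$, and the linear/cyclic orders are merely carried along (the cyclic orders get their indices pushed forward by $f$, resp.\ $f'$, consistently on both sides). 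This is exactly the kind of bookkeeping identity already used in the proof of functoriality of $F_P$ above.

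I do not expect a serious obstacle here: the statement is essentially a compatibility check, and the only mild subtlety is making sure the identifications $\pi_0(P\otimes P')\simeq\pi_0(P)\sqcup\pi_0(P')$ and $\partial(D\sqcup D')=\partial D\sqcup\partial D'$ are used coherently throughout, so that "disjoint union of graphs with orders" really is strictly associative and compatible with relabelling. The point requiring the most care is the locality argument for (STU, AS, IHX): one should phrase it as "each relation is supported on a ball in the diagram and disjoint from $D'$", so that passing to $D\sqcup D'$ does not create or destroy any incidences, and in particular the STU relation involving a solid-line portion still makes sense because that solid line belongs to $P$ (or a circle of $S$) and is unaffected by adjoining $P'$ and $S'$. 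Once that is granted, the factorization through the quotient and the continuity on completions are immediate.
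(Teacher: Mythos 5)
Your proposal is correct and follows the same route as the paper, which simply states that "one checks" compatibility of the bilinear extension with the STU, AS, IHX relations and compatibility with pairs of bijections; you have supplied exactly those checks (locality of the relations, additivity of the degree for continuity on completions, and the bookkeeping identity for naturality on generators).
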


\begin{proof}
One checks that the bilinear extension of the map \eqref{tensor:map:A:PP'Jac} is compatible with the STU, AS and IHX relations. One also checks that the assignment  $(S,S')\mapsto \mathrm{Tens}^{\mathcal A}(P,P')_{S,S'}$ is compatible with pairs  $(f,g):(S,S')\to (E,E')$ of bijections $f:S\to E$ and $f':S'\to E'$ which implies the  second statement.
\end{proof}

\begin{notation}\label{notationfortens} We will write $x\otimes_{\mathcal A} x'$ \index[notation]{\otimes_{\mathcal A}@$\otimes_{\mathcal A}$} instead of $\mathrm{Tens}^{\mathcal A}(P,P')_{S,S'}(x\otimes x')$ for any  $x \in \mathring{\mathcal A}^{\wedge}(P,S)$ and $x'\in\mathring{\mathcal A}^{\wedge}(P',S')$ and we refer to such an element as the tensor product of $x$ and $x'$. In drawings, the tensor product $x\otimes_{\mathcal A} x'$ is described by the horizontal concatenation of $x$ and $x'$, see Figure~\ref{figuraJD_tensorexample} for an example.
\end{notation}

\begin{figure}[ht!]
										\centering
                        \includegraphics[scale=0.8]{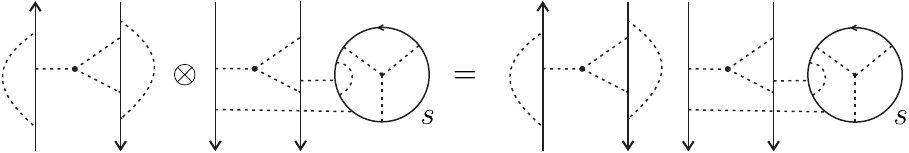}
												\caption{Schematic representation of the tensor product of an element in $\mathring{\mathcal{A}}^{\wedge}(\uparrow\downarrow, \emptyset)$ and an element in $\mathring{\mathcal{A}}^{\wedge}(\downarrow\downarrow, \{s\})$ which gives rise to an element in  $\mathring{\mathcal{A}}^{\wedge}(\uparrow\downarrow\downarrow\downarrow, \{s\})$.}
\label{figuraJD_tensorexample} 										
\end{figure}

\subsection{The natural transformation  \texorpdfstring{$\mathrm{Comp}^{\mathcal A}(P,P')_{\bullet,\bullet'}$}{Comp(P,P')..}}\label{sec:4-3}

Let  $P,P'\in\underline{\vec{\mathcal Br}}$ be a  composable  pair of oriented Brauer diagrams and $S,S'$ finite sets. Recall from Lemma~\ref{r:relPi0}$(c)$ that there is a map  $\mathrm{proj}^{\pi_0}_{P,P'}:\pi_0(P)\sqcup \pi_0(P')\to \pi_0(P'\circ P)\sqcup \mathrm{Cir}(P',P)$. Its disjoint union with the identity maps of $S$ and $S'$ is a map  $\mathrm{proj}_{P,P'}^{S,S'}:\pi_0(P)\sqcup S\sqcup \pi_0(P')\sqcup S'\to \pi_0(P'\circ P)\sqcup  S\sqcup S'\sqcup \mathrm{Cir}(P',P)$. We have  $(\mathrm{proj}^{\pi_0}_{P,P'})^{-1}(x)=(\mathrm{proj}^{S,S'}_{P,P'})^{-1}(x)$ for any $x \in  \pi_0(P'\circ P)\sqcup \mathrm{Cir}(P',P)$.

\begin{lemma}\label{r:2023-01-11-r1} Let $(P,P')$ be a composable pair in $\underline{\vec{\mathcal Br}}$. One then has $P=((p,q,\sigma),B)$, $P'=((q,r,\sigma'),B')$, and there exists a subset $L \subset [\![1,q]\!]$ such that $\mathrm{can}_2(L)=E \cap [\![1,q]\!]$ and $\mathrm{can}_1(L)=B'\cap [\![1,q]\!]$, where $E:=\sigma(B) \subset [\![1,p]\!] \sqcup [\![1,q]\!]$ (see Definition~1.30) and  $\mathrm{can}_1:[\![1,q]\!] \to [\![1,q]\!]\sqcup [\![1,r]\!]$ and $\mathrm{can}_2:[\![1,q]\!] \to [\![1,p]\!]\sqcup [\![1,q]\!]$ denote the canonical inclusions. We also set $E':=\sigma'(B') \subset [\![1,p']\!] \sqcup [\![1,q']\!]$. We have
\begin{itemize}
\item[$(a)$] For any $x\in\pi_0(P'\circ P)$ the set $(\mathrm{proj}^{\pi_0}_{P,P'})^{-1}(x)\subset \pi_0(P)\sqcup\pi_0(P')$ admits a
 unique linear order~$<$ with the following properties:
 \begin{itemize}
 \item[$(i)$] If $\alpha \in [\![1,q]\!]$ is such that $\mathrm{can}_2(\alpha) \in E \subset [\![1,p]\!] \sqcup [\![1,q]\!]$ and $\mathrm{can}_1(\alpha) \in B'\subset [\![1,q]\!] \sqcup [\![1,r]\!]$, then
$\{\sigma(\mathrm{can}_2(\alpha)), \mathrm{can}_2(\alpha)\} \in \pi_0(P)$ and $\{\mathrm{can}_1(\alpha),\sigma'(\mathrm{can}_1(\alpha))\} \in \pi_0(P')$ are such that $$\{\sigma(\mathrm{can}_2(\alpha)), \mathrm{can}_2(\alpha)\} < \{\mathrm{can}_1(\alpha),\sigma'(\mathrm{can}_1(\alpha))\}.$$

\item[$(ii)$] If $\alpha \in [\![1,q]\!]$ is such that $\mathrm{can}_2(\alpha) \in B \subset [\![1,p]\!] \sqcup [\![1,q]\!]$ and $\mathrm{can}_1(\alpha) \in E' \subset [\![1,q]\!] \sqcup [\![1,r]\!]$, then
$\{\sigma(\mathrm{can}_2(\alpha)),\mathrm{can}_2(\alpha)\} \in \pi_0(P)$ and $\{\mathrm{can}_1(\alpha),\sigma'(\mathrm{can}_1(\alpha))\} \in \pi_0(P')$ are such that $$\{\mathrm{can}_1(\alpha),\sigma'(\mathrm{can}_1(\alpha))\}< \{\sigma(\mathrm{can}_2(\alpha)),\mathrm{can}_2(\alpha)\}.$$ 
\end{itemize}

\item[$(b)$] Let $x\in\mathrm{Cir}(P', P)$. Let $X:=(\mathrm{proj}_{P,P'}^{\pi_0})^{-1}(x)$. If $y\in X\cap \pi_0(P)$, then there exists a unique $\alpha \in [\![1,q]\!]$ such that $\mathrm{e}_{P}(y)=\mathrm{can}_2(\alpha)$, and there exists a unique element $y'\in\pi_0(P')$ such that $\mathrm{b}_{P'}(y')=\mathrm{can}_1(\alpha)$. 
If $y\in X\cap \pi_0(P')$, then there exist a unique $\alpha \in [\![1,q]\!]$ such that  $\mathrm{e}_P(y)=\mathrm{can}_1(\alpha)$, and there exists a unique element $y' \in \pi_0(P)$ such that $\mathrm{b}_P(y)=\mathrm{can}_2(\alpha)$. 
The assignment $y \mapsto y'$ defines a self-map $\mathrm{succ}_X$ of $X$. The map $\mathrm{succ}_X$ is a permutation, therefore defines an action of $\mathbb{Z}$ on $X$. This action is transitive, therefore defines a cyclic order on $X$.
\end{itemize}
\end{lemma}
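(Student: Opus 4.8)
The statement is a structural lemma about the composition of two oriented Brauer diagrams, unpacking the abstract map $\mathrm{proj}^{\pi_0}_{P,P'}$ from Lemma~\ref{r:relPi0}$(c)$ into explicit combinatorial data: a linear order on each fiber over $\pi_0(P'\circ P)$ and a cyclic order on each fiber over $\mathrm{Cir}(P',P)$. The natural strategy is to recognize that these fibers are precisely the orbits of a PDISM on $\pi_0(P)\sqcup\pi_0(P')$, and to invoke Lemma~\ref{r:2023-02-14-r1} and Lemma~\ref{r:2023-02-15r1}. So first I would set up the PDISM: on the finite set $X:=\pi_0(P)\sqcup\pi_0(P')$, define a partially-defined injective self-map $\mathrm{succ}$ whose domain consists of those $y$ for which the ``ending point'' $\mathrm{e}_P$ or $\mathrm{e}_{P'}$ of the connected component $y$ lands in the shared boundary $[\![1,q]\!]$ (more precisely: for $y\in\pi_0(P)$, when $\mathrm{e}_{P}(y)=\mathrm{can}_2(\alpha)$ with $\alpha\in L$ in the sense of the subset $L\subset[\![1,q]\!]$ introduced in the statement, so $\mathrm{can}_1(\alpha)\in B'$ and there is a unique $y'\in\pi_0(P')$ with $\mathrm{b}_{P'}(y')=\mathrm{can}_1(\alpha)$; symmetrically for $y\in\pi_0(P')$), and set $\mathrm{succ}(y):=y'$. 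Injectivity of $\mathrm{succ}$ follows from the fact that $\mathrm{b}_{P}$, $\mathrm{b}_{P'}$ are bijections onto the beginning-point sets and from injectivity of the canonical inclusions $\mathrm{can}_1,\mathrm{can}_2$; this is the routine bookkeeping that I would not grind through.

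The second step is to identify the orbit decomposition $\mathrm{orb}(X)=\mathrm{orb}_{\mathrm{lin}}(X)\sqcup\mathrm{orb}_{\mathrm{cyc}}(X)$ of Lemma~\ref{r:2023-02-14-r1} with the decomposition $\pi_0(P'\circ P)\sqcup\mathrm{Cir}(P',P)$. Concretely: an orbit is cyclic (never exits the domain of $\mathrm{succ}$) exactly when the corresponding ``traversal'' of components never reaches a global endpoint in $[\![1,p]\!]\sqcup[\![1,r]\!]$, i.e.\ it is a circle formed entirely inside the middle layer — which is the defining condition for an element of $\mathrm{Cir}(P',P)$ (recall $\mathrm{Cir}(P',P)$ consists of orbits contained in $[\![1,q]\!]$, via equation~\eqref{eq:toto2} and Definition~\ref{def:defofSk}). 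A linear orbit, by contrast, has a unique element $\omega_0$ in $X_0=X\setminus D_{\mathrm{succ}}$, whose component has an ending point in $[\![1,p]\!]\sqcup[\![1,r]\!]$; tracking this through the explicit description of $B\circ A$ in Theorem~\ref{r:2022-09-27-1} (namely $B\circ A=\{\omega\cap([\![1,p]\!]\sqcup[\![1,r]\!])\}$), this matches the element of $\pi_0(P'\circ P)$ indexed by $x$. Lemma~\ref{r:2023-02-14-r1}$(c)$ then hands over the linear order on each linear orbit, and $(d)$ the cyclic order on each cyclic orbit, via the transitive $\mathbb{Z}$-action generated by $\mathrm{succ}$ — which is exactly the map $\mathrm{succ}_X$ in part $(b)$ of the present statement.

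For part $(a)$: the existence and uniqueness of the linear order $<$ on $(\mathrm{proj}^{\pi_0}_{P,P'})^{-1}(x)$ is Lemma~\ref{r:2023-02-14-r1}$(c)$, and I must check that the two characterizing properties $(i)$ and $(ii)$ are equivalent to the defining property ``$x<_\omega\mathrm{succ}(x)$ for $x\in\omega\setminus X_0$''. Property $(i)$ describes the case $\alpha\in[\![1,q]\!]$ with $\mathrm{can}_2(\alpha)\in E$ and $\mathrm{can}_1(\alpha)\in B'$: here the $P$-component $\{\sigma(\mathrm{can}_2(\alpha)),\mathrm{can}_2(\alpha)\}$ has its ending point at $\mathrm{can}_2(\alpha)$, which glues to the beginning point $\mathrm{can}_1(\alpha)$ of the $P'$-component $\{\mathrm{can}_1(\alpha),\sigma'(\mathrm{can}_1(\alpha))\}$, so the $P$-component is the $\mathrm{succ}$-predecessor of the $P'$-component, forcing $\{\sigma(\mathrm{can}_2(\alpha)),\mathrm{can}_2(\alpha)\}<\{\mathrm{can}_1(\alpha),\sigma'(\mathrm{can}_1(\alpha))\}$; property $(ii)$ is the mirror case where the orientation runs the other way (here $\mathrm{can}_2(\alpha)\in B$, so the $P$-component's \emph{beginning} point is glued, making it the $\mathrm{succ}$-\emph{successor}). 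Since $\mathrm{succ}$ generates the linear order, these constraints determine $<$ uniquely, and conversely the unique linear order of Lemma~\ref{r:2023-02-14-r1}$(c)$ satisfies them.

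\textbf{Main obstacle.} The genuine difficulty is not any single deduction but the careful translation between the ``geometric'' picture (components of Brauer diagrams, beginning/ending points, gluing along the middle layer $[\![1,q]\!]$) and the PDISM formalism of \S\ref{sec:4-0}: one must pin down exactly what $D_{\mathrm{succ}}$, $X_0$, and $\mathrm{succ}$ are, verify injectivity cleanly, and — the subtlest point — make sure the orientation conventions (encoded in $B$, $E=\sigma(B)$, $B'$, $E'$ and the roles of $\mathrm{can}_1$ versus $\mathrm{can}_2$) are threaded consistently so that cases $(i)$ and $(ii)$ come out with the stated inequalities rather than their reverses, and so that the $\mathbb{Z}$-action in $(b)$ is transitive (which is the content of the orbit being a single $\mathrm{succ}$-cycle, i.e.\ Lemma~\ref{r:2023-02-14-r1}$(d)$). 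Once the PDISM is correctly set up, parts $(a)$ and $(b)$ are essentially immediate corollaries of Lemmas~\ref{r:2023-02-14-r1} and~\ref{r:2023-02-15r1}.
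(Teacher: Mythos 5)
Your plan is correct and rests on the same engine as the paper's proof — a PDISM whose orbits, via Lemma~\ref{r:2023-02-14-r1}, carry the linear orders of part $(a)$ and the cyclic orders of part $(b)$ — but you set it up one level higher than the paper does. You define $\mathrm{succ}$ directly on the component set $\pi_0(P)\sqcup\pi_0(P')$ (ending point of $y$ glued to beginning point of $\mathrm{succ}(y)$), and then have to argue separately that the $\mathrm{succ}$-orbits coincide with the fibers of $\mathrm{proj}^{\pi_0}_{P,P'}$, which you only sketch by appeal to Theorem~\ref{r:2022-09-27-1} and \eqref{eq:toto2}. The paper instead builds the PDISM on the endpoint set $V=[\![1,p]\!]\sqcup[\![1,q]\!]\sqcup[\![1,r]\!]$, with domain $\mathrm{can}_{12}(B)\sqcup\mathrm{can}_{23}(B')$ and $\mathrm{succ}$ given by $\sigma$ and $\sigma'$; there the orbit equivalence is literally the equivalence generated by $\alpha=\sigma*\mathrm{Id}$ and $\beta=\mathrm{Id}*\sigma'$, so the identification of $\mathrm{orb}(V)$ with $\pi_0(P'\circ P)\sqcup\mathrm{Cir}(P',P)$ is immediate from \eqref{eq:toto}, and the descent to your component-level PDISM is then handled by Lemma~\ref{r:2023-02-15r1}, after the small but necessary check that no element of $V_0$ has empty $\mathrm{succ}$-preimage (so that no spurious singleton linear orbits appear and the orbit sets match bijectively). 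Your route buys a shorter setup and the same conclusion, including the correct directions in $(i)$/$(ii)$, which you match correctly to $y<\mathrm{succ}(y)$; what the paper's detour through $V$ buys is that the one genuinely delicate point in your sketch — why fibers of $\mathrm{proj}^{\pi_0}_{P,P'}$ (defined via the endpoint projection in Lemma~\ref{r:relPi0}$(c)$) are exactly the $\mathrm{succ}$-orbits, and in particular why the $\mathbb{Z}$-action in $(b)$ is transitive on each fiber — becomes tautological rather than something you must verify by hand.
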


\begin{proof}
Recall the canonical injections $\mathrm{can}_{12} :  [\![1,p]\!] \sqcup [\![1,q]\!] \to [\![1,p]\!] \sqcup [\![1,q]\!] \sqcup [\![1,r]\!]$ and $\mathrm{can}_{23} : [\![1,q]\!] \sqcup [\![1,r]\!] \to [\![1,p]\!] \sqcup [\![1,q]\!] \sqcup [\![1,r]\!]$. Set $V:=[\![1,p]\!] \sqcup [\![1,q]\!] \sqcup [\![1,r]\!]$,  it follows from the assumption that the subsets $\mathrm{can}_{12}(B)$, $\mathrm{can}_{23}(B')$, $\mathrm{can}_{12}(E) \cap [\![1p]\!]$, $\mathrm{can}_{23}(E) \cap [\![1,r]\!]$ of $[\![1,p]\!] \sqcup [\![1,q]\!] \sqcup [\![1,r]\!]$ form a partition of $V$. Set $V_{0}:=(\mathrm{can}_{12}(E) \cap [\![1,p]\!]) \sqcup (\mathrm{can}_{23}(E') \cap [\![1,r]\!])$ and define $\mathrm{succ} : V\setminus V_0 \to V$ by $\mathrm{succ}(\mathrm{can}_{12}(b)):=\mathrm{can}_{12}(\sigma(b))$ and $\mathrm{succ}(\mathrm{can}_{23}(b')):=\mathrm{can}_{23}(\sigma'(b'))$ for $b \in B$, $b' \in B'$.  The injectivity of $\mathrm{succ}$ follows from the partition status of $\big(\mathrm{can}_{12}(B)$,$\mathrm{can}_{23}(B'), \mathrm{can}_{12}(E) \cap [\![1,p]\!], \mathrm{can}_{23}(E) \cap [\![1,r]\!]\big)$, the injectivity of $\mathrm{can}_{12}$ and $\mathrm{can}_{23}$, and the permutation status of $\sigma$ and $\sigma'$.  By Lemma~\ref{r:2023-02-14-r1}, one attaches to $(V,V_{0},\mathrm{succ})$ a set $\mathrm{orb}(V)$ equipped with a partition $(\mathrm{orb}_{\mathrm{lin}}(V),\mathrm{orb}_{\mathrm{cyc}}(V))$. 

There is a canonical bijection $\mathrm{orb}(V) \simeq \big([\![1,p]\!] \sqcup [\![1,q]\!] \sqcup [\![1,r]\!]\big)/\langle \alpha,\beta \rangle$ inducing a bijection between the partitions $(\mathrm{orb}_{\mathrm{lin}}(V),\mathrm{orb}_{\mathrm{cyc}}(V))$ and $(\pi_0(P' \circ P),{\mathrm{Cir}}(P',P))$, see Equation \eqref{eq:toto}. 

Besides, $\{v \in V_{0}\ | \ \mathrm{succ}^{-1}(v)=\emptyset\}=\emptyset$. Indeed, if $v \in V_{0}$, then either there exists $e \in E$ with $\mathrm{can}_{12}(e) \in [\![1,p]\!]$ and $v=\mathrm{can}_{12}(e)$; in that case
$\mathrm{can}_{12}(\sigma(e)) \in \mathrm{succ}^{-1}(v)$;  or there exists $e' \in E'$ with $\mathrm{can}_{23}(e') \in [\![1,r]\!]$ and $v'=\mathrm{can}_{23}(e')$; in that case $\mathrm{can}_{23}(\sigma'(e')) \in\mathrm{succ}^{-1}(v)$. By Lemma~\ref{r:2023-02-15r1}, one derives a bijection $\mathrm{orb}(V\setminus V_0) \simeq \mathrm{orb}(V)$ inducing a bijection between the partitions $(\mathrm{orb}_{\mathrm{lin}}(V\setminus V_0),\mathrm{orb}_{\mathrm{cyc}}(V\setminus V_0))$ and $(\mathrm{orb}_{\mathrm{lin}}(V),\mathrm{orb}_{\mathrm{cyc}}(V))$. Now $V\setminus V_0 \simeq B \sqcup B' \simeq \pi_0(P) \sqcup \pi_0(P')$. Transporting structures, we obtain a PDISM on $\pi_0(P) \sqcup \pi_0(P')$, see Definition~\ref{def:PDISM}. Its associated partition is in bijection with $(\mathrm{orb}_{\mathrm{lin}}(V\setminus V_0),\mathrm{orb}_{\mathrm{cyc}}(V\setminus V_0))$, and therefore with $(\pi_0(P' \circ P),{\mathrm{Cir}}(P',P))$, and whose underlying cyclic and linear orders are as announced. 
\end{proof}

\begin{definition}
Let  $P,P'\in\underline{\vec{\mathcal Br}}$ be a composable pair of oriented Brauer diagrams and $S,S'$ finite sets. Define a map 
\begin{equation}\label{Compmap:A:PP'Jac}
\mathrm{Comp}^{\mathring{\mathrm{Jac}}}(P,P')_{S,S'}:\mathring{\mathrm{Jac}}({P},S)\times \mathring{\mathrm{Jac}}({P}',S')\longrightarrow\mathring{\mathrm{Jac}}({P}'\circ {P}, S\sqcup S'\sqcup \mathrm{Cir}(P',P)), \quad (\underline{D}, \underline{D}')\longmapsto \underline{D}'\circ \underline{D}
\end{equation}  \index[notation]{Comp^{\mathring{\mathrm{Jac}}}(P,P')_{S,S'}@$\mathrm{Comp}^{\mathring{\mathrm{Jac}}}(P,P')_{S,S'}$}
where for $\underline{D}=(D,\varphi,\{\mathrm{lin}_l\}_{x \in \pi_0(P)}, \{\mathrm{cyc}_s\}_{s \in S})\in\mathring{\mathrm{Jac}}(P,S)$ and  $\underline{D}'=(D',\varphi',\{\mathrm{lin}'_x\}_{x \in \pi_0(P')}, \{\mathrm{cyc}'_s\}_{s \in S'})\in\mathring{\mathrm{Jac}}(P',S')$ the expression $\underline{D}'\circ \underline{D}$  denotes the Jacobi diagram on ${P}' \circ {P}'$ given by
 $$\big(D\sqcup D',\mathrm{proj}_{P,P'}^{S,S'}\circ(\varphi\sqcup\varphi'), \{\underline{\mathrm{lin}}_l\}_{x \in \pi_0(P'\circ P)}, \{\mathrm{cyc}_s\}_{s \in S}\sqcup \{\mathrm{cyc}'_s\}_{s \in S'}\sqcup \{\underline{\mathrm{cyc}}_x\}_{x \in \mathrm{Cir}(P',P)} \big)$$
with $\mathrm{proj}_{P,P'}^{S,S'}$ as in Lemma~\ref{r:2023-01-11-r1}$(b)$ and orders $\underline{\mathrm{lin}}_x$ and $\underline{\mathrm{cyc}}_x$ defined as follows. Let $x\in\pi_0(P'\circ P)$, by Lemma~\ref{r:2023-01-11-r1}$(a)$, the set $(\mathrm{proj}_{P,P}^{S,S'})^{-1}(x)\subset \pi_0(P)\sqcup \pi_0(P')$ is linearly ordered, let us say it is given by $x_1< x_2 < \cdots < x_k$. This order induces the linear order $$(\varphi\sqcup \varphi')^{-1}(x_1) <(\varphi\sqcup \varphi')^{-1}(x_2) < \cdots < (\varphi\sqcup \varphi')^{-1}(x_k)$$ on the family of sets $\{(\varphi\sqcup \varphi')^{-1}(x_i)\}_{i\in[\![1,k]\!]}$. By hypothesis each set   $(\varphi\sqcup \varphi')^{-1}(x_i)$ is linearly ordered, then by Lemma~\ref{r:2023-01-11-r2}$(a)$, there is a unique linear  order on 
$$\big(\mathrm{proj}_{P,P'}^{S,S'}\circ (\varphi\sqcup \varphi')\big)^{-1}(x)=\sqcup_{i=1}^{k}(\varphi\sqcup \varphi')^{-1}(x_i)$$ 
satisfying the conditions stated in Lemma~\ref{r:2023-01-11-r2}$(a)$, we denote this order by $\underline{\mathrm{lin}}_x$. The cyclic order $\underline{\mathrm{cyc}}_x$ for $x\in\mathrm{Circ}(P'\circ P)$ is defined similarly using Lemma~\ref{r:2023-01-11-r1}$(b)$ and Lemma~\ref{r:2023-01-11-r2}$(b)$.
\end{definition}

\begin{lemma}
The composition  map \eqref{Compmap:A:PP'Jac}  induces a continuous bilinear map
\begin{equation}\label{map:A:PP'2}
\mathrm{Comp}^{\mathcal{A}}(P,P')_{S,S'}:\mathring{\mathcal{A}}^\wedge({P},S)\times\mathring{\mathcal{A}}^\wedge({P}',S')\longrightarrow\mathring{\mathcal{A}}^\wedge(P'\circ {P}, S\sqcup S'\sqcup \mathrm{Cir}(P',P)), 
\end{equation} \index[notation]{Comp^{\mathcal{A}}(P,P')_{S,S'}@$\mathrm{Comp}^{\mathcal{A}}(P,P')_{S,S'}$}
for any composable Brauer diagrams ${P}$ and ${P}'$ and any finite sets $S$ and $S'$. The assignment $(S,S')\mapsto \mathrm{Comp}^{\mathcal A}(P,P')_{S,S'}$ is a natural transformation relating the functors $\mathcal Set_f^2 \to \mathcal Vect$ given by $(S,S')\mapsto \mathring{\mathcal A}^{\wedge}(P,S) \otimes \mathring{\mathcal A}^{\wedge}(P',S')$ and $(S,S')\mapsto \mathring{\mathcal A}^{\wedge}(P'\circ P, S\sqcup S'\sqcup \mathrm{Cir}(P',P))$.
\end{lemma}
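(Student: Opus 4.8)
The plan is to verify two things: first, that the bilinear map \eqref{Compmap:A:PP'Jac} on Jacobi diagrams descends to the quotients $\mathring{\mathcal A}(P,S)\times \mathring{\mathcal A}(P',S')\to \mathring{\mathcal A}(P'\circ P,S\sqcup S'\sqcup \mathrm{Cir}(P',P))$, and hence (being degree-preserving, as follows from the additivity of $\mathrm{deg}$ under disjoint union of vertex-oriented unitrivalent graphs) extends continuously to the degree completions; second, that the assignment $(S,S')\mapsto \mathrm{Comp}^{\mathcal A}(P,P')_{S,S'}$ is natural in $(S,S')$.

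For the first point, I would argue as follows. The underlying graph of $\underline D'\circ \underline D$ is $D\sqcup D'$, so each of the local relations STU, AS, IHX imposed on a generator of $\mathbb{C}\mathring{\mathrm{Jac}}(P,S)$ (resp. of $\mathbb{C}\mathring{\mathrm{Jac}}(P',S')$) is carried by the assignment $(\underline D,\underline D')\mapsto \underline D'\circ\underline D$ to the corresponding local relation on $D\sqcup D'$, performed inside the $D$-part (resp. the $D'$-part). Concretely, AS and IHX are purely internal to the dashed graph and do not interact with $\varphi$ or with the orders, so they pass through immediately. For STU one must check that resolving a trivalent vertex adjacent to a strand of $D$ (a portion of the schematic representation of some $l\in\pi_0(P)$, or of a circle for some $s\in S$) produces, after applying $\mathrm{proj}^{S,S'}_{P,P'}\circ(\varphi\sqcup\varphi')$ and reconstructing the linear/cyclic orders $\underline{\mathrm{lin}}_x$, $\underline{\mathrm{cyc}}_x$ via Lemma~\ref{r:2023-01-11-r2}, exactly the STU relation in the target; this holds because the order on $\big(\mathrm{proj}^{S,S'}_{P,P'}\circ(\varphi\sqcup\varphi')\big)^{-1}(x)$ is built by concatenating the orders on the fibers $(\varphi\sqcup\varphi')^{-1}(x_i)$ in the order dictated by Lemma~\ref{r:2023-01-11-r1}$(a)$, so an elementary change of cyclic/linear order of three consecutive univalent vertices attached to a single strand of $D$ or $D'$ is reflected faithfully. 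Thus the bilinear extension of \eqref{Compmap:A:PP'Jac} factors through $\mathring{\mathcal A}(P,S)\otimes\mathring{\mathcal A}(P',S')$, giving \eqref{map:A:PP'2}; continuity on completions is automatic since $\mathrm{deg}(\underline D'\circ\underline D)=\mathrm{deg}(\underline D)+\mathrm{deg}(\underline D')$.

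For naturality, let $(f,f'):(S,S')\to(E,E')$ be a pair of bijections. One must check that the square relating $\mathring{\mathcal A}^\wedge(P,S)\otimes\mathring{\mathcal A}^\wedge(P',S')\to\mathring{\mathcal A}^\wedge(P'\circ P,S\sqcup S'\sqcup\mathrm{Cir}(P',P))$ and $\mathring{\mathcal A}^\wedge(P,E)\otimes\mathring{\mathcal A}^\wedge(P',E')\to\mathring{\mathcal A}^\wedge(P'\circ P,E\sqcup E'\sqcup\mathrm{Cir}(P',P))$ commutes, where the horizontal maps are $\mathrm{Comp}^{\mathcal A}$ and the vertical ones are the functorial maps induced by $f$, $f'$ (and by $f\sqcup f'\sqcup\mathrm{Id}_{\mathrm{Cir}(P',P)}$ on the target). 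At the level of $\mathring{\mathrm{Jac}}$ this is a diagram chase: relabelling the circle-elements of $\underline D$ and $\underline D'$ by $f$ and $f'$ commutes with forming $D\sqcup D'$, with the identity $\mathrm{proj}^{E,E'}_{P,P'}\circ((\mathrm{Id}_{\pi_0(P)}\sqcup f)\sqcup(\mathrm{Id}_{\pi_0(P')}\sqcup f'))= (\mathrm{Id}_{\pi_0(P'\circ P)}\sqcup f\sqcup f'\sqcup\mathrm{Id}_{\mathrm{Cir}(P',P)})\circ\mathrm{proj}^{S,S'}_{P,P'}\circ(\varphi\sqcup\varphi')$, and with the reconstruction of the orders, since the fibers over $\pi_0(P'\circ P)$ and the cyclic orders over $\mathrm{Cir}(P',P)$ only involve $\pi_0(P)\sqcup\pi_0(P')$ and are untouched by $f$, $f'$, while the cyclic orders indexed by $s\in S$, $s'\in S'$ are transported verbatim along $f$, $f'$.

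The main obstacle is the STU compatibility, and more precisely the bookkeeping in Lemma~\ref{r:2023-01-11-r2}: one has to be careful that when STU merges or splits univalent vertices on a strand of $D$ corresponding to some $l\in\pi_0(P)$ whose class in $\pi_0(P'\circ P)$ is obtained by concatenating several arcs of $P$ and $P'$, the three vertices involved in the relation remain consecutive with respect to the concatenated order $\underline{\mathrm{lin}}_x$ — this is where one uses that the fibers $(\varphi\sqcup\varphi')^{-1}(x_i)$ are inserted as order-intervals and that STU is a local modification within a single such interval. Once this is granted, everything else is routine.
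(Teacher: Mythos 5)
Your proposal is correct and follows essentially the same route as the paper, whose proof simply states that one checks compatibility of the bilinear extension of \eqref{Compmap:A:PP'Jac} with the STU, AS and IHX relations and with pairs of bijections $(S,S')\to(E,E')$; your argument just spells out those checks (the interval property of the concatenated orders from Lemma~\ref{r:2023-01-11-r2} for STU, degree additivity for continuity, and commutation of relabelling with $\mathrm{proj}^{S,S'}_{P,P'}$ for naturality), all of which are sound.
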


\begin{proof}
One checks that the bilinear extension of the map \eqref{Compmap:A:PP'Jac} is compatible with the STU, AS and IHX relations.  One also checks that the assignment  $(S,S')\mapsto \mathrm{Comp}^{\mathcal A}(P,P')_{S,S'}$ is compatible with pairs  $(f,g):(S,S')\to (E,E')$ of bijections $f:S\to E$ and $f':S'\to E'$ which implies the  second statement.
\end{proof}

\begin{notation}\label{notationforcomp}Let $P,P'\in\vec{\mathcal Br}$ be composable oriented Brauer diagrams and $S$ and $S'$ finite sets. We will write $x'\circ_{\mathcal A} x$  \index[notation]{\circ_{\mathcal A}@$\circ_{\mathcal A}$} instead of $\mathrm{Comp}^{\mathcal A}(P,P')_{S,S'}(x\otimes x')$ for any  $x \in \mathring{\mathcal A}^{\wedge}(P,S)$ and $x'\in\mathring{\mathcal A}^{\wedge}(P',S')$  and we refer to such an element as the composition of $x$ and $x'$. In drawings, the composition $x'\circ_{\mathcal A} x$ is described by the vertical concatenation of $x'$ above $x$, see Figure~\ref{figuraJD_compositionexample} for an example.
\end{notation}

\begin{figure}[ht!]
										\centering
                        \includegraphics[scale=0.65]{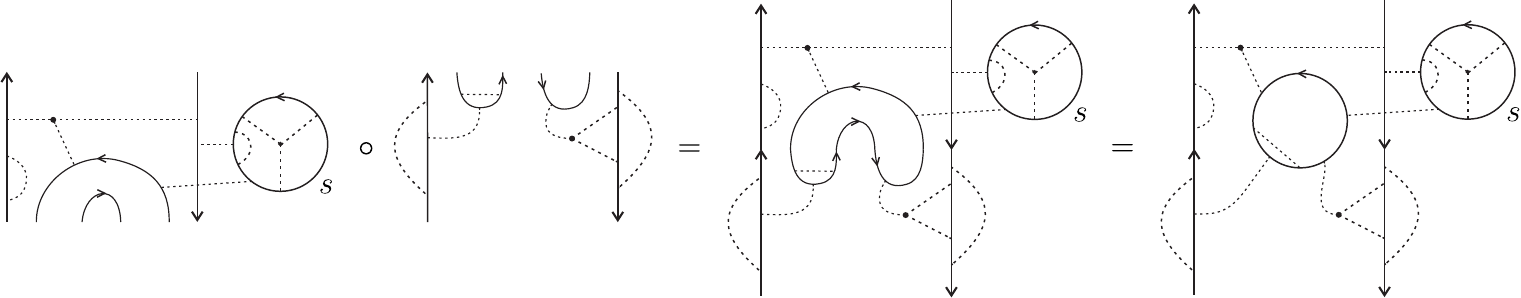}
												\caption{Schematic representation of the composition of Jacobi diagrams.}
\label{figuraJD_compositionexample} 										
\end{figure}

\subsection{The natural transformation  \texorpdfstring{$\mathrm{co}^{\mathcal A}(P,\bullet)$}{co(P,.)}}\label{sec:4-4}

\begin{definition}\label{def:changeoforientationJac} Let $P$ be an oriented Brauer diagram, $S$ a finite set and $s_0\in S$. Define a map
\begin{equation}\label{eq:2023-01-11-coJac}
\mathrm{co}^{\mathring{\mathrm{Jac}}}(P, (S,s_0)):\mathring{\mathrm{Jac}}(P,S) \longrightarrow \mathbb{C}\mathring{\mathrm{Jac}}(P,S)
\end{equation}  \index[notation]{co^{\mathring{\mathrm{Jac}}}(P, (S,s_0))@$\mathrm{co}^{\mathring{\mathrm{Jac}}}(P, (S,s_0))$}
by $$\mathrm{co}^{\mathring{\mathrm{Jac}}}(P, (S,s_0))(D,\varphi, \{\mathrm{lin}_l\}_{l\in\pi_0(P)},\{\mathrm{cyc}_s\}_{s\in S}) =(-1)^{|\varphi^{-1}(s_0)|} (D,\varphi, \{\mathrm{lin}_l\}_{l\in\pi_0(P)},\{\mathrm{cyc}_s\}_{s\in S\setminus\{s_0\}}\sqcup \{\overline{\mathrm{cyc}}_{s_0}\} )$$ for any $(D,\varphi, \{\mathrm{lin}_l\}_{l\in\pi_0(P)},\{\mathrm{cyc}_s\}_{s\in S})\in\mathring{\mathrm{Jac}}(P,S)$, where $\overline{\mathrm{cyc}}_{s_0}$ denotes the opposite cyclic order of ${\mathrm{cyc}}_{s_0}$ on the set $\varphi^{-1}(s_0)\subset \partial D$.
\end{definition}

\begin{lemma}\label{r:2023-02-26r1} Let $P$ be an oriented Brauer diagram, $S$ a finite set and $s_0\in S$. The map \eqref{eq:2023-01-11-coJac}  induces a continuous linear map
\begin{equation}\label{map:A:PP'}
\mathrm{co}^{\mathcal A}(P, (S,s)): \mathring{\mathcal{A}}^\wedge({P},S) \longrightarrow\mathring{\mathcal{A}}^\wedge(P, S).
\end{equation} \index[notation]{co^{\mathcal A}(P, (S,s))@$\mathrm{co}^{\mathcal A}(P, (S,s))$}
The assignment $(S,s)\mapsto \mathrm{co}^{\mathcal A}(P,(S,s))$ is a natural self-transformation of the functor $\mathcal Set_f^*\to \mathcal Vec_{\mathbb{C}}$ given by  $(S,s)\mapsto  \mathring{\mathcal A}^{\wedge}(P,S)$.
\end{lemma}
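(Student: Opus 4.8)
The statement to prove is Lemma~\ref{r:2023-02-26r1}: the signed-change-of-orientation operation on Jacobi diagrams of Definition~\ref{def:changeoforientationJac} descends to the quotient space $\mathring{\mathcal A}^\wedge(P,S)$ and assembles into a natural self-transformation of the functor $(S,s)\mapsto \mathring{\mathcal A}^\wedge(P,S)$ on $\mathcal Set_f^*$. There are three things to verify: (i) the linear extension of $\mathrm{co}^{\mathring{\mathrm{Jac}}}(P,(S,s_0))$ is compatible with the STU, AS and IHX relations, so that it passes to $\mathring{\mathcal A}(P,S)$; (ii) the induced map is degree-preserving and continuous, so it extends to the degree completion $\mathring{\mathcal A}^\wedge(P,S)$; (iii) the assignment $(S,s)\mapsto \mathrm{co}^{\mathcal A}(P,(S,s))$ commutes with the morphisms of $\mathcal Set_f^*$, i.e. with pointed bijections $f:(S,s_0)\to(S',s_0')$, which amounts to checking that the functoriality map $F_P(f)$ intertwines $\mathrm{co}^{\mathring{\mathrm{Jac}}}(P,(S,s_0))$ with $\mathrm{co}^{\mathring{\mathrm{Jac}}}(P,(S',s_0'))$.

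For (i), I would argue relation by relation. The key observation is that $\mathrm{co}^{\mathring{\mathrm{Jac}}}(P,(S,s_0))$ acts on a diagram $\underline D=(D,\varphi,\{\mathrm{lin}_l\},\{\mathrm{cyc}_s\})$ by leaving $D$, $\varphi$ and all orders except $\mathrm{cyc}_{s_0}$ untouched, reversing only $\mathrm{cyc}_{s_0}$ on the set $\varphi^{-1}(s_0)$, and multiplying by $(-1)^{\mathrm m_{s_0}(\underline D)}$ where $\mathrm m_{s_0}(\underline D)=|\varphi^{-1}(s_0)|$ (Definition~\ref{themapm}). The AS and IHX relations are purely local modifications of the graph $D$ near a trivalent vertex (resp. near an internal edge), and do not change $\varphi$, $\partial D$, or the number of univalent vertices landing on $s_0$; hence $\mathrm m_{s_0}$ is constant along such a relation and the cyclic order $\mathrm{cyc}_{s_0}$ on $\varphi^{-1}(s_0)$ is unchanged, so $\mathrm{co}^{\mathring{\mathrm{Jac}}}$ simply carries an AS (resp. IHX) relation to an AS (resp. IHX) relation multiplied by the same sign. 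For STU, the relation $\text{S}=\text{T}-\text{U}$ replaces a trivalent vertex on a solid strand by the difference of two diagrams in which a univalent vertex is slid past a point of the strand. When the solid strand is a portion of a component $l\in\pi_0(P)$, again $\varphi^{-1}(s_0)$ and its cyclic order are untouched, $\mathrm m_{s_0}$ is constant, and the relation is preserved up to the common sign. When the solid strand is the circle labelled by some $s\in S$ with $s\ne s_0$, the same reasoning applies. The only case needing a genuine check is $s=s_0$: here the STU relation changes the cyclic order on $\varphi^{-1}(s_0)$ by transposing two consecutive univalent vertices, while $\mathrm m_{s_0}$ stays the same; one verifies directly that reversing the cyclic order commutes with the STU move in the sense that applying $\mathrm{co}^{\mathring{\mathrm{Jac}}}$ to each of S, T, U gives, up to the overall sign $(-1)^{\mathrm m_{s_0}}$, a valid STU relation read with the opposite orientation of the $s_0$-circle (the roles of T and U being the same because the defining picture of STU is symmetric under orientation reversal of the strand). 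So $\mathrm{co}^{\mathring{\mathrm{Jac}}}(P,(S,s_0))$ maps the subspace generated by STU, AS, IHX to itself and descends to $\mathring{\mathcal A}(P,S)$.

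For (ii), the degree $\mathrm{deg}(\underline D)=\tfrac12(\#\{\text{trivalent vertices}\}+|\partial D|)$ depends only on $D$, which $\mathrm{co}^{\mathring{\mathrm{Jac}}}$ leaves unchanged; hence the induced endomorphism of $\mathring{\mathcal A}(P,S)$ is graded, so it is continuous for the degree filtration and extends uniquely to a continuous linear endomorphism $\mathrm{co}^{\mathcal A}(P,(S,s))$ of the completion $\mathring{\mathcal A}^\wedge(P,S)$. For (iii), a pointed bijection $f:(S,s_0)\to(S',s_0')$ induces $F_P(f):\mathring{\mathrm{Jac}}(P,S)\to\mathring{\mathrm{Jac}}(P,S')$ by post-composing $\varphi$ with $\mathrm{Id}_{\pi_0(P)}\sqcup f$ and relabelling the cyclic orders accordingly; since $f(s_0)=s_0'$ we have $\varphi^{-1}(s_0)=((\mathrm{Id}\sqcup f)\circ\varphi)^{-1}(s_0')$ as subsets of $\partial D$, so the sign $(-1)^{|\varphi^{-1}(s_0)|}$ is unchanged and reversing $\mathrm{cyc}_{s_0}$ corresponds exactly to reversing $\mathrm{cyc}_{s_0'}$ after relabelling. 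Therefore $F_P(f)\circ\mathrm{co}^{\mathring{\mathrm{Jac}}}(P,(S,s_0))=\mathrm{co}^{\mathring{\mathrm{Jac}}}(P,(S',s_0'))\circ F_P(f)$, which on passing to $\mathring{\mathcal A}^\wedge$ gives the naturality square. I expect the only subtle point — the ``main obstacle'' — to be the STU compatibility in the case where the affected solid strand is the $s_0$-circle itself, since this is where the sign and the orientation reversal interact; everything else is bookkeeping. I would handle that case by drawing the two local pictures (before and after orientation reversal of the $s_0$-circle) and observing that the STU identity is invariant under simultaneously reversing the orientation of the strand and swapping the two terms $T$ and $U$, exactly matching the action of $\mathrm{co}^{\mathring{\mathrm{Jac}}}$ together with the sign $(-1)^{\mathrm m_{s_0}}$ (which is the same for all three diagrams S, T, U).
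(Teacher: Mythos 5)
Your overall route coincides with the paper's (the paper merely says ``one checks'' compatibility with STU, AS, IHX and with pointed bijections), and your points (ii) and (iii) and the AS/IHX and off-$s_0$ STU cases are fine. But the one step you yourself flag as the crux --- STU with the solid strand equal to the $s_0$-circle --- is argued incorrectly, and as written it would not close. In the STU relation $S=T-U$ the diagram $S$ has \emph{one} univalent vertex on the strand (the stem of the trivalent vertex), while $T$ and $U$ each have \emph{two}; so when the strand is the $s_0$-circle one has $\mathrm{m}_{s_0}(T)=\mathrm{m}_{s_0}(U)=\mathrm{m}_{s_0}(S)+1$, and the sign $(-1)^{\mathrm{m}_{s_0}}$ is \emph{not} ``the same for all three diagrams''. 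Moreover STU is not symmetric under reversing the orientation of the strand: reversing the cyclic order interchanges the local pictures of $T$ and $U$, so ``the roles of $T$ and $U$ being the same'' is false.

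The correct verification is precisely the interplay of these two effects. Writing $m=\mathrm{m}_{s_0}(S)$ and denoting by a superscript ${}^{\mathrm{rev}}$ the diagrams with $\mathrm{cyc}_{s_0}$ reversed, one gets
\begin{equation*}
\mathrm{co}^{\mathring{\mathrm{Jac}}}(P,(S,s_0))\bigl(S-T+U\bigr)
=(-1)^{m}S^{\mathrm{rev}}-(-1)^{m+1}T^{\mathrm{rev}}+(-1)^{m+1}U^{\mathrm{rev}}
=(-1)^{m}\bigl(S^{\mathrm{rev}}+T^{\mathrm{rev}}-U^{\mathrm{rev}}\bigr),
\end{equation*}
and with respect to the reversed cyclic order the diagram $T^{\mathrm{rev}}$ plays the role of $U$ and $U^{\mathrm{rev}}$ that of $T$, so the STU relation at that spot reads $S^{\mathrm{rev}}=U^{\mathrm{rev}}-T^{\mathrm{rev}}$, i.e.\ $S^{\mathrm{rev}}+T^{\mathrm{rev}}-U^{\mathrm{rev}}=0$. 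Thus the image of an STU relator is $(-1)^{m}$ times an STU relator: the extra $(-1)$ coming from the leg-count parity exactly compensates the swap of $T$ and $U$ caused by reversing the orientation --- this is the raison d'\^etre of the sign in Definition~\ref{def:changeoforientationJac}. Replace your parity claim by this cancellation and the proof is complete; the remaining checks (degree preservation, continuity, naturality under pointed bijections) are as you state and agree with the paper.
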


\begin{proof}
One checks that the linear extension of the map \eqref{eq:2023-01-11-coJac} is compatible with the STU, AS and IHX relations. One also checks that the assignment $(S,s)\mapsto \mathrm{co}^{\mathcal A}(P.(S,s))$ is compatible with pointed bijections $f:(S,s_0)\to (E,e_0)$ which implies the second statement. 
\end{proof}

In drawings, the element $\mathrm{co}^{\mathcal A}(P.(S,s))(x)$ is obtained from $x\in\mathring{\mathcal A}^{\wedge}(P,S)$ by changing the orientation of the circle in the schematic representation of $x$ labelled by $s_0$ and multiplying it by $(-1)$ power to the number of univalent vertices attached to this circle, see  Figure~\ref{figuraJD_changeorientationcircle} for an example. 
\begin{figure}[ht!]
										\centering
                        \includegraphics[scale=0.9]{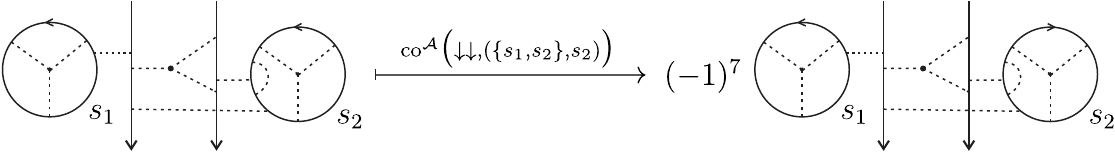}
												\caption{Schematic representation of the change of orientation operation for Jacobi diagrams.}
\label{figuraJD_changeorientationcircle} 										
\end{figure}

\subsection{The natural transformation  \texorpdfstring{$\mathrm{dbl}_{\mathcal A}((P,A),\bullet)$}{dbl((P,A),.)}}\label{sec:4-5}

\begin{definition}
For $P \in \vec{\mathcal Br}$ and $A \subset \pi_0(P)$ and $S$ a finite set. Use the simplified notation $P^A$ for $\mathrm{dbl}_{\vec{\mathcal Br}}(P,A)$. By Lemma~\ref{r:2023-01-11-r2}$(c)$, there is a map $\mathrm{proj}^{\pi_0}_{P,A} : \pi_0(P^A) \to \pi_0(P)$. Let $$\big(D,\varphi,\{\mathrm{lin}_x\}_{x \in \pi_0(P)},\{\mathrm{cyc}_s\}_{s \in S}\big)\in\mathring{\mathrm{Jac}}(P,S).$$ For any $\tilde{\varphi} : \partial D \to \pi_0(P^A) \sqcup S$ such that $\mathrm{proj}^{\pi_0}_{P,A} \circ \tilde{\varphi}=\varphi$ and $x' \in \pi_0(P^A)$, one has 
$\tilde{\varphi}^{-1}(x') \subset \varphi^{-1}(x)$, where  $x:=\mathrm{proj}^{\pi_0}_{P,A}(x')$. Define  $\mathrm{lin}^{\tilde{\varphi}}_{x'}$ as the linear order on $\tilde{\varphi}^{-1}(x')$ given by the restriction of $\mathrm{lin}_x$, see \S\ref{sec:4-0}. 

Define a map 
\begin{equation}\label{defeq:doublingmapinJac}
\mathrm{dbl}_{\mathring{\mathrm{Jac}}}((P,A),S) : \mathring{\mathrm{Jac}}(P,S)\longrightarrow \mathbb{C}\mathring{\mathrm{Jac}}(P^A,S)
\end{equation}  \index[notation]{dbl_{\mathring{\mathrm{Jac}}}((P,A),S) @$\mathrm{dbl}_{\mathring{\mathrm{Jac}}}((P,A),S)$}  by 
\begin{equation*}
\big(D,\varphi,\{\mathrm{lin}_x\}_{x \in \pi_0(P)},\{\mathrm{cyc}_s\}_{s \in S}\big) \longmapsto \sum_{\tilde{\varphi} : \partial D \to \pi_0(P^A) \sqcup S \text{ with } \mathrm{proj}^{\pi_0}_{P,A} \circ \tilde{\varphi}=\varphi}
\big(D,\tilde{\varphi},\{\mathrm{lin}^{\tilde{\varphi}}_{x'}\}_{x' \in \pi_0(P^A)},\{\mathrm{cyc}_s\}_{s \in S}\big).
\end{equation*} 
\end{definition}

\begin{lemma}\label{r:2023-03-04r1} Let $P \in \vec{\mathcal Br}$ and $A \subset \pi_0(P)$ and $S$ be a finite set.The map \eqref{defeq:doublingmapinJac} induces a continuous linear map
\begin{equation}\label{defeq:doublingmapinA}
\mathrm{dbl}_{\mathcal{A}}((P,A),S) : \mathring{\mathcal{A}}^{\wedge}(P,S)\longrightarrow \mathring{\mathcal{A}}^{\wedge}(P^A,S).
\end{equation} \index[notation]{dbl_{\mathcal{A}}((P,A),S) @$\mathrm{dbl}_{\mathcal{A}}((P,A),S) $}
The assignment $S\mapsto \mathrm{dbl}_{\mathcal{A}}((P,A),S)$ defines a natural transformation $\mathrm{dbl}_{\mathcal{A}}((P,A),\bullet) : \mathring{\mathcal{A}}^{\wedge}(P,\bullet)\longrightarrow \mathring{\mathcal{A}}^{\wedge}(P^A,\bullet)$ between the functors $\mathring{\mathcal{A}}^{\wedge}(P,\bullet): \mathcal Set_f\to\mathcal Vect$ and  $\mathring{\mathcal{A}}^{\wedge}(P^A,\bullet): \mathcal Set_f\to\mathcal Vect$.
\end{lemma}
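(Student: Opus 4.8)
\textbf{Proof plan for Lemma~\ref{r:2023-03-04r1}.}

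The plan is to establish the two assertions in turn: first that the map \eqref{defeq:doublingmapinJac} descends to the quotient spaces and extends to the degree completion, and then that the resulting family is natural in the $\mathcal Set_f$-variable. For the first assertion, I would check compatibility with the three local relations STU, AS, IHX. The key observation is that the doubling operation $\mathrm{dbl}_{\mathring{\mathrm{Jac}}}((P,A),S)$ acts only on the map $\varphi:\partial D\to\pi_0(P)\sqcup S$ by summing over all lifts $\tilde\varphi$ along $\mathrm{proj}^{\pi_0}_{P,A}$, while leaving the underlying vertex-oriented unitrivalent graph $D$ and the cyclic orders $\{\mathrm{cyc}_s\}_{s\in S}$ untouched; the linear orders on the doubled components are obtained by restriction from those of the original components, as in \S\ref{sec:4-0}. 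Since each of STU, AS, IHX is a local relation affecting a bounded piece of $D$ (and, for STU, a portion of a solid line which is either a component of $\pi_0(P)$ or a circle labelled by $S$), and since the set of lifts $\tilde\varphi$ of $\varphi$ is in natural bijection with the set of lifts of the modified $\varphi'$ appearing in each relation (because the univalent vertices involved attach to the same solid component before and after, hence have the same set of preimage-components under $\mathrm{proj}^{\pi_0}_{P,A}$), the relation is preserved term by term in the sum. The AS and IHX relations only touch trivalent vertices and edges away from the boundary, so they are immediately respected; the STU relation is the only one requiring a small argument about how the two new univalent vertices created on a solid line distribute over the doubled strands, and this follows from the definition of $\mathrm{lin}^{\tilde\varphi}_{x'}$ by restriction. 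Compatibility with degree is clear since doubling changes neither the number of trivalent vertices nor $|\partial D|$, so the induced map preserves degree and therefore extends continuously to the degree completions, giving \eqref{defeq:doublingmapinA}.

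For the naturality assertion, let $f:S\to S'$ be a bijection. I would check that the square relating $\mathrm{dbl}_{\mathcal A}((P,A),S)$ and $\mathrm{dbl}_{\mathcal A}((P,A),S')$ via the functorial maps $\mathring{\mathcal A}^\wedge(P,f)$ and $\mathring{\mathcal A}^\wedge(P^A,f)$ commutes. At the level of generators this is essentially formal: post-composing $\varphi$ with $\mathrm{Id}_{\pi_0(P)}\sqcup f$ and then summing over lifts along $\mathrm{proj}^{\pi_0}_{P,A}$ (which is the identity on the $S$-part) yields the same set of decorated graphs as first summing over lifts and then relabelling the $S$-part by $f$; the bijection between the two indexing sets of lifts is induced by $\mathrm{Id}_{\pi_0(P^A)}\sqcup f$, and it respects the linear orders $\mathrm{lin}^{\tilde\varphi}_{x'}$ and the cyclic orders $\mathrm{cyc}_s$ because $f$ acts only on labels, not on orders. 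Passing to the $\mathbb{C}$-linear extensions, then to the quotients by STU/AS/IHX, and then to the degree completions preserves this commutativity.

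I do not expect a serious obstacle here; the statement is of the same routine character as Lemmas~\ref{r:2023-02-26r1} and the naturality claims for $\mathrm{Tens}^{\mathcal A}$ and $\mathrm{Comp}^{\mathcal A}$ established earlier in this section. The one point deserving genuine care — and the closest thing to a ``hard part'' — is the verification that the STU relation is respected: one must confirm that when the STU move replaces a configuration near a solid line by a difference of two configurations (with the two adjacent univalent vertices swapped), the family of lifts $\tilde\varphi$ and the restricted linear orders $\mathrm{lin}^{\tilde\varphi}_{x'}$ match up correctly across the relation, so that the relation holds summand-by-summand after applying $\mathrm{dbl}_{\mathring{\mathrm{Jac}}}$. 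This is where the precise definition of the linear order by restriction (rather than by some more elaborate prescription) is used in an essential way, and I would spell it out carefully while treating AS, IHX, degree-preservation, and naturality as direct verifications.
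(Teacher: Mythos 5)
Your proposal is correct and follows essentially the same route as the paper, whose proof simply records that the linear extension of \eqref{defeq:doublingmapinJac} is compatible with the STU, AS and IHX relations and that the assignment $S\mapsto \mathrm{dbl}_{\mathcal{A}}((P,A),S)$ is compatible with bijections $S\to S'$. Your additional care about how the lifts $\tilde\varphi$ and the restricted linear orders match up across the STU relation is exactly the content of the check the paper leaves implicit.
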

\begin{proof}
One checks that the linear extension of the map \eqref{defeq:doublingmapinJac} is compatible with the STU, AS and IHX relations. One also checks that the assignment $S\mapsto \mathrm{dbl}_{\mathcal{A}}((P,A),S)$ is compatible with  bijections $S\to E$ which implies the second statement. 
\end{proof}

{\begin{remark} In drawings, we use the \emph{box notation} to denote the sum over all the possible ways of gluing the univalent vertices of a diagram $D$ attached to the grey box to the  segments involved in it, so that if there are $k$ univalent vertices attached to it and two segments, the sum consists of $2^k$ terms. See Figure \ref{fig:doubling:A}.
\end{remark}}

\begin{figure}[ht!]
\centering
\includegraphics[scale=0.8]{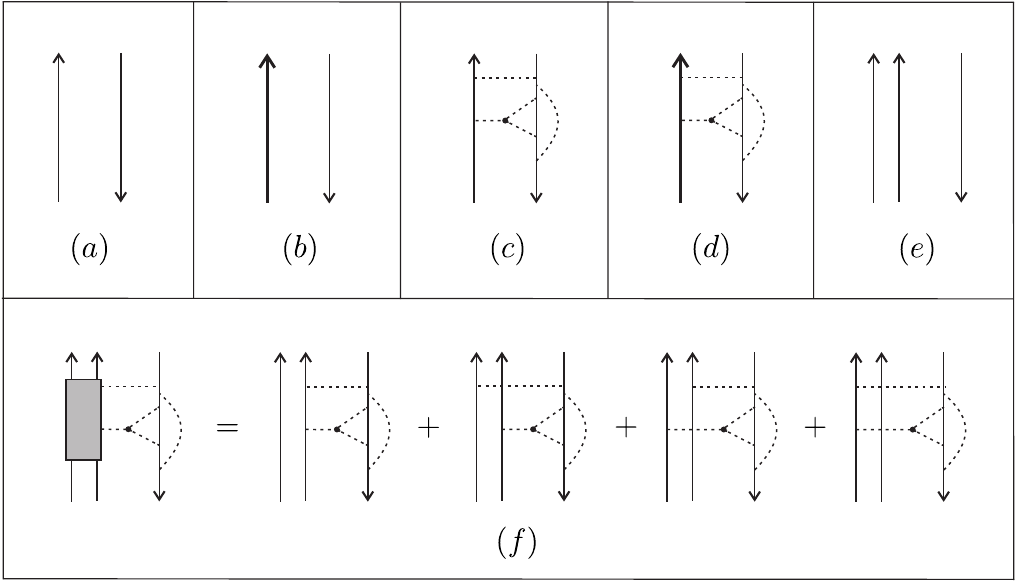}
\caption{Schematic representation of $\mathrm{dbl}_{\mathcal{A}}((\uparrow\downarrow,\{\uparrow\}),\emptyset) : \mathring{\mathcal{A}}^{\wedge}(\uparrow\downarrow,\emptyset)\longrightarrow \mathring{\mathcal{A}}^{\wedge}(\uparrow\uparrow\downarrow,\emptyset)$. In~$(a)$ we show ${P}=\uparrow\downarrow\in \vec{\mathcal Br}$, in $(b)$ we show  $(P,A) = (\uparrow\downarrow,\{\uparrow\})$ using thick lines for representing $A\subset\pi_0(P)$. In  $(c)$ we show an element  $\underline{D}\in\mathring{\mathrm{Jac}}(\uparrow\downarrow,\emptyset)$. In $(e)$ we show ${P}^A=\uparrow\downarrow^{\{\uparrow\}}=\mathrm{dbl}_{\vec{\mathcal Br}}(P,A)=\uparrow\uparrow\downarrow$ and in $(f)$ we show a schematic representation of $\mathrm{dbl}_{\mathring{\mathrm{Jac}}}((\uparrow\downarrow,\{\uparrow\}),\emptyset)(\underline{D})$ using the box notation. Its class in $\mathring{\mathcal{A}}^{\wedge}(\uparrow\uparrow\downarrow,\emptyset)$ is $\mathrm{dbl}_{\mathcal{A}}((\uparrow\downarrow,\{\uparrow\}),\emptyset)([\underline{D}])$.}
\label{fig:doubling:A}
\end{figure}

\begin{convention}\label{convention-doublingblueboxes} Consider a pair $({P},A)$ with $P\in\vec{\mathcal Br}$ and $A\subset \pi_0(P)$ and $S$ a finite set. For $[\underline{D}]\in\mathring{\mathcal{A}}^\wedge({P},S)$, we also represent $[\underline{D}]^A := {\mathrm{dbl}_{\mathcal A}((P,A),S)}([\underline{D}])\in\mathring{\mathcal{A}}^\wedge({P}^A,S)$ schematically by using two nested blue boxes around the schematic representation of $\underline{D}$ and $A$ is indicated by using $\vee$'s  (resp. $\wedge$'s ) between the two blue boxes on each top (resp. bottom) boundary point  $a\in \bigsqcup_{p\in A}p\cap [\![1,\mathrm{t}(P)]\!]$ (resp. $a\in \bigsqcup_{p\in A}p\cap [\![1,\mathrm{t}(P)]\!]$). In Figure~\ref{fig:doubling:A-nc} we shown an example of this notation. 
\begin{figure}[ht!]
\centering
\includegraphics[scale=0.8]{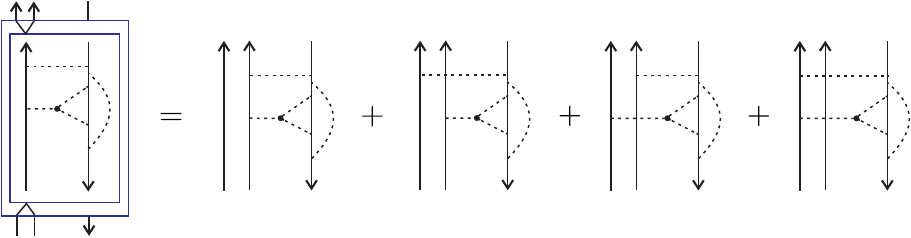}
\caption{The element ${\mathrm{dbl}}_{1}([\underline{D}])\in\mathring{\mathcal{A}}^\wedge({P}^A,\emptyset)$ from Figure~\ref{fig:doubling:A} $(f)$ using the convention  of  nested blue boxes for doubling.}
\label{fig:doubling:A-nc}
\end{figure}
\end{convention}

\begin{convention}\label{convention:co_i-dbl_i} If ${P}=\mathrm{Id}_w\in\underline{\vec{\mathcal Br}}(w,w)$ for $w\in\{+,-\}^*$, then for $i\in[\![1,|w|]\!]$, we denote by   $\mathrm{dbl}_i$ \index[notation]{dbl_i@$\mathrm{dbl}_i$} the map  $\mathrm{dbl}_{\mathcal{A}}{({P}, \{i,i\})}$. Moreover, if $|w|=1$ we write $\mathrm{dbl}$ instead of $\mathrm{dbl}_1$. 
\end{convention}

\subsection{Proof of the pre-LMO axioms (LMO1), (LMO2) and (LMO3)}\label{sec:4-6}

\begin{lemma}\label{r:2023-02-24r1} Let $X$, $Y$ and $Z$ be finite sets and let $\mathrm{succ}_{XY}$, $\mathrm{succ}_{YZ}$ be \emph{PDISM}s (see Definition~\ref{def:PDISM}) on $X \sqcup Y$ and $Y\sqcup Z$. Let $\mathrm{orb}(X\sqcup Y) = \mathrm{orb}_{\mathrm{lin}}(X\sqcup Y)\sqcup \mathrm{orb}_{\mathrm{cyc}}(X\sqcup Y)$ and $\mathrm{orb}(Y\sqcup Z)=\mathrm{orb}_{\mathrm{lin}}(Y\sqcup Z)\sqcup \mathrm{orb}_{\mathrm{cyc}}(Y\sqcup Z)$ as in Lemma~\ref{r:2023-02-14-r1}. Suppose that $\mathrm{succ}_{XY}(X) \subset Y$, $\mathrm{succ}_{XY}(X) \subset Y$, and $\mathrm{succ}_{YZ}(Y) \subset Z$, 
$\mathrm{succ}_{YZ}(Z) \subset Y$. Assume also that $\mathrm{succ}_{XY}(X) \cap \mathrm{succ}_{YZ}(Z)=\emptyset$ and that for $y \in Y$, $\mathrm{succ}_{XY}(y)$ is defined if and only if $\mathrm{succ}_{YZ}(y)$ is undefined.
\begin{itemize}
\item[$(a)$] A \emph{PDISM} $\mathrm{succ}_{XY,Z}$ on $\mathrm{orb}_{\mathrm{lin}}(X \sqcup Y) \sqcup Z$ is defined as follows: Let $\omega \in \mathrm{orb}_{\mathrm{lin}}(X \sqcup Y)$ and $m_{\omega}$ be the maximal element of the underlying set of $\omega$; 

$(i)$ if $m_{\omega} \in X$, then $\mathrm{succ}_{XY,Z}(\omega)$ is undefined; 

$(ii)$ if $m_{\omega} \in Y$, then $\mathrm{succ}_{XY}(m_{\omega})$ is undefined, therefore $\mathrm{succ}_{YZ}(m_{\omega})$ is defined and belongs to~$Z$, and one sets $\mathrm{succ}_{XY,Z}(\omega):= \mathrm{succ}_{YZ}(m_{\omega})$.      
If $z \in Z$, then if $\mathrm{succ}_{YZ}(z)$ is undefined, then so is $\mathrm{succ}_{XY,Z}(z)$; and if $\mathrm{succ}_{YZ}(z)$ is defined, then it belongs to $Y$ and if one denotes it $y$, then $\mathrm{succ}_{XY,Z}(z):=(y,\mathrm{succ}_{XY}(y),\mathrm{succ}_{XY}^2(y),\ldots)\in\mathrm{orb}_{\mathrm{lin}}(X\sqcup Y)$.

\item[$(b)$]  A \emph{PDISM} $\mathrm{succ}_{X,YZ}$ on $X \sqcup \mathrm{orb}_{\mathrm{lin}}(Y\sqcup Z)$ is similarly defined. 

\item[$(c)$]  A \emph{PDISM} $\mathrm{succ}$ on $X \sqcup Y \sqcup Z$ is defined as follows: for $x \in X$, if $\mathrm{succ}_{XY}(x)$ is undefined, then $\mathrm{succ}(x)$ is undefined; if $\mathrm{succ}_{XY}(x)$ is defined, then $\mathrm{succ}(x):=\mathrm{succ}_{XY}(x)$; 
for $z \in Z$, if $\mathrm{succ}_{YZ}(z)$ is undefined, then $\mathrm{succ}(z)$ is undefined; if $\mathrm{succ}_{YZ}(z)$ is defined, then $\mathrm{succ}(z):=\mathrm{succ}_{YZ}(z)$; if $y \in Y$, then either $\mathrm{succ}_{XY}(y)$ is defined, then $\mathrm{succ}(y):=\mathrm{succ}_{XY}(y)$; or $\mathrm{succ}_{YZ}(y)$ is defined, then $\mathrm{succ}(y):=\mathrm{succ}_{YZ}(y)$. 

\item[$(d)$]  The  \emph{PDSIM} $\mathrm{succ}$ coincides with the \emph{PDISM} on $X \sqcup Y \sqcup Z$ obtained by combining the \emph{PDISM} $\mathrm{succ}_{XY,Z}$ on $\mathrm{orb}_{\mathrm{lin}}(X \sqcup Y) \sqcup Z$, the map $X \sqcup Y \sqcup Z \to (\mathrm{orb}_{\mathrm{lin}}(X \sqcup Y) \sqcup Z) \sqcup  \mathrm{orb}_{\mathrm{cyc}}(X \sqcup Y)$, the linear (resp. cyclic) order in the fibers of $\mathrm{orb}_{\mathrm{lin}}(X \sqcup Y) \sqcup Z$ (resp. $\mathrm{orb}_{\mathrm{cyc}}(X \sqcup Y)$) for this map, and Lemma~\ref{r:2023-01-11-r2}, as well as with the \emph{PDISM} on $X \sqcup Y \sqcup Z$ obtained by combining the \emph{PDISM} $\mathrm{succ}_{X,YZ}$  the map $X \sqcup Y \sqcup Z \to (X \sqcup \mathrm{orb}_{\mathrm{lin}}(Y \sqcup Z)) \sqcup  \mathrm{orb}_{\mathrm{cyc}}(Y \sqcup Z)$, the linear (resp. cyclic) order in the fibers of $X \sqcup \mathrm{orb}_{\mathrm{lin}}(Y \sqcup Z)$ (resp. $\mathrm{orb}_{\mathrm{cyc}}(Y \sqcup Z)$) for this map, and Lemma~\ref{r:2023-01-11-r2}. 
\end{itemize}
\end{lemma}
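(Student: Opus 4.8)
The statement is entirely about partially defined injective self-maps (PDISMs) and the way their orbit decompositions behave under a natural ``gluing'' operation along a shared set $Y$. The plan is to treat each of $(a)$--$(d)$ as a bookkeeping verification: in each case, I must check that the proposed assignment is well-defined (images land where claimed, maps are injective), and in $(d)$ that two a priori different constructions produce the same PDISM. The key conceptual point, to be isolated first, is the following dichotomy forced by the hypotheses: since $\mathrm{succ}_{XY}(X)\subset Y$ and $\mathrm{succ}_{YZ}(Z)\subset Y$, while $\mathrm{succ}_{XY}(X)\cap \mathrm{succ}_{YZ}(Z)=\emptyset$ and for $y\in Y$ exactly one of $\mathrm{succ}_{XY}(y)$, $\mathrm{succ}_{YZ}(y)$ is defined, the ``combined'' dynamics on $X\sqcup Y\sqcup Z$ alternates between the two PDISMs only via $Y$, and each $Y$-element is either a ``source'' (in the image of neither, or in the image of exactly one) or a ``transit'' point. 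I would record this as a short preliminary observation, together with the remark that an $\mathrm{orb}_{\mathrm{lin}}$-class $\omega$ of $\mathrm{succ}_{XY}$ has a well-defined maximal element $m_\omega$ (by Lemma~\ref{r:2023-02-14-r1}$(c)$), and that $m_\omega\in X$ iff $\mathrm{succ}_{XY}(m_\omega)$ undefined iff (by the hypothesis on $Y$) $\mathrm{succ}_{YZ}(m_\omega)$ is defined when $m_\omega\in Y$.

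\textbf{Parts $(a)$, $(b)$, $(c)$.} For $(a)$: the domain of $\mathrm{succ}_{XY,Z}$ is $\{\omega\in\mathrm{orb}_{\mathrm{lin}}(X\sqcup Y)\mid m_\omega\in Y\}\sqcup\{z\in Z\mid \mathrm{succ}_{YZ}(z)\text{ defined}\}$; I check that on the first piece $\mathrm{succ}_{YZ}(m_\omega)\in Z$ (it is defined since $\mathrm{succ}_{XY}(m_\omega)$ is not, and lands in $Z$ by hypothesis), and on the second piece $\mathrm{succ}_{YZ}(z)\in Y$ generates, under iteration of $\mathrm{succ}_{XY}$, a well-defined finite $\mathrm{orb}_{\mathrm{lin}}$-class (finite because $X\sqcup Y$ is finite; linear by Lemma~\ref{r:2023-02-14-r1}$(c)$). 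Injectivity follows from injectivity of $\mathrm{succ}_{YZ}$ and $\mathrm{succ}_{XY}$ together with the disjointness hypothesis: two classes with the same $\mathrm{succ}_{YZ}$-image would share their $\mathrm{succ}_{XY}$-orbit, hence coincide. Part $(b)$ is the mirror image, swapping the roles of $X$ and $Z$ and of $\mathrm{succ}_{XY}$, $\mathrm{succ}_{YZ}$; I will simply say ``by the symmetric argument''. Part $(c)$ is immediate: the three clauses are mutually exclusive and exhaustive on $X\sqcup Y\sqcup Z$ precisely by the $Y$-dichotomy hypothesis, and injectivity of $\mathrm{succ}$ reduces to injectivity of each $\mathrm{succ}_{XY}$, $\mathrm{succ}_{YZ}$ plus $\mathrm{succ}_{XY}(X)\cap\mathrm{succ}_{YZ}(Z)=\emptyset$ to rule out collisions across the two.

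\textbf{Part $(d)$ — the main obstacle.} This is where the real content lies: one must show that the PDISM $\mathrm{succ}$ of $(c)$, the PDISM obtained from $\mathrm{succ}_{XY,Z}$ on $\mathrm{orb}_{\mathrm{lin}}(X\sqcup Y)\sqcup Z$ by ``re-expanding'' $\mathrm{orb}_{\mathrm{lin}}$-classes into their linearly ordered elements via Lemma~\ref{r:2023-01-11-r2}$(a)$ (and leaving the cyclic $\mathrm{orb}_{\mathrm{cyc}}(X\sqcup Y)$ classes alone, as in Lemma~\ref{r:2023-01-11-r2}$(b)$), and its $(b)$-analogue, all agree as diagrams $X\sqcup Y\sqcup Z\supset D\to X\sqcup Y\sqcup Z$. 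The plan is to trace, element by element, the successor of an arbitrary $v\in X\sqcup Y\sqcup Z$ under each of the three descriptions and check they coincide. The delicate case is $v\in Y$ a transit point whose forward $\mathrm{succ}$-trajectory eventually exits an $\mathrm{orb}_{\mathrm{lin}}(X\sqcup Y)$-class at its maximum $m$ and jumps to $\mathrm{succ}_{YZ}(m)\in Z$: I must verify that ``walk inside $\omega$ to the top, then apply $\mathrm{succ}_{XY,Z}$'' reproduces ``apply $\mathrm{succ}$ step by step''. This is exactly what the recursive description of $\mathrm{succ}_{XY,Z}$ in $(a)(ii)$ encodes, so the verification amounts to an induction on the length of the $\mathrm{succ}_{XY}$-chain from $v$ to $m_\omega$; the $\mathrm{orb}_{\mathrm{cyc}}$ classes cause no interaction since they never meet $Z$. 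I expect $(d)$ to consume most of the write-up, but each step is forced and mechanical; I would present it as: ``By $(c)$ it suffices to identify domains of definition and values; we do this by tracing the three constructions on each $v\in X\sqcup Y\sqcup Z$'', followed by the four-or-five element-type cases, invoking Lemma~\ref{r:2023-02-14-r1} and Lemma~\ref{r:2023-01-11-r2} for the order-compatibility bookkeeping.
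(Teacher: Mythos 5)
Your proposal is correct and takes the same route as the paper: the paper's entire proof of this lemma is the single sentence ``The result follows straightforwardly from the definitions,'' i.e.\ precisely the direct verification you outline. Your write-up merely makes explicit the bookkeeping the authors leave to the reader (well-definedness and injectivity in $(a)$--$(c)$ via the disjointness and $Y$-dichotomy hypotheses, and the element-by-element trace through the transit case for $(d)$).
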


\begin{proof}
The result follows straightforwardly from the definitions.
\end{proof}

\begin{lemma}\label{r:lemmaforLMO1}  The collection of  assignments taking:   
\begin{itemize}
\item[$(a)$]   $P\in\vec{\mathcal Br}$ to the functor $\mathring{\mathcal A}^{\wedge}(P,\bullet):\mathcal Set_f\to \mathcal Vect$, see \S\ref{sec:4-1};
\item[$(b)$] $w\in\{+,-\}^*$ to the  element $\mathrm{Id}^{\mathcal A}_w:=(\mathrm{Id}_w)_{\mathcal A}\in\mathring{\mathcal A}^{\wedge}(\mathrm{Id}_w,\emptyset)$, see Definition~\ref{def:emptyJacdiagram};   \index[notation]{Id^{\mathcal A}_w@$\mathrm{Id}^{\mathcal A}_w$}
\item[$(c)$]   each pair $P,P'\in\vec{\mathcal Br}$ to the natural transformation $\mathrm{Tens}^{\mathcal A}(P,P')_{\bullet, \diamond}:\mathring{\mathcal A}^{\wedge}(P,\bullet)\otimes \mathring{\mathcal A}^{\wedge}(P',\diamond)\to \mathring{\mathcal A}^{\wedge}(P\otimes P',\bullet\sqcup \diamond)$, see \S\ref{sec:4-2};
\item[$(d)$]   every composable pair $P,P'\in\vec{\mathcal Br}$ to  the natural transformation $\mathrm{Comp}^{\mathcal A}(P,P')_{\bullet,\diamond}:\mathring{\mathcal A}^{\wedge}(P,\bullet)\otimes \mathring{\mathcal A}^{\wedge}(P',\diamond)\to \mathring{\mathcal A}^{\wedge}(P'\circ P, \bullet\sqcup \diamond\sqcup \mathrm{Cir}(P',P))$, see \S\ref{sec:4-3};
\end{itemize}
satisfies the axiom (LMO1) in Definition~\ref{def:preLMOstructure}.
\end{lemma}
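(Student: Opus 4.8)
The statement to be proved is Lemma~\ref{r:lemmaforLMO1}: the collection of data $(a)$--$(d)$ satisfies axiom (LMO1) of Definition~\ref{def:preLMOstructure}. Axiom (LMO1) has a ``data'' part (already provided by items $(a)$--$(d)$ and the construction sections \S\ref{sec:4-1}--\S\ref{sec:4-3}) and four conditions $(a)$--$(d)$ to be verified. The overall strategy is to check each of these four conditions, and in each case to reduce the identity on the completed Jacobi-diagram spaces $\mathring{\mathcal A}^\wedge(P,S)$ to an identity at the level of the generating sets $\mathring{\mathrm{Jac}}(P,S)$, i.e. on individual Jacobi diagrams $\underline D=(D,\varphi,\{\mathrm{lin}_l\},\{\mathrm{cyc}_s\})$; since all the natural transformations involved ($\mathrm{Tens}^{\mathcal A}$, $\mathrm{Comp}^{\mathcal A}$) are $\mathbb C$-linear extensions of explicit combinatorial operations on $\mathring{\mathrm{Jac}}$, and the STU/AS/IHX relations are respected (already checked in \S\ref{sec:4-2}, \S\ref{sec:4-3}), it suffices to match the underlying graphs, the boundary-labelling maps, and the linear/cyclic orders on both sides, together with the bookkeeping bijections of circle-component sets coming from Proposition~\ref{r:2022-09-13-bijofcirclesPart}, Lemma~\ref{r:bijectioncomptensorandcompinOBr}, and Notation~\ref{not:twistinX}.

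First I would dispatch condition $(a)$ (the unit axiom for $\mathrm{Comp}^{\mathcal A}$): for $P\in\vec{\mathcal Br}$ and $\underline D\in\mathring{\mathrm{Jac}}(P,S)$, composing with $\emptyset_{(\mathrm{Id}_{\mathrm t(P)},\emptyset)}$ produces the graph $D\sqcup\emptyset=D$, the new labelling map $\mathrm{proj}^{S,\emptyset}_{P,\mathrm{Id}}\circ(\varphi\sqcup\emptyset)$ which — because $\mathrm{Cir}(\mathrm{Id}_{\mathrm t(P)},P)=\emptyset$ and $P'\circ P=P$ — is just $\varphi$, and the induced linear orders coincide with $\mathrm{lin}_l$ by construction (Lemma~\ref{r:2023-01-11-r1}$(a)$ degenerates when one of the two factors is an identity). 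The mirror statement with $\mathrm{Id}_{\mathrm s(P)}$ is symmetric. Next, condition $(b)$ (associativity of $\mathrm{Tens}^{\mathcal A}$ and symmetry with respect to the empty diagram): here the underlying graphs on both sides are canonically $D\sqcup D'\sqcup D''$, the labelling maps agree under the canonical identification $\pi_0(P\otimes P'\otimes P'')\simeq\pi_0(P)\sqcup\pi_0(P')\sqcup\pi_0(P'')$, and the families of linear and cyclic orders are just concatenations, which are associative; the symmetry identity involving $\tau^{\mathcal Set_f}$ and $\tau^{\mathcal Vect}$ is the observation that swapping the two tensor factors on Jacobi diagrams corresponds to the canonical bijection of index sets. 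Condition $(c)$ (associativity of $\mathrm{Comp}^{\mathcal A}$ up to the bijection $\mathrm{bij}_{P'',P',P}$ and the twist $\theta$) is the substantive one: both triple compositions give the graph $D\sqcup D'\sqcup D''$, but the labelling maps land a priori in different sets, $\pi_0(P''\circ P'\circ P)\sqcup S\sqcup S'\sqcup S''\sqcup(\text{two circle sets})$ assembled in two different orders; identifying them is exactly the content of Proposition~\ref{r:2022-09-13-bijofcirclesPart}, and matching the induced linear/cyclic orders in the fibers is exactly the content of Lemma~\ref{r:2023-02-24r1} (the three-set PDISM compatibility lemma), which was tailor-made for this step. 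Finally, condition $(d)$ (interchange of $\mathrm{Comp}^{\mathcal A}$ with $\mathrm{Tens}^{\mathcal A}$): this follows the same template, using equation~\eqref{eq:comptensorandcompositioninOBr} for the underlying Brauer diagrams, Lemma~\ref{r:bijectioncomptensorandcompinOBr} for the circle-set bijection, and again Lemma~\ref{r:2023-02-24r1} plus Lemma~\ref{r:2023-01-11-r2} for the orders; the various twist maps $\tau^{\mathcal Set_f}$, $\tau^{\mathcal Vect}$ on the right-hand side are precisely the reindexings needed to align the two ways of grouping the four graphs $D_1,D_1',D_2,D_2'$ and their boundary data.

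\textbf{Main obstacle.} The routine part is verifying that graphs and labelling maps match; the delicate part — and the one I expect to take real care — is condition $(c)$, namely checking that the \emph{orders} (linear orders on the components of $\pi_0(P''\circ P'\circ P)$ and cyclic orders on $\mathrm{Cir}$-components) produced by the two bracketings of the triple composition coincide under $\mathrm{bij}_{P'',P',P}$ and $\theta$. This is a genuine combinatorial verification about how the ``successor'' (PDISM) structures on $[\![1,p]\!]\sqcup[\![1,q]\!]\sqcup[\![1,r]\!]\sqcup[\![1,s]\!]$ restrict and glue; the right way to organize it is to introduce, as in the proof of Lemma~\ref{r:2023-01-11-r1}, a single global PDISM on the disjoint union of all four intervals, observe that both iterated compositions compute the $\mathrm{orb}_{\mathrm{lin}}$/$\mathrm{orb}_{\mathrm{cyc}}$ decomposition of that one PDISM (by two applications of Lemma~\ref{r:2023-02-15r1}, in the two possible orders), and invoke Lemma~\ref{r:2023-02-24r1}$(d)$ to conclude that the intermediate quotients, with their inherited orders, agree. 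Once that is in place, the matching of the Jacobi-diagram boundary orders is immediate since $\mathrm{lin}^{\tilde\varphi}$-type orders are defined by restriction and the lexicographic rule of Lemma~\ref{r:2023-01-11-r2} is associative. I would write the proof of $(c)$ in full and then remark that $(d)$ follows by the same argument applied to the appropriate global PDISM, leaving the mechanical parts of $(a)$, $(b)$ to a sentence each.
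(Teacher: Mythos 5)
Your proposal is correct and follows essentially the same route as the paper: the paper likewise treats conditions (a), (b), (d) of (LMO1) as immediate and concentrates on (c), reducing it to (i) a set-level commutative diagram of $\mathrm{proj}^{\pi_0}$-maps whose identification of circle sets is $\mathrm{Id}\sqcup\mathrm{bij}_{P'',P',P}$ from Proposition~\ref{r:2022-09-13-bijofcirclesPart}, and (ii) the coincidence of the induced linear and cyclic orders, obtained exactly from Lemma~\ref{r:2023-02-24r1} applied with $X=\pi_0(P)$, $Y=\pi_0(P')$, $Z=\pi_0(P'')$ and the PDISMs of Lemma~\ref{r:2023-01-11-r1}. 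Your variant of organizing (ii) through a global PDISM on the endpoint intervals and two applications of Lemma~\ref{r:2023-02-15r1} is only a presentational unfolding of the same key lemma, so no substantive difference.
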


\begin{proof}
The only non-immediate statement is $(c)$ from (LMO1). It relies on the folowing statements: $(a)$ for $(P,P')$ and  $(P',P'')$ two composable pairs in $\vec {\mathcal Br}$, the diagram 
\begin{equation}\label{eq:diagram2023-02-25}
\xymatrix{
\pi_0(P'') \sqcup \pi_0(P') \sqcup \pi_0(P)   
\ar_{\mathrm{proj}_{P'',P'}^{\pi_0}\sqcup\mathrm{Id}_{\pi_0(P)}}[d]  \ar^{\mathrm{Id}}[r] &   \pi_0(P'') \sqcup \pi_0(P') \sqcup \pi_0(P)   
\ar^{\mathrm{Id}_{\pi_0(P'') }\sqcup \mathrm{proj}_{P',P}^{\pi_0}}[d]      \\
 \pi_0(P'' \circ P') \sqcup \mathrm{Cir}(P'',P') \sqcup \pi_0(P) \ar_{\simeq}[d]  &   \pi_0(P'')\sqcup \pi_0(P' \circ P) \sqcup \mathrm{Cir}(P',P)  \ar^{\mathrm{Id}}[d] \\
 \pi_0(P'' \circ P') \sqcup \pi_0(P) \sqcup \mathrm{Cir}(P'',P')  \ar_{\mathrm{proj}_{P''\circ P',P}^{\pi_0}\sqcup\mathrm{Id}_{\mathrm{Cir}(P'',P')}}[d] &    \pi_0(P'')\sqcup \pi_0(P' \circ P) \sqcup \mathrm{Cir}(P',P)  \ar^{ \mathrm{proj}_{P''\circ P',P}^{\pi_0}\sqcup \mathrm{Id}_{\mathrm{Cir}(P',P)}}[d]\\
 \pi_0((P'' \circ P') \circ P) \sqcup \mathrm{Cir}(P''\circ P',P) \sqcup \mathrm{Cir}(P'',P') \ar^{\varphi}[r]& \pi_0(P'' \circ (P' \circ P)) \sqcup \mathrm{Cir}(P'',  P'\circ  P) \sqcup \mathrm{Cir}(P',P)
 }
\end{equation}
commutes, where $\varphi:=\mathrm{Id}\sqcup \mathrm{bij}_{P'',P',P}$ (see Proposition~\ref{r:2022-09-13-bijofcirclesPart}) and the notation  $\mathrm{proj}_{\bullet,\diamond}^{\pi_0}$ is as in Lemma~\ref{r:relPi0};  and $(b)$ the linear (resp.  cyclic) orders on the fibers of $\pi_0((P'' \circ P') \circ P)$ and $\pi_0(P'' \circ (P' \circ P))$ (resp. $\mathrm{Cir}(P''\circ P',P) \sqcup \mathrm{Cir}(P'',P')$ and $\mathrm{Cir}(P'',P'\circ P) \sqcup \mathrm{Cir}(P',P))$ for the composed map corresponding to the left (resp. right) column of \eqref{eq:diagram2023-02-25} (obtained using Lemma~\ref{r:2023-01-11-r2}) coincide. This follows from Lemma~\ref{r:2023-02-24r1} with $X:=\pi_0(P)$, $Y:=\pi_0(P')$ and  $Z:=\pi_0(P'')$ and $\mathrm{succ}_{XY}$ and $\mathrm{succ}_{YZ}$  given by Lemma~\ref{r:2023-01-11-r1}. 
\end{proof}

\begin{lemma}\label{lemma:CO:lmo}

The collection of  assignments: $(a)$ and $(c)$ as in Lemma~\ref{r:lemmaforLMO1} and  
\begin{itemize}
\item[$(e)$] the assignment taking $P\in\vec{\mathcal Br}$  to the natural involution $\mathrm{co}^{\mathcal A}(P,\bullet):\mathring{\mathcal A}^{\wedge}(P,\bullet)\to \mathring{\mathcal A}^{\wedge}(P,\bullet)$ of the functor $\mathring{\mathcal A}^{\wedge}(P,\bullet):\mathcal Set_f^*\to \mathcal Vect$ taking the pointed finite set $(S,s)$ to the vector space $\mathring{\mathcal A}^{\wedge}(P,S)$, see Lemma~\ref{r:2023-02-26r1}; 
\end{itemize}
satisfies the axiom (LMO2) in Definition~\ref{def:preLMOstructure}.
\end{lemma}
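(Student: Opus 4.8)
The statement to be proved is Lemma~\ref{lemma:CO:lmo}, asserting that the assignments (a), (c), (e) satisfy axiom (LMO2). Axiom (LMO2) has two components: the \emph{data} part, which has already been secured by Lemma~\ref{r:2023-02-26r1} (the natural involution $\mathrm{co}^{\mathcal A}(P,\bullet)$ is well-defined on $\mathring{\mathcal A}^\wedge(P,S)$ and is natural in $(S,s)\in\mathcal Set_f^*$); and the two \emph{conditions} (a) and (b), which are what remains to check. The plan is to verify both at the level of the generating set $\mathring{\mathrm{Jac}}(P,S)$ of Jacobi diagrams and then invoke compatibility with the STU, AS, IHX relations (already recorded in the proof of Lemma~\ref{r:2023-02-26r1}) to descend to $\mathring{\mathcal A}^\wedge(P,S)$.

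\textbf{Step 1: commutativity of change-of-orientation involutions (condition (a)).} For a Jacobi diagram $\underline D=(D,\varphi,\{\mathrm{lin}_l\},\{\mathrm{cyc}_s\})$ on $(P,S)$ and $s,s'\in S$, I would compute $\mathrm{co}^{\mathring{\mathrm{Jac}}}(P,(S,s))\circ\mathrm{co}^{\mathring{\mathrm{Jac}}}(P,(S,s'))$ directly from Definition~\ref{def:changeoforientationJac}. If $s\neq s'$, the first operation reverses $\mathrm{cyc}_{s'}$ and multiplies by $(-1)^{|\varphi^{-1}(s')|}$, the second reverses $\mathrm{cyc}_s$ and multiplies by $(-1)^{|\varphi^{-1}(s)|}$; since these affect disjoint fibers $\varphi^{-1}(s)$ and $\varphi^{-1}(s')$ and the scalar factors are independent, the two operations commute. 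If $s=s'$, both compositions equal $\mathrm{id}$ (reversing a cyclic order twice gives it back, and $(-1)^{2|\varphi^{-1}(s)|}=1$), which also incidentally records that $\mathrm{co}^{\mathcal A}(P,(S,s))$ is an involution. Passing to the $\mathbb C$-linear span and then to the quotient $\mathring{\mathcal A}^\wedge(P,S)$ (using that $\mathrm{co}^{\mathring{\mathrm{Jac}}}$ is compatible with STU, AS, IHX) yields condition (a).

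\textbf{Step 2: compatibility with $\mathrm{Tens}^{\mathcal A}$ (condition (b)).} Here I take $P=P'=\vec\varnothing$, finite sets $S,S'$, $s\in S$, and a pair $(\underline D,\underline D')\in\mathring{\mathrm{Jac}}(\vec\varnothing,S)\times\mathring{\mathrm{Jac}}(\vec\varnothing,S')$. By Definition~\ref{deftensorinJac}, $\underline D\otimes\underline D'$ is the disjoint-union diagram with the combined families of cyclic orders indexed by $S\sqcup S'$. Going around the square~\eqref{LMO3:26022020} one way: first reverse $\mathrm{cyc}_s$ in $\underline D$ with sign $(-1)^{|\varphi^{-1}(s)|}$, then tensor; the other way: first tensor, then reverse the cyclic order indexed by $s\in S\subset S\sqcup S'$, whose fiber in $\underline D\otimes\underline D'$ is exactly $\varphi^{-1}(s)$ (the circle labelled $s$ sits entirely in the $\underline D$-part), with the same sign $(-1)^{|\varphi^{-1}(s)|}$. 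Both routes produce the identical Jacobi diagram with the identical scalar, so the square commutes on generators; bilinear extension and passage to $\mathring{\mathcal A}^\wedge$ (again via STU/AS/IHX compatibility) finish condition (b).

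\textbf{Main obstacle.} There is no serious obstacle: both conditions are essentially bookkeeping about which univalent vertices lie on which $s$-labelled circle, and about the multiplicativity of the signs $(-1)^{|\varphi^{-1}(s)|}$. The one point requiring a little care is making sure that, after tensoring with $P=P'=\vec\varnothing$, the fiber $\varphi^{-1}(s)$ of the label $s$ is genuinely unchanged — i.e.\ that the identification $\mathrm{Tens}^{\mathcal A}(\vec\varnothing,\vec\varnothing)_{S,S'}$ and the inclusion $S\subset S\sqcup S'$ used in the right-hand vertical map of~\eqref{LMO3:26022020} match up — which is immediate from Definition~\ref{deftensorinJac} since $\partial(D\sqcup D')=\partial D\sqcup\partial D'$ and $\varphi\sqcup\varphi'$ restricts to $\varphi$ on $\partial D$. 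Hence the proof will be: ``The statement concerning the data follows from Lemma~\ref{r:2023-02-26r1}. Conditions (a) and (b) are checked on the generating Jacobi diagrams by the computations above, and descend to $\mathring{\mathcal A}^\wedge(P,S)$ because $\mathrm{co}^{\mathring{\mathrm{Jac}}}$ and $\mathrm{Tens}^{\mathring{\mathrm{Jac}}}$ are compatible with the STU, AS and IHX relations.''
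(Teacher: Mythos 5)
Your proposal is correct and follows essentially the same route as the paper: condition (a) is verified on generating Jacobi diagrams via Definition~\ref{def:changeoforientationJac} (disjoint fibers, independent signs), and condition (b) is checked by chasing a generator pair around the square, the key point being the identity $(\varphi\sqcup\varphi')^{-1}(s)=\varphi^{-1}(s)$, before descending through the STU, AS and IHX relations. Nothing is missing.
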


\begin{proof}
It follows directly from Definition~\ref{def:changeoforientationJac} that the analogous  statement to  $(a)$  in (LMO2) in the setting of Jacobi diagrams holds. By considering the corresponding linear extensions and quotient by the STU, AS and IHX relations we then obtain the desired result. Similarly, for finite sets $S$ and $S'$  and $s_0\in S$, the diagram  
\begin{equation}\label{LMO3:16022021}
\xymatrix{
\mathring{\mathrm{Jac}}(\vec{\varnothing}, S)\times \mathring{\mathrm{Jac}}(\vec{\varnothing}, S')\ar_{\mathrm{co}^{\mathring{\mathrm{Jac}}}(\vec{\varnothing}, (S,s_0)) \times\mathrm{Id}_{\mathring{\mathrm{Jac}}(\vec{\varnothing}, S')}}[d]
\ar^-{\mathrm{Tens}^{\mathring{\mathrm{Jac}}}(\vec{\varnothing},\vec{\varnothing})_{S,S'}}[rrr]& & &\mathring{\mathrm{Jac}}(\vec{\varnothing}, S\sqcup S')
\ar^{\mathrm{co}^{\mathring{\mathrm{Jac}}}(\vec{\varnothing}, (S\sqcup S',s_0))}[d]\\ 
\mathbb{C}\mathring{\mathrm{Jac}}(\vec{\varnothing}, S)\times {\mathring{\mathrm{Jac}}}(\vec{\varnothing},S')\ar_-{\mathbb{C}\mathrm{Tens}^{\mathring{\mathrm{Jac}}}(\vec{\varnothing},\vec{\varnothing})_{S,S'}}[rrr]& & &
\mathbb{C}{\mathring{\mathrm{Jac}}}(\vec{\varnothing}, S\sqcup S')}
\end{equation}
commutes. Indeed, let $\underline{D}=(D,\varphi, \emptyset, \{\mathrm{cyc}_s\}_{s\in S})\in \mathring{\mathrm{Jac}}(\vec{\varnothing}, S)$ (resp.  $\underline{D}'=(D',\varphi', \emptyset, \{\mathrm{cyc}_{s'}\}_{s'\in S'})\in \mathring{\mathrm{Jac}}(\vec{\varnothing}, S')$). The image of 
$(\underline{D},\underline{D}')$ by $\mathrm{co}^{\mathring{\mathrm{Jac}}}(\vec{\varnothing}, (S,s_0)) \times\mathrm{Id}_{\mathring{\mathrm{Jac}}(\vec{\varnothing}, S')}$ is $\big((-1)^{|\varphi^{-1}(s_0)|}\underline{D}^{s_0},\underline{D}'\big)$, where $$ \underline{D}^{s_0}:=(D,\varphi, \emptyset, \{\mathrm{cyc}_s\}_{s\in S\setminus \{s_0\}}\sqcup \{\overline{\mathrm{cyc}}_{s_0}\}),$$
 see Definition~\ref{def:changeoforientationJac}. The image of the 
latter element by $\mathbb{C}\mathrm{Tens}^{\mathring{\mathrm{Jac}}}(\vec{\varnothing},\vec{\varnothing})_{S,S'}$ is 
\begin{equation}\label{eq:2023-02-26equ1}
(-1)^{|\varphi^{-1}(s_0)|}\big(D\sqcup D', \varphi\sqcup\varphi', \emptyset,
\{\mathrm{cyc}_s\}_{s\in S\setminus \{s_0\}}\sqcup \{\overline{\mathrm{cyc}}_{s_0}\}\sqcup  \{\mathrm{cyc}_s\}_{s'\in S'}\big),
\end{equation}
see Definition~\ref{deftensorinJac}. The image of $(\underline{D},\underline{D}')$ under the composition  $ \mathrm{co}^{\mathring{\mathrm{Jac}}}(\vec{\varnothing}, (S\sqcup S',s_0))\circ\mathrm{Tens}^{\mathring{\mathrm{Jac}}}(\vec{\varnothing},\vec{\varnothing})_{S,S'}$  is 
\begin{equation}\label{eq:2023-02-26equ2}
(-1)^{|(\varphi\sqcup \varphi')^{-1}(s_0)|}\big(D\sqcup D', \varphi\sqcup\varphi', \emptyset,
\{\mathrm{cyc}_s\}_{s\in S\setminus \{s_0\}}\sqcup \{\overline{\mathrm{cyc}}_{s_0}\}\sqcup  \{\mathrm{cyc}_s\}_{s'\in S'}\big).
\end{equation}
The equality of the expresions \eqref{eq:2023-02-26equ1} and \eqref{eq:2023-02-26equ2} follows from the equality between $(\varphi\sqcup\varphi')^{-1}(s_0)$ 
and $\varphi^{-1}(s_0)$, which shows the commutativity of  diagram \eqref{LMO3:16022021}. The  statement $(b)$ in (LMO2)  follows from the commutativity of this diagram. 
\end{proof}

\begin{lemma}\label{lmo2:2:A}  The collection of assignments $(a)$ and $(c)$ from Lemma~\ref{r:lemmaforLMO1} together with 
\begin{itemize}
\item[$(f)$]  the assignment taking a pair $(P,A)$ with $P \in \vec{\mathcal Br}$ and $A \subset \pi_0(P)$ to the natural transformation $\mathrm{dbl}_{\mathcal A}((P,A),\bullet) : \mathring{\mathcal A}^\wedge(P,\bullet)\to \mathring{\mathcal A}^\wedge(\mathrm{dbl}_{\vec{\mathcal Br}}(P,A),\bullet)$ (see Lemma~\ref{r:2023-03-04r1})
\end{itemize}
satisfies axiom (LMO3) from Definition~\ref{def:preLMOstructure}.
\end{lemma}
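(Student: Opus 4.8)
\textbf{Proof plan for Lemma~\ref{lmo2:2:A} (axiom (LMO3)).}

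The plan is to first reduce the statement from the level of the spaces $\mathring{\mathcal A}^\wedge(P,S)$ to the level of the sets $\mathring{\mathrm{Jac}}(P,S)$ of Jacobi diagrams, exactly as was done for (LMO1) and (LMO2): since all the maps involved — $\mathrm{Tens}^{\mathcal A}$, $\mathrm{dbl}_{\mathcal A}$, and the relabelling maps $\mathring{\mathcal A}^\wedge(P,f)$ — are $\mathbb{C}$-linear extensions of maps defined on generators which are then checked to be compatible with the STU, AS and IHX relations (Lemmas~\ref{r:2023-03-04r1} and the lemma of \S\ref{sec:4-2}), it suffices to prove the commutativity of the corresponding diagram of sets with $\mathring{\mathcal A}^\wedge$ replaced by $\mathbb{C}\mathring{\mathrm{Jac}}$ and the transformations replaced by $\mathrm{Tens}^{\mathring{\mathrm{Jac}}}$ and $\mathrm{dbl}_{\mathring{\mathrm{Jac}}}$. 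The naturality part of the statement (that $\mathrm{dbl}_{\mathcal A}((P,A),\bullet)$ is a natural transformation) is already contained in Lemma~\ref{r:2023-03-04r1}, so what remains is precisely the commutativity of Diagram~\eqref{diags***23-01-2021}, together with the equality appearing in its lower triangle, which the excerpt already notes follows from Lemma~\ref{lemma:1:18:03032020}$(c)$ (namely $\mathrm{dbl}_{\vec{\mathcal Br}}(P,A)\otimes P' = \mathrm{dbl}_{\vec{\mathcal Br}}(P\otimes P',A)$ when $A$ is regarded inside $\pi_0(P\otimes P')$ via $\pi_0(P)\subset\pi_0(P\otimes P')$).

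Next I would take a Jacobi diagram $\underline D = (D,\varphi,\{\mathrm{lin}_x\}_{x\in\pi_0(P)},\{\mathrm{cyc}_s\}_{s\in S})$ on $(P,S)$ and a diagram $\underline D' = (D',\varphi',\{\mathrm{lin}'_x\}_{x\in\pi_0(P')},\{\mathrm{cyc}'_s\}_{s\in S'})$ on $(P',S')$, and chase both ways around the square. Going down then along: $\mathrm{dbl}_{\mathring{\mathrm{Jac}}}((P,A),S)(\underline D)$ is the sum over all lifts $\tilde\varphi:\partial D\to\pi_0(P^A)\sqcup S$ of $\varphi$ along $\mathrm{proj}^{\pi_0}_{P,A}$, with linear orders obtained by restriction of $\mathrm{lin}_x$; tensoring with $\underline D'$ on the right then disjoint-unions the graphs, the boundary maps, and the order families, using $\pi_0(\mathrm{dbl}_{\vec{\mathcal Br}}(P,A)\otimes P')\simeq\pi_0(P^A)\sqcup\pi_0(P')$. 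Going along then down: $\underline D\otimes\underline D' = (D\sqcup D',\varphi\sqcup\varphi',\dots)$, and $\mathrm{dbl}_{\mathring{\mathrm{Jac}}}((P\otimes P',A),S\sqcup S')$ sums over all lifts $\psi:\partial(D\sqcup D')\to\pi_0((P\otimes P')^A)\sqcup S\sqcup S'$ of $\varphi\sqcup\varphi'$. The key bookkeeping point is that since $A\subset\pi_0(P)$, the doubling $\pi_0((P\otimes P')^A)\to\pi_0(P\otimes P')$ is an isomorphism over the $\pi_0(P')$-part and agrees with $\mathrm{proj}^{\pi_0}_{P,A}$ over the $\pi_0(P)$-part; hence a lift $\psi$ of $\varphi\sqcup\varphi'$ is forced to send $\partial D'$ by $\varphi'$ (transported across the canonical identification) and restricts on $\partial D$ to a lift $\tilde\varphi$ of $\varphi$. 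This gives a bijection between the two indexing sets of lifts, term by term; and the induced linear orders match because on the $\pi_0(P^A)$-components both are restrictions of $\mathrm{lin}_x$, while on the $\pi_0(P')$-components both are literally $\mathrm{lin}'_x$ (the doubling being trivial there), and the cyclic orders $\mathrm{cyc}_s,\mathrm{cyc}'_s$ are untouched by doubling. Therefore the two composites agree on each generator, proving the commutativity.

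I do not expect a serious obstacle here: unlike (LMO1)$(c)$, which required the genuinely delicate Lemma~\ref{r:2023-02-24r1} on compatibility of iterated $\mathrm{succ}$-orbit orders, axiom (LMO3) only involves doubling a Brauer diagram that sits in the \emph{left} tensor factor, so no circle components are created and the only combinatorial content is that ``doubling commutes with disjoint union when the doubled part lies in one factor.'' The one place to be careful is the precise identification of $\pi_0$-sets used to make sense of ``$A\subset\pi_0(P\otimes P')$'' and of the equality in the lower triangle of~\eqref{diags***23-01-2021}; I would cite Lemma~\ref{r:relPi0}$(a)$ for $\pi_0(P\otimes P')\simeq\pi_0(P)\sqcup\pi_0(P')$ and Lemma~\ref{lemma:1:18:03032020}$(c)$ for $\mathrm{dbl}_{\vec{\mathcal Br}}(P\otimes P',A)=\mathrm{dbl}_{\vec{\mathcal Br}}(P,A)\otimes P'$, and keep all transport-of-structure maps explicit so that the term-by-term bijection of lifts is visibly order-preserving. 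After the diagram of sets commutes, passing to $\mathbb{C}$-linear spans and quotienting by STU, AS, IHX (using that all four maps descend, by the cited lemmas) yields the commutativity of~\eqref{diags***23-01-2021}, which is exactly (LMO3).
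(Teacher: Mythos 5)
Your proposal is correct and follows essentially the same route as the paper's proof: reduce to generators of $\mathbb{C}\mathring{\mathrm{Jac}}(P,S)\otimes\mathbb{C}\mathring{\mathrm{Jac}}(P',S')$, observe that a lift $\psi$ of $\varphi\sqcup\varphi'$ along $\mathrm{proj}^{\pi_0}_{P,A}\sqcup\mathrm{Id}$ is forced to equal $\varphi'$ on $\partial D'$ and restricts to a lift of $\varphi$ on $\partial D$, and check that the induced linear and cyclic orders agree, using Lemma~\ref{lemma:1:18:03032020}$(c)$ for the identification $\mathrm{dbl}_{\vec{\mathcal Br}}(P,A)\otimes P'=\mathrm{dbl}_{\vec{\mathcal Br}}(P\otimes P',A)$. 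The paper phrases this as both composites having the common value $\sum_\psi(D\sqcup D',\psi,\ldots)$, which is exactly your term-by-term bijection of lifts.
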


\begin{proof}
This is a consequence of the commutativity of the analogue of diagram \eqref{diags***23-01-2021} in (LMO3) of Definition~\ref{def:preLMOstructure} for spaces of Jacobi diagrams, which we now prove. Let $P,P' \in \vec{\mathcal Br}$, let $A \subset \pi_0(P)$ and $S$ and $S'$ be finite sets. We denote by $\mathrm{pr}:=\mathrm{proj}^{\pi_0}_{P,A} : \pi_0(\mathrm{dbl}_{\vec{\mathcal Br}}(P,A)) \to \pi_0(P)$ the natural projection map, see Lemma~\ref{r:relPi0}$(b)$. Let $\underline{D}=(D,\varphi,\{\mathrm{lin}_l\}_{l \in \pi_0(P)},\{\mathrm{cyc}_s\}_{s \in S}) \in \mathring{\mathrm{Jac}}(P,S)$ and $\underline{D}'=(D',\varphi',\{\mathrm{lin}_{l'}\}_{l' \in \pi_0(P')},\{\mathrm{cyc}_{s'}\}_{s' \in S'}) \in \mathring{\mathrm{Jac}}(P',S')$. Observe that for $\psi : \partial D \sqcup \partial D' \to \pi_0(\mathrm{dbl}_{\vec{\mathcal Br}}(P,A)) \sqcup S \sqcup \pi_0(P') \sqcup S'$ a map such that $(\mathrm{pr} \sqcup \mathrm{Id}_{S \sqcup \pi_0(P') \sqcup S'}) \circ \psi=\varphi \sqcup \varphi'$, one has: for all $l \in \pi_0(\mathrm{dbl}_{\vec{\mathcal Br}}(P,A))$, $\psi^{-1}(l)$ is contained in $\varphi^{-1}(\mathrm{pr}(l))$, which is equipped with the linear order $\mathrm{lin}_{\varphi^{-1}(\mathrm{pr}(l))}$, therefore $\psi^{-1}(l)$ is equipped with the restriction of this linear order, we denote it by  $\mathrm{lin}_{\varphi^{-1}(\mathrm{pr}(l))}|_{\psi^{-1}(l)}$; for $l' \in \pi_0(P')$, $\psi^{-1}(l')$ is equal to $(\varphi')^{-1}(l')$, which is equipped with the linear order $\mathrm{lin}_{l'}$; for $s \in S$ (resp. $s' \in S'$), $\psi^{-1}(s)$ (resp. $\psi^{-1}(s')$) is equal to $\varphi^{-1}(s)$ (resp. $(\varphi')^{-1}(s')$), which is equipped with the cyclic order $\mathrm{cyc}_s$ (resp. $\mathrm{cyc}_{s'}$).      
Then one checks that the common value of the images of $\underline{D }\otimes \underline {D}'$ by the two involved maps is equal to 
$$\sum_{\psi} 
\big(D \sqcup D',\psi,\{\mathrm{lin}_{\varphi^{-1}(\mathrm{pr}(l))}|_{\psi^{-1}(l)}\}_{l \in \pi_0(\mathrm{dbl}_{\vec{\mathcal Br}}(P,A))}, \{\mathrm{lin}_{l'}\}_{l' \in \pi_0(P')}, \{\mathrm{cyc}_s\}_{s \in S}, \{\mathrm{cyc}_s'\}_{s' \in S'}\big)$$ where the sum runs over the set of maps $\psi : \partial D \sqcup \partial D'\to \pi_0(\mathrm{dbl}_{\vec{\mathcal Br}}(P,A)) \sqcup S \sqcup \pi_0(P') \sqcup S'$   such that  $(\mathrm{pr} \sqcup \mathrm{Id}_{S \sqcup \pi_0(P') \sqcup S'}) \circ \psi=\varphi \sqcup \varphi'$.
 
\end{proof}

We summarize the results of this section in the following result.

\begin{theorem}\label{r:thmsummarizingLMO123} The collection of assignments $(a)$--$(d)$ in Lemma~\ref{r:lemmaforLMO1},   $(e)$ in Lemma~\ref{lemma:CO:lmo} and $(f)$ in Lemma~\ref{lmo2:2:A}, satisfies axioms (LMO1), (LMO2) and (LMO3) from Definition~\ref{def:preLMOstructure}.
\end{theorem}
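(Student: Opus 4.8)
\textbf{Proof plan for Theorem~\ref{r:thmsummarizingLMO123}.} The theorem is a pure bookkeeping assembly: it merely asserts that the six families of data $(a)$--$(f)$, each of which has already been shown individually to satisfy the corresponding axiom in Lemmas~\ref{r:lemmaforLMO1}, \ref{lemma:CO:lmo} and \ref{lmo2:2:A}, jointly satisfy (LMO1), (LMO2) and (LMO3). So the plan is essentially to invoke those three lemmas in sequence. First I would observe that Lemma~\ref{r:lemmaforLMO1} already establishes that the assignments $(a)$ (the functors $\mathring{\mathcal A}^\wedge(P,\bullet)$), $(b)$ (the units $\mathrm{Id}^{\mathcal A}_w$), $(c)$ (the natural transformations $\mathrm{Tens}^{\mathcal A}$) and $(d)$ (the natural transformations $\mathrm{Comp}^{\mathcal A}$) satisfy axiom (LMO1) in full, including the only non-formal part, condition $(c)$ of (LMO1), whose verification reduced to the commutativity of diagram~\eqref{eq:diagram2023-02-25} together with the compatibility of the induced linear and cyclic orders, a compatibility supplied by Lemma~\ref{r:2023-02-24r1}. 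Second, Lemma~\ref{lemma:CO:lmo} shows that $(a)$, $(c)$ together with the change-of-orientation data $(e)$ satisfy axiom (LMO2), the substantive point being the commutativity of diagram~\eqref{LMO3:16022021} for spaces of Jacobi diagrams, which passes to $\mathring{\mathcal A}^\wedge$ after taking linear extensions and quotienting by STU, AS, IHX. Third, Lemma~\ref{lmo2:2:A} shows that $(a)$, $(c)$ together with the doubling data $(f)$ satisfy axiom (LMO3), via the commutativity of the Jacobi-diagram analogue of diagram~\eqref{diags***23-01-2021}.

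The only thing left to check is consistency of overlapping hypotheses. Axioms (LMO2) and (LMO3) both refer to the data of (LMO1) in their conditions (condition $(b)$ of (LMO2) and the condition of (LMO3) both involve $\mathrm{Tens}^{\mathcal A}$), and it must be noted that the instances of $(a)$ and $(c)$ appearing in Lemmas~\ref{lemma:CO:lmo} and \ref{lmo2:2:A} are literally the same assignments as those appearing in Lemma~\ref{r:lemmaforLMO1}; there is no clash because all three lemmas use one and the same collection of functors and natural transformations. Hence I would simply state that the collection $(a)$--$(f)$ is, by construction, a collection of data of the type demanded in Definition~\ref{def:preLMOstructure}, and that Lemmas~\ref{r:lemmaforLMO1}, \ref{lemma:CO:lmo} and \ref{lmo2:2:A} verify, respectively, (LMO1), (LMO2) and (LMO3) for it, which is exactly the assertion of the theorem.

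I do not expect any genuine obstacle here: all the real work — the diagram chases in~\eqref{eq:diagram2023-02-25}, \eqref{LMO3:16022021} and the analogue of~\eqref{diags***23-01-2021}, the order-compatibility statements from Lemmas~\ref{r:2023-01-11-r1}, \ref{r:2023-01-11-r2}, \ref{r:2023-02-14-r1}, \ref{r:2023-02-15r1}, \ref{r:2023-02-24r1}, and the checks that the various Jacobi-level maps descend to $\mathring{\mathcal A}$ and $\mathring{\mathcal A}^\wedge$ — has been carried out in the preceding lemmas. The proof is therefore a one-line assembly. If I wanted to be slightly more careful I would add a sentence pointing out that the naturality and continuity claims bundled into each piece of data were checked at the point each was introduced (in \S\ref{sec:4-1}--\S\ref{sec:4-5}), so that the data genuinely live in the categories required by Definition~\ref{def:preLMOstructure}; but this is a remark, not an obstacle. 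The write-up is essentially: ``The result follows by combining Lemmas~\ref{r:lemmaforLMO1}, \ref{lemma:CO:lmo} and \ref{lmo2:2:A}.''
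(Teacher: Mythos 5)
Your proposal is correct and matches the paper exactly: the theorem is stated there as a summary of Lemmas~\ref{r:lemmaforLMO1}, \ref{lemma:CO:lmo} and \ref{lmo2:2:A}, with no further argument needed since all the substantive verifications (the diagram chases and order-compatibility checks) were carried out in those lemmas. Your extra remark that the data $(a)$ and $(c)$ appearing in the three lemmas are literally the same assignments is a reasonable precaution but adds nothing beyond the paper's one-line assembly.
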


\subsection{The natural transformation  \texorpdfstring{$\mathrm{co}^{\mathcal A}((P,a),\bullet)$}{co((P,a),.)} and the assignment \texorpdfstring{$Z_{\mathcal A}(\bullet)$}{Z_A(.)}}\label{sec:4-7}

\subsubsection{The natural transformation  \texorpdfstring{$\mathrm{co}^{\mathcal A}((P,a),\bullet)$}{co( ( P , a ), .)}} Let $P \in \vec{\mathcal Br}$, let $a \in \pi_0(P)$. Recall that there is a canonical bijection $\pi_0(\mathrm{co}_{\vec{\mathcal Br}}(P,a)) \simeq \pi_0(P)$, see Lemma~\ref{lemma:1:18:03032020}.  Let $S$ be a finite set.  For $$\big(D,\varphi,\{\mathrm{lin}_l\}_{l \in \pi_0(P)},\{\mathrm{cyc}_s\}_{s \in S}\big) \in \mathring{\mathrm{Jac}}(P,S),$$ the tuple $\big(D,\tilde{\varphi},\{\widetilde{\mathrm{lin}}_l\}_{l \in \pi_0(\mathrm{co}_{\vec{\mathcal Br}}(P,a))},\{\mathrm{cyc}_s\}_{s \in S}\big)$, where 
$\tilde{\varphi}$ is the composition of $\partial D\xrightarrow{\varphi}\pi_0(P)$ with the bijection $\pi_0(P)\simeq \pi_0(\mathrm{co}_{\vec{\mathcal Br}}(P,a))$ and where  for $l \in \pi_0(\mathrm{co}_{\vec{\mathcal Br}}(P,a)) \simeq \pi_0(P)$, $\widetilde{\mathrm{lin}}_l$ is equal to $\mathrm{lin}_l$ if $l \neq a$ and is equal to $\overline{\mathrm{lin}}_a$ (the opposite linear order of $\mathrm{lin}_a$) otherwise, belongs to $\mathring{\mathrm{Jac}}(\mathrm{co}_{\vec{\mathcal Br}}(P,a),S)$.

\begin{definition} Let $P$ be an oriented Brauer diagram and $S$ be a finite set. Define a map
\begin{equation}\label{r:2023-03-04COforsegment}
\mathrm{co}^{\mathring{\mathrm{Jac}}}((P,a),S): \mathring{\mathrm{Jac}}(P,S) \longrightarrow \mathbb{C} \mathring{\mathrm{Jac}}(\mathrm{co}_{\vec{\mathcal Br}}(P,a),S)
\end{equation}  \index[notation]{co^{\mathring{\mathrm{Jac}}}((P,a),S)@$\mathrm{co}^{\mathring{\mathrm{Jac}}}((P,a),S)$}
 by  $\big(D,\varphi,\{\mathrm{lin}_l\}_{l \in \pi_0(P)},\{\mathrm{cyc}_s\}_{s \in S}\big) \mapsto  (-1)^{|\varphi^{-1}(a)|}\big(D,\tilde{\varphi},\{\widetilde{\mathrm{lin}}_l\}_{l \in \pi_0(\mathrm{co}_{\vec{\mathcal Br}}(P,a))},\{\mathrm{cyc}_s\}_{s \in S}\big)$. 
\end{definition}

This map is illustrated in Figure \ref{fig:co:A}. 

\begin{figure}[ht!]
\centering
\includegraphics[scale=0.8]{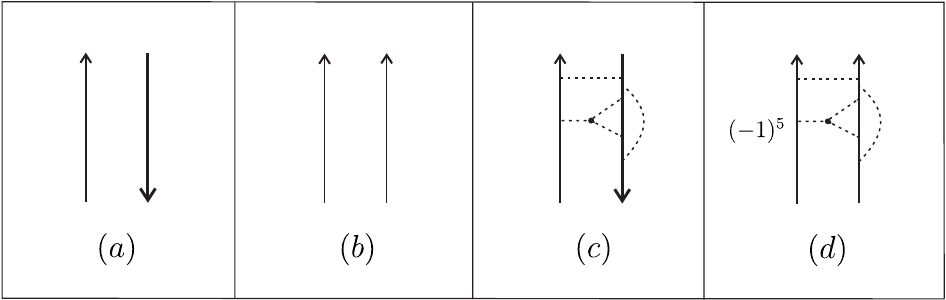}
\caption{Schematic illustration of the change of orientation map $\mathrm{co}^{\mathring{\mathrm{Jac}}}((\uparrow\downarrow,\downarrow),\emptyset): \mathring{\mathrm{Jac}}(\uparrow\downarrow,\emptyset) \longrightarrow \mathbb{C} \mathring{\mathrm{Jac}}(\mathrm{co}_{\vec{\mathcal Br}}(\uparrow\downarrow,\downarrow),\emptyset) = \mathring{\mathrm{Jac}}(\uparrow\uparrow,\emptyset)$.  In $(a)$ we show the schematic representation of the oriented Brauer diagram $\uparrow\downarrow$  with $a\in\pi_0(\uparrow\downarrow)$ indicated by using thick lines. In~$(b)$ we show the schematic representation of $\mathrm{co}_{\vec{\mathcal Br}}(\uparrow\downarrow,\downarrow) = \uparrow \uparrow$.  In $(c)$  we show a schematic representation of a Jacobi diagram on $(\uparrow\downarrow,\emptyset)$ and in $(d)$ it image under the map $\mathrm{co}^{\mathring{\mathrm{Jac}}}((\uparrow\downarrow,\downarrow),\emptyset)$.}
\label{fig:co:A}
\end{figure}

\begin{lemma}\label{r:2022-06-13changeoforientation} Let $P$ be an oriented Brauer diagram and $S$ be a finite set. The map~\eqref{r:2023-03-04COforsegment} induces a continuous linear isomorphism
\begin{equation}
\mathrm{co}^{\mathcal A}((P,a),S): \mathring{\mathcal A}^\wedge(P,S) \to  \mathring{\mathcal A}^\wedge(\mathrm{co}_{\vec{\mathcal Br}}(P,a),S).
\end{equation} \index[notation]{co^{\mathcal A}((P,a),S)@$\mathrm{co}^{\mathcal A}((P,a),S)$}
The assignment $S\mapsto \mathrm{co}^{\mathcal A}((P,a),\bullet)$  defines a natural isomorphism of the functors $\mathring{\mathcal A}^\wedge(P,\bullet):\mathcal Set_f \to \mathcal Vect$ and $\mathring{\mathcal A}^\wedge(\mathrm{co}_{\vec{\mathcal Br}}(P,a),\bullet):\mathcal Set_f \to \mathcal Vect$ .
\end{lemma}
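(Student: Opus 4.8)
The statement has two parts: (1) the map~\eqref{r:2023-03-04COforsegment} descends to a well-defined continuous linear map on the degree-completed spaces of Jacobi diagrams, which is moreover a linear isomorphism; and (2) the resulting family $\{\mathrm{co}^{\mathcal A}((P,a),S)\}_S$ is natural in $S$. This is entirely parallel to Lemma~\ref{r:2023-02-26r1} (the analogous statement for $\mathrm{co}^{\mathcal A}(P,(S,s))$) and to Lemmas~\ref{r:2023-03-04r1} and the other ``$\mathcal X$-level'' lemmas in \S\ref{sec:4-2}--\S\ref{sec:4-5}: in each case one has a map defined on generators (Jacobi diagrams), extends it $\mathbb{C}$-linearly, checks compatibility with STU, AS, IHX, and then checks functoriality in the finite-set variable. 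I would follow exactly that template.

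First I would show well-definedness on $\mathring{\mathcal A}^\wedge(P,S)$. Extend the generator-level map $\mathrm{co}^{\mathring{\mathrm{Jac}}}((P,a),S)$ to a linear map $\mathbb{C}\mathring{\mathrm{Jac}}(P,S)\to \mathbb{C}\mathring{\mathrm{Jac}}(\mathrm{co}_{\vec{\mathcal Br}}(P,a),S)$, and verify that it sends the subspace spanned by STU, AS, IHX relations into the corresponding subspace. This is immediate: the relations STU, AS, IHX are local (they modify $D$ near a vertex or an edge and do not touch the maps $\varphi$, $\mathrm{lin}_l$, $\mathrm{cyc}_s$ except through the bijection $\partial D\simeq\partial\widetilde D$), and the overall sign $(-1)^{|\varphi^{-1}(a)|}$ depends only on the number of univalent vertices mapped to $a$, which is preserved by AS and IHX and by the S, T, U pieces of STU simultaneously (STU changes the number of trivalent vertices, not the count $|\varphi^{-1}(a)|$ if the solid line involved is not the component $a$; if it is $a$, one checks the sign is consistent across the three terms since $|\partial D|$ restricted to $a$ is the same for all three). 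Since the map is degree-preserving (it changes neither the number of trivalent vertices nor $|\partial D|$), it induces a continuous linear map on degree completions. For invertibility, I would exhibit the inverse as $\mathrm{co}^{\mathcal A}((\mathrm{co}_{\vec{\mathcal Br}}(P,a),a),S)$, using that $\mathrm{co}_{\vec{\mathcal Br}}(\mathrm{co}_{\vec{\mathcal Br}}(P,a),a)=P$ (Lemma~\ref{lemma:1:18:03032020}$(b)$, via the identification $\pi_0(\mathrm{co}_{\vec{\mathcal Br}}(P,a))\simeq\pi_0(P)$), and that reversing a linear order twice gives back the original; the two signs $(-1)^{|\varphi^{-1}(a)|}$ multiply to $1$, so the composite is the identity. (This is cleaner than checking bijectivity directly and mirrors Lemma~\ref{lemma:1:18:03032020}$(b)$.)

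Second, I would check naturality in the finite-set variable. For a bijection $f:S\to S'$ of finite sets, one must show that the square relating $F_P(f)$, $\mathrm{co}^{\mathcal A}((P,a),S)$, $\mathrm{co}^{\mathcal A}((\mathrm{co}_{\vec{\mathcal Br}}(P,a)),a),S')$ (wait---rather $\mathrm{co}^{\mathcal A}((P,a),S')$) and $F_{\mathrm{co}_{\vec{\mathcal Br}}(P,a)}(f)$ commutes. This reduces to a computation on generators: relabelling the cyclic orders indexed by $S$ via $f$ commutes with reversing the linear order on $\varphi^{-1}(a)$ and with composing $\varphi$ with the bijection $\pi_0(P)\simeq\pi_0(\mathrm{co}_{\vec{\mathcal Br}}(P,a))$, because these operations act on disjoint pieces of the data (the $S$-indexed cyclic orders on one hand, the $\pi_0(P)$-indexed linear orders and the map to $\pi_0(P)$ on the other), and the sign $(-1)^{|\varphi^{-1}(a)|}$ is unchanged by relabelling of $S$. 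Hence the family is a natural isomorphism of functors $\mathcal Set_f\to\mathcal Vect$.

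I do not expect a genuine obstacle here---this lemma is routine and fully analogous to the preceding ones. The only mildly delicate point is bookkeeping the sign $(-1)^{|\varphi^{-1}(a)|}$ through the STU relation when the affected solid line is precisely the component $a$: there one uses that all three diagrams in an STU triple have the same set $\varphi^{-1}(a)$ up to the canonical identification of boundary points, so the prefactor is common and the relation is respected. I would present the argument concisely, referring back to the proofs of Lemmas~\ref{lemma:CO:lmo} and~\ref{r:2023-02-26r1} for the pattern rather than repeating every verification.
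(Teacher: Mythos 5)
Your overall route is the same as the paper's: extend the generator-level map linearly, check compatibility with the STU, AS and IHX relations, note it is degree-preserving hence continuous on completions, and check compatibility with bijections $f:S\to S'$ for naturality; your extra remark that the inverse is $\mathrm{co}^{\mathcal A}((\mathrm{co}_{\vec{\mathcal Br}}(P,a),a),S)$, using $\mathrm{co}_{\vec{\mathcal Br}}(\mathrm{co}_{\vec{\mathcal Br}}(P,a),a)=P$ and the cancellation of the two signs, is a clean way to justify the isomorphism claim, which the paper's (very terse) proof leaves implicit.

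However, the one point you single out as delicate is argued incorrectly. In an STU triple supported on the component $a$, the three diagrams do \emph{not} have the same set $\varphi^{-1}(a)$: the $S$-term has one univalent vertex on $a$ where the $T$- and $U$-terms have two, so if $|\varphi^{-1}(a)|=m$ for $S$ then it is $m+1$ for $T$ and $U$, and the prefactors are $(-1)^m$ versus $(-1)^{m+1}$, not a common sign. This discrepancy is in fact essential: reversing the linear order on $\varphi^{-1}(a)$ exchanges the $T$- and $U$-configurations, so in the target space the relation reads $\bar S=\bar U-\bar T$, and one needs
$(-1)^m\bar S=(-1)^{m+1}\bar T-(-1)^{m+1}\bar U=(-1)^m(\bar U-\bar T)$,
which holds precisely because of the extra $-1$. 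Had the prefactor been common, as you assert, the image of $S-T+U$ would be $(-1)^m(\bar S-\bar T+\bar U)=(-1)^m\,2\bar S\neq 0$ modulo STU, and the verification would fail. So replace that sentence by the correct bookkeeping: order reversal swaps $T$ and $U$, and the sign $(-1)^{|\varphi^{-1}(a)|}$ is exactly what compensates this swap. With that fix the argument is complete and matches the paper's.
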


 \begin{proof} It follows easily from the definition that the linear extension of the map~\eqref{r:2023-03-04COforsegment} is compatible with the STU, AS and IHX relations. One also checks that the assignment  $S\mapsto \mathrm{co}^{\mathcal A}((P,a),\bullet) $ is compatible with bijections $f:S\to S'$. 
 \end{proof}

\begin{convention}\label{convention:co_i-dbl_i2} If ${P}=\mathrm{Id}_w\in\underline{\vec{\mathcal Br}}(w,w)$ for $w\in\{+,-\}^*$, then for $i\in[\![1,|w|]\!]$, we denote by   $\mathrm{co}_i$   \index[notation]{co_i@$\mathrm{co}_i$} the map $\mathrm{co}_{\mathcal{A}}{(P,\tilde{w}(i)})$, where $\tilde{w}(i)$ denotes the element in $\pi_0(\mathrm{Id}_w)$ corresponding to $i$. Moreover, if $|w|=1$ we write $\mathrm{co}$ instead of $\mathrm{co}_1$. 
\end{convention}

\subsubsection{The assignment \texorpdfstring{$Z_{\mathcal A}(\bullet)$}{Z_A(.)}}\label{sec:3-8-2} In this subsection we associate to a parenthesized tangle $T$ an element  $Z_{\mathcal A}(T)\in  \mathring{\mathcal{A}}^{\wedge}(\vec{\mathrm{br}}(T),\pi_0(T))$ called the Kontsevich integral of $T$, which is an isotopy invariant of parenthesized tangles, and therefore defines an element $Z_{\mathcal A}(\mathbf{T})\in  \mathring{\mathcal{A}}^{\wedge}(\vec{\mathrm{br}}(\mathbf{T}),\pi_0(\mathbf{T}))$ for any $\mathbf{T}\in \mathcal Pa\vec{\mathcal T}$. We  follow the combinatorial approach  \cite{LM2,JacksonMoffat,Ohts}.

Recall that a family of morphisms in a monoidal category $\underline{\mathcal{C}}$ is a \emph{generating family} if every morphism of $\underline{\mathcal{C}}$ can be obtained from its morphisms and the morphisms $\mathrm{Id}_x$ with $x\in\mathrm{Ob}(\underline{\mathcal{C}})$ by a finite number of applications of the tensor product and composition.  

The following can be deduced from ~\cite[Thm.~3.2]{Tur1}), see also \cite[Thm.~1]{BN2} or \cite[Thm.~6.5]{Ohts}.

\begin{theorem}[Turaev] Let $\mathcal E$ be the family consisting of the set of parenthesized tangles  
 shown in Figure~\ref{figure_gen},  where the thick lines in the figures in position three and four (from left to right)  represent a trivial tangle (with arbitrary orientation) and the black dots some non-associative words on $\{+,-\}$.
 \begin{figure}[ht!]
										\centering
                        \includegraphics[scale=1.4]{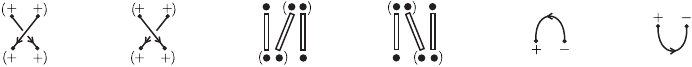}
												
\caption{Elements of the family $\mathcal{E}$.}
\label{figure_gen} 								
\end{figure}
Then the family~$\mathcal E$ is a generating  family of the monoidal category  $\underline{\mathcal Pa\vec{\mathcal T}}$.
\end{theorem}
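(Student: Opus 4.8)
The statement is Turaev's theorem that the family $\mathcal E$ generates the monoidal category $\underline{\mathcal Pa\vec{\mathcal T}}$; since the excerpt explicitly attributes it to Turaev and offers it as a citable input (``can be deduced from \cite[Thm.~3.2]{Tur1}''), the natural route is not to reprove it from scratch but to \emph{deduce} it from the cited results, and I will organize the argument so that the only genuine work is the passage from the cited categories to $\underline{\mathcal Pa\vec{\mathcal T}}$ as defined in \S\ref{sec:1-6-5}. First I would recall the statement of \cite[Thm.~3.2]{Tur1} (equivalently \cite[Thm.~1]{BN2}, \cite[Thm.~6.5]{Ohts}): the category of \emph{non-parenthesized} framed oriented tangles, or its parenthesized variant, is generated under $\otimes$ and $\circ$ (together with the identity morphisms $\mathrm{Id}_u$, $u$ an object) by the elementary tangles: the positive and negative crossings, the cups and caps $\mathrm{cup}^{\mathcal T}_{+-}, \mathrm{cup}^{\mathcal T}_{-+}, \mathrm{cap}^{\mathcal T}_{+-}, \mathrm{cap}^{\mathcal T}_{-+}$, and the reassociating isomorphisms (the ``associativity'' tangles $a_{\bullet,\bullet,\bullet}$, i.e.\ the trivial tangle reparenthesizing three consecutive strands). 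These are exactly the pictures displayed in Figure~\ref{figure_gen}: the thick lines in the third and fourth pictures are trivial tangles of arbitrary orientation and the black dots carry non-associative words, which encodes precisely the family of reassociators together with the elementary crossings and cup/cap morphisms.

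The key steps, in order, would be the following. (1) Fix a diagrammatic normal form: any $\mathbf T\in\underline{\mathcal Pa\vec{\mathcal T}}(u,v)$ is an isotopy class, represented by a tangle diagram, and by the Reidemeister-type presentation recalled in the diagrammatic subsections of \S\ref{sec::2} (oriented tangle diagrams up to planar isotopy and moves RI$'$, RII, RIII), one may slice the diagram by finitely many horizontal levels so that between consecutive levels exactly one of the following happens: a single crossing (positive or negative), a single local maximum (a cap), a single local minimum (a cup), with all other strands vertical. (2) Translate each elementary slice into a morphism of $\underline{\mathcal Pa\vec{\mathcal T}}$: a slice containing one crossing is $\mathrm{Id}_{w_1}\otimes X^{\pm}\otimes \mathrm{Id}_{w_2}$ for the appropriate words $w_1,w_2$ and the appropriately oriented elementary crossing $X^{\pm}$ (all four orientation types of crossing are obtainable from the two generators in $\mathcal E$ by pre- and post-composing with cups/caps, or are themselves listed); similarly a cap-slice is $\mathrm{Id}_{w_1}\otimes \mathrm{cap}^{\mathcal T}_{\epsilon}\otimes\mathrm{Id}_{w_2}$ and a cup-slice is $\mathrm{Id}_{w_1}\otimes\mathrm{cup}^{\mathcal T}_{\epsilon}\otimes\mathrm{Id}_{w_2}$, $\epsilon\in\{+-,-+\}$. (3) Account for parenthesizations: $\underline{\mathcal Pa\vec{\mathcal T}}(u,v):=\underline{\vec{\mathcal T}}(f(u),f(v))$ by definition, so the composite of the elementary slices already equals $\mathbf T$ in $\underline{\vec{\mathcal T}}$; to realize it in $\underline{\mathcal Pa\vec{\mathcal T}}$ one inserts at each level the reassociating isomorphisms (elements of $\mathcal E$) needed to make source and target parenthesizations match, using the invertibility of those reassociators and the coherence (Mac Lane) theorem to guarantee that the choice does not matter. (4) Conclude that $\mathbf T$ lies in the monoidal subcategory generated by $\mathcal E$, which is the claim.

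\textbf{Main obstacle.} The substantive point is step (3), i.e.\ the bookkeeping of parenthesizations, because $\underline{\mathcal Pa\vec{\mathcal T}}$ is defined to have \emph{free magma} objects while the tangle slicing argument is intrinsically non-parenthesized; one must check that inserting reassociators between levels is always possible and that the resulting expression is well-defined up to the relations of the category, which is exactly where coherence for monoidal categories enters. The other mild nuisance is step (2)'s orientation casework — verifying that the two crossing generators in $\mathcal E$, together with cups and caps, produce crossings of all four orientation types (this is the standard ``rotating a crossing by cups and caps'' identity and is routine but must be stated). I would present the proof as: reduce to the non-parenthesized statement by coherence (step 3), invoke \cite[Thm.~3.2]{Tur1} / \cite[Thm.~6.5]{Ohts} for the non-parenthesized generation (steps 1, 2), and note that the list of generators there coincides with $\mathcal E$ after the orientation reductions. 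I would keep the computational parts (explicit slicing, explicit reassociator insertions) at the level of a reference to the cited theorems rather than carrying them out, since the excerpt already signals that this is the intended status of the result.
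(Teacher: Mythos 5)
Your proposal matches the paper's treatment: the paper gives no independent proof of this theorem but states it as a result of Turaev, citing \cite[Thm.~3.2]{Tur1} (see also \cite[Thm.~1]{BN2}, \cite[Thm.~6.5]{Ohts}), which is exactly the route you take, and your sketch of the underlying argument (slicing a diagram into elementary levels, expressing each slice as $\mathrm{Id}_{w_1}\otimes g\otimes \mathrm{Id}_{w_2}$ for a generator $g$, and inserting reassociators via coherence to pass from $\underline{\vec{\mathcal T}}$ to $\underline{\mathcal Pa\vec{\mathcal T}}$) is the standard content of those references. So the proposal is correct and takes essentially the same approach as the paper.
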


\begin{lemma} The parenthesized tangles shown in Figure~\ref{figure_genpar}  
 \begin{figure}[ht!]
										\centering
                        \includegraphics[scale=1.4]{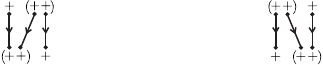}
			\caption{Particular elements of $\mathcal{E}$.}
\label{figure_genpar} 								
\end{figure}
 are particular  elements of $\mathcal E$ and all  the elements in position three and four (from left to right)   in Figure~\ref{figure_gen}    can be obtained from these particular elements  by applying to them a finite number of doubling operations $\mathrm{dbl}_{\mathcal Pa\vec{\mathcal T}}$   and change of orientation operations  $\mathrm{co}_{\mathcal Pa\vec{\mathcal T}}$ (see Definition~\ref{def:DPaT}).
\end{lemma}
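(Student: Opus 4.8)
The statement asserts two things: first, that the parenthesized tangles displayed in Figure~\ref{figure_genpar} are genuinely among the elements of the family $\mathcal{E}$ from Figure~\ref{figure_gen}; and second, that every element of $\mathcal{E}$ occurring in the third and fourth positions of Figure~\ref{figure_gen} (the ``generic'' cup/cap-type generators carried by a trivial tangle with an arbitrary orientation and black-dot labels given by arbitrary non-associative words on $\{+,-\}$) can be produced from the short list in Figure~\ref{figure_genpar} by finitely many applications of $\mathrm{dbl}_{\mathcal{P}a\vec{\mathcal T}}$ and $\mathrm{co}_{\mathcal{P}a\vec{\mathcal T}}$ (Definition~\ref{def:DPaT}). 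The plan is to treat the first claim by inspection---it is immediate from the definition of $\mathcal{E}$, since the tangles of Figure~\ref{figure_genpar} are obtained by specializing the black dots to the one-letter words and the trivial-tangle orientations to the standard ones---and then to devote the substance of the argument to the second claim, which I would carry out by a double induction: an outer induction on the total length $|w|$ of the non-associative words labelling the black dots, and, interleaved with it, a bookkeeping step to fix the orientations.

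First I would set up notation: a generic element in position three or four of Figure~\ref{figure_gen} is determined by a choice of parenthesized words $w_1,\dots$ on $\{+,-\}$ sitting at the black dots together with an orientation of the underlying trivial strand. The base case of the outer induction is when every such word has length $1$, i.e.\ is a single letter $+$ or $-$; in that case, after possibly applying $\mathrm{co}_{\mathcal{P}a\vec{\mathcal T}}$ to the relevant connected component to switch $+\leftrightarrow-$, the tangle is precisely one of the listed elements of Figure~\ref{figure_genpar}. For the inductive step, suppose one of the black-dot words $w$ has length $\geq 2$; write $w=(w'w'')$ for its outermost parenthesization. I would use the doubling operation $\mathrm{dbl}_{\mathcal{P}a\vec{\mathcal T}}$ on the connected component of a smaller cup/cap generator whose black dot carries the shorter word obtained by replacing $w$ with a single letter: Definition~\ref{def:DPaT}$(a)$ shows exactly that doubling a segment replaces a letter at a boundary position by a length-two block with no parenthesis between its two entries, which, after iterating, realizes an arbitrary parenthesized word. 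The orientations of the doubled strands are forced to be compatible with the orientation of the strand before doubling (Definition~\ref{def:doublinginext}, Definition~\ref{def:doublingforvecBr}), so any discrepancy with the target orientation of the generic generator is corrected afterwards by a finite sequence of $\mathrm{co}_{\mathcal{P}a\vec{\mathcal T}}$ moves on the appropriate components; here one uses Lemma~\ref{lemma:1:18:03032020}$(b)$ that change-of-orientation operations commute and are involutive, so they can be applied in any order. Each doubling strictly decreases the multiset of word-lengths attached to black dots in the reverse-lexicographic order I would fix on such multisets, which closes the induction.

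The bookkeeping I expect to be the only real subtlety: one must check that the parenthesization $p^X$ and $(p')^X$ induced on the doubled source and target (as in Definition~\ref{def:DPaT}$(a)$, via the surjections $\pi_{\mathrm{s}(\mathbf{T}),X\cap\partial_b T}$ and $\pi_{\mathrm{t}(\mathbf{T}),X\cap\partial_t T}$ and the ``no parenthesis inside a cardinality-$2$ fiber'' condition) match the parenthesization actually required by the generic generator in Figure~\ref{figure_gen}---that is, that repeated doubling builds up the outer-to-inner parenthesization in the correct nesting order, and does not inadvertently insert or omit a pair of parentheses. This is where I would be most careful, and I would phrase it as: doubling the connected component sitting over a letter $\ell$ of a word $u$, then doubling again over one of the two new letters, yields, after $k$ steps, precisely the words in $\{+,-\}^{(*)}$ with outermost parenthesization separating the first-introduced block from the rest, so an induction on the parenthesization tree of the target word (not merely its length) goes through. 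Everything else---well-definedness of the operations on isotopy classes, compatibility with $\vec{\mathrm{br}}$---is already supplied by Lemmas~\ref{lem120}, \ref{r:2022-10-31-01}, \ref{r:changeoforientationtangles} and the discussion of $\underline{\mathcal{P}a\vec{\mathcal T}}$, so the proof reduces to this combinatorial verification plus the routine base-case inspection.
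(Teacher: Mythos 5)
Your argument is correct: the base-case inspection, together with the observation that a doubling replaces a leaf of the parenthesization tree by a sibling pair with no parenthesis between the two new letters (so iterated doubling realizes any non-associative word, strand by strand), and that change-of-orientation moves then adjust the signs component by component, is exactly the routine verification intended here. The paper offers no details at all --- its proof consists of the single sentence that the result follows straightforwardly --- so your write-up is a faithful, if more elaborate than necessary, elaboration of the same approach rather than a different route.
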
 
\begin{proof} The result follows straightforwardly. 
\end{proof}

To define $Z_{\mathcal A}$ we need and \emph{associator}   $\Phi$. More precisely, let $\mathbb{C}\langle\langle A,B\rangle\rangle$ be the power series ring on the non-commutative free variables $A$ and $B$.  We fix an \emph{even Drinfeld associator} $\varphi(A,B) \in \mathbb{C}\langle\langle A,B \rangle\rangle$. This is an exponential series satisfying several conditions, see \cite{BN3, Dri} or \cite[Appendix D]{Ohts} for the precise definitions.

There is an algebra morphism
\begin{equation}\label{equ:morphcanC-ABtoA}
\mathrm{can}: \mathbb{C}\langle\langle A,B\rangle\rangle\longrightarrow  \mathring{\mathcal A}^\wedge(\downarrow\downarrow\downarrow, \emptyset)
\end{equation}
given by
\begin{align*}
\includegraphics[scale=1]{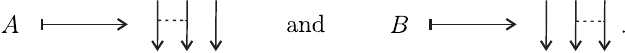}
\end{align*}
Set $\Phi:=\mathrm{can}(\varphi(A,B))\in\mathring{\mathcal A}^\wedge(\downarrow\downarrow\downarrow, \emptyset)$. Therefore $\Phi$ is a exponential series of Jacobi diagrams satisfying several properties, see~\cite[(6.11)--(6.13)]{Ohts}. The element $\Phi$ is also called an \emph{even Drinfeld associator}. In low degree we have:

\begin{align*}
\includegraphics[scale=1]{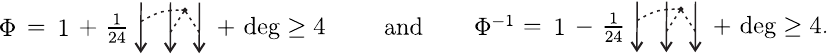}
\end{align*}

\noindent Here $1$ means $\downarrow\downarrow\downarrow$. Set
\begin{align*}
\includegraphics[scale=1]{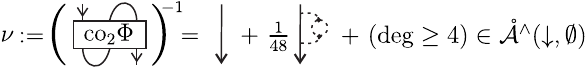}
\end{align*}
where $\mathrm{co}_2$ is as in Convention~\ref{convention:co_i-dbl_i}. One can show that the element $\nu\in\mathring{\mathcal{A}}^\wedge(\downarrow,\emptyset)$  \index[notation]{\nu@$\nu$} does not depend on the choice of the associator $\Phi$, see for instance \cite[Thm. 6.7]{Ohts}. Its explicit value  was conjectured in~\cite{BGRTw}, this is known as the \emph{wheels conjecture}, and it was proved  in~\cite{BLT}.

We also need the square root of $\nu$:
\begin{align*}
\includegraphics[scale=1.1]{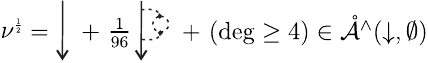}
\end{align*}

The following is the main theorem of this subsection, it was obtained by Le--Murakami~\cite{LM1,LM2}, see  also \cite{BN2,Ca, Piu}. We refer to  \cite[\S17.2]{JacksonMoffat} for a detailed explanation.

\begin{theorem}[Le--Murakami]\label{thmkontsevichintegral} There is a unique assignment $$\mathcal Pa\mathcal T \ni \mathbf{T} \longmapsto Z_{\mathcal A}(\mathbf{T}) \in \mathring{\mathcal A}^{\wedge}(\vec{\mathrm{br}}(\mathbf{T}),\pi_0(\mathbf{T})_{\mathrm{cir}}),$$   
  \index[notation]{Z_{\mathcal A}(\bullet)@$Z_{\mathcal A}(\bullet)$} such that 
\begin{itemize}
\item[$(a)$]  ${Z}_{\mathcal A}(\mathrm{Id}_w) = \mathrm{Id}_{w'}\in {\mathring{\mathcal{A}}}^\wedge(\downarrow_{|w'|},\emptyset)$ for any $w\in\{+\}^{(*)}$, where $w'\in\{+\}^*$ is the image under the natural forget-parenthesization map $\{+\}^{(*)}\to \{+\}^{*}$ and $\downarrow_{|w'|}=\mathrm{Id}_{w'}\in\underline{\vec{\mathcal Br}}(w',w')$ and
\begin{align*}
\includegraphics[scale=1]{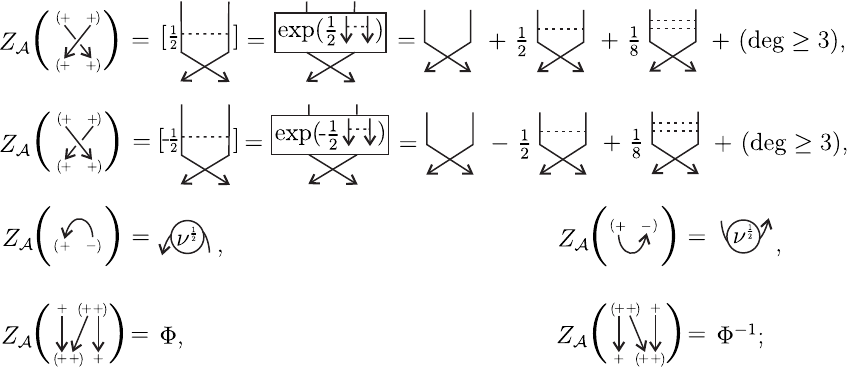}
\end{align*}
and on parenthesized tangles shown in position three and four (from left to right)   in Figure~\ref{figure_gen} 
${Z}_{\mathcal A}$ is defined using  the corresponding  doubling and change of orientation operations applied to the parenthesized tangles in  Figure~\ref{figure_genpar}, for instance 
\begin{align*}
\includegraphics[scale=1]{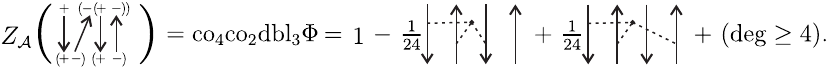}
\end{align*}
Here we are using Convention~\ref{convention:co_i-dbl_i}.
\item[$(b)$] $Z_{\mathcal A}$ satisfies axiom \emph{(LMO4)}$(b)$ in Definition~\ref{def:preLMOstructure}.

\item[$(c)$] $Z_{\mathcal A}$ satisfies axiom \emph{(LMO4)}$(c)$ in Definition~\ref{def:preLMOstructure}.

\item[$(d)$] $Z_{\mathcal A}$ satisfies axiom \emph{(LMO4)}$(d)$ from Definition~\ref{def:preLMOstructure} and   for any $\mathbf{T}\in \underline{\mathcal Pa\vec{\mathcal T}}$ and $a\in\pi_0(\mathbf{T})_{\mathrm{seg}}\simeq\pi_0(\vec{\mathrm{br}}(\mathbf{T}))$ we have
$$Z_{\mathcal A}(\mathrm{co}_{\mathcal Pa\vec{\mathcal T}}(\mathbf{T},a))=\mathrm{co}^{\mathcal A}\big((\vec{\mathrm{br}}(\mathbf{T}),a),\pi_0(\mathbf{T})_{\mathrm{cir}}\big)(Z_{\mathcal A}(\mathbf{T})).$$  

\end{itemize}
\end{theorem}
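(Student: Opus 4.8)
The statement to prove is Theorem~\ref{thmkontsevichintegral}, namely the existence and uniqueness of the assignment $\mathbf{T}\mapsto Z_{\mathcal A}(\mathbf{T})$ satisfying properties $(a)$--$(d)$. The plan is to follow the standard combinatorial construction of the Kontsevich integral as in \cite{LM1,LM2,BN2,Ca,Piu,JacksonMoffat,Ohts}, adapted to the present bookkeeping of Jacobi diagrams on oriented Brauer diagrams. First I would set up the \emph{uniqueness} part, which is the easier half: by Turaev's theorem, the family $\mathcal E$ of elementary parenthesized tangles generates $\underline{\mathcal Pa\vec{\mathcal T}}$ as a monoidal category, and the preceding lemma shows that every element of $\mathcal E$ is obtained from the finitely many tangles in Figure~\ref{figure_genpar} by the operations $\mathrm{dbl}_{\mathcal Pa\vec{\mathcal T}}$ and $\mathrm{co}_{\mathcal Pa\vec{\mathcal T}}$. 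Hence, once $Z_{\mathcal A}$ is prescribed on the tangles of Figure~\ref{figure_genpar} as in $(a)$ (using $\nu^{1/2}$, $\Phi$ and the $\mathrm{cup}/\mathrm{cap}$ values displayed there) and is required to be compatible with $\otimes$, $\circ$ (axioms (LMO4)$(b),(c)$, part of $(d)$), doubling (LMO4)$(e)$ and change of orientation ($(d)$ for segments), its value on \emph{any} parenthesized tangle is forced. The only thing to check for uniqueness is that this forcing is consistent, which is precisely the content of the existence part.

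For \emph{existence}, the plan is to \emph{define} $Z_{\mathcal A}(\mathbf{T})$ by choosing a decomposition of a representative tangle diagram of $\mathbf{T}$ into a composition of tensor products of the elementary pieces, assigning the displayed values to the pieces, and combining them with $\otimes_{\mathcal A}$, $\circ_{\mathcal A}$, $\mathrm{dbl}_{\mathcal A}$, $\mathrm{co}^{\mathcal A}$; one must then show this is independent of all choices. This is the classical invariance argument and it splits into: (i) invariance under planar isotopy and the choice of Morse decomposition into elementary slices, (ii) invariance under the Turaev/Reidemeister-type moves relating two such decompositions (the pentagon and hexagon axioms for the associator $\Phi$ enter here, together with the nondegeneracy relations for $\nu$), and (iii) independence of the parenthesization, using the associator to reparenthesize. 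Then properties $(a)$--$(d)$ are read off from the construction: $(a)$ is the definition on the generators; $(b)$ and $(c)$ (compatibility with $\otimes$ and $\circ$) hold because $Z_{\mathcal A}$ is built out of $\otimes_{\mathcal A}$ and $\circ_{\mathcal A}$ and these operations satisfy the coherence identities of Lemma~\ref{r:tenscompcoinvprop}; $(d)$ for circle components is the behaviour of the Kontsevich integral under reversing a closed component (sign $(-1)^{\#\text{legs}}$ and reversal of the cyclic order, matching $\mathrm{co}^{\mathcal A}(P,\bullet)$ of Definition~\ref{def:changeoforientationJac}), while $(d)$ for segment components matches $\mathrm{co}^{\mathcal A}((P,a),\bullet)$ of~\eqref{r:2023-03-04COforsegment}; and (LMO4)$(e)$ (doubling) is the cabling formula for the Kontsevich integral, which holds on the generators of Figure~\ref{figure_genpar} by the explicit $\mathrm{dbl}_{\mathcal Pa\vec{\mathcal T}}$ description and propagates by functoriality. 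I would cite \cite[\S17.2]{JacksonMoffat} and \cite[\S6]{Ohts} for the detailed verification rather than reproduce it.

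The main obstacle, and the part deserving genuine care, is \emph{matching the combinatorial conventions}: the Kontsevich integral in the cited references is valued in a space of diagrams on a $1$-manifold with a linear (for segments) or cyclic (for circles) order on the legs along each component, whereas here the target is $\mathring{\mathcal A}^\wedge(\vec{\mathrm{br}}(\mathbf{T}),\pi_0(\mathbf{T})_{\mathrm{cir}})$ with the linear orders indexed by $\pi_0(\vec{\mathrm{br}}(\mathbf{T}))$ and cyclic orders indexed by $\pi_0(\mathbf{T})_{\mathrm{cir}}$. The nontrivial checks are: that the composition operation $\mathrm{Comp}^{\mathcal A}$ of \S\ref{sec:4-3} — whose linear and cyclic orders on the glued and newly-created components are dictated by Lemma~\ref{r:2023-01-11-r1} and Lemma~\ref{r:2023-01-11-r2} — reproduces exactly the ``stacking'' of leg-orders used in the classical construction (this is where the associativity of Lemma~\ref{r:tenscompcoinvprop}$(c)$, resting on Lemma~\ref{r:2023-02-24r1}, does the work); and that the sign $(-1)^{|\varphi^{-1}(a)|}$ and order-reversal in $\mathrm{co}^{\mathcal A}$ and $\mathrm{co}^{\mathcal A}((P,a),\bullet)$ agree with the known orientation-reversal behaviour. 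Once these dictionary points are nailed down, the theorem reduces to quoting the existence and uniqueness of the Kontsevich integral together with its compatibility with cabling and orientation reversal, all of which are established in the literature.
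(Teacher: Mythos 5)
Your proposal is correct and follows essentially the same route as the paper: the paper simply takes $Z_{\mathcal A}$ to be the normalized Kontsevich integral $\check Z$ of \cite[\S 17.2]{JacksonMoffat}, cites the relevant definitions there for properties $(a)$--$(d)$, and notes that these conditions determine $Z_{\mathcal A}$ uniquely via the generating family of $\underline{\mathcal Pa\vec{\mathcal T}}$, which is exactly your uniqueness-by-generators plus existence-by-citation strategy. Your extra paragraph on matching the leg-order conventions with $\mathrm{Comp}^{\mathcal A}$ and $\mathrm{co}^{\mathcal A}$ is a reasonable elaboration of a point the paper leaves implicit, but it is not a divergence in method.
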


\begin{proof} Denote by $Z_{\mathcal A}$ the assignment $\check Z$ from \cite[\S17.2]{JacksonMoffat}. Then $Z_{\mathcal A}$  satisfies property  $(a)$ by  \cite[Def. 17.7, 17.5 and $(17.3)$]{JacksonMoffat}, properties $(b)$, $(c)$, $(d)$, by \cite[Def. 17.5 and 17.7]{JacksonMoffat}. One can check that these conditions define $Z_{\mathcal A}$ uniquely. 
\end{proof}

 It is important to notice that the value  of ${Z}_{\mathcal A}$ on a framed oriented link does not depend on the choice of an associator~\cite[Thm.~8]{LM2}. 

By definition, If $U$ denotes the unknot with trivial framing, we have ${Z}_{\mathcal A}(U) =\nu\in\mathring{\mathcal{A}}^\wedge(\downarrow, \emptyset)\cong\mathring{\mathcal{A}}^\wedge(\vec{\varnothing}, \pi_0(U)_{\mathrm{cir}})$, see \cite[Proposition~6.3]{Ohts} for the isomorphism  $\mathring{\mathcal{A}}^\wedge(\downarrow, \emptyset)\cong\mathring{\mathcal{A}}^\wedge(\vec{\varnothing},\{s\})$.

\begin{convention}\label{convention:red-box} For an element $\mathbf{T}\in\mathcal{P}a\vec{\mathcal T}(u,v)$ with  parenthesized tangle representative $T$, we denote by 
$${\underset{u'}{\overset{v'}{\begin{tikzpicture}[squarednode/.style={rectangle, draw=red!, very thick, minimum size=5mm,}
]
\node[squarednode] (1){$T$};
\end{tikzpicture}}}}$$
the element ${Z}_{\mathcal A}(\mathbf{T})\in{\mathring{\mathcal{A}}}^{\wedge}(\vec{\mathrm{br}}(\mathbf{T}), \pi_0(\mathbf{T}_{\mathrm{circ}}))$, where $\vec{\mathrm{br}}(\mathbf{T})\in\vec{\mathcal Br}(u',v')$ and $u',v'\in\{+\}^*$ are the images under the natural forget-parenthesization map $\{+,-\}^{(*)}\to \{+,-\}^{*}$. Moreover, sometimes we will omit the symbols $+$ and $-$ from the notation since these can be recovered from the orientation of $T$ or from the orientation of its underlying Brauer diagram. Since ${Z}_{\mathcal A}$ defines an isotopy invariant of parenthesized tangles, we will write sometimes ${Z}_{\mathcal A}(T)$ instead of ${Z}_{\mathcal A}(\mathbf{T})$ for any $\mathbf{T}\in \mathcal Pa\vec{\mathcal T}(u,v)$ with tangle representative~$T$.
\end{convention}

\begin{example}\label{ejemplo1KI} Here we use Conventions~\ref{convention:red-box} and~\ref{convention:co_i-dbl_i}.
\begin{align*}
\includegraphics[scale=1]{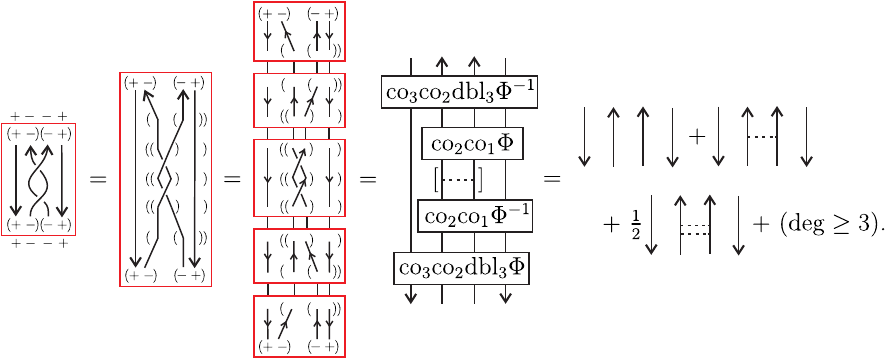}
\end{align*}
\end{example}

\subsection{Proof of the pre-LMO axiom (LMO4) and  the pre-LMO structure  \texorpdfstring{$\mathring{\mathbf{A}}$}{AA}}\label{sec:4-8}

\begin{lemma}\label{lemmaZanddoubling}  The collection of assignments $(a)$--$(d)$ in Lemma~\ref{r:lemmaforLMO1},   $(e)$ in Lemma~\ref{lemma:CO:lmo} and $(f)$ in Lemma~\ref{lmo2:2:A} together with
\begin{itemize}
\item[$(g)$]  the assignment $\mathcal Pa\vec{\mathcal T} \ni \mathbf{T}\mapsto Z_{\mathcal A}(\mathbf{T})\in  {\mathring{\mathcal{A}}}^{\wedge}(\vec{\mathrm{br}}(\mathbf{T}), \pi_0(\mathbf{T}_{\mathrm{circ}}))$
\end{itemize}
satisfies axiom \emph{(LMO4)} from Definition~\ref{def:preLMOstructure}.
\end{lemma}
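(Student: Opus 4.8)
The statement asserts that the collection of data associated with the Kontsevich integral satisfies axiom (LMO4), given that axioms (LMO1), (LMO2), (LMO3) already hold by Theorem~\ref{r:thmsummarizingLMO123}. Axiom (LMO4) has five sub-conditions $(a)$--$(e)$, and the strategy is to match each of them directly against the corresponding properties of $Z_{\mathcal A}$ established in Theorem~\ref{thmkontsevichintegral}. Concretely: condition $(a)$ (the value of $Z_{\mathcal A}$ on identities $\mathrm{Id}_w$ equals $\mathrm{Id}^{\mathcal A}_{f(w)}$) is exactly part $(a)$ of Theorem~\ref{thmkontsevichintegral} combined with the definition $\mathrm{Id}^{\mathcal A}_w = (\mathrm{Id}_w)_{\mathcal A}$ from Lemma~\ref{r:lemmaforLMO1}$(b)$; condition $(b)$ (compatibility with tensor product) is part $(b)$ of Theorem~\ref{thmkontsevichintegral}; condition $(c)$ (compatibility with composition) is part $(c)$ of that theorem; and condition $(d)$ (compatibility with change of orientation of a circle component) is the first clause of part $(d)$ of that theorem. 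So the only content that is not already packaged verbatim in Theorem~\ref{thmkontsevichintegral} is condition $(e)$: the compatibility of $Z_{\mathcal A}$ with the doubling operation $\mathrm{dbl}_{\vec{\mathcal T}}$ on segment components, i.e. the identity
$$
Z_{\mathcal A}(\mathrm{dbl}_{\vec{\mathcal T}}(\mathbf{T},A)) = \mathrm{dbl}_{\mathcal A}(\vec{\mathrm{br}}(\mathbf{T}),A)(\pi_0(\mathbf{T})_{\mathrm{cir}})(Z_{\mathcal A}(\mathbf{T})).
$$

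\textbf{Handling condition (e).} The approach I would take for $(e)$ is to reduce to generators and then check the elementary cases. Using the generating family $\mathcal E$ of $\underline{\mathcal{P}a\vec{\mathcal T}}$ (Turaev's theorem quoted in \S\ref{sec:4-7}) and the fact that $\mathrm{dbl}_{\vec{\mathcal T}}$ is compatible with $\otimes$ and $\circ$ (this is the part $(a)$--$(c)$ package for tangles, combined with the diagram~\eqref{diags***23-01-2021} already verified for Jacobi diagrams in Lemma~\ref{lmo2:2:A}, i.e. axiom (LMO3)), the identity propagates from generators to arbitrary parenthesized tangles: if it holds for $\mathbf{T}$ and $\mathbf{T}'$ then it holds for $\mathbf{T}\otimes\mathbf{T}'$ and $\mathbf{T}'\circ\mathbf{T}$, because both sides transform the same way under the monoidal and compositional structure maps and the bijections $\mathrm{bij}_{\cdots}$ on circle-component index sets match up. Hence it suffices to verify $(e)$ on the elements of $\mathcal E$. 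For the cup, cap, and crossing generators with arbitrary orientation, $\mathrm{dbl}_{\vec{\mathcal T}}$ either does nothing to a segment not being doubled, or produces a standard "doubled" tangle whose Kontsevich integral is by construction of $Z_{\mathcal A}$ (Theorem~\ref{thmkontsevichintegral}$(a)$, the clause about positions three and four in Figure~\ref{figure_gen}, using the doubling operations $\mathrm{dbl}_i$ of Convention~\ref{convention:co_i-dbl_i}) equal to $\mathrm{dbl}_i$ applied to the integral of the undoubled generator; and $\mathrm{dbl}_i$ on $\mathring{\mathcal A}^\wedge$ is precisely $\mathrm{dbl}_{\mathcal A}((\mathrm{Id}_w,\{i,i\}),\emptyset)$ by the same Convention. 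For the $\Phi$-labelled generators in positions three and four, the same clause of Theorem~\ref{thmkontsevichintegral}$(a)$ applies verbatim: $Z_{\mathcal A}$ is \emph{defined} there via doubling, so the required identity is a definitional match once one records that doubling a segment of an already-doubled generator, then computing, agrees with first computing and then doubling — which is the associativity/coherence of $\mathrm{dbl}_{\mathcal A}$, a consequence of Lemma~\ref{r:2023-03-04r1} and the functoriality built into its proof.

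\textbf{Expected main obstacle.} The genuinely technical point is bookkeeping of the circle-component index set $\pi_0(\mathbf{T})_{\mathrm{cir}}$ through the reduction to generators: when one composes or tensors tangles, new circles appear (the sets $\mathrm{Cir}(\cdot,\cdot)$), and one must check that the natural transformation status of $\mathrm{dbl}_{\mathcal A}((P,A),\bullet)$ in the $\bullet$-variable (Lemma~\ref{r:2023-03-04r1}, and the compatibility diagram~\eqref{diags***23-01-2021}) makes the two sides of $(e)$ transform identically under the identifications of ambient vector spaces supplied by Lemmas~\ref{r:2022-10-24-03}, \ref{r:2022-10-31-01}, and \ref{r:2022-12-05-01}. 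This is exactly the kind of coherence check that is routine but lengthy; since axiom (LMO3) has already been verified (Lemma~\ref{lmo2:2:A}), the inductive step is formally forced, so the obstacle is one of careful exposition rather than mathematical difficulty. I would therefore structure the proof as: (i) cite Theorem~\ref{thmkontsevichintegral}$(a)$--$(d)$ for conditions $(a)$--$(d)$; (ii) state the reduction-to-generators lemma for $(e)$ using Turaev's generating family and the multiplicativity of both sides under $\otimes$ and $\circ$; (iii) verify $(e)$ on the finitely many generator types, invoking the definition of $Z_{\mathcal A}$ via $\mathrm{dbl}_i$ and $\mathrm{co}_i$; (iv) conclude. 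Combined with Theorem~\ref{r:thmsummarizingLMO123}, this shows $\mathring{\mathbf{A}}$ is a pre-LMO structure.
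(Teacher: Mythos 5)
For conditions $(a)$--$(d)$ of (LMO4) your route coincides with the paper's: both simply invoke Theorem~\ref{thmkontsevichintegral}$(a)$--$(d)$. The divergence, and the problem, is in condition $(e)$. The paper does not prove $(e)$ from the generator presentation at all: it observes that $(e)$ is a reformulation of the cabling/doubling formula for the normalized Kontsevich integral, i.e.\ Theorem~4.2 of \cite{LeMu97} (see also \cite[\S 6.5]{Ohts}), and cites it. Your sketch instead tries to obtain $(e)$ by reduction to Turaev's generating family, and at the crucial point it asserts that for the crossing generators the identity is ``by construction of $Z_{\mathcal A}$'' a definitional match. That is not so: the generating family $\mathcal E$ contains crossings of \emph{single} strands only, and $Z_{\mathcal A}$ on the doubled (2-cabled) crossing is not defined via $\mathrm{dbl}_i$ --- it must be computed by decomposing the cabled crossing into single-strand crossings conjugated by associators, and the equality $Z_{\mathcal A}(\mathrm{dbl}_{\vec{\mathcal T}}(\text{crossing}))=\mathrm{dbl}_i(Z_{\mathcal A}(\text{crossing}))$ is then exactly the hexagon identity satisfied by the Drinfeld associator (together with the pentagon for the associator generators). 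This is the entire non-trivial content of the cabling formula, so the step you treat as definitional is precisely the theorem being used; as written, your argument for $(e)$ is circular at the generators where it matters.

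There is also a secondary gap in the inductive step of your reduction. Propagating $(e)$ through compositions requires the compatibility of $\mathrm{dbl}_{\mathcal A}$ with $\mathrm{Comp}^{\mathcal A}$ (doubling distributes over gluing of Jacobi diagrams, with the correct identification of the circle sets $\mathrm{Cir}(\cdot,\cdot)$), but axiom (LMO3) and Lemma~\ref{lmo2:2:A} only establish compatibility with $\mathrm{Tens}^{\mathcal A}$, and no such composition-compatibility is stated or proved anywhere in the paper; similarly, handling a set $A$ meeting both tensor factors needs an iterated-doubling coherence that is not recorded. These facts are true for Jacobi diagrams, but they would have to be proved before your induction is legitimate. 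If you want a self-contained proof of $(e)$ along your lines, you would in effect be re-proving Le--Murakami's Theorem~4.2; the efficient fix is to do what the paper does and cite it, keeping your treatment of $(a)$--$(d)$ unchanged.
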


\begin{proof} Items $(a)$--$(d)$ in (LMO4) follow from Theorem~\ref{thmkontsevichintegral}. Item $(e)$ in (LMO4) is a reformulation of~\cite[Theorem~4.2]{LeMu97}. See also~\cite[\S 6.5]{Ohts}. 
\end{proof}

We can summarize the results in  \S\ref{sec:4-1}~--~\S\ref{sec:4-7} in the following theorem.

\begin{theorem}\label{thm:prelmokont}The tuple  \begin{equation*}
\begin{split}
\mathring{\mathbf{A}}:=\Big(\big\{\mathring{\mathcal A}^\wedge(P,\bullet)\big\}_{P \in \vec{\mathcal  Br}}, \big\{\mathrm{Tens}^{{\mathcal A}}(P,P')_{\bullet,\bullet'}\big\}_{P,P' \in \vec{\mathcal Br}},  \big\{\mathrm{Comp}^{{\mathcal A}}(P,P')_{\bullet,\bullet'}\big\}_{P,P' \text{ composable in } \vec{\mathcal Br}},\\
 \big\{\mathrm{co}^{{\mathcal A}}(P,\bullet)\big\}_{P \in \vec{\mathcal Br}},\big\{\mathrm{dbl}_{{\mathcal A}}((P,A),\bullet)\big\}_{P \in \vec{\mathcal Br}, A \subset \pi_0(P)}, Z_{{\mathcal A}}(\bullet)\Big)
\end{split}
\end{equation*}
is  a pre-LMO structure in the sense of Definition~\ref{def:preLMOstructure}.
\end{theorem}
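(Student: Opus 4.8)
The statement of Theorem~\ref{thm:prelmokont} asserts that the tuple $\mathring{\mathbf{A}}$ assembled from the data introduced in \S\ref{sec:4-1}--\S\ref{sec:4-7} is a pre-LMO structure, i.e.\ satisfies axioms (LMO1), (LMO2), (LMO3), (LMO4) of Definition~\ref{def:preLMOstructure}. The plan is simply to invoke the work done in the two preceding summary results. First I would recall that Theorem~\ref{r:thmsummarizingLMO123} already establishes that the collection of assignments $(a)$--$(d)$ of Lemma~\ref{r:lemmaforLMO1}, together with assignment $(e)$ of Lemma~\ref{lemma:CO:lmo} and assignment $(f)$ of Lemma~\ref{lmo2:2:A}, satisfies axioms (LMO1), (LMO2) and (LMO3). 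These are exactly the first five components of the tuple $\mathring{\mathbf{A}}$, namely $\big\{\mathring{\mathcal A}^\wedge(P,\bullet)\big\}_P$, $\big\{\mathrm{Tens}^{\mathcal A}(P,P')_{\bullet,\bullet'}\big\}_{P,P'}$, $\big\{\mathrm{Comp}^{\mathcal A}(P,P')_{\bullet,\bullet'}\big\}_{P,P'}$, $\big\{\mathrm{co}^{\mathcal A}(P,\bullet)\big\}_P$, $\big\{\mathrm{dbl}_{\mathcal A}((P,A),\bullet)\big\}_{P,A}$ (the last one being the map of Lemma~\ref{r:2023-03-04r1}, i.e.\ assignment $(f)$), so axioms (LMO1)--(LMO3) hold verbatim.

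Next I would observe that the sixth component, the assignment $Z_{\mathcal A}(\bullet)$, is precisely assignment $(g)$ of Lemma~\ref{lemmaZanddoubling}, built from the Kontsevich integral $Z_{\mathcal A}$ constructed in \S\ref{sec:3-8-2} via Theorem~\ref{thmkontsevichintegral}. Lemma~\ref{lemmaZanddoubling} states exactly that the collection $(a)$--$(d)$, $(e)$, $(f)$, $(g)$ satisfies axiom (LMO4): items $(a)$--$(d)$ of (LMO4) follow from Theorem~\ref{thmkontsevichintegral}$(a)$--$(d)$, and item $(e)$ of (LMO4) is the behaviour of $\check Z$ under doubling, a reformulation of \cite[Theorem~4.2]{LeMu97}. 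Since the data entering the tuple $\mathring{\mathbf{A}}$ are, component by component, the assignments $(a)$, $(c)$, $(d)$, $(e)$, $(f)$, $(g)$ in the notation of those lemmas, and since together they satisfy (LMO1), (LMO2), (LMO3) (by Theorem~\ref{r:thmsummarizingLMO123}) and (LMO4) (by Lemma~\ref{lemmaZanddoubling}), the tuple meets all the requirements of Definition~\ref{def:preLMOstructure}.

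Therefore the proof is a short assembly step: cite Theorem~\ref{r:thmsummarizingLMO123} for axioms (LMO1)--(LMO3) and Lemma~\ref{lemmaZanddoubling} for axiom (LMO4), and conclude that $\mathring{\mathbf{A}}$ is a pre-LMO structure. The only point requiring a line of care is making explicit the identification between the components of $\mathring{\mathbf{A}}$ as written in the theorem and the assignments labelled $(a)$--$(g)$ in the lemmas, but this is purely a matter of matching notation (e.g.\ $\mathrm{Id}^{\mathcal A}_w$ of assignment $(b)$ is implicit in the functor data, and the $\mathrm{co}^{\mathcal A}((P,a),\bullet)$ transformation of \S\ref{sec:4-7}, while discussed in that section, is not part of the pre-LMO tuple itself). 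There is no genuine obstacle here; the substance of the theorem resides entirely in the earlier lemmas and in Theorem~\ref{thmkontsevichintegral}, and the present statement merely records that those pieces fit the definition.

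\begin{proof}
By Theorem~\ref{r:thmsummarizingLMO123}, the collection of assignments $(a)$--$(d)$ of Lemma~\ref{r:lemmaforLMO1}, together with the assignment $(e)$ of Lemma~\ref{lemma:CO:lmo} and the assignment $(f)$ of Lemma~\ref{lmo2:2:A}, satisfies axioms (LMO1), (LMO2) and (LMO3) of Definition~\ref{def:preLMOstructure}. These assignments provide exactly the data $\big\{\mathring{\mathcal A}^\wedge(P,\bullet)\big\}_{P\in\vec{\mathcal Br}}$, $\big\{\mathrm{Tens}^{\mathcal A}(P,P')_{\bullet,\bullet'}\big\}_{P,P'\in\vec{\mathcal Br}}$, $\big\{\mathrm{Comp}^{\mathcal A}(P,P')_{\bullet,\bullet'}\big\}_{P,P'\text{ composable}}$, $\big\{\mathrm{co}^{\mathcal A}(P,\bullet)\big\}_{P\in\vec{\mathcal Br}}$ and $\big\{\mathrm{dbl}_{\mathcal A}((P,A),\bullet)\big\}_{P\in\vec{\mathcal Br},\,A\subset\pi_0(P)}$ appearing in $\mathring{\mathbf{A}}$, together with the elements $\mathrm{Id}^{\mathcal A}_w=(\mathrm{Id}_w)_{\mathcal A}$ required by (LMO1).

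By Lemma~\ref{lemmaZanddoubling}, adjoining to these the assignment $(g)$, namely $\mathcal Pa\vec{\mathcal T}\ni\mathbf{T}\mapsto Z_{\mathcal A}(\mathbf{T})\in\mathring{\mathcal A}^\wedge(\vec{\mathrm{br}}(\mathbf{T}),\pi_0(\mathbf{T})_{\mathrm{cir}})$ constructed in Theorem~\ref{thmkontsevichintegral}, yields a collection satisfying axiom (LMO4). This assignment $(g)$ is the component $Z_{\mathcal A}(\bullet)$ of $\mathring{\mathbf{A}}$.

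Hence the tuple $\mathring{\mathbf{A}}$ consists precisely of data satisfying axioms (LMO1), (LMO2), (LMO3) and (LMO4), so it is a pre-LMO structure in the sense of Definition~\ref{def:preLMOstructure}.
\end{proof}
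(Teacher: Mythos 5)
Your proof is correct and follows exactly the paper's route: the theorem is stated there as a summary whose justification is precisely the combination of Theorem~\ref{r:thmsummarizingLMO123} for axioms (LMO1)--(LMO3) and Lemma~\ref{lemmaZanddoubling} for axiom (LMO4), which is what you do. The only content beyond those citations is the notational matching of the tuple's components with the assignments $(a)$--$(g)$, which you handle correctly.
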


Since $\mathring{\mathbf{A}}$ is a pre-LMO structure, one may apply Proposition~\ref{l:LMOinvt}, so that one may attach to it the semi-Kirby structure $(\mathfrak{s}(\mathbf{A}),\mu)$.

\subsection{Additional material}\label{sec:4-9}

\subsubsection{Some relations among Jacobi diagrams}

\begin{definition}\label{def:cup:capinBR} Define the oriented Brauer diagrams 
 $\mathrm{cup}_{+-}:=\big((0,2,(12)),\{1\}\big)\in\underline{\vec{\mathcal Br}}(\emptyset,+-)$, where $(12)$ is the only element in $\mathrm{FPFI}(\emptyset \sqcup [\![1,2]\!])$ and $\{1\} \subset \emptyset \sqcup [\![1,2]\!]$ is the set beginning points, see  Definition~\ref{def:vecBr}. Similarly, define  $\mathrm{cup}_{-+}:=\big((0,2,(12)),\{2\}\big)\in\underline{\vec{\mathcal Br}}(\emptyset,-+)$,    $\mathrm{cap}_{-+}:=\big((2,0,(12)),\{1\}\big)\in\underline{\vec{\mathcal Br}}(-+,\emptyset)$ and $\mathrm{cap}_{+-}:=\big((2,0,(12)),\{2\}\big)\in\underline{\vec{\mathcal Br}}(+-,\emptyset)$. Finally, define $\tau_{++}:=((2,2, (1\bar{2})(2\bar{1})), \{\bar{1},\bar{2}\})\in\underline{\vec{\mathcal Br}}(++,++)$, we use Notation~\ref{notation1}.  The schematic representations of $\mathrm{cup}_{+-}$, $\mathrm{cup}_{-+}$, $\mathrm{cap}_{-+}$ and $\mathrm{cap}_{+-}$ coincide with the schematic representation of the oriented tangles shown in Figure~\ref{figure_cup_cap} and that of $\tau_{++}$ is shown in Figure~\ref{figura2024-01-17}.
\end{definition}
\begin{figure}[ht!]
\centering
\includegraphics[scale=1]{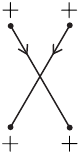}
\caption{Schematic representation of the oriented Brauer diagram $\tau_{++}\in\underline{\vec{\mathcal Br}}(++,++)$.}
\label{figura2024-01-17}
\end{figure}

\begin{definition}\label{themapz} Define the map
$$z:\mathring{\mathcal{A}}^{\wedge}(\downarrow,\emptyset)\longrightarrow \mathring{\mathcal{A}}^{\wedge}(\uparrow,\emptyset)$$
by $$z(x) = (\mathrm{Id}_-^{\mathcal A}\otimes_{\mathcal A} (\mathrm{cap}_{+-})_{\mathcal A})\circ_{\mathcal A} (\mathrm{Id}_-^{\mathcal A}\otimes_{\mathcal A} x\otimes_{\mathcal A} \mathrm{Id}_-^{\mathcal A}) \circ_{\mathcal A} ((\mathrm{cup}_{-+})_{\mathcal A}\otimes_{\mathcal A} \mathrm{Id}_-^{\mathcal A})$$ for any $x\in \mathring{\mathcal{A}}^{\wedge}(\downarrow, \emptyset)$.Here $\mathrm{Id}_-^{\mathcal A}=(\mathrm{Id}_-)_{\mathcal A}\in \mathring{\mathcal{A}}^\wedge(\mathrm{Id}_{-},\emptyset)$ and $(\mathrm{cup}_{-+})_{\mathcal A}\in \mathring{\mathcal{A}}^\wedge(\mathrm{cup}_{-+},\emptyset)$ are as in Definition~\ref{def:emptyJacdiagram},   where $\mathrm{Id}_-\in\underline{\vec{\mathcal Br}}(-,-)$ is the identity morphism and $\mathrm{cup}_{-+}\in \underline{\vec{\mathcal Br}}(\emptyset,-+) $ as in Definition~\ref{def:cup:capinBR}. In Figure~\ref{figuraKI17-mapz} we show a schematic representation of this map.
\begin{figure}[ht!]
\centering
\includegraphics[scale=1]{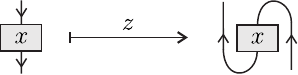}
\caption{Schematic representation of the map $z:{\mathring{\mathcal{A}}}^{\wedge}(\downarrow, \emptyset)\to {\mathring{\mathcal{A}}}^{\wedge}(\uparrow, \emptyset)$. We represent $x\in{\mathring{\mathcal{A}}}^{\wedge}(\downarrow, \emptyset)$  by the grey-colored box.}
\label{figuraKI17-mapz}
\end{figure}
\end{definition}

\begin{lemma}\label{themapzalgebr} The map $z:\mathring{\mathcal{A}}^{\wedge}(\downarrow, \emptyset)\to \mathring{\mathcal{A}}^{\wedge}(\uparrow, \emptyset)$ from Definition~\ref{themapz} is an algebra anti-homomorphism, i.e, $z(x\circ y) = z(y)\circ z(x)$ for any $x,y\in\mathring{\mathcal{A}}^\wedge(\downarrow, \emptyset)$.
\end{lemma}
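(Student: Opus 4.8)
The statement to prove is that the map $z:\mathring{\mathcal A}^{\wedge}(\downarrow,\emptyset)\to\mathring{\mathcal A}^{\wedge}(\uparrow,\emptyset)$ defined in Definition~\ref{themapz} reverses the order of composition, i.e.\ $z(x\circ_{\mathcal A} y)=z(y)\circ_{\mathcal A} z(x)$ for $x,y\in\mathring{\mathcal A}^{\wedge}(\downarrow,\emptyset)$. The approach is purely diagrammatic-calculational: unfold both sides in terms of the operations $\otimes_{\mathcal A}$ and $\circ_{\mathcal A}$ on $\mathring{\mathcal A}^{\wedge}$, and use the functoriality and naturality properties of the pre-LMO data $\mathring{\mathbf{A}}$ (Theorem~\ref{thm:prelmokont}), in particular the interchange/exchange law \eqref{eq:comptensorandcompositioninOBr} and its image under $Z_{\mathcal A}$/the coinvariant functor (Lemma~\ref{r:tenscompcoinvprop}$(d)$, and the underlying identities in (LMO1)$(c)$,$(d)$), together with the elementary relations $(\mathrm{cap}_{+-})_{\mathcal A}\circ_{\mathcal A}(\text{stuff})\circ_{\mathcal A}(\mathrm{cup}_{-+})_{\mathcal A}$ coming from the fact that $\mathrm{cup},\mathrm{cap}$ are the unit/counit of a duality in $\underline{\vec{\mathcal Br}}$ (the ``zig-zag'' or snake identities), which hold on the level of Brauer diagrams and hence, by functoriality of $P\mapsto P_{\mathcal A}$ and the axioms, on the level of $\mathring{\mathcal A}^{\wedge}$-elements $(\mathrm{cup})_{\mathcal A}$, $(\mathrm{cap})_{\mathcal A}$.

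Concretely, first I would fix the picture: $z(x)$ is obtained by ``rotating'' $x$ using a cup on the left and a cap on the right, so that $z(x)$ sits on the strand $\uparrow$ read in the opposite direction. The first step is to record the snake identities in $\underline{\vec{\mathcal Br}}$: $(\mathrm{Id}_-\otimes\mathrm{cap}_{+-})\circ(\mathrm{Id}_-\otimes\mathrm{cup}_{-+}\otimes\mathrm{Id}_-)=\mathrm{Id}_-\otimes(\mathrm{cap}_{+-}\circ(\mathrm{cup}_{-+}\otimes\mathrm{Id}_+))$ reduces, via the zig-zag relation $\mathrm{cap}_{+-}\circ(\mathrm{cup}_{-+}\otimes\mathrm{Id}_{+})=\mathrm{Id}$-type identities, to something which when fed a diagram $x$ in the middle just transports $x$ to the single strand. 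Applying $(-)_{\mathcal A}$ and the compatibilities in (LMO1) promotes these to identities between the corresponding $\otimes_{\mathcal A},\circ_{\mathcal A}$-expressions. Then, given $x\circ_{\mathcal A} y$, I would insert it into the defining expression for $z$, and repeatedly apply Lemma~\ref{r:tenscompcoinvprop}$(b)$,$(c)$,$(d)$ (associativity of $\otimes_{\mathcal A}$, associativity of $\circ_{\mathcal A}$, and the interchange law $(a'\otimes_{\mathcal A}b')\circ_{\mathcal A}(a\otimes_{\mathcal A}b)=(a'\circ_{\mathcal A}a)\otimes_{\mathcal A}(b'\circ_{\mathcal A}b)$) to slide the two cups past each other and the two caps past each other; the order reversal then comes out exactly as in the standard proof that in a monoidal category with duals, ``transpose of a composite is the composite of transposes in reverse order''. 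Finally I would compare the resulting expression with $z(y)\circ_{\mathcal A}z(x)$ expanded the same way and check they agree term by term.

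The main obstacle I anticipate is bookkeeping: the expression for $z$ involves $\mathrm{Id}_-^{\mathcal A}$ on both sides and several nested $\otimes_{\mathcal A}$'s, so a naive expansion of $z(x\circ_{\mathcal A}y)$ produces a fairly long composite in $\mathring{\mathcal A}^{\wedge}$ whose simplification to $z(y)\circ_{\mathcal A}z(x)$ requires carefully chaining the interchange law with the snake identities in the right sequence — essentially reproving the abstract ``duality makes transposition an anti-homomorphism'' lemma in this concrete combinatorial setting. There is no conceptual difficulty and no issue with the $\mathrm{Cir}$-bookkeeping (all diagrams involved have no closed circle components, so the $\mathrm{Cir}$ sets are empty and $\circ_{\mathcal A}$ behaves like ordinary composition), but one must be scrupulous that each manipulation is legitimated by a previously established axiom (specifically (LMO1)$(a)$ for the unit laws with $\mathrm{Id}_w^{\mathcal A}$, (LMO1)$(b)$ for associativity of $\otimes_{\mathcal A}$, and (LMO1)$(c)$,$(d)$ for associativity of $\circ_{\mathcal A}$ and the interchange law), and that the snake identities are applied to genuine $\underline{\vec{\mathcal Br}}$-morphisms before passing to $\mathring{\mathcal A}^{\wedge}$. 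Once the abstract lemma is in hand, the conclusion $z(x\circ_{\mathcal A}y)=z(y)\circ_{\mathcal A}z(x)$ is immediate; the proof should therefore be organized as: (1) snake identities for $\mathrm{cup},\mathrm{cap}$ in $\underline{\vec{\mathcal Br}}$ and their $(-)_{\mathcal A}$-images; (2) the one-line abstract computation using the interchange law; (3) identification of the output with $z(y)\circ_{\mathcal A}z(x)$.
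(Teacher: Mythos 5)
Your proposal is correct and is essentially the paper's own argument: the paper proves the lemma by the schematic computation in Figure~\ref{figuraKI21-mapzalgebrahom}, which is exactly the cup/cap "rotation" calculation you describe — expand $z(y)\circ_{\mathcal A} z(x)$, slide the inner cup and cap past each other via the interchange law, and cancel the resulting zig-zag by the snake identity, all of which is legitimate here since no circle components arise and the $(-)_{\mathcal A}$-elements of cups and caps compose as their underlying Brauer diagrams. Your algebraic write-up via (LMO1) and Lemma~\ref{r:tenscompcoinvprop} is just a symbol-level transcription of that same diagrammatic proof, so no further comparison is needed.
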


\begin{proof}
Let $x,y\in\mathring{\mathcal{A}}^\wedge(\downarrow, \emptyset)$. In Figure~\ref{figuraKI21-mapzalgebrahom} we give a schematic proof of the equality $z(x\circ y) = z(y)\circ z(x)$.
\begin{figure}[ht!]
\centering
\includegraphics[scale=1]{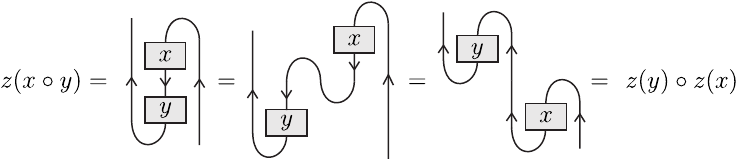}
\caption{The map $z:{\mathring{\mathcal{A}}}^{\wedge}(\downarrow, \emptyset)\to {\mathring{\mathcal{A}}}^{\wedge}(\uparrow, \emptyset)$ is an algebra anti-homomorphism.}
\label{figuraKI21-mapzalgebrahom}
\end{figure}
\end{proof}

\begin{lemma}\label{slide-nu} For any $x\in\mathring{\mathcal{A}}^{\wedge}(\downarrow, \emptyset)$, we have 
\begin{align*}
\includegraphics[scale=1]{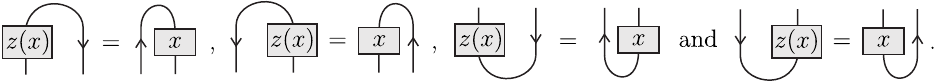}
\end{align*}
where $z:\mathring{\mathcal{A}}^{\wedge}(\downarrow, \emptyset)\to \mathring{\mathcal{A}}^{\wedge}(\uparrow, \emptyset)$ is map from Definition~\ref{themapz}.
\end{lemma}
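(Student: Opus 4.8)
The statement to be proved (Lemma~\ref{slide-nu}) asserts an identity of the form: for any $x \in \mathring{\mathcal A}^\wedge(\downarrow,\emptyset)$, the element obtained by composing $x$ with $\nu$ (the Kontsevich integral of the unknot) on one side equals the element obtained by composing $z(x)$ with $\mathrm{co}(\nu)$ (or a similar configuration) on the other side — schematically, that $\nu$ can be ``slid past'' a box $x$ along a strand at the cost of reversing orientation and applying the anti-homomorphism $z$. The natural strategy is to reduce this to the already-established properties of the Kontsevich integral and of $\nu$, rather than to argue directly with Jacobi diagrams. First I would recall that $\nu = Z_{\mathcal A}(U)$ with $U$ the $0$-framed unknot, and that $\nu$ is central and group-like in $\mathring{\mathcal A}^\wedge(\downarrow,\emptyset)$ and independent of the associator (Theorem~\ref{thmkontsevichintegral}, and the references to \cite[Thm.~6.7]{Ohts}, \cite[Prop.~6.3]{Ohts}). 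The key structural inputs are: (i) the behaviour of $Z_{\mathcal A}$ under the change-of-orientation operation $\mathrm{co}_{\mathcal Pa\vec{\mathcal T}}$ on segments, which is axiom (LMO4)$(d)$ as sharpened in Theorem~\ref{thmkontsevichintegral}$(d)$; (ii) the fact that $z$ is the algebra anti-homomorphism of Lemma~\ref{themapzalgebr}, built from $\mathrm{cup}_{-+}$, $\mathrm{cap}_{+-}$ and the identity strand; and (iii) compatibility of $Z_{\mathcal A}$ with composition and tensor product, i.e. (LMO4)$(b),(c)$.

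\textbf{Main steps.} The plan is to proceed as follows. Step 1: unwind the definition of $z$ (Definition~\ref{themapz}) so that $z(x) \circ_{\mathcal A} \mathrm{co}(\nu)$ is rewritten as a composition/tensor expression in $\mathring{\mathcal A}^\wedge$ involving $x$, the cup and cap elements, $\mathrm{Id}_-^{\mathcal A}$, and $\nu$ with a reversed strand. Step 2: interpret both sides as values of $Z_{\mathcal A}$ on parenthesized tangles. The left-hand side $x \circ \nu$ corresponds to a box $x$ stacked with an unknot on its strand; the right-hand side corresponds to the same tangle after a rotation/change of orientation of that strand, using that $\mathrm{co}^{\mathcal A}$ intertwines with $\mathrm{co}_{\mathcal Pa\vec{\mathcal T}}$ under $Z_{\mathcal A}$ (Theorem~\ref{thmkontsevichintegral}$(d)$). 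Step 3: verify that the two underlying parenthesized tangles are isotopic — this is a purely topological matter: pulling the $0$-framed unknot around a strand and reversing the strand's orientation gives an isotopic (framed, oriented) tangle, because the unknot has trivial framing so no framing anomaly appears. Step 4: conclude by isotopy invariance of $Z_{\mathcal A}$ together with the naturality/functoriality packaged in (LMO4)$(b),(c)$, translating the tangle-level isotopy into the claimed algebraic identity. Throughout, the centrality and group-likeness of $\nu$ (and the fact that $z(\nu)$ relates to $\mathrm{co}(\nu)$, itself a consequence of the unknot being invariant under orientation reversal up to the sign bookkeeping in $\mathrm{co}^{\mathcal A}$) are used to handle the precise form of the right-hand side.

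\textbf{Expected main obstacle.} The delicate point is Step 2--Step 3: getting the cup/cap bookkeeping exactly right so that the box $x$ appears with the correct adjacent identity strands and correct orientations on both sides, and then identifying the resulting tangles as genuinely isotopic as \emph{framed oriented parenthesized} tangles (not merely as links). One must be careful that the change-of-orientation operation on a segment, when $Z_{\mathcal A}$ is applied, introduces exactly the sign pattern encoded in $\mathrm{co}^{\mathcal A}((P,a),\bullet)$ of Definition~\ref{r:2023-03-04COforsegment}, and that these signs are absorbed consistently — in particular that no spurious sign or framing correction survives because $U$ has zero framing and $\nu$ is even. A secondary, more routine obstacle is that the identity as drawn in Figure~\ref{figuraKI16-nu-co-nu} may be stated with a specific normalization (e.g. involving $\nu^{1/2}$ rather than $\nu$, cf. the square-root element introduced just before Theorem~\ref{thmkontsevichintegral}); one should match the figure's normalization precisely, which amounts to keeping track of how the two copies of the associated cup/cap closures each contribute a factor, using Lemma~\ref{themapzalgebr} to split $z$ multiplicatively. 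Once the tangle picture is fixed, the algebraic conclusion is immediate from isotopy invariance, so the real work is entirely in the careful topological identification rather than in any diagrammatic computation with STU/IHX/AS.
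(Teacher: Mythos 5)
Your proposal misreads what kind of statement this is, and the resulting strategy has a genuine gap. The lemma asserts an identity for an \emph{arbitrary} element $x\in\mathring{\mathcal{A}}^{\wedge}(\downarrow,\emptyset)$ of the Jacobi-diagram space, not for values of the Kontsevich integral. Your plan is to realize both sides as $Z_{\mathcal A}$ of parenthesized tangles and invoke isotopy invariance; but a generic $x$ (an arbitrary, not necessarily group-like, series of Jacobi diagrams) is not of the form $Z_{\mathcal A}(\mathbf{T})$ for any tangle, so Steps 2--4 simply do not apply to the quantifier in the statement. Even in the restricted case $x=Z_{\mathcal A}(\mathbf{T})$, the ``rotate the strand and reverse orientation'' isotopy argument would require knowing how $Z_{\mathcal A}$ behaves under such a rotation, which is essentially the content of the lemma itself (and is not among the axioms (LMO4)$(a)$--$(e)$ you cite), so the argument is circular there as well. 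Moreover the properties of $\nu$ you lean on (centrality, group-likeness, evenness, triviality of the framing of $U$) are irrelevant: $\nu$ does not enter the statement except as the element to which the lemma is most often applied later; there is no sign or framing bookkeeping to absorb.

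The paper's proof is much more elementary and purely diagrammatic: one proves the first equality (the others being analogous) by substituting the definition of $z(x)$ from Definition~\ref{themapz} --- a composite built from $(\mathrm{cup}_{-+})_{\mathcal A}$, $(\mathrm{cap}_{+-})_{\mathcal A}$, identity strands and $x$ --- into the left-hand side, and then simplifying using the compatibility of $\otimes_{\mathcal A}$ and $\circ_{\mathcal A}$ (the interchange law of (LMO1)$(d)$, Theorem~\ref{r:thmsummarizingLMO123}) together with the zig-zag (snake) identity $(\mathrm{cap}\otimes\mathrm{Id})\circ(\mathrm{Id}\otimes\mathrm{cup})=\mathrm{Id}$, which holds already at the level of the underlying Brauer diagrams and hence in $\mathring{\mathcal{A}}^{\wedge}$, since composing the empty-dashed-part cup and cap classes just composes the Brauer skeleta and carries the dashed part of $x$ along. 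No Kontsevich integral, no isotopy of framed tangles, and no STU/IHX/AS manipulation is needed. If you want to salvage your write-up, replace Steps 2--4 by this direct computation: expand $z(x)$, apply the interchange law to regroup the cups/caps around $x$, and cancel the resulting zig-zag; that yields exactly the slid configuration on the other strand.
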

\begin{proof} We prove the first equality, the other ones follow similarly. We have
\begin{align*}
\includegraphics[scale=1]{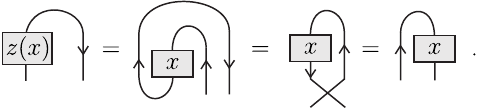}
\end{align*}
\end{proof}

\begin{lemma}\label{commut-prop} Let $w_1,w_2\in\{+,-\}^*$, $w=w_1\otimes +\otimes w_2$, $w'=w_1\otimes -\otimes w_2$. Recall the notation  $\mathrm{Id}_w \in \vec{\underline{\mathcal Br}}(w,w)$ and $\mathrm{Id}_{w'}  \in \vec{\underline{\mathcal Br}}(w',w')$. Let $D\in\mathring{\mathcal{A}}^\wedge(\mathrm{Id}_w,\emptyset)$, $E\in\mathring{\mathcal{A}}^\wedge(\mathrm{Id}_{w'},\emptyset)$, $y\in\mathring{\mathcal{A}}^\wedge(\downarrow,\emptyset)$ and $x'\in\mathring{\mathcal{A}}^\wedge(\uparrow, \emptyset)$. Then
$$D\circ(\mathrm{Id}_{w_1}\otimes x \otimes \mathrm{Id}_{w_2}) = (\mathrm{Id}_{w_1}\otimes x \otimes \mathrm{Id}_{w_2})\circ D \quad \text{and} \quad E\circ(\mathrm{Id}_{w_1}\otimes y \otimes \mathrm{Id}_{w_2}) = (\mathrm{Id}_{w_1}\otimes y \otimes \mathrm{Id}_{w_2})\circ E.$$
Schematically
\begin{align*}
\includegraphics[scale=1]{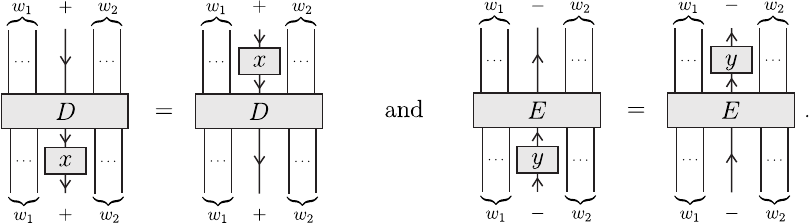}
\end{align*}
\end{lemma}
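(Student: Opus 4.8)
\textbf{Proof plan for Lemma~\ref{commut-prop}.}

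The statement asserts that a Jacobi diagram supported on a trivial tangle $\mathrm{Id}_w$ commutes (under the composition $\circ_{\mathcal A}$ in $\mathring{\mathcal A}^\wedge$) with an element inserted on a single strand disjoint from the support of the diagram. Geometrically this is the statement that two boxes stacked on \emph{different} vertical strands can be slid past one another; the only subtlety is to phrase it in terms of the algebraic operations $\otimes_{\mathcal A}$ and $\circ_{\mathcal A}$ and the compatibility identity \eqref{eq:comptensorandcompositioninOBr}. The plan is to reduce both equalities to the interchange law \eqref{eq:comptensorandcompositioninOBr} for $\underline{\vec{\mathcal Br}}$ together with its lifted counterpart from Lemma~\ref{r:tenscompcoinvprop}$(d)$ (equivalently, the axiom (LMO1)$(d)$ applied to $\mathring{\mathbf A}$, which holds by Theorem~\ref{r:thmsummarizingLMO123}).

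First I would set up notation: write $D = \mathrm{Id}_{w_1} \otimes_{\mathcal A} D_0 \otimes_{\mathcal A} \mathrm{Id}_{w_2}$ is \emph{not} automatic, so instead the cleaner route is to observe that $w = w_1 \otimes (+) \otimes w_2$ makes $\mathrm{Id}_w = \mathrm{Id}_{w_1} \otimes \downarrow \otimes\, \mathrm{Id}_{w_2}$ in $\underline{\vec{\mathcal Br}}$, but $D$ itself need not decompose as a tensor product. However, both sides of the desired identity are composites in which the ``inner'' factor $\mathrm{Id}_{w_1}\otimes x \otimes \mathrm{Id}_{w_2}$ is a tensor product of three pieces, one of which is the identity of $\downarrow$ and the other two are identities of $w_1$, $w_2$. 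The key step is therefore: write $x = \mathrm{Id}_{w_1}^{\mathcal A} \otimes_{\mathcal A} x \otimes_{\mathcal A} \mathrm{Id}_{w_2}^{\mathcal A}$ (valid by axiom (LMO1)$(a)$ and $(b)$ for $\mathring{\mathbf A}$, since $\mathrm{Id}_{w_i}^{\mathcal A}$ are units for $\circ_{\mathcal A}$), and then apply \eqref{eq:comptensorandcompositioninOBr} at the level of $\mathring{\mathcal A}^\wedge$: the composite $D \circ_{\mathcal A} (\mathrm{Id}_{w_1}^{\mathcal A}\otimes_{\mathcal A} x \otimes_{\mathcal A}\mathrm{Id}_{w_2}^{\mathcal A})$ equals the composite in the other order because one can rewrite $D = D \circ_{\mathcal A} \mathrm{Id}_{\mathrm{Id}_w}^{\mathcal A}$ and $\mathrm{Id}_{\mathrm{Id}_w}^{\mathcal A} = \mathrm{Id}_{w_1}^{\mathcal A}\otimes_{\mathcal A}\mathrm{Id}_{\downarrow}^{\mathcal A}\otimes_{\mathcal A}\mathrm{Id}_{w_2}^{\mathcal A}$; applying the interchange law to the two tensor-factored composable pairs $(\mathrm{Id}_{w_1}^{\mathcal A}, \mathrm{Id}_{w_1}^{\mathcal A})$, $(\mathrm{Id}_{\downarrow}^{\mathcal A}, x)$ (and its transpose with $x$ on the other side), $(\mathrm{Id}_{w_2}^{\mathcal A}, \mathrm{Id}_{w_2}^{\mathcal A})$ yields that both composites equal $D' \otimes$-decomposed pieces whose factors involve $x \circ_{\mathcal A} \mathrm{Id} = \mathrm{Id}\circ_{\mathcal A} x = x$. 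I would present this as the single displayed chain of equalities
\begin{align*}
D\circ_{\mathcal A}(\mathrm{Id}_{w_1}^{\mathcal A}\otimes_{\mathcal A} x \otimes_{\mathcal A} \mathrm{Id}_{w_2}^{\mathcal A})
&= \big(\mathrm{Id}_{\mathrm{Id}_w}^{\mathcal A}\circ_{\mathcal A} D\big)\circ_{\mathcal A}(\mathrm{Id}_{w_1}^{\mathcal A}\otimes_{\mathcal A} x \otimes_{\mathcal A} \mathrm{Id}_{w_2}^{\mathcal A}) \\
&= \big((\mathrm{Id}_{w_1}^{\mathcal A}\otimes_{\mathcal A}\mathrm{Id}_{\downarrow}^{\mathcal A}\otimes_{\mathcal A}\mathrm{Id}_{w_2}^{\mathcal A})\circ_{\mathcal A} D\big)\circ_{\mathcal A}(\mathrm{Id}_{w_1}^{\mathcal A}\otimes_{\mathcal A} x \otimes_{\mathcal A} \mathrm{Id}_{w_2}^{\mathcal A}),
\end{align*}
and then invoke \eqref{eq:comptensorandcompositioninOBr}/Lemma~\ref{r:tenscompcoinvprop}$(d)$ to rebracket, landing on $(\mathrm{Id}_{w_1}^{\mathcal A}\otimes_{\mathcal A} x \otimes_{\mathcal A}\mathrm{Id}_{w_2}^{\mathcal A})\circ_{\mathcal A} D$ by the same chain read backwards. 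The identity for $E$ and $y\in\mathring{\mathcal A}^\wedge(\uparrow,\emptyset)$ is verbatim the same argument with $w'=w_1\otimes(-)\otimes w_2$ and $\downarrow$ replaced by $\uparrow$.

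The main obstacle is purely bookkeeping: one must be careful that the interchange law \eqref{eq:comptensorandcompositioninOBr} in the excerpt is stated for $\underline{\vec{\mathcal Br}}$, and that its lift to $\mathring{\mathcal A}^\wedge$ with the extra ``circle'' bookkeeping is exactly axiom (LMO1)$(d)$ — but here all the relevant $\mathrm{Cir}$-sets are empty (composing with identities creates no circles), so the general $\mathrm{bij}$'s and twists $\tau^{\mathcal Set_f}$ in (LMO1)$(d)$ all reduce to identity maps, and the statement collapses to the naive interchange law. I would note this simplification explicitly so the reader sees why no combinatorial correction terms appear, and conclude by pointing to Figure~\ref{figuraKI19} for the schematic picture of the slide. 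No genuine new computation is needed beyond verifying that the bracketings match.
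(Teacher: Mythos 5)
There is a genuine gap, and it sits exactly at the step you flag as ``purely bookkeeping.'' The interchange law \eqref{eq:comptensorandcompositioninOBr} (and its lift, axiom (LMO1)$(d)$, via Lemma~\ref{r:tenscompcoinvprop}$(d)$) only applies when \emph{both} composable factors are presented as tensor products. In your chain you rewrite $D=\mathrm{Id}_{\mathrm{Id}_w}^{\mathcal A}\circ_{\mathcal A}D$ and decompose $\mathrm{Id}_{\mathrm{Id}_w}^{\mathcal A}=\mathrm{Id}_{w_1}^{\mathcal A}\otimes_{\mathcal A}\mathrm{Id}_{\downarrow}^{\mathcal A}\otimes_{\mathcal A}\mathrm{Id}_{w_2}^{\mathcal A}$, but $D$ itself (as you acknowledge) need not decompose along the partition $w_1,\downarrow,w_2$: a Jacobi diagram on $\mathrm{Id}_w$ typically has dashed components whose legs lie on several strands at once. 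So the only place the interchange law can be applied is to the composite $(\mathrm{Id}_{w_1}^{\mathcal A}\otimes\mathrm{Id}_{\downarrow}^{\mathcal A}\otimes\mathrm{Id}_{w_2}^{\mathcal A})\circ_{\mathcal A}(\mathrm{Id}_{w_1}^{\mathcal A}\otimes x\otimes\mathrm{Id}_{w_2}^{\mathcal A})$, which just collapses back to $\mathrm{Id}_{w_1}^{\mathcal A}\otimes x\otimes\mathrm{Id}_{w_2}^{\mathcal A}$; at no point does $x$ get transported to the other side of $D$. A quick sanity check shows no formal argument of this kind can succeed: the axioms you invoke hold in any monoidal category (e.g.\ in $\underline{\vec{\mathcal T}}$ itself, or in vector spaces), where the analogous commutation $D\circ(\mathrm{Id}\otimes x\otimes\mathrm{Id})=(\mathrm{Id}\otimes x\otimes\mathrm{Id})\circ D$ is false. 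The lemma is a special property of spaces of Jacobi diagrams.

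The real content is the ``sliding'' phenomenon: already in the simplest case $D$ a single chord joining two strands and $x$ a chord on one strand, moving $x$ past the leg of $D$ produces STU correction terms, and one must argue (by STU together with AS/IHX, a telescoping or PBW-symmetrization argument, or induction on the number of legs) that their total vanishes. This is why the paper treats the statement as a known diagrammatic lemma and simply cites \cite[Lemma~16.13]{JacksonMoffat} (stated there for $w_1=\emptyset$) rather than deriving it from the pre-LMO axioms. To repair your proof you would need to either reproduce such an STU-based argument or reduce to the cited lemma; the observation that the relevant $\mathrm{Cir}$-sets are empty is correct but does not supply the missing diagrammatic input.
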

\begin{proof}
This is  a well-known result, see for instance \citep[Lemma~16.13]{JacksonMoffat} for the case $w_1=\emptyset$. 
\end{proof}

\begin{lemma}\label{commut-prop-doubl} Let $w_1,w_2\in\{+,-\}^*$ with $|w_1|=i-1\geq 1$, $w=w_1\otimes +\otimes w_2$, $w'=w_1\otimes -\otimes w_2$. Let $D\in\mathring{\mathcal{A}}^\wedge(\mathrm{Id}_w,\emptyset)$, $E\in\mathring{\mathcal{A}}^\wedge(\mathrm{Id}_{w'},\emptyset)$, $y\in\mathring{\mathcal{A}}^\wedge(\downarrow^n,\emptyset)$ and $x'\in\mathring{\mathcal{A}}^\wedge(\uparrow^n,\emptyset)$ where $n\geq 1$. Then
$$\mathrm{dbl}_i^{(n-1)}(D)\circ(\mathrm{Id}_{w_1}\otimes x \otimes \mathrm{Id}_{w_2}) = (\mathrm{Id}_{w_1}\otimes x \otimes \mathrm{Id}_{w_2})\circ \mathrm{dbl}_i^{(n-1)}(D)$$
{and}
$$\mathrm{dbl}_i^{(n-1)}(E)\circ(\mathrm{Id}_{w_1}\otimes y \otimes \mathrm{Id}_{w_2}) = (\mathrm{Id}_{w_1}\otimes y \otimes \mathrm{Id}_{w_2})\circ \mathrm{dbl}_i^{(n-1)}(E).$$
Schematically
\begin{align*}
\includegraphics[scale=1]{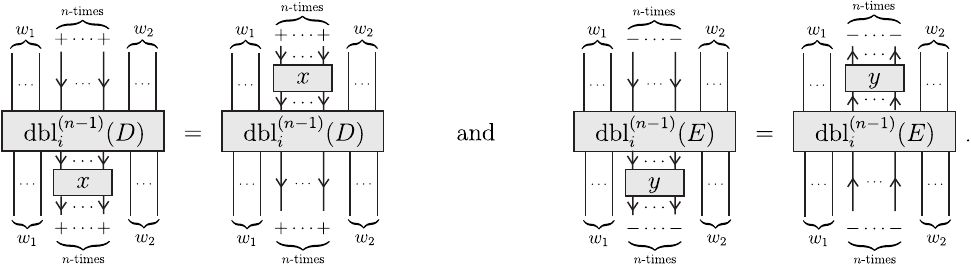}
\end{align*}
\end{lemma}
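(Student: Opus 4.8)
The statement is a ``doubled'' version of Lemma~\ref{commut-prop}. The plan is to reduce it to Lemma~\ref{commut-prop} by analyzing how the doubling operation $\mathrm{dbl}_i^{(n-1)}$ interacts with tensor products of the form $\mathrm{Id}_{w_1}\otimes(-)\otimes\mathrm{Id}_{w_2}$. Concretely, $D\in\mathring{\mathcal{A}}^\wedge(\mathrm{Id}_w,\emptyset)$ with $w=w_1\otimes+\otimes w_2$, and $\mathrm{dbl}_i^{(n-1)}(D)\in\mathring{\mathcal{A}}^\wedge(\mathrm{Id}_{w_1}\otimes(\downarrow^n)\otimes\mathrm{Id}_{w_2},\emptyset)$, where $i=|w_1|+1$; here $\mathrm{dbl}_i$ is the iterated doubling $\mathrm{dbl}_{\mathcal A}((\mathrm{Id}_w,\{i,i\}),\emptyset)$ of Convention~\ref{convention:co_i-dbl_i} applied $n-1$ times to the strand in position $i$. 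The key observation is that since $|w_1|\geq 1$, the strand being doubled is an interior strand, so after doubling the ambient oriented Brauer diagram factors as $\mathrm{Id}_{w_1}\otimes\mathrm{Id}_{\downarrow^n}\otimes\mathrm{Id}_{w_2}$, and correspondingly $\mathrm{dbl}_i^{(n-1)}(D)$ is an element of a space whose underlying Brauer diagram is a tensor product in which the middle tensor factor $\mathrm{Id}_{\downarrow^n}$ (together with all the dashed part of $D$ that attaches to that block of $n$ strands, plus whatever dashed part attached to $w_1,w_2$) is controlled.

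\textbf{Key steps.} First I would set $v:=\downarrow^n$ (respectively $\uparrow^n$) and rewrite both sides using the identities \eqref{eq:comptensorandcompositioninOBr} for composition and tensor product in $\underline{\vec{\mathcal Br}}$, together with the analogous property for Jacobi diagrams from Lemma~\ref{r:tenscompcoinvprop}$(b),(d)$, i.e.\ the interchange law $(a_1\otimes_{\mathcal A}a_2)\circ_{\mathcal A}(b_1\otimes_{\mathcal A}b_2)=(a_1\circ_{\mathcal A}b_1)\otimes_{\mathcal A}(a_2\circ_{\mathcal A}b_2)$. Second, I would invoke Lemma~\ref{commut-prop} itself applied with the word $w'=w_1\otimes v'\otimes w_2$ where $v'$ is a single letter, but more precisely: the content of Lemma~\ref{commut-prop-doubl} is exactly Lemma~\ref{commut-prop} stated for the space $\mathring{\mathcal{A}}^\wedge(\mathrm{Id}_{w_1}\otimes(\downarrow^n)\otimes\mathrm{Id}_{w_2},\emptyset)$ with the ``single letter'' in position $i$ replaced by the block $\downarrow^n$, and $x\in\mathring{\mathcal{A}}^\wedge(\downarrow,\emptyset)$ replaced by $x$ inserted as $\mathrm{Id}_{w_1}\otimes x\otimes\mathrm{Id}_{w_2}$ where now $x$ is tensored into only one of the $n$ parallel strands. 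The crucial point is that the proof of Lemma~\ref{commut-prop} is local: it uses only the STU relation to slide a dashed subgraph of $x$ past the chord endpoints of $D$ on the single strand in position $i$; after doubling, $x$ still lives on a single one of the $n$ strands, and $\mathrm{dbl}_i^{(n-1)}(D)$ has dashed endpoints distributed among the $n$ strands, but the sliding argument applies strand by strand and is unobstructed. Third, I would write out the schematic picture (as in the figure referenced) to record that sliding $x$ (on its strand) upward past all trivalent/univalent attachment points of $\mathrm{dbl}_i^{(n-1)}(D)$, using AS, IHX, STU repeatedly, gives the claimed equality; then the same argument verbatim with orientations reversed gives the statement for $E$ and $y\in\mathring{\mathcal{A}}^\wedge(\uparrow^n,\emptyset)$, using that $\mathrm{co}$ and $\mathrm{dbl}$ commute appropriately (Lemma~\ref{lmo2:2:A}, (LMO3), and the naturality statements of \S\ref{sec:4-5}, \S\ref{sec:4-7}).

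\textbf{Main obstacle.} The genuinely delicate point is bookkeeping the condition $|w_1|=i-1\geq 1$ and making sure the doubled diagram really does factor through $\mathrm{Id}_{w_1}\otimes\mathrm{Id}_{\downarrow^n}\otimes\mathrm{Id}_{w_2}$ in a way compatible with the natural transformation $\mathrm{dbl}_{\mathcal A}$ of Lemma~\ref{r:2023-03-04r1} — i.e.\ that $\mathrm{dbl}_i^{(n-1)}$ applied to a tensor product $\mathrm{Id}_{w_1}\otimes(\cdots)\otimes\mathrm{Id}_{w_2}$ distributes over the tensor factors as governed by Lemma~\ref{lemma:1:18:03032020}$(c)$ and Lemma~\ref{r:doublingcoinvprop}. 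Once that compatibility is recorded, the result is, as the authors say, ``well known'' and the actual isotopy/relation manipulation is a routine strand-local STU computation identical to the one behind Lemma~\ref{commut-prop}. I expect the write-up to be short: reduce to Lemma~\ref{commut-prop} via the distributivity of $\mathrm{dbl}$ over $\otimes$ and the locality of the STU-sliding, then cite it, with a figure doing most of the work.
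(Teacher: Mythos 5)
Your reduction does not work as stated, because it rests on a misreading of where the element being slid lives. In the lemma, $y\in\mathring{\mathcal{A}}^\wedge(\downarrow^n,\emptyset)$ (resp. $x'\in\mathring{\mathcal{A}}^\wedge(\uparrow^n,\emptyset)$) is an arbitrary element on the \emph{whole block} of $n$ cabled strands --- this is what the grey box spanning all $n$ strands in the schematic means --- and such an element is in general not a (sum of) tensor product(s) of one-strand elements; it can have dashed edges joining different strands of the cable. Your sketch instead treats it as ``tensored into only one of the $n$ parallel strands''; in that special case the statement would indeed be an immediate instance of Lemma~\ref{commut-prop} applied to the word $w_1(+)^{\otimes n}w_2$, and no compatibility of $\mathrm{dbl}$ with $\otimes$ would even be needed, but that is not what the lemma asserts. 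Moreover, the intermediate statement you invoke --- ``Lemma~\ref{commut-prop} with the single letter in position $i$ replaced by the block $\downarrow^n$'', i.e.\ that an arbitrary diagram on $\mathrm{Id}_{w_1}\otimes\downarrow^n\otimes\mathrm{Id}_{w_2}$ commutes with an arbitrary element of the block --- is false, and the claim that ``the sliding argument applies strand by strand and is unobstructed'' is exactly the false step: sliding a leg on a single cable strand past $y$ produces nonzero STU error terms. A degree-one example already shows both points: for $n=2$ and one extra strand, the horizontal chord $t_{23}$ between the two cable strands does not commute with the chord $t_{12}$, whereas it does commute with $t_{12}+t_{13}=\mathrm{dbl}$ of a chord, by the infinitesimal braid relation $[t_{12}+t_{13},t_{23}]=0$. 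So the truth of the lemma depends essentially on the legs of $\mathrm{dbl}_i^{(n-1)}(D)$ on the cable occurring as sums over the $n$ strands, and only those summed legs pass through $y$; this cancellation is the actual content and cannot be obtained by quoting Lemma~\ref{commut-prop}.

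For comparison, the paper does not prove the lemma at all: its ``proof'' is a citation of the case $w_1=\emptyset$ in the literature (Jackson--Moffatt, Lemma~16.14), the extension to nonempty $w_1,w_2$ being routine since legs on those strands are untouched. If you want an actual argument, you need the classical sliding computation behind that reference: move $y$ past the cabled legs of $\mathrm{dbl}_i^{(n-1)}(D)$ one (summed) leg at a time and show that the STU commutators produced on the $n$ strands cancel against each other precisely because each leg appears as a sum over all strands of the cable (the degree-one instance of this cancellation being the $4T$-type relation above). Your remarks about the distributivity of $\mathrm{dbl}$ over $\otimes$ (Lemma~\ref{lemma:1:18:03032020}$(c)$, Lemma~\ref{r:doublingcoinvprop}) are correct but peripheral; they are not where the difficulty lies.
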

\begin{proof}
This is  a well-known result, see for instance \citep[Lemma~16.14]{JacksonMoffat} for the case $w_1=\emptyset$. 
\end{proof}

\subsubsection{The functor $\mathring{\mathcal A}(P,\bullet,\diamond)$ of Jacobi diagrams with free vertices}\label{sec:310-2}

We introduce a slight generalization of Definition~\ref{def:thesetJacPS}. 
\begin{definition}\label{generalJDwithfree} Let $P$ be an oriented Brauer diagram and let $S$ and $\Sigma$ be finite sets. Let $\mathring{\mathrm{Jac}}(P,S,\Sigma)$   \index[notation]{Jac(P,S,\Sigma)@$\mathring{\mathrm{Jac}}(P,S,\Sigma)$}  be the set consisting  of classes of tuples
$\underline D = (D,\varphi,\{\mathrm{lin}_l\}_{l \in \pi_0(P)},\{\mathrm{cyc}_s\}_{s \in S})$ where 
\begin{itemize}
\item $D$ is a vertex-oriented unitrivalent graph, 
\item $\varphi : \partial D \to \pi_0(P) \sqcup S \sqcup \Sigma$ is a map such that its image  contains $\Sigma$ and the preimage of any element of $\Sigma$ has cardinality $1$,
\item   $\mathrm{lin}_l$ is a linear order on $\varphi^{-1}(l)$ for each $l \in \pi_0(P)$, 
\item $\mathrm{cyc}_s$ is a cyclic order on $\varphi^{-1}(s)$ for each $s \in S$.
\end{itemize}
Two such tuples $\underline D$ and $\widetilde{\underline D}$ being considered equivalents if there exists a graph isomorphism $\psi: D\to \widetilde{D}$ compatible with all the structures (see  Definition~~\ref{def:thesetJacPS}). 
\end{definition}

In particular,  $\mathring{\mathrm{Jac}}(P,S,\emptyset)$ coincides with the set $\mathring{\mathrm{Jac}}(P,S)$  as in Definition~\ref{def:thesetJacPS}.

\begin{definition}\label{operationVOUTG} Let $D$ and $D'$ be vertex-oriented unitrivalent graphs and $(d,d',\theta)$ be a triple where $d \subset \partial D$, $d' \subset \partial D'$ and $\theta : d \to d'$ is a bijection. Define \emph{the glueing of $D$ and $D'$ along $(d,d',\theta)$}, denoted by $D \sqcup_{(d,d',\theta)}D'$, as the vertex-oriented unitrivalent graph obtained from $D$ and $D'$ by glueing together the univalent vertices with endpoints in $d$ and $d'$ along $\theta$. Notice that  $\partial(D \sqcup_{(d,d',\theta)}D')=(\partial D \setminus d) \sqcup (\partial D'\setminus d')$.   
\end{definition}

\begin{definition}\label{themappairing} Let ${P}$ and ${P}'$ be oriented Brauer diagrams and $S,S',\Sigma$  finite sets. Define the map
\begin{equation}\label{thepairing}
\langle \ ,\  \rangle : \mathring{\mathrm{Jac}}({P},S, \Sigma)\times \mathring{\mathrm{Jac}}({P}',S',\Sigma) \longrightarrow \mathring{\mathrm{Jac}}({P}\otimes {P}', S\sqcup S') 
\end{equation}
as follows. Let  $(\underline{D},\underline{D}')\in \mathring{\mathrm{Jac}}({P},S, \Sigma)\times \mathring{\mathrm{Jac}}({P}',S', \Sigma)$ where
$$\underline D=(D,\varphi : \partial D\to \pi_0(P) \sqcup S \sqcup \Sigma,\{\mathrm{lin}_l\}_{l \in \pi_0(P)}, \{\mathrm{cyc}_s\}_{s \in S})$$
and
$$\underline D'=(D',\varphi' : \partial D'\to \pi_0(P') \sqcup S' \sqcup \Sigma,\{\mathrm{lin}_{l'}\}_{l' \in \pi_0(P')}, \{\mathrm{cyc}_{s'}\}_{s' \in S'}).$$
Define  $\langle \underline{D} , \underline{D}'  \rangle\in \mathring{\mathrm{Jac}}({P}\otimes {P}', S\sqcup S')$  by 
$$\langle \underline D,\underline D'\rangle :=\big(D \sqcup_{(d,d',\theta)} D',\varphi_{|\partial D\setminus \varphi^{-1}(\Sigma)}
\sqcup \varphi'_{|\partial D'\setminus \varphi'^{-1}(\Sigma)},\{\mathrm{lin}_l\}_{l \in \pi_0(P)}\sqcup\{\mathrm{lin}_{l'}\}_{l' \in \pi_0(P')}, \{\mathrm{cyc}_s\}_{s \in S}\sqcup\{\mathrm{cyc}_{s'}\}_{s' \in S'}\big)$$
where $ d=\varphi^{-1}(\Sigma)$,  $d'=(\varphi')^{-1}(\Sigma)$ and $\theta : \varphi^{-1}(\Sigma) \to (\varphi')^{-1}(\Sigma)$ is the composition of the bijections  $\varphi^{-1}(\Sigma)\to \Sigma$ and $(\varphi')^{-1}(\Sigma)\to \Sigma$.
\end{definition}

For any oriented Brauer diagrams  $P,P'$, and finite sets $S$, $S'$, $\Sigma$, $\Sigma'$ define the tensor product map
\begin{equation}\label{tensor:map:A:PP'Jacgen} 
\mathring{\mathrm{Jac}}({P},S,\Sigma)\times \mathring{\mathrm{Jac}}({P}',S',\Sigma')\longrightarrow \mathring{\mathrm{Jac}}({P}\otimes {P}', S\sqcup S', \Sigma \sqcup \Sigma')
\end{equation}
taking  the pair $(\underline{D},\underline{D}')\in \mathring{\mathrm{Jac}}({P},S, \Sigma)\times \mathring{\mathrm{Jac}}({P}',S, \Sigma')$ where $\underline D=(D,\varphi ,\{\mathrm{lin}_l\}_{l \in \pi_0(P)}, \{\mathrm{cyc}_s\}_{s \in S})$ and $\underline D'=(D',\varphi' ,\{\mathrm{lin}_{l'}\}_{l' \in \pi_0(P')}, \{\mathrm{cyc}_{s'}\}_{s' \in S'})$ to the Jacobi diagram $\underline{D}\otimes \underline D'\in \mathring{\mathrm{Jac}}({P}\otimes {P}', S\sqcup S', \Sigma \sqcup \Sigma')$ given by 
$$\underline{D}\otimes \underline D' = \big(D \sqcup D',\varphi \sqcup \varphi',\{\mathrm{lin}_l\}_{l \in \pi_0(P)}\sqcup \{\mathrm{lin}_{l'}\}_{l' \in \pi_0(P')}, \{\mathrm{cyc}_{s}\}_{s \in S}\sqcup\{\mathrm{cyc}_{s'}\}_{s' \in S'}\big).$$
The above definition coincides with the definition tensor product map \eqref{tensor:map:A:PP'Jac} when $\Sigma=\Sigma'=\emptyset$.

\begin{definition}\label{spacewithfreelegs} Let $P$ be an oriented Brauer diagram and let $S$ and $\Sigma$ be finite sets. The \emph{space of Jacobi diagrams on} $({P},S, \Sigma)$ is the $\mathbb{C}$-vector space generated by the set $\mathring{\mathrm{Jac}}(P,S,\Sigma)$ of Jacobi diagrams on $(P,S,\Sigma)$ up to the STU, AS, and IHX relations:
$$
\mathring{\mathcal{A}}({P},S,\Sigma):=\frac{{\mathbb{C}}\mathring{\mathrm{Jac}}({P},S,\Sigma)}{\text{(STU, AS, IHX)}}
$$ \index[notation]{A({P},S,\Sigma)@$\mathring{\mathcal{A}}({P},S,\Sigma)$}
where the relations STU, AS, and IHX are as in Definition~\ref{thespaceAPS}. 
\end{definition}

For $P$ an oriented Brauer diagram and $S$, $\Sigma$ finite sets, the formula \eqref{degree-on-APS} defines a degree $\mathrm{deg}$ on $\mathring{\mathcal{A}}({P},S,\Sigma)$.   We denote by $\mathring{\mathcal{A}}^\wedge({P},S,\Sigma)$   \index[notation]{A^\wedge({P},S,\Sigma)@$\mathring{\mathcal{A}}^\wedge({P},S,\Sigma)$} its completion with respect to this degree. The class of $\underline{D}\in \mathring{\mathrm{Jac}}(P,S,\Sigma)$ in $\mathring{\mathcal{A}}({P},S,\Sigma)\subset \mathring{\mathcal{A}}^\wedge({P},S,\Sigma)$ is denoted by $[\underline{D}]$.

Assume that $P=\vec{\varnothing}$ and $S=\emptyset$. Define a map $\mathrm{deg}_{\mathrm{cyc}} : \mathrm{Jac}(\vec{\varnothing},\emptyset,\Sigma) \to \mathbb{Z}_{\geq 0}$ by 
\begin{equation}\label{degcircles}
\mathrm{deg}_{\mathrm{cyc}}(\underline{D}):=\frac{1}{2}\big((\text{number of trivalent vertices of } D) -|\partial D|\big)+|\pi_0(D)|.
\end{equation}
Then $(|\pi_0|, \mathrm{deg}_\mathrm{cyc}, \mathrm{deg})$ defines a tridegree on $\mathbb{C}\mathring{\mathrm{Jac}}(\vec{\varnothing},\emptyset, \Sigma)$. The relations AS and IHX relate diagrams with identical triples 
$(\# \text{univalent vertices},\#  \text{trivalent vertices}, |\pi_0|)$, hence $(|\pi_0|, \mathrm{deg}_\mathrm{cyc}, \mathrm{deg})$ defines a tridegree on $\mathring{\mathcal A}(\vec{\varnothing},\emptyset,\Sigma)$. Then the completion $\mathring{\mathcal A}^\wedge(\vec{\varnothing},\emptyset,\Sigma)$ of  $\mathring{\mathcal A}(\vec{\varnothing},\emptyset,\Sigma)$ with respect to $\mathrm{deg}$ is equipped with the bidegree $(|\pi_0|,\mathrm{deg}_{\mathrm{cyc}})$. For $c,d \in \mathbb{Z}_{\geq 0}$, denote by $\mathring{\mathcal{A}}^{\wedge}(\vec{\varnothing},\emptyset,\Sigma)[c,d]$ the part of $\mathring{\mathcal{A}}^{\wedge}(\vec{\varnothing},\emptyset,\Sigma)$ of bidegree $(c,d)$.

Let  $P$ be an oriented Brauer diagram and  $S$, $\Sigma$, $\Sigma'$  be finite sets. One checks that any map $f:\Sigma \to \Sigma'$ induces  maps $f_*:\mathring{\mathrm{Jac}}(P,S,\Sigma) \to \mathring{\mathrm{Jac}}(P,S,\Sigma')$ and $\mathring{\mathcal A}(P,S,\Sigma)\to \mathring{\mathcal A}(P,S,\Sigma')$. Moreover, if $\Sigma''$ is other finite set and $g:\Sigma' \to \Sigma'' $ is a map,  one has $(g \circ f)_*=g_* \circ f_*$.

The pairing \eqref{thepairing} induces a well-defined continuous bilinear map
\begin{equation}\label{thepairinginA}
\langle \ ,\  \rangle : \mathring{\mathcal{A}}^\wedge({P},S,\Sigma)\times \mathring{\mathcal{A}}^\wedge({P}',S',\Sigma) \longrightarrow \mathring{\mathcal{A}}^\wedge({P}\otimes{P}', S\sqcup S').
\end{equation}

\begin{remark} The map \eqref{thepairinginA} is a particular case of the map introduced in~\cite[Definition 2.9]{BNGRTI}.
\end{remark}

The tensor map \eqref{tensor:map:A:PP'Jacgen} induces a well-defined  continuous bilinear map
\begin{equation}\label{tensor:map:A:PP'gen}
\otimes_{\mathcal A} : \mathring{\mathcal{A}}^\wedge({P}, S,\Sigma)\times\mathring{\mathcal{A}}^\wedge({P}', S', \Sigma')\longrightarrow\mathring{\mathcal{A}}^\wedge({P}\otimes {P}', S\sqcup S', \Sigma\sqcup\Sigma').
\end{equation}
Similarly, for any composable pair $(P,P')$ of oriented Brauer diagrams, one  defines a composition map 
\begin{equation}\label{COMP:MAP:SIGMA}
\circ_{\mathcal A} : \mathring{\mathcal{A}}^\wedge({P}, S,\Sigma)\times\mathring{\mathcal{A}}^\wedge({P}', S', \Sigma')\longrightarrow\mathring{\mathcal{A}}^\wedge({P'}\circ {P}, S\sqcup S'\sqcup\mathrm{Cir}(P',P), \Sigma\sqcup\Sigma').
\end{equation}
The above map coincides with the map \eqref{map:A:PP'2} when $\Sigma=\Sigma'=\emptyset$.

\section{Construction of two families of invariants \texorpdfstring{$\{\Omega^{\mathfrak{c}}_n\}_{n\geq 1}$}{O'n} and \texorpdfstring{$\{\Omega^{\mathfrak{d}}_n\}_{n\geq 1}$}{On} of 3-manifolds}\label{ch4}

In \S\ref{sec:4-8},  we  constructed a semi-Kirby structure $(\mathfrak{s}(\mathbf{A}),\mu)$ where the algebra $\mathfrak{s}(\mathbf{A})$ is a quotient of $\bigoplus_{k \geq 0}\mathring{\mathcal A}^\wedge_k$, where $\mathring{\mathcal A}^\wedge_k=\mathring{\mathcal A}^\wedge(\vec{\varnothing},[\![1,k]\!])_{\mathfrak{S}_k}$. In~\S\ref{sec:3:1}, we review the definition of an algebra automorphism $\mathrm{cs}^\nu$ of $\bigoplus_{k \geq 0}\mathring{\mathcal A}^{\wedge}_k$. In~\S\ref{sec:3:2}, we construct an algebra $\mathfrak{a}$ and an algebra isomorphism  $\overline{\mathrm{cs}}^\nu: \mathfrak{s}(\mathbf{A})\to \mathfrak{a}$. Fix an integer $n>0$. In~\S\ref{sec:3:3}, we construct an algebra $\mathfrak{b}(n)$ and an algebra morphism $\varphi_n : \mathfrak{a} \to \mathfrak{b}(n)$. In \S\ref{sec:tracesandLMOtrace} we define a \emph{space of diagrammatic traces} and the \emph{LMO diagrammatic trace}.  In~\S\ref{sec:3:4}   we construct  an algebra $\mathfrak{c}(n)$ and use the LMO diagrammatic trace to define an algebra morphism $\overline{j}_n:\mathfrak{b}(n)\to \mathfrak{c}(n)$. In~\S\ref{sec:3:5-2022-04-18}, we define algebras $\mathfrak{d}(n)$ and algebra morphisms $\mathrm{pr}_n:\mathfrak{c}(n)\to \mathfrak{d}(n)$ which give rise to the push-forward semi-Kirby structures  $(\mathfrak{b}(n), \varphi_n \circ \overline{\mathrm{cs}}^{\nu}\circ {Z}_{{\mathcal A}})$, 
$(\mathfrak{c}(n),\overline{j}_n \circ \varphi_n \circ \overline{\mathrm{cs}}^{\nu}\circ {Z}_{{\mathcal A}})$ and $(\mathfrak{d}(n),\mathrm{pr}_n \circ \overline{j}_n \circ \varphi_n \circ \overline{\mathrm{cs}}^{\nu}\circ {Z}_{{\mathcal A}})$. We then prove that the semi-Kirby structures $(\mathfrak{c}(n),\overline{j}_n \circ \varphi_n \circ \overline{\mathrm{cs}}^{\nu}\circ {Z}_{{\mathcal A}})$ and $(\mathfrak{d}(n),\mathrm{pr}_n \circ \overline{j}_n \circ \varphi_n \circ \overline{\mathrm{cs}}^{\nu}\circ {Z}_{{\mathcal A}})$  are Kirby structures for any $n\geq 1$ (Theorem~\ref{mainr-2022-02-14} and Proposition~\ref{r2022-04-12-dn}); moreover in \S\ref{sec:5:7}, we show the equality   $\mathfrak{c}(1)=\mathfrak{d}(1)$\footnote{In \cite{LMMO99} it is shown, in our terminology, that $(\mathfrak{b}(1), \varphi_1 \circ \overline{\mathrm{cs}}^\nu\circ Z_{\mathcal A})$ is a Kirby structure; it seems to follow from \cite{LMO98} that there is an isomorphism of Kirby structures  $\mathfrak{b}(1)\simeq \mathfrak{e}(1)$, where $\mathfrak{e}(1)$ is as in Definition~\ref{def:definitionofen}, which given Lemma~\ref{r2022-04-12-ohtlemma}, implies the isomorphism  $\mathfrak{b}(1)\simeq\mathfrak{d}(1)$.}.

The relations between the involved algebras can be descried by the following overall diagram of algebra morphisms, where $(\mathrm{KII}^s \mathring{\mathcal{A}}^{\wedge})$, $P^{n+1}$ and $L^{<2n}$  are suitable ideals of $\bigoplus_{k \geq 0}\mathring{\mathcal{A}}^{\wedge}_k$, and $P^{n+1}_0$, $O^n$ and $(\mathrm{deg}>n)$ ideals of $\mathring{\mathcal{A}}^{\wedge}_0$; and  $\mathrm{cs}^\nu$ and $j_n$ are suitable algebra morphisms.

$$
\xymatrixcolsep{0.95pc}\xymatrix{
\mathbb{Z}\underline{\vec{\mathcal T}}(\emptyset,\emptyset)\ar^-{{Z}_{{\mathcal A}}}[r]\ar[d]& \bigoplus_{k\geq0} \mathring{\mathcal{A}}^\wedge_k \ar^-{\mathrm{cs}^\nu}[r]\ar[d] & \bigoplus_{k\geq0} \mathring{\mathcal{A}}^\wedge_k\ar^-{\mathrm{Id}}[r] \ar[d]& \bigoplus_{k\geq0} \mathring{\mathcal{A}}^\wedge_k\ar^-{j_n}[r] \ar[d]& \mathring{\mathcal{A}}^\wedge_0\ar[d]\ar^-{\mathrm{Id}}[r] & \mathring{\mathcal{A}}^\wedge_0\ar[d]\\
\frac{\mathbb{Z}\underline{\vec{\mathcal T}}(\emptyset,\emptyset)}{(\mathrm{KII}\vec{\mathcal T}\cup\mathrm{CO}\vec{\mathcal T})}\ar^-{{Z}_{{\mathcal A}}}[r]\ar@{=}[d] & \frac{\bigoplus_{k\geq 0} \mathring{\mathcal{A}}^\wedge_k}{(\mathrm{KII}\mathring{\mathcal{A}}^{\wedge})+(\mathrm{CO}\mathring{\mathcal{A}}^{\wedge})}\ar^-{\overline{\mathrm{cs}}^\nu}[r] \ar@{=}[d]& \frac{\bigoplus_{k\geq 0} \mathring{\mathcal{A}}^\wedge_k}{(\mathrm{KII}^s\mathring{\mathcal{A}}^{\wedge})+(\mathrm{CO}\mathring{\mathcal{A}}^{\wedge})} \ar^-{\varphi_n}[r]\ar@{=}[d] & \frac{\bigoplus_{k\geq 0} \mathring{\mathcal{A}}_k}{P^{n+1}+ L^{<2n}+ (\mathrm{CO}\mathring{\mathcal{A}}^{\wedge})}  \ar^-{\overline{j}_n}[r]\ar@{=}[d] & \frac{\mathring{\mathcal{A}}^\wedge_0}{P^{n+1}_0+ O^n}\ar@{=}[d]\ar^-{\mathrm{pr}_n}[r]  & \frac{\mathring{\mathcal{A}}^\wedge_0}{P^{n+1}_0 + O^n +(\mathrm{deg}>n)} \ar@{=}[d]\\
 \mathfrak{Kir} \ar^-{{Z}_{{\mathcal A}}}[r]& \mathfrak{s}(\mathbf{A})\ar^-{\overline{\mathrm{cs}}^\nu}[r] &  \mathfrak{a} \ar^-{\varphi_n}[r]& \mathfrak{b}(n)\ar^-{\overline{j}_n}[r] &  \mathfrak{c}(n)\ar^-{\mathrm{pr}_n}[r] & \mathfrak{d}(n).
} 
$$

In~\S\ref{sec:3:6-2022-04-18} we derive from the Kirby structures $(\mathfrak{c}(n),\overline{j}_n \circ \varphi_n \circ \overline{\mathrm{cs}}^{\nu}\circ {Z}_{{\mathcal A}})$  and $(\mathfrak{d}(n),\mathrm{pr}_n \circ \overline{j}_n \circ \varphi_n \circ \overline{\mathrm{cs}}^{\nu}\circ {Z}_{{\mathcal A}})$  the $3$-manifold invariants ${\Omega}^{\mathfrak{c}}_n$ and ${\Omega}^{\mathfrak{d}}_n$ with values in $\mathfrak{c}(n)$ and $\mathfrak{d}(n)$ and related by $\Omega^{\mathfrak{d}}_n(Y) = \mathrm{pr}_n(\Omega^{\mathfrak{c}}_n(Y))$ for any 3-manifold $Y$  (Theorem~\ref{r2022-04-12theinvariants}). In \S\ref{sec::equalityomega1comega1d} we deduce the equality ${\Omega}^{\mathfrak{c}}_1 = {\Omega}^{\mathfrak{d}}_1$. The construction of $\Omega^{\mathfrak{d}}_n$ was first obtained in \cite{LMO98}. In~\S\ref{sec:3:7}, we show the universality of the morphism $\overline{j}_n:\mathfrak{b}(n)\to \mathfrak{c}(n)$ in the class of surjective graded algebra morphisms $\mathfrak{b}(n) \to A$ giving rise to Kirby structures.

\subsection{The automorphism \texorpdfstring{$\mathrm{cs}^{\nu}$}{cs-nu} of the algebra \texorpdfstring{$\bigoplus_{k \geq 0}\mathring{\mathcal A}^{\wedge}_k$}{A}}\label{sec:3:1}

Recall that $\bigoplus_{k \geq 0}\mathring{\mathcal A}^{\wedge}_k$ is a commutative algebra equipped with the product induced by the  tensor product (Lemma~\ref{lemma:1207}). One checks that this product  coincides with the one induced by composition. In this subsection, we associate to each $\alpha\in\mathring{\mathcal{A}}^{\wedge}(\downarrow,\emptyset)$ an endomorphism $\mathrm{cs}^{\alpha}$ of this algebra, which is an automorphism if $\alpha$ is invertible.

Let us start by recalling some notations.  Let $k \in\mathbb{Z}_{\geq 0}$. Let $P \in \vec{\mathcal Br}$ be an oriented Brauer diagram, from Definition~\ref{def:actionPermSonXS},     $\mathcal X(P)_k$ denotes the coinvariant space $\mathcal X(P,[\![1,k]\!])_{\mathfrak S_k}$. From Notation~\ref{notationarrows},  $\downarrow^k:=\downarrow\cdots\downarrow =\mathrm{Id}_{+^{\otimes k}} \in \underline{\vec{\mathcal Br}}(+^{\otimes k},+^{\otimes k})$ and $\uparrow^k:=\uparrow\cdots\uparrow = \mathrm{Id}_{-^{\otimes k}} \in \underline{\vec{\mathcal Br}}(-^{\otimes k},-^{\otimes k})$. 

Define the oriented Brauer diagrams $\mathrm{cup}^k \in \underline{\vec{\mathcal Br}}(\emptyset,-^{\otimes k} \otimes +^{\otimes k})$  \index[notation]{cup^k@$\mathrm{cup}^k$}  and  $\mathrm{cap}^k \in \underline{\vec{\mathcal Br}}(-^{\otimes k} \otimes +^{\otimes k},\emptyset)$ \index[notation]{cap^k@$\mathrm{cap}^k$} by 
$$\mathrm{cup}^k:=((0,2k,\sigma_{\mathrm{cup}^k}), B:=[\![k+1,2k]\!]) \quad \text{and} \quad \mathrm{cap}^k:=((2k,0,\sigma_{\mathrm{cap}^k}), B':=[\![1,k]\!])$$
where $\sigma_{\mathrm{cup}^k}$ and $\sigma_{\mathrm{cap}^k}$ are the $\mathrm{FPFI}$ of $[\![1,2k]\!]$ defined by $\sigma_{\mathrm{cup}^k}(i)=\sigma_{\mathrm{cap}^k}(i) = 2k-i+1$ for every $i\in [\![1,k]\!]$. One easily checks that $|\mathrm{Cir}(\mathrm{cup}^k,\mathrm{cap}^k)|=k$.

Denote by  $\mathrm{cup}^k_{\mathcal A}$ \index[notation]{cup^k_{\mathcal A}@$\mathrm{cup}^k_{\mathcal A}$}  (resp. $\mathrm{cap}^k_{\mathcal A}$) \index[notation]{cap^k_{\mathcal A}@$\mathrm{cap}^k_{\mathcal A}$} the element  in $\mathring{\mathcal A}^{\wedge}(\mathrm{cup}^k,\emptyset)$ (resp. $\mathring{\mathcal A}^{\wedge}(\mathrm{cap}^k,\emptyset)$) corresponding to  $\mathrm{cup}^k $ (resp. $\mathrm{cap}^k)$, see Definition~\ref{def:emptyJacdiagram}.  Let  $\mathrm{Id}^{\mathcal A}_{(-)^{\otimes k}} \in \mathring{\mathcal A}^{\wedge}(\uparrow^k, \emptyset)$ as in Lemma~\ref{r:lemmaforLMO1}$(b)$.

\begin{definition}\label{def:closuremap} Define the linear continous map 
$$\mathrm{clos}_k : \mathring{\mathcal A}^{\wedge}(\downarrow^k \otimes P,\emptyset)\longrightarrow  \mathring{\mathcal A}^{\wedge}(P,\mathrm{Cir}(\mathrm{cup}^k,\mathrm{cap}^k))$$ \index[notation]{clos_k@$\mathrm{clos}_k$}
 by 
$\mathrm{clos}_k(x) := \mathrm{cap}^k_{\mathcal A} \circ (\mathrm{Id}^{\mathcal A}_{-^{\otimes k}} \otimes x) \circ \mathrm{cup}^k_{\mathcal A}$  for any $x\in  \mathring{\mathcal A}^{\wedge}(\downarrow^k \otimes P,\emptyset)$.
\end{definition}

\begin{lemma}\label{themapcsalpha}   Let $k \in\mathbb{Z}_{\geq 0}$ and let $P \in \vec{\mathcal Br}$ be an oriented Brauer diagram.
\begin{itemize}
\item[$(a)$] For any $\alpha \in \mathring{\mathcal A}^{\wedge}(\downarrow,\emptyset)$ there exists a  unique linear continuous endomorphism $$\tilde{\mathrm{cs}}^{\alpha}_{P,k} : \mathring{\mathcal A}^{\wedge}(P,\mathrm{Cir}(\mathrm{cup}^k,\mathrm{cap}^k)) \longrightarrow \mathring{\mathcal A}^{\wedge}(P,\mathrm{Cir}(\mathrm{cup}^k,\mathrm{cap}^k))$$  \index[notation]{cs^{\alpha}_{P,k}tilde@$\tilde{\mathrm{cs}}^{\alpha}_{P,k}$}
such that the diagram
\begin{equation*}
\xymatrix{\mathring{\mathcal{A}}^\wedge(\downarrow^k \otimes P, \emptyset )\ar_{ (\alpha^{\otimes k}\circ \bullet) \otimes \mathrm{Id}_{\mathrm{t}(P)}}[d]
\ar^{\mathrm{clos}_k}[rrr] & &  & \mathring{\mathcal{A}}^\wedge(P, \mathrm{Cir}(\mathrm{cup}^k,\mathrm{cap}^k)) \ar^{\tilde{\mathrm{cs}}^{\alpha}_{P,k}}[d]\\
\mathring{\mathcal{A}}^\wedge(\downarrow^k \otimes P,\emptyset)\ar^{\mathrm{clos}_k}[rrr]  & & &\mathring{\mathcal{A}}^\wedge(P, \mathrm{Cir}(\mathrm{cup}^k,\mathrm{cap}^k))}
\end{equation*}
is commutative (where $\mathrm{t}(P)$ denotes the target of $P$ in the category $\underline{\vec{\mathcal Br}}$).

\item[$(b)$] Let $\alpha \in \mathring{\mathcal A}^{\wedge}(\downarrow, \emptyset)$. For $S$ a finite set, the endomorphism of $\mathring{\mathcal{A}}^\wedge(P, S)$ defined by $\mathring{\mathcal A}^\wedge(P,\phi) \circ  \tilde{\mathrm{cs}}^{\alpha}_{P,|S|} \circ \mathring{\mathcal A}^\wedge(P,\phi)^{-1}$, is independent of the choice of a bijection $\phi  : \mathrm{Cir}(\mathrm{cup}^{|S|},\mathrm{cap}^{|S|}) \to S$, where $\mathring{\mathcal A}^\wedge(P,\bullet)$ is the functor from Section~\ref{sec:4-1}. This endomorphism will be denoted by $\tilde{\mathrm{cs}}^{\alpha}_{P,S}$. When $S=[\![1,k]\!]$, it induces a linear endomorphism 
\begin{equation} \mathrm{cs}^{\alpha}_{P,k}: \mathring{\mathcal A}^{\wedge}(P)_k \longrightarrow \mathring{\mathcal A}^{\wedge}(P)_k
\end{equation}   \index[notation]{cs^{\alpha}_{P,k}@$\mathrm{cs}^{\alpha}_{P,k}$}
called \emph{$\alpha$-connected sum map}.

\item[$(c)$]   Let $\mathrm{cs}^{\alpha}$   \index[notation]{cs^{\alpha}@$\mathrm{cs}^{\alpha}$} be the endomorphism of $\bigoplus_{k \geq 0}\mathring{\mathcal A}_k^{\wedge}$ given by $\bigoplus_{k\geq 0}\mathrm{cs}^{\alpha}_{\vec{\varnothing},k}$ then  $\mathrm{cs}^{\alpha}$ is an algebra endomorphism.  

\item[$(d)$]  If $\alpha,\beta \in \mathring{\mathcal A}^{\wedge}(\downarrow,\emptyset)$ then $\mathrm{cs}^{\beta \circ \alpha} = \mathrm{cs}^{\beta} \circ \mathrm{cs}^{\alpha}$.

\item[$(e)$] Moreover, if $\alpha \in \mathring{\mathcal A}^{\wedge}(\downarrow,\emptyset)$ is invertible, then  $\mathrm{cs}^{\alpha}$ is an algebra automorphism.

\end{itemize}
\end{lemma}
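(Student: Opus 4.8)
\textbf{Proof plan for Lemma~\ref{themapcsalpha}.}

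The plan is to build the map $\mathrm{cs}^\alpha$ level by level, starting from the closure map $\mathrm{clos}_k$ of Definition~\ref{def:closuremap}, and then to check the algebraic properties by reducing everything to identities in the category $\underline{\vec{\mathcal Br}}$ together with the compatibilities (LMO1) of the pre-LMO structure $\mathring{\mathbf{A}}$ (available by Theorem~\ref{r:thmsummarizingLMO123} / Theorem~\ref{thm:prelmokont}). For part~$(a)$, the first thing I would do is analyze the map $\mathrm{clos}_k : \mathring{\mathcal A}^\wedge(\downarrow^k\otimes P,\emptyset)\to \mathring{\mathcal A}^\wedge(P,\mathrm{Cir}(\mathrm{cup}^k,\mathrm{cap}^k))$: I claim it is surjective. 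Indeed, composing with the ``re-opening'' map $x\mapsto (\mathrm{Id}^{\mathcal A}_{-^{\otimes k}}\otimes x')\circ(\text{cup/cap configuration})$ — more precisely using the zig-zag (snake) identities for $\mathrm{cup}^k,\mathrm{cap}^k$ in $\underline{\vec{\mathcal Br}}$ lifted through (LMO1)$(a)$,$(b)$,$(c)$ — produces a continuous linear section of $\mathrm{clos}_k$. Given surjectivity, the endomorphism $\tilde{\mathrm{cs}}^\alpha_{P,k}$ making the square commute is \emph{unique} if it exists; for existence one must check that $(\alpha^{\otimes k}\circ\bullet)\otimes\mathrm{Id}_{\mathrm t(P)}$ preserves $\ker(\mathrm{clos}_k)$. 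This is the technical heart of $(a)$: one shows that if $\mathrm{clos}_k(x)=0$ then $\mathrm{clos}_k\big((\alpha^{\otimes k}\circ x)\otimes\mathrm{Id}_{\mathrm t(P)}\big)=0$, by expressing the difference before and after insertion of $\alpha$ as the closure of an element of the form $(\text{something})\circ x$ and invoking functoriality of composition. I expect this kernel-preservation verification, done purely with the axioms (LMO1)$(a)$--$(d)$ and the snake identities for $\mathrm{cup}^k/\mathrm{cap}^k$, to be the main obstacle — everything else is bookkeeping.

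For part~$(b)$, the independence of the conjugate $\mathring{\mathcal A}^\wedge(P,\phi)\circ\tilde{\mathrm{cs}}^\alpha_{P,|S|}\circ\mathring{\mathcal A}^\wedge(P,\phi)^{-1}$ from the chosen bijection $\phi$ follows from naturality of the functor $\mathring{\mathcal A}^\wedge(P,\bullet)$ together with the fact that $\tilde{\mathrm{cs}}^\alpha_{P,k}$ is $\mathfrak S_k$-equivariant: the symmetric group acts on $\mathrm{Cir}(\mathrm{cup}^k,\mathrm{cap}^k)$ by permuting the $k$ closed strands, and since $\mathrm{clos}_k$ intertwines the obvious $\mathfrak S_k$-action on $\downarrow^k\otimes P$-diagrams (permuting the first $k$ downward strands) with the $\mathfrak S_k$-action on the circle set, while $(\alpha^{\otimes k}\circ\bullet)$ commutes with permuting the $k$ strands, the commuting square in $(a)$ is $\mathfrak S_k$-equivariant, hence $\tilde{\mathrm{cs}}^\alpha_{P,k}$ descends to $\mathrm{cs}^\alpha_{P,k}$ on coinvariants $\mathring{\mathcal A}^\wedge(P)_k$. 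Two bijections $\phi,\phi'$ differ by an element of $\mathfrak S_{|S|}$, and the equivariance gives the claimed independence.

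For parts~$(c)$--$(e)$: that $\mathrm{cs}^\alpha=\bigoplus_k\mathrm{cs}^\alpha_{\vec\varnothing,k}$ is an algebra endomorphism of $\bigoplus_{k\ge 0}\mathring{\mathcal A}^\wedge_k$ amounts to compatibility with the product, which (Lemma~\ref{lemma:1207}) is induced by $\otimes_{\mathcal A}$ (equivalently $\circ_{\mathcal A}$); one checks $\mathrm{clos}_{k+l}(x\otimes y)$ relates to $\mathrm{clos}_k(x)\otimes_{\mathcal A}\mathrm{clos}_l(y)$ via (LMO1)$(b)$,$(d)$ and $\mathrm{Cir}(\mathrm{cup}^{k+l},\mathrm{cap}^{k+l})\simeq\mathrm{Cir}(\mathrm{cup}^k,\mathrm{cap}^k)\sqcup\mathrm{Cir}(\mathrm{cup}^l,\mathrm{cap}^l)$, and that inserting $\alpha^{\otimes(k+l)}=\alpha^{\otimes k}\otimes\alpha^{\otimes l}$ respects this splitting; also $\mathrm{cs}^\alpha$ preserves the unit $\emptyset_{(\vec\varnothing,\emptyset)}$ trivially. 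For $(d)$, $\mathrm{cs}^{\beta\circ\alpha}=\mathrm{cs}^\beta\circ\mathrm{cs}^\alpha$ follows from $(\beta\circ\alpha)^{\otimes k}\circ x=\beta^{\otimes k}\circ(\alpha^{\otimes k}\circ x)$ and associativity of $\circ_{\mathcal A}$ (Lemma~\ref{r:tenscompcoinvprop}$(c)$), combined with the uniqueness statement from $(a)$ applied to the composite. Finally $(e)$ is immediate: if $\alpha$ is invertible with inverse $\alpha^{-1}$, then by $(d)$ and the fact that $\mathrm{cs}^{\mathrm{Id}^{\mathcal A}_-}=\mathrm{Id}$ (which again follows from uniqueness in $(a)$, since the identity endomorphism makes the square commute when $\alpha=\mathrm{Id}^{\mathcal A}_-$), we get $\mathrm{cs}^\alpha\circ\mathrm{cs}^{\alpha^{-1}}=\mathrm{cs}^{\alpha^{-1}}\circ\mathrm{cs}^\alpha=\mathrm{cs}^{\mathrm{Id}}=\mathrm{Id}$, so $\mathrm{cs}^\alpha$ is an automorphism.
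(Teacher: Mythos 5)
Your overall architecture for part $(a)$ (uniqueness from surjectivity of $\mathrm{clos}_k$, existence from preservation of $\ker(\mathrm{clos}_k)$, then descent to coinvariants by $\mathfrak S_k$-equivariance) is the right shape, but as written part $(a)$ has two problems, one fixable and one a genuine hole. First, the claimed continuous section of $\mathrm{clos}_k$ "via the snake identities lifted through (LMO1)" cannot be produced that way: in the pre-LMO structure the circle-label set only grows under $\mathrm{Comp}^{\mathcal A}$ and $\mathrm{Tens}^{\mathcal A}$, so no composition of an element of $\mathring{\mathcal A}^{\wedge}(P,\mathrm{Cir}(\mathrm{cup}^k,\mathrm{cap}^k))$ with cups and caps ever lands in $\mathring{\mathcal A}^{\wedge}(\downarrow^k\otimes P,\emptyset)$ — once a component is closed it is an element of the set of circles, not a strand that categorical composition can re-open. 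Surjectivity of $\mathrm{clos}_k$ is nevertheless true, but by the direct diagrammatic argument (cut each circle at a point, i.e. choose a linear order refining each cyclic order), so uniqueness survives. Second, and more seriously, the existence step — that $(\alpha^{\otimes k}\circ\bullet)\otimes\mathrm{Id}_{\mathrm t(P)}$ preserves $\ker(\mathrm{clos}_k)$ — is exactly the mathematical content of $(a)$, and you leave it at a one-line gesture after acknowledging it is "the main obstacle". What is actually needed is (i) an identification of $\ker(\mathrm{clos}_k)$ as topologically spanned by the link relations (a leg sliding past the closure point of one of the $k$ strands), and (ii) the STU-based centrality of elements of $\mathring{\mathcal A}^{\wedge}(\downarrow,\emptyset)$ on a strand (Lemma~\ref{commut-prop}), so that the inserted copy of $\alpha$ can be slid past the moving leg; neither is proved nor invoked in your sketch. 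This sliding/insertion well-definedness is precisely the "well-known result" that the paper's own proof delegates to the literature (\cite[Thm.~16.20]{JacksonMoffat}); since you neither prove it nor cite it, the heart of $(a)$ is missing.

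Parts $(b)$–$(e)$ of your plan do match the paper's argument: naturality of $S\mapsto\tilde{\mathrm{cs}}^{\alpha}_{P,S}$ and $\mathfrak S_k$-equivariance give the independence of $\phi$ and the passage to coinvariants; $(c)$ reduces to compatibility of closure and $\alpha$-insertion with $\otimes_{\mathcal A}$ (note that $\mathrm{cup}^{k+l}$ is not literally $\mathrm{cup}^k\otimes\mathrm{cup}^l$, so the identification of the two closures deserves a sentence, as in the paper's "more general claim"); $(d)$ follows from multiplicativity of $\alpha\mapsto(\alpha^{\otimes k}\circ\bullet)$ together with the uniqueness in $(a)$, and $(e)$ from $(d)$. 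One small slip: the unit for $\circ$ in $\mathring{\mathcal A}^{\wedge}(\downarrow,\emptyset)$ is $\mathrm{Id}^{\mathcal A}_{+}$ (the empty diagram on $\downarrow$), not $\mathrm{Id}^{\mathcal A}_{-}$. Once $(a)$ is repaired — either by citing the insertion result or by proving the kernel description and the centrality lemma — the remainder of your proposal goes through.
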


\begin{proof} $(a)$ is a formulation of well-known results, see for instance~\cite[Thm. 16.20]{JacksonMoffat}. These results imply that for fixed $P \in \vec{\mathcal{B}r}$ and $\alpha \in \mathring{\mathcal A}^{\wedge}(\downarrow, \emptyset)$, the map $\tilde {\mathrm{cs}}^{\alpha}_{P,k}$ is equivariant under the action of $\mathfrak{S}_{\mathrm{Cir}(\mathrm{cup}^k, \mathrm{cap}^k)}$, which implies the independence stated in $(b)$. The association $S\mapsto \tilde{\mathrm{cs}}^{\alpha}_{P,S}$ for any finite set $S$, is then a natural equivalence of the functor $S\mapsto \mathring{\mathcal A}^{\wedge}(P,S)$ with itself, which implies that for $S=[\![1,k]\!]$ the map $\tilde{\mathrm{cs}}^{\alpha}_{P,[\![1,k]\!]}$ is $\mathfrak{S}_k$-equivariant. The endomorphism $\tilde{\mathrm{cs}}^{\alpha}_{P,k}$ is then defined by applying the functor of coinvariants, this finishes the proof of $(b)$. Statement  $(c)$ is a consequence of the following more general claim: let $P$ and $P'$ be oriented Brauer diagrams,  if $x \in \mathring{\mathcal A}^\wedge(P)_k$ and $x'\in \mathring{\mathcal A}^\wedge(P')_k$ then $\mathrm{cs}^{\alpha}_{P \otimes P',k+k'}(x\otimes x')=\mathrm{cs}^{\alpha}_{P,k}(x) \otimes \mathrm{cs}^{\alpha}_{P',k'}(x')$ which can be shown from the definitions of the maps and the tensor product. $(d)$ follows from the fact that the map taking $\alpha$ to the endomorphism of $\mathring{\mathcal{A}}(\downarrow^k \otimes P,\emptyset)$ in the commutative diagram in $(a)$ is an algebra morphism. $(e)$ follows from $(d)$.

\end{proof}

\begin{remark} One can prove that the ring $( \mathring{\mathcal A}^{\wedge}(\downarrow,\emptyset), \circ)$ is commutative, see for instance~\cite[Proposition~6.1]{Ohts}.
\end{remark}

 Intuitively, the operation $\mathrm{cs}^{\alpha}_{P,k}$ corresponds to inserting the element $\alpha$ (or $z(\alpha)$) to each circle component of the schematic representation of  $\underline{D}$, see Figure~\ref{figura_alpha_sum} for a schematic representation.

\begin{figure}[ht!]
										\centering
                        \includegraphics[scale=0.8]{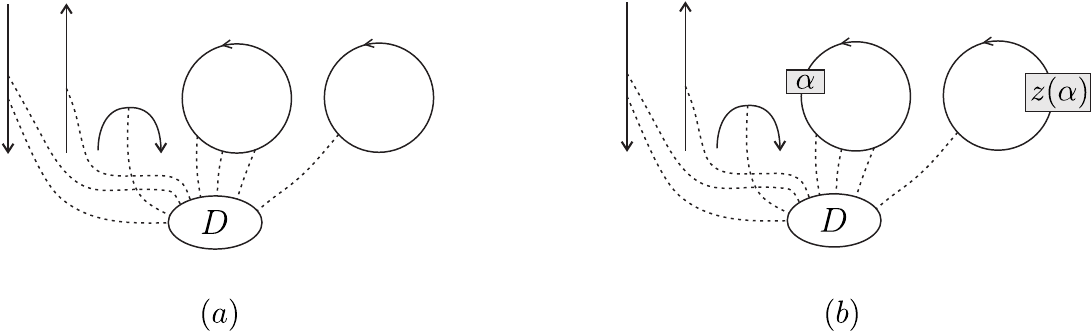}
												\caption{$(a)$ schematic representation of a Jacobi diagram $\underline D$ on $(P,[\![1,2]\!])$ and $(b)$ its image under the map $\mathrm{cs}^{\alpha}_{P,2}$.}
\label{figura_alpha_sum} 										
\end{figure}

We specialize the map in Lemma~\ref{themapcsalpha}$(c)$ to the invertible element $\nu$ defined in subsection~\ref{sec:4-8}, to obtain an algebra automorphism:
\begin{equation}\label{cs-nu-total}
\mathrm{cs}^{\nu}:\bigoplus_{k\geq 0}\mathring{\mathcal{A}}^\wedge_k\longrightarrow \bigoplus_{k\geq 0}\mathring{\mathcal{A}}^\wedge_k.
\end{equation}
  \index[notation]{cs^{\nu}@$\mathrm{cs}^{\nu}$}

\begin{remark}  In~\cite{LMO98}, a normalization $\check{Z}$   \index[notation]{Z@$\check{Z}$} of the Kontsevich integral ${Z}_{{\mathcal A}}$ restricted to links is defined as the composed map $\mathrm{cs}^{\nu} \circ {Z}_{{\mathcal A}}$. 
\end{remark}

\subsection{The algebras \texorpdfstring{$\mathfrak{s}(\mathbf{A})$}{SA}, \texorpdfstring{$\mathfrak{a}$}{m} and the  algebra isomorphism \texorpdfstring{$\overline{\mathrm{cs}}^{\nu}: \mathfrak{s}(\mathbf{A})\to \mathfrak{a}$}{cs-nu-m-to-sA}}\label{sec:3:2}

Let $\mathbf{A}$ be the pre-LMO structure from Theorem~\ref{thm:prelmokont}. By Proposition~\ref{l:LMOinvt}, this pre-LMO structure gives rise to a semi-Kirby structure $(\mathfrak{s}(\mathbf{A}),\mu)$. In \S\ref{sec:3:2:1} we recall the definition of the algebra $\mathfrak{s}(\mathbf{A})$   as the quotient of $\bigoplus_{k \geq 0}\mathring{\mathcal{A}}^\wedge_k$ by the sum of the ideals  $(\mathrm{KII}\mathring{\mathcal{A}}^\wedge)$ and~$(\mathrm{CO}\mathring{\mathcal{A}}^\wedge)$. In \S\ref{sec:3:2:2} we define the ideal $(\mathrm{KII}^s\mathring{\mathcal{A}}^\wedge)$ of $\bigoplus_{k \geq 0}\mathring{\mathcal{A}}^\wedge_k$ and the algebra $\mathfrak{a}$  as the quotient  of~$\bigoplus_{k \geq 0}\mathring{\mathcal{A}}^\wedge_k$ by the sum of the ideals  $(\mathrm{KII}^s\mathring{\mathcal{A}}^\wedge)$ and $(\mathrm{CO}\mathring{\mathcal{A}}^\wedge)$. In \S\ref{sec:3:2:3} and \S\ref{sec:3:2:4} we show that $\mathrm{cs}^\nu((\mathrm{KII}\mathring{\mathcal{A}}^\wedge))=(\mathrm{KII}^s\mathring{\mathcal{A}}^\wedge)$  and $\mathrm{cs}^\nu((\mathrm{CO}\mathring{\mathcal{A}}^\wedge))=(\mathrm{CO}\mathring{\mathcal{A}}^\wedge)$, respectively. This enables us to construct the isomorphism of algebras $\overline{\mathrm{cs}}^{\nu}: \mathrm{s}(\mathbf{A}) \to \mathfrak{a}$ in \S\ref{sec:3:2:4}.

\subsubsection{The algebra $\mathfrak{s}(\mathbf{A})$}\label{sec:3:2:1}

By specializing Definition~\ref{def:spaceKIIabs} to the pre-LMO structure $\mathbf{A}$, we have  $(\mathrm{KII}\mathring{\mathcal A}^{\wedge}) =\bigoplus_{k\geq 2}(\mathrm{KII}\mathring{\mathcal A}^{\wedge}_k)$   \index[notation]{KII\mathring{\mathcal A}^{\wedge}@$(\mathrm{KII}\mathring{\mathcal A}^{\wedge})$} where $(\mathrm{KII}\mathring{\mathcal A}^{\wedge}_k) \subset \mathring{\mathcal A}^{\wedge}_k$ is given by
\begin{equation}\label{spaceKIIforA}
(\mathrm{KII}\mathring{\mathcal A}^{\wedge}_k) =\mathrm{Im}\Big(\mathrm{KIIMap}_{{\mathring{\mathcal A}}^{\wedge},k}^2 - \mathrm{KIIMap}_{{\mathring{\mathcal  A}}^{\wedge},k}^1\Big)
\end{equation} \index[notation]{KII\mathring{\mathcal A}^{\wedge}_k@$(\mathrm{KII}\mathring{\mathcal A}^{\wedge}_k)$}
where the maps $\mathrm{KIIMap}_{{\mathring{\mathcal A}}^{\wedge},k}^1$ \index[notation]{KIIMap_{{\mathring{\mathcal A}}^{\wedge},k}^1@$\mathrm{KIIMap}_{{\mathring{\mathcal A}}^{\wedge},k}^1$} and $\mathrm{KIIMap}_{{\mathring{\mathcal A}}^{\wedge},k}^2$ \index[notation]{KIIMap_{{\mathring{\mathcal A}}^{\wedge},k}^2@$\mathrm{KIIMap}_{{\mathring{\mathcal A}}^{\wedge},k}^2$} are the specializations of \eqref{KIIMap1forX} and \eqref{KIIMap2forX} to  $\mathbf{A}$. In Figure~\ref{figureEx1_52_abstract} we give a schematic representation of these maps using Conventions~\ref{convention:red-box} and~\ref{convention-doublingblueboxes}.


\begin{figure}[ht!]
			\centering
            \includegraphics[scale=0.7]{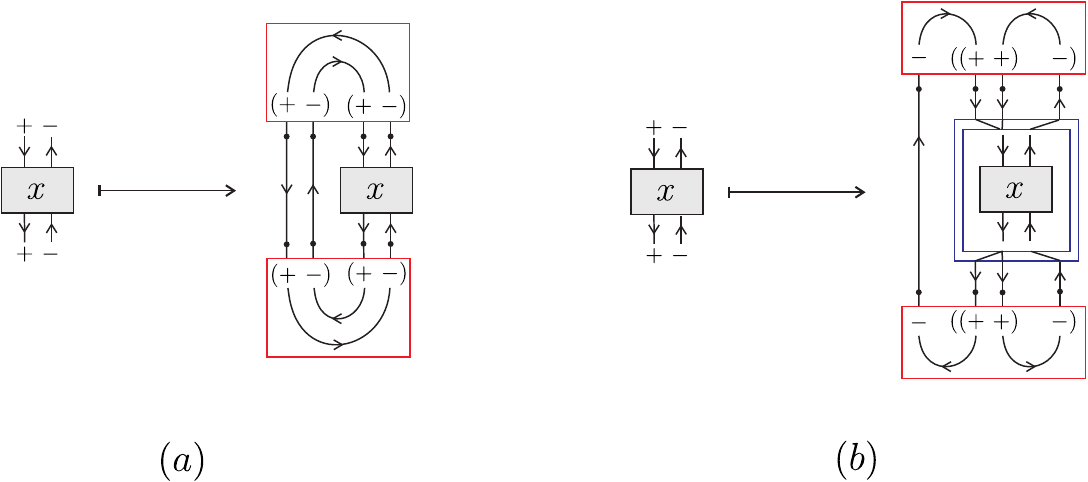}					\caption{$(a)$ The map $\mathrm{KIIMap}^1_{{\mathring{\mathcal A}}^\wedge,k}$, $(b)$ the map $\mathrm{KIIMap}^2_{{\mathring{\mathcal A}}^\wedge,k}$. We represent $x\in{\mathring{\mathcal{A}}}^{\wedge}(\downarrow\uparrow)_{k-2}$ by the grey-colored box. Notice that there are ${k-2}$ circles  in the schematic representation  of $x$ which are not shown in the figure.}
\label{figureEx1_52_abstract} 								
\end{figure}

Similarly, we have $(\mathrm{CO}\mathring{\mathcal{A}}^\wedge)= \bigoplus_{k\geq 1} (\mathrm{CO}\mathring{\mathcal{A}}_k^\wedge) \subset \bigoplus_{k\geq 0}\mathring{\mathcal{A}}^\wedge_k$  \index[notation]{CO\mathring{\mathcal{A}}^\wedge)@$(\mathrm{CO}\mathring{\mathcal{A}}^\wedge)$} where $(\mathrm{CO}\mathring{\mathcal{A}}_k^\wedge) \subset \mathring{\mathcal{A}}^\wedge_k$  \index[notation]{CO\mathring{\mathcal{A}}^\wedge_k@$(\mathrm{CO}\mathring{\mathcal{A}}^\wedge_k)$} is given by
\begin{equation*}
(\mathrm{CO}\mathring{\mathcal{A}}^\wedge_k)=\mathrm{Im}\big(\mathrm{co}^{{\mathcal{A}}}_k  - \mathrm{proj} ^{{{\mathcal{A}}}}_{1,k-1}\big) \subset \mathring{\mathcal{A}}^\wedge_k,
\end{equation*}
\noindent see Definition~\ref{def:projtoXk}. By Proposition~\ref{l:LMOinvt}, $(\mathrm{KII}\mathring{\mathcal{A}}^\wedge)$ and $(\mathrm{CO}\mathring{\mathcal{A}}^\wedge)$ are both ideals of the commutative algebra $\bigoplus_{k \geq 0}\mathring{\mathcal A}^{\wedge}_k$ and its associated  semi-Kirby structure $\mathfrak{s}(\mathbf{A})$ is given by
\begin{equation}\label{modSA}
\mathfrak{s}(\mathbf{A}) = \frac{\bigoplus_{k \geq 0}\mathring{\mathcal A}^{\wedge}_k}{(\mathrm{KII}\mathring{\mathcal{A}}^\wedge) + (\mathrm{CO}\mathring{\mathcal{A}}^\wedge)},
\end{equation}   \index[notation]{s(\mathbf{A})@$\mathfrak{s}(\mathbf{A})$}
together with the algebra morphism $\mu:\mathfrak{Kir}\to \mathfrak{s}(\mathbf{A})$ induced by the Kontsevich integral.

\subsubsection{The algebra $\mathfrak{a}$}\label{sec:3:2:2}

We introduce a variant of the KII maps $\mathrm{KIIMap}_{{\mathring{\mathcal A}}^{\wedge},k}^1$ and $\mathrm{KIIMap}_{{\mathring{\mathcal A}}^{\wedge},k}^2$  which are simpler.  Recall that $\downarrow \uparrow \in \vec{\mathcal Br}$ denotes the identity morphism in $\underline{\vec{\mathcal Br}}(+-,+-)$.

Define  $\mathrm{cup}_{+-+-}$,  \index[notation]{cup_{+-+-}@$\mathrm{cup}_{+-+-}$} $\mathrm{cup}_{-+-+}$,  \index[notation]{cup_{-+-+}@$\mathrm{cup}_{-+-+}$} $\mathrm{cap}_{+-+-}$ \index[notation]{cap_{+-+-}@$\mathrm{cap}_{+-+-}$} and $\mathrm{cap}_{-+-+}$ \index[notation]{cap_{-+-+}@$\mathrm{cap}_{-+-+}$} to be the underlying oriented Brauer diagrams of the tangles  $\mathrm{cup}^{\mathcal T}_{+-+-}$, $\mathrm{cup}^{\mathcal T}_{-+-+}$, $\mathrm{cap}^{\mathcal T}_{+-+-}$ and $\mathrm{cap}^{\mathcal T}_{-+-+}$ from Definition~\ref{gencupcap_1}.  Let $(\mathrm{cup}_{+-+-})_{\mathcal A}$, \index[notation]{cup_{+-+-}_{\mathcal A}@$(\mathrm{cup}_{+-+-})_{\mathcal A}$} $(\mathrm{cup}_{-+-+})_{\mathcal A}$, \index[notation]{cup_{-+-+}_{\mathcal A}@$(\mathrm{cup}_{-+-+})_{\mathcal A}$}   $(\mathrm{cap}_{+-+-})_{\mathcal A}$  \index[notation]{cap_{+-+-}_{\mathcal A}@$(\mathrm{cap}_{+-+-})_{\mathcal A}$}  and  $(\mathrm{cap}_{-+-+})_{\mathcal A}$  \index[notation]{cap_{-+-+}_{\mathcal A}@$(\mathrm{cap}_{-+-+})_{\mathcal A}$}  be the corresponding elements in the respective spaces $\mathring{\mathcal{A}}^\wedge(\mathrm{cup}_{+-+-},\emptyset)$,  $\mathring{\mathcal{A}}^\wedge(\mathrm{cup}_{-+-+},\emptyset)$, $\mathring{\mathcal{A}}^\wedge(\mathrm{cap}_{+-+-},\emptyset)$ and $\mathring{\mathcal{A}}^\wedge(\mathrm{cap}_{-+-+},\emptyset)$, see Definition~\ref{def:emptyJacdiagram}.

\begin{lemma}\label{themapsKIIsimple}  Let $k\geq 2$ be an integer. There are well-defined continuous linear maps
$$
\mathrm{KIIMap}_{{\mathring{\mathcal A}}^\wedge,k}^{s,1}:{\mathring{\mathcal A}}^\wedge(\downarrow\uparrow)_{k-2}\longrightarrow {\mathring{\mathcal A}}^\wedge_k, \quad \quad  \quad \quad \mathrm{KIIMap}_{{\mathring{\mathcal A}}^\wedge,k}^{s,2}:{\mathring{\mathcal A}}^\wedge(\downarrow\uparrow)_{k-2}\longrightarrow {\mathring{\mathcal A}}^\wedge_k
$$ \index[notation]{KIIMap_{{\mathring{\mathcal A}}^\wedge,k}^{s,1}@$\mathrm{KIIMap}_{{\mathring{\mathcal A}}^\wedge,k}^{s,1}$}  \index[notation]{KIIMap_{{\mathring{\mathcal A}}^\wedge,k}^{s,2}@$\mathrm{KIIMap}_{{\mathring{\mathcal A}}^\wedge,k}^{s,2}$} 
given by
\begin{equation}\label{KIIMap1forA-2}
\mathrm{KIIMap}^{s,1}_{{\mathring{\mathcal A}}^\wedge,k}(x):=(\mathrm{cap}_{+-+-})_{{\mathcal{A}}}\circ_{\mathcal A} ((\mathrm{Id}_{+-})_{\mathcal A}\otimes_{\mathcal A} x)\circ_{\mathcal A} (\mathrm{cup}_{+-+-})_{{\mathcal{A}}}
\end{equation}
and
\begin{equation}\label{KIIMap2forA-2}
\mathrm{KIIMap}^{s,2}_{{\mathring{\mathcal A}}^\wedge,k}(x):=(\mathrm{cap}_{-++-})_{{\mathcal{A}}}\circ_{\mathcal A}((\mathrm{Id}_{-})_{\mathcal A}\otimes_{\mathcal A} \mathrm{dbl}_1(x))\circ_{\mathcal A} (\mathrm{cup}_{-++-})_{{\mathcal{A}}},
\end{equation}
for any  $x\in {\mathring{\mathcal A}}^\wedge(\downarrow\uparrow)_{k-2}$. In \eqref{KIIMap2forA-2}, we are using Convention~\ref{convention:co_i-dbl_i}. See Figure~\ref{figureEx1_53_abstract} for a schematic representation of these maps using Conventions~\ref{convention:red-box} and \ref{convention-doublingblueboxes}. The superscript $s$ refers to \emph{simple}.


\end{lemma}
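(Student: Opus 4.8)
The statement asserts that the two explicit formulas \eqref{KIIMap1forA-2} and \eqref{KIIMap2forA-2} define continuous linear maps from $\mathring{\mathcal A}^\wedge(\downarrow\uparrow)_{k-2}$ to $\mathring{\mathcal A}^\wedge_k$. The plan is to check three things for each of the two maps: that the composite of operations on the right-hand side is defined at all (i.e.\ the oriented Brauer diagrams are composable and the $\mathrm{Cir}$-sets have the right cardinality so the target lands in $\mathcal X_k$), that the resulting assignment is $\mathbb C$-linear and continuous, and that it descends to the coinvariant spaces indexing the subscripts. All three are formal consequences of the apparatus already built in \S\ref{sec:2-9-2}: the operations $\otimes_{\mathcal A}$, $\circ_{\mathcal A}$ of Lemma~\ref{r:morphismsinduitsurlescoinvariants}, the doubling map $\mathrm{dbl}_1 = \mathrm{dbl}_{\mathcal A}(\downarrow\uparrow,\downarrow)$ of Lemma~\ref{r:doublingcoinv}, and the fact that $Z_{\mathcal A}$ of the relevant cup/cap tangles lands in the appropriate $\mathcal X(P)_0$ (here all circle components are empty). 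So the proof is essentially a bookkeeping of degrees and $\mathrm{Cir}$-cardinalities, exactly parallel to the remark already made after Definition~\ref{r:2022.12.27KIIMaps1}.

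First I would treat $\mathrm{KIIMap}^{s,1}_{\mathring{\mathcal A}^\wedge,k}$. Starting with $x \in \mathring{\mathcal A}^\wedge(\downarrow\uparrow)_{k-2}$, form $(\mathrm{Id}_{+-})_{\mathcal A} \otimes_{\mathcal A} x \in \mathring{\mathcal A}^\wedge(\downarrow\uparrow\downarrow\uparrow)_{k-2}$ using Lemma~\ref{r:morphismsinduitsurlescoinvariants}$(a)$ (here $(\mathrm{Id}_{+-})_{\mathcal A}$ sits in degree $0$ with no circle components). Next I would observe that $\mathrm{cup}_{+-+-}$ is a morphism $\emptyset \to +-+-$ and $\mathrm{cap}_{+-+-}$ a morphism $+-+-\to\emptyset$ in $\vec{\mathcal Br}$ composable with $\downarrow\uparrow\downarrow\uparrow$ in the appropriate order, and then compute $|\mathrm{Cir}(\mathrm{cap}_{+-+-}, -)|$ step by step; composing with $\mathrm{cup}_{+-+-}$ below and $\mathrm{cap}_{+-+-}$ above produces exactly $2$ new circle components (as in the parallel count in Lemma~\ref{lemma_KImapsrestriction} for $\mathrm{KIIMap}^1$), so by two applications of Lemma~\ref{r:morphismsinduitsurlescoinvariants}$(b)$ the output lies in $\mathring{\mathcal A}^\wedge_{(k-2)+0+0+2} = \mathring{\mathcal A}^\wedge_k$. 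Linearity and continuity are inherited from the bilinearity and continuity of $\otimes_{\mathcal A}$ and $\circ_{\mathcal A}$ (the cup/cap factors and $\mathrm{Id}_{+-}$ being fixed), and the fact that the maps pass to coinvariants is exactly what $\otimes_{\mathcal A}$ and $\circ_{\mathcal A}$ already encode. This establishes well-definedness of the first map.

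For $\mathrm{KIIMap}^{s,2}_{\mathring{\mathcal A}^\wedge,k}$ the only extra ingredient is the doubling: by Lemma~\ref{r:doublingcoinv}, $\mathrm{dbl}_1(x) \in \mathring{\mathcal A}^\wedge(\downarrow\downarrow\uparrow)_{k-2}$ (doubling along the component $\downarrow$ takes $\downarrow\uparrow$ to $\downarrow\downarrow\uparrow = \mathrm{dbl}_{\vec{\mathcal Br}}(\downarrow\uparrow,\downarrow)$, cf.\ Lemma~\ref{r:2022-10-31-01} and Convention~\ref{convention:co_i-dbl_i}), and the circle count is unchanged. Then $(\mathrm{Id}_-)_{\mathcal A}\otimes_{\mathcal A}\mathrm{dbl}_1(x) \in \mathring{\mathcal A}^\wedge(\uparrow\downarrow\downarrow\uparrow)_{k-2}$, which is composable with $\mathrm{cup}_{-++-}$ below and $\mathrm{cap}_{-++-}$ above, and again exactly $2$ circle components are created (the same count as for $\mathrm{KIIMap}^2$ in Lemma~\ref{lemma_KImapsrestriction}), so the result lies in $\mathring{\mathcal A}^\wedge_k$. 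Note the notational point that in \eqref{KIIMap2forA-2} the symbols $\mathrm{cap}_{-++-}$ and $\mathrm{cup}_{-++-}$ denote the underlying oriented Brauer diagrams of the tangles $\mathrm{cap}^{\mathcal T}_{-((++)-)}$, $\mathrm{cup}^{\mathcal T}_{-((++)-)}$ from Definition~\ref{gencupcap_2}, which explains the source and target words; once this identification is made the composability and circle-count are immediate. Linearity and continuity again follow from those of $\mathrm{dbl}_1$, $\otimes_{\mathcal A}$, $\circ_{\mathcal A}$.

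The main (and only mild) obstacle is the circle-counting: one must carry out the two $\mathrm{Cir}$-cardinality computations carefully, keeping track of which boundary points of the cup/cap diagrams get glued to which endpoints of the identity and of $x$ (resp.\ $\mathrm{dbl}_1(x)$), so as to confirm in each case that precisely two closed loops are produced and hence the total grading is $k$. This is entirely analogous to — indeed simpler than — the verification already carried out in the proof of Lemma~\ref{lemma_KImapsrestriction} for the non-simplified maps $\mathrm{KIIMap}^1,\mathrm{KIIMap}^2$, so I would state the count explicitly and refer to that proof for the method rather than redoing the figure-chasing in full. Everything else is a direct invocation of results in \S\ref{sec:2-9-2}.
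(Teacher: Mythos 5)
Your proposal is correct and follows essentially the same route as the paper: identify the source/target oriented Brauer diagrams of each factor, apply the coinvariant-level operations $\otimes_{\mathcal A}$, $\circ_{\mathcal A}$ (and $\mathrm{dbl}_1$ for the second map), and conclude from the count $|\mathrm{Cir}(\mathrm{cap},\mathrm{cup})|=2$ that the output lies in $\mathring{\mathcal A}^\wedge_{k-2+2}=\mathring{\mathcal A}^\wedge_k$, with linearity and continuity inherited from these operations. The paper only writes out the case of $\mathrm{KIIMap}^{s,1}$ and declares the other case similar, whereas you spell out both; the content is the same.
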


\begin{proof} Let us do the proof for the map $\mathrm{KIIMap}^{s,1}_{{\mathring{\mathcal A}}^\wedge,k}$, the other cases are similar. We have
 $(\mathrm{cup}_{+-+-})_{{\mathcal{A}}} \in \mathring{\mathcal A}^\wedge(\mathrm{cup}_{+-+-})_0$ and  $(\mathrm{Id}_{+-})_{\mathcal A}\otimes_{\mathcal A} x \in \mathring{\mathcal A}^\wedge(\uparrow\downarrow)_{k-2}$  hence  $((\mathrm{Id}_{+-})_{\mathcal A}\otimes_{\mathcal A} x)\circ_{\mathcal A} (\mathrm{cup}_{+-+-})_{{\mathcal{A}}} \in \mathring{\mathcal A}^\wedge(\mathrm{cap}_{+-+-})_{k-2}$. Moreover,  
$(\mathrm{cap}_{+-+-})_{{\mathcal{A}}} \in \mathring{\mathcal A}^\wedge(\mathrm{cap}_{+-+-})_0$ therefore the final triple composition~\eqref{KIIMap1forA-2} belongs to $$\mathring{\mathcal A}^\wedge(\vec{\varnothing})_{k-2+|\mathrm{Cir}(\mathrm{cap}_{+-+-}, \mathrm{cup}_{+-+-})|}=\mathring{\mathcal A}^\wedge_k.$$
\end{proof}

\begin{figure}[ht!]
			\centering
            \includegraphics[scale=0.7]{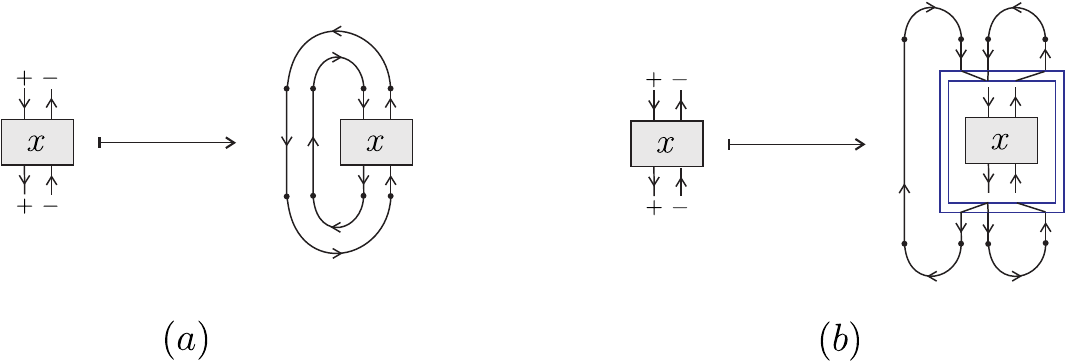}					\caption{$(a)$ The map $\mathrm{KIIMap}^{s,1}_{{\mathring{\mathcal A}}^\wedge,k}$, $(b)$ the map $\mathrm{KIIMap}^{s,2}_{{\mathring{\mathcal A}}^\wedge,k}$. We represent $x\in{\mathring{\mathcal{A}}}^{\wedge}(\downarrow\uparrow)_{k-2}$ by the grey-colored box and we use Convention~\ref{convention-doublingblueboxes}  for representing $\mathrm{dbl}_1(x)\in{\mathring{\mathcal{A}}}^{\wedge}(\downarrow\downarrow\uparrow)_{k-2}$.}
\label{figureEx1_53_abstract} 								
\end{figure}

\begin{definition}\label{spaceKIIsimpleforA}
Let $k\geq 2$, define the subgroups $(\mathrm{KII}^s{\mathring{\mathcal A}}^\wedge_k)$  \index[notation]{KII^s{\mathring{\mathcal A}}^\wedge_k@$(\mathrm{KII}^s{\mathring{\mathcal A}}^\wedge_k)$} of ${\mathring{\mathcal A}}^\wedge_k$ and $(\mathrm{KII}^s{\mathring{\mathcal A}}^\wedge)$ \index[notation]{KII^s{\mathring{\mathcal A}}^\wedge@$(\mathrm{KII}^s{\mathring{\mathcal A}}^\wedge)$} of $\bigoplus_{k\geq 0}\mathring{\mathcal A}^\wedge_k$ by
\begin{equation}\label{spaceKIIsimpleforA-1}
(\mathrm{KII}^s{\mathring{\mathcal A}}^\wedge_k) :=\mathrm{Im}\Big(\mathrm{KIIMap}_{{\mathring{\mathcal A}}^\wedge,k}^{s,2} - \mathrm{KIIMap}_{{\mathring{\mathcal A}}^\wedge,k}^{s,1}\Big) 
\end{equation}
and
\begin{equation}\label{spaceKIIsimpleforA-2}
(\mathrm{KII}^s{\mathring{\mathcal A}}^\wedge) := \bigoplus_{k\geq 0} (\mathrm{KII}^s{\mathring{\mathcal A}}^\wedge_k) \subset \bigoplus_{k\geq 0}\mathring{\mathcal A}^\wedge_k,
\end{equation}
where we set $(\mathrm{KII}^s{\mathring{\mathcal A}}^\wedge_0) = (\mathrm{KII}^s{\mathring{\mathcal A}}^\wedge_1) = \{0\}$.
\end{definition}

\begin{lemma}\label{r:lemma1-20210820} The subgroup $(\mathrm{KII}^s\mathring{\mathcal A}^\wedge)$ is a $\mathbb{Z}_{\geq 0}$-graded ideal of $\bigoplus_{k\geq 0} \mathring{\mathcal{A}}^\wedge_k$.
\end{lemma}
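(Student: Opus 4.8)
The plan is to mimic, in the ``simple'' setting, the argument used in Lemma~\ref{lemma:1207} to show that $(\mathrm{KII}\mathring{\mathcal A}^\wedge)$ is an ideal. Concretely, set $A:=\bigoplus_{k\geq 0}\mathring{\mathcal A}^\wedge_k$ and $M:=\bigoplus_{k\geq 2}\mathring{\mathcal A}^\wedge(\downarrow\uparrow)_{k-2}$. One first observes, exactly as in Lemma~\ref{lemma:1207}, that $M$ is a right $A$-module: the linear maps $\mathring{\mathcal A}^\wedge(\downarrow\uparrow)_{k-2}\otimes\mathring{\mathcal A}^\wedge_{l}\to\mathring{\mathcal A}^\wedge(\downarrow\uparrow)_{k+l-2}$ given by $\otimes_{\mathcal A}$ (Lemma~\ref{r:morphismsinduitsurlescoinvariants}$(a)$ with $P=\ \downarrow\uparrow$, $P'=\vec{\varnothing}$) assemble into an $A$-module structure by Lemma~\ref{r:tenscompcoinvprop}$(b)$. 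Then the key point is to show that the two maps $\bigoplus_{k\geq 2}\mathrm{KIIMap}^{s,i}_{\mathring{\mathcal A}^\wedge,k}:M\to A$ ($i=1,2$) are morphisms of right $A$-modules, where $A$ carries its regular module structure; consequently so is their difference $f:=\bigoplus_{k}(\mathrm{KIIMap}^{s,2}_{\mathring{\mathcal A}^\wedge,k}-\mathrm{KIIMap}^{s,1}_{\mathring{\mathcal A}^\wedge,k})$, and hence $(\mathrm{KII}^s\mathring{\mathcal A}^\wedge)=\mathrm{Im}(f)$ is an ideal of $A$. The $\mathbb{Z}_{\geq 0}$-grading is immediate from~\eqref{spaceKIIsimpleforA-2} and the fact that each $\mathrm{KIIMap}^{s,i}_{\mathring{\mathcal A}^\wedge,k}$ lands in the single graded piece $\mathring{\mathcal A}^\wedge_k$.

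To prove the module-morphism property for $i=1$, take $x\in\mathring{\mathcal A}^\wedge(\downarrow\uparrow)_{k-2}$ and $a\in\mathring{\mathcal A}^\wedge_l=\mathring{\mathcal A}^\wedge(\vec{\varnothing})_l$. Using the definition~\eqref{KIIMap1forA-2} and the compatibility of $\otimes_{\mathcal A}$ with $\circ_{\mathcal A}$ (Lemma~\ref{r:tenscompcoinvprop}$(b)$ and $(d)$), one slides the extra $l$ circles, which belong to $a$, out of the cup-cap closure; since $a$ lies over $\vec{\varnothing}$ it commutes freely past $(\mathrm{Id}_{+-})_{\mathcal A}$, $(\mathrm{cup}_{+-+-})_{\mathcal A}$ and $(\mathrm{cap}_{+-+-})_{\mathcal A}$ in the tensor factor. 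This yields
\[
\mathrm{KIIMap}^{s,1}_{\mathring{\mathcal A}^\wedge,k+l}(x\otimes_{\mathcal A}a)=\mathrm{KIIMap}^{s,1}_{\mathring{\mathcal A}^\wedge,k}(x)\otimes_{\mathcal A}a,
\]
which is precisely the right $A$-linearity. For $i=2$ one argues the same way using~\eqref{KIIMap2forA-2}, with the additional ingredient that the doubling map $\mathrm{dbl}_1$ does not affect the circle components coming from $a$: this is the content of Lemma~\ref{r:doublingcoinvprop}, which gives $\mathrm{dbl}_1(x\otimes_{\mathcal A}a)=\mathrm{dbl}_1(x)\otimes_{\mathcal A}a$ (here $A=\vec{\varnothing}$ so the subset being doubled stays inside $\pi_0(\downarrow\uparrow)$). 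Combining this with Lemma~\ref{r:tenscompcoinvprop}$(b)$, $(d)$ again gives $\mathrm{KIIMap}^{s,2}_{\mathring{\mathcal A}^\wedge,k+l}(x\otimes_{\mathcal A}a)=\mathrm{KIIMap}^{s,2}_{\mathring{\mathcal A}^\wedge,k}(x)\otimes_{\mathcal A}a$. Subtracting, $f:M\to A$ is a right $A$-module homomorphism, so its image is an ideal.

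The only point requiring a little care — and the expected main obstacle — is the bookkeeping of the circle components and of the identification of ambient spaces $\mathring{\mathcal A}^\wedge(\vec{\varnothing})_{k-2+|\mathrm{Cir}(\cdots)|}=\mathring{\mathcal A}^\wedge_k$ used implicitly in defining $\mathrm{KIIMap}^{s,i}_{\mathring{\mathcal A}^\wedge,k}$ (Lemma~\ref{themapsKIIsimple}): one must check that sliding $a$ out of the closure is compatible with these identifications, i.e.\ that the extra $l$ circles from $a$ appear as ``new'' circle labels disjoint from the $|\mathrm{Cir}|$ circles produced by the cup-cap closure. This is exactly the type of verification carried out in the proof of Lemma~\ref{lemma:1207} (where it follows from Lemma~\ref{r:tenscompcoinvprop}$(b)$, $(d)$ together with the associativity/commutativity of $\otimes_{\mathcal A}$), so no new idea is needed; it is just a matter of transcribing that argument with $\mathrm{cup}_{+-+-},\mathrm{cap}_{+-+-}$ (resp.\ $\mathrm{cup}_{-++-},\mathrm{cap}_{-++-}$) in place of the parenthesized tangles $\mathrm{cup}^{\mathcal T}_{(+-)(+-)},\mathrm{cap}^{\mathcal T}_{(+-)(+-)}$. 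I would therefore present the proof as: ``The proof is identical to that of Lemma~\ref{lemma:1207}, replacing $\mathrm{KIIMap}^i_{\mathcal X,k}$ by $\mathrm{KIIMap}^{s,i}_{\mathring{\mathcal A}^\wedge,k}$ and using~\eqref{KIIMap1forA-2}, \eqref{KIIMap2forA-2}, Lemma~\ref{r:tenscompcoinvprop} and Lemma~\ref{r:doublingcoinvprop}'', spelling out only the two displayed identities above.
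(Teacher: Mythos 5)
Your proof is correct and is essentially the paper's own argument: the paper proves this lemma by saying the reasoning is the same as in Proposition~\ref{l:LMOinvt}$(a)$, whose proof rests on exactly the module-morphism mechanism of Lemma~\ref{lemma:1207} (and Lemma~\ref{1207b}) that you transcribe for the maps $\mathrm{KIIMap}^{s,i}_{\mathring{\mathcal A}^\wedge,k}$ via Lemma~\ref{r:tenscompcoinvprop} and Lemma~\ref{r:doublingcoinvprop}. No gaps; your spelled-out displayed identities are precisely the verifications the paper leaves implicit.
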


\begin{proof} The reasoning is the same as in the proof of Proposition~\ref{l:LMOinvt}$(a)$.
\end{proof}

Since $(\mathrm{CO}\mathring{\mathcal A}^\wedge)$ is also an ideal of $\bigoplus_{k\geq 0} \mathring{\mathcal{A}}^\wedge_k$, so is $(\mathrm{KII}^s\mathring{\mathcal A}^\wedge) + (\mathrm{CO}\mathring{\mathcal A}^\wedge)$.

\begin{definition} Define the commutative algebra $\mathfrak{a}$ \index[notation]{a@$\mathfrak{a}$} as the quotient algebra
\begin{equation}\label{Z2geq0-module-m}
\mathfrak{a}:=\frac{\bigoplus_{k\geq 0} \mathring{\mathcal{A}}^\wedge_k}{(\mathrm{KII}^s\mathring{\mathcal A}^\wedge) + (\mathrm{CO}\mathring{\mathcal A}^\wedge)} = \bigoplus_{k\geq 0}\frac{\mathring{\mathcal{A}}^\wedge_k}{(\mathrm{KII}^s\mathring{\mathcal A}^\wedge_k) + (\mathrm{CO}\mathring{\mathcal A}^\wedge_k)}.
\end{equation}
\end{definition}

\subsubsection{The linear isomorphism $\mathrm{cs}^{\nu}_{+-}$ }

Recall that for a Jacobi diagram $x$, the element $\mathrm{cs}^{\nu}(x)$ is obtained from $x$ by inserting $\nu$ or $z(\nu)$ in each circle component, see \S\ref{sec:3:1}. In this subsection we introduce a map $\mathrm{cs}^{\nu}_{+-}$  which also insert $\nu$ or $z(\nu)$ in the segment components.

\begin{definition} For any integer $k\geq 2$, define the linear map 
\begin{equation}\label{equ:mapcs-nuopen-def}
\mathrm{cs}^{\nu}_{+-}:\mathring{\mathcal{A}}^\wedge(\downarrow \uparrow)_{k-2}\longrightarrow \mathring{\mathcal{A}}^\wedge(\downarrow \uparrow)_{k-2} 
\end{equation}  \index[notation]{cs^{\nu}_{+-}@$\mathrm{cs}^{\nu}_{+-}$} 
by
\begin{equation}\label{equ:mapcs-nu+-}
\begin{split}
\mathrm{cs}_{+-}^{\nu}(x)= \big(\nu\otimes_{\mathcal A} z(\nu)\otimes_{\mathcal A} {Z}_{\mathcal A}(\mathrm{cap}^{\mathcal T}_{(+-)(+-)})\big)\circ_{\mathcal A} & \big(((\mathrm{cup}_{+-+-})_{{\mathcal A}}\circ_{\mathcal A} (\mathrm{cap}_{+-+-})_{{\mathcal A}})\otimes_{\mathcal A} \mathrm{cs}^{\nu}(x)\big) 
 \\
 & \circ_{\mathcal A} \big(\mathrm{Id}_{+-}\otimes_{\mathcal A} {Z}_{\mathcal A}(\mathrm{cup}^{\mathcal T}_{(+-)(+-)})\big)
\end{split}
\end{equation}
for any $x\in\mathring{\mathcal{A}}(\downarrow\uparrow)_{k-2}$, where $z(\nu)\in\mathring{\mathcal A}^{\wedge}(\uparrow)_0$ denotes the image of $\nu\in\mathring{\mathcal{A}}^\wedge(\downarrow)_0$ under the map $z:{\mathring{\mathcal A}}^\wedge(\downarrow)_0\to {\mathring{\mathcal A}}^\wedge(\uparrow)_0$ from Definition~\ref{themapz}. See Figure~\ref{figureEx1_49_abstract} for a schematic representation.

\begin{figure}[ht!]
			\centering
            \includegraphics[scale=0.7]{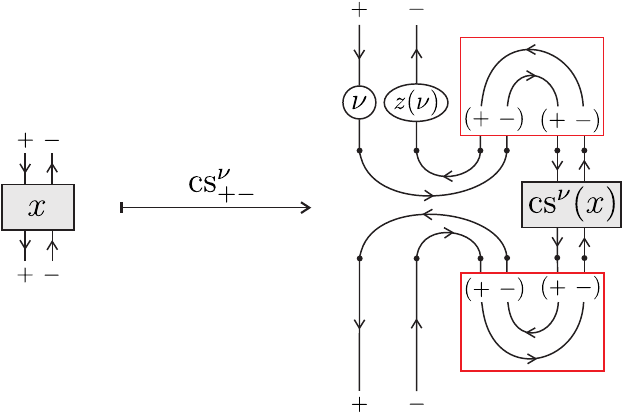}					\caption{Schematic representation of the map $\mathrm{cs}^{\nu}_{+-}:\mathring{\mathcal{A}}(\downarrow\uparrow)_{k-2}\to \mathring{\mathcal{A}}(\downarrow\uparrow)_{k-2}$. We represent $x\in \mathring{\mathcal{A}}(\downarrow\uparrow)_{k-2}$ by the grey-colored box. }
\label{figureEx1_49_abstract} 								
\end{figure}

\end{definition}

\begin{lemma}\label{the-map-cs-+-} Let $k\in\mathbb{Z}_{\geq 2}$. The linear  map
\begin{equation}\label{equ:mapcs-nuopen}
\mathrm{cs}^{\nu}_{+-}:\mathring{\mathcal{A}}^\wedge(\downarrow \uparrow)_{k-2}\longrightarrow \mathring{\mathcal{A}}^\wedge(\downarrow \uparrow)_{k-2} 
\end{equation}
is an isomorphism.
\end{lemma}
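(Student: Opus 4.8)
\textbf{Proof plan for Lemma~\ref{the-map-cs-+-}.}

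The plan is to exhibit an explicit inverse to $\mathrm{cs}^\nu_{+-}$ built from the same ingredients. The key observation is that formula~\eqref{equ:mapcs-nu+-} is a composite of three operations on $\mathring{\mathcal{A}}^\wedge(\downarrow\uparrow)_{k-2}$, each of which is individually invertible: first, the algebra automorphism $\mathrm{cs}^\nu$ of $\bigoplus_{k\geq 0}\mathring{\mathcal{A}}^\wedge_k$, which extends naturally (via Lemma~\ref{themapcsalpha}$(b)$ with $P=\downarrow\uparrow$, i.e. the endomorphism $\mathrm{cs}^\nu_{\downarrow\uparrow,k-2}$) to $\mathring{\mathcal{A}}^\wedge(\downarrow\uparrow)_{k-2}$ and is an automorphism there because $\nu$ is invertible; second, the insertion of $\nu$ on the $\downarrow$-strand and of $z(\nu)$ on the $\uparrow$-strand, which is pre-/post-composition by the invertible elements $\nu\in\mathring{\mathcal{A}}^\wedge(\downarrow)_0$ and $z(\nu)\in\mathring{\mathcal{A}}^\wedge(\uparrow)_0$ (invertibility of $z(\nu)$ follows from invertibility of $\nu$ together with the fact that $z$ is an algebra anti-homomorphism, Lemma~\ref{themapzalgebr}); third, the "conjugation" by the parenthesized cup/cap tangles $\mathrm{cup}^{\mathcal T}_{(+-)(+-)}$, $\mathrm{cap}^{\mathcal T}_{(+-)(+-)}$ together with the Brauer-diagram cup/cap elements $(\mathrm{cup}_{+-+-})_{\mathcal{A}}$, $(\mathrm{cap}_{+-+-})_{\mathcal{A}}$.

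The main technical point is the third operation. I would first record the relevant "rigidity"/duality identities: the composites appearing in~\eqref{equ:mapcs-nu+-} of the form $(\mathrm{cup}_{+-+-})_{\mathcal A}\circ_{\mathcal A}(\mathrm{cap}_{+-+-})_{\mathcal A}$ and the pairing of ${Z}_{\mathcal A}(\mathrm{cup}^{\mathcal T}_{(+-)(+-)})$ with ${Z}_{\mathcal A}(\mathrm{cap}^{\mathcal T}_{(+-)(+-)})$ — these are the diagrammatic analogues of the zig-zag (snake) identities, and using axioms (LMO1)$(a)$, (LMO4)$(a)$ and the compatibility of $Z_{\mathcal A}$ with composition (LMO4)$(c)$ one checks that "closing up" with a cup and then a cap, or inserting the $Z_{\mathcal A}$ of a cup/cap pair, amounts to an invertible endomorphism of $\mathring{\mathcal{A}}^\wedge(\downarrow\uparrow)_{k-2}$ whose effect is the insertion of $\mathrm{cs}^\nu$-type correction factors on the two strands. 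Concretely, I expect that the net effect of the third operation, once combined with the explicit $\nu$ and $z(\nu)$ factors written in front, is to multiply (via $\circ_{\mathcal A}$) the two strands of $x$ by fixed invertible elements of $\mathring{\mathcal A}^\wedge(\downarrow)_0$ and $\mathring{\mathcal A}^\wedge(\uparrow)_0$ — the commutativity of $(\mathring{\mathcal A}^\wedge(\downarrow,\emptyset),\circ)$ (the remark after Lemma~\ref{themapcsalpha}) and the slide/commutation lemmas (Lemmas~\ref{slide-nu}, \ref{commut-prop}, \ref{commut-prop-doubl}) let one move these factors past $x$ and assemble them into one overall invertible pre-/post-composition.

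With these identities in hand, the proof is assembled as follows: write $\mathrm{cs}^\nu_{+-} = R \circ L \circ \mathrm{cs}^\nu_{\downarrow\uparrow,k-2}$ where $L$ is left/right multiplication by $\nu$ and $z(\nu)$ and $R$ is the cup/cap conjugation, show each of $R$, $L$, $\mathrm{cs}^\nu_{\downarrow\uparrow,k-2}$ is a linear isomorphism of $\mathring{\mathcal{A}}^\wedge(\downarrow\uparrow)_{k-2}$ (for $\mathrm{cs}^\nu_{\downarrow\uparrow,k-2}$ by Lemma~\ref{themapcsalpha}$(d)$--$(e)$ applied with $P=\downarrow\uparrow$ and $\nu^{-1}$ giving the inverse; for $L$ by the invertibility of $\nu$ and $z(\nu)$; for $R$ by exhibiting its inverse as the analogous conjugation with the opposite-orientation cup/cap $\mathrm{cup}_{-+-+}$, $\mathrm{cap}_{-+-+}$ and $Z_{\mathcal A}(\mathrm{cup}^{\mathcal T}_{(+-)(+-)})$, $Z_{\mathcal A}(\mathrm{cap}^{\mathcal T}_{(+-)(+-)})$, using the zig-zag identities), and conclude that the composite is an isomorphism. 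The hard part will be the bookkeeping in the third step — keeping track of which correction factors the cup/cap conjugation actually produces and verifying that they match up so that $R$ genuinely inverts; this is essentially a careful diagrammatic computation using the snake identities and the slide lemmas, and is where I would spend most of the effort, but it involves no new ideas beyond the lemmas already available in \S\ref{sec:4-9} and the pre-LMO axioms.
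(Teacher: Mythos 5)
Your proposal follows essentially the same route as the paper: the paper proves the lemma by writing down an explicit inverse map $\widetilde{\mathrm{cs}}^{\nu}_{+-}$ (obtained from the defining formula by replacing $\nu$, $z(\nu)$, $\mathrm{cs}^{\nu}$ with $\nu^{-1}$, $z(\nu)^{-1}$, $\mathrm{cs}^{\nu^{-1}}$ and exchanging the roles of the $Z_{\mathcal A}$-corrected and plain cup/cap elements) and then verifying diagrammatically that both composites are the identity, using exactly the ingredients you cite — compatibility of $Z_{\mathcal A}$ with $\otimes_{\mathcal A}$ and $\circ_{\mathcal A}$, centrality of $\nu\otimes_{\mathcal A} z(\nu)$ (Lemma~\ref{commut-prop}), the anti-homomorphism property of $z$ (Lemma~\ref{themapzalgebr}), and the slide identities (Lemma~\ref{slide-nu}). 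Your three-factor packaging, and in particular the expectation that the cup/cap conjugation reduces to strand-local invertible factors, is not needed and is not how the paper simplifies; but since your fallback for inverting that piece is precisely to exhibit the dual cup/cap conjugation as its inverse, the substance of your argument coincides with the paper's proof.
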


\begin{proof}
Define the map 
$$\widetilde{\mathrm{cs}}^{\nu}_{+-}: \mathring{\mathcal{A}}^\wedge(\downarrow \uparrow)_{k-2}\longrightarrow \mathring{\mathcal{A}}^\wedge(\downarrow \uparrow)_{k-2}$$
by
\begin{equation*}\label{equ:mapcs-nu-inverse}
\begin{split}
\widetilde{\mathrm{cs}}_{+-}^{\nu}(x)=\Big(\nu^{-1}\otimes_{\mathcal A} z(\nu)^{-1}\otimes_{\mathcal A} (\mathrm{cap}_{+-+-})_{{\mathcal A}} \Big) \circ_{\mathcal A} &\Big(({Z}_{{\mathcal A}}(\mathrm{cup}^{\mathcal T}_{(+-)(+-)})\circ_{\mathcal A} {Z}_{{\mathcal A}}(\mathrm{cup}^{\mathcal T}_{(+-)(+-)})) \otimes_{\mathcal A} \mathrm{cs}^{\nu^{-1}}(x)\Big)\\
& \qquad \circ_{\mathcal A}\Big(\mathrm{Id}_{+-}\otimes_{\mathcal A}  
(\mathrm{cup}_{+-+-})_{{\mathcal A}}\Big),
\end{split}
\end{equation*}
for $x\in \mathring{\mathcal{A}}^\wedge(\downarrow \uparrow)_{k-2}$. Schematically:
\begin{align*}
\includegraphics[scale=0.7]{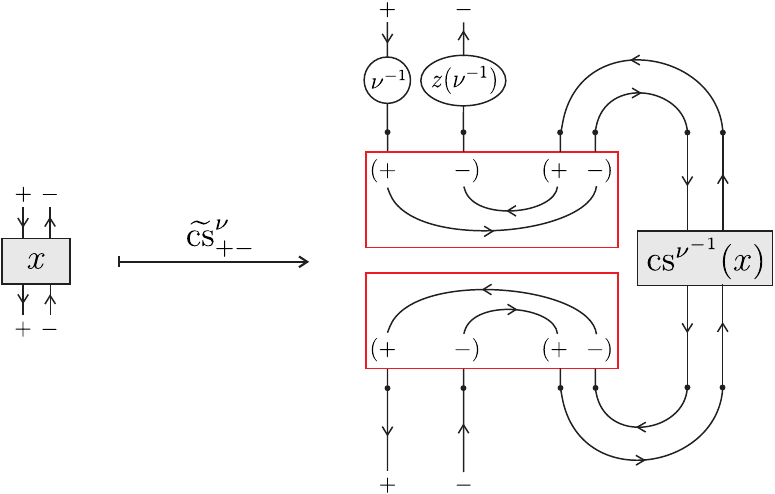}
\end{align*}

Let $x\in{\mathring{\mathcal{A}}}^{\wedge}(\downarrow\uparrow)_{k-2}$, in Figure~\ref{figureEx1_51_abstract} we show a schematic argument for the equality $\widetilde{\mathrm{cs}}_{+-}^{\nu}\circ\mathrm{cs}_{+-}^{\nu}(x) = x$. In the second equality in Figure~\ref{figureEx1_51_abstract} we have used the compatibility of  ${Z}_{\mathcal A}$ with the tensor product and composition and the fact the $\nu\otimes z(\nu)$ is central in the algebra $\mathring{\mathcal{A}}^{\wedge}(\downarrow\uparrow)_{k-2}$, see Lemma~\ref{commut-prop}. In the third equality we have used Lemma~\ref{themapzalgebr} and a combination of Lemmas~\ref{slide-nu} and~\ref{commut-prop}.

\begin{figure}[ht!]
			\centering
            \includegraphics[scale=0.7]{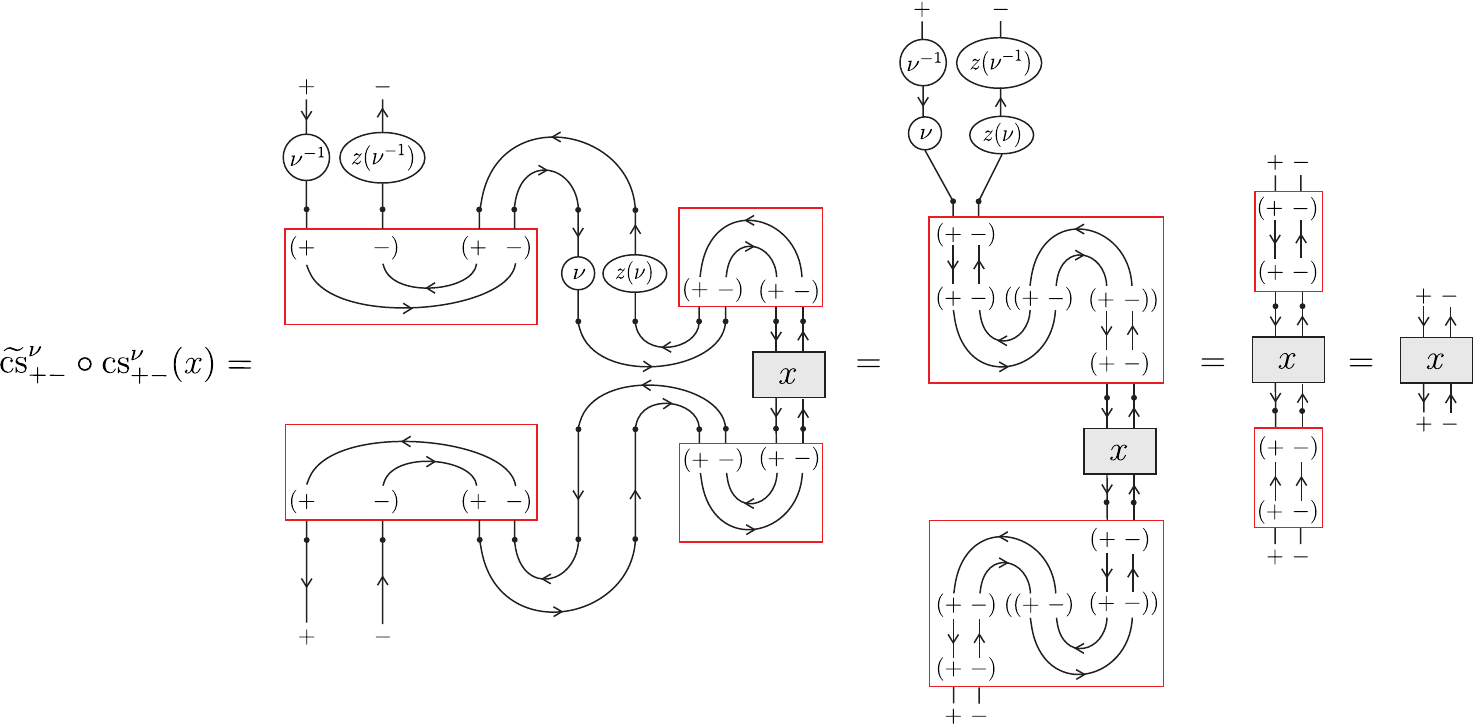}					\caption{The equality $\widetilde{\mathrm{cs}}_{+-}^{\nu}\circ\mathrm{cs}_{+-}^{\nu}(x) = x$. We represent $x\in{\mathring{\mathcal{A}}}^{\wedge}(\downarrow\uparrow)_{k-2}$ by the grey-colored box.}
\label{figureEx1_51_abstract} 								
\end{figure}

Therefore 
$$\widetilde{\mathrm{cs}}^{\nu}_{+-}\circ {\mathrm{cs}}^{\nu}_{+-}= \mathrm{Id}_{\mathring{\mathcal{A}}^\wedge(\downarrow\uparrow)_{k-2}}.$$
Similarly, one checks ${\mathrm{cs}}^{\nu}_{+-} \circ \widetilde{\mathrm{cs}}^{\nu}_{+-} = \mathrm{Id}_{\mathring{\mathcal{A}}^\wedge(\downarrow\uparrow)_{k-2}}$. 
\end{proof}

\subsubsection{The equality $\mathrm{cs}^\nu((\mathrm{KII}\mathring{\mathcal{A}}^\wedge))=(\mathrm{KII}^s\mathring{\mathcal{A}}^\wedge)$}\label{sec:3:2:3}
The automorphism $\mathrm{cs}^\nu$ of $\bigoplus_{k\geq 0} \mathring{\mathcal{A}}^\wedge_k$  given in~\eqref{cs-nu-total} allows us to relate the two ideals $(\mathrm{KII}\mathring{\mathcal{A}}^\wedge)$ and $(\mathrm{KII}^s\mathring{\mathcal{A}}^\wedge)$ given in \eqref{spaceKIIforA} and \eqref{spaceKIIsimpleforA-2}, respectively.

\begin{lemma}\label{20231225-auxiliarlemma}  Let 
$$
A:= \mathring{\mathcal{A}}^\wedge(\mathrm{cap}_{+-+-},\emptyset)\otimes \mathring{\mathcal{A}}^\wedge(\downarrow \uparrow\downarrow \uparrow,\mathrm{Cir}(\mathrm{cup}^{k-2}, \mathrm{cap}^{k-2})) \otimes \mathring{\mathcal{A}}^\wedge(\mathrm{cup}_{+-+-},\emptyset)$$
and
$$
B:= \mathring{\mathcal{A}}^\wedge(\mathrm{cap}_{+-+-},\emptyset)\otimes \mathring{\mathcal{A}}^\wedge(\downarrow^{k-2}\otimes\downarrow \uparrow\downarrow \uparrow,\emptyset) \otimes \mathring{\mathcal{A}}^\wedge(\mathrm{cup}_{+-+-},\emptyset)$$

Recall the  anti-homomorphism $z:\mathring{\mathcal{A}}^{\wedge}(\downarrow,\emptyset)\longrightarrow \mathring{\mathcal{A}}^{\wedge}(\uparrow,\emptyset)$ from Definition~\ref{themapz}. Let $h: B\to B$ be the linear map defined by 
\begin{equation}
h(x\otimes y \otimes w) = x \otimes \Big(y\circ_{\mathcal A} \big(\nu^{\otimes k-2}\otimes_{\mathcal A}\nu \otimes_{\mathcal A}z(\nu)\otimes_{\mathcal A}\mathrm{Id}_{+-}^{\mathcal A}\big)\Big) \otimes w
\end{equation}
for any $x\in \mathring{\mathcal{A}}^\wedge(\mathrm{cap}_{+-+-},\emptyset)$, $y\in \mathring{\mathcal{A}}^\wedge(\downarrow^{k-2}\otimes\downarrow \uparrow\downarrow \uparrow,\emptyset)$ and $w\in \mathring{\mathcal{A}}^\wedge(\mathrm{cup}_{+-+-},\emptyset)$.

Let $g$ be the endomorphism of $\mathring{\mathcal{A}}^\wedge(\downarrow^k,\emptyset)$ defined by 
\begin{equation}
g(x)= \big(\mathrm{Id}_{+^{\otimes (k-2)}}^{\mathcal A}\otimes_{\mathcal A} \nu \otimes_{\mathcal A}\mathrm{Id}_{+}^{\mathcal A}\big) \circ_{\mathcal A} x \circ_{\mathcal A} \big( \nu^{\otimes (k-2)}\otimes_{\mathcal A} \mathrm{Id}^{\mathcal A}_+\otimes_{\mathcal A} \nu\big)
\end{equation}
for any $x\in \mathring{\mathcal{A}}^\wedge(\downarrow^k,\emptyset)$.

Let $\tilde{\lambda}_k^\nu : \mathring{\mathcal{A}}^\wedge(\downarrow \uparrow\downarrow \uparrow,\mathrm{Cir}(\mathrm{cup}^{k-2}, \mathrm{cap}^{k-2}))\to \mathring{\mathcal{A}}^\wedge(\downarrow \uparrow\downarrow \uparrow, \mathrm{Cir}(\mathrm{cup}^{k-2}, \mathrm{cap}^{k-2}))$ be the linear map given by 
\begin{equation}\tilde{\lambda}^{\nu}_k(x) =(\nu\otimes  z(\nu) \otimes  \mathrm{Id}_{+-})\circ \widetilde{\mathrm{cs}}^{\nu}_{\downarrow\uparrow\downarrow\uparrow, \mathrm{Cir}(\mathrm{cup}^{k-2}, \mathrm{cap}^{k-2})}(x)
\end{equation}
for any $x\in\mathring{\mathcal{A}}^\wedge(\downarrow \uparrow\downarrow \uparrow, \mathrm{Cir}(\mathrm{cup}^{k-2}, \mathrm{cap}^{k-2}))$,

Let $c: B\to A$ be the linear map given by $c=\mathrm{Id}\otimes \mathrm{clos}_{k-2}\otimes \mathrm{Id}$, where $\mathrm{clos}_{k-2}$ is as in Definition~\ref{def:closuremap} with $P=\downarrow\uparrow\downarrow\uparrow$.

Let $f: B\to  \mathring{\mathcal{A}}^\wedge(\downarrow^k,\emptyset)$ be the linear  map defined schematically in Figure~\ref{figure20231224}.
\begin{figure}[ht!]
			\centering
            \includegraphics[scale=0.7]{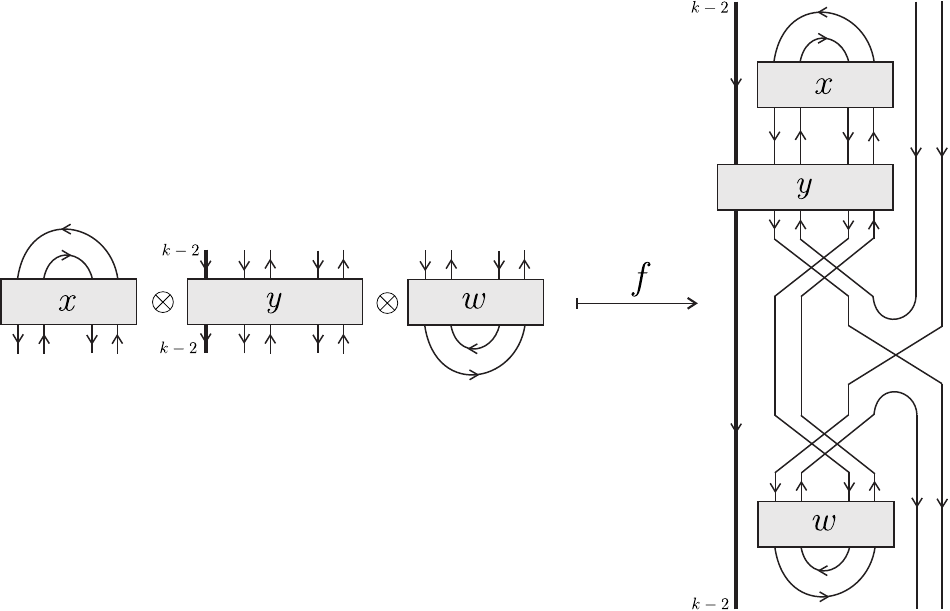}					\caption{The linear map $f$. The thick line with the label $k-2$ means $k-2$ vertical segments oriented downward.}
\label{figure20231224} 								
\end{figure}

Then the diagrams in  \eqref{eq:thenewdiagram2} and \eqref{eq:thenewdiagram3} are commutative.  
\begin{equation}\label{eq:thenewdiagram2}
\xymatrix{B \ar_{h}[d]
\ar^-{c}[r] & A \ar^{\mathrm{Id}\otimes \tilde{\lambda}_k^\nu\otimes\mathrm{Id}}[d]\\
B\ar^-{c}[r]  &A} 
\qquad \qquad \qquad \xymatrix{B \ar_{h}[d]
\ar^-{f}[r] & \mathring{\mathcal{A}}^\wedge(\downarrow^k, \emptyset) \ar^{g}[d]\\
B\ar^-{f}[r]  &\mathring{\mathcal{A}}^\wedge(\downarrow^k,\emptyset)}
\end{equation}

\begin{equation}\label{eq:thenewdiagram3}
\xymatrix{B \ar_{c}[d]
\ar^-{f}[r] &  \mathring{\mathcal A}^{\wedge}(\downarrow^k,\emptyset) \ar^-{\mathrm{clos}_k}[r]& \mathring{\mathcal A}^{\wedge}(\vec{\varnothing},\mathrm{Cir}(\mathrm{cup}^k,\mathrm{cap}^k)) \ar^{\simeq}[d]\\
A\ar^-{}[rr] & & \mathring{\mathcal A}^{\wedge}(\vec{\varnothing},\mathrm{Cir}(\mathrm{cap}^{k-2}, \mathrm{cap}^{k-2})\sqcup \mathrm{Cir}(\mathrm{cap}_{+-+-},\mathrm{cup}_{+-+-}))}
\end{equation}
The bottom map in \eqref{eq:thenewdiagram3} is the composition map and the right vertical isomorphism is induced by the bijection $\mathrm{Cir}(\mathrm{cup}^k,\mathrm{cap}^k) \xrightarrow{\simeq} \mathrm{Cir}(\mathrm{cap}^{k-2}, \mathrm{cap}^{k-2})\sqcup \mathrm{Cir}(\mathrm{cap}_{+-+-},\mathrm{cup}_{+-+-})$ which identifies the most outer circles in $\mathrm{cup}^k\circ \mathrm{cap}^k$ with $\mathrm{Cir}(\mathrm{cap}_{+-+-},\mathrm{cup}_{+-+-})$.
\end{lemma}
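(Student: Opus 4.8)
The plan is to verify each of the three commutative squares by a direct unwinding of the definitions, using the structural compatibilities of $Z_{\mathcal A}$, $\otimes_{\mathcal A}$, $\circ_{\mathcal A}$, the doubling and change-of-orientation maps already established in \S\ref{sec:4-1}--\S\ref{sec:4-7}, together with the centrality statements of Lemmas~\ref{commut-prop} and~\ref{commut-prop-doubl}, the anti-homomorphism property of $z$ (Lemma~\ref{themapzalgebr}), the slide relations of Lemma~\ref{slide-nu}, and the basic properties of $\mathrm{cs}^\nu$ and $\mathrm{cs}^\nu_{+-}$ (Lemmas~\ref{themapcsalpha} and~\ref{the-map-cs-+-}). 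All four maps $h$, $c$, $f$, and the closure maps are built out of these operations, so the commutativity of each square amounts to checking an identity between two compositions of such operations; since each of them is linear and continuous, it suffices to check the identities on Jacobi diagram generators, and the verification will in each case reduce to a finite sequence of local moves together with the associativity/naturality axioms (LMO1)--(LMO3).

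For the \emph{left square} in \eqref{eq:thenewdiagram2}, I would start from an element $x\otimes y\otimes w\in B$, apply $c=\mathrm{Id}\otimes\mathrm{clos}_{k-2}\otimes\mathrm{Id}$ to get an element of $A$, and then compute $\mathrm{Id}\otimes\tilde\lambda_k^\nu\otimes\mathrm{Id}$ on it. Expanding $\tilde\lambda_k^\nu$ using its definition in terms of $\widetilde{\mathrm{cs}}^\nu$ and the multiplication by $\nu\otimes z(\nu)\otimes\mathrm{Id}_{+-}$, and expanding $\widetilde{\mathrm{cs}}^\nu$ by its defining formula (the ``$\mathrm{cs}^{\alpha}_{P,k}$ inserts $\alpha$ or $z(\alpha)$ in each component'' description of Lemma~\ref{themapcsalpha}(a)--(b), combined with the commutation of $\mathrm{clos}_{k-2}$ with insertions along the $\downarrow^{k-2}$ strands), one sees that the net effect is precisely post-composition of $y$ with $\nu^{\otimes(k-2)}\otimes\nu\otimes z(\nu)\otimes\mathrm{Id}_{+-}^{\mathcal A}$, i.e.\ the map $h$, followed by $c$. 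The key point to be careful about is the bookkeeping of where $\nu$ versus $z(\nu)$ gets inserted (orientation of the corresponding strand), which is governed by the $z$-anti-homomorphism and Lemma~\ref{slide-nu}; centrality (Lemma~\ref{commut-prop}) is what allows one to move the inserted factors past the rest of $y$ so that they collect as a single composition factor.

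The \emph{right square} in \eqref{eq:thenewdiagram2} is similar but done at the level of the ``totally closed up'' picture $\mathring{\mathcal A}^\wedge(\downarrow^k,\emptyset)$: here $g$ is by definition pre- and post-composition by tensor products of $\nu$'s and identities, and the claim is that $f$ intertwines $h$ and $g$. Reading off $f$ from Figure~\ref{figure20231224}, the factor $\nu^{\otimes(k-2)}\otimes\nu\otimes z(\nu)$ that $h$ inserts on the side of $y$ gets transported by $f$ into exactly the $\nu^{\otimes(k-2)}\otimes\mathrm{Id}_+\otimes\nu$ on the bottom and $\mathrm{Id}_{+^{\otimes(k-2)}}^{\mathcal A}\otimes\nu\otimes\mathrm{Id}_+^{\mathcal A}$ on the top of $g$; the $z(\nu)$ on an upward strand of $f$ becomes a $\nu$ on a downward strand after the cup/cap is absorbed, again by Lemma~\ref{slide-nu} and Lemma~\ref{themapzalgebr}, and the remaining identities follow from the axioms (LMO1). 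Finally, for \eqref{eq:thenewdiagram3} I would check that $\mathrm{clos}_k\circ f$ and (composition map)$\circ c$ produce the same element of the space indexed by $\mathrm{Cir}(\mathrm{cap}^{k-2},\mathrm{cap}^{k-2})\sqcup\mathrm{Cir}(\mathrm{cap}_{+-+-},\mathrm{cup}_{+-+-})$ after the stated re-indexing bijection; this is essentially a ``both ways of closing up the picture give the same circle components'' statement, which is a consequence of the cocycle/pentagon properties of $\mathrm{Cir}$ (Proposition~\ref{r:2022-09-13-bijofcirclesPart}, Remark~\ref{rem:pentagone-gen}) together with the compatibility of $\mathrm{Comp}^{\mathcal A}$ with itself, axiom (LMO1)(c). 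I expect the main obstacle to be precisely this last identification: tracking which univalent vertices of the Jacobi diagrams end up attached to which circle under the two different gluing patterns, and checking that the induced cyclic orders agree, so that the stated bijection $\mathrm{Cir}(\mathrm{cup}^k,\mathrm{cap}^k)\xrightarrow{\simeq}\mathrm{Cir}(\mathrm{cap}^{k-2},\mathrm{cap}^{k-2})\sqcup\mathrm{Cir}(\mathrm{cap}_{+-+-},\mathrm{cup}_{+-+-})$ is indeed the one compatible with all the structure; the cyclic- and linear-order lemmas of \S\ref{sec:4-0} (Lemmas~\ref{r:2023-01-11-r2}, \ref{r:2023-02-24r1}) are the tools that make this routine once the combinatorics is set up correctly.
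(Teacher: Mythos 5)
Your proposal is correct and follows essentially the same route as the paper: the left square of \eqref{eq:thenewdiagram2} is checked there via the defining property of $\widetilde{\mathrm{cs}}^{\nu}$ (Lemma~\ref{themapcsalpha}$(a)$), a sliding identity through $\mathrm{clos}_{k-2}$, and the centrality statement of Lemma~\ref{commut-prop}; the right square by comparing $f\circ h$ and $g\circ f$ schematically and invoking Lemma~\ref{slide-nu} (with Lemma~\ref{themapzalgebr} for the $\nu$ versus $z(\nu)$ bookkeeping); and \eqref{eq:thenewdiagram3} by a direct check of the circle bookkeeping — exactly the ingredients you list. The only cosmetic difference is that the paper organizes the verification through explicit auxiliary maps and figures, whereas you phrase the last step through the $\mathrm{Cir}$-cocycle properties and axiom (LMO1)$(c)$, which amounts to the same computation.
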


\begin{proof}
Let $R:= \mathrm{Cir}(\mathrm{cap}^{k-2}, \mathrm{cup}^{k-2})$. Define the linear maps  
$$a: \mathring{\mathcal{A}}^{\wedge}(\downarrow^{k-2}\otimes \downarrow\uparrow\downarrow\uparrow,\emptyset) \to \mathring{\mathcal{A}}^{\wedge}(\downarrow^{k-2}\otimes \downarrow\uparrow\downarrow\uparrow,\emptyset)\quad \text{and} \quad b: \mathring{\mathcal{A}}^{\wedge}( \downarrow\uparrow\downarrow\uparrow,R) \to \mathring{\mathcal{A}}^{\wedge}( \downarrow\uparrow\downarrow\uparrow,R)$$
by $a(y)=(\mathrm{Id}_{(+)^{\otimes (k-2)}}^{\mathcal A}\otimes_{\mathcal A}\nu\otimes_{\mathcal A}z(\nu)\otimes_{\mathcal A}\mathrm{Id}^{\mathcal A}_{+-})\circ_{\mathcal A} y$ for any $y\in \mathring{\mathcal{A}}^{\wedge}(\downarrow^{k-2}\otimes \downarrow\uparrow\downarrow\uparrow,\emptyset)$ and  by  $b(y)= (\nu\otimes_{\mathcal A} z(\nu)\otimes_{\mathcal A}\mathrm{Id}^{\mathcal A}_{+-})\circ_{\mathcal A}y$ for any $y\in  \mathring{\mathcal{A}}^{\wedge}( \downarrow\uparrow\downarrow\uparrow,R)$. In diagram~\eqref{eq:diagramauxiliar2024}, the left triangle commutes because of associativity of the composition, the right triangle commutes  by the definition of  the involved maps, the top square commutes by Lemma~\ref{themapcsalpha}$(a)$ and the bottom square commutes by the identity represented schematically in Figure~\ref{figure20231229-1}. Therefore the outer square, which is the left  diagram on the left of \eqref{eq:thenewdiagram2}, commutes. 
\begin{figure}[ht!]
			\centering
            \includegraphics[scale=0.8]{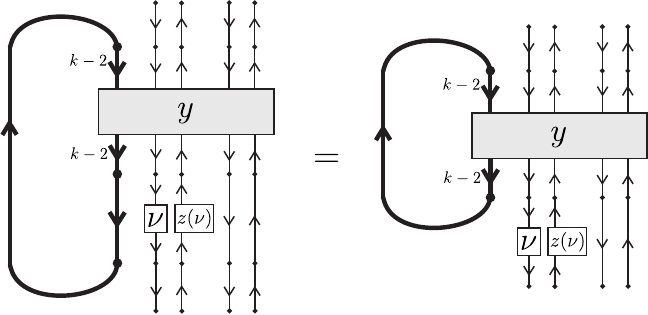}	\caption{Schematic proof of the equality $\mathrm{clos}_{k-2}\circ a (y) = \mathrm{clos}_{k-2} (y) \circ_{\mathcal A} (\nu\otimes_{\mathcal A} z(\nu) \otimes_{\mathcal A} \mathrm{Id}_{+-}^{\mathcal A})$ in $ \mathring{\mathcal{A}}^{\wedge}(\downarrow\uparrow\downarrow\uparrow, \mathrm{Cir}(\mathrm{cap}^{k-2}, \mathrm{cup}^{k-2}))$ for any $y\in  \mathring{\mathcal{A}}^{\wedge}(\downarrow^{k-2}\otimes \downarrow\uparrow\downarrow\uparrow,\emptyset)$.}
\label{figure20231229-1} 								
\end{figure}

\begin{equation}\label{eq:diagramauxiliar2024}
\xymatrix{
  \mathring{\mathcal{A}}^{\wedge}(\downarrow^{k-2}\otimes \downarrow\uparrow\downarrow\uparrow,\emptyset) \ar^-{\mathrm{clos}_{k-2}}[rrr] \ar_{(\nu^{\otimes (k-2)}\otimes_{\mathcal A} \nu \otimes_{\mathcal A} z(\nu)\otimes_{\mathcal A} \mathrm{Id}_{+-}^{\mathcal A})\circ_{\mathcal A} \bullet}[dd]\ar^-{\quad \qquad (\nu^{\otimes (k-2)}\otimes_{\mathcal A}  \mathrm{Id}_{+-+-}^{\mathcal A})\circ_{\mathcal A} \bullet}[dr]&    & & \mathring{\mathcal{A}}^{\wedge}(\downarrow\uparrow\downarrow\uparrow,R) \ar^{\widetilde{\lambda}^{\nu}_{k}}[dd]\ar_{\widetilde{\mathrm{cs}}^{\nu}_{\downarrow\uparrow\downarrow\uparrow, R}}[dl]\\
&  \mathring{\mathcal{A}}^{\wedge}(\downarrow^{k-2}\otimes \downarrow\uparrow\downarrow\uparrow,\emptyset) \ar^-{\mathrm{clos}_{k-2}}[r] \ar_-{a}[dl]& \mathring{\mathcal{A}}^{\wedge}(\downarrow\uparrow\downarrow\uparrow,R) \ar^{b}[rd] & \\
 \mathring{\mathcal{A}}^{\wedge}(\downarrow^{k-2}\otimes \downarrow\uparrow\downarrow\uparrow,\emptyset)  \ar^-{\mathrm{clos}_{k-2}}[rrr]&    & & \mathring{\mathcal{A}}^{\wedge}(\downarrow\uparrow\downarrow\uparrow,R)}
\end{equation}

Let $y\in \mathring{\mathcal{A}}^\wedge(\downarrow^{k-2}\otimes\downarrow \uparrow\downarrow \uparrow,\emptyset)$. Multiple applications of Lemma~\ref{commut-prop} implies 
\begin{equation}\label{eq:20231225-eq1}
y \circ_{\mathcal A}\big(\nu^{\otimes(k-2)}\otimes_{\mathcal A}\nu\otimes_{\mathcal A} z(\nu)\otimes_{\mathcal A} \mathrm{Id}^{\mathcal A}_{+-}\big) = \big(\nu^{\otimes(k-2)}\otimes_{\mathcal A}\nu\otimes_{\mathcal A} z(\nu)\otimes_{\mathcal A} \mathrm{Id}^{\mathcal A}_{+-}\big)  \circ_{\mathcal A} y,
\end{equation}
equality in  $\mathring{\mathcal{A}}^\wedge(\downarrow^{k-2}\otimes\downarrow \uparrow\downarrow \uparrow,\emptyset)$.  A schematic representation of it is shown in Figure~\ref{figure20231225-1}.
\begin{figure}[ht!]
			\centering
            \includegraphics[scale=0.9]{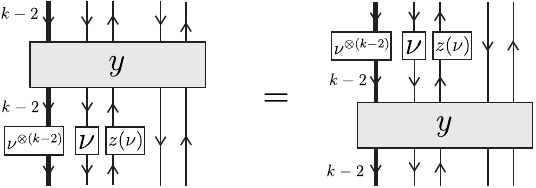}					\caption{Schematic representation of the equality \eqref{eq:20231225-eq1}.}
\label{figure20231225-1} 								
\end{figure}
Equality \eqref{eq:20231225-eq1} implies the commutativity of the diagram on the left in \eqref{eq:thenewdiagram2}.

Let $x\in \mathring{\mathcal{A}}^\wedge(\mathrm{cap}_{+-+-},\emptyset)$, $y\in \mathring{\mathcal{A}}^\wedge(\downarrow^{k-2}\otimes\downarrow \uparrow\downarrow \uparrow,\emptyset)$ and $w\in \mathring{\mathcal{A}}^\wedge(\mathrm{cup}_{+-+-},\emptyset)$. We give  schematic representation of $f \circ h (x\otimes y\otimes w)$ and  $g \circ f (x\otimes y\otimes w)$ in Figure~\ref{figure20231225-2}$(a)$ and $(b)$, respectively.

\begin{figure}[ht!]
			\centering
            \includegraphics[scale=0.9]{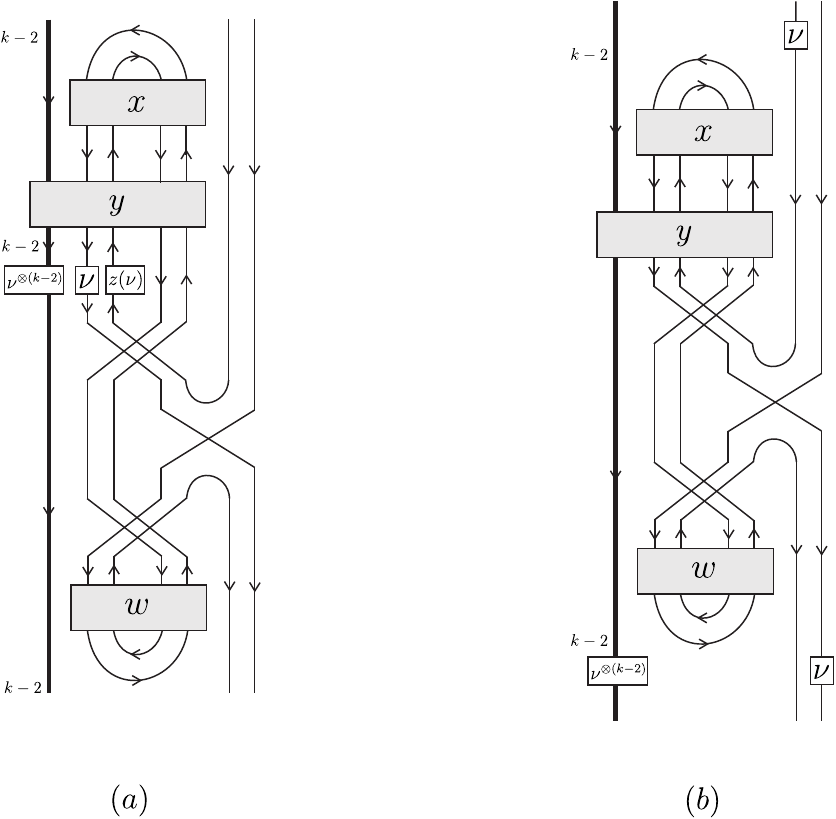}					\caption{$(a)$ Schematic representation of $f \circ h (x\otimes y\otimes w)$. $(b)$ Schematic representation of $g \circ f (x\otimes y\otimes w)$.}
\label{figure20231225-2} 								
\end{figure}

Lemma~\ref{slide-nu} implies that these two elements are equal in  $ \mathring{\mathcal{A}}^\wedge(\downarrow^k,\emptyset)$, which is exactly the commutativity of the diagram on the right in  \eqref{eq:thenewdiagram2}.

The commutativity of the diagram  \eqref{eq:thenewdiagram3} follows from the equality shown in Figure~\ref{figure20231225-3}.

\begin{figure}[ht!]
			\centering
            \includegraphics[scale=0.7]{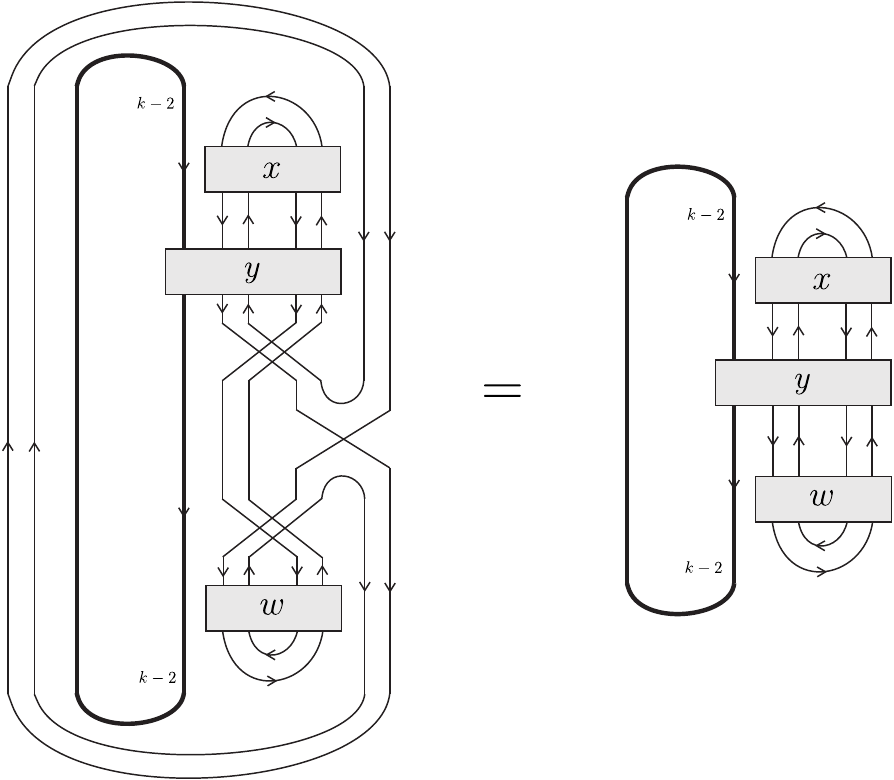}					\caption{Equality implying the commutativity of the diagram  \eqref{eq:thenewdiagram3}.}
\label{figure20231225-3} 								
\end{figure}

\end{proof}

\begin{lemma}\label{rlemma20211031} Let $k\in\mathbb{Z}_{\geq 2}$. Let 
$$\lambda^{\nu}_k:\mathring{\mathcal{A}}^\wedge(\downarrow \uparrow\downarrow \uparrow)_{k-2}\longrightarrow \mathring{\mathcal{A}}^\wedge(\downarrow \uparrow\downarrow \uparrow)_{k-2}$$
be the map defined by
$\lambda^{\nu}_k(x) :=\big(\nu\otimes_{\mathcal A} z(\nu)\otimes_{\mathcal A} (\mathrm{Id}_{+-})_{\mathcal A}\big)\circ_{\mathcal A}\mathrm{cs}^{\nu}_{\downarrow \uparrow \downarrow \uparrow,k-2}(x)$ for any $x\in\mathring{\mathcal{A}}^\wedge(\downarrow \uparrow\downarrow \uparrow)_{k-2}$.  The composition operation $\circ_{\mathcal A}$ gives rise to a continuous  linear map 
$$\mathring{\mathcal{A}}^\wedge(\mathrm{cap}_{+-+-})_{0}\otimes \mathring{\mathcal{A}}^\wedge(\downarrow \uparrow\downarrow \uparrow)_{k-2} \otimes \mathring{\mathcal{A}}^\wedge(\mathrm{cup}_{+-+-})_{0}\longrightarrow 
\mathring{\mathcal{A}}^\wedge_{k-2+|\mathrm{Cir}(\mathrm{cap}_{+-+-}, \mathrm{cup}_{+-+-})|}=\mathring{\mathcal{A}}^\wedge_k. $$
Then the  diagram
\begin{equation}\label{eq:thediagram}
\xymatrix{\mathring{\mathcal{A}}^\wedge(\mathrm{cap}_{+-+-})_{0}\otimes \mathring{\mathcal{A}}^\wedge(\downarrow \uparrow\downarrow \uparrow)_{k-2} \otimes \mathring{\mathcal{A}}^\wedge(\mathrm{cup}_{+-+-})_{0}\ar_{\mathrm{Id}\otimes \lambda^{\nu}_{k}\otimes\mathrm{Id}}[d]
\ar^{}[rrr] & &  & \mathring{\mathcal{A}}^\wedge_k \ar^{\mathrm{cs}^{\nu}}[d]\\
\mathring{\mathcal{A}}^\wedge(\mathrm{cap}_{+-+-})_{0}\otimes \mathcal{A}^\wedge(\downarrow \uparrow\downarrow \uparrow)_{k-2}\otimes \mathring{\mathcal{A}}^\wedge(\mathrm{cup}_{+-+-})_{0}\ar^{}[rrr]  & & &\mathring{\mathcal{A}}^\wedge_k}
\end{equation}
is commutative.
\end{lemma}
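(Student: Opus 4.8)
The goal is the commutativity of diagram \eqref{eq:thediagram}, which relates the composition map to the automorphism $\mathrm{cs}^\nu$ of $\bigoplus_{k\geq 0}\mathring{\mathcal A}^\wedge_k$ via the open-component version $\lambda^\nu_k$ of the connected-sum operation. The natural approach is to lift the coinvariant-space statement \eqref{eq:thediagram} to a statement about the spaces $\mathring{\mathcal A}^\wedge(-,\mathrm{Cir}(\mathrm{cup}^{k-2},\mathrm{cap}^{k-2}))$ before quotienting by the $\mathfrak S_{k-2}$-actions, and then to assemble it from the three commutative squares established in Lemma~\ref{20231225-auxiliarlemma}. Concretely, by Definition~\ref{def:closuremap} and Lemma~\ref{themapcsalpha}, the map $\mathrm{cs}^\nu$ on $\mathring{\mathcal A}^\wedge_k$ is obtained by applying $\mathrm{clos}_k$ and then $\tilde{\mathrm{cs}}^\nu_{\vec\varnothing,k}$, and the composition map $\mathring{\mathcal A}^\wedge(\mathrm{cap}_{+-+-})_0\otimes\mathring{\mathcal A}^\wedge(\downarrow\uparrow\downarrow\uparrow)_{k-2}\otimes\mathring{\mathcal A}^\wedge(\mathrm{cup}_{+-+-})_0\to\mathring{\mathcal A}^\wedge_k$ is, after undoing the coinvariants, the bottom map of \eqref{eq:thenewdiagram3} composed with $c$. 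Thus one should read \eqref{eq:thediagram} as the outer rectangle obtained by gluing the left square of \eqref{eq:thenewdiagram2} (which translates $h$ on $B$ into $\mathrm{Id}\otimes\tilde\lambda^\nu_k\otimes\mathrm{Id}$ via $c$), the right square of \eqref{eq:thenewdiagram2} (which translates $h$ into $g$ via $f$), and the square \eqref{eq:thenewdiagram3} together with the identity $\mathrm{clos}_k\circ g=\tilde{\mathrm{cs}}^\nu_{\vec\varnothing,k}\circ\mathrm{clos}_k$ coming again from Lemma~\ref{themapcsalpha}$(a)$.

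First I would set up the notation: fix $R:=\mathrm{Cir}(\mathrm{cup}^{k-2},\mathrm{cap}^{k-2})$ and work with the non-quotiented spaces $A$ and $B$ of Lemma~\ref{20231225-auxiliarlemma}. I would observe that $\lambda^\nu_k$ on $\mathring{\mathcal A}^\wedge(\downarrow\uparrow\downarrow\uparrow)_{k-2}$ is the $\mathfrak S_{k-2}$-coinvariant quotient of $\tilde\lambda^\nu_k$ on $\mathring{\mathcal A}^\wedge(\downarrow\uparrow\downarrow\uparrow,R)$, using the definition of $\tilde{\mathrm{cs}}^\nu_{P,S}$ from Lemma~\ref{themapcsalpha}$(b)$ and the $\mathfrak S_R$-equivariance proved there; this reduces \eqref{eq:thediagram} to the equivariant statement upstairs. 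Next I would exhibit the lifted diagram as the boundary of a diagram whose faces are: (i) $c:B\to A$ intertwining $h$ and $\mathrm{Id}\otimes\tilde\lambda^\nu_k\otimes\mathrm{Id}$ — left square of \eqref{eq:thenewdiagram2}; (ii) $f:B\to\mathring{\mathcal A}^\wedge(\downarrow^k,\emptyset)$ intertwining $h$ and $g$ — right square of \eqref{eq:thenewdiagram2}; (iii) the square \eqref{eq:thenewdiagram3}, expressing that the composition $B\xrightarrow{c}A\to\mathring{\mathcal A}^\wedge_k$ equals $B\xrightarrow{f}\mathring{\mathcal A}^\wedge(\downarrow^k,\emptyset)\xrightarrow{\mathrm{clos}_k}\mathring{\mathcal A}^\wedge_k$ (under the canonical circle identifications); and (iv) the identity $\mathrm{clos}_k\circ g=\tilde{\mathrm{cs}}^\nu_{\vec\varnothing,k}\circ\mathrm{clos}_k$, which is precisely the instance of the commutative square in Lemma~\ref{themapcsalpha}$(a)$ with $P=\vec\varnothing$ (here one uses $g(x)=\nu^{\otimes\cdots}$-insertions at the endpoints of $\downarrow^k$, matching the definition of $\mathrm{cs}^\nu$). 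A short diagram chase along these four faces, all of which are already proven or are direct specializations of Lemma~\ref{themapcsalpha}, then yields the commutativity of \eqref{eq:thediagram}. Finally I would pass back to coinvariants, noting that every map in sight is $\mathfrak S_{k-2}$-equivariant (for $c$, $f$, $h$ this is built into their definitions; for $\lambda^\nu_k$ and $\mathrm{cs}^\nu$ it follows from Lemma~\ref{themapcsalpha}), so the quotiented diagram commutes as well.

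The one point that requires genuine care — and which I expect to be the main obstacle — is the bookkeeping of the circle-component identifications: one must check that the bijection $\mathrm{Cir}(\mathrm{cup}^k,\mathrm{cap}^k)\xrightarrow{\simeq}\mathrm{Cir}(\mathrm{cap}^{k-2},\mathrm{cap}^{k-2})\sqcup\mathrm{Cir}(\mathrm{cap}_{+-+-},\mathrm{cup}_{+-+-})$ used in \eqref{eq:thenewdiagram3} is compatible with the $\mathfrak S_{k-2}$-actions and with the bijections implicit in $\mathrm{clos}_k$, $\mathrm{clos}_{k-2}$, and in the definition of $\mathrm{cs}^\nu$ on $\mathring{\mathcal A}^\wedge_k$ via $\mathrm{cs}^\nu_{\vec\varnothing,k}=\tilde{\mathrm{cs}}^\nu_{\vec\varnothing,[\![1,k]\!]}$ after coinvariants. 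This is the kind of naturality verification that the cocycle/pentagon machinery of \S\ref{sec::1} (Proposition~\ref{r:2022-09-13-bijofcirclesPart}, Remark~\ref{rem:pentagone-gen}, Lemma~\ref{r:bijectioncomptensorandcompinOBr}) is designed to handle, so I would invoke those coherence results rather than re-deriving the identifications by hand. Once the identifications are pinned down, the remaining content is exactly the three schematic equalities of Lemma~\ref{20231225-auxiliarlemma} (Figures~\ref{figure20231229-1}, \ref{figure20231225-1}, \ref{figure20231225-2}, \ref{figure20231225-3}), which encode the centrality of $\nu\otimes z(\nu)$ (Lemma~\ref{commut-prop}), the slide relation for $\nu$ (Lemma~\ref{slide-nu}), and the anti-homomorphism property of $z$ (Lemma~\ref{themapzalgebr}); none of these needs to be reproven here.
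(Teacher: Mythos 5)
Your proposal is correct and follows essentially the same route as the paper's proof: reduce \eqref{eq:thediagram} to the pre-coinvariant diagram via the coinvariant space functor (equivariance with respect to $\mathfrak{S}_{\mathrm{Cir}(\mathrm{cap}^{k-2},\mathrm{cup}^{k-2})}\to\mathfrak{S}_S$), then assemble that diagram from the two squares of \eqref{eq:thenewdiagram2}, the square \eqref{eq:thenewdiagram3}, and Lemma~\ref{themapcsalpha}$(a)$--$(b)$ (with $g=\bullet\circ_{\mathcal A}\nu^{\otimes k}$ via Lemma~\ref{commut-prop}). The only cosmetic difference is that you invoke the coherence results of \S\ref{sec::1} for the circle identifications, where the paper handles this directly through Lemma~\ref{themapcsalpha}$(b)$.
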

\begin{proof}
Let $A$, $B$ and $\tilde{\lambda}_k^{\nu}$ be as in Lemma~\ref{20231225-auxiliarlemma}.  Let $S:=  \mathrm{Cir}(\mathrm{cap}^{k-2}, \mathrm{cup}^{k-2})\sqcup \mathrm{Cir}(\mathrm{cap}_{+-+-},\mathrm{cup}_{+-+-})$. 
The composition map $A \longrightarrow \mathring{\mathcal A}^{\wedge}(\vec{\varnothing}, S)$ and  the maps $\mathrm{Id} \otimes \tilde{\lambda}_k^\nu \otimes \mathrm{Id}: A \to A$ and $\tilde{\mathrm{cs}}^{\nu}_{\vec{\varnothing}, S}: \mathring{\mathcal A}^{\wedge}(\vec{\varnothing}, S)  \to \mathring{\mathcal A}^{\wedge}(\vec{\varnothing}, S)$ are respectively compatible with the evident group homomorphism $\mathfrak{S}_{\mathrm{Cir}(\mathrm{cap}^{k-2}, \mathrm{cup}^{k-2})}\to \mathfrak{S}_{S}$ and
with the actions of these groups. Therefore, they give rise to morphisms in the category $\mathcal Vect\mathcal Gp$. By their definition and since  $|S|=k$ and  $|\mathrm{Cir}(\mathrm{cap}^{k-2}, \mathrm{cup}^{k-2})|=k-2$, the images of these morphisms under the coinvariant space functor are the maps involved in Diagram~\eqref{eq:thediagram}. The announced commutativity is then a consequence of the commutativity of Diagram~\eqref{eq:thenewdiagram},  which we now prove.   
\begin{equation}\label{eq:thenewdiagram}
\xymatrix{ A  \ar_{\mathrm{Id}\otimes \tilde{\lambda}^{\nu}_{k}\otimes\mathrm{Id}}[d]
\ar^{}[r] & \mathring{\mathcal{A}}^\wedge(\vec{\varnothing},S) \ar^{\widetilde{\mathrm{cs}}^{\nu}_{\vec{\varnothing},S}}[d]\\
 A  \ar^{}[r]  &\mathring{\mathcal{A}}^\wedge(\vec{\varnothing},S)}
\end{equation}
We will decompose Diagram~\eqref{eq:thenewdiagram} in several diagrams. Let $c$, $f$, $g$, and $h$ as in Lemma~\ref{20231225-auxiliarlemma} and consider the diagram  \eqref{eq:diagramauxiliar}.
\begin{equation}\label{eq:diagramauxiliar}
\xymatrix{
 A \ar^{}[rrrr] \ar_{\mathrm{Id}\otimes \tilde{\lambda}^{\nu}_{k}\otimes\mathrm{Id}}[ddd]&  &  & & \mathring{\mathcal{A}}^{\wedge}(\vec{\varnothing},S) \ar^{\widetilde{\mathrm{cs}}^{\nu}_{\vec{\varnothing},S}}[ddd]\\
  & B \ar^-{f}[r]\ar_-{c}[lu]  \ar_{h}[d]&  \mathring{\mathcal{A}}^{\wedge}(\downarrow^k,\emptyset) \ar@/^1pc/^{\bullet\circ_{\mathcal A} \nu^{\otimes k}}[d] \ar@/_1pc/_{g}[d]\ar^-{\mathrm{clos}_k}[r]& \mathring{\mathcal{A}}^{\wedge}(\vec{\varnothing},\mathrm{Cir}(\mathrm{cap}^{k}, \mathrm{cup}^{k}))\ar^{\tilde{\mathrm{cs}}^{\nu}_{\vec{\varnothing},\mathrm{Cir}(\mathrm{cap}^{k}, \mathrm{cup}^{k})}}[d]\ar^{\simeq}[ru] & \\
  & B \ar^-{f}[r]\ar_-{c}[ld]&  \mathring{\mathcal{A}}^{\wedge}(\downarrow^k,\emptyset) \ar^-{\mathrm{clos}_k}[r]& \mathring{\mathcal{A}}^{\wedge}(\vec{\varnothing},\mathrm{Cir}(\mathrm{cap}^{k}, \mathrm{cup}^{k})) \ar^{\simeq}[rd] & \\
 A \ar^{}[rrrr]&  &  & & \mathring{\mathcal{A}}^{\wedge}(\vec{\varnothing},S)}
\end{equation}
The top and bottom pentagons are the commutative diagram \eqref{eq:thenewdiagram3}, the square on the left is the commutative diagram on the left of \eqref{eq:thenewdiagram2}, the inner square involving $f$, $g$ and $h$ is the commutative diagram on the right of \eqref{eq:thenewdiagram2}.  By Lemma~\ref{commut-prop} we have $g= \bullet\circ_{\mathcal A} \nu^{\otimes k}$. The inner square involving $\mathrm{clos}_k$ is commutative  by Lemma~\ref{themapcsalpha}$(a)$. The square on the right is commutative by Lemma~\ref{themapcsalpha}$(b)$. All this implies the commutativity of the outer square which is exactly \eqref{eq:thenewdiagram}. This finishes the proof.
\end{proof}

\begin{lemma}\label{r20210921-1} The diagram
\begin{equation}
\xymatrix{\mathring{\mathcal{A}}^\wedge(\downarrow \uparrow)_{k-2}\ar_{\mathrm{cs}^{\nu}_{+-}}[d]
\ar^{\ \ \mathrm{KIIMap}_{{\mathring{\mathcal A}}^\wedge,k}^{1}}[rrr]  & & &\mathring{\mathcal{A}}^\wedge_k\ar^{\mathrm{cs}^{\nu}}[d]\\
\mathring{\mathcal{A}}^\wedge(\downarrow \uparrow)_{k-2}\ar^{\ \ \mathrm{KIIMap}_{{\mathring{\mathcal A}}^\wedge,k}^{s,1}}[rrr] &  & &\mathring{\mathcal{A}}^\wedge_k}.
\end{equation}
commutes, where the maps $\mathrm{cs}^{\nu}_{+-}$ and $\mathrm{cs}^{\nu}$ are given in~\eqref{equ:mapcs-nuopen} and~\eqref{cs-nu-total} respectively. 
\end{lemma}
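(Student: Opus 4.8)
The plan is to unwind the definitions of the four maps appearing in the square and reduce the claimed commutativity $\mathrm{cs}^{\nu}\circ \mathrm{KIIMap}^{1}_{\mathring{\mathcal A}^\wedge,k} = \mathrm{KIIMap}^{s,1}_{\mathring{\mathcal A}^\wedge,k}\circ \mathrm{cs}^{\nu}_{+-}$ to the already-established commutativity in Lemma~\ref{rlemma20211031} (the square for $\lambda^{\nu}_k$) together with standard compatibilities of $Z_{\mathcal A}$ with the tensor product and composition, and the centrality/sliding results Lemmas~\ref{commut-prop}, \ref{slide-nu}, \ref{themapzalgebr}. First I would recall that $\mathrm{KIIMap}^{1}_{\mathring{\mathcal A}^\wedge,k}(x)= Z_{\mathcal A}(\mathrm{cap}^{\mathcal T}_{(+-)(+-)})\circ_{\mathcal A}(\mathrm{Id}^{\mathcal A}_{+-}\otimes_{\mathcal A}x)\circ_{\mathcal A}Z_{\mathcal A}(\mathrm{cup}^{\mathcal T}_{(+-)(+-)})$ (Definition~\ref{r:2022.12.27KIIMaps1}) while $\mathrm{KIIMap}^{s,1}_{\mathring{\mathcal A}^\wedge,k}(x)=(\mathrm{cap}_{+-+-})_{\mathcal A}\circ_{\mathcal A}((\mathrm{Id}_{+-})_{\mathcal A}\otimes_{\mathcal A}x)\circ_{\mathcal A}(\mathrm{cup}_{+-+-})_{\mathcal A}$ (Lemma~\ref{themapsKIIsimple}); the difference between the two is precisely the ``decorated caps and cups'' $Z_{\mathcal A}(\mathrm{cup}^{\mathcal T}_{(+-)(+-)})$ versus the bare diagram $(\mathrm{cup}_{+-+-})_{\mathcal A}$, and this is exactly what $\mathrm{cs}^{\nu}_{+-}$ is designed to absorb, by its formula \eqref{equ:mapcs-nu+-}.

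The key computational step I would carry out is: starting from $\mathrm{cs}^{\nu}\big(\mathrm{KIIMap}^1_{\mathring{\mathcal A}^\wedge,k}(x)\big)$, use Lemma~\ref{themapcsalpha}(a)--(c) to move the operator $\mathrm{cs}^{\nu}$ through the closure/composition structure. Concretely, $\mathrm{KIIMap}^1_{\mathring{\mathcal A}^\wedge,k}(x)$ is a closure (in the sense of $\mathrm{clos}_{k}$ applied to a composite in $\mathring{\mathcal A}^{\wedge}(\downarrow^k,\emptyset)$ built from caps, cups, $x$ and the $\downarrow\uparrow\downarrow\uparrow$ strands), so applying $\mathrm{cs}^{\nu}$ amounts, by Lemma~\ref{themapcsalpha}(a), to precomposing the interior morphism by $\nu^{\otimes k}$; I then push these $\nu$'s along the strands using Lemmas~\ref{commut-prop} (centrality of $\nu\otimes z(\nu)\otimes\mathrm{Id}$) and \ref{slide-nu} (sliding $\nu$ past cups/caps, turning a $\nu$ on a downward strand into $z(\nu)$ on the returning upward strand), and recognize the result as $\mathrm{KIIMap}^{s,1}_{\mathring{\mathcal A}^\wedge,k}$ evaluated on the element obtained from $x$ by the decorations that define $\mathrm{cs}^{\nu}_{+-}(x)$ in \eqref{equ:mapcs-nu+-}. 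In effect I would assemble the square as an outer rectangle built from: (i) the $\lambda^{\nu}_k$-square of Lemma~\ref{rlemma20211031}, rewriting $\mathrm{KIIMap}^1$ and $\mathrm{KIIMap}^{s,1}$ as compositions $\mathring{\mathcal A}^\wedge(\mathrm{cap}_{+-+-})_0\otimes \mathring{\mathcal A}^\wedge(\downarrow\uparrow\downarrow\uparrow)_{k-2}\otimes \mathring{\mathcal A}^\wedge(\mathrm{cup}_{+-+-})_0\to \mathring{\mathcal A}^\wedge_k$ precomposed with the appropriate maps $\mathring{\mathcal A}^\wedge(\downarrow\uparrow)_{k-2}\to \mathring{\mathcal A}^\wedge(\mathrm{cap}_{+-+-})_0\otimes \mathring{\mathcal A}^\wedge(\downarrow\uparrow\downarrow\uparrow)_{k-2}\otimes \mathring{\mathcal A}^\wedge(\mathrm{cup}_{+-+-})_0$ sending $x$ to $Z_{\mathcal A}(\mathrm{cup}^{\mathcal T}_{(+-)(+-)})\otimes x\otimes \ldots$ etc.; and (ii) a triangle identity comparing the effect of $\lambda^{\nu}_k$ on the $\downarrow\uparrow\downarrow\uparrow$-slot against $\mathrm{cs}^{\nu}_{+-}$ on the $\downarrow\uparrow$-slot after incorporating the $Z_{\mathcal A}(\mathrm{cap}^{\mathcal T}_{(+-)(+-)})$, $Z_{\mathcal A}(\mathrm{cup}^{\mathcal T}_{(+-)(+-)})$ factors.

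The main obstacle I anticipate is precisely step (ii): the bookkeeping needed to match $\mathrm{cs}^{\nu}_{+-}$ (whose defining formula \eqref{equ:mapcs-nu+-} bundles together $\nu\otimes z(\nu)$, the pair $(\mathrm{cup}_{+-+-})_{\mathcal A}\circ_{\mathcal A}(\mathrm{cap}_{+-+-})_{\mathcal A}$, $\mathrm{cs}^{\nu}(x)$, and $Z_{\mathcal A}(\mathrm{cup}^{\mathcal T}_{(+-)(+-)})$) against the $\lambda^{\nu}_k$-square, and in particular verifying that the extra $\nu$'s produced by $\mathrm{cs}^{\nu}$ on the ``outer'' closure circles reorganize correctly — this is a diagrammatic identity best proved, as the authors do throughout \S\ref{sec:3:2:3}, by an explicit picture computation invoking Lemmas~\ref{slide-nu}, \ref{themapzalgebr}, \ref{commut-prop}. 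I would present the argument as: (1) rewrite both horizontal arrows of the square as $(\text{composition into }\mathring{\mathcal A}^\wedge_k)\circ(\text{insertion into the }\downarrow\uparrow\downarrow\uparrow\text{-slot})$; (2) observe the insertion maps intertwine $\mathrm{cs}^{\nu}_{+-}$ with $\lambda^{\nu}_k$ on the middle factor and are the identity on the $(\mathrm{cap}_{+-+-})_0$, $(\mathrm{cup}_{+-+-})_0$ factors, up to the decorations absorbed in the definition of $\mathrm{cs}^{\nu}_{+-}$; (3) invoke Lemma~\ref{rlemma20211031}. Everything else — naturality of $Z_{\mathcal A}$, associativity of $\circ_{\mathcal A}$ — is routine, so the proof itself should be short, essentially a citation of Lemma~\ref{rlemma20211031} plus a remark unwinding the definitions and one sliding identity.
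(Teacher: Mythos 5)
Your proposal is correct and follows essentially the same route as the paper: the paper's proof is exactly a schematic computation unwinding the definitions of $\mathrm{KIIMap}^{1}_{\mathring{\mathcal A}^\wedge,k}$, $\mathrm{KIIMap}^{s,1}_{\mathring{\mathcal A}^\wedge,k}$ and $\mathrm{cs}^{\nu}_{+-}$, sliding the $\nu$'s via Lemma~\ref{slide-nu} and then invoking the $\lambda^{\nu}_k$-square of Lemma~\ref{rlemma20211031}, which is precisely the reduction you outline (your appeal to Lemma~\ref{themapcsalpha} is already packaged inside Lemma~\ref{rlemma20211031}).
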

\begin{proof}
We use the schematic representations of the maps $\mathrm{cs}^{\nu}_{+-}$, $\mathrm{KIIMap}^1_{{\mathring{\mathcal A}}^\wedge,k}$ and $\mathrm{KIIMap}^{s,1}_{{\mathring{\mathcal A}}^\wedge,k}$ given in Figures~\ref{figureEx1_49_abstract}, \ref{figureEx1_52_abstract} and \ref{figureEx1_53_abstract} to give a schematic proof in Figure~\ref{figureEx1_54_abstract}. In the second equality we have used Lemma~\ref{slide-nu} and in the third equality Lemma~\ref{rlemma20211031}.
\begin{figure}[ht!]
			\centering
            \includegraphics[scale=0.7]{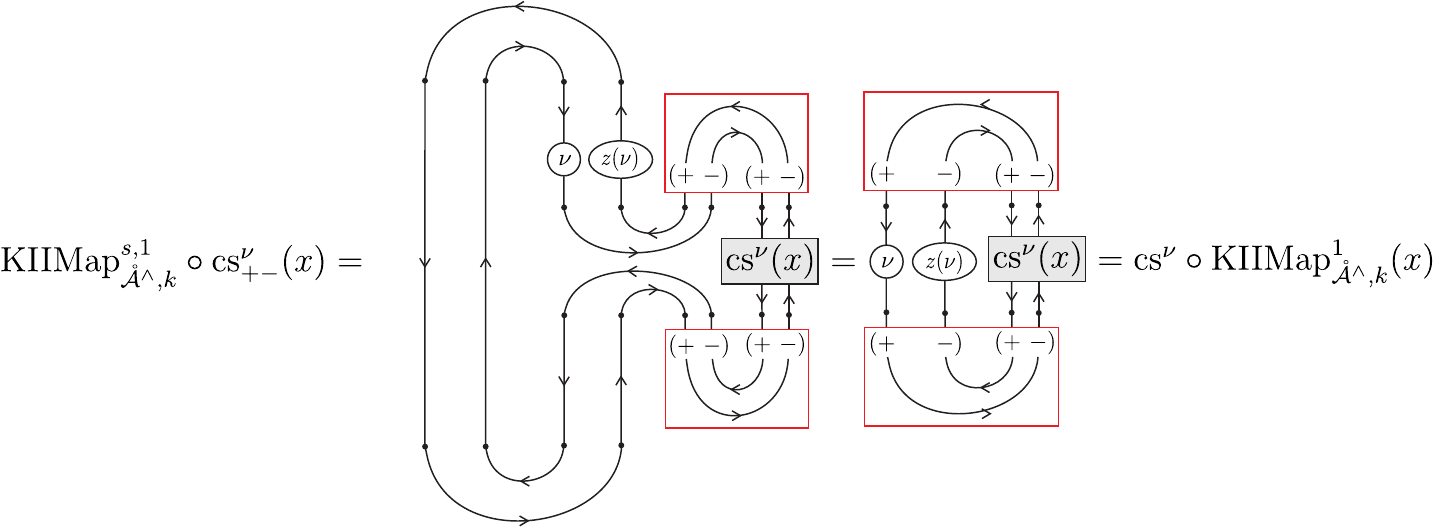}					\caption{The equality $\mathrm{KIIMap}^{s,1}_{{\mathring{\mathcal{A}}}^\wedge,k}\circ \mathrm{cs}^\nu_{+-}(x)=\mathrm{cs}^\nu\circ \mathrm{KIIMap}^{1}_{{\mathring{\mathcal{A}}}^\wedge,k}(x)$. We represent $x\in{\mathring{\mathcal{A}}}^{\wedge}(\downarrow\uparrow)_{k-2}$  by the grey-colored box.}
\label{figureEx1_54_abstract} 								
\end{figure}

\end{proof}

Consider the oriented Brauer diagrams $\mathrm{cap}_{-++-}:= \mathrm{cap}_{-+}\otimes \mathrm{cap}_{+-}\in\underline{\vec{\mathcal Br}}(-++-,\emptyset)$ and  $\mathrm{cup}_{-++-}:= \mathrm{cup}_{-+}\otimes \mathrm{cup}_{+-}\in\underline{\vec{\mathcal Br}}(\emptyset, -++-)$. Here the oriented Brauer diagrams  $\mathrm{cap}_{u}$ and $\mathrm{cup}_{u}$ for $u\in\{-+,+-\}$ are as in Definition~\ref{def:cup:capinBR}.

\begin{definition} Define the multilinear map
$$\langle \ ,\ , \  \rangle_{k}: \mathring{\mathcal{A}}^{\wedge}(\mathrm{cap}_{-++-})_{0}\otimes  \mathring{\mathcal{A}}^{\wedge}(\mathrm{cup}_{-++-})_{0}\otimes  \mathring{\mathcal{A}}^{\wedge}(\downarrow\downarrow\uparrow)_{k-2}\longrightarrow  \mathring{\mathcal{A}}^{\wedge}_k$$

by $$\langle x,y,z\rangle_{k}= x\circ_{\mathcal A} ((\mathrm{Id}_-)_{\mathcal A}\otimes_{\mathcal A} z)\circ_{\mathcal A} y $$
for any $x\in \mathring{\mathcal{A}}^{\wedge}(\mathrm{cap}_{-++-})_{0}$, $y\in \mathring{\mathcal{A}}^{\wedge}(\mathrm{cup}_{-++-})_{0}$, $z\in \mathring{\mathcal{A}}^{\wedge}(\downarrow\downarrow\uparrow)_{k-2}$.  See Figure~\ref{figureEx1_60_abstract} for a schematic representation of this map.
\begin{figure}[ht!]
			\centering
            \includegraphics[scale=0.8]{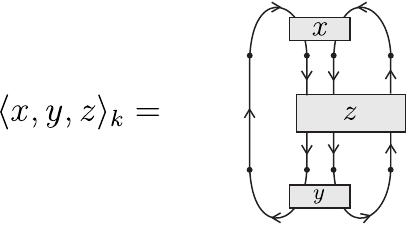}					\caption{Schematic representations of the map $\langle \ ,\ , \  \rangle_{k}$. Notice that in the grey-colored box labelled by $z$  there is implicitly $k-2$ circles which are not shown in the figure.}
\label{figureEx1_60_abstract} 								
\end{figure}
\end{definition}

\begin{lemma}\label{r20211102lemequalityofpairing} Let $x\in \mathring{\mathcal{A}}^{\wedge}(\mathrm{cap}_{-++-})_{0}$, $y\in \mathring{\mathcal{A}}^{\wedge}(\mathrm{cup}_{-++-})_{0}$, $z\in \mathring{\mathcal{A}}^{\wedge}(\downarrow\uparrow)_{k-2}$ and $v\in\mathring{\mathcal{A}}^{\wedge}(\downarrow\downarrow)_{0}$,  with $v$  invertible. Set $${u}:= (\mathrm{Id}_-)_{\mathcal A}\otimes_{\mathcal A} v\otimes_{\mathcal A}(\mathrm{Id}_-)_{\mathcal A}\in\mathring{\mathcal{A}}^{\wedge}(\uparrow\downarrow\downarrow\uparrow)_{0}.$$
Then
\begin{equation}\label{r20211102equ1}
\big\langle x\circ_{\mathcal A} u, u^{-1}\circ_{\mathcal A} y, \mathrm{dbl}_1(z)\rangle = \langle x, y, \mathrm{dbl}_1(z)\big\rangle_{k} 
\end{equation}

\end{lemma}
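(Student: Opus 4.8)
The plan is to unwind both sides of \eqref{r20211102equ1} using the definition of $\langle\ ,\ ,\ \rangle_k$ and to exhibit the two resulting elements of $\mathring{\mathcal{A}}^{\wedge}_k$ as literally equal via the associativity and the exchange identities already established for $\circ_{\mathcal A}$ and $\otimes_{\mathcal A}$ (Lemma~\ref{r:tenscompcoinvprop}), together with the commutation Lemma~\ref{commut-prop}. Concretely, the left-hand side expands to
$$
\big(x\circ_{\mathcal A} u\big)\circ_{\mathcal A}\big((\mathrm{Id}_-)_{\mathcal A}\otimes_{\mathcal A}\mathrm{dbl}_1(z)\big)\circ_{\mathcal A}\big(u^{-1}\circ_{\mathcal A} y\big),
$$
and the key point is that inserting the invertible element $u=(\mathrm{Id}_-)_{\mathcal A}\otimes_{\mathcal A} v\otimes_{\mathcal A}(\mathrm{Id}_-)_{\mathcal A}$ after $x$ and its inverse before $y$ cancels: by associativity of $\circ_{\mathcal A}$ we may move $u$ rightward past $(\mathrm{Id}_-)_{\mathcal A}\otimes_{\mathcal A}\mathrm{dbl}_1(z)$. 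First I would use the compatibility of $\otimes_{\mathcal A}$ and $\circ_{\mathcal A}$ (Lemma~\ref{r:tenscompcoinvprop}$(d)$) to write $u\circ_{\mathcal A}\big((\mathrm{Id}_-)_{\mathcal A}\otimes_{\mathcal A}\mathrm{dbl}_1(z)\big)$ and $\big((\mathrm{Id}_-)_{\mathcal A}\otimes_{\mathcal A}\mathrm{dbl}_1(z)\big)\circ_{\mathcal A} u$ as tensor products of pieces living over $\uparrow$ and over the remaining strands, observing that $\mathrm{dbl}_1(z)\in\mathring{\mathcal{A}}^{\wedge}(\downarrow\downarrow\uparrow)_{k-2}$ has the shape needed for $v\in\mathring{\mathcal{A}}^{\wedge}(\downarrow\downarrow)_0$ to slide through it on the $\downarrow\downarrow$-part.

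The heart of the argument is therefore the identity
$$
u\circ_{\mathcal A}\big((\mathrm{Id}_-)_{\mathcal A}\otimes_{\mathcal A}\mathrm{dbl}_1(z)\big)=\big((\mathrm{Id}_-)_{\mathcal A}\otimes_{\mathcal A}\mathrm{dbl}_1(z)\big)\circ_{\mathcal A} u,
$$
i.e. that $u$ is central enough to commute past $(\mathrm{Id}_-)_{\mathcal A}\otimes_{\mathcal A}\mathrm{dbl}_1(z)$. This is an instance of Lemma~\ref{commut-prop} (commutation of a Jacobi diagram supported on a sub-word of strands with an element inserted on a disjoint strand), applied with the sub-word decomposition $\uparrow\downarrow\downarrow\uparrow$: the element $v$ sits on the middle two ($\downarrow\downarrow$) strands while the two $\uparrow$-caps/cups of $\mathrm{cap}_{-++-}$, $\mathrm{cup}_{-++-}$ are on the outer strands; more precisely one applies Lemma~\ref{commut-prop} on the $\downarrow\downarrow$-block to move $v$ across $\mathrm{dbl}_1(z)$, which is allowed because $\mathrm{dbl}_1(z)$ is the doubling of $z$ along the $\downarrow$-strand and hence, on the doubled block, has the ``identity-on-that-block away from the dashed part'' structure required. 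Once this commutation is in hand, associativity gives
$$
\big(x\circ_{\mathcal A} u\big)\circ_{\mathcal A}\big((\mathrm{Id}_-)_{\mathcal A}\otimes_{\mathcal A}\mathrm{dbl}_1(z)\big)\circ_{\mathcal A}\big(u^{-1}\circ_{\mathcal A} y\big)
= x\circ_{\mathcal A}\big((\mathrm{Id}_-)_{\mathcal A}\otimes_{\mathcal A}\mathrm{dbl}_1(z)\big)\circ_{\mathcal A}\big(u\circ_{\mathcal A} u^{-1}\circ_{\mathcal A} y\big),
$$
and $u\circ_{\mathcal A} u^{-1}=(\mathrm{Id}_{-++-})_{\mathcal A}$ (invertibility of $v$ forces invertibility of $u$, with inverse $(\mathrm{Id}_-)_{\mathcal A}\otimes_{\mathcal A} v^{-1}\otimes_{\mathcal A}(\mathrm{Id}_-)_{\mathcal A}$, again by Lemma~\ref{r:tenscompcoinvprop}$(d)$), so the right side collapses to $x\circ_{\mathcal A}\big((\mathrm{Id}_-)_{\mathcal A}\otimes_{\mathcal A}\mathrm{dbl}_1(z)\big)\circ_{\mathcal A} y=\langle x,y,\mathrm{dbl}_1(z)\rangle_k$, as desired.

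The step I expect to be the main obstacle is verifying that $\mathrm{dbl}_1(z)$ really has the structural shape that makes Lemma~\ref{commut-prop} (or its doubled variant Lemma~\ref{commut-prop-doubl}) applicable for sliding $v$ through it: one must check that the doubled strand together with the ancillary $\uparrow$-strand forms a sub-word on which $v$ acts, disjoint from where the dashed part of $z$ meets the boundary, so that the commutation lemma is invoked with the correct decomposition $w=w_1\otimes\cdots\otimes w_2$. This is the point where one should draw the schematic picture (in the spirit of Figure~\ref{figureEx1_60_abstract}) and track exactly which strands carry $v$ and which carry the closure caps; the remaining manipulations are purely formal applications of the associativity/compatibility identities of Lemma~\ref{r:tenscompcoinvprop}, hence routine. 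I would phrase the final write-up as a short sequence of equalities in $\mathring{\mathcal{A}}^{\wedge}_k$ with a figure, citing Lemma~\ref{r:tenscompcoinvprop}, Lemma~\ref{commut-prop}, and the invertibility of $v$.
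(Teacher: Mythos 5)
Your proposal is correct and is essentially the paper's own argument, which is given there purely as a schematic picture (Figure~\ref{figureEx1_68_abstract}): one slides $u=(\mathrm{Id}_-)_{\mathcal A}\otimes_{\mathcal A} v\otimes_{\mathcal A}(\mathrm{Id}_-)_{\mathcal A}$ through $(\mathrm{Id}_-)_{\mathcal A}\otimes_{\mathcal A}\mathrm{dbl}_1(z)$ and cancels it against $u^{-1}$, exactly as you describe. The only precision worth adding is that the key commutation $(v\otimes_{\mathcal A}\mathrm{Id}_-^{\mathcal A})\circ_{\mathcal A}\mathrm{dbl}_1(z)=\mathrm{dbl}_1(z)\circ_{\mathcal A}(v\otimes_{\mathcal A}\mathrm{Id}_-^{\mathcal A})$ is an instance of Lemma~\ref{commut-prop-doubl} (with $w_1=\emptyset$, $n=2$, $D=z$, $y=v$) rather than of Lemma~\ref{commut-prop}, which only covers single-strand insertions — a point your own "main obstacle" paragraph already identifies correctly.
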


\begin{proof} In Figure~\ref{figureEx1_68_abstract} we give a schematic proof of the  identity~\eqref{r20211102equ1}. 
\begin{figure}[ht!]
			\centering
            \includegraphics[scale=0.8]{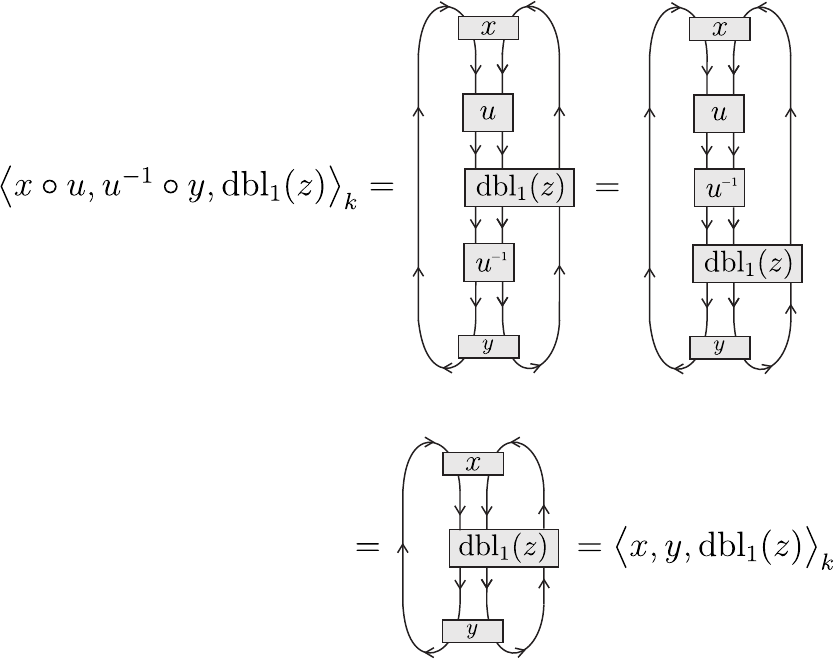}					\caption{Schematic proof of the identity~\eqref{r20211102equ1}.}
\label{figureEx1_68_abstract} 								
\end{figure}
\end{proof}

\begin{lemma}\label{r20211021lem1} Let 
$$\alpha_{t}, \beta_{t}\in \mathring{\mathcal{A}}^{\wedge}(\mathrm{cap}_{-++-})_{0}, \quad \quad  \alpha_{b}, \beta_{b}\in \mathring{\mathcal{A}}^{\wedge}(\mathrm{cup}_{-++-})_{0},$$
be the elements given by
\begin{gather*}
\begin{split}
\alpha_{t} &={Z}_{\mathcal A}(\mathrm{cap}^{\mathcal T}_{-((++)-)})\circ_{\mathcal A} ((\mathrm{Id}_{-})_{\mathcal A}\otimes_{\mathcal A} \nu\otimes_{\mathcal A} \nu\otimes_{\mathcal A}(\mathrm{Id}_-)_{\mathcal A}), \\
\alpha_{b} &= {Z}_{\mathcal A}(\mathrm{cup}^{\mathcal T}_{-((++)-)}),\\
\beta_{t} &= \big((\mathrm{cap}_{-+})_{\mathcal A}\otimes_{\mathcal A} (\mathrm{cap}_{+-})_{\mathcal A}\big)\circ_{\mathcal A}\Big((\mathrm{Id}_-)_{\mathcal A}\otimes_{\mathcal A} \mathrm{dbl}_1\big((\nu\otimes_{\mathcal A} z(\nu)\otimes_{\mathcal A}{Z}_{\mathcal A}(\mathrm{cap}^{\mathcal T}_{(+-)(+-)})) \\
& \qquad\qquad \qquad \qquad\qquad \qquad  \qquad\qquad \qquad \qquad \qquad \qquad \circ_{\mathcal A} ((\mathrm{cup}_{+-+-})_{\mathcal A}\otimes (\mathrm{Id}_{+-})_{\mathcal A})\big)\Big),\\
\beta_{b}& =\Big((\mathrm{Id}_{-})_{\mathcal A}\otimes_{\mathcal A} \mathrm{dbl}_1\big(((\mathrm{cap}_{+-+-})_{\mathcal A}\otimes_{\mathcal A}(\mathrm{Id}_{+-})_{\mathcal A})\circ_{\mathcal A}((\mathrm{Id}_{+-})_{\mathcal A}\otimes_{\mathcal A}{Z}_{\mathcal A}(\mathrm{cup}^{\mathcal T}_{(+-)(+-)}))\big)\Big) \\
&\qquad\qquad \qquad \qquad\qquad \qquad  \qquad\qquad \qquad \qquad \qquad \qquad \circ_{\mathcal A} \big((\mathrm{cup}_{-+})_{\mathcal A}\otimes_{\mathcal A} (\mathrm{cup}_{+-})_{\mathcal A}\big).
\end{split}
\end{gather*}
In Figures~\ref{figureEx1_61_abstract} and~\ref{figureEx1_62_abstract}  we give a schematic representations of these elements using Conventions~\ref{convention:red-box} and~\ref{convention-doublingblueboxes}. 
\begin{figure}[ht!]
			\centering
            \includegraphics[scale=0.7]{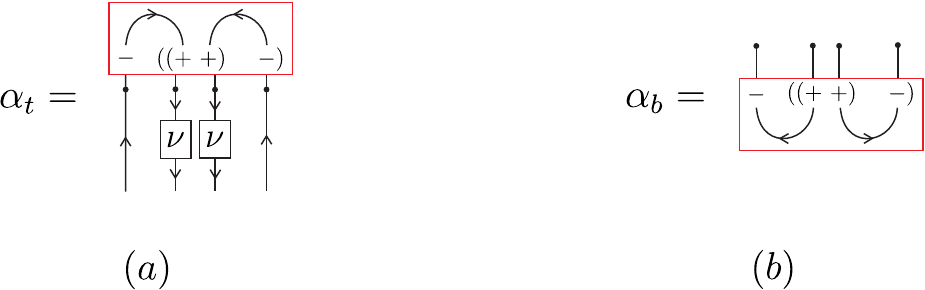}					\caption{Schematic representation of the elements  $\alpha_{t}$ and $\alpha_{b}$.}
\label{figureEx1_61_abstract} 								
\end{figure}

\begin{figure}[ht!]
			\centering
            \includegraphics[scale=0.7]{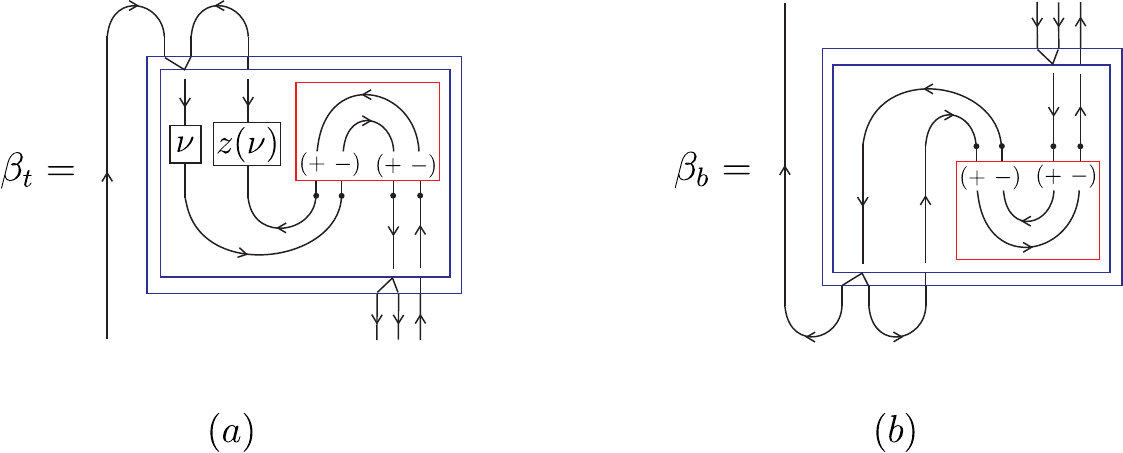}					\caption{Schematic representation of the elements  $\beta_{t}$ and $\beta_{b}$. }
\label{figureEx1_62_abstract} 								
\end{figure}
Then for any $k\geq 2$ and any $x\in\mathring{\mathcal{A}}^\wedge(\downarrow\uparrow)_{k-2}$ we have the  following equalities in $\mathring{\mathcal A}^\wedge_k$. 
\begin{equation}\label{r20211021lem1equ1}
\big\langle \alpha_{t}, \alpha_{b}, \mathrm{dbl}_1(\mathrm{cs}^{\nu}(x)) \big\rangle_{k} = \mathrm{cs}^\nu\circ \mathrm{KIIMap}^2_{{\mathring{\mathcal{A}}}^\wedge,k}(x)
\end{equation}
\begin{equation}\label{r20211021lem1equ2}
\big\langle \beta_{t}, \beta_{b}, \mathrm{dbl}_1(\mathrm{cs}^{\nu}(x)) \big\rangle_{k} = \mathrm{KIIMap}^{s,2}_{{\mathring{\mathcal{A}}}^\wedge,k}\circ \mathrm{cs}^\nu_{+-}(x).
\end{equation}
Here we are using Convention~\ref{convention:co_i-dbl_i} for the doubling map.
\end{lemma}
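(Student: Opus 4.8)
The plan is to establish the two claimed identities \eqref{r20211021lem1equ1} and \eqref{r20211021lem1equ2} by unfolding the definitions of all the maps involved and reducing each side to the multilinear pairing $\langle\,\cdot\,,\,\cdot\,,\,\cdot\,\rangle_k$, using the commutation lemmas for Jacobi diagrams (Lemmas~\ref{slide-nu}, \ref{commut-prop}, \ref{commut-prop-doubl}, \ref{themapzalgebr}) together with the compatibility of $Z_{\mathcal A}$ with tensor product and composition (Theorem~\ref{thmkontsevichintegral}$(b)$, $(c)$). Since the statement is a purely combinatorial identity in the graded-completed spaces of Jacobi diagrams, the proof is best carried out via the schematic (diagrammatic) representations introduced in Conventions~\ref{convention:red-box} and \ref{convention-doublingblueboxes}, exactly as done for the analogous Lemma~\ref{r20210921-1}.

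For \eqref{r20211021lem1equ1}: I would start from the right-hand side $\mathrm{cs}^\nu\circ\mathrm{KIIMap}^2_{\mathring{\mathcal A}^\wedge,k}(x)$, expand $\mathrm{KIIMap}^{2}_{\mathring{\mathcal A}^\wedge,k}(x)$ using its definition \eqref{KIIMap2forX} (specialized to $\mathbf A$), namely $Z_{\mathcal A}(\mathrm{cap}^{\mathcal T}_{-((++)-)})\circ_{\mathcal A}((\mathrm{Id}^{\mathcal A}_-\otimes_{\mathcal A}\mathrm{dbl}_1(x))\circ_{\mathcal A} Z_{\mathcal A}(\mathrm{cup}^{\mathcal T}_{-((++)-)}))$, and then apply $\mathrm{cs}^\nu$. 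The automorphism $\mathrm{cs}^\nu$ inserts $\nu$ (resp. $z(\nu)$) on each circle component; the key point is that by Lemma~\ref{themapcsalpha}$(a)$ this insertion can be pushed through the composition with the cup/cap closures, producing exactly the $\nu\otimes_{\mathcal A}\nu$ factor that appears in $\alpha_t$ and the $\mathrm{dbl}_1(\mathrm{cs}^\nu(x))$ in the third slot — here one uses that $\mathrm{dbl}_1$ commutes with $\mathrm{cs}^\nu$ up to the appropriate bookkeeping on the doubled circle (so $\mathrm{cs}^\nu(\mathrm{dbl}_1(x))$ and $\mathrm{dbl}_1(\mathrm{cs}^\nu(x))$ differ only by the displayed $\nu$-factors absorbed into $\alpha_t$). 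Comparing with the definition of $\langle\alpha_t,\alpha_b,\mathrm{dbl}_1(\mathrm{cs}^\nu(x))\rangle_k = \alpha_t\circ_{\mathcal A}((\mathrm{Id}_-)_{\mathcal A}\otimes_{\mathcal A}\mathrm{dbl}_1(\mathrm{cs}^\nu(x)))\circ_{\mathcal A}\alpha_b$ and substituting the definitions of $\alpha_t$, $\alpha_b$ yields the equality; the verification is a diagram chase of the type shown in Figures~\ref{figureEx1_54_abstract}, \ref{figureEx1_68_abstract}.

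For \eqref{r20211021lem1equ2}: I would start from $\mathrm{KIIMap}^{s,2}_{\mathring{\mathcal A}^\wedge,k}\circ\mathrm{cs}^\nu_{+-}(x)$, expand $\mathrm{cs}^\nu_{+-}(x)$ using its defining formula \eqref{equ:mapcs-nu+-} — which already builds in the factors $\nu$, $z(\nu)$, the $Z_{\mathcal A}$ of the paired cups/caps, and $\mathrm{cs}^\nu(x)$ — then apply $\mathrm{KIIMap}^{s,2}_{\mathring{\mathcal A}^\wedge,k}$ via \eqref{KIIMap2forA-2}, i.e. post- and pre-compose with $(\mathrm{cap}_{-++-})_{\mathcal A}$ and $(\mathrm{cup}_{-++-})_{\mathcal A}$ after applying $\mathrm{dbl}_1$ and tensoring with $(\mathrm{Id}_-)_{\mathcal A}$. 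The crucial step is that $\mathrm{dbl}_1$ distributes over composition and tensor product and commutes with the insertion of $\nu,z(\nu)$ on strands not being doubled (Lemma~\ref{commut-prop-doubl} and the functoriality of $\mathrm{dbl}_{\mathcal A}$ from Lemma~\ref{r:2023-03-04r1}); this lets me collect all the ``decoration'' pieces coming from $\mathrm{cs}^\nu_{+-}$ and from $\mathrm{KIIMap}^{s,2}$ into precisely $\beta_t$ (top) and $\beta_b$ (bottom), leaving $\mathrm{dbl}_1(\mathrm{cs}^\nu(x))$ in the middle slot, which is the definition of $\langle\beta_t,\beta_b,\mathrm{dbl}_1(\mathrm{cs}^\nu(x))\rangle_k$. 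I expect the main obstacle here to be the careful bookkeeping of where the doubling operation $\mathrm{dbl}_1$ acts relative to the various $\nu$- and $z(\nu)$-insertions and the closure cups/caps: one must verify that doubling a strand carrying $\nu\otimes_{\mathcal A} z(\nu)\otimes_{\mathcal A} Z_{\mathcal A}(\mathrm{cap}^{\mathcal T}_{(+-)(+-)})$ reproduces exactly the diagram appearing inside $\beta_t$ (and dually for $\beta_b$), which requires Lemma~\ref{slide-nu} to slide $\nu$ past the relevant portions and the compatibility of $\mathrm{dbl}_1$ with $Z_{\mathcal A}$ on the paired cup/cap tangles. Once these two pieces are matched, both sides reduce to the same element of $\mathring{\mathcal A}^\wedge_k$ and the proof concludes, paralleling the schematic arguments in Figures~\ref{figureEx1_54_abstract}--\ref{figureEx1_68_abstract}.
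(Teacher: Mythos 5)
Your proposal is correct and follows essentially the same route as the paper: both proofs are diagram chases that unfold the definitions of $\mathrm{KIIMap}^2_{\mathring{\mathcal A}^\wedge,k}$, $\mathrm{KIIMap}^{s,2}_{\mathring{\mathcal A}^\wedge,k}$ and $\mathrm{cs}^\nu_{+-}$, and then use the $\mathrm{cs}^\nu$--closure compatibility (the paper packages this as Lemma~\ref{rlemma20211031}, which is exactly what Lemma~\ref{themapcsalpha}$(a)$ yields in this situation), Lemma~\ref{slide-nu}, and the compatibility of $\mathrm{dbl}_1$ with composition and tensor product, matching the schematic arguments of Figures~\ref{figureEx1_63_abstract} and~\ref{figureEx1_64_abstract}. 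One small correction that does not affect the argument: $\mathrm{dbl}_1$ and $\mathrm{cs}^\nu$ commute on the nose (they act on the segment and on the $k-2$ circles respectively), and the extra factor $\nu\otimes_{\mathcal A}\nu$ in $\alpha_t$ is produced by $\mathrm{cs}^\nu$ acting on the two circles created by the closure, not by a discrepancy between $\mathrm{cs}^\nu(\mathrm{dbl}_1(x))$ and $\mathrm{dbl}_1(\mathrm{cs}^\nu(x))$.
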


\begin{proof}
The two equalities  follow directly by the definitions of the elements and maps involved together with Lemma~\ref{rlemma20211031}, Lemma~\ref{slide-nu}, the commutativity of $\mathrm{cs}^\nu$ with the doubling operation and the compatibility of the composition with the doubling operation. Indeed, in Figure~\ref{figureEx1_63_abstract}  we show a schematic version of the proof of~\eqref{r20211021lem1equ1}.  
\begin{figure}[ht!]
			\centering
            \includegraphics[scale=0.75]{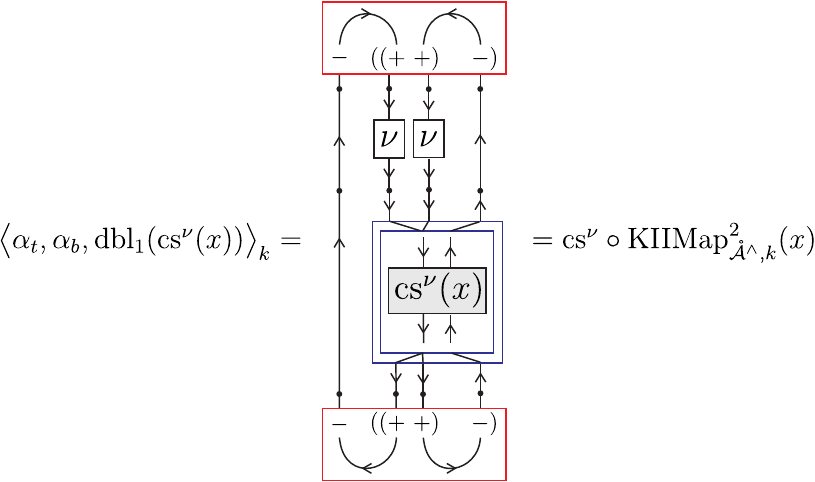}					\caption{Schematic representation of the proof of \eqref{r20211021lem1equ1}. In the second equality we use the schematic representation of $\mathrm{KIIMap}^2_{{\mathring{\mathcal A}}^\wedge,k}$ from Figure~\ref{figureEx1_52_abstract}.}
\label{figureEx1_63_abstract} 								
\end{figure}

In Figure~\ref{figureEx1_64_abstract} we show a schematic proof of \eqref{r20211021lem1equ2}, where in the second equality we use the compatibility of the doubling map $\mathrm{dbl}_1:\mathring{\mathcal{A}}^\wedge(\downarrow\uparrow)\to \mathring{\mathcal{A}}^\wedge(\downarrow\downarrow\uparrow)$ with the composition in  $\mathring{\mathcal{A}}^\wedge(\downarrow\uparrow)$.
\begin{figure}[ht!]
			\centering
            \includegraphics[scale=0.75]{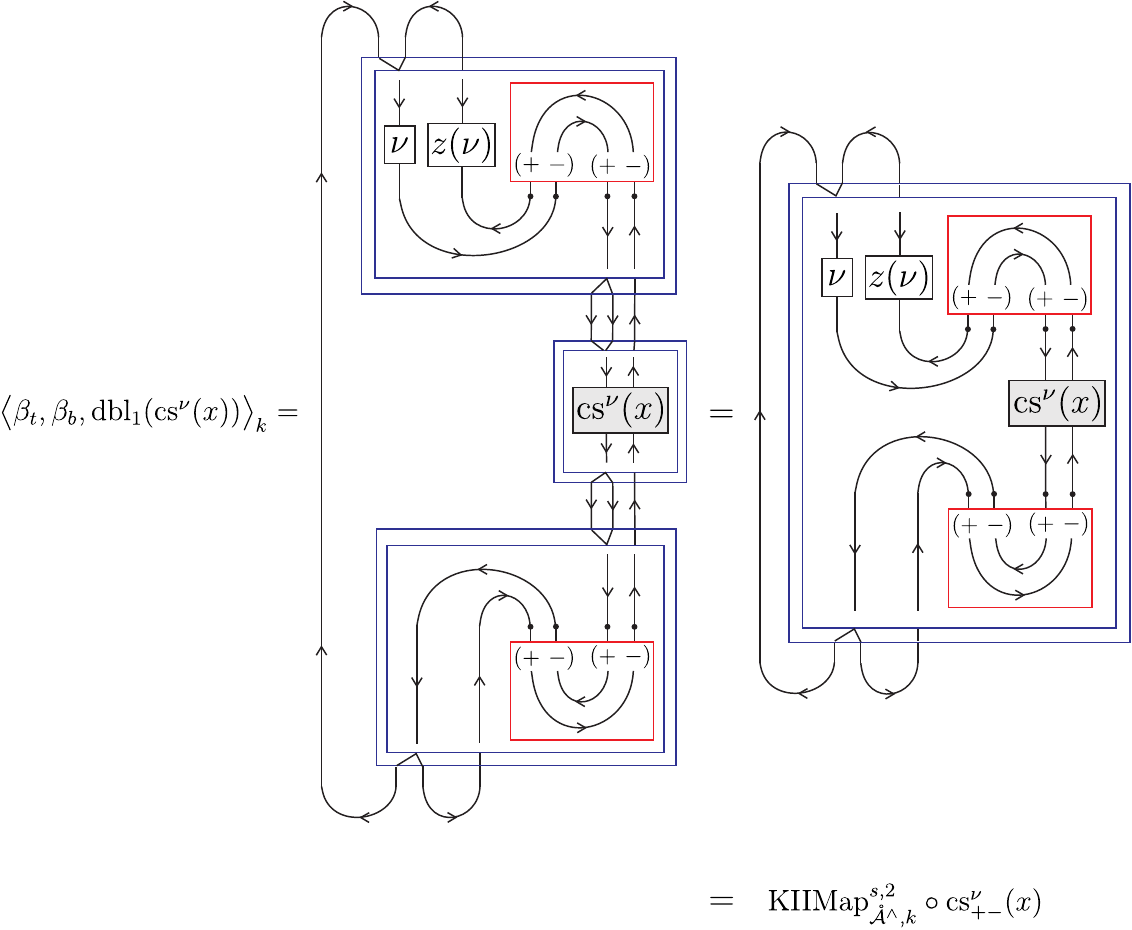}					\caption{Schematic representation of the proof of \eqref{r20211021lem1equ2}.}
\label{figureEx1_64_abstract} 								
\end{figure}

\end{proof}

\begin{lemma}\label{r20211019lem1} Let ${P}$ be the oriented Brauer diagram $(\mathrm{Id}_+\otimes \mathrm{cup}_{+-}\otimes \mathrm{Id}_-)\circ\mathrm{cup}_{+-}$. Consider the action of $\mathring{\mathcal{A}}^{\wedge}(\downarrow\downarrow\uparrow\uparrow)_0$ on $\mathring{\mathcal{A}}^{\wedge}({P})_0$  given by composition. Let $\gamma\in\mathring{\mathcal{A}}^{\wedge}(\downarrow\downarrow\uparrow\uparrow)_0$ be the element 
\begin{align*}
\includegraphics[scale=0.9]{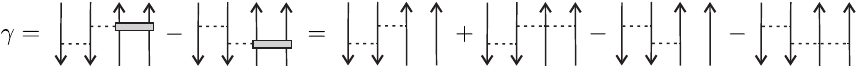}
\end{align*}
then $\gamma\cdot m =0$ for any $m\in\mathring{\mathcal{A}}^{\wedge}({P})_0$.
\end{lemma}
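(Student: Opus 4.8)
The plan is to reduce the statement to a single local identity near the inner cup of $P$, and then to verify that identity by the defining relations of Jacobi diagrams. Since the action $m\mapsto\gamma\cdot m=\gamma\circ_{\mathcal A}m$ is continuous and linear (Lemma~\ref{r:morphismsinduitsurlescoinvariants} and Notation~\ref{notationforcomp}), and $\mathring{\mathcal A}^{\wedge}(P,\emptyset)$ is topologically spanned by the classes $[\underline D]$ of Jacobi diagrams $\underline D$ on $(P,\emptyset)$, it suffices to prove $\gamma\circ_{\mathcal A}[\underline D]=0$ for every $\underline D$. Write $P=P_{\mathrm{top}}\circ P_{\mathrm{bot}}$ with $P_{\mathrm{bot}}:=\mathrm{cup}_{+-}$ and $P_{\mathrm{top}}:=\mathrm{Id}_+\otimes\mathrm{cup}_{+-}\otimes\mathrm{Id}_-$ (see Definition~\ref{def:cup:capinBR}), so that $P_{\mathrm{top}}$ carries the ``inner'' U-turn together with the four through-strands labelled $\downarrow\downarrow\uparrow\uparrow$. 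Because $\gamma$ lies in $\mathring{\mathcal A}^{\wedge}(\downarrow\downarrow\uparrow\uparrow,\emptyset)$, in the diagram $\gamma\circ_{\mathcal A}[\underline D]$ the graph of $\gamma$ occupies a neighbourhood of the top boundary of $P$; using the STU relation to slide its legs downward along the four strands one may arrange that this neighbourhood also contains the inner U-turn, so that the sub-picture formed by $\gamma$ and the inner U-turn is, by the locality of the STU, AS and IHX relations (Figure~\ref{figuraJD2_ell}), exactly the class $\gamma\circ_{\mathcal A}(P_{\mathrm{top}})_{\mathcal A}\in\mathring{\mathcal A}^{\wedge}(P_{\mathrm{top}},\emptyset)$, with $(P_{\mathrm{top}})_{\mathcal A}$ the empty Jacobi diagram on $P_{\mathrm{top}}$ (Definition~\ref{def:emptyJacdiagram}). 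Hence it is enough to prove the single identity $\gamma\circ_{\mathcal A}(P_{\mathrm{top}})_{\mathcal A}=0$; the general case then follows by substituting $0$ for this local sub-picture (an extra point, treated below, is needed when $\underline D$ has an edge running over the inner U-turn).

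To prove $\gamma\circ_{\mathcal A}(P_{\mathrm{top}})_{\mathcal A}=0$, I would expand this class as the sum of the Jacobi diagrams obtained by attaching the legs of $\gamma$ lying on the two inner strands to the two halves of the inner arc of $P_{\mathrm{top}}$. Applying the STU relation repeatedly to transport these legs around the tip of the inner U-turn, and the AS relation to record the sign produced each time a univalent vertex is moved from one side of the U-turn to the other (the two halves of the inner arc carry opposite orientations, so the linear order on the legs attached to that arc is reversed), one checks that the particular shape of $\gamma$ drawn in Figure~\ref{figuraJD_elementgamma} makes each resulting diagram coincide with its own opposite, hence vanish; equivalently, the terms cancel pairwise. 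An alternative, perhaps cleaner, route is to write $\gamma=\delta-\delta'$, where $\delta'$ is obtained from $\delta$ by the involution interchanging the two inner strands (a crossing insertion on the $\downarrow\downarrow$ part), and to note that after composition with the inner cup this crossing is absorbed, so that $\delta\circ_{\mathcal A}(P_{\mathrm{top}})_{\mathcal A}=\delta'\circ_{\mathcal A}(P_{\mathrm{top}})_{\mathcal A}$ by the commutation/sliding property recorded in Lemma~\ref{commut-prop}; the difference is then $0$.

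The step I expect to be the main obstacle is the explicit local computation in the second paragraph: carefully tracking the linear orders on the univalent vertices attached to each of the two arcs of $P$, the cyclic orders at the trivalent vertices of $\gamma$, and the signs generated by the AS relation whenever a vertex crosses the inner U-turn, so as to be certain that all terms genuinely cancel rather than merely being regrouped. A secondary subtlety is the reduction in the first paragraph when $\underline D$ has an edge running over the inner U-turn: such an edge cannot be pushed below that U-turn, so instead one slides the whole graph of $\gamma$ downward past it and re-runs the local argument at the outer U-turn of $P_{\mathrm{bot}}$, where no such edge obstructs it.
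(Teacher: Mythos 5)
Your reduction to the single local identity $\gamma\circ_{\mathcal A}(P_{\mathrm{top}})_{\mathcal A}=0$ is where the argument breaks down, because that identity is false. Unwinding the definition (see \eqref{equ:mor} and the proof of Corollary~\ref{r20211019cor1}), $\gamma=\delta([A,B])=\mathrm{dbl}_3\circ\mathrm{co}_3\big(ab-ba\big)$, where $a$ is the chord joining strands $1$ and $2$ and $b$ the chord joining strands $2$ and $3$; hence, up to a global sign, $\gamma=[a,b_3]+[a,b_4]$, where $b_i$ is the chord joining strand $2$ to the upward strand $i\in\{3,4\}$ and the bracket is the commutator for composition in $\mathring{\mathcal A}^{\wedge}(\downarrow\downarrow\uparrow\uparrow)_0$. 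By STU on strand $2$, each commutator is a single $Y$-graph, one with legs on strands $1,2,3$ and one with legs on strands $1,2,4$. Composing with $(P_{\mathrm{top}})_{\mathcal A}$ caps only strands $2$ and $3$, so one obtains the sum of a tripod with one leg on the free $\downarrow$ strand and two adjacent legs on the arc, and a tripod with legs on the free $\downarrow$ strand, the arc and the free $\uparrow$ strand. These two diagrams are not negatives of each other in $\mathring{\mathcal A}^{\wedge}(P_{\mathrm{top}},\emptyset)$: any Lie-algebra weight system separates them, since the first acts as the identity on the module attached to the free $\uparrow$ strand while the second does not. The vanishing of $\gamma\cdot m$ is a genuinely global property of the nested double cup: the leg that the doubling places on strand $3$ must be closed by the inner cup and the one it places on strand $4$ by the outer cup before the two $Y$-graphs can be identified up to the AS sign and cancelled. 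This is exactly how the paper proceeds: $m$ is kept in place as a box, a sliding step based on Lemma~\ref{commut-prop-doubl} moves the legs on the doubled strands around the two cups past $m$, and then one STU relation on each of the two downward strands followed by the AS relation gives $\gamma\cdot m=0$, with no localization at the inner cup and no case analysis on $m$.

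Two further points. Even if the local identity were true, your substitution step would not be justified: $m$ may have legs attached to the inner arc, and pushing the legs of $\gamma$ on strands $2,3$ past them by STU creates correction terms (with new trivalent vertices joining $\gamma$ to $m$) that your argument discards; replacing a sub-picture by $0$ is legitimate only when the whole class factors through that sub-picture, which is not the case here. Finally, the alternative route misdescribes $\gamma$: it is the image of the commutator $[A,B]$, not a difference $\delta-\delta'$ of a diagram and its image under interchanging two strands, and Lemma~\ref{commut-prop} concerns the centrality of elements inserted on a single strand; it provides no mechanism for ``absorbing a crossing'', and indeed there is no crossing datum in $\mathring{\mathcal A}^{\wedge}(\downarrow\downarrow\uparrow\uparrow)_0$ to absorb.
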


\begin{proof} Let $m\in\mathring{\mathcal{A}}^{\wedge}({P})_0$, which we represent with a grey-colored box labelled with $m$ over the schematic representation of the oriented Brauer  ${P}$. Then we have
\begin{align*}
\includegraphics[scale=1]{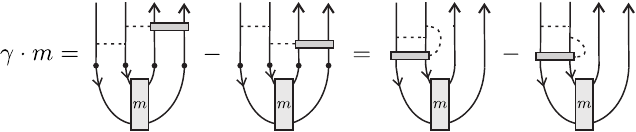}
\end{align*}
In the second equality we have used Lemma~\ref{commut-prop-doubl}. Besides, we have
\begin{align*}
\includegraphics[scale=1]{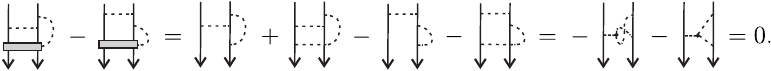}
\end{align*}
In the second equality we have used a STU relation in the first $\downarrow$ involving the first and third term and a STU relation in the second $\downarrow$ involving the second and fourth term. Last equality follows by the AS relation. Therefore, $\gamma\cdot m = 0$.
\end{proof}

Notice that the linear maps $\mathrm{co}_3:\mathring{\mathcal{A}}^{\wedge}(\downarrow\downarrow\downarrow)_0\to \mathring{\mathcal{A}}^{\wedge}(\downarrow\downarrow\uparrow)_0$ and  $\mathrm{dbl}_3:\mathring{\mathcal{A}}^{\wedge}(\downarrow\downarrow\uparrow)_0\to \mathring{\mathcal{A}}^{\wedge}(\downarrow\downarrow\uparrow\uparrow)_0$ are algebra morphisms. Therefore, the composition map
\begin{equation}\label{equ:mor}
\delta:=\mathrm{dbl}_3\circ \mathrm{co}_3\circ \mathrm{can}: \mathbb{C}\langle\langle A, B\rangle\rangle\longrightarrow \mathring{\mathcal{A}}^{\wedge}(\downarrow\downarrow\uparrow\uparrow)
\end{equation}
is also an algebra morphism, where $\mathrm{can}:\mathbb{C}\langle\langle A, B\rangle\rangle \to \mathring{\mathcal{A}}^{\wedge}(\downarrow\downarrow\downarrow)_0$
is the algebra morphism given in \eqref{equ:morphcanC-ABtoA}. Explicitly, the morphism $\delta$ is given by
\begin{align*}
\includegraphics[scale=0.9]{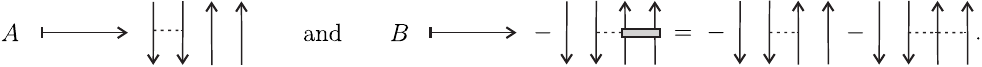}
\end{align*}

\begin{corollary}\label{r20211019cor1} Let ${P}$ be as in Lemma~\ref{r20211019lem1}. Then for any $x\in\mathbb{C}\langle\langle A,B\rangle\rangle [A,B] \mathbb{C}\langle\langle A,B\rangle\rangle$ and any $m\in \mathring{\mathcal{A}}^{\wedge}({P})_0$ we have $\delta(x)\cdot m = 0$. 
\end{corollary}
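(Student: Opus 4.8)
The plan is to deduce Corollary~\ref{r20211019cor1} from Lemma~\ref{r20211019lem1} by an algebraic density/ideal argument. The key observation is that $\delta : \mathbb{C}\langle\langle A,B\rangle\rangle \to \mathring{\mathcal{A}}^{\wedge}(\downarrow\downarrow\uparrow\uparrow)$ from \eqref{equ:mor} is an algebra morphism, and that $\gamma = \delta([A,B])$, where $[A,B] = AB - BA$. Indeed, tracing through the explicit formula for $\delta$, the image $\delta(AB-BA)$ is exactly the difference of the two diagrams depicted in the definition of $\gamma$ in Lemma~\ref{r20211019lem1}: $\delta(A)$ and $\delta(B)$ are the single-chord diagrams (after applying $\mathrm{can}$, $\mathrm{co}_3$, $\mathrm{dbl}_3$), and composing them in the two orders $\delta(A)\delta(B)$ and $\delta(B)\delta(A)$ and subtracting gives $\gamma$. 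So the first step is simply to verify $\gamma = \delta([A,B])$ by unwinding the definitions of $\mathrm{can}$, $\mathrm{co}_3$ and $\mathrm{dbl}_3$; this is a routine diagrammatic check.

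Next, I would observe that the action of $\mathring{\mathcal{A}}^{\wedge}(\downarrow\downarrow\uparrow\uparrow)_0$ on $\mathring{\mathcal{A}}^{\wedge}(P)_0$ by composition makes $\mathring{\mathcal{A}}^{\wedge}(P)_0$ a left module over the algebra $\mathring{\mathcal{A}}^{\wedge}(\downarrow\downarrow\uparrow\uparrow)_0$, and that $\delta$ is an algebra morphism into $\mathring{\mathcal{A}}^{\wedge}(\downarrow\downarrow\uparrow\uparrow)_0$, so $\mathring{\mathcal{A}}^{\wedge}(P)_0$ becomes a module over $\mathbb{C}\langle\langle A,B\rangle\rangle$ via $\delta$. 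Now take any $x \in \mathbb{C}\langle\langle A,B\rangle\rangle [A,B] \mathbb{C}\langle\langle A,B\rangle\rangle$, i.e. $x = \sum_i p_i [A,B] q_i$ for finitely many $p_i, q_i \in \mathbb{C}\langle\langle A,B\rangle\rangle$ (or a convergent sum in the graded-completed sense). Then for $m \in \mathring{\mathcal{A}}^{\wedge}(P)_0$,
\[
\delta(x)\cdot m = \sum_i \delta(p_i)\cdot\big(\delta([A,B])\cdot(\delta(q_i)\cdot m)\big) = \sum_i \delta(p_i)\cdot\big(\gamma\cdot(\delta(q_i)\cdot m)\big),
\]
using that $\delta$ is multiplicative and that the action is associative. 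Since $\delta(q_i)\cdot m \in \mathring{\mathcal{A}}^{\wedge}(P)_0$, Lemma~\ref{r20211019lem1} gives $\gamma\cdot(\delta(q_i)\cdot m) = 0$ for each $i$, hence each summand vanishes and $\delta(x)\cdot m = 0$.

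The one point requiring a little care is the passage from finite sums to the completed setting: elements of the ideal $\mathbb{C}\langle\langle A,B\rangle\rangle [A,B]\mathbb{C}\langle\langle A,B\rangle\rangle$ are (possibly infinite) series, so I would note that $\delta$ is continuous for the grading (being built from $\mathrm{can}$, $\mathrm{co}_3$, $\mathrm{dbl}_3$, all continuous graded maps) and that the composition action is continuous, so the identity $\delta(x)\cdot m = 0$ extends from the dense subspace of finite sums by continuity; alternatively one argues degree by degree, since in each fixed degree $x$ is a finite sum of terms $p[A,B]q$. I expect this continuity bookkeeping to be the only mild obstacle; the core of the argument — that $\gamma$ is the image of the commutator $[A,B]$ under the algebra morphism $\delta$, combined with Lemma~\ref{r20211019lem1} — is immediate once that identification is made.
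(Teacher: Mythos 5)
Your proposal is correct and is essentially the paper's own argument: both hinge on the identification $\gamma=\delta([A,B])$, the multiplicativity of $\delta$, and Lemma~\ref{r20211019lem1}, with the paper phrasing the final step as ``the kernel of $\delta$ (acting on $\mathring{\mathcal{A}}^{\wedge}(P)_0$) is a two-sided ideal'' while you expand $x=\sum_i p_i[A,B]q_i$ explicitly. Your extra remark on continuity/degree-by-degree convergence in the completed setting is a harmless refinement the paper leaves implicit.
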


\begin{proof}
Notice that $\delta([A,B])=\gamma$, where $\gamma$ is the element described in Lemma~\ref{r20211019lem1}. Therefore, by this lemma, $\delta([A,B])\cdot m = 0$ for any $m\in   \mathring{\mathcal{A}}^{\wedge}(P)_0$, therefore $[A,B] \in \mathrm{ker}(\delta)$. Since $\delta$ is an algebra morphism, its kernel is a two-sided ideal, therefore it contains   $\mathbb{C}\langle\langle A,B\rangle\rangle [A,B] \mathbb{C}\langle\langle A,B\rangle\rangle$.
\end{proof}

\begin{corollary}\label{r20211020corasso} Let ${P}$ be as in Lemma~\ref{r20211019lem1}. Then for any $m\in\mathring{\mathcal{A}}^{\wedge}({P})_0$ we have
\begin{align*}
\includegraphics[scale=0.75]{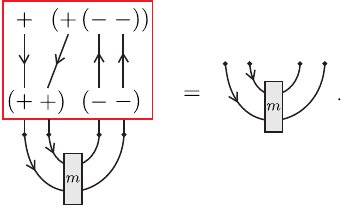}		
\end{align*}
Here we are using Convention~\ref{convention:red-box} and represent $m$ as in the proof of Lemma~\ref{r20211019lem1}.
\end{corollary}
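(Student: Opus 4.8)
The plan is to deduce this from Corollary~\ref{r20211019cor1} together with a standard property of the associator $\varphi(A,B)$. The crucial algebraic input is that $\varphi(A,B) - 1$ belongs to the two-sided ideal $\mathbb{C}\langle\langle A,B\rangle\rangle\,[A,B]\,\mathbb{C}\langle\langle A,B\rangle\rangle$ of $\mathbb{C}\langle\langle A,B\rangle\rangle$. Indeed, an even Drinfeld associator is group-like, so $\varphi(A,B)=\exp(u)$ with $u$ in the degree-completed free Lie algebra on $A,B$; the normalization of a Drinfeld associator (and evenness) forces the degree $\leq 1$ part of $\varphi(A,B)$ to be trivial, hence $u$ is a Lie series of degree $\geq 2$. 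The abelianization morphism $\mathbb{C}\langle\langle A,B\rangle\rangle \to \mathbb{C}[[A,B]]$ (with $A$, $B$ now commuting) annihilates the free Lie algebra in degrees $\geq 2$, so it sends $u$ to $0$, hence $\varphi(A,B)$ to $1$; since its kernel is exactly the two-sided ideal generated by $[A,B]$, this gives the claim.

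Granting this, the next step is to combine it with Corollary~\ref{r20211019cor1}: for every $m \in \mathring{\mathcal{A}}^{\wedge}(P)_0$ one gets $\delta\big(\varphi(A,B)-1\big)\cdot m = 0$. As $\delta$ is an algebra morphism it sends $1$ to the identity of $\mathring{\mathcal{A}}^{\wedge}(\downarrow\downarrow\uparrow\uparrow)_0$, so this reads $\delta(\varphi(A,B))\cdot m = m$. Unwinding the definition $\delta = \mathrm{dbl}_3\circ\mathrm{co}_3\circ\mathrm{can}$ of \eqref{equ:mor} and recalling $\Phi=\mathrm{can}(\varphi(A,B))$, this says $\mathrm{dbl}_3(\mathrm{co}_3(\Phi))\cdot m = m$: the associator $\Phi$, with its third strand reversed and then doubled, may be inserted or deleted on top of the nested double cup $P$ without affecting the result. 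Applying the same reasoning to $\varphi(A,B)^{-1}$ (which, being $\exp(-u)$, also abelianizes to $1$) shows $\mathrm{dbl}_3(\mathrm{co}_3(\Phi))^{-1}\cdot m = m$ as well.

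The final step is to read off the pictorial identity of Figure~\ref{figureEx1_55_abstract} from this. Its two sides are the Kontsevich integrals $Z_{\mathcal A}$ of the relevant nested-cup tangle equipped with two different parenthesizations of its boundary word, each post-composed with $m$. By the construction of $Z_{\mathcal A}$ (Theorem~\ref{thmkontsevichintegral}), passing from one parenthesization of a cup to another inserts a suitable copy of $\Phi^{\pm1}$, with the strand orientations and doublings dictated by the cup; by the choice of the shape of $P$ this insertion is precisely $\mathrm{dbl}_3(\mathrm{co}_3(\Phi))^{\pm1}$ acting by composition on $\mathring{\mathcal{A}}^{\wedge}(P)_0$. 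Hence the two sides differ by acting on $m$ with $\mathrm{dbl}_3(\mathrm{co}_3(\Phi))^{\pm1}$, which by the previous paragraph is the identity, so they coincide.

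The main obstacle is the bookkeeping in that last translation: one must verify that the associator(s) produced when reparenthesizing this specific double-cup configuration match exactly $\delta$ of an element of the commutator ideal acting by composition on $\mathring{\mathcal{A}}^{\wedge}(P)_0$, i.e. that the orientations, the doubling of the reversed strand, and the placement on top of $P$ are precisely those built into $\delta$ in \eqref{equ:mor} and into the module structure used in Lemma~\ref{r20211019lem1} and Corollary~\ref{r20211019cor1}. Once this identification is in place, no further computation is required.
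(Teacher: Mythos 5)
Your proposal is correct and follows essentially the same route as the paper's proof: identify the left-hand side of the figure with $\delta(\varphi(A,B))\cdot m$ via the definition of $Z_{\mathcal A}$ on reparenthesization tangles, note that $\varphi(A,B)-1$ lies in $\mathbb{C}\langle\langle A,B\rangle\rangle [A,B]\mathbb{C}\langle\langle A,B\rangle\rangle$, and conclude with Corollary~\ref{r20211019cor1}. The only difference is that you supply a (correct) justification of the commutator-ideal property of the associator, which the paper simply asserts.
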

\begin{proof}
By definition of ${Z}_{\mathcal A}$, the left-hand side is given by
\begin{equation}\label{equ1:lemma20211020}
\mathrm{LHS} = \big(\mathrm{dbl}_3\circ \mathrm{co}_3(\Phi)\big)\circ m = \big(\mathrm{dbl}_3\circ \mathrm{co}_3\circ \mathrm{can}(\varphi(A,B))\big)\circ m = \delta(\varphi(A,B))\cdot m,
\end{equation}
where $\delta$ is the algebra morphism \eqref{equ:mor}. Besides, we can write the even Drinfeld associator $\varphi(A,B)$ as 
$$\varphi(A,B) = 1 + \theta(A,B)$$
with $\theta(A,B)\in \mathbb{C}\langle\langle A,B\rangle\rangle [A,B] \mathbb{C}\langle\langle A,B\rangle\rangle$.  Replacing in \eqref{equ1:lemma20211020} and using Corollary~\ref{r20211019cor1}, we have
$$\mathrm{LHS} = \delta(1 +\theta(A,B))\cdot m = m = \mathrm{RHS}.$$
\end{proof}

\begin{lemma}\label{somecomputationsofZ} We have
\begin{align*}
\includegraphics[scale=1]{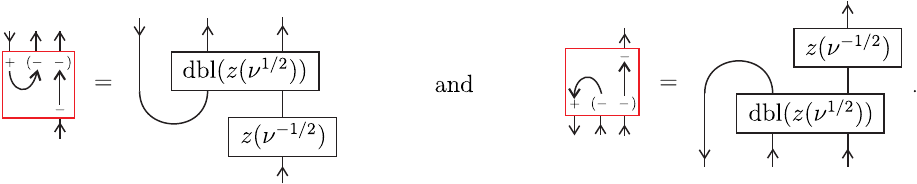}
\end{align*}
where $z:\mathring{\mathcal{A}}^\wedge(\downarrow)\to \mathring{\mathcal{A}}^\wedge(\uparrow)$ is the map given in \eqref{themapz}. We have similar identities obtained by changing all the orientations and replacing $z(\nu^{\pm 1/2})$ with $\nu^{\pm 1/2}$.
\end{lemma}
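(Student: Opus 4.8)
The plan is to reduce each of the claimed identities to the defining properties of the normalized Kontsevich integral $Z_{\mathcal A}$ gathered in Theorem~\ref{thmkontsevichintegral}, combined with the structural lemmas of \S\ref{sec:4-9}. The tangles appearing in the statement are all built from the elementary cups and caps $\mathrm{cup}^{\mathcal T}_{+-}$, $\mathrm{cup}^{\mathcal T}_{-+}$, $\mathrm{cap}^{\mathcal T}_{+-}$, $\mathrm{cap}^{\mathcal T}_{-+}$ (possibly pre- or post-composed with a strand carrying a power of $\nu^{1/2}$ or $z(\nu^{1/2})$), together with the tangles in position three and four of Figure~\ref{figure_gen}. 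First I would record, directly from Theorem~\ref{thmkontsevichintegral}$(a)$, the value of $Z_{\mathcal A}$ on the distinguished generators of Figure~\ref{figure_genpar}; for the generators of Figure~\ref{figure_gen} not listed there, I would express them as images of those in Figure~\ref{figure_genpar} under a finite sequence of doubling operations $\mathrm{dbl}_{\mathcal Pa\vec{\mathcal T}}$ and change-of-orientation operations $\mathrm{co}_{\mathcal Pa\vec{\mathcal T}}$, and then transport the known values using the compatibility of $Z_{\mathcal A}$ with $\mathrm{dbl}$ and $\mathrm{co}$ (axioms (LMO4)$(d)$–$(e)$, i.e.\ Theorem~\ref{thmkontsevichintegral}$(d)$).

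Next I would use functoriality of $Z_{\mathcal A}$ — compatibility with $\otimes$ and $\circ$, axioms (LMO4)$(b)$–$(c)$ — to decompose each tangle occurring on the left-hand side of the lemma into a composition/tensor of elementary pieces, and substitute the values computed in the previous step. At this point the expressions obtained involve: factors of $\nu^{\pm 1/2}$; their images $z(\nu^{\pm 1/2})$ under the map $z$ of Definition~\ref{themapz}, which is an algebra anti-homomorphism by Lemma~\ref{themapzalgebr}; and an associator contribution coming from the element $\Phi$. The associator contribution is then removed using Corollary~\ref{r20211020corasso} (together with Corollary~\ref{r20211019cor1}), which asserts that, after the relevant doubling and change of orientation, $\Phi$ acts as the identity on the pertinent space $\mathring{\mathcal A}^\wedge(P,\emptyset)$ of Jacobi diagrams. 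Finally, the surviving $\nu^{\pm 1/2}$ and $z(\nu^{\pm 1/2})$ factors are moved into the positions asserted in the statement by means of Lemma~\ref{slide-nu} and the commutation Lemmas~\ref{commut-prop} and~\ref{commut-prop-doubl}, together with the identities $\nu^{1/2}\circ\nu^{1/2}=\nu$ and the centrality of $\nu$; note also that $\nu$, hence $\nu^{1/2}$, is independent of the chosen even Drinfeld associator, so no choices obstruct the computation.

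The main obstacle is bookkeeping rather than conceptual: one must track orientations carefully through $\mathrm{co}$, $\mathrm{dbl}$ and the map $z$, so that each $\nu^{1/2}$ versus $z(\nu^{1/2})$ factor ends up on the strand of the correct orientation, and one must choose for each small tangle a decomposition into generators of Figure~\ref{figure_gen}/Figure~\ref{figure_genpar} for which the associator factors genuinely cancel via Corollary~\ref{r20211020corasso}. Once such a decomposition is fixed for each identity — as suggested by the schematic proof the authors will give for the first one — the verification is a routine chain of substitutions. The ``similar identities obtained by changing all the orientations and replacing $z(\nu^{\pm 1/2})$ with $\nu^{\pm 1/2}$'' then follow by running the same argument after inserting $\mathrm{co}$ on every component, using the naturality of $Z_{\mathcal A}$ under change of orientation and the relation between $z$ and $\mathrm{co}$ recorded in Lemma~\ref{slide-nu}.
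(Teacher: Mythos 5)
Your proposal follows essentially the same route as the paper's proof: expand $Z_{\mathcal A}$ on the elementary cup/cap generators using its compatibility with composition, tensor product and the doubling operation (Theorem~\ref{thmkontsevichintegral}), cancel the associator contribution via Corollary~\ref{r20211020corasso}, and then isolate the $\nu^{\pm 1/2}$ and $z(\nu^{\pm 1/2})$ factors. The paper carries this out schematically for the first identity and treats the remaining ones ``similarly'', so your plan is correct and matches the paper's argument.
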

\begin{proof}
Let us prove the first identity. Using the definition of ${Z}_{\mathcal A}$ and its compatibility  with composition and tensor product, we have

\begin{align*}
\includegraphics[scale=1]{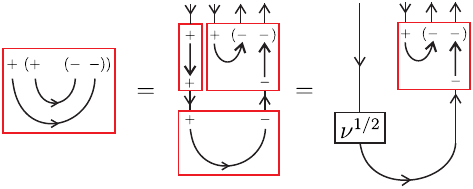}
\end{align*}

\noindent Besides

\begin{align*}
\includegraphics[scale=1]{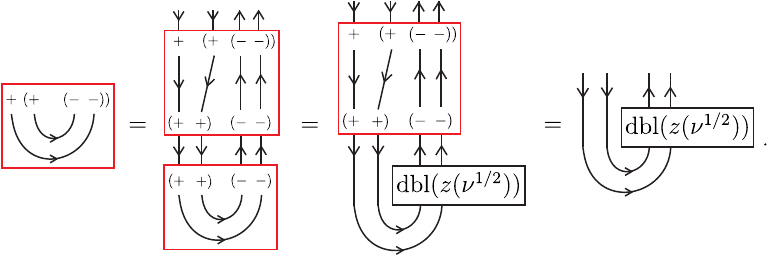}
\end{align*}

\noindent In the first and second equalities we have used the definition of ${Z}_{\mathcal A}$, its compatibility   with the composition, tensor product and doubling operation, see Theorem~\ref{thmkontsevichintegral}.  The third  equality follows from Corollary~\ref{r20211020corasso}.

Thus
\begin{align*}
\includegraphics[scale=1]{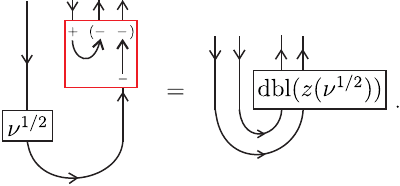}
\end{align*}
Therefore
\begin{align*}
\includegraphics[scale=1]{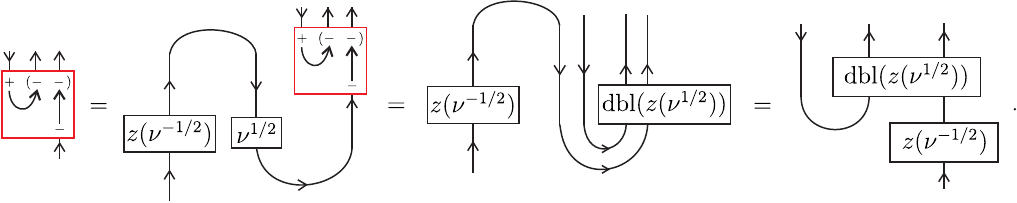}
\end{align*}

\noindent The second identity can be proved similarly.
\end{proof}

\begin{lemma}\label{r20211029lienbetaalpha}
Let
$$u:=(\mathrm{Id}_{-})_{\mathcal A}\otimes_{\mathcal A} \big(\mathrm{dbl}(\nu^{-1/2})\circ_{\mathcal A} (\nu^{1/2}\otimes_{\mathcal A} (\mathrm{Id}_{+})_{\mathcal A})\big) \otimes_{\mathcal A}(\mathrm{Id}_{-})_{\mathcal A}\in \mathring{\mathcal{A}}^\wedge(\uparrow\downarrow\downarrow\uparrow).$$
Then
\begin{equation}\label{equ1_20211027}
\alpha_{t} = \beta_{t} \circ_{\mathcal A} u \quad \quad  \text{and} \quad \quad \alpha_{b} =u^{-1}\circ_{\mathcal A} \beta_{b}.
\end{equation}
\end{lemma}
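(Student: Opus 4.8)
\textbf{Proof proposal for Lemma~\ref{r20211029lienbetaalpha}.}

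The plan is to compare $\alpha_t$ with $\beta_t\circ_{\mathcal A} u$ (and dually $\alpha_b$ with $u^{-1}\circ_{\mathcal A}\beta_b$) by unwinding the definitions of $\alpha_t,\alpha_b,\beta_t,\beta_b$ in terms of the Kontsevich integral and the doubling operation, and then applying the computations of $Z_{\mathcal A}$ collected in Lemma~\ref{somecomputationsofZ} together with the ``associativity killing'' Corollary~\ref{r20211020corasso}. Concretely, $\alpha_t = Z_{\mathcal A}(\mathrm{cap}^{\mathcal T}_{-((++)-)})\circ_{\mathcal A}\big((\mathrm{Id}_-)_{\mathcal A}\otimes_{\mathcal A}\nu\otimes_{\mathcal A}\nu\otimes_{\mathcal A}(\mathrm{Id}_-)_{\mathcal A}\big)$, while $\beta_t$ is built from $(\mathrm{cap}_{-+})_{\mathcal A}\otimes_{\mathcal A}(\mathrm{cap}_{+-})_{\mathcal A}$ composed with the doubling (along the first strand) of the element $\nu\otimes_{\mathcal A} z(\nu)\otimes_{\mathcal A} Z_{\mathcal A}(\mathrm{cap}^{\mathcal T}_{(+-)(+-)})$ post-composed with $(\mathrm{cup}_{+-+-})_{\mathcal A}\otimes(\mathrm{Id}_{+-})_{\mathcal A}$. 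First I would expand $Z_{\mathcal A}(\mathrm{cap}^{\mathcal T}_{-((++)-)})$ and $Z_{\mathcal A}(\mathrm{cap}^{\mathcal T}_{(+-)(+-)})$ using Theorem~\ref{thmkontsevichintegral}(a) and the compatibility of $Z_{\mathcal A}$ with composition, tensor product and doubling; the parenthesization data $-((++)-)$ versus $(+-)(+-)$ means the two expressions differ exactly by an insertion of an associator $\Phi$ on the relevant triple of strands, which is precisely where Corollary~\ref{r20211020corasso} is used to simplify the associator to the identity after composing against an element $m\in\mathring{\mathcal A}^\wedge(P)_0$ with $P=(\mathrm{Id}_+\otimes\mathrm{cup}_{+-}\otimes\mathrm{Id}_-)\circ\mathrm{cup}_{+-}$.

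The second ingredient is the appearance of the $\nu$-factors and their square roots. The definition of $Z_{\mathcal A}$ on cups/caps introduces factors of $\nu^{1/2}$ (and, after change of orientation, $z(\nu^{1/2})$) on each strand; Lemma~\ref{somecomputationsofZ} computes exactly the relevant $Z_{\mathcal A}$-values of doubled cups and caps in terms of $\nu^{\pm 1/2}$ and $z(\nu^{\pm 1/2})$. Using these identities I would rewrite $\alpha_t$ so that the $\nu\otimes\nu$ block gets absorbed into the shape of $\beta_t$, at the cost of a leftover factor $\mathrm{dbl}(\nu^{-1/2})\circ_{\mathcal A}(\nu^{1/2}\otimes_{\mathcal A}(\mathrm{Id}_+)_{\mathcal A})$ sitting on the doubled strand; tensoring with identities on the two outer $-$-strands, this leftover is exactly $u$. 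The commutation of $\nu$-blocks past Jacobi diagrams (Lemma~\ref{commut-prop}, Lemma~\ref{commut-prop-doubl}) and the fact that $z$ is an algebra anti-homomorphism (Lemma~\ref{themapzalgebr}) will be needed to move the $z(\nu)$-factor appearing in $\beta_t$ into position and to match orientations. The bottom equality $\alpha_b = u^{-1}\circ_{\mathcal A}\beta_b$ follows by the mirror-image argument, using that $u$ is invertible (its pieces $\nu^{\pm 1/2}$, $\mathrm{dbl}(\nu^{\pm 1/2})$ are invertible) and the ``similar identities obtained by changing all the orientations'' stated at the end of Lemma~\ref{somecomputationsofZ}.

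I expect the main obstacle to be bookkeeping rather than conceptual: tracking which strand carries which $\nu^{\pm 1/2}$ or $z(\nu^{\pm 1/2})$ factor through the doubling operation, and verifying that the orientation reversals (encoded by $z$ and by $\mathrm{co}$) are matched consistently on both sides, so that the leftover discrepancy between $\alpha_t$ and $\beta_t$ is literally $u$ and not some conjugate or mirror of it. The cleanest way to control this is to carry the argument out diagrammatically, as in Figures~\ref{figureEx1_61_abstract}--\ref{figureEx1_62_abstract}, reducing both $\alpha_t$ and $\beta_t\circ_{\mathcal A} u$ to a common normal form in which all associators have been removed via Corollary~\ref{r20211020corasso} and all cup/cap $\nu$-factors have been evaluated via Lemma~\ref{somecomputationsofZ}; once both sides are in that form the equality is visible. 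The fact that the computation takes place in degree-graded completed spaces poses no difficulty since all maps involved are continuous.
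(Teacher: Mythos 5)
Your proposal follows essentially the same route as the paper: both reduce the identities to a diagrammatic computation that pushes the doubling through $Z_{\mathcal A}$ (compatibility of $Z_{\mathcal A}$ and $\mathrm{dbl}$ with $\circ_{\mathcal A}$ and $\otimes_{\mathcal A}$, Lemma~\ref{lemmaZanddoubling}), slides and commutes the $\nu$- and $z(\nu)$-blocks (Lemmas~\ref{slide-nu}, \ref{commut-prop}, \ref{themapzalgebr}), and then invokes the cup/cap evaluations of Lemma~\ref{somecomputationsofZ} (where the associator has already been eliminated via Corollary~\ref{r20211020corasso}), so the leftover factor is exactly $u$. The only caution is that the second identity is not a literal mirror of the first ($\beta_t$ carries the $\nu\otimes_{\mathcal A} z(\nu)$ block while $\beta_b$ does not), so as in the paper it needs its own analogous computation using the orientation-reversed identities of Lemma~\ref{somecomputationsofZ}, which you correctly anticipate.
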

\begin{proof}
 In Figure~\ref{figureEx1_65_abstract} we show the identity $\beta_{t}=\alpha_{t}\circ_{\mathcal A} (u)^{-1}$. We use:  the compatibly of the doubling map with the composition and tensor product in the second equality; Lemma~\ref{lemmaZanddoubling} in third equality; Lemma~\ref{slide-nu} in the fourth equality; Lemmas~\ref{slide-nu} and \ref{commut-prop} in the fifth equality and Lemma~\ref{somecomputationsofZ} in the sixth equality.

\begin{figure}[ht!]
			\centering
            \includegraphics[scale=0.7]{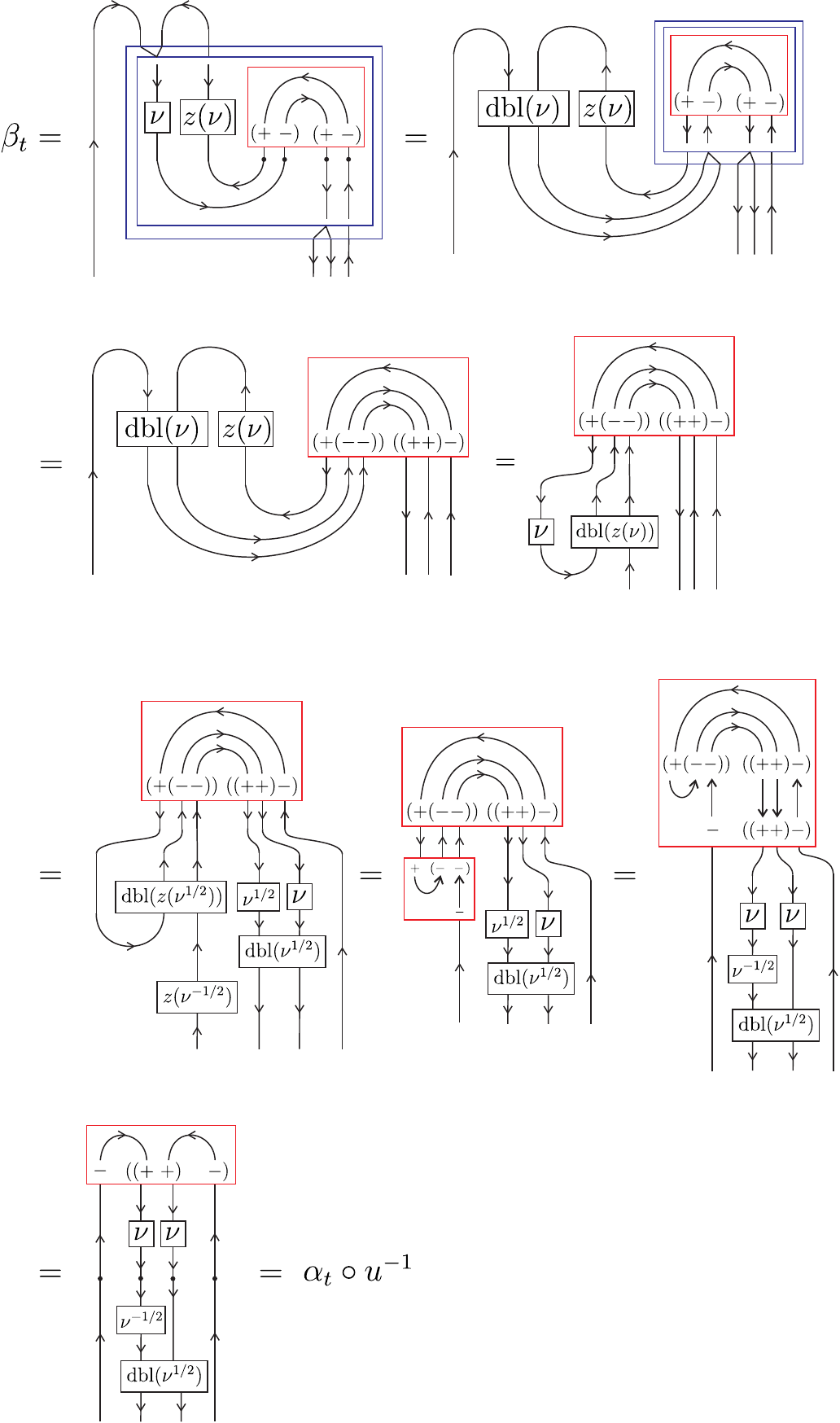}					\caption{Proof of the equality $\beta_{t}=\alpha_{t}\circ_{\mathcal A} u^{-1}$ from Lemma~\ref{r20211029lienbetaalpha}.}
\label{figureEx1_65_abstract} 								
\end{figure}

Let us now prove the second identity in~\eqref{equ1_20211027}. 
Using the compatibility of the doubling map with the composition and tensor product and Lemma~\ref{lemmaZanddoubling}, we get the equalities shown in Figure~\ref{auxequality}.

\begin{figure}[ht!]
			\centering
\includegraphics[scale=0.75]{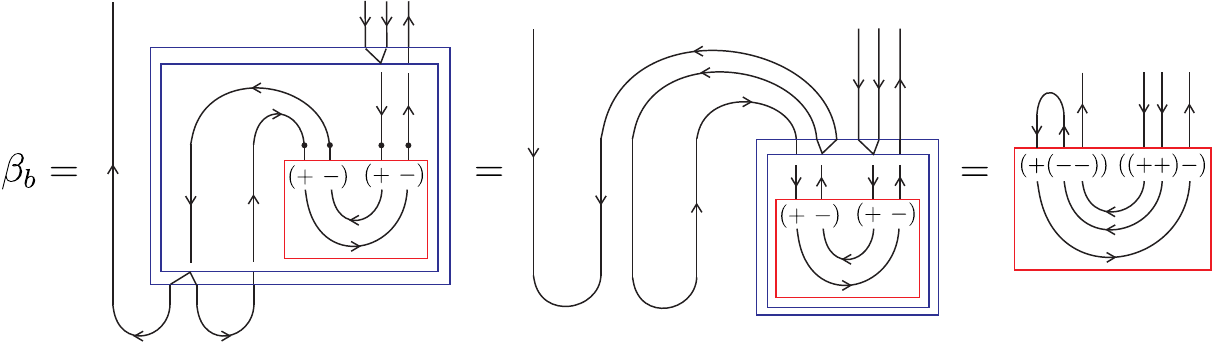}
\caption{An equivalent expression of $\beta_{b}$.}
\label{auxequality} 								
\end{figure}

Using the identity  from Figure~\ref{auxequality}, we show in Figure~\ref{figureEx1_67_abstract} the equality $u^{-1}\circ_{\mathcal A}\beta_{b}=\alpha_{t}$. We use: Lemmas~\ref{slide-nu} and~\ref{commut-prop}  in the second equality; Lemma~\ref{somecomputationsofZ} in the third equality and the compatibility  of ${Z}_{\mathcal A}$ with tensor product and composition in the fourth equality. 
\begin{figure}[ht!]
			\centering
            \includegraphics[scale=0.7]{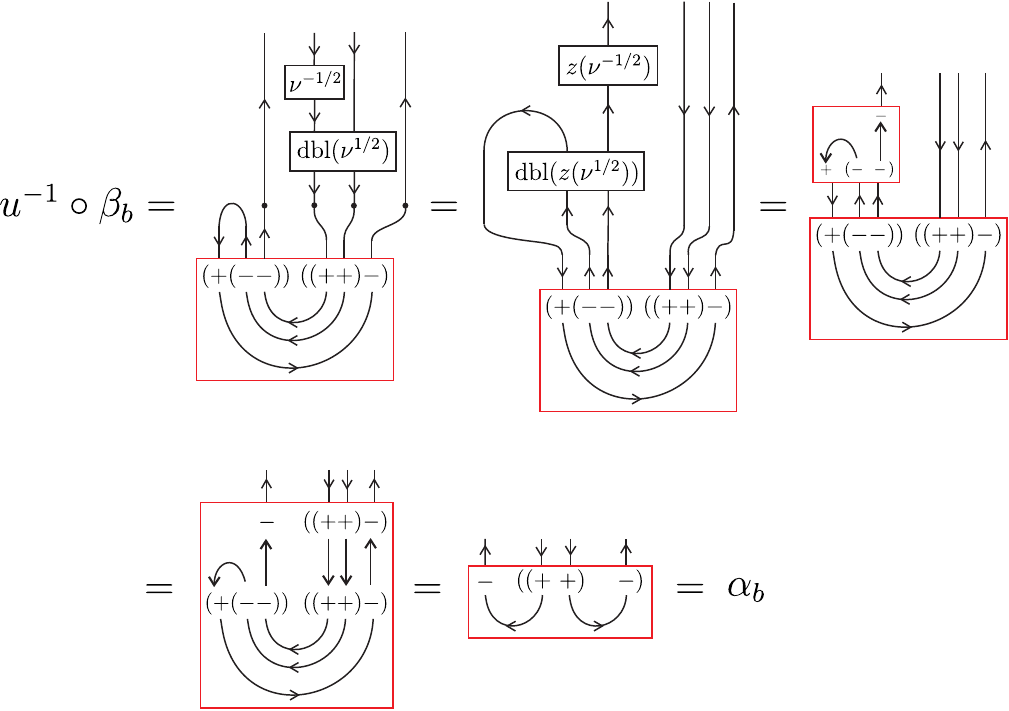}					\caption{Proof of the equality $u^{-1}\circ_{\mathcal A}\beta_{b}=\alpha_{b}$ from Lemma~\ref{r20211029lienbetaalpha}.}
\label{figureEx1_67_abstract} 								
\end{figure}

\end{proof}

\begin{proposition}\label{r20210921-2} Let $k\in\mathbb{Z}_{\geq 2}$. The diagram
\begin{equation}
\xymatrix{\mathring{\mathcal{A}}^\wedge(\downarrow \uparrow)_{k-2}\ar_{\mathrm{cs}^{\nu}_{+-}}[d]
\ar^{\ \ \mathrm{KIIMap}_{{\mathring{\mathcal A}}^\wedge,k}^{2}}[rrr] & &  &\mathring{\mathcal{A}}^\wedge_k\ar^{\mathrm{cs}^{\nu}}[d]\\
\mathring{\mathcal{A}}^\wedge(\downarrow \uparrow)_{k-2}\ar^{\ \ \mathrm{KIIMap}_{{\mathring{\mathcal A}}^\wedge,k}^{s,2}}[rrr]  & & &\mathring{\mathcal{A}}^\wedge_k}.
\end{equation}
commutes, where the maps $\mathrm{cs}^{\nu}_{+-}$ and $\mathrm{cs}^{\nu}$ are given in~\eqref{equ:mapcs-nuopen} and~\eqref{cs-nu-total} respectively. 
\end{proposition}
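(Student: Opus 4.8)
The plan is to mirror the proof of Lemma~\ref{r20210921-1}, which handled the commutativity for $\mathrm{KIIMap}^1$, and to assemble the analogous diagram for $\mathrm{KIIMap}^2$ out of the auxiliary identities already established in Lemmas~\ref{r20211021lem1}, \ref{r20211102lemequalityofpairing} and~\ref{r20211029lienbetaalpha}. The strategy is to express both composites $\mathrm{cs}^{\nu}\circ \mathrm{KIIMap}^2_{{\mathring{\mathcal A}}^\wedge,k}$ and $\mathrm{KIIMap}^{s,2}_{{\mathring{\mathcal A}}^\wedge,k}\circ \mathrm{cs}^{\nu}_{+-}$ through the trilinear pairing $\langle \ ,\ ,\ \rangle_k$, using the distinguished elements $\alpha_t,\alpha_b,\beta_t,\beta_b$. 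Concretely, fix $x\in\mathring{\mathcal{A}}^\wedge(\downarrow \uparrow)_{k-2}$ and write $y:=\mathrm{dbl}_1(\mathrm{cs}^{\nu}(x))$. By equation~\eqref{r20211021lem1equ1} of Lemma~\ref{r20211021lem1}, one has $\mathrm{cs}^\nu\circ \mathrm{KIIMap}^2_{{\mathring{\mathcal A}}^\wedge,k}(x) = \langle \alpha_t,\alpha_b, y\rangle_k$, and by equation~\eqref{r20211021lem1equ2} of the same lemma, $\mathrm{KIIMap}^{s,2}_{{\mathring{\mathcal A}}^\wedge,k}\circ \mathrm{cs}^\nu_{+-}(x) = \langle \beta_t,\beta_b, y\rangle_k$. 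So the whole statement reduces to the single equality $\langle \alpha_t,\alpha_b, y\rangle_k = \langle \beta_t,\beta_b, y\rangle_k$ in $\mathring{\mathcal A}^\wedge_k$.

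The second step is to deduce this equality of pairings from the relation between $(\alpha_t,\alpha_b)$ and $(\beta_t,\beta_b)$ recorded in Lemma~\ref{r20211029lienbetaalpha}: there is an invertible element $u\in\mathring{\mathcal{A}}^\wedge(\uparrow\downarrow\downarrow\uparrow)$ such that $\alpha_t = \beta_t\circ_{\mathcal A} u$ and $\alpha_b = u^{-1}\circ_{\mathcal A}\beta_b$. The point is that $u$ has exactly the shape $(\mathrm{Id}_-)_{\mathcal A}\otimes_{\mathcal A} v\otimes_{\mathcal A}(\mathrm{Id}_-)_{\mathcal A}$ with $v=\mathrm{dbl}(\nu^{-1/2})\circ_{\mathcal A}(\nu^{1/2}\otimes_{\mathcal A}(\mathrm{Id}_+)_{\mathcal A})\in\mathring{\mathcal{A}}^\wedge(\downarrow\downarrow)_0$ invertible, so Lemma~\ref{r20211102lemequalityofpairing} applies verbatim with $z$ replaced by $\mathrm{cs}^{\nu}(x)$ (both sides of~\eqref{r20211102equ1} have the form $\langle -,-,\mathrm{dbl}_1(\mathrm{cs}^\nu(x))\rangle_k$). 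This gives
\begin{equation*}
\langle \alpha_t,\alpha_b, y\rangle_k = \langle \beta_t\circ_{\mathcal A} u,\ u^{-1}\circ_{\mathcal A}\beta_b,\ \mathrm{dbl}_1(\mathrm{cs}^\nu(x))\rangle_k = \langle \beta_t,\beta_b, \mathrm{dbl}_1(\mathrm{cs}^\nu(x))\rangle_k = \langle \beta_t,\beta_b, y\rangle_k,
\end{equation*}
which is the desired identity, and the proposition follows.

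The routine verifications are: that the two sides of~\eqref{r20211102equ1} are indeed instances of $\langle\ ,\ ,\ \rangle_k$ with the third argument exactly $\mathrm{dbl}_1(\mathrm{cs}^\nu(x))$ (this is a matter of comparing the definitions of $\alpha_t,\alpha_b,\beta_t,\beta_b$ against the domain of the pairing, and noting that $\mathrm{cs}^\nu$ commutes with $\mathrm{dbl}_1$), and bookkeeping of which coinvariant space each element lives in so that the compositions $\circ_{\mathcal A}$ are legitimate. I expect the main obstacle to be purely organizational rather than conceptual: one must be careful that the element $u$ produced by Lemma~\ref{r20211029lienbetaalpha} matches the hypothesis ``$u=(\mathrm{Id}_-)_{\mathcal A}\otimes_{\mathcal A} v\otimes_{\mathcal A}(\mathrm{Id}_-)_{\mathcal A}$ with $v$ invertible'' of Lemma~\ref{r20211102lemequalityofpairing} on the nose — this is immediate from the explicit formula for $u$ since $\mathrm{dbl}(\nu^{-1/2})$ and $\nu^{1/2}$ are invertible and $v$ is their composition in $\mathring{\mathcal{A}}^\wedge(\downarrow\downarrow)_0$ — and that the grading/coinvariant indices line up ($k-2$ circles throughout). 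Everything else is a direct substitution into already-proven lemmas, so no genuinely new computation is needed.
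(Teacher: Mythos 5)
Your proposal is correct and is essentially identical to the paper's own proof: the paper also reduces both composites to the pairing $\langle\ ,\ ,\ \rangle_k$ via equations~\eqref{r20211021lem1equ1} and~\eqref{r20211021lem1equ2} of Lemma~\ref{r20211021lem1}, substitutes $\alpha_t=\beta_t\circ_{\mathcal A}u$, $\alpha_b=u^{-1}\circ_{\mathcal A}\beta_b$ from Lemma~\ref{r20211029lienbetaalpha}, and cancels $u$ with Lemma~\ref{r20211102lemequalityofpairing}. Your check that the explicit $u$ matches the hypothesis of Lemma~\ref{r20211102lemequalityofpairing} is the only bookkeeping point, and you handle it correctly.
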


\begin{proof}
Let $x\in\mathring{\mathcal{A}}^\wedge(\downarrow \uparrow)_{k-2}$, then
\begin{gather*}
\begin{split}
 \mathrm{cs}^\nu\circ \mathrm{KIIMap}^2_{{\mathring{\mathcal{A}}}^\wedge,k}(x) & =\big\langle \alpha_{t}, \alpha_{b}, \mathrm{dbl}_1(\mathrm{cs}^{\nu}(x)) \big\rangle_{k} \\
  & = \big\langle \beta_{t}\circ_{\mathcal A} u, u^{-1}\circ_{\mathcal A}\beta_{b}, \mathrm{dbl}_1(\mathrm{cs}^{\nu}(x)) \big\rangle_{k}\\
  & = \big\langle \beta_{t}, \beta_{b}, \mathrm{dbl}_1(\mathrm{cs}^{\nu}(x)) \big\rangle_{k}\\
  & = \mathrm{KIIMap}^{s,2}_{{\mathring{\mathcal{A}}}^\wedge,k}\circ \mathrm{cs}^{\nu}_{+-}(x).
\end{split}
\end{gather*}
We have used equation~\eqref{r20211021lem1equ1} from Lemma~\ref{r20211021lem1} in the first equality, Lemma~\ref{r20211029lienbetaalpha} in the second equality, Lemma~\ref{r20211102lemequalityofpairing} in the third equality and equation~\eqref{r20211021lem1equ2} from Lemma~\ref{r20211021lem1} in the last equality.

\end{proof}

\begin{corollary}\label{r20210809} Let $k\in\mathbb{Z}_{\geq 2}$. The diagram
\begin{equation}
\xymatrix{\mathring{\mathcal{A}}^\wedge(\downarrow \uparrow)_{k-2}\ar_{\mathrm{cs}^{\nu}_{+-}}[d]
\ar^{\ \ \mathrm{KIIMap}_{{\mathring{\mathcal A}}^\wedge,k}^{2} - \mathrm{KIIMap}_{{\mathring{\mathcal A}}^\wedge,k}^{1}}[rrrrr] & & & & &\mathring{\mathcal{A}}^\wedge_k \ar^{\mathrm{cs}^{\nu}}[d]\\
\mathring{\mathcal{A}}^\wedge(\downarrow \uparrow)_{k-2}\ar^{\ \ \mathrm{KIIMap}_{{\mathring{\mathcal A}}^\wedge,k}^{s,2} - \mathrm{KIIMap}_{{\mathring{\mathcal A}}^\wedge,k}^{s,1}}[rrrrr] & & & & &\mathring{\mathcal{A}}^\wedge_k}.
\end{equation}
commutes, where the maps $\mathrm{cs}^{\nu}_{+-}$ and $\mathrm{cs}^{\nu}$ are given in~\eqref{equ:mapcs-nuopen} and~\eqref{cs-nu-total} respectively. 
\end{corollary}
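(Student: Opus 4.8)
The corollary is the direct combination of two preceding results, so the proof proposal is essentially a matter of subtracting two commutative diagrams. The plan is to observe that Lemma~\ref{r20210921-1} provides the commutative square
\[
\mathrm{KIIMap}_{{\mathring{\mathcal A}}^\wedge,k}^{s,1}\circ \mathrm{cs}^\nu_{+-} = \mathrm{cs}^\nu\circ \mathrm{KIIMap}_{{\mathring{\mathcal A}}^\wedge,k}^{1},
\]
while Proposition~\ref{r20210921-2} provides the commutative square
\[
\mathrm{KIIMap}_{{\mathring{\mathcal A}}^\wedge,k}^{s,2}\circ \mathrm{cs}^\nu_{+-} = \mathrm{cs}^\nu\circ \mathrm{KIIMap}_{{\mathring{\mathcal A}}^\wedge,k}^{2}.
\]

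The key step is then to subtract these two operator identities. Since both sides of each identity are linear maps $\mathring{\mathcal A}^\wedge(\downarrow\uparrow)_{k-2}\to \mathring{\mathcal A}^\wedge_k$, and since $\mathrm{cs}^\nu$ is linear (it is even an algebra automorphism by Lemma~\ref{themapcsalpha}$(e)$, applied to the invertible element $\nu$), subtraction is legitimate and yields
\[
\big(\mathrm{KIIMap}_{{\mathring{\mathcal A}}^\wedge,k}^{s,2} - \mathrm{KIIMap}_{{\mathring{\mathcal A}}^\wedge,k}^{s,1}\big)\circ \mathrm{cs}^\nu_{+-} = \mathrm{cs}^\nu\circ \big(\mathrm{KIIMap}_{{\mathring{\mathcal A}}^\wedge,k}^{2} - \mathrm{KIIMap}_{{\mathring{\mathcal A}}^\wedge,k}^{1}\big),
\]
which is exactly the asserted commutativity of the square in the statement. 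No further computation is required.

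There is no real obstacle here: all the substantive work—tracking the insertions of $\nu$ and $z(\nu)$ through the cup/cap morphisms and the doubling operation, and establishing the compatibility of $\mathrm{cs}^\nu$ with closure and composition (Lemmas~\ref{rlemma20211031}, \ref{r20211021lem1}, \ref{r20211029lienbetaalpha}, and the sliding lemmas \ref{slide-nu}, \ref{commut-prop})—has already been carried out in Lemma~\ref{r20210921-1} and Proposition~\ref{r20210921-2}. The only point worth making explicit is that the linearity of $\mathrm{cs}^\nu$ and of the two ``simple'' KII maps allows one to distribute the difference over the composition with $\mathrm{cs}^\nu_{+-}$ on the left and over $\mathrm{cs}^\nu$ on the right; this is immediate. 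Hence the proof is a one-line deduction, and I would write it simply as: the result follows by taking the difference of the commutative diagrams in Lemma~\ref{r20210921-1} and Proposition~\ref{r20210921-2}.

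\begin{proof}
The map $\mathrm{cs}^{\nu}$ is linear (indeed an algebra automorphism, see Lemma~\ref{themapcsalpha}$(e)$), and the maps $\mathrm{KIIMap}_{{\mathring{\mathcal A}}^\wedge,k}^{s,1}$, $\mathrm{KIIMap}_{{\mathring{\mathcal A}}^\wedge,k}^{s,2}$ are linear by Lemma~\ref{themapsKIIsimple}. Subtracting the commutative diagram of Lemma~\ref{r20210921-1} from that of Proposition~\ref{r20210921-2}, we obtain for any $x\in\mathring{\mathcal{A}}^\wedge(\downarrow\uparrow)_{k-2}$:
\begin{equation*}
\big(\mathrm{KIIMap}_{{\mathring{\mathcal A}}^\wedge,k}^{s,2} - \mathrm{KIIMap}_{{\mathring{\mathcal A}}^\wedge,k}^{s,1}\big)\big(\mathrm{cs}^{\nu}_{+-}(x)\big) = \mathrm{cs}^{\nu}\Big(\big(\mathrm{KIIMap}_{{\mathring{\mathcal A}}^\wedge,k}^{2} - \mathrm{KIIMap}_{{\mathring{\mathcal A}}^\wedge,k}^{1}\big)(x)\Big),
\end{equation*}
which is the asserted commutativity.
\end{proof}
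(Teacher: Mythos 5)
Your proposal is correct and matches the paper's own argument: the corollary is deduced directly from Lemma~\ref{r20210921-1} and Proposition~\ref{r20210921-2} by subtracting the two commutative squares, with linearity of the maps involved making this immediate. No issues.
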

\begin{proof}
The result follows directly from Lemma~\ref{r20210921-1} and Proposition~\ref{r20210921-2}.
\end{proof}

\begin{corollary}\label{r2-20210809}
The algebra automorphism $\mathrm{cs}^{\nu}$ of $\bigoplus_{k \geq 0}\mathring{\mathcal{A}}^\wedge_k$ is such that $\mathrm{cs}^{\nu}((\mathrm{KII}\mathring{\mathcal{A}}^\wedge)) = (\mathrm{KII}^s\mathring{\mathcal{A}}^\wedge)$.
\end{corollary}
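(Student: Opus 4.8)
\textbf{Proof proposal for Corollary~\ref{r2-20210809}.}

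The plan is to deduce the equality $\mathrm{cs}^{\nu}((\mathrm{KII}\mathring{\mathcal{A}}^\wedge)) = (\mathrm{KII}^s\mathring{\mathcal{A}}^\wedge)$ directly from Corollary~\ref{r20210809} together with the fact, established in Lemma~\ref{the-map-cs-+-}, that for each $k\geq 2$ the map $\mathrm{cs}^{\nu}_{+-}:\mathring{\mathcal{A}}^\wedge(\downarrow\uparrow)_{k-2}\to\mathring{\mathcal{A}}^\wedge(\downarrow\uparrow)_{k-2}$ is an isomorphism. Recall that by Definition~\eqref{spaceKIIforA} we have $(\mathrm{KII}\mathring{\mathcal{A}}^\wedge_k)=\mathrm{Im}(\mathrm{KIIMap}^2_{\mathring{\mathcal A}^\wedge,k}-\mathrm{KIIMap}^1_{\mathring{\mathcal A}^\wedge,k})$ and by Definition~\eqref{spaceKIIsimpleforA-1} that $(\mathrm{KII}^s\mathring{\mathcal{A}}^\wedge_k)=\mathrm{Im}(\mathrm{KIIMap}^{s,2}_{\mathring{\mathcal A}^\wedge,k}-\mathrm{KIIMap}^{s,1}_{\mathring{\mathcal A}^\wedge,k})$.

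Fix $k\geq 2$. The commutative diagram of Corollary~\ref{r20210809} reads, in formula form,
\begin{equation*}
\big(\mathrm{KIIMap}^{s,2}_{\mathring{\mathcal A}^\wedge,k}-\mathrm{KIIMap}^{s,1}_{\mathring{\mathcal A}^\wedge,k}\big)\circ \mathrm{cs}^{\nu}_{+-} = \mathrm{cs}^{\nu}\circ \big(\mathrm{KIIMap}^{2}_{\mathring{\mathcal A}^\wedge,k}-\mathrm{KIIMap}^{1}_{\mathring{\mathcal A}^\wedge,k}\big).
\end{equation*}
Applying this to elements of $\mathring{\mathcal{A}}^\wedge(\downarrow\uparrow)_{k-2}$ gives, for the images,
\begin{equation*}
\mathrm{Im}\big(\mathrm{KIIMap}^{s,2}_{\mathring{\mathcal A}^\wedge,k}-\mathrm{KIIMap}^{s,1}_{\mathring{\mathcal A}^\wedge,k}\big)\supset \mathrm{cs}^{\nu}\big(\mathrm{Im}(\mathrm{KIIMap}^{2}_{\mathring{\mathcal A}^\wedge,k}-\mathrm{KIIMap}^{1}_{\mathring{\mathcal A}^\wedge,k})\big),
\end{equation*}
and since $\mathrm{cs}^{\nu}_{+-}$ is surjective (indeed bijective) by Lemma~\ref{the-map-cs-+-}, precomposing with it does not shrink the image, so in fact the left-hand side equals $\mathrm{cs}^{\nu}((\mathrm{KII}\mathring{\mathcal{A}}^\wedge_k))$. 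That is, $\mathrm{cs}^{\nu}((\mathrm{KII}\mathring{\mathcal{A}}^\wedge_k))=(\mathrm{KII}^s\mathring{\mathcal{A}}^\wedge_k)$ for every $k\geq 2$. For $k=0,1$ both sides are $\{0\}$ by Definitions~\ref{spaceKIIabs} and~\ref{spaceKIIsimpleforA}, so the equality holds trivially. Taking the direct sum over $k\geq 0$, and using that $\mathrm{cs}^{\nu}$ is a graded algebra automorphism of $\bigoplus_{k\geq 0}\mathring{\mathcal{A}}^\wedge_k$ (Lemma~\ref{themapcsalpha}$(e)$ specialized to the invertible element $\nu$), we conclude $\mathrm{cs}^{\nu}((\mathrm{KII}\mathring{\mathcal{A}}^\wedge))=(\mathrm{KII}^s\mathring{\mathcal{A}}^\wedge)$.

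I do not anticipate a genuine obstacle here: the corollary is a formal consequence of Corollary~\ref{r20210809} and the bijectivity of $\mathrm{cs}^{\nu}_{+-}$, both of which are already in hand. The only point requiring a moment of care is the bookkeeping that precomposition by the \emph{surjective} map $\mathrm{cs}^{\nu}_{+-}$ leaves the image unchanged — i.e.\ $\mathrm{Im}(g\circ \mathrm{cs}^{\nu}_{+-})=\mathrm{Im}(g)$ when $\mathrm{cs}^{\nu}_{+-}$ is onto — which is immediate. (All the real work is in the chain Lemma~\ref{20231225-auxiliarlemma}$\Rightarrow$Lemma~\ref{rlemma20211031}$\Rightarrow$Lemma~\ref{r20211019lem1}, Corollaries~\ref{r20211019cor1}, \ref{r20211020corasso}, Lemma~\ref{somecomputationsofZ}, Lemma~\ref{r20211029lienbetaalpha}, Proposition~\ref{r20210921-2} and Lemma~\ref{r20210921-1}, which together yield Corollary~\ref{r20210809}; by the time we reach Corollary~\ref{r2-20210809} there is nothing left to do but assemble.)
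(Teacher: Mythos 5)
Your argument is correct and is essentially the paper's own proof: the paper likewise deduces the equality from Corollary~\ref{r20210809} together with the bijectivity of $\mathrm{cs}^{\nu}_{+-}$ and $\mathrm{cs}^{\nu}$ and the definitions \eqref{spaceKIIforA} and \eqref{spaceKIIsimpleforA-1}, degree by degree. Your only addition is the explicit bookkeeping that precomposition with the surjective $\mathrm{cs}^{\nu}_{+-}$ preserves the image, which is exactly the point the paper leaves implicit.
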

\begin{proof}
The result follows from Corollary~\ref{r20210809}, from the fact that the maps $\mathrm{cs}^\nu_{+-}$ and $\mathrm{cs}^\nu$ are bijective (Lemma~\ref{the-map-cs-+-} and Lemma~\ref{themapcsalpha}$(e)$) and from   the definitions of $(\mathrm{KII}\mathring{\mathcal{A}}^\wedge)$, $\mathrm{KII}^s\mathring{\mathcal{A}}^\wedge$ (see \eqref{spaceKIIforA} and \eqref{spaceKIIsimpleforA-1}).
\end{proof}

\subsubsection{The equality $\mathrm{cs}^\nu((\mathrm{CO}\mathring{\mathcal{A}}^\wedge))=(\mathrm{CO}\mathring{\mathcal{A}}^\wedge)$}\label{sec:3:2:4}

\begin{lemma}\label{lemma1-20210805}
The algebra automorphism $\mathrm{cs}^{\nu}=\bigoplus_{k\geq 0}\mathrm{cs}^{\nu}_{\vec{\varnothing},k}$ of $\bigoplus_{k \geq 0}\mathring{\mathcal{A}}^\wedge_k$ is such that $\mathrm{cs}^{\nu}((\mathrm{CO}\mathring{\mathcal{A}}^\wedge)) = (\mathrm{CO}\mathring{\mathcal{A}}^\wedge)$. 
\end{lemma}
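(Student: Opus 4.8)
The goal is to show that the algebra automorphism $\mathrm{cs}^{\nu}$ of $\bigoplus_{k\geq 0}\mathring{\mathcal A}^\wedge_k$ preserves the ideal $(\mathrm{CO}\mathring{\mathcal A}^\wedge)$, i.e.\ $\mathrm{cs}^{\nu}((\mathrm{CO}\mathring{\mathcal A}^\wedge))=(\mathrm{CO}\mathring{\mathcal A}^\wedge)$. Recall that $(\mathrm{CO}\mathring{\mathcal A}^\wedge)=\bigoplus_{k\geq 1}(\mathrm{CO}\mathring{\mathcal A}^\wedge_k)$ where $(\mathrm{CO}\mathring{\mathcal A}^\wedge_k)=\mathrm{Im}(\mathrm{co}^{\mathcal A}_k-\mathrm{proj}^{\mathcal A}_{1,k-1})$ with $\mathrm{co}^{\mathcal A}_k$ and $\mathrm{proj}^{\mathcal A}_{1,k-1}$ the maps from Lemma~\ref{def:mapCotoXk} and Definition~\ref{def:projtoXk} (both having source the coinvariant space $\mathring{\mathcal A}^\wedge(\vec{\varnothing},[\![1,k]\!])_{\mathfrak S_{k-1}}$). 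Since $\mathrm{cs}^\nu$ is an algebra automorphism (Lemma~\ref{themapcsalpha}$(e)$), it suffices to prove in each degree $k\geq 1$ that $\mathrm{cs}^\nu_{\vec{\varnothing},k}((\mathrm{CO}\mathring{\mathcal A}^\wedge_k))\subseteq (\mathrm{CO}\mathring{\mathcal A}^\wedge_k)$; the reverse inclusion then follows by applying the same argument to the inverse automorphism $\mathrm{cs}^{\nu^{-1}}=(\mathrm{cs}^\nu)^{-1}$ (Lemma~\ref{themapcsalpha}$(d)$), or simply by a dimension/bijectivity argument on the graded pieces.

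The key step is a commutativity statement: there should be a linear endomorphism, call it $\widetilde{\mathrm{cs}}^\nu_k$, of $\mathring{\mathcal A}^\wedge(\vec{\varnothing},[\![1,k]\!])_{\mathfrak S_{k-1}}$ — intuitively the ``$\nu$-connected-sum operation that inserts $\nu$ or $z(\nu)$ in the $k-1$ circles indexed by $[\![2,k]\!]$ and treats the distinguished circle $1$ separately'' — such that the two diagrams
\[
\xymatrix{
\mathring{\mathcal A}^\wedge(\vec{\varnothing},[\![1,k]\!])_{\mathfrak S_{k-1}} \ar[r]^-{\mathrm{co}^{\mathcal A}_k} \ar[d]_{\widetilde{\mathrm{cs}}^\nu_k} & \mathring{\mathcal A}^\wedge_k \ar[d]^{\mathrm{cs}^\nu_{\vec{\varnothing},k}} \\
\mathring{\mathcal A}^\wedge(\vec{\varnothing},[\![1,k]\!])_{\mathfrak S_{k-1}} \ar[r]^-{\mathrm{co}^{\mathcal A}_k} & \mathring{\mathcal A}^\wedge_k
}
\qquad
\xymatrix{
\mathring{\mathcal A}^\wedge(\vec{\varnothing},[\![1,k]\!])_{\mathfrak S_{k-1}} \ar[r]^-{\mathrm{proj}^{\mathcal A}_{1,k-1}} \ar[d]_{\widetilde{\mathrm{cs}}^\nu_k} & \mathring{\mathcal A}^\wedge_k \ar[d]^{\mathrm{cs}^\nu_{\vec{\varnothing},k}} \\
\mathring{\mathcal A}^\wedge(\vec{\varnothing},[\![1,k]\!])_{\mathfrak S_{k-1}} \ar[r]^-{\mathrm{proj}^{\mathcal A}_{1,k-1}} & \mathring{\mathcal A}^\wedge_k
}
\]
both commute. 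Granting this, for any $x$ in the source one has
\[
\mathrm{cs}^\nu_{\vec{\varnothing},k}\big((\mathrm{co}^{\mathcal A}_k-\mathrm{proj}^{\mathcal A}_{1,k-1})(x)\big)
=(\mathrm{co}^{\mathcal A}_k-\mathrm{proj}^{\mathcal A}_{1,k-1})\big(\widetilde{\mathrm{cs}}^\nu_k(x)\big)\in(\mathrm{CO}\mathring{\mathcal A}^\wedge_k),
\]
which gives the desired inclusion. The content is therefore in constructing $\widetilde{\mathrm{cs}}^\nu_k$ and checking the two squares. The right-hand square is the easy one: $\mathrm{proj}^{\mathcal A}_{1,k-1}$ is just the projection along $\mathfrak S_{[\![2,k]\!]}\hookrightarrow\mathfrak S_k$, and $\mathrm{cs}^\nu_{\vec{\varnothing},k}$ is induced by $\tilde{\mathrm{cs}}^\nu_{\vec{\varnothing},k}$, which by Lemma~\ref{themapcsalpha}$(a)$--$(b)$ is ``insert $\nu$ or $z(\nu)$ in each circle'' and is visibly $\mathfrak S_k$-equivariant; so $\widetilde{\mathrm{cs}}^\nu_k$ is simply the restriction of the analogous pre-coinvariant insertion map to the $\mathfrak S_{k-1}$-coinvariants, and commutativity is the functoriality of the coinvariant space functor (Lemma~\ref{def:coinvariantspacefunctor}) applied to the appropriate morphism in $\mathcal Vect\mathcal Gp$, in the same spirit as the proof of Lemma~\ref{1207b}$(b)$.

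The main obstacle is the left-hand square, i.e.\ checking that the connected-sum insertion operation commutes with the change-of-orientation map $\mathrm{co}^{\mathcal A}(\vec{\varnothing},([\![1,k]\!],1))$ on the distinguished circle $1$. Here the subtlety is that changing the orientation of circle $1$ replaces a portion of solid circle by its orientation reversal and introduces a sign $(-1)^{\#\text{legs on circle }1}$ (Definition~\ref{def:changeoforientationJac}), while $\mathrm{cs}^\nu$ inserts $\nu$ into circle $1$ if it is oriented one way and $z(\nu)$ if oriented the other way (recall $z$ from Definition~\ref{themapz}). The compatibility to be verified is thus exactly the statement that inserting $\nu$ then reversing orientation equals reversing orientation then inserting $z(\nu)$ (up to the same overall sign), which is a diagrammatic identity in $\mathring{\mathcal A}^\wedge(\vec{\varnothing},\{1\})\cong\mathring{\mathcal A}^\wedge(\downarrow,\emptyset)$; it follows from the definition of $z$ together with Lemma~\ref{slide-nu} (the ``$\nu$-slide'' identities relating $\nu$, $z(\nu)$ and the change-of-orientation map) and the fact that $\nu$ is central (Lemma~\ref{commut-prop}). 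The other circles $2,\dots,k$ are untouched by the change of orientation on circle $1$, so the insertion on those is handled exactly as in the right-hand square. I would organize the write-up as: (i) reduce to the per-degree inclusion and to the inverse automorphism remark; (ii) construct $\widetilde{\mathrm{cs}}^\nu_k$ via the coinvariant functor from a $\mathcal Vect\mathcal Gp$-morphism; (iii) prove the two commuting squares, spending the bulk of the effort on the left square using Lemmas~\ref{slide-nu} and~\ref{commut-prop} and the definition of $z$; (iv) conclude. A schematic (box-notation) figure illustrating the $\nu/z(\nu)$ slide through the reversed arc, analogous to Figure~\ref{figureEx1_54_abstract}, will make step (iii) transparent.
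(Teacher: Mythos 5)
Your overall architecture coincides with the paper's proof: reduce to the commutation of the $\nu$-insertion with the two pre-coinvariant maps $\mathrm{co}^{\mathcal A}_k$ and $\mathrm{proj}^{\mathcal A}_{1,k-1}$, prove the ``proj'' square by equivariance and the coinvariant space functor, prove the ``co'' square by a diagrammatic computation after presenting circle $1$ as the closure of a strand, then conclude by the automorphism property (the paper's $\mathrm{cs}^\nu_{\vec{\varnothing},1,k-1}$ is exactly your $\widetilde{\mathrm{cs}}^\nu_k$; note it does not treat circle $1$ separately, it is just the same insertion map viewed through $\mathfrak{S}_{k-1}$-coinvariants). However, there is a genuine gap at precisely the point you single out as the main obstacle. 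The identity your left-hand square needs is, at the strand level, $\mathrm{co}^{\mathcal A}((\downarrow^k,\downarrow_1),\emptyset)(\nu^{\otimes k})=z(\nu)\otimes_{\mathcal A}\nu^{\otimes (k-1)}$, i.e.\ that the change of orientation sends $\nu$ to $z(\nu)$ on the nose. This is not a formal consequence of the definition of $z$, of Lemma~\ref{slide-nu}, or of centrality (Lemma~\ref{commut-prop}): Lemma~\ref{slide-nu} only lets you slide $\nu$ and $z(\nu)$ through cups and caps, whereas the change-of-orientation map (Definition~\ref{def:changeoforientationJac}, Lemma~\ref{r:2022-06-13changeoforientation}) reverses the order of the legs \emph{and} multiplies by $(-1)^{\#\mathrm{legs}}$. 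Your phrase ``up to the same overall sign'' is exactly where the argument is incomplete: what kills the sign is the special property of $\nu$ (visible from the explicit wheels formula, and invoked explicitly in the paper) that every term of $\nu$ has an even number of univalent vertices on its skeleton component, together with the invariance of $\nu$ under reversal of the leg order. For a general central invertible $\alpha\in\mathring{\mathcal A}^{\wedge}(\downarrow,\emptyset)$ with odd-legged terms this commutation would fail, so some non-formal input about $\nu$ is unavoidable and your cited toolkit does not supply it.

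To repair step (iii) you should therefore add two ingredients: first, the statement $\mathrm{co}(\nu)=z(\nu)$, justified by the evenness and symmetry of $\nu$; second, the (routine but necessary) verification that the closure map intertwines the strand-level change of orientation with the circle-level one, which is the bookkeeping matching linear orders on strands with cyclic orders on circles that the paper carries out before it can transport the computation from $\mathring{\mathcal A}^{\wedge}(\downarrow^k,\emptyset)$ to $\mathring{\mathcal A}^{\wedge}(\vec{\varnothing},[\![1,k]\!])$. The remaining parts of your plan — the proj square via functoriality, and deducing the equality from the inclusion via $\mathrm{cs}^{\nu^{-1}}$ (note $\nu^{-1}$ is again even-legged, so the same argument applies to it) — are sound and agree with the paper.
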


\begin{proof}

Let $k \geq 0$ be an integer and let us identify $\mathrm{Cir}(\mathrm{cup}^k, \mathrm{cap}^k)$ with the set $[\![1,k]\!]$ by associating increasing numbers from the innermost circle to the outermost circle in $\mathrm{cap}^k \circ \mathrm{cup}^k$ and, by simplicity, continue to denote by $\mathrm{clos}_k$ the composite map $ \mathring{\mathcal{A}}^{\wedge}(\downarrow^k, \emptyset) \xrightarrow{\mathrm{clos}_k} \mathring{\mathcal{A}}^{\wedge}(\vec{\varnothing},\mathrm{Cir}(\mathrm{cup}^k, \mathrm{cap}^k)) \simeq  \mathring{\mathcal{A}}^{\wedge}(\vec{\varnothing},[\![1,k]\!])$.

Consider the diagram \eqref{auxiliardiagram2024-01-04}.

\begin{equation}\label{auxiliardiagram2024-01-04}
\xymatrix{
 \mathring{\mathcal{A}}^{\wedge}(\vec{\varnothing},[\![1,k]\!]) \ar^{\tilde{\mathrm{cs}}^{\nu}_{\vec{\varnothing},[\![1,k]\!]}}[rrrr] \ar_{\mathrm{co}^{{\mathcal A}}(\vec{\varnothing},([\![1,k]\!],1))}[ddd]&  &  & & \mathring{\mathcal{A}}^{\wedge}(\vec{\varnothing},[\![1,k]\!]) \ar^{\mathrm{co}^{{\mathcal A}}(\vec{\varnothing},([\![1,k]\!],1))}[ddd]\\
  & \mathring{\mathcal{A}}^{\wedge}(\downarrow^k, \emptyset) \ar^-{\bullet  \circ_{\mathcal A}\nu^{\otimes k}}[rr]\ar_-{\mathrm{clos}_k}[lu]  \ar_{\mathrm{co}^{{\mathcal A}}((\downarrow^k, \downarrow_1),\emptyset)}[d]&  & \mathring{\mathcal{A}}^{\wedge}(\downarrow^k, \emptyset)\ar^{\mathrm{co}^{{\mathcal A}}((\downarrow^k, \downarrow_1),\emptyset)}[d]\ar^{\mathrm{clos}_k}[ru] & \\
  & \mathring{\mathcal{A}}^{\wedge}(\uparrow\downarrow^{k-1}, \emptyset)  \ar^-{a}[rr]\ar_-{\widetilde{\mathrm{clos}}_k}[ld]& & \mathring{\mathcal{A}}^{\wedge}(\uparrow\downarrow^{k-1}, \emptyset) \ar^{\widetilde{\mathrm{clos}}_k}[rd] & \\
  \mathring{\mathcal{A}}^{\wedge}(\vec{\varnothing},[\![1,k]\!])  \ar^{\tilde{\mathrm{cs}}^{\nu}_{\vec{\varnothing},[\![1,k]\!]}}[rrrr]&  &  & &  \mathring{\mathcal{A}}^{\wedge}(\vec{\varnothing},[\![1,k]\!]) }
\end{equation}
where $a(y):= y \circ_{\mathcal A} \big(z(\nu) \otimes_{\mathcal A} \nu^{\otimes k-1}\big)$ for any $y\in  \mathring{\mathcal{A}}^{\wedge}(\uparrow\downarrow^{k-1}, \emptyset)$ and $\mathrm{co}^{{\mathcal A}}((\downarrow^k, \downarrow_1),\emptyset): \mathring{\mathcal{A}}^{\wedge}(\downarrow^k, \emptyset)  \to \mathring{\mathcal{A}}^{\wedge}(\uparrow\downarrow^{k-1}, \emptyset)$ is as in Lemma~\ref{r:2022-06-13changeoforientation} (the subindexed arrow $\downarrow_1$ indicates the first arrow, from left to right, in~$\downarrow^k$). The map  $\widetilde{\mathrm{clos}}_k: \mathring{\mathcal{A}}^{\wedge}(\uparrow\downarrow^{k-1}, \emptyset) \to  \mathring{\mathcal{A}}^{\wedge}(\vec{\varnothing},[\![1,k]\!])$ is defined as the composition of $\mathrm{clos}_k$ with the map $\zeta: \mathring{\mathcal A}(\uparrow\downarrow^{k-1},\emptyset) \to \mathring {\mathcal A}(\downarrow^{k},\emptyset) $ given by
$$x\xmapsto{\ \zeta\ } \big(  \mathrm{Id}_+^{\mathcal A}  \otimes_{\mathcal A} (\mathrm{cap}_{+-})_{\mathcal A}   \otimes_{\mathcal A}  \mathrm{Id}_{+^{\otimes k-1}}^{\mathcal A}\big) \circ_{\mathcal A} \big( (\tau_{++})_{\mathcal A}  \otimes_{\mathcal A}  x \big) \circ_{\mathcal A}\big(\mathrm{Id}_+^{\mathcal A} \otimes_{\mathcal A} (\mathrm{cup}_{+-})_{\mathcal A} \otimes_{\mathcal A}\mathrm{Id}_{+^{\otimes k-1}}^{\mathcal A} \big)$$
for any $x\in \mathring{\mathcal A}(\uparrow\downarrow^{k-1},\emptyset)$, where $\mathrm{cap}_{+-}$, $\mathrm{cup}_{+-}$ and $\tau_{++}$ are the oriented Brauer diagrams as in Definition~\ref{def:cup:capinBR}, and the notation $(P)_{\mathcal A}$, for an oriented Brauer diagram $P$,  is as in Definition~\ref{def:emptyJacdiagram}.

The top square in diagram  \eqref{auxiliardiagram2024-01-04} is commutative by definition of the map $\tilde{\mathrm{cs}}^{\nu}_{\vec{\varnothing},[\![1,k]\!]}$, see Lemma~\ref{themapcsalpha}$(a)$. The bottom square in \eqref{auxiliardiagram2024-01-04} can be decomposed as shown in diagram \eqref{auxiliardiagram2024-01-12-3}.

\begin{equation}\label{auxiliardiagram2024-01-12-3}
\xymatrix{
 \mathring{\mathcal{A}}^{\wedge}(\uparrow\downarrow^{k-1}, \emptyset) \ar^-{a}[rrrr] \ar_{\widetilde{\mathrm{clos}}_k}[dd] \ar^{\zeta}[dr]&  &  & & \mathring{\mathcal{A}}^{\wedge}(\uparrow\downarrow^{k-1}, \emptyset) \ar^{\widetilde{\mathrm{clos}}_k}[dd]\ar_{\zeta}[dl]\\
  & \mathring{\mathcal{A}}^{\wedge}(\downarrow^{k}, \emptyset)  \ar^-{\bullet \circ_{\mathcal A} \nu^{\otimes k }}[rr]\ar_-{{\mathrm{clos}}_k}[ld]& & \mathring{\mathcal{A}}^{\wedge}(\downarrow^{k}, \emptyset) \ar^{{\mathrm{clos}}_k}[rd] & \\
  \mathring{\mathcal{A}}^{\wedge}(\vec{\varnothing},[\![1,k]\!])  \ar^{\tilde{\mathrm{cs}}^{\nu}_{\vec{\varnothing},[\![1,k]\!]}}[rrrr]&  &  & &  \mathring{\mathcal{A}}^{\wedge}(\vec{\varnothing},[\![1,k]\!]) }
\end{equation}
The bottom square in \eqref{auxiliardiagram2024-01-12-3} is commutative by definition of  $\tilde{\mathrm{cs}}^{\nu}_{\vec{\varnothing},[\![1,k]\!]}$, see Lemma~\ref{themapcsalpha}$(a)$. The triangles on the left and right in  \eqref{auxiliardiagram2024-01-12-3} are the same diagram and it commutes by definition. The top square in \eqref{auxiliardiagram2024-01-12-3} commutes by Lemmas~\ref{slide-nu} and~\ref{commut-prop}.  Therefore, the outer square in  \eqref{auxiliardiagram2024-01-12-3}, which is the bottom square in \eqref{auxiliardiagram2024-01-04}, is commutative.

The left and right squares in  \eqref{auxiliardiagram2024-01-04} are identical diagrams. Their commutativity is a consequence of that of the diagram 

\begin{equation}\label{auxiliardiagram2024-01-12}
\xymatrix{ \mathbb{C}\mathring{\mathrm{Jac}}(\downarrow^k,\emptyset) \ar_{\mathrm{co}^{\mathring{\mathrm{Jac}}}((\downarrow^k,\downarrow_1),\emptyset)}[d]  \ar^{\mathrm{clos}^{\mathring{\mathrm{Jac}}}_k}[rr] & & \mathbb{C}\mathring{\mathrm{Jac}}(\vec{\varnothing},[\![1,k]\!]) \ar^{\mathrm{co}^{\mathring{\mathrm{Jac}}}(\vec{\varnothing},([\![1,k]\!],1))}[d]\\
 \mathbb{C}\mathring{\mathrm{Jac}}(\uparrow \downarrow^{k-1},\emptyset) \ar^{\widetilde{\mathrm{clos}}^{\mathring{\mathrm{Jac}}}_k}[rr] & & \mathbb{C}\mathring{\mathrm{Jac}}(\vec{\varnothing},[\![1,k]\!])
}
\end{equation}
where $\mathrm{clos}^{\mathring{\mathrm{Jac}}}_k$, $\widetilde{\mathrm{clos}}^{\mathring{\mathrm{Jac}}}_k$ are the obvious lifts of their $\mathring{\mathcal A}^{\wedge}$-counterparts. Commutativity of \eqref{auxiliardiagram2024-01-12} follows from the commutativity of the following diagram 
\begin{equation*}\label{auxiliardiagram2024-01-12-2}
\xymatrix{\mathbb{Z}L\ar_{\mathrm{co}^L}[d] \ar^{f}[r] & \mathbb{Z}C\ar^{\mathrm{co}^C}[d]\\
\mathbb{Z}L\ar^{f}[r]& \mathbb{Z}C
}
\end{equation*}
where $L$ (resp. $C$) is the set of classes of tuples $(X,f,\{\mathrm{lin}_i\}_{i\in[\![1,k]\!]})$ (resp. $(X,f,\{\mathrm{cyc}_i\}_{i\in[\![1,k]\!]})$) where $X$ is a finite set, $f : X \to [\![1,k]\!]$ is a map, and for any $i \in [\![1,k]\!]$, $\mathrm{lin}_i$ (resp. $\mathrm{cyc}_i$) is a linear (resp. cyclic) order on $f^{-1}(i)$. The linear map $f : \mathbb{Z}L \to \mathbb{Z}C$ is induced by the set map $L \to C$ defined on representatives of classes by $(X,f,\{\mathrm{lin}_i\}_{i\in[\![1,k]\!]})\mapsto (X,f,\{\mathrm{cyc}(\mathrm{lin}_i)\}_{i\in[\![1,k]\!]})$ where 
$\mathrm{cyc}(\mathrm{lin})$ is the cyclic order associated to a linear order $\mathrm{lin}$.  The map $\mathrm{co}^L: \mathbb{Z}L \to \mathbb{Z}L$ (resp. $\mathrm{co}^C: \mathbb{Z}C \to \mathbb{Z}C$) is induced by the association $(X,f,\{\mathrm{lin}_i\}_{i\in[\![1,k]\!]}) \mapsto (-1)^{|f^{-1}(1)|}(X,f,\{\widetilde{\mathrm{lin}}_1,\mathrm{lin}_2,\ldots,\mathrm{lin}_k\})$ where $\widetilde{\mathrm{lin}}$ denotes the linear order opposite to the linear order $\mathrm{lin}$ (resp. $(X,f,\{\mathrm{cyc}_i\}_{i\in[\![1,k]\!]}) \mapsto 
(-1)^{|f^{-1}(1)|}(X,f,\{\widetilde{\mathrm{cyc}}_1,\mathrm{cyc}_2, \ldots,\mathrm{cyc}_k\})$ where $\widetilde{\mathrm{cyc}}$ is the cyclic order opposite to the cyclic order $\mathrm{cyc}$). 

Let us show the commutativity of the inner square. We can check that for any $x,y\in\mathring{\mathcal{A}}^{\wedge}(\downarrow^k, \emptyset)$ we have 
\begin{equation}\label{eq:2024-01-16-1}
\mathrm{co}^{\mathcal A}((\downarrow^k,\downarrow_1),\emptyset)\big(x\circ_{\mathcal A} y) = \mathrm{co}^{\mathcal A}((\downarrow^k,\downarrow_1),\emptyset)\big(x\big) \circ_{\mathcal A} \mathrm{co}^{\mathcal A}((\downarrow^k,\downarrow_1),\emptyset)\big(y\big).
\end{equation}

Besides, all  the terms appearing in the invertible element $\nu\in\mathring{\mathcal{A}}^\wedge(\downarrow)$ (see \S\ref{sec:3-8-2})   have an even number of univalent vertices attached to $\downarrow$ (see \cite{BLT} for an explicit formula for $\nu$). Therefore,  $\mathrm{co}^{\mathcal A}((\downarrow^k,\downarrow_1),\emptyset)\big(\nu^{\otimes k}\big) = z(\nu)\otimes_{\mathcal A}\nu^{\otimes k-1}$. Using this and \eqref{eq:2024-01-16-1} we obtain
$$\mathrm{co}^{\mathcal A}((\downarrow^k,\downarrow_1),\emptyset)\big(x\circ_{\mathcal A} \nu^{\otimes k}\big) = \mathrm{co}^{\mathcal A}((\downarrow^k,\downarrow_1),\emptyset)\big(x\big)\circ_{\mathcal A} (z(\nu)\otimes_{\mathcal A}\nu^{\otimes k-1}),$$
for any $x \in \mathring{\mathcal{A}}^{\wedge}(\downarrow^k, \emptyset)$, which is precisely the stated commutativity.  

We then have that the external square diagram of \eqref{auxiliardiagram2024-01-04} is commutative. The square  diagram obtained from  the external square diagram of \eqref{auxiliardiagram2024-01-04}  by replacing the vertical maps by the identity is also trivially commutative. Both diagrams are compatible with commutative group diagram
\begin{equation*}\label{auxiliardiagram2024-01-09}
\xymatrix{ \mathfrak{S}_{k-1}\ar^{}[d] \ar^{\mathrm{Id}}[r] & \mathfrak{S}_{k-1} \ar^{}[d]\\
\mathfrak{S}_{k} \ar^{\mathrm{Id}}[r]& \mathfrak{S}_k
}
\end{equation*}
where the vertical arrows are induced by the obvious identification $[\![1,k-1]\!] \simeq [\![2,k]\!] \subset [\![1,k]\!]$.
This result in two commutative  diagrams in $\mathcal{V}ec\mathcal{G}p$.  Applying the coinvariant space functor (see Lemma~\ref{def:coinvariantspacefunctor}) to these commutative diagrams, we obtain that the two  diagrams in  \eqref{auxiliardiagram2024-01-09-2} are  commutative.
\begin{equation}\label{auxiliardiagram2024-01-09-2}
\xymatrix{
 \mathring{\mathcal{A}}^{\wedge}(\vec{\varnothing},[\![1,k]\!])_{\mathfrak{S}_{k-1}} \ar^{{\mathrm{cs}}^{\nu}_{\vec{\varnothing},1,k-1}}[rr] \ar_{\mathrm{proj}^{{\mathcal A}}_{1,k-1}}[d]  & & \mathring{\mathcal{A}}^{\wedge}(\vec{\varnothing},[\![1,k]\!])_{\mathfrak{S}_{k-1}} \ar^{\mathrm{proj}^{{\mathcal A}}_{1,k-1}}[d]\\
  \mathring{\mathcal{A}}^{\wedge}_k  \ar^{{\mathrm{cs}}^{\nu}_{\vec{\varnothing},k}}[rr]  & &  \mathring{\mathcal{A}}^{\wedge}_k} 
 \quad 
\xymatrix{
 \mathring{\mathcal{A}}^{\wedge}(\vec{\varnothing},[\![1,k]\!])_{\mathfrak{S}_{k-1}} \ar^{{\mathrm{cs}}^{\nu}_{\vec{\varnothing},1,k-1}}[rr] \ar_{\mathrm{co}^{{\mathcal A}}_{k}}[d]  & & \mathring{\mathcal{A}}^{\wedge}(\vec{\varnothing},[\![1,k]\!])_{\mathfrak{S}_{k-1}} \ar^{\mathrm{co}^{{\mathcal A}}_{k}}[d]\\
  \mathring{\mathcal{A}}^{\wedge}_k  \ar^{{\mathrm{cs}}^{\nu}_{\vec{\varnothing},k}}[rr]  & &  \mathring{\mathcal{A}}^{\wedge}_k} 
\end{equation}
where the endomorphism  $\mathrm{cs}^{\nu}_{\vec{\varnothing},1,k-1}$ of $\mathring{\mathcal{A}}^{\wedge}(\vec{\varnothing},[\![1,k]\!])_{\mathfrak{S}_{k-1}}$ is defined to be the image by the  coinvariant space functor of the endomorphism $\tilde{\mathrm{cs}}^{\nu}_{\vec{\varnothing},([\![1,k]\!]}$ of $\mathring{\mathcal A}(\vec{\varnothing},[\![1,k])$ equipped with the action of $\mathfrak{S}_{k-1}$.  This implies that the diagram obtained from the diagrams in \eqref{auxiliardiagram2024-01-09-2} by replacing the vertical maps by $\mathrm{co}^{{\mathcal A}}_k-\mathrm{proj}^{{\mathcal A}}_{1,k-1}$ and $\mathrm{co}^{{\mathcal A}}_k-\mathrm{proj}^{{\mathcal A}}_{1,k-1}$ is comutative.  Since $(\mathrm{CO}\mathring{\mathcal{A}}^\wedge_k)=\mathrm{Im}\big(\mathrm{co}^{{\mathcal{A}}}_k  - \mathrm{proj} ^{{{\mathcal{A}}}}_{1,k-1}: \mathring{\mathcal{A}}^{\wedge}(\vec{\varnothing},[\![1,k]\!])_{\mathfrak{S}_{k-1}} \to \mathring{\mathcal{A}}^\wedge_k \big)$, the commutativity of the latter   implies 
$\mathrm{cs}_{\vec{\varnothing},k}^{\nu}\big((\mathrm{CO}\mathring{\mathcal A}^\wedge_k)\big) \subset (\mathrm{CO}\mathring{\mathcal A}^\wedge_k)$.  Since $\mathrm{cs}^\nu_{\vec{\varnothing},k}$ is an automorphism (see Lemma~\ref{themapcsalpha}),  this implies the equality $\mathrm{cs}_{\vec{\varnothing},k}^{\nu}\big((\mathrm{CO}\mathring{\mathcal A}^\wedge_k)\big) = (\mathrm{CO}\mathring{\mathcal A}^\wedge_k)$. The result then follows from the equalities  $\mathrm{cs}^{\nu}=\bigoplus_{k\geq 0}\mathrm{cs}^{\nu}_{\vec{\varnothing},k}$  (see Lemma~\ref{themapcsalpha}$(c)$) and $(\mathrm{CO}\mathring{\mathcal A}^\wedge)=\bigoplus_{k \geq 1}(\mathrm{CO}\mathring{\mathcal A}^\wedge_k)$  (see \S\ref{sec:3:2:1}).

\end{proof}

\subsubsection{The algebra isomorphism $\overline{\mathrm{cs}}^{\nu}:\mathfrak{s}(\mathbf{A}) \to \mathfrak{a}$}\label{sec:3:2:5}

\begin{proposition} 
The algebra automorphism  $\mathrm{cs}^\nu$ of $\bigoplus_{k \geq 0}\mathring{\mathcal{A}}^\wedge_k$ (see Equation\eqref{cs-nu-total}) induces an algebra isomorphism 
\begin{equation}\label{iso-theta}
\overline{\mathrm{cs}}^\nu: \mathfrak{s}(\mathbf{A})\longrightarrow \mathfrak{a},
\end{equation} \index[notation]{cs^\nu@$\overline{\mathrm{cs}}^\nu$} 
where $\mathfrak{s}(\mathbf{A})$ is given in~\eqref{modSA} and $\mathfrak{a}$ in~\eqref{Z2geq0-module-m}.
\end{proposition}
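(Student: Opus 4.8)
The statement asserts that the algebra automorphism $\mathrm{cs}^\nu$ of $\bigoplus_{k\geq 0}\mathring{\mathcal A}^\wedge_k$ descends to an isomorphism $\overline{\mathrm{cs}}^\nu:\mathfrak s(\mathbf A)\to\mathfrak a$ between the two quotients. Recall that $\mathfrak s(\mathbf A)=\bigl(\bigoplus_{k\geq 0}\mathring{\mathcal A}^\wedge_k\bigr)/\bigl((\mathrm{KII}\mathring{\mathcal A}^\wedge)+(\mathrm{CO}\mathring{\mathcal A}^\wedge)\bigr)$ and $\mathfrak a=\bigl(\bigoplus_{k\geq 0}\mathring{\mathcal A}^\wedge_k\bigr)/\bigl((\mathrm{KII}^s\mathring{\mathcal A}^\wedge)+(\mathrm{CO}\mathring{\mathcal A}^\wedge)\bigr)$. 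Since $\mathrm{cs}^\nu$ is an algebra automorphism of the common ambient algebra $\bigoplus_{k\geq 0}\mathring{\mathcal A}^\wedge_k$ (Lemma~\ref{themapcsalpha}$(e)$, specialized to the invertible element $\nu$), the only thing to check is that $\mathrm{cs}^\nu$ carries the ideal $(\mathrm{KII}\mathring{\mathcal A}^\wedge)+(\mathrm{CO}\mathring{\mathcal A}^\wedge)$ \emph{onto} the ideal $(\mathrm{KII}^s\mathring{\mathcal A}^\wedge)+(\mathrm{CO}\mathring{\mathcal A}^\wedge)$; a surjective algebra homomorphism carrying one ideal onto another induces an isomorphism of the corresponding quotients.

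First I would invoke Corollary~\ref{r2-20210809}, which gives $\mathrm{cs}^\nu((\mathrm{KII}\mathring{\mathcal A}^\wedge))=(\mathrm{KII}^s\mathring{\mathcal A}^\wedge)$, and Lemma~\ref{lemma1-20210805}, which gives $\mathrm{cs}^\nu((\mathrm{CO}\mathring{\mathcal A}^\wedge))=(\mathrm{CO}\mathring{\mathcal A}^\wedge)$. Taking the sum of these two image equalities (using that $\mathrm{cs}^\nu$, being a linear automorphism, satisfies $\mathrm{cs}^\nu(U+V)=\mathrm{cs}^\nu(U)+\mathrm{cs}^\nu(V)$ for subspaces $U,V$) yields
$$
\mathrm{cs}^\nu\bigl((\mathrm{KII}\mathring{\mathcal A}^\wedge)+(\mathrm{CO}\mathring{\mathcal A}^\wedge)\bigr)=(\mathrm{KII}^s\mathring{\mathcal A}^\wedge)+(\mathrm{CO}\mathring{\mathcal A}^\wedge).
$$
Consequently $\mathrm{cs}^\nu$ composed with the canonical projection $\pi_{\mathfrak a}:\bigoplus_{k\geq 0}\mathring{\mathcal A}^\wedge_k\to\mathfrak a$ is an algebra homomorphism whose kernel contains $(\mathrm{KII}\mathring{\mathcal A}^\wedge)+(\mathrm{CO}\mathring{\mathcal A}^\wedge)$, hence factors through $\pi_{\mathfrak s(\mathbf A)}$ to give a well-defined algebra homomorphism $\overline{\mathrm{cs}}^\nu:\mathfrak s(\mathbf A)\to\mathfrak a$ with $\overline{\mathrm{cs}}^\nu\circ\pi_{\mathfrak s(\mathbf A)}=\pi_{\mathfrak a}\circ\mathrm{cs}^\nu$.

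For bijectivity I would run the same argument with the inverse automorphism $\mathrm{cs}^{\nu^{-1}}=(\mathrm{cs}^\nu)^{-1}$ (Lemma~\ref{themapcsalpha}$(d)$, $(e)$): it carries $(\mathrm{KII}^s\mathring{\mathcal A}^\wedge)+(\mathrm{CO}\mathring{\mathcal A}^\wedge)$ onto $(\mathrm{KII}\mathring{\mathcal A}^\wedge)+(\mathrm{CO}\mathring{\mathcal A}^\wedge)$, hence induces an algebra homomorphism $\overline{\mathrm{cs}}^{\nu^{-1}}:\mathfrak a\to\mathfrak s(\mathbf A)$. A diagram chase using the identities $\overline{\mathrm{cs}}^\nu\circ\pi_{\mathfrak s(\mathbf A)}=\pi_{\mathfrak a}\circ\mathrm{cs}^\nu$ and $\overline{\mathrm{cs}}^{\nu^{-1}}\circ\pi_{\mathfrak a}=\pi_{\mathfrak s(\mathbf A)}\circ\mathrm{cs}^{\nu^{-1}}$, together with surjectivity of the projections and $\mathrm{cs}^{\nu^{-1}}\circ\mathrm{cs}^\nu=\mathrm{Id}$, shows $\overline{\mathrm{cs}}^{\nu^{-1}}$ is the two-sided inverse of $\overline{\mathrm{cs}}^\nu$. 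There is essentially no obstacle here: all the substance is already in Corollary~\ref{r2-20210809} and Lemma~\ref{lemma1-20210805}, and the present proof is a formal application of the universal property of the quotient algebra; the only mild care needed is to confirm that $\mathrm{cs}^\nu$ respects the grading and the algebra structure (it does, by Lemma~\ref{themapcsalpha}$(c)$) so that the induced map is a \emph{graded algebra} isomorphism and not merely a linear one.
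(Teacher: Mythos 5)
Your proposal is correct and follows essentially the same route as the paper: the paper's proof likewise combines Corollary~\ref{r2-20210809} and Lemma~\ref{lemma1-20210805} to get $\mathrm{cs}^\nu\bigl((\mathrm{KII}\mathring{\mathcal A}^\wedge)+(\mathrm{CO}\mathring{\mathcal A}^\wedge)\bigr)=(\mathrm{KII}^s\mathring{\mathcal A}^\wedge)+(\mathrm{CO}\mathring{\mathcal A}^\wedge)$ and then concludes by the universal property of the quotient algebras. Your extra details (explicit factorization through the projections and the inverse induced by $\mathrm{cs}^{\nu^{-1}}$) merely spell out what the paper leaves implicit.
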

\begin{proof}

By Corollary~\ref{r2-20210809} and Lemma~\ref{lemma1-20210805}  the algebra automorphism $\mathrm{cs}^\nu$ of $\bigoplus_{k \geq 0}\mathring{\mathcal{A}}^\wedge_k$ satisfies $$\mathrm{cs}^\nu((\mathrm{KII}\mathring{\mathcal{A}}^\wedge) + (\mathrm{CO}\mathring{\mathcal{A}}^\wedge)) = (\mathrm{KII}^s\mathring{\mathcal{A}}^\wedge) + (\mathrm{CO}\mathring{\mathcal{A}}^\wedge).$$ It then follows from the definitions $\mathfrak{s}(\mathbf{A})$ and $\mathfrak{a}$ that  $\mathrm{cs}^\nu$ induces an algebra isomorphism $\overline{\mathrm{cs}}^\nu:\mathfrak{s}(\mathbf{A})\to\mathfrak{a}$.
\end{proof}

\begin{remark} The  algebra isomorphism \eqref{iso-theta} is compatible with the $\mathbb{Z}_{\geq 0}$-gradings of its source and target. 
\end{remark}

\subsection{The algebras \texorpdfstring{$\mathfrak{b}(n)$}{a(n)} and the morphisms \texorpdfstring{$\varphi_n : \mathfrak{a}\to \mathfrak{b}(n)$}{varphi-m-to-a(n)}}\label{sec:3:3}

Throughout  this subsection let $n\geq 1$ be an integer. In this subsection we use the notations from subsection~\ref{sec:4-1}. In particular we use the map $\mathrm{m}_C:\mathring{\mathrm{Jac}}({P},S)\to \mathbb{Z}_{\geq 0}$ from Definition~\ref{themapm}, the space $\mathring{\mathcal{A}}^\wedge({P},S)$ from Definition~\ref{thespaceAPS} (for $S\not=\emptyset$) and the pairing $\langle\ , \  \rangle$ from ~\eqref{thepairinginA}.

\subsubsection{The algebra \texorpdfstring{$\mathfrak{b}(n)$}{a(n)}}

\begin{definition}\label{thespaceL2n} For any integers $k,r\geq 0$ define the subspace $L^{<r}_k$  \index[notation]{L^{<r}_k@$L^{<r}_k$}  of $\mathring{\mathcal{A}}^\wedge_k$ by
$${L}^{<r}_k:=\widehat{\mathrm{Vect}}_{\mathbb{C}}\big\{\ [\underline{D}]\in\mathring{\mathcal{A}}^\wedge_k \ | \ \underline{D}\in\mathring{\mathrm{Jac}}(\vec{\varnothing},[\![1,k]\!]) \text{ and } \exists i\in[\![1,k]\!] \text{ with } \mathrm{m}_{i}(\underline{D})<r\ \big\}.$$
That is, $L^{<r}_k$ consists of linear combinations of classes of Jacobi diagrams in which there is at least one circle component with less than $r$ attached univalent vertices to it.  See Figure~\ref{figureL2n} for an example.

\begin{figure}[ht!]
										\centering
                        \includegraphics[scale=0.8]{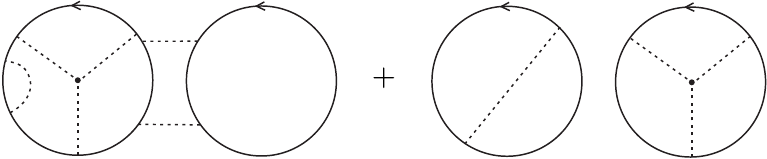}
												\caption{ An element of $L^{<3}_2$.}
\label{figureL2n} 										
\end{figure}

We set
\begin{equation}
L^{<2n}:=\bigoplus_{k\geq 0} L^{<2n}_k \subset \bigoplus_{k \geq 0}\mathring {\mathcal A}_k^\wedge.
\end{equation}  \index[notation]{L^{<2n}_k@$L^{<2n}$}
\end{definition}

Let $\Sigma$ be a finite set. Recall that we denote by  $\mathrm{FPFI}(\Sigma)$  the set of fixed-point  free involutions of $\Sigma$. This set is of cardinality $0$ if $|\Sigma|$ is odd and  $(2m-1)!/(2^{m-1}(m-1)!)$ if $|\Sigma|=2m$ for some $m \geq 1$.   There is a map 
\begin{equation}\label{equ:JacandFPFI}
\mathring{\mathrm{Jac}}:\mathrm{FPFI}(\Sigma)  \longrightarrow  \mathring{\mathrm{Jac}}(\vec{\varnothing},\emptyset,\Sigma),
\end{equation}  \index[notation]{Jac@$\mathring{\mathrm{Jac}}$}
(see Definition~\ref{generalJDwithfree}) associating to each $\tau\in\mathrm{FPFI}(\Sigma)$  (for $|\Sigma|$ even) the Jacobi diagram $\mathring{\mathrm{Jac}}(\tau)$ on $(\vec{\varnothing},\emptyset, \Sigma)$ whose vertex-oriented unitrivalent  graph consists of the disjoint union of $|\Sigma|/2$ segments each one of this having  endpoints colored by $s$ and $\tau(s)$ for $s\in \Sigma$. 
Such kind of Jacobi diagrams are called \emph{struts}. 

\begin{definition}\label{the-element-varsigma-k}
For $k \geq 1$, define $\varsigma_k\in{\mathring{\mathcal A}}^\wedge(\vec{\varnothing},\emptyset,[\![1,2k]\!])$ by $$ \varsigma_k :=\sum_{\tau \in\mathrm{FPFI}([\![1,2k]\!])}[\mathring{\mathrm{Jac}}(\tau)],$$  \index[notation]{\varsigma_k@$\varsigma_k$}
where $[ \ ]: \mathbb{C}\mathring{\mathrm{Jac}}(\vec{\varnothing},\emptyset, [\![1,2k]\!])\to {\mathring{\mathcal A}}^\wedge(\vec{\varnothing},\emptyset,[\![1,2k]\!])$ is the canonical projection.  
In Figure~\ref{figureEx1_69} we show the element $\varsigma_2$ and some of the terms of $\varsigma_3$.

\begin{figure}[ht!]
										\centering
                        \includegraphics[scale=0.8]{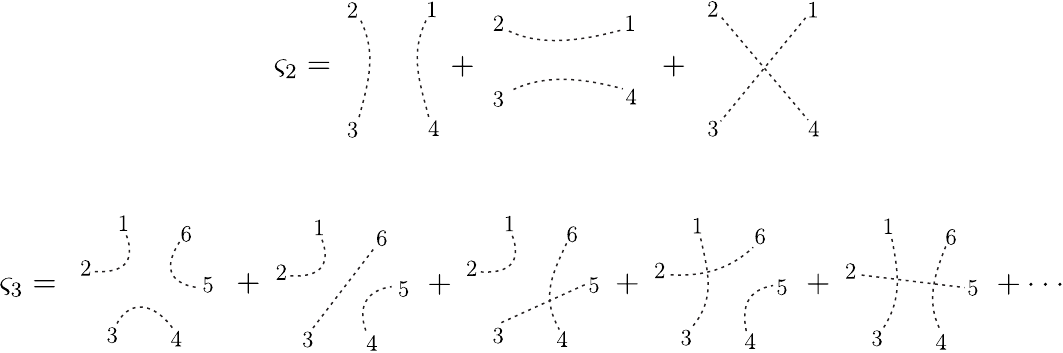}
												\caption{The elements $\varsigma_2$ and $\varsigma_3$.}
\label{figureEx1_69} 										
\end{figure}
\end{definition}

\begin{definition}\label{space-P-n-general}
Let $P$ an oriented Brauer diagram and $n \geq 0$.
\begin{itemize} 
\item[$(a)$] For any finite set $S$, define the subspace $$P^{n+1}(P,S):=\left\{\langle x, \varsigma_{n+1}\rangle\in\mathring{\mathcal{A}}^\wedge(P,S) \ | \ x\in \mathring{\mathcal{A}}^\wedge(P,S, [\![1,2(n+1)]\!]) \right\}\subset\mathring{\mathcal A}^{\wedge}(P,S),$$  \index[notation]{P^{n+1}(P,S)@$P^{n+1}(P,S)$} the pairing being as in \eqref{thepairinginA} with $\Sigma=[\![1,2(n+1)]\!]$.  See Figure~\ref{figureEx1_70} for an example.

\item[$(b)$] For any $k \geq 0$, define $P^{n+1}(P)_k$ \index[notation]{P^{n+1}(P)_k@$P^{n+1}(P)_k$} to be the image of $P^{n+1}(P,[\![1,k]\!]) \subset \mathring{\mathcal A}^\wedge(P,[\![1,k]\!])$ by the projection $\mathring{\mathcal A}^\wedge(P,[\![1,k]\!])\to \mathring{\mathcal A}^\wedge(P)_k$.  
\item[$(c)$] For $k \geq 0$, we set $P^{n+1}_k:=P^{n+1}(\vec{\varnothing})_k\subset \mathring{\mathcal A}^\wedge_k$.  \index[notation]{P^{n+1}_k@$P^{n+1}_k$}
\end{itemize}

We set
\begin{equation}
P^{n+1}:=\bigoplus_{k\geq 0}P^{n+1}_k\subset \bigoplus_{k\geq 0}\mathring{\mathcal A}^\wedge_k.
\end{equation} \index[notation]{P^{n+1}@$P^{n+1}$}

\begin{figure}[ht!]
										\centering
                        \includegraphics[scale=0.7]{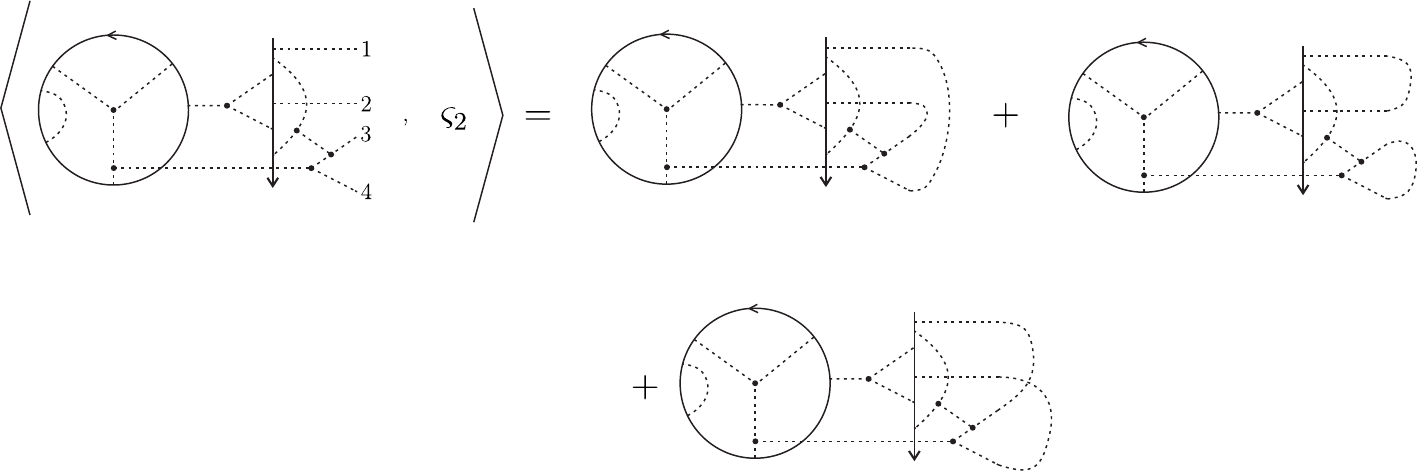}
												\caption{An element of $P^2(\downarrow, \{1\})$.}
\label{figureEx1_70} 										
\end{figure}
\end{definition}

\begin{lemma}\label{rlemma1_20211111} The subspaces $L^{<2n}$ and $P^{n+1}$ are $\mathbb{Z}_{\geq 0}$-graded ideals of $\bigoplus_{k\geq 0}\mathring{\mathcal{A}}^{\wedge}_k$.
\end{lemma}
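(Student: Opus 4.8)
The statement asserts that $L^{<2n}$ and $P^{n+1}$ are $\mathbb{Z}_{\geq 0}$-graded subspaces of $\bigoplus_{k\geq 0}\mathring{\mathcal{A}}^\wedge_k$ which are closed under multiplication by arbitrary homogeneous elements. Gradedness is immediate from the definitions, since both are defined degree by degree: $L^{<2n}=\bigoplus_k L^{<2n}_k$ and $P^{n+1}=\bigoplus_k P^{n+1}_k$ with $L^{<2n}_k, P^{n+1}_k\subset\mathring{\mathcal A}^\wedge_k$. So the content is the ideal property. Recall that the product on $\bigoplus_{k\geq 0}\mathring{\mathcal A}^\wedge_k$ is induced by the tensor product $\otimes_{\mathcal A}$ (equivalently, by composition $\circ_{\mathcal A}$) with $P=P'=\vec{\varnothing}$, via the coinvariant maps $\mathcal X(P)_k\otimes\mathcal X(P')_{k'}\to\mathcal X(P\otimes P')_{k+k'}$ of Lemma~\ref{r:morphismsinduitsurlescoinvariants}. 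Concretely, on representatives, the product of the classes of $\underline D\in\mathring{\mathrm{Jac}}(\vec{\varnothing},[\![1,k]\!])$ and $\underline D'\in\mathring{\mathrm{Jac}}(\vec{\varnothing},[\![1,k']\!])$ is the class of the disjoint-union Jacobi diagram on $(\vec{\varnothing},[\![1,k+k']\!])$, where the $k'$ circles of $\underline D'$ are relabelled as $[\![k+1,k+k']\!]$ (this is Definition~\ref{deftensorinJac} with $P=P'=\vec{\varnothing}$, pushed to coinvariants).

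For $L^{<2n}$: it suffices (by $\mathbb{C}$-bilinearity and continuity, and since $L^{<2n}_k$ is a closed subspace) to check that if $\underline D$ is a Jacobi diagram on $(\vec{\varnothing},[\![1,k]\!])$ possessing a circle component $i_0\in[\![1,k]\!]$ with $\mathrm{m}_{i_0}(\underline D)<2n$, then for any Jacobi diagram $\underline D'$ on $(\vec{\varnothing},[\![1,k']\!])$ the product $[\underline D]\cdot[\underline D']$ again lies in $L^{<2n}_{k+k'}$. But in the disjoint-union diagram underlying $\underline D\otimes_{\mathcal A}\underline D'$, the circle labelled $i_0$ still carries exactly $\mathrm{m}_{i_0}(\underline D)<2n$ univalent vertices, since no new legs are attached to it. Hence each generating diagram of $L^{<2n}_k\otimes\mathring{\mathcal A}^\wedge_{k'}$ maps to a generating diagram of $L^{<2n}_{k+k'}$, proving $L^{<2n}\cdot\big(\bigoplus_k\mathring{\mathcal A}^\wedge_k\big)\subset L^{<2n}$; commutativity of the algebra then gives the two-sided property. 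First I would also note the degenerate cases: if $k'=0$ multiplication is by a scalar (the diagram $\emptyset_{(\vec{\varnothing},\emptyset)}$ contributes the unit) and there is nothing to prove.

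For $P^{n+1}$: by Definition~\ref{space-P-n-general}, an element of $P^{n+1}_k$ is the image in $\mathring{\mathcal A}^\wedge_k$ of $\langle x,\varsigma_{n+1}\rangle$ for some $x\in\mathring{\mathcal A}^\wedge(\vec{\varnothing},[\![1,k]\!],[\![1,2(n+1)]\!])$, where $\langle\ ,\ \rangle$ is the glueing pairing of \eqref{thepairinginA} along the set $\Sigma=[\![1,2(n+1)]\!]$ of free vertices, and $\varsigma_{n+1}$ is the sum of all struts on $\Sigma$. The key point is that the tensor product $\otimes_{\mathcal A}$ (with $\Sigma'=\emptyset$ on the second factor) is compatible with the pairing in the first variable: for $x$ as above and $y\in\mathring{\mathcal A}^\wedge(\vec{\varnothing},[\![1,k']\!])$ one has $\langle x,\varsigma_{n+1}\rangle\otimes_{\mathcal A} y=\langle x\otimes_{\mathcal A} y,\varsigma_{n+1}\rangle$ (glueing the struts to the first factor and then disjoint-union-ing with $y$ is the same as first disjoint-union-ing and then glueing), using the maps \eqref{tensor:map:A:PP'gen} and the relation to \eqref{thepairinginA}. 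Since $x\otimes_{\mathcal A} y$ again lies in a space of Jacobi diagrams with free-vertex set $[\![1,2(n+1)]\!]$ (now on $(\vec{\varnothing},[\![1,k+k']\!])$), the product $[\langle x,\varsigma_{n+1}\rangle]\cdot[y]$ is again of the required form, i.e. lies in $P^{n+1}_{k+k'}$. Passing to coinvariants and then to the direct sum over $k$, and invoking commutativity, yields that $P^{n+1}$ is a graded ideal.

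\textbf{Main obstacle.} None of the steps is deep; the only thing requiring care is the bookkeeping of the identifications: (i) that the algebra product on $\bigoplus_k\mathring{\mathcal A}^\wedge_k$ really is the disjoint-union operation after relabelling circles, which is Lemma~\ref{r:tenscompcoinvprop}$(b)$ together with Definition~\ref{deftensorinJac}; and (ii) the compatibility $\langle x,\varsigma_{n+1}\rangle\otimes_{\mathcal A} y=\langle x\otimes_{\mathcal A} y,\varsigma_{n+1}\rangle$ of the $\Sigma$-pairing with $\otimes_{\mathcal A}$, which follows by inspecting the glueing operation of Definition~\ref{themappairing} and the generalized tensor map \eqref{tensor:map:A:PP'gen}: the glueing of struts only touches the $\Sigma$-colored legs, which are disjoint from the legs coming from $y$. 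Both are routine once one unwinds the definitions, so the proof is short: establish (i) and (ii), then run the two one-line arguments above and note that commutativity of $\bigoplus_k\mathring{\mathcal A}^\wedge_k$ upgrades ``right ideal'' to ``ideal''.
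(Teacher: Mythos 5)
Your proposal is correct and follows essentially the same route as the paper: for $L^{<2n}$ one observes that $\mathrm{m}_{i}(\underline{D}\otimes_{\mathcal A}\underline{E})=\mathrm{m}_{i}(\underline{D})<2n$ on generating diagrams, and for $P^{n+1}$ one uses the compatibility $\langle x,\varsigma_{n+1}\rangle\otimes_{\mathcal A} y=\langle x\otimes_{\mathcal A} y,\varsigma_{n+1}\rangle$ of the $\Sigma$-pairing with the tensor product (the paper states exactly this identity with a general $s$ and then specializes to $s=\varsigma_{n+1}$), before passing to coinvariants and invoking commutativity. No gaps; your added remarks on gradedness and the degenerate case $k'=0$ are harmless bookkeeping the paper leaves implicit.
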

\begin{proof}

Let $\underline{E} \in \mathring{\mathrm{Jac}}(\vec{\varnothing},[\![1,k]\!])$,  $\underline{D} \in \mathring{\mathrm{Jac}}(\vec{\varnothing},[\![1,l]\!])$ and $i \in [\![1,l]\!]$ such that $\mathrm{m}_{i}(\underline{D})<2n$, then $\mathrm{m}_{i}(\underline{D}\otimes_{\mathcal A} \underline{E})<2n$. This implies  $\mathring{\mathcal{A}}^{\wedge}_k\cdot L^{<2n}_l\subset L^{<2n}_{k+l}$.  Hence $L^{<2n}$ is an ideal of $\bigoplus_{k\geq 0}\mathring{\mathcal{A}}^{\wedge}_k$.

Let $k,l \geq 0$, one checks the identity $ \langle y\otimes_{\mathcal A} x, s\rangle = y\otimes_{\mathcal A}\langle x, s\rangle$ (identity in $\mathring{\mathcal{A}}^\wedge(\vec{\varnothing},[\![1,k+l]\!])$) for any
$y\in \mathring{\mathcal{A}}^{\wedge}(\vec{\varnothing},[\![1,l]\!])$,  $x\in\mathring{\mathcal{A}}^{\wedge}(\vec{\varnothing},[\![1, k]\!], [\![1, 2(n+1)]\!])$ and 
$s \in  \mathring{\mathcal A}^\wedge(\vec{\varnothing},\emptyset,[\![1,2(n+1)]\!])$, where $\otimes_{\mathcal A}$ is as in~\eqref{tensor:map:A:PP'gen} in the LHS and as in Notation \ref{notationfortens} in the RHS. 

It follows from the specialization of this identity to $s= \varsigma_{n+1}$ that the linear map 
$\otimes_{\mathcal A} : \mathring{\mathcal A}^\wedge(\vec{\varnothing},[\![1,k]\!]) \otimes \mathring{\mathcal A}^\wedge(\vec{\varnothing},[\![1,l]\!])\to
\mathring{\mathcal A}^\wedge(\vec{\varnothing},[\![1,k+l]\!])$
induces a map
$\mathring{\mathcal A}^\wedge(\vec{\varnothing},[\![1,k]\!]) \otimes P^{n+1}_l\to P^{n+1}_{k+l}$
between the subspaces of the source and target of this map. Taking coinvariants, one derives the inclusion  $\mathring{\mathcal{A}}^{\wedge}_k\cdot P^{n+1}_l\subset P^{n+1}_{k+l}$. Hence $P^{n+1}$ is an ideal of $\bigoplus_{k\geq 0}\mathring{\mathcal{A}}^{\wedge}_k$.
\end{proof}
By Proposition~\ref{l:LMOinvt}$(a)$, $(\mathrm{CO}\mathring{\mathcal A}^\wedge)$ is an ideal of $\bigoplus_{k\geq 0}\mathring{\mathcal{A}}^{\wedge}_k$, therefore so is $L^{<2n} + P^{n+1} + (\mathrm{CO}\mathring{\mathcal A}^\wedge)$.

\begin{definition}\label{def:space-a-n} Define the commutative algebra $\mathfrak{b}(n)$ as the quotient algebra
\begin{equation}\label{space-a-n}
\mathfrak{b}(n)=\frac{\bigoplus_{k \geq 0}\mathring{\mathcal A}^\wedge_{k}}{L^{<2n}+P^{n+1}+(\mathrm{CO}\mathring{\mathcal{A}}^{\wedge})} = \bigoplus_{k\geq 0} \frac{\mathring{\mathcal A}^\wedge_k}{L^{<2n}_k+P^{n+1}_k + (\mathrm{CO}\mathring{\mathcal{A}}_k^{\wedge})}.
\end{equation}  \index[notation]{b(n)@$\mathfrak{b}(n)$}
\end{definition}

\subsubsection{The morphism \texorpdfstring{$\varphi_n : \mathfrak{a}\to \mathfrak{b}(n)$}{varphi-m-to-a(n)}}

\begin{proposition}[See {\cite[Lemma 10.4]{Ohts}}]\label{rprop1_20211111} For any integer $k\geq 2$ we have
$$\mathring{\mathcal{A}}^\wedge(\downarrow\uparrow) _{k-2}= P^{n+1}(\downarrow\uparrow)_{k-2} + \widehat{\mathrm{Vect}}_{\mathbb{C}}\left\{[\underline{D}]\in\mathring{\mathcal{A}}^\wedge(\downarrow\uparrow)_{k-2} \ | \ \underline{D}\in\mathring{\mathrm{Jac}}(\downarrow\uparrow,[\![1,k-2]\!]) \text{ with } \mathrm{m}_{\downarrow}(\underline{D})\leq 2n\right\}.$$
\end{proposition}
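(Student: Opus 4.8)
The statement decomposes the space $\mathring{\mathcal{A}}^\wedge(\downarrow\uparrow)_{k-2}$ as the sum of the ``relation part'' $P^{n+1}(\downarrow\uparrow)_{k-2}$ and the span of classes of Jacobi diagrams whose $\downarrow$-component carries at most $2n$ univalent vertices. My plan is to prove the non-trivial inclusion: any class $[\underline{D}]$ with $\mathrm{m}_{\downarrow}(\underline{D}) > 2n$ can be rewritten, modulo $P^{n+1}(\downarrow\uparrow)_{k-2}$, as a combination of classes with strictly smaller $\downarrow$-valence; iterating this descent then lands in the second subspace. The reverse inclusion is trivial since both summands are subspaces of $\mathring{\mathcal{A}}^\wedge(\downarrow\uparrow)_{k-2}$.

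First I would recall the mechanism behind $P^{n+1}(\downarrow\uparrow)_{k-2}$: it is spanned by the elements $\langle x,\varsigma_{n+1}\rangle$ where $\varsigma_{n+1}=\sum_{\tau\in\mathrm{FPFI}([\![1,2(n+1)]\!])}[\mathring{\mathrm{Jac}}(\tau)]$ is the sum of all struts pairings on $2(n+1)$ points. The key combinatorial identity (this is precisely the content underlying \cite[Lemma 10.4]{Ohts}) is: if $\underline{D}$ has a distinguished component $c$ with at least $2n+1$ univalent vertices attached to it, then grouping $2(n+1)$ of these vertices and ``summing over all ways of pairing them among themselves'' produces, via the STU relation repeatedly applied along the solid line $c$, an element of $P^{n+1}(\downarrow\uparrow)_{k-2}$ plus a correction. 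More precisely, one uses the identity expressing $\varsigma_{n+1}$-insertions in terms of (a) reconnections that lower the number of legs on $c$ (because each application of STU near $c$ trades two adjacent legs for one trivalent vertex not on $c$) and (b) an error term which is itself supported on diagrams with $\leq 2n$ legs on $\downarrow$. So I would set up the descent: pick $\underline{D}$ with $\mathrm{m}_\downarrow(\underline{D})=m>2n$, isolate $2(n+1)\leq m$ of the legs on $\downarrow$, and write the ``all-pairings'' sum over those legs; by STU/AS/IHX this sum equals (up to sign) the original $[\underline{D}]$ plus terms with fewer legs on $\downarrow$, while simultaneously it visibly lies in $P^{n+1}(\downarrow\uparrow)_{k-2}$ since it is of the form $\langle x,\varsigma_{n+1}\rangle$ for a suitable $x\in\mathring{\mathcal{A}}^\wedge(\downarrow\uparrow,[\![1,k-2]\!],[\![1,2(n+1)]\!])$. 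Hence $[\underline{D}]\equiv(\text{lower-leg terms})\pmod{P^{n+1}(\downarrow\uparrow)_{k-2}}$, and induction on $m$ closes the argument.

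The careful bookkeeping is in justifying that the ``reconnection'' terms genuinely have strictly fewer univalent vertices on the $\downarrow$-component: each STU move applied to two consecutive legs on $\downarrow$ replaces them by a single internal trivalent vertex plus a new leg, but one then reroutes that new leg off of $\downarrow$, so the net effect is $-2$ legs on $\downarrow$ at the cost of creating internal structure. One must check that after taking $\mathfrak S_{k-2}$-coinvariants (to pass from $\mathring{\mathcal{A}}^\wedge(\downarrow\uparrow,[\![1,k-2]\!])$ to $\mathring{\mathcal{A}}^\wedge(\downarrow\uparrow)_{k-2}$) nothing breaks, which is routine since the entire construction is $\mathfrak S_{k-2}$-equivariant. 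The main obstacle, and the only place real care is needed, is the precise STU-bookkeeping that converts a strut-sum insertion into ``identity plus lower-leg terms'' — this is exactly the computation carried out in Ohtsuki's lemma, so I would either reproduce it diagrammatically (insertions on a solid line, repeatedly resolving adjacent legs) or cite \cite[Lemma 10.4]{Ohts} directly and indicate the adaptation needed for the $\downarrow\uparrow$-decorated setting (which is purely notational: the extra $\uparrow$ strand and the $\mathfrak S_{k-2}$-action play no role in the local moves). Finiteness of the descent is immediate since $\mathrm{m}_\downarrow$ is a non-negative integer decreasing by at least $2$ at each step until it drops to $\leq 2n$.
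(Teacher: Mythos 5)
Your proposal is correct and takes essentially the same route as the paper: the paper's proof consists precisely of citing the general equality (Lemma 3.1 of \cite{LMO98}, i.e. \cite[Lemma 10.4]{Ohts}) for an arbitrary pair $(P,S)$ and distinguished component $C\in\pi_0(P)$, specializing to $P=\downarrow\uparrow$, $S=[\![1,k-2]\!]$, $C=\downarrow$, and then taking $\mathfrak{S}_{k-2}$-coinvariants — exactly your fallback option, with the coinvariance step being harmless by equivariance just as you say. The only caveat is that your sketched descent inside the cited lemma, as phrased, selects $2(n+1)\leq \mathrm{m}_{\downarrow}(\underline{D})$ legs and so would not by itself treat diagrams with exactly $2n+1$ legs on $\downarrow$; this does not affect your proof since you ultimately defer the STU bookkeeping to \cite[Lemma 10.4]{Ohts}, whose argument covers every $\mathrm{m}_{\downarrow}(\underline{D})>2n$.
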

\begin{proof}
Lemma 3.1 in \cite{LMO98}  (see also \cite[Lemma 10.4]{Ohts}) proves  for any finite set $S$, any oriented Brauer diagram $P$  and $C \in \pi_0(P)$, the equality
$$\mathring{\mathcal A}^\wedge(P,S)=P^{n+1}(P,S) + \widehat{\mathrm{Vect}}_{\mathbb{C}}\left\{[\underline{D}]\in\mathring{\mathcal{A}}^\wedge(P,S) \ | \ \underline{D}\in\mathring{\mathrm{Jac}}(P,S) \text{ with } \mathrm{m}_{C}(\underline{D})\leq 2n\right\}.$$
The result follows upon taking coinvariants from the specialization of this equality to 
$S=[\![1,k-2]\!]$ , $P=\downarrow \uparrow$, and $C=\downarrow \in \pi_0(\downarrow \uparrow)$. 
\end{proof}

\begin{lemma}\label{r2-20211111} For any $k\geq 2$ we have
$$\left(\mathrm{KIIMap}^{s,2}_{{\mathring{\mathcal{A}}}^\wedge,k}-\mathrm{KIIMap}^{s,1}_{{\mathring{\mathcal{A}}}^\wedge,k}\right)\big(P^{n+1}(\downarrow\uparrow)_{k-2}\big)\subset P^{n+1}_k. 
$$
\end{lemma}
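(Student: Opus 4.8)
The statement asserts that the simplified $\mathrm{KII}$-difference map sends the ideal piece $P^{n+1}(\downarrow\uparrow)_{k-2}$ into $P^{n+1}_k$. My plan is to trace through the definitions of $\mathrm{KIIMap}^{s,1}_{\mathring{\mathcal{A}}^\wedge,k}$ and $\mathrm{KIIMap}^{s,2}_{\mathring{\mathcal{A}}^\wedge,k}$ from Lemma~\ref{themapsKIIsimple} and check separately that each of them individually maps $P^{n+1}(\downarrow\uparrow)_{k-2}$ into $P^{n+1}_k$; the difference then obviously does as well, so I do not need any cancellation. First I would recall that an element of $P^{n+1}(\downarrow\uparrow)_{k-2}$ is, by Definition~\ref{space-P-n-general}(a)–(b), the image under the coinvariants projection of a pairing $\langle y, \varsigma_{n+1}\rangle$ with $y \in \mathring{\mathcal A}^\wedge(\downarrow\uparrow,[\![1,k-2]\!],[\![1,2(n+1)]\!])$, i.e.\ a Jacobi diagram on $(\downarrow\uparrow,[\![1,k-2]\!])$ glued to $n+1$ struts coming from $\varsigma_{n+1}$.

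The key point is a \emph{naturality/compatibility} statement: the operations used to build $\mathrm{KIIMap}^{s,i}_{\mathring{\mathcal{A}}^\wedge,k}$ --- namely the composition $\circ_{\mathcal A}$ and tensor product $\otimes_{\mathcal A}$ with the fixed elements $(\mathrm{cup}_{\cdots})_{\mathcal A}$, $(\mathrm{cap}_{\cdots})_{\mathcal A}$, $(\mathrm{Id})_{\mathcal A}$, and the doubling map $\mathrm{dbl}_1$ --- all commute with the strut-pairing $\langle -, \varsigma_{n+1}\rangle$, in the sense that applying one of these operations to $\langle y, \varsigma_{n+1}\rangle$ yields $\langle y', \varsigma_{n+1}\rangle$ for a suitable $y'$ obtained by applying the corresponding ``$\Sigma$-decorated'' version of the operation (as provided by the maps \eqref{tensor:map:A:PP'gen}, \eqref{COMP:MAP:SIGMA}, and the $\mathrm{dbl}$ analogue, which exist precisely because these constructions act trivially on the free-vertex set $\Sigma$). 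Concretely, I would argue: since $(\mathrm{cup}_{+-+-})_{\mathcal A}$, $(\mathrm{cap}_{+-+-})_{\mathcal A}$, $(\mathrm{Id}_{+-})_{\mathcal A}$ all lie in spaces with empty free-vertex set, composing/tensoring them with $\langle y,\varsigma_{n+1}\rangle$ equals $\langle (\text{same operation on }y), \varsigma_{n+1}\rangle$; and the doubling map $\mathrm{dbl}_1$ sends a strut (a diagram with two univalent vertices, none trivalent) either to itself on the doubled skeleton or, when a strut leg is attached to the doubled component, to a sum that still consists of struts --- in any case $\mathrm{dbl}_1\langle y,\varsigma_{n+1}\rangle$ is again of the form $\langle y'', \varsigma_{n+1}\rangle$, because doubling acts only on the oriented-Brauer-diagram labels and not on $\Sigma$. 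Chaining these compatibilities through the definitions \eqref{KIIMap1forA-2} and \eqref{KIIMap2forA-2} shows that $\mathrm{KIIMap}^{s,i}_{\mathring{\mathcal{A}}^\wedge,k}(\langle y,\varsigma_{n+1}\rangle)$ equals $\langle z_i, \varsigma_{n+1}\rangle$ for some $z_i \in \mathring{\mathcal A}^\wedge(\vec\varnothing,[\![1,k]\!],[\![1,2(n+1)]\!])$ (before taking coinvariants), hence lands in $P^{n+1}(\vec\varnothing,[\![1,k]\!])$, and after projecting to coinvariants in $P^{n+1}_k$ by Definition~\ref{space-P-n-general}(b)–(c).

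The step I expect to be the main obstacle is the behavior of $\mathrm{dbl}_1$ on the pairing: I need to check carefully that when a univalent vertex belonging to a strut of $\varsigma_{n+1}$ is attached to the component $\downarrow$ of $\downarrow\uparrow$ that gets doubled, the resulting sum (over the two ways of reattaching) still decomposes as a strut-pairing, i.e.\ that the doubling does not create trivalent vertices on the strut part and that the bijection $\theta$ in the pairing Definition~\ref{themappairing} is respected. This is really a bookkeeping check using Definition~\ref{def:doublingforvecBr} (doubling only replaces a component by two parallel copies and lifts the incidence of univalent vertices along $\mathrm{pr}^{-1}$) together with the formula \eqref{defeq:doublingmapinJac} for $\mathrm{dbl}_{\mathring{\mathrm{Jac}}}$; the compatibility of $\mathrm{dbl}$ with the free-vertex decoration is then immediate since $\tilde\varphi$ is only required to lift $\varphi$ over $\pi_0(P^A)$, leaving the $S$- and $\Sigma$-components untouched. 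Once this is settled, the rest is the routine diagram-chase through the two explicit formulas, using that $P^{n+1}(P,S)$ is functorial in $P$ in the appropriate sense and that the auxiliary elements involved carry no free vertices.
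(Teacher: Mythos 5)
Your proposal is correct and takes essentially the same route as the paper: the paper's proof defines $\Sigma$-decorated analogues $\mathrm{KIIMap}^{s,i}_{\mathring{\mathcal{A}}^\wedge,k,[\![1,2(n+1)]\!]}$ of the simple KII maps by the same formulas \eqref{KIIMap1forA-2}--\eqref{KIIMap2forA-2} on $\mathring{\mathcal{A}}^\wedge(\downarrow\uparrow,[\![1,2(n+1)]\!])_{k-2}$ and notes that they commute with the pairing $\langle\,\cdot\,,\varsigma_{n+1}\rangle$, which is exactly the operation-by-operation compatibility (with $\circ_{\mathcal A}$, $\otimes_{\mathcal A}$ against elements with no free vertices, and $\mathrm{dbl}_1$) that you spell out before concluding via Definition~\ref{space-P-n-general}. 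The one step you flag as the main obstacle is in fact harmless for the reason you yourself give: the struts of $\varsigma_{n+1}$ carry only $\Sigma$-colored legs and never attach to the skeleton, so doubling only redistributes legs of $y$ on the doubled component and commutes with the strut-pairing immediately.
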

\begin{proof}
For $P$ an oriented Brauer diagram, $k\geq 0$ and $\Sigma$ a finite set, define $\mathring{\mathcal A}^\wedge(P,\Sigma)_k$ to be the space $\mathring{\mathcal A}^\wedge(P,[\![1,k]\!],\Sigma)_{\mathfrak{S}_k}$. For $i=1,2$, one can define the maps
\begin{equation}
\mathrm{KIIMap}^{s,i}_{{\mathring{\mathcal{A}}}^\wedge,k,[\![1,2(n+1)]\!]}: \mathring{\mathcal{A}}^\wedge(\downarrow\uparrow,[\![1,2(n+1)]\!])_{k-2}\longrightarrow \mathring{\mathcal{A}}^\wedge(\vec{\varnothing}, [\![1,2(n+1)]\!])_{k}
\end{equation}
by the same formulas as those given in \eqref{KIIMap1forA-2} and \eqref{KIIMap2forA-2} in Lemma~\ref{themapsKIIsimple}. By definition of these maps the diagram
\begin{equation*}
\xymatrix{\mathring{\mathcal{A}}^\wedge(\downarrow\uparrow,[\![1,2(n+1)]\!])_{k-2}  \ar_{\mathrm{KIIMap}^{s,i}_{{\mathring{\mathcal{A}}}^\wedge,k,[\![1,2(n+1)]\!]}}[dd]
\ar^{\ \ \ \ \ \ \langle \ \cdot \ , \varsigma_{n+1} \rangle}[rrr] & & &\mathring{\mathcal{A}}^\wedge(\downarrow\uparrow)_{k-2}\ar^{\mathrm{KIIMap}^{s,i}_{{\mathring{\mathcal{A}}}^\wedge,k}}[dd]\\
 & & & \\
\mathring{\mathcal{A}}^\wedge(\vec{\varnothing}, [\![1,2(n+1)]\!])_k\ar^{\ \ \ \ \ \ \langle \ \cdot \ , \varsigma_{n+1} \rangle}[rrr] & & &\mathring{\mathcal{A}}^\wedge_k}
\end{equation*}
commutes. Then the result follows. 
\end{proof}

\begin{lemma}\label{r20211117} Let $k\geq 2$ and $\underline D=\big(D,\varphi,\{\mathrm{lin}_l\}_{l \in \pi_0(\downarrow \uparrow)},\{\mathrm{cyc}_s\}_{s \in [\![1,k-2]\!]}\big) \in\mathring{\mathrm{Jac}}(\downarrow\uparrow, [\![1,k-2]\!])$. Then
\begin{itemize}
\item[$(a)$] There exist 
$\big(\tilde\varphi,\{\widetilde{\mathrm{cyc}}_s\}_{s \in [\![1,k]\!]}\big)$ and 
$\big(\varphi_i,\{{\mathrm{cyc}}_s^i\}_{s \in[\![1,k]\!]}\big)_i$, indexed by the set of maps $i : \varphi^{-1}(\downarrow) \to \{1,2\}$, where $\tilde{\varphi},\varphi_i : \partial D\to [\![1,k]\!]$ are maps and for each $s \in [\![1,k]\!]$,  $\widetilde{\mathrm{cyc}}_s$ and ${\mathrm{cyc}}_s^i$ are cyclic orders on $\tilde{\varphi}^{-1}(s)$ and $\varphi_i^{-1}(s)$ respectively;  such that 
$$\mathrm{KIIMap}_{{\mathring{\mathcal A}}^\wedge,k}^{s,1}([\underline{D}]) = [\underline{\tilde{D}}] \quad \quad \text{and} \quad \quad \mathrm{KIIMap}_{{\mathring{\mathcal A}}^\wedge,k}^{s,2}([\underline{D}])= \sum_i[\underline{D}_i],$$ 
 where $\tilde{\underline D}$ and $\tilde{\underline D}_i$ are the elements of $\mathring{\mathrm{Jac}}(\vec{\varnothing}, [\![1,k]\!])$ defined by $\tilde{\underline{D}}:=\big(D,\tilde{\varphi},\{\widetilde{\mathrm{cyc}}_s\}_{s \in [\![1,k]\!]}\big)$ and $\underline{D}_i:=\big(D,\varphi_i,\{\widetilde{\mathrm{cyc}}_s^i\}_{s \in[\![1,k]\!]}\big)$.

\item[$(b)$] Let $i_{1}:\varphi^{-1}(\downarrow) \to \{1,2\} $ be the constant map with value $1$. Then $$\big(\varphi_{i_1},\{\mathrm{cyc}_s^{i_1}\}_{s \in [\![1,k]\!]}\big)=\big(\tilde{\varphi},\{\widetilde{\mathrm{cyc}}_s\}_{s \in [\![1,k]\!]}\big).$$

\item[$(c)$] For any map $i:\varphi^{-1}(1)\to \{1,2\}$   with $i\not = i_1$, we have $\mathrm{m}_{1}(\underline{D}_i) < \mathrm{m}_{\downarrow}(\underline{D})$  (in these expressions, the indices 1 and $\downarrow$ respectively refer to elements of $[\![1,k]\!]$ and of $\pi_0(\downarrow \uparrow)$).

\item[$(d)$] $$\big(\mathrm{KIIMap}_{{\mathring{\mathcal A}}^\wedge,k}^{s,2} -\mathrm{KIIMap}_{{\mathring{\mathcal A}}^\wedge,k}^{s,1}\big)([\underline{D}])\in L^{<\mathrm{m}_{\downarrow}(\underline{D})}_k.$$

\end{itemize}
\end{lemma}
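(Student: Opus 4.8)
The statement bundles together four facts about the simplified $\mathrm{KII}$-maps acting on a single Jacobi diagram $\underline D\in\mathring{\mathrm{Jac}}(\downarrow\uparrow,[\![1,k-2]\!])$, and items $(a)$--$(c)$ are really just a careful bookkeeping analysis of the combinatorics that will feed into $(d)$. The plan is to prove $(a)$, $(b)$, $(c)$ first by unwinding the definitions of $\mathrm{KIIMap}^{s,1}_{\mathring{\mathcal A}^\wedge,k}$ and $\mathrm{KIIMap}^{s,2}_{\mathring{\mathcal A}^\wedge,k}$ from Lemma~\ref{themapsKIIsimple} (formulas \eqref{KIIMap1forA-2} and \eqref{KIIMap2forA-2}), and then derive $(d)$ as a formal consequence of $(b)$ and $(c)$. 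For $(a)$: the map $\mathrm{KIIMap}^{s,1}_{\mathring{\mathcal A}^\wedge,k}$ is built by composing $\underline D$ (extended by identities $\mathrm{Id}_{+-}$) with the elementary Brauer diagrams $\mathrm{cup}_{+-+-}$ and $\mathrm{cap}_{+-+-}$, which carry no dashed part; hence at the level of Jacobi diagrams the underlying graph $D$ is unchanged and only the colouring map $\varphi$ and the linear/cyclic orders on boundary points are rearranged according to the composition rule of \S\ref{sec:4-3} (Lemma~\ref{r:2023-01-11-r1}). So $\mathrm{KIIMap}^{s,1}_{\mathring{\mathcal A}^\wedge,k}([\underline D])=[\underline{\tilde D}]$ for a single diagram $\underline{\tilde D}=(D,\tilde\varphi,\{\widetilde{\mathrm{cyc}}_s\})$ obtained by tracking how the two new circle components (coming from $\mathrm{Cir}(\mathrm{cap}_{+-+-},\mathrm{cup}_{+-+-})$) absorb the univalent vertices formerly coloured by the $\downarrow$- and $\uparrow$-components of $\downarrow\uparrow$. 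For $\mathrm{KIIMap}^{s,2}_{\mathring{\mathcal A}^\wedge,k}$ the only extra ingredient is $\mathrm{dbl}_1(\underline D)$, and by the definition of $\mathrm{dbl}_{\mathring{\mathrm{Jac}}}$ in \S\ref{sec:4-5} this is precisely a sum over all maps $i:\varphi^{-1}(\downarrow)\to\{1,2\}$ of diagrams with the same graph $D$; composing each summand with the $\mathrm{cup}$/$\mathrm{cap}$ diagrams again leaves the graph intact and produces $\underline D_i=(D,\varphi_i,\{\mathrm{cyc}^i_s\})$. This proves $(a)$.

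For $(b)$: the constant map $i_1$ sends every doubled leg to the first of the two parallel strands, which is exactly the strand that in $\mathrm{KIIMap}^{s,1}$ plays the role of the original $\downarrow$-component; tracing through the composition formulas the resulting colouring and orders match those of $\underline{\tilde D}$ verbatim, so $\big(\varphi_{i_1},\{\mathrm{cyc}^{i_1}_s\}\big)=\big(\tilde\varphi,\{\widetilde{\mathrm{cyc}}_s\}\big)$. For $(c)$: fix $i\neq i_1$. Since $i$ is not constant it sends at least one leg of $\varphi^{-1}(\downarrow)$ to strand $2$. In $\mathrm{KIIMap}^{s,2}$ strand $2$ (the doubled parallel copy) ends up glued, via $\mathrm{cap}_{-++-}\circ(\mathrm{Id}_-\otimes-)\circ\mathrm{cup}_{-++-}$, into the circle component labelled $1$ of the target $\vec{\varnothing}$ — wait, more precisely one must check which of the two new circles absorbs strand $2$; the point is that strand $1$ and strand $2$ together feed into the same pair of target circles, and the leg sent to strand $2$ no longer contributes to the circle that receives all legs in the $i_1$-case. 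Hence the number of univalent vertices attached to the circle component $1$ of $\underline D_i$ is strictly smaller than $\mathrm{m}_\downarrow(\underline D)$, i.e. $\mathrm{m}_1(\underline D_i)<\mathrm{m}_\downarrow(\underline D)$. I expect this to be the main obstacle: one has to pin down exactly which target circle in $\mathrm{Cir}(\mathrm{cap}_{-++-},\mathrm{cup}_{-++-})$ (respectively in $\mathrm{Cir}(\mathrm{cap}_{+-+-},\mathrm{cup}_{+-+-})$) the doubled strands merge into, keeping the orientations and the $\tau_{++}$-type crossings straight, so that the index ``$1$'' in the statement refers consistently to the same component on both sides; this is where the schematic pictures (Figure~\ref{figureEx1_53_abstract}) do the real work and where a sign-free combinatorial argument must replace hand-waving.

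For $(d)$: combining $(a)$, $(b)$ and $(c)$,
\[
\big(\mathrm{KIIMap}^{s,2}_{\mathring{\mathcal A}^\wedge,k}-\mathrm{KIIMap}^{s,1}_{\mathring{\mathcal A}^\wedge,k}\big)([\underline D])
=\sum_{i}[\underline D_i]-[\underline{\tilde D}]
=\sum_{i\neq i_1}[\underline D_i],
\]
since the $i_1$-term cancels $[\underline{\tilde D}]$ by $(b)$. By $(c)$, each remaining diagram $\underline D_i$ has $\mathrm{m}_1(\underline D_i)<\mathrm{m}_\downarrow(\underline D)$, so its class lies in $L^{<\mathrm{m}_\downarrow(\underline D)}_k$ by Definition~\ref{thespaceL2n} (there is at least one circle component, namely the one indexed $1$, with fewer than $\mathrm{m}_\downarrow(\underline D)$ attached univalent vertices). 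Since $L^{<\mathrm{m}_\downarrow(\underline D)}_k$ is a subspace, the sum lies in it, giving $(d)$. Finally one should remark that by naturality (the constructions commute with $\mathfrak S_{k-2}$- and $\mathfrak S_k$-actions as used throughout \S\ref{sec:2-9-2}) all four statements descend correctly to the coinvariant spaces $\mathring{\mathcal A}^\wedge(\downarrow\uparrow)_{k-2}$ and $\mathring{\mathcal A}^\wedge_k$, so the statement as phrased for classes of Jacobi diagrams is exactly what is needed; no further argument is required there.
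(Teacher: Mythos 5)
Your proposal is correct and follows essentially the same route as the paper's proof: unwind the formulas \eqref{KIIMap1forA-2} and \eqref{KIIMap2forA-2} to see that the underlying graph $D$ is untouched and only the colouring and orders are rearranged (giving $(a)$), observe $(b)$ by inspection, count legs for $(c)$, and obtain $(d)$ from the cancellation of the $i_1$-term followed by Definition~\ref{thespaceL2n}. The one point you flag as unresolved — which of the two new circles absorbs which strand — is exactly what the paper settles inside part $(a)$, not $(c)$: it records the bijection $\pi_0(\downarrow\uparrow)\sqcup[\![1,k-2]\!]\simeq[\![1,k]\!]$ sending $\downarrow\mapsto 1$, $\uparrow\mapsto 2$, $s\mapsto s+2$, and defines $\mathrm{cyc}^i_1$ from the restriction of $\mathrm{lin}_{\downarrow}$ to $i^{-1}(1)$ alone, while $\mathrm{cyc}^i_2$ merges $i^{-1}(2)$ with the $\uparrow$-legs (via Lemma~\ref{r:2023-01-11-r2}); this is forced by the wiring $\mathrm{cup}_{-++-}=\mathrm{cup}_{-+}\otimes\mathrm{cup}_{+-}$ and $\mathrm{cap}_{-++-}=\mathrm{cap}_{-+}\otimes\mathrm{cap}_{+-}$, under which the auxiliary $-$-strand closes up with the first doubled copy of $\downarrow$ (circle $1$) and the second copy closes up with $\uparrow$ (circle $2$). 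With that labelling in place, $(c)$ is the paper's one-line count, $\mathrm{m}_1(\underline D_i)+\mathrm{m}_2(\underline D_i)=\mathrm{m}_{\downarrow}(\underline D)+\mathrm{m}_{\uparrow}(\underline D)$ and $\mathrm{m}_2(\underline D_i)=|i^{-1}(2)|+\mathrm{m}_{\uparrow}(\underline D)$, i.e.\ $\mathrm{m}_1(\underline D_i)=|i^{-1}(1)|<\mathrm{m}_{\downarrow}(\underline D)$ whenever $i\neq i_1$ — precisely your argument, made unconditional. Note also that for $(d)$ alone the labelling is immaterial, since membership in $L^{<\mathrm{m}_{\downarrow}(\underline D)}_k$ only requires \emph{some} circle with fewer than $\mathrm{m}_{\downarrow}(\underline D)$ legs; it is only the literal statement of $(c)$, with its distinguished index $1$, that needs the explicit identification from $(a)$. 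Your closing remark about descending to coinvariants is fine but not needed, as the lemma is stated for classes in $\mathring{\mathcal A}^\wedge_k$ of individual Jacobi diagrams and the maps are already defined there.
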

\begin{proof}

$(a)$ It follows from the definition  of the maps $\mathrm{KIIMap}_{\mathring{\mathcal A}^\wedge,k}^{s,j}$ for $j=1,2$, (see Lemma 4.5) that the first equality holds with $\tilde{\varphi}$ being the composition of $\varphi$ with the bijection $\pi_0(\downarrow \uparrow) \sqcup [\![1,k-2]\!]\to [\![1,k]\!]$ given by $\downarrow \mapsto 1$, $\uparrow \mapsto 2$, $s \mapsto s+2$ for $s \in [\![1,k-2]\!]$, and for $s \in [\![3,k]\!]$, $\widetilde{\mathrm{cyc}}_s:=\mathrm{cyc}_{s-2}$ and for $s \in \{1,2\}$, $\widetilde{\mathrm{cyc}}_s$ is the cyclic order derived from the linear order $\mathrm{lin}_{s'}$ where $s'$ is the preimage of $s$ under the bijection $\pi_0(\downarrow\uparrow)\simeq \{1,2\}$ given by $\downarrow\mapsto 1$ and $\uparrow\mapsto 2$, as in Lemma~\ref{r:2023-01-11-r2}$(b)$  (with $I=\{\downarrow\}$ or $I=\{\uparrow\}$). The second equality holds if for each map $i : \varphi^{-1}(1) \to \{1,2\}$, the map $\varphi_i : \partial D \to [\![1,k]\!]$ is defined by the conditions that $(\varphi_i)_{|\varphi^{-1}(\downarrow)}=i$ and $(\varphi_i)_{|\partial D\setminus \varphi^{-1}(\downarrow)}$ is the composition of $\varphi_{|\partial D\setminus \varphi^{-1}(\downarrow)} : \partial D\setminus \varphi^{-1}(\downarrow) \to \{\uparrow\} \sqcup [\![1,k-2]\!]$ with the map $\{\uparrow\} \sqcup [\![1,k-2]\!] \to [\![1,k]\!]$ given by $\uparrow \mapsto 2$ and $s \mapsto s+2$ for $s \in [\![1,k-2]\!]$, and the cyclic orders are defined by $\mathrm{cyc}^i_s:=\mathrm{cyc}_{s-2}$ for $s \in [\![3,k]\!]$ and  $\mathrm{cyc}^i_1$ is the cyclic order derived from the restriction of the linear order $\mathrm{lin}_{\downarrow}$ to $i^{-1}(1)$ as in Lemma~\ref{r:2023-01-11-r2}$(b)$  (with $I=\{\downarrow\}$), and $\mathrm{cyc}^i_2$  is the cyclic order obtained from the linear orders $\mathrm{lin}_{\uparrow}$ and the restriction of the linear order $\mathrm{lin}_{\downarrow}$ to $i^{-1}(2)$ as in  Lemma~\ref{r:2023-01-11-r2}$(b)$ (with $I=\{\downarrow,\uparrow\}$ equipped with its unique cyclic order).

$(b)$ Follows directly from the definition of $\big(\varphi_{i_1},\{\mathrm{cyc}_s^{i_1}\}_{s \in [\![1,k]\!]}\big)$ and $\big(\tilde{\varphi},\{\widetilde{\mathrm{cyc}}_s\}_{s \in [\![1,k]\!]}\big)$ in $(a)$.

$(c)$ By the definitions given in $(a)$, for any map $i:\varphi^{-1}(\downarrow)\to \{1,2\}$ we have 
\begin{equation}\label{eq:equalitiesofm}
\mathrm{m}_1(\underline{D}_i) + \mathrm{m}_2(\underline{D}_i) = \mathrm{m}_{\downarrow}(\underline{D}) + \mathrm{m}_{\uparrow}(\underline{D}) \qquad \text{and} \qquad \mathrm{m}_2(\underline{D}_i) = |i^{-1}(2)| +  \mathrm{m}_{\uparrow}(\underline{D}).
\end{equation}
If $i\not= i_1$, then $|i^{-1}(2)| > 0$ and therefore, by \eqref{eq:equalitiesofm}, we get $\mathrm{m}_{1}(\underline{D}_i) < \mathrm{m}_{\downarrow}(\underline{D})$.

$(d)$ By $(a)$ and $(b)$ we have
$$\Big(\mathrm{KIIMap}_{{\mathring{\mathcal A}}^\wedge,k}^{s,2} -\mathrm{KIIMap}_{{\mathring{\mathcal A}}^\wedge,k}^{s,1}\Big)([\underline{D}]) = \sum_{i\not = i_1}[\underline{D}_i]$$
and by $(c)$ each  $[\underline{D}_i]$ with $i\not=i_1$ belongs to $L^{<{\mathrm m}_{\downarrow}(\underline{D})}$, which finishes the proof.
\end{proof}

\begin{proposition}[See {\cite[Prop. 10.5]{Ohts}}]\label{r220211117} For any $k\geq 2$ we have
\begin{equation}
(\mathrm{KII}^s\mathring{\mathcal{A}}^\wedge_k) \subset L^{<2n}_k + P^{n+1}_k.
\end{equation}
Therefore $(\mathrm{KII}^s\mathring{\mathcal{A}}^\wedge) \subset L^{<2n} + P^{n+1}.$
\end{proposition}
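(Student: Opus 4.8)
The plan is to establish the inclusion $(\mathrm{KII}^s\mathring{\mathcal{A}}^\wedge_k)\subset L^{<2n}_k+P^{n+1}_k$ for each $k\geq 2$; the assertion for $(\mathrm{KII}^s\mathring{\mathcal{A}}^\wedge)$ then follows at once by taking the direct sum over $k$, since $(\mathrm{KII}^s\mathring{\mathcal{A}}^\wedge_0)=(\mathrm{KII}^s\mathring{\mathcal{A}}^\wedge_1)=\{0\}$ by Definition~\ref{spaceKIIsimpleforA} and $L^{<2n}=\bigoplus_k L^{<2n}_k$, $P^{n+1}=\bigoplus_k P^{n+1}_k$. By Definition~\ref{spaceKIIsimpleforA}, $(\mathrm{KII}^s\mathring{\mathcal{A}}^\wedge_k)$ is the image of the map $\mathrm{KIIMap}^{s,2}_{{\mathring{\mathcal A}}^\wedge,k}-\mathrm{KIIMap}^{s,1}_{{\mathring{\mathcal A}}^\wedge,k}$ on $\mathring{\mathcal{A}}^\wedge(\downarrow\uparrow)_{k-2}$, so it suffices to show that this map sends $\mathring{\mathcal{A}}^\wedge(\downarrow\uparrow)_{k-2}$ into $L^{<2n}_k+P^{n+1}_k$.

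First I would invoke Proposition~\ref{rprop1_20211111}, which decomposes $\mathring{\mathcal{A}}^\wedge(\downarrow\uparrow)_{k-2}$ as the sum of $P^{n+1}(\downarrow\uparrow)_{k-2}$ and the closed subspace $W$ spanned by classes $[\underline D]$ of Jacobi diagrams $\underline D\in\mathring{\mathrm{Jac}}(\downarrow\uparrow,[\![1,k-2]\!])$ with $\mathrm{m}_\downarrow(\underline D)\leq 2n$. It then suffices to check the inclusion separately on each summand. On $P^{n+1}(\downarrow\uparrow)_{k-2}$ this is exactly the content of Lemma~\ref{r2-20211111}, which gives $\big(\mathrm{KIIMap}^{s,2}_{{\mathring{\mathcal A}}^\wedge,k}-\mathrm{KIIMap}^{s,1}_{{\mathring{\mathcal A}}^\wedge,k}\big)\big(P^{n+1}(\downarrow\uparrow)_{k-2}\big)\subset P^{n+1}_k$. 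On the subspace $W$, I would apply Lemma~\ref{r20211117}$(d)$: for a generator $[\underline D]$ with $\mathrm{m}_\downarrow(\underline D)\leq 2n$ one has $\big(\mathrm{KIIMap}^{s,2}_{{\mathring{\mathcal A}}^\wedge,k}-\mathrm{KIIMap}^{s,1}_{{\mathring{\mathcal A}}^\wedge,k}\big)([\underline D])\in L^{<\mathrm{m}_\downarrow(\underline D)}_k\subset L^{<2n}_k$, the last inclusion being immediate from Definition~\ref{thespaceL2n} since $\mathrm{m}_\downarrow(\underline D)\leq 2n$. By linearity and continuity (both $\mathrm{KIIMap}^{s,i}_{{\mathring{\mathcal A}}^\wedge,k}$ are continuous linear maps by Lemma~\ref{themapsKIIsimple}, and $L^{<2n}_k$ is a closed subspace), the image of all of $W$ lies in $L^{<2n}_k$.

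Combining the two cases, any element of $\mathring{\mathcal{A}}^\wedge(\downarrow\uparrow)_{k-2}$ is a sum of an element whose image lies in $P^{n+1}_k$ and one whose image lies in $L^{<2n}_k$, so the image of the difference map lies in $L^{<2n}_k+P^{n+1}_k$, which is precisely $(\mathrm{KII}^s\mathring{\mathcal{A}}^\wedge_k)\subset L^{<2n}_k+P^{n+1}_k$. I do not anticipate a genuine obstacle here since all the real work has been done in Proposition~\ref{rprop1_20211111} and Lemmas~\ref{r2-20211111} and~\ref{r20211117}; the one point requiring a little care is the passage from generators to the whole completed space, i.e.\ verifying that the decomposition of Proposition~\ref{rprop1_20211111} together with continuity of the KII maps and closedness of $L^{<2n}_k$ and $P^{n+1}_k$ indeed lets one conclude for arbitrary (possibly infinite) linear combinations, rather than just for finite sums of generators.
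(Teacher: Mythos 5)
Your proposal is correct and follows essentially the same route as the paper's own proof: decompose $\mathring{\mathcal{A}}^\wedge(\downarrow\uparrow)_{k-2}$ via Proposition~\ref{rprop1_20211111}, handle the $P^{n+1}(\downarrow\uparrow)_{k-2}$ part with Lemma~\ref{r2-20211111}, and the bounded-multiplicity part with Lemma~\ref{r20211117}$(d)$, then sum over $k$. Your extra remark on continuity and closedness when passing from generators to the completed span is a fair point of care, handled implicitly in the paper by the $\widehat{\mathrm{Vect}}_{\mathbb{C}}$ convention.
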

\begin{proof}
Recall from Definition~\ref{spaceKIIsimpleforA} that 
\begin{equation*}
(\mathrm{KII}^s{\mathring{\mathcal A}}^\wedge_k) =\mathrm{Im}\Big(\mathrm{KIIMap}_{{\mathring{\mathcal A}}^\wedge,k}^{s,2} - \mathrm{KIIMap}_{{\mathring{\mathcal A}}^\wedge,k}^{s,1}\Big).
\end{equation*}

\noindent Besides
\begin{gather*}
\begin{split}
\mathrm{Im}&\Big(\mathrm{KIIMap}_{{\mathring{\mathcal A}}^\wedge,k}^{s,2} - \mathrm{KIIMap}_{{\mathring{\mathcal A}}^\wedge,k}^{s,1}\Big)  = \Big(\mathrm{KIIMap}_{{\mathring{\mathcal A}}^\wedge,k}^{s,2} - \mathrm{KIIMap}_{{\mathring{\mathcal A}}^\wedge,k}^{s,1}\Big)(\mathring{\mathcal{A}}^\wedge(\downarrow\uparrow)_{k-2}) \\
& = \Big(\mathrm{KIIMap}_{{\mathring{\mathcal A}}^\wedge,k}^{s,2} - \mathrm{KIIMap}_{{\mathring{\mathcal A}}^\wedge,k}^{s,1}\Big)(P^{n+1}(\downarrow\uparrow)_{k-2})\\
&\ \ \ \  + \widehat{\mathrm{Vect}}_{\mathbb{C}}\left\{\Big(\mathrm{KIIMap}_{{\mathring{\mathcal A}}^\wedge,k}^{s,2} - \mathrm{KIIMap}_{{\mathring{\mathcal A}}^\wedge,k}^{s,1}\Big)([\underline{D}])\ | \ \underline{D}\in\mathring{\mathrm{Jac}}(\downarrow\uparrow, [\![1,k-2]\!]) \text{ with } \mathrm{m}_{\downarrow}(\underline{D})\leq 2n\right\} \\
& \ \ \ \ \subset P^{n+1}_k + L^{<2n}_k.
\end{split}
\end{gather*}
We have used Proposition~\ref{rprop1_20211111} in the second equality; and Lemma~\ref{r2-20211111} and Lemma~\ref{r20211117}$(d)$ in the last inclusion. Therefore, we conclude $(\mathrm{KII}^s\mathring{\mathcal{A}}^\wedge) \subset L^{<2n} + P^{n+1}.$
\end{proof}

\begin{corollary} The identity map of $\bigoplus_{k\geq 0} \mathring{\mathcal A}_k^\wedge$ induces an algebra morphism $\varphi_n : \mathfrak{a}\to \mathfrak{b}(n)$, where $\mathfrak{a}$ is as in~\eqref{Z2geq0-module-m} and $\mathfrak{b}(n)$ as in~\eqref{space-a-n}.
\end{corollary}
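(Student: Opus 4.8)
The statement is that the identity map of $\bigoplus_{k\geq 0}\mathring{\mathcal A}^\wedge_k$ descends to an algebra morphism $\varphi_n:\mathfrak{a}\to\mathfrak{b}(n)$. Recall that
\[
\mathfrak{a}=\frac{\bigoplus_{k\geq 0}\mathring{\mathcal A}^\wedge_k}{(\mathrm{KII}^s\mathring{\mathcal A}^\wedge)+(\mathrm{CO}\mathring{\mathcal A}^\wedge)},
\qquad
\mathfrak{b}(n)=\frac{\bigoplus_{k\geq 0}\mathring{\mathcal A}^\wedge_k}{L^{<2n}+P^{n+1}+(\mathrm{CO}\mathring{\mathcal A}^\wedge)}.
\]
So the plan is the standard one for showing that the identity on a ring passes to a morphism between two quotients: it suffices to check the inclusion of ideals
\[
(\mathrm{KII}^s\mathring{\mathcal A}^\wedge)+(\mathrm{CO}\mathring{\mathcal A}^\wedge)\ \subset\ L^{<2n}+P^{n+1}+(\mathrm{CO}\mathring{\mathcal A}^\wedge).
\]
Since the term $(\mathrm{CO}\mathring{\mathcal A}^\wedge)$ appears on both sides, this reduces to showing $(\mathrm{KII}^s\mathring{\mathcal A}^\wedge)\subset L^{<2n}+P^{n+1}+(\mathrm{CO}\mathring{\mathcal A}^\wedge)$, and in fact it is enough to prove the cleaner inclusion $(\mathrm{KII}^s\mathring{\mathcal A}^\wedge)\subset L^{<2n}+P^{n+1}$.

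First I would observe that the three relevant subspaces $L^{<2n}$, $P^{n+1}$ and $(\mathrm{CO}\mathring{\mathcal A}^\wedge)$ are $\mathbb Z_{\geq 0}$-graded ideals of $\bigoplus_{k\geq 0}\mathring{\mathcal A}^\wedge_k$ — this is Lemma~\ref{rlemma1_20211111} for the first two and Proposition~\ref{l:LMOinvt}$(a)$ for the third — hence their sum is again a graded ideal, so $\mathfrak{b}(n)$ is a well-defined quotient algebra and the quotient map $\bigoplus_{k\geq 0}\mathring{\mathcal A}^\wedge_k\to\mathfrak{b}(n)$ is an algebra homomorphism. The core of the argument is then exactly Proposition~\ref{r220211117}, which states precisely that $(\mathrm{KII}^s\mathring{\mathcal A}^\wedge_k)\subset L^{<2n}_k+P^{n+1}_k$ for all $k$, hence $(\mathrm{KII}^s\mathring{\mathcal A}^\wedge)\subset L^{<2n}+P^{n+1}$. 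Combining this with the trivial inclusion $(\mathrm{CO}\mathring{\mathcal A}^\wedge)\subset(\mathrm{CO}\mathring{\mathcal A}^\wedge)$ gives the desired inclusion of ideals.

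From the inclusion of ideals, the conclusion is formal: the composite $\bigoplus_{k\geq 0}\mathring{\mathcal A}^\wedge_k\xrightarrow{\mathrm{id}}\bigoplus_{k\geq 0}\mathring{\mathcal A}^\wedge_k\to\mathfrak{b}(n)$ kills $(\mathrm{KII}^s\mathring{\mathcal A}^\wedge)+(\mathrm{CO}\mathring{\mathcal A}^\wedge)$, so by the universal property of the quotient it factors uniquely through $\mathfrak{a}$, yielding an algebra morphism $\varphi_n:\mathfrak{a}\to\mathfrak{b}(n)$; it is automatically induced by the identity map and respects the gradings. The only real mathematical content has already been carried out in Propositions~\ref{rprop1_20211111}, \ref{r220211117} and Lemmas~\ref{r2-20211111}, \ref{r20211117}, so the ``main obstacle'' here is genuinely minor: one just has to be careful that the presence of the shared summand $(\mathrm{CO}\mathring{\mathcal A}^\wedge)$ in both denominators is what makes the argument go through, since $L^{<2n}+P^{n+1}$ is \emph{not} contained in $(\mathrm{KII}^s\mathring{\mathcal A}^\wedge)+(\mathrm{CO}\mathring{\mathcal A}^\wedge)$ — the map $\varphi_n$ is only a quotient map, not an isomorphism. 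No estimate or diagram chase beyond citing the above results is needed.
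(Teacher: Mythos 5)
Your proposal is correct and follows exactly the paper's route: the paper's proof is simply the observation that the key inclusion $(\mathrm{KII}^s\mathring{\mathcal A}^\wedge)\subset L^{<2n}+P^{n+1}$ is Proposition~\ref{r220211117}, after which the factorization through the quotient is formal. You have only spelled out the routine ideal-inclusion and universal-property details that the paper leaves implicit.
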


\begin{proof}
The result follows directly from Proposition~\ref{r220211117}. 
\end{proof}

\begin{remark} The algebra morphism~$\varphi_n$ is compatible with the $\mathbb{Z}_{\geq 0}$-gradings of its source and target. 
\end{remark}

\subsection{The spaces of diagrammatic traces and the LMO diagrammatic trace}\label{sec:tracesandLMOtrace}

\subsubsection{The space of diagrammatic traces}
For a finite group $G$, a pair $(P,S)$ of an oriented Brauer diagram $P$ and a finite set~$S$, and a  finite $G$-set $\Sigma$, there is  an action of $G$ on $\mathring{\mathrm{Jac}}(P,S,\Sigma)$, $\mathbb{C}\mathring{\mathrm{Jac}}(P,S,\Sigma)$,  $\mathring{\mathcal A}(P,S,\Sigma)$ and  $\mathring{\mathcal A}^\wedge(P,S,\Sigma)$ induced by the action of $G$ on $\Sigma$. Recall that for $m\geq 0$ we denote by $\mathbb{Z}_m$  the cyclic group of order $m$ (this is $\emptyset$ for $m=0$). In particular, for any integer  $n \geq 1$ we have actions of $\mathfrak{S}_n$ (using the identification $\mathbb{Z}_n \simeq [\![0,n-1]\!]$) and~$\mathbb{Z}_n$ (by addition) on   $\mathring{\mathcal A}(\vec{\varnothing},\emptyset,\mathbb{Z}_n)$.

\begin{definition}\label{def:diagrtrace}  A \emph{diagrammatic trace} is an element $T_{\bullet}:=(T_n)_{n \geq 0} \in\prod_{n \geq 0} \mathring{\mathcal A}(\vec{\varnothing},\emptyset,\mathbb{Z}_n)$ satisfying:
\begin{enumerate}
\item $T_0=T_1= 0$,
\item for $n \geq 2$, the element $T_n$ is invariant under the action of $\mathbb{Z}_n$, i.e. $(1\cdots n)T_n=T_n$,
\item (\emph{STU-compatibility}) $T_m - (i \ i+1)T_{m} = T_{m-1}*_{i} \Yup$ for any {$m\geq 3$} and $i\in\mathbb{Z}_m$, 
\end{enumerate}
where $T_{m-1} \mapsto T_{m-1}*_{i} \Yup$  is the linear map $\mathring{\mathcal A}(\vec{\varnothing},\emptyset,\mathbb{Z}_{m-1}) \to \mathring{\mathcal A}(\vec{\varnothing},\emptyset,\mathbb{Z}_m)$  derived from the map 
$$\mathring{\mathrm{Jac}}(\vec{\varnothing}, \emptyset, \mathbb{Z}_{m-1}) \longrightarrow \mathring{\mathrm{Jac}}(\vec{\varnothing}, \emptyset, \mathbb{Z}_{m})$$ which takes $\underline{D}\in\mathring{\mathrm{Jac}}(\vec{\varnothing},\emptyset,\mathbb{Z}_{n-1})$ to the Jacobi diagram $\underline{D}*_{i} \Yup$  obtained from $\underline{D}$ by gluing the $\Yup$-shaped graph to the univalent vertex  labelled $i$ and re-labelling the vertices so that the remaining vertices of the $\Yup$-shaped graph become $i$ and $i+1$ with the cyclic order in the new trivalent vertex given by $[a<i+1 <i]$, where $a$ is the edge connecting the $\Yup$-shaped graph with $\underline{D}$, see Figure~\ref{fig:STU-compatibility}.

Denote by $\mathbb{D}\mathrm{iag}\mathbb{T}\mathrm{r} \subset \prod_{n \geq 0}\mathring{\mathcal A}(\vec{\varnothing}, \emptyset, \mathbb{Z}_n)$  \index[notation]{D\mathrm{iag}\mathbb{T}\mathrm{r}@$\mathbb{D}\mathrm{iag}\mathbb{T}\mathrm{r} $}  the subset of all diagrammatic traces. 
\end{definition}

\begin{figure}[ht]
		\centering
         \includegraphics[scale=1]{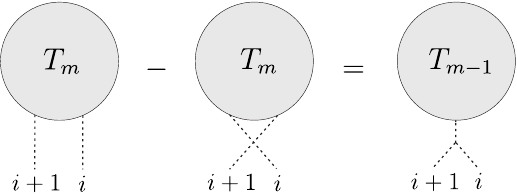}
\caption{Schematic description of the STU-compatibility condition of a diagrammatic  trace $T_{\bullet}=(T_n)_{n\geq 0}$.}
\label{fig:STU-compatibility} 								
\end{figure}

\begin{lemma}\label{def:bidegree}
Equip $\prod_{n \geq 0} \mathring{\mathcal A}(\vec{\varnothing},\emptyset,\mathbb{Z}_n)$ with the bigrading: For $c,d \in\mathbb{Z}_{\geq 0}$
 $$\big(\prod_{n \geq 0} \mathring{\mathcal A}(\vec{\varnothing},\emptyset,\mathbb{Z}_n)\big)[c,d]:=
\prod_{n \geq 0} \mathring{\mathcal A}(\vec{\varnothing},\emptyset,\mathbb{Z}_n)[c,d]$$ induced by $(|\pi_0|,\mathrm{deg}_\mathrm{cyc})$ (see  \eqref{degcircles}). Then $\mathbb{D}\mathrm{iag}\mathbb{T}\mathrm{r}$ is a bigraded subspace of $\prod_{n \geq 0}\mathring{\mathcal A}(\vec{\varnothing},\emptyset,\mathbb{Z}_n)$. We denote by $\mathbb{D}\mathrm{iag}\mathbb{T}\mathrm{r}[c,d]$ the space of diagrammatic traces of bidegree $(c,d)$. 
\end{lemma}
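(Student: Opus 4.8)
\textbf{Proof plan for Lemma~\ref{def:bidegree}.}
The plan is to verify two things: first, that the bigrading on the ambient product space $\prod_{n\geq 0}\mathring{\mathcal A}(\vec{\varnothing},\emptyset,\mathbb{Z}_n)$ is well-defined (i.e.\ that $(|\pi_0|,\mathrm{deg}_{\mathrm{cyc}})$ genuinely induces a direct-product-of-graded-pieces decomposition), and second, that $\mathbb{D}\mathrm{iag}\mathbb{T}\mathrm{r}$, as a subspace of this bigraded space, is itself bigraded, meaning it is the direct sum of its intersections with the homogeneous components. The first point was essentially already recorded in the excerpt: it is noted there that the AS and IHX relations relate Jacobi diagrams with the same triple $(\#\text{univalent vertices},\#\text{trivalent vertices},|\pi_0|)$, so $(|\pi_0|,\mathrm{deg}_{\mathrm{cyc}},\mathrm{deg})$ descends to $\mathring{\mathcal A}(\vec{\varnothing},\emptyset,\Sigma)$ and hence, after completing with respect to $\mathrm{deg}$, one gets the bigrading $(|\pi_0|,\mathrm{deg}_{\mathrm{cyc}})$ on each $\mathring{\mathcal A}(\vec{\varnothing},\emptyset,\mathbb{Z}_n)$; taking the product over $n$ gives the bigrading on $\prod_{n\geq 0}\mathring{\mathcal A}(\vec{\varnothing},\emptyset,\mathbb{Z}_n)$. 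So that part I would simply cite.

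The substantive step is showing $\mathbb{D}\mathrm{iag}\mathbb{T}\mathrm{r}$ is a bigraded subspace. For this I would take a diagrammatic trace $T_\bullet=(T_n)_{n\geq 0}$, decompose each $T_n=\sum_{c,d}T_n[c,d]$ into its bihomogeneous components, and check that for each fixed $(c,d)$ the collection $(T_n[c,d])_{n\geq 0}$ is again a diagrammatic trace. Conditions (1) and (2) of Definition~\ref{def:diagrtrace} are immediate, since $T_0=T_1=0$ forces all their bihomogeneous pieces to vanish, and the $\mathbb{Z}_n$-action preserves $(|\pi_0|,\mathrm{deg}_{\mathrm{cyc}})$ (it only permutes the labels in $\mathbb{Z}_n$, leaving the underlying graph untouched), so invariance of $T_n$ passes to each $T_n[c,d]$. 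The one thing requiring genuine attention is condition (3), the STU-compatibility $T_m-(i\ i+1)T_m=T_{m-1}*_i\Yup$. Here I would observe that the transposition $(i\ i+1)$ acts without changing the underlying unitrivalent graph, hence preserves the bidegree, so the left-hand side is bidegree-preserving. For the right-hand side, I must check that the map $\underline D\mapsto \underline D*_i\Yup$ shifts the bidegree in a way that is \emph{independent of the diagram}: gluing a $\Yup$ removes one univalent vertex, adds two univalent vertices and one trivalent vertex, and does not change the number of connected components $|\pi_0|$ (the $\Yup$ attaches to an existing component). Plugging into formula~\eqref{degcircles}, $\mathrm{deg}_{\mathrm{cyc}}(\underline D*_i\Yup)=\frac12\big((\#\text{trivalent}+1)-(\#\text{univalent}+1)\big)+|\pi_0|=\mathrm{deg}_{\mathrm{cyc}}(\underline D)$, and $|\pi_0(\underline D*_i\Yup)|=|\pi_0(\underline D)|$. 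Thus $*_i\Yup$ preserves the bidegree $(c,d)$ exactly, so the map restricts to $\mathring{\mathcal A}(\vec{\varnothing},\emptyset,\mathbb{Z}_{m-1})[c,d]\to\mathring{\mathcal A}(\vec{\varnothing},\emptyset,\mathbb{Z}_m)[c,d]$. Consequently, extracting the $(c,d)$-component of both sides of condition (3) for $T_\bullet$ yields exactly condition (3) for $(T_n[c,d])_n$.

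Having shown each bihomogeneous slice $(T_n[c,d])_n$ of a diagrammatic trace is again a diagrammatic trace, it follows that $\mathbb{D}\mathrm{iag}\mathbb{T}\mathrm{r}=\prod_{c,d}\big(\mathbb{D}\mathrm{iag}\mathbb{T}\mathrm{r}\cap(\prod_n\mathring{\mathcal A}(\vec{\varnothing},\emptyset,\mathbb{Z}_n))[c,d]\big)$ in the appropriate completed sense, i.e.\ $\mathbb{D}\mathrm{iag}\mathbb{T}\mathrm{r}$ is a bigraded subspace, and we set $\mathbb{D}\mathrm{iag}\mathbb{T}\mathrm{r}[c,d]:=\mathbb{D}\mathrm{iag}\mathbb{T}\mathrm{r}\cap(\prod_n\mathring{\mathcal A}(\vec{\varnothing},\emptyset,\mathbb{Z}_n))[c,d]$. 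I do not anticipate a real obstacle here; the only place to be slightly careful is the bookkeeping in the degree computation for the $\Yup$-insertion (making sure the $\tfrac12$ in~\eqref{degcircles} comes out right and that $|\pi_0|$ is genuinely unchanged), and making sure the product-versus-sum distinction over $(c,d)$ is phrased correctly given that we work in a $\mathrm{deg}$-completed setting but the bigrading $(|\pi_0|,\mathrm{deg}_{\mathrm{cyc}})$ is only a grading, not a completed grading, in each fixed $\mathrm{deg}$.
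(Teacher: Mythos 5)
Your proposal is correct and follows essentially the same route as the paper: both arguments reduce to the observation that the cyclic action, the transposition $(i\ i{+}1)$, and the insertion $T\mapsto T*_i\Yup$ preserve the bidegree $(|\pi_0|,\mathrm{deg}_{\mathrm{cyc}})$ (your explicit count $+1$ trivalent, $+1$ univalent, $|\pi_0|$ unchanged is exactly the needed compatibility). The paper merely packages the conclusion differently, exhibiting $\mathbb{D}\mathrm{iag}\mathbb{T}\mathrm{r}$ as the kernel of a single bidegree-$(0,0)$ linear map, whereas you verify directly that each bihomogeneous slice of a diagrammatic trace again satisfies conditions (1)--(3); these are the same argument.
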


\begin{proof}
The self-maps of $\mathring{\mathcal A}(\vec{\varnothing},\emptyset,\mathbb{Z}_m)$ defined by  $T \mapsto (1\cdots n)  T$ and $T \mapsto (i,i+1) T$, as well as the map $\mathring{\mathcal A}(\vec{\varnothing},\emptyset,\mathbb{Z}_{m-1})\to \mathring{\mathcal A}(\vec{\varnothing},\emptyset,\mathbb{Z}_m)$ defined by $T \mapsto T*_{i} \Yup$ are compatible with the bidegrees. It follows that $\mathbb{D}\mathrm{iag}\mathbb{T}\mathrm{r}$ is the kernel of a linear map of bidegree $(0,0)$ with source $\prod_{n \geq 0}\mathring{\mathcal A}(\vec{\varnothing},\emptyset,\mathbb{Z}_n)$ and with target $\mathring{\mathcal A}(\vec{\varnothing},\emptyset,\mathbb{Z}_0) \times 
\mathring{\mathcal A}(\vec{\varnothing},\emptyset,\mathbb{Z}_1) \times \prod_{n \geq 2} \mathring{\mathcal A}(\vec{\varnothing},\emptyset,\mathbb{Z}_n) \times \prod_{n \geq 2} \mathring{\mathcal A}(\vec{\varnothing},\emptyset,\mathbb{Z}_n)$, which implies the result. 
\end{proof}

One checks that if $T_{\bullet}=(T_n)_{n \geq 0} \in \mathbb{D}\mathrm{iag}\mathbb{T}\mathrm{r}[c,d]$, then $\mathrm{deg}(T_n)=n+d-c$ for any $n \geq 2$.

\begin{lemma}[{\cite[Proposition 2.3]{LMO98}}]\label{def:productoftraces} Let $T_{\bullet}=(T_n)_{n \geq 0}$ and $T'_{\bullet}=(T'_n)_{n \geq 0}$ be diagrammatic traces. For $n \geq 0$, set $T''_n:=\sum_{p,q \ | \ p+q=n} \sum_{\sigma \in \mathfrak{S}_{p,q}}
\sigma_*(T_p \otimes_{\mathcal A} T'_q)$ where $\otimes_{\mathcal A}$ is as in \eqref{tensor:map:A:PP'gen}, $\mathfrak{S}_{p,q}$ is the set of bijections $\mathbb{Z}_p \sqcup \mathbb{Z}_q \to \mathbb{Z}_{p+q}$ whose restrictions to $\mathbb{Z}_p$ and $\mathbb{Z}_p$ are both increasing (using the identifications  of $\mathbb{Z}_p$, $\mathbb{Z}_q$ and $\mathbb{Z}_{p+q}$ with $[\![0,p-1]\!]$, $[\![0,q-1]\!]$ and $[\![0,p+q-1]\!]$ respectively) and $\sigma_*$ is defined as in \S\ref{sec:310-2}. Then $T_{\bullet}*T'_{\bullet}:=(T''_n)_{n \geq 0}$ is a diagrammatic trace.
\end{lemma}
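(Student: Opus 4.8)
The statement to prove is that if $T_\bullet = (T_n)_{n\geq 0}$ and $T'_\bullet = (T'_n)_{n\geq 0}$ are diagrammatic traces, then the componentwise-defined element $T_\bullet * T'_\bullet = (T''_n)_{n\geq 0}$, with $T''_n := \sum_{p+q=n}\sum_{\sigma\in\mathfrak{S}_{p,q}} \sigma_*(T_p\otimes_{\mathcal A} T'_q)$, again satisfies the three conditions in Definition~\ref{def:diagrtrace}. The plan is to check these three conditions in order, the first two being essentially formal and the third (STU-compatibility) being the real content.

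\textbf{Condition (1).} First I would observe that $T''_0 = \sigma_*(T_0\otimes_{\mathcal A} T'_0) = 0$ since $T_0 = 0$ (here $p=q=0$ is the only summand), and $T''_1 = \sigma_*(T_0\otimes_{\mathcal A} T'_1) + \sigma_*(T_1\otimes_{\mathcal A} T'_0) = 0$ since $T_0 = T_1 = T'_0 = T'_1 = 0$. So (1) holds trivially.

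\textbf{Condition (2): $\mathbb{Z}_n$-invariance of $T''_n$.} For $n\geq 2$, I need $(1\cdots n)_* T''_n = T''_n$. The idea is that the cyclic group $\mathbb{Z}_n$ acts on the index set by permuting the labels in $\mathbb{Z}_{p+q}$; a cyclic rotation sends a $(p,q)$-shuffle $\sigma$ to a map $\mathbb{Z}_p\sqcup\mathbb{Z}_q\to\mathbb{Z}_n$ which is no longer increasing on both blocks, but whose failure to be so is a cyclic shift of one of the two $T$-factors. Concretely, $\bigsqcup_{p+q=n}\mathfrak{S}_{p,q}$ together with the cyclic shifts of $T_p$ and $T'_q$ parametrizes a basis-independent decomposition: the rotation group $\mathbb{Z}_n$ acts on the set of pairs (shuffle, choice of offset) and $T''_n$ is the average over an orbit-stable set, using that $T_p$ is $\mathbb{Z}_p$-invariant and $T'_q$ is $\mathbb{Z}_q$-invariant (Definition~\ref{def:diagrtrace}(2) for $T_\bullet$ and $T'_\bullet$). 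I would make this precise by exhibiting, for each $g\in\mathbb{Z}_n$ and each summand $\sigma_*(T_p\otimes_{\mathcal A} T'_q)$, an identification $g\circ\sigma = \sigma'\circ(\rho_p^{a}*\rho_q^{b})$ for a unique triple $(\sigma'\in\mathfrak{S}_{p,q}, a\in\mathbb{Z}_p, b\in\mathbb{Z}_q)$, where $\rho$ denotes the relevant cyclic generator, and then use $(\rho_p)_* T_p = T_p$, $(\rho_q)_* T'_q = T'_q$ together with compatibility of $\otimes_{\mathcal A}$ and $\sigma_*$ with these relabelings (the functoriality noted in \S\ref{sec:310-2}) to conclude $g_*\sigma_*(T_p\otimes_{\mathcal A} T'_q) = (\sigma')_*(T_p\otimes_{\mathcal A} T'_q)$, and that $\sigma\mapsto\sigma'$ is a bijection on $\mathfrak{S}_{p,q}$.

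\textbf{Condition (3): STU-compatibility --- the main obstacle.} I need, for $m\geq 3$ and $i\in\mathbb{Z}_m$, the identity $T''_m - (i\ i{+}1)_* T''_m = T''_{m-1} *_i \Yup$. The approach is to split the sum defining $T''_m$ according to whether, for a shuffle $\sigma\in\mathfrak{S}_{p,q}$ with $p+q=m$, the two consecutive labels $i, i{+}1$ come from the same block ($\mathbb{Z}_p$ or $\mathbb{Z}_q$) or from different blocks. When $i,i{+}1$ come from different blocks, the transposition $(i\ i{+}1)$ sends the shuffle $\sigma$ to another shuffle in some $\mathfrak{S}_{p',q'}$ (with $\{p',q'\}$ possibly $\{p,q\}$ or shifted by one depending on which block contributes which label), and these contributions cancel in pairs in $T''_m - (i\ i{+}1)_* T''_m$ --- this requires a careful bookkeeping argument showing the involution on "split" shuffles induced by $(i\ i{+}1)$ is fixed-point-free and matches summands. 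When $i,i{+}1$ come from the same block, say $\mathbb{Z}_p$, then $(i\ i{+}1)$ acts only on the $T_p$-factor via the transposition of the corresponding consecutive labels (after transporting through $\sigma$), and the STU-compatibility of $T_\bullet$ itself (Definition~\ref{def:diagrtrace}(3)) gives $T_p - (\cdot)_* T_p = T_{p-1} *_j \Yup$; re-shuffling the resulting $(p-1)+q = m-1$ labels and matching with the definition of $*_i\Yup$ applied to $T''_{m-1}$ then produces exactly the right-hand side. The symmetric statement holds when $i,i{+}1$ lie in the $\mathbb{Z}_q$-block, using STU-compatibility of $T'_\bullet$. The hard part will be the combinatorial bookkeeping: tracking how the shuffle set $\mathfrak{S}_{p,q}$, the transposition $(i\ i{+}1)$, the "gluing $\Yup$ at position $i$" operation $*_i$, and the $\sigma_*$-relabelings interact, and verifying that the "different-block" terms cancel while the "same-block" terms reassemble into $T''_{m-1}*_i\Yup$. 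I would organize this by fixing notation for the preimages $\sigma^{-1}(i), \sigma^{-1}(i{+}1)$ and doing the case analysis cleanly; I expect this to be the bulk of the verification, while everything else is formal functoriality of $\otimes_{\mathcal A}$, $\sigma_*$ and $*_i\Yup$ with respect to relabeling bijections.
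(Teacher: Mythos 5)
The paper does not prove this lemma at all: it is quoted from \cite[Proposition~2.3]{LMO98} with no argument given, so there is no in-paper proof to compare your proposal against. Judged on its own terms, your plan is the standard verification and it is sound. Conditions (1) and (2) are handled correctly: writing $T''_n$ as a sum over subsets $A\subset\mathbb{Z}_n$ (the image of $\mathbb{Z}_p$ under the increasing shuffle), a cyclic rotation $g$ of $\mathbb{Z}_n$ satisfies $g\circ\sigma=\sigma'\circ(\rho_p^{a}*\rho_q^{b})$ exactly as you say, and $\mathbb{Z}_p$-, $\mathbb{Z}_q$-invariance of the factors plus bijectivity of $A\mapsto g(A)$ gives invariance of $T''_n$. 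For (3), your split into ``$i,i{+}1$ in different blocks'' (where $A\mapsto (A\setminus\{i\})\cup\{i{+}1\}$ is a fixed-point-free involution and the terms cancel pairwise) and ``$i,i{+}1$ in the same block'' (where the transposition transports to a transposition of consecutive labels of that block and STU-compatibility of the corresponding factor produces the $\Yup$-terms, which reassemble into $T''_{m-1}*_i\Yup$ via the bijection between subsets of $\mathbb{Z}_m$ containing $\{i,i{+}1\}$ and subsets of $\mathbb{Z}_{m-1}$ containing $i$) is exactly the right mechanism.

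Two details your sketch glosses over and should be addressed explicitly. First, the index $i\in\mathbb{Z}_m$ includes the wrap-around case $i=m-1$, $i{+}1=0$: there, if both labels lie in the same block they are the first and last elements of its increasing enumeration, not adjacent ones, so the transposition does not transport to a transposition of consecutive labels; the clean fix is to first prove (2) for $T''_m$ and $T''_{m-1}$ and then deduce the wrap-around instance of (3) from a non-wrap-around one by conjugating with the cyclic rotation, using equivariance of $*_i\Yup$ under relabelling. Second, in the same-block case with block size $p=2$ (or $q=2$), STU-compatibility of the factor is only postulated for $m\geq 3$; the needed identity still holds because $T_2-(0\ 1)T_2=0$ by $\mathbb{Z}_2$-invariance and $T_1*_j\Yup=0$ since $T_1=0$, but this should be said. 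With these two points patched, your argument goes through.
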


\begin{lemma} Equipped with the product from Lemma~\ref{def:productoftraces}, and with the bidegree from Lemma~\ref{def:bidegree}, the vector space $\mathbb{D}\mathrm{iag}\mathbb{T}\mathrm{r}$ as a $\mathbb{Z}_{\geq 0}^2$-graded commutative  algebra.
\end{lemma}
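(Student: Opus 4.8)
The statement to prove is that $\mathbb{D}\mathrm{iag}\mathbb{T}\mathrm{r}$, equipped with the convolution-type product $*$ from Lemma~\ref{def:productoftraces} and the bigrading from Lemma~\ref{def:bidegree}, is a $\mathbb{Z}_{\geq 0}^2$-graded commutative algebra. By the preceding two lemmas we already know that $*$ is a well-defined binary operation on $\mathbb{D}\mathrm{iag}\mathbb{T}\mathrm{r}$ and that $\mathbb{D}\mathrm{iag}\mathbb{T}\mathrm{r}$ is a bigraded subspace of $\prod_{n\geq 0}\mathring{\mathcal A}(\vec{\varnothing},\emptyset,\mathbb{Z}_n)$, so what remains is: (1) bilinearity of $*$; (2) associativity; (3) existence of a unit; (4) commutativity; (5) compatibility of $*$ with the bidegree. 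I would organize the proof around these five points, in that order, since (1) is essentially immediate, and the gradedness (5) follows the same bookkeeping as in Lemma~\ref{def:bidegree}.

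\textbf{Key steps.} First, bilinearity of $*$ is clear from the defining formula $T''_n=\sum_{p+q=n}\sum_{\sigma\in\mathfrak S_{p,q}}\sigma_*(T_p\otimes_{\mathcal A}T'_q)$, since $\otimes_{\mathcal A}$ (as in \eqref{tensor:map:A:PP'gen}) is bilinear and each $\sigma_*$ is linear. For the unit, I would take $\mathbf{1}:=(\mathbf{1}_n)_{n\geq 0}$ where $\mathbf{1}_0=[\emptyset_{(\vec\varnothing,\emptyset,\mathbb{Z}_0)}]$ is the class of the empty Jacobi diagram and $\mathbf{1}_n=0$ for $n\geq 1$; one checks directly that $\mathbf{1}\in\mathbb{D}\mathrm{iag}\mathbb{T}\mathrm{r}$ (conditions (1)--(3) of Definition~\ref{def:diagrtrace} hold trivially since $\mathbf 1_0=\mathbf 1_1=0$ would fail — so in fact one must be careful: condition (1) requires $T_0=T_1=0$). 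This is exactly the subtle point: a genuine two-sided unit for $*$ lies in $\mathring{\mathcal A}(\vec\varnothing,\emptyset,\mathbb Z_0)$, but Definition~\ref{def:diagrtrace}(1) forces the degree-$0$ component of every diagrammatic trace to vanish. So I expect the correct reading is that $\mathbb{D}\mathrm{iag}\mathbb{T}\mathrm{r}$ is a $\mathbb{Z}_{\geq 0}^2$-graded commutative algebra \emph{without} necessarily being unital, or that the claim is about the non-unital (possibly "rng") structure; I would state this explicitly and prove associativity and commutativity, which do not require a unit. Thus the main content is: (2) associativity — expanding $(T_\bullet * T'_\bullet)*T''_\bullet$ and $T_\bullet*(T'_\bullet*T''_\bullet)$ in degree $n$ gives in both cases a sum over ordered triples $(p,q,r)$ with $p+q+r=n$ and over the set $\mathfrak S_{p,q,r}$ of bijections $\mathbb Z_p\sqcup\mathbb Z_q\sqcup\mathbb Z_r\to\mathbb Z_n$ increasing on each block, of $\sigma_*(T_p\otimes_{\mathcal A}T'_q\otimes_{\mathcal A}T''_r)$; the identification of the two triple sums uses the associativity of $\otimes_{\mathcal A}$ (which in turn reduces to associativity of disjoint union of Jacobi diagrams) and the functoriality $(g\circ f)_*=g_*\circ f_*$ of the relabelling maps, noted just before \eqref{thepairinginA}. (4) Commutativity — in degree $n$, $(T_\bullet*T'_\bullet)_n=\sum_{p+q=n}\sum_{\sigma\in\mathfrak S_{p,q}}\sigma_*(T_p\otimes_{\mathcal A}T'_q)$ and $(T'_\bullet*T_\bullet)_n=\sum_{q+p=n}\sum_{\tau\in\mathfrak S_{q,p}}\tau_*(T'_q\otimes_{\mathcal A}T_p)$; one uses that $\otimes_{\mathcal A}$ is symmetric up to the canonical swap of the two factor sets ($T_p\otimes_{\mathcal A}T'_q$ and $T'_q\otimes_{\mathcal A}T_p$ differ by the natural bijection $\mathbb Z_p\sqcup\mathbb Z_q\simeq\mathbb Z_q\sqcup\mathbb Z_p$, which for these spaces of Jacobi diagrams with free legs holds on the nose — there are no circle components, so no sign), and that precomposing this swap sets up a bijection $\mathfrak S_{p,q}\leftrightarrow\mathfrak S_{q,p}$ that matches the two sums term by term. (5) For the bidegree compatibility, the maps $\otimes_{\mathcal A}$ and $\sigma_*$ both preserve $(|\pi_0|,\mathrm{deg}_{\mathrm{cyc}})$ — $\sigma_*$ obviously, and $\otimes_{\mathcal A}$ because $|\pi_0|$ and $\mathrm{deg}_{\mathrm{cyc}}$ (see \eqref{degcircles}) are additive under disjoint union of graphs — so $*$ restricted to the bidegree-$(c,d)$ and $(c',d')$ pieces lands in bidegree $(c+c',d+d')$.

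\textbf{Main obstacle.} I do not anticipate any deep difficulty; the whole proof is the kind of bookkeeping that in the literature (e.g.\ \cite[Proposition 2.3]{LMO98} and its surroundings) is left to the reader. The one genuine point requiring care, and where I would spend a sentence or two, is the role of the unit and of condition (1) in Definition~\ref{def:diagrtrace}: I would clarify whether "algebra" here is meant in the unital or non-unital sense, and present the proof so that associativity, commutativity, bilinearity, and gradedness are established unconditionally, while the (non-)existence of a unit is addressed separately. The second, purely notational, subtlety is keeping the relabelling bijections straight when expanding the triple product — making sure that the composite bijection arising from iterated $*$ is literally an element of $\mathfrak S_{p,q,r}$ and that every element of $\mathfrak S_{p,q,r}$ arises exactly once from each of the two bracketings — but this is a finite combinatorial check using only $(g\circ f)_*=g_*\circ f_*$.
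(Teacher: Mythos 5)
Your proposal is correct and matches the paper, whose entire proof is the statement that the result is a direct verification; your five-point checklist (bilinearity, associativity, commutativity, bidegree compatibility, plus the unit discussion) is exactly that verification carried out. Your remark that the would-be unit violates condition (1) of Definition~\ref{def:diagrtrace}, so the algebra should be read as non-unital, is a fair and accurate observation consistent with how the paper uses the structure afterwards.
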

\begin{proof} The result is a direct verification.
\end{proof}

The commutative algebra $\mathring{\mathcal A}(\vec{\varnothing},\emptyset,\emptyset)$ is bigraded by $(|\pi_0|, (\mathrm{deg}_{\mathrm{cyc}})$. It acts on $\mathbb{D}\mathrm{iag}\mathbb{T}\mathrm{r}$ by $a*(T_n)_{n \geq 0}:=(a \otimes_{\mathcal A} T_n)_{n \geq 0}$. Then $\mathbb{D}\mathrm{iag}\mathbb{T}\mathrm{r}$ is naturally a bigraded  $\mathring{\mathcal A}(\vec{\varnothing},\emptyset,\emptyset)$-bimodule, and the algebra structure of $\mathbb{D}\mathrm{iag}\mathbb{T}\mathrm{r}$ gives rise to a structure of algebra of $\mathbb{D}\mathrm{iag}\mathbb{T}\mathrm{r}$ in the category of $\mathring{\mathcal A}(\vec{\varnothing},\emptyset,\emptyset)$-bimodules.

\begin{remark}\label{exampleUU}
Consider the family $U_{\bullet}:={(U_n)}_{n\geq 0} \in \prod_{n\geq 0}\mathring{\mathcal A}(\vec{\varnothing}, \emptyset, \mathbb{Z}_n)$, where $U_0 = U_1 = 0$ and for $n\geq 2$, $U_n$ consists of a dashed loop with $n$ legs attached to it, see Figure~\ref{elements-U-m} for the schematic representation of $U_8$. One checks that $U_{
\bullet}\in\mathbb{D}\mathrm{iag}\mathbb{T}\mathrm{r}[1,1]$ .

\begin{figure}[ht]
		\centering
         \includegraphics[scale=0.8]{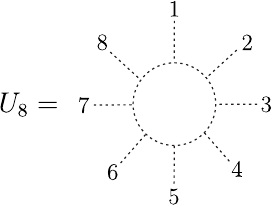}
\caption{The element $U_8\in\mathring{\mathcal A}(\vec{\varnothing}, \emptyset, \mathbb{Z}_8)$.}
\label{elements-U-m} 								
\end{figure}

\end{remark}

\subsubsection{The LMO diagrammatic trace}

\begin{lemma}[{\cite[Proposition 2.2]{LMO98}}]\label{r:elementsTmLMO} The space $\mathbb{D}\mathrm{iag}\mathbb{T}\mathrm{r}[1,0]$ is $1$-dimensional. A generator $T^{\mathrm{LMO}}_{\bullet}=(T_n^{\mathrm{LMO}})_{n\geq 0}$  \index[notation]{T^{\mathrm{LMO}}_{\bullet}@$T^{\mathrm{LMO}}_{\bullet}$} is such that $T_n^{\mathrm{LMO}}$ is given by Figure~\ref{elements-T-m} for $n=2,3,4$.  For any $n \geq2$, $T_n^{\text{LMO}}$ is a sum of connected {tree} diagrams, hence $\mathrm{deg}(T_n^{\mathrm{LMO}})=n-1$ for $\mathrm{deg}$ given by \eqref{degree-on-APS}. We call $T^{\mathrm{LMO}}$ the \emph{LMO diagrammatic trace}. 
\end{lemma}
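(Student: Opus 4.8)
The statement to be proved is Lemma~\ref{r:elementsTmLMO}: that the space $\mathbb{D}\mathrm{iag}\mathbb{T}\mathrm{r}[1,0]$ is one-dimensional, with a generator $T^{\mathrm{LMO}}_{\bullet}=(T_n^{\mathrm{LMO}})_{n\geq 0}$ whose components $T_n^{\mathrm{LMO}}$ are sums of connected tree diagrams of degree $n-1$. Since this result is attributed to \cite[Proposition 2.2]{LMO98}, the natural route is to reconstruct its argument in the present language, where a diagrammatic trace is an element of $\prod_{n\geq 0}\mathring{\mathcal A}(\vec{\varnothing},\emptyset,\mathbb{Z}_n)$ satisfying the three conditions (1)--(3) of Definition~\ref{def:diagrtrace}.

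\textbf{First step: reduce to the bidegree constraint.} By Lemma~\ref{def:bidegree}, $\mathbb{D}\mathrm{iag}\mathbb{T}\mathrm{r}$ is bigraded by $(|\pi_0|,\mathrm{deg}_\mathrm{cyc})$, and an element of $\mathbb{D}\mathrm{iag}\mathbb{T}\mathrm{r}[1,0]$ has $\mathrm{deg}(T_n)=n+0-1=n-1$ for every $n\geq 2$ (using the remark following Lemma~\ref{def:bidegree}). First I would record what bidegree $(1,0)$ forces on each component $T_n$: one connected component ($|\pi_0(\underline D)|=1$) and $\mathrm{deg}_\mathrm{cyc}=0$, i.e. by \eqref{degcircles} the number of trivalent vertices equals $|\partial D| - 2$. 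Combined with $|\partial D|=n$ (the diagram has exactly the legs labelled by $\mathbb{Z}_n$, each fiber of $\varphi$ having cardinality $1$ since the set $\Sigma=\mathbb{Z}_n$ must lie in the image with fibers of size $1$), this says each diagram in $T_n$ is a connected unitrivalent graph with $n$ legs, $n-2$ trivalent vertices, hence first Betti number $0$ — a tree. So the statement "sum of connected tree diagrams of degree $n-1$" is automatic from membership in $\mathbb{D}\mathrm{iag}\mathbb{T}\mathrm{r}[1,0]$; the real content is the one-dimensionality.

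\textbf{Second step: existence and recursive determination.} To show the space is at most one-dimensional, I would argue that the STU-compatibility condition (3) together with the $\mathbb{Z}_n$-invariance (2) determines $T_n$ from $T_{n-1}$ up to the normalization of $T_2$. Concretely: $T_2$ is a $\mathbb{Z}_2$-invariant element of $\mathring{\mathcal A}(\vec{\varnothing},\emptyset,\mathbb{Z}_2)[1,0]$, which is the one-dimensional space spanned by the single strut diagram on two legs; fix this scalar, and it fixes the whole tower. For the inductive step, apply condition (3) with varying $i$: the differences $T_m-(i\ i{+}1)T_m$ are prescribed by $T_{m-1}$; since the transpositions $(i\ i{+}1)$ generate $\mathfrak S_m$ and the orbit-sum under $\mathbb Z_m$ (condition (2)) pins down the average, one recovers $T_m$ uniquely provided the prescribed data is consistent. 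Consistency — i.e. that the system $T_m-(i\ i{+}1)T_m = T_{m-1}*_i\Yup$ admits a (necessarily unique) $\mathbb Z_m$-invariant solution — is exactly the nontrivial point, and here one invokes the IHX and AS relations in $\mathring{\mathcal A}$ to check that the cocycle-type obstruction vanishes; this is where the combinatorics of tree diagrams and the STU/IHX relations must be used carefully, and I expect this to be the main obstacle. The explicit low-degree values $T_2^{\mathrm{LMO}}, T_3^{\mathrm{LMO}}, T_4^{\mathrm{LMO}}$ displayed in Figure~\ref{elements-T-m} then serve both as the base case ($T_2^{\mathrm{LMO}}$) and as a sanity check of the recursion, and the fact that each $T_n^{\mathrm{LMO}}$ is a sum of connected trees follows from the first step (or directly by induction, since $*_i\Yup$ grafts a trivalent vertex onto a tree, keeping it a tree, and no relation in $\mathring{\mathcal A}$ can introduce a loop among top-bidegree terms).

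\textbf{Alternative and the hard part.} An alternative to the bare-hands recursion is to identify $\mathbb{D}\mathrm{iag}\mathbb{T}\mathrm{r}[1,0]$ with a space of "universal traces" on the free Lie/Casimir side and cite the classification of such traces, but in the self-contained spirit of this survey I would carry out the recursion directly. The main obstacle, as noted, is verifying the compatibility of the system of equations \eqref{} defining the $T_m$ — namely that the STU-compatibility prescriptions for different adjacent transpositions glue coherently — which amounts to checking a relation in $\mathring{\mathcal A}(\vec{\varnothing},\emptyset,\mathbb{Z}_m)$ that unwinds to an instance of IHX; I would isolate this as a separate computational lemma (or simply quote \cite[Proposition 2.2]{LMO98} for it) rather than grind through it here.
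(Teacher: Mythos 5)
First, note that the paper itself gives no argument for this lemma: it is stated with the citation \cite[Proposition 2.2]{LMO98} and no proof, so there is no internal proof to compare against, and your reconstruction has to stand on its own. Your first step is correct and is essentially the content of the last sentence of the lemma: bidegree $(1,0)$ forces each diagram occurring in $T_n$ to be connected with $n$ legs and $n-2$ trivalent vertices, hence a tree of degree $n-1$, and AS/IHX preserve this bidegree.

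The genuine gap is in your uniqueness step. Knowing the differences $T_m-(i\ i{+}1)T_m=T_{m-1}*_i\Yup$ for all $i$ determines $T_m$ only modulo the space of $\mathfrak{S}_m$-invariant elements of the bidegree-$(1,0)$ part of $\mathring{\mathcal A}(\vec{\varnothing},\emptyset,\mathbb{Z}_m)$, and your appeal to condition (2) ("the orbit-sum under $\mathbb{Z}_m$ pins down the average") does not remove this ambiguity: every $\mathfrak{S}_m$-invariant element is in particular $\mathbb{Z}_m$-invariant, so condition (2) imposes nothing on it. To make the induction work you would need the nontrivial fact that for $m\geq 3$ the space of connected tree diagrams with legs labelled bijectively by $\mathbb{Z}_m$, modulo AS and IHX, contains no nonzero $\mathfrak{S}_m$-invariants (this is true -- e.g.\ for $m=3$ the tripod spans a copy of the sign representation by AS -- but it is a statement about the $\mathfrak{S}_m$-module structure of the tree part of $\mathring{\mathcal A}$ that must be proved, not assumed), or else a different argument altogether. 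Moreover, the existence and the consistency of the recursive system, which you yourself identify as the main obstacle, you ultimately propose to quote from \cite{LMO98}; since that is exactly what the paper does, your proposal as written does not actually establish the lemma beyond the citation, and the one place where it tries to do more (one-dimensionality) contains the flaw above.
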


\begin{figure}[ht]
		\centering
         \includegraphics[scale=0.8]{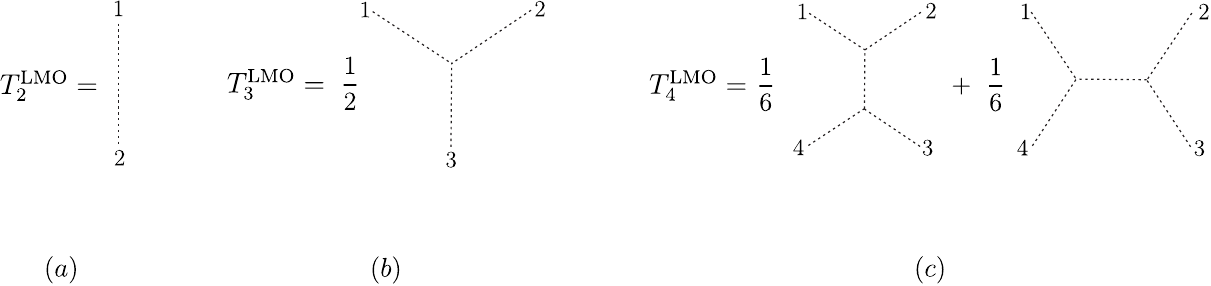}
\caption{The elements $T_2^{\mathrm{LMO}}\in\mathring{\mathcal A}(\vec{\varnothing}, \emptyset, \mathbb{Z}_2)$, $T_3^{\mathrm{LMO}}\in \mathring{\mathcal A}(\vec{\varnothing}, \emptyset, \mathbb{Z}_3)$ and  $T_4^{\mathrm{LMO}}\in\mathring{\mathcal A}(\vec{\varnothing}, \emptyset, \mathbb{Z}_4)$.}
\label{elements-T-m} 								
\end{figure}

\subsubsection{Lie algebras and the space of diagrammatic traces}

\begin{definition} Recall that a trace on an associative algebra $A$ is a linear map $T : A\to \mathbb{C}$ such that $T(ab) = T(ba)$ for any $a,b\in A$. Denote by $\mathbb{T}(A)$  \index[notation]{T(A)@$\mathbb{T}(A)$} the set of traces of $A$; this is a vector subspace of $A^*$.
\end{definition}

 \begin{lemma}\label{productindiagrtraces} Let $\mathfrak{g}$ be a Lie algebra. Let $U\mathfrak{g}$ denote the enveloping algebra of $\mathfrak{g}$ and $\Delta: U\mathfrak{g}\to U\mathfrak{g}\otimes U\mathfrak{g}$ the canonical coproduct. We use the notation $\Delta(x)=\sum x^{(1)}\otimes x^{(2)}$.
\begin{itemize}
\item[$(a)$]  Let $\tau_1,\tau_2\in \mathbb{T}(U\mathfrak{g})$. The map $\tau_1\otimes \tau_2: U\mathfrak{g}\to\mathbb{C}$ defined by $(\tau_1*\tau_2)(x) = \sum \tau_1(x^{(1)})\otimes \tau_2(x^{(2)})$  for any $x\in U\mathfrak{g}$, is a trace of $U\mathfrak{g}$.

\item[$(b)$] The operation in $(a)$ endows $\mathbb{T}(U\mathfrak{g})$ with a structure of commutative algebra. 

\end{itemize}

\end{lemma}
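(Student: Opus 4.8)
The statement to prove is Lemma~\ref{productindiagrtraces}: that for a Lie algebra $\mathfrak{g}$ with enveloping algebra $U\mathfrak{g}$ and canonical coproduct $\Delta$, (a) the convolution product $\tau_1 * \tau_2$ of two traces is again a trace, and (b) this product makes $\mathbb{T}(U\mathfrak{g})$ a commutative algebra. The plan is to verify these two claims directly by unwinding the definitions, exploiting that $\Delta$ is an algebra homomorphism and that $U\mathfrak{g}$ is cocommutative.

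For part (a), first I would recall that $\Delta : U\mathfrak{g} \to U\mathfrak{g} \otimes U\mathfrak{g}$ is a morphism of algebras, so in Sweedler notation $\sum (xy)^{(1)} \otimes (xy)^{(2)} = \sum x^{(1)} y^{(1)} \otimes x^{(2)} y^{(2)}$. Then for $x, y \in U\mathfrak{g}$ I would compute
$$(\tau_1 * \tau_2)(xy) = \sum \tau_1(x^{(1)} y^{(1)}) \, \tau_2(x^{(2)} y^{(2)}),$$
and, using that $\tau_1$ and $\tau_2$ are individually traces (so $\tau_i(ab) = \tau_i(ba)$ applied to each tensor factor), rewrite this as $\sum \tau_1(y^{(1)} x^{(1)}) \, \tau_2(y^{(2)} x^{(2)}) = (\tau_1 * \tau_2)(yx)$. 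This shows $\tau_1 * \tau_2 \in \mathbb{T}(U\mathfrak{g})$. I would also note that $\tau_1 * \tau_2$ is manifestly linear in $x$ since $\Delta$ is linear and $\tau_1, \tau_2$ are linear, and that the finiteness of the Sweedler sum makes all expressions well-defined.

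For part (b), I would check associativity, commutativity, and existence of a unit. Associativity of $*$ follows from the coassociativity of $\Delta$: both $((\tau_1 * \tau_2) * \tau_3)(x)$ and $(\tau_1 * (\tau_2 * \tau_3))(x)$ unwind to $\sum \tau_1(x^{(1)}) \tau_2(x^{(2)}) \tau_3(x^{(3)})$ using $(\Delta \otimes \mathrm{id})\circ \Delta = (\mathrm{id} \otimes \Delta)\circ \Delta$. Commutativity is the key use of the special structure of $U\mathfrak{g}$: since $\mathfrak{g}$ consists of primitive elements, $U\mathfrak{g}$ is a \emph{cocommutative} Hopf algebra, i.e. $\tau^{\mathrm{flip}} \circ \Delta = \Delta$ where $\tau^{\mathrm{flip}}$ swaps tensor factors; hence $\sum \tau_1(x^{(1)}) \tau_2(x^{(2)}) = \sum \tau_1(x^{(2)}) \tau_2(x^{(1)})$, giving $\tau_1 * \tau_2 = \tau_2 * \tau_1$. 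The unit is the counit $\varepsilon : U\mathfrak{g} \to \mathbb{C}$, which is a trace (it is an algebra homomorphism to a commutative algebra) and satisfies $\varepsilon * \tau = \tau * \varepsilon = \tau$ by the counit axiom $(\varepsilon \otimes \mathrm{id})\circ \Delta = (\mathrm{id}\otimes\varepsilon)\circ\Delta = \mathrm{id}$. Bilinearity of $*$ over $\mathbb{C}$ is immediate.

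I do not expect a genuine obstacle here: the lemma is a routine Hopf-algebraic computation, and the only point requiring a little care is making explicit that $U\mathfrak{g}$ is cocommutative (which is the ingredient giving commutativity, and which is what singles out $U\mathfrak{g}$ among general bialgebras). If one wants to be fully self-contained, one could phrase the argument without invoking the Hopf-algebra vocabulary, writing $\Delta$ on generators $x \in \mathfrak{g}$ as $\Delta(x) = x \otimes 1 + 1 \otimes x$ and extending multiplicatively, and checking cocommutativity on a PBW basis; but invoking the standard fact that $(U\mathfrak{g}, \Delta, \varepsilon)$ is a cocommutative bialgebra is cleaner. The mild bookkeeping with Sweedler notation and finite sums is the only thing to be slightly careful about, and it is entirely routine.
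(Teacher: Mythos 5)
Your proof is correct and is exactly the verification the paper has in mind — the paper simply states "the result is a direct verification," and your argument (traceness from multiplicativity of $\Delta$ plus the trace property applied factorwise, associativity from coassociativity, commutativity from cocommutativity of $U\mathfrak{g}$, unit given by the counit) supplies precisely that verification. No gaps; the only remark is that the displayed formula in the statement should read $\tau_1(x^{(1)})\,\tau_2(x^{(2)})$ with a scalar product rather than $\otimes$, as your computation implicitly assumes.
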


\begin{proof}
The result is a direct verification.
\end{proof}

Similar to the definition of a weight system, see \cite[Chapter 14]{JacksonMoffat},  any pair $(\mathfrak{g},\langle\ , \ \rangle_{\mathfrak{g}})$, where $\mathfrak{g}$ is a finite dimensional Lie algebra  and $\langle\ ,\  \rangle_{\mathfrak{g}}$ is an non-degenerated invariant pairing on $\mathfrak{g}$, gives rise to a collection of linear maps 
  \begin{equation}\label{eq:mapsAtoU}
 \mathring{\mathcal A}(\mathrm{Id}_{[\![1,m]\!]},\emptyset, [\![1,n]\!]) \longrightarrow \mathrm{Hom}(\mathfrak{g}^{\otimes n},U\mathfrak{g}^{\otimes m}) 
 \end{equation}
 which moreover intertwine the composition 
$$\circ_{{\mathcal A}} :  \mathring{\mathcal A}(\mathrm{Id}_{[\![1,m]\!]},\emptyset,[\![1,n]\!]) \otimes  \mathring{\mathcal A}(\mathrm{Id}_{[\![1,m]\!]},\emptyset,[\![1,n']\!])\longrightarrow \mathring{\mathcal A}(\mathrm{Id}_{[\![1,m]\!]},\emptyset,[\![1,n+n']\!])$$
(see\eqref{COMP:MAP:SIGMA}) with the map induced by the product in  $U\mathfrak{g}^{\otimes m}$ and the concatenation $\mathfrak{g}^{\otimes n} \otimes \mathfrak{g}^{\otimes n'}\to \mathfrak{g}^{\otimes n+n'}$; the tensor product 
$$\otimes_{{\mathcal A}}:  \mathring{\mathcal A}(\mathrm{Id}_{[\![1,m]\!]},\emptyset,[\![1,n]\!]) \otimes  \mathring{\mathcal A}(\mathrm{Id}_{[\![1,m']\!]},\emptyset,[\![1,n']\!])\longrightarrow \mathring{\mathcal A}(\mathrm{Id}_{[\![1,m+m']\!]},\emptyset,[\![1,n+n']\!])$$ 
(see \eqref{tensor:map:A:PP'gen}) with the operation defined by the concatenations  $\mathfrak{g}^{\otimes n} \otimes \mathfrak{g}^{\otimes n'}\to \mathfrak{g}^{\otimes n+n'}$   and  $U\mathfrak{g}^{\otimes m} \otimes U\mathfrak{g}^{\otimes m'}\to U\mathfrak{g}^{\otimes m+m'}$; and the connecting operation  $\mathring{\mathcal A}(\mathrm{Id}_{[\![1,m]\!]},\emptyset,[\![1,n]\!]) \otimes  \mathring{\mathcal A}(\mathrm{Id}_{[\![1,p]\!]},\emptyset,[\![1,n]\!]) \to \mathring{\mathcal A}(\mathrm{Id}_{[\![1,m+p]\!]},\emptyset, \emptyset)$ (see \eqref{thepairinginA}) with the map arising from the 
concatenation of $U\mathfrak{g}^{\otimes m}$ and $U\mathfrak{g}^{\otimes p}$ and the composition of the $n$-th power of the element of $\mathfrak{g} \otimes \mathfrak{g}$ arising from the pairing $\langle\ , \ \rangle_{\mathfrak{g}}$. 
 
For $n \geq 0$, the bijection $\mathbb{Z}_n \simeq [\![0,n-1]\!]$ and the map \eqref{eq:mapsAtoU} give rise to a linear map  $\mathring{\mathcal A}(\vec{\varnothing},\emptyset,\mathbb{Z}_n) \to \mathrm{Hom}(\mathfrak{g}^{\otimes n},\mathbb{C})$. 
The product over $n \geq 0$ of these maps is a linear map 
\begin{equation}\label{eq:mapev}
\mathrm{ev}_{\mathfrak{g}} : \prod_{n \geq 0}\mathring{\mathcal A}(\vec{\varnothing},\emptyset, \mathbb{Z}_n) \longrightarrow \mathrm{Hom}(T\mathfrak{g},\mathbb{C}),
\end{equation}
 where $T\mathfrak{g}$ is the tensor algebra of $\mathfrak{g}$.

\begin{lemma}
The image of $\mathbb{D}\mathrm{iag}\mathbb{T}\mathrm{r}$ by the map \eqref{eq:mapev}  is contained in  $\mathbb{T}(U\mathfrak{g})$. The resulting map $\mathbb{D}\mathrm{iag}\mathbb{T}\mathrm{r}\to \mathbb{T}(U\mathfrak{g})$ is an algebra morphism.
\end{lemma}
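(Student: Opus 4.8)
The statement has two parts: first, that the map $\mathrm{ev}_{\mathfrak g}$ of \eqref{eq:mapev} sends a diagrammatic trace to a trace of $U\mathfrak g$; second, that the induced map $\mathbb{D}\mathrm{iag}\mathbb{T}\mathrm{r}\to\mathbb{T}(U\mathfrak g)$ is an algebra morphism. The plan is to run both verifications by translating the combinatorial axioms in Definition~\ref{def:diagrtrace} and Lemma~\ref{def:productoftraces} through the (already established) compatibilities of \eqref{eq:mapsAtoU} with $\circ_{\mathcal A}$, $\otimes_{\mathcal A}$ and the connecting pairing. Throughout I would fix a basis $(e_i)$ of $\mathfrak g$ with dual basis $(e^i)$ relative to $\langle\,,\,\rangle_{\mathfrak g}$, so that the element of $\mathfrak g\otimes\mathfrak g$ coming from the pairing is $\sum_i e_i\otimes e^i$, and I would think of $\mathrm{ev}_{\mathfrak g}(T_\bullet)$ concretely: $\mathrm{ev}_{\mathfrak g}(T_n)$ is the multilinear functional on $\mathfrak g^{\otimes n}$ obtained by labelling the $n$ legs of each trivalent graph in $T_n$ by inputs, summing over basis labels on internal edges, contracting structure constants at trivalent vertices (with the cyclic-order sign convention matching AS) and using $\langle\,,\,\rangle_{\mathfrak g}$ on the edges joining two legs.

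\textbf{Step 1: the image is a trace.} Let $T_\bullet=(T_n)_{n\geq 0}\in\mathbb{D}\mathrm{iag}\mathbb{T}\mathrm{r}$ and put $\tau:=\mathrm{ev}_{\mathfrak g}(T_\bullet)$, a linear functional on $T\mathfrak g$. First I would check that $\tau$ descends to $U\mathfrak g$, i.e.\ $\tau$ kills the two-sided ideal generated by $xy-yx-[x,y]$. On a monomial the insertion of $xy-yx$ at positions $i,i+1$ corresponds, under \eqref{eq:mapsAtoU}, exactly to $T_n-(i\ i{+}1)T_n$, while the insertion of $[x,y]$ corresponds to $T_{n-1}*_i\Yup$ (this is the whole point of the sign/cyclic-order convention in the definition of $*_i\Yup$, matched to the structure-constant contraction at the new trivalent vertex). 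Hence the STU-compatibility axiom (3) says precisely that $\tau$ annihilates these relators, so $\tau$ factors through $U\mathfrak g$. Next, that $\tau$ is a \emph{trace} on $U\mathfrak g$: since $\mathfrak g$ generates $U\mathfrak g$, it suffices to show $\tau(a x)=\tau(x a)$ for $x$ a monomial in $\mathfrak g$ and $a\in\mathfrak g$; by induction on the length of $x$ this reduces, using the STU relation already verified, to the cyclic-invariance axiom (2), $(1\cdots n)T_n=T_n$ — applying a cyclic rotation to the leg labels is exactly the passage from $ax$ to $xa$. Here one must also invoke axiom (1) ($T_0=T_1=0$) for the trivial base cases.

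\textbf{Step 2: algebra morphism.} Let $T_\bullet,T'_\bullet$ be diagrammatic traces with product $T_\bullet*T'_\bullet=(T''_n)$ as in Lemma~\ref{def:productoftraces}. Using the coproduct $\Delta$ of $U\mathfrak g$, the functional $\mathrm{ev}_{\mathfrak g}(T_\bullet)*\mathrm{ev}_{\mathfrak g}(T'_\bullet)$ of Lemma~\ref{productindiagrtraces}$(a)$ is computed on a length-$n$ monomial $x_1\cdots x_n$ by $\sum\tau(x^{(1)})\tau'(x^{(2)})$; since the $x_i$ are primitive, $\Delta(x_1\cdots x_n)=\sum_{A\sqcup B=[\![1,n]\!]} x_A\otimes x_B$ where $x_A$ is the ordered sub-product indexed by $A$. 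Splitting each such pair $(A,B)$ according to the unique order-preserving relabelling of $A$ by $\mathbb Z_p$ and $B$ by $\mathbb Z_q$ (with $p=|A|$, $q=|B|$) produces exactly the sum over $\mathfrak S_{p,q}$ and the shuffled tensor product $\sigma_*(T_p\otimes_{\mathcal A}T'_q)$ appearing in the formula for $T''_n$, once one uses the compatibility of \eqref{eq:mapsAtoU} with $\otimes_{\mathcal A}$ (which, on legs all attached to $\vec\varnothing$, is simply disjoint union of graphs, matching concatenation of monomials). Hence $\mathrm{ev}_{\mathfrak g}(T''_n)=\bigl(\mathrm{ev}_{\mathfrak g}(T_\bullet)*\mathrm{ev}_{\mathfrak g}(T'_\bullet)\bigr)|_{\mathfrak g^{\otimes n}}$, i.e.\ $\mathrm{ev}_{\mathfrak g}$ is multiplicative; it visibly sends the unit to the unit, so it is an algebra morphism $\mathbb{D}\mathrm{iag}\mathbb{T}\mathrm{r}\to\mathbb{T}(U\mathfrak g)$.

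\textbf{Expected main obstacle.} The routine part is harmless; the delicate point is Step~1, matching the $*_i\Yup$ operation and the action of the transposition $(i\ i{+}1)$ to the universal-enveloping relators \emph{with the correct signs}. This requires pinning down once and for all the conventions behind \eqref{eq:mapsAtoU}: the cyclic orientation of trivalent vertices versus the antisymmetry of the bracket (the AS relation), and the normalization of the copairing $\sum_i e_i\otimes e^i$ relative to the orientation of struts. I would handle this by writing out the degree-one (single-$\Yup$) and degree-zero (single-strut) cases explicitly in the chosen basis, verifying that $\mathrm{ev}_{\mathfrak g}(\Yup)$ equals (up to the intended sign) the structure-constant tensor and that $\mathrm{ev}_{\mathfrak g}$ of a strut is $\langle\,,\,\rangle_{\mathfrak g}$, and then leveraging the already-stated intertwining properties of \eqref{eq:mapsAtoU} with $\circ_{\mathcal A}$ and $\otimes_{\mathcal A}$ to bootstrap to all degrees, so that no further sign bookkeeping is needed beyond these base cases.
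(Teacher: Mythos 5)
Your proposal is correct and follows essentially the same route as the paper's proof: STU-compatibility (axiom (3)) to descend $\mathrm{ev}_{\mathfrak g}(T_\bullet)$ from $(T\mathfrak g)^*$ to $(U\mathfrak g)^*$, cyclic invariance (axiom (2)) for the trace property, and the shuffle expression of the coproduct on pure tensors matched against the $\mathfrak S_{p,q}$-sum defining $T_\bullet * T'_\bullet$ for multiplicativity. Your Step 1 reduction of the trace property to single generators and your sign-convention remarks just spell out details the paper leaves implicit; no gap.
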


\begin{proof}
Recall that $\mathbb{T}(U\mathfrak{g}) \subset (U\mathfrak{g})^* \subset (T\mathfrak{g})^*$. Let $T_{\bullet} \in \mathbb{D}\mathrm{iag}\mathbb{T}\mathrm{r}$, by the condition $(3)$ in Definition~\ref{def:diagrtrace}, the map $\mathrm{ev}_{\mathfrak{g}}(T_{\bullet})$ vanishes  on the two-sided ideal of $T\mathfrak{g}$ generated by the elements
$x \otimes y - y \otimes x -[x,y]$ for $x,y \in \mathfrak{g}$. It follows that $\mathrm{ev}_{\mathfrak{g}}(T_{\bullet}) \in U\mathfrak{g}^*$. The relation $(2)$  in Definition~\ref{def:diagrtrace} then implies that $\mathrm{ev}_{\mathfrak{g}}(T_{\bullet}) \in \mathbb{T}(U\mathfrak{g})$. The algebra morphism property follows from the definition of the product structure on $\mathbb{T}(U\mathfrak{g})$ (see Lemma~\ref{productindiagrtraces}$(a)$) and the expression of the coproduct of a pure tensor in $T\mathfrak{g}$ based on pure shuffles, namely  $\Delta(x_1\cdots x_n)=\sum_{a=0}^n \sum_{\sigma \in \mathfrak{S}_{a,n-a}}x_{\sigma(1)}\cdots x_{\sigma(a)} \otimes x_{\sigma(a+1)}\cdots x_{\sigma(n)}$, where the sets $\mathfrak{S}_{a,n-a}$ are as in Lemma~\ref{def:productoftraces}. 
\end{proof}

\begin{remark} One checks that the image of the element $U_{\bullet}$ from Remark~\ref{exampleUU}  is the trace on $U\mathfrak{g}$ induced by the adjoint representation  of $U\mathfrak{g}$ on $\mathfrak{g}$, namely 
$\mathrm{ev}_{\mathfrak{g}}(U_{\bullet})=\mathrm{tr}_{\mathfrak{g}}$, where $\mathrm{tr}_{\mathfrak{g}} \in \mathbb{T}(U\mathfrak{g})$ is given by $U\mathfrak{g}\ni a\mapsto \mathrm{tr}_{\mathfrak{g}}(\mathrm{ad}_a)$, and $U\mathfrak{g}\ni a \mapsto \mathrm{ad}_a$ is the algebra morphism $U\mathfrak{g} \to \mathrm{End}(\mathfrak{g})$ whose restriction  to $\mathfrak{g}$ is the map  $x\mapsto  \mathrm{ad}_x=(y\mapsto [x,y])$ for any $x,y\in\mathfrak{g}$.  
\end{remark}

\subsection{The algebras \texorpdfstring{$\mathfrak{c}(n)$}{c(n)} and the morphisms \texorpdfstring{$\overline{j}_n:\mathfrak{b}(n) \to \mathfrak{c}(n)$}{j-n}}\label{sec:3:4}
Throughout this subsection let $n \geq 1$ be an integer.

\subsubsection{Gradings on $\bigoplus_{k \geq 0} \mathring{\mathcal A}^{\wedge}_k$ and $\mathring{\mathcal A}^{\wedge}_0$}

Recall that for any pair $(P,S)$ of an oriented Brauer diagram $P$ and a finite set $S$, the space $\mathring{\mathcal{A}}(P,S)$ is equipped with the grading $\mathrm{deg}$ given in~\eqref{degree-on-APS}, for which the composition (see \S\ref{sec:4-3}) and tensor product (see \S\ref{sec:4-2}) operations are homogeneous. These operations coincide in the case of a pair of the form $(\vec{\varnothing},[\![1,k]\!])$ for $k \geq 0$. They induce graded maps $\mathring{\mathcal{A}}_k \otimes \mathring{\mathcal{A}}_l \to \mathring{\mathcal{A}}_{k+l}$ for $k,l\geq 0$, which define an algebra structure on the  vector space $\bigoplus_{k\geq 0} \mathring{\mathcal{A}}_k$. This algebra is equipped with a bigrading $(\mathrm{deg}, \mathrm{deg}_{\mathrm{cir}})$  \index[notation]{deg_cir@$\mathrm{deg}_{\mathrm{cir}}$} where $\mathring{\mathcal{A}}_k$ is the homogeneous component of degree $k$ for $\mathrm{deg}_{\mathrm{cir}}$. The algebra $\bigoplus_{k\geq 0} \mathring{\mathcal{A}}^\wedge_k$ is the completion of the latter algebra for $\mathrm{deg}$.

The subspace $\mathring{\mathcal{A}}_0$ \index[notation]{A_0@$\mathring{\mathcal{A}}_0$} of $\bigoplus_{k\geq 0} \mathring{\mathcal{A}}_k$ is a subalgebra, equipped with the degree $\mathrm{deg}$. We denote by $\mathring{\mathcal{A}}^\wedge_0$  \index[notation]{A^\wedge_0@$\mathring{\mathcal{A}}^\wedge_0$} its degree completion.  

\subsubsection{The algebra morphisms $j^{T_{\bullet}}:\bigoplus_{k\geq 0} \mathring{\mathcal{A}}^\wedge_k\to \mathring{\mathcal{A}}^\wedge_0$ associated to a diagrammatic trace $T_{\bullet}$}

In this subsection we define a map   $j^{T_{\bullet}}:\bigoplus_{k\geq 0} \mathring{\mathcal{A}}^\wedge_k\to \mathring{\mathcal{A}}^\wedge_0$ associated to a diagrammatic trace $T_{\bullet}$. If $T_{\bullet}$ is the $n$-th power (using the product $*$ from Lemma~\ref{productindiagrtraces}) of the diagrammatic LMO trace (see Lemma~\ref{r:elementsTmLMO}), then $j^{T_{\bullet}}$ is the map  introduced in \cite[Section~2]{LMO98} (see also  \cite[Chapter 10]{Ohts}).

\begin{definition}\label{def:absSTUcyc} For $k \geq 1$ and $n_1,...,n_k \geq 0$ and $i \in [\![1,k]\!]$. 
Set $$\mathcal A := \bigoplus_{m_1,\ldots,m_k \geq 0}\mathring{\mathcal A}^{\wedge}(\vec{\varnothing},\emptyset,\mathbb{Z}_{m_1} \sqcup\cdots\sqcup \mathbb{Z}_{m_k}).$$
\begin{itemize}

\item[$(a)$]  Define the linear map
$$\varphi_{(n_1,\ldots,n_k,i)} : \mathring{\mathcal A}^{\wedge}(\vec{\varnothing},\emptyset,\mathbb{Z}_{n_1} \sqcup\cdots\sqcup \mathbb{Z}_{n_k})\longrightarrow \mathring{\mathcal A}^{\wedge}(\vec{\varnothing},\emptyset,\mathbb{Z}_{n_1} \sqcup\cdots\sqcup \mathbb{Z}_{n_k})\subset \mathcal A$$ by $T\mapsto \tilde{T}-T$, where $\tilde T$ is the action of the permutation of  $\mathbb{Z}_{n_1} \sqcup \cdots \sqcup \mathbb{Z}_{n_k}$ given by $x\mapsto x+1$ for $i$-th component. 
Let $$\varphi^{\mathrm{cyc}}: \bigoplus_{i \in [\![1,k]\!],n_1,\ldots,n_k \in \mathbb{Z}_{\geq 0}}  \mathring{\mathcal A}^{\wedge}(\vec{\varnothing},\emptyset,\mathbb{Z}_{n_1} \sqcup\cdots\sqcup \mathbb{Z}_{n_k}) \longrightarrow  \mathcal A$$ be defined as
$$\varphi^{\mathrm{cyc}}:=\sum_{i=1}^k\sum_{n_1,...,n_k \geq 0}\varphi_{(n_1,\ldots,n_k,i)}.$$

\item[$(b)$] For  $i \in [\![1,k]\!]$ such that $n_i \geq 2$,  define the linear map
 $$\psi^1_{(n_1,\ldots,n_k,i)} : \mathring{\mathcal A}^{\wedge}(\vec{\varnothing},  \emptyset ,\mathbb{Z}_{n_1} \sqcup\cdots \sqcup \mathbb{Z}_{n_k})\longrightarrow  \mathring{\mathcal A}^{\wedge}(\vec{\varnothing}, \emptyset,\mathbb{Z}_{n_1}\sqcup \cdots \sqcup \mathbb{Z}_{n_i-1}\sqcup\cdots\sqcup \mathbb{Z}_{n_k})\subset  \mathcal A$$
as the map derived from the map 
$$\mathring{\mathrm{Jac}}(\vec{\varnothing},  \emptyset ,\mathbb{Z}_{n_1} \sqcup\cdots \sqcup \mathbb{Z}_{n_k})\longrightarrow  \mathring{\mathrm{Jac}}(\vec{\varnothing}, \emptyset,\mathbb{Z}_{n_1}\sqcup \cdots \sqcup \mathbb{Z}_{n_i-1}\sqcup\cdots\sqcup \mathbb{Z}_{n_k})$$
which takes $\underline{D}\in\mathring{\mathrm{Jac}}(\vec{\varnothing},  \emptyset ,\mathbb{Z}_{n_1} \sqcup\cdots \sqcup \mathbb{Z}_{n_k})$ to the Jacobi diagram $\underline{D}*_{(n_i-2,n_i-1)} \Yup$  obtained from $\underline{D}$ by gluing the two univalent vertices (corresponding to $\mathbb{Z}_{n_i}$) labelled by $n_i-2$ and $n_i-1$ to two of the univalent vertices  of the $\Yup$-shaped graph and labelling the remaining vertex of the $\Yup$-shaped graph by $n_i-2\in\mathbb{Z}_{n_i-1}\subset \mathbb{Z}_{n_1}\sqcup \cdots \sqcup \mathbb{Z}_{n_i-1}\sqcup\cdots\sqcup \mathbb{Z}_{n_k}$. The cyclic order of the new trivalent vertex is $[\mathrm{old}(n_i-1)<\mathrm{new}(n_i-2)<\mathrm{old}(n_i-2)]$.

\item[$(c)$] For  $i \in [\![1,k]\!]$ such that $n_i \geq 2$, define the linear map
$$\psi^2_{(n_1,\ldots,n_k,i)} : \mathring{\mathcal A}^{\wedge}(\vec{\varnothing},\emptyset,\mathbb{Z}_{n_1} \sqcup\cdots\sqcup \mathbb{Z}_{n_k})\longrightarrow \mathring{\mathcal A}^{\wedge}(\vec{\varnothing},\emptyset,\mathbb{Z}_{n_1} \sqcup\cdots\sqcup \mathbb{Z}_{n_k})\subset  \mathcal A$$ by $T \mapsto \tilde{T} - T$, where $\tilde T$ is obtained from $T$ by the action of the transposition of $\mathbb{Z}_{n_1} \sqcup \cdots \sqcup \mathbb{Z}_{n_k}$ given by the exchange of the elements $n_i-2$ and $n_i-1$ in position $i$.

\item[$(d)$] For  $i \in [\![1,k]\!]$ such that $n_i \geq 2$,  let 
$$\psi_{(n_1,\ldots,n_k,i)} : \mathring{\mathcal A}^{\wedge}(\vec{\varnothing}, \emptyset, \mathbb{Z}_{n_1} \sqcup\cdots\sqcup \mathbb{Z}_{n_k}) \longrightarrow 
 \mathcal A$$ 
given by $\psi_{(n_1,\ldots,n_k,i)}:= \psi^1_{(n_1,\ldots,n_k,i)} + \psi^2_{(n_1,\ldots,n_k,i)}$.
Let 
$$\psi^{\mathrm{STU}}: \bigoplus_{(n_1,\ldots,n_k,i)\ | \ (n_1,\ldots,n_k) \in \mathbb{Z}_{\geq0}^k \text{ and } n_i\geq 2} \mathring{\mathcal A}^{\wedge}(\vec{\varnothing},\emptyset,\mathbb{Z}_{n_1} \sqcup\cdots\sqcup \mathbb{Z}_{n_k}) \longrightarrow  \mathcal A$$ the map defined by 
$$\psi^{\mathrm{STU}}:=\sum_{i=1}^k\sum_{n_1,...,n_k \geq 0, \text{ with } n_i \geq 2}\psi_{(n_1,\ldots,n_k,i)}.$$
\end{itemize}
\end{definition}

Recall that if $G$ is a group acting on a set $\Sigma$ then $\Sigma^G:=\{s\in \Sigma \ | \ g\cdot s = s\}$ is called the set of \emph{invariant elements} and  $\Sigma/G$, i.e., the set of all orbits of $\Sigma$ under the action of $G$, is  called the \emph{quotient set} of the action. If $G$ acts on a $\mathbb{C}$-vector space $V$, then $V^G$ is a vector space. If $\Sigma$ is a $G$-space, one has ($\mathbb{C} \Sigma)^G=\mathbb{C}(\Sigma^G)$ and $(\mathbb{C} \Sigma)_G=\mathbb{C}(\Sigma/G)$, where the $(\mathbb{C} \Sigma)_G$ denotes the space of coinvariants of the pair $(G,\mathbb{C}\Sigma)$ (see \S\ref{sec:2-9-2}) .

 Then for any family $n_1$, $\ldots$, $n_k$ of non-negative  integers, the group $\mathbb{Z}_{n_1}\times \cdots \times \mathbb{Z}_{n_k}$ acts on the disjoint union $\mathbb{Z}_{n_1}\sqcup \cdots \sqcup \mathbb{Z}_{n_k}$, the action of $\mathbb{Z}_{n_i}$ on itself being by the product and the action of $\mathbb{Z}_{n_i}$ on $\mathbb{Z}_{n_j}$ being trivial if $i \neq j$.

\begin{lemma}\label{r:2022-06-14lemmabijJac}
For any $k\geq 1$ and any family  $n_1,\ldots, n_k$ of  non-negative integers, there is a bijection 
\begin{equation}\label{eq:2022-06-06bij}\bigsqcup_{n_1, \ldots, n_k \geq 0}\mathring{\mathrm{Jac}}(\vec{\varnothing},\emptyset, \mathbb{Z}_{n_1}\sqcup \cdots \sqcup \mathbb{Z}_{n_k})/\mathbb{Z}_{n_1}\times\cdots\times\mathbb{Z}_{n_k}\longrightarrow\mathring{\mathrm{Jac}}(\vec{\varnothing},[\![1,k]\!]),
\end{equation}
which induces an isomorphism
\begin{equation}\label{eq:2023-08-17iso}
\frac{\bigoplus_{n_1,\ldots, n_k \geq 0}
\mathring{\mathcal A}^{\wedge}(\vec{\varnothing},\emptyset,\mathbb{Z}_{n_1} \sqcup \cdots\sqcup \mathbb{Z}_{n_k})_{\mathbb{Z}_{n_1}\times\cdots\times \mathbb{Z}_{n_k}}}{[\mathrm{Im}(\psi^{\mathrm{STU}})]} \simeq \frac{\bigoplus_{n_1,\ldots, n_k \geq 0}
\mathring{\mathcal A}^{\wedge}(\vec{\varnothing},\emptyset,\mathbb{Z}_{n_1} \sqcup \cdots\sqcup \mathbb{Z}_{n_k})}{\mathrm{Im}(\psi^{\mathrm{STU}})+ \mathrm{Im}(\varphi^{\mathrm{cyc}})} \simeq  \mathring{\mathcal A}^{\wedge}(\vec{\varnothing},[\![1,k]\!]),
\end{equation}
where $\psi^{\mathrm{STU}}$ and $\varphi^{\mathrm{cyc}}$ are as in Definition~\ref{def:absSTUcyc} and where  $[\mathrm{Im}(\psi^{\mathrm{STU}})]$ is the image of $\mathrm{Im}(\psi^{\mathrm{STU}})$ under the canonical projection $V\to V_G$ for $V=\bigoplus_{n_1,\ldots, n_k \geq 0} \mathring{\mathcal A}^{\wedge}(\vec{\varnothing},\emptyset,\mathbb{Z}_{n_1} \sqcup \cdots\sqcup \mathbb{Z}_{n_k})$ and $G=\bigoplus_{n_1,\ldots, n_k \geq 0}\mathbb{Z}_{n_1}\times\cdots\times \mathbb{Z}_{n_k}$.

\end{lemma}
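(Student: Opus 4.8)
The statement asserts that Jacobi diagrams on $(\vec\varnothing,[\![1,k]\!])$ can be recovered from Jacobi diagrams with $k$ groups of \emph{free} (cyclically ordered within each group) univalent vertices, by first quotienting by the ``cyclic rotation'' relations $\mathrm{Im}(\varphi^{\mathrm{cyc}})$ (equivalently, passing to $\mathbb{Z}_{n_1}\times\cdots\times\mathbb{Z}_{n_k}$-coinvariants) and then by the ``STU around a circle'' relations $\mathrm{Im}(\psi^{\mathrm{STU}})$. The approach is to build the bijection \eqref{eq:2022-06-06bij} at the level of sets of diagrams first, then linearize and quotient. First I would construct the map from left to right: given a representative $\underline D=(D,\varphi,\{\mathrm{cyc}_s\}_{s\in\mathbb{Z}_{n_1}\sqcup\cdots\sqcup\mathbb{Z}_{n_k}})\in\mathring{\mathrm{Jac}}(\vec\varnothing,\emptyset,\mathbb{Z}_{n_1}\sqcup\cdots\sqcup\mathbb{Z}_{n_k})$, I attach to each of the $k$ groups a new circle component, glue the $n_i$ univalent vertices colored by $\mathbb{Z}_{n_i}$ to that circle in the order dictated by the cyclic order on $\mathbb{Z}_{n_i}$ (the colors themselves give the cyclic order since $\mathbb{Z}_{n_i}$ is canonically cyclically ordered by addition), and color the resulting univalent-vertex-free diagram's legs by $i\in[\![1,k]\!]$ — wait, more precisely the univalent vertices land on the $i$-th circle and the cyclic order on the preimage of $i$ is the one induced by going around that circle. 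This produces a well-defined element of $\mathring{\mathrm{Jac}}(\vec\varnothing,[\![1,k]\!])$ because two tuples equivalent under the $\mathbb{Z}_{n_1}\times\cdots\times\mathbb{Z}_{n_k}$-action (rotation within each group) glue to isomorphic diagrams on the circles, and conversely a graph isomorphism of the $(D,\varphi,\ldots)$ tuples induces one of the glued diagrams. This shows the assignment factors through the quotient by $\mathbb{Z}_{n_1}\times\cdots\times\mathbb{Z}_{n_k}$.

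Next I would check this map is a bijection after also collapsing $\mathrm{Im}(\psi^{\mathrm{STU}})$. Surjectivity at the level of $\mathring{\mathrm{Jac}}(\vec\varnothing,[\![1,k]\!])$: any Jacobi diagram on $(\vec\varnothing,[\![1,k]\!])$ has its legs distributed on $k$ circles, and cutting each circle at one point (a choice) produces a diagram with free legs linearly — hence cyclically — ordered in each group, together with a dashed edge/graph structure; varying the cut point is exactly the $\mathbb{Z}_{n_i}$-action. So surjectivity holds already before introducing $\psi^{\mathrm{STU}}$. The role of $\mathrm{Im}(\psi^{\mathrm{STU}})$ is to account for the STU relation on $\mathring{\mathcal A}(\vec\varnothing,[\![1,k]\!])$ when the ``middle line'' of the STU is a portion of one of the circles: in the free-leg picture two consecutive legs $n_i-2,n_i-1$ on a circle can be resolved via an STU, which either produces a $\Yup$-insertion merging them (the $\psi^1$ term) or swaps them (the $\psi^2$ term), and the difference of the two sides is precisely the generator of $\mathrm{Im}(\psi^{\mathrm{STU}})$. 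I would verify that, modulo $\mathrm{Im}(\psi^{\mathrm{STU}})+\mathrm{Im}(\varphi^{\mathrm{cyc}})$, one can always bring any two adjacent legs into the last two positions (using $\varphi^{\mathrm{cyc}}$ to rotate), so $\mathrm{Im}(\psi^{\mathrm{STU}})$ really generates all circle-STU relations; conversely each $\psi^{\mathrm{STU}}$ generator maps to zero in $\mathring{\mathcal A}(\vec\varnothing,[\![1,k]\!])$ by the STU relation there. Combined with the already-established compatibility with AS and IHX (these involve only the dashed graph and are untouched), this gives that the linearized, doubly-quotiented map is a well-defined isomorphism, which is the content of \eqref{eq:2023-08-17iso}. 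The first isomorphism in \eqref{eq:2023-08-17iso} is just the general identity $(\mathbb{C}\Sigma)_G=\mathbb{C}(\Sigma/G)$ applied with $\Sigma$ the diagram set and $G$ the product of cyclic groups, pushing $\mathrm{Im}(\psi^{\mathrm{STU}})$ to its image $[\mathrm{Im}(\psi^{\mathrm{STU}})]$ in the coinvariants, so it requires only that $\psi^{\mathrm{STU}}$-generators are $G$-equivariantly defined, which is immediate from the construction.

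\textbf{Main obstacle.} The delicate point is the bookkeeping of cyclic orders: one must check that the cyclic order induced on $\tilde\varphi^{-1}(i)$ by ``reading around the $i$-th circle'' is intrinsically well-defined (independent of orientation conventions once fixed) and that it matches, under the gluing, the cyclic order coming from the canonical ordering of $\mathbb{Z}_{n_i}$ up to the $\mathbb{Z}_{n_i}$-rotation; and symmetrically, that cutting a circle and the $\psi^1$ $\Yup$-insertion interact correctly with the prescribed cyclic order $[\mathrm{old}(n_i-1)<\mathrm{new}(n_i-2)<\mathrm{old}(n_i-2)]$ at the new trivalent vertex. This is exactly the kind of sign/orientation consistency that makes STU-type arguments error-prone, and it is where I would spend the most care; everything else (AS/IHX untouched, linearization, the $(\mathbb{C}\Sigma)_G$ identity, surjectivity by cutting) is routine. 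I expect the final write-up to reduce to: (i) define the set map and its inverse on representatives, (ii) check compatibility with the three kinds of equivalences/relations on each side, (iii) invoke Lemma~\ref{r:2023-01-11-r2}(b) to handle the cyclic-order gluing cleanly, and (iv) assemble the two displayed isomorphisms.
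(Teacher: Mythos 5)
Your proposal is correct and follows essentially the same route as the paper: a set-level bijection obtained by identifying a cyclic order on an $n_i$-element set of legs with a $\mathbb{Z}_{n_i}$-coloring modulo rotation, followed by linearization, the observation that coinvariants coincide with the quotient by $\mathrm{Im}(\varphi^{\mathrm{cyc}})$, and the matching of the $\psi^{\mathrm{STU}}$ generators with the circle-STU relations in $\mathring{\mathcal A}^{\wedge}(\vec{\varnothing},[\![1,k]\!])$. The only difference is one of detail: the paper compresses the STU-matching step into a single ``one checks,'' whereas you spell out the two directions (generators map to STU relations; every circle-STU relation is reached after a cyclic rotation), which is exactly the verification the paper has in mind.
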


\begin{proof}
An element of $\mathring{\mathrm{Jac}}(\vec{\varnothing},[\![1,k]\!]$ is a triple $\big(D,\varphi, \{\mathrm{cyc}_s\}_{s\in[\![1,k]\!]}\big)$ where $D$ is a vertex-oriented unitrivalent graph, $\varphi:\partial{D}\to [\![1,k]\!]$ is a map  and for $s \in [\![1,k]\!]$, $\mathrm{cyc}_s$ is a cyclic order on $\varphi^{-1}(s)$. The assignment $\big(D,\varphi, \{\mathrm{cyc}_s\}_{s\in[\![1,k]\!]}\big)\mapsto \big(|\varphi^{-1}(1)|, \ldots, |\varphi^{-1}(k)|\big)$ defines a map $\mathring{\mathrm{Jac}}(\vec{\varnothing},[\![1,k]\!])\to \mathbb{Z}_{\geq 0}^k$ and therefore a partition $\mathring{\mathrm{Jac}}(\vec{\varnothing},[\![1,k]\!]) = \bigsqcup_{n_1,\ldots, n_k \geq 0}\mathring{\mathrm{Jac}}(\vec{\varnothing},[\![1,k]\!])_{(n_1,\ldots, n_k)}$; where an element of $\mathring{\mathrm{Jac}}(\vec{\varnothing},[\![1,k]\!])_{(n_1,...n_k)}$ is identified with the triple of a vertex-oriented unitrivalent graph $D$, a partition of $\partial D$ in $k$ components of cardinalities $n_1$, $\ldots$, $n_k$ and a cyclic order on each of these components. 

Notice that a cyclic order on a finite set $S$ is the same as an element of the quotient set $\mathrm{Bij}(S,\mathbb{Z}_{|S|})/\mathbb{Z}_{|S|}$, where $\mathrm{Bij}(S,\mathbb{Z}_{|S|})$ denotes the set of bijections from $S$ to $\mathbb{Z}_{|S|}$. Therefore each set $\mathring{\mathrm{Jac}}(\vec{\varnothing},[\![1,k]\!])_{(n_1,\ldots, n_k)}$  can be identified with $\mathring{\mathrm{Jac}}(\vec{\varnothing},\emptyset,\mathbb{Z}_{n_1}\sqcup \cdots \sqcup \mathbb{Z}_{n_k})/\mathbb{Z}_{n_1}\times\cdots\times\mathbb{Z}_{n_k}$, which gives the stated bijection.

The bijection~\eqref{eq:2022-06-06bij}  induces a linear isomorphism 
$$\bigoplus_{n_1, \ldots, n_k \geq 0}\mathbb{C}\mathring{\mathrm{Jac}}(\vec{\varnothing},\emptyset,\mathbb{Z}_{n_1} \sqcup \cdots \sqcup \mathbb{Z}_{n_k})_{\mathbb{Z}_{n_1}\times \cdots \times \mathbb{Z}_{n_k}}\longrightarrow \mathbb{C}\mathring{\mathrm{Jac}}(\vec{\varnothing},[\![1,k]\!]),$$ 
which gives rise to a morphism
$$\frac{\bigoplus_{n_1,\ldots,n_k\geq 0}\mathbb{C}\mathring{\mathrm{Jac}}(\vec{\varnothing},\emptyset,\mathbb{Z}_{n_1} \sqcup \cdots\sqcup \mathbb{Z}_{n_k})}{\mathrm{Im}(\varphi^{\mathrm{cyc}}_{\mathring{\mathrm{Jac}}})}\longrightarrow \mathring{\mathcal A}(\vec{\varnothing},[\![1,k]\!]),$$
where $\varphi^{\mathrm{cyc}}_{\mathring{\mathrm{Jac}}}$  is the analogue of the map $\varphi^{\mathrm{cyc}}$ between sets of Jacobi diagrams. One checks that it induces  a map
$$\frac{\bigoplus_{n_1,\ldots, n_k \geq 0}
\mathring{\mathcal A}^{\wedge}(\vec{\varnothing},\emptyset,\mathbb{Z}_{n_1} \sqcup \cdots\sqcup \mathbb{Z}_{n_k})}{\mathrm{Im}(\psi^{\mathrm{STU}})+ \mathrm{Im}(\varphi^{\mathrm{cyc}})} \longrightarrow  \mathring{\mathcal A}^{\wedge}(\vec{\varnothing},[\![1,k]\!])$$
and that this map is an isomorphism.  The first isomorphism in \eqref{eq:2023-08-17iso} follows since taking coinvariants with respect to the action of $\mathbb{Z}_{n_1}\times \cdots \times \mathbb{Z}_{n_k}$ is equivalent to taking the quotient by  $[\mathrm{Im}(\varphi^{\mathrm{cyc}})]$.

\end{proof}

\begin{proposition}\label{r:diagtracesimpliestrace} Let $T_{\bullet}=(T_m)_{m\geq 0}$ be a diagrammatic trace of bidegree $(c,d)$ (see Lemma~\ref{def:bidegree}). Set $j^{T_{\bullet}}[0]:=\mathrm{Id}: \mathring{\mathcal A}_0^\wedge \to \mathring{\mathcal A}_0^\wedge$ be the identity map. For any integer  $k\geq 1$ we can associate to $T_{\bullet}$ a $\mathfrak{S}_k$-equivariant continuous linear map $j^{T_{\bullet}}[k]: \mathring{\mathcal A}^{\wedge}(\vec\varnothing,[\![1,k]\!]) \to \mathring{\mathcal A}^{\wedge}(\vec{\varnothing}, \emptyset)$ and therefore a continuous linear map  $j^{T_{\bullet}}[k]: \mathring{\mathcal A}^{\wedge}_k \to \mathring{\mathcal A}^{\wedge}_0$. The assignment $T_{\bullet} \mapsto j^{T_{\bullet}}[k]$ is polynomial of degree $k$ with respect to $T_{\bullet}$; in particular, for any $\lambda\in\mathbb{C}$ and $k\geq 1$, we have $j^{\lambda T_{\bullet}}[k]=\lambda^k j^{T_{\bullet}}[k]$ and $T_{\bullet} \mapsto j^{T_{\bullet}}[1]$ is linear. Moreover, the direct sum 
\begin{equation}\label{equ:mapj1}
j^{T_{\bullet}}=\bigoplus_{k\geq 0}j^{T_{\bullet}}[k]:\bigoplus_{k\geq 0} \mathring{\mathcal{A}}^\wedge_k\longrightarrow \mathring{\mathcal{A}}^\wedge_0
\end{equation}  \index[notation]{j^{T_{\bullet}}@$j^{T_{\bullet}}$}
is an algebra homomorphism and for a homogeneous $x \in \mathring{\mathcal A}^{\wedge}_k$ one has 
\begin{equation}\label{equdegreeandj}
\mathrm{deg}(j^{T_{\bullet}}(x))=\mathrm{deg}(x)+(d-c)k,
\end{equation}
 with $\mathrm{deg}$ being defined on $\mathring{\mathcal A}_k^{\wedge}$ and $\mathring{\mathcal A}_0^{\wedge}$ as in \eqref{degree-on-APS}.
\end{proposition}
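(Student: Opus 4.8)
The statement bundles together several assertions about the map $j^{T_\bullet}$: its construction for each $k$, its $\mathfrak{S}_k$-equivariance, the polynomiality in $T_\bullet$, the homomorphism property, and the degree formula. The plan is to build $j^{T_\bullet}[k]$ explicitly using the identification from Lemma~\ref{r:2022-06-14lemmabijJac}, then verify each property in turn.

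First I would construct $j^{T_\bullet}[k]$. Given a diagrammatic trace $T_\bullet = (T_m)_{m\geq 0}$, I define a linear map on $\bigoplus_{n_1,\ldots,n_k\geq 0}\mathring{\mathcal A}^\wedge(\vec\varnothing,\emptyset,\mathbb{Z}_{n_1}\sqcup\cdots\sqcup\mathbb{Z}_{n_k})$ by sending a Jacobi diagram with legs partitioned into $k$ cyclically ordered blocks of sizes $n_1,\ldots,n_k$ to the element obtained by gluing $T_{n_i}$ to the $i$-th block for each $i$, via the pairing $\langle\ ,\ \rangle$ of \eqref{thepairinginA} applied blockwise; more precisely this is the iterated pairing using the maps of type \eqref{COMP:MAP:SIGMA} composed $k$ times. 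The key point is that this map kills $\mathrm{Im}(\psi^{\mathrm{STU}})$ and $\mathrm{Im}(\varphi^{\mathrm{cyc}})$: the STU-compatibility condition (3) in Definition~\ref{def:diagrtrace} of $T_\bullet$ is exactly what makes the $\psi^{\mathrm{STU}}$-relations go to zero after pairing, and the $\mathbb{Z}_{n_i}$-invariance condition (2) makes the $\varphi^{\mathrm{cyc}}$-relations vanish. By Lemma~\ref{r:2022-06-14lemmabijJac}, the quotient by these two images is isomorphic to $\mathring{\mathcal A}^\wedge(\vec\varnothing,[\![1,k]\!])$, so the map descends to a continuous linear map $j^{T_\bullet}[k]:\mathring{\mathcal A}^\wedge(\vec\varnothing,[\![1,k]\!])\to\mathring{\mathcal A}^\wedge(\vec\varnothing,\emptyset)=\mathring{\mathcal A}^\wedge_0$. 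For $k=0$ we set $j^{T_\bullet}[0]=\mathrm{Id}$.

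Next I would check the remaining properties. The $\mathfrak{S}_k$-equivariance follows because permuting the $k$ blocks and permuting which $T_{n_i}$ is glued where commute, using that the pairing is symmetric in the blocks (and that $T_\bullet$ is a single fixed family, the same for every block); taking $\mathfrak{S}_k$-coinvariants then yields $j^{T_\bullet}[k]:\mathring{\mathcal A}^\wedge_k\to\mathring{\mathcal A}^\wedge_0$. Polynomiality of degree $k$ in $T_\bullet$ is clear from the construction: $j^{T_\bullet}[k]$ glues exactly $k$ copies of pieces of $T_\bullet$ (one per block), so it is $k$-linear in $T_\bullet$; rescaling $T_\bullet$ by $\lambda$ rescales each of the $k$ glued pieces, giving $j^{\lambda T_\bullet}[k]=\lambda^k j^{T_\bullet}[k]$, and $T_\bullet\mapsto j^{T_\bullet}[1]$ is linear. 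For the homomorphism property of $j^{T_\bullet}=\bigoplus_k j^{T_\bullet}[k]$: given $x\in\mathring{\mathcal A}^\wedge_k$ and $y\in\mathring{\mathcal A}^\wedge_l$, the product $x\otimes_{\mathcal A} y\in\mathring{\mathcal A}^\wedge_{k+l}$ is represented by placing the diagrams side by side with the $k+l$ blocks being the union of the two families of blocks; applying $j^{T_\bullet}[k+l]$ glues $T_{n_i}$'s to all $k+l$ blocks, which factors as applying $j^{T_\bullet}[k]$ to the first group of blocks and $j^{T_\bullet}[l]$ to the second, then multiplying in $\mathring{\mathcal A}^\wedge_0$. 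This is where one must be careful to match the combinatorics of blockwise gluing with the tensor/composition operations on $\bigoplus_k\mathring{\mathcal A}^\wedge_k$; this is the main technical obstacle, though it is essentially bookkeeping once the identification of Lemma~\ref{r:2022-06-14lemmabijJac} is in hand. Finally, the degree formula \eqref{equdegreeandj}: gluing $T_{n_i}$ to a block with $n_i$ legs, and using that $T_\bullet$ has bidegree $(c,d)$ so that $\mathrm{deg}(T_{n_i})=n_i+d-c$ for $n_i\geq 2$ (as noted right after Lemma~\ref{def:bidegree}), one tallies trivalent vertices and univalent vertices across the gluing: each block contributes a net degree shift of $d-c$, and there are $k$ blocks, giving $\mathrm{deg}(j^{T_\bullet}(x))=\mathrm{deg}(x)+(d-c)k$. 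I would carry out this count on representatives, verifying it is compatible with STU/AS/IHX, and conclude.
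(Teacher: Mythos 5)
Your construction is essentially the paper's own proof: one pairs (via \eqref{thepairinginA}, applied to the tensor product of the $T_{n_i}$ from \eqref{tensor:map:A:PP'gen}) with $T_{n_1}\otimes\cdots\otimes T_{n_k}$ on the decomposition of Lemma~\ref{r:2022-06-14lemmabijJac}, using the cyclic invariance of $T_\bullet$ (through the invariant/coinvariant pairing) and the STU-compatibility to factor through $\mathrm{Im}(\psi^{\mathrm{STU}})$, then uses $\mathfrak{S}_k$-equivariance to pass to $\mathring{\mathcal A}^\wedge_k$, with the polynomiality, homomorphism and degree statements verified by the same direct bookkeeping you sketch. The proposal is correct; the only cosmetic slip is attributing the blockwise gluing to the composition map \eqref{COMP:MAP:SIGMA} rather than to the pairing \eqref{thepairinginA}, which changes nothing in substance.
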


\begin{proof}

Let $k\geq 1$ be an integer.  For  $n_1,\ldots,n_k\geq 0$, the map \eqref{tensor:map:A:PP'gen} gives rise to a linear map
$$\mathring{\mathcal A}^{\wedge}(\vec{\varnothing}, \emptyset,\mathbb{Z}_{n_1}) \otimes\cdots \otimes \mathring{\mathcal A}^{\wedge}(\vec{\varnothing}, \emptyset, \mathbb{Z}_{n_k})\longrightarrow 
\mathring{\mathcal A}(\vec{\varnothing}, \emptyset, \mathbb{Z}_{n_1}\sqcup\cdots \sqcup \mathbb{Z}_{n_k}),$$
which restricts to a linear map 
$$\mathring{\mathcal A}^{\wedge}(\vec{\varnothing}, \emptyset, \mathbb{Z}_{n_1})^{\mathbb{Z}_{n_1}} \otimes\cdots \otimes \mathring{\mathcal A}^{\wedge}(\vec{\varnothing}, \emptyset, \mathbb{Z}_{n_k})^{\mathbb{Z}_{n_k}}\longrightarrow
\mathring{\mathcal A}^{\wedge}(\vec{\varnothing}, \emptyset, \mathbb{Z}_{n_1}\sqcup\cdots \sqcup \mathbb{Z}_{n_k})^{\mathbb{Z}_{n_1}\times\cdots\times\mathbb{Z}_{n_k}}.$$

If $G$ is a group, if $V,W$ are $G$-modules and $Z$ is a vector space, then any $G$-invariant linear map $V \otimes W \to Z$ induces a linear map $V^G \otimes W_G \to Z$, obtained by factorization of its restriction to $V^G \otimes W$. It follows that in the situation of the pairing \eqref{thepairinginA}, if $\Sigma$ is equipped with an action of a group $G$, it induces a pairing 
$$\mathring{\mathcal A}^{\wedge}(P,S,\Sigma)^G \otimes \mathring{\mathcal A}^{\wedge}(P',S',\Sigma)_G\longrightarrow\mathring{\mathcal A}(P \otimes P',S \sqcup S').$$ 
In our case, we obtain the pairing
$$\mathring{\mathcal A}^{\wedge}(\vec{\varnothing},\emptyset, \mathbb{Z}_{n_1}\sqcup\cdots \sqcup \mathbb{Z}_{n_k})^{\mathbb{Z}_{n_1}\times\cdots\times\mathbb{Z}_{n_k}} \otimes \mathring{\mathcal A}^{\wedge}(\vec{\varnothing},\emptyset,\mathbb{Z}_{n_1}\sqcup \cdots \sqcup \mathbb{Z}_{n_k})_{\mathbb{Z}_{n_1}\times\cdots\times \mathbb{Z}_{n_k}}\longrightarrow\mathring{\mathcal A}^{\wedge}(\vec{\varnothing},\emptyset).$$
In particular, the pairing with  $T_{n_1} \otimes\cdots\otimes T_{n_k}\in \mathring{\mathcal A}(\vec{\varnothing},\emptyset, \mathbb{Z}_{n_1}\sqcup\cdots \sqcup \mathbb{Z}_{n_k})^{\mathbb{Z}_{n_1}\times\cdots\times\mathbb{Z}_{n_k}}$, induces a linear map 
\begin{equation}\label{eq:2022-06-06-equ3}
\langle T_{n_1}\otimes \cdots\otimes T_{n_k}, \bullet \rangle : \mathring{\mathcal A}^{\wedge}(\vec{\varnothing},\emptyset,\mathbb{Z}_{n_1}\sqcup \cdots \sqcup \mathbb{Z}_{n_k})_{\mathbb{Z}_{n_1}\times\cdots\times \mathbb{Z}_{n_k}} \longrightarrow\mathring{\mathcal A}^{\wedge}(\vec{\varnothing},\emptyset),
\end{equation}
the direct sum over $n_1,\ldots,n_k\geq 0$ gives rise to the linear map
\begin{equation}\label{eq:2022-06-06-equ3SUM}
\bigoplus_{n_1,\ldots,n_k \geq 0}\langle T_{n_1}\otimes\cdots\otimes T_{n_k},\bullet \rangle\ : \bigoplus_{n_1,\ldots,n_k \geq 0}\mathring{\mathcal A}^{\wedge}(\vec{\varnothing},\emptyset,\mathbb{Z}_{n_1} \sqcup \cdots \sqcup \mathbb{Z}_{n_k})_{\mathbb{Z}_{n_1}\times \cdots \times \mathbb{Z}_{n_k}} \longrightarrow \mathring{\mathcal A}^{\wedge}(\vec{\varnothing},\emptyset).
\end{equation}
By the STU-compatibility of $T_{\bullet}$ (see Definition~\ref{def:diagrtrace}), the map \eqref{eq:2022-06-06-equ3SUM} gives rise, by factorization, to a map 
  \begin{equation}\label{eq:2022-06-06-equ3SUMquotient}
\bigoplus_{n_1,\ldots,n_k \geq 0}\langle T_{n_1}\otimes\cdots\otimes T_{n_k},\bullet \rangle\ : \frac{\bigoplus_{n_1,\ldots,n_k \geq 0}\mathring{\mathcal A}^{\wedge}(\vec{\varnothing},\emptyset,\mathbb{Z}_{n_1} \sqcup \cdots \sqcup \mathbb{Z}_{n_k})_{\mathbb{Z}_{n_1}\times \cdots \times \mathbb{Z}_{n_k}}}{\mathrm{Im}(\psi^{\mathrm{STU}})} \longrightarrow \mathring{\mathcal A}^{\wedge}(\vec{\varnothing},\emptyset),
\end{equation}
where $\psi^{\mathrm{STU}}$ is as in Definition~\ref{def:absSTUcyc}$(d)$. Using Lemma~\ref{r:2022-06-14lemmabijJac}, one then defines a linear map 
\begin{equation}\label{eq:2022-06-06equ2}
j^{T_{\bullet}}[k] : \mathring{\mathcal A}^{\wedge}(\vec{\varnothing},[\![1,k]\!]) \stackrel{\eqref{eq:2023-08-17iso}}{\ \simeq\ } \frac{\bigoplus_{n_1,\ldots,n_k \geq 0}\mathring{\mathcal A}^{\wedge}(\vec{\varnothing},\emptyset,\mathbb{Z}_{n_1} \sqcup \cdots \sqcup \mathbb{Z}_{n_k})_{\mathbb{Z}_{n_1}\times \cdots \times \mathbb{Z}_{n_k}}}{\mathrm{Im}(\psi^{\mathrm{STU}})}\xrightarrow{\ \eqref{eq:2022-06-06-equ3SUMquotient}\ } \mathring{\mathcal A}^{\wedge}(\vec{\varnothing},\emptyset)
\end{equation}

The map~\eqref{eq:2022-06-06equ2} is  described schematically in Figure~\ref{the-map-j-1}.
\begin{figure}[ht]
		\centering
         \includegraphics[scale=1]{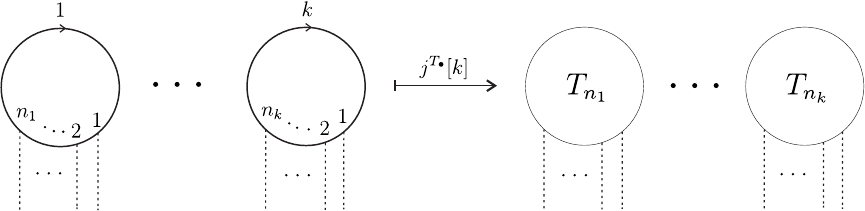}
\caption{Schematic representation of the map \eqref{eq:2022-06-06equ2}.}
\label{the-map-j-1} 								
\end{figure}

This  map is $\mathfrak{S}_k$-invariant,  therefore it induces a  linear map $j^{T_{\bullet}}[k]: \mathring{\mathcal{A}}^{\wedge}_k \to \mathring{\mathcal{A}}^{\wedge}_0$. We then check that the direct sum  $j^{T_{\bullet}}=\bigoplus_{k\geq 0}j^{T_{\bullet}}[k] : \bigoplus_{k\geq 0} \mathring{\mathcal{A}}^\wedge_k\to \mathring{\mathcal{A}}^\wedge_0$   is an algebra homomorphism and that for $x \in \mathring{\mathcal A_k}^{\wedge}$ one has $\mathrm{deg}(j^{T_{\bullet}}(x))=\mathrm{deg}(x)+(d-c)k$.
\end{proof}

By Lemma~\ref{def:productoftraces}, for integer $n\geq 1$ the $n$-th power  $(T_{\bullet}^{\mathrm{LMO}})^{*n}$ of the $\mathrm{LMO}$ diagrammatic trace $T_{\bullet}^{\mathrm{LMO}}$  (see Lemma~\ref{r:elementsTmLMO}) is a diagrammatic trace.

\begin{definition}\label{def:the-map-j-n} Let  $n\geq 1$ be an integer.  Define the algebra homomorphism
\begin{equation}\label{equ:the-map-j-n}
j_n^{\mathrm{LMO}}:\bigoplus_{k\geq 0} \mathring{\mathcal{A}}^\wedge_k\longrightarrow \mathring{\mathcal{A}}^\wedge_0
\end{equation}  \index[notation]{j_n^{\mathrm{LMO}}@$j_n^{\mathrm{LMO}}$}
as the morphism associated by Proposition~\ref{r:diagtracesimpliestrace} to the  $n$-th power  $(T_{\bullet}^{\mathrm{LMO}})^{*n}$ of the $\mathrm{LMO}$ diagrammatic trace $T_{\bullet}^{\mathrm{LMO}}$. 
\end{definition}

Specializing \eqref{equdegreeandj} to the LMO diagrammatic trace,  for $x \in \mathring{\mathcal{A}}^\wedge_k$ one has
\begin{equation}\label{equ:2023-10-10}
\mathrm{deg}(j_n^{\mathrm{LMO}}(x))=\mathrm{deg}(x)-kn,
\end{equation} 
where $\mathrm{deg}$  is as in \eqref{degree-on-APS}.

\begin{definition}\label{def:doublingfree}Let $n \geq 1$ be an integer and $S$ a finite set. Define a map $\mathbb{C}\mathring{\mathrm{Jac}}(\vec{\varnothing},S) \to \mathbb{C}\mathring{\mathrm{Jac}}(\vec{\varnothing},S \times [\![1,n]\!])$ by
\begin{equation}\label{equ:doublingfree}
\underline{D}=\big(D,\varphi : \partial D\to S, \{\mathrm{cyc}_s\}_{s \in S}\big)\longmapsto \underline {D}^{(n)}:=\sum_{\psi : \partial D \to S\times [\![1,n]\!] \ | \  \mathrm{pr}_1 \circ \psi=\varphi}\big(D,\psi,\{\mathrm{cyc}_{(s,i)}^\psi\}_{(s,i) \in S \times [\![1,n]\!]}\big)\end{equation}
where $\mathrm{pr}_1: S\times[\![1,n]\!]\to S$ is the projection on the first component and for any $\psi :\partial D \to S\times [\![1,n]\!]$ such that $\mathrm{pr}_1 \circ \psi=\varphi$ and any 
$(s,i) \in S \times [\![1,n]\!]$, we denote by $\mathrm{cyc}_{(s,i)}^{\psi}$  the cyclic order on $\psi^{-1}(s,i) \subset \varphi^{-1}(s)$ induced by the restriction of the cyclic order $\mathrm{cyc}_s$ on $\varphi^{-1}(s)$.
\end{definition}

\begin{lemma}\label{r:jLMOanddoubling} Let $n, k \geq 1$ be integers. For any diagrammatic  trace $T_{\bullet}$ and $\underline{D} \in \mathring{\mathrm{Jac}}(\vec{\varnothing},[\![1,k]\!])$, one has  $$j^{(T_{\bullet})^{*n}}([\underline{D}])=\frac{1}{(n!)^{k}}j^{T_{\bullet}}\big([\underline{D}^{(n)}]\big).$$
In particular, specializing to the LMO diagrammatic trace, one has
$$j^{\mathrm{LMO}}_n([\underline{D}])=\frac{1}{(n!)^{k}}j^{\mathrm{LMO}}_1\big([\underline{D}^{(n)}]\big).\footnote{In \cite[pag. 281]{Ohts}, there is typo, the factor $1/n!$ should be $1/(n!)^l$.}$$
\end{lemma}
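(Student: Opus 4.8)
\textbf{Proof plan for Lemma~\ref{r:jLMOanddoubling}.}

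The plan is to trace through the construction of $j^{T_{\bullet}}[k]$ given in the proof of Proposition~\ref{r:diagtracesimpliestrace} and simply track what happens when one replaces $T_{\bullet}$ by its $n$-th power $(T_{\bullet})^{*n}$, using the product formula from Lemma~\ref{def:productoftraces}. Recall that $j^{T_{\bullet}}[k]([\underline{D}])$ is computed by: writing $\underline{D} \in \mathring{\mathrm{Jac}}(\vec{\varnothing},[\![1,k]\!])$, with $|\varphi^{-1}(i)|=n_i$, lifting it via the bijection \eqref{eq:2022-06-06bij} to a class in $\mathring{\mathrm{Jac}}(\vec{\varnothing},\emptyset,\mathbb{Z}_{n_1}\sqcup\cdots\sqcup\mathbb{Z}_{n_k})/\mathbb{Z}_{n_1}\times\cdots\times\mathbb{Z}_{n_k}$, and then pairing with $T_{n_1}\otimes\cdots\otimes T_{n_k}$ via \eqref{eq:2022-06-06-equ3}. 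The heart of the argument is that, by definition of $*$, the $p$-th component of $(T_{\bullet})^{*n}$ is a sum over all ways of writing $p=n^{(1)}+\cdots+n^{(n)}$ and over shuffle bijections $\sigma:\mathbb{Z}_{n^{(1)}}\sqcup\cdots\sqcup\mathbb{Z}_{n^{(n)}}\to\mathbb{Z}_p$ of the pushforward $\sigma_*(T_{n^{(1)}}\otimes_{\mathcal A}\cdots\otimes_{\mathcal A}T_{n^{(n)}})$. So pairing $\underline{D}$ against $(T_{n_1})^{*n}\otimes\cdots\otimes(T_{n_k})^{*n}$ amounts to summing, for each leg-block $\varphi^{-1}(i)$ of size $n_i$, over all ordered partitions of that block into $n$ sub-blocks, which is exactly the combinatorial content of the doubling map $\underline{D}\mapsto\underline{D}^{(n)}$ from Definition~\ref{def:doublingfree} applied to $(\vec{\varnothing},[\![1,k]\!])$ with $S=[\![1,k]\!]$ — each map $\psi:\partial D\to[\![1,k]\!]\times[\![1,n]\!]$ refining $\varphi$ corresponds to one choice of distribution of the legs of the $i$-th block among the $n$ copies of $T$.

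The first step I would carry out is to set up the identification precisely: relate the pairing $\langle (T_{n_1})^{*n}\otimes\cdots\otimes(T_{n_k})^{*n},\ \underline{D}\ \rangle$ appearing in $j^{(T_{\bullet})^{*n}}[k]$ with the pairing $\langle T_{m_1}\otimes\cdots\otimes T_{m_{nk}},\ \underline{D}^{(n)}\ \rangle$ appearing in $j^{T_{\bullet}}[nk]$, where the $nk$ indices $m_j$ are the cardinalities of the sub-blocks $\psi^{-1}(s,i)$. Expanding $(T_{n_i})^{*n}$ via Lemma~\ref{def:productoftraces} produces a sum indexed by ordered decompositions and shuffles; I would check that each such term, when paired with the (class of the) lift of $\underline{D}$, equals precisely one term $\langle T_{m_1}\otimes\cdots\otimes T_{m_{nk}},\ (\text{a refinement }\psi)\ \rangle$ of the expansion of $j^{T_{\bullet}}[nk]([\underline{D}^{(n)}])$. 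The shuffles in the definition of $*$ are exactly what is needed to match the orderings, since the target set $\mathbb{Z}_p$ (or rather the cyclic order on the $i$-th block of $\underline{D}$) is recovered from the orders on the sub-blocks via Lemma~\ref{r:2023-01-11-r2}, which is the same recipe used both in $\otimes_{\mathcal A}$ and in the reconstruction of cyclic orders in Definition~\ref{def:doublingfree}.

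The second step is bookkeeping of the multiplicative constant. The doubling map $\underline{D}\mapsto\underline{D}^{(n)}$ sums over \emph{all} maps $\psi:\partial D\to[\![1,k]\!]\times[\![1,n]\!]$ refining $\varphi$, i.e. over all functions assigning to each leg of block $i$ an element of $[\![1,n]\!]$ — there is no ordering data on these $n$ copies and no quotient taken. On the other hand, in $(T_{n_i})^{*n}$ the $n$ copies of $T$ are ordered (it is an $n$-fold product in a commutative algebra), and the coproduct-type sum over shuffles in Lemma~\ref{def:productoftraces} already ranges over \emph{ordered} decompositions. Comparing the two counts, each unordered distribution of the $n_i$ legs of block $i$ among $n$ labelled copies is counted once on the doubling side but, because $(T_{\bullet})^{*n}$ is built from the \emph{commutative} product, one overcounts by exactly the $n!$ permutations of the copies within block $i$ — contributing the factor $1/(n!)^k$ once we also invoke the $\mathfrak{S}_k$-symmetrization/coinvariants on the $\bigoplus_k\mathring{\mathcal A}^\wedge_k$ side and the $\mathbb{Z}_{n_i}$-coinvariants on the lift. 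I would make this precise by noting that both sides, after passing to coinvariants, are independent of the orderings, so the ordered sum defining $(T_{\bullet})^{*n}$ equals $(n!)^k$ times the unordered sum, which is $j^{T_{\bullet}}([\underline{D}^{(n)}])$. Specializing $T_{\bullet}=T^{\mathrm{LMO}}_{\bullet}$ and recalling $j^{\mathrm{LMO}}_n=j^{(T^{\mathrm{LMO}}_{\bullet})^{*n}}$ (Definition~\ref{def:the-map-j-n}) and $j^{\mathrm{LMO}}_1=j^{T^{\mathrm{LMO}}_{\bullet}}$ gives the stated formula.

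The main obstacle I anticipate is not the combinatorics of which terms match — that is essentially forced — but the careful verification that the cyclic-order data is transported correctly on both sides and that the coinvariance (quotient by $\mathbb{Z}_{n_1}\times\cdots\times\mathbb{Z}_{n_k}$ on the lift, and by the $\mathfrak{S}_k$-action and the STU/$\varphi^{\mathrm{cyc}}$ relations) makes the identity well-defined and independent of all the choices made in lifting $\underline{D}$; in particular one must check that the overcounting factor is uniformly $(n!)^k$ and does not interact with diagrams having extra automorphisms, which is where one should appeal to working in $\mathring{\mathcal A}^\wedge$ (where the relevant relations already kill the discrepancies) rather than at the level of $\mathbb{C}\mathring{\mathrm{Jac}}$. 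Once that is pinned down, the proof is a one-line consequence of expanding the definition of $*$ and comparing with Definition~\ref{def:doublingfree}.
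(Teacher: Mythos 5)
Your overall strategy --- expanding $(T_{\bullet})^{*n}$ term by term and matching the result against the sum over refinements $\psi$ that defines $\underline{D}^{(n)}$, with the constant arising from an ordered-versus-unordered count of partitions of each leg block --- is the same as the paper's, which organizes this via the projection $\mathrm{Part}_n(\varphi^{-1}(1))\times\cdots\times\mathrm{Part}_n(\varphi^{-1}(k))\to \mathrm{UPart}_n(\varphi^{-1}(1))\times\cdots\times\mathrm{UPart}_n(\varphi^{-1}(k))$ and the fact that its fibers have cardinality $(n!)^{k}$.

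The gap is in your bookkeeping of that constant, which is the entire content of the lemma, and as written it comes out backwards. A map $\psi:\partial D\to[\![1,k]\!]\times[\![1,n]\!]$ refining $\varphi$ is exactly an \emph{ordered} $n$-partition of each block $\varphi^{-1}(i)$ (the $n$ copies are labelled), so on the doubling side each unordered grouping with nonempty parts occurs $n!$ times per block, not once as you claim: $j^{T_{\bullet}}([\underline{D}^{(n)}])$ is the big, ordered sum. In the paper's proof it is $j^{(T_{\bullet})^{*n}}([\underline{D}])$ that is identified with the sum over \emph{unordered} tuples of partitions, and the factor $(n!)^{k}$ is precisely the fiber cardinality of the ordered-to-unordered projection (empty parts are harmless because $T_0=T_1=0$ kills those terms, and pairwise disjoint nonempty parts have trivial stabilizer, so the fiber count is uniform). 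Your claim that ``each unordered distribution is counted once on the doubling side'' while the $*$-power ``overcounts by $n!$ per block'' would, taken literally, yield $j^{(T_{\bullet})^{*n}}([\underline{D}])=(n!)^{k}\,j^{T_{\bullet}}([\underline{D}^{(n)}])$, the reciprocal of the asserted identity; while your earlier (correct) remark that the shuffle sum in Lemma~\ref{def:productoftraces} ranges over ordered decompositions would, pushed through the pairing, produce no factor at all. So the two halves of your argument contradict each other and the constant is not established. What is needed is to decide unambiguously which side is the ordered sum (it is the $\underline{D}^{(n)}$ side) and to justify, at the level of the pairing of the $*^{n}$-power against the cyclically ordered leg blocks, that this side contributes one term per \emph{unordered} grouping --- this is the delicate identification the paper's proof makes, and it has nothing to do with the $\mathfrak{S}_k$-coinvariance in the circle labels, which you invoke but which plays no role in the factor.
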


\begin{proof}For $E$ a finite set, denote by $\mathrm{Part}_n(E)$ the set of ordered $n$-partitions of $E$, and by $\mathrm{UPart}_n(E):=\mathrm{Part}_n(E)/\mathfrak{S}_n$ the set of its unordered $n$-partitions. Let $\underline{D}=\big(D,\varphi : \partial D\to [\![1,k]\!], \{\mathrm{cyc}_s\}_{s \in [\![1,k]\!]}\big)\in \mathring{\mathrm{Jac}}(\vec{\varnothing},[\![1,k]\!])$. To each $\big((A_1^1,\ldots,A_n^1),\ldots,(A_1^k,\ldots,A_n^k)\big) \in \mathrm{Part}_n(\varphi^{-1}(1))\times \cdots \times \mathrm{Part}_n(\varphi^{-1}(k))$ attach the map $\psi : \partial D \to [\![1,k]\!]\times [\![1,n]\!]$ such that $\psi(A_i^s):=(s,i)$.  Then the assignment $$\big((A_1^1,\ldots,A_n^1),\ldots,(A_1^k,\ldots,A_n^k)) \longmapsto j^{T_{\bullet}}\big(D,\psi,\{\mathrm{cyc}^{\psi}_{(s,i)}\}_{(s,i) \in [\![1,k]\!]\times [\![1,n]\!]}\big),$$ 
where $\mathrm{cyc}^{\psi}_{(s,i)}$ is as in Definition~\ref{def:doublingfree}, is a map 
 $$\mathrm{Part}_n(\varphi^{-1}(1))\times \cdots \times \mathrm{Part}_n(\varphi^{-1}(k)) \longrightarrow \mathbb{C}\mathring{\mathrm{Jac}}(\vec{\varnothing}, \emptyset).$$
It factors through a map 
$$\mathrm{UPart}_n(\varphi^{-1}(1))\times\cdots \times \mathrm{UPart}_n(\varphi^{-1}(k))\longrightarrow \mathbb{C}\mathring{\mathrm{Jac}}(\vec{\varnothing}, \emptyset).$$
The element $j^{T_{\bullet}}([\underline{D}^{(n)}])$  (resp. $j^{(T_{\bullet})^{*n}}([\underline{D}])$) 
is equal to the sum of the classes in $\mathring{\mathcal A}^\wedge(\vec{\varnothing},\emptyset)$ of the images of the former (resp. latter) map. The result then follows from  the fact that the  fibers of the projection  
$\mathrm{Part}_n(\varphi^{-1}(1))\times \cdots \times \mathrm{Part}_n(\varphi^{-1}(k)) \to \mathrm{UPart}_n(\varphi^{-1}(1))\times\cdots\times \mathrm{UPart}_n(\varphi^{-1}(k))$ are all of cardinality $(n!)^k$. 
\end{proof}

\subsubsection{The algebra $\mathfrak{c}(n)$}\label{sec:4-5-3}

\begin{definition}\label{def:theelementXaloop}
Let $X$ be the element in $\mathring{\mathcal{A}}^\wedge_0$ defined as the class of  the connected Jacobi diagram without trivalent vertices (dashed loop).  Notice that it has degree  $0$.  
\end{definition}

\begin{definition}\label{def:ideal-O-n} Define  $O^n$   \index[notation]{O^n@$O^n$} to be the closed ideal of  the  commutative algebra $\mathring{\mathcal{A}}^\wedge_0$ generated by $X+2n$. Since $X+2n$ is homogeneous  of degree $0$, then $O^n$ is a complete graded ideal of $\mathring{\mathcal{A}}^\wedge_0$.
\end{definition}

Recall that $P^{n+1}_0$ is a complete graded ideal of $\mathring{\mathcal{A}}^\wedge_0$  (see Definition~\ref{space-P-n-general}$(c)$). Hence $P^{n+1}_0 + O^n$ is also a complete graded ideal of  $\mathring{\mathcal{A}}^\wedge_0$.

\begin{definition}\label{def:b-n} Define the complete graded commutative algebra $\mathfrak{c}(n)$ as the quotient algebra
\begin{equation}\label{eq:def:b-n}
\mathfrak{c}(n)= \frac{\mathring{\mathcal{A}}^\wedge_0}{P^{n+1}_0 + O^n}.
\end{equation}  \index[notation]{c(n)@$\mathfrak{c}(n)$} 

\end{definition}

\subsubsection{The algebra morphism $\overline{j}_n:\mathfrak{b}(n)\to \mathfrak{c}(n)$}

\begin{lemma}\label{jandco} Let $k \geq 1$ be an integer. Then $j^{\mathrm{LMO}}_1=j^{\mathrm{LMO}}_1\circ \mathrm{co}^{\mathcal{A}}_k$. Here $\mathrm{co}^{\mathcal{A}}_k: \mathring{\mathcal{A}}^\wedge(\vec{\varnothing},[\![1,k]\!])_{\mathfrak{S}_{k-1}}\to \mathring{\mathcal{A}}^\wedge_k$ is the change of orientation map from Lemma~\ref{def:mapCotoXk} specialized to the pre-LMO structure $\mathbf{A}$.
\end{lemma}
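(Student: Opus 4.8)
The statement is that the degree-1 LMO map factors through the change-of-orientation operation on circle components, that is, $j^{\mathrm{LMO}}_1 = j^{\mathrm{LMO}}_1 \circ \mathrm{co}^{\mathcal A}_k$. The overall strategy is to unwind both sides to the level of Jacobi diagrams (before passing to coinvariants and before completion) and to produce a bijection between the summands appearing in the two pairings, compatible with all the orders, so that the two linear maps agree term-by-term. First I would recall, from Proposition~\ref{r:diagtracesimpliestrace} and its proof, that $j^{\mathrm{LMO}}_1[k]$ is obtained from the pairing $\bigoplus_{n_1,\ldots,n_k\geq 0}\langle T^{\mathrm{LMO}}_{n_1}\otimes\cdots\otimes T^{\mathrm{LMO}}_{n_k},\bullet\rangle$ via the isomorphism \eqref{eq:2023-08-17iso} identifying $\mathring{\mathcal A}^{\wedge}(\vec\varnothing,[\![1,k]\!])$ with the quotient of $\bigoplus \mathring{\mathcal A}^{\wedge}(\vec\varnothing,\emptyset,\mathbb{Z}_{n_1}\sqcup\cdots\sqcup\mathbb{Z}_{n_k})$ by $\mathrm{Im}(\psi^{\mathrm{STU}})+\mathrm{Im}(\varphi^{\mathrm{cyc}})$. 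On the other hand $\mathrm{co}^{\mathcal A}_k$ is induced (by the coinvariant space functor applied with the group $\mathfrak{S}_{k-1}$) by the map $\mathrm{co}^{\mathring{\mathrm{Jac}}}(\vec\varnothing,([\![1,k]\!],1))$ of Definition~\ref{def:changeoforientationJac}, which acts on a Jacobi diagram $\underline D=(D,\varphi,\emptyset,\{\mathrm{cyc}_s\}_{s\in[\![1,k]\!]})$ by replacing the cyclic order $\mathrm{cyc}_1$ on $\varphi^{-1}(1)$ by its opposite $\overline{\mathrm{cyc}}_1$ and multiplying by $(-1)^{|\varphi^{-1}(1)|}$.

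The key step is then the following. Fix a Jacobi diagram $\underline D$ representing a class in $\mathring{\mathcal A}^{\wedge}(\vec\varnothing,[\![1,k]\!])$, say with $|\varphi^{-1}(i)|=n_i$; under \eqref{eq:2023-08-17iso} it corresponds to a class of a tuple $(D,\varphi,\emptyset,\{\mathrm{cyc}_s\})$ viewed modulo the $\mathbb{Z}_{n_1}\times\cdots\times\mathbb{Z}_{n_k}$-action and $\mathrm{Im}(\psi^{\mathrm{STU}})$. Applying $\mathrm{co}^{\mathring{\mathrm{Jac}}}$ replaces $\mathrm{cyc}_1$ by $\overline{\mathrm{cyc}}_1$ and introduces the sign $(-1)^{n_1}$. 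I would then show that $\langle T^{\mathrm{LMO}}_{n_1}\otimes\cdots\otimes T^{\mathrm{LMO}}_{n_k},\bullet\rangle$ takes the same value on $(D,\varphi,\{\mathrm{cyc}_1,\mathrm{cyc}_2,\ldots,\mathrm{cyc}_k\})$ and on $(-1)^{n_1}(D,\varphi,\{\overline{\mathrm{cyc}}_1,\mathrm{cyc}_2,\ldots,\mathrm{cyc}_k\})$. This is precisely the statement that the $\mathrm{LMO}$ trace element $T^{\mathrm{LMO}}_{n_1}\in\mathring{\mathcal A}(\vec\varnothing,\emptyset,\mathbb{Z}_{n_1})$ satisfies, after pairing, the invariance $\langle \rho^*T^{\mathrm{LMO}}_{n_1},\bullet\rangle = \langle T^{\mathrm{LMO}}_{n_1},\rho_*\bullet\rangle$ where $\rho$ is the order-reversing involution of $\mathbb{Z}_{n_1}$ (that is, $\rho$ generates the opposite cyclic order), combined with the fact that $T^{\mathrm{LMO}}_{n_1}$ is itself invariant under $\rho$ up to the sign $(-1)^{n_1}$. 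The invariance of $T^{\mathrm{LMO}}_{\bullet}$ under reversal of the cyclic order (with the correct sign $(-1)^{n}$) is a consequence of the description in Lemma~\ref{r:elementsTmLMO}: $T^{\mathrm{LMO}}_n$ is a sum of connected tree diagrams, and reversing the cyclic order on the $n$ legs is implemented by a sequence of AS relations at the trivalent vertices; each AS relation contributes a sign, and a bookkeeping argument (a planar tree with $n$ leaves has $n-1$ internal trivalent vertices, and each leg-transposition toggles orders compatibly) yields exactly the global sign $(-1)^{n}$. Granting this, the summand of $j^{\mathrm{LMO}}_1[k](\mathrm{co}^{\mathcal A}_k[\underline D])$ indexed by a fixed choice of struts/pairing equals the corresponding summand of $j^{\mathrm{LMO}}_1[k]([\underline D])$, so the two maps agree on every Jacobi diagram, hence on $\mathring{\mathcal A}^{\wedge}(\vec\varnothing,[\![1,k]\!])$, hence (passing to $\mathfrak{S}_{k-1}$-coinvariants, using that both $j^{\mathrm{LMO}}_1[k]$ and $\mathrm{co}^{\mathcal A}_k$ are compatible with the group homomorphism $\mathfrak{S}_{k-1}\hookrightarrow\mathfrak{S}_k$) on the coinvariant spaces. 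Taking the direct sum over $k$ gives the claimed equality of algebra morphisms.

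\textbf{Main obstacle.} The routine verifications (compatibility with $\psi^{\mathrm{STU}}$ and $\varphi^{\mathrm{cyc}}$, behaviour of the pairing under order-reversal) are bookkeeping, but the genuinely delicate point is the sign: establishing that reversing the cyclic order on the $n$ univalent vertices of $T^{\mathrm{LMO}}_n$ multiplies it by exactly $(-1)^n$, and that this matches the sign $(-1)^{|\varphi^{-1}(1)|}$ built into $\mathrm{co}^{\mathring{\mathrm{Jac}}}$. I would handle this by induction on $n$ using the STU-compatibility relation of the diagrammatic trace (Definition~\ref{def:diagrtrace}(3)): the relation $T_m-(i\ i{+}1)T_m = T_{m-1}*_i\Yup$ lets one express a transposition of adjacent legs in terms of the lower trace with a $\Yup$ attached, and reversing the full cyclic order is a composite of $n-1$ such adjacent transpositions (a "bubble sort" of the cyclic word); carefully tracking the correction terms $T_{m-1}*_i\Yup$ and the AS signs at the glued $\Yup$-vertices shows that all correction terms cancel or vanish after pairing with the tree-shaped $T^{\mathrm{LMO}}$ and that the net effect is the sign $(-1)^n$. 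An alternative, cleaner route that I would also consider is to observe that $T^{\mathrm{LMO}}_\bullet$ is characterised (up to scalar) as the unique element of $\mathbb{D}\mathrm{iag}\mathbb{T}\mathrm{r}[1,0]$ by Lemma~\ref{r:elementsTmLMO}, that the order-reversal operation (with sign) preserves $\mathbb{D}\mathrm{iag}\mathbb{T}\mathrm{r}[1,0]$, and hence acts on the one-dimensional space $\mathbb{D}\mathrm{iag}\mathbb{T}\mathrm{r}[1,0]$ by a scalar which one computes in low degree (e.g. on $T^{\mathrm{LMO}}_2$) to be compatible with the $(-1)^n$ normalization; this reduces the sign computation to a single explicit diagram.
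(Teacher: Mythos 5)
Your proposal follows the same route as the paper's proof: unwind $\mathring{\mathcal A}^\wedge_k$ via the $\mathbb{Z}_{n_1}\sqcup\cdots\sqcup\mathbb{Z}_{n_k}$-colored description (Lemma~\ref{r:2022-06-14lemmabijJac}), observe that $\mathrm{co}^{\mathcal A}_k$ acts by $(-1)^{n_1}$ times the inversion of the first cyclic factor, transport this through the pairing onto the trace side, and conclude from the fact that $(-1)^{n}$ times the inversion-twisted $T^{\mathrm{LMO}}_{n}$ equals $T^{\mathrm{LMO}}_{n}$. The only genuine difference is how that last fact is handled: the paper simply cites \cite[Lemma~2.5]{LMO98}, whereas you propose to prove it, and your second suggested argument (the signed order-reversal preserves the bidegree and the space of diagrammatic traces, hence acts by a scalar on the one-dimensional $\mathbb{D}\mathrm{iag}\mathbb{T}\mathrm{r}[1,0]$ of Lemma~\ref{r:elementsTmLMO}, computed to be $+1$ on $T^{\mathrm{LMO}}_2$) is a nice self-contained alternative — but note that its real content is the verification that the signed reversal preserves the trace axioms, in particular the STU-compatibility of Definition~\ref{def:diagrtrace}(3), where the reversal conjugates $(i\ i{+}1)$ into another adjacent transposition and flips the vertex orientation of the attached $\Yup$-graph; this is exactly the step you gloss over, and it is where the sign bookkeeping actually lives. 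Two small corrections to your heuristics: a connected trivalent tree with $n$ legs has $n-2$ (not $n-1$) trivalent vertices, so the reflection/AS argument gives the sign $(-1)^{n-2}=(-1)^{n}$ — with your count the same argument would output $(-1)^{n-1}$ and contradict the sign you (correctly) assert; and since $T^{\mathrm{LMO}}_n$ is invariant under the cyclic group, it is immaterial which reflection of $\mathbb{Z}_n$ you take as the order-reversal. With these points tightened, your argument is correct and amounts to the paper's proof with the cited lemma of \cite{LMO98} proved in place.
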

\begin{proof}
The space $\mathring{\mathcal{A}}^\wedge_k$ is topologically spanned by the classes of elements in $\bigsqcup_{n_1,\ldots, n_k\geq 0}\mathring{\mathrm{Jac}}(\vec{\varnothing},\emptyset,\mathbb{Z}_{n_1}\times \cdots\times \mathbb{Z}_{n_k})/\mathbb{Z}_{n_1}\times \cdots\times \mathbb{Z}_{n_k}$, see Lemma~\ref{r:2022-06-14lemmabijJac}. For $\underline{D}\in \mathring{\mathrm{Jac}}(\vec{\varnothing},\emptyset,\mathbb{Z}_{n_1}\times \cdots\times \mathbb{Z}_{n_k})$, the image of the corresponding element in $ \mathring{\mathcal{A}}^\wedge(\vec{\varnothing},[\![1,k]\!])_{\mathfrak{S}_{k-1}}$ under $\mathrm{co}^{\mathcal{A}}_k$ is $(-1)^{n_1}$ times class of the element obtained from $\underline{D}$ by the automorphism of $\mathbb{Z}_{n_1}\times \cdots \times \mathbb{Z}_{n_k}$ given by the action of $-1$ on the first component. The image of the latter element by $j^{\mathrm{LMO}}_1$ is therefore the pairing with $(-1)^{n_1}$ times the effect of the action of same automorphism of $\mathbb{Z}_{n_1}\times \cdots \times \mathbb{Z}_{n_k}$  on the disjoint union of $T_{n_1}^{\mathrm{LMO}}$, $\ldots$, $T_{n_k}^{\mathrm{LMO}}$. By \cite[Lemma~2.5]{LMO98}, this element coincides with the disjoint union of $T_{n_1}^{\mathrm{LMO}}$, $\ldots$, $T_{n_k}^{\mathrm{LMO}}$ therefore the image by $j^{\mathrm{LMO}}_1\circ \mathrm{co}^{\mathcal{A}}_k$ of  the corresponding element associated to $\underline{D}$ coincides with its image under $j^{\mathrm{LMO}}_1$.
\end{proof}

\begin{proposition}\label{r2022-04-08-jn} We have
$$j_n^{\mathrm{LMO}}(L^{<2n}+ P^{n+1} + (\mathrm{CO}\mathring{\mathcal{A}}^\wedge))\subset P^{n+1}_0 \subset P^{n+1}_0 + O^n.$$
Therefore, $j_n^{\mathrm{LMO}}$ induces an algebra homomorphism
\begin{equation}
\overline{j}_n:\mathfrak{b}(n)\longrightarrow \mathfrak{c}(n),
\end{equation}  \index[notation]{j_n@$\overline{j}_n$}
where $\mathfrak{b}(n)$ is as in \eqref{space-a-n} and $\mathfrak{c}(n)$ as in~\eqref{eq:def:b-n}.
\end{proposition}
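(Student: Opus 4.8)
The goal is to show that the algebra morphism $j_n^{\mathrm{LMO}}:\bigoplus_{k\geq 0}\mathring{\mathcal A}^\wedge_k\to \mathring{\mathcal A}^\wedge_0$ sends each of the three ideals $L^{<2n}$, $P^{n+1}$ and $(\mathrm{CO}\mathring{\mathcal A}^\wedge)$ into $P^{n+1}_0$, so that it descends to the stated quotients. Since $L^{<2n}+P^{n+1}+(\mathrm{CO}\mathring{\mathcal A}^\wedge)$ is the sum of the three, it suffices to treat each summand separately. The plan is to handle them in the order: first $(\mathrm{CO}\mathring{\mathcal A}^\wedge)$ (this is where Lemma~\ref{jandco} is used), then $L^{<2n}$ (the degree/arm-counting argument), then $P^{n+1}$ (compatibility of $j_n^{\mathrm{LMO}}$ with the $\varsigma_{n+1}$-pairing). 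Once the inclusion into $P^{n+1}_0$ is established, the morphism $\overline{j}_n$ is obtained by passing to the quotient, using that $j_n^{\mathrm{LMO}}$ is an algebra homomorphism (Proposition~\ref{r:diagtracesimpliestrace}) and that $P^{n+1}_0\subset P^{n+1}_0+O^n$.

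\textbf{The $(\mathrm{CO}\mathring{\mathcal A}^\wedge)$ summand.} Recall $(\mathrm{CO}\mathring{\mathcal A}^\wedge_k)=\mathrm{Im}\big(\mathrm{co}^{\mathcal A}_k-\mathrm{proj}^{\mathcal A}_{1,k-1}\big)$. By Lemma~\ref{r:jLMOanddoubling} one has $j^{\mathrm{LMO}}_n([\underline D])=\frac{1}{(n!)^k}j^{\mathrm{LMO}}_1([\underline D^{(n)}])$, so it is enough to work with $j^{\mathrm{LMO}}_1$; and the doubling operation $\underline D\mapsto \underline D^{(n)}$ commutes with the change-of-orientation map in the first component, so one reduces to showing $j^{\mathrm{LMO}}_1\circ \mathrm{co}^{\mathcal A}_k=j^{\mathrm{LMO}}_1\circ \mathrm{proj}^{\mathcal A}_{1,k-1}$ on $\mathring{\mathcal A}^\wedge(\vec\varnothing,[\![1,k]\!])_{\mathfrak S_{k-1}}$. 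Both sides kill the difference on the nose: $\mathrm{proj}^{\mathcal A}_{1,k-1}$ is the projection induced by $\mathfrak S_{k-1}\hookrightarrow\mathfrak S_k$, and Lemma~\ref{jandco} gives precisely $j^{\mathrm{LMO}}_1=j^{\mathrm{LMO}}_1\circ\mathrm{co}^{\mathcal A}_k$. Hence $j^{\mathrm{LMO}}_n((\mathrm{CO}\mathring{\mathcal A}^\wedge))=0\subset P^{n+1}_0$.

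\textbf{The $L^{<2n}$ and $P^{n+1}$ summands.} For $L^{<2n}$: a generator is the class of a Jacobi diagram $\underline D$ on $(\vec\varnothing,[\![1,k]\!])$ with some circle component $i$ carrying fewer than $2n$ legs. In the definition of $j^{\mathrm{LMO}}_n$, circle $i$ gets paired against the $n$-th power $(T^{\mathrm{LMO}}_\bullet)^{*n}$, whose degree-$(1,0)$ components $T^{\mathrm{LMO}}_m$ vanish for $m\leq 1$ and are sums of trees with $m-1$ trivalent vertices for $m\geq 2$; after doubling, circle $i$ contributes $n$ copies and its $<2n$ legs are distributed among them, forcing at least one copy to receive $\leq 1$ leg, hence the pairing vanishes. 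Concretely this is cleanest via Lemma~\ref{r:jLMOanddoubling}: $\underline D^{(n)}$ is a sum of diagrams where each has circle-$i$-descendants with leg counts summing to $<2n$ over $n$ slots, so one of them has leg count $\leq 1$, and $j^{\mathrm{LMO}}_1$ of such a diagram is $0$ because $T^{\mathrm{LMO}}_0=T^{\mathrm{LMO}}_1=0$. Thus $j^{\mathrm{LMO}}_n(L^{<2n})=0$. For $P^{n+1}$: by Definition~\ref{space-P-n-general}, a generator of $P^{n+1}_k$ is (the image in $\mathring{\mathcal A}^\wedge_k$ of) $\langle x,\varsigma_{n+1}\rangle$ for $x\in\mathring{\mathcal A}^\wedge(\vec\varnothing,[\![1,k]\!],[\![1,2(n+1)]\!])$, i.e.\ a diagram with $2(n+1)$ struts attached pairwise. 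The map $j^{\mathrm{LMO}}_n$ only acts on the circle-labelled legs and commutes with the external-leg pairing $\langle\ ,\varsigma_{n+1}\rangle$ (the pairing with $\varsigma_{n+1}$ is over the free-vertex set $[\![1,2(n+1)]\!]$, disjoint from the circle set on which $j^{\mathrm{LMO}}_n$ operates), so $j^{\mathrm{LMO}}_n(\langle x,\varsigma_{n+1}\rangle)=\langle j^{\mathrm{LMO}}_{n}(x),\varsigma_{n+1}\rangle$ where $j^{\mathrm{LMO}}_n$ is applied in the version with free vertices; this lands in $P^{n+1}_0=P^{n+1}(\vec\varnothing)_0$ by definition.

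\textbf{Main obstacle.} The routine part is the commutation statements ($j^{\mathrm{LMO}}_n$ with doubling, with change of orientation in one component, and with the $\varsigma_{n+1}$-pairing over a disjoint leg set); these are diagrammatic bookkeeping once the definitions of \S\ref{sec:3:4} are unwound. The real content is the $(\mathrm{CO}\mathring{\mathcal A}^\wedge)$ case, which rests entirely on Lemma~\ref{jandco}, itself relying on \cite[Lemma~2.5]{LMO98} (invariance of the disjoint union of the $T^{\mathrm{LMO}}_m$ under reversing a cyclic order together with the sign $(-1)^{m}$); I would expect to spend most of the write-up carefully matching the combinatorial conventions (cyclic orders, the sign $(-1)^{|\varphi^{-1}(1)|}$, the identification $\mathbb Z_m\simeq[\![0,m-1]\!]$) so that the reduction to that cited lemma is clean. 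After the three inclusions, the construction of $\overline j_n$ is formal: it is the unique algebra morphism making the evident square commute, and its grading compatibility follows from \eqref{equ:2023-10-10}.
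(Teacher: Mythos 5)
Your proposal is correct and follows essentially the same route as the paper: $j_n^{\mathrm{LMO}}$ kills $L^{<2n}$ via Lemma~\ref{r:jLMOanddoubling} and $T_0^{\mathrm{LMO}}=T_1^{\mathrm{LMO}}=0$, kills $(\mathrm{CO}\mathring{\mathcal{A}}^\wedge)$ via Lemma~\ref{jandco}, and maps $P^{n+1}$ into $P^{n+1}_0$ by commuting with the $\varsigma_{n+1}$-pairing through the free-vertex version of the map (the paper's $j^{T_\bullet}[k,l]$). The only point to tighten is in the CO step: doubling turns the orientation reversal of the first circle into reversals of all $n$ of its descendants, so Lemma~\ref{jandco} must be applied iteratively to each of them, exactly as in the paper's chain of equalities.
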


\begin{proof}
Let $k \geq 0$ be an integer and $\underline{D}=(D,\varphi,\{\mathrm{cyc}_s\}_{s \in [\![1,k]\!]}) \in \mathring{\mathrm{Jac}}(\vec{\varnothing},[\![1,k]\!])$ and $i \in [\![1,k]\!]$ be such that $|\varphi^{-1}(i)|<2n$, that is $[\underline{D}]\in L^{<2n}(k)$. In what follows, we use  the notation from the proof of Lemma~\ref{r:jLMOanddoubling}. Any $(A^i_1, \ldots, A^i_n) \in \mathrm{Part}_n(\varphi^{-1}(i))$ is such that for some $s \in [\![1,n]\!]$ we have $|A^i_s|<2$. This implies $j_1^{\mathrm{LMO}}([\underline{D}^{(n)}])=0$, which  by 
Lemma 4.53 implies $j_n^{\mathrm{LMO}}([\underline{D}])=0$. Therefore, $j_n^{\mathrm{LMO}}(L^{<2n}_k)=0$, and hence $j_n^{\mathrm{LMO}}(L^{<2n})=0$.

For $\underline{D}=(D,\varphi,\{\mathrm{cyc}_s\}_{s \in [\![1,k]\!]}) \in \mathring{\mathrm{Jac}}(\vec{\varnothing},[\![1,k]\!])$ and $X\subset [\![1,k]\!]$, let 
$$\mathrm{co}_X(\underline{D}):=(-1)^{|\varphi^{-1}(X)|}\big(D,\varphi, \{\mathrm{cyc}_s\}^X_{s \in [\![1,k]\!]}\big),$$  where $\mathrm{cyc}_s^X:=\mathrm{cyc}_s$ if $s \notin X$  and $\mathrm{cyc}_s^X:=\overline{\mathrm{cyc}}_s$ if $s \in X$.  Then for $\underline{D }\in \mathring{\mathrm{Jac}}(\vec{\varnothing},[\![1,k]\!])$  one has
\begin{equation*}
\begin{split}
 j_n^{\mathrm{LMO}}\big([\mathrm{co}_1(\underline D)]\big) & =\frac{1}{(n!)^{k}}j_1^{\mathrm{LMO}}\big(([{\mathrm{co}_1(\underline D)}^{(n)}]\big)
=\frac{1}{(n!)^{k}}j_1^{\mathrm{LMO}}\big([\mathrm{co}_{{1} \times [\![1,k]\!]}(\underline{D}^{(n)})]\big)\\
&=\frac{1}{(n!)^{k}}j_1^{\mathrm{LMO}}\big([\mathrm{co}_{(1,1)} \circ\cdots \circ \mathrm{co}_{(1,k)}(\underline{D}^{(n)})]\big)
=\frac{1}{(n!)^{k}}j_1^{\mathrm{LMO}}\big([\underline{D}^{(n)}]\big)=j_n^{\mathrm{LMO}}\big([\underline{D}]\big)
\end{split}
\end{equation*} 
(using Lemma~\ref{jandco} and Lemma~\ref{r:jLMOanddoubling}), which implies that $j_n^{\mathrm{LMO}}\big((\mathrm{CO}\mathring{\mathcal A}^\wedge)\big)=0$.

For $T_{\bullet}$ a diagrammatic trace and $k,l \geq 1$, one defines similarly to the map $j^{T_{\bullet}}[k]$ from \eqref{equ:mapj1} a map
\begin{equation}\label{equ:mapj1moregeneral}
j^{T_{\bullet}} [k,l]: \mathring{\mathcal{A}}^\wedge(\vec{\varnothing}, [\![1,k]\!], [\![1,l]\!])\longrightarrow \mathring{\mathcal{A}}^\wedge(\vec{\varnothing},\emptyset, [\![1,l]\!])
\end{equation}
such that for any $\sigma \in  \mathring{\mathcal{A}}^\wedge(\vec{\varnothing},\emptyset, [\![1,l]\!])$, one has $j^{T_{\bullet}}[k] \circ \langle \ ,\sigma\rangle =\langle \ ,\sigma\rangle \circ j^{T_{\bullet}}[k,l]$ (equality of maps $\mathring{\mathcal{A}}^\wedge(\vec{\varnothing}, [\![1,k]\!], [\![1,l]\!])\longrightarrow \mathring{\mathcal{A}}^\wedge(\vec{\varnothing},\emptyset, \emptyset)$), where $\langle \ , \ \rangle$ is as in \eqref{thepairinginA}. Specializing to $T_{\bullet}=(T^{\mathrm{LMO}})^{*n}$, $l:=2(n+1)$ and $\sigma:=\varsigma_{n+1}$ (from Definition~\ref{the-element-varsigma-k}), one obtains the second equality in
\begin{equation*}
\begin{split}
j_n^{\mathrm{LMO}}[k]\big(P^{n+1}(\vec{\varnothing},[\![1,k]\!])\big) 
&=j_n^{\mathrm{LMO}}[k]\left(\big\langle \mathring{\mathcal{A}}^\wedge(\vec{\varnothing}, [\![1,k]\!], [\![1,2(n+1)]\!]), \varsigma_{n+1}\big\rangle\right)\\ 
&=\left\langle j^{(T^{\mathrm{LMO}})^{*n}}[k,2n]\big(\mathring{\mathcal{A}}^\wedge(\vec{\varnothing}, [\![1,k]\!], [\![1,2(n+1)]\!])\big), \varsigma_{n+1}\right\rangle \\
& \subset \left\langle \mathring{\mathcal{A}}^\wedge(\vec{\varnothing}, \emptyset, [\![1,2(n+1)]\!]), \varsigma_{n+1}\right\rangle 
\\
& =P^{n+1}(\vec{\varnothing})_0=P^{n+1}_0
\end{split}
\end{equation*}
where the first and last equalities follow from Definition~\ref{space-P-n-general}. Therefore $j_n^{\mathrm{LMO}}[k](P^{n+1}_k) \subset P^{n+1}_0$ and hence $j_n^{\mathrm{LMO}}(P^{n+1}) \subset P^{n+1}_0$ .
 
\end{proof}

Since $(\mathfrak{b}(n), \varphi_n\circ\overline{\mathrm{cs}}^{\nu}\circ \hat{Z})$ is a semi-Kirby structure, then by Lemma~\ref{sec:1.8lemma3bis}, the pair $(\mathfrak{c}(n), \overline{j}_n\circ\varphi_n \circ \overline{\mathrm{cs}}^{\nu}\circ \hat{Z})$ is also a semi-Kirby structure.

\subsubsection{The Kirby structure $(\mathfrak{c}(n), \overline{j}_n \circ\varphi_n\circ\overline{\mathrm{cs}}^{\nu}\circ \hat{Z})$} In this subsection we prove that the semi-Kirby structure $(\mathfrak{c}(n), \overline{j}_n\circ\varphi_n\circ \overline{\mathrm{cs}}^{\nu}\circ \hat{Z})$ is in fact a Kirby structure (Theorem~\ref{mainr-2022-02-14}). In order to do this, we need to prove that the elements $\overline{j}_n\circ \varphi_n \circ \overline{\mathrm{cs}}^\nu\circ \hat{Z}(U^\pm)\in\mathring{\mathcal{A}}^\wedge_0/(P^{n+1}_0+ O^n)$  are invertible.

In view of this we will compute explicitly the degree~$0$ part  $$\big(\mathring{\mathcal{A}}^{\wedge}_0/P^{n+1}_0\big)[0] = \mathring{\mathcal{A}}^{\wedge}_0[0]/(\mathring{\mathcal{A}}^{\wedge}_0[0]\cap P^{n+1}_0)$$
of the quotient algebra $\mathring{\mathcal{A}}^{\wedge}_0/P^{n+1}_0$. This will be done in Proposition~\ref{degree-0-of-Pn} based on a result (Lemma~\ref{equinFPFI}) on fixed-point free involutions.  Recall that  we denote by $\mathrm{FPFI}(S)$ the set of fixed-point free involutions of a set $S$.

\begin{definition}\label{def:pairingparpar} Let $S$ be a set. 

\begin{itemize}

\item [$(a)$] Define the map
\begin{equation}\label{defequ:pairingparpar}
(\!( \ ,\ )\!): \mathrm{FPFI}(S)\times \mathrm{FPFI}(S) \longrightarrow \mathbb{Z}_{>0} 
\end{equation}
by $(\!( \sigma, \tau )\!) =\left|S/\langle\sigma,\tau\rangle\right|$, i.e., the number of orbits of the action of the subgroup $\langle\sigma,\tau\rangle\leq \mathfrak{S}_{S}$ generated by $\sigma,\tau\in\mathrm{FPFI}(S)$ on the set $S$.

\item [$(b)$] Define the bilinear map
\begin{equation}\label{thepairinginFPFI}
\langle\!\langle \  ,\ \rangle\!\rangle: \mathbb{Z}\mathrm{FPFI}(S)\times \mathbb{Z}\mathrm{FPFI}(S) \longrightarrow \mathbb{Z}[X]
\end{equation}
 by $\langle\!\langle \sigma, \tau \rangle\!\rangle=X^{(\!( \sigma, \tau )\!)}$ for any $\sigma,\tau\in\mathrm{FPFI}(S)$. 
\end{itemize}
\end{definition}

\begin{lemma}\label{equinFPFI} Let $S$ be  a set with $|S|$ even and $\sigma_0 \in \mathrm{FPFI}(S)$. Then
\begin{equation}
\sum_{\sigma\in\mathrm{FPFI}(S)}\langle\!\langle \sigma_0 , \sigma\rangle\!\rangle= X(X+2)\cdots (X+2(|S|-1)).
\end{equation}
\end{lemma}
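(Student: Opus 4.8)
The plan is to prove the identity by induction on $|S|$, peeling off one point of $S$ at a time. Write $2k=|S|$; I would carry through the induction the statement that for \emph{every} $\sigma_0\in\mathrm{FPFI}(S)$ one has $\sum_{\sigma\in\mathrm{FPFI}(S)}X^{(\!(\sigma_0,\sigma)\!)}=\prod_{j=0}^{k-1}(X+2j)$, so that no separate symmetry argument in $\sigma_0$ is needed. The base case $k=0$ (i.e. $S=\emptyset$) is immediate: $\mathrm{FPFI}(\emptyset)=\{\mathrm{id}_\emptyset\}$ and $(\!(\mathrm{id}_\emptyset,\mathrm{id}_\emptyset)\!)=0$, so the sum is the empty product $1$. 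Before the induction I would record the combinatorial meaning of $(\!(\sigma_0,\sigma)\!)$: viewing $\sigma_0$ and $\sigma$ as perfect matchings of $S$, their superposition is a disjoint union of even-length cycles whose edges alternate between $\sigma_0$ and $\sigma$ (a common pair being counted as a length-$2$ cycle), and the vertex set of such a cycle is exactly an orbit of $\langle\sigma_0,\sigma\rangle$; hence $(\!(\sigma_0,\sigma)\!)$ is the number of these cycles. (This is the fixed-point-free case of the orbit analysis underlying Proposition~\ref{r:classificationofGsets}: each orbit has no fixed point for either generator, so it is of the "cyclic" type.)

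For the inductive step, fix $a\in S$, put $b:=\sigma_0(a)$, and split the sum over $\mathrm{FPFI}(S)$ according to the value $\sigma(a)$. If $\sigma(a)=b$, then $\{a,b\}$ is an isolated length-$2$ cycle of the superposition, the restrictions of $\sigma_0$ and $\sigma$ to $S':=S\setminus\{a,b\}$ lie in $\mathrm{FPFI}(S')$, and $(\!(\sigma_0,\sigma)\!)=(\!(\sigma_0|_{S'},\sigma|_{S'})\!)+1$; since $\sigma\mapsto\sigma|_{S'}$ is a bijection onto $\mathrm{FPFI}(S')$, the induction hypothesis (applied with base point $\sigma_0|_{S'}$) shows this block contributes $X\cdot\prod_{j=0}^{k-2}(X+2j)$. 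If $\sigma(a)=c$ with $c\neq b$ — there are $2k-2$ such $c$ — let $d:=\sigma_0(c)$; one checks $d\notin\{a,b,c\}$ using that $\sigma_0$ is a fixed-point-free involution. Put $S':=S\setminus\{a,c\}$, $\bar\sigma_0:=(\sigma_0\setminus\{\{a,b\},\{c,d\}\})\cup\{\{b,d\}\}\in\mathrm{FPFI}(S')$ and $\bar\sigma:=\sigma\setminus\{\{a,c\}\}\in\mathrm{FPFI}(S')$. The superposition of $\bar\sigma_0$ and $\bar\sigma$ is obtained from that of $\sigma_0$ and $\sigma$ by contracting the length-$3$ path $b-a-c-d$ (edges $\sigma_0,\sigma,\sigma_0$) to the single $\bar\sigma_0$-edge $\{b,d\}$ inside the same cycle; this deletes the vertices $a,c$ and creates or destroys no cycle, so $(\!(\sigma_0,\sigma)\!)=(\!(\bar\sigma_0,\bar\sigma)\!)$. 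For fixed $c$ the map $\sigma\mapsto\bar\sigma$ is a bijection from $\{\sigma\in\mathrm{FPFI}(S):\sigma(a)=c\}$ onto $\mathrm{FPFI}(S')$, so the induction hypothesis (applied with base point $\bar\sigma_0$) gives contribution $\prod_{j=0}^{k-2}(X+2j)$ for each of the $2k-2$ values of $c$. Adding the two blocks, $\sum_{\sigma}X^{(\!(\sigma_0,\sigma)\!)}=(X+2k-2)\prod_{j=0}^{k-2}(X+2j)=\prod_{j=0}^{k-1}(X+2j)$, which completes the induction and gives the asserted formula (the right-hand side being $X(X+2)\cdots(X+2k-2)$ with $2k=|S|$).

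The points suppressed in this sketch — that $d$ is a fourth distinct element, that $\sigma\mapsto\sigma|_{S'}$ and $\sigma\mapsto\bar\sigma$ are bijections onto $\mathrm{FPFI}(S')$, and the bookkeeping of how the cycles of the superposition change — are all elementary. The one step deserving care is the last: that the path-contraction in the second case genuinely leaves the number of connected components (equivalently, the number of $\langle\sigma_0,\sigma\rangle$-orbits) unchanged, including the degenerate subcase in which $\{b,d\}$ is already a $\bar\sigma$-edge (so that contraction produces a length-$2$ cycle). I would handle this directly via the explicit superposition-graph picture — deleting vertices $a,c$ together with the three incident edges and inserting the edge $\{b,d\}$ — rather than by manipulating the group action. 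Beyond that I do not anticipate any real obstacle.
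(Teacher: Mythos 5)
Your proof is correct, but it takes a genuinely different route from the paper's. You prove the identity by a delete-a-point recursion on the superposition graph of the two matchings: splitting the sum according to $\sigma(a)$, the case $\sigma(a)=\sigma_0(a)$ contributes $X\cdot\prod_{j=0}^{k-2}(X+2j)$, and each of the $2k-2$ other choices of $c=\sigma(a)$ contributes $\prod_{j=0}^{k-2}(X+2j)$ via the contraction $(\sigma_0,\sigma)\mapsto(\bar\sigma_0,\bar\sigma)$, which preserves the number of $\langle\cdot,\cdot\rangle$-orbits; carrying the claim ``for every base point $\sigma_0$'' through the induction is exactly what legitimizes the second case, since $\bar\sigma_0$ depends on $c$. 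The paper argues instead by double counting: it reduces the lemma to the identity $\bigl|\{\sigma\in\mathrm{FPFI}([\![1,2m]\!]) : (\!(\sigma_0,\sigma)\!)=k\}\bigr|=2^{m-k}\,\bigl|\{\tau\in\mathfrak{S}_m : \tau\text{ has }k\text{ cycles}\}\bigr|$ together with the classical generating function $\sum_{k}\bigl|\{\tau\in\mathfrak{S}_m:\tau\text{ has }k\text{ cycles}\}\bigr|\,X^k=X(X+1)\cdots(X+(m-1))$, and proves the former by an explicit bijection between pairs (an FPFI $\sigma$, together with a choice of one $\langle\sigma\sigma_0\rangle$-orbit inside each $\langle\sigma,\sigma_0\rangle$-orbit) and pairs (a permutation of $S/\langle\sigma_0\rangle$, together with a section of $S\to S/\langle\sigma_0\rangle$). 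Your route is more elementary and self-contained (no appeal to Stirling numbers or to the transitivity of the conjugation action used in the surrounding computation), while the paper's bijection explains structurally where the factor $2^{m-k}$, and hence the ``doubled rising factorial,'' comes from. The one step in your sketch that genuinely needs the care you promise is the contraction step, including the degenerate case where the $4$-cycle on $b,a,c,d$ collapses to a doubled edge on $\{b,d\}$; your graph-theoretic description handles both that and the verification that $a,b,c,d$ are pairwise distinct. Finally, note that what you prove — $\prod_{j=0}^{|S|/2-1}(X+2j)$ — is what the paper's own proof establishes and what is used later (e.g.\ in Proposition~\ref{degree-0-of-Pn}); the ``$|S|-1$'' in the displayed statement is a typo for ``$|S|/2-1$,'' so your silent correction is the right reading.
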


\begin{proof}
For a polynomial $Q(X)$ we denote by $c_{X^k}(Q(X))$ the coefficient of $X^k$ in $Q(X)$. Set $m:=|S|/2$, we may assume that $S=[\![1,2m]\!]$ and $\sigma_0=(12)\cdots(2m-1,2m)\in\mathrm{FPFI}([\![1,2m]\!])$. We have
\begin{gather*}
\sum_{\sigma\in\mathrm{FPFI}([\![1,2m]\!])}\langle\!\langle \sigma_0 , \sigma\rangle\!\rangle
  =\sum_{k\geq 0}\Big|\big\{\sigma\in\mathrm{FPFI}([\![1,2m]\!])\ | \ \langle\!\langle\sigma_0, \sigma\rangle\!\rangle = \left|[\![1,2m]\!]/\langle\sigma,\tau\rangle\right|=k\big\}\Big|X^k.\\
\end{gather*}

Besides, it is well know that
\begin{equation}\label{stanley}
\sum_{k\geq 0}\Big|\big\{\sigma\in \mathfrak{S}_m \ | \ \sigma \text{ has } k \text{ cycles}\big\}\Big|X^k = X(X+1)\cdots (X+(m-1)),
\end{equation}
see for instance~\cite[Proposition~1.3.4]{Stanley}.  Hence,
\begin{gather*}
\begin{split}
c_{X^k}\big(X(X+2)\cdots(X+2(m-1))\big)&=2^{m-k}c_{X^k}\big(X(X+1)\cdots (X+(m-1))\big)\\
& =2^{n-k}\Big|\big\{\sigma\in \mathfrak{S}_m \ |\ \sigma \text{ has } k \text{ cycles}\big\}\Big|.
\end{split}
\end{gather*}
Therefore, we need to show the equality
\begin{equation}\label{final-equality}
\Big|\big\{\sigma\in\mathrm{FPFI}([\![1,2m]\!])\ | \ \langle\!\langle\sigma_0, \sigma\rangle\!\rangle = \left|[\![1,2m]\!]/\langle\sigma_0,\sigma\rangle\right|=k\big\}\Big| = 2^{m-k}\Big|\big\{\sigma\in \mathfrak{S}_m \ |\ \sigma \text{ has } k \text{ cycles}\big\}\Big|.
\end{equation}

We set
\begin{equation}
A=\Big\{\big(\sigma,(x_{\omega})_{\omega\in [\![1,2m]\!]/\langle\sigma,\sigma_0\rangle}\big) \ \big| \ \sigma\in\mathrm{FPFI}([\![1,2m]\!]) \text{ and } \forall\omega\in [\![1,2m]\!]/\langle\sigma,\sigma_0\rangle : x_{\omega}\in \omega/\langle\sigma\sigma_0\rangle  \Big\},
\end{equation}
and
\begin{equation}
B=\left\{\begin{array}{l|l}
         &\tau\in\mathfrak{S}_{[\![1,2m]\!]/\langle\sigma_0\rangle} \text{ and } \\
(\tau,\lambda)          &  \lambda: [\![1,2m]\!]/\langle\sigma_0\rangle\to [\![1,2m]\!] \text{ is a section}\\
 & \text{of the projection }\pi:[\![1,2m]\!]\to [\![1,2m]\!]/\langle\sigma_0\rangle
        \end{array}\right\}.
\end{equation}

Let us construct a map $f:A\to B$. Let $\big(\sigma, (x_\omega)_{\omega\in [\![1,2m]\!]/\langle\sigma,\sigma_0\rangle }\big)\in A$. Each $\omega\in [\![1,2m]\!]/\langle\sigma,\sigma_0\rangle$ is an orbit of $\langle\sigma,\sigma_0\rangle$ in $[\![1,2m]\!]$, therefore is a $\langle\sigma,\sigma_0\rangle$-invariant subset of $[\![1,2m]\!]$. For each $\omega \in [\![1,2m]\!]/\langle\sigma,\sigma_0\rangle$, $x_{\omega}$ is an orbit of the action of $\langle \sigma\sigma_0\rangle$ on $\omega$ therefore it is a $\langle \sigma\sigma_0\rangle$-invariant subset of $\omega$. The various $\omega$ in $[\![1,2m]\!]/\langle\sigma,\sigma_0\rangle$ are disjoint subsets of $[\![1,2m]\!]$ therefore the various $x_{\omega}$, $\omega\in [\![1,2m]\!]/\langle\sigma,\sigma_0\rangle$ are disjoint $\langle \sigma\sigma_0\rangle$-invariant subsets of $[\![1,2m]\!]$. Their union $\cup_{\omega \in [\![1,2m]\!]/\langle\sigma,\sigma_0\rangle}x_{\omega}$ is therefore a $\langle \sigma\sigma_0\rangle$-invariant subset of $[\![1,2m]\!]$. Hence, the inclusion map  $$\iota:\bigsqcup_{\omega\in [\![1,2m]\!]/\langle\sigma,\sigma_0\rangle}x_{\omega}\hooklongrightarrow [\![1,2m]\!]$$ is injective and equivariant under the action of $\sigma\sigma_0$. 

Since each $x_{\omega}$ is $\langle \sigma\sigma_0\rangle$-invariant, then the union $\sqcup_{\omega\in[\![1,2m]\!]/\langle\sigma,\sigma_0\rangle}x_{\omega}$ is equipped with a permutation denoted $(\sigma\sigma_0)_{|\sqcup_{\omega}x_{\omega}}$.

Consider the map
\begin{equation}\label{equ:0315themapb}
b:=\pi \circ \iota: \bigsqcup_{\omega\in([\![1,2m]\!]/\langle\sigma,\sigma_0\rangle)}x_{\omega}\hooklongrightarrow [\![1,2m]\!]\longrightarrow [\![1,2m]\!]/\langle\sigma_0\rangle.
\end{equation}
Notice that each $\omega$ is a finite set equipped with a transitive action of the group $\mathbb{Z}\rtimes \mathbb{Z}_2$, where the generator of $\mathbb{Z}$ acts by $\sigma\sigma_0$ and the generator of $\mathbb{Z}_2$ acts by $\sigma_0$. One can show that such $\mathbb{Z}\rtimes \mathbb{Z}_2$-sets correspond bijectively to $\mathbb{Z}_{\geq 0}$, the $\mathbb{Z}\rtimes \mathbb{Z}_2$-set corresponding to $k\in\mathbb{Z}_{\geq 0}$ being $\mathbb{Z}_k\times\{1,-1\}$, the action of $\sigma_0$ being $\mathrm{Id}\times (\epsilon\mapsto -\epsilon)$ and the action of $\sigma\sigma_0$ being given by $(a,\epsilon)\mapsto (a+\epsilon,\epsilon)$. It follows from this description that such a $\mathbb{Z}\rtimes \mathbb{Z}_2$-set $E$ is such that $E/\langle\sigma\sigma_0\rangle$ has cardinality $2$ and that for each $x_E \in E/\langle \sigma\sigma_0\rangle$, the composition $x_E\hookrightarrow E\to E/\langle \sigma_0\rangle$ is bijective. In particular, for each $\omega \in [\![1,2m]\!]/\langle\sigma,\sigma_0\rangle$, the composition $x_\omega\hookrightarrow\omega\to \omega/\langle \sigma_0\rangle$ is bijective.  The disjoint union over $\omega \in  [\![1,2m]\!]/\langle\sigma,\sigma_0\rangle$, of these maps is therefore a bijection, and as it identifies with~\eqref{equ:0315themapb}, the latter map is a bijection.

Let $\tau=b\circ (\sigma\sigma_0)_{|\sqcup_{\omega}x_{\omega}}\circ b^{-1}\in \mathfrak{S}_{[\![1,2m]\!]/\langle \sigma_0\rangle}$ and $\lambda=\iota\circ b^{-1}:[\![1,2m]\!]/\langle \sigma_0\rangle \to [\![1,2m]\!]$. Since $b=\pi \circ \iota$ and $b$ is bijective, one has $\pi \circ (\iota \circ b^{-1})=\pi \circ \lambda=\mathrm{Id}_{[\![1,2m]\!]/\langle \sigma,\sigma_0\rangle}$, therefore $\lambda$ is a section of $\pi:[\![1,2m]\!]\to[\![1,2m]\!]/\langle\sigma_0\rangle$. It follows that $(\tau,\lambda) \in B$. One can then define $f: A\to B$ by 
\begin{equation}
f\big(\sigma, (x_\omega)_{\omega\in([\![1,2m]\!]/\langle\sigma,\sigma_0\rangle)}\big)=(\tau,\lambda)\in B.
\end{equation}

Define a map $g:B\to A$ as follows. Let $(\tau,\lambda)\in B$, then $\lambda:[\![1,2m]\!]/\langle\sigma_0\rangle \to [\![1,2m]\!]$ is an injection. Moreover, it follows from $\langle\sigma_0\rangle\simeq \mathbb{Z}_2$ and from the freeness of the action of $\langle\sigma_0\rangle$ on $[\![1,2m]\!]$ that
$$[\![1,2m]\!]=\lambda\left([\![1,2m]\!]/\langle\sigma_0\rangle\right)\sqcup \sigma_0\big(\lambda\left([\![1,2m]\!]/\langle\sigma_0\rangle\right)\big).$$

Therefore, we have a bijection
\begin{equation}
\tilde{b}:\big([\![1,2m]\!]/\langle\sigma_0\rangle\big)\times \{0,1\}\longrightarrow [\![1,2m]\!]
\end{equation}
given by $\tilde{b}(\underline{a},0)=\lambda(\underline{a})$ and $\tilde{b}(\underline{a},1)=\sigma_0\big(\lambda(\underline{a})\big)$ for any $\underline{a}\in [\![1,2m]\!]/\langle\sigma_0\rangle$. 

Besides, consider the permutation $\tilde{\sigma}$ of $([\![1,2m]\!]/\langle\sigma_0\rangle) \times \{0,1\}$ given by  
\begin{equation}
\tilde{\sigma}:\big([\![1,2m]\!]/\langle\sigma_0\rangle\big)\times \{0,1\}\longrightarrow \big([\![1,2m]\!]/\langle\sigma_0\rangle\big)\times \{0,1\}
\end{equation}
given by $\tilde{\sigma}(\underline{a},0)=(\tau^{-1}(\underline{a}),1)$ and $\tilde{\sigma}(\underline{a},1)=(\tau(\underline{a}),0)$ for any $\underline{a}\in [\![1,2m]\!]/\langle\sigma_0\rangle$. Clearly, $\tilde{\sigma}$ is a fixed-point free involution of $\big([\![1,2m]\!]/\langle\sigma_0\rangle\big)\times \{0,1\}$.

Set $\hat{\sigma}:=\tilde{b}\circ \tilde{\sigma}\circ \tilde{b}^{-1}\in\mathrm{FPFI}([\![1,2m]\!])$. For $\omega\in [\![1,2m]\!]/\langle \hat{\sigma},\sigma_0\rangle$, define $x_{\omega}:= \omega\cap\lambda\big([\![1,2m]\!]/\langle\sigma_0\rangle\big)$, i.e., one of the two orbits of the action of $\langle\hat{\sigma}\sigma_{0}\rangle$ on $\omega$. Thus, $x_{\omega}\in \omega/\langle \hat{\sigma}\sigma_0\rangle$. We set
\begin{equation}
g(\tau,\lambda)= \big(\hat{\sigma}, (x_{\omega})_{\omega\in  [\![1,2m]\!]/\langle \hat{\sigma},\sigma_0\rangle}\big)\in A.
\end{equation}

One checks that $f\circ g=\mathrm{Id}_B$ and $g\circ f=\mathrm{Id}_A$, that is, $f$ is bijective.

Consider the maps
\begin{equation}
p_{\mbox{\tiny $A$}}:A\longrightarrow \mathbb{Z}_{\geq 0} \quad \quad \quad \text{ and } \quad \quad \quad p_{\mbox{\tiny $B$}}:B\longrightarrow \mathbb{Z}_{\geq 0}
\end{equation}
given by
$$p_{\mbox{\tiny $A$}}\big(\sigma, (x_\omega)_{\omega\in[\![1,2m]\!]/\langle\sigma,\sigma_0\rangle}\big)=\big|[\![1,2m]\!]/\langle\sigma,\sigma_0\rangle\big|\quad\quad\quad \text{ and } \quad\quad\quad  p_{\mbox{\tiny $B$}}(\tau,\lambda)=\big|([\![1,2m]\!]/\langle\sigma_0\rangle)/\langle \tau\rangle\big|,$$
for any  $\big(\sigma, (x_\omega)_{\omega\in[\![1,2m]\!]/\langle\sigma,\sigma_0\rangle}\big)\in A$ and any $(\tau,\lambda)\in B$.
One checks that the diagram
$$
\xymatrix{
A\ar^{f}[r]
\ar_{p_{\mbox{\tiny $A$}}}[dr] & B\ar^{p_{\mbox{\tiny $B$}}}[d]\\ 
& \mathbb{Z}_{\geq 0}}
$$
commutes. As a consequence, we have 
\begin{equation}\label{A-k=B-k}
|p_{\mbox{\tiny $A$}}^{-1}(k)| = |p_{\mbox{\tiny $B$}}^{-1}(k)|\end{equation}
for any $k\in\mathbb{Z}_{\geq 0}$. Let us compute these cardinalities.

There is  a map $\pi:A \to \mathrm{FPFI}([\![1,2m]\!])$ given by $\pi\big(\sigma, (x_\omega)_{\omega\in [\![1,2m]\!]/\langle\sigma,\sigma_0\rangle }\big)=\sigma$. It follows from the classification of transitive finite $\mathbb{Z}\rtimes \mathbb{Z}_2$-sets that if $\omega$ is an orbit of $\sigma$ in $[\![1,2m]\!]$, then $|\omega/\langle \sigma\sigma_0\rangle|=2$. For $\sigma \in \mathrm{FPFI}([\![1,2m]\!])$, the fiber over $\sigma$ of $A\to \mathrm{FPFI}([\![1,2m]\!])$ is in bijection with $\prod_{\omega \in [\![1,2m]\!]/\langle \sigma,\sigma_0\rangle} \omega/\langle \sigma\sigma_0\rangle$, therefore 
\begin{equation}\label{equ:thecardinality}
\pi^{-1}(\sigma) = 2^{|[\![1,2m]\!]/\langle \sigma,\sigma_0\rangle|}.
\end{equation} 

The map $\pi: A\to\mathrm{FPFI}([\![1,2m]\!])$ induces a map 
\begin{equation}\label{equ:theinducedmap}
p_{\mbox{\tiny $A$}}^{-1}(k)\longrightarrow  \big\{\sigma\in\mathrm{FPFI}([\![1,2m]\!])\ | \ \langle\!\langle\sigma_0, \sigma\rangle\!\rangle = \left|[\![1,2m]\!]/\langle\sigma_0,\sigma\rangle\right|=k\big\}
\end{equation}
Then by \eqref{equ:thecardinality}, the cardinality of the fiber of the map~\eqref{equ:theinducedmap} is $2^k$. Hence,
\begin{equation}\label{card-of-A-k}
|p_{\mbox{\tiny $A$}}^{-1}(k)| = 2^k\Big|\big\{\sigma\in\mathrm{FPFI}([\![1,2m]\!])\ | \ \langle\!\langle\sigma_0, \sigma\rangle\!\rangle = \left|[\![1,2m]\!]/\langle\sigma_0,\sigma\rangle\right|=k\big\}\Big|.
\end{equation}

Besides,  there are exactly $2^{\big|([\![1,2m]\!]/\langle \sigma_0\rangle)\big|}=2^n$  sections of the projection $\pi:[\![1,2m]\!]\to [\![1,2m]\!]/\langle\sigma_0\rangle$. Since
$$p_B^{-1}(k)=\big\{\tau\in\mathfrak{S}_{[\![1,2m]\!]/\langle\sigma_0\rangle} \ | \ \big|([\![1,2m]\!]/\langle\sigma_0\rangle)/\langle \tau\rangle\big|= k\big\} \times \{\text{sections of } \pi : [\![1,2m]\!]\to [\![1,2m]\!]/\langle \sigma_0\rangle\},$$
one has  
\begin{equation}\label{card-of-B-k}
\begin{split}
|p_{\mbox{\tiny $B$}}^{-1}(k)| &= 2^m\Big|\big\{\tau\in\mathfrak{S}_{[\![1,2m]\!]/\langle\sigma_0\rangle}\ | \ \big|([\![1,2m]\!]/\langle\sigma_0\rangle)/\langle \tau\rangle\big|= k\big\}\Big|\\
 &= 2^m\Big|\big\{\tau\in \mathfrak{S}_m\ | \ \tau \text{ has } k \text{ cycles }\big\}\Big|.
\end{split}
\end{equation}

From \eqref{A-k=B-k}, \eqref{card-of-A-k} and \eqref{card-of-B-k} we deduce \eqref{final-equality}, which completes the proof.
\end{proof}

\begin{proposition}\label{degree-0-of-Pn}
Let $X$ denote  the connected Jacobi diagram on $\vec{\varnothing}$ without trivalent vertices (i.e. a dashed loop). Then

$(a)$ for $n\geq 2$, the ideal $\mathring{\mathcal{A}}^\wedge_0[0]\cap P^n_0$  of $\mathring{\mathcal{A}}^\wedge_0[0]$ is generated by $X(X+2)\cdots (X+2(n-1))$. Hence,  there is an algebra isomorphism
\begin{equation}\label{eq:degree-0-of-Pn}
\left(\frac{\mathring{\mathcal{A}}^\wedge_0}{P^{n}_0}\right)[0]= \frac{\mathring{\mathcal{A}}^\wedge_0[0]}{\mathring{\mathcal{A}}^\wedge_0[0]\cap P^{n}_0} \simeq \frac{\mathbb{C}[X]}{( X(X+2)\cdots (X+2(n-1)))},
\end{equation}

$(b)$ for $n\geq 1$, there are algebra isomorphisms 
\begin{equation}\label{eq:degree-0-of-Pn-On}
\left(\frac{\mathring{\mathcal{A}}^\wedge_0}{P^{n+1}_0 + O^n}\right)[0] \simeq \frac{\mathbb{C}[X]}{(X+2n)} \simeq \mathbb{C},
\end{equation}
where the first isomorphism is induced by \eqref{eq:degree-0-of-Pn} and the second isomorphism is given by evaluation at~$-2n$. Here~$O^n$ is as in Definition~\ref{def:ideal-O-n}.

\end{proposition}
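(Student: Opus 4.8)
\textbf{Proof plan for Proposition~\ref{degree-0-of-Pn}.}

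The plan is to reduce the statement to a purely combinatorial computation about fixed-point free involutions, which is exactly the content of Lemma~\ref{equinFPFI}. First I would make precise the structure of $\mathring{\mathcal{A}}^\wedge_0[0]$: the degree-$0$ part of $\mathring{\mathcal{A}}^\wedge_0=\mathring{\mathcal{A}}^\wedge(\vec{\varnothing},\emptyset)$ consists of linear combinations of classes of Jacobi diagrams on $(\vec{\varnothing},\emptyset)$ with no trivalent vertices and no univalent vertices; by definition of the degree in \eqref{degree-on-APS}, such a diagram is a disjoint union of dashed loops, so $\mathring{\mathcal{A}}^\wedge_0[0]$ is freely spanned by the powers $X^m$ ($m\ge 0$), i.e. $\mathring{\mathcal{A}}^\wedge_0[0]\cong \mathbb{C}[X]$ as an algebra (the product being disjoint union, which coincides with $\otimes_{\mathcal A}$ and with $\circ_{\mathcal A}$ on $\mathring{\mathcal A}^\wedge_0$). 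This identification, together with $\mathring{\mathcal A}^\wedge_0[0]\cap P^n_0 = (\mathring{\mathcal A}^\wedge_0\cap P^n_0)[0]$ (the grading being respected by the ideal $P^n_0$ since it is a complete graded ideal, Definition~\ref{space-P-n-general}), reduces part $(a)$ to identifying the ideal of $\mathbb{C}[X]$ spanned by the degree-$0$ components of elements $\langle x,\varsigma_n\rangle$ with $x\in\mathring{\mathcal A}^\wedge(\vec{\varnothing},\emptyset,[\![1,2n]\!])$.

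Next I would analyze this degree-$0$ part explicitly. An element $x$ of $\mathring{\mathcal A}^\wedge(\vec{\varnothing},\emptyset,[\![1,2n]\!])$ contributing a degree-$0$ term to $\langle x,\varsigma_n\rangle$ must, after pairing off its legs labelled by $[\![1,2n]\!]$ against the struts of $\varsigma_n=\sum_{\tau\in\mathrm{FPFI}([\![1,2n]\!])}[\mathring{\mathrm{Jac}}(\tau)]$, produce a disjoint union of dashed loops; hence $x$ itself must (in its relevant part) be a disjoint union of struts, i.e. $x$ ranges over $\sum_{\sigma\in\mathrm{FPFI}([\![1,2n]\!])}c_\sigma[\mathring{\mathrm{Jac}}(\sigma)]$. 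Gluing $\mathring{\mathrm{Jac}}(\sigma)$ to $\mathring{\mathrm{Jac}}(\tau)$ along $[\![1,2n]\!]$ produces a disjoint union of $|[\![1,2n]\!]/\langle\sigma,\tau\rangle|=(\!(\sigma,\tau)\!)$ dashed loops, so in the notation of Definition~\ref{def:pairingparpar} one has $\langle [\mathring{\mathrm{Jac}}(\sigma)],\varsigma_n\rangle[0] = \sum_{\tau}X^{(\!(\sigma,\tau)\!)} = \langle\!\langle \sigma,\sum_\tau\tau\rangle\!\rangle$. By Lemma~\ref{equinFPFI}, for each fixed $\sigma$ this equals $X(X+2)\cdots(X+2(2n-1))$, wait---one must be careful with the cardinality: $|[\![1,2n]\!]|=2n$, so Lemma~\ref{equinFPFI} (applied with $|S|=2n$) gives $\sum_{\tau\in\mathrm{FPFI}([\![1,2n]\!])}\langle\!\langle\sigma,\tau\rangle\!\rangle = X(X+2)\cdots(X+2(2n-1))$. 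However the ideal $P^n_0$ involves $\varsigma_n$ with $2n$ legs, so the polynomial appearing is $X(X+2)\cdots(X+2(2n-1))$; to get the claimed $X(X+2)\cdots(X+2(n-1))$ I would instead use that $P^n(P,S)$ in Definition~\ref{space-P-n-general} pairs against $\varsigma_{n}$ which has $2n$ legs, hence the relevant involutions live on a set of size $2n$ and Lemma~\ref{equinFPFI} with $|S|=2n$ gives $\prod_{j=0}^{2n-1}(X+2j)$. So in fact the generator of the degree-$0$ part of $P^{n}_0$ is $\prod_{j=0}^{2n-1}(X+2j)$; but the statement claims $\prod_{j=0}^{n-1}(X+2j)$. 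I would resolve this discrepancy by re-reading the indexing convention: $P^{n+1}$ uses $\varsigma_{n+1}$ with $2(n+1)$ legs, and the relevant combinatorial count over $\mathrm{FPFI}$ of a $2(n+1)$-element set, once restricted to struts, decomposes according to the cycle structure, and the key point is that $\langle x,\varsigma_{n+1}\rangle[0]$ for a single strut $x$ spans the principal ideal generated by $\prod_{j=0}^{n}(X+2j)$. This will require carefully tracking how many of the $2(n+1)$ legs of $x$ are ``free'' versus paired among themselves, and I expect the cleanest route is: a degree-$0$ contribution forces $x$ to be a union of $n+1$ struts, and then Lemma~\ref{equinFPFI} with $|S|=2(n+1)$ yields exactly $X(X+2)\cdots(X+2(2n+1))$, whereas the claim is $X(X+2)\cdots(X+2n)$. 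The main obstacle is precisely pinning down this index bookkeeping — I will need to recheck whether Definition~\ref{space-P-n-general} pairs against all $2(n+1)$ legs or whether some of the legs of $x$ land on the loop being counted, since that determines the degree of the resulting polynomial; I believe the correct statement matches the source \cite[Lemma~10.4]{Ohts} and the polynomial is $\prod_{j=0}^{n}(X+2j)$ as written, with Lemma~\ref{equinFPFI} applied to a $2(n+1)$-element set but with one loop ``absorbed'' into the pairing, reducing the count by one factor; I would verify this by a small explicit check at $n=0$ and $n=1$.

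For part $(b)$: given the isomorphism $\big(\mathring{\mathcal{A}}^\wedge_0/P^{n+1}_0\big)[0]\cong \mathbb{C}[X]/(X(X+2)\cdots(X+2n))$ from $(a)$ (with the index convention fixed), I would observe that $O^n$ is by Definition~\ref{def:ideal-O-n} the closed ideal generated by the homogeneous degree-$0$ element $X+2n$, so its degree-$0$ part is the principal ideal $(X+2n)\subset\mathbb{C}[X]$. Therefore $\big(\mathring{\mathcal{A}}^\wedge_0/(P^{n+1}_0+O^n)\big)[0]\cong \mathbb{C}[X]/(X(X+2)\cdots(X+2n),\,X+2n)$. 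Since $X+2n$ is one of the linear factors of $X(X+2)\cdots(X+2n)$ (namely the one with $j=n$), the quotient is simply $\mathbb{C}[X]/(X+2n)\cong\mathbb{C}$, the last isomorphism being evaluation at $X=-2n$; this is a one-line commutative algebra fact once the identification of $O^n[0]$ is in place. The only subtlety here is to confirm that $(P^{n+1}_0+O^n)[0]=(P^{n+1}_0)[0]+(O^n)[0]$, which holds because both are complete graded ideals and the grading is by the completion variable $\mathrm{deg}$, so taking degree-$0$ parts commutes with finite sums of graded ideals. Overall the plan is: (i) identify $\mathring{\mathcal A}^\wedge_0[0]$ with $\mathbb{C}[X]$; (ii) describe the degree-$0$ part of $P^{n}_0$ via struts and Lemma~\ref{equinFPFI}, being scrupulous about the index; (iii) deduce $(a)$; (iv) intersect with $O^n[0]=(X+2n)$ to get $(b)$.
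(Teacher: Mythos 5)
Your overall strategy is the same as the paper's: identify $\mathring{\mathcal{A}}^\wedge_0[0]$ with $\mathbb{C}[X]$, observe that the degree-zero part of $P^n_0$ is spanned by pairings $\langle Q(X)[\mathring{\mathrm{Jac}}(\sigma)],\varsigma_n\rangle$ with $\sigma\in\mathrm{FPFI}([\![1,2n]\!])$, reduce to the combinatorial sum $\sum_{\tau\in\mathrm{FPFI}([\![1,2n]\!])}X^{(\!(\sigma,\tau)\!)}$, and then handle $(b)$ by adding the ideal $(X+2n)=O^n[0]$ and using that $X+2n$ divides $X(X+2)\cdots(X+2n)$. Part $(b)$ of your argument is complete and matches the paper.

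The genuine gap is in part $(a)$, at exactly the quantitative step the whole proposition rests on: you never determine the polynomial $\sum_{\tau}X^{(\!(\sigma,\tau)\!)}$, and your proposal ends with an unresolved ``discrepancy'' between an $n$-factor and a $2n$-factor answer, settled only by ``I believe the correct statement matches the source'' plus a vague mechanism (``one loop absorbed into the pairing, reducing the count by one factor'') that does not correspond to anything in the actual gluing. The correct value is the $n$-factor product $X(X+2)\cdots(X+2(n-1))$, i.e.\ Lemma~\ref{equinFPFI} yields a monic polynomial of degree $|S|/2$, not $|S|$; the displayed formula in that lemma's statement contains a misprint ($|S|-1$ should be $|S|/2-1$), as its own proof shows (it works throughout with $m=|S|/2$ and the polynomial $X(X+2)\cdots(X+2(m-1))$), and as its use in the proposition requires. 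You had two easy ways to resolve this and used neither: (i) each $\langle\sigma,\tau\rangle$-orbit in $[\![1,2n]\!]$ is a union of $\sigma$-orbits and hence has at least two elements, so $(\!(\sigma,\tau)\!)\le n$ with equality exactly when $\tau=\sigma$, forcing the sum to be monic of degree $n$ and immediately ruling out the $2n$-factor reading; (ii) the check at $|S|=2$ you deferred gives the sum $X$, not $X(X+2)$. One further point you should make explicit rather than apply the lemma ``for each fixed $\sigma$'' without comment: the reduction from $\langle\!\langle\sigma,\tilde\varsigma_n\rangle\!\rangle$ to a $\sigma$-independent quantity uses the $\mathrm{Ad}$-invariance of $\langle\!\langle\ ,\ \rangle\!\rangle$ together with the fact that $\tilde\varsigma_n$ is (up to the factor $2^nn!$) the sum over the conjugation orbit of a fixed $\sigma_0$, which is how the paper justifies $\langle\!\langle\sigma,\tilde\varsigma_n\rangle\!\rangle=\sum_{\tau}\langle\!\langle\sigma_0,\tau\rangle\!\rangle$; this is harmless here since the lemma holds for arbitrary base point, but as written your step is unjustified. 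With the degree count pinned down and this symmetry argument inserted, your plan coincides with the paper's proof.
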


\begin{proof}
$(a)$ Recall that 
$$P^n_0 =\mathrm{Im}\big(\mathring{\mathcal{A}}^{\wedge}(\vec{\varnothing}, \emptyset, [\![1,2n]\!])\xrightarrow{\ \langle\ ,\varsigma_n \rangle\ }\mathring{\mathcal{A}}^{\wedge}_0\big),$$
where $\langle\ ,\ \rangle$ is the graded pairing~ \eqref{thepairinginA} and $\varsigma_n\in\mathring{\mathcal{A}}^\wedge(\vec{\varnothing},\emptyset, [\![1,2n]\!])$ is as in Definition~\ref{the-element-varsigma-k}. Since $\varsigma_n$ has degree~$0$, then the map $\langle\ ,\varsigma_n\rangle$ is of degree~$0$. Hence,
\begin{equation}\label{2022-04-07-equ1}
P^n_0\cap\mathring{\mathcal{A}}^{\wedge}_0[0] =\mathrm{Im}\big(\mathring{\mathcal{A}}^{\wedge}(\vec{\varnothing},\emptyset,[\![1,2n]\!])[0]\xrightarrow{\ \langle\ ,\varsigma_n \rangle\ }\mathring{\mathcal{A}}^{\wedge}_0[0]\big)= \langle \mathring{\mathcal{A}}^\wedge(\vec{\varnothing}, \emptyset, [\![1,2n]\!])[0], \varsigma_n\rangle.
\end{equation}

Besides,
\begin{equation}\label{2022-04-07-equ2}
\mathring{\mathcal{A}}^{\wedge}(\vec{\varnothing},\emptyset,[\![1,2n]\!])[0]=\bigoplus_{\sigma\in\mathrm{FPFI}([\![1,2n]\!])}\mathbb{C}[X][\mathring{\mathrm{Jac}}(\sigma)],
\end{equation}
where $\mathring{\mathrm{Jac}}$ is as in \eqref{equ:JacandFPFI} and  $[\ ]:\mathring{\mathrm{Jac}}(\vec{\varnothing},\emptyset,[\![1,2n]\!])\to \mathring{\mathcal A}^\wedge(\vec{\varnothing}, \emptyset,[\![1,2n]\!])$ denotes the projection. We continue to denote  by $\mathring{\mathrm{Jac}}$ the linear extension of the  map  from \eqref{equ:JacandFPFI}. Notice that $\mathring{\mathrm{Jac}}(\varsigma_n)=\tilde{\varsigma_n}$, where
\begin{equation*}\label{the-element-tildevarsigma}
\tilde{\varsigma}_n:=\sum_{\sigma\in\mathrm{FPFI}([\![1,2n]\!])}\sigma \in \mathbb{Z}\mathrm{FPFI}([\![1,2n]\!]).
\end{equation*}
Thus, for any $Q(X)\in\mathbb{C}[X]=\mathring{\mathcal{A}}^\wedge_0[0]$ and $\sigma\in\mathrm{FPFI}([\![1,2n]\!])$, we have
\begin{equation}\label{2022-04-07-equ3}
\langle Q(X)\mathring{\mathrm{Jac}}(\sigma),\varsigma_n\rangle = Q(X)\langle \mathring{\mathrm{Jac}}(\sigma),\mathring{\mathrm{Jac}}(\tilde{\varsigma}_n)\rangle =  Q(X)\langle\!\langle \sigma,\tilde{\varsigma}_n\rangle\!\rangle, 
\end{equation}
where $\langle\!\langle \  , \ \rangle\!\rangle$ is the pairing~\eqref{thepairinginFPFI} and  $\langle \ , \  \rangle$ is the pairing~\eqref{thepairinginA}.

For $g\in \mathfrak{S}_{2n}$ define the map $\mathrm{Ad}_g: \mathrm{FPFI}([\![1,2n]\!]) \to \mathrm{FPFI}([\![1,2n]\!])$ by $\mathrm{Ad}_g(\sigma):=g\sigma g^{-1}$ for any $\sigma\in \mathrm{FPFI}([\![1,2n]\!])$. Then, if $\sigma,\tau\in\mathrm{FPFI}([\![1,2n]\!])$ and $g\in \mathfrak{S}_{2n}$, we have $\langle\!\langle \mathrm{Ad}_g(\sigma),\mathrm{Ad}_g(\tau) \rangle\!\rangle= \langle\!\langle  \sigma,\tau \rangle\!\rangle$.

Fix $\sigma_0\in\mathrm{FPFI}([\![1,2n]\!])$, then
\begin{equation}\label{equ:tildevarsigma202203}
\tilde{\varsigma}_n =\sum_{\sigma\in\mathrm{FPFI}([\![1,2n]\!])}\sigma = \frac{1}{2^n n!}\sum_{g\in \mathfrak{S}_{2n}}\mathrm{Ad}_g(\sigma_0). 
\end{equation}
The second equality in \eqref{equ:tildevarsigma202203} follows from the fact that the conjugation action of $\mathfrak{S}_{2n}$ on $\mathrm{FPFI}([\![1,2n]\!])$ is transitive, the orbit of each point being isomorphic to the semidirect product  $\mathfrak{S}_n \ltimes \mathbb{Z}_2^n$.

If $\sigma\in \mathrm{FPFI}([\![1,2n]\!])$, then there exists $h\in \mathfrak{S}_{2n}$ such that $\sigma=\mathrm{Ad}_h(\sigma_0)$. Therefore,
\begin{equation}\label{2022-04-07-equ4}
\begin{split}
\langle\!\langle \sigma,\tilde{\varsigma}_n\rangle\!\rangle = \left\langle\!\left\langle \mathrm{Ad}_h(\sigma_0),\frac{1}{2^n n!}\sum_{g\in \mathfrak{S}_{2n}}\mathrm{Ad}_g(\sigma_0)\right\rangle\!\right\rangle = \left\langle\!\left\langle \sigma_0,\frac{1}{2^n n!}\sum_{g\in \mathfrak{S}_{2n}}\mathrm{Ad}_{h^{-1}g}\sigma_0\right\rangle\!\right\rangle \\
=\sum_{\sigma\in\mathrm{FPFI}([\![1,2n]\!])}\langle\!\langle \sigma_0,\sigma \rangle\!\rangle =X(X+2)\cdots (X+2(n-1)),\\
\end{split}
\end{equation}
where the last equality  follows from Lemma~\ref{equinFPFI}. Combining \eqref{2022-04-07-equ1}, \eqref{2022-04-07-equ3} and \eqref{2022-04-07-equ4}, one obtains 
$$P^n_0\cap\mathring{\mathcal{A}}^\wedge_0[0]=\langle \mathring{\mathcal{A}}^\wedge(\vec{\varnothing},\emptyset,[\![1,2n]\!])[0], \varsigma_n\rangle=(X(X+2)\cdots(X+2(n-1))).$$

$(b)$ Since $O^n$ is generated by $X+2n$ (see Definition~\ref{def:ideal-O-n}), the isomorphism \eqref{eq:degree-0-of-Pn} induces an isomorphism of $\left(\mathring{\mathcal A}^\wedge_0/(P^{n+1}_0 + O^n)\right)[0]$  with the quotient of $\mathbb{C}[X]$ by the sum of its ideals $(X+2n)$ and $(X(X+2)\cdots(X+2n))$, which is equal to the ideal $(X+2n)$.

\end{proof}

\begin{lemma}\label{lemma:deg-of-j-n} For any integer $r\geq 1$,  the algebra homomorphism $j^{\mathrm{LMO}}_r:\bigoplus_{k\geq 0} \mathring{\mathcal{A}}^\wedge_k\to \mathring{\mathcal{A}}^\wedge_0$ is homogeneous for the degrees $\mathrm{deg}-r\mathrm{deg}_{\mathrm{cir}}$ for the source space and $\mathrm{deg}$ for the target space.
\end{lemma}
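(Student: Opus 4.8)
�The plan is to reduce the statement to the degree count already recorded in equation~\eqref{equ:2023-10-10}. Recall that $j^{\mathrm{LMO}}_r$ is the algebra morphism $j^{(T^{\mathrm{LMO}}_{\bullet})^{*r}}$ attached by Proposition~\ref{r:diagtracesimpliestrace} to the $r$-th power of the LMO diagrammatic trace. By Lemma~\ref{r:elementsTmLMO}, $T^{\mathrm{LMO}}_{\bullet}$ has bidegree $(1,0)$ in the sense of Lemma~\ref{def:bidegree}; hence, by the compatibility of the product on $\mathbb{D}\mathrm{iag}\mathbb{T}\mathrm{r}$ with the bigrading, its $r$-th power $(T^{\mathrm{LMO}}_{\bullet})^{*r}$ has bidegree $(r,0)$, i.e. $c=r$ and $d=0$. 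First I would simply invoke the degree formula~\eqref{equdegreeandj} of Proposition~\ref{r:diagtracesimpliestrace}, which for a homogeneous $x\in\mathring{\mathcal A}^\wedge_k$ gives $\mathrm{deg}(j^{T_{\bullet}}(x))=\mathrm{deg}(x)+(d-c)k$, and specialize to $(c,d)=(r,0)$, obtaining $\mathrm{deg}(j^{\mathrm{LMO}}_r(x))=\mathrm{deg}(x)-rk$; this is exactly \eqref{equ:2023-10-10}, stated there only for $r=n$ but valid verbatim for every $r\geq 1$.

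Next I would translate this into the language of the two gradings $\mathrm{deg}-r\,\mathrm{deg}_{\mathrm{cir}}$ on the source and $\mathrm{deg}$ on the target. On $\bigoplus_{k\geq 0}\mathring{\mathcal A}^\wedge_k$, an element of $\mathring{\mathcal A}^\wedge_k$ that is homogeneous of $\mathrm{deg}$-degree $m$ has $\mathrm{deg}_{\mathrm{cir}}$-degree $k$, hence $(\mathrm{deg}-r\,\mathrm{deg}_{\mathrm{cir}})$-degree $m-rk$; the formula $\mathrm{deg}(j^{\mathrm{LMO}}_r(x))=m-rk$ shows that the image lies in the $\mathrm{deg}$-degree $m-rk$ part of $\mathring{\mathcal A}^\wedge_0$. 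Thus $j^{\mathrm{LMO}}_r$ carries each homogeneous component for $\mathrm{deg}-r\,\mathrm{deg}_{\mathrm{cir}}$ into the homogeneous component of the same degree for $\mathrm{deg}$, which is precisely the asserted homogeneity. For completeness I would also note that $j^{\mathrm{LMO}}_r$ is already known to be an algebra homomorphism by Proposition~\ref{r:diagtracesimpliestrace}, so the statement is about the grading only.

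There is essentially no obstacle here: the entire content is bookkeeping with the bidegree of diagrammatic traces and the degree formula \eqref{equdegreeandj}, both of which are established earlier in the excerpt. The only mild point to check carefully is that the product on $\mathbb{D}\mathrm{iag}\mathbb{T}\mathrm{r}$ is additive for the bigrading, so that $(T^{\mathrm{LMO}}_{\bullet})^{*r}$ genuinely has bidegree $(r,0)$ rather than $(r\cdot 1, r\cdot 0)=(r,0)$ with some correction; this is immediate from the definition of the product in Lemma~\ref{def:productoftraces}, since $\otimes_{\mathcal A}$ and the symmetrizations $\sigma_*$ all preserve $(|\pi_0|,\mathrm{deg}_{\mathrm{cyc}})$, as already used in the proof of the bigraded-algebra statement. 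So the proof is a two-line deduction once these references are assembled.
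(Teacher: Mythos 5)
Your proposal is correct and follows essentially the same route as the paper, whose proof simply cites the degree formula \eqref{equ:2023-10-10}, itself the specialization of \eqref{equdegreeandj}. You add the worthwhile bookkeeping that $(T^{\mathrm{LMO}}_{\bullet})^{*r}$ has bidegree $(r,0)$ because the product on $\mathbb{D}\mathrm{iag}\mathbb{T}\mathrm{r}$ is bigraded, which makes the specialization of \eqref{equdegreeandj} precise, and the translation into homogeneity for $\mathrm{deg}-r\,\mathrm{deg}_{\mathrm{cir}}$ is exactly as intended.
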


\begin{proof}
The results follows from \eqref{equ:2023-10-10}.
\end{proof}

\begin{lemma}\label{reduction-calcul-j-n-cs-Z-U} For any integer $r\geq 1$ let $\theta_r\in\mathring{\mathcal{A}}^\wedge_1 =\mathring{\mathcal{A}}^\wedge(\vec{\varnothing},\{1\})$ denote the class of the Jacobi diagram  consisting of $r$ ``parallel'' chords, see Figure~\ref{figure-theta-n} for some examples. 
\begin{figure}[ht]
		\centering
         \includegraphics[scale=1]{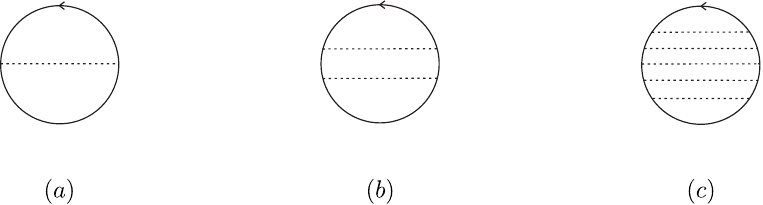}
\caption{The elements $\theta_1, \theta_2, \theta_5\in \mathring{\mathcal{A}}^\wedge_1$.}
\label{figure-theta-n} 								
\end{figure}

Then 
\begin{equation}
j^{\mathrm{LMO}}_r(\mathrm{cs}^{\nu}\circ {Z}_{\mathcal A}(U^{\pm}))[0] =\frac{1}{r!}\left(\pm\frac{1}{2}\right)^r j^{\mathrm{LMO}}_r(\theta_r)\in \mathring{\mathcal{A}}^\wedge_0[0],
\end{equation}
where ${Z}_{\mathcal A}$ is the Kontsevich integral (see Theorem~\ref{thmkontsevichintegral}).
\end{lemma}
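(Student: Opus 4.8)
The plan is to compute the degree-zero part of $j^{\mathrm{LMO}}_r(\mathrm{cs}^{\nu}\circ Z_{\mathcal A}(U^{\pm}))$ by first reducing the input to a manageable diagram and then invoking the degree-counting properties of $j^{\mathrm{LMO}}_r$. First I would recall that $U^\pm$ is the $\pm1$-framed unknot, and that $Z_{\mathcal A}(U^\pm)\in\mathring{\mathcal A}^\wedge(\downarrow,\emptyset)\cong\mathring{\mathcal A}^\wedge(\vec{\varnothing},\{1\})$. The key structural input is Lemma~\ref{lemma:deg-of-j-n}: $j^{\mathrm{LMO}}_r$ is homogeneous for $\mathrm{deg}-r\,\mathrm{deg}_{\mathrm{cir}}$ on the source and $\mathrm{deg}$ on the target. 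Since $Z_{\mathcal A}(U^\pm)$ lives in $\mathring{\mathcal A}^\wedge_1$ (one circle, $\mathrm{deg}_{\mathrm{cir}}=1$), an element of internal degree $d$ in it is sent by $j^{\mathrm{LMO}}_r$ to something of degree $d-r$; so to land in degree $0$ of the target we only need the degree-$r$ part of $Z_{\mathcal A}(U^\pm)$ (equivalently of $\mathrm{cs}^\nu\circ Z_{\mathcal A}(U^\pm)$, since $\mathrm{cs}^\nu$ is a degree-homogeneous automorphism). This is the crucial simplification: only finitely much of the Kontsevich integral of the unknot matters.

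Next I would identify the degree-$r$ part of $Z_{\mathcal A}(U^\pm)$ modulo the image of $j^{\mathrm{LMO}}_r$ on higher-$\mathrm{deg}_{\mathrm{cir}}$ pieces. The point is that $\mathrm{cs}^\nu$ inserts $\nu$ or $z(\nu)$ into circle components, but $\nu$ has no strut (degree-$0$) part beyond the empty diagram and its wheels start in positive degree; more importantly, $Z_{\mathcal A}(U^\pm)$ differs from the ``twist'' factor $\exp(\pm\tfrac12\,\text{chord})$ by terms of higher complexity, and when one closes up and applies $j^{\mathrm{LMO}}_r$, only the chord-diagram contributions survive in degree $0$ of the target. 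Concretely, the degree-$r$, one-circle diagram whose $j^{\mathrm{LMO}}_r$-image is nonzero in degree $0$ is precisely $\theta_r$, the diagram of $r$ parallel chords on the circle, and it appears in $Z_{\mathcal A}(U^\pm)$ with coefficient $\tfrac{1}{r!}(\pm\tfrac12)^r$: this is exactly the degree-$r$ coefficient of $\exp(\pm\tfrac12 c)$ where $c$ is a single chord, coming from the framing change relation $RI'$ (the $\nu$-insertions and the wheel part of $Z_{\mathcal A}(U^\pm)$ either contribute trivalent vertices that push the image into positive degree, or get killed by $j^{\mathrm{LMO}}_r$ for degree reasons). I would spell this out using Theorem~\ref{thmkontsevichintegral} and the fact that the framing contribution to $Z_{\mathcal A}$ of a $\pm1$-framed component is $\exp(\pm\tfrac12\,\text{chord})$, together with the observation that $\mathrm{cs}^\nu$ acts trivially on the degree-$r$ part of this exponential modulo terms that $j^{\mathrm{LMO}}_r$ sends to positive degree.

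Then the conclusion is immediate: $j^{\mathrm{LMO}}_r$ is linear, so
\[
j^{\mathrm{LMO}}_r(\mathrm{cs}^\nu\circ Z_{\mathcal A}(U^\pm))[0]
= j^{\mathrm{LMO}}_r\!\left(\tfrac{1}{r!}\left(\pm\tfrac12\right)^r\theta_r\right)
= \tfrac{1}{r!}\left(\pm\tfrac12\right)^r j^{\mathrm{LMO}}_r(\theta_r),
\]
which is the claimed identity. The main obstacle I anticipate is the bookkeeping in the reduction step: one must check carefully that every diagram in $\mathrm{cs}^\nu\circ Z_{\mathcal A}(U^\pm)$ of internal degree $r$ other than the parallel-chords diagram $\theta_r$ either has a trivalent vertex (so that $j^{\mathrm{LMO}}_r$, being degree-preserving on the target while the source degree only accounts for $r$, forces a positive-degree or vanishing image) or is annihilated outright by $j^{\mathrm{LMO}}_r$ because some of the $r$ ``slots'' on the loop receive fewer than two legs — this is where the structure of $T^{\mathrm{LMO}}_\bullet$ (Lemma~\ref{r:elementsTmLMO}) and of the doubling map from Lemma~\ref{r:jLMOanddoubling} enter. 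I would handle this by a degree argument: the closure of a one-circle diagram of internal degree $r$ with $t$ trivalent vertices and $u$ univalent vertices satisfies $t+u=2r$, and after applying $j^{\mathrm{LMO}}_r$ (which has source-degree defect $-r$) a nonzero degree-$0$ output forces $t=0$, hence $u=2r$ and the diagram is a union of chords; the only such diagram on a single circle with all $2r$ endpoints on it and giving a nonzero pairing with the closure data is $\theta_r$.
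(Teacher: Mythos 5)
Your proposal is correct and follows essentially the same route as the paper's proof: reduce to the degree-$r$ part of the input via the homogeneity of $j^{\mathrm{LMO}}_r$ (Lemma~\ref{lemma:deg-of-j-n}), write $\mathrm{cs}^{\nu}\circ Z_{\mathcal A}(U^{\pm})$ as $\exp_{\#}\bigl(\pm\tfrac12\theta_1+(\text{connected terms with at least one trivalent vertex})\bigr)$, and kill the trivalent-vertex terms through Lemma~\ref{r:jLMOanddoubling} together with $T_0^{\mathrm{LMO}}=T_1^{\mathrm{LMO}}=0$, leaving the coefficient $\tfrac{1}{r!}\left(\pm\tfrac12\right)^r$ of $\theta_r=\theta_1^{\# r}$. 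One small caution: a degree-$r$ diagram with a trivalent vertex is not ``pushed to positive degree'' — by homogeneity its image sits in degree $0$ but vanishes because it has fewer than $2r$ legs, so some slot receives at most one leg; this leg-count mechanism (which you do invoke at the end) is what actually carries the step, exactly as in the paper.
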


\begin{proof} Notice that  $\theta_r \in\mathring{\mathcal{A}}^\wedge_1[r]$, therefore, by Lemma~\ref{lemma:deg-of-j-n}, $j^{\mathrm{LMO}}_r(\theta_r)\in \mathring{\mathcal{A}}^\wedge_0[0]$. Let $\mathbb{C}\mathring{\mathrm{Jac}}(\vec{\varnothing},\{1\})[r]$ be the subspace of $\mathbb{C}\mathring{\mathrm{Jac}}(\vec{\varnothing},\{1\})$ generated by  pairs $\underline{D}=(D,\varphi, \mathrm{cyc}_1)$ (see Defintion \ref{def:thesetJacPS}) such that $\mathrm{deg}(\underline{D})=\frac{1}{2}(|\mathrm{triv}({D})| + |\partial {D}|)=r$, where $\mathrm{triv}({D})$ (resp. $\partial D$) denotes the set of trivalent (resp. univalent) vertices of $D$, see  \eqref{degree-on-APS}. Then $\mathring{\mathcal{A}}^\wedge_1[r]$ is a quotient of $\mathbb{C}\mathring{\mathrm{Jac}}(\vec{\varnothing},\{1\})[r]$. 

If $|\mathrm{triv}(D)| \geq 1$, then $j_r^{\mathrm{LMO}}([\underline{D}])=0$. Indeed, by Lemma~\ref{r:jLMOanddoubling}, $j_r^{\mathrm{LMO}}([\underline{D}])=\frac{1}{r!}j_1^{\mathrm{LMO}}([\underline{D}^{(r)}])$ and,  by its definition, $\underline{D}^{(r)}$ is a sum of elements $\underline{E}=(E,\psi,\{\mathrm{cyc}_{(1,i)}\}_{i\in [\![1,r]\!]})$ of $\mathring{\mathrm{Jac}}(\vec{\varnothing},[\![1,r]\!])$ with $\partial E \simeq \partial D$, see Definition~\ref{def:doublingfree}. Since $|\partial D|<2r$, each one of the maps $\psi : \partial E\to [\![1,r]\!]$ is such that that the fiber of at least one element has cardinality less or equal than $1$, which implies $j_1^{\mathrm{LMO}}(\underline{E})=0$ as $T_0=T_1=1$. Therefore, $j_r^{\mathrm{LMO}}([\underline{D}])=0$.  Besides, we have 
\begin{equation}
\begin{split}
\mathrm{cs}^\nu\circ {Z}_{\mathcal A}(U^{\pm}) &= \mathrm{cs}^{\nu}\left(\mathrm{cs}^\nu\left(\mathrm{exp}_{\#}\left(\pm\frac{1}{2}\theta_1\right)\right)\right)\\
& = \mathrm{exp}_{\#}\left(\pm\frac{1}{2} \theta_1 + \big(\text{sum of connected diagrams with } \geq 1 \text{ trivalent vertices} \big)\right),
\end{split}
\end{equation}
where $\#$ denotes the product (connected sum)\footnote{Notice that the product $\#$ in $\mathring{\mathcal{A}}^\wedge_1=\mathring{\mathcal{A}}^\wedge(\vec{\varnothing},\{1\})$ is the product induced by the composition in $\mathring{\mathcal{A}}^\wedge(\uparrow,\emptyset)$ using the  continuous linear isomorphism $\mathring{\mathcal{A}}^\wedge(\uparrow, \emptyset) \simeq \mathring{\mathcal{A}}^\wedge_1$, see \cite[Proposition~6.3]{Ohts}.} in $\mathring{\mathcal{A}}^\wedge_1$. We then have
\begin{equation*}
\begin{split}
 j_r^{\mathrm{LMO}}\left(\mathrm{cs}^\nu\circ {Z}_{\mathcal A}(U^{\pm})\right) &=  \mathrm{exp}_{\#}\left(\pm\frac{1}{2} \theta_1 + \big(\text{sum of connected diagrams with } \geq 1 \text{ trivalent vertices} \big)\right)\\
 & = j_r^{\mathrm{LMO}}\left(\mathrm{exp}_{\#}\left(\pm\frac{1}{2} \theta_1\right)\right)
\end{split}
\end{equation*}
Hence, by taking the degree $0$ part in both sides of the above equation, using Lemma~\ref{lemma:deg-of-j-n} and the fact that $\theta_1^{\#r}=\theta_r$, we obtain
\begin{equation*}
\begin{split}
j_r^{\mathrm{LMO}}\left(\mathrm{cs}^{\nu} \circ {Z}_{\mathcal A}(U^{\pm})\right)[0]
& =j_r^{\mathrm{LMO}}\left(\mathrm{exp}_{\#}\left(\pm\frac{1}{2}\theta_1\right)\right)[0] = \sum_{k\geq 0}\frac{1}{k!}\left(\pm\frac{1}{2}\right)^k j_r^{\mathrm{LMO}}(\theta_1^{\# k})[0]\\
&=\frac{1}{r!}\left(\pm\frac{1}{2}\right)^r j_r^{\mathrm{LMO}}(\theta_1^{\# r}) = \frac{1}{r!}\left(\pm\frac{1}{2}\right)^rj_r^{\mathrm{LMO}}(\theta_r)\\
\end{split}
\end{equation*}
which finishes the proof.
\end{proof}

\begin{lemma}\label{calcul-of-j-n-theta-n} For any integer $r\geq 1$ we have
\begin{equation}\label{eq:j-n-theta-n}
j^{\mathrm{LMO}}_r(\theta_r) = X(X+2)\cdots (X+2(r-1))\in \mathring{\mathcal{A}}^\wedge_0[0],
\end{equation}
where $X$ denotes the dashed loop (as in Proposition~\ref{degree-0-of-Pn}).
\end{lemma}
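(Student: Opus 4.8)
�The plan is to compute $j^{\mathrm{LMO}}_r(\theta_r)$ by unwinding the definitions of $j^{\mathrm{LMO}}_r$ in terms of the doubling operation and the LMO diagrammatic trace, and then to reduce the combinatorial bookkeeping to a count of fixed-point free involutions, which has already been carried out in Lemma~\ref{equinFPFI} (and which produced the polynomial $X(X+2)\cdots(X+2(r-1))$ in the proof of Proposition~\ref{degree-0-of-Pn}). Concretely, first I would recall that $\theta_r \in \mathring{\mathcal A}^\wedge_1 = \mathring{\mathcal A}^\wedge(\vec{\varnothing},\{1\})$ is the class of the Jacobi diagram $\underline{D}_r$ consisting of $r$ parallel chords attached to a single circle, so $\underline{D}_r$ has $2r$ univalent vertices and no trivalent vertices, and $\mathrm{deg}(\underline{D}_r)=r$. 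By Lemma~\ref{r:jLMOanddoubling}, $j^{\mathrm{LMO}}_r(\theta_r) = \tfrac{1}{(r!)}\, j^{\mathrm{LMO}}_1\big([\underline{D}_r^{(r)}]\big)$, where $\underline{D}_r^{(r)}$ is the sum over all maps $\psi:\partial \underline{D}_r \to [\![1,r]\!]$ of the corresponding recolored diagrams. Since $j^{\mathrm{LMO}}_1$ kills any term in which some color class has fewer than $2$ vertices (because $T^{\mathrm{LMO}}_0=T^{\mathrm{LMO}}_1=0$), only those $\psi$ with every fiber of cardinality exactly $2$ survive; such a $\psi$ is the same datum as a fixed-point free involution $\sigma$ of the $2r$ univalent vertices (pair up the two vertices of each color), together with a bijection of the set of colors which I will sum over.

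The next step is to identify the surviving contribution. A fixed-point free involution $\sigma$ on $\partial \underline{D}_r$ together with the original chord-pairing $\sigma_0$ (the involution pairing the two ends of each of the $r$ chords) determines, after pairing with $T^{\mathrm{LMO}}_2 = $ (a single strut, i.e.\ the degree-$1$ generator) in each color, a Jacobi diagram on $(\vec{\varnothing},\emptyset,\emptyset)$ whose dashed part is a disjoint union of loops; the number of loops is exactly $(\!( \sigma_0,\sigma )\!) = |\,[\![1,2r]\!]/\langle\sigma_0,\sigma\rangle\,|$ in the notation of Definition~\ref{def:pairingparpar}, because closing up the struts along the involution $\sigma$ and the chords along $\sigma_0$ traces out the orbits of $\langle\sigma_0,\sigma\rangle$. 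Hence each such $\sigma$ contributes $X^{(\!(\sigma_0,\sigma)\!)}$ up to the combinatorial factors coming from Lemma~\ref{r:jLMOanddoubling} and from the sum over color-bijections. I would then check that these factors conspire to cancel the $\tfrac{1}{(r!)}$ and the overcounting, exactly as in the passage from $p_{A}^{-1}(k)$ to the count of involutions in the proof of Lemma~\ref{equinFPFI}, so that $j^{\mathrm{LMO}}_r(\theta_r) = \sum_{\sigma\in\mathrm{FPFI}([\![1,2r]\!])} \langle\!\langle \sigma_0,\sigma\rangle\!\rangle$. Applying Lemma~\ref{equinFPFI} with $S=[\![1,2r]\!]$ gives $\sum_{\sigma}\langle\!\langle\sigma_0,\sigma\rangle\!\rangle = X(X+2)\cdots(X+2(2r-1))$... wait — I must be careful here: Lemma~\ref{equinFPFI} with $|S|=2r$ yields $X(X+2)\cdots(X+2(|S|-1)) = X(X+2)\cdots(X+2(2r-1))$, so to land on $X(X+2)\cdots(X+2(r-1))$ the relevant involution-count must instead be over $\mathrm{FPFI}$ of a set of size $r$ (one point per chord, or equivalently the quotient $[\![1,2r]\!]/\langle\sigma_0\rangle$); so the clean formulation is that the colored-doubling sum, after identifying each chord with a single point, reduces to $\sum_{\tau\in\mathrm{FPFI}([\![1,r]\!])}$ of the pairing, i.e.\ to the computation in the proof of Proposition~\ref{degree-0-of-Pn}$(a)$ that produced precisely $X(X+2)\cdots(X+2(r-1))$.

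Thus the cleanest route is to observe that $\theta_r$ is, up to normalization, the closure of $r$ parallel chords, so that $j^{\mathrm{LMO}}_r(\theta_r)$ is computed by gluing $T^{\mathrm{LMO}}$-struts onto the $2r$ legs in all admissible ways; the admissible gluings are indexed by fixed-point free involutions $\tau$ of the set of $r$ chords (after contracting each pair of chord-ends glued through a common strut), and the resulting diagram is a union of $(\!(\mathrm{id},\tau)\!)$-type loops whose number is exactly the orbit count appearing in Definition~\ref{def:pairingparpar}; summing $X^{\#\text{loops}}$ over all $\tau\in\mathrm{FPFI}([\![1,r]\!])$ and invoking (the proof of) Lemma~\ref{equinFPFI}/Proposition~\ref{degree-0-of-Pn}$(a)$ yields $X(X+2)\cdots(X+2(r-1))$. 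I expect the main obstacle to be precisely the careful bookkeeping of the normalization constants and of which ``set of points'' the involution count runs over: one must track the $1/(r!)^{k}$ factor from Lemma~\ref{r:jLMOanddoubling} (here $k=1$), the symmetrization over colors, the automorphisms of $\theta_r$, and the factor relating $\mathrm{FPFI}([\![1,2r]\!])$ to $\mathrm{FPFI}$ of the $r$-element set of chords, and show these combine to give coefficient $1$ in front of $X(X+2)\cdots(X+2(r-1))$. Everything else — the identification of loop number with orbit number, and the evaluation of the involution sum — is already available in the excerpt.
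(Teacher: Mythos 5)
Your first two paragraphs follow the paper's own route (reduce via Lemma~\ref{r:jLMOanddoubling} to $\tfrac{1}{r!}j_1^{\mathrm{LMO}}(\theta_r^{(r)})$, observe that $T_0^{\mathrm{LMO}}=T_1^{\mathrm{LMO}}=0$ kills every recoloring with a fiber of size $\leq 1$, identify the survivors with fixed-point free involutions of the $2r$ chord-endpoints, and record that each survivor contributes $X^{(\!(\sigma_0,\sigma)\!)}$). The derailment happens exactly where you invoke Lemma~\ref{equinFPFI}: the formula printed in its statement has a typo, and its proof (set $m=|S|/2$) as well as its use in Proposition~\ref{degree-0-of-Pn} with $S=[\![1,2n]\!]$ show that for $|S|=2m$ the sum $\sum_{\sigma\in\mathrm{FPFI}(S)}\langle\!\langle\sigma_0,\sigma\rangle\!\rangle$ equals $X(X+2)\cdots(X+2(m-1))$, a product with $|S|/2$ factors. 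So with $S$ the set of $2r$ endpoints and $\sigma_0$ the involution pairing the two ends of each chord, the sum you had already arrived at is \emph{exactly} $X(X+2)\cdots(X+2(r-1))$, and no re-indexing is needed. Your substitute ``cleanest route'' --- indexing the admissible gluings by fixed-point free involutions of the $r$-element set of chords --- is wrong: for odd $r$ that set is empty while $j_r^{\mathrm{LMO}}(\theta_r)\neq 0$ (already $r=1$ gives $X$), and for $r=2$ it has a single element while the answer $X(X+2)=X^2+2X$ has two monomials. The point is that an admissible recoloring may pair the two ends of one and the same chord (i.e.\ $\sigma$ may agree with $\sigma_0$ on some chords), so the survivors do not descend to involutions on the set of chords.

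The bookkeeping you defer as ``the main obstacle'' is also where the only remaining work lies, and it is short: a surviving map $\varphi:[\![1,r]\!]\times\{s,t\}\to[\![1,r]\!]$ with all fibers of cardinality $2$ is equivalent to the pair consisting of the involution $\sigma=\Phi(\varphi)$ exchanging the two points of each fiber and a bijection from the $r$-element set of $\sigma$-orbits to $[\![1,r]\!]$; hence each $\sigma\in\mathrm{FPFI}([\![1,r]\!]\times\{s,t\})$ has exactly $r!$ preimages, which cancels the prefactor $\tfrac{1}{r!}$ from Lemma~\ref{r:jLMOanddoubling} with coefficient exactly $1$ (no extra symmetrization or automorphism factors enter, since $j_1^{\mathrm{LMO}}$ pairs each color with the single strut $T_2^{\mathrm{LMO}}$ and the loop count of the resulting diagram is the orbit count $(\!(\sigma_0,\sigma)\!)$). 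This yields $j_r^{\mathrm{LMO}}(\theta_r)=\sum_{\sigma\in\mathrm{FPFI}([\![1,2r]\!])}\langle\!\langle\sigma_0,\sigma\rangle\!\rangle$, and the (correctly read) Lemma~\ref{equinFPFI} finishes the proof; this is precisely the paper's argument.
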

\begin{proof}
We will use Lemma~\ref{r:jLMOanddoubling}. Let us describe $\theta_r^{(r)}$ explicitly. Let us label the chords of $\theta_r$ from $1$
 to $r$ and for each chord we label its endpoints with $s$ and $t$.

 Then we have a bijection $\{\text{endpoints of the } r \text{ chords}\}\simeq [\![1,r]\!]\times\{s, t\}$. Let $\mathrm{cyc}$ be the induced cyclic order on  $[\![1,r]\!]\times\{s, t\}$ by this bijection and the cyclic order deduced from the diagrammatic representation of $\theta_r$, for instance, if in Figure~\ref{figure-theta-n}$(b)$ we label the chords from top to bottom with $1$ and $2$ and from left to right with $s$ and $t$, then the cyclic order $\mathrm{cyc}$    on $[\![1,2]\!]\times\{s, t\}$ is given by $(1,s)\leq (2,s)\leq (2,t)\leq (1,t)\leq (1,s)$. Let $\mathrm{Map}([\![1,r]\!]\times\{s, t\}, [\![1,r]\!])$ be the sets of maps $\varphi:[\![1,r]\!]\times\{s, t\} \to [\![1,r]\!]$. For such a $\varphi$ let $\mathrm{diag}(\varphi)$ be  the class in $\mathring{\mathcal{A}}^{\wedge}(\vec{\varnothing}, [\![1,r]\!])$ of the Jacobi diagram $(\theta_r, \varphi, \{\mathrm{cyc}_i\}_{i\in[\![1,r]\!]})$ where $\mathrm{cyc}_i$ is the cyclic order on $\varphi^{-1}(i)\subset [\![1,r]\!]\times\{s, t\}$ induced by $\mathrm{cyc}$. Then we have
\begin{equation}\label{equ:2023-10-12}
\theta_r^{(r)}= \sum_{\varphi} \mathrm{diag}(\varphi)\in\mathring{\mathcal{A}}^{\wedge}(\vec{\varnothing}, [\![1,r]\!]),
\end{equation}
where the sum runs over the set $\mathrm{Map}([\![1,r]\!]\times\{s, t\}, [\![1,r]\!])$. 

We know that if $\varphi$ is such that there is $c\in [\![1,r]\!]$ with $|\varphi^{-1}(c)|\leq 1$, then $j_1^{\mathrm{LMO}}(\mathrm{diag}(\varphi))=0$ (because $T_0=T_1=0$).
Set 
$$\mathrm{Map}'([\![1,r]\!]\times\{s, t\}, [\![1,r]\!]):=\{\varphi\in \mathrm{Map}([\![1,r]\!]\times\{s, t\}, [\![1,r]\!]) \ | \ \forall c\in [\![1,r]\!]: |\varphi^{-1}(c)|\geq 2\}.$$
Then by Lemma~\ref{r:jLMOanddoubling} and \eqref{equ:2023-10-12}, we have
\begin{equation}\label{2022-04-03-equ0}
j^{\mathrm{LMO}}_r(\theta_r) = \frac{1}{r!}j^{\mathrm{LMO}}_1(\theta_r^{(r)})  = \sum_{\varphi\in \mathrm{Map}'([\![1,r]\!]\times\{s, t\}, [\![1,r]\!])} j^{\mathrm{LMO}}_1(\mathrm{diag}(\varphi)).
\end{equation}

Consider the map 
\begin{equation}\label{2022-04-05-equ0}
\Phi:\mathrm{Map}'([\![1,r]\!]\times\{s, t\}, [\![1,r]\!])\longrightarrow \mathrm{FPFI}([\![1,r]\!]\times\{s, t\})
\end{equation}
given as follows. For $\varphi\in \mathrm{Map}'([\![1,r]\!]\times\{s, t\}, [\![1,r]\!])$, we have $\sum_{c\in [\![1,r]\!]}|\varphi^{-1}(c)|=2r$, which implies that $|\varphi^{-1}(c)|=2$ for any $c\in [\![1,r]\!]$. Define $\Phi(\varphi):[\![1,r]\!]\times\{s, t\}\to [\![1,r]\!]\times\{s, t\}$ by the condition $\varphi^{-1}(\varphi(x))=\{x,\Phi(\varphi)(x)\}$ for all $x\in [\![1,r]\!]\times\{s, t\}$.

Let $\sigma\in\mathrm{FPFI}([\![1,r]\!]\times\{s, t\})$, let us compute the cardinality $\big|\Phi^{-1}(\sigma)\big|$. The quotient set $([\![1,r]\!]\times\{s, t\})/\langle \sigma \rangle$ has cardinality $r$. Hence
\begin{equation}\label{2022-04-03-equ1}
\big|\mathrm{Bij}(([\![1,r]\!]\times\{s, t\})/\langle \sigma \rangle, [\![1,r]\!])\big| =r!,
\end{equation}
where $\mathrm{Bij}(([\![1,r]\!]\times\{s, t\})/\langle \sigma \rangle, [\![1,r]\!])$ is the set of bijective maps from $([\![1,r]\!]\times\{s, t\})/\langle \sigma \rangle$ to  $[\![1,r]\!]$. If $\varphi\in\Phi^{-1}(\sigma)$, then we have a commutative diagram
$$
\xymatrix{
[\![1,r]\!]\times\{s, t\}\ar^{}[r]
\ar_{\varphi}[dr] & \frac{[\![1,r]\!]\times\{s, t\}}{\langle \sigma \rangle}\ar^{\overline{\varphi}}[d]\\ 
& [\![1,r]\!],}
$$
where $\overline{\varphi}$ is a bijection. One checks that the map $\varphi\mapsto \overline{\varphi}$ defines a bijective map 
$\Phi^{-1}(\sigma)\to \mathrm{Bij}(([\![1,r]\!]\times\{s, t\})/\langle \sigma \rangle, [\![1,r]\!])$. Therefore, by \eqref{2022-04-03-equ1} we have
\begin{equation}\label{2022-04-03-equ2}
\big|\Phi^{-1}(\sigma)\big| =r!.
\end{equation} 

Let $\sigma_0\in\mathrm{FPFI}([\![1,r]\!]\times\{s, t\})$ defined by $\sigma_0(x,s)=(x,t)$ and $\sigma_0(x,t)=(x,s)$ for any $x\in [\![1,r]\!]$. Recall from \eqref{defequ:pairingparpar} in Definition~\ref{def:pairingparpar} that for $\sigma\in\mathrm{FPFI}([\![1,r]\!]\times\{s, t\})$
 $$(\!(\sigma_0,\sigma)\!)=\left|\frac{[\![1,r]\!]\times\{s, t\}}{\langle \sigma_0, \sigma \rangle}\right|.$$

Let  $\varphi\in \mathrm{Map}'([\![1,r]\!]\times\{s, t\}, [\![1,r]\!])$ and let us compute $j^{\mathrm{LMO}}_1(\mathrm{diag}(\varphi))$. The chords in $\mathrm{diag}(\varphi)$ connect the pairs $(x,\sigma_0(x))$ for $x\in [\![1,r]\!]\times\{s, t\}$. Besides, $j^{\mathrm{LMO}}_1(\mathrm{diag}(\varphi))$ is obtained from $\mathrm{diag}(\varphi)$ by connecting with chords (i.e., with $T_2$) pairs of univalent vertices lying on a circle and then erasing the circle (see Figure~\ref{the-map-j-1}). Therefore, we are connecting the pairs $(x,\sigma(x))$ for $x\in [\![1,r]\!]\times\{s, t\}$ where $\sigma=\Phi(\varphi)$. Since $\sigma=\Phi(\varphi)$ and $\sigma_0$ are involutions we have a bijection between the set of connected components of $j^{\mathrm{LMO}}_1(\mathrm{diag}(\varphi))$ and the quotient set $([\![1,r]\!]\times\{s, t\})/\langle \sigma, \sigma_0\rangle$. Such connected components are dashed circles, hence 
\begin{equation}\label{2022-04-03-equ3}
j^{\mathrm{LMO}}_1(\mathrm{diag}(\varphi)) = X^{(\!(\sigma_0,\Phi(\varphi))\!)}=\langle\!\langle \sigma_0,\Phi(\varphi)\rangle\!\rangle,
\end{equation}
where $\langle\!\langle \ , \ \rangle\!\rangle$ is as in Definition~\ref{def:pairingparpar}$(b)$. We conclude that
\begin{equation*}
\begin{split}
j^{\mathrm{LMO}}_r(\theta_r) = \frac{1}{r!}\sum_{\varphi\in \mathrm{Map}'([\![1,r]\!]\times\{s, t\}, [\![1,r]\!])} j^{\mathrm{LMO}}_1(\mathrm{diag}(\varphi))
 = \frac{1}{r!}\sum_{\varphi\in \mathrm{Map}'([\![1,r]\!]\times\{s, t\}, [\![1,r]\!])} \langle\!\langle\sigma_0, \Phi(\varphi)\rangle\!\rangle\\
  = \frac{1}{r!}\sum_{ \sigma\in\mathrm{FPFI}( [\![1,r]\!]\times\{s, t\} ) }|\Phi^{-1}(\sigma)|\langle\!\langle\sigma_0, \Phi(\varphi)\rangle\!\rangle
 =  \sum_{\sigma\in\mathrm{FPFI}([\![1,r]\!]\times\{s, t\})}\langle\!\langle\sigma,\sigma_0\rangle\!\rangle\\
 = X(X+2)\cdots (X+2(r-1)).\\
 \end{split}
\end{equation*}
We have used~\eqref{2022-04-03-equ0} in the first equality, \eqref{2022-04-03-equ3} in the second equality, the map \eqref{2022-04-05-equ0} in the third equality, \eqref{2022-04-03-equ2} in the fourth equality, and Lemma~\eqref{equinFPFI} in the last equality.
\end{proof}

\begin{proposition}\label{calcul-j-n-cs-Z-U-pm-deg0} For any integer $r\geq 1$ we have
\begin{equation}
j^{\mathrm{LMO}}_r(\mathrm{cs}^{\nu}\circ {Z}_{\mathcal A}(U^{\pm}))[0] =\frac{1}{r!}\left(\pm\frac{1}{2}\right)^rX(X+2)\cdots (X+2(r-1)),
\end{equation}
where $X$ denotes the dashed loop (as in Proposition~\ref{degree-0-of-Pn}).
\end{proposition}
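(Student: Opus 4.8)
The proof is a direct combination of the two preceding lemmas. The plan is as follows. First I would invoke Lemma~\ref{reduction-calcul-j-n-cs-Z-U}, which gives
\begin{equation*}
j^{\mathrm{LMO}}_r(\mathrm{cs}^{\nu}\circ {Z}_{\mathcal A}(U^{\pm}))[0] =\frac{1}{r!}\left(\pm\frac{1}{2}\right)^r j^{\mathrm{LMO}}_r(\theta_r)\in \mathring{\mathcal{A}}^\wedge_0[0].
\end{equation*}
Then I would substitute the value of $j^{\mathrm{LMO}}_r(\theta_r)$ computed in Lemma~\ref{calcul-of-j-n-theta-n}, namely $j^{\mathrm{LMO}}_r(\theta_r) = X(X+2)\cdots (X+2(r-1))$, to obtain the desired identity
\begin{equation*}
j^{\mathrm{LMO}}_r(\mathrm{cs}^{\nu}\circ {Z}_{\mathcal A}(U^{\pm}))[0] =\frac{1}{r!}\left(\pm\frac{1}{2}\right)^rX(X+2)\cdots (X+2(r-1)).
\end{equation*}

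Since both ingredient lemmas are already proved earlier in the excerpt, the only thing to check is that they are being applied in a consistent setting. Concretely, I would verify that the element $\theta_r \in \mathring{\mathcal{A}}^\wedge_1$ appearing in Lemma~\ref{reduction-calcul-j-n-cs-Z-U} is literally the same element (the class of $r$ parallel chords on a single circle component) as the one used in Lemma~\ref{calcul-of-j-n-theta-n}, and that the degree bookkeeping matches: $\theta_r$ has $\mathrm{deg}$ equal to $r$ and $\mathrm{deg}_{\mathrm{cir}}$ equal to $1$, so by Lemma~\ref{lemma:deg-of-j-n} its image $j^{\mathrm{LMO}}_r(\theta_r)$ lands in $\mathring{\mathcal{A}}^\wedge_0$ in degree $\mathrm{deg} - r\,\mathrm{deg}_{\mathrm{cir}} = r - r = 0$, which is exactly where the $[0]$-truncation in Lemma~\ref{reduction-calcul-j-n-cs-Z-U} places it. This confirms that no truncation is being lost when composing the two statements.

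There is essentially no obstacle here: the statement is a one-line corollary obtained by chaining the two lemmas. If anything, the only subtlety worth a sentence is that the right-hand sides of both lemmas already lie in the degree-zero part $\mathring{\mathcal{A}}^\wedge_0[0]$, which is identified with a polynomial ring in the variable $X$ (the dashed loop) by Proposition~\ref{degree-0-of-Pn}, so the equality of the two sides is literally an equality of polynomials in $X$ scaled by the scalar $\frac{1}{r!}\left(\pm\frac12\right)^r$. Thus the proof reads: ``The result follows by combining Lemma~\ref{reduction-calcul-j-n-cs-Z-U} and Lemma~\ref{calcul-of-j-n-theta-n}.''
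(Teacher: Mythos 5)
Your proposal is correct and matches the paper's proof exactly: the paper also deduces the statement directly by combining Lemma~\ref{reduction-calcul-j-n-cs-Z-U} with Lemma~\ref{calcul-of-j-n-theta-n}. Your additional degree bookkeeping (that $j^{\mathrm{LMO}}_r(\theta_r)$ lands in $\mathring{\mathcal{A}}^\wedge_0[0]$ by Lemma~\ref{lemma:deg-of-j-n}) is a harmless, accurate sanity check.
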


\begin{proof}
The result follows from Lemmas~\ref{reduction-calcul-j-n-cs-Z-U} and~\ref{calcul-of-j-n-theta-n}.
\end{proof}

\begin{theorem}\label{mainr-2022-02-14} The pair $(\mathfrak{c}(n), \overline{j}_n\circ \varphi_n\circ\overline{\mathrm{cs}}^{\nu}\circ {Z}_{\mathcal A})$ is a Kirby structure.
\end{theorem}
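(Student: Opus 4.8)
We already know from Lemma~\ref{sec:1.8lemma3bis} that the pair $(\mathfrak{c}(n), \overline{j}_n\circ \varphi_n\circ\overline{\mathrm{cs}}^{\nu}\circ {Z}_{\mathcal A})$ is a semi-Kirby structure, so by Definition~\ref{def:strB} it remains only to show that the two elements $\overline{j}_n\circ \varphi_n\circ\overline{\mathrm{cs}}^{\nu}\circ {Z}_{\mathcal A}(U^\pm)\in\mathfrak{c}(n)$ are invertible. The plan is to identify the relevant images and reduce their invertibility in the complete graded algebra $\mathfrak{c}(n)=\mathring{\mathcal A}^\wedge_0/(P^{n+1}_0+O^n)$ to the non-vanishing of their degree $0$ components, which we can then compute explicitly using the preceding lemmas.

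First I would unwind the definition of the composed morphism. Tracing through the overall diagram of algebra morphisms, the element $\overline{j}_n\circ \varphi_n\circ\overline{\mathrm{cs}}^{\nu}\circ {Z}_{\mathcal A}(U^\pm)$ is the class in $\mathfrak{c}(n)$ of $j^{\mathrm{LMO}}_n(\mathrm{cs}^\nu\circ {Z}_{\mathcal A}(U^\pm))\in\mathring{\mathcal A}^\wedge_0$; here I use that $\mathrm{cs}^\nu$ lifts $\overline{\mathrm{cs}}^\nu$ (Equation~\eqref{iso-theta}), that $\varphi_n$ is induced by the identity map, and that $\overline{j}_n$ is induced by $j^{\mathrm{LMO}}_n$ (Proposition~\ref{r2022-04-08-jn}). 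By Proposition~\ref{calcul-j-n-cs-Z-U-pm-deg0}, the degree $0$ part of this element in $\mathring{\mathcal A}^\wedge_0[0]=\mathbb{C}[X]$ equals
\begin{equation*}
j^{\mathrm{LMO}}_n(\mathrm{cs}^\nu\circ {Z}_{\mathcal A}(U^\pm))[0]=\frac{1}{n!}\left(\pm\frac{1}{2}\right)^n X(X+2)\cdots(X+2(n-1)).
\end{equation*}
Now I would push this into $\mathfrak{c}(n)$: by Proposition~\ref{degree-0-of-Pn}$(b)$, the degree $0$ part $(\mathfrak{c}(n))[0]$ is isomorphic to $\mathbb{C}$ via evaluation at $X=-2n$, under which the above polynomial is sent to $\frac{1}{n!}(\pm\frac{1}{2})^n(-2n)(-2n+2)\cdots(-2n+2(n-1))=\frac{1}{n!}(\pm\frac{1}{2})^n(-2)^n\, n(n-1)\cdots 1=(\mp 1)^n$, which is nonzero.

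It then remains to upgrade non-vanishing of the degree $0$ part to invertibility in the whole complete graded algebra. The key step here is the standard fact that in a complete graded commutative algebra $B=\prod_{d\geq 0}B[d]$ with $B[0]$ a field (here $\mathbb{C}$), an element $b=b_0+b_1+\cdots$ with $b_0\neq 0$ is invertible: writing $b=b_0(1-u)$ with $u$ of positive degree, the geometric series $b_0^{-1}\sum_{k\geq 0}u^k$ converges in $B$ and provides the inverse. I would apply this with $B=\mathfrak{c}(n)$, which is complete graded by Definition~\ref{def:b-n} and has $(\mathfrak{c}(n))[0]\cong\mathbb{C}$ by Proposition~\ref{degree-0-of-Pn}$(b)$; since the degree $0$ part of $\overline{j}_n\circ \varphi_n\circ\overline{\mathrm{cs}}^{\nu}\circ {Z}_{\mathcal A}(U^\pm)$ is the nonzero scalar $(\mp 1)^n$, invertibility follows. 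I do not anticipate a serious obstacle: all the hard computational work (the identification of $j^{\mathrm{LMO}}_r(\theta_r)$, the combinatorics of fixed-point free involutions in Lemma~\ref{equinFPFI}, the structure of $(\mathring{\mathcal A}^\wedge_0/P^{n+1}_0)[0]$) has already been carried out in the preceding lemmas, so the mildly delicate point is just bookkeeping: making sure the composite morphism is correctly identified as the class of $j^{\mathrm{LMO}}_n(\mathrm{cs}^\nu\circ {Z}_{\mathcal A}(U^\pm))$ and that the evaluation-at-$(-2n)$ isomorphism is applied with the right normalization so that the final scalar is indeed a unit.
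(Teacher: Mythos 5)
Your proposal is correct and follows essentially the same route as the paper: the paper likewise reduces invertibility in the complete graded algebra $\mathfrak{c}(n)$ to non-vanishing of the degree~$0$ component (using $\mathfrak{c}(n)[0]\simeq\mathbb{C}$ from Proposition~\ref{degree-0-of-Pn}$(b)$), and then invokes Proposition~\ref{calcul-j-n-cs-Z-U-pm-deg0} to see that the images of $U^\pm$ evaluate to $(\mp 1)^n$ in $\mathbb{C}[X]/(X+2n)$. The only difference is that you spell out the geometric-series argument and the lifting of the composite to $j^{\mathrm{LMO}}_n(\mathrm{cs}^\nu\circ Z_{\mathcal A}(U^\pm))$, which the paper leaves implicit.
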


\begin{proof}
We need to show that the elements  $\overline{j}_n \circ \varphi_n\circ\overline{\mathrm{cs}}^\nu \circ {Z}_{\mathcal A}(U^\pm)$ of the algebra  $\mathfrak{c}(n)$ are invertible (see Definition~\ref{def:kirby}). This algebra is a complete $\mathbb{Z}_{\geq 0}$-graded algebra, with its degree zero component $\mathfrak{c}(n)[0]$ being isomorphic to $\mathbb{C}$ by Proposition~\ref{degree-0-of-Pn}$(b)$. The said invertibilities are then equivalent to that of the images of the elements $(\overline{j}_n \circ \varphi_n \circ \overline{\mathrm{cs}}^\nu \circ {Z}_{\mathcal A}(U^\pm))[0]$ in $\mathbb{C}$. Their images are, by Proposition~\ref{calcul-j-n-cs-Z-U-pm-deg0}, the classes of the polynomials  $\frac{1}{n!(\pm 2)^n}X(X+2)\cdots(X+2(n-1))$ in $\mathbb{C}[X]/(X+2n)$ which are equal to $(\mp 1)^n$ and therefore invertible.
\end{proof}

\subsection{The algebras \texorpdfstring{$\mathfrak{d}(n)$}{dn} and the morphisms \texorpdfstring{$\mathrm{pr}_n : \mathfrak{c}(n) \to \mathfrak{d}(n)$}{cn-to-dn}}\label{sec:3:5-2022-04-18}

We fix an integer $n\geq 1$.  Let $\mathring{\mathcal{A}}^\wedge_0[n]$  \index[notation]{A^\wedge_0[n]@$\mathring{\mathcal{A}}^\wedge_0[n]$} denote the subspace of ${\mathcal{A}}^\wedge_0$ consisting of elements of degree $n$ and  let $\mathring{\mathcal{A}}^\wedge_0[>n]$  \index[notation]{A^\wedge_0[>n]@$\mathring{\mathcal{A}}^\wedge_0[>n]$} denote the complete direct sum of all the components of all the $\mathring{\mathcal{A}}^\wedge_0[k]$ with $k>n$. Since $\mathring{\mathcal{A}}^\wedge_0$ is a complete graded algebra, this is a complete graded ideal  of $\mathring{\mathcal{A}}^\wedge_0$. 

\begin{definition}\label{def:alg-dn} Define the complete graded commutative algebra $\mathfrak{d}(n)$ as the quotient algebra
\begin{equation}
\mathfrak{d}(n)=\frac{\mathring{\mathcal{A}}^\wedge_0}{P^{n+1}_0 + O^n + {\mathring{\mathcal{A}}^\wedge_0[>n]}}.
\end{equation} \index[notation]{d(n)@$\mathfrak{d}(n)$}
\end{definition}

One checks that $\mathfrak{d}(n)$ is the direct sum of its components of degrees~$\leq n$, and is therefore a graded commutative algebra.

\begin{proposition}\label{r2022-04-12-dn} Let $\mathrm{pr}_n:\mathfrak{c}(n)\to \mathfrak{d}(n)$ \index[notation]{pr_n@$\mathrm{pr}_n$} denote the canonical projection. Then the pair  $(\mathfrak{d}(n),\mathrm{pr}_n\circ\overline{j}_n\circ\varphi_n\circ \overline{\mathrm{cs}}^\nu\circ {Z}_{\mathcal A})$ is a Kirby structure.
\end{proposition}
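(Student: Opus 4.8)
The plan is to realize $(\mathfrak{d}(n),\mathrm{pr}_n\circ\overline{j}_n\circ\varphi_n\circ \overline{\mathrm{cs}}^\nu\circ {Z}_{\mathcal A})$ as the push-forward (in the sense of Lemma~\ref{sec:1.8lemma3bis}) of the Kirby structure $(\mathfrak{c}(n),\overline{j}_n\circ \varphi_n\circ\overline{\mathrm{cs}}^{\nu}\circ {Z}_{\mathcal A})$ provided by Theorem~\ref{mainr-2022-02-14}, along the canonical projection $\mathrm{pr}_n:\mathfrak{c}(n)\to\mathfrak{d}(n)$. So almost all the content has already been established; what remains is to check that $\mathrm{pr}_n$ is an algebra morphism and to invoke the push-forward lemma.

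First I would verify that $\mathrm{pr}_n$ is an algebra homomorphism. By Definitions~\ref{def:b-n} and~\ref{def:alg-dn}, one has $\mathfrak{c}(n)=\mathring{\mathcal{A}}^\wedge_0/(P^{n+1}_0+O^n)$ and $\mathfrak{d}(n)=\mathring{\mathcal{A}}^\wedge_0/(P^{n+1}_0+O^n+\mathring{\mathcal{A}}^\wedge_0[>n])$, and, as noted just before Definition~\ref{def:alg-dn}, $\mathring{\mathcal{A}}^\wedge_0[>n]$ is a complete graded ideal of the complete graded algebra $\mathring{\mathcal{A}}^\wedge_0$. Hence its image in $\mathfrak{c}(n)$ is an ideal, $\mathfrak{d}(n)$ is the quotient algebra of $\mathfrak{c}(n)$ by this ideal, and the canonical projection $\mathrm{pr}_n:\mathfrak{c}(n)\to\mathfrak{d}(n)$ is an algebra homomorphism (indeed a morphism of complete graded algebras).

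Then I would apply Lemma~\ref{sec:1.8lemma3bis}$(b)$ with $(A,\mu)=(\mathfrak{c}(n),\overline{j}_n\circ \varphi_n\circ\overline{\mathrm{cs}}^{\nu}\circ {Z}_{\mathcal A})$, which is an object of $\mathcal{KS}$ by Theorem~\ref{mainr-2022-02-14}, and $f=\mathrm{pr}_n$. It follows that the push-forward $(\mathfrak{d}(n),\mathrm{pr}_n\circ(\overline{j}_n\circ \varphi_n\circ\overline{\mathrm{cs}}^{\nu}\circ {Z}_{\mathcal A}))$ is again a Kirby structure, and by associativity of composition this push-forward is exactly $(\mathfrak{d}(n),\mathrm{pr}_n\circ\overline{j}_n\circ\varphi_n\circ \overline{\mathrm{cs}}^\nu\circ {Z}_{\mathcal A})$, which is the claim. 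Equivalently, and to make the argument self-contained, one checks directly that $\mathrm{pr}_n\circ\overline{j}_n\circ\varphi_n\circ \overline{\mathrm{cs}}^\nu\circ\mu:\mathfrak{Kir}\to\mathfrak{d}(n)$ is an algebra morphism (composition of algebra morphisms), and that the images of $U^\pm$ under it, being $\mathrm{pr}_n$ applied to the invertible elements of $\mathfrak{c}(n)$ exhibited in the proof of Theorem~\ref{mainr-2022-02-14}, are invertible in $\mathfrak{d}(n)$ because algebra homomorphisms preserve invertibility.

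I do not expect any genuine obstacle here: the hard part — namely the degree-zero computation of $\overline{j}_n\circ\varphi_n\circ\overline{\mathrm{cs}}^\nu\circ {Z}_{\mathcal A}(U^\pm)$ via Proposition~\ref{calcul-j-n-cs-Z-U-pm-deg0} together with the identification of $\mathfrak{c}(n)[0]$ with $\mathbb{C}$ in Proposition~\ref{degree-0-of-Pn}$(b)$ — has already been carried out in Theorem~\ref{mainr-2022-02-14}, and the invertibility it yields is automatically transported along $\mathrm{pr}_n$. The only point requiring a word of care is the verification that $\mathring{\mathcal{A}}^\wedge_0[>n]$ is an ideal, which is already recorded in the text preceding Definition~\ref{def:alg-dn}.
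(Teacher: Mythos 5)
Your proposal is correct and is essentially the paper's own proof: the paper deduces the statement from Theorem~\ref{mainr-2022-02-14} together with the push-forward Lemma~\ref{sec:1.8lemma3bis}$(b)$, exactly as you do. Your additional check that $\mathrm{pr}_n$ is an algebra homomorphism is the implicit prerequisite the paper leaves unstated, so nothing differs in substance.
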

\begin{proof} The result follows from Theorem~\ref{mainr-2022-02-14} and Lemma~\ref{sec:1.8lemma1}$(b)$. 
\end{proof}

\subsection{The equality \texorpdfstring{$\mathfrak{c}(1)= \mathfrak{d}(1)$}{c(1) = d(1)}}\label{sec:5:7}

In this subsection we show that the algebra homomorphism $\mathrm{pr}_1:\mathfrak{c}(1) \to \mathfrak{d}(1)$ is an isomorphism, which implies $\mathfrak{c}(1)= \mathfrak{d}(1)$. 

\begin{lemma}\label{r-2022-05-03-02} Let $x$ be a connected element in $\mathring{\mathrm{Jac}}(\vec{\varnothing},\emptyset)$ of degree $m\geq 3$. Then, modulo AS, IHX and~$P^2_0$ relations, the diagram $x$ is equal to a linear combination $\sum_i x_i$ of elements $x_i\in\mathring{\mathrm{Jac}}(\vec{\varnothing},\emptyset)$ of degree~$m$ such that each $x_i$ has a bigon as a subdiagram, see Figure~\ref{bigon}, in other words each $x_i$ is a graph having a cycle of length~$2$.
\begin{figure}[ht]
		\centering
         \includegraphics[scale=1]{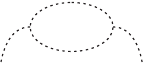}
\caption{Local view of a bigon subdiagram, equivalently a cycle of length $2$, in a Jacobi diagram.}
\label{bigon} 								
\end{figure}
\end{lemma}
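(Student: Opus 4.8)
The plan is to argue by a careful induction on combinatorial data of the diagram, using the $P^2_0$ relation together with STU, AS and IHX to ``create bigons''. Recall that $P^2_0$ is generated (as a subspace) by elements of the form $\langle x, \varsigma_2\rangle$ with $x\in\mathring{\mathcal A}^\wedge(\vec{\varnothing},\emptyset,[\![1,4]\!])$, and $\varsigma_2$ is the sum of the three struts pairings on $[\![1,4]\!]$. Concretely, the key consequence of the $P^2_0$ relation that I would extract first is a local move: if a Jacobi diagram contains four univalent-vertex-free ``legs'' that are glued pairwise in one of three ways, the alternating sum of these three gluings vanishes modulo $P^2_0$. In graph terms, given a connected trivalent graph $x$ and two disjoint edges $e_1,e_2$ of $x$, cutting each of them produces four half-edges; the three ways of reconnecting these four half-edges (one of which is the original $x$, the other two being the graphs obtained by the two ``crossed'' reconnections) satisfy $x = x' + x''$ modulo $P^2_0$, up to AS signs. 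I would establish this carefully from the definition of $\varsigma_2$ and the pairing $\langle\,\cdot\,,\,\cdot\,\rangle$ of \eqref{thepairinginA}.

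Next I would set up the induction. Let $x$ be connected of degree $m\ge 3$. If $x$ already has a cycle of length $2$ there is nothing to do. Otherwise, since $x$ is a connected trivalent graph of degree $m\ge 3$ (hence with a positive number of trivalent vertices and first Betti number $m+1-V_{\mathrm{triv}}/2\ge 2$ ... one has to track the exact numerics, but the point is that $x$ contains a cycle), it contains some cycle $C$; let $\ell=\ell(x)$ be the minimal length of a cycle in $x$, so $\ell\ge 3$. I would pick a shortest cycle $C$ and two consecutive edges $e_1,e_2$ of $C$ (sharing a trivalent vertex $v$). Applying the local $P^2_0$ move to a suitable pair of edges — one of them being an edge of $C$ and the other being chosen so that one of the two crossed reconnections short-cuts $C$ — produces $x = x' + x''$ where the diagrams $x',x''$ either contain a strictly shorter cycle, or contain a bigon, or are obtained from $x$ with the same number of trivalent vertices. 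Combining this with AS and IHX (which preserve the triple $(\#\text{univalent},\#\text{trivalent},|\pi_0|)$, as noted in the excerpt) I would show that modulo $\langle$AS, IHX, $P^2_0\rangle$ any connected degree-$m$ diagram with shortest cycle of length $\ell\ge 3$ is a linear combination of connected degree-$m$ diagrams with shortest cycle of length $<\ell$. Iterating until $\ell=2$ gives the statement. The diagrams appearing throughout stay of degree exactly $m$ because all three moves (AS, IHX, and the $P^2_0$ local move) are degree-preserving.

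The main obstacle I expect is the bookkeeping in the inductive step: choosing the pair of edges $e_1,e_2$ so that one of the two crossed reconnections genuinely produces a strictly shorter cycle (or a bigon) while the other is controlled. This requires a small case analysis on the local structure of a shortest cycle $C$ and the trivalent vertices lying on it — in particular one must handle the possibility that reconnection disconnects the graph (in which case the connected target space forces that term to be absorbed differently) and the possibility that $C$ passes through a vertex more than once. A clean way to organize this is to first reduce, using IHX, to the case where some shortest cycle $C$ has two consecutive edges meeting a vertex $v$ whose third incident edge leaves $C$, then apply the $P^2_0$ move to that third edge together with the opposite edge of $C$; one of the crossed reconnections then routes through a chord of $C$, producing two shorter cycles. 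I would also need to treat the low-degree base cases ($m=3$, where the relevant connected diagrams are the theta-graph and the dumbbell, both of which already contain a bigon or are handled directly) to anchor the induction, and to double-check that the reduction terminates since $\ell$ strictly decreases and is bounded below by $2$.
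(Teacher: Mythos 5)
Your proposal follows essentially the same route as the paper: both arguments observe that a connected trivalent diagram of degree $m\geq 3$ contains a cycle and induct on its length, using the three-term consequence of $P^2_0$ (the sum over the three pairings of the four legs created near the cycle) together with IHX and AS to trade a length-$k$ cycle for a length-$(k-1)$ cycle plus terms containing a bigon, terminating at length $2$. The bookkeeping you flag (which pair of edges to cut and the resulting signs) is precisely what the paper settles by exhibiting explicit diagrammatic instances of $P^2_0$ followed by an IHX move, with the $k=3$ case treated separately because one term of the three-term relation vanishes by AS.
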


\begin{proof} Notice that $x$ has a $k$-cycle with $2\leq k \leq 2m$. That is, a closed path containing $k$ trivalent vertices (and therefore $k$ edges). Up to the AS relation we can suppose that $x$ has the form shown in Figure~\ref{elementxwithkcycle}.
\begin{figure}[ht]
		\centering
         \includegraphics[scale=1]{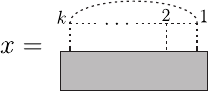}
\caption{The diagram $x$ has a $k$-cycle.}
\label{elementxwithkcycle} 								
\end{figure}

We prove by induction on $k$ that, modulo AS, IHX and $P^2_0$ relations, the diagram $x$ can be written as stated in the lemma. If $k=2$, there is nothing to prove. Suppose $k=3$, consider the instance of the $P^2_0$ relation show in Figure~\ref{instanceofP-2-4}.

\begin{figure}[ht]
		\centering
         \includegraphics[scale=1]{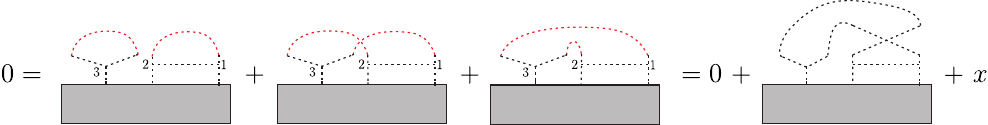}
\caption{Instance of the $P^2_0$ relation. The grey part is as in Figure~\ref{elementxwithkcycle}. The first diagram in the the second part of the equality is equal to~$0$ by the AS relation.}
\label{instanceofP-2-4} 								
\end{figure}

Then, using the IHX relation we have the equality shown in Figure~\ref{elementxafterIHX}.

\begin{figure}[ht]
		\centering
         \includegraphics[scale=1]{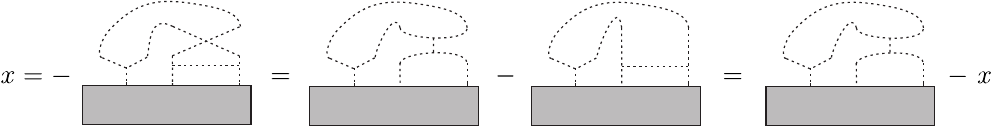}
\caption{Using the IHX relation.}
\label{elementxafterIHX} 								
\end{figure}

The equality in Figure~\ref{elementxafterIHX} implies that, modulo AS, IHX and $P^2_0$ relations, the diagram $x$ can be written in terms of a diagram with a $2$-cycle.

Let us assume $k\geq 4$. Consider the instance of the $P^2_0$ relation shown in Figure~\ref{instanceoP-2-5}.
  
\begin{figure}[ht]
		\centering
         \includegraphics[scale=1]{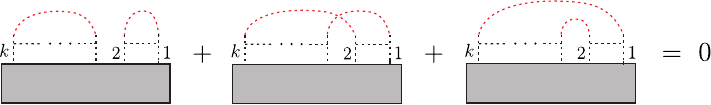}
\caption{Instance of $P^2_0$. Here the grey part is as in Figure~\ref{elementxwithkcycle}. Hence, the last diagram in the left-hand of the equation is the diagram $x$.}
\label{instanceoP-2-5} 								
\end{figure}

We then obtain the equations  shown Figure~\ref{elementxafterIHXgeneralcase}.

\begin{figure}[ht]
		\centering
         \includegraphics[scale=1]{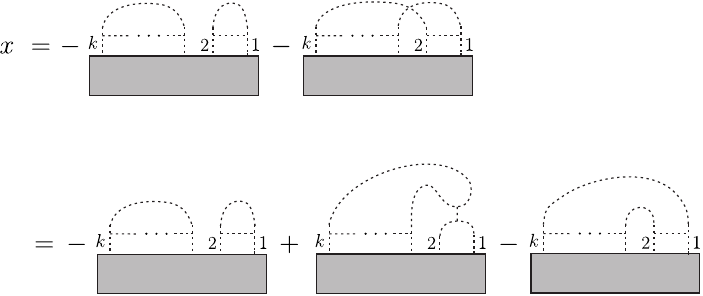}
\caption{In the second equality we have used the IHX relation.}
\label{elementxafterIHXgeneralcase} 								
\end{figure}

The first diagram in the last part of the equations in Figure~\ref{elementxafterIHXgeneralcase} has a $2$-cycle while the second diagram  has a $(k-1)$-cycle and the last diagram is $x$. It follows from this that the diagram $x$, modulo AS, IHX and $P^2_0$ relations, can be written as the linear combination of a diagram with a $2$-cycle and a diagram with a $(k-1)$-cycle, we can apply the induction hypothesis to the latter. This completes the proof. 

\end{proof}

\begin{lemma}\label{r-2022-05-02-lemma1} For all $m\geq 2$ the quotient space $\frac{\mathring{\mathcal{A}}^\wedge_0}{P^2_0}[m]$ is equal to $\{0\}$. 
\end{lemma}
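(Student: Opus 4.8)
I would prove the equivalent statement $\mathring{\mathcal{A}}^\wedge_0[m]\subseteq P^2_0$ for all $m\geq 2$ by induction on $m$. Since $\mathring{\mathcal{A}}^\wedge_0[m]$ is topologically spanned by the classes of disjoint unions of connected Jacobi diagrams, and since a Jacobi diagram with a looped edge at a trivalent vertex vanishes by AS, it is enough to treat two kinds of generators: (i) connected diagrams of degree $m$, and (ii) disjoint unions all of whose connected factors have degree $\leq 1$, i.e. diagrams $\vartheta^{\sqcup r}\sqcup X^{\sqcup s}$ with $r\geq 2$, where $\vartheta\in\mathring{\mathcal{A}}^\wedge_0[1]$ denotes the class of the theta graph (two trivalent vertices joined by three edges) and $X$ the dashed loop (Definition~\ref{def:theelementXaloop}). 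Indeed, a disjoint union of total degree $m\geq 2$ that has a connected factor of degree $\geq 2$ lies in $P^2_0$ by case (i) together with the inductive hypothesis and the fact that $P^2_0$ is an ideal (Lemma~\ref{rlemma1_20211111}), and the only other possibility is case (ii), which reduces to $\vartheta^{\sqcup 2}\in P^2_0$.

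\textbf{Step 1: $\vartheta\sqcup E\in P^2_0$ for every Jacobi diagram $E$ with at least one edge.} Write $E=R\cup\epsilon$, where $\epsilon$ is a chosen edge and $R:=E\setminus\epsilon$ carries the two half-edges $p_u,p_v$ of $\epsilon$ as marked slots. Form $x\in\mathring{\mathcal{A}}(\vec{\varnothing},\emptyset,[\![1,4]\!])$ as the disjoint union of a ``lens'' $L$ (two trivalent vertices $u,v$ joined by two edges, with pendant legs $1$ at $u$ and $2$ at $v$) and of $R$ equipped with legs $3$ at $p_u$ and $4$ at $p_v$, and consider $\langle x,\varsigma_2\rangle\in P^2_0$ (pairing~\eqref{thepairinginA}, $\varsigma_2$ as in Definition~\ref{the-element-varsigma-k}). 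Its three summands are: gluing $\{1,2\}$ turns $L$ into a theta graph while gluing $\{3,4\}$ restores $\epsilon$, producing $\vartheta\sqcup E$; gluing $\{1,3\},\{2,4\}$ produces a diagram $D$ with a bigon between $u$ and $v$ whose two other edges run into $R$; gluing $\{1,4\},\{2,3\}$ produces the ``flipped'' diagram $D^{\mathrm{flip}}$. Applying the IHX relation to one bigon edge of $D^{\mathrm{flip}}$ (one of the three resolutions carries a looped edge, hence vanishes by AS) gives $D^{\mathrm{flip}}=-D$ (with the paper's IHX sign convention, Figure~\ref{figuraJD2_ell}), so the last two summands cancel and $\vartheta\sqcup E\in P^2_0$. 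Taking $E=\vartheta$ (respectively $E=\vartheta^{\sqcup(r-1)}\sqcup X^{\sqcup s}$) settles case (ii): $\vartheta^{\sqcup r}\sqcup X^{\sqcup s}\in P^2_0$ for $r\geq 2$.

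\textbf{Step 2: connected diagrams ($m\geq 3$), and the base case $m=2$.} For $m\geq 3$, Lemma~\ref{r-2022-05-03-02} writes any connected degree-$m$ diagram, modulo AS/IHX/$P^2_0$, as a linear combination of degree-$m$ diagrams each having a bigon. For such a bigon diagram $D=(\text{lens}\,\{u,v;e_1,e_2;f_u,f_v\})\cup R$, smoothing the bigon gives a diagram $D'$ of degree $m-1$; using the relation $\langle x,\varsigma_2\rangle\in P^2_0$ for $x=L\sqcup R$ (Step 1) together with the inductive hypothesis ($D'\in P^2_0$, since $m-1\geq 2$) one reduces $D$ to $0$ modulo $P^2_0$, after eliminating $D$ itself as explained below. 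For the base case $m=2$, up to AS/IHX there are only finitely many connected degree-$2$ diagrams: those containing a bigon are handled as above (their smoothing is $\vartheta$, so they are $\equiv$ a multiple of $\vartheta^{\sqcup 2}\in P^2_0$), and the complete graph $K_4$, which has no multiple edge but does contain a $3$-cycle, is rewritten modulo AS/IHX/$P^2_0$ as a combination of degree-$2$ diagrams with a bigon by the $k=3$ step in the proof of Lemma~\ref{r-2022-05-03-02} (that step uses only the presence of a $3$-cycle, not $m\geq 3$). Hence $\mathring{\mathcal{A}}^\wedge_0[2]\subseteq P^2_0$.

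\textbf{Main obstacle.} The delicate point is the genuine elimination of a single bigon diagram $D$ in Step 2. The natural $P^2_0$-relation obtained by cutting both bigon edges, or by the $x=L\sqcup R$ construction above, produces $D$ together with $D^{\mathrm{flip}}=-D$, so these cancel and one only extracts $\vartheta\sqcup D'\in P^2_0$, not $D\in P^2_0$. To see $D$ uncancelled one must instead use a $P^2_0$-relation cutting one bigon edge \emph{and} an edge of the remainder $R$ (possible since $\deg R=m-2\geq 1$) with all four endpoints pairwise distinct, and then show that the two other resolutions are again reducible—either of the form $\vartheta\sqcup(\,\cdot\,)$ (Step 1) or built from diagrams of strictly smaller degree (inductive hypothesis). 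Making this last reduction precise, i.e. controlling the bigon structure of the two other resolutions, is the crux of the argument; it is also the place where the inductive hypothesis and the sign in the IHX relation are genuinely used.
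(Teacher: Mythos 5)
There is a genuine gap, and it sits at the center of your argument: the sign claim $D^{\mathrm{flip}}=-D$ in Step~1 is false. The ``lens'' (two trivalent vertices joined by two parallel edges, with one external leg at each vertex) admits an automorphism exchanging its two legs that preserves both vertex orientations (rotate the planar picture by $\pi$); equivalently, when you apply IHX to a bigon edge, the one resolution that survives AS is isomorphic to the original diagram, so IHX yields a trivial identity rather than $D^{\mathrm{flip}}=-D$. Hence the two crossed gluings in $\langle x,\varsigma_2\rangle$ are \emph{equal}, not opposite, and the relation you actually obtain is $\Theta\sqcup E+2D\in P^2_0$, not $\Theta\sqcup E\in P^2_0$. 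Step~1 therefore fails, and with it your case (ii) (the diagrams $\Theta^{\sqcup r}\sqcup X^{\sqcup s}$), your base case $m=2$ (which leaned on $\Theta^{\sqcup 2}\in P^2_0$), and the input you feed into Step~2.

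The ``main obstacle'' you describe is an artifact of the same sign error, and with the correct sign the proof closes exactly along the paper's lines: for a connected degree-$m$ diagram $x$ with a bigon ($m\geq 3$, after Lemma~\ref{r-2022-05-03-02}), the relation reads $2x+\Theta\sqcup x'\in P^2_0$ with $\deg(x')=m-1\geq 2$; the inductive hypothesis gives $x'\in P^2_0$, hence $\Theta\sqcup x'\in P^2_0$ because $P^2_0$ is an ideal (Lemma~\ref{rlemma1_20211111}), and so $2x\equiv 0$, i.e.\ $x\in P^2_0$. No relation cutting one bigon edge together with an edge of $R$ is needed, and the workaround you sketch there is in any case left unproved. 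What cannot be bypassed is the base case: in degree $2$ the smoothed term $\Theta\sqcup\Theta$ has the same degree as $x$, so this induction gives nothing, and the paper instead settles $m=2$ by two explicit instances of $P^2_0$ (combined with IHX and AS) showing that the three-wheel class and $\Theta\cdot\Theta$ vanish in $\mathring{\mathcal{A}}^\wedge_0/P^2_0$; once Step~1 is removed, your outline has no substitute for that computation.
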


\begin{proof} We proceed by induction on $m\geq 2$. Let us consider the case $m=2$. Let $\Theta$, 
\myencircle{$=$} and \myencircle{\rotatebox[origin=c]{180}{\textsf{Y}}} denote the classes in $\mathring{\mathcal{A}}^\wedge_0$ of the Jacobi diagrams shown in Figure~\ref{figure-theta-dashed-pl-dashed} $(a)$, $(b)$ and $(c)$ respectively. Let $X$ denote the dashed loop as in Proposition~\ref{degree-0-of-Pn}. 

\begin{figure}[ht]
		\centering
         \includegraphics[scale=1]{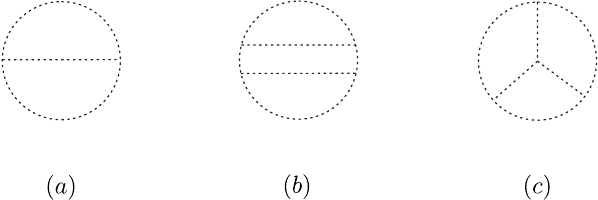}
\caption{Some elements in
  $\mathring{\mathcal{A}}^\wedge_0$. }
\label{figure-theta-dashed-pl-dashed} 								
\end{figure}

By the IHX and AS relations we have 
\begin{equation}\label{eq:2022-05-03-01}
\text{\myencircle{$=$}} {}=2 \text{ \myencircle{\rotatebox[origin=c]{180}{\textsf{Y}}}}.
\end{equation} 
One checks that $$\mathring{\mathcal{A}}^\wedge_0[2] = \mathbb{C}[X]\cdot \text{Vect}_{\mathbb{C}}\big\{\Theta\cdot\Theta, \text{\myencircle{\rotatebox[origin=c]{180}{\textsf{Y}}}}\big\}.$$

An instance of the relation $P^2_0$ and the AS relation imply the equation shown in Figure~\ref{instanceofP-2-2}. 
\begin{figure}[ht]
		\centering
         \includegraphics[scale=1]{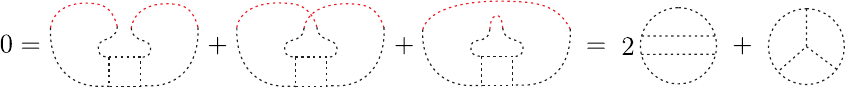}
\caption{Instance of $P^2_0$. In the second equality we have used the AS relation.}
\label{instanceofP-2-2} 								
\end{figure}

From the equation in Figure~\ref{instanceofP-2-2} we deduce $\text{\myencircle{$=$}} {}=-\frac{1}{2} \text{ \myencircle{\rotatebox[origin=c]{180}{\textsf{Y}}}}$ in $\mathring{\mathcal{A}}^\wedge_0/P^2_0$. Using this and \eqref{eq:2022-05-03-01} we obtain $\text{ \myencircle{\rotatebox[origin=c]{180}{\textsf{Y}}}} {}= 0$ in $\mathring{\mathcal{A}}^\wedge_0/P^2_0$.

\begin{figure}[ht]
		\centering
         \includegraphics[scale=1]{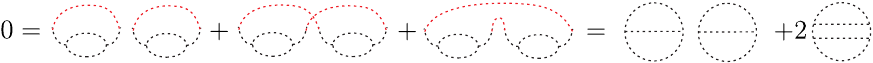}
\caption{Instance of $P^2_0$. In the second equality we have used the AS relation.}
\label{instanceofP-2} 								
\end{figure}

The instance of the relation $P^2_0$ shown in Figure~\ref{instanceofP-2} together with \eqref{eq:2022-05-03-01} imply $\Theta\cdot \Theta = -4 \text{ \myencircle{\rotatebox[origin=c]{180}{\textsf{Y}}}}$ in $\mathring{\mathcal{A}}^\wedge_0/P^2_0$ and therefore   $\Theta\cdot \Theta = 0$ in $\mathring{\mathcal{A}}^\wedge_0/P^2_0$. We deduce that $\frac{\mathring{\mathcal{A}}^\wedge_0}{P^2_0}[2]=\{0\}$. 

Let us consider now the case $m\geq 3$. Let $x\in\mathring{\mathrm{Jac}}(\vec{\varnothing},\emptyset)$  of degree $m$ and which does not contain the dashed component $X$. We consider the two cases:

\medskip

$(i)$ The diagram $x$ has at least two connected components. In this case we can write $x=x'\sqcup x''$ where $x'$ is a connected component of $x$ and $x''=x\setminus x'$. Thus, $1\leq \mathrm{deg}(x'), \mathrm{deg}(x'')\leq m-1$ and $\mathrm{deg}(x') + \mathrm{deg}(x'') = m$. If $\mathrm{deg}(x')\geq 2$, then by inductive hypothesis $x'=0$ in $\mathring{\mathcal{A}}^\wedge_0/P^2_0$, therefore $x=0$ in $\mathring{\mathcal{A}}^\wedge_0/P^2_0$. If $\mathrm{deg}(x')=1$, then $\mathrm{deg}(x'')=m-1\geq 2$. Thus $x''=0$ in $\mathring{\mathcal{A}}^\wedge_0/P^2_0$ and therefore $x=0$ in $\mathring{\mathcal{A}}^\wedge_0/P^2_0$.

\medskip

$(ii)$ The diagram $x$ has only one connected component. In this case by Lemma~\ref{r-2022-05-03-02}, we can suppose that $x$ has the form shown  in Figure~\ref{elementxwithbigon}. 
\begin{figure}[ht]
		\centering
         \includegraphics[scale=1]{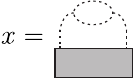}
\caption{Form of $x$.}
\label{elementxwithbigon} 
\end{figure}

Consider the instance of the relation $P^2_0$ shown in Figure~\ref{instanceofP-2-3}. 

\begin{figure}[ht]
		\centering
         \includegraphics[scale=1]{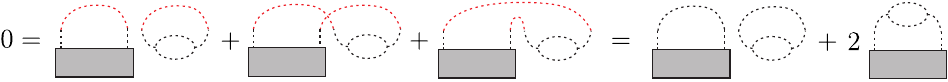}
\caption{Instance of the relation $P^2_0$. The grey part is the same as in Figure~\ref{elementxwithbigon}.}
\label{instanceofP-2-3} 
\end{figure}
Let $D$ denote the diagram with the $\Theta$ component in Figure~\ref{instanceofP-2-3}. We have  $\mathrm{deg}(D\setminus \Theta)=m-1\geq 2$, then by the inductive hypothesis $D\setminus \Theta =0$ in $\mathring{\mathcal{A}}^\wedge_0/P^2_0$. It follows that $D=0$ in $\mathring{\mathcal{A}}^\wedge_0/P^2_0$. Using this and the equation from Figure~\ref{instanceofP-2-3}, we obtain $2x=0$ and therefore $x=0$ in $\mathring{\mathcal{A}}^\wedge_0/P^2_0$.

We conclude that $\frac{\mathring{\mathcal{A}}^\wedge_0}{P^2_0}[m]=\{0\}$ for any $m\geq 2$.
\end{proof}

\begin{proposition}\label{r-2022-05-05-1} The map $\mathrm{pr}_1: \mathfrak{c}(1)\to \mathfrak{d}(1)$ is an algebra isomorphism and therefore $\mathfrak{c}(1) = \mathfrak{d}(1)$.
\end{proposition}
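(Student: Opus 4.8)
The plan is to show that the two ideals of $\mathring{\mathcal A}^\wedge_0$ appearing in the definitions of $\mathfrak c(1)$ and $\mathfrak d(1)$ actually coincide, which forces $\mathrm{pr}_1$ to be the identity and in particular an isomorphism. Recall from Definition~\ref{def:b-n} and Definition~\ref{def:alg-dn} that $\mathfrak c(1) = \mathring{\mathcal A}^\wedge_0/(P^2_0 + O^1)$ and $\mathfrak d(1) = \mathring{\mathcal A}^\wedge_0/(P^2_0 + O^1 + \mathring{\mathcal A}^\wedge_0[>1])$, and that $\mathrm{pr}_1$ is the canonical projection between these quotients. Thus it suffices to prove the inclusion $\mathring{\mathcal A}^\wedge_0[>1] \subseteq P^2_0 + O^1$; since $P^2_0 \subseteq P^2_0 + O^1$ already, it is in fact enough to establish $\mathring{\mathcal A}^\wedge_0[>1] \subseteq P^2_0$.

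First I would record that $P^2_0$ is a complete graded ideal of $\mathring{\mathcal A}^\wedge_0$ (this was observed when introducing $\mathfrak c(n)$, $P^{n+1}_0$ being homogeneous of degree $0$ and $\mathring{\mathcal A}^\wedge_0$ being complete graded), so that $P^2_0 = \widehat{\bigoplus}_{m \geq 0} P^2_0[m]$ with $P^2_0[m] := P^2_0 \cap \mathring{\mathcal A}^\wedge_0[m]$. Next, Lemma~\ref{r-2022-05-02-lemma1} gives $\big(\mathring{\mathcal A}^\wedge_0/P^2_0\big)[m] = \{0\}$ for every $m \geq 2$, which unwinds to the homogeneous containment $\mathring{\mathcal A}^\wedge_0[m] = P^2_0[m] \subseteq P^2_0$ for all $m \geq 2$. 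Taking completed direct sums over $m \geq 2$ and using completeness of $P^2_0$ yields $\mathring{\mathcal A}^\wedge_0[>1] = \widehat{\bigoplus}_{m \geq 2} \mathring{\mathcal A}^\wedge_0[m] \subseteq P^2_0$, as wanted.

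Consequently $P^2_0 + O^1 + \mathring{\mathcal A}^\wedge_0[>1] = P^2_0 + O^1$, so the quotient maps defining $\mathfrak c(1)$ and $\mathfrak d(1)$ have the same kernel; hence $\mathrm{pr}_1$ is the identity of $\mathfrak c(1) = \mathfrak d(1)$, which is in particular an algebra isomorphism. As a byproduct, combined with Proposition~\ref{r2022-04-12-dn} and Theorem~\ref{mainr-2022-02-14}, the Kirby structures $(\mathfrak c(1), \overline{j}_1 \circ \varphi_1 \circ \overline{\mathrm{cs}}^\nu \circ Z_{\mathcal A})$ and $(\mathfrak d(1), \mathrm{pr}_1 \circ \overline{j}_1 \circ \varphi_1 \circ \overline{\mathrm{cs}}^\nu \circ Z_{\mathcal A})$ coincide.

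The real content of the argument is Lemma~\ref{r-2022-05-02-lemma1}, which has already been proved, so I do not expect a genuine obstacle here. The one point requiring a line of care is the passage from the vanishing of $\big(\mathring{\mathcal A}^\wedge_0/P^2_0\big)[m]$ for each individual $m \geq 2$ to the inclusion at the level of the complete graded subspace $\mathring{\mathcal A}^\wedge_0[>1]$; this is exactly where completeness of the ideal $P^2_0$ enters, and I would state it explicitly rather than leave it implicit.
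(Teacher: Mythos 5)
Your argument is correct, and it is a more direct route than the one in the paper, though both hinge on the same key ingredient, Lemma~\ref{r-2022-05-02-lemma1}. You prove directly that the defining ideals coincide: since $P^2_0$ is a complete graded ideal, the vanishing of $\big(\mathring{\mathcal A}^\wedge_0/P^2_0\big)[m]$ for $m\geq 2$ unwinds to $\mathring{\mathcal A}^\wedge_0[m]=P^2_0[m]$, and completeness gives $\mathring{\mathcal A}^\wedge_0[>1]\subseteq P^2_0$, so $P^2_0+O^1+\mathring{\mathcal A}^\wedge_0[>1]=P^2_0+O^1$ and $\mathrm{pr}_1$ is literally the identity. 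The paper instead composes $\mathrm{pr}_1$ with the isomorphism $\mathfrak d(1)\simeq\mathfrak e(1)$ of Lemma~\ref{r2022-04-12-ohtlemma} and checks degree by degree that the resulting map $\widetilde{\mathrm{pr}}_1:\mathfrak c(1)\to\mathfrak e(1)$ is an isomorphism: Proposition~\ref{degree-0-of-Pn}$(b)$ handles degree $0$, a direct computation handles degree $1$ (identifying $\mathfrak c(1)[1]$ as the line spanned by $\Theta$), and Lemma~\ref{r-2022-05-02-lemma1} kills the degrees $\geq 2$. Your approach dispenses with the $\mathfrak e(1)$ detour and the low-degree computations altogether, because adding the graded ideal $\mathring{\mathcal A}^\wedge_0[>1]$ cannot affect the degree $0$ and $1$ pieces of a quotient by graded ideals; what the paper's version buys in exchange is the explicit description of $\mathfrak c(1)$ as the two-dimensional algebra spanned by the classes of the empty diagram and of $\Theta$, which your argument does not need and does not produce. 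Your explicit flag about where completeness of $P^2_0$ enters is exactly the right point to make precise.
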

\begin{proof}

Consider the surjective algebra homomorphism $\widetilde{\mathrm{pr}}_1: \mathfrak{c}(1)\to \mathfrak{e}(1)$ given by the composition
$$ \mathfrak{c}(1)=\frac{\mathring{\mathcal{A}}_0}{P^{2}_0 + O^1 }\xrightarrow{\ \mathrm{pr}_1 \ } \mathfrak{d}(1)=\frac{\mathring{\mathcal{A}}_0}{P^{2}_0 + O^1 + (\mathring{\mathrm{deg}}>1)}\simeq\frac{{\mathcal{A}}_0}{(\mathrm{deg}>1)}=\mathfrak{e}(1),$$
where the last isomorphism is given by Lemma~\ref{r2022-04-12-ohtlemma}. Recall that $\mathfrak{e}(1)=\mathfrak{e}(1)[0]\oplus \mathfrak{e}(1)[1]$ is a two dimensional $\mathbb{C}$-vector space where $\mathfrak{e}(1)[0]$ is spanned by the class in $\mathfrak{e}(1)$ of the empty Jacobi diagram (of degree $0$) and $\mathfrak{e}(1)[1]$ is spanned by the class in $\mathfrak{e}(1)$ of  the theta Jacobi diagram  (of degree $1$) $\Theta$ shown in Figure~\ref{figure-theta-dashed-pl-dashed}$(a)$. Let us study the restriction of the map $\widetilde{\mathrm{pr}}_1$ to the degree components 
$\mathfrak{c}(1)[m]=\frac{\mathring{\mathcal{A}}^\wedge_0[m]}{P^{2}_0[m] + O^1[m]}$ of $\mathfrak{c}(1)$ where the degree is given by half of the number of trivalent vertices. By Proposition~\ref{degree-0-of-Pn}, we have $\mathfrak{c}(1)[0]\simeq \mathbb{C}$ spanned by the class in $\mathfrak{c}(1)$ of the empty Jacobi diagram. We also have that $\mathfrak{c}(1)[1]\simeq \mathbb{C}$ is spanned by the class in $\mathfrak{c}(1)$ of the theta Jacobi diagram $\Theta$. Thus, $\widetilde{\mathrm{pr}}_1: \mathfrak{c}(1)\to \mathfrak{e}(1)$ induces  isomorphisms $\mathfrak{c}(1)[0]\simeq \mathfrak{e}(1)[0]$ and $\mathfrak{c}(1)[1]\simeq \mathfrak{e}(1)[1]$. Besides, by Lemma~\ref{r-2022-05-02-lemma1}, $\frac{\mathring{\mathcal{A}}^\wedge_0[m]}{P^{2}_0[m]} =\{0\}$ for any $m\geq 2$, hence $\mathfrak{c}(1)[m]=\{0\}$ for any $m\geq 2$. We conclude that the map  $\widetilde{\mathrm{pr}}_1: \mathfrak{c}(1)\to \mathfrak{e}(1)$ is an isomorphism and therefore ${\mathrm{pr}}_1: \mathfrak{c}(1)\to \mathfrak{d}(1)$ is too. 

\end{proof}

\subsection{The families of \texorpdfstring{$3$}{3}-manifold invariants \texorpdfstring{$\{\Omega^{\mathfrak{c}}_n\}_{n\geq 1}$}{tilde-Omega-n} and \texorpdfstring{$\{\Omega^{\mathfrak{d}}_n\}_{n\geq 1}$}{Omega-n}}\label{sec:3:6-2022-04-18}

\begin{theorem}\label{r2022-04-12theinvariants} For any integer $n\geq 1$, there exist invariants of closed oriented $3$-manifolds
\begin{equation}
\Omega^{\mathfrak{c}}_n: 3\text{-}\mathrm{Mfds}\longrightarrow \mathfrak{c}(n)\quad \quad \text{and} \quad \quad  \Omega^{\mathfrak{d}}_n: 3\text{-}\mathrm{Mfds}\longrightarrow \mathfrak{d}(n)
\end{equation}  \index[notation]{\Omega^{\mathfrak{c}}_n@$\Omega^{\mathfrak{c}}_n$}  \index[notation]{\Omega^{\mathfrak{d}}_n@$\Omega^{\mathfrak{d}}_n$}
with values in the algebras $\mathfrak{c}(n)$ and $\mathfrak{d}(n)$, explicitly given by $\Omega^{\mathfrak{c}}_n:=Z_{(\mathfrak{c}(n),\overline{j}_n\circ  \varphi_n\circ\overline{\mathrm{cs}}^\nu\circ {Z}_{\mathcal A})}$ and $\Omega^{\mathfrak{d}}_n:=Z_{(\mathfrak{d}(n),\mathrm{pr}_n\circ\overline{j}_n\circ \varphi_n\circ \overline{\mathrm{cs}}^\nu\circ {Z}_{\mathcal A})}$.
Moreover, these invariants are monoid homomorphisms and for any closed oriented $3$-manifold $Y$ we have $\mathrm{pr}_n(\Omega^{\mathfrak{c}}_n(Y)) = \Omega^{\mathfrak{d}}_n(Y)$, where $\mathrm{pr}_n:\mathfrak{c}(n)\to \mathfrak{d}(n)$ is the canonical projection.
\end{theorem}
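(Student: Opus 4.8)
The plan is to obtain $\Omega^{\mathfrak c}_n$ and $\Omega^{\mathfrak d}_n$ by applying the monoidal functor $\mathcal{KS}\to\mathcal{I}nv$ of Proposition~\ref{cor:invt:3-vars:alg} to the two Kirby structures already produced in the previous subsections. First I would recall that, by Theorem~\ref{mainr-2022-02-14}, the pair $(\mathfrak c(n),\overline{j}_n\circ\varphi_n\circ\overline{\mathrm{cs}}^\nu\circ Z_{\mathcal A})$ is a Kirby structure and, by Proposition~\ref{r2022-04-12-dn}, so is $(\mathfrak d(n),\mathrm{pr}_n\circ\overline{j}_n\circ\varphi_n\circ\overline{\mathrm{cs}}^\nu\circ Z_{\mathcal A})$; here $Z_{\mathcal A}$ in both composites denotes the algebra morphism $\mathfrak{Kir}\to\mathfrak s(\mathbf A)$ induced by the Kontsevich integral (Proposition~\ref{l:LMOinvt}), followed by $\overline{\mathrm{cs}}^\nu:\mathfrak s(\mathbf A)\to\mathfrak a$, $\varphi_n:\mathfrak a\to\mathfrak b(n)$, $\overline{j}_n:\mathfrak b(n)\to\mathfrak c(n)$, and in the second case by $\mathrm{pr}_n:\mathfrak c(n)\to\mathfrak d(n)$. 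Applying $\mathcal{KS}\to\mathcal{I}nv$ to these two objects then yields the algebra-valued invariants $\Omega^{\mathfrak c}_n=Z_{(\mathfrak c(n),\overline{j}_n\circ\varphi_n\circ\overline{\mathrm{cs}}^\nu\circ Z_{\mathcal A})}$ and $\Omega^{\mathfrak d}_n=Z_{(\mathfrak d(n),\mathrm{pr}_n\circ\overline{j}_n\circ\varphi_n\circ\overline{\mathrm{cs}}^\nu\circ Z_{\mathcal A})}$, each of which is, by the definition of $\mathcal{I}nv$, a monoid homomorphism $3\text{-}\mathrm{Mfds}\to(A,\cdot)$ for the respective target algebra.

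For the last identity, I would verify that $\mathrm{pr}_n$ is a morphism of Kirby structures $(\mathfrak c(n),\mu_{\mathfrak c})\to(\mathfrak d(n),\mu_{\mathfrak d})$, where $\mu_{\mathfrak c}=\overline{j}_n\circ\varphi_n\circ\overline{\mathrm{cs}}^\nu\circ Z_{\mathcal A}$ and $\mu_{\mathfrak d}=\mathrm{pr}_n\circ\mu_{\mathfrak c}$. This is immediate: $\mathrm{pr}_n$ is an algebra homomorphism by construction, and the equality $\mu_{\mathfrak d}=\mathrm{pr}_n\circ\mu_{\mathfrak c}$ holds by the very definition of $\mu_{\mathfrak d}$, so $\mathrm{pr}_n\in\mathcal{KS}\big((\mathfrak c(n),\mu_{\mathfrak c}),(\mathfrak d(n),\mu_{\mathfrak d})\big)$ (Definition~\ref{def:catSKSandKS}). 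The functoriality clause of Proposition~\ref{cor:invt:3-vars:alg}, namely that $Z_{(B,\nu)}(Y)=\phi(Z_{(A,\mu)}(Y))$ for any morphism $\phi$ of Kirby structures and any $Y\in 3\text{-}\mathrm{Mfds}$, applied with $\phi=\mathrm{pr}_n$, then gives $\Omega^{\mathfrak d}_n(Y)=\mathrm{pr}_n\big(\Omega^{\mathfrak c}_n(Y)\big)$ for every closed oriented $3$-manifold $Y$.

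Since every step is a direct application of results already in place, there is no genuine obstacle here; the only point requiring care is bookkeeping — ensuring the two structure maps $\mu_{\mathfrak c}$ and $\mu_{\mathfrak d}$ are written with the common factorization through $\mathfrak{Kir}\to\mathfrak s(\mathbf A)\to\mathfrak a\to\mathfrak b(n)\to\mathfrak c(n)$, so that $\mathrm{pr}_n$ literally realizes $\mu_{\mathfrak d}$ as its post-composition with $\mu_{\mathfrak c}$, and then invoking precisely the functoriality part of Proposition~\ref{cor:invt:3-vars:alg} rather than only its object-level statement. For completeness one may also remark that $\Omega^{\mathfrak c}_n$ and $\Omega^{\mathfrak d}_n$ are compatible with the $\mathbb{Z}_{\geq 0}$-gradings of $\mathfrak c(n)$ and $\mathfrak d(n)$, since all the morphisms $\overline{\mathrm{cs}}^\nu$, $\varphi_n$, $\overline{j}_n$ and $\mathrm{pr}_n$ appearing in the construction are graded.
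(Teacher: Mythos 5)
Your proposal is correct and follows essentially the same route as the paper: both obtain $\Omega^{\mathfrak c}_n$ and $\Omega^{\mathfrak d}_n$ by applying the functor $\mathcal{KS}\to\mathcal{I}nv$ of Proposition~\ref{cor:invt:3-vars:alg} to the Kirby structures furnished by Theorem~\ref{mainr-2022-02-14} and Proposition~\ref{r2022-04-12-dn}, and deduce $\mathrm{pr}_n(\Omega^{\mathfrak c}_n(Y))=\Omega^{\mathfrak d}_n(Y)$ from the functoriality clause of that proposition applied to the Kirby-structure morphism $\mathrm{pr}_n$. Your explicit verification that $\mathrm{pr}_n$ is such a morphism merely spells out what the paper invokes implicitly when it says the last statement follows from Proposition~\ref{cor:invt:3-vars:alg}.
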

\begin{proof}
By Theorem~\ref{mainr-2022-02-14} and Proposition~\ref{r2022-04-12-dn}, for any $n\geq 1$,  the pairs $(\mathfrak{c}(n),\overline{j}_n\circ  \varphi_n\circ\overline{\mathrm{cs}}^\nu\circ {Z}_{\mathcal A})$ and $(\mathfrak{d}(n),\mathrm{pr}_n\circ\overline{j}_n\circ \varphi_n\circ \overline{\mathrm{cs}}^\nu\circ {Z}_{\mathcal A})$ are Kirby structures, that is, they belong to the category~$\mathcal{KS}$ (see Definition~\ref{def:catSKSandKS}). We can then apply the functor $\mathcal{KS}\to \mathcal{I}nv$ from Proposition~\ref{cor:invt:3-vars:alg} to obtain invariants of $3$-manifolds $\Omega^{\mathfrak{c}}_n := Z_{(\mathfrak{c}(n),\overline{j}_n\circ \varphi_n\circ\overline{cs}^\nu\circ {Z}_{\mathcal A})}$ and $\Omega^{\mathfrak{d}}_n:= Z_{(\mathfrak{d}(n),\mathrm{pr}_n\circ\overline{j}_n\circ \varphi_n\circ\overline{cs}^\nu\circ {Z}_{\mathcal A})}$~:
\begin{equation}
\Omega^{\mathfrak{c}}_n: 3\text{-}\mathrm{Mfds}\longrightarrow \mathfrak{c}(n)\quad \quad \text{and} \quad \quad  \Omega^{\mathfrak{d}}_n: 3\text{-}\mathrm{Mfds}\longrightarrow \mathfrak{d}(n)
\end{equation}
with values in the algebras $\mathfrak{c}(n)$ and $\mathfrak{d}(n)$, respectively. The last statement follows  from Proposition~\ref{cor:invt:3-vars:alg}.
\end{proof}

\begin{remark} The invariant $\Omega^{\mathfrak{d}}_n: 3\text{-}\mathrm{Mfds}\longrightarrow \mathfrak{d}(n)\simeq\mathfrak{e}(n)$, is the one considered in \cite{LMO98,Ohts}. The algebra $\mathfrak{e}(n)$ will be introduced in Definition~\ref{def:definitionofen} and the isomorphism $\mathfrak{d}(n)\simeq\mathfrak{e}(n)$ is the one given in Lemma~\ref{r2022-04-12-ohtlemma}.
\end{remark}

\subsection{Equality of \texorpdfstring{$\Omega^{\mathfrak{c}}_1$}{tildeOmega-1} and \texorpdfstring{$\Omega^{\mathfrak{d}}_1$}{Omega-1}}\label{sec::equalityomega1comega1d}

\begin{corollary}  For any $Y\in 3\text{-}\mathrm{Mfds}$, we have $\Omega^{\mathfrak{c}}_1(Y)=\Omega^{\mathfrak{d}}_1(Y)$.
\end{corollary}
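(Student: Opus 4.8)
The plan is to deduce the equality directly from the fact, just proved, that the projection $\mathrm{pr}_1:\mathfrak{c}(1)\to\mathfrak{d}(1)$ is an isomorphism of algebras (Proposition~\ref{r-2022-05-05-1}), combined with the compatibility statement contained in Theorem~\ref{r2022-04-12theinvariants}.

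First I would recall that, by the last part of Theorem~\ref{r2022-04-12theinvariants} applied with $n=1$, for every $Y\in 3\text{-}\mathrm{Mfds}$ one has $\mathrm{pr}_1\big(\Omega^{\mathfrak{c}}_1(Y)\big)=\Omega^{\mathfrak{d}}_1(Y)$, where $\mathrm{pr}_1:\mathfrak{c}(1)\to\mathfrak{d}(1)$ is the canonical projection. By Proposition~\ref{r-2022-05-05-1}, this map $\mathrm{pr}_1$ is an algebra isomorphism; in particular $\mathfrak{c}(1)=\mathfrak{d}(1)$ and $\mathrm{pr}_1$ is (under this identification) the identity map. Substituting $\mathrm{pr}_1=\mathrm{Id}$ into the displayed equality then gives $\Omega^{\mathfrak{c}}_1(Y)=\Omega^{\mathfrak{d}}_1(Y)$ for every $Y$.

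More precisely, one should note that $\Omega^{\mathfrak{d}}_1=Z_{(\mathfrak{d}(1),\mathrm{pr}_1\circ\overline{j}_1\circ\varphi_1\circ\overline{\mathrm{cs}}^\nu\circ Z_{\mathcal A})}$ and, by the functoriality statement of Proposition~\ref{cor:invt:3-vars:alg} (the morphism of Kirby structures $\mathrm{pr}_1:\mathfrak{c}(1)\to\mathfrak{d}(1)$ induces the relation $Z_{(\mathfrak{d}(1),\nu')}(Y)=\mathrm{pr}_1\big(Z_{(\mathfrak{c}(1),\mu')}(Y)\big)$), we get $\Omega^{\mathfrak{d}}_1(Y)=\mathrm{pr}_1\big(\Omega^{\mathfrak{c}}_1(Y)\big)$. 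Since $\mathrm{pr}_1$ is an isomorphism identifying $\mathfrak{c}(1)$ with $\mathfrak{d}(1)$, the two invariants coincide. This is essentially a one-line argument, so there is no real obstacle; the only point to be careful about is to make explicit that "$\mathrm{pr}_1$ is an isomorphism'' allows us to identify the two target algebras and hence to read the equality $\mathrm{pr}_1(\Omega^{\mathfrak{c}}_1(Y))=\Omega^{\mathfrak{d}}_1(Y)$ as $\Omega^{\mathfrak{c}}_1(Y)=\Omega^{\mathfrak{d}}_1(Y)$. All the substantial work has already been done in Proposition~\ref{r-2022-05-05-1} (via Lemmas~\ref{r-2022-05-03-02} and~\ref{r-2022-05-02-lemma1}) and in Theorem~\ref{r2022-04-12theinvariants}.
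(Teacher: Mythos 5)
Your argument is correct and is essentially the same as the paper's own proof: both combine the identity $\mathrm{pr}_1(\Omega^{\mathfrak{c}}_1(Y))=\Omega^{\mathfrak{d}}_1(Y)$ from Theorem~\ref{r2022-04-12theinvariants} with the fact that $\mathrm{pr}_1:\mathfrak{c}(1)\to\mathfrak{d}(1)$ is an isomorphism (Proposition~\ref{r-2022-05-05-1}), under which $\mathrm{pr}_1$ becomes the identity. No gaps; your extra remark on the functoriality of Proposition~\ref{cor:invt:3-vars:alg} merely makes explicit what the theorem already records.
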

\begin{proof} By Theorem~\ref{r2022-04-12theinvariants} we have $\mathrm{pr}_1(\Omega_1^{\mathfrak{c}}(Y)) = \Omega^{\mathfrak{d}}_1(Y)$ and by Proposition~\ref{r-2022-05-05-1}  $\mathfrak{c}(1) =  \mathfrak{d}(1)$. Therefore $\Omega^{\mathfrak{c}}_1(Y)=\Omega^{\mathfrak{d}}_1(Y)$.
\end{proof}

\subsection{On the construction of the algebras \texorpdfstring{$\mathfrak{c}(n)$}{bn}}\label{sec:3:7}

In this subsection we consider integers $m,n\geq 1$. By Lemma~\ref{lemma:deg-of-j-n} the algebra homomorphism $j_n^{\mathrm{LMO}}:\bigoplus_{k\geq 0}\mathring{\mathcal{A}}^\wedge_k\to \mathring{\mathcal{A}}^\wedge_0$ is homogeneous, therefore $j_n^{\mathrm{LMO}}\big(P^{m+1}+L^{<2m}+ (\mathrm{CO}\mathring{\mathcal{A}}^\wedge)\big)$ is a graded subspace of $\mathring{\mathcal{A}}^\wedge_0$.

If $J\subset \mathring{\mathcal{A}}^\wedge_0$ is a graded ideal containing $j_n^{\mathrm{LMO}}\big(P^{m+1}+L^{<2m}+ (\mathrm{CO}\mathring{\mathcal{A}}^\wedge)\big)$, then the map $j_n^{\mathrm{LMO}}$ induces an algebra homomorphism
\begin{equation}
\overline{j}_{n,m,J}: \mathfrak{b}(m)\longrightarrow \frac{\mathring{\mathcal{A}}^\wedge_0}{J},
\end{equation}  \index[notation]{j_{n,m,J}@$\overline{j}_{n,m,J}$}
where $\mathfrak{b}(m)$ is as in Definition~\ref{def:space-a-n}. Hence, by Lemma~\ref{sec:1.8lemma1}, the pair $\big(\mathring{\mathcal{A}}^\wedge_0/J, \overline{j}_{n,m,J}\circ\varphi_n\circ \overline{\mathrm{cs}}^\nu\circ {Z}_{\mathcal A}\big)$ is a semi-Kirby structure. In particular, if $n=m$ and $J=P^{n+1}_0 + O^n$ then $\mathring{\mathcal{A}}^\wedge_0/J = \mathfrak{c}(n)$ (see Definition~\ref{def:b-n}) and the algebra homomorphism $\overline{j}_{n,n,J}$ is the algebra  homomorphism $\overline{j}_n$ in Proposition~\ref{r2022-04-08-jn}. Moreover, by Theorem~\ref{mainr-2022-02-14}, the pair $\big(\mathfrak{c}(n),\overline{j}_{n}\circ \varphi_n \circ \overline{\mathrm{cs}}^\nu\circ{Z}_{\mathcal A}\big)$ is a Kirby structure. We have the following.

\begin{proposition} For $m\not = n$ there is no graded ideal $J$ of  $\mathring{\mathcal{A}}^\wedge(\emptyset)$ containing  $j_n^{\mathrm{LMO}}\big(P^{m+1}+L^{2m}+ (\mathrm{CO}\mathring{\mathcal{A}}^\wedge)\big)$ and such that the pair $\big(\mathring{\mathcal{A}}^\wedge_0/J, \overline{j}_{n,m,J}\circ \varphi_n\circ \overline{\mathrm{cs}}^\nu\circ{Z}_{\mathcal A}\big)$ is a Kirby structure.
\end{proposition}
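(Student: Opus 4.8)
'sThe plan is to argue by computing the degree-$0$ part of the candidate quotient $\mathring{\mathcal{A}}^\wedge_0/J$ and showing it forces the images of $U^\pm$ to be non-invertible whenever $m\neq n$. The key structural input is the degree shift of $j_n^{\mathrm{LMO}}$: by Lemma~\ref{lemma:deg-of-j-n} (equivalently \eqref{equ:2023-10-10}), $j_n^{\mathrm{LMO}}$ sends the degree-$d$, circle-degree-$k$ part of $\bigoplus_{k\ge 0}\mathring{\mathcal{A}}^\wedge_k$ into $\mathring{\mathcal{A}}^\wedge_0[d-kn]$. The point is that for the pair to be a Kirby structure, the elements $\overline{j}_{n,m,J}\circ\varphi_n\circ\overline{\mathrm{cs}}^\nu\circ Z_{\mathcal A}(U^\pm)$ must be invertible in $\mathring{\mathcal{A}}^\wedge_0/J$; since $J$ is graded and $\mathring{\mathcal{A}}^\wedge_0$ is a complete graded algebra, this is equivalent to invertibility of the degree-$0$ component, hence (in particular) to non-vanishing of the image of the degree-$0$ part of $j_n^{\mathrm{LMO}}(\mathrm{cs}^\nu\circ Z_{\mathcal A}(U^\pm))$ in $(\mathring{\mathcal{A}}^\wedge_0/J)[0]$.

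The heart of the computation is already available: by Proposition~\ref{calcul-j-n-cs-Z-U-pm-deg0},
\[
j^{\mathrm{LMO}}_n\big(\mathrm{cs}^{\nu}\circ Z_{\mathcal A}(U^{\pm})\big)[0] =\frac{1}{n!}\Big(\pm\tfrac{1}{2}\Big)^n X(X+2)\cdots (X+2(n-1)),
\]
and by Lemma~\ref{calcul-of-j-n-theta-n}, $j^{\mathrm{LMO}}_n(\theta_n) = X(X+2)\cdots(X+2(n-1))$ as well; more generally $j^{\mathrm{LMO}}_n(\theta_r)=0$ whenever $r<n$ (since then $\theta_r^{(n)}$ always produces a fiber of cardinality $\le 1$ under the relevant map, killed by $T_0=T_1=0$), and $j^{\mathrm{LMO}}_n(\theta_r)$ lies in $\mathring{\mathcal{A}}^\wedge_0[r-n]$ for $r>n$, so it contributes nothing in degree $0$. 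Therefore, using $\mathrm{cs}^\nu\circ Z_{\mathcal A}(U^{\pm})=\exp_\#\!\big(\pm\tfrac12\theta_1+(\ge 1\text{ trivalent vertices})\big)$ exactly as in the proof of Lemma~\ref{reduction-calcul-j-n-cs-Z-U}, one gets
\[
\overline{j}_{n,m,J}\circ\varphi_n\circ\overline{\mathrm{cs}}^\nu\circ Z_{\mathcal A}(U^{\pm})\,[0]
= \frac{1}{n!}\Big(\pm\tfrac12\Big)^n\, \big[X(X+2)\cdots(X+2(n-1))\big]
\]
in $(\mathring{\mathcal{A}}^\wedge_0/J)[0]$. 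The task is thus to show that when $m\neq n$, the class of $X(X+2)\cdots(X+2(n-1))$ in $(\mathring{\mathcal{A}}^\wedge_0/J)[0]$ is not invertible — indeed I expect it to vanish. The first step is to identify $(\mathring{\mathcal{A}}^\wedge_0/J)[0]=\mathbb{C}[X]/I$ for some ideal $I\subset\mathbb{C}[X]$, using $\mathring{\mathcal{A}}^\wedge_0[0]=\mathbb{C}[X]$ (from the analysis preceding Proposition~\ref{degree-0-of-Pn}). Since $J$ must contain $j_n^{\mathrm{LMO}}(P^{m+1}+L^{<2m}+(\mathrm{CO}\mathring{\mathcal{A}}^\wedge))$, I would pin down the degree-$0$ part of this image: by the same reasoning as in Proposition~\ref{r2022-04-08-jn} combined with Proposition~\ref{degree-0-of-Pn}$(a)$, one has $j_n^{\mathrm{LMO}}(P^{m+1})[0]=P^{m+1}_0[0]=\big(X(X+2)\cdots(X+2m)\big)$ as an ideal of $\mathbb{C}[X]$, and $j_n^{\mathrm{LMO}}(L^{<2m})[0]\ni X(X+2)\cdots(X+2(m-1))$ (apply $j^{\mathrm{LMO}}_n$ to $\theta_{m-1}\cdot(\text{something})$, or directly use that $\theta_r\in L^{<2m}_1$ for $r<m$ and note $j^{\mathrm{LMO}}_n(\theta_r)$ hits $X(X+2)\cdots(X+2(r-1))$ for suitable $r\le m-1$ after re-examination — this is the one bookkeeping point to get right). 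Hence $I$ contains $X(X+2)\cdots(X+2(m-1))$, and therefore in $\mathbb{C}[X]/I$ the class of $X(X+2)\cdots(X+2(n-1))$ is a zero divisor whenever $n\ge m$ (it divides, up to unit, a zero element), and when $n<m$ one instead uses that $X(X+2)\cdots(X+2(n-1))$ is a proper factor of $X(X+2)\cdots(X+2(m-1))\in I$ so its class is a zero divisor as well — in either case non-invertible. This contradicts the Kirby structure hypothesis, which proves the proposition.

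The main obstacle I anticipate is the precise determination of $j_n^{\mathrm{LMO}}(L^{<2m})[0]$ and $j_n^{\mathrm{LMO}}(P^{m+1})[0]$ as ideals of $\mathbb{C}[X]$: one must show these are exactly what Proposition~\ref{degree-0-of-Pn}-type arguments predict, rather than something larger that could accidentally make $X(X+2)\cdots(X+2(n-1))$ invertible. Concretely, the subtlety is that $j_n^{\mathrm{LMO}}$ contracts $k$ circles at the cost of $2kn$ in degree, so elements of $L^{<2m}$ or $P^{m+1}$ sitting in positive circle-degree can land in degree $0$ of $\mathring{\mathcal{A}}^\wedge_0$ only from a tightly constrained range of degrees, and I would need the combinatorial lemmas on $\mathrm{FPFI}$ (Lemma~\ref{equinFPFI}) and on $\theta_r^{(n)}$ (as in Lemma~\ref{calcul-of-j-n-theta-n}) to show the resulting ideal is generated by products of the form $X(X+2)\cdots(X+2(j-1))$ with $j\le m$. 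Once that is in hand, the conclusion that $X(X+2)\cdots(X+2(n-1))$ is a zero divisor in $(\mathring{\mathcal{A}}^\wedge_0/J)[0]$ for every $m\neq n$ — hence the pair is never a Kirby structure — is immediate, and the proof is complete; I would close by noting that this is exactly the sense in which the choice $m=n$, $J=P^{n+1}_0+O^n$ leading to $\mathfrak{c}(n)$ and $\overline{j}_n$ is forced, complementing Theorem~\ref{mainr-2022-02-14}.
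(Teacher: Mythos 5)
Your overall strategy (reduce to non-invertibility of the degree-$0$ images of $U^\pm$ in $(\mathring{\mathcal{A}}^\wedge_0/J)[0]$) is the same as the paper's, but the way you try to close the argument has a genuine gap, concentrated in the case $n<m$. You reduce everything to showing that the class of $X(X+2)\cdots(X+2(n-1))$ in $\mathbb{C}[X]/I$, $I:=J[0]$, is a zero divisor, by exhibiting a polynomial in $I$ that it divides or that divides it. But $J$ is an \emph{arbitrary} graded ideal containing the image, so you have no upper bound on $I$, and a divisibility/zero-divisor argument cannot rule out invertibility without one: your own flagged ``main obstacle'' is not a bookkeeping point but a real obstruction, as the case $m=n$ itself shows (there $J[0]\supseteq(X+2n)$ and the very same polynomial $X(X+2)\cdots(X+2(n-1))$ becomes invertible, which is how $\mathfrak{c}(n)$ works). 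Moreover two of your supporting claims are false: $j_n^{\mathrm{LMO}}(\theta_r)$ lies in degree $r-n$ and vanishes for $r<n$ (Lemma~\ref{r:jLMOanddoubling} plus $T_0=T_1=0$), so it never ``hits'' $X(X+2)\cdots(X+2(r-1))$ for $r\neq n$; and for $n>m$ one has $j_n^{\mathrm{LMO}}(L^{<2m})=0$, so $X(X+2)\cdots(X+2(m-1))$ does not enter $I$ from $L^{<2m}$ at all.

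The missing idea is that only a \emph{membership} statement (a lower bound on $J$) is needed, never an exact determination of $J[0]$. For $n<m$, since $2n<2m$ the element $\theta_n$ lies in $L^{<2m}$, so by Lemma~\ref{reduction-calcul-j-n-cs-Z-U} the degree-$0$ part $j_n^{\mathrm{LMO}}(\mathrm{cs}^\nu\circ Z_{\mathcal A}(U^\pm))[0]=\frac{1}{n!}(\pm\frac12)^n j_n^{\mathrm{LMO}}(\theta_n)$ already belongs to $j_n^{\mathrm{LMO}}(L^{<2m})[0]\subset J[0]$; its class is therefore zero, with no control of $I$ required. For $n>m$, use instead that $P^{m+1}_0=j_n^{\mathrm{LMO}}(P^{m+1}_0)\subset j_n^{\mathrm{LMO}}(P^{m+1})\subset J$, so by Proposition~\ref{degree-0-of-Pn}$(a)$ the ideal $J[0]$ contains $X(X+2)\cdots(X+2m)$, of which $X(X+2)\cdots(X+2(n-1))$ is a multiple (as $n-1\geq m$); hence again the degree-$0$ image of $U^\pm$ is zero in the quotient. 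In both cases vanishing of the degree-$0$ component forces non-invertibility in the graded quotient, which is the contradiction. Your case $n>m$ is thus repairable by replacing the (incorrect) appeal to $L^{<2m}$ with $P^{m+1}$, but your case $n<m$ needs the membership observation above rather than the zero-divisor route.
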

\begin{proof}
Suppose, contrary to our claim, that such a graded ideal $J$ of $\mathring{\mathcal{A}}^\wedge_0$ exists. We consider two cases.

If $n<m$, the element $\theta_n$ (described in Lemma~\ref{reduction-calcul-j-n-cs-Z-U}) belongs the ideal $L^{<2m}$.  Using Lemma~\ref{reduction-calcul-j-n-cs-Z-U}, we have
\begin{equation*}
\begin{split}
j_n^{\mathrm{LMO}}\big(\mathrm{cs}^{\nu}\circ{Z}_{\mathcal A}(U^\pm)\big)[0]&=\frac{1}{n!}\left(\pm\frac{1}{2}\right)^n j_1^{\mathrm{LMO}}(\theta_n^{(n)})\in j_n^{\mathrm{LMO}}(L^{<2m})[0]\\
& \subset j_n^{\mathrm{LMO}}\big(P^{m+1} + L^{<2m}+ (\mathrm{CO}\mathring{\mathcal{A}}^\wedge)\big)[0] \subset J[0].
\end{split}
\end{equation*}
Hence, the elements $\overline{j}_{n,m,J}\circ \varphi_n\circ \overline{\mathrm{cs}}^\nu\circ {Z}_{\mathcal A}(U^\pm)[0]$, which are the classes of $j_n^{\mathrm{LMO}}\big(\mathrm{cs}^{\nu}\circ{Z}_{\mathcal A}(U^\pm)\big)[0]$ in $\mathring{\mathcal{A}}^\wedge_0[0]/J[0]$, are zero. Therefore, the elements $\overline{j}_{n,m,J}\circ\varphi_n\circ \overline{\mathrm{cs}}^\nu\circ {Z}_{\mathcal A}(U^\pm)$ are not invertible in $\mathring{\mathcal{A}}^\wedge_0/J$, that is, the pair $\big(\mathring{\mathcal{A}}^\wedge(\emptyset)/J,\overline{j}_{n,m,J}\circ\varphi_n\circ\overline{\mathrm{cs}}^\nu\circ {Z}_{\mathcal A}\big)$ is not a Kirby structure.

Now, let us assume $n>m$. In the proof of Proposition~\ref{r2022-04-08-jn} we have seen that $j_n^{\mathrm{LMO}}(L^{<2n})=j_n^{\mathrm{LMO}}((\mathrm{CO}\mathring{\mathcal{A}}^\wedge))= 0$ and the same argument used to show  $j_n^{\mathrm{LMO}}(P^{n+1})\subset P^{n+1}_0$ works to show $j_n^{\mathrm{LMO}}(P^{m+1})$ is a subset of  $P^{m+1}_0$. Since $L^{<2n}\subset L^{<2m}$ (because $n>m$), then $j_n^{\mathrm{LMO}}(L^{<2m})=0$. Besides, $P^{m+1}_0\subset P^{m+1}$ and the restriction of $j_n^{\mathrm{LMO}}:\bigoplus_{k\geq 0}\mathring{\mathcal{A}}^\wedge_k\to \mathring{\mathcal{A}}^\wedge_0$ to $\mathring{\mathcal{A}}^\wedge_0$ is the identity, therefore $P^{m+1}_0 = j_n^{\mathrm{LMO}}(P^{m+1}_0)\subset j^{\mathrm{LMO}}_n(P^{m+1})$. Thus, we have $j^{\mathrm{LMO}}_n(P^{m+1}) = P^{m+1}_0$. 
To sum up, 
$$P^{m+1}_0 = j^{\mathrm{LMO}}_n\big(P^{m+1} + L^{2m} + (\mathrm{CO}\mathring{\mathcal{A}}^\wedge)\big) \subset J.$$
From this, it follows that $\mathring{\mathcal{A}}^\wedge_0[0]\cap P^{m+1}_0 = P^{m+1}_0[0] \subset J[0]$. By Proposition~\ref{degree-0-of-Pn}$(a)$, the ideal $P^{m+1}_0[0]$ is  generated by $X(X+2)\cdots (X+2m)$. Consequently, the element $X(X+2)\cdots (X+2(n-1))$ belongs to $P^{m+1}_0[0] \subset J[0]$ (because $n>m$). Hence, by Proposition~\ref{calcul-j-n-cs-Z-U-pm-deg0},  the elements $j^{\mathrm{LMO}}_n\big(\mathrm{cs}^\nu\circ {Z}_{\mathcal A}(U^\pm)\big)[0]$ belong to~$J[0]$. Therefore, similarly to the case $n<m$, we conclude that the pair $\big(\mathring{\mathcal{A}}^\wedge(\emptyset)/J,\overline{j}_{n,m,J}\circ\varphi_n\circ\overline{\mathrm{cs}}^\nu\circ {Z}_{\mathcal A}\big)$ is not a Kirby structure.
\end{proof}

\section{Elimination of redundant information and the LMO invariant}\label{sec::5}

We start this section by recalling in \S\ref{sec:5:1} the well-known coproduct map $\Delta_{\bigoplus_{k\geq 0}\mathring{\mathcal{A}}^\wedge_k}$ for the algebra $\bigoplus_{k\geq 0}\mathring{\mathcal{A}}^\wedge_k$, which endows it with a bialgebra structure (Proposition \ref{r:2022-06-15Deltaisalgebramorpshim}). Recall that the algebras $\mathfrak{b}(n)$, $\mathfrak{c}(n)$ and $\mathfrak{d}(n)$ (from \S\ref{sec:3:3}, \S\ref{sec:3:4} and  \S\ref{sec:3:5-2022-04-18}, respectively) are quotients of  $\bigoplus_{k\geq 0}\mathring{\mathcal{A}}^\wedge_k$, we show that the above coproduct map induces coproduct maps $\Delta^{\mathfrak{b}}_{n,p}:\mathfrak{b}(n)\to \mathfrak{b}(p) \otimes \mathfrak{b}(n-p)$,  $\Delta^{\mathfrak{c}}_{n,p}:\mathfrak{c}(n)\to \mathfrak{c}(p) \otimes \mathfrak{b}(n-p)$ and $\Delta^{\mathfrak{d}}_{n,p}:\mathfrak{d}(n)\to \mathfrak{d}(p) \otimes \mathfrak{d}(n-p)$   in \S\ref{sec:5:2},  \S\ref{sec:5:3} and  \S\ref{sec:5:4}, respectively. We show in the same sections  that $\Delta^{\mathfrak{b}}_{n,p}$ is a morphism of semi-Kirby structures (Proposition \ref{r:2022-06-15commdiag1}) and that $\Delta^{\mathfrak{c}}_{n,p}$ and $\Delta^{\mathfrak{d}}_{n,p}$ are morphisms of Kirby structures (Proposition \ref{r:2022-06-15inducedalgebramorphismonmathfrakc} and Proposition \ref{r:2024-01-23-1}, respectively). This allows us to show  a compatibility between the family of invariants of 3-manifolds $\{\Omega_n^{\mathfrak{c}}\}_{n\geq 1}$   (resp. $\{\Omega_n^{\mathfrak{d}}\}_{n\geq 1}$) obtained in \S\ref{ch4} (see Theorem~\ref{r2022-04-12theinvariants}) and the coproduct maps $\Delta^{\mathfrak{c}}_{n,p}$ (resp. $\Delta^{\mathfrak{d}}_{n,p}$), see Proposition \ref{r:2023-12-06r2}. In \S\ref{nsec:5:6}, we consider a family of Kirby structures $\{\mathfrak{e}(n)\}_{n\geq 1}$ and Kirby morphisms $\Delta^{\mathfrak{e}}_{n,p}:\mathfrak{e}(n)\to \mathfrak{e}(p) \otimes \mathfrak{e}(n-p)$ (Proposition \ref{r:2022-06-15inducedalgebramorphismonmathfrake}) and show it to be isomorphic to the family $\{\mathfrak{d}(n)\}_{n\geq 1}$  of Kirby structures  (Lemma~\ref{r2022-04-12-ohtlemma}). To the family $\{\mathfrak{e}(n)\}_{n\geq 1}$  we associate  the family of invariants $\{\Omega_n^{\mathfrak{e}}\}_{n\geq 1}$ of oriented closed 3-manifolds. One can prove that the new information given by $\Omega_n^{\mathfrak{e}}$ with respect to the previous $\Omega_k^{\mathfrak{e}}$ ($k<n$) is contained in the degree $n$ part of $\Omega_n^{\mathfrak{e}}$, see Corollary~\ref{r:2023-02-05r4}. This allows us to unify the family of invariants $\{\Omega_n^{\mathfrak{e}}\}_{n\geq 1}$ into a single invariant: \emph{the LMO invariant} $Z^{\mathrm{LMO}}$, see \S\ref{sec:5:6}.  In \S\ref{sec:5:7}, we prove that $\mathrm{pr}_1:\mathfrak{c}(1)\to\mathfrak{d}(1)$ is an isomorphism (see \S\ref{sec:3:5-2022-04-18} for the definition of $\mathrm{pr}_n:\mathfrak{c}(n)\to\mathfrak{d}(n)$), from which we derive the equality of invariants $\Omega^{\mathfrak{c}}_1 = \Omega^{\mathfrak{d}}_1$. For $n \geq 2$, the map $\mathrm{pr}_n: \mathfrak{c}(n)\to\mathfrak{d}(n)$ is surjective, but it is not known whether it is injective; in the case it is not, the invariant $\Omega^{\mathfrak{c}}_n$ may contain additional information with respect to $\Omega^{\mathfrak{d}}_n$.

\subsection{A bialgebra structure on \texorpdfstring{$\bigoplus_{k\geq 0}\mathring{\mathcal{A}}^{\wedge}_k$}{oplus A(circle k)}}\label{sec:5:1}

\subsubsection{Coproduct on \texorpdfstring{$\mathring{\mathcal{A}}^{\wedge}_k$}{A(circle k)}}

Let $k \geq 0$. Recall that the set $\mathring{\mathrm{Jac}}(\vec{\varnothing}, [\![1,k]\!])$ consists of tuples of the form  $\underline{D}= \big(D,\varphi,\{\mathrm{cyc}_i\}_{i\in[\![1,k]\!]}\big)$ where $D$ is a vertex-oriented unitrivalent graph, $ \varphi : \partial D \to [\![1,k]\!]$ is a map and $\mathrm{cyc}_i$ is a cyclic order on $\varphi^{-1}(i)$ for each $i\in[\![1,k]\!]$, see Definition~\ref{def:thesetJacPS}.

\begin{definition}\label{def:coproductinA-k}
Let $\underline{D}= \big(D,\varphi,\{\mathrm{cyc}_i\}_{i\in[\![1,k]\!]}\big) \in \mathring{\mathrm{Jac}}(\vec{\varnothing}, [\![1,k]\!])$. 

$(a)$ For $A \subset \pi_0(D)$ we set
$$\underline{D}_A:=\left(\bigsqcup_{x \in A}x, \varphi_{|\bigsqcup_{x\in A} \partial x}, \{\widetilde{\mathrm{cyc}}_i\}_{i\in [\![1,k]\!]}\right)\in \mathring{\mathrm{Jac}}(\vec{\varnothing}, [\![1,k]\!]),$$
where   $\widetilde{\mathrm{cyc}}_i$ is the restriction of $\mathrm{cyc}_i$ to $\varphi^{-1}(i) \cap \bigsqcup_{x \in A}\partial x$ for all $i\in [\![1,k]\!]$.

$(b)$ Define the linear map 
\begin{equation}\label{eq:coproductinJac}
\Delta_{\mathring{\mathrm{Jac}}(\vec{\varnothing}, [\![1,k]\!])} : \mathbb{C}\mathring{\mathrm{Jac}}(\vec{\varnothing}, [\![1,k]\!])\longrightarrow \mathbb{C}\mathring{\mathrm{Jac}}(\vec{\varnothing}, [\![1,k]\!]) \otimes \mathbb{C}\mathring{\mathrm{Jac}}(\vec{\varnothing}, [\![1,k]\!]),
\end{equation} \index[notation]{\Delta_{\mathring{\mathrm{Jac}}(\vec{\varnothing}, [\![1,k]\!])}@$\Delta_{\mathring{\mathrm{Jac}}(\vec{\varnothing}, [\![1,k]\!])}$}
 by  $$\underline{D}\xmapsto{\ \Delta_{\mathring{\mathrm{Jac}}(\vec{\varnothing}, [\![1,k]\!])} \ } \sum_{A \subset \pi_0(D)}\underline{D}_A \otimes \underline{D}_{\pi_0(D)\setminus A}$$
for any $\underline{D}\in\mathring{\mathrm{Jac}}(\vec{\varnothing}, [\![1,k]\!])$.  In other words $\Delta_{\mathring{\mathrm{Jac}}(\vec{\varnothing}, [\![1,k]\!])}(\underline{D})$ is the linear combination in $\mathbb{C}\mathring{\mathrm{Jac}}(\vec{\varnothing}, [\![1,k]\!])\otimes \mathbb{C}\mathring{\mathrm{Jac}}(\vec{\varnothing}, [\![1,k]\!])$ obtained by considering all the possible ways of splitting the connected components of $D$ into the two tensor factors. See Figure~\ref{figure-example-coproduit-Jac} for an example. 
\end{definition}

\begin{figure}[ht]
		\centering
         \includegraphics[scale=0.9]{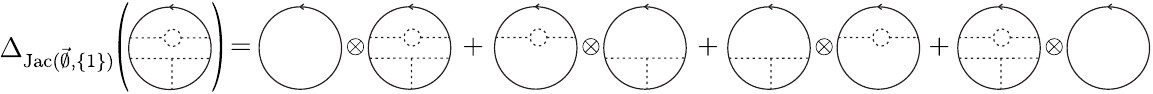}
\caption{Image of an element in $\mathring{\mathrm{Jac}}(\vec{\varnothing}, \{1\})$ under the map $\Delta_{\mathring{\mathrm{Jac}}(\vec{\varnothing}, \{1\})}$.}
\label{figure-example-coproduit-Jac} 								
\end{figure}

One checks that the map \eqref{eq:coproductinJac} equips  $\mathbb{C}\mathring{\mathrm{Jac}}(\vec{\varnothing}, [\![1,k]\!])$ with a cocommutative coassociative coalgebra structure and that there is a unique continuous linear map $\Delta_{\mathring{\mathcal A}^\wedge_k} : \mathring{\mathcal A}^\wedge_k\to \mathring{\mathcal A}^\wedge_k \hat{\otimes} \mathring{\mathcal A}^\wedge_k$, \index[notation]{\Delta_{\mathring{\mathcal A}^\wedge_k}@$\Delta_{\mathring{\mathcal A}^\wedge_k}$} which makes $\mathring{\mathcal A}^\wedge_k$ into a coassociative coalgebra such that the projection $\mathbb{C}\mathring{\mathrm{Jac}}(\vec{\varnothing}, [\![1,k]\!])\to \mathring{\mathcal A}^\wedge_k$ is a coalgebra homomorphism.

\begin{lemma}[{\cite[Proposition 6.9]{Ohts}}]\label{r:imageofhatZcontainedinGL} Let $k\geq 0$. The image of ${Z}_{\mathcal A} : \underline{\vec{\mathcal T}}(\emptyset,\emptyset)_k \to \mathring{\mathcal{A}}^\wedge_k$ (see Theorem~\ref{thmkontsevichintegral}) is contained in the set of group-like elements. That is, for any $L\in  \underline{\vec{\mathcal T}}(\emptyset,\emptyset)_k $ we have
$$\Delta_{\mathring{\mathcal{A}}^\wedge_k}({Z}_{\mathcal A}(L)) = {Z}_{\mathcal A}(L) \hat{\otimes} {Z}_{\mathcal A}(L) \in \mathring{\mathcal{A}}^\wedge_k\hat{\otimes} \mathring{\mathcal{A}}^\wedge_k.$$
\end{lemma}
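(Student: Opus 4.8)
The statement to prove is Lemma~\ref{r:imageofhatZcontainedinGL}: for any framed oriented link $L$ with $k$ circle components (viewed as an element of $\underline{\vec{\mathcal T}}(\emptyset,\emptyset)_k$ after choosing a parenthesized diagram representative), the element $Z_{\mathcal A}(L)\in\mathring{\mathcal A}^\wedge_k$ is group-like for the coproduct $\Delta_{\mathring{\mathcal A}^\wedge_k}$ introduced just above. The core idea is the standard one: group-likeness of the Kontsevich integral follows from the fact that (i) it is defined by a product/composition of elementary pieces, each of which is group-like, and (ii) the coproduct is compatible with the tensor product, composition, change of orientation and doubling operations on Jacobi diagrams. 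What is special here, compared to the classical statement, is that we must track the circle components and the free-set labels $[\![1,k]\!]$ carefully, since the target lives in $\mathring{\mathcal A}^\wedge_k=\mathring{\mathcal A}^\wedge(\vec{\varnothing},[\![1,k]\!])_{\mathfrak S_k}$ rather than in a space attached to the skeleton.

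First I would set up the coproduct on the spaces $\mathring{\mathcal A}^\wedge(P,S)$ for an arbitrary oriented Brauer diagram $P$ and finite set $S$, by splitting connected components of the dashed graph $D$ between the two tensor factors exactly as in Definition~\ref{def:coproductinA-k} (for $P=\vec{\varnothing}$ this is the map already defined). I would then check the compatibility lemmas: that $\Delta$ is a morphism with respect to $\mathrm{Tens}^{\mathcal A}(P,P')_{\bullet,\diamond}$, $\mathrm{Comp}^{\mathcal A}(P,P')_{\bullet,\diamond}$, $\mathrm{co}^{\mathcal A}(P,\bullet)$, $\mathrm{co}^{\mathcal A}((P,a),\bullet)$ and $\mathrm{dbl}_{\mathcal A}((P,A),\bullet)$ (in the last two cases the coproduct on the doubled/reoriented space is the one attached to the new Brauer diagram). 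Each of these is a routine verification at the level of Jacobi diagrams: splitting connected components commutes with all these operations because the latter never merge or break dashed components, they only relabel the map $\varphi$ and modify the cyclic/linear orders on the legs, which affects each connected component separately. I would also record that $\Delta$ is compatible with taking $\mathfrak S_k$-coinvariants, so that it descends to the map $\Delta_{\mathring{\mathcal A}^\wedge_k}$.

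Next I would reduce to the elementary generators. By the Turaev theorem quoted before Theorem~\ref{thmkontsevichintegral}, every parenthesized tangle is obtained from the finite family $\mathcal E$ (the elements of Figure~\ref{figure_gen}) together with identity morphisms by finitely many tensor products, compositions, and — for the generators in positions three and four — doubling and change-of-orientation operations applied to the elements of Figure~\ref{figure_genpar}. Since $Z_{\mathcal A}$ intertwines all of these operations (axiom (LMO4) and Theorem~\ref{thmkontsevichintegral}$(a)$--$(d)$), and since by the previous paragraph $\Delta$ intertwines them too, group-likeness is preserved under all these operations; hence it suffices to prove that $Z_{\mathcal A}$ is group-like on each element of the small family $\mathcal E'$ of Figure~\ref{figure_genpar} (and on identities, where $Z_{\mathcal A}(\mathrm{Id}_w)=\mathrm{Id}^{\mathcal A}_{w}$ is obviously group-like, having no dashed part). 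On the cups and caps, $Z_{\mathcal A}$ takes the value with no dashed component, which is group-like. On a positive/negative crossing, $Z_{\mathcal A}$ is an exponential $\exp_\#(\tfrac12 \theta_1+\cdots)$ of a primitive element (a series of connected diagrams), hence group-like. On the generator involving the associator $\Phi=\mathrm{can}(\varphi(A,B))$, group-likeness of $\Phi$ follows from the group-like property of the Drinfeld associator $\varphi(A,B)$ together with the fact that $\mathrm{can}$ is an algebra morphism sending the commutator-generated primitive part to primitives; and $\nu^{\pm1/2}$ is group-like for the same reason (it is an exponential of a series of connected wheels). Since a tensor product and composition of group-like elements is group-like, this closes the argument.

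\textbf{Main obstacle.} I expect the bookkeeping obstacle to be the descent to coinvariants: one must verify that the diagram-splitting coproduct on $\mathring{\mathcal A}^\wedge(\vec{\varnothing},[\![1,k]\!])$ is $\mathfrak S_k$-equivariant in the appropriate sense (the $\mathfrak S_k$-action being diagonal on the two tensor factors) so that it induces a well-defined map on $\mathring{\mathcal A}^\wedge_k=\mathring{\mathcal A}^\wedge(\vec{\varnothing},[\![1,k]\!])_{\mathfrak S_k}$ landing in $\mathring{\mathcal A}^\wedge_k\hat\otimes\mathring{\mathcal A}^\wedge_k$ (not merely in $(\mathring{\mathcal A}^\wedge(\vec{\varnothing},[\![1,k]\!])\hat\otimes\mathring{\mathcal A}^\wedge(\vec{\varnothing},[\![1,k]\!]))_{\mathfrak S_k}$, which is larger). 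In practice this is handled by the same coinvariant-space functor machinery as in Lemma~\ref{r:morphismsinduitsurlescoinvariants} and Lemma~\ref{r:tenscompcoinvprop}: one realizes $\Delta$ as induced by a morphism in $\mathcal Vect\mathcal Gp$ whose group component is the diagonal $\mathfrak S_k\to\mathfrak S_k\times\mathfrak S_k$. The only genuinely non-formal inputs are the group-likeness of the crossings (an exponential of primitives) and of the associator $\Phi$ and $\nu^{\pm1/2}$; all of these are classical and can be cited from \cite{Ohts} (for instance the group-like property of the associator and of $\nu$) and from the exponential formula for $Z_{\mathcal A}$ on crossings, so no new work is required there.
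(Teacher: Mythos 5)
Your proposal is correct, but note that the paper does not prove this lemma at all: it simply cites \cite[Proposition~6.9]{Ohts}, and your argument (group-likeness of $Z_{\mathcal A}$ on the elementary generators, compatibility of the component-splitting coproduct with $\mathrm{Tens}^{\mathcal A}$, $\mathrm{Comp}^{\mathcal A}$, $\mathrm{co}^{\mathcal A}$, $\mathrm{dbl}_{\mathcal A}$, and descent to $\mathfrak{S}_k$-coinvariants) is essentially the standard proof found in that reference. The only nitpick is your claim that $Z_{\mathcal A}$ on cups and caps has empty dashed part -- in this normalization those values carry $\nu^{\pm 1/2}$ factors -- but since you separately observe that $\nu^{\pm 1/2}$ is group-like (an exponential of connected wheels), this does not affect the argument.
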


\subsubsection{Coproduct on $\bigoplus_{k\geq 0}\mathring{\mathcal{A}}^\wedge_k$}

\begin{lemma}  There is a unique cocommutative  coassociative coalgebra structure on $\bigoplus_{k\geq 0}\mathring{\mathcal{A}}^\wedge_k$ with coproduct 
\begin{equation}\label{eq:coproductin-oplus-A-l}
\Delta_{\bigoplus_{k\geq 0}\mathring{\mathcal{A}}^\wedge_k} : \bigoplus_{k\geq 0}\mathring{\mathcal{A}}^\wedge_k \longrightarrow 
\bigoplus_{k\geq 0}\mathring{\mathcal{A}}^\wedge_k\hat{\otimes} \bigoplus_{k\geq 0}\mathring{\mathcal{A}}^\wedge_k
\end{equation} \index[notation]{\Delta_{\bigoplus_{k\geq 0}\mathring{\mathcal{A}}^\wedge_k}@$\Delta_{\bigoplus_{k\geq 0}\mathring{\mathcal{A}}^\wedge_k}$} 
such that for any $k \geq 0$, the injection $\mathring{\mathcal{A}}^\wedge_k\subset \bigoplus_{k'\geq 0}\mathring{\mathcal{A}}^\wedge_{k'}$ is a coalgebra homomorphism. 
\end{lemma}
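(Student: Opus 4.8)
The statement is a routine "glueing" assertion: each summand $\mathring{\mathcal A}^\wedge_k$ already carries the coassociative cocommutative coproduct $\Delta_{\mathring{\mathcal A}^\wedge_k}$ constructed above, and one simply assembles these into a coproduct on the direct sum. I will argue uniqueness first, then existence, then check the coalgebra axioms, all by reduction to the homogeneous pieces. Throughout, write $\iota_k:\mathring{\mathcal A}^\wedge_k\hookrightarrow\bigoplus_{k'\geq 0}\mathring{\mathcal A}^\wedge_{k'}$ for the canonical injection, and interpret $\bigl(\bigoplus_{k\geq 0}\mathring{\mathcal A}^\wedge_k\bigr)\hat\otimes\bigl(\bigoplus_{k\geq 0}\mathring{\mathcal A}^\wedge_k\bigr)$ as the algebraic direct sum $\bigoplus_{k,k'\geq 0}\bigl(\mathring{\mathcal A}^\wedge_k\hat\otimes\mathring{\mathcal A}^\wedge_{k'}\bigr)$, each $\mathring{\mathcal A}^\wedge_k\hat\otimes\mathring{\mathcal A}^\wedge_{k'}$ being the tensor product completed with respect to $\mathrm{deg}$; the direct sum over the index $\mathrm{deg}_{\mathrm{cir}}$ remains algebraic.

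\textbf{Uniqueness.} Since $\bigoplus_{k'\geq 0}\mathring{\mathcal A}^\wedge_{k'}$ is the algebraic direct sum of the subspaces $\mathrm{Im}(\iota_k)$, any coalgebra structure $\Delta_{\bigoplus}$ for which every $\iota_k$ is a coalgebra morphism must satisfy $\Delta_{\bigoplus}\circ\iota_k=(\iota_k\hat\otimes\iota_k)\circ\Delta_{\mathring{\mathcal A}^\wedge_k}$ for all $k\geq 0$; as the $\mathrm{Im}(\iota_k)$ span, this determines $\Delta_{\bigoplus}$ completely (and, in the same way, forces the counit to be the direct sum of the counits $\varepsilon_{\mathring{\mathcal A}^\wedge_k}$ of the summands).

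\textbf{Existence.} Define $\Delta_{\bigoplus_{k\geq 0}\mathring{\mathcal A}^\wedge_k}$ to be the direct sum over $k\geq 0$ of the continuous linear maps $(\iota_k\hat\otimes\iota_k)\circ\Delta_{\mathring{\mathcal A}^\wedge_k}:\mathring{\mathcal A}^\wedge_k\to\bigoplus_{k',k''}\mathring{\mathcal A}^\wedge_{k'}\hat\otimes\mathring{\mathcal A}^\wedge_{k''}$, whose image lies in the single summand $\mathring{\mathcal A}^\wedge_k\hat\otimes\mathring{\mathcal A}^\wedge_k$; this is well defined because the outer direct sum is algebraic and each component map is continuous, being the composition of the continuous $\Delta_{\mathring{\mathcal A}^\wedge_k}$ established above with the inclusion of a completed tensor summand. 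Coassociativity of $\Delta_{\bigoplus_{k\geq 0}\mathring{\mathcal A}^\wedge_k}$ is checked on each $\mathrm{Im}(\iota_k)$: there both $(\Delta_{\bigoplus}\hat\otimes\mathrm{Id})\circ\Delta_{\bigoplus}$ and $(\mathrm{Id}\hat\otimes\Delta_{\bigoplus})\circ\Delta_{\bigoplus}$ equal $(\iota_k\hat\otimes\iota_k\hat\otimes\iota_k)$ applied to the common value of $(\Delta_{\mathring{\mathcal A}^\wedge_k}\hat\otimes\mathrm{Id})\circ\Delta_{\mathring{\mathcal A}^\wedge_k}$ and $(\mathrm{Id}\hat\otimes\Delta_{\mathring{\mathcal A}^\wedge_k})\circ\Delta_{\mathring{\mathcal A}^\wedge_k}$, which agree by coassociativity of $\Delta_{\mathring{\mathcal A}^\wedge_k}$. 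Cocommutativity follows the same way, using that the flip of $\bigl(\bigoplus\mathring{\mathcal A}^\wedge_k\bigr)\hat\otimes\bigl(\bigoplus\mathring{\mathcal A}^\wedge_k\bigr)$ restricts on the $(k,k)$-summand to the flip of $\mathring{\mathcal A}^\wedge_k\hat\otimes\mathring{\mathcal A}^\wedge_k$ together with cocommutativity of $\Delta_{\mathring{\mathcal A}^\wedge_k}$; the counit axioms reduce to those for the triples $(\mathring{\mathcal A}^\wedge_k,\Delta_{\mathring{\mathcal A}^\wedge_k},\varepsilon_{\mathring{\mathcal A}^\wedge_k})$. By construction each $\iota_k$ is a coalgebra homomorphism, which completes existence.

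\textbf{Expected main obstacle.} There is no substantial difficulty here; the only point requiring care is bookkeeping, namely keeping the algebraic direct sum over the circle grading $\mathrm{deg}_{\mathrm{cir}}$ separate from the $\mathrm{deg}$-completed tensor products, so that $\Delta_{\bigoplus_{k\geq 0}\mathring{\mathcal A}^\wedge_k}$ restricted to $\mathrm{Im}(\iota_k)$ is seen to land in the single diagonal summand $\mathring{\mathcal A}^\wedge_k\hat\otimes\mathring{\mathcal A}^\wedge_k$ and that all continuity statements are understood degreewise for $\mathrm{deg}$ only.
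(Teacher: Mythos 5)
Your proposal is correct and matches the paper's (implicit) argument: the paper simply declares the proof immediate, and your componentwise construction — uniqueness from the spanning of the images of the injections $\iota_k$, existence by summing the maps $(\iota_k\hat\otimes\iota_k)\circ\Delta_{\mathring{\mathcal A}^\wedge_k}$, and verification of the coalgebra axioms on each summand — is exactly the routine verification being left to the reader.
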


\begin{proof} The proof is immediate.
\end{proof}

\subsubsection{Compatibility of coproduct and product of \texorpdfstring{$\bigoplus_{k\geq 0}\mathring{\mathcal{A}}^{\wedge}_k$}{Al}}

\begin{proposition}\label{r:2022-06-15Deltaisalgebramorpshim} The map $$\Delta_{\bigoplus_{k\geq 0}\mathring{\mathcal{A}}^\wedge_k} : \bigoplus_{k\geq 0}\mathring{\mathcal{A}}^\wedge_k\longrightarrow 
\bigoplus_{k\geq 0}\mathring{\mathcal{A}}^\wedge_k \hat{\otimes} \bigoplus_{k\geq 0}\mathring{\mathcal{A}}^\wedge_k$$  from \eqref{eq:coproductin-oplus-A-l} is an algebra homomorphism. Therefore, $\bigoplus_{k\geq 0}\mathring{\mathcal{A}}^\wedge_k$ is a bialgebra.
\end{proposition}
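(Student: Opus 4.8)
The plan is to verify that $\Delta := \Delta_{\bigoplus_{k\geq 0}\mathring{\mathcal{A}}^\wedge_k}$ respects both the unit and the multiplication of the algebra $\bigoplus_{k\geq 0}\mathring{\mathcal{A}}^\wedge_k$. Since the coalgebra structure was defined so that each inclusion $\mathring{\mathcal{A}}^\wedge_k \subset \bigoplus_{k'\geq 0}\mathring{\mathcal{A}}^\wedge_{k'}$ is a coalgebra homomorphism, and since the product sends $\mathring{\mathcal{A}}^\wedge_k \otimes \mathring{\mathcal{A}}^\wedge_{l}$ into $\mathring{\mathcal{A}}^\wedge_{k+l}$ via the tensor-product operation $\otimes_{\mathcal A}$ of Jacobi diagrams on $\vec{\varnothing}$, it suffices to prove the identity
\begin{equation*}
\Delta_{\mathring{\mathcal{A}}^\wedge_{k+l}}(x \otimes_{\mathcal A} y) = \Delta_{\mathring{\mathcal{A}}^\wedge_k}(x) \cdot \Delta_{\mathring{\mathcal{A}}^\wedge_{l}}(y)
\end{equation*}
for $x \in \mathring{\mathcal{A}}^\wedge_k$, $y \in \mathring{\mathcal{A}}^\wedge_{l}$, where the right-hand product is taken in $\bigoplus \mathring{\mathcal{A}}^\wedge_{\bullet} \hat\otimes \bigoplus \mathring{\mathcal{A}}^\wedge_{\bullet}$; together with the obvious statement that $\Delta$ sends the unit (the empty Jacobi diagram in $\mathring{\mathcal A}^\wedge_0$) to the unit of the tensor-square. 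Here I implicitly use that $\mathring{\mathcal A}^\wedge_k = \mathring{\mathcal A}^\wedge(\vec\varnothing,[\![1,k]\!])_{\mathfrak S_k}$ and, via Lemma~\ref{r:2022-06-14lemmabijJac} (or directly from the construction of $\Delta_{\mathring{\mathcal A}^\wedge_k}$), that the relevant identities may be checked at the level of the generating Jacobi diagrams in $\mathbb{C}\mathring{\mathrm{Jac}}(\vec{\varnothing},[\![1,k]\!])$.

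First I would reduce to diagrams: pick $\underline D \in \mathring{\mathrm{Jac}}(\vec{\varnothing},[\![1,k]\!])$ and $\underline E \in \mathring{\mathrm{Jac}}(\vec{\varnothing},[\![1,l]\!])$ and unwind $\Delta_{\mathring{\mathrm{Jac}}}$ on the diagram $\underline D \otimes \underline E$, whose underlying graph is $D \sqcup E$ with $\pi_0(D \sqcup E) = \pi_0(D) \sqcup \pi_0(E)$ (and whose circle set $[\![1,k]\!]\sqcup[\![1,l]\!]$ is identified with $[\![1,k+l]\!]$ via the chosen concatenation bijection). By Definition~\ref{def:coproductinA-k}, $\Delta_{\mathring{\mathrm{Jac}}(\vec{\varnothing},[\![1,k+l]\!])}(\underline D \otimes \underline E) = \sum_{C \subset \pi_0(D)\sqcup\pi_0(E)} (\underline D \otimes \underline E)_C \otimes (\underline D \otimes \underline E)_{(\pi_0(D)\sqcup\pi_0(E))\setminus C}$. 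The key combinatorial observation is that a subset $C$ of the disjoint union $\pi_0(D)\sqcup\pi_0(E)$ is the same datum as a pair $(A,B)$ with $A \subset \pi_0(D)$ and $B \subset \pi_0(E)$, and that the operation $(-)_{C}$ of Definition~\ref{def:coproductinA-k}$(a)$ is compatible with the disjoint union, i.e. $(\underline D \otimes \underline E)_{A \sqcup B} = \underline D_A \otimes \underline E_B$ (restriction of the $\varphi$-map and of the cyclic orders is done componentwise). Summing over all $(A,B)$ and re-grouping then yields exactly $\Delta_{\mathring{\mathrm{Jac}}}(\underline D) \cdot \Delta_{\mathring{\mathrm{Jac}}}(\underline E)$, where the product on the target is the one induced by $\otimes_{\mathcal A}$ in each tensor slot; this is precisely the Fubini-type rearrangement $\sum_{A,B} (\underline D_A \otimes \underline E_B)\otimes(\underline D_{\pi_0(D)\setminus A}\otimes \underline E_{\pi_0(E)\setminus B}) = \big(\sum_A \underline D_A \otimes \underline D_{\pi_0(D)\setminus A}\big)\cdot\big(\sum_B \underline E_B \otimes \underline E_{\pi_0(E)\setminus B}\big)$. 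Passing to the quotient by the STU, AS, IHX relations (compatible on both sides, as already used repeatedly in \S\ref{sec:4-1}--\S\ref{sec:4-2}) and then to $\mathfrak S_{k}$-, $\mathfrak S_{l}$-coinvariants (compatible because the concatenation homomorphism $\mathfrak S_k \times \mathfrak S_{l} \to \mathfrak S_{k+l}$ intertwines the relevant actions, exactly as in Lemma~\ref{r:morphismsinduitsurlescoinvariants}) gives the desired identity on $\bigoplus_{k\geq 0}\mathring{\mathcal{A}}^\wedge_k$. The counit compatibility and coassociativity/cocommutativity of $\Delta$ were already recorded, so once multiplicativity and the unit condition are established, $\bigoplus_{k\geq 0}\mathring{\mathcal{A}}^\wedge_k$ is a bialgebra.

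I do not expect any serious obstacle here: the statement is essentially the assertion that "splitting connected components" commutes with "disjoint union of diagrams," which is a formal bookkeeping matter. The only points requiring a little care are (i) making sure the identification of index sets $[\![1,k]\!]\sqcup[\![1,l]\!] \simeq [\![1,k+l]\!]$ used in the definition of $\otimes_{\mathcal A}$ is carried consistently through the restriction operations $(-)_C$ — this is where one must check that the cyclic orders $\widetilde{\mathrm{cyc}}_i$ behave correctly — and (ii) checking that passing from $\mathbb{C}\mathring{\mathrm{Jac}}$ to $\mathring{\mathcal A}^\wedge$ and then to coinvariants is legitimate, i.e. that the maps involved descend; both of these are of the same nature as arguments already carried out in the paper (e.g. Lemmas in \S\ref{sec:4-2} and \S\ref{sec:2-9-2}), so I would simply invoke those patterns rather than redo them in full.
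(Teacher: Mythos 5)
Your argument is correct and follows essentially the same route as the paper's proof: both reduce to the level of $\mathbb{C}\mathring{\mathrm{Jac}}(\vec{\varnothing},[\![1,k]\!])$, rest on the key identity $(\underline{D}\otimes\underline{E})_A=\underline{D}_{A\cap\pi_0(D)}\otimes\underline{E}_{A\cap\pi_0(E)}$ (your pairs $(A,B)$ are exactly this), and then descend through the projection onto $\bigoplus_{k\geq 0}\mathring{\mathcal{A}}^\wedge_k$, which is compatible with products and coproducts.
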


\begin{proof}
One checks that the space $\bigoplus_{k \geq 0}\mathbb{C}\mathring{\mathrm{Jac}}(\vec{\varnothing},[\![1,k]\!])$, equipped with the product 
$$\otimes_{\mathring{\mathrm{Jac}}} : \mathbb{C}\mathring{\mathrm{Jac}}(\vec{\varnothing},[\![1,k]\!]) \otimes \mathbb{C}\mathring{\mathrm{Jac}}(\vec{\varnothing},[\![1,l]\!]) \longrightarrow \mathbb{C}\mathring{\mathrm{Jac}}(\vec{\varnothing},[\![1,k]\!]\sqcup [\![1,l]\!]) \simeq\mathbb{C}\mathring{\mathrm{Jac}}(\vec{\varnothing},[\![1,k+l]\!]),$$ 
where the first map is as in Definition~\ref{deftensorinJac} and the second map is induced by the bijection  $[\![1,k]\!] \sqcup [\![1,l]\!]\simeq [\![1,k+l]\!]$ given by $x \mapsto x$ for $x \in [\![1,k]\!]$ and $y\mapsto k+y$ for $y \in [\![1,l]\!]$, is an associative (non-commutative) algebra. The fact that $\Delta_{\mathring{\mathrm{Jac}}}$ is a morphism of algebras from this algebra to its tensor square follows from the equality $$(\underline{D} \otimes_{\mathring{\mathrm{Jac}}} \underline{E})_A=\left(\underline{D}_{A \cap \pi_0(D)}\right) \otimes_{\mathring{\mathrm{Jac}}} \left(\underline{E}_{A \cap \pi_0(E)}\right)$$
for any $A \subset \pi_0(D) \sqcup \pi_0(E)$. The projection $\bigoplus_{k \geq 0}\mathbb{C}\mathring{\mathrm{Jac}}(\vec{\varnothing},[\![1,k]\!]) \to \bigoplus_k \mathring{\mathcal A}^\wedge_k$ is compatible with the products and coproducts. The compatibility of the product and coproduct in its source then imply the same for the product and coproduct in the target.       
\end{proof}

\begin{proposition}[{\cite[Proposition~10.3]{Ohts}}]\label{2023-11-15prop} The algebra automorphism $\mathrm{cs}^\nu$ of the algebra $\bigoplus_{k\geq 0}\mathring{\mathcal{A}}^\wedge_k$ (see \S\ref{sec:3:1}) is a bialgebra automorphism.
\end{proposition}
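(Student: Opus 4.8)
<br>

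The statement to prove is that the algebra automorphism $\mathrm{cs}^\nu$ of $\bigoplus_{k\geq 0}\mathring{\mathcal{A}}^\wedge_k$ is a bialgebra automorphism, i.e.\ it is compatible with the coproduct $\Delta_{\bigoplus_{k\geq 0}\mathring{\mathcal{A}}^\wedge_k}$ from \eqref{eq:coproductin-oplus-A-l}. Since $\mathrm{cs}^\nu$ is already known to be an algebra automorphism (Lemma~\ref{themapcsalpha}$(e)$) and $\Delta_{\bigoplus_{k\geq 0}\mathring{\mathcal{A}}^\wedge_k}$ is known to be an algebra homomorphism (Proposition~\ref{r:2022-06-15Deltaisalgebramorpshim}), the only thing to establish is the identity $\Delta_{\bigoplus_{k\geq 0}\mathring{\mathcal{A}}^\wedge_k}\circ \mathrm{cs}^\nu=(\mathrm{cs}^\nu\hat{\otimes}\,\mathrm{cs}^\nu)\circ \Delta_{\bigoplus_{k\geq 0}\mathring{\mathcal{A}}^\wedge_k}$, together with the observation that this implies the counit is also respected (the counit is the projection onto the degree-zero-in-$\mathrm{deg}_{\mathrm{cir}}$, degree-zero-in-$\mathrm{deg}$ part, which $\mathrm{cs}^\nu$ fixes since $\nu=1+(\text{higher degree})$). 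The plan is to reduce the verification to each graded piece $\mathring{\mathcal{A}}^\wedge_k$, where $\Delta$ restricts to $\Delta_{\mathring{\mathcal A}^\wedge_k}$, and then to the level of Jacobi diagrams.

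First I would unwind the definition of $\mathrm{cs}^\nu_{\vec{\varnothing},k}$ on a generator: by Lemma~\ref{themapcsalpha}, $\tilde{\mathrm{cs}}^\nu_{\vec{\varnothing},k}$ is obtained, via the closure isomorphism $\mathrm{clos}_k$, from the operation of precomposing with $\nu^{\otimes k}$; concretely, as recalled after Lemma~\ref{themapcsalpha} and illustrated in Figure~\ref{figura_alpha_sum}, on a Jacobi diagram $\underline{D}\in\mathring{\mathrm{Jac}}(\vec{\varnothing},[\![1,k]\!])$ the map inserts a copy of $\nu$ (or $z(\nu)$, depending on orientation) into each of the $k$ circle components. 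The key structural point is that this insertion is performed \emph{independently and locally on each circle component}, and that the components of $\underline{D}$ as a unitrivalent graph (i.e.\ the elements of $\pi_0(D)$, over which $\Delta$ sums) are distributed among circles but the circles themselves are not split by $\Delta_{\mathring{\mathcal A}^\wedge_k}$ — recall $\Delta_{\mathring{\mathcal A}^\wedge_k}$ only splits the connected components of the dashed graph $D$, while keeping the same $k$ circles (= same set $[\![1,k]\!]$) on both tensor factors. Because $\nu$ itself is a sum of connected dashed diagrams with legs on a single interval, inserting $\nu$ into a circle of $\underline D$ adds new connected components of the dashed graph, each attached to exactly one circle; so for a subset $A\subset\pi_0$ of components of the $\nu$-enriched diagram, every component coming from an inserted $\nu$ lies in a well-defined circle, and the decomposition of $\mathrm{cs}^\nu(\underline D)$ into $A$-parts matches the $\mathrm{cs}^\nu$ of the $A$-parts of $\underline D$ summed over the ways of distributing the $\nu$-components. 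Making this bookkeeping precise is the core of the argument.

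Concretely, the key step I would carry out is the following diagram-level identity: for $\underline D\in\mathring{\mathrm{Jac}}(\vec{\varnothing},[\![1,k]\!])$,
\begin{equation*}
\Delta_{\mathring{\mathrm{Jac}}(\vec{\varnothing},[\![1,k]\!])}\bigl(\text{``}\nu\text{-insertion''}(\underline D)\bigr)=\bigl(\text{``}\nu\text{-insertion''}\otimes\text{``}\nu\text{-insertion''}\bigr)\bigl(\Delta_{\mathring{\mathrm{Jac}}(\vec{\varnothing},[\![1,k]\!])}(\underline D)\bigr)
\end{equation*}
modulo STU, AS, IHX — where ``$\nu$-insertion'' stands for the lift to $\mathbb{C}\mathring{\mathrm{Jac}}$ of the map $\mathrm{clos}_k\circ(\nu^{\otimes k}\circ -)$. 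For this, I would first note that the $\nu$-insertion operation, before passing to $\mathring{\mathcal A}^\wedge$, is a composition of (i) doubling-type operations on the circles and (ii) insertion of the fixed diagram $\nu$ (respectively $z(\nu)$) on each circle, all of which act locally on a single circle and hence commute with the splitting of dashed components among the two tensor factors, provided one sums over all ways of partitioning the components internal to each inserted $\nu$. Since $\nu$ is \emph{group-like} for the coproduct on $\mathring{\mathcal A}^\wedge(\downarrow,\emptyset)$ — this is exactly the statement that $\Delta(\nu)=\nu\hat{\otimes}\nu$, which follows from $\nu=Z_{\mathcal A}(U)$ being the Kontsevich integral of the unknot together with Lemma~\ref{r:imageofhatZcontainedinGL}, and the analogous group-likeness of $z(\nu)$ via Lemma~\ref{themapzalgebr} — the sum over ways of partitioning the $\nu$-components factors as $\nu\hat\otimes\nu$ on each circle, yielding precisely the right-hand side. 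Passing to the quotient $\mathring{\mathcal A}^\wedge_k$ (compatibility of $\nu$-insertion and $\Delta$ with STU, AS, IHX has already been recorded in Lemmas~\ref{r:2023-03-04r1}, \ref{themapcsalpha} and the construction of $\Delta_{\mathring{\mathcal A}^\wedge_k}$) and then taking the direct sum over $k\geq 0$ and using that $\Delta_{\bigoplus_{k\geq 0}\mathring{\mathcal{A}}^\wedge_k}$ respects the grading, gives the claimed bialgebra compatibility. Finally, bijectivity of $\mathrm{cs}^\nu$ (Lemma~\ref{themapcsalpha}$(e)$) upgrades ``bialgebra endomorphism'' to ``bialgebra automorphism''.

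The main obstacle I anticipate is the careful diagram-level bookkeeping in the key step: one must verify that after inserting $\nu$ into each circle, the connected components of the resulting dashed graph are unambiguously assigned to circles so that the two operations — splitting components for $\Delta$ and inserting $\nu$ for $\mathrm{cs}^\nu$ — genuinely commute, and that this is compatible with the identifications $\mathrm{clos}_k$, the coinvariants functor for the $\mathfrak{S}_k$-action, and the local relations. A clean way to organize this, which I would adopt, is to prove the identity first for the auxiliary operation on $\mathring{\mathcal A}^\wedge(\downarrow^k\otimes P,\emptyset)$ of precomposing with $\alpha^{\otimes k}$ for a group-like $\alpha$, where the coproduct is the evident one on a tensor product of interval-diagram spaces and the statement reduces to $\Delta(\alpha^{\otimes k})=\alpha^{\otimes k}\hat\otimes\alpha^{\otimes k}$, and then transport along $\mathrm{clos}_k$ using that $\mathrm{clos}_k$ is itself a coalgebra map (which follows from the fact that the cup/cap diagrams $\mathrm{cup}^k_{\mathcal A}$, $\mathrm{cap}^k_{\mathcal A}$ are group-like, being $P_{\mathcal A}$-type classes of single oriented Brauer diagrams). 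This reduces everything to the already-available group-likeness of $\nu$ and $z(\nu)$ and to formal properties of the coproduct, avoiding any genuinely new diagrammatic computation.
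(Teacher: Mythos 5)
Your argument is correct, but note that the paper itself gives no proof of this statement: it is quoted from the literature (Ohtsuki's Proposition~10.3), so there is no in-paper argument to compare against. What you propose is essentially the standard proof and would fill that gap: since $\Delta_{\bigoplus_{k\geq 0}\mathring{\mathcal{A}}^\wedge_k}$ only splits the connected components of the dashed graph while composition, tensor product and $\mathrm{clos}_k$ take disjoint unions of dashed graphs, all these operations are comultiplicative, and the whole question reduces — via the defining commutative diagram of Lemma~\ref{themapcsalpha}$(a)$ together with surjectivity of $\mathrm{clos}_k$ and $\mathfrak{S}_k$-equivariance — to the group-likeness of $\nu^{\otimes k}$, which follows from $\nu=Z_{\mathcal A}(U)$ and Lemma~\ref{r:imageofhatZcontainedinGL}. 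Your ``clean route'' in the last paragraph is exactly the right way to organize this, and it in fact makes the middle paragraph's circle-by-circle bookkeeping (and the group-likeness of $z(\nu)$, for which Lemma~\ref{themapzalgebr} is not the relevant statement anyway, since it only concerns the product) unnecessary: in the formal definition of $\mathrm{cs}^\nu$ only precomposition with $\nu^{\otimes k}$ on $\downarrow^k$ occurs, $z(\nu)$ appearing only in the informal picture of Figure~\ref{figura_alpha_sum}. Two small inaccuracies, neither of which affects the argument: the counit of this bialgebra is not the projection onto the $(\mathrm{deg}_{\mathrm{cir}},\mathrm{deg})=(0,0)$ part but the functional equal to $1$ on each empty diagram $\emptyset_{(\vec{\varnothing},[\![1,k]\!])}$ and $0$ on diagrams with nonempty dashed part; and in any case counit compatibility is automatic once comultiplicativity and bijectivity are known, by uniqueness of counits.
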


\begin{remark}\label{remarkaboutcoproduit} 
Recall that the algebras $\mathfrak{s}(\mathbf{A})$  and $\mathfrak{a}$ are defined as the quotient algebras of $\bigoplus_{k\geq 0}\mathring{\mathcal{A}}^\wedge_k$ by the ideals  $(\mathrm{KII}\mathring{\mathcal{A}}^\wedge) + (\mathrm{CO}\mathring{\mathcal{A}}^\wedge)$ and $(\mathrm{KII}^s\mathring{\mathcal{A}}^\wedge) + (\mathrm{CO}\mathring{\mathcal{A}}^\wedge)$, respectively. We expect these ideals to be  bialgebra ideals, so that $\mathfrak{s}(\mathbf{A})$ and $\mathfrak{a}$ are bialgebras. Moreover, we expect that the algebra homomorphisms $\mu:\mathfrak{Kir} \to \mathfrak{s}(\mathbf{A})$  and $\overline{\mathrm{cs}}^\nu:\mathfrak{s}(\mathbf{A})\to\mathfrak{a}$ to be  bialgebra homomorphisms.
\end{remark}

\subsection{A semi-Kirby structure morphism \texorpdfstring{$\Delta^{\mathfrak{b}}_{n,p}:\mathfrak{b}(n)\to \mathfrak{b}(p) \otimes \mathfrak{b}(n-p)$}{bn-to-bp-otimes-bn-p}}\label{sec:5:2}

Consider integers $n \geq 1$ and $p\in [\![1,n-1]\!]$.

\subsubsection{Preliminaries on coproducts}

Let $k,m \geq 0$ be  integers. Recall from Definition~\ref{generalJDwithfree} that the set $\mathring{\mathrm{Jac}}(\vec{\varnothing},[\![1,k]\!],[\![1,m]\!])$  consists of classes of tuples $\big(D,\varphi : \partial D\to [\![1,k]\!] \sqcup [\![1,m]\!], \{\mathrm{cyc}_s\}_{s \in [\![1,k]\!]}\big)$ where the image of $\varphi$ contains $[\![1,m]\!]$, the preimage of any element of $[\![1,m]\!]$ has cardinality $1$, and  $\mathrm{cyc}_s$ is a cyclic order on $\varphi^{-1}(s)$ for any $s \in [\![1,k]\!]$.  There is an action of the symmetric group $\mathfrak{S}_{m}$ on $m$ letters  on $\mathring{\mathrm{Jac}}(\vec{\varnothing},[\![1,k]\!],[\![1,m]\!])$ given by 
$$\sigma \cdot \big(D,\varphi, \{\mathrm{cyc}_s\}_{s \in [\![1,k]\!]}\big):=  \big(D,(\mathrm{Id}_{[\![1,k]\!]} \sqcup\sigma)\circ\varphi , \{\mathrm{cyc}_s\}_{s \in [\![1,k]\!]}\big)$$
for any $\sigma\in \mathfrak{S}_{m}$ and $\big(D,\varphi, \{\mathrm{cyc}_s\}_{s \in [\![1,k]\!]}\big)\in \mathring{\mathrm{Jac}}(\vec{\varnothing},[\![1,k]\!],[\![1,m]\!])$.

\begin{definition}\label{def:definition1sec4.1} We set
\begin{equation}
\mathring{\mathrm{Jac}}(\vec{\varnothing},[\![1,k]\!],m):=\frac{\mathring{\mathrm{Jac}}(\vec{\varnothing},[\![1,k]\!],[\![1,m]\!])}{\mathfrak{S}_{m}} \quad \quad \text{ and } \quad \quad \mathring{\mathrm{Jac}}(\vec{\varnothing},[\![1,k]\!],\bullet) := \bigsqcup_{m\geq 0} \mathring{\mathrm{Jac}}(\vec{\varnothing},[\![1,k]\!],m).
\end{equation} \index[notation]{Jac(\vec{\varnothing},[\![1,k]\!],m)@$\mathring{\mathrm{Jac}}(\vec{\varnothing},[\![1,k]\!],m)$} \index[notation]{Jac(\vec{\varnothing},[\![1,k]\!],\bullet)@$\mathring{\mathrm{Jac}}(\vec{\varnothing},[\![1,k]\!],\bullet)$} 
\end{definition}

Notice that  $\mathring{\mathrm{Jac}}(\vec{\varnothing},[\![1,k]\!],0) = \mathring{\mathrm{Jac}}(\vec{\varnothing},[\![1,k]\!],\emptyset) = \mathring{\mathrm{Jac}}(\vec{\varnothing},[\![1,k]\!])$, from now on we will use this identification. Moreover, we have a grading  on $\mathbb{C}\mathring{\mathrm{Jac}}(\vec{\varnothing},[\![1,k]\!],\bullet)$ induced by $\bullet$.

\begin{lemma} The set $\mathring{\mathrm{Jac}}(\vec{\varnothing},[\![1,k]\!],\bullet)$ consists of tuples $\big(D,{\partial D}_a, {\partial D}_f, \varphi : \partial D_a \to [\![1,k]\!], \{\mathrm{cyc}_s\}_{s \in [\![1,k]\!]}\big)$ where   $D$ is a vertex-oriented unitrivalent graph, $\partial D = {\partial D}_a\sqcup {\partial D}_f$ is a partition, $\varphi : {\partial D}_a \to [\![1,k]\!]$ is a map and $\mathrm{cyc}_s$ is a cyclic order on $\varphi^{-1}(s)$ for any $s \in [\![1,k]\!]$.
\end{lemma}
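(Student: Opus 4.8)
The plan is to unwind the definitions and exhibit an explicit bijection. Recall from Definition~\ref{generalJDwithfree} and Definition~\ref{def:definition1sec4.1} that an element of $\mathring{\mathrm{Jac}}(\vec{\varnothing},[\![1,k]\!],m)$ is the $\mathfrak{S}_m$-orbit of the class of a tuple $\big(D,\varphi:\partial D\to [\![1,k]\!]\sqcup [\![1,m]\!],\{\mathrm{cyc}_s\}_{s\in[\![1,k]\!]}\big)$ in which $[\![1,m]\!]$ is contained in the image of $\varphi$ and each element of $[\![1,m]\!]$ has exactly one $\varphi$-preimage, the class being taken with respect to compatible graph isomorphisms and $\mathfrak{S}_m$ acting by relabelling the $[\![1,m]\!]$-block of the target. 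First I would record that such a tuple canonically determines a partition $\partial D=\partial D_a\sqcup \partial D_f$ with $\partial D_a:=\varphi^{-1}([\![1,k]\!])$ and $\partial D_f:=\varphi^{-1}([\![1,m]\!])$; the cardinality-one hypothesis gives $|\partial D_f|=m$, and $\big(D,\partial D_a,\partial D_f,\varphi_{|\partial D_a}:\partial D_a\to [\![1,k]\!],\{\mathrm{cyc}_s\}_{s}\big)$ — where each $\mathrm{cyc}_s$ is a cyclic order on $\varphi^{-1}(s)=(\varphi_{|\partial D_a})^{-1}(s)$ — is a tuple of the shape in the statement.

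Next I would check that this assignment is invariant under the $\mathfrak{S}_m$-action: an element $\sigma\in\mathfrak{S}_m$ only permutes the $[\![1,m]\!]$-labels of the target and therefore leaves $\partial D_a$, $\partial D_f$, $\varphi_{|\partial D_a}$ and the family $\{\mathrm{cyc}_s\}_s$ untouched; it is also compatible with the graph-isomorphism equivalence of Definition~\ref{generalJDwithfree}, so the assignment descends to a well-defined map $\Phi_m$ from $\mathring{\mathrm{Jac}}(\vec{\varnothing},[\![1,k]\!],m)$ to the set of tuples $\big(D,\partial D_a,\partial D_f,\varphi,\{\mathrm{cyc}_s\}_s\big)$ as in the statement with $|\partial D_f|=m$. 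For the reverse direction, starting from such a tuple with $|\partial D_f|=m$ I would choose a bijection $\beta:\partial D_f\to [\![1,m]\!]$ and form $\big(D,\varphi\sqcup\beta:\partial D\to [\![1,k]\!]\sqcup [\![1,m]\!],\{\mathrm{cyc}_s\}_s\big)$, which satisfies the hypotheses of Definition~\ref{generalJDwithfree}; since any two choices of $\beta$ differ by post-composition with an element of $\mathfrak{S}_m$, its image in $\mathring{\mathrm{Jac}}(\vec{\varnothing},[\![1,k]\!],m)$ is independent of $\beta$, giving a map $\Psi_m$. A direct check shows $\Psi_m\circ\Phi_m=\mathrm{Id}$ and $\Phi_m\circ\Psi_m=\mathrm{Id}$.

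Finally I would take the disjoint union over $m\geq 0$ and invoke $\mathring{\mathrm{Jac}}(\vec{\varnothing},[\![1,k]\!],\bullet)=\bigsqcup_{m\geq 0}\mathring{\mathrm{Jac}}(\vec{\varnothing},[\![1,k]\!],m)$ to obtain the asserted description, the parameter $m$ being recovered as $|\partial D_f|$. The only delicate point — and really the ``main obstacle'', modest as it is — is to track the compatible-graph-isomorphism equivalence simultaneously through the quotient by $\mathfrak{S}_m$ and through the auxiliary choice of $\beta$; this is routine bookkeeping rather than a genuine difficulty, and no new ideas beyond those already used to set up $\mathring{\mathrm{Jac}}(P,S,\Sigma)$ are needed.
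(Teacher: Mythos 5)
Your proposal is correct and is essentially the paper's own argument: the paper simply declares the lemma to be a restatement of Definition~\ref{def:definition1sec4.1}, and your explicit bijection (setting $\partial D_a=\varphi^{-1}([\![1,k]\!])$, $\partial D_f=\varphi^{-1}([\![1,m]\!])$, and inverting via a choice of labelling $\beta$ of $\partial D_f$ that is unique up to the $\mathfrak{S}_m$-action) is just the detailed unwinding of that restatement. Nothing is missing; your care with the $\mathfrak{S}_m$-quotient and the graph-isomorphism equivalence is exactly the routine bookkeeping the paper leaves implicit.
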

\begin{proof}
This is just a restatement of Definition~\ref{def:definition1sec4.1}.
\end{proof}

From now on, for $\underline{D}=\big(D,{\partial D}_a, {\partial D}_f, \varphi : \partial D_a \to [\![1,k]\!], \{\mathrm{cyc}_s\}_{s \in [\![1,k]\!]}\big)\in \mathring{\mathrm{Jac}}(\vec{\varnothing},[\![1,k]\!],\bullet)$ we set $\pi_0(\underline{D}):=\pi_0(D)$ and ${\partial\underline{D}}_f:={\partial{D}}_f$.

\begin{lemma} For $\underline{D}\in \mathring{\mathrm{Jac}}(\vec{\varnothing},[\![1,k]\!],\bullet)$  and $A \subset \pi_0(\underline{D})$ we set 
$$\underline{D}_A:=\left(\bigsqcup_{x\in A} x, ({\partial D}_a)\cap \bigsqcup_{x\in A} x, ({\partial D}_f) \cap \bigsqcup_{x\in A} x, \varphi_{|({\partial D}_a) \cap \bigsqcup_{x\in A} x}, \{\mathrm{cyc}_s^A\}_{s \in [\![1,k]\!]} \right)$$
where $\mathrm{cyc}_s^A$ is the restriction to $\varphi^{-1}(s)\cap \bigsqcup_{x\in A} x$ of the cyclic order $\mathrm{cyc}_s$. Then $\underline{D}_A\in \mathring{\mathrm{Jac}}(\vec{\varnothing},[\![1,k]\!],\bullet)$. If moreover $\underline{D} \in \mathring{\mathrm{Jac}}(\vec{\varnothing},[\![1,k]\!],0)$ and $A \subset \pi_0(\underline{D})$, we have  $\underline{D}_A \in \mathring{\mathrm{Jac}}(\vec{\varnothing},[\![1,k]\!],0)$. 
\end{lemma}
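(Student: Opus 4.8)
The final statement is a bookkeeping lemma: it asserts that the construction $\underline D \mapsto \underline D_A$ restricts consistently to elements of $\mathring{\mathrm{Jac}}(\vec{\varnothing},[\![1,k]\!],\bullet)$, and that it preserves the subset $\mathring{\mathrm{Jac}}(\vec{\varnothing},[\![1,k]\!],0)=\mathring{\mathrm{Jac}}(\vec{\varnothing},[\![1,k]\!])$. The plan is to unwind the definitions and observe that every piece of the tuple defining $\underline D_A$ is manifestly of the correct type, then check the one nontrivial compatibility point, namely that restricting the cyclic orders $\mathrm{cyc}_s$ does indeed produce well-defined cyclic orders on the sub-preimages.

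First I would fix $\underline D=\big(D,{\partial D}_a, {\partial D}_f, \varphi : \partial D_a \to [\![1,k]\!], \{\mathrm{cyc}_s\}_{s \in [\![1,k]\!]}\big)\in \mathring{\mathrm{Jac}}(\vec{\varnothing},[\![1,k]\!],\bullet)$ and $A\subset \pi_0(\underline D)=\pi_0(D)$. The underlying graph of $\underline D_A$ is $D_A:=\bigsqcup_{x\in A}x$, which is a sub-vertex-oriented-unitrivalent graph of $D$: it inherits the trivalent/univalent structure and, at each trivalent vertex, the cyclic order on incident half-edges, since $D_A$ is a union of connected components of $D$. Its univalent vertex set is $\partial(D_A)=\partial D\cap \bigsqcup_{x\in A}x$, which the partition $\partial D={\partial D}_a\sqcup {\partial D}_f$ splits as $\big({\partial D}_a\cap \bigsqcup_{x\in A}x\big)\sqcup\big({\partial D}_f\cap\bigsqcup_{x\in A}x\big)$; thus the $a$/$f$ partition of $\underline D_A$ is legitimate. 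The map $\varphi$ restricts to $\varphi|_{{\partial D}_a\cap\bigsqcup_{x\in A}x}$, which is still a map into $[\![1,k]\!]$, so no hypothesis on its image or on the fibers is required (such a hypothesis is only present in the $\bullet$ picture through the identification with the $\mathfrak{S}_m$-quotient, which imposes nothing here).

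The key step is the cyclic orders. For each $s\in[\![1,k]\!]$, $\mathrm{cyc}_s$ is a cyclic order on $\varphi^{-1}(s)\subset{\partial D}_a$, and $\varphi^{-1}(s)\cap\bigsqcup_{x\in A}x$ is a subset of $\varphi^{-1}(s)$. By the remark on restriction of cyclic orders recorded just before Definition~\ref{def:definitioncyclicorder} (a cyclic order on a finite set restricts to any subset, exactly as a linear order does — formally one checks conditions $(a)$--$(d)$ of Definition~\ref{def:definitioncyclicorder} are inherited by passing to $C\cap (Y^3)$), the restriction $\mathrm{cyc}_s^A$ is a well-defined cyclic order on $\varphi^{-1}(s)\cap\bigsqcup_{x\in A}x$. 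Here one should also note that the construction $A\mapsto \underline D_A$ is compatible with the equivalence relation defining $\mathring{\mathrm{Jac}}(\vec{\varnothing},[\![1,k]\!],\bullet)$ as an $\mathfrak{S}_m$-quotient of $\mathring{\mathrm{Jac}}(\vec{\varnothing},[\![1,k]\!],[\![1,m]\!])$: acting by $\sigma\in\mathfrak{S}_m$ on the free univalent labels commutes with restricting to the components in $A$, so the class $\underline D_A$ does not depend on the chosen representative. Assembling these observations gives $\underline D_A\in\mathring{\mathrm{Jac}}(\vec{\varnothing},[\![1,k]\!],\bullet)$.

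Finally, for the last sentence, suppose $\underline D\in\mathring{\mathrm{Jac}}(\vec{\varnothing},[\![1,k]\!],0)$. By the identification noted after Definition~\ref{def:definition1sec4.1}, this means ${\partial D}_f=\emptyset$, i.e. all univalent vertices of $D$ are coloured by $[\![1,k]\!]$. Then ${\partial(D_A)}_f={\partial D}_f\cap\bigsqcup_{x\in A}x=\emptyset$ as well, so $\underline D_A\in\mathring{\mathrm{Jac}}(\vec{\varnothing},[\![1,k]\!],0)$. The only point that requires any thought at all is the well-definedness of the restricted cyclic orders and the independence of the class $\underline D_A$ from the representative; both are immediate from the restriction property of cyclic orders and the fact that splitting off connected components commutes with relabelling the free legs. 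I expect no real obstacle; the proof is essentially ``direct verification,'' which is presumably how the authors will phrase it.
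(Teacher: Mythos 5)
Your proof is correct and matches the paper's approach: the paper disposes of this lemma with "The result follows directly from the definitions," and your argument is exactly that direct verification, spelled out (components split off cleanly, the boundary partition, $\varphi$, and the cyclic orders all restrict, and ${\partial D}_f=\emptyset$ is inherited). The only cosmetic slip is that the paper's remark before Definition~\ref{def:definitioncyclicorder} concerns restriction of \emph{linear} orders; restriction of cyclic orders is equally immediate from conditions $(a)$--$(d)$, as you note.
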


\begin{proof}
The result follows directly from the definitions.
\end{proof}

Let $\underline{D} \in \mathring{\mathrm{Jac}}(\vec{\varnothing},[\![1,k]\!],\bullet)$,  $A \subset \pi_0(\underline{D})$ and $\pi_0(\underline{D})\setminus A$ the complement of $A$ in $\pi_0(\underline{D})$.  Then $\sum_{A \subset \pi_0(\underline{D})} \underline{D}_A \otimes \underline {D}_{\pi_0(\underline{D})\setminus A} \in \mathbb{C} \mathring{\mathrm{Jac}}(\vec{\varnothing},[\![1,k]\!],\bullet) \otimes \mathbb{C}\mathring{\mathrm{Jac}}(\vec{\varnothing},[\![1,k]\!],\bullet)$.

\begin{definition}  Define the linear map 
\begin{equation}\label{eq:coproductinJacbullet}
\Delta_{\mathring{\mathrm{Jac}}(\vec{\varnothing},[\![1,k]\!],\bullet)} : \mathbb{C}\mathring{\mathrm{Jac}}(\vec{\varnothing},[\![1,k]\!],\bullet)\longrightarrow \mathbb{C}\mathring{\mathrm{Jac}}(\vec{\varnothing},[\![1,k]\!],\bullet) \otimes \mathbb{C}\mathring{\mathrm{Jac}}(\vec{\varnothing},[\![1,k]\!],\bullet),
\end{equation} \index[notation]{\Delta_{\mathring{\mathrm{Jac}}(\vec{\varnothing},[\![1,k]\!],\bullet)}@$\Delta_{\mathring{\mathrm{Jac}}(\vec{\varnothing},[\![1,k]\!],\bullet)}$} 
 by  $$\underline{D}\xmapsto{\ \Delta_{\mathring{\mathrm{Jac}}(\vec{\varnothing},[\![1,k]\!],\bullet)} \ } \sum_{A \subset \pi_0(\underline{D})}\underline{D}_A \otimes \underline{D}_{\pi_0(D)\setminus A}$$
for any $\underline{D}\in\mathring{\mathrm{Jac}}(\vec{\varnothing},[\![1,k]\!],\bullet)$. 
\end{definition}

\begin{lemma}\label{r:2022-05-23lem:grading} The map  $\Delta_{\mathring{\mathrm{Jac}}(\vec{\varnothing},[\![1,k]\!],\bullet)}$ from \eqref{eq:coproductinJacbullet} is compatible with grading of $\mathbb{C}\mathring{\mathrm{Jac}}(\vec{\varnothing},[\![1,k]\!],\bullet)$. In particular,  it restricts to a map 
\begin{equation}\label{eq:coproductinJaczero}
\Delta_{\mathring{\mathrm{Jac}}(\vec{\varnothing},[\![1,k]\!],0)} : \mathbb{C}\mathring{\mathrm{Jac}}(\vec{\varnothing},[\![1,k]\!],0)\longrightarrow \mathbb{C}\mathring{\mathrm{Jac}}(\vec{\varnothing},[\![1,k]\!],0) \otimes \mathbb{C}\mathring{\mathrm{Jac}}(\vec{\varnothing},[\![1,k]\!],0).
\end{equation}
\end{lemma}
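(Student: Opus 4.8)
�The statement to prove is Lemma~\ref{r:2022-05-23lem:grading}: the coproduct $\Delta_{\mathring{\mathrm{Jac}}(\vec{\varnothing},[\![1,k]\!],\bullet)}$ respects the grading induced by $\bullet$ (the number of free univalent vertices), and consequently restricts to the degree-zero piece $\mathbb{C}\mathring{\mathrm{Jac}}(\vec{\varnothing},[\![1,k]\!],0)$.

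The plan is to trace through the definition of the coproduct and check that the grading is additive with respect to the decomposition into tensor factors. First I would recall that an element $\underline{D}=\big(D,{\partial D}_a,{\partial D}_f,\varphi,\{\mathrm{cyc}_s\}_{s}\big)$ of $\mathring{\mathrm{Jac}}(\vec{\varnothing},[\![1,k]\!],\bullet)$ has degree $|{\partial D}_f|$, i.e. the cardinality of its set of free legs. Then, given $A\subset\pi_0(\underline{D})$, the sub-Jacobi-diagram $\underline{D}_A$ consists of those connected components of $D$ lying in $A$, and its free legs are exactly ${\partial D}_f\cap\bigsqcup_{x\in A}x$; similarly the free legs of $\underline{D}_{\pi_0(\underline{D})\setminus A}$ are ${\partial D}_f\cap\bigsqcup_{x\notin A}x$. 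Since $\{\bigsqcup_{x\in A}x,\bigsqcup_{x\notin A}x\}$ is a partition of the vertex set of $D$, these two sets of free legs partition ${\partial D}_f$, hence
$$
\deg(\underline{D}_A)+\deg(\underline{D}_{\pi_0(\underline{D})\setminus A})=|{\partial D}_f|=\deg(\underline{D}).
$$
Therefore every term $\underline{D}_A\otimes\underline{D}_{\pi_0(\underline{D})\setminus A}$ appearing in $\Delta_{\mathring{\mathrm{Jac}}(\vec{\varnothing},[\![1,k]\!],\bullet)}(\underline{D})$ lies in the appropriate graded piece of the tensor square, which is precisely the assertion that $\Delta_{\mathring{\mathrm{Jac}}(\vec{\varnothing},[\![1,k]\!],\bullet)}$ is a graded map.

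For the second assertion I would simply specialize: if $\deg(\underline{D})=0$, then $|{\partial D}_f|=0$, so by the additivity just established both $\deg(\underline{D}_A)$ and $\deg(\underline{D}_{\pi_0(\underline{D})\setminus A})$ are zero for every $A$; hence each summand of $\Delta_{\mathring{\mathrm{Jac}}(\vec{\varnothing},[\![1,k]\!],\bullet)}(\underline{D})$ lies in $\mathbb{C}\mathring{\mathrm{Jac}}(\vec{\varnothing},[\![1,k]\!],0)\otimes\mathbb{C}\mathring{\mathrm{Jac}}(\vec{\varnothing},[\![1,k]\!],0)$, giving the restricted map \eqref{eq:coproductinJaczero}.

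There is essentially no obstacle here: the whole argument is the observation that splitting the connected components of a graph splits its univalent vertices (both free and labelled) compatibly, so it is a one-paragraph bookkeeping verification. The only mild care needed is to make sure the $\mathfrak{S}_m$-quotient used to define $\mathring{\mathrm{Jac}}(\vec{\varnothing},[\![1,k]\!],\bullet)$ from $\mathring{\mathrm{Jac}}(\vec{\varnothing},[\![1,k]\!],[\![1,m]\!])$ does not interfere — but since the symmetric group action permutes only the labels of free legs and does not change their number, the degree descends to the quotient and the coproduct is well defined on it, so this too is immediate.
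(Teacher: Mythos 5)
Your proof is correct and is exactly the straightforward bookkeeping argument the paper has in mind (the paper dismisses the lemma with "The proof is straightforward"): the free legs of $\underline{D}_A$ and $\underline{D}_{\pi_0(\underline{D})\setminus A}$ partition $\partial \underline{D}_f$, so the grading by $\bullet$ is additive across the two tensor factors, and the degree-zero case follows at once.
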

\begin{proof}
The proof is straightforward.
\end{proof}

Under the identification  $\mathring{\mathrm{Jac}}(\vec{\varnothing},[\![1,k]\!],0)=\mathring{\mathrm{Jac}}(\vec{\varnothing},[\![1,k]\!],\emptyset) = \mathring{\mathrm{Jac}}(\vec{\varnothing},[\![1,k]\!])$,  the two coproducts maps $\Delta_{\mathring{\mathrm{Jac}}(\vec{\varnothing},[\![1,k]\!],0)}$ and $\Delta_{\mathring{\mathrm{Jac}}(\vec{\varnothing},[\![1,k]\!])}$ (from Definition~~\ref{def:coproductinA-k}$~(b)$) coincide.

Let $\underline{D}\in \mathring{\mathrm{Jac}}(\vec{\varnothing},[\![1,k]\!],\bullet)$. Recall  from~\eqref{equ:JacandFPFI} that there is a map $\mathring{\mathrm{Jac}}: \mathrm{FPFI}(\partial{\underline{D}}_f) \to \mathring{\mathrm{Jac}}(\vec{\varnothing}, \emptyset, \partial{\underline{D}}_f)$ (the set $\mathrm{FPFI}(\partial{\underline{D}}_f)$ is equal to $\emptyset$ if $|\partial{\underline{D}}_f|$ is odd). Hence, given $\sigma\in \mathrm{FPFI}(\partial{\underline{D}}_f)$ we obtain an element 
\begin{equation}\label{eq:2022-05-27eqpairing}
\langle \underline{D}, \mathring{\mathrm{Jac}}(\sigma)\rangle \in \mathring{\mathrm{Jac}}(\vec{\varnothing},[\![1,k]\!],0) =  \mathring{\mathrm{Jac}}(\vec{\varnothing},[\![1,k]\!]), 
\end{equation}
where $\langle \ , \ \rangle$ is the pairing~\eqref{thepairing} 
 in Definition~\ref{themappairing}.

\begin{definition}\label{def:2022-05-23-def1} Let $\underline{D}\in \mathring{\mathrm{Jac}}(\vec{\varnothing},[\![1,k]\!],\bullet)$. 

$(a)$ Define the sets
\begin{equation*}
\mathrm{Set}_{\underline{D}}:=\left\{ (A,\sigma,\sigma') \ \Big| \ A\subset \pi_0(\underline{D}), \ \sigma\in\mathrm{FPFI}\left({\partial{\underline{D}}}_f\cap \bigsqcup_{x\in A}x \right), \ \sigma'\in\mathrm{FPFI}\left({\partial{\underline{D}}}_f\cap \bigsqcup_{x\in(\pi_0(\underline{D})\setminus A)}x \right)  \right\}
\end{equation*}
and
\begin{equation*}
\widetilde{\mathrm{Set}}_{\underline{D}}:=\Big\{ (\widetilde{\sigma}, \widetilde{A}) \ \Big| \    \widetilde{\sigma}\in\mathrm{FPFI}({\partial{\underline{D}}}_f), \ \widetilde{A}\subset \pi_0\big(\big\langle \underline{D}, \mathring{\mathrm{Jac}}(\widetilde{\sigma}) \big\rangle \big)  \Big\}.
\end{equation*}

$(b)$ Define the map
\begin{equation} (l_{\underline{D}}, r_{\underline{D}}): \mathrm{Set}_{\underline{D}} \longrightarrow \mathring{\mathrm{Jac}}(\vec{\varnothing},[\![1,k]\!]) \times \mathring{\mathrm{Jac}}(\vec{\varnothing},[\![1,k]\!]), \quad \quad (A,\sigma,\sigma')\longmapsto \big(l_{\underline{D}}(A,\sigma,\sigma'), r_{\underline{D}}(A,\sigma,\sigma')\big),
\end{equation}
where $l_{\underline{D}}(A,\sigma,\sigma') := \big\langle \underline{D}_A, \mathring{\mathrm{Jac}}(\sigma)\big\rangle$ and $r_{\underline{D}}(A,\sigma,\sigma') := \big\langle \underline{D}_{\pi_0(\underline{D})\setminus A}, \mathring{\mathrm{Jac}}(\sigma')\big\rangle$ for any $(A,\sigma,\sigma')\in \mathrm{Set}_{\underline{D}}$.

$(c)$ Define the map
\begin{equation} (\widetilde{l}_{\underline{D}}, \widetilde{r}_{\underline{D}}): \widetilde{\mathrm{Set}}_{\underline{D}} \longrightarrow \mathring{\mathrm{Jac}}(\vec{\varnothing},[\![1,k]\!]) \times \mathring{\mathrm{Jac}}(\vec{\varnothing},[\![1,k]\!]), \quad \quad (\widetilde{\sigma}, \widetilde{A})\longmapsto \big(\widetilde{l}_{\underline{D}}(A,\sigma,\sigma'), \widetilde{r}_{\underline{D}}(A,\sigma,\sigma')\big),
\end{equation}
where $\widetilde{l}_{\underline{D}}(\widetilde{\sigma}, \widetilde{A}) := \big\langle \underline{D}, \mathring{\mathrm{Jac}}(\widetilde{\sigma})\big\rangle_{\widetilde{A}}$ and $\widetilde{r}_{\underline{D}}(\widetilde{\sigma},\widetilde{A}) := \big\langle \underline{D}, \mathring{\mathrm{Jac}}(\widetilde{\sigma})\big\rangle_{\pi_0\big(\langle \underline{D}, \mathring{\mathrm{Jac}}(\widetilde{\sigma})\rangle\big)\setminus \widetilde{A}}$ for any $(\widetilde{\sigma},\widetilde{A})\in \widetilde{\mathrm{Set}}_{\underline{D}}$.
\end{definition} 

Notice that if in Definition~\ref{def:2022-05-23-def1} the number $|\partial{\underline{D}}_f|$ is odd,  then $\mathrm{Set}_{\underline D} = \widetilde{\mathrm{Set}}_{\underline D} = \emptyset.$

\begin{definition}\label{def:2022-05-30} Let $U$ and $V$ be finite sets and $f:U\to V$ be a surjective map.

$(a)$ For $\widetilde{\sigma}\in\mathrm{FPFI}(U)$ let $\sim_{\widetilde{\sigma}}$ be the finest equivalent relation on $V$ such that $f(u)\sim_{\widetilde{\sigma}} f(\widetilde{\sigma}u)$. Explicitly, $v\sim_{\widetilde{\sigma}} v'$ if and only if $v=v'$ or there exists $i\geq 0$ and $u_0,\ldots, u_i\in U$ such that  $v_0:=f(u_0)$, $v_1:=f(\widetilde{\sigma}u_0) = f(u_1)$, $\ldots$, $v_i:=f(\widetilde{\sigma}u_{i-1}) =f(u_i)$ and $v=v_0$ and $v'=v_i$.

$(b)$ Define the sets
$$\mathrm{Set}(f):=\Big\{ (A,\sigma,\sigma') \ \big| \ A\subset V, \ \sigma\in\mathrm{FPFI}(f^{-1}(A)), \ \sigma'\in\mathrm{FPFI}(f^{-1}(V\setminus A)) \Big\}$$
and
$$\widetilde{\mathrm{Set}}(f):=\Big\{ (\widetilde{\sigma}, \widetilde{A}) \ \big| \  \widetilde{\sigma}\in \mathrm{FPFI}(U), \ \widetilde{A}\subset V/\sim_{\widetilde{\sigma}} \Big\}.$$
\end{definition}

Recall that for a set $E$, we denote by $\mathfrak{S}_{E}$ the group of permutations of $E$. For $E$ and $F$ finite sets there exists a unique operation 
\begin{equation}\label{eq:2022-05-20-disjointofperm}
\mathfrak{S}_E\times \mathfrak{S}_F\to \mathfrak{S}_{E\sqcup F}
\end{equation}
taking $(\alpha,\beta)\in \mathfrak{S}_{E}\times \mathfrak{S}_{F}$ to the unique permutation $\gamma$ of $E\sqcup F$ such that $\gamma_{|E}=\alpha$ and $\gamma_{|F}=\beta$. We denote $\gamma:=\alpha\sqcup \beta$ and call it  the \emph{disjoint union} of $\alpha$ and $\beta$.

\begin{lemma}\label{r:2022-05-20lemma2} Let $U$ and $V$ be finite sets and $f:U\to V$ be a surjective map. 

$(a)$ To each $(A,\sigma,\sigma')\in \mathrm{Set}(f)$ we associate the pair $(\widetilde{\sigma}, \widetilde{A})$ where $\widetilde{\sigma}:=\sigma\sqcup \sigma'$ and $\widetilde{A}$ is the image of $A$ in $V/\sim_{\widetilde{\sigma}}$ under the canonical projection $V\to V/\sim_{\widetilde{\sigma}}$. Then  $(\widetilde{\sigma}, \widetilde{A})\in \widetilde{\mathrm{Set}}(f)$ and the map
$$\Phi: \mathrm{Set}(f)\longrightarrow \widetilde{\mathrm{Set}}(f),  \quad \quad (A,\sigma,\sigma')\longmapsto (\widetilde{\sigma}, \widetilde{A})$$ 
is a bijection.

$(b)$ There are involutions $\mathrm{a}$ and $\widetilde{\mathrm{a}}$ of $\mathrm{Set}(f)$ and $\widetilde{\mathrm{Set}}(f)$, respectively defined by $\mathrm{a}(A,\sigma,\sigma'):=(V\setminus A,\sigma',\sigma)$ and $\widetilde{\mathrm{a}}(\widetilde{\sigma},\widetilde{A}):=\big(\widetilde{\sigma},(V/\sim_{\widetilde{\sigma}})\setminus \widetilde{A}\big)$ for any $(A,\sigma,\sigma')\in \mathrm{Set}(f)$ and $(\widetilde{\sigma},\widetilde{A})\in\widetilde{\mathrm{Set}}(f)$. They are such that $\widetilde{\mathrm{a}} \circ {\Phi}=\Phi \circ \mathrm{a}$. 
\end{lemma}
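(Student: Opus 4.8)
\textbf{Proof plan for Lemma~\ref{r:2022-05-20lemma2}.}

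The plan is to verify directly that the explicitly described maps $\Phi$, $\mathrm{a}$ and $\widetilde{\mathrm{a}}$ are well-defined, and then exhibit an inverse for $\Phi$. First I would check that $\Phi$ is well-defined: given $(A,\sigma,\sigma')\in\mathrm{Set}(f)$, the permutation $\widetilde{\sigma}:=\sigma\sqcup\sigma'$ of $U = f^{-1}(A)\sqcup f^{-1}(V\setminus A)$ is visibly a fixed-point free involution since $\sigma$ and $\sigma'$ are, so $\widetilde{\sigma}\in\mathrm{FPFI}(U)$; and $\widetilde{A}$, being the image of a subset of $V$, is a subset of $V/\sim_{\widetilde{\sigma}}$. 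Hence $(\widetilde\sigma,\widetilde A)\in\widetilde{\mathrm{Set}}(f)$. The key point making $\Phi$ a bijection is that $A$ can be recovered from $(\widetilde\sigma,\widetilde A)$: because $\widetilde\sigma$ preserves the partition $\{f^{-1}(A),f^{-1}(V\setminus A)\}$ of $U$ (it acts as $\sigma$ on the first block and $\sigma'$ on the second, neither of which moves a point between blocks), the equivalence relation $\sim_{\widetilde\sigma}$ on $V$ does not identify any element of $A$ with an element of $V\setminus A$; therefore $A$ is a union of $\sim_{\widetilde\sigma}$-classes, and it equals the preimage of $\widetilde A$ under $V\to V/\sim_{\widetilde\sigma}$. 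This lets one define the candidate inverse $\Psi:\widetilde{\mathrm{Set}}(f)\to\mathrm{Set}(f)$ by $(\widetilde\sigma,\widetilde A)\mapsto (A,\widetilde\sigma|_{f^{-1}(A)},\widetilde\sigma|_{f^{-1}(V\setminus A)})$, where $A$ is that preimage; one must check $\widetilde\sigma$ restricts to involutions on $f^{-1}(A)$ and $f^{-1}(V\setminus A)$ — again because these two sets are $\widetilde\sigma$-stable, which follows from $A$ being $\sim_{\widetilde\sigma}$-saturated. Then $\Psi\circ\Phi=\mathrm{Id}$ and $\Phi\circ\Psi=\mathrm{Id}$ are immediate from the constructions.

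For part $(b)$, I would first note that $\mathrm{a}$ is well-defined: if $(A,\sigma,\sigma')\in\mathrm{Set}(f)$ then $(V\setminus A,\sigma',\sigma)$ has $\sigma'\in\mathrm{FPFI}(f^{-1}(V\setminus A))$ and $\sigma\in\mathrm{FPFI}(f^{-1}(V\setminus(V\setminus A)))=\mathrm{FPFI}(f^{-1}(A))$, so it lies in $\mathrm{Set}(f)$; clearly $\mathrm{a}^2=\mathrm{Id}$. For $\widetilde{\mathrm{a}}$, given $(\widetilde\sigma,\widetilde A)$, the complement $(V/\sim_{\widetilde\sigma})\setminus\widetilde A$ is still a subset of $V/\sim_{\widetilde\sigma}$, and $\widetilde{\mathrm{a}}$ leaves $\widetilde\sigma$ unchanged, so $\widetilde{\mathrm{a}}$ is a well-defined involution. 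Finally the identity $\widetilde{\mathrm{a}}\circ\Phi=\Phi\circ\mathrm{a}$ is checked on an element $(A,\sigma,\sigma')$: the left side gives $\big(\sigma\sqcup\sigma',(V/\sim_{\widetilde\sigma})\setminus[\text{image of }A]\big)$, while the right side first produces $(V\setminus A,\sigma',\sigma)$ and then applies $\Phi$, giving $\big(\sigma'\sqcup\sigma,[\text{image of }V\setminus A\text{ in }V/\sim_{\sigma'\sqcup\sigma}]\big)$; one uses $\sigma\sqcup\sigma'=\sigma'\sqcup\sigma$ (the disjoint union of permutations on complementary blocks is symmetric under swapping the blocks — both equal the unique permutation restricting appropriately) and the fact, established above, that $A$ is $\sim_{\widetilde\sigma}$-saturated, so the image of $V\setminus A$ in $V/\sim_{\widetilde\sigma}$ is precisely the complement of the image of $A$.

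I do not expect a genuine obstacle here; every assertion reduces to the observation that a fixed-point free involution of the form $\sigma\sqcup\sigma'$ stabilizes each of the two blocks $f^{-1}(A)$ and $f^{-1}(V\setminus A)$, whence the induced equivalence relation $\sim_{\widetilde\sigma}$ on $V$ never merges $A$ with its complement, and so $A$ is a union of $\sim_{\widetilde\sigma}$-classes. The mildly delicate bookkeeping point is to make sure that the definition of $\sim_{\widetilde\sigma}$ (as the finest equivalence relation with $f(u)\sim_{\widetilde\sigma}f(\widetilde\sigma u)$) indeed has the stated saturation property — this is where one invokes block-stability of $\widetilde\sigma$ together with surjectivity of $f$ to see that every generating relation $f(u)\sim f(\widetilde\sigma u)$ stays within $A$ or within $V\setminus A$. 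All the displayed equalities of maps are then verified by evaluating on a general element, which is routine.
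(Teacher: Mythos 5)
Your proposal is correct and follows essentially the same route as the paper: it defines the inverse $\Psi$ via the preimage of $\widetilde{A}$ and the restrictions of $\widetilde{\sigma}$, justifies this by the observation that $A$ is a union of $\sim_{\widetilde{\sigma}}$-classes (equivalently, $f^{-1}(A)$ is $\widetilde{\sigma}$-stable), and then checks both compositions and the identity $\widetilde{\mathrm{a}}\circ\Phi=\Phi\circ\mathrm{a}$ by direct evaluation using $\sigma\sqcup\sigma'=\sigma'\sqcup\sigma$ and saturation of $A$. No gaps.
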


\begin{proof}
$(a)$ Since $f$ is surjective, $U=f^{-1}(A) \sqcup f^{-1}(V\setminus A)$. One checks that the disjoint union operation~\eqref{eq:2022-05-20-disjointofperm} $\mathfrak{S}_{f^{-1}(A)}\times \mathfrak{S}_{f^{-1}(V\setminus A)}\to \mathfrak{S}_{U}$ induces a map
$$\mathrm{FPFI}(f^{-1}(A))\times \mathrm{FPFI}(f^{-1}(V\setminus A))\longrightarrow \mathrm{FPFI}(U).$$
Thus, if $(A,\sigma,\sigma')\in \mathrm{Set}(f)$ then $\sigma\sqcup\sigma'\in \mathrm{FPFI}(U)$ and therefore  $(\widetilde{\sigma}, \widetilde{A})\in \widetilde{\mathrm{Set}}(f)$. To show that $\Phi$ is bijective we define a map $\Psi: \widetilde{\mathrm{Set}}(f)\to \mathrm{Set}(f)$ and show that
$\Phi\circ \Psi = \mathrm{Id}_{\widetilde{\mathrm{Set}}(f)}$ and $\Psi\circ \Phi = \mathrm{Id}_{\mathrm{Set}(f)}$.

For $(\widetilde{\sigma},\widetilde{A})\in\widetilde{\mathrm{Set}}(f)$ we set $A\subset V$ equal to the preimage of $\widetilde{A}$ under the projection $V\to V/\sim_{\widetilde{\sigma}}$ and $\sigma:=\widetilde{\sigma}_{|f^{-1}(A)}$ and $\sigma':=\widetilde{\sigma}_{|f^{-1}(V\setminus A)}$. Notice that $A=\bigcup_{a\in V/\sim_{\widetilde{\sigma}}}\hat{a}$ where $\hat{a}$ is preimage of $a$ under the projection $V\to V/\sim_{\widetilde{\sigma}}$. That is, $A$ is a union of orbits of $\sim_{\widetilde{\sigma}}$. Since the orbits of $\sim_{\widetilde{\sigma}}$ are unions of the images of orbits of $\widetilde{\sigma}$, the preimages of the unions of orbits of $\sim_{\widetilde{\sigma}}$ are unions of orbits of $\widetilde{\sigma}$. Therefore,   $f^{-1}(A)$ is stable under $\widetilde{\sigma}$ which, since $\widetilde{\sigma}$ is a permutation of $U$, implies that  $f^{-1}(V\setminus A)$ is also stable under $\widetilde{\sigma}$. Hence, $\sigma\in\mathrm{FPFI}(f^{-1}(A))$ and $\sigma'\in\mathrm{FPFI}(f^{-1}(V\setminus A))$ and therefore $(A,\sigma,\sigma')\in\mathrm{Set}(f)$. This assignment defines the map $\Psi: \widetilde{\mathrm{Set}}(f)\to \mathrm{Set}(f)$. 

We now proceed to show that $\Phi\circ \Psi = \mathrm{Id}_{\widetilde{\mathrm{Set}}(f)}$ and $\Psi\circ \Phi = \mathrm{Id}_{\mathrm{Set}(f)}$.  Let $(\widetilde{\sigma}, \widetilde{A})$ in $\widetilde{\mathrm{Set}}(f)$. Then $\Psi(\widetilde{\sigma}, \widetilde{A})=(A, \sigma, \sigma')$ given as in the above paragraph so $\Phi \circ \Psi(\widetilde{\sigma}, \widetilde{A}) =  \Phi(A, \sigma, \sigma') = (\widetilde{\widetilde{\sigma}}, \widetilde{\widetilde{A}})$. Here $\widetilde{\widetilde{\sigma}}=\sigma \sqcup \sigma'$, which is equal to $\widetilde{\sigma}$ since $\sigma$ and $\sigma'$ are the restrictions of $\widetilde \sigma$ to complementary subsets of $U$. Besides, $\widetilde{\widetilde{A}}$ is the image  $\mathrm{Im}\big(A \subset V \longrightarrow V/\sim_{\widetilde{\widetilde{\sigma}}}= V/\sim_{\widetilde{\sigma}}\big)$ but $A$ is the preimage of $\widetilde{A}$ under the same map, which is  surjective, and therefore $\widetilde{\widetilde{A}}= \widetilde{A}$. We then conclude $\Phi \circ \Psi=\mathrm{Id}_{\widetilde{\mathrm{Set}}(f)}$.  

Let $(A,\sigma,\sigma') \in \mathrm{Set}(f)$. Then $\Phi(A,\sigma,\sigma')=(\widetilde{\sigma}, \widetilde{A})$ as in the statement of the lemma and $\Psi \circ \Phi(A, \sigma, \sigma')=\Psi(\widetilde{\sigma}, \widetilde{A})= (\underline{A},\underline{\sigma},\underline{\sigma}')$. Here $\underline{A}$ is the preimage of $\widetilde{A} \subset V/\sim_{\widetilde{\sigma}}$ by the projection $V \to V/\sim_{\widetilde{\sigma}}$, while $\widetilde{A}$ is the image of $A$ under the same projection. Since $\widetilde{\sigma} \in \mathrm{Perm}(U)$ preserves $f^{-1}(A)$, then $A$ is a union of equivalence classes under $\sim_{\widetilde{\sigma}}$, which implies that $\underline{A}=A$.  Then $\underline{\sigma}$ and $\underline{\sigma'}$ are respectively the restrictions of $\widetilde{\sigma}$ to  $f^{-1}(\underline{A}) = f^{-1}(A)$ and $f^{-1}(V\setminus \underline{A})=f^{-1}(V\setminus  A)$, which are $\sigma$ and $\sigma'$. Hence $\Psi \circ \Phi= \mathrm{Id}_{\mathrm{Set}(f)}$.  

$(b)$ The first statement is obvious. If $(A,\sigma,\sigma') \in \mathrm{Set}(f)$, then $\Phi(A,\sigma,\sigma')$ is equal to $(\widetilde{\sigma},\widetilde{A})$ given by $(a)$. Then $\widetilde{\mathrm{a}}\circ \Phi(A,\sigma,\sigma')=\widetilde{a}(\widetilde{\sigma}, \widetilde{A}) = \big(\widetilde{\sigma},(V/\sim_{\widetilde{\sigma}})\setminus \widetilde{A}\big)$. On the other hand, $\mathrm{a}(A,\sigma,\sigma')=(V\setminus A, \sigma', \sigma)$. Then $\Phi \circ \mathrm{a}(A,\sigma,\sigma')=\Phi(V\setminus A,\sigma',\sigma)=(\widetilde{\widetilde{\sigma}},{\widetilde{B}})$ where $\widetilde{\widetilde{\sigma}}:=\sigma' \sqcup \sigma=\widetilde{\sigma}$, and ${\widetilde{B}}$ is the image in $ V/\sim_{\widetilde{\widetilde{\sigma}}}= V/\sim_{\widetilde{\sigma}})$ of $V\setminus A \subset V$ under the canonical projection $V \to V/\sim_{\widetilde{\widetilde{\sigma}}}= V/\sim_{\widetilde{\sigma}}$. Since $A$ is a union of classes of $\sim_{\widetilde{\sigma}}$, then ${\widetilde{B}} = (V/\sim_{\widetilde{\sigma}})\setminus \mathrm{Im}(A \subset V \to V/\sim_{\widetilde{\sigma}}) = (V/\sim_{\widetilde{\sigma}})\setminus \widetilde{A}$. Therefore $\Phi \circ \mathrm{a}(A,\sigma,\sigma')= \widetilde{\mathrm{a}} \circ \Phi(A,\sigma,\sigma')$. 

\end{proof}

\begin{lemma}\label{r20220511lemma1} For any $\underline{D}\in \mathring{\mathrm{Jac}}(\vec{\varnothing},[\![1,k]\!],\bullet)$ there is a bijection
\begin{equation}
\mathrm{Bij}_{\underline{D}}: \mathrm{Set}_{\underline{D}} \longrightarrow \widetilde{\mathrm{Set}}_{\underline{D}}
\end{equation}
given by $\mathrm{Bij}_{\underline{D}}(A,\sigma,\sigma') = (\widetilde{\sigma}, \widetilde{A})$, where $\widetilde{\sigma}:=\sigma\sqcup \sigma'$ and $\widetilde{A}$ is the image of $A$ in $\pi_0\big(\big\langle\underline{D}, \mathring{\mathrm{Jac}}(\widetilde{\sigma})\big\rangle\big)$ under the canonical map $\pi_0(\underline{D})\to \pi_0\big(\big\langle\underline{D}, \mathring{\mathrm{Jac}}(\widetilde{\sigma})\big\rangle\big)$.
\end{lemma}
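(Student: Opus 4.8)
The plan is to prove Lemma~\ref{r20220511lemma1} by identifying the sets $\mathrm{Set}_{\underline{D}}$ and $\widetilde{\mathrm{Set}}_{\underline{D}}$ with $\mathrm{Set}(f)$ and $\widetilde{\mathrm{Set}}(f)$ respectively, for a suitable surjective map $f$, and then invoking Lemma~\ref{r:2022-05-20lemma2}$(a)$. First I would take $U:={\partial\underline{D}}_f$ (the set of ``free'' univalent vertices of $\underline{D}$) and $V:=\pi_0(\underline{D})$, and let $f:U\to V$ be the map sending a free vertex to the connected component of $D$ containing it. This map is a priori only a map $U\to \pi_0(D)$ whose image is the set of components carrying at least one free leg; to make it surjective one either restricts $V$ to that subset, or — cleaner — one observes that components of $D$ with no free legs simply contribute a factor that splits trivially and can be carried along. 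The honest route is to replace $\pi_0(\underline{D})$ throughout by $\pi_0(\underline{D})=f(U)\sqcup\{\text{legless components}\}$ and note that the choices over the legless part match on both sides, so the bijection reduces to the surjective case. After this reduction, $f:U\to V$ is surjective.

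Next I would check that under this identification the data match on the nose. On the left: $\mathrm{Set}_{\underline{D}}$ consists of triples $(A,\sigma,\sigma')$ with $A\subset\pi_0(\underline{D})$, $\sigma\in\mathrm{FPFI}\big({\partial\underline{D}}_f\cap\bigsqcup_{x\in A}x\big)$, $\sigma'\in\mathrm{FPFI}\big({\partial\underline{D}}_f\cap\bigsqcup_{x\in\pi_0(\underline{D})\setminus A}x\big)$; since ${\partial\underline{D}}_f\cap\bigsqcup_{x\in A}x=f^{-1}(A)$ and similarly for the complement, this is exactly $\mathrm{Set}(f)$. On the right: given $\widetilde{\sigma}\in\mathrm{FPFI}({\partial\underline{D}}_f)$, the Jacobi diagram $\langle\underline{D},\mathring{\mathrm{Jac}}(\widetilde{\sigma})\rangle$ is obtained from $\underline{D}$ by gluing the struts prescribed by $\widetilde{\sigma}$, and its set of connected components $\pi_0\big(\langle\underline{D},\mathring{\mathrm{Jac}}(\widetilde{\sigma})\rangle\big)$ is exactly the quotient $V/\sim_{\widetilde{\sigma}}$ of Definition~\ref{def:2022-05-30}$(a)$: two components of $\underline{D}$ become identified precisely when a chain of struts of $\widetilde{\sigma}$ connects them, which is the generating relation $f(u)\sim_{\widetilde{\sigma}}f(\widetilde{\sigma}u)$. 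Hence $\widetilde{\mathrm{Set}}_{\underline{D}}$ is identified with $\widetilde{\mathrm{Set}}(f)$. The final step is to observe that the map $\mathrm{Bij}_{\underline{D}}$ in the statement, $(A,\sigma,\sigma')\mapsto(\sigma\sqcup\sigma',\widetilde{A})$ with $\widetilde{A}$ the image of $A$ under the canonical surjection $\pi_0(\underline{D})\to\pi_0\big(\langle\underline{D},\mathring{\mathrm{Jac}}(\sigma\sqcup\sigma')\rangle\big)$, is literally the map $\Phi$ of Lemma~\ref{r:2022-05-20lemma2}$(a)$ transported through these identifications, so it is a bijection.

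The main obstacle I anticipate is bookkeeping rather than conceptual: one must verify carefully that the combinatorial glueing operation $\langle-,\mathring{\mathrm{Jac}}(\widetilde{\sigma})\rangle$ on Jacobi diagrams interacts with $\pi_0$ exactly as the set-theoretic relation $\sim_{\widetilde{\sigma}}$ predicts — in particular that no extra identifications or splittings of components occur, and that the restriction of cyclic orders recorded in $\underline{D}_A$, $\underline{D}_{\pi_0(\underline{D})\setminus A}$ is consistent with the cyclic orders inherited by $\langle\underline{D},\mathring{\mathrm{Jac}}(\widetilde{\sigma})\rangle_{\widetilde{A}}$ and its complement. These compatibilities are what make the identification $\mathrm{Set}_{\underline{D}}\simeq\mathrm{Set}(f)$, $\widetilde{\mathrm{Set}}_{\underline{D}}\simeq\widetilde{\mathrm{Set}}(f)$ respect the full structure (not just the underlying sets), and they follow from the definitions of the pairing \eqref{thepairing} and of $\underline{D}_A$, but spelling them out is the fiddly part. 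I expect the proof to read: ``Apply Lemma~\ref{r:2022-05-20lemma2}$(a)$ with $U={\partial\underline{D}}_f$, $V=\pi_0(\underline{D})$ and $f$ the component map, after the trivial reduction to the surjective case; the identifications $\mathrm{Set}_{\underline{D}}=\mathrm{Set}(f)$ and $\widetilde{\mathrm{Set}}_{\underline{D}}=\widetilde{\mathrm{Set}}(f)$ follow from the definitions, and under them $\mathrm{Bij}_{\underline{D}}=\Phi$.'' I would also remark in passing that, combined with part $(b)$ of Lemma~\ref{r:2022-05-20lemma2}, this bijection intertwines the swap $(A,\sigma,\sigma')\mapsto(\pi_0(\underline{D})\setminus A,\sigma',\sigma)$ with $(\widetilde{\sigma},\widetilde{A})\mapsto(\widetilde{\sigma},\pi_0(\langle\underline{D},\mathring{\mathrm{Jac}}(\widetilde{\sigma})\rangle)\setminus\widetilde{A})$, since that symmetry is what will be needed to compare the two sides of the coproduct formula in the subsequent proof that $\Delta^{\mathfrak{b}}_{n,p}$ is well defined.
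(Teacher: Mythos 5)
Your proof is correct and is essentially the paper's own argument: the paper likewise takes $f:\partial\underline{D}_f\to\pi_0(\underline{D})$ to be the component map, identifies $\mathrm{Set}_{\underline{D}}$ with $\mathrm{Set}(f)$ and $\widetilde{\mathrm{Set}}_{\underline{D}}$ with $\widetilde{\mathrm{Set}}(f)$ (the key point being $\pi_0\big(\langle\underline{D},\mathring{\mathrm{Jac}}(\widetilde{\sigma})\rangle\big)=\pi_0(\underline{D})/\sim_{\widetilde{\sigma}}$), and then invokes Lemma~\ref{r:2022-05-20lemma2}. Your reduction to the surjective case (splitting off the legless components) is extra care the paper omits — it applies the lemma to this $f$ without comment, and indeed surjectivity plays no essential role in its proof — and your remark about cyclic orders is really only needed for Lemma~\ref{r20220511lemma2}, not for this bijection of index sets.
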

\begin{proof}
Let $\underline{D}\in\mathring{\mathrm{Jac}}(\vec{\varnothing},[\![1,k]\!],\bullet)$. Let $f:\partial{\underline{D}}_f \to \pi_0(\underline{D})$ be the map taking $x\in \partial{\underline{D}}_f$ to the unique $C\in\pi_0(\underline{D})$ such that $x\in C$. Using the notation of Definition~\ref{def:2022-05-30}, one checks that  $\mathrm{Set}_{\underline{D}} = \mathrm{Set}(f)$ and $\widetilde{\mathrm{Set}}_{\underline{D}} = \widetilde{\mathrm{Set}}(f)$ and the map $\mathrm{Bij}$ identifies with the map  $\Phi$ in Lemma~\ref{r:2022-05-20lemma2}. The result then follows from this lemma.
\end{proof}

\begin{lemma}\label{r20220511lemma2}  For any $\underline{D}\in \mathring{\mathrm{Jac}}(\vec{\varnothing},[\![1,k]\!],\bullet)$ the diagram
\begin{equation}
\xymatrix{\mathrm{Set}_{\underline{D}}\ar[rr]^-{(l_{\underline{D}}, r_{\underline{D}})}\ar[d]_-{\mathrm{Bij}_{\underline{D}}} & & \mathring{\mathrm{Jac}}(\vec{\varnothing},[\![1,k]\!]) \times \mathring{\mathrm{Jac}}(\vec{\varnothing},[\![1,k]\!]) \\ 
\widetilde{\mathrm{Set}}_{\underline{D}} \ar[rru]_-{(\widetilde{l}_{\underline{D}}, \widetilde{r}_{\underline{D}})} & }
\end{equation}
commutes.
\end{lemma}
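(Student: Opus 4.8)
\textbf{Proof plan for Lemma~\ref{r20220511lemma2}.} The statement asserts that the two ways of producing a pair of Jacobi diagrams from a datum in $\mathrm{Set}_{\underline D}$ agree: one either splits first and then pairs each half with its involution, or one pairs first (with the disjoint union involution) and then splits the resulting diagram along the induced partition of connected components. The plan is to reduce everything to the combinatorics already established in Lemma~\ref{r:2022-05-20lemma2}, together with a bookkeeping check that the pairing operation $\langle\ ,\ \rangle$ of Definition~\ref{themappairing} is compatible with restriction to subsets of connected components. Concretely, the key observation is that both $(l_{\underline D},r_{\underline D})$ and $(\widetilde l_{\underline D},\widetilde r_{\underline D})\circ\mathrm{Bij}_{\underline D}$ assign to $(A,\sigma,\sigma')$ the pair of diagrams obtained from $\underline D$ by (i) gluing struts according to $\widetilde\sigma:=\sigma\sqcup\sigma'$ and (ii) distributing the connected components of the glued diagram to the two tensor slots so that the slots receive exactly the components meeting $A$ and those meeting $\pi_0(\underline D)\setminus A$. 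So the real content is that these two descriptions of ``which components go where'' coincide.

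First I would fix $\underline D=\big(D,\partial D_a,\partial D_f,\varphi,\{\mathrm{cyc}_s\}\big)\in\mathring{\mathrm{Jac}}(\vec\varnothing,[\![1,k]\!],\bullet)$ and an element $(A,\sigma,\sigma')\in\mathrm{Set}_{\underline D}$. I would first dispose of the trivial case $|\partial D_f|$ odd, where both $\mathrm{Set}_{\underline D}$ and $\widetilde{\mathrm{Set}}_{\underline D}$ are empty and there is nothing to prove. In the main case, I would unwind $l_{\underline D}(A,\sigma,\sigma')=\langle\underline D_A,\mathring{\mathrm{Jac}}(\sigma)\rangle$: by Definition~\ref{def:emptyJacdiagram2} and Definition~\ref{themappairing}, this is the vertex-oriented unitrivalent graph $(\bigsqcup_{x\in A}x)\sqcup_{(d,d',\theta)}\mathring{\mathrm{Jac}}(\sigma)$, i.e.\ the subgraph of $D$ on the components in $A$, with the free legs lying in $\partial D_f\cap\bigsqcup_{x\in A}x$ glued pairwise according to $\sigma$; its cyclic orders are the restrictions of the $\mathrm{cyc}_s$. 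Symmetrically for $r_{\underline D}(A,\sigma,\sigma')$. On the other side, $\widetilde l_{\underline D}\circ\mathrm{Bij}_{\underline D}(A,\sigma,\sigma')=\langle\underline D,\mathring{\mathrm{Jac}}(\widetilde\sigma)\rangle_{\widetilde A}$, which is the full glued diagram $D\sqcup_{(\partial D_f,\partial D_f,\widetilde\sigma)}\mathring{\mathrm{Jac}}(\widetilde\sigma)$ restricted (via the operation $\underline D\mapsto\underline D_{\widetilde A}$ of Definition~\ref{def:coproductinA-k}) to the components in $\widetilde A$, where $\widetilde A\subset\pi_0(\langle\underline D,\mathring{\mathrm{Jac}}(\widetilde\sigma)\rangle)$ is the image of $A$ under $\pi_0(\underline D)\to\pi_0(\langle\underline D,\mathring{\mathrm{Jac}}(\widetilde\sigma)\rangle)$.

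The heart of the argument is then a purely combinatorial identification: I claim that a connected component of $\langle\underline D,\mathring{\mathrm{Jac}}(\widetilde\sigma)\rangle$ lies in $\widetilde A$ if and only if every component of $D$ it contains lies in $A$, and that when this holds the component, with its cyclic orders, coincides with the corresponding component of $\langle\underline D_A,\mathring{\mathrm{Jac}}(\sigma)\rangle$. This is exactly the statement that the partition $\{f^{-1}(A),f^{-1}(\pi_0(\underline D)\setminus A)\}$ of $\partial D_f$ is $\widetilde\sigma$-stable — which is the content established in the proof of Lemma~\ref{r:2022-05-20lemma2}$(a)$ with $U=\partial D_f$, $V=\pi_0(\underline D)$ and $f$ the ``belongs to'' map — so that gluing struts by $\widetilde\sigma$ never connects an $A$-component to a $(\pi_0(\underline D)\setminus A)$-component; hence the glued diagram is the disjoint union of $\langle\underline D_A,\mathring{\mathrm{Jac}}(\sigma)\rangle$ and $\langle\underline D_{\pi_0(\underline D)\setminus A},\mathring{\mathrm{Jac}}(\sigma')\rangle$, and the component-partition $\widetilde A$ induced by $A$ separates these two pieces. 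Comparing cyclic orders is routine: both constructions restrict the $\mathrm{cyc}_s$ to the same subsets of $\partial D_a$, and the struts carry no labels in $[\![1,k]\!]$. I would then conclude that $\widetilde l_{\underline D}\circ\mathrm{Bij}_{\underline D}=l_{\underline D}$ and, using the compatibility $\widetilde{\mathrm a}\circ\Phi=\Phi\circ\mathrm a$ from Lemma~\ref{r:2022-05-20lemma2}$(b)$ (which matches the ``swap the two slots'' involutions on both sides), that $\widetilde r_{\underline D}\circ\mathrm{Bij}_{\underline D}=r_{\underline D}$, giving the commutativity of the diagram.

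The main obstacle I anticipate is purely notational rather than conceptual: one must be careful that the ``restriction of a glued diagram to a subset of components'' (the operation $\underline D_{\widetilde A}$ applied to $\langle\underline D,\mathring{\mathrm{Jac}}(\widetilde\sigma)\rangle$) really does recover the ``glue the restricted diagram'' (the operation $\langle\underline D_A,\mathring{\mathrm{Jac}}(\sigma)\rangle$), including that the struts coming from $\sigma$ end up precisely in the $\widetilde A$-components and those from $\sigma'$ in the complementary ones — this is where the $\widetilde\sigma$-stability of $f^{-1}(A)$ is used, and it must be invoked cleanly rather than re-proved. Once that bijection of connected components (with all decorations) is set up, the equality of the two composite maps is immediate. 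I expect the whole proof to be short, essentially ``unwind both sides and apply Lemma~\ref{r:2022-05-20lemma2}''.
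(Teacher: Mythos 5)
Your proposal is correct and follows essentially the same route as the paper: you verify the $l$-triangle by unwinding both composites (your component-separation claim is exactly what underlies the paper's identification $B=\widetilde{A}$, which holds since $\widetilde{\sigma}=\sigma\sqcup\sigma'$ preserves the two halves of $\partial\underline{D}_f$), and you then deduce the $r$-triangle from the compatibility $\widetilde{\mathrm{a}}\circ\Phi=\Phi\circ\mathrm{a}$ of Lemma~\ref{r:2022-05-20lemma2}$(b)$, just as the paper does via the involutions $\mathrm{aut}_{\underline{D}}$ and $\widetilde{\mathrm{aut}}_{\underline{D}}$ together with $r_{\underline{D}}=l_{\underline{D}}\circ\mathrm{aut}_{\underline{D}}$ and $\widetilde{r}_{\underline{D}}=\widetilde{l}_{\underline{D}}\circ\widetilde{\mathrm{aut}}_{\underline{D}}$.
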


\begin{proof}
Let $\underline{D}=(D,\partial{D}_a, \partial{D}_f,\varphi)\in\mathring{\mathrm{Jac}}(\vec{\varnothing},[\![1,k]\!],\bullet)$. We  show first that the diagram 
\begin{equation}\label{equ:20220511diag1}
\xymatrix{\mathrm{Set}_{\underline{D}}\ar[rr]^-{l_{\underline{D}}}\ar[d]_-{\mathrm{Bij}_{\underline{D}}} & & \mathring{\mathrm{Jac}}(\vec{\varnothing},[\![1,k]\!])\\ 
\widetilde{\mathrm{Set}}_{\underline{D}} \ar[rru]_-{\widetilde{l}_{\underline{D}}} & }
\end{equation}
commutes. Indeed, let $(A,\sigma,\sigma')\in \mathrm{Set}_{\underline{D}}$, $\widetilde{\sigma}:=\sigma\sqcup\sigma'$, $\widetilde{A}:=\mathrm{Im}\big(A\subset \pi_0(\underline{D})\to\pi_0\big(\langle \underline{D}, \mathring{\mathrm{Jac}}(\widetilde{\sigma}) \rangle\big)\big)$ and $\widetilde{\underline{D}}:=\langle \underline{D}, \mathring{\mathrm{Jac}}(\widetilde{\sigma})\rangle$. Then $\widetilde{\underline{D}}=(\widetilde{D},\partial{\widetilde{D}}_a= \partial{D}_a, \partial{\widetilde{D}}_f=\emptyset, \widetilde{\varphi} =\varphi)$,
$$\underline{D}_A = \left(\bigsqcup_{x\in A} x, ({\partial D}_a)\cap \bigsqcup_{x\in A} x, ({\partial D}_f) \cap \bigsqcup_{x\in A} x, \varphi_{|({\partial D}_a) \cap \bigsqcup_{x\in A} x}\right)$$
and 
$$l_{\underline{D}}(A,\sigma,\sigma') = \langle \underline{D}_A,\mathring{\mathrm{Jac}}(\sigma)\rangle = \left(\bigsqcup_{z\in B} z, ({\partial D}_a)\cap \bigsqcup_{z\in B} z, \emptyset, \varphi_{|({\partial D}_a) \cap \bigsqcup_{z\in B} z}\right),$$
where $B:=\mathrm{Im}\big(A\subset\pi_0(\underline{D})\to \pi_0\big(\langle\underline{D}, \mathring{\mathrm{Jac}}(\sigma)\rangle\big)\big)$. Here $$\langle\underline{D}, \mathring{\mathrm{Jac}}(\sigma)\rangle\in \mathring{\mathrm{Jac}}(\vec{\varnothing},[\![1,k]\!],|(\partial{\underline{D}}_f)\cap\bigsqcup_{x\in\pi_0(\underline{D})\setminus A}x|)$$ is the Jacobi diagram obtained from $\underline{D}$ by identifying the univalent vertices in $(\partial{{D}}_f)\cap\bigsqcup_{x\in A}x$ according to $\sigma$. Since $\widetilde{\sigma}=\sigma\sqcup \sigma'$, then $B=\widetilde{A}$. We also have $({\partial D}_a) \cap \bigsqcup_{z\in B} z = ({\partial D}_a) \cap \bigsqcup_{x\in A} x$.
Therefore, 
\begin{equation*}
\begin{split}
\widetilde{l}_{\underline{D}}\circ\mathrm{Bij}_{\underline{D}}(A,\sigma,\sigma')=\widetilde{l}_{\underline{D}}(\widetilde{\sigma},\widetilde{A})=\widetilde{\underline{D}}_{\widetilde{A}} & = \left(\bigsqcup_{z\in \widetilde{A}} z, (\partial{\widetilde{D}}_a)\cap \bigsqcup_{z\in \widetilde{A}} z, ({\partial{\widetilde{D}}}_f) \cap \bigsqcup_{z\in \widetilde{A}} z, \widetilde{\varphi}_{|{\partial {\widetilde{D}}}_a \cap \bigsqcup_{z\in \widetilde{A}} z}\right) \\
& = \left(\bigsqcup_{z\in B} z, (\partial{D}_a)\cap \bigsqcup_{z\in B} z, \emptyset, {\varphi}_{|({\partial{D}}_a)\cap \bigsqcup_{z\in B} z}\right) = l_{\underline{D}}(A,\sigma,\sigma'). 
\end{split}
\end{equation*}

We show now the commutativity of the diagram 
\begin{equation}\label{equ:20220511diag2}
\xymatrix{\mathrm{Set}_{\underline{D}}\ar[rr]^-{r_{\underline{D}}}\ar[d]_-{\mathrm{Bij}_{\underline{D}}} & & \mathring{\mathrm{Jac}}(\vec{\varnothing},[\![1,k]\!])  \\ 
\widetilde{\mathrm{Set}}_{\underline{D}} \ar[rru]_-{\widetilde{r}_{\underline{D}}} & }
\end{equation}

Consider the involutions 
$$\mathrm{aut}_{\underline{D}}: \mathrm{Set}_{\underline{D}}\longrightarrow \mathrm{Set}_{\underline{D}}, \quad \quad (A,\sigma,\sigma')\longmapsto (\pi_0(\underline{D}))\setminus A, \sigma' ,\sigma)
$$
and
$$\widetilde{\mathrm{aut}}_{\underline{D}}: \widetilde{\mathrm{Set}}_{\underline{D}}\longrightarrow \widetilde{\mathrm{Set}}_{\underline{D}}, \quad \quad (\widetilde{\sigma}, \widetilde{A}) \longmapsto \big(\widetilde{\sigma}, \pi_0\big(\big\langle \underline{D},\mathring{\mathrm{Jac}}(\widetilde{\sigma})\big\rangle\big)\setminus \widetilde{A}\big).
$$
One checks that 
\begin{equation}\label{equ:20220511equrl}
r_{\underline{D}} = l_{\underline{D}} \circ \mathrm{aut}_{\underline{D}} \quad \quad \text{ and } \quad \quad  \widetilde{r}_{\underline{D}} = \widetilde{l}_{\underline{D}} \circ \widetilde{\mathrm{aut}}_{\underline{D}}.
\end{equation}
Besides, using the notations in the proof of Lemma~\ref{r20220511lemma1}, the maps $\mathrm{Bij}_{\underline{D}}$, $\mathrm{aut}_{\underline{D}}$ and $\widetilde{\mathrm{aut}}_{\underline{D}}$ are respectively identified with the maps $\Phi$, $\mathrm{a}$ and $\widetilde{\mathrm{a}}$ in Lemma~\ref{r:2022-05-20lemma2}. It then follows from Lemma~\ref{r:2022-05-20lemma2}$(b)$ that the diagram 
\begin{equation}\label{equ:20220511diagsquare}
\xymatrix{\mathrm{Set}_{\underline{D}}\ar[rr]^-{{\mathrm{Bij}}_{\underline{D}}}\ar[d]_-{\mathrm{aut}_{\underline{D}}} & & \widetilde{\mathrm{Set}}_{\underline{D}} \ar[d]^-{\widetilde{\mathrm{aut}}_{\underline{D}}}\\ 
{\mathrm{Set}}_{\underline{D}} \ar[rr]_-{{\mathrm{Bij}}_{\underline{D}}} & &\widetilde{\mathrm{Set}}_{\underline{D}} }
\end{equation}
commutes.

Diagram~\eqref{equ:20220511diag2} corresponds to the exterior  triangle in the diagram
\begin{equation}
\xymatrix{\mathrm{Set}_{\underline{D}}\ar[rrrr]^-{{\mathrm{Bij}}_{\underline{D}}} \ar@/_3pc/[ddrr]_{r_{\underline{D}}} \ar[rd]^-{\mathrm{aut}_{\underline{D}}}&  &  &  & \widetilde{\mathrm{Set}}_{\underline{D}}\ar@/^3pc/[ddll]^{\widetilde{r}_{\underline{D}}}\ar[ld]_-{\widetilde{\mathrm{aut}}_{\underline{D}}} \\
& \mathrm{Set}_{\underline{D}}\ar[rr]^-{{\mathrm{Bij}}_{\underline{D}}}\ar[dr]^-{l_{\underline{D}}} &  & \widetilde{\mathrm{Set}}_{\underline{D}} \ar[dl]_-{\widetilde{l}_{\underline{D}}} &  \\
  &  & \mathring{\mathrm{Jac}}(\vec{\varnothing},[\![1,k]\!]) &  & }
\end{equation}
The inner square and inner triangles commute because of \eqref{equ:20220511diagsquare}, \eqref{equ:20220511diag1} and \eqref{equ:20220511equrl}. We deduce then the commutativity of Diagram~\eqref{equ:20220511diag2}. This completes the proof.
\end{proof}

\begin{definition}\label{def:2022-05-19tpair-k} Define the linear map 
\begin{equation}
\mathrm{Pair}_{(\vec{\varnothing},[\![1,k]\!])}: \mathbb{C}\mathring{\mathrm{Jac}}(\vec{\varnothing},[\![1,k]\!],\bullet)\longrightarrow \mathbb{C}\mathring{\mathrm{Jac}}(\vec{\varnothing},[\![1,k]\!])
\end{equation}  \index[notation]{Pair_{(\vec{\varnothing},[\![1,k]\!])}@$\mathrm{Pair}_{(\vec{\varnothing},[\![1,k]\!])}$}
by  $$\mathrm{Pair}_{(\vec{\varnothing},[\![1,k]\!])}(\underline{D}):=\sum_{\sigma\in\mathrm{FPFI}({\partial{\underline{D}}}_f)}\big\langle \underline{D}, \mathring{\mathrm{Jac}}(\sigma) \big\rangle
$$ for any $\underline{D}\in \mathring{\mathrm{Jac}}(\vec{\varnothing},[\![1,k]\!],\bullet)$. (Recall that by definition an empty sum is equal to $0$.)
\end{definition}

\begin{proposition}\label{r:2022-05-19Copr}The diagram 
\begin{equation}\label{eq:2022-05-19commdiag}
\xymatrix{\mathbb{C}\mathring{\mathrm{Jac}}(\vec{\varnothing},[\![1,k]\!],\bullet)\ar[rr]^-{{\mathrm{Pair}}_{(\vec{\varnothing},[\![1,k]\!])}}\ar[d]_-{\Delta_{\mathring{\mathrm{Jac}}(\vec{\varnothing},[\![1,k]\!],\bullet)}} & & \mathbb{C}\mathring{\mathrm{Jac}}(\vec{\varnothing},[\![1,k]\!])\ar[d]^-{\Delta_{\mathring{\mathrm{Jac}}(\vec{\varnothing},[\![1,k]\!])}}\\ 
\mathbb{C}\mathring{\mathrm{Jac}}(\vec{\varnothing},[\![1,k]\!],\bullet)\otimes \mathbb{C}\mathring{\mathrm{Jac}}(\vec{\varnothing},[\![1,k]\!],\bullet) \ar[rr]_-{{\mathrm{Pair}}_{(\vec{\varnothing},[\![1,k]\!])}^{\otimes 2}} & &\mathbb{C}\mathring{\mathrm{Jac}}(\vec{\varnothing},[\![1,k]\!])\otimes \mathbb{C}\mathring{\mathrm{Jac}}(\vec{\varnothing},[\![1,k]\!]) }
\end{equation}

commutes.
\end{proposition}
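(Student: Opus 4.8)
The plan is to prove the commutativity of \eqref{eq:2022-05-19commdiag} by evaluating both composites on an arbitrary generator $\underline{D}\in\mathring{\mathrm{Jac}}(\vec{\varnothing},[\![1,k]\!],\bullet)$ and exhibiting a bijection of index sets that matches the two resulting linear combinations term by term. The central observation is that the combinatorial bookkeeping underlying both paths is exactly the one already packaged in Lemmas~\ref{r20220511lemma1} and~\ref{r20220511lemma2}: one path ($\Delta$ then $\mathrm{Pair}\otimes\mathrm{Pair}$) is indexed by the set $\mathrm{Set}_{\underline{D}}$, while the other path ($\mathrm{Pair}$ then $\Delta$) is indexed by $\widetilde{\mathrm{Set}}_{\underline{D}}$, and the bijection $\mathrm{Bij}_{\underline{D}}$ between them intertwines the respective contributions.

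Concretely, first I would compute the lower-left composite. Applying $\Delta_{\mathring{\mathrm{Jac}}(\vec{\varnothing},[\![1,k]\!],\bullet)}$ to $\underline{D}$ yields $\sum_{A\subset\pi_0(\underline{D})}\underline{D}_A\otimes\underline{D}_{\pi_0(\underline{D})\setminus A}$; then applying $\mathrm{Pair}_{(\vec{\varnothing},[\![1,k]\!])}^{\otimes 2}$ and unwinding Definition~\ref{def:2022-05-19tpair-k} gives
$$\sum_{A\subset\pi_0(\underline{D})}\ \sum_{\sigma\in\mathrm{FPFI}(\partial\underline{D}_f\cap\sqcup_{x\in A}x)}\ \sum_{\sigma'\in\mathrm{FPFI}(\partial\underline{D}_f\cap\sqcup_{x\in\pi_0(\underline{D})\setminus A}x)} \big\langle\underline{D}_A,\mathring{\mathrm{Jac}}(\sigma)\big\rangle\otimes\big\langle\underline{D}_{\pi_0(\underline{D})\setminus A},\mathring{\mathrm{Jac}}(\sigma')\big\rangle,$$
which by Definition~\ref{def:2022-05-23-def1}$(b)$ is precisely $\sum_{(A,\sigma,\sigma')\in\mathrm{Set}_{\underline{D}}}(l_{\underline{D}},r_{\underline{D}})(A,\sigma,\sigma')$, viewed as an element of $\mathbb{C}\mathring{\mathrm{Jac}}(\vec{\varnothing},[\![1,k]\!])^{\otimes 2}$. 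Next I would compute the upper-right composite: $\mathrm{Pair}_{(\vec{\varnothing},[\![1,k]\!])}(\underline{D})=\sum_{\widetilde\sigma\in\mathrm{FPFI}(\partial\underline{D}_f)}\langle\underline{D},\mathring{\mathrm{Jac}}(\widetilde\sigma)\rangle$, and then $\Delta_{\mathring{\mathrm{Jac}}(\vec{\varnothing},[\![1,k]\!])}$ splits each term into $\sum_{\widetilde A\subset\pi_0(\langle\underline{D},\mathring{\mathrm{Jac}}(\widetilde\sigma)\rangle)}\langle\underline{D},\mathring{\mathrm{Jac}}(\widetilde\sigma)\rangle_{\widetilde A}\otimes\langle\underline{D},\mathring{\mathrm{Jac}}(\widetilde\sigma)\rangle_{\pi_0(\cdots)\setminus\widetilde A}$, which by Definition~\ref{def:2022-05-23-def1}$(c)$ equals $\sum_{(\widetilde\sigma,\widetilde A)\in\widetilde{\mathrm{Set}}_{\underline{D}}}(\widetilde l_{\underline{D}},\widetilde r_{\underline{D}})(\widetilde\sigma,\widetilde A)$. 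The equality of these two sums now follows immediately: $\mathrm{Bij}_{\underline{D}}:\mathrm{Set}_{\underline{D}}\to\widetilde{\mathrm{Set}}_{\underline{D}}$ is a bijection by Lemma~\ref{r20220511lemma1}, and $(\widetilde l_{\underline{D}},\widetilde r_{\underline{D}})\circ\mathrm{Bij}_{\underline{D}}=(l_{\underline{D}},r_{\underline{D}})$ by Lemma~\ref{r20220511lemma2}, so reindexing the first sum along $\mathrm{Bij}_{\underline{D}}$ turns it into the second. Since the two composites agree on every generator $\underline{D}$ and both are linear, the diagram commutes.

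I do not expect a genuine obstacle here, since Lemmas~\ref{r20220511lemma1} and~\ref{r20220511lemma2} have done the real work. The only points requiring a little care are purely bookkeeping: one must check that in the first composite the argument of $l_{\underline{D}}$ (respectively $r_{\underline{D}}$) has free-vertex set exactly $\partial\underline{D}_f\cap\sqcup_{x\in A}x$ (respectively its complement), so that the inner summation ranges over $\mathrm{FPFI}$ of the correct set, matching the definition of $\mathrm{Set}_{\underline{D}}$; and one must note that the cyclic-order data attached to the various $\underline{D}_A$, $\langle\underline{D}_A,\mathring{\mathrm{Jac}}(\sigma)\rangle$, $\langle\underline{D},\mathring{\mathrm{Jac}}(\widetilde\sigma)\rangle_{\widetilde A}$ etc.\ are all obtained by restriction from the $\{\mathrm{cyc}_s\}_{s\in[\![1,k]\!]}$ of $\underline{D}$, hence automatically coincide under $\mathrm{Bij}_{\underline{D}}$ — this is implicit in the proof of Lemma~\ref{r20220511lemma2}. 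Finally I would remark (as a one-line corollary, or leave it to the reader) that restricting to the degree-zero part and using Lemma~\ref{r:2022-05-23lem:grading} gives the analogous commutative square with $\Delta_{\mathring{\mathrm{Jac}}(\vec{\varnothing},[\![1,k]\!],0)}$, which is the form actually needed to induce coproducts on the quotient algebras $\mathfrak{b}(n)$, $\mathfrak{c}(n)$, $\mathfrak{d}(n)$ in the subsequent subsections.
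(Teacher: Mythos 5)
Your proposal is correct and follows essentially the same route as the paper's own proof: evaluate both composites on a generator $\underline{D}$, identify one with the sum over $\mathrm{Set}_{\underline{D}}$ via $(l_{\underline{D}},r_{\underline{D}})$ and the other with the sum over $\widetilde{\mathrm{Set}}_{\underline{D}}$ via $(\widetilde{l}_{\underline{D}},\widetilde{r}_{\underline{D}})$, and then reindex using the bijection of Lemma~\ref{r20220511lemma1} together with the compatibility of Lemma~\ref{r20220511lemma2}. No gaps; your bookkeeping remarks match what the paper's computation implicitly uses.
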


\begin{proof}
Let $\underline{D}\in \mathring{\mathrm{Jac}}(\vec{\varnothing},[\![1,k]\!],\bullet)$, then
\begin{align*}
\Delta_{\mathring{\mathrm{Jac}}(\vec{\varnothing},[\![1,k]\!])}\circ \mathrm{Pair}_{(\vec{\varnothing},[\![1,k]\!])}(\underline{D})  = \Delta_{\mathring{\mathrm{Jac}}(\vec{\varnothing},[\![1,k]\!])}\left(\sum_{\tilde{\sigma}\in\mathrm{FPFI}({\partial{\underline{D}}}_f)} \big\langle \underline{D}, \mathring{\mathrm{Jac}}(\widetilde{\sigma})\big\rangle \right) \quad \quad \quad \quad \quad \quad \quad \quad \quad \quad \quad \quad\\
 \quad \quad  = \sum_{(\tilde{\sigma}, \tilde{A})\in \widetilde{\mathrm{Set}}_{\underline{D}}} \big\langle \underline{D}, \mathring{\mathrm{Jac}}(\widetilde{\sigma})\big\rangle_{\widetilde{A}}\otimes \big\langle \underline{D}, \mathring{\mathrm{Jac}}(\widetilde{\sigma})\big\rangle_{\pi_0\big(\big\langle\underline{D}, \mathring{\mathrm{Jac}}(\widetilde{\sigma})\big\rangle\big)\setminus \widetilde{A}} 
  = \sum_{(\tilde{\sigma}, \tilde{A})\in \widetilde{\mathrm{Set}}_{\underline{D}}} \widetilde{l}_{\underline{D}}(\widetilde{\sigma},\widetilde{A}) \otimes \widetilde{r}_{\underline{D}}(\widetilde{\sigma},\widetilde{A}) \\
  = \sum_{(\tilde{\sigma}, \tilde{A})\in \widetilde{\mathrm{Set}}_{\underline{D}}} {l}_{\underline{D}}\circ \mathrm{Bij}^{-1}_{\underline{D}}(\widetilde{\sigma},\widetilde{A}) \otimes {r}_{\underline{D}}\circ \mathrm{Bij}^{-1}_{\underline{D}}(\widetilde{\sigma},\widetilde{A})
   = \sum_{(A,\sigma,\sigma')\in {\mathrm{Set}}_{\underline{D}}} {l}_{\underline{D}}(A,\sigma,\sigma') \otimes {r}_{\underline{D}}(A,\sigma,\sigma')\\
   = \sum_{(A,\sigma,\sigma')\in {\mathrm{Set}}_{\underline{D}}} \big\langle \underline{D}_A, \mathring{\mathrm{Jac}}(\sigma)\big\rangle \otimes \big\langle \underline{D}_{\pi_0(\underline{D})\setminus A}, \mathring{\mathrm{Jac}}(\sigma')\big\rangle
    = \mathrm{Pair}^{\otimes 2}_{(\vec{\varnothing},[\![1,k]\!])}\circ\Delta_{\mathring{\mathrm{Jac}}(\vec{\varnothing},[\![1,k]\!],\bullet)}(\underline{D}).
\end{align*}
We have used Lemma~\ref{r20220511lemma1} in the fourth equality and Lemma~\ref{r20220511lemma2} in the fifth equality. 
\end{proof}

\subsubsection{An algebra homomorphism \texorpdfstring{$\Delta^{\mathfrak{b}}_{n,p}:\mathfrak{b}(n)\to \mathfrak{b}(p) \otimes \mathfrak{b}(n-p)$}{bn-to-bp-otimes-bn-p}}

Recall from Lemma~\ref{rlemma1_20211111} the graded ideal $P^{n+1} = \bigoplus_{k\geq 0} P^{n+1}_k$ of $\bigoplus_{k\geq 0} \mathring{\mathcal A}^\wedge_k$.

\begin{definition} Define the category of \emph{finitely fibered set diagrams} $\mathcal{S}et\mathcal{D}$   \index[notation]{Set\mathcal{D}@$\mathcal{S}et\mathcal{D}$} as follows. The objects are sets. For $A,B$ two objects, $\mathcal{S}et\mathcal{D}(A,B)$ is the collection of set diagrams $A\xleftarrow{ \ f \ } X \xrightarrow{\  f' \ }B$, where the map $f: X\to A$ has finite fibers. For objects $A$, $B$ and $C$, the composition
$\mathcal{S}et\mathcal{D} (A,B)\times\mathcal{S}et\mathcal{D}(B,C) \to \mathcal{S}et\mathcal{D}(A,C)$ takes the pair $\big(A\xleftarrow{ \  f \  } X \xrightarrow{ \  f' \ }B, B\xleftarrow{ \  g  \ } Y \xrightarrow{ \  g' \  } C\big)$ to the diagram $A\xleftarrow{\ \hat{f} \ } X\times_B Y  \xrightarrow{\ \hat{g}'} C$, where $X\times_B Y:=\{(x,y)\in X\times Y \ | \  f'(x)=g(y)\}$ and $\hat{f}:X\times_B Y\to A $ (resp. $\hat{g}':X\times_B Y\to B $) is the composition $X\times_B Y\to X \xrightarrow{\ f\ } A$ (resp. $X\times_B Y\to Y \xrightarrow{\ g'\ } B$) where the first map is the canonical projection. 
\end{definition}

\begin{lemma}\label{r:2022-05-27-r1}  Let $\mathcal Vect$ be the category of $\mathbb{C}$-vector spaces. There is a functor $F:\mathcal{S}et\mathcal{D}  \to \mathcal Vect$ taking an object $A$ in $\mathcal{S}et\mathcal{D}$ to the free vector space $\mathbb{C}A$  with 
basis $A$, and taking a morphism $A\xleftarrow{ \  f \  } X \xrightarrow{ \  f'  \ }B$ in $\mathcal{S}et\mathcal{D}(A,B)$  to the linear map $\mathbb{C}A \to \mathbb{C}B$ given by $a \mapsto \sum_{x \in f^{-1}(a)}f'(x)$ for any $a\in A$.
\end{lemma}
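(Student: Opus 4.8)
The plan is to verify the functoriality axioms directly from the definitions, the only subtlety being the finiteness condition on fibers, which is exactly what guarantees the relevant sums are finite and makes the assigned linear maps well-defined. First I would check that $F$ respects identities: the identity morphism of an object $A$ in $\mathcal{S}et\mathcal{D}$ is the diagram $A \xleftarrow{\mathrm{Id}_A} A \xrightarrow{\mathrm{Id}_A} A$, whose fibers are singletons (hence finite), and the associated linear map sends $a \mapsto \sum_{x \in \mathrm{Id}_A^{-1}(a)} \mathrm{Id}_A(x) = a$, so $F(\mathrm{Id}_A) = \mathrm{Id}_{\mathbb{C}A}$.

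Next I would check compatibility with composition. Given morphisms $A \xleftarrow{f} X \xrightarrow{f'} B$ and $B \xleftarrow{g} Y \xrightarrow{g'} C$, one must first confirm that the composite $A \xleftarrow{\hat f} X \times_B Y \xrightarrow{\hat g'} C$ is again a morphism in $\mathcal{S}et\mathcal{D}$, i.e. that $\hat f$ has finite fibers: for $a \in A$, the fiber $\hat f^{-1}(a)$ maps into $f^{-1}(a)$ via the projection to $X$, and over each $x \in f^{-1}(a)$ the fiber in $X \times_B Y$ is $\{x\} \times g^{-1}(f'(x))$, which is finite since $g$ has finite fibers; as $f^{-1}(a)$ is finite, $\hat f^{-1}(a)$ is a finite union of finite sets, hence finite. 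Then for $a \in A$ one computes
\begin{align*}
F(g,g') \circ F(f,f')(a) &= F(g,g')\Big(\sum_{x \in f^{-1}(a)} f'(x)\Big) = \sum_{x \in f^{-1}(a)} \sum_{y \in g^{-1}(f'(x))} g'(y) \\
&= \sum_{(x,y) \in \hat f^{-1}(a)} g'(y) = \sum_{(x,y) \in \hat f^{-1}(a)} \hat g'(x,y) = F(\hat f, \hat g')(a),
\end{align*}
where the reindexing in the third equality uses that $\hat f^{-1}(a) = \{(x,y) \in X \times_B Y \mid x \in f^{-1}(a)\} = \bigsqcup_{x \in f^{-1}(a)} \{x\} \times g^{-1}(f'(x))$ as sets, and each inner sum is finite by the finite-fiber hypothesis on $g$ (so the double sum makes sense and may be gathered). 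This is precisely the composition law for morphisms in $\mathcal{S}et\mathcal{D}$, so $F$ is a functor.

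I do not anticipate a genuine obstacle here; the statement is essentially bookkeeping. The only point requiring care — and the reason the finiteness condition is imposed in the definition of $\mathcal{S}et\mathcal{D}$ — is that without it the sum $\sum_{x \in f^{-1}(a)} f'(x)$ need not be a well-defined element of the free vector space $\mathbb{C}B$, and the interchange of summations in the composition check would be illegitimate. Since every step is routine, I would simply state "The verification is straightforward" or spell out the two displays above and leave it at that.
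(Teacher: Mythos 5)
Your verification is correct and is exactly the routine check the paper leaves implicit (its proof is simply ``The proof is straightforward''): identities go to identities, and the composition law follows from the fiber-product decomposition $\hat f^{-1}(a)=\bigsqcup_{x\in f^{-1}(a)}\{x\}\times g^{-1}(f'(x))$ together with the finite-fiber hypothesis that makes all sums finite. Nothing further is needed.
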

\begin{proof}
The proof is straightforward.
\end{proof}

\begin{lemma}\label{r:2022-05-27-r2}
 If $A,B\in\mathcal{S}et\mathcal{D}$, if $A\xleftarrow{ \  f  \ } X \xrightarrow{ \ f' \ }B$ is in $\mathcal{S}et\mathcal{D}(A,B)$ and if $\varphi:Y \to X$ is a set morphism with finite fibers, then $A\xleftarrow{\  f \circ \varphi \ } Y \xrightarrow{\ f' \circ \varphi \ } B$ belongs to $\mathcal{S}et\mathcal{D}(A,B)$. If moreover $c \in \mathbb{Z}_{\geq 0}$ is such that for all $x \in X$, $|\varphi^{-1}(x)|=c$, then $F(A\xleftarrow{ \ f \circ \varphi \ } Y \xrightarrow{ \ f' \circ \varphi \ } B)=cF(A\xleftarrow{\  f  \ }Y \xrightarrow{ \  f' \  }B)$.
\end{lemma}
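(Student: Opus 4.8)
The plan is to verify the two assertions directly from the definitions of $\mathcal{S}et\mathcal{D}$ and of the functor $F$ recalled in Lemma~\ref{r:2022-05-27-r1}, so the argument is essentially bookkeeping with fibers.

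For the first assertion, I would fix $a\in A$ and observe that $(f\circ\varphi)^{-1}(a)=\varphi^{-1}\big(f^{-1}(a)\big)=\bigsqcup_{x\in f^{-1}(a)}\varphi^{-1}(x)$, where the union is disjoint because the $\varphi^{-1}(x)$ are the fibers of $\varphi$. Since $f$ has finite fibers, $f^{-1}(a)$ is finite, and since $\varphi$ has finite fibers, each $\varphi^{-1}(x)$ is finite; hence $(f\circ\varphi)^{-1}(a)$ is a finite disjoint union of finite sets, thus finite. Therefore $f\circ\varphi:Y\to A$ has finite fibers, which is exactly the condition for $A\xleftarrow{f\circ\varphi}Y\xrightarrow{f'\circ\varphi}B$ to be a morphism in $\mathcal{S}et\mathcal{D}(A,B)$.

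For the second assertion, I would use the description of $F$ from Lemma~\ref{r:2022-05-27-r1}: $F\big(A\xleftarrow{f\circ\varphi}Y\xrightarrow{f'\circ\varphi}B\big)$ sends $a\in A$ to $\sum_{y\in(f\circ\varphi)^{-1}(a)}f'(\varphi(y))$. Using the disjoint-union decomposition $(f\circ\varphi)^{-1}(a)=\bigsqcup_{x\in f^{-1}(a)}\varphi^{-1}(x)$ established above, and the fact that $f'(\varphi(y))=f'(x)$ whenever $y\in\varphi^{-1}(x)$, this double sum becomes $\sum_{x\in f^{-1}(a)}\big(\sum_{y\in\varphi^{-1}(x)}f'(x)\big)=\sum_{x\in f^{-1}(a)}|\varphi^{-1}(x)|\,f'(x)$. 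The hypothesis $|\varphi^{-1}(x)|=c$ for all $x\in X$ then gives $\sum_{x\in f^{-1}(a)}c\,f'(x)=c\sum_{x\in f^{-1}(a)}f'(x)$, and the last expression is $c$ times $F\big(A\xleftarrow{f}X\xrightarrow{f'}B\big)(a)$. Since $a$ was arbitrary, the two linear maps $\mathbb{C}A\to\mathbb{C}B$ coincide up to the scalar $c$, which is the claimed equality.

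There is no genuine obstacle here; the only point requiring a little care is making sure the fiber decomposition $(f\circ\varphi)^{-1}(a)=\bigsqcup_{x\in f^{-1}(a)}\varphi^{-1}(x)$ is written as a genuinely disjoint union (so that the sums reorganize without double counting) and that it is finite, both of which follow immediately from the finite-fiber hypotheses. I would therefore keep the write-up to a couple of lines, citing Lemma~\ref{r:2022-05-27-r1} for the formula defining $F$ and stating that the remaining verification is the elementary computation above.
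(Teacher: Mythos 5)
Your proof is correct and follows essentially the same route as the paper: the composite $f\circ\varphi$ inherits finite fibers, and the sum defining $F$ over $(f\circ\varphi)^{-1}(a)$ reorganizes over the fibers $\varphi^{-1}(x)$, each contributing $c\,f'(x)$. Your write-up only adds the explicit disjoint-union decomposition, which the paper leaves implicit; nothing further is needed.
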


\begin{proof}
Since both $f:X\to A$ and $\varphi: Y\to X$ have finite fibers, so does $f\circ \varphi: Y\to X$. This implies the first statement of the lemma. If $|\varphi^{-1}(x)|=c$ for all $x\in X$, then
\begin{equation*}
\begin{split}
F\big(A\xleftarrow{ \ f \circ \varphi \ } Y \xrightarrow{ \ f' \circ \varphi \ } B\big)(a) = \sum_{y\in(f\circ\varphi)^{-1}(a)}(f'\circ\varphi)(y)=\sum_{y\in\varphi^{-1}(f^{-1}(a))}f'(\varphi(y)) = c \sum_{x\in f^{-1}(a)}f'(x) \\ = cF\big(A\xleftarrow{ \ f  \ } X \xrightarrow{  f'  \ } B\big)(a)
\end{split}
\end{equation*}
for any $a\in A$. This proves the second statement of the lemma.
\end{proof}

 We apply the previous lemmas for sets of Jacobi diagrams as follows. For a finite set $S$, we denote by $\mathcal{P}_2(S)$ the set of two-element subsets of $S$. Let $k,m \geq 0$. Consider the set diagram
\begin{equation}\label{eq:2022-05-27-eq1}
\mathring{\mathrm{Jac}}(\vec{\varnothing},[\![1,k]\!],2(m+1))\xleftarrow{\ f_{m} \ } X_{m,m+1}\xrightarrow{\ f'_m \ } \mathring{\mathrm{Jac}}(\vec{\varnothing},[\![1,k]\!],2m),
\end{equation}
where $$X_{m,m+1}:=\big\{(\underline D,\alpha)\ | \ \underline{D}\in\mathring{\mathrm{Jac}}(\vec{\varnothing},[\![1,k]\!],2(m+1)),\  \alpha \in \mathcal{P}_2(\partial \underline D_f)\big\},$$
and  $f_m:X_{m,m+1} \to \mathring{\mathrm{Jac}}(\vec{\varnothing},[\![1,k]\!],2(m+1))$ is the projection $(\underline D,  \alpha) \mapsto \underline D$, while $f'_m:X_{m,m+1}\to\mathring{\mathrm{Jac}}(\vec{\varnothing},[\![1,k]\!],2m)$ is the map taking $(\underline D, \alpha)$ to $\underline{D}_{\alpha}$, which is obtained from $\underline D$ by connecting the pairs of endpoints of $\underline D$ corresponding to $\alpha$. For any $\underline{D} \in \mathring{\mathrm{Jac}}(\vec{\varnothing},[\![1,k]\!],2(m+1))$, the set $\mathcal P_2(\partial \underline{D}_f)$ is finite, therefore the set diagram~\eqref{eq:2022-05-27-eq1} is a morphism in $\mathcal{S}et\mathcal{D}\big(\mathring{\mathrm{Jac}}(\vec{\varnothing},[\![1,k]\!], 2(m+1)),\mathring{\mathrm{Jac}}(\vec{\varnothing},[\![1,k]\!],2m)\big).$
Denote by $\Psi_m$ the image of this morphism by the functor $F$ from Lemma~\ref{r:2022-05-27-r1}, which is a vector space morphism $$\Psi_m:\mathbb{C}\mathring{\mathrm{Jac}}(\vec{\varnothing},[\![1,k]\!],2(m+1)) \longrightarrow \mathbb{C} \mathring{\mathrm{Jac}}(\vec{\varnothing},[\![1,k]\!],2m).$$

Let $\mathrm{Pair}_{(\vec{\varnothing},[\![1,k]\!],m)}: \mathbb{C}\mathring{\mathrm{Jac}}(\vec{\varnothing},[\![1,k]\!],m)\to \mathbb{C}\mathring{\mathrm{Jac}}((\vec{\varnothing},[\![1,k]\!])$ denote the restriction of the map $\mathrm{Pair}_{(\vec{\varnothing},[\![1,k]\!])}$ from Definition~\ref{def:2022-05-19tpair-k} to $\mathring{\mathrm{Jac}}(\vec{\varnothing},[\![1,k]\!],m)$.

\begin{lemma}\label{r:2022-05-27mainlemma} For any $k,m \geq 0$, one has $\mathrm{Pair}_{(\vec{\varnothing},[\![1,k]\!],2m)} \circ \Psi_m=\binom{2(m+1)}{2}\mathrm{Pair}_{(\vec{\varnothing},[\![1,k]\!],2(m+1))}$.
\end{lemma}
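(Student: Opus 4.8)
The plan is to unwind all the definitions on both sides of the claimed identity and verify that $\mathrm{Pair}_{(\vec{\varnothing},[\![1,k]\!],2m)} \circ \Psi_m$ and $\mathrm{Pair}_{(\vec{\varnothing},[\![1,k]\!],2(m+1))}$ agree on each generator $\underline{D} \in \mathring{\mathrm{Jac}}(\vec{\varnothing},[\![1,k]\!],2(m+1))$, with the factor $\binom{2(m+1)}{2}$ appearing as the cardinality of a fiber. First I would observe that both maps have target $\mathbb{C}\mathring{\mathrm{Jac}}(\vec{\varnothing},[\![1,k]\!])$ and that by definition of $\mathrm{Pair}$ (Definition~\ref{def:2022-05-19tpair-k}) the right-hand side evaluated at $\underline{D}$ is $\sum_{\sigma \in \mathrm{FPFI}(\partial\underline{D}_f)}\langle \underline{D},\mathring{\mathrm{Jac}}(\sigma)\rangle$, where $|\partial\underline{D}_f|=2(m+1)$. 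For the left-hand side, I would first apply $\Psi_m$: by the definition of $\Psi_m$ as $F$ applied to the set diagram~\eqref{eq:2022-05-27-eq1} (using Lemma~\ref{r:2022-05-27-r1}), one gets $\Psi_m(\underline{D}) = \sum_{\alpha \in \mathcal{P}_2(\partial\underline{D}_f)}\underline{D}_{\alpha}$, where $\underline{D}_{\alpha} \in \mathring{\mathrm{Jac}}(\vec{\varnothing},[\![1,k]\!],2m)$ is obtained by joining the pair of free endpoints indexed by $\alpha$. Then $\mathrm{Pair}_{(\vec{\varnothing},[\![1,k]\!],2m)}(\underline{D}_\alpha) = \sum_{\tau \in \mathrm{FPFI}(\partial(\underline{D}_\alpha)_f)}\langle \underline{D}_\alpha, \mathring{\mathrm{Jac}}(\tau)\rangle$, and $\partial(\underline{D}_\alpha)_f = \partial\underline{D}_f \setminus \alpha$ has cardinality $2m$.

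The key combinatorial point is then the following bijection-with-multiplicity: the set of pairs $(\alpha,\tau)$ with $\alpha \in \mathcal{P}_2(\partial\underline{D}_f)$ and $\tau \in \mathrm{FPFI}(\partial\underline{D}_f \setminus \alpha)$ maps onto $\mathrm{FPFI}(\partial\underline{D}_f)$ by $(\alpha,\tau)\mapsto \widehat{\tau}$, where $\widehat{\tau}$ is the fixed-point-free involution that acts as $\tau$ on $\partial\underline{D}_f \setminus \alpha$ and swaps the two elements of $\alpha$ (this is the disjoint-union-of-permutations operation from~\eqref{eq:2022-05-20-disjointofperm}). I would note that this map is \emph{not} injective: the fiber over a given $\sigma \in \mathrm{FPFI}(\partial\underline{D}_f)$ consists of all pairs $(\alpha,\sigma|_{\partial\underline{D}_f\setminus\alpha})$ where $\alpha$ is an orbit (a $2$-element block) of $\sigma$; since $\sigma$ has exactly $m+1$ such blocks, each fiber has cardinality $m+1$. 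Moreover under this identification one has $\langle \underline{D}_\alpha, \mathring{\mathrm{Jac}}(\tau)\rangle = \langle \underline{D}, \mathring{\mathrm{Jac}}(\widehat{\tau})\rangle$ because joining endpoints first along $\alpha$ and then along $\tau$ is the same as joining all of them at once along $\widehat\tau$ (this is a compatibility of the pairing $\langle\ ,\ \rangle$ of~\eqref{thepairing} with the struts map $\mathring{\mathrm{Jac}}$ of~\eqref{equ:JacandFPFI}, which I would check directly from the definition of glueing in Definition~\ref{themappairing}). Hence
\begin{equation*}
\mathrm{Pair}_{(\vec{\varnothing},[\![1,k]\!],2m)} \circ \Psi_m(\underline{D}) = \sum_{\alpha,\tau}\langle \underline{D}_\alpha,\mathring{\mathrm{Jac}}(\tau)\rangle = \sum_{\sigma \in \mathrm{FPFI}(\partial\underline{D}_f)}(m+1)\,\langle \underline{D},\mathring{\mathrm{Jac}}(\sigma)\rangle = (m+1)\,\mathrm{Pair}_{(\vec{\varnothing},[\![1,k]\!],2(m+1))}(\underline{D}).
\end{equation*}
Finally I would reconcile this with the stated constant: $\binom{2(m+1)}{2} = (m+1)(2m+1)$ is \emph{not} equal to $m+1$, so either the intended bijection has fiber cardinality $\binom{2(m+1)}{2}$ counted differently, or $\Psi_m$ should already be viewed as a sum over \emph{ordered} pairs of endpoints (giving each unordered $\alpha$ multiplicity $2$) together with a refined count; I would re-examine whether $\mathcal{P}_2$ in the definition of $X_{m,m+1}$ should be ordered pairs, or whether the count of $(\alpha,\tau)$ over a fixed $\sigma$ is really $\binom{2(m+1)}{2}$ when one does not require $\alpha$ to be a block of $\sigma$ but instead lets $\tau$ be an arbitrary involution after removing $\alpha$ — in which case $(\alpha,\tau)\mapsto\widehat\tau$ need not even land in $\mathrm{FPFI}(\partial\underline{D}_f)$ unless $\alpha$ is matched. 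The main obstacle, and the thing I would pin down carefully, is precisely this fiber-cardinality bookkeeping: identifying the exact set over which $\Psi_m$ sums (ordered vs.\ unordered endpoint pairs, and whether $\tau$ ranges over all involutions or only those compatible with an already-chosen matching), so that the composite collapses to the correct multiple of $\mathrm{Pair}$. Everything else — the compatibility $\langle\underline{D}_\alpha,\mathring{\mathrm{Jac}}(\tau)\rangle=\langle\underline{D},\mathring{\mathrm{Jac}}(\widehat\tau)\rangle$ and the functoriality used to compute $\Psi_m$ on generators via Lemma~\ref{r:2022-05-27-r1} and Lemma~\ref{r:2022-05-27-r2} — is routine once the indexing sets are fixed. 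I would use Lemma~\ref{r:2022-05-27-r2} (the multiplicity statement for $F$) as the clean mechanism to extract the constant: exhibit a set map $\varphi$ from the fiber product computing $\mathrm{Pair}_{(\vec{\varnothing},[\![1,k]\!],2m)}\circ\Psi_m$ to the set indexing $\mathrm{Pair}_{(\vec{\varnothing},[\![1,k]\!],2(m+1))}$ all of whose fibers have the same cardinality $\binom{2(m+1)}{2}$, and invoke that lemma to conclude.
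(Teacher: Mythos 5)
Your computation is essentially the paper's own argument. The paper also evaluates the composite through the functor $F$ of Lemma~\ref{r:2022-05-27-r1}, introduces exactly your map $(\alpha,\tau)\mapsto\widehat{\tau}$ in the guise of a map $\varphi_m\big((\underline D,\alpha),(\underline E,\sigma)\big)=(\underline D,\sigma\sqcup\alpha)$ defined on the fiber product computing $\mathrm{Pair}_{(\vec{\varnothing},[\![1,k]\!],2m)}\circ\Psi_m$, uses the same compatibility $\langle \underline D_\alpha,\mathring{\mathrm{Jac}}(\sigma)\rangle=\langle \underline D,\mathring{\mathrm{Jac}}(\sigma\sqcup\alpha)\rangle$, and extracts the constant from Lemma~\ref{r:2022-05-27-r2} as the common cardinality of the fibers of $\varphi_m$. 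So the route is the same; the only divergence is the value of that cardinality, and there your count is the correct one. For $(\underline D,\widetilde\sigma)$ in the target set (pairs of a diagram with $2(m+1)$ free legs and a fixed-point free involution $\widetilde\sigma$ of them), a preimage under $\varphi_m$ requires $\sigma\sqcup\alpha=\widetilde\sigma$, which forces $\alpha$ to be one of the $m+1$ two-element orbits of $\widetilde\sigma$ and then determines $\sigma$ and $\underline E=\underline D_\alpha$; hence each fiber has $m+1$ elements, not $\binom{2(m+1)}{2}$, since the latter counts all two-element subsets of $\partial\underline D_f$, including those not matched by $\widetilde\sigma$, which contribute no preimage. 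Your own $m=1$ check settles this: the six choices of $\alpha$, followed by the unique gluing of the remaining two legs, hit each of the three involutions of a four-element set with multiplicity two, so the composite is $2\cdot\mathrm{Pair}$; and globally $\binom{2(m+1)}{2}\cdot\frac{(2m)!}{2^m m!}=(m+1)\cdot\frac{(2m+2)!}{2^{m+1}(m+1)!}$ confirms that the uniform multiplicity is $m+1$.

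The genuine defect of your write-up is therefore not the mathematics but the final paragraph: having correctly derived the constant $m+1$, you retract it and leave the statement unresolved, speculating that $\mathcal P_2$ might mean ordered pairs or that $\tau$ might range over a different set. The paper's definitions are unambiguous on both points ($\mathcal P_2(S)$ is the set of unordered two-element subsets, and $\tau$ runs over $\mathrm{FPFI}$ of the complement of $\alpha$), so you should have committed: the identity holds with the constant $m+1$, and the factor $\binom{2(m+1)}{2}$ appearing in the statement, and in the paper's assertion that $|\varphi_m^{-1}(x)|=\binom{2(m+1)}{2}$, is an overcount. Note also that the precise nonzero value is immaterial for the only use of the lemma, Corollary~\ref{r:2022-05-23inclusionPn+1inPn}: that argument only needs the image of $\mathrm{Pair}_{(\vec{\varnothing},[\![1,k]\!],2(m+1))}$ to be contained, up to a nonzero scalar, in the image of $\mathrm{Pair}_{(\vec{\varnothing},[\![1,k]\!],2m)}\circ\Psi_m$, so the inclusion $P^{m+1}_k\subset P^m_k$ and everything downstream are unaffected.
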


\begin{proof}
One defines a morphism in $\mathcal{S}et\mathcal{D}\big(\mathring{\mathrm{Jac}}(\vec{\varnothing},[\![1,k]\!],2m),\mathring{\mathrm{Jac}}(\vec{\varnothing},[\![1,k]\!])\big)$ by the set diagram $$\mathring{\mathrm{Jac}}(\vec{\varnothing},[\![1,k]\!],2m) \xleftarrow{\ g_m \ } X_{m} \xrightarrow{\ g'_m \ } \mathring{\mathrm{Jac}}(\vec{\varnothing},[\![1,k]\!]),$$ where $X_{m}:=\{(\underline D,\sigma) \ | \ 
\underline{D}\in \mathring{\mathrm{Jac}}(\vec{\varnothing},[\![1,k]\!],2m) \text{ and } \sigma \in \mathrm{FPFI}(\partial \underline{D}_f)\}$, the map $$g_m: X_{m} \longrightarrow \mathring{\mathrm{Jac}}(\vec{\varnothing},[\![1,k]\!],2m)$$ is the projection $(\underline{D}, \sigma) \mapsto \underline{D}$ and the map $g'_m: X_{m} \to \mathring{\mathrm{Jac}}(\vec{\varnothing},[\![1,k]\!])$
takes $(\underline{D}, \sigma)$ to $\langle \underline{D},\mathring{\mathrm{Jac}}(\sigma)\rangle$, see \eqref{eq:2022-05-27eqpairing}.

It follows easily that 
\begin{equation}
F\big(\mathring{\mathrm{Jac}}(\vec{\varnothing},[\![1,k]\!],2m) \xleftarrow{ \ g_m \ } X_{m} \xrightarrow{\ g'_m \ } \mathring{\mathrm{Jac}}(\vec{\varnothing},[\![1,k]\!])\big)=\mathrm{Pair}_{(\vec{\varnothing},[\![1,k]\!],2m)},
\end{equation}
where $F$ is the functor from Lemma~\ref{r:2022-05-27-r1}.
Using the functoriality of $F$ we have
\begin{equation*}
\begin{split}
\mathrm{Pair}_{(\vec{\varnothing},[\![1,k]\!],2m)} \circ \Psi_m &= F\big(\mathring{\mathrm{Jac}}(\vec{\varnothing},[\![1,k]\!],2m) \xleftarrow{\ g_m \ } X_m \xrightarrow{ \ g'_m} \mathring{\mathrm{Jac}}(\vec{\varnothing},[\![1,k]\!])\big)\\
 &\qquad \qquad \circ F\big(\mathring{\mathrm{Jac}}(\vec{\varnothing},[\![1,k]\!],2(m+1)) \xleftarrow{\ f_m \ } X_{m,m+1} \xrightarrow{ \ f'_m \ }\mathring{\mathrm{Jac}}(\vec{\varnothing},[\![1,k]\!],2m)\big)\\
&=F\big(\mathring{\mathrm{Jac}}(\vec{\varnothing},[\![1,k]\!],2(m+1)) \xleftarrow{ \ \hat{f}_n } X_{m,m+1} \times_{\mathring{\mathrm{Jac}}(\vec{\varnothing},[\![1,k]\!],2m)}X_{2m}\xrightarrow{ \ \hat{g}'_m} \mathring{\mathrm{Jac}}(\vec{\varnothing},[\![1,k]\!])\big).
\end{split} 
\end{equation*}
Notice that $X_{m,m+1} \times_{\mathring{\mathrm{Jac}}(\vec{\varnothing},[\![1,k]\!],2m)}X_m \to X_{m+1}$ consists of the tuples $\big((\underline{D},\alpha),(\underline{E},\sigma)\big)$ with $$\underline{D}\in\mathring{\mathrm{Jac}}(\vec{\varnothing},[\![1,k]\!],2(m+1)), \ \  \alpha\in\mathcal{P}_2(\partial{\underline{D}}_f), \  \ \underline{E}\in\mathring{\mathrm{Jac}}(\vec{\varnothing},[\![1,k]\!],2m)  \  \text{ and } \  \sigma\in\mathrm{FPFI}(\partial{\underline{E}}_f)$$ such that $\underline{D}_{\alpha} =f'_m((\underline{D},\alpha)) = g_m(\underline{E},\sigma) = \underline{E}$.  Define $\varphi_m : X_{m,m+1} \times_{\mathring{\mathrm{Jac}}(\vec{\varnothing},[\![1,k]\!],2m)}X_m \to X_{m+1}$ by $$\big((\underline{D},\alpha),(\underline{E},\sigma)\big)\longmapsto (\underline{D}, \sigma\sqcup\alpha).$$  
We see at once that for any $x \in X_{m+1}$, we have $|\varphi^{-1}(x)|=\binom{2(m+1)}{2}$. Besides, for any $\big((\underline{D},\alpha),(\underline{E},\sigma)\big)\in X_{m,m+1} \times_{\mathring{\mathrm{Jac}}(\vec{\varnothing},[\![1,k]\!],2m)}X_m$ we have
$$g_{m+1}\circ\varphi_m\big((\underline{D},\alpha),(\underline{E},\sigma)\big) = g_{m+1}(\underline{D}, \sigma\sqcup\alpha) = \underline{D} = f_m(\underline{D},\alpha) = \hat{f}_m\big((\underline{D},\alpha),(\underline{E},\sigma)\big)$$
and
\begin{equation*}
\begin{split}
g'_{m+1}\circ\varphi_m\big((\underline{D},\alpha),(\underline{E},\sigma)\big)=g'_{m+1}(\underline{D},\sigma\sqcup\alpha) = \langle \underline{D},\mathring{\mathrm{Jac}}(\sigma\sqcup\alpha)\rangle = \langle \underline{D}_{\alpha}, \mathring{\mathrm{Jac}}(\sigma)\rangle = \langle \underline{E}, \mathring{\mathrm{Jac}}(\sigma)\rangle \\= g'_m(\underline{E},\sigma) = \hat{g}'_m\big((\underline{D},\alpha),(\underline{E},\sigma)\big),
\end{split}
\end{equation*}
that is,  $g_{m+1}\circ \varphi_m =\hat{f}_n$ and $g'_{m+1}\circ\varphi_m=\hat{g}'_m$. Therefore, using Lemma~\ref{r:2022-05-27-r2}, we obtain
\begin{equation*}
\begin{split}
F\big(\mathring{\mathrm{Jac}}&(\vec{\varnothing},[\![1,k]\!],2(m+1)) \xleftarrow{ \ \hat{f}_m } X_{m,m+1} \times_{\mathring{\mathrm{Jac}}(\vec{\varnothing},[\![1,k]\!],2m)}X_{2m}\xrightarrow{ \ \hat{g}'_m} \mathring{\mathrm{Jac}}(\vec{\varnothing},[\![1,k]\!])\big) \\
&= F\big(\mathring{\mathrm{Jac}}(\vec{\varnothing},[\![1,k]\!],2(m+1)) \xleftarrow{ \ g_{m+1}\circ \varphi_m } X_{m,m+1} \times_{\mathring{\mathrm{Jac}}(\vec{\varnothing},[\![1,k]\!],2m)}X_{2m}\xrightarrow{ \ {g}'_{m+1}\circ\varphi_m} \mathring{\mathrm{Jac}}(\vec{\varnothing},[\![1,k]\!])\big) \\
& = \binom{2(m+1)}{2}F\big(\mathring{\mathrm{Jac}}(\vec{\varnothing},[\![1,k]\!],2(m+1))\xleftarrow{\ g_{m+1} \  } X_{m+1}\xrightarrow{ \ g'_{m+1} \ }\mathring{\mathrm{Jac}}(\vec{\varnothing},[\![1,k]\!])\big)\\
& =\binom{2(m+1)}{2}\mathrm{Pair}_{(\vec{\varnothing},[\![1,k]\!],2(m+1)}. 
\end{split}
\end{equation*}
This finishes the proof.
\end{proof}

\begin{corollary}\label{r:2022-05-23inclusionPn+1inPn} For all $k\geq 0$ and $m\geq 2$ we have $P^{m+1}_k\subset P^{m}_k$ and therefore $P_{m+1}\subset P_{m}$.
\end{corollary}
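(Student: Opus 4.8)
The plan is to prove the inclusion $P^{m+1}_k \subset P^m_k$ by exhibiting, for every element of $P^{m+1}_k$, an explicit preimage witnessing its membership in $P^m_k$. Recall that by Definition~\ref{space-P-n-general}, an element of $P^{m+1}(\vec{\varnothing})_k$ is the image under the projection $\mathring{\mathcal A}^\wedge(\vec{\varnothing},[\![1,k]\!]) \to \mathring{\mathcal A}^\wedge_k$ of an element of the form $\langle x, \varsigma_{m+1}\rangle$ with $x \in \mathring{\mathcal A}^\wedge(\vec{\varnothing},[\![1,k]\!],[\![1,2(m+1)]\!])$, where $\varsigma_{m+1} = \sum_{\tau \in \mathrm{FPFI}([\![1,2(m+1)]\!])}[\mathring{\mathrm{Jac}}(\tau)]$; and similarly for $P^m(\vec{\varnothing})_k$ with $\varsigma_m$ and $2m$ free legs. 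The essential point is to relate the pairing against $\varsigma_{m+1}$ with the pairing against $\varsigma_m$, and this is exactly the content of Lemma~\ref{r:2022-05-27mainlemma}.

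First I would translate Lemma~\ref{r:2022-05-27mainlemma} to the level of the spaces $\mathring{\mathcal A}^\wedge(\vec{\varnothing},[\![1,k]\!],m)$ of Jacobi diagrams with free legs: the map $\mathrm{Pair}_{(\vec{\varnothing},[\![1,k]\!])}$ on $\mathbb C\mathring{\mathrm{Jac}}$ descends to the quotient by the STU/AS/IHX relations, and on the homogeneous degree-$2m$ (resp.\ degree-$2(m+1)$) component it computes precisely $\langle \cdot, \varsigma_m\rangle$ (resp.\ $\langle \cdot, \varsigma_{m+1}\rangle$) up to the normalising factor coming from the number $(2m-1)!!$ of fixed-point-free involutions, since $\mathring{\mathrm{Jac}}(\varsigma_m) = \sum_{\tau \in \mathrm{FPFI}}\tau$. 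Then the map $\Psi_m : \mathbb C\mathring{\mathrm{Jac}}(\vec{\varnothing},[\![1,k]\!],2(m+1)) \to \mathbb C\mathring{\mathrm{Jac}}(\vec{\varnothing},[\![1,k]\!],2m)$, which contracts one pair of free legs in all $\binom{2(m+1)}{2}$ possible ways, also descends to a map $\overline{\Psi}_m : \mathring{\mathcal A}^\wedge(\vec{\varnothing},[\![1,k]\!],2(m+1)) \to \mathring{\mathcal A}^\wedge(\vec{\varnothing},[\![1,k]\!],2m)$ (compatibility with STU/AS/IHX is routine since these relations involve no free legs). The identity of Lemma~\ref{r:2022-05-27mainlemma} then reads, after passing to quotients: for $x \in \mathring{\mathcal A}^\wedge(\vec{\varnothing},[\![1,k]\!],2(m+1))$, $\langle \overline{\Psi}_m(x), \varsigma_m\rangle$ is a nonzero scalar multiple of $\langle x, \varsigma_{m+1}\rangle$ (the scalar being $\binom{2(m+1)}{2}$ times the ratio of the involution counts, which is nonzero).

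Given this, the argument is immediate: if $\xi \in P^{m+1}(\vec{\varnothing})_k$, write $\xi$ as the class of $\langle x, \varsigma_{m+1}\rangle$ for some $x$; then $\xi$ is also the class of a nonzero scalar multiple of $\langle \overline{\Psi}_m(x), \varsigma_m\rangle$, hence lies in the image of the projection applied to $P^m(\vec{\varnothing})_k = \langle \mathring{\mathcal A}^\wedge(\vec{\varnothing},[\![1,k]\!],2m), \varsigma_m\rangle$, i.e.\ $\xi \in P^m_k$. Taking the direct sum over $k \geq 0$ gives $P^{m+1} \subset P^m$. I would also note that the statement is only claimed for $m \geq 2$ because $\varsigma_m$ for $m \geq 2$ is the relevant normalisation; the case distinction plays no essential role in the argument beyond matching the indexing in the preceding lemmas.

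The main obstacle — really the only nontrivial point — is checking that $\mathrm{Pair}$ and $\Psi_m$ genuinely descend from $\mathbb C\mathring{\mathrm{Jac}}$ to the quotient spaces $\mathring{\mathcal A}^\wedge$, i.e.\ that they are compatible with the STU, AS, and IHX relations, and keeping careful track of the combinatorial normalising constants (the $\binom{2(m+1)}{2}$ factor and the ratio $(2m-1)!!/(2m+1)!!$ of counts of fixed-point-free involutions) so that one can be sure the relevant scalar is nonzero and invertible in $\mathbb C$. Both are routine but must be stated; the compatibility with relations is inherited from the fact that STU/AS/IHX are local relations not involving free legs, and contracting or pairing free legs commutes with applying a local relation elsewhere in the diagram. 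Everything else is a direct unwinding of the definitions in Definition~\ref{space-P-n-general} and Lemma~\ref{r:2022-05-27mainlemma}.
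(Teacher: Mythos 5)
Your proposal is correct and follows essentially the same route as the paper: identify $P^{m}_k$ with the image (under the canonical projection to $\mathring{\mathcal A}^\wedge_k$) of $\mathrm{Pair}_{(\vec{\varnothing},[\![1,k]\!],2m)}$ and then invoke Lemma~\ref{r:2022-05-27mainlemma}. The only cosmetic difference is that the paper applies Lemma~\ref{r:2022-05-27mainlemma} directly at the level of $\mathbb{C}\mathring{\mathrm{Jac}}(\vec{\varnothing},[\![1,k]\!],\bullet)$ and projects to $\mathring{\mathcal A}^\wedge_k$ only at the end, so no separate descent of $\mathrm{Pair}$ and $\Psi_m$ to the quotient is needed, and the relevant scalar is exactly $\binom{2(m+1)}{2}$ (there is no additional ratio of counts of fixed-point-free involutions, since both $\mathrm{Pair}$ and the pairing with $\varsigma_m$ are unnormalized sums over $\mathrm{FPFI}$).
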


\begin{proof}
 Let $k\geq 0$.  Let $[\ ]: \mathbb{C}\mathring{\mathrm{Jac}}(\vec{\varnothing},[\![1,k]\!])\to \mathring{\mathcal{A}}^\wedge_k$ be the canonical projection. From the definition of $\mathrm{Pair}_{(\vec{\varnothing},[\![1,k]\!])}$ (see Definition~\ref{def:2022-05-19tpair-k}) we have
\begin{equation}\label{eq:2022-05-23:Pn+1andPair1}
P^{m}_k= \big[\mathrm{Pair}_{(\vec{\varnothing},[\![1,k]\!],2m)}\big(\mathbb{C}\mathring{\mathrm{Jac}}(\vec{\varnothing},[\![1,k]\!],2m)\big)\big].
\end{equation}
Using \eqref{eq:2022-05-23:Pn+1andPair1} and Lemma~\ref{r:2022-05-27mainlemma} we obtain $P^{m+1}_k\subset P^{m}_k$. The sum of these inclusions over $k\geq 0$ then gives $P^{m+1}\subset P^{m}$.
\end{proof}

\begin{proposition}\label{r:2022-05-19coproductandP-n} One has
$$\Delta_{\mathcal{A}}(P^{n+1}) \subset P^{p+1} \otimes \mathcal{A} + \mathcal{A} \otimes P^{n-p+1},$$
where by simplicity of notation we set $\mathcal{A}:= \bigoplus_{k\geq 0} \mathring{\mathcal A}^\wedge_k$ and $P^{n+1}=\bigoplus_{k \geq 0}P^{n+1}_k$ is the graded ideal of $\mathcal{A}$ from Lemma~\ref{rlemma1_20211111}.
\end{proposition}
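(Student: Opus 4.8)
The plan is to reduce the statement to the level of $\mathbb{C}\mathring{\mathrm{Jac}}(\vec{\varnothing},[\![1,k]\!],\bullet)$ and use the compatibility between the pairing map $\mathrm{Pair}_{(\vec{\varnothing},[\![1,k]\!])}$ and the coproduct map $\Delta_{\mathring{\mathrm{Jac}}(\vec{\varnothing},[\![1,k]\!],\bullet)}$ established in Proposition~\ref{r:2022-05-19Copr}. Concretely, fix $k\geq 0$. By Definition~\ref{space-P-n-general} and formula \eqref{eq:2022-05-23:Pn+1andPair1} in the proof of Corollary~\ref{r:2022-05-23inclusionPn+1inPn}, the subspace $P^{n+1}_k$ of $\mathring{\mathcal A}^\wedge_k$ is the image, under the canonical projection $[\ ]:\mathbb{C}\mathring{\mathrm{Jac}}(\vec{\varnothing},[\![1,k]\!])\to\mathring{\mathcal A}^\wedge_k$, of $\mathrm{Pair}_{(\vec{\varnothing},[\![1,k]\!],2(n+1))}\big(\mathbb{C}\mathring{\mathrm{Jac}}(\vec{\varnothing},[\![1,k]\!],2(n+1))\big)$. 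Thus an arbitrary element of $P^{n+1}_k$ lifts to $\mathrm{Pair}_{(\vec{\varnothing},[\![1,k]\!])}(\underline D)$ for some $\underline D\in\mathring{\mathrm{Jac}}(\vec{\varnothing},[\![1,k]\!],2(n+1))$, i.e. a Jacobi diagram with exactly $2(n+1)$ free univalent vertices.

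First I would apply Proposition~\ref{r:2022-05-19Copr} to such a $\underline D$: the commuting square gives
$$\Delta_{\mathring{\mathrm{Jac}}(\vec{\varnothing},[\![1,k]\!])}\big(\mathrm{Pair}_{(\vec{\varnothing},[\![1,k]\!])}(\underline D)\big)=\mathrm{Pair}^{\otimes 2}_{(\vec{\varnothing},[\![1,k]\!])}\Big(\sum_{A\subset\pi_0(\underline D)}\underline D_A\otimes\underline D_{\pi_0(\underline D)\setminus A}\Big).$$
Now the key observation is a parity argument on the free vertices. For each $A\subset\pi_0(\underline D)$ write $2(n+1)=m_A+m'_A$ where $m_A=|(\partial\underline D_f)\cap\bigsqcup_{x\in A}x|$ and $m'_A=|(\partial\underline D_f)\cap\bigsqcup_{x\in\pi_0(\underline D)\setminus A}x|$ are the numbers of free vertices on the two sides. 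Since $\mathrm{FPFI}(S)=\emptyset$ whenever $|S|$ is odd, the summand $\mathrm{Pair}_{(\vec{\varnothing},[\![1,k]\!])}(\underline D_A)\otimes\mathrm{Pair}_{(\vec{\varnothing},[\![1,k]\!])}(\underline D_{\pi_0(\underline D)\setminus A})$ vanishes unless both $m_A$ and $m'_A$ are even, say $m_A=2a$ and $m'_A=2b$ with $a+b=n+1$. Hence either $a\geq p+1$ or $b\geq n-p+1$ (if not, $a\leq p$ and $b\leq n-p$ force $a+b\leq n$, a contradiction). In the first case, by Corollary~\ref{r:2022-05-23inclusionPn+1inPn} (the inclusions $P^{a}_\bullet\subset P^{p+1}_\bullet$ for $a\geq p+1$), the left factor $[\mathrm{Pair}_{(\vec{\varnothing},[\![1,k]\!],2a)}(\underline D_A)]$ lies in $P^{p+1}$, so the corresponding term lies in $P^{p+1}\otimes\mathcal A$; symmetrically in the second case it lies in $\mathcal A\otimes P^{n-p+1}$. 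Summing over $A$ shows $\Delta_{\mathcal A}(\mathrm{Pair}_{(\vec{\varnothing},[\![1,k]\!])}(\underline D))\in P^{p+1}\otimes\mathcal A+\mathcal A\otimes P^{n-p+1}$; since such elements span $P^{n+1}_k$, and summing over $k\geq 0$, we get the claim.

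The step I expect to require the most care is making the passage from $\mathbb{C}\mathring{\mathrm{Jac}}$ to $\mathring{\mathcal A}^\wedge$ clean: one must check that $\Delta_{\mathring{\mathrm{Jac}}(\vec{\varnothing},[\![1,k]\!])}$ and $\mathrm{Pair}_{(\vec{\varnothing},[\![1,k]\!])}$ descend compatibly through the STU/AS/IHX relations and the $\mathfrak S_k$-coinvariants, so that the diagram-level computation above really computes $\Delta_{\mathcal A}$ applied to a generator of $P^{n+1}_k\subset\mathring{\mathcal A}^\wedge_k$; this is where the grading compatibility of Lemma~\ref{r:2022-05-23lem:grading} and the coalgebra-homomorphism property of the projection $\mathbb{C}\mathring{\mathrm{Jac}}(\vec{\varnothing},[\![1,k]\!])\to\mathring{\mathcal A}^\wedge_k$ are used. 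Everything else is the bookkeeping of the parity argument, which is routine once Proposition~\ref{r:2022-05-19Copr} and Corollary~\ref{r:2022-05-23inclusionPn+1inPn} are in hand.
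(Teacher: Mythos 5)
Your proposal is correct and follows essentially the same route as the paper's proof: identify $P^{n+1}_k$ as the image of $\mathrm{Pair}_{(\vec{\varnothing},[\![1,k]\!])}$ on diagrams with $2(n+1)$ free vertices, push the coproduct through the commutative square of Proposition~\ref{r:2022-05-19Copr} (using the $\bullet$-grading compatibility of Lemma~\ref{r:2022-05-23lem:grading} and the coalgebra property of the projection), exploit vanishing on odd numbers of free vertices, and absorb the resulting terms via the nesting $P^{q+1}_k\subset P^q_k$ of Corollary~\ref{r:2022-05-23inclusionPn+1inPn}. Your element-wise pigeonhole on $a+b=n+1$ is just a rephrasing of the paper's splitting of the sum $\sum_q P^q_k\otimes P^{n+1-q}_k$, so there is nothing substantive to add.
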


\begin{proof} Let $k\geq 0$. Recall that the coproducts $\Delta_{\mathring{\mathrm{Jac}}(\vec{\varnothing},[\![1,k]\!],0)}$ (from \eqref{eq:coproductinJaczero}) and $\Delta_{\mathring{\mathrm{Jac}}(\vec{\varnothing},[\![1,k]\!])}$  (from Definition~\ref{def:coproductinA-k}$(b)$) coincide. Besides, 
\begin{equation}\label{eq:2022-05-23:Pn+1andPair}
P^{n+1}_k = \big[\mathrm{Pair}_{(\vec{\varnothing},[\![1,k]\!])}\big(\mathbb{C}\mathring{\mathrm{Jac}}(\vec{\varnothing},[\![1,k]\!],2(n+1))\big)\big],
\end{equation}
where $[\ ]: \mathbb{C}\mathring{\mathrm{Jac}}(\vec{\varnothing},[\![1,k]\!])\to \mathring{\mathcal{A}}^\wedge_k$ is the canonical projection.

Let $m\geq 0$, by Lemma~\ref{r:2022-05-23lem:grading}, we have the inclusion
$$\Delta_{\mathring{\mathrm{Jac}}(\vec{\varnothing},[\![1,k]\!],\bullet)}\big(\mathbb{C} \mathring{\mathrm{Jac}}(\vec{\varnothing},[\![1,k]\!],m)\big) \subset \bigoplus_{0 \leq q \leq  m} \mathbb{C}\mathring{\mathrm{Jac}}(\vec{\varnothing},[\![1,k]\!],q) \otimes \mathbb{C} \mathring{\mathrm{Jac}}(\vec{\varnothing},[\![1,k]\!],m-q).$$ 
By the commutativity of Diagram \eqref{eq:2022-05-19commdiag} we then have
\begin{equation*}
\begin{split}
\Delta_{\mathring{\mathrm{Jac}}(\vec{\varnothing},[\![1,k]\!])}&\big(\mathrm{Pair}_{(\vec{\varnothing},[\![1,k]\!])}(\mathbb{C}\mathring{\mathrm{Jac}}(\vec{\varnothing},[\![1,k]\!],m))\big) \\
&\subset \sum_{0\leq q \leq m} \mathrm{Pair}_{(\vec{\varnothing},[\![1,k]\!])}(\mathbb{C} \mathring{\mathrm{Jac}}(\vec{\varnothing},[\![1,k]\!],q)) \otimes \mathrm{Pair}_{(\vec{\varnothing},[\![1,k]\!])}(\mathbb{C} \mathring{\mathrm{Jac}}(\vec{\varnothing},[\![1,k]\!], m-q)).
\end{split}
\end{equation*}

The compatibility of the map $\Delta_{\mathring{\mathcal{A}}^\wedge_k}$, the map $\Delta_{\mathring{\mathrm{Jac}}(\vec{\varnothing},[\![1,k]\!])}$ and the  projection $[\ ]: \mathbb{C}\mathring{\mathrm{Jac}}(\vec{\varnothing},[\![1,k]\!]) \to \mathring{\mathcal{A}}^\wedge_k$ imply
\begin{equation*}
\begin{split}
\Delta_{\mathring{\mathcal{A}}^\wedge_k}&
\big(\big[\mathrm{Pair}_{(\vec{\varnothing},[\![1,k]\!])}(\mathbb{C}\mathring{\mathrm{Jac}}(\vec{\varnothing},[\![1,k]\!],m))\big]\big)\\
&  \subset \sum_{0\leq q \leq m} [\mathrm{Pair}_{(\vec{\varnothing},[\![1,k]\!])}(\mathbb{C} \mathring{\mathrm{Jac}}(\vec{\varnothing},[\![1,k]\!],q))] \otimes [\mathrm{Pair}_{(\vec{\varnothing},[\![1,k]\!])}(\mathbb{C} \mathring{\mathrm{Jac}}(\vec{\varnothing},[\![1,k]\!], m-q))].
\end{split}
\end{equation*}

Specializing this inclusion for $m=2(n+1)$, using \eqref{eq:2022-05-23:Pn+1andPair} and $[\mathrm{Pair}_{(\vec{\varnothing},[\![1,k]\!])}(\mathbb{C}\mathring{\mathrm{Jac}}(\vec{\varnothing},[\![1,k]\!],u))]=0$ for $u$ odd, one then obtains 
\begin{equation*}
\begin{split}
 \Delta_{\mathring{\mathcal{A}}^{\wedge}_k}(P^{n+1}_k) \subset \sum_{2\leq q\leq n-1} P^q_k \otimes P^{n+1-q}_k. \quad \quad \quad \quad \quad \quad \quad \quad \quad \quad \quad \quad \quad \quad \quad \quad \quad \quad \quad \quad \quad \quad \quad \quad \\
\quad \quad \quad \quad =  P^2_k \otimes P^{n-1}_k + \cdots+ P^p_k \otimes P^{n+1-p}_k + P^{p+1}_k \otimes P^{n-p}_k + \cdots + P^{n-1}_k \otimes P^2_k. 
 \end{split}
\end{equation*}
Besides, by Corollary~\ref{r:2022-05-23inclusionPn+1inPn}, we have $P^{q+1}_k\subset P^q_k$ for any $q\geq 2$. We thus get
\begin{equation}\label{2023-11-15copr}\Delta_{\mathring{\mathcal{A}}^\wedge_k}(P^{n+1}_k) \subset \mathring{\mathcal{A}}^\wedge_k\otimes P^{n+1-p}_k   \ + \  P^{p+1}_k\otimes \mathring{\mathcal{A}}^\wedge_k.
\end{equation}
The sum of these inclusions over $k \geq 0$ then implies
$$\Delta_{\mathcal{A}}(P^{n+1}) \subset P^{p+1}\otimes \mathcal{A} \ + \  \mathcal{A} \otimes P^{n-p+1}$$
which finishes the proof.
\end{proof}

\begin{proposition}\label{r:2022-06-15morphismDelta-n-p}
The coproduct of the algebra $ \bigoplus_{k\geq 0} \mathring{\mathcal A}^\wedge_k$ induces a morphism of algebras $$\Delta^{\mathfrak{b}}_{n,p}:\mathfrak{b}(n)\to \mathfrak{b}(p) \otimes \mathfrak{b}(n-p).$$  \index[notation]{\Delta^{\mathfrak{b}}_{n,p}@$\Delta^{\mathfrak{b}}_{n,p}$}
\end{proposition}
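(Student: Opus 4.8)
The plan is to descend the coproduct $\Delta_{\bigoplus_{k\geq 0}\mathring{\mathcal A}^\wedge_k}$ from $\bigoplus_{k\geq 0}\mathring{\mathcal A}^\wedge_k$ to the quotient defining $\mathfrak{b}(n)$. Recall from Definition~\ref{def:space-a-n} that $\mathfrak{b}(n)=\big(\bigoplus_{k\geq 0}\mathring{\mathcal A}^\wedge_k\big)/\big(L^{<2n}+P^{n+1}+(\mathrm{CO}\mathring{\mathcal A}^\wedge)\big)$, and similarly $\mathfrak{b}(p)$ and $\mathfrak{b}(n-p)$ are quotients by $L^{<2p}+P^{p+1}+(\mathrm{CO}\mathring{\mathcal A}^\wedge)$ and $L^{<2(n-p)}+P^{n-p+1}+(\mathrm{CO}\mathring{\mathcal A}^\wedge)$ respectively. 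By Proposition~\ref{r:2022-06-15Deltaisalgebramorpshim} the map $\Delta:=\Delta_{\bigoplus_{k\geq 0}\mathring{\mathcal A}^\wedge_k}$ is an algebra homomorphism; composing it with the projection $\bigoplus_{k\geq 0}\mathring{\mathcal A}^\wedge_k\hat{\otimes}\bigoplus_{k\geq 0}\mathring{\mathcal A}^\wedge_k\to \mathfrak{b}(p)\otimes\mathfrak{b}(n-p)$ yields an algebra homomorphism, and it suffices to check that this composite kills the ideal $I_n:=L^{<2n}+P^{n+1}+(\mathrm{CO}\mathring{\mathcal A}^\wedge)$. Equivalently, I must verify the inclusion
\begin{equation*}
\Delta(I_n)\ \subset\ I_p\hat{\otimes}\Big(\bigoplus_{k\geq 0}\mathring{\mathcal A}^\wedge_k\Big)\ +\ \Big(\bigoplus_{k\geq 0}\mathring{\mathcal A}^\wedge_k\Big)\hat{\otimes}\, I_{n-p},
\end{equation*}
where $I_p$ and $I_{n-p}$ denote the corresponding ideals for $p$ and $n-p$. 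Since $I_n=L^{<2n}+P^{n+1}+(\mathrm{CO}\mathring{\mathcal A}^\wedge)$, it is enough to treat each of the three summands separately.

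First I would handle $P^{n+1}$: this is exactly the content of Proposition~\ref{r:2022-05-19coproductandP-n}, which gives $\Delta_{\mathcal A}(P^{n+1})\subset P^{p+1}\otimes\mathcal A+\mathcal A\otimes P^{n-p+1}$, and $P^{p+1}\subset I_p$, $P^{n-p+1}\subset I_{n-p}$. Next, the term $(\mathrm{CO}\mathring{\mathcal A}^\wedge)$: I would use Remark~\ref{remarkaboutcoproduit}'s expectation made precise—more carefully, I would show directly that $\Delta$ maps $(\mathrm{CO}\mathring{\mathcal A}^\wedge_k)$ into $(\mathrm{CO}\mathring{\mathcal A}^\wedge)\hat{\otimes}\mathring{\mathcal A}^\wedge_k+\mathring{\mathcal A}^\wedge_k\hat{\otimes}(\mathrm{CO}\mathring{\mathcal A}^\wedge)$. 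This follows because $(\mathrm{CO}\mathring{\mathcal A}^\wedge_k)$ is the image of $\mathrm{co}^{\mathcal A}_k-\mathrm{proj}^{\mathcal A}_{1,k-1}$, both of which are built from the change-of-orientation natural transformation $\mathrm{co}^{\mathcal A}$ (Definition~\ref{def:changeoforientationJac}); the coproduct $\Delta_{\mathring{\mathrm{Jac}}(\vec{\varnothing},[\![1,k]\!])}$ splits connected components, and the sign $(-1)^{|\varphi^{-1}(s_0)|}$ of $\mathrm{co}^{\mathring{\mathrm{Jac}}}$ is multiplicative over components, so on each summand $\underline D_A\otimes\underline D_{\pi_0(D)\setminus A}$ the orientation change acts on exactly one factor; using the Leibniz-type identity $\mathrm{co}\otimes\mathrm{Id}-\mathrm{Id}\otimes\mathrm{Id}=(\mathrm{co}-\mathrm{Id})\otimes\mathrm{Id}+\mathrm{co}\otimes(\mathrm{Id}-\mathrm{Id})$, wait—more precisely one writes $a\otimes b - a'\otimes b'$ as $(a-a')\otimes b+a'\otimes(b-b')$—one gets the stated inclusion. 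Finally, for $L^{<2n}$: since $\Delta_{\mathring{\mathrm{Jac}}(\vec{\varnothing},[\![1,k]\!])}$ distributes the connected components of a Jacobi diagram $\underline D$ over the two tensor slots, a circle component $s\in[\![1,k]\!]$ of $\underline D$ with $\mathrm{m}_s(\underline D)<2n$ legs has its legs split between the two factors, so one of the two factors receives fewer than $2p$ of them or the other receives fewer than $2(n-p)$ (because if the left factor had $\geq 2p$ legs on $s$ and the right had $\geq 2(n-p)$, the total would be $\geq 2n$, a contradiction). Hence every term of $\Delta([\underline D])$ lies in $L^{<2p}\hat{\otimes}\mathring{\mathcal A}^\wedge+\mathring{\mathcal A}^\wedge\hat{\otimes}L^{<2(n-p)}$, which is contained in $I_p\hat{\otimes}\mathring{\mathcal A}^\wedge+\mathring{\mathcal A}^\wedge\hat{\otimes}I_{n-p}$.

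Combining the three inclusions and passing to the completed quotient gives a well-defined algebra homomorphism $\Delta^{\mathfrak{b}}_{n,p}:\mathfrak{b}(n)\to\mathfrak{b}(p)\otimes\mathfrak{b}(n-p)$. The main obstacle I anticipate is the bookkeeping for the $(\mathrm{CO}\mathring{\mathcal A}^\wedge)$ term: one must be careful that $(\mathrm{CO}\mathring{\mathcal A}^\wedge_k)$ is defined via the $\mathfrak{S}_{k-1}$-coinvariant space $\mathring{\mathcal A}^\wedge(\vec{\varnothing},[\![1,k]\!])_{\mathfrak{S}_{k-1}}$ (Definition~\ref{def:projtoXk} and Lemma~\ref{def:mapCotoXk}), so one should first check that the coproduct is compatible with the coinvariant-taking and with the relevant permutation actions before applying the component-splitting argument—this compatibility is routine but needs to be spelled out, analogously to the arguments in \S\ref{sec:3:2:4}. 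Everything else ($P^{n+1}$ via Proposition~\ref{r:2022-05-19coproductandP-n}, and $L^{<2n}$ via the elementary pigeonhole on leg counts) is straightforward. Note that the statement asserts only that $\Delta^{\mathfrak{b}}_{n,p}$ is an algebra morphism, not that the three ideals are genuine bialgebra ideals of $\bigoplus_{k\geq 0}\mathring{\mathcal A}^\wedge_k$ (which would require the analogous inclusions with $I_n$ on \emph{both} tensor factors); the asymmetric inclusion above is exactly what is needed and what the proof establishes.
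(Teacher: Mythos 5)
Your proposal is correct and follows essentially the same route as the paper: the paper also reduces the statement to the three inclusions $\Delta(P^{n+1})\subset P^{p+1}\otimes\mathcal A+\mathcal A\otimes P^{n-p+1}$ (via Proposition~\ref{r:2022-05-19coproductandP-n}), the pigeonhole argument on leg counts for $L^{<2n}$, and the telescoping identity $a\otimes b-a'\otimes b'=(a-a')\otimes b+a'\otimes(b-b')$ applied to $\mathrm{co}^{\mathcal A}_k$ for $(\mathrm{CO}\mathring{\mathcal A}^\wedge)$. The coinvariance bookkeeping you flag is handled in the paper simply by working with the spanning elements $\mathrm{co}^{\mathcal A}_k([\underline D])-\mathrm{proj}^{\mathcal A}_{1,k-1}([\underline D])$ for $\underline D\in\mathring{\mathrm{Jac}}(\vec{\varnothing},[\![1,k]\!])$, which is the routine check you anticipated.
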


\begin{proof}
Set $J(n):=L^{<2n}+P^{n+1}+ (\mathrm{CO}\mathring{\mathcal A}^\wedge) \subset \bigoplus_{k\geq 0} \mathring{\mathcal A}^\wedge_k$, then $J(n)$ is an ideal of $\bigoplus_{k\geq 0} \mathring{\mathcal A}^\wedge_k$ and $\mathfrak{b}(n)$ is the corresponding quotient. The statement then follows from the inclusion $$\Delta_{\bigoplus_{k\geq 0} \mathring{\mathcal A}^\wedge_k}(J(n)) \subset J(p) \otimes \Big(\bigoplus_{k\geq 0} \mathring{\mathcal A}^\wedge_k\Big) \ + \ \Big(\bigoplus_{k\geq 0} \mathring{\mathcal A}^\wedge_k\Big) \otimes J(n-p).$$

\noindent This follows from the following inclusions 

\noindent $(a)$  $\Delta_{\bigoplus_{k\geq 0} \mathring{\mathcal A}^\wedge_k}(P^{n+1}) \subset P^{p+1} \otimes \Big(\bigoplus_{k\geq 0} \mathring{\mathcal A}^\wedge_k\Big) \  +  \ \Big(\bigoplus_{k\geq 0} \mathring{\mathcal A}^\wedge_k\Big) \otimes P^{n-p+1}$,

\noindent $(b)$ $\Delta_{\bigoplus_{k\geq 0} \mathring{\mathcal A}^\wedge_k}(L^{<2n}) \subset L^{<2p} \otimes \Big(\bigoplus_{k\geq 0} \mathring{\mathcal A}^\wedge_k\Big) \  + \ \Big(\bigoplus_{k\geq 0} \mathring{\mathcal A}^\wedge_k\Big) \otimes L^{<2(n-p)}$,

\noindent $(c)$ $\Delta_{\bigoplus_{k\geq 0} \mathring{\mathcal A}^\wedge_k}((\mathrm{CO}\mathring{\mathcal A}^\wedge)) \subset (\mathrm{CO}\mathring{\mathcal A}^\wedge)\otimes \Big(\bigoplus_{k\geq 0} \mathring{\mathcal A}^\wedge_k\Big) \  + \ \Big(\bigoplus_{k\geq 0} \mathring{\mathcal A}^\wedge_k\Big) \otimes (\mathrm{CO}\mathring{\mathcal A}^\wedge)$.

\medskip

\noindent The inclusion in $(a)$ is given by Proposition~\ref{r:2022-05-19coproductandP-n}. We now prove $(b)$ and $(c)$.

Recall from Definition~\ref{thespaceL2n} that $L^{<2n}=\bigoplus_{k\geq 0} L^{<2n}_k$ where $L^{<2n}_k$ is as in \eqref{thespaceL2n}. Define 
$$\mathring{\mathrm{Jac}}^{<2n}(\vec{\varnothing},[\![1,k]\!]):=\{\underline{D} \in \mathring{\mathrm{Jac}}(\vec{\varnothing},[\![1,k]\!])\ |\ \text{there exists } a \in [\![1,k]\!] \text{ with } \mathrm{m}_{a}(\underline{ D})<2n\},$$
where $\mathrm{m}_{a}: \mathring{\mathrm{Jac}}(\vec{\varnothing},[\![1,k]\!])\to \mathbb{Z}_{\geq 0}$ is the map from Definition~\ref{themapm}.
Let  $[\ ]:\mathbb{C}\mathring{\mathrm{Jac}}(\vec{\varnothing},[\![1,k]\!])\to \mathring{\mathcal{A}}^\wedge_k$ be the canonical projection. Then 
\begin{equation}\label{eq:equivdescripofL2n}
L^{<2n}_k= [\mathbb{C}\mathring{\mathrm{Jac}}^{<2n}(\vec{\varnothing},[\![1,k]\!])]^\wedge,
\end{equation} 
where $[\mathbb{C}\mathring{\mathrm{Jac}}^{<2n}(\vec{\varnothing},[\![1,k]\!])]^\wedge$ is the complete graded subspace of $\mathring{\mathcal{A}}^\wedge_k$ generated by  $[\mathbb{C}\mathring{\mathrm{Jac}}^{<2n}(\vec{\varnothing},[\![1,k]\!])]$.  
 
If $\underline{D} \in \mathring{\mathrm{Jac}}^{<2n}(\vec{\varnothing},[\![1,k]\!])$ and $A \subset \pi_0(\underline{D})$, there exists $a \in [\![1,k]\!]$ such that $\mathrm{m}_{a}(\underline{D})<2n$. Then  $\mathrm{m}_{a}(\underline{D}_A) + \mathrm{m}_{a}(\underline{D}_{\pi_0(\underline{D})\setminus A}) = \mathrm{m}_{a}(\underline{D})<2n$. 
Let now  $p \in [\![1,n-1]\!]$. Then either $\mathrm{m}_{a}(\underline{D}_A)<2p$ which implies $\underline{D}_A \otimes \underline{D}_{\pi_0(\underline{D})\setminus A} \in \mathbb{C}\mathring{\mathrm{Jac}}^{<2p}(\vec{\varnothing},[\![1,k]\!]) \otimes \mathbb{C}\mathring{\mathrm{Jac}}(\vec{\varnothing},[\![1,k]\!])$, or 
$\mathrm{m}_{a}(\underline{D}_A) \geq 2p$ which implies  $\mathrm{m}_{a}(\underline{D}_{\pi_0(\underline{D})\setminus A})<2(n-p)$. Therefore $\underline{D}_A \otimes \underline{D}_{\pi_0(\underline{D})\setminus A} \in \mathbb{C}\mathring{\mathrm{Jac}}(\vec{\varnothing},[\![1,k]\!])\otimes \mathbb{C}\mathring{\mathrm{Jac}}^{<2(n-p)}(\vec{\varnothing},[\![1,k]\!])$. It follows that 
\begin{equation}\label{eq:2022-06-13equ1}
\begin{split}
\Delta_{\mathbb{C}\mathring{\mathrm{Jac}}(\vec{\varnothing},[\![1,k]\!])}&(\mathring{\mathrm{Jac}}^{<2n}(\vec{\varnothing},[\![1,k]\!])) \\
& \subset \mathbb{C}\mathring{\mathrm{Jac}}^{<2p}(\vec{\varnothing},[\![1,k]\!]) \otimes \mathbb{C}\mathring{\mathrm{Jac}}(\vec{\varnothing},[\![1,k]\!]) \ + \ \mathbb{C}\mathring{\mathrm{Jac}}(\vec{\varnothing},[\![1,k]\!])\otimes \mathbb{C}\mathring{\mathrm{Jac}}^{<2(n-p)}(\vec{\varnothing},[\![1,k]\!]). 
\end{split}
\end{equation}
Then one has 
\begin{equation*}
\begin{split}
\Delta_{\mathring{\mathcal{A}}_k}(L^{<2n}_k) &= \Delta_{\mathring{\mathcal{A}}^\wedge_k}\big( [\mathbb{C}\mathring{\mathrm{Jac}}_{<2n}(\vec{\varnothing},[\![1,k]\!])]^\wedge\big)=([\ ]^\wedge \otimes [\ ]^\wedge) \circ \Delta_{\mathring{\mathrm{Jac}}(\vec{\varnothing},[\![1,k]\!])}\big(\mathbb{C}\mathring{\mathrm{Jac}}_{<2n}(\vec{\varnothing},[\![1,k]\!])\big)\\ 
& \subset ([\ ]^\wedge {\otimes} [\ ]^\wedge)\big(\mathbb{C}\mathring{\mathrm{Jac}}_{<2p}(\vec{\varnothing},[\![1,k]\!]) {\otimes} \mathbb{C}\mathring{\mathrm{Jac}}(\vec{\varnothing},[\![1,k]\!]) \ + \  \mathbb{C}\mathring{\mathrm{Jac}}(\vec{\varnothing},[\![1,k]\!]){\otimes} \mathbb{C}\mathring{\mathrm{Jac}}_{<2(n-p)}(\vec{\varnothing},[\![1,k]\!])\big) \\
&=L^{<2p}_k \otimes \mathring{\mathcal{A}}_k \ +  \ \mathring{\mathcal{A}}_k \otimes L^{<2(n-p)}_k,
\end{split}
\end{equation*}
where the first and last equality follows from~\eqref{eq:equivdescripofL2n}, the second equality follows from the fact that $[\ ] : \mathbb{C}\mathring{\mathrm{Jac}}(\vec{\varnothing},[\![1,k]\!])\to \mathring{\mathcal{A}}_k$ is a coalgebra morphism and  the inclusion follows from~\eqref{eq:2022-06-13equ1}.  Therefore, we have 
$$\Delta_{\mathring{\mathcal{A}}_k}(L^{<2n}_k) \subset L^{<2p}_k \otimes \mathring{\mathcal{A}}_k + \mathring{\mathcal{A}}_k \otimes L^{<2(n-p)}_k$$
for any $k \geq 0$, which by taking the direct sum over $k\geq 0$ implies the inclusion in $(b)$.

We proceed to show $(c)$. Recall from \S\ref{sec:3:2:1} that $(\mathrm{CO}\mathring{\mathcal{A}}^\wedge)= \bigoplus_{k\geq 1} (\mathrm{CO}\mathring{\mathcal{A}}_k^\wedge) \subset \bigoplus_{k\geq 0}\mathring{\mathcal{A}}^\wedge_k$ and $(\mathrm{CO}\mathring{\mathcal{A}}_k^\wedge)$ is topologically spanned by elements of the form $\mathrm{co}_k^{{\mathcal A}}([\underline{D}]) - \mathrm{proj}_{1,k-1}^{{\mathcal{A}}}([\underline{D}])$ where $$\underline{D}=\big(D,\varphi:\partial D\to [\![1,k]\!], \{\mathrm{cyc}_i\}_{i \in [\![1,k]\!]}\big)\in \mathring{\mathrm{Jac}}(\vec{\varnothing},[\![1,k]\!]).$$ By definition $\mathrm{co}_k^{{\mathcal A}}([\underline{D}])=(-1)^{|\varphi^{-1}(1)|}[(D,\varphi, \{\mathrm{cyc}_i\}_{i\in [\![2,k]\!]}\sqcup \{\overline{\mathrm{cyc}}_{1}\} )]$, where $\overline{\mathrm{cyc}}_{1}$ denotes the opposite cyclic order of ${\mathrm{cyc}}_{1}$ on the set $\varphi^{-1}(1)\subset \partial D$. Set 
\begin{equation}\label{2023-11-17diagraDprime}
\underline{D}':=(D,\varphi, \{\mathrm{cyc}_i\}_{i\in [\![2,k]\!]}\sqcup \{\overline{\mathrm{cyc}}_{1}\}),
\end{equation}
clearly $\pi_0(\underline{D})=\pi_0(\underline{D}')$.

For each $A\subset \pi_0(\underline{D})$ we have
\begin{equation*}
|\varphi^{-1}(1)| = \big|(\varphi_{|(\partial D)\cap \bigsqcup_{x\in A}x})^{-1}(1) \big|\  + \ \big|(\varphi_{|(\partial D)\cap \bigsqcup_{x\in \pi_0(\underline{D})\setminus A}x})^{-1}(1)\big|. 
\end{equation*}
Consequently,
\begin{equation*}
\begin{split}
\Delta_{\mathring{\mathrm{Jac}}(\vec{\varnothing},[\![1,k]\!])}\big(&(-1)^{|\varphi^{-1}(1)|}\underline{D}'\big)  \\
& = \sum_{A\subset \pi_0(\underline{D}')} \Big((-1)^{\big|(\varphi_{|(\partial D')\cap \bigsqcup_{x\in A}x})^{-1}(1) \big|}\underline{D}'_A \Big) \otimes \Big((-1)^{\big|(\varphi_{|(\partial D')\cap \bigsqcup_{x\in \pi_0(\underline{D}')\setminus A}x})^{-1}(1)\big|}\underline{D}'_{\pi_0(\underline{D}')\setminus A}\Big).
\end{split}
\end{equation*}
where $\underline{D}'_A$ and $\underline{D}'_{\pi_0(\underline{D}')\setminus A}$ are as in 
Definition~\ref{def:coproductinA-k}.

According to the above and to the fact that $[\ ]:\mathbb{C}\mathring{\mathrm{Jac}}(\vec{\varnothing},[\![1,k]\!])\to \mathring{\mathcal{A}}^\wedge_k$ is a coalgebra morphism we then obtain 
$$\Delta_{\mathring{\mathcal{A}}^\wedge_k}\big(\mathrm{co}^{{{\mathcal{A}}}}_{k}([\underline{D}])\big) = \sum_{A\subset \pi_0(\underline{D})} \Big(\mathrm{co}^{{{\mathcal{A}}}}_{k}([\underline{D}_A]) \Big) \otimes \Big(\mathrm{co}^{{{\mathcal{A}}}}_{k}([\underline{D}_{\pi_0(\underline{D})\setminus A}]) \Big).$$
Therefore,

\begin{equation*}
\begin{split}
\Delta_{\mathring{\mathcal{A}}^\wedge_k} \Big(\mathrm{proj}_{1,k-1}^{{\mathcal{A}}}([\underline{D}])-  \mathrm{co}^{{{\mathcal{A}}}}_{k}([\underline{D}])\Big) = \sum_{A\subset \pi_0(\underline{D})} [\underline{D}_A]\otimes [\underline{D}_{\pi_0(\underline{D})\setminus A}] -  \mathrm{co}^{{{\mathcal{A}}}}_{k}([\underline{D}_A]) \otimes [\underline{D}_{\pi_0(\underline{D})\setminus A}] \qquad \qquad\\
 \qquad  \qquad  \qquad\ \ \   + \mathrm{co}^{{{\mathcal{A}}}}_{k}([\underline{D}_A]) \otimes [\underline{D}_{\pi_0(\underline{D})\setminus A}] -   \mathrm{co}^{{{\mathcal{A}}}}_{k}([\underline{D}_A]) \otimes  \mathrm{co}^{{{\mathcal{A}}}}_{k}([\underline{D}_{\pi_0(\underline{D})\setminus A}])\\
=  \sum_{A\subset \pi_0(\underline{D})} \Big([\underline{D}_A] - \mathrm{co}^{{{\mathcal{A}}}}_{k}([\underline{D}_A])\Big)\otimes [\underline{D}_{\pi_0(\underline{D})\setminus A}]  + \sum_{A\subset \pi_0(\underline{D})} \mathrm{co}^{{{\mathcal{A}}}}_{k}([\underline{D}_A]) \otimes \Big([\underline{D}_{\pi_0(\underline{D})\setminus A}] -    \mathrm{co}^{{{\mathcal{A}}}}_{k}([\underline{D}_{\pi_0(\underline{D})\setminus A}]\Big)\\
\in (\mathrm{CO}\mathring{\mathcal{A}}_k^\wedge)\otimes \mathring{\mathcal{A}}^\wedge_k \ + \ \mathring{\mathcal{A}}^\wedge_k\otimes (\mathrm{CO}\mathring{\mathcal{A}}_k^\wedge).
\end{split}
\end{equation*}
Hence,
$$ \Delta_{\mathring{\mathcal{A}}^\wedge_k}\big( (\mathrm{CO}\mathring{\mathcal{A}}_k^\wedge)\big)\subset (\mathrm{CO}\mathring{\mathcal{A}}_k^\wedge)\otimes \mathring{\mathcal{A}}^\wedge_k \ + \ \mathring{\mathcal{A}}^\wedge_k\otimes (\mathrm{CO}\mathring{\mathcal{A}}_k^\wedge)$$
for any $k\geq 1$, which by taking the direct sum over $k\geq 1$ yields the inclusion in $(c)$.

\end{proof}

\subsubsection{The algebra homomorphism \texorpdfstring{$\Delta^{\mathfrak{b}}_{n,p}:\mathfrak{b}(n)\to \mathfrak{b}(p) \otimes \mathfrak{b}(n-p)$}{bn-to-bp-otimes-bn-p} as a morphism of semi-Kirby structures}
 
\begin{proposition}\label{r:2022-06-15commdiag1}  The map $\Delta^{\mathfrak{b}}_{n,p}:\mathfrak{b}(n)\to \mathfrak{b}(p) \otimes \mathfrak{b}(n-p)$ from Proposition~\ref{r:2022-06-15morphismDelta-n-p} is a morphism of semi-Kirby structures.
\end{proposition}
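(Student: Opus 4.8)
I would begin by unwinding what has to be proved. By Definition~\ref{def:catSKSandKS}, a morphism of semi-Kirby structures is an algebra homomorphism that intertwines the structure maps from $\mathfrak{Kir}$; the algebra-homomorphism part is already Proposition~\ref{r:2022-06-15morphismDelta-n-p}, so only the intertwining remains. The semi-Kirby structure on the source is $(\mathfrak{b}(n),\mu_{\mathfrak{b}(n)})$ with $\mu_{\mathfrak{b}(n)}=\varphi_n\circ\overline{\mathrm{cs}}^\nu\circ\mu$, where $\mu:\mathfrak{Kir}\to\mathfrak{s}(\mathbf{A})$ is the morphism of Proposition~\ref{l:LMOinvt}; by Lemma~\ref{tensorofSKS} the structure map of the target is $(\mu_{\mathfrak{b}(p)}\otimes\mu_{\mathfrak{b}(n-p)})\circ\Delta_{\mathfrak{Kir}}$. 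Thus the plan is to establish the identity of algebra morphisms
\[
\Delta^{\mathfrak{b}}_{n,p}\circ\mu_{\mathfrak{b}(n)}=(\mu_{\mathfrak{b}(p)}\otimes\mu_{\mathfrak{b}(n-p)})\circ\Delta_{\mathfrak{Kir}}:\ \mathfrak{Kir}\longrightarrow\mathfrak{b}(p)\otimes\mathfrak{b}(n-p).
\]

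The strategy is to check this identity on the classes of framed oriented links, which span $\mathfrak{Kir}$ as an abelian group (they are the image of the spanning set $\underline{\vec{\mathcal T}}(\emptyset,\emptyset)$ of $\mathbb{Z}\underline{\vec{\mathcal T}}(\emptyset,\emptyset)$); since both sides are additive, this is enough. Fix such a class $\mathbf{L}$ and set $w:=\mathrm{cs}^\nu(Z_{\mathcal A}(\mathbf{L}))\in\bigoplus_{k\geq 0}\mathring{\mathcal A}^\wedge_k$. First I would observe, by unwinding the definitions ($\overline{\mathrm{cs}}^\nu$ is induced by the algebra automorphism $\mathrm{cs}^\nu$ and $\varphi_n$ by the identity of $\bigoplus_{k\geq 0}\mathring{\mathcal A}^\wedge_k$), that $\mu_{\mathfrak{b}(m)}(\mathbf{L})$ is the class of $w$ in $\mathfrak{b}(m)$ for $m\in\{p,n-p,n\}$. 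On the right-hand side, $\mathbf{L}$ is a group-like element of $\mathfrak{Kir}$ (it lies in the commutative monoid $\mathrm{Kir}$, cf.\ Definition~\ref{def:Kir-and-Kir-k}), so $\Delta_{\mathfrak{Kir}}(\mathbf{L})=\mathbf{L}\otimes\mathbf{L}$, whence the right-hand side sends $\mathbf{L}$ to $[w]_{\mathfrak{b}(p)}\otimes[w]_{\mathfrak{b}(n-p)}$. On the left-hand side, $Z_{\mathcal A}(\mathbf{L})$ is group-like by Lemma~\ref{r:imageofhatZcontainedinGL}, and $\mathrm{cs}^\nu$ is a bialgebra automorphism of $\bigoplus_{k\geq 0}\mathring{\mathcal A}^\wedge_k$ by Proposition~\ref{2023-11-15prop}, so $w$ is group-like, i.e.\ $\Delta_{\bigoplus_{k\geq 0}\mathring{\mathcal A}^\wedge_k}(w)=w\,\hat\otimes\,w$; since $\Delta^{\mathfrak{b}}_{n,p}$ is, by Proposition~\ref{r:2022-06-15morphismDelta-n-p}, induced by $\Delta_{\bigoplus_{k\geq 0}\mathring{\mathcal A}^\wedge_k}$ followed by the projection onto $\mathfrak{b}(p)\otimes\mathfrak{b}(n-p)$, it sends $[w]_{\mathfrak{b}(n)}$ to the class of $w\,\hat\otimes\,w$, namely $[w]_{\mathfrak{b}(p)}\otimes[w]_{\mathfrak{b}(n-p)}$. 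The two sides therefore agree on $\mathbf{L}$, and additivity finishes the argument.

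The main obstacle — and the only genuine subtlety — is that $\mathfrak{s}(\mathbf{A})$ and $\mathfrak{a}$ are not known to be bialgebras (cf.\ Remark~\ref{remarkaboutcoproduit}), so one cannot shortcut the proof by asserting that $\mu$ or $\overline{\mathrm{cs}}^\nu$ intertwine coproducts; the compatibility has to be imported from the honest bialgebra $\bigoplus_{k\geq 0}\mathring{\mathcal A}^\wedge_k$ (Proposition~\ref{r:2022-06-15Deltaisalgebramorpshim}) via the explicit group-like representatives $\mathrm{cs}^\nu(Z_{\mathcal A}(\mathbf{L}))$, which is exactly why reducing to link classes is indispensable. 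Apart from this, the only remaining bookkeeping is to identify $\Delta^{\mathfrak{b}}_{n,p}$ with the composite of the completed coproduct $\Delta_{\bigoplus_{k\geq 0}\mathring{\mathcal A}^\wedge_k}$ and the projection $\bigoplus_{k\geq 0}\mathring{\mathcal A}^\wedge_k\,\hat\otimes\,\bigoplus_{k\geq 0}\mathring{\mathcal A}^\wedge_k\to\mathfrak{b}(p)\otimes\mathfrak{b}(n-p)$, and to check that this projection is well defined — both routine and already packaged in Proposition~\ref{r:2022-06-15morphismDelta-n-p}.
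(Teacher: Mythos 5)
Your proposal is correct and follows essentially the same route as the paper: the paper's proof likewise reduces to the commutativity of the diagram $\Delta^{\mathfrak{b}}_{n,p}\circ(\varphi_n\circ\overline{\mathrm{cs}}^\nu\circ Z_{\mathcal A})=\big((\varphi_p\circ\overline{\mathrm{cs}}^\nu\circ Z_{\mathcal A})\otimes(\varphi_{n-p}\circ\overline{\mathrm{cs}}^\nu\circ Z_{\mathcal A})\big)\circ\Delta_{\mathfrak{Kir}}$, deduced from group-likeness of $Z_{\mathcal A}$ on links (Lemma~\ref{r:imageofhatZcontainedinGL}), the bialgebra automorphism property of $\mathrm{cs}^\nu$ (Proposition~\ref{2023-11-15prop}), the compatibility of $\Delta^{\mathfrak{b}}_{n,p}$ with $\Delta_{\bigoplus_{k\geq 0}\mathring{\mathcal A}^\wedge_k}$, and Lemma~\ref{tensorofSKS} for the target structure. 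You merely spell out explicitly the evaluation on link classes that the paper leaves implicit, which is a faithful filling-in rather than a different argument.
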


\begin{proof}
It follows from the fact that the map ${Z}_{\mathcal A}$ sends each element of $\vec{\mathcal T}(\emptyset, \emptyset)_k$ to a group-like element of $\mathring{\mathcal A}^\wedge_k$, see Lemma~\ref{r:imageofhatZcontainedinGL}; from Proposition~\ref{2023-11-15prop}, and from the compatibility of $\Delta_{n,p}^{\mathfrak b}$ with $\Delta_{\oplus_{k \geq 0}\mathring{\mathcal A}^\wedge_k}$ (see Proposition~\ref{r:2022-06-15morphismDelta-n-p})  that we have a commutative the diagram 
\begin{equation}\label{eq:20220506-diag1}
\xymatrix{\mathfrak{Kir}\ar[rrrr]^-{\varphi_n\circ{\mathrm{cs}}^\nu\circ{Z}_{\mathcal A}}\ar[d]_{\Delta_{\mathfrak{Kir}}} & & & & \mathfrak{b}(n) \ar[d]^{\Delta^{\mathfrak{b}}_{n,p}} \\ 
\mathfrak{Kir}\otimes \mathfrak{Kir} \ar[rrrr]^-{(\varphi_p\circ{\mathrm{cs}}^\nu\circ{Z}_{\mathcal A})\otimes (\varphi_{n-p}\circ{\mathrm{cs}}^\nu\circ {Z}_{\mathcal A})} & & & &  \mathfrak{b}(p)\otimes \mathfrak{b}(n-p).}
\end{equation}

The facts that
\begin{itemize}
\item[$(a)$] the semi-Kirby structure on
on the algebra $\mathfrak{b}(p) \otimes \mathfrak{b}(n-p)$ is given by the composed algebra homomorphism $$\mathfrak{Kir} \xrightarrow{\ \Delta_{\mathfrak{Kir}} \ } \mathfrak{Kir} \otimes \mathfrak{Kir} \xrightarrow{ \ {(\varphi_p\circ{\mathrm{cs}}^\nu\circ {Z}_{\mathcal A})\otimes (\varphi_{n-p}\circ {\mathrm{cs}}^\nu\circ {Z}_{\mathcal A})}\ } \mathfrak{b}(p) \otimes \mathfrak{b}(n-p)$$
which follows from Lemma~\ref{tensorofSKS},
\item[$(b)$] $\Delta_{n,p}^{\mathfrak{b}}$ is an algebra homomorphism (Proposition~\ref{r:2022-06-15morphismDelta-n-p}),
\item[$(c)$] the diagram \eqref{eq:20220506-diag1} is commutative, 
\end{itemize}
imply that $\Delta_{n,p}^{\mathfrak{b}}$ is a morphism of semi-Kirby structures.
\end{proof}

\subsection{A Kirby structure morphism \texorpdfstring{$\Delta^{\mathfrak{c}}_{n,p}:\mathfrak{c}(n)\to \mathfrak{c}(p) \otimes \mathfrak{c}(n-p)$}{cn-to-cp-otimes-cn-p}} \label{sec:5:3}

\subsubsection{An algebra homomorphism \texorpdfstring{$\Delta^{\mathfrak{c}}_{n,p}:\mathfrak{c}(n)\to \mathfrak{c}(p) \otimes \mathfrak{c}(n-p)$}{cn-to-cp-otimes-cn-p}}

Recall that $\mathfrak{c}(n)= \mathring{\mathcal{A}}^\wedge_0/(P^{n+1}_0 + O^n)$, see Definition~\ref{def:b-n}. Specializing the map \eqref{eq:coproductinJac} to $k=0$ and restricting the map \eqref{eq:coproductin-oplus-A-l} to $k=0$, we obtain maps

\begin{equation}\label{eq:coproductinJacempty}
\Delta_{\mathring{\mathrm{Jac}}(\vec{\varnothing},\emptyset)}:\mathbb{C}\mathring{\mathrm{Jac}}(\vec{\varnothing},\emptyset)\longrightarrow \mathbb{C}\mathring{\mathrm{Jac}}(\vec{\varnothing},\emptyset)\otimes \mathbb{C}\mathring{\mathrm{Jac}}(\vec{\varnothing},\emptyset)
\end{equation} \index[notation]{\Delta_{\mathring{\mathrm{Jac}}(\vec{\varnothing},\emptyset)}@$\Delta_{\mathring{\mathrm{Jac}}(\vec{\varnothing},\emptyset)}$}

and
\begin{equation}\label{eq:coproductinAempty}
\Delta_{\mathring{\mathcal{A}}^\wedge_0}:\mathring{\mathcal{A}}^\wedge_0\longrightarrow \mathring{\mathcal{A}}^\wedge_0\otimes \mathring{\mathcal{A}}^\wedge_0.
\end{equation}  \index[notation]{\Delta_{\mathring{\mathcal{A}}^\wedge_0}@$\Delta_{\mathring{\mathcal{A}}^\wedge_0}$}
These maps endow $\mathbb{C}\mathring{\mathrm{Jac}}(\vec{\varnothing},\emptyset)$ and $\mathring{\mathcal{A}}^\wedge_0$ with cocommutative coassociative coalgebra structures such that the projection $[\ ]:\mathbb{C}\mathring{\mathrm{Jac}}(\vec{\varnothing},\emptyset) \to  \mathring{\mathcal{A}}^\wedge_0$ is a coalgebra morphism. The triple $(\mathring{\mathcal A}^\wedge_0, \otimes, \Delta_{\mathring{\mathcal A}^\wedge_0})$ is a bialgebra as it is both a subalgebra and a subcoalgebra of the  bialgebra from Proposition~\ref{r:2022-06-15Deltaisalgebramorpshim}. Besides, equation \eqref{2023-11-15copr} (restricted to $k=0$) implies that
\begin{equation}\label{eq:2022-06-15compatibilitycoproductandPnempty}
\Delta_{\mathring{\mathcal{A}}^\wedge_0}(P^{n+1}_0) \subset P^{p+1}_0\otimes \mathring{\mathcal{A}}^\wedge_0 \ + \ \mathring{\mathcal{A}}^\wedge_0\otimes P^{n-p+1}_0
\end{equation}
for any $1\leq p\leq n$.

\begin{proposition}\label{r:2022-06-15inducedalgebramorphismonmathfrakc}
The coproduct \eqref{eq:coproductinAempty} of the algebra $\mathring{\mathcal{A}}^\wedge_0$ induces a morphism of algebras 
$$\Delta^{\mathfrak{c}}_{n,p}:\mathfrak{c}(n)\longrightarrow \mathfrak{c}(p)\otimes \mathfrak{c}(n-p).$$  \index[notation]{\Delta^{\mathfrak{c}}_{n,p}@$\Delta^{\mathfrak{c}}_{n,p}$}
\end{proposition}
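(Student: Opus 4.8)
\textbf{Proof proposal for Proposition~\ref{r:2022-06-15inducedalgebramorphismonmathfrakc}.}

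The plan is to deduce this statement from the bialgebra structure on $\mathring{\mathcal A}^\wedge_0$ together with compatibilities of the coproduct $\Delta_{\mathring{\mathcal A}^\wedge_0}$ with the two ideals $P^{n+1}_0$ and $O^n$. Concretely, writing $\mathcal A_0:=\mathring{\mathcal A}^\wedge_0$, it suffices to establish the inclusion
\begin{equation*}
\Delta_{\mathring{\mathcal{A}}^\wedge_0}\big(P^{n+1}_0 + O^n\big) \subset \big(P^{p+1}_0 + O^p\big)\otimes \mathcal A_0 \ + \ \mathcal A_0\otimes \big(P^{n-p+1}_0 + O^{n-p}\big);
\end{equation*}
once this is shown, the composition of $\Delta_{\mathring{\mathcal A}^\wedge_0}$ with the projection $\mathcal A_0\otimes \mathcal A_0\to \mathfrak c(p)\otimes \mathfrak c(n-p)$ kills $P^{n+1}_0+O^n$, hence factors through $\mathfrak c(n)=\mathcal A_0/(P^{n+1}_0+O^n)$, yielding the desired algebra homomorphism $\Delta^{\mathfrak c}_{n,p}$; that it is an algebra morphism follows from Proposition~\ref{r:2022-06-15Deltaisalgebramorpshim} (restricted to $k=0$), since all the maps involved are algebra homomorphisms.

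First I would recall that $P^{n+1}_0$ already behaves well: the inclusion
$\Delta_{\mathring{\mathcal{A}}^\wedge_0}(P^{n+1}_0) \subset P^{p+1}_0\otimes \mathcal A_0 + \mathcal A_0\otimes P^{n-p+1}_0$ is exactly \eqref{eq:2022-06-15compatibilitycoproductandPnempty}, which was obtained by specializing to $k=0$ the general inclusion \eqref{2023-11-15copr} established in the proof of Proposition~\ref{r:2022-05-19coproductandP-n}. So the only new point is to handle the ideal $O^n$, which by Definition~\ref{def:ideal-O-n} is the closed ideal of $\mathcal A_0$ generated by the single homogeneous degree-$0$ element $X+2n$, where $X$ is the dashed loop (Definition~\ref{def:theelementXaloop}). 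The key computation is the coproduct of $X$: since $X$ is a connected Jacobi diagram without trivalent vertices, its only splitting into tensor factors is the trivial one, and (taking into account that the empty diagram is the unit) one gets
\begin{equation*}
\Delta_{\mathring{\mathcal A}^\wedge_0}(X) = X\otimes 1 + 1\otimes X,
\end{equation*}
i.e. $X$ is primitive. Hence $\Delta_{\mathring{\mathcal A}^\wedge_0}(X+2n) = (X+2p)\otimes 1 + 1\otimes (X+2(n-p))$, which lies in $O^p\otimes \mathcal A_0 + \mathcal A_0\otimes O^{n-p}$. Because $\Delta_{\mathring{\mathcal A}^\wedge_0}$ is an algebra morphism and $O^p\otimes \mathcal A_0 + \mathcal A_0\otimes O^{n-p}$ is an ideal of $\mathcal A_0\otimes \mathcal A_0$ containing $\Delta_{\mathring{\mathcal A}^\wedge_0}(X+2n)$, it follows that $\Delta_{\mathring{\mathcal A}^\wedge_0}$ maps the ideal generated by $X+2n$ — namely $O^n$ — into $O^p\otimes \mathcal A_0 + \mathcal A_0\otimes O^{n-p}$; a small continuity/completeness remark takes care of the closures. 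Adding the two inclusions gives the displayed inclusion above and concludes.

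The main obstacle is essentially bookkeeping rather than conceptual: one must be careful that the degree-$0$ parts and the completions interact correctly (the ideals $O^n$, $P^{n+1}_0$ are complete graded ideals, and the coproduct is continuous), and one should double-check the normalization in $\Delta_{\mathring{\mathcal A}^\wedge_0}(X)$ — in particular that splitting off the empty component on either side contributes the unit $1$ and not something else — so that $X+2n\mapsto (X+2p)\otimes 1+1\otimes(X+2(n-p))$ comes out with the correct constants $2p$ and $2(n-p)$ summing to $2n$. Everything else is an immediate consequence of results already proved in the excerpt: Proposition~\ref{r:2022-06-15Deltaisalgebramorpshim} for the bialgebra structure, \eqref{eq:2022-06-15compatibilitycoproductandPnempty} for $P^{n+1}_0$, and the universal property of quotient algebras.
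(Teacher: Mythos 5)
Your proposal is correct and follows essentially the same route as the paper: the paper's proof also reduces to the two inclusions $\Delta_{\mathring{\mathcal A}^\wedge_0}(P^{n+1}_0)\subset P^{p+1}_0\otimes\mathring{\mathcal A}^\wedge_0+\mathring{\mathcal A}^\wedge_0\otimes P^{n-p+1}_0$ (quoted from \eqref{eq:2022-06-15compatibilitycoproductandPnempty}) and $\Delta_{\mathring{\mathcal A}^\wedge_0}(O^n)\subset O^p\otimes\mathring{\mathcal A}^\wedge_0+\mathring{\mathcal A}^\wedge_0\otimes O^{n-p}$, the latter obtained exactly by the computation $\Delta_{\mathring{\mathcal A}^\wedge_0}(X+2n)=(X+2p)\otimes 1+1\otimes(X+2(n-p))$ together with the fact that $O^n$ is the (closed) ideal generated by $X+2n$. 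Your extra remarks spelling out the factorization through the quotient and the continuity/completeness point are just explicit versions of what the paper leaves implicit.
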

\begin{proof}
The result follows from the following inclusions

\noindent $(a)$ $\Delta_{\mathring{\mathcal{A}}^\wedge0}(P^{n+1}_0) \subset P^{p+1}_0\otimes \mathring{\mathcal{A}}^\wedge_0 \ + \ \mathring{\mathcal{A}}^\wedge_0\otimes P^{n-p+1}_0$,

\noindent $(b)$ $\Delta_{\mathring{\mathcal{A}}^\wedge_0}(O^n) \subset O^{p}\otimes \mathring{\mathcal{A}}^\wedge_0 \ + \ \mathring{\mathcal{A}}^\wedge_0\otimes O^{n-p}$.

\medskip

\noindent The inclusion in $(a)$ is given by \eqref{eq:2022-06-15compatibilitycoproductandPnempty}. Let us show the inclusion in $(b)$. Recall that $O^n$ is the closed ideal of $\mathring{\mathcal{A}}^\wedge_0$ generated by $X+2n$ where $X$ denotes the dashed loop, see Definition~\ref{def:ideal-O-n}. We have
$$\Delta_{\mathring{\mathcal{A}}^\wedge_0}(X+2n) = X\otimes\emptyset + \emptyset\otimes X + 2n = (X+2p)\otimes \emptyset + \emptyset\otimes (X+2(n-p))\in O^p\otimes \mathring{\mathcal{A}}^\wedge_0 + \mathring{\mathcal{A}}^\wedge_0\otimes O^{n-p},$$
which implies the inclusion in $(b)$.
\end{proof}

\subsubsection{The algebra homomorphism \texorpdfstring{$\mathfrak{c}(n)\to \mathfrak{c}(p) \otimes \mathfrak{c}(n-p)$}{cn-to-cp-otimes-cn-p} as a morphism of Kirby structures} 

By Theorem~\ref{mainr-2022-02-14}, for every integer $n\geq 1$, the pair $(\mathfrak{c}(n),\overline{j}_{n}\circ \varphi_n \circ {\mathrm{cs}}^\nu\circ{Z}_{\mathcal A})$ is a Kirby structure (see Proposition~\ref{r2022-04-08-jn}). By Lemma~\ref{sec:1.8lemma1}, for all integers $n,p\geq 1$ with $n-p\geq 1$, the algebra $\mathfrak{c}(p) \otimes \mathfrak{c}(n-p)$ is endowed canonically  with the semi-Kirby structure  given by the composed algebra homomorphism $$\mathfrak{Kir} \xrightarrow{\ \Delta_{\mathfrak{Kir}} \ } \mathfrak{Kir} \otimes \mathfrak{Kir} \xrightarrow{ \ {(\overline{j}_p\circ\varphi_n\circ {\mathrm{cs}}^\nu\circ{Z}_{\mathcal A})\otimes (\overline{j}_{n-p}\circ\varphi_{n-p}\circ{\mathrm{cs}}^\nu\circ{Z}_{\mathcal A})}\ } \mathfrak{c}(p) \otimes \mathfrak{c}(n-p)$$
and therefore $\mathfrak{c}(p) \otimes \mathfrak{c}(n-p)$ is endowed with a Kirby structure.

\begin{proposition}\label{r:2024-01-23-1} The   algebra morphism $\Delta^{\mathfrak{c}}_{n,p}:\mathfrak{c}(n)\to \mathfrak{c}(p) \otimes \mathfrak{c}(n-p)$ from Proposition~\ref{r:2022-06-15inducedalgebramorphismonmathfrakc} defines  a morphism of Kirby structures
$$\Delta^{\mathfrak{c}}_{n,p}:\big(\mathfrak{c}(n),\overline{j}_{n}\circ \varphi_n \circ {\mathrm{cs}}^\nu\circ{Z}_{\mathcal A}\big)\longrightarrow \big(\mathfrak{c}(p),\overline{j}_{p}\circ \varphi_p \circ {\mathrm{cs}}^\nu\circ{Z}_{\mathcal A}\big)\otimes \big(\mathfrak{c}(n-p),\overline{j}_{n-p}\circ \varphi_{n-p} \circ {\mathrm{cs}}^\nu\circ{Z}_{\mathcal A}\big).$$
\end{proposition}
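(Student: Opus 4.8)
The claim is that the algebra morphism $\Delta^{\mathfrak{c}}_{n,p}$ is a morphism of Kirby structures, i.e.\ (in the sense of Definition~\ref{def:catSKSandKS} applied to the category $\mathcal{KS}$) that it is an algebra homomorphism compatible with the structure maps of the two Kirby structures. Since $\Delta^{\mathfrak c}_{n,p}$ is already known to be an algebra homomorphism by Proposition~\ref{r:2022-06-15inducedalgebramorphismonmathfrakc}, and since the Kirby structure on $\mathfrak{c}(p)\otimes\mathfrak{c}(n-p)$ is the tensor product Kirby structure (Lemma~\ref{tensorofSKS}, using that we have already observed just before the statement that the structure map on $\mathfrak{c}(p)\otimes\mathfrak{c}(n-p)$ is the composite $\mathfrak{Kir}\xrightarrow{\Delta_{\mathfrak{Kir}}}\mathfrak{Kir}\otimes\mathfrak{Kir}\to \mathfrak{c}(p)\otimes\mathfrak{c}(n-p)$), the only thing to check is the single commutativity condition
\begin{equation*}
\big((\overline{j}_p\circ\varphi_p\circ\overline{\mathrm{cs}}^\nu\circ Z_{\mathcal A})\otimes(\overline{j}_{n-p}\circ\varphi_{n-p}\circ\overline{\mathrm{cs}}^\nu\circ Z_{\mathcal A})\big)\circ\Delta_{\mathfrak{Kir}} = \Delta^{\mathfrak c}_{n,p}\circ(\overline{j}_n\circ\varphi_n\circ\overline{\mathrm{cs}}^\nu\circ Z_{\mathcal A}).
\end{equation*}

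\textbf{Key steps.} First I would record that $\Delta^{\mathfrak c}_{n,p}$ is by construction induced by $\Delta_{\mathring{\mathcal A}^\wedge_0}$, so it suffices to prove the analogous commutativity one level up, before passing to the ideals $P^{n+1}_0+O^n$ etc.; that is, to prove commutativity of the diagram with source $\mathfrak{Kir}$, middle terms $\mathring{\mathcal A}^\wedge_0$ and $\mathring{\mathcal A}^\wedge_0\otimes\mathring{\mathcal A}^\wedge_0$, and horizontal maps $\overline{j}_n\circ\varphi_n\circ\overline{\mathrm{cs}}^\nu\circ Z_{\mathcal A}$. Second, I would factor the composite $\overline{j}_n\circ\varphi_n\circ\overline{\mathrm{cs}}^\nu\circ Z_{\mathcal A}$ through $\bigoplus_{k\ge0}\mathring{\mathcal A}^\wedge_k$, writing it (on representatives) as $j_n^{\mathrm{LMO}}\circ\mathrm{cs}^\nu\circ Z_{\mathcal A}$ up to the quotient maps, and then split the verification into three independent compatibilities: (i) $Z_{\mathcal A}$ lands in group-likes, so $\Delta_{\mathring{\mathcal A}^\wedge_k}(Z_{\mathcal A}(L))=Z_{\mathcal A}(L)\hat\otimes Z_{\mathcal A}(L)$ (Lemma~\ref{r:imageofhatZcontainedinGL}); (ii) $\mathrm{cs}^\nu$ is a bialgebra automorphism (Proposition~\ref{2023-11-15prop}), hence commutes with $\Delta_{\bigoplus_k\mathring{\mathcal A}^\wedge_k}$; (iii) $j_n^{\mathrm{LMO}}$ is compatible with the coproducts, i.e.\ $\Delta_{\mathring{\mathcal A}^\wedge_0}\circ j_n^{\mathrm{LMO}}=(j_p^{\mathrm{LMO}}\otimes j_{n-p}^{\mathrm{LMO}})\circ\Delta_{\bigoplus_k\mathring{\mathcal A}^\wedge_k}$ appropriately interpreted. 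Chaining (i)--(iii) with the fact that $\mu:\mathfrak{Kir}\to\mathfrak{s}(\mathbf A)$ intertwines $\Delta_{\mathfrak{Kir}}$ with $\Delta_{\bigoplus_k\mathring{\mathcal A}^\wedge_k}$ on the image of $Z_{\mathcal A}$ (which suffices since $\mathfrak{Kir}$ is generated as an algebra by classes of links and both sides are algebra maps) gives the required commutativity. Finally I would check that the relevant ideals descend correctly: the compatibilities $\Delta_{\mathring{\mathcal A}^\wedge_0}(P^{n+1}_0)\subset P^{p+1}_0\otimes\mathring{\mathcal A}^\wedge_0+\mathring{\mathcal A}^\wedge_0\otimes P^{n-p+1}_0$ (equation~\eqref{eq:2022-06-15compatibilitycoproductandPnempty}) and $\Delta_{\mathring{\mathcal A}^\wedge_0}(O^n)\subset O^p\otimes\mathring{\mathcal A}^\wedge_0+\mathring{\mathcal A}^\wedge_0\otimes O^{n-p}$ (from the proof of Proposition~\ref{r:2022-06-15inducedalgebramorphismonmathfrakc}), together with the fact that $j_n^{\mathrm{LMO}}$ sends $P^{n+1}+L^{<2n}+(\mathrm{CO}\mathring{\mathcal A}^\wedge)$ into $P^{n+1}_0$ (Proposition~\ref{r2022-04-08-jn}), guarantee that everything is well-defined on the quotients.

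\textbf{Main obstacle.} The delicate point is compatibility (iii), the interaction of the LMO diagrammatic-trace map $j_n^{\mathrm{LMO}}$ with the coproduct: I would need to check that cutting the circle components of a Jacobi diagram and distributing the dashed components into two tensor factors (the coproduct) commutes with the operation of pairing each circle's legs with the disjoint union of copies of the LMO trace $T^{\mathrm{LMO}}_\bullet$ (the map $j$, via Lemma~\ref{r:jLMOanddoubling} and the construction in Proposition~\ref{r:diagtracesimpliestrace}). Concretely this reduces to the statement that $\Delta$ is a coalgebra map with respect to the disjoint-union product of diagrammatic traces, i.e.\ that the trace product $*$ of Lemma~\ref{def:productoftraces} and the coproduct of $\bigoplus_k\mathring{\mathcal A}^\wedge_k$ interact as expected; this is essentially the bialgebra compatibility (Proposition~\ref{r:2022-06-15Deltaisalgebramorpshim}) combined with the fact that a disjoint union of trees distributes over a splitting of connected components, but it requires care to phrase it so that the grading shift $\mathrm{deg}\mapsto\mathrm{deg}-kn$ from \eqref{equ:2023-10-10} is consistent on both sides (here the splitting $n=p+(n-p)$ enters). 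Everything else is bookkeeping: chasing the big overall diagram of $\S\ref{ch4}$ and invoking that the various intermediate objects $\mathfrak a$, $\mathfrak b(n)$, $\mathfrak c(n)$ are all quotients of $\bigoplus_k\mathring{\mathcal A}^\wedge_k$ or of $\mathring{\mathcal A}^\wedge_0$ by bialgebra-compatible ideals.
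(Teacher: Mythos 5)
Your overall architecture coincides with the paper's: the paper also reduces the statement to the commutativity of the diagram with source $\mathfrak{Kir}$, and it handles what you call (i) and (ii) exactly as you do, by splitting off the square involving $\mathfrak{b}(n)$ (this is Proposition~\ref{r:2022-06-15commdiag1}, which rests on Lemma~\ref{r:imageofhatZcontainedinGL}, Proposition~\ref{2023-11-15prop} and Proposition~\ref{r:2022-06-15morphismDelta-n-p}). The remaining content is then the single square expressing compatibility of $\overline{j}_n$ with the coproducts, i.e.\ $\Delta^{\mathfrak{c}}_{n,p}\circ\overline{j}_n=(\overline{j}_p\otimes\overline{j}_{n-p})\circ\Delta^{\mathfrak{b}}_{n,p}$ as maps $\mathfrak{b}(n)\to\mathfrak{c}(p)\otimes\mathfrak{c}(n-p)$, which the paper does not reprove but cites as Lemma~4.2 of \cite{LMO98}. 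This is precisely your step (iii), and it is where your proposal has a genuine gap.

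The gap is twofold. First, your reduction ``it suffices to prove the analogous commutativity one level up, before passing to the ideals'' is false: the identity $\Delta_{\mathring{\mathcal{A}}^\wedge_0}\circ j^{\mathrm{LMO}}_n=(j^{\mathrm{LMO}}_p\otimes j^{\mathrm{LMO}}_{n-p})\circ\Delta_{\bigoplus_k\mathring{\mathcal{A}}^\wedge_k}$ does not hold in $\mathring{\mathcal{A}}^\wedge_0\otimes\mathring{\mathcal{A}}^\wedge_0$, not even on group-like inputs. For instance, with $x=\theta_2$ (or with the degree-$0$ part of $\mathrm{cs}^\nu\circ Z_{\mathcal A}(U^+)$, using Proposition~\ref{calcul-j-n-cs-Z-U-pm-deg0}) and $n=2$, $p=1$, one side gives $\Delta(X(X+2))$ while the other gives $2X\otimes X$; the discrepancy $X(X+2)\otimes 1+1\otimes X(X+2)$ is nonzero and only dies because it lies in $P^{2}_0\otimes\mathring{\mathcal{A}}^\wedge_0+\mathring{\mathcal{A}}^\wedge_0\otimes P^{2}_0$. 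So the compatibility is intrinsically a statement modulo the ideals $P^{\bullet+1}_0$ and $O^{\bullet}$, and your strategy of lifting it to the unquotiented spaces cannot work. Second, your proposed justification --- that (iii) is ``essentially the bialgebra compatibility combined with the fact that a disjoint union of trees distributes over a splitting of connected components'' --- misses the actual difficulty: when the legs of a circle component are distributed between the two tensor factors, the pairing with $(T^{\mathrm{LMO}}_{\bullet})^{*n}$ produces cross terms in which a single tree of the trace connects dashed components destined for different factors, and moreover the $O$-relations do not match ($X+2n$ in $\mathfrak{c}(n)$ versus $X+2p$ and $X+2(n-p)$ in the factors). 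Showing that these discrepancies lie in $P^{p+1}_0\otimes\mathring{\mathcal{A}}^\wedge_0+\mathring{\mathcal{A}}^\wedge_0\otimes P^{n-p+1}_0$ plus the $O$-ideals is a genuine computation with the trace elements $T_m^{\mathrm{LMO}}$ and the relations $P^{\bullet+1}$, $O^{\bullet}$, $L^{<2\bullet}$; it is not a formal consequence of Proposition~\ref{r:2022-06-15Deltaisalgebramorpshim} or of Lemma~\ref{def:productoftraces}. To complete your proof you would either have to carry out that computation or, as the paper does, invoke \cite[Lemma~4.2]{LMO98}.
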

\begin{proof}
 We need to show that the diagram
\begin{equation}\label{eq:2022-06-15-diag1forc}
\xymatrix{\mathfrak{Kir}\ar[rrrrr]^-{\overline{j}_n\circ\varphi_n\circ {\mathrm{cs}}^\nu\circ {Z}_{\mathcal A}}\ar[d]_{\Delta_{\mathfrak{Kir}}} & & & & & \mathfrak{c}(n) \ar[d]^{\Delta^{\mathfrak{c}}_{n,p}} \\ 
\mathfrak{Kir}\otimes \mathfrak{Kir} \ar[rrrrr]^-{(\overline{j}_p\circ\varphi_p\circ {\mathrm{cs}}^\nu\circ {Z}_{\mathcal A})\otimes (\overline{j}_{n-p}\circ\varphi_{n-p}\circ {\mathrm{cs}}^\nu \circ {Z}_{\mathcal A})} & & & & &  \mathfrak{c}(p)\otimes \mathfrak{c}(n-p)}
\end{equation}
commutes. This diagram decomposes as
\begin{equation*}\label{eq:2022-06-15-diag2}
\xymatrix{\mathfrak{Kir}\ar[rrrrr]^-{\varphi_n\circ{\mathrm{cs}}^\nu\circ{Z}_{\mathcal A}}\ar[d]_{\Delta_{\mathfrak{Kir}}} & & & & & \mathfrak{b}(n)\ar[rr]^-{\overline{j}_n} \ar[d]^{\Delta^{\mathfrak{b}}_{n,p}} & & \mathfrak{c}(n) \ar[d]^{\Delta^{\mathfrak{c}}_{n,p}} \\ 
\mathfrak{Kir}\otimes \mathfrak{Kir} \ar[rrrrr]^-{(\varphi_p\circ {\mathrm{cs}}^\nu\circ {Z}_{\mathcal A})\otimes (\varphi_{n-p}\circ {\mathrm{cs}}^\nu\circ {Z}_{\mathcal A})} & & & & &  \mathfrak{b}(p)\otimes \mathfrak{b}(n-p) \ar[rr]^-{\overline{j}_p\otimes \overline{j}_{n-p}} & & \mathfrak{c}(p)\otimes \mathfrak{c}(n-p).}
\end{equation*}
The square on the left is commutative by Proposition~\ref{r:2022-06-15commdiag1}. The commutativity of  the square on the right is exactly \cite[Lemma 4.2]{LMO98}, which finishes the proof.
\end{proof}

\subsection{A Kirby structure morphism \texorpdfstring{$\Delta^{\mathfrak{d}}_{n,p}: \mathfrak{d}(n)\to \mathfrak{d}(p) \otimes \mathfrak{d}(n-p)$}{dn-to-dp-otimes-dn-p}}\label{sec:5:4}

\subsubsection{An algebra homomorphism \texorpdfstring{$\Delta^{\mathfrak{d}}_{n,p}:\mathfrak{d}(n)\to \mathfrak{d}(p) \otimes \mathfrak{d}(n-p)$}{dn-to-dp-otimes-dn-p}}

Recall that $\mathfrak{d}(n)= \mathring{\mathcal{A}}^\wedge_0/(P^{n+1}_0 + O^n + (\mathring{\mathrm{deg}}>n))$, see Definition~\ref{def:alg-dn}.

\begin{proposition}\label{r:2022-06-15inducedalgebramorphismonmathfrakd}
The coproduct \eqref{eq:coproductinAempty} of the algebra $\mathring{\mathcal{A}}^\wedge_0$ induces a morphism of algebras 
$$\Delta^{\mathfrak{d}}_{n,p}:\mathfrak{d}(n)\longrightarrow \mathfrak{d}(p)\otimes \mathfrak{d}(n-p).$$ \index[notation]{\Delta^{\mathfrak{d}}_{n,p}@$\Delta^{\mathfrak{d}}_{n,p}$}
\end{proposition}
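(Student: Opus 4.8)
The plan is to reduce the statement to a single ideal-level inclusion, exactly as in the proofs of Propositions~\ref{r:2022-06-15morphismDelta-n-p} and~\ref{r:2022-06-15inducedalgebramorphismonmathfrakc}. Recall $\mathfrak{d}(n) = \mathring{\mathcal{A}}^\wedge_0/J(n)$ with $J(n) := P^{n+1}_0 + O^n + \mathring{\mathcal{A}}^\wedge_0[>n]$, and that $\Delta_{\mathring{\mathcal{A}}^\wedge_0}$ is an algebra morphism (as $\mathring{\mathcal{A}}^\wedge_0$ is a subbialgebra of $\bigoplus_{k\geq 0}\mathring{\mathcal A}^\wedge_k$). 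So it suffices to show
$$\Delta_{\mathring{\mathcal{A}}^\wedge_0}(J(n)) \subset J(p)\otimes \mathring{\mathcal{A}}^\wedge_0 \ + \ \mathring{\mathcal{A}}^\wedge_0\otimes J(n-p);$$
this will induce the desired algebra map $\Delta^{\mathfrak{d}}_{n,p}$ on the quotients. First I would treat the three summands of $J(n)$ separately. For $P^{n+1}_0$, the inclusion $\Delta_{\mathring{\mathcal{A}}^\wedge_0}(P^{n+1}_0)\subset P^{p+1}_0\otimes \mathring{\mathcal{A}}^\wedge_0 + \mathring{\mathcal{A}}^\wedge_0\otimes P^{n-p+1}_0$ is already established in~\eqref{eq:2022-06-15compatibilitycoproductandPnempty}. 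For $O^n$, the computation in the proof of Proposition~\ref{r:2022-06-15inducedalgebramorphismonmathfrakc}$(b)$ shows $\Delta_{\mathring{\mathcal{A}}^\wedge_0}(X+2n) = (X+2p)\otimes \emptyset + \emptyset\otimes(X+2(n-p))$, hence $\Delta_{\mathring{\mathcal{A}}^\wedge_0}(O^n)\subset O^p\otimes\mathring{\mathcal{A}}^\wedge_0 + \mathring{\mathcal{A}}^\wedge_0\otimes O^{n-p}$.

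The remaining summand, $\mathring{\mathcal{A}}^\wedge_0[>n]$, is handled by a degree argument. The coproduct $\Delta_{\mathring{\mathcal{A}}^\wedge_0}$ is induced by splitting a Jacobi diagram into two sub-diagrams by distributing its connected components; since each connected component carries a nonnegative degree and the degree is additive under disjoint union, $\Delta_{\mathring{\mathcal{A}}^\wedge_0}$ is a graded coalgebra map for $\mathrm{deg}$: it sends the homogeneous component of degree $m$ into $\bigoplus_{0\leq q\leq m}\mathring{\mathcal{A}}^\wedge_0[q]\otimes\mathring{\mathcal{A}}^\wedge_0[m-q]$. Thus for any $\underline{D}$ with $\mathrm{deg}(\underline{D}) = m > n$ and any splitting $\underline{D} = \underline{D}_A \sqcup \underline{D}_{\pi_0(\underline{D})\setminus A}$, writing $q = \mathrm{deg}(\underline{D}_A)$ and $m - q = \mathrm{deg}(\underline{D}_{\pi_0(\underline{D})\setminus A})$, we have $q + (m-q) = m > n = p + (n-p)$, so either $q > p$ or $m - q > n - p$. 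In the first case the term lies in $\mathring{\mathcal{A}}^\wedge_0[>p]\otimes\mathring{\mathcal{A}}^\wedge_0$, in the second in $\mathring{\mathcal{A}}^\wedge_0\otimes\mathring{\mathcal{A}}^\wedge_0[>n-p]$; hence $\Delta_{\mathring{\mathcal{A}}^\wedge_0}(\mathring{\mathcal{A}}^\wedge_0[>n])\subset \mathring{\mathcal{A}}^\wedge_0[>p]\otimes\mathring{\mathcal{A}}^\wedge_0 + \mathring{\mathcal{A}}^\wedge_0\otimes\mathring{\mathcal{A}}^\wedge_0[>n-p]$. (A small care point: this pigeonhole step uses $p\geq 1$ and $n-p\geq 1$, and one should phrase it on the level of classes in $\mathring{\mathcal A}^\wedge_0$, using that $[\ ]:\mathbb C\mathring{\mathrm{Jac}}(\vec{\varnothing},\emptyset)\to\mathring{\mathcal A}^\wedge_0$ is a coalgebra morphism compatible with the grading, exactly as in the proof of Proposition~\ref{r:2022-06-15morphismDelta-n-p}.)

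Summing the three inclusions gives the required containment of $\Delta_{\mathring{\mathcal{A}}^\wedge_0}(J(n))$, so $\Delta_{\mathring{\mathcal{A}}^\wedge_0}$ descends to an algebra morphism $\Delta^{\mathfrak{d}}_{n,p}:\mathfrak{d}(n)\to\mathfrak{d}(p)\otimes\mathfrak{d}(n-p)$, completing the proof. I do not expect a genuine obstacle here: the only subtlety is bookkeeping — making sure the degree filtration used to handle $\mathring{\mathcal A}^\wedge_0[>n]$ is the same $\mathrm{deg}$ for which $\Delta_{\mathring{\mathcal A}^\wedge_0}$ is graded, and that one works consistently with classes rather than representatives — but these are exactly the verifications already carried out for $\mathfrak{b}(n)$ and $\mathfrak{c}(n)$ in the two preceding subsections, and the argument transfers verbatim with the extra $\mathring{\mathcal A}^\wedge_0[>n]$-term disposed of by the pigeonhole observation above.
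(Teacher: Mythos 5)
Your proposal is correct and follows essentially the same route as the paper: the paper's proof reduces to exactly the three inclusions you list, quoting (a) and (b) from the proof for $\mathfrak{c}(n)$ and deducing (c) from the additivity $\mathrm{deg}(\underline{D}) = \mathrm{deg}(\underline{D}_A) + \mathrm{deg}(\underline{D}_{\pi_0(\underline{D})\setminus A})$, i.e.\ the gradedness of $\Delta_{\mathring{\mathcal{A}}^\wedge_0}$, which is precisely your pigeonhole step (your parenthetical caveat about needing $p\geq 1$ and $n-p\geq 1$ is in fact unnecessary, but harmless).
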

\begin{proof}
The result follows from the following inclusions

\noindent $(a)$ $\Delta_{\mathring{\mathcal{A}}^\wedge_0}(P^{n+1}_0) \subset P^{p+1}_0\otimes \mathring{\mathcal{A}}^\wedge_0 \ + \ \mathring{\mathcal{A}}^\wedge_0\otimes P^{n-p+1}_0$,

\noindent $(b)$ $\Delta_{\mathring{\mathcal{A}}^\wedge_0}(O^n) \subset O^{p}\otimes \mathring{\mathcal{A}}^\wedge_0 \ + \ \mathring{\mathcal{A}}^\wedge_0\otimes O^{n-p}$,

\noindent $(c)$ {$\Delta_{\mathring{\mathcal{A}}^\wedge_0}(\mathring{\mathcal{A}}^\wedge_0[>n]) \subset \mathring{\mathcal{A}}^\wedge_0[>p] \otimes \mathring{\mathcal{A}}^\wedge_0 \ + \ \mathring{\mathcal{A}}^\wedge_0\otimes \mathring{\mathcal{A}}^\wedge_0[>n-p]$.}

\medskip

\noindent The inclusions in $(a)$  and $(b)$ where already proved in the proof of Proposition~\ref{r:2022-06-15inducedalgebramorphismonmathfrakc}.  For each $A\subset\pi_0(\underline{D})$ we have $\mathrm{deg}(\underline{D}) = \mathrm{deg}(\underline{D}_A) + \mathrm{deg}(\underline{D}_{\pi_0(\underline{D})\setminus A})$, which implies that $\Delta_{\mathring{\mathcal{A}}^\wedge_0}$ is graded for the degree on $\mathring{\mathcal{A}}^\wedge_0$, this implies $(c)$. 

\end{proof}

\subsubsection{The algebra homomorphism \texorpdfstring{$\mathfrak{d}(n)\to \mathfrak{d}(p) \otimes \mathfrak{d}(n-p)$}{dn-to-dp-otimes-dn-p} as a morphism of Kirby structures} 

By Proposition~\ref{r2022-04-12-dn}, for every integer $n\geq 1$, the pair $(\mathfrak{d}(n),\mathrm{pr}_n\circ\overline{j}_{n}\circ \varphi_n \circ \overline{\mathrm{cs}}^\nu\circ{Z}_{\mathcal A})$ is a Kirby structure. By Lemma~\ref{sec:1.8lemma1}, for all integers $n,p\geq 1$ with $n-p\geq 1$, the algebra $\mathfrak{d}(p) \otimes \mathfrak{d}(n-p)$ is endowed canonically  with the semi-Kirby structure  given by the composed algebra homomorphism $$\mathfrak{Kir} \xrightarrow{\ \Delta_{\mathfrak{Kir}} \ } \mathfrak{Kir} \otimes \mathfrak{Kir} \xrightarrow{ \ {(\mathrm{pr}_p\circ\overline{j}_p\circ\varphi_n\circ\overline{\mathrm{cs}}^\nu\circ{Z}_{\mathcal A})\otimes (\mathrm{pr}_{n-p}\overline{j}_{n-p}\circ\varphi_{n-p}\circ\overline{\mathrm{cs}}^\nu\circ{Z}_{\mathcal A})}\ } \mathfrak{d}(p) \otimes \mathfrak{d}(n-p)$$
and therefore $\mathfrak{d}(p) \otimes \mathfrak{d}(n-p)$ is endowed with a Kirby structure.

\begin{proposition} The  algebra morphism $\Delta^{\mathfrak{d}}_{n,p}:\mathfrak{d}(n)\to \mathfrak{d}(p) \otimes \mathfrak{d}(n-p)$ from Proposition~\ref{r:2022-06-15inducedalgebramorphismonmathfrakd} defines a morphism of Kirby structures
$$\Delta^{\mathfrak{d}}_{n,p}:\big(\mathfrak{d}(n),\overline{j}_{n}\circ \varphi_n \circ {\mathrm{cs}}^\nu\circ{Z}_{\mathcal A}\big)\longrightarrow \big(\mathfrak{d}(p),\overline{j}_{p}\circ \varphi_p \circ {\mathrm{cs}}^\nu\circ{Z}_{\mathcal A}\big)\otimes \big(\mathfrak{d}(n-p),\overline{j}_{n-p}\circ \varphi_{n-p} \circ {\mathrm{cs}}^\nu\circ{Z}_{\mathcal A}\big).$$
\end{proposition}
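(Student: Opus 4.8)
The statement to prove is that the algebra morphism $\Delta^{\mathfrak{d}}_{n,p}:\mathfrak{d}(n)\to \mathfrak{d}(p) \otimes \mathfrak{d}(n-p)$ from Proposition~\ref{r:2022-06-15inducedalgebramorphismonmathfrakd} is a morphism of Kirby structures. The overall strategy parallels exactly the argument used for $\Delta^{\mathfrak{c}}_{n,p}$ in Proposition~\ref{r:2024-01-23-1}, so the plan is to reduce the statement to a single commutative diagram and then assemble that diagram out of pieces already established. Concretely, one has to check that the diagram
\begin{equation*}
\xymatrix{\mathfrak{Kir}\ar[rrrrr]^-{\mathrm{pr}_n\circ\overline{j}_n\circ\varphi_n\circ \overline{\mathrm{cs}}^\nu\circ {Z}_{\mathcal A}}\ar[d]_{\Delta_{\mathfrak{Kir}}} & & & & & \mathfrak{d}(n) \ar[d]^{\Delta^{\mathfrak{d}}_{n,p}} \\
\mathfrak{Kir}\otimes \mathfrak{Kir} \ar[rrrrr]^-{(\mathrm{pr}_p\circ\overline{j}_p\circ\varphi_p\circ \overline{\mathrm{cs}}^\nu\circ {Z}_{\mathcal A})\otimes (\mathrm{pr}_{n-p}\circ\overline{j}_{n-p}\circ\varphi_{n-p}\circ \overline{\mathrm{cs}}^\nu \circ {Z}_{\mathcal A})} & & & & &  \mathfrak{d}(p)\otimes \mathfrak{d}(n-p)}
\end{equation*}
commutes, together with the fact that $\Delta^{\mathfrak{d}}_{n,p}$ is an algebra homomorphism (which is Proposition~\ref{r:2022-06-15inducedalgebramorphismonmathfrakd}) and the fact, following from Lemma~\ref{tensorofSKS} together with the Kirby structure status of $(\mathfrak{d}(p),\mathrm{pr}_p\circ\overline{j}_p\circ\varphi_p\circ\overline{\mathrm{cs}}^\nu\circ {Z}_{\mathcal A})$ and $(\mathfrak{d}(n-p),\mathrm{pr}_{n-p}\circ\overline{j}_{n-p}\circ\varphi_{n-p}\circ\overline{\mathrm{cs}}^\nu\circ {Z}_{\mathcal A})$ from Proposition~\ref{r2022-04-12-dn}, that the bottom horizontal composite is precisely the canonical Kirby structure on $\mathfrak{d}(p)\otimes\mathfrak{d}(n-p)$.

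\textbf{Decomposition of the diagram.} The plan is to split the above square horizontally into the portion involving $\mathfrak{c}(\cdot)$ and the portion involving the projections $\mathrm{pr}_{\cdot}$, namely into
\begin{equation*}
\xymatrix{\mathfrak{Kir}\ar[rr]^-{\overline{j}_n\circ\varphi_n\circ\overline{\mathrm{cs}}^\nu\circ{Z}_{\mathcal A}}\ar[d]_{\Delta_{\mathfrak{Kir}}} & & \mathfrak{c}(n)\ar[rr]^-{\mathrm{pr}_n} \ar[d]^{\Delta^{\mathfrak{c}}_{n,p}} & & \mathfrak{d}(n) \ar[d]^{\Delta^{\mathfrak{d}}_{n,p}} \\
\mathfrak{Kir}\otimes \mathfrak{Kir} \ar[rr]^-{(\overline{j}_p\circ\varphi_p\circ\overline{\mathrm{cs}}^\nu\circ {Z}_{\mathcal A})\otimes (\overline{j}_{n-p}\circ\varphi_{n-p}\circ \overline{\mathrm{cs}}^\nu \circ {Z}_{\mathcal A})} & &  \mathfrak{c}(p)\otimes \mathfrak{c}(n-p) \ar[rr]^-{\mathrm{pr}_p\otimes \mathrm{pr}_{n-p}} & & \mathfrak{d}(p)\otimes \mathfrak{d}(n-p).}
\end{equation*}
The left square is exactly the commutative square established in the proof of Proposition~\ref{r:2024-01-23-1}. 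For the right square, one should observe that all four maps are induced by the identity on representatives in $\mathring{\mathcal A}^\wedge_0$ modulo appropriate ideals; $\mathrm{pr}_n$ and $\mathrm{pr}_p\otimes\mathrm{pr}_{n-p}$ are the canonical projections killing the degree-$>n$, resp.\ degree-$>p$ and degree-$>(n-p)$, parts, while $\Delta^{\mathfrak{c}}_{n,p}$ and $\Delta^{\mathfrak{d}}_{n,p}$ are both induced by the coproduct $\Delta_{\mathring{\mathcal{A}}^\wedge_0}$ of \eqref{eq:coproductinAempty}. Commutativity of the right square then follows from the fact that $\Delta_{\mathring{\mathcal A}^\wedge_0}$ is graded for the degree on $\mathring{\mathcal A}^\wedge_0$ (observed in the proof of Proposition~\ref{r:2022-06-15inducedalgebramorphismonmathfrakd}): both composites $\mathfrak{c}(n)\to\mathfrak{d}(p)\otimes\mathfrak{d}(n-p)$ coincide with the map induced on quotients by $\Delta_{\mathring{\mathcal A}^\wedge_0}$, since the grading compatibility guarantees that the extra ideal $(\mathring{\mathrm{deg}}>n)$ is sent into $\mathring{\mathcal A}^\wedge_0[>p]\otimes\mathring{\mathcal A}^\wedge_0+\mathring{\mathcal A}^\wedge_0\otimes\mathring{\mathcal A}^\wedge_0[>n-p]$, and this is exactly the content of item $(c)$ in the proof of Proposition~\ref{r:2022-06-15inducedalgebramorphismonmathfrakd}. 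Pasting the two commuting squares gives the desired outer square.

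\textbf{Conclusion and main obstacle.} Once the outer square commutes, the three facts listed above — $\Delta^{\mathfrak{d}}_{n,p}$ is an algebra homomorphism, the bottom composite is the canonical Kirby structure on $\mathfrak{d}(p)\otimes\mathfrak{d}(n-p)$, and the square commutes — combine, exactly as in Proposition~\ref{r:2024-01-23-1}, to show that $\Delta^{\mathfrak{d}}_{n,p}$ is a morphism of semi-Kirby structures; since source and both tensor factors of the target are Kirby structures, it is a morphism of Kirby structures. In fact, since $\mathfrak{d}(n)=\mathrm{pr}_n(\mathfrak{c}(n))$ and $\Delta^{\mathfrak{d}}_{n,p}$ is obtained from $\Delta^{\mathfrak{c}}_{n,p}$ by push-forward along $\mathrm{pr}_n$ and $\mathrm{pr}_p\otimes\mathrm{pr}_{n-p}$, the statement can also be phrased as: the morphism of Kirby structures $\Delta^{\mathfrak{c}}_{n,p}$ of Proposition~\ref{r:2024-01-23-1} descends along the projections to a morphism of Kirby structures between the push-forward structures, which is automatic once the right square above is known to commute. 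The only point requiring genuine verification is the commutativity of that right square, i.e.\ the compatibility of $\Delta_{\mathring{\mathcal A}^\wedge_0}$ with the degree truncation; this is where the main (and only mild) obstacle lies, and it is handled by the grading argument sketched above. Everything else is formal once Propositions~\ref{r:2024-01-23-1}, \ref{r:2022-06-15inducedalgebramorphismonmathfrakd} and Lemma~\ref{tensorofSKS} are invoked.
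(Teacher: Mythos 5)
Your proposal is correct and follows essentially the same route as the paper: the outer square is split into the $\mathfrak{c}$-square already established for $\Delta^{\mathfrak{c}}_{n,p}$ and the right-hand square relating $\mathrm{pr}_n$, $\Delta^{\mathfrak{c}}_{n,p}$, $\Delta^{\mathfrak{d}}_{n,p}$ and $\mathrm{pr}_p\otimes\mathrm{pr}_{n-p}$, whose commutativity is deduced from the fact that all four maps are induced by $\Delta_{\mathring{\mathcal{A}}^\wedge_0}$ on the common quotient source (the paper phrases this via surjectivity of $\mathring{\mathcal{A}}^\wedge_0\to\mathfrak{c}(n)$ in its auxiliary diagram, which is the same point you invoke through the grading compatibility and the induced-map identification). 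The concluding assembly into a morphism of Kirby structures is likewise the same formal argument as in the $\mathfrak{c}$-case.
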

\begin{proof}
 We need to show that the diagram
\begin{equation}\label{eq:2022-06-15-diag1ford}
\xymatrix{\mathfrak{Kir}\ar[rrrrrrr]^-{\mathrm{pr}_n\circ\overline{j}_n\circ\varphi_n\circ\overline{\mathrm{cs}}^\nu\circ{Z}_{\mathcal A}}\ar[d]_{\Delta_{\mathfrak{Kir}}} & & && & & & \mathfrak{d}(n) \ar[d]^{\Delta^{\mathfrak{d}}_{n,p}} \\ 
\mathfrak{Kir}\otimes \mathfrak{Kir} \ar[rrrrrrr]^-{(\mathrm{pr}_p\circ\overline{j}_p\circ\varphi_p\circ\overline{\mathrm{cs}}^\nu\circ{Z}_{\mathcal A})\otimes (\mathrm{pr}_{n-p}\circ\overline{j}_{n-p}\circ\varphi_{n-p}\circ\overline{\mathrm{cs}}^\nu\circ{Z}_{\mathcal A})} & & & & & & &  \mathfrak{d}(p)\otimes \mathfrak{d}(n-p)}
\end{equation}
commutes. This diagram decomposes as
\begin{equation*}\label{eq:2022-06-15-diag2ford}
\xymatrix{\mathfrak{Kir}\ar[rrrrr]^-{\overline{j}_n\circ\varphi_n\circ\overline{\mathrm{cs}}^\nu\circ{Z}_{\mathcal A}}\ar[d]_{\Delta_{\mathfrak{Kir}}} & & & & & \mathfrak{c}(n)\ar[rr]^-{\mathrm{pr}_n} \ar[d]^{\Delta^{\mathfrak{c}}_{n,p}} & & \mathfrak{d}(n) \ar[d]^{\Delta^{\mathfrak{d}}_{n,p}} \\ 
\mathfrak{Kir}\otimes \mathfrak{Kir} \ar[rrrrr]^-{(\overline{j}_p\circ\varphi_p\circ\overline{\mathrm{cs}}^\nu\circ{Z}_{\mathcal A})\otimes (\overline{j}_{n-p}\circ\varphi_{n-p}\circ\overline{\mathrm{cs}}^\nu\circ{Z}_{\mathcal A})} & & & & &  \mathfrak{c}(p)\otimes \mathfrak{c}(n-p) \ar[rr]^-{\mathrm{pr}_p\otimes \mathrm{pr}_{n-p}} & & \mathfrak{d}(p)\otimes \mathfrak{d}(n-p).}
\end{equation*}
The square on the left is the commutative diagram \eqref{eq:2022-06-15-diag1forc}. The square on the right inserts in the diagram
\begin{equation}\label{eq:2023-11-22-1}
\xymatrix{ \mathring{\mathcal{A}}^\wedge_0 \ar[rd]^-{}\ar[ddd]_-{\Delta_{\mathring{\mathcal{A}}^\wedge_0}}\ar[rrrd]^-{}&  &   & \\
 &  \mathfrak{c}(n)\ar[rr]_-{\mathrm{pr}_n} \ar[d]^{\Delta^{\mathfrak{c}}_{n,p}}  &  &\mathfrak{d}(n) \ar[d]^{\Delta^{\mathfrak{d}}_{n,p}}\\
 & \mathfrak{c}(p)\otimes \mathfrak{c}(n-p) \ar[rr]^-{\mathrm{pr}_p\otimes \mathrm{pr}_{n-p}} &  &\mathfrak{d}(p)\otimes \mathfrak{d}(n-p) \\
\mathring{\mathcal{A}}^\wedge_0\otimes \mathring{\mathcal{A}}^\wedge_0 \ar[ru]^-{}\ar[rrru]^-{}&  &  &}
\end{equation}
where the diagonal maps are all canonical projections. In this diagram the two triangles, left square and exterior square are commutative by definition. This, together with the surjectivity of the projection $\mathring{\mathcal{A}}^\wedge_0 \to \mathfrak{c}(n)$, imply the commutativity of the inner square, which finishes the proof.
\end{proof}

\begin{proposition}\label{r:2023-12-06r2}  For any $Y \in 3\text{-}\mathrm{Mfds}$ and  integers  $n,p\geq 1$ such that $n-p \geq 1$, one has:
 
$(a)$ $\Omega^{\mathfrak{c}}_p(Y) \otimes \Omega^{\mathfrak{c}}_{n-p}(Y)=\Delta^{\mathfrak{c}}_{n,p}(\Omega^{\mathfrak{c}}_n(Y))$ (equality in $\mathfrak{c}(p) \otimes \mathfrak{c}(n-p)$).

$(b)$  $\Omega^{\mathfrak{d}}_p(Y) \otimes \Omega^{\mathfrak{d}}_{n-p}(Y)=\Delta^{\mathfrak{d}}_{n,p}(\Omega^{\mathfrak{e}}_n(Y))$ (equality in $\mathfrak{d}(p) \otimes \mathfrak{d}(n-p)$).

\end{proposition}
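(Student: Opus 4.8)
The plan is to deduce both statements directly from the fact that the coproduct maps $\Delta^{\mathfrak{c}}_{n,p}$ and $\Delta^{\mathfrak{d}}_{n,p}$ are morphisms of Kirby structures (Proposition~\ref{r:2024-01-23-1} and the proposition immediately preceding this statement) together with the monoidality of the functor $\mathcal{KS}\to\mathcal{I}nv$ from Proposition~\ref{cor:invt:3-vars:alg}. Recall that by Theorem~\ref{r2022-04-12theinvariants} one has $\Omega^{\mathfrak{c}}_n = Z_{(\mathfrak{c}(n),\overline{j}_n\circ\varphi_n\circ\overline{\mathrm{cs}}^\nu\circ Z_{\mathcal A})}$ and similarly for $\Omega^{\mathfrak{d}}_n$, so everything is phrased in terms of the images under this functor of the Kirby structures involved.

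For part $(a)$: by Proposition~\ref{r:2024-01-23-1}, $\Delta^{\mathfrak{c}}_{n,p}$ is a morphism of Kirby structures from $\big(\mathfrak{c}(n),\overline{j}_n\circ\varphi_n\circ\overline{\mathrm{cs}}^\nu\circ Z_{\mathcal A}\big)$ to the tensor product $\big(\mathfrak{c}(p),\overline{j}_p\circ\varphi_p\circ\overline{\mathrm{cs}}^\nu\circ Z_{\mathcal A}\big)\otimes\big(\mathfrak{c}(n-p),\overline{j}_{n-p}\circ\varphi_{n-p}\circ\overline{\mathrm{cs}}^\nu\circ Z_{\mathcal A}\big)$. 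Applying the monoidal functor $\mathcal{KS}\to\mathcal{I}nv$ of Proposition~\ref{cor:invt:3-vars:alg}, the last sentence of that proposition gives, for any $Y\in 3\text{-}\mathrm{Mfds}$, the equality $Z_{(\mathfrak{c}(p)\otimes\mathfrak{c}(n-p),\ \cdot\ )}(Y) = \Delta^{\mathfrak{c}}_{n,p}\big(Z_{(\mathfrak{c}(n),\ \cdot\ )}(Y)\big) = \Delta^{\mathfrak{c}}_{n,p}(\Omega^{\mathfrak{c}}_n(Y))$. On the other hand, the monoidality statement~\eqref{equ:monoidalityofthefunctorinv} in Proposition~\ref{cor:invt:3-vars:alg} applied to the Kirby structures $(\mathfrak{c}(p),\ \cdot\ )$ and $(\mathfrak{c}(n-p),\ \cdot\ )$ identifies $Z_{(\mathfrak{c}(p)\otimes\mathfrak{c}(n-p),\ \cdot\ )}(Y)$ with $Z_{(\mathfrak{c}(p),\ \cdot\ )}(Y)\otimes Z_{(\mathfrak{c}(n-p),\ \cdot\ )}(Y) = \Omega^{\mathfrak{c}}_p(Y)\otimes\Omega^{\mathfrak{c}}_{n-p}(Y)$. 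Combining the two identities yields $\Omega^{\mathfrak{c}}_p(Y)\otimes\Omega^{\mathfrak{c}}_{n-p}(Y) = \Delta^{\mathfrak{c}}_{n,p}(\Omega^{\mathfrak{c}}_n(Y))$ in $\mathfrak{c}(p)\otimes\mathfrak{c}(n-p)$, which is $(a)$. Here one must take a small amount of care that the Kirby structure induced on $\mathfrak{c}(p)\otimes\mathfrak{c}(n-p)$ by the tensor product in $\mathcal{KS}$ (Lemma~\ref{tensorofSKS}) agrees with the one used in Proposition~\ref{r:2024-01-23-1}, but this is exactly the content of item~$(a)$ in the proof of that proposition.

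For part $(b)$: the argument is identical, with $\mathfrak{c}$ replaced by $\mathfrak{d}$ and the relevant Kirby structure morphism being $\Delta^{\mathfrak{d}}_{n,p}$ from the proposition preceding the statement; one uses the same two inputs, namely the last sentence and equation~\eqref{equ:monoidalityofthefunctorinv} of Proposition~\ref{cor:invt:3-vars:alg}, to obtain $\Omega^{\mathfrak{d}}_p(Y)\otimes\Omega^{\mathfrak{d}}_{n-p}(Y) = \Delta^{\mathfrak{d}}_{n,p}(\Omega^{\mathfrak{d}}_n(Y))$. (Note that the statement as printed writes $\Omega^{\mathfrak{e}}_n(Y)$ on the right-hand side of $(b)$; since $\mathfrak{d}(n)\simeq\mathfrak{e}(n)$ and $\Omega^{\mathfrak{e}}_n$ is by definition the image of $\Omega^{\mathfrak{d}}_n$ under this isomorphism, I would simply observe that $(b)$ reads correctly as stated once this identification is invoked, or equivalently prove it in the form with $\Omega^{\mathfrak{d}}_n$ and transport along the isomorphism.)

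There is essentially no hard step here: the proof is a formal consequence of functoriality and monoidality, and the genuine mathematical work was already done in establishing that $\Delta^{\mathfrak{c}}_{n,p}$ and $\Delta^{\mathfrak{d}}_{n,p}$ are morphisms of Kirby structures (which in turn rested on Proposition~\ref{r:2022-05-19coproductandP-n} and \cite[Lemma~4.2]{LMO98}). The only point requiring attention—and the mild obstacle—is bookkeeping: making sure the semi-Kirby structure carried by $\mathfrak{c}(p)\otimes\mathfrak{c}(n-p)$ (resp. $\mathfrak{d}(p)\otimes\mathfrak{d}(n-p)$) as a tensor object in $\mathcal{KS}$ is literally the one appearing as the target of $\Delta^{\mathfrak{c}}_{n,p}$ (resp. $\Delta^{\mathfrak{d}}_{n,p}$), so that both invocations of Proposition~\ref{cor:invt:3-vars:alg} refer to the same object; this is immediate from Lemma~\ref{tensorofSKS} and the explicit description of the tensor Kirby structures given just before Proposition~\ref{r:2024-01-23-1}.
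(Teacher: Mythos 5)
Your proof is correct and is essentially the paper's argument: both deduce $(a)$ and $(b)$ by applying the monoidal functor $\mathcal{KS}\to\mathcal{I}nv$ of Proposition~\ref{cor:invt:3-vars:alg} to the Kirby structure morphisms $\Delta^{\mathfrak{c}}_{n,p}$ and $\Delta^{\mathfrak{d}}_{n,p}$, using its monoidality \eqref{equ:monoidalityofthefunctorinv} and its compatibility with morphisms. Your extra bookkeeping about the tensor Kirby structure on $\mathfrak{c}(p)\otimes\mathfrak{c}(n-p)$ and your reading of $\Omega^{\mathfrak{e}}_n$ in $(b)$ as $\Omega^{\mathfrak{d}}_n$ (a typo in the statement) are both appropriate.
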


\begin{proof}

$(a)$ (resp. $(b)$) follows by applying the functor $\mathcal{KS}\to \mathcal{I}nv$ from Proposition~\ref{cor:invt:3-vars:alg} to the morphism of Kirby structures  $\Delta^{\mathfrak{c}}_{n,p}$ (resp. $\Delta^{\mathfrak{d}}_{n,p}$).
\end{proof}

\subsection{The Kirby structure \texorpdfstring{$\mathfrak{e}(n)$}{e-n} and the invariant  \texorpdfstring{$\Omega^{\mathfrak{e}}_{n}$}{Omega-e-n}}\label{nsec:5:6}

\begin{definition}  Define the set $\mathrm{Jac}(\vec{\varnothing}, \emptyset)$ \index[notation]{Jac(\vec{\varnothing}, \emptyset)@$\mathrm{Jac}(\vec{\varnothing}, \emptyset)$} and the commutative complete graded algebra ${\mathcal{A}}^\wedge_0$  \index[notation]{A^\wedge_02@${\mathcal{A}}^\wedge_0$} similarly to  $\mathring{\mathrm{Jac}}(\vec{\varnothing}, \emptyset)$ and $\mathring{\mathcal{A}}^\wedge_0$ but without allowing the dashed loop. Let ${\mathcal{A}}^\wedge_0[n]$  \index[notation]{A^\wedge_0[n]noring@${\mathcal{A}}^\wedge_0[n]$} denote the subspace of ${\mathcal{A}}^\wedge_0$ consisting of elements of degree $n$ and  let ${\mathcal{A}}^\wedge_0[>n]$  \index[notation]{A^\wedge_0[>n]noring@${\mathcal{A}}^\wedge_0[>n]$} denote the complete direct sum of all the ${\mathcal{A}}^\wedge_0[k]$ with $k>n$. Since ${\mathcal{A}}^\wedge_0$ is a complete graded algebra, this is a complete graded ideal  of ${\mathcal{A}}^\wedge_0$. 
\end{definition}

\begin{definition} For integers $k,l\geq 0$, denote by $\mathring{\mathcal A}^{\wedge}_0[k]\{l\}$  \index[notation]{A^{\wedge}_0[k]\{l\}@$\mathring{\mathcal A}^{\wedge}_0[k]\{l\}$} the subspace of  $\mathring{\mathcal A}^{\wedge}_0[k]$ spanned by classes of Jacobi diagrams with $l$ dashed loops. This induces a bigrading on $\mathring{\mathcal A}^{\wedge}_0$.
\end{definition} 

\begin{lemma} Let ${\mathcal A}^{\wedge}_0[X]$ be the polynomial algebra on $X$ equipped with the bigrading  such that for integers $k,l\geq 0$ the bidegree $(k,l)$ part is given by  $X^l{\mathcal A}^{\wedge}_0[k]$. Then there is a bigraded  algebra isomorphism ${\mathcal A}^{\wedge}_0[X]\xrightarrow{ \cong } \mathring{\mathcal A}^{\wedge}_0$ such that it
restricts to the inclusion ${\mathcal A}^{\wedge}_0\subset  \mathring{\mathcal A}^{\wedge}_0$ and takes $X$ to the dashed loop. 
\end{lemma}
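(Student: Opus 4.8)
The statement asserts that the algebra $\mathring{\mathcal A}^{\wedge}_0$ of Jacobi diagrams allowing dashed loops is the polynomial ring in one variable $X$ (the dashed loop) over the algebra $\mathcal A^{\wedge}_0$ of Jacobi diagrams not allowing dashed loops, and that this identification respects the bigradings. The plan is to build the candidate isomorphism directly at the level of generating sets of Jacobi diagrams, check it is well-defined on the quotients by the $(\mathrm{STU},\mathrm{AS},\mathrm{IHX})$-relations, and then verify it is an algebra morphism, bidegree-preserving, and bijective.

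\textbf{Construction of the map.} First I would observe that any element of $\mathrm{Jac}(\vec{\varnothing},\emptyset)$ (the set of Jacobi diagrams on $(\vec{\varnothing},\emptyset)$ with dashed loops allowed) decomposes uniquely, as a graph, into a finite disjoint union of ``dashed-loop'' components (each a looped edge without vertices) and a sub-Jacobi-diagram with no looped components, i.e.\ an element of $\mathrm{Jac}(\vec{\varnothing},\emptyset)$. This gives a bijection $\mathrm{Jac}(\vec{\varnothing},\emptyset)\simeq \bigsqcup_{l\geq 0}\{X^l\}\times\mathrm{Jac}(\vec{\varnothing},\emptyset)$, hence a linear isomorphism $\mathbb C\,\mathring{\mathrm{Jac}}(\vec{\varnothing},\emptyset)\simeq \mathcal A^{\wedge}_0[X]\text{-span}$. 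Passing to the completed quotient, I would define the map $\mathcal A^{\wedge}_0[X]\to \mathring{\mathcal A}^{\wedge}_0$ by sending $X^l\,[\underline D]$ (with $\underline D$ loopless) to the class of $\underline D$ with $l$ extra dashed loops adjoined. One then checks this is well-defined: the $(\mathrm{STU},\mathrm{AS},\mathrm{IHX})$-relations used in $\mathring{\mathcal A}^{\wedge}_0$ are local relations on the dashed part away from the loops (a dashed loop has no vertices, so it is never touched by STU, AS or IHX), so the relation submodule in $\mathbb C\,\mathring{\mathrm{Jac}}(\vec{\varnothing},\emptyset)$ is exactly $\bigoplus_l X^l\cdot(\text{relation submodule of }\mathbb C\,\mathrm{Jac}(\vec{\varnothing},\emptyset))$; this is the key point that makes the whole thing work.

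\textbf{Algebra and bigrading compatibility.} The product on both sides is disjoint union of diagrams, which under the decomposition above corresponds to adding the loop-counts and juxtaposing the loopless parts — i.e.\ exactly multiplication of polynomials with coefficients in $\mathcal A^{\wedge}_0$; so the map is an algebra homomorphism, it restricts to the identity on $\mathcal A^{\wedge}_0\subset\mathring{\mathcal A}^{\wedge}_0$, and it sends $X$ to the dashed loop. For the bigrading: the degree $\mathrm{deg}$ of \eqref{degree-on-APS} is additive over disjoint union and a dashed loop has degree $0$, so $\mathrm{deg}(X^l\,\underline D)=\mathrm{deg}(\underline D)$; and the loop-count is by construction the second grading. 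Hence $X^l\,\mathcal A^{\wedge}_0[k]$ maps into $\mathring{\mathcal A}^{\wedge}_0[k]\{l\}$, and the decomposition statement above shows that this is onto each bidegree piece. Since the map is bijective on the spanning set of each bidegree piece (before relations) and the relations match up bidegree by bidegree, it is a bigraded isomorphism after completion.

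\textbf{Main obstacle.} There is no serious analytic or combinatorial obstruction; the one point needing care is the well-definedness, i.e.\ that quotienting by STU/AS/IHX commutes with ``adjoin $l$ dashed loops.'' This is where I would spend most of the write-up: carefully noting that each of the three defining relations is supported on a neighborhood of a trivalent vertex (STU, IHX) or of a vertex/edge incident to the solid skeleton (AS), none of which can lie on a looped edge, so the relation submodule of $\mathbb C\,\mathring{\mathrm{Jac}}(\vec{\varnothing},\emptyset)$ splits as claimed along the loop-count grading. Once that is established, the remaining verifications — algebra morphism, restriction to $\mathcal A^{\wedge}_0$, value on $X$, bidegree preservation, bijectivity — are routine and follow from the explicit formula.
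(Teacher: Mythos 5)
Your proposal is correct, and it coincides with the paper's (unstated) argument: the paper disposes of this lemma with ``direct verification,'' and your decomposition of a diagram into its dashed-loop components and its loopless part, together with the observation that STU, AS and IHX only involve (trivalent) vertices and hence never touch a looped edge, is exactly that verification. Only cosmetic points: the set of diagrams with loops allowed should be written $\mathring{\mathrm{Jac}}(\vec{\varnothing},\emptyset)$ rather than $\mathrm{Jac}(\vec{\varnothing},\emptyset)$ in your bijection, and AS is a sign relation at a trivalent vertex rather than at an edge ``incident to the solid skeleton,'' neither of which affects the argument.
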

\begin{proof}
The result is a direct verification.
\end{proof}

\begin{definition}\label{def:definitionofen} Define the complete graded commutative algebra $\mathfrak{e}(n)$ as the quotient algebra 
\begin{equation}
\mathfrak{e}(n)=\frac{{\mathcal{A}}^\wedge_0}{{\mathcal{A}}^\wedge_0[>n]}.
\end{equation} \index[notation]{e(n)@$\mathfrak{e}(n)$}
\end{definition}

Similarly to $\mathfrak{d}(n)$, the algebra $\mathfrak{e}(n)$ is the direct sum of its components {$\mathfrak{e}(n)[k]$ of degrees~$0\leq k\leq n$}, and is therefore a graded commutative algebra.

One checks that the coproduct \eqref{eq:coproductinAempty} induces a coproduct
\begin{equation}\label{eq:inAnocircle}
\Delta_{{\mathcal{A}}^\wedge_0}:{\mathcal{A}}^\wedge_0\longrightarrow {\mathcal{A}}^\wedge_0\otimes {\mathcal{A}}^\wedge_0
\end{equation} \index[notation]{\Delta_{{\mathcal{A}}^\wedge_0}@$\Delta_{{\mathcal{A}}^\wedge_0}$} on the algebra ${\mathcal{A}}^\wedge_0$ and that $\Delta_{{\mathcal{A}}^\wedge_0}$ is an algebra homomorphism.

\begin{proposition}\label{r:2022-06-15inducedalgebramorphismonmathfrake}
The coproduct \eqref{eq:inAnocircle} of the algebra ${\mathcal{A}}^\wedge_0$ induces a morphism of algebras 
$$\Delta^{\mathfrak{e}}_{n,p}:\mathfrak{e}(n)\longrightarrow \mathfrak{e}(p)\otimes \mathfrak{e}(n-p).$$  \index[notation]{\Delta^{\mathfrak{e}}_{n,p}@$\Delta^{\mathfrak{e}}_{n,p}$}
\end{proposition}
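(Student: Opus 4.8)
The plan is to show that the algebra homomorphism $\Delta_{{\mathcal{A}}^\wedge_0}$ of \eqref{eq:inAnocircle} descends to the quotients defining the algebras $\mathfrak{e}(n)$, exactly as in the proofs of Propositions~\ref{r:2022-06-15inducedalgebramorphismonmathfrakc} and~\ref{r:2022-06-15inducedalgebramorphismonmathfrakd}. Since it is already recorded, just before the statement, that $\Delta_{{\mathcal{A}}^\wedge_0}:{\mathcal{A}}^\wedge_0\to {\mathcal{A}}^\wedge_0\otimes {\mathcal{A}}^\wedge_0$ is an algebra homomorphism, and since ${\mathcal{A}}^\wedge_0[>p]\otimes {\mathcal{A}}^\wedge_0 + {\mathcal{A}}^\wedge_0\otimes {\mathcal{A}}^\wedge_0[>n-p]$ is precisely the kernel of the surjective algebra homomorphism ${\mathcal{A}}^\wedge_0\otimes {\mathcal{A}}^\wedge_0\to \mathfrak{e}(p)\otimes\mathfrak{e}(n-p)$ induced by the two canonical projections, it suffices to establish the inclusion
\[
\Delta_{{\mathcal{A}}^\wedge_0}\big({\mathcal{A}}^\wedge_0[>n]\big) \subset {\mathcal{A}}^\wedge_0[>p]\otimes {\mathcal{A}}^\wedge_0 \ + \ {\mathcal{A}}^\wedge_0\otimes {\mathcal{A}}^\wedge_0[>n-p].
\]
Granting this, $\Delta^{\mathfrak{e}}_{n,p}$ is defined as the induced map on quotients, and it is automatically an algebra homomorphism.

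First I would check that $\Delta_{{\mathcal{A}}^\wedge_0}$ is homogeneous of degree $0$ for the degree $\mathrm{deg}$ on ${\mathcal{A}}^\wedge_0$. The coproduct on $\mathbb{C}\mathrm{Jac}(\vec{\varnothing},\emptyset)$ is the restriction of $\Delta_{\mathring{\mathrm{Jac}}(\vec{\varnothing},\emptyset)}$ from \eqref{eq:coproductinJacempty} — one notes that splitting the connected components of a loop-free Jacobi diagram on $\vec{\varnothing}$ into two tensor factors again produces loop-free diagrams, so the coproduct is well defined on $\mathbb{C}\mathrm{Jac}(\vec{\varnothing},\emptyset)$ and descends to ${\mathcal{A}}^\wedge_0$ modulo STU, AS, IHX. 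Then, for $\underline{D}\in\mathrm{Jac}(\vec{\varnothing},\emptyset)$ and $A\subset\pi_0(\underline{D})$, the diagrams $\underline{D}_A$ and $\underline{D}_{\pi_0(\underline{D})\setminus A}$ partition the univalent and trivalent vertices of $\underline{D}$, whence $\mathrm{deg}(\underline{D})=\mathrm{deg}(\underline{D}_A)+\mathrm{deg}(\underline{D}_{\pi_0(\underline{D})\setminus A})$ by \eqref{degree-on-APS}. This is the same additivity argument as the one used for part $(c)$ in the proof of Proposition~\ref{r:2022-06-15inducedalgebramorphismonmathfrakd}, now in the loop-free setting, and it shows that every term of $\Delta_{{\mathcal{A}}^\wedge_0}(x)$ for homogeneous $x$ is a tensor product of two homogeneous elements whose degrees add up to $\mathrm{deg}(x)$.

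Finally, I would conclude by an elementary counting step: if $x\in {\mathcal{A}}^\wedge_0[k]$ with $k>n$, then $\Delta_{{\mathcal{A}}^\wedge_0}(x)\in\bigoplus_{a+b=k}{\mathcal{A}}^\wedge_0[a]\otimes {\mathcal{A}}^\wedge_0[b]$, and for every decomposition $a+b=k>n=p+(n-p)$ one necessarily has $a>p$ or $b>n-p$; hence $\Delta_{{\mathcal{A}}^\wedge_0}(x)\in {\mathcal{A}}^\wedge_0[>p]\otimes {\mathcal{A}}^\wedge_0 + {\mathcal{A}}^\wedge_0\otimes {\mathcal{A}}^\wedge_0[>n-p]$. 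Taking the completed direct sum over $k>n$ yields the displayed inclusion, and the proposition follows. The argument is purely formal; the only point requiring a moment's care — checking that the coproduct indeed restricts to the loop-free subalgebra ${\mathcal{A}}^\wedge_0$ so that $\Delta_{{\mathcal{A}}^\wedge_0}$ of \eqref{eq:inAnocircle} is meaningful — is immediate and was already noted before the statement, so there is no genuine obstacle.
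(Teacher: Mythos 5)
Your proposal is correct and follows essentially the same route as the paper: the paper's proof consists precisely of observing that $\Delta_{{\mathcal{A}}^\wedge_0}$ is graded for $\mathrm{deg}$ (via the additivity argument over splittings of connected components, as in part $(c)$ of the proof for $\mathfrak{d}(n)$), deducing the inclusion $\Delta_{{\mathcal{A}}^\wedge_0}({\mathcal{A}}^\wedge_0[>n]) \subset {\mathcal{A}}^\wedge_0[>p]\otimes {\mathcal{A}}^\wedge_0 + {\mathcal{A}}^\wedge_0\otimes {\mathcal{A}}^\wedge_0[>n-p]$, and passing to quotients. Your additional check that the coproduct restricts to the loop-free subalgebra is a detail the paper asserts just before the statement, and your pigeonhole step is exactly the implicit counting the paper relies on.
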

\begin{proof}
One checks that $\Delta_{{\mathcal{A}}^\wedge_0}$ is graded for the degree ${\mathrm{deg}}$ on ${\mathcal{A}}^\wedge_0$ (see similar argument in the proof of  Proposition~\ref{r:2022-06-15inducedalgebramorphismonmathfrakd}), which implies 
the  inclusion $\Delta_{{\mathcal{A}}^\wedge_0}({\mathcal{A}}^\wedge_0[>n]) \subset {\mathcal{A}}^\wedge_0[>p]\otimes {\mathcal{A}}^\wedge_0 \ + \ {\mathcal{A}}^\wedge_0\otimes {\mathcal{A}}^\wedge_0[>n-p]$ and therefore the stated result.
\end{proof}

The algebra morphism $\mathcal{A}_0^\wedge \to \mathring{\mathcal A}_0^\wedge$ (given by the inclusion) maps $\mathcal{A}_0^\wedge[>n]$ to $\mathring{\mathcal A}_0^\wedge[>n]$  for any integer  $n \geq 0$, therefore it induces an algebra morphism  $\mathfrak{e}(n) \to \mathfrak{d}(n)$.

\begin{lemma}[{{\cite[Lemma 3.3]{LMO98}}{\cite[Lemma 10.6]{Ohts}}}]\label{r2022-04-12-ohtlemma} For every integer  $n\geq 1$ the algebra morphism $\mathfrak{e}(n) \to \mathfrak{d}(n)$ induced by the inclusion $\mathcal{A}_0^\wedge[>n] \subset \mathring{\mathcal A}_0^\wedge[>n]$ is an isomorphism.
\end{lemma}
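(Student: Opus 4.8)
The map in question is induced by the inclusion $\mathcal{A}_0^\wedge \hookrightarrow \mathring{\mathcal A}_0^\wedge$, which identifies $\mathcal{A}_0^\wedge$ with the subalgebra of $\mathring{\mathcal A}_0^\wedge$ spanned by Jacobi diagrams without dashed loops; equivalently, via the bigraded isomorphism $\mathcal{A}_0^\wedge[X] \xrightarrow{\cong} \mathring{\mathcal A}_0^\wedge$ sending $X$ to the dashed loop, $\mathcal{A}_0^\wedge$ is the degree-zero-in-$X$ part. The plan is to show that modulo $P^{n+1}_0 + O^n$, every dashed loop can be removed at the cost of multiplying by the scalar $-2n$, so that the composite $\mathcal{A}_0^\wedge \to \mathring{\mathcal A}_0^\wedge \to \mathfrak{d}(n)$ is surjective, and then to match up the kernels using Proposition~\ref{degree-0-of-Pn}.

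\textbf{Surjectivity.} First I would observe that $\mathfrak{d}(n) = \mathring{\mathcal A}_0^\wedge/(P^{n+1}_0 + O^n + \mathring{\mathcal A}_0^\wedge[>n])$, and that $\mathring{\mathcal A}_0^\wedge[k] = \bigoplus_l \mathring{\mathcal A}_0^\wedge[k]\{l\}$ where $\mathring{\mathcal A}_0^\wedge[k]\{l\}$ is spanned by diagrams of degree $k$ with $l$ dashed loops. Since $O^n$ is the closed ideal generated by $X + 2n$ (the dashed loop plus $2n$), any class of a diagram with $\geq 1$ dashed loop is congruent modulo $O^n$ to $(-2n)$ times the class of the same diagram with one dashed loop deleted; iterating, the class of any diagram is congruent modulo $O^n$ to $(-2n)^l$ times the class of its dashed-loop-free part, which lies in the image of $\mathcal{A}_0^\wedge$. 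This uses that multiplication by a dashed loop acts diagonally on the bigrading (it raises the $X$-degree by one without changing the ordinary degree), so the argument is a finite induction on the number of dashed loops, working degree by degree in the ordinary degree which is bounded by $n$ in $\mathfrak{d}(n)$. Hence the map $\mathfrak{e}(n) \to \mathfrak{d}(n)$ is surjective.

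\textbf{Injectivity.} This is where the real content lies, and I expect it to be the main obstacle. I would work degree by degree: $\mathfrak{e}(n)$ is the direct sum of its components of degrees $0 \leq k \leq n$, with $\mathfrak{e}(n)[k] = \mathcal{A}_0^\wedge[k]$, and similarly $\mathfrak{d}(n)[k] = \mathring{\mathcal A}_0^\wedge[k]/(P^{n+1}_0[k] + O^n[k] + \dots)$. One must show that if $a \in \mathcal{A}_0^\wedge[k]$ maps to zero in $\mathfrak{d}(n)$, then $a = 0$ in $\mathcal{A}_0^\wedge[k]$. Writing the preimage condition: $a \in P^{n+1}_0 + O^n$ inside $\mathring{\mathcal A}_0^\wedge$ (the ideal $\mathring{\mathcal A}_0^\wedge[>n]$ contributes nothing in degree $k \leq n$). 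Using the $X$-grading, one decomposes an element of $P^{n+1}_0 + O^n$ into its $X$-degree-zero part and higher parts; the key point is that the $X$-degree-zero part of $P^{n+1}_0$ is precisely the ideal $P^{n+1}$ of $\mathcal{A}_0^\wedge$ defined without dashed loops (because the element $\varsigma_{n+1}$ used to define $P^{n+1}_0$ is itself dashed-loop-free, and pairing with it does not create loops unless the other factor has them — here one must be careful that $O^n$ can contribute to the $X$-degree-zero part by producing $-2n$ times a loop-free diagram, which is why the statement is not simply $P^{n+1}_0 \cap \mathcal{A}_0^\wedge = P^{n+1}$). The cited references \cite[Lemma~3.3]{LMO98} and \cite[Lemma~10.6]{Ohts} carry out exactly this bookkeeping: one shows that $P^{n+1}_0 + O^n$, intersected with $\mathcal{A}_0^\wedge$, is contained in $\mathcal{A}_0^\wedge[>n]$ — intuitively, removing all dashed loops from a relation in $P^{n+1}_0$ using $O^n$ raises the ordinary degree past $n$ once enough loops were present, while genuine loop-free relations in $P^{n+1}$ already vanish in $\mathfrak{e}(n)$ because $P^{n+1} \subset \mathcal{A}_0^\wedge[>n]$ for the relevant degrees. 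So I would invoke these lemmas directly, since the excerpt permits assuming earlier results and the statement is explicitly attributed to them; the proof then reduces to checking that $P^{n+1}$ (as an ideal of the loop-free algebra $\mathcal{A}_0^\wedge$) has trivial degree-$\leq n$ part, which follows from the structure of $\varsigma_{n+1}$ (a sum over fixed-point-free involutions of a $2(n+1)$-element set, hence producing diagrams of degree $\geq n+1$ after pairing when no loops are formed). Combining surjectivity with this injectivity gives that $\mathfrak{e}(n) \to \mathfrak{d}(n)$ is an isomorphism.
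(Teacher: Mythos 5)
Your surjectivity argument and your overall intention (lean on \cite[Lemma 3.3]{LMO98} / \cite[Lemma 10.6]{Ohts}) match the paper, which argues more compactly: since $\mathring{\mathcal A}^\wedge_0\simeq \mathcal{A}^\wedge_0[X]$ and $O^n=(X+2n)$, the composite $\mathcal{A}^\wedge_0\to\mathring{\mathcal A}^\wedge_0/O^n$ (evaluation at $X=-2n$) is a graded algebra isomorphism, hence $\mathfrak{e}(n)\xrightarrow{\sim}\mathring{\mathcal A}^\wedge_0/(O^n+\mathring{\mathcal A}^\wedge_0[>n])$, and the cited containment $P^{n+1}_0\subset O^n+\mathring{\mathcal A}^\wedge_0[>n]$ says that adjoining $P^{n+1}_0$ changes nothing, so the target is $\mathfrak{d}(n)$; no separate injectivity analysis is needed.

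The genuine gap is in your final reduction of injectivity and its justification. First, the relevant invariant is not the ``$X$-degree-zero part'' of $P^{n+1}_0$: by the argument above, injectivity in degrees $\leq n$ is equivalent to the image of $P^{n+1}_0$ under evaluation at $X=-2n$ lying in $\mathcal{A}^\wedge_0[>n]$, equivalently to $P^{n+1}_0\subset O^n+\mathring{\mathcal A}^\wedge_0[>n]$; this evaluation mixes contributions from all loop numbers, so isolating the loop-free component is the wrong bookkeeping. Second, the justification you give for your reduced claim --- that pairing with $\varsigma_{n+1}$ ``produces diagrams of degree $\geq n+1$ when no loops are formed'' --- is false. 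Gluing kills the $2(n+1)$ univalent vertices, so $\mathrm{deg}\,\langle x,\varsigma_{n+1}\rangle=\tfrac12(\#\text{trivalent vertices of }x)=\mathrm{deg}(x)-(n+1)$; e.g.\ for $n=1$, taking $x$ to be the $4$-legged tree with two trivalent vertices, every fixed-point-free pairing of its legs yields a closed trivalent (hence loop-free) diagram of degree $1$, not $\geq 2$, and more generally $x$ a disjoint union of $c$ trees gives loop-free gluings of degree $n+1-c\leq n$. That such low-degree contributions nevertheless disappear modulo $O^n+\mathring{\mathcal A}^\wedge_0[>n]$ is exactly the nontrivial content of \cite[Lemma 3.3]{LMO98} (consistent, in degree $0$, with Proposition~\ref{degree-0-of-Pn}, where the whole polynomial $X(X+2)\cdots(X+2n)$ is divisible by $X+2n$), and it rests on cancellations in the full sum over fixed-point-free involutions together with the $O^n$ relation, not on a term-by-term degree bound. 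So either quote that lemma as a black box, as the paper does, or supply its combinatorial proof; the degree count you propose cannot replace it.
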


\begin{proof}
The induced map $\mathcal{A}_0^\wedge \to \mathring{\mathcal{A}}_0^\wedge / O^n$ is an isomorphism of graded algebras therefore the algebra morphism 
$\mathfrak{e}(n)=\mathcal{A}_0^\wedge / \mathcal{A}_0^\wedge[>n] \to \mathring{\mathcal A}_0^\wedge / (O^n+\mathring{\mathcal A}_0^\wedge[>n])$ is an isomorphism. By~\cite[Lemma 3.3]{LMO98}, $P^{n+1}_0 \subset O^n + \mathring{\mathcal A}_0^\wedge[>n]$, hence the target space of this isomorphism is $\mathfrak{d}(n)$, which finishes the proof.
\end{proof}

\begin{proposition}\label{r:2023:12-06r1} For any integers $n,p\geq 1$ with $n-p\geq 1$, the  diagram 

\begin{equation*}
\xymatrix{ \mathfrak{d}(n) \ar[d]_{\Delta^{\mathfrak{d}}_{n,p}} & & \mathfrak{e}(n) \ar[d]^{\Delta^{\mathfrak{e}}_{n,p}}\ar[ll]^-{} \\ 
\mathfrak{d}(p)\otimes \mathfrak{d}(n-p)& & \mathfrak{e}(p)\otimes \mathfrak{e}(n-p) \ar[ll]^-{} }
\end{equation*}
is commutative, where the horizontal maps are given by the isomorphisms from Lemma~\ref{r2022-04-12-ohtlemma}.
\end{proposition}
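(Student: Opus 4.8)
The plan is to deduce the claimed commutativity from a single compatibility at the level of the (unquotiented) algebras ${\mathcal{A}}^\wedge_0$ and $\mathring{\mathcal{A}}^\wedge_0$, together with the universal property satisfied by each of the four maps appearing in the square as a map induced on a quotient. Throughout, write $j\colon{\mathcal{A}}^\wedge_0\hookrightarrow\mathring{\mathcal{A}}^\wedge_0$ for the inclusion, $\pi^{\mathfrak e}_m\colon{\mathcal{A}}^\wedge_0\to\mathfrak{e}(m)$ and $\pi^{\mathfrak d}_m\colon\mathring{\mathcal{A}}^\wedge_0\to\mathfrak{d}(m)$ for the canonical projections, and $\iota_m\colon\mathfrak{e}(m)\to\mathfrak{d}(m)$ for the isomorphism of Lemma~\ref{r2022-04-12-ohtlemma}.

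First I would record the basic fact that the coproduct $\Delta_{\mathring{\mathcal{A}}^\wedge_0}$ of \eqref{eq:coproductinAempty} preserves the subalgebra ${\mathcal{A}}^\wedge_0\subset\mathring{\mathcal{A}}^\wedge_0$ and restricts there to the coproduct $\Delta_{{\mathcal{A}}^\wedge_0}$ of \eqref{eq:inAnocircle}; equivalently, $(j\otimes j)\circ\Delta_{{\mathcal{A}}^\wedge_0}=\Delta_{\mathring{\mathcal{A}}^\wedge_0}\circ j$. This is immediate from the definition of the coproduct on Jacobi diagrams: for $\underline D\in\mathring{\mathrm{Jac}}(\vec{\varnothing},\emptyset)$ the element $\Delta_{\mathring{\mathrm{Jac}}(\vec{\varnothing},\emptyset)}(\underline D)$ is obtained by distributing the connected components of $\underline D$ between the two tensor factors (Definition~\ref{def:coproductinA-k}), and this operation neither creates nor destroys dashed loops; hence if $\underline D$ contains no dashed loop, neither do $\underline D_A$ and $\underline D_{\pi_0(\underline D)\setminus A}$. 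Passing to the $\mathrm{STU}/\mathrm{AS}/\mathrm{IHX}$ quotient and completing yields the stated square. By construction one also has $\iota_m\circ\pi^{\mathfrak e}_m=\pi^{\mathfrak d}_m\circ j$ (the map $\iota_m$ is exactly the map induced by $j$ on the quotients), and likewise after tensoring: $(\iota_p\otimes\iota_{n-p})\circ(\pi^{\mathfrak e}_p\otimes\pi^{\mathfrak e}_{n-p})=(\pi^{\mathfrak d}_p\otimes\pi^{\mathfrak d}_{n-p})\circ(j\otimes j)$.

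Next I would combine these relations with the definitions of $\Delta^{\mathfrak e}_{n,p}$ and $\Delta^{\mathfrak d}_{n,p}$, namely $\Delta^{\mathfrak e}_{n,p}\circ\pi^{\mathfrak e}_n=(\pi^{\mathfrak e}_p\otimes\pi^{\mathfrak e}_{n-p})\circ\Delta_{{\mathcal{A}}^\wedge_0}$ and $\Delta^{\mathfrak d}_{n,p}\circ\pi^{\mathfrak d}_n=(\pi^{\mathfrak d}_p\otimes\pi^{\mathfrak d}_{n-p})\circ\Delta_{\mathring{\mathcal{A}}^\wedge_0}$ (Propositions~\ref{r:2022-06-15inducedalgebramorphismonmathfrakd} and \ref{r:2022-06-15inducedalgebramorphismonmathfrake}). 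A short diagram chase then gives $\Delta^{\mathfrak d}_{n,p}\circ\iota_n\circ\pi^{\mathfrak e}_n=\Delta^{\mathfrak d}_{n,p}\circ\pi^{\mathfrak d}_n\circ j=(\pi^{\mathfrak d}_p\otimes\pi^{\mathfrak d}_{n-p})\circ\Delta_{\mathring{\mathcal{A}}^\wedge_0}\circ j=(\pi^{\mathfrak d}_p\otimes\pi^{\mathfrak d}_{n-p})\circ(j\otimes j)\circ\Delta_{{\mathcal{A}}^\wedge_0}=(\iota_p\otimes\iota_{n-p})\circ(\pi^{\mathfrak e}_p\otimes\pi^{\mathfrak e}_{n-p})\circ\Delta_{{\mathcal{A}}^\wedge_0}=(\iota_p\otimes\iota_{n-p})\circ\Delta^{\mathfrak e}_{n,p}\circ\pi^{\mathfrak e}_n$. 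Since $\pi^{\mathfrak e}_n$ is surjective, one may cancel it on the right, obtaining $\Delta^{\mathfrak d}_{n,p}\circ\iota_n=(\iota_p\otimes\iota_{n-p})\circ\Delta^{\mathfrak e}_{n,p}$, which is precisely the commutativity asserted by the diagram.

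I do not expect any genuine obstacle: the statement is a formal consequence of the way the coproducts and the maps $\Delta^{\bullet}_{n,p}$ and $\iota_m$ were constructed. The only point requiring an (entirely routine) verification is the first one, that $\Delta_{\mathring{\mathcal{A}}^\wedge_0}$ restricts to $\Delta_{{\mathcal{A}}^\wedge_0}$ on the dashed-loop-free subalgebra; everything else is diagram-chasing with surjective projections, in the same spirit as the proofs of Proposition~\ref{r:2022-06-15inducedalgebramorphismonmathfrakd} and the analogous compatibilities for $\mathfrak b$ and $\mathfrak c$.
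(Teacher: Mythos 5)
Your proposal is correct and follows essentially the same route as the paper: the paper's proof consists precisely of observing that the square relating $\Delta_{\mathring{\mathcal{A}}^\wedge_0}$ and $\Delta_{{\mathcal{A}}^\wedge_0}$ via the inclusion ${\mathcal{A}}^\wedge_0\subset\mathring{\mathcal{A}}^\wedge_0$ commutes, and stating that this implies the result. You simply make explicit the routine diagram chase (using that the isomorphisms of Lemma~\ref{r2022-04-12-ohtlemma} are induced by the inclusion and that the projection $\pi^{\mathfrak e}_n$ is surjective) that the paper leaves implicit.
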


\begin{proof}The diagram
\begin{equation*}
\xymatrix{ \mathring{\mathcal{A}}^\wedge_0 \ar[d]_{\Delta_{\mathring{\mathcal{A}}^\wedge_0}} & &{\mathcal{A}}^\wedge_0 \ar[d]^{\Delta_{{\mathcal{A}}^\wedge_0}}\ar[ll]^-{} \\ 
\mathring{\mathcal{A}}^\wedge_0\otimes \mathring{\mathcal{A}}^\wedge_0  & & {\mathcal{A}}^\wedge_0 \otimes {\mathcal{A}}^\wedge_0  \ar[ll]^-{},}
\end{equation*}
where the horizontal maps are inclusions, is commutative, which implies the stated result.
\end{proof}

\begin{definition} For any integer $n\geq 1$ and any $Y \in 3\text{-}\mathrm{Mfds}$ let $\Omega_n^{\mathfrak{e}}(Y)$  \index[notation]{\Omega_n^{\mathfrak{e}}@$\Omega_n^{\mathfrak{e}}$} denote the image of $\Omega^{\mathfrak{d}}_n(Y)$ in $\mathfrak{e}(n)$ under the isomorphism $\mathfrak{d}(n)\simeq \mathfrak{e}(n)$ from Lemma~\ref{r2022-04-12-ohtlemma}. This defines a map  $\Omega_n^{\mathfrak{e}}:3\text{-}\mathrm{Mfds}\to \mathfrak{e}(n)$.
\end{definition}

\begin{corollary}\label{r:2023-12-05r1}  For any $Y \in 3\text{-}\mathrm{Mfds}$ and  integers  $n,p\geq 1$ such that $n-p \geq 1$, one has  $\Omega^{\mathfrak{e}}_p(Y) \otimes \Omega^{\mathfrak{e}}_{n-p}(Y)=\Delta^{\mathfrak{e}}_{n,p}(\Omega^{\mathfrak{e}}_n(Y))$ (equality in $\mathfrak{e}(p) \otimes \mathfrak{e}(n-p)$).

\end{corollary}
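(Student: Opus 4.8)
The final statement, Corollary~\ref{r:2023-12-05r1}, is an immediate consequence of two results already established in the excerpt: Proposition~\ref{r:2023-12-06r2}$(b)$, which gives $\Omega^{\mathfrak{d}}_p(Y) \otimes \Omega^{\mathfrak{d}}_{n-p}(Y)=\Delta^{\mathfrak{d}}_{n,p}(\Omega^{\mathfrak{e}}_n(Y))$ in $\mathfrak{d}(p) \otimes \mathfrak{d}(n-p)$, and Proposition~\ref{r:2023:12-06r1}, which asserts the commutativity of the square relating $\Delta^{\mathfrak{d}}_{n,p}$ and $\Delta^{\mathfrak{e}}_{n,p}$ through the isomorphisms $\mathfrak{d}(m)\simeq \mathfrak{e}(m)$ of Lemma~\ref{r2022-04-12-ohtlemma}. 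The plan is simply to transport the equality of Proposition~\ref{r:2023-12-06r2}$(b)$ from $\mathfrak{d}(p)\otimes\mathfrak{d}(n-p)$ to $\mathfrak{e}(p)\otimes\mathfrak{e}(n-p)$ along these isomorphisms.

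Concretely, I would first recall that by definition $\Omega^{\mathfrak{e}}_m(Y)$ is the image of $\Omega^{\mathfrak{d}}_m(Y)$ under the isomorphism $\iota_m:\mathfrak{d}(m)\xrightarrow{\ \simeq\ }\mathfrak{e}(m)$ of Lemma~\ref{r2022-04-12-ohtlemma}, for each of $m\in\{n,p,n-p\}$. Applying the isomorphism $\iota_p\otimes\iota_{n-p}:\mathfrak{d}(p)\otimes\mathfrak{d}(n-p)\to\mathfrak{e}(p)\otimes\mathfrak{e}(n-p)$ to both sides of the equality in Proposition~\ref{r:2023-12-06r2}$(b)$ yields, on the left-hand side,
\[
(\iota_p\otimes\iota_{n-p})\big(\Omega^{\mathfrak{d}}_p(Y)\otimes\Omega^{\mathfrak{d}}_{n-p}(Y)\big)=\Omega^{\mathfrak{e}}_p(Y)\otimes\Omega^{\mathfrak{e}}_{n-p}(Y),
\]
while on the right-hand side the commutativity of the diagram in Proposition~\ref{r:2023:12-06r1} gives $(\iota_p\otimes\iota_{n-p})\circ\Delta^{\mathfrak{d}}_{n,p}=\Delta^{\mathfrak{e}}_{n,p}\circ\iota_n$, hence
\[
(\iota_p\otimes\iota_{n-p})\big(\Delta^{\mathfrak{d}}_{n,p}(\Omega^{\mathfrak{e}}_n(Y))\big)=\Delta^{\mathfrak{e}}_{n,p}\big(\iota_n(\Omega^{\mathfrak{e}}_n(Y))\big).
\]
Here one should be slightly careful: $\Omega^{\mathfrak{e}}_n(Y)$ already lives in $\mathfrak{e}(n)$, so in Proposition~\ref{r:2023-12-06r2}$(b)$ the notation $\Delta^{\mathfrak{d}}_{n,p}(\Omega^{\mathfrak{e}}_n(Y))$ implicitly means $\Delta^{\mathfrak{d}}_{n,p}(\iota_n^{-1}(\Omega^{\mathfrak{e}}_n(Y)))$, so that $\iota_n$ applied to it simply returns $\Omega^{\mathfrak{e}}_n(Y)$; chasing through, the right-hand side becomes $\Delta^{\mathfrak{e}}_{n,p}(\Omega^{\mathfrak{e}}_n(Y))$. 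Combining the two computations gives $\Omega^{\mathfrak{e}}_p(Y)\otimes\Omega^{\mathfrak{e}}_{n-p}(Y)=\Delta^{\mathfrak{e}}_{n,p}(\Omega^{\mathfrak{e}}_n(Y))$ in $\mathfrak{e}(p)\otimes\mathfrak{e}(n-p)$, which is exactly the claim.

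There is no genuine obstacle here; the only point requiring care is bookkeeping of which algebra each invariant lives in and making sure the isomorphisms $\iota_m$ are applied consistently on both factors of the tensor product. The substance of the corollary has already been absorbed into Propositions~\ref{r:2023-12-06r2} and~\ref{r:2023:12-06r1}, so the write-up will be short — essentially the two displayed equalities above plus a sentence identifying the composite. Alternatively, and perhaps more cleanly, one could phrase the whole thing as an application of the functor $\mathcal{KS}\to\mathcal{I}nv$ from Proposition~\ref{cor:invt:3-vars:alg} directly to the morphism of Kirby structures $\Delta^{\mathfrak{e}}_{n,p}$ obtained by transporting $\Delta^{\mathfrak{d}}_{n,p}$ along the isomorphism of Kirby structures $\mathfrak{d}(m)\simeq\mathfrak{e}(m)$ (which is what Lemma~\ref{r2022-04-12-ohtlemma} together with the compatibility of Proposition~\ref{r:2023:12-06r1} provides); I would likely present the proof in this second form, as a one-line deduction parallel to the proof of Proposition~\ref{r:2023-12-06r2}.
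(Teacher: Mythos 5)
Your proposal is correct and is essentially the paper's own proof: the paper deduces the corollary in one line from the definition of $\Omega^{\mathfrak{e}}_n$ together with Propositions~\ref{r:2023:12-06r1} and~\ref{r:2023-12-06r2}$(b)$, which is exactly the transport-along-$\iota_m$ argument you spell out (and your remark that $\Delta^{\mathfrak{d}}_{n,p}(\Omega^{\mathfrak{e}}_n(Y))$ in Proposition~\ref{r:2023-12-06r2}$(b)$ should be read via $\iota_n^{-1}$ correctly resolves a slight abuse of notation in the paper's statement).
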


\begin{proof}
The result follows from the definition of $\Omega^{\mathfrak{e}}_i$ and Propositions~\ref{r:2023:12-06r1} and \ref{r:2023-12-06r2}$(b)$. 
\end{proof}

 Recall that $\mathcal A^\wedge_0$ is a graded algebra with $\mathcal A^\wedge_0[0]=\mathbb{C}$ (generated by the class of the empty Jacobi diagram). Therefore the same holds for each algebra $\mathfrak{e}(p)$ for  $p \geq 0$.

\begin{definition} $(a)$ We denote by $\varepsilon_p : \mathfrak{e}(p) \to \mathbb{C}$  \index[notation]{\varepsilon_p@$\varepsilon_p$} the projection on the degree 0 component, which is an algebra character.  

$(b)$ For every integer $n\geq 1$, let $\mathrm{pr}_{n,n-1}:\mathfrak{e}(n) \to \mathfrak{e}(n-1)$ \index[notation]{pr_{n,n-1}@$\mathrm{pr}_{n,n-1}$} be the projection induced by the inclusion $\mathcal A^\wedge_0[<n-1] \subset \mathcal A^\wedge_0[<n]$. 
\end{definition}

\begin{lemma}\label{r:2023-12-05r3} For any integer $n\geq 1$, the  diagram 
\begin{equation*}
\xymatrix{ \mathfrak{e}(n)\ar[drr]_-{\mathrm{pr}_{n,n-1}} \ar[rr]^-{\Delta^{\mathfrak{e}}_{n,1}} & & \mathfrak{e}(1)\otimes \mathfrak{e}(n-1) \ar[d]^{\varepsilon_1\otimes\mathrm{Id}_{\mathfrak{e}(n-1)}} \\ 
 & & \mathfrak{e}(n-1)}
\end{equation*}
is commutative.
\end{lemma}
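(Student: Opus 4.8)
The statement to prove, Lemma~\ref{r:2023-12-05r3}, asserts that $\mathrm{pr}_{n,n-1} = (\varepsilon_1 \otimes \mathrm{Id}_{\mathfrak{e}(n-1)}) \circ \Delta^{\mathfrak{e}}_{n,1}$ as maps $\mathfrak{e}(n) \to \mathfrak{e}(n-1)$. The plan is to reduce everything to the behavior of the coproduct $\Delta_{{\mathcal{A}}^\wedge_0}$ on $\mathcal{A}^\wedge_0$ before passing to the quotients, and then to check the identity on classes of Jacobi diagrams, which topologically span $\mathcal{A}^\wedge_0$.

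\textbf{Step 1: reduce to $\mathcal{A}^\wedge_0$.} All four maps in the diagram are induced, via the defining quotients $\mathfrak{e}(p) = \mathcal{A}^\wedge_0/\mathcal{A}^\wedge_0[>p]$, from maps with source $\mathcal{A}^\wedge_0$: the map $\Delta^{\mathfrak{e}}_{n,1}$ comes from $\Delta_{{\mathcal{A}}^\wedge_0}$ (Proposition~\ref{r:2022-06-15inducedalgebramorphismonmathfrake}), the character $\varepsilon_1$ comes from the projection $\mathcal{A}^\wedge_0 \to \mathcal{A}^\wedge_0[0] = \mathbb{C}$ onto the degree-$0$ part, and $\mathrm{pr}_{n,n-1}$ comes from the identity of $\mathcal{A}^\wedge_0$ (it is just a change of quotient). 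Since the projection $\mathcal{A}^\wedge_0 \to \mathfrak{e}(n)$ is surjective, it suffices to prove that the composition $\mathcal{A}^\wedge_0 \xrightarrow{\Delta_{{\mathcal{A}}^\wedge_0}} \mathcal{A}^\wedge_0 \otimes \mathcal{A}^\wedge_0 \xrightarrow{\varepsilon^0 \otimes \mathrm{Id}} \mathbb{C} \otimes \mathcal{A}^\wedge_0 \simeq \mathcal{A}^\wedge_0$ equals the identity of $\mathcal{A}^\wedge_0$, where $\varepsilon^0 : \mathcal{A}^\wedge_0 \to \mathbb{C}$ is projection onto degree $0$; the diagram of the lemma then follows by passing to the quotients $\mathcal{A}^\wedge_0[>n] \subset \mathcal{A}^\wedge_0$ on the source and $\mathcal{A}^\wedge_0[>n-1]$ on the target (both compatible, since $\varepsilon^0$ kills nothing of positive degree and all maps are graded).

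\textbf{Step 2: the counit identity.} For a Jacobi diagram $\underline{D} \in \mathrm{Jac}(\vec{\varnothing},\emptyset)$ (no dashed loops), the coproduct $\Delta_{{\mathcal{A}}^\wedge_0}$ is the restriction of \eqref{eq:coproductinJac} to $k=0$, i.e. $\Delta_{{\mathcal{A}}^\wedge_0}([\underline{D}]) = \sum_{A \subset \pi_0(D)} [\underline{D}_A] \otimes [\underline{D}_{\pi_0(D)\setminus A}]$, the sum over all ways of splitting the connected components of $D$ into two groups. Applying $\varepsilon^0 \otimes \mathrm{Id}$ kills every term in which the first tensor factor $[\underline{D}_A]$ has positive degree; since $\underline D$ has no dashed loops, the only term with $\deg([\underline{D}_A]) = 0$ is the one with $A = \emptyset$, which gives $\varepsilon^0([\underline{D}_\emptyset]) \otimes [\underline{D}_{\pi_0(D)}] = 1 \otimes [\underline{D}]$. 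Hence $(\varepsilon^0 \otimes \mathrm{Id}) \circ \Delta_{{\mathcal{A}}^\wedge_0}([\underline{D}]) = 1 \otimes [\underline{D}]$, which under the canonical identification $\mathbb{C}\otimes \mathcal{A}^\wedge_0 \simeq \mathcal{A}^\wedge_0$ is just $[\underline{D}]$. Since such classes topologically span $\mathcal{A}^\wedge_0$ and all maps involved are continuous, this proves that the composition is the identity.

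I do not anticipate a genuine obstacle here: this is essentially the statement that $\varepsilon^0$ is a counit for $\Delta_{{\mathcal{A}}^\wedge_0}$ (on the loop-free subalgebra), combined with the observation that $\mathrm{pr}_{n,n-1}$ is induced by the identity map. The only point requiring minor care is the bookkeeping of the quotients: one must verify that $\varepsilon^0$ is compatible with the passage from $\mathcal{A}^\wedge_0$ to $\mathfrak{e}(1) = \mathcal{A}^\wedge_0/\mathcal{A}^\wedge_0[>1]$ (it factors through because $\mathcal{A}^\wedge_0[>1] \subset \ker \varepsilon^0$), and that $\mathrm{pr}_{n,n-1}$ really is the map induced on quotients by $\mathrm{Id}_{\mathcal{A}^\wedge_0}$, which is immediate from its definition via the inclusion $\mathcal{A}^\wedge_0[<n-1] \subset \mathcal{A}^\wedge_0[<n]$ together with the direct-sum decompositions $\mathfrak{e}(n) = \bigoplus_{k=0}^n \mathfrak{e}(n)[k]$ and $\mathfrak{e}(n-1) = \bigoplus_{k=0}^{n-1}\mathfrak{e}(n-1)[k]$.
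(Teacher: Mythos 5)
Your proposal is correct and follows essentially the same route as the paper: the paper's proof also reduces the statement to the counit identity $(\varepsilon_{\mathcal A_0^\wedge}\otimes\mathrm{Id}_{\mathcal A_0^\wedge})\circ\Delta_{\mathcal A_0^\wedge}=\mathrm{Id}_{\mathcal A_0^\wedge}$ on $\mathcal{A}^\wedge_0$ and then passes to the quotients. You merely spell out the diagrammatic verification (only the $A=\emptyset$ term survives, since loop-free components have positive degree) that the paper leaves as "one easily checks."
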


\begin{proof} 
Let $\varepsilon_{\mathcal A_0^\wedge} : \mathcal A^{\wedge}_0 \to \mathbb{C}$ be the projection of $\mathcal A^{\wedge}_0$ on its degree~$0$ component. One easily checks that $ (\varepsilon_{\mathcal A_0^\wedge} \otimes \mathrm{Id}_{\mathcal A_0^\wedge}) \circ \Delta_{\mathcal A_0^\wedge}= \mathrm{Id}_{\mathcal A_0^\wedge}$, which, by projection, implies the stated commutativity.

\end{proof}

\begin{corollary}\label{r:2023-02-05r4old} For any $Y \in 3\text{-}\mathrm{Mfds}$ and  integer  $n\geq 2$, we have
$$\varepsilon_1(\Omega^{\mathfrak{e}}_1(Y))\Omega^{\mathfrak{e}}_{n-1}(Y) = \mathrm{pr}_{n,n-1}(\Omega^{\mathfrak{e}}_n(Y)).$$
\end{corollary}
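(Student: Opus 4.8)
The statement is a direct consequence of the multiplicativity relation in Corollary~\ref{r:2023-12-05r1} together with the commutative triangle of Lemma~\ref{r:2023-12-05r3}. The plan is to specialize the former to $p=1$ and then push it forward along the algebra map $\varepsilon_1 \otimes \mathrm{Id}_{\mathfrak{e}(n-1)}$.

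First I would invoke Corollary~\ref{r:2023-12-05r1} with $p=1$, which for $n \geq 2$ gives the equality
$$\Omega^{\mathfrak{e}}_1(Y) \otimes \Omega^{\mathfrak{e}}_{n-1}(Y) = \Delta^{\mathfrak{e}}_{n,1}(\Omega^{\mathfrak{e}}_n(Y))$$
in $\mathfrak{e}(1) \otimes \mathfrak{e}(n-1)$. Next I would apply the linear map $\varepsilon_1 \otimes \mathrm{Id}_{\mathfrak{e}(n-1)} : \mathfrak{e}(1) \otimes \mathfrak{e}(n-1) \to \mathbb{C} \otimes \mathfrak{e}(n-1)$ to both sides; under the canonical identification $\mathbb{C} \otimes \mathfrak{e}(n-1) \simeq \mathfrak{e}(n-1)$, the left-hand side becomes $\varepsilon_1(\Omega^{\mathfrak{e}}_1(Y)) \cdot \Omega^{\mathfrak{e}}_{n-1}(Y)$ (here using only that scalar extraction from the first tensor factor is exactly evaluation of $\varepsilon_1$ and that multiplication by a scalar is the module structure of $\mathfrak{e}(n-1)$), while the right-hand side becomes $(\varepsilon_1 \otimes \mathrm{Id}_{\mathfrak{e}(n-1)}) \circ \Delta^{\mathfrak{e}}_{n,1}$ applied to $\Omega^{\mathfrak{e}}_n(Y)$. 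By the commutativity of the diagram in Lemma~\ref{r:2023-12-05r3}, the composite $(\varepsilon_1 \otimes \mathrm{Id}_{\mathfrak{e}(n-1)}) \circ \Delta^{\mathfrak{e}}_{n,1}$ equals $\mathrm{pr}_{n,n-1}$, so the right-hand side is $\mathrm{pr}_{n,n-1}(\Omega^{\mathfrak{e}}_n(Y))$, which yields the claimed identity.

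There is essentially no obstacle here: the content has already been extracted in Corollary~\ref{r:2023-12-05r1} (which itself rests on the Kirby-structure morphism property of $\Delta^{\mathfrak{e}}_{n,p}$ and the functor $\mathcal{KS}\to\mathcal{I}nv$) and in Lemma~\ref{r:2023-12-05r3} (the counit axiom $(\varepsilon_{\mathcal A_0^\wedge}\otimes \mathrm{Id})\circ \Delta_{\mathcal A_0^\wedge} = \mathrm{Id}$ passed to the quotient $\mathfrak{e}(n-1)$). The only minor points to spell out are that $\varepsilon_1$ is the algebra character given by projection onto the degree-$0$ part and that the identification $\mathbb{C}\otimes\mathfrak{e}(n-1)\simeq\mathfrak{e}(n-1)$ turns the evaluation $\varepsilon_1$ in the first slot into scalar multiplication; both are immediate. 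Hence the corollary follows in a couple of lines.
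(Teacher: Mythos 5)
Your proof is correct and follows exactly the route the paper takes: specialize Corollary~\ref{r:2023-12-05r1} to $p=1$ and combine it with the commutative triangle of Lemma~\ref{r:2023-12-05r3}, your version merely spelling out the application of $\varepsilon_1\otimes\mathrm{Id}_{\mathfrak{e}(n-1)}$ that the paper leaves implicit. No issues.
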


\begin{proof}
The result follows from Corollary~\ref{r:2023-12-05r1} (with $p=1$) and Lemma~\ref{r:2023-12-05r3}.
\end{proof}

For any $Y \in 3\text{-}\mathrm{Mfds}$, the value $\varepsilon_1(\Omega^{\mathfrak{e}}_1(Y))$ is given  explicitly as follows (see \cite[Lemma 10.11]{Ohts}:
\begin{equation}\label{eq:epsilon1}
\varepsilon_1(\Omega^{\mathfrak{e}}_1(Y)) = \begin{cases}
|H_{1}(Y;\mathbb{Z})| & \text{if } Y \text{ is a } \mathbb{Q}\text{-homology sphere},\\
0 & \text{otherwise}.
\end{cases}
\end{equation}

\begin{corollary}\label{r:2023-02-05r4} Let $Y \in 3\text{-}\mathrm{Mfds}$ and  $n\geq 1$ be an integer. For any positive integer $k$, let $\Omega^{\mathfrak{e}}_n(Y)[k]\in\mathfrak{e}(n)[k]$ denote the degree $k$ part of $\Omega^{\mathfrak{e}}_n(Y)\in\mathfrak{e}(n)$, then for $1\leq k<n$ we have
$$\Omega^{\mathfrak{e}}_n(Y)[k] = \big(\varepsilon_1(\Omega^{\mathfrak{e}}_1(Y))\big)^{n-k}\Omega^{\mathfrak{e}}_{k}(Y)[k].$$
\end{corollary}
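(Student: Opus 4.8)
The statement is a clean induction on the gap $n-k$, with base case $n-k=1$ already in hand. The plan is to fix $Y \in 3\text{-}\mathrm{Mfds}$ and the integer $n\geq 1$, and to prove the formula $\Omega^{\mathfrak{e}}_n(Y)[k] = \big(\varepsilon_1(\Omega^{\mathfrak{e}}_1(Y))\big)^{n-k}\Omega^{\mathfrak{e}}_{k}(Y)[k]$ by descending induction on $k$ from $k=n-1$ down to $k=1$. For the base case $k=n-1$, Corollary~\ref{r:2023-02-05r4old} gives $\mathrm{pr}_{n,n-1}(\Omega^{\mathfrak{e}}_n(Y)) = \varepsilon_1(\Omega^{\mathfrak{e}}_1(Y))\,\Omega^{\mathfrak{e}}_{n-1}(Y)$, and since $\mathrm{pr}_{n,n-1}:\mathfrak{e}(n)\to\mathfrak{e}(n-1)$ is the projection induced by $\mathcal A^\wedge_0[<n-1]\subset \mathcal A^\wedge_0[<n]$, it is the identity on the degree $k=n-1$ component (as $k<n$, both $\mathfrak{e}(n)$ and $\mathfrak{e}(n-1)$ contain the degree-$k$ part $\mathcal A^\wedge_0[k]$, and the projection restricts to the identity there); taking degree-$(n-1)$ parts of both sides, and noting $\varepsilon_1(\Omega^{\mathfrak{e}}_1(Y))$ is a scalar (lies in $\mathfrak e(\cdot)[0]=\mathbb C$) while $\Omega^{\mathfrak{e}}_{n-1}(Y)[n-1]$ is the top-degree part, yields $\Omega^{\mathfrak{e}}_n(Y)[n-1] = \varepsilon_1(\Omega^{\mathfrak{e}}_1(Y))\,\Omega^{\mathfrak{e}}_{n-1}(Y)[n-1]$, which is the case $n-k=1$.

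For the inductive step, suppose $1\leq k < n-1$ and that the formula holds for all larger values of the second index (in particular for the pair $(n-1,k)$, i.e.\ $\Omega^{\mathfrak{e}}_{n-1}(Y)[k] = \big(\varepsilon_1(\Omega^{\mathfrak{e}}_1(Y))\big)^{n-1-k}\Omega^{\mathfrak{e}}_{k}(Y)[k]$). Apply Corollary~\ref{r:2023-02-05r4old} with $n$ replaced by $n$ itself: $\mathrm{pr}_{n,n-1}(\Omega^{\mathfrak{e}}_n(Y)) = \varepsilon_1(\Omega^{\mathfrak{e}}_1(Y))\,\Omega^{\mathfrak{e}}_{n-1}(Y)$. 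Now take the degree-$k$ part of both sides. On the left, since $k<n-1<n$, the projection $\mathrm{pr}_{n,n-1}$ acts as the identity on the degree-$k$ component, so the left side in degree $k$ is $\Omega^{\mathfrak{e}}_n(Y)[k]$. On the right, since $\varepsilon_1(\Omega^{\mathfrak{e}}_1(Y))$ is a degree-$0$ scalar, the degree-$k$ part is $\varepsilon_1(\Omega^{\mathfrak{e}}_1(Y))\,\Omega^{\mathfrak{e}}_{n-1}(Y)[k]$. Hence
\begin{equation*}
\Omega^{\mathfrak{e}}_n(Y)[k] = \varepsilon_1(\Omega^{\mathfrak{e}}_1(Y))\,\Omega^{\mathfrak{e}}_{n-1}(Y)[k] = \varepsilon_1(\Omega^{\mathfrak{e}}_1(Y))\cdot \big(\varepsilon_1(\Omega^{\mathfrak{e}}_1(Y))\big)^{n-1-k}\Omega^{\mathfrak{e}}_{k}(Y)[k] = \big(\varepsilon_1(\Omega^{\mathfrak{e}}_1(Y))\big)^{n-k}\Omega^{\mathfrak{e}}_{k}(Y)[k],
\end{equation*}
using the inductive hypothesis in the middle equality. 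This completes the induction.

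\textbf{Main obstacle.} There is no serious analytic or combinatorial obstacle here; the content has already been extracted into Corollaries~\ref{r:2023-02-05r4old} and into the structural results on the maps $\mathrm{pr}_{n,n-1}$, $\varepsilon_p$, and $\Delta^{\mathfrak e}_{n,p}$. The only point requiring a little care is the bookkeeping about degrees: one must verify that $\mathrm{pr}_{n,n-1}$ restricts to the identity on the degree-$k$ component for every $k<n-1$ (clear from its definition as the projection induced by $\mathcal A^\wedge_0[<n-1]\subset \mathcal A^\wedge_0[<n]$, which changes nothing in degrees $<n-1$), and that multiplication by the scalar $\varepsilon_1(\Omega^{\mathfrak e}_1(Y))\in\mathbb C$ is degree-preserving so that taking degree-$k$ parts commutes with it. One should also remark that the statement is only claimed for $1\le k<n$, so the edge case $k=n$ (where $\mathrm{pr}_{n,n-1}$ kills the top component) never arises. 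Alternatively, one could phrase the same argument directly from Corollary~\ref{r:2023-12-05r1} with $p=1$ together with Lemma~\ref{r:2023-12-05r3}, iterated $n-k$ times, but the descending induction on $k$ using Corollary~\ref{r:2023-02-05r4old} is the most economical route.
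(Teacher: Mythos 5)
Your argument is correct and is essentially the paper's own proof: the paper simply notes that Corollary~\ref{r:2023-02-05r4old} gives $\Omega^{\mathfrak{e}}_n(Y)[k] = \varepsilon_1(\Omega^{\mathfrak{e}}_1(Y))\,\Omega^{\mathfrak{e}}_{n-1}(Y)[k]$ and concludes "recursively," which is precisely your descending induction with the degree bookkeeping for $\mathrm{pr}_{n,n-1}$ made explicit. No gaps; your extra care about the projection acting as the identity in degrees $k<n$ is a correct (and implicit in the paper) verification.
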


\begin{proof}
Corollary~\ref{r:2023-02-05r4old} implies $\Omega^{\mathfrak{e}}_n(Y)[k] = \big(\varepsilon_1(\Omega^{\mathfrak{e}}_1(Y))\big)\Omega^{\mathfrak{e}}_{n-1}(Y)[k]$, then the result follows recursively.
\end{proof}

\subsection{The LMO invariant \texorpdfstring{$Z^{\mathrm{LMO}}$}{ZLMO} }\label{sec:5:6}

Corollary~\ref{r:2023-02-05r4} implies that for any $Y \in 3\text{-}\mathrm{Mfds}$  all the information given by the collection  $\{\Omega^{\mathfrak{e}}_n(Y)\}_{n\geq 1}$  is concentrated in the collection of degree $n$ parts 
$\{\Omega^{\mathfrak{e}}_n(Y)[n]\}_{n\geq 1}$.

\begin{definition}   For any $Y \in 3\text{-}\mathrm{Mfds}$ the \emph{Le-Murakami-Ohtsuki invariant} (or \emph{LMO invariant}) $Z^{\mathrm{LMO}}(Y)$ of $Y$  is given by 
$$Z^{\mathrm{LMO}}(Y) = 1 + \sum_{n\geq 1}  \Omega^{\mathfrak{e}}_n(Y)[n]\in {\mathcal A}_0^\wedge.$$ \index[notation]{Z^{\mathrm{LMO}}@$Z^{\mathrm{LMO}}$}
\end{definition}

By virtue of \eqref{eq:epsilon1} and Corollary~\ref{r:2023-02-05r4}, we can consider the following variant of the LMO invariant for rational homology spheres. 
\begin{definition}\label{LMOforQHS} Let $\mathbb{Q}$-$\mathrm{HS}$ denote the set of homeomorphism classes of  rational homology 3-spheres. Define the map $\tilde{Z}^{\mathrm{LMO}}:\mathbb{Q}\text{-}\mathrm{HS}\to \mathcal{A}^{\wedge}_0$ by 
\begin{equation}
\tilde{Z}^{\mathrm{LMO}}(Y) = 1 + \sum_{n\geq 1}  \frac{\Omega^{\mathfrak{e}}_n(Y)[n]}{|H_1(Y;\mathbb{Z})|^n}\in {\mathcal A}_0^\wedge.
\end{equation} \index[notation]{Z^{\mathrm{LMO}tilde}@$\tilde{Z}^{\mathrm{LMO}}$}
for any $Y\in\mathbb{Q}$-$\mathrm{HS}$.
\end{definition}

It follows from the definition, \eqref{eq:epsilon1} and Corollary~\ref{r:2023-02-05r4} that for any positive integer $n$ and any $Y\in\mathbb{Q}$-$\mathrm{HS}$, we have
\begin{equation}\label{deglegntildeZLMO}
\tilde{Z}^{\mathrm{LMO}}(Y)[\leq n] = |H_1(Y;\mathbb{Z})|^{-n}\Omega^{\mathfrak{e}}_n(Y),
\end{equation} \index[notation]{Z^{\mathrm{LMO}}(Y)[\leq n]@$\tilde{Z}^{\mathrm{LMO}}(Y)[\leq n]$} where $\tilde{Z}^{\mathrm{LMO}}(Y)[\leq n]$ denotes the image of $\tilde{Z}^{\mathrm{LMO}}(Y)$ under the projection $\mathcal{A}^\wedge_0\to \frac{\mathcal{A}^\wedge_0}{(\mathrm{deg}> n)} =\mathfrak{e}(n)$.

\begin{proposition} For any $Y \in 3\text{-}\mathrm{Mfds}$ we have
$$\Delta_{\mathring{\mathcal{A}}_0^{\wedge}}\big(Z^{\mathrm{LMO}}(Y)\big) = Z^{\mathrm{LMO}}(Y) \otimes Z^{\mathrm{LMO}}(Y) \in {\mathcal{A}}_0^{\wedge}\otimes {\mathcal{A}}_0^{\wedge}.$$
\end{proposition}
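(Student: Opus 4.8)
The statement asserts that $Z^{\mathrm{LMO}}(Y)$ is group-like in the bialgebra $\mathcal{A}_0^\wedge$ (equipped with the coproduct induced by $\Delta_{\mathring{\mathcal{A}}_0^\wedge}$). The plan is to reduce the group-likeness of $Z^{\mathrm{LMO}}(Y)$ to the already-established compatibilities between the maps $\Omega^{\mathfrak{e}}_n$ and the coproducts $\Delta^{\mathfrak{e}}_{n,p}$, and ultimately to the group-likeness of the Kontsevich integral from Lemma~\ref{r:imageofhatZcontainedinGL}. First I would observe that, since $\mathcal{A}_0^\wedge = \prod_{k\geq 0}\mathcal{A}_0^\wedge[k]$ is a complete graded bialgebra with $\mathcal{A}_0^\wedge[0]=\mathbb{C}$, the coproduct is graded and it suffices to check the identity in each fixed total degree $n$, i.e. to compare the degree-$n$ parts of $\Delta_{\mathring{\mathcal{A}}_0^\wedge}(Z^{\mathrm{LMO}}(Y))$ and of $Z^{\mathrm{LMO}}(Y)\otimes Z^{\mathrm{LMO}}(Y)$. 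Note also that $Z^{\mathrm{LMO}}(Y)$ lies in $\mathcal{A}_0^\wedge \subset \mathring{\mathcal{A}}_0^\wedge$ and that $\Delta_{\mathring{\mathcal{A}}_0^\wedge}$ restricts to $\Delta_{\mathcal{A}_0^\wedge}$ on $\mathcal{A}_0^\wedge$ (equation \eqref{eq:inAnocircle} and the fact that the inclusion $\mathcal{A}_0^\wedge \subset \mathring{\mathcal{A}}_0^\wedge$ is a bialgebra morphism), so the identity can be checked inside $\mathcal{A}_0^\wedge\otimes\mathcal{A}_0^\wedge$.

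\textbf{Key steps.} Fix $n\geq 1$. By definition $Z^{\mathrm{LMO}}(Y)=1+\sum_{k\geq 1}\Omega^{\mathfrak{e}}_k(Y)[k]$, and by \eqref{deglegntildeZLMO}/the definition of $\Omega^{\mathfrak{e}}_n$, the projection of $Z^{\mathrm{LMO}}(Y)$ to $\mathfrak{e}(n)=\mathcal{A}_0^\wedge/(\mathrm{deg}>n)$ equals, up to the invertible scalar factor $\varepsilon_1(\Omega^{\mathfrak{e}}_1(Y))$ on nonzero homology (or is more subtle for non-$\mathbb{Q}$HS), a reassembly of $\Omega^{\mathfrak{e}}_n(Y)$. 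More precisely, I would show that the class of $Z^{\mathrm{LMO}}(Y)$ in $\mathfrak{e}(n)$ is group-like for $\Delta^{\mathfrak{e}}_{n,p}$ for all $p$: this is exactly the content of Corollary~\ref{r:2023-12-05r1}, which gives $\Omega^{\mathfrak{e}}_p(Y)\otimes\Omega^{\mathfrak{e}}_{n-p}(Y)=\Delta^{\mathfrak{e}}_{n,p}(\Omega^{\mathfrak{e}}_n(Y))$, combined with Corollary~\ref{r:2023-02-05r4} which expresses the lower-degree parts of $\Omega^{\mathfrak{e}}_n(Y)$ in terms of the $\Omega^{\mathfrak{e}}_k(Y)[k]$ for $k\leq n$. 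Concretely: the degree-$n$ part of $\Delta_{\mathcal{A}_0^\wedge}(Z^{\mathrm{LMO}}(Y))$ is $\sum_{p=0}^{n}(\text{deg-}p\text{ part})\otimes(\text{deg-}(n-p)\text{ part})$ of the coproduct applied to $\sum_{k=0}^n\Omega^{\mathfrak{e}}_k(Y)[k]$; using $\Delta^{\mathfrak{e}}_{n,p}$-compatibility and that $\Delta_{\mathcal{A}_0^\wedge}$ is graded, each summand matches $\Omega^{\mathfrak{e}}_p(Y)[p]\otimes\Omega^{\mathfrak{e}}_{n-p}(Y)[n-p]$, which is precisely the degree-$n$ part of $Z^{\mathrm{LMO}}(Y)\otimes Z^{\mathrm{LMO}}(Y)$. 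Assembling over all $p$ and all $n$ gives the claim. I would write this bookkeeping carefully, tracking that $\Delta^{\mathfrak{e}}_{n,p}$ is induced by $\Delta_{\mathcal{A}_0^\wedge}$ (Proposition~\ref{r:2022-06-15inducedalgebramorphismonmathfrake}) so that the degree-$(p,n-p)$ component of $\Delta_{\mathcal{A}_0^\wedge}(\Omega^{\mathfrak{e}}_n(Y))$ coincides with the degree-$(p,n-p)$ component of $\Delta^{\mathfrak{e}}_{n,p}(\Omega^{\mathfrak{e}}_n(Y)) = \Omega^{\mathfrak{e}}_p(Y)\otimes\Omega^{\mathfrak{e}}_{n-p}(Y)$, and then extracting the ``diagonal'' degree-$(p,n-p)$ piece on the right-hand side picks out exactly $\Omega^{\mathfrak{e}}_p(Y)[p]\otimes\Omega^{\mathfrak{e}}_{n-p}(Y)[n-p]$.

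\textbf{Main obstacle.} The delicate point is the degree bookkeeping: $\Omega^{\mathfrak{e}}_n(Y)$ lives in $\mathfrak{e}(n)$ (degrees $\leq n$), while $Z^{\mathrm{LMO}}(Y)$ is built only from the \emph{top-degree} parts $\Omega^{\mathfrak{e}}_k(Y)[k]$, so one must reconcile these two descriptions — this is where Corollary~\ref{r:2023-02-05r4} (expressing $\Omega^{\mathfrak{e}}_n(Y)[k]$ in terms of $\Omega^{\mathfrak{e}}_k(Y)[k]$ and powers of $\varepsilon_1(\Omega^{\mathfrak{e}}_1(Y))$) is essential, together with the observation that $\varepsilon_1(\Omega^{\mathfrak{e}}_1(Y))$ is itself the degree-$0$ part of $Z^{\mathrm{LMO}}(Y)$ in $\mathfrak{e}(1)$, hence $1$ after normalizing $Z^{\mathrm{LMO}}(Y)$ to have constant term $1$ — so in fact $\Omega^{\mathfrak{e}}_n(Y)[k]=\Omega^{\mathfrak{e}}_k(Y)[k]$ holds for the normalized invariant when $\varepsilon_1(\Omega^{\mathfrak{e}}_1(Y))=1$, and one should be careful about the non-$\mathbb{Q}$HS case where $\varepsilon_1(\Omega^{\mathfrak{e}}_1(Y))=0$; there the degree-$\geq 1$ parts of all $\Omega^{\mathfrak{e}}_n(Y)$ with $n$ large contribute, but the top-degree parts $\Omega^{\mathfrak{e}}_n(Y)[n]$ are unaffected by the recursion's lower terms, so the argument still goes through. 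An alternative, cleaner route I would consider is to note that Lemma~\ref{r:imageofhatZcontainedinGL} makes $Z_{\mathcal A}(L)$ group-like, that $\mathrm{cs}^\nu$ and the various quotient maps through which $\Omega^{\mathfrak{e}}_n$ factors are bialgebra morphisms (Proposition~\ref{2023-11-15prop}, Remark~\ref{remarkaboutcoproduit}, Propositions~\ref{r:2022-06-15inducedalgebramorphismonmathfrakc}--\ref{r:2022-06-15inducedalgebramorphismonmathfrake}), and that the signature normalization in the definition of $Z_{(A,\mu)}$ (Proposition~\ref{cor:invt:3-vars:alg}) preserves group-likeness since $U^\pm$ are group-like; pushing group-likeness of $Z_{\mathcal A}(L)$ through this chain of bialgebra morphisms and assembling over $n$ recovers the statement. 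I would present the first, more explicit argument as the main proof and mention the second as a remark.
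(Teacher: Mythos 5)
Your main argument coincides with the paper's proof: both use that $\Delta^{\mathfrak{e}}_{n,p}$ is induced by the graded coproduct $\Delta_{\mathcal{A}^{\wedge}_0}$ together with Corollary~\ref{r:2023-12-05r1} to identify the bidegree-$(p,n-p)$ component of $\Delta_{\mathcal{A}^{\wedge}_0}\big(\Omega^{\mathfrak{e}}_n(Y)[n]\big)$ with $\Omega^{\mathfrak{e}}_p(Y)[p]\otimes\Omega^{\mathfrak{e}}_{n-p}(Y)[n-p]$, and then sum over $n$, so the proposal is correct and follows essentially the same route. The excursion through Corollary~\ref{r:2023-02-05r4} and $\varepsilon_1(\Omega^{\mathfrak{e}}_1(Y))$ is not needed for this (note that $\varepsilon_1(\Omega^{\mathfrak{e}}_1(Y))$ equals $|H_1(Y;\mathbb{Z})|$ or $0$, not the constant term of $Z^{\mathrm{LMO}}(Y)$), and the alternative route you sketch would rest on Remark~\ref{remarkaboutcoproduit}, which the paper only states as an expectation.
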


\begin{proof}
For any $k \geq 0$, there is an isomorphism  $\mathfrak{e}(k)[k] \to \mathcal{A}^{\wedge}_0[k]$. Moreover, the coproduct $\Delta^{\mathfrak{e}}_{n,p} : \mathfrak{e}(n) \to \mathfrak{e}(p) \otimes \mathfrak{e}(n-p)$ induces a  map $\mathfrak{e}(n)[n]\to \mathfrak{e}(p)[p] \otimes \mathfrak{e}(n-p)[n-p]$, which fits in the commutative  diagram

\begin{equation*}
\xymatrix{ \mathfrak{e}(n)[n]\ar[d]_-{\simeq} \ar[rrr]^-{} & && \mathfrak{e}(n)[p]\otimes \mathfrak{e}(n-p)[n-p] \ar[d]^{\simeq} \\ 
 \mathcal{A}^{\wedge}_0[n]\ar[rrr]^-{(\mathrm{pr}^{\mathcal{A}^{\wedge}_0}_p \otimes \mathrm{pr}^{\mathcal{A}^{\wedge}_0}_{n-p})\circ\Delta_{\mathcal{A}^{\wedge}_0}}& & & \mathcal{A}^{\wedge}_0[p]\otimes \mathcal{A}^{\wedge}_0[n-p]}
\end{equation*}
where $\mathrm{pr}^{\mathcal{A}^{\wedge}_0}_i: \mathcal{A}^{\wedge}_0 \to \mathcal{A}^{\wedge}_0[i] $ denotes the projection on the degree $i$ component. By Corollary~\ref{r:2023-12-05r1}, $\Delta^{\mathfrak{e}}_{n,p}$ takes $\Omega^{\mathfrak{e}}_n(Y)[n]$ to $\Omega^{\mathfrak{e}}_p(Y)[p] \otimes \Omega^{\mathfrak{e}}_{n-p}(Y)[n-p]$, therefore $(\mathrm{pr}^{\mathcal{A}^{\wedge}_0}_p \otimes \mathrm{pr}^{\mathcal{A}^{\wedge}_0}_{n-p}) \circ \Delta_{\mathcal A^{\wedge}_0}$ takes $\Omega_n^{\mathfrak{e}}(Y)[n] \in \mathcal{A}^{\wedge}_0[n]$ to $\Omega^{\mathfrak{e}}_p(Y)[p] \otimes \Omega^{\mathfrak{e}}_{n-p}(Y)[n-p] \in \mathcal{A}^{\wedge}_0[p] \otimes \mathcal{A}^{\wedge}_0[n-p]$. Hence, $\Delta_{\mathcal{A}^{\wedge}_0}$  takes $\Omega^{\mathfrak{e}}_n(Y)[n] \in \mathcal{A}^{\wedge}_0[n]$ to $\sum_p \Omega^{\mathfrak{e}}_p(Y)[p] \otimes \Omega^{\mathfrak{e}}_{n-p}(Y)[n-p] \in \mathcal{A}^{\wedge}_0 \otimes \mathcal{A}^{\wedge}_0$, which implies the result.  
\end{proof}

A similar result can be shown for the variant $\tilde{Z}^{\mathrm{LMO}}$.

\section{Relation with the Aarhus integral}

As stated in the introduction, there is an  alternative approach for the definition of the LMO invariant for rational homology spheres based on a diagrammatic Gaussian integration: \emph{Aarhus integral}. This approach was developed in \cite{BNGRTI, BNGRT, BNGRTIII}. The authors of these papers introduced a map  
 $\mathring{A}: \mathbb{Q}\text{-}\mathrm{HS}\to \mathcal{A}^\wedge_0$ and showed  that for every positive integer $n$ and any $Y\in\mathbb{Q}\text{-}\mathrm{HS}$ the equality $\mathring{A}(Y)[\leq n] = \tilde{Z}^{\mathrm{LMO}}(Y)[\leq n]$ holds. In this brief section we sketch this construction and this relation.  Throughout this section let   $n$ be a non-negative integer.

\subsection{The Aarhus integral}
Let  $\underline{\vec{\mathcal T}}(\emptyset,\emptyset)^{\mathrm{reg}}$  \index[notation]{T(\emptyset,\emptyset)^{\mathrm{reg}}@$\underline{\vec{\mathcal T}}(\emptyset,\emptyset)^{\mathrm{reg}}$} denote the set of \emph{regular links}, that is, isotopy classes of framed oriented links whose linking matrix  is invertible. This set decomposes as $\underline{\vec{\mathcal T}}(\emptyset,\emptyset)^{\mathrm{reg}}=\sqcup_{k\geq 0}\underline{\vec{\mathcal T}}(\emptyset,\emptyset)^{\mathrm{reg}}_k$, where $\underline{\vec{\mathcal T}}(\emptyset,\emptyset)^{\mathrm{reg}}_k$  \index[notation]{T(\emptyset,\emptyset)^{\mathrm{reg}}_k@$\underline{\vec{\mathcal T}}(\emptyset,\emptyset)^{\mathrm{reg}}_k$}  consists of regular links with $k$ connected components. Let $Y$ be a 3-manifold with surgery presentation $L$, that is $Y\cong S^3_L$, it is well known that $Y$ is a rational homology sphere if and only if $L$ is a regular link.

Let $S$ be a finite set. Define the complete vector space  $\mathcal B(S):= {\mathcal A}^{\wedge}(\vec{\varnothing},\emptyset, S)$  \index[notation]{B(S)@$\mathcal B(S)$}  similarly to $\mathring{\mathcal A}^{\wedge}(\vec{\varnothing},\emptyset, S)$ (see Definition~\ref{spacewithfreelegs}) but without allowing the dashed loop. Then $\mathcal B(S)$ is a subalgebra  of the commutative $\mathring{\mathcal A}^{\wedge}(\vec{\varnothing},\emptyset, S)$ (with disjoint union as product). Moreover, there is an algebra isomomorphism  $\mathcal B(S)[X]\xrightarrow{\cong}\mathring{\mathcal A}^{\wedge}(\vec{\varnothing},\emptyset, S)$ such that it restricts to the inclusion $\mathcal B(S)\subset \mathring{\mathcal A}^{\wedge}(\vec{\varnothing},\emptyset, S)$ and takes $X$ to the dashed loop.

An \emph{$S$-strut}  is a Jacobi diagram ${}^s\!-{}^{s'}$ without trivalent vertices and with two univalent vertices colored by elements $s,s'\in S$. The number of trivalent vertices defines a degree, denoted by $\mathrm{i\text{-}deg}$, \index[notation]{i-deg@$\mathrm{i\text{-}deg}$}  on the algebra $\mathcal B(S)$. For a non-negative integer $k$, let $\mathcal B(S)[\mathrm{i\text{-}deg}=k]$ \index[notation]{B(S)[\mathrm{i\text{-}deg}=k]@$\mathcal B(S)[\mathrm{i\text{-}deg}=k]$}  denote the $\mathrm{i\text{-}deg}=k$ part of $\mathcal B(S)$. We have that $\mathcal B(S)[\mathrm{i\text{-}deg}=0]$ is the polynomial algebra on $S$-struts. More precisely, the map $S\times S\to \mathcal B(S)[\mathrm{i\text{-}deg}=0]$ defined by $S\times S\ni(s,s')\mapsto \ {}^s\!-{}^{s'}$ is $\mathfrak{S}_2$-invariant and $\mathcal B(S)[\mathrm{i\text{-}deg}=0]= \mathbb{C}[\ {}^s\!-{}^{s'} \ |\ (s,s')\in (S\times S)/\mathfrak{S}_2]$. Denote by $\mathcal{B}(S)^{\mathrm{nostrut}}$ \index[notation]{{B}(S)^{\mathrm{nostrut}}@$\mathcal{B}(S)^{\mathrm{nostrut}}$} the subalgebra of $\mathcal{B}(S)$ consisting of elements with no $S$-struts\footnote{In the literature, usually this kind of elements are called \emph{$S$-substantial}.}. The product map $\mathcal B(S)[\mathrm{i\text{-}deg}=0]\otimes \mathcal{B}(S)^{\mathrm{nostrut}}\to \mathcal B(S)$ is an isomorphism.

Let us recall the diagrammatic version of the Poincar\'e-Birkhoff-Witt isomorphism.  Let $D$ be a Jacobi diagram on $(\vec{\varnothing},\emptyset,C\sqcup \{s\})$ where $C$ is a finite set and $s\not\in C$. Consider the arithmetic mean of all the possible ways of gluing the $s$-colored univalent vertices of $D$ to the interval $\uparrow_s$. This defines a map
\begin{equation}\label{symmap}
\chi_s:\mathcal{A}^\wedge(\vec{\varnothing},\emptyset,C\sqcup \{s\})\longrightarrow \mathcal{A}^\wedge(\uparrow_s, \emptyset,C),
\end{equation} \index[notation]{\chi_s@$\chi_s$} 
called \emph{symmetrization map}. It is not difficult to show that the map (\ref{symmap}) is well defined, but it is more laborious to show that it is a vector space isomorphism, see \cite[Theorem 8]{BN}. Let $\uparrow^{S}$ denote the oriented Brauer diagram consisting of $|S|$ copies of $\uparrow$. If $S=\{s_1,\ldots,s_l\}$, define
\begin{equation*}
\chi_S:\mathcal{B}(S)\longrightarrow \mathcal{A}^\wedge(\uparrow^{S})
\end{equation*} \index[notation]{\chi_S@$\chi_S$} 
by $\chi_S = \chi_{s_l}\circ\cdots\circ \chi_{s_1}$. The map $\chi_S$ is a vector space isomorphism called  \emph{formal PBW}. The inverse map of $\chi_S$ is denoted by  $\sigma_S: \mathcal{A}^{\wedge}(\uparrow^{S}) \to \mathcal B(S)$. \index[notation]{\sigma_S@$\sigma_S$}

\begin{proposition}[{\cite[Thm. 3]{BNGRT}}]
The isomorphism $\chi_S:\mathcal{B}(S)\longrightarrow \mathcal{A}^\wedge(\uparrow^{S})$ descends to a well defined isomorphism $\chi_S: \mathcal B^{\mathrm{links}}(S)\longrightarrow\mathcal{A}^\wedge(\vec{\varnothing},S)$, where  $\mathcal B^{\mathrm{links}}(S)$ \index[notation]{B^{\mathrm{links}}(S)@$\mathcal B^{\mathrm{links}}(S)$}  is a quotient of $ \mathcal B(S)$ by certain relations, called \emph{$S$-flavored link relations}, see \cite[\S 5.2]{BNGRT}. Its inverse is denoted by $\sigma_S:  \mathcal{A}^\wedge(\vec{\varnothing},S)\to   \mathcal B^{\mathrm{links}}(S)$.
\end{proposition}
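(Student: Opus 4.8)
The plan is to build the descended map from the fact, recalled just above, that the \emph{formal PBW} map $\chi_S : \mathcal B(S) \to \mathcal A^\wedge(\uparrow^{S})$ is already a (continuous, algebra) isomorphism, so the whole content is to match, on the two sides of this isomorphism, the quotients that appear in the statement. On the target side there is a natural \emph{closure map}
$$\mathrm{cl}_S : \mathcal A^\wedge(\uparrow^{S}) \longrightarrow \mathcal A^\wedge(\vec{\varnothing},S),$$
obtained by joining, for each $s \in S$, the two endpoints of the corresponding strand of $\uparrow^{S}$ into a circle labelled by $s$; it is compatible with the STU, AS and IHX relations and hence well defined (this is the many-strand analogue of the isomorphism $\mathring{\mathcal A}^\wedge(\downarrow,\emptyset)\cong \mathring{\mathcal A}^\wedge(\vec{\varnothing},\{s\})$ of \cite[Proposition~6.3]{Ohts} used throughout \S\ref{sec:2}). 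So the strategy is: (i) show $\mathrm{cl}_S$ is surjective with a computable kernel $K(S) \subset \mathcal A^\wedge(\uparrow^{S})$; (ii) transport $K(S)$ through $\sigma_S = \chi_S^{-1}$ and check that $\sigma_S(K(S)) \subset \mathcal B(S)$ is exactly the span of the $S$-flavored link relations of \cite[\S5.2]{BNGRT}; (iii) conclude that $\chi_S$ descends to an isomorphism of the quotients, with inverse (the descent of) $\sigma_S$.

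For step (i), surjectivity of $\mathrm{cl}_S$ is immediate: any Jacobi diagram on $|S|$ circles can be cut at a point of each circle lying in the complement of its univalent vertices, producing a preimage in $\mathcal A^\wedge(\uparrow^{S})$. For the kernel, one uses the standard ``$4\mathrm T \Rightarrow$ cyclicity'' phenomenon --- the same mechanism that makes $\mathring{\mathcal A}^\wedge(\downarrow,\emptyset)\to \mathring{\mathcal A}^\wedge(\vec{\varnothing},\{s\})$ an isomorphism for a single strand (\cite[Proposition~6.3]{Ohts}): the STU/AS/IHX relations already force $\mathrm{cl}_S(\underline D)$ to be unchanged when the legs on one fixed strand are cyclically permuted, so the only genuine ambiguity in reconstructing $\underline D$ from $\mathrm{cl}_S(\underline D)$ is the position, relative to the chosen cut point of strand $s$, of a leg that also meets a different strand. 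Thus $K(S) = \ker(\mathrm{cl}_S)$ is spanned by the ``interval-picture link relations'' $\underline D - \underline D'$, where $\underline D'$ is obtained from $\underline D$ by detaching the topmost univalent vertex on some strand $s$ and reattaching it at the bottom of the same strand, all other data left unchanged. Verifying that these differences span the \emph{whole} kernel, and not merely a subspace, is the first point requiring a real argument, and is carried out in \cite[\S5.2]{BNGRT}.

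Step (ii) is the heart of the matter and the step I expect to be the main obstacle: one must compute the image under the de-symmetrization map $\sigma_S$ (built from the symmetrization maps \eqref{symmap}) of an interval-picture link relation on a strand $s$, and recognize the result as the $\mathcal B$-picture relation stated in \cite[\S5.2]{BNGRT}. Concretely, applying $\sigma_s$ commutes the ``average over all gluings of the $s$-coloured legs onto the strand $s$'' with the operation ``move one leg from the top to the bottom of strand $s$'', and a combinatorial bookkeeping of the resulting terms produces exactly the $S$-flavored link relation. Granting this identification, $\chi_S$ carries the span of the $S$-flavored link relations onto $K(S)$, hence induces a linear isomorphism
$$\mathcal B^{\mathrm{links}}(S) = \frac{\mathcal B(S)}{(S\text{-flavored link relations})} \;\longrightarrow\; \frac{\mathcal A^\wedge(\uparrow^{S})}{K(S)} \;\cong\; \mathcal A^\wedge(\vec{\varnothing},S),$$
which is the asserted descent of $\chi_S$; its inverse is the induced map of $\sigma_S$. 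Continuity for the degree completions and compatibility with the disjoint-union algebra structures on both sides are inherited from the corresponding properties of $\chi_S$ and $\sigma_S$, completing the proof.
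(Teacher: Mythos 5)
The proposition is quoted by the paper from \cite[Thm.~3]{BNGRT} and is given no proof in the text, so there is nothing internal to compare against; your outline does follow the same route as the cited reference (closure map, kernel, transport through $\sigma_S$). As a standalone proof, however, it has a genuine gap: the two claims that carry all of the content are precisely the ones you defer. First, that the differences $\underline D-\underline D'$ (moving the top leg of a strand to the bottom of the same strand) span the \emph{whole} kernel of the closure map $\mathrm{cl}_S:\mathcal A^\wedge(\uparrow^{S})\to\mathcal A^\wedge(\vec{\varnothing},S)$: for $|S|\geq 2$ this map is far from injective, and the STU-induced cyclic invariance on each closed circle only shows that such differences \emph{lie} in the kernel, not that they exhaust it — this is the hard half of the theorem, and you explicitly push it back to \cite[\S 5.2]{BNGRT}. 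Second, that $\sigma_S$ carries this span onto the $S$-flavored link relations of $\mathcal B(S)$; you label this ``the heart of the matter'' and then proceed ``granting this identification''. Since both steps are exactly the assertion of the result being proved, the argument as written is circular: it reduces the proposition to the very statements proved in the reference, which is in effect what the paper already does by citing it.

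A smaller but real inaccuracy: $\chi_S$ is only a vector-space isomorphism (the paper states this, and it is the content of the formal PBW theorem); it is not an algebra map for the disjoint-union product on $\mathcal B(S)$ versus any product on $\mathcal A^\wedge(\uparrow^{S})$, since the averaging over attachments does not distribute over products. Consequently your closing sentence, that compatibility ``with the disjoint-union algebra structures on both sides'' is inherited from $\chi_S$ and $\sigma_S$, is unfounded — harmless here, because the proposition only claims an isomorphism of (complete) vector spaces, but it should be removed or corrected.
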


 Denote by 
$\langle \ , \  \rangle_{S}: \mathcal B(S)\otimes \mathcal B(S)\to \mathring{\mathcal A}^\wedge_0$  \index[notation]{\langle \ , \  \rangle_{S}@$\langle \ , \  \rangle_{S}$}  the specialization of the pairing \eqref{thepairinginA}.

\begin{definition}[{\cite[Def. 2.7, Def 2.8]{BNGRT}}]
\begin{itemize}
\item[$(a)$] An element $G\in \mathcal{B}(S)$ is called \emph{non-degenerate Gaussian in $S$} if it is of the form
\begin{equation}\label{eq:gaussian}
G=P\sqcup \mathrm{exp}_{\sqcup}\left(\frac{1}{2}\sum_{x,y\in S}l_{x,y}\   {}^x\!-{}^y\right)
\end{equation}
where  $P\in \mathcal{B}(S)^{\mathrm{nostrut}}$ and $\Lambda=(l_{x,y})_{x,y\in S}$ is an invertible symmetric matrix. Let $\mathcal{B}(S)^{\mathrm{ndg}}$   \index[notation]{B(S)^{\mathrm{ndg}}@$\mathcal{B}(S)^{\mathrm{ndg}}$}  the set of non-degenerate Gaussian elements in  $\mathcal{B}(S)$.

\item[$(b)$] An element $G\in \mathcal{B}^{\mathrm{links}}(S)$ is called \emph{non-degenerate Gaussian in $S$} if it has a pre-image in $\mathcal{B}(S)$ which is a non-degenerate Gaussian in $S$. Let $\mathcal{B}^{\mathrm{links}}(S)^{\mathrm{ndg}}$  \index[notation]{B^{\mathrm{links}}(S)^{\mathrm{ndg}}@$\mathcal{B}^{\mathrm{links}}(S)^{\mathrm{ndg}}$} the set of non-degenerate Gaussian elements in  $\mathcal{B}^{\mathrm{links}}(S)$.

\item[$(c)$] Let $G\in \mathcal{B}(S)^{\mathrm{ndg}}$ expressed as in \eqref{eq:gaussian} and let $\Lambda^{-1}=(l^{x,y})_{x,y\in S}$ be the inverse matrix of $\Lambda=(l_{x,y})_{x,y\in S}$. The \emph{formal Gaussian integral of $G$} is the element in $\mathcal{A}^{\wedge}_0$ given by
$$\int^{\mathrm{FG}}G dS = \left\langle \mathrm{exp}_{\sqcup}\left(-\frac{1}{2}\sum_{x,y\in S}l^{x,y}\   {}^x\!-{}^y\right), P \right\rangle_S.$$
This defines a map
\begin{equation}
\int^{\mathrm{FG}}: \mathcal{B}(S)^{\mathrm{ndg}}\to \mathcal{A}^{\wedge}_0.
\end{equation} \index[notation]{\int^{\mathrm{FG}}@$\int^{\mathrm{FG}}$}
\end{itemize}
\end{definition}

\begin{proposition}[{\cite[Prop. 2.2]{BNL}}]  Let $G, G'\in \mathcal{B}(S)^{\mathrm{ndg}}$ such that their classes in $\mathcal B^{\mathrm{links}}(S)^{\mathrm{ndg}}$ are equal. Then 
$\int^{\mathrm{FG}}G dS = \int^{\mathrm{FG}}G' dS$. Therefore, there is a well defined map
\begin{equation}
\int^{\mathrm{FG}}: \mathcal{B}^{\mathrm{links}}(S)^{\mathrm{ndg}}\to \mathcal{A}^{\wedge}_0.
\end{equation}
\end{proposition}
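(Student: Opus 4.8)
This statement is \cite[Proposition 2.2]{BNL}; we indicate the strategy we would follow. Recall that $\mathcal{B}^{\mathrm{links}}(S)=\mathcal{B}(S)/(\text{link relations})$, where the $S$-flavored link relations of \cite[\S5.2]{BNGRT} are generated \emph{one color at a time}: for $x\in S$ they come from the fact that the $x$-th link component is a closed loop, so its $x$-colored legs may be cyclically permuted, and telescoping such a cyclic permutation by means of the STU relation expresses it as a sum of diagrams in which two $x$-legs are merged into a single $x$-leg sitting on a new (``Lie bracket'') trivalent vertex. Since for a fixed non-degenerate symmetric matrix $\Lambda=(l_{x,y})$ the formal Gaussian integral $P\sqcup\mathrm{exp}_\sqcup(\tfrac12\sum l_{x,y}\,{}^x\!-{}^y)\mapsto\big\langle\mathrm{exp}_\sqcup(-\tfrac12\sum l^{x,y}\,{}^x\!-{}^y),\,P\big\rangle_S$ depends linearly on the strut-free part $P$, the problem is to show: if $G,G'\in\mathcal{B}(S)^{\mathrm{ndg}}$ become equal in $\mathcal{B}^{\mathrm{links}}(S)$, then $\int^{\mathrm{FG}}G\,dS=\int^{\mathrm{FG}}G'\,dS$.

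The plan is first to reduce to one variable. A link-relation generator for a color $x$ involves only $x$-colored legs, hence commutes with integrating out the remaining colors; using the iterated-integration (Fubini-type) property of $\int^{\mathrm{FG}}$ from \cite{BNGRT} --- valid after reordering $S$ and passing, if necessary, to non-degenerate principal submatrices of $\Lambda$ by a preliminary change of variables --- one factors $\int^{\mathrm{FG}}\,dS$ as $\int^{\mathrm{FG}}dx$ composed with the integral over $S\setminus\{x\}$. Integrating out the other colors turns $G,G'$ into non-degenerate Gaussians in the single variable $x$ whose difference is again a sum of one-variable link-relation generators, so one is reduced to the case $S=\{x\}$, where $\int^{\mathrm{FG}}$ reads $P\sqcup\mathrm{exp}_\sqcup(\tfrac\lambda2\,{}^x\!-{}^x)\mapsto\big\langle\mathrm{exp}_\sqcup(-\tfrac1{2\lambda}\,{}^x\!-{}^x),\,P\big\rangle_{\{x\}}$ with $\lambda\neq 0$.

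The remaining one-variable statement is then a cancellation of symmetry against antisymmetry: evaluating $\int^{\mathrm{FG}}$ glues the $x$-legs in pairs, each pair weighted by $-1/\lambda$, while a one-variable link-relation generator contributes, for a given pair of legs, the difference between gluing them directly and gluing them through the bracket vertex, summed over the remaining legs; after the gluing the bracket vertex inherits the antisymmetry of the AS relation, whereas the propagator ${}^x\!-{}^x$ is symmetric in its two ends, so the total vanishes. Equivalently, one can transport the whole computation through the isomorphism $\chi_x:\mathcal{B}(\{x\})\to\mathcal{A}^\wedge(\uparrow_x)$ of \cite{BN} and the closure map $\mathcal{A}^\wedge(\uparrow_x)\to\mathcal{A}^\wedge(\vec{\varnothing},\{x\})$, whose kernel is exactly the one-variable link relations. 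I expect the main obstacle to be the bookkeeping in this last step --- the one-variable link relations are not homogeneous in the number of $x$-legs, so one must track carefully how a generator interacts with the Wick-type pairing against the symmetric propagator --- together with the justification of the iterated-integration reduction when principal submatrices of $\Lambda$ degenerate; both points are dealt with in \cite{BNL,BNGRT}.
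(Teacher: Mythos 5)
First, a point of reference: the paper does not prove this proposition at all --- it is quoted verbatim with the citation to Bar-Natan--Lawrence, so there is no in-paper argument to compare yours with. Measured against the proof in the cited source (and in the Aarhus series), your sketch has genuine gaps. The argument there fixes the common matrix $\Lambda$ (possible because every term of an $x$-flavored link relation contains the new trivalent vertex, hence has vanishing all-strut part, so $G$ and $G'$ have the same quadratic part) and then shows directly that the pairing $\big\langle \exp_{\sqcup}\big(-\tfrac12\sum_{x,y}l^{x,y}\,{}^x\!-\!{}^y\big),\,\cdot\,\big\rangle_S$ annihilates each link relation; the vanishing mechanism is an STU/IHX telescoping argument --- after the Wick gluing, the sum over all ways of bracketing the marked leg onto the legs lying on the closed-up $x$-component is the obstruction to sliding that leg once around a circle, and on a circle this obstruction is zero. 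It is not a cancellation of the AS-antisymmetry of the bracket vertex against the symmetry of the propagator: the two edges exchanged by AS at the new vertex are internal edges of the glued diagram, whereas the propagator pairs the \emph{new} leg with other legs, so no term-by-term symmetry/antisymmetry cancellation is available (it only handles the degenerate ``tadpole'' terms where both ends of one and the same inverse strut are glued at that vertex).

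Your reduction to one variable is also problematic, for two reasons. The Fubini-type iterated integration of [BNGRT] is established for non-degenerate Gaussians and requires the block of $\Lambda$ being integrated first to be invertible; principal sub-blocks of an invertible symmetric matrix can be singular (e.g.\ the hyperbolic form $\begin{pmatrix}0&1\\1&0\end{pmatrix}$), and a ``preliminary change of variables'' on colors is serious extra machinery, not a remark. More importantly, applying partial integration to the difference $G-G'$, which is a combination of link relations and not itself a Gaussian, and claiming it sends $x$-flavored link relations to $x$-flavored link relations, is essentially a relative version of the statement being proved, so the reduction is close to circular. Finally, the one-variable endgame you land in is vacuous: for a single color the closure map $\mathcal{A}^{\wedge}(\uparrow_x)\to\mathcal{A}^{\wedge}(\vec{\varnothing},\{x\})$ is injective, so one-variable link relations are already zero in $\mathcal{B}(\{x\})$ modulo AS/IHX and there is nothing left for your final cancellation to do. All the content of the proposition sits in the interaction between several colors (one cannot independently cut all components of a link), and your sketch never confronts it.
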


It can be shown that for $L\in \underline{\vec{\mathcal T}}(\emptyset,\emptyset)^{\mathrm{reg}}$, the element $\sigma_{\pi_0(L)}\circ\mathrm{cs}^{\nu}\circ Z_{\mathcal A}(L)\in \mathcal B^{\mathrm{links}}(\pi_0(L))$ belongs to $\mathcal{B}^{\mathrm{links}}(\pi_0(L))^{\mathrm{ndg}}$, see \cite[Claim 1.10]{BNGRTI}.

\begin{theorem}[\cite{BNGRT}] 
\begin{itemize}
\item[$(a)$] The map $\mathring{A}_0:\underline{\vec{\mathcal T}}(\emptyset,\emptyset)^{\mathrm{reg}}\to \mathcal{A}^\wedge_0$ \index[notation]{\mathring{A}_0@$\mathring{A}_0$} defined by 
$$L\mapsto \int^{\mathrm{FG}}\big(\sigma_{\pi_0(L)}\circ\mathrm{cs}^{\nu}\circ Z_{\mathcal A}(L)\big)d\pi_0(L)$$
is an isotopy invariant of links which is invariant under change of orientation and the Kirby move~KII.

\item[$(b)$] The map $\mathring{A}:\underline{\vec{\mathcal T}}(\emptyset,\emptyset)^{\mathrm{reg}}\to \mathcal{A}^\wedge_0$ defined by 
$$L\mapsto \mathring{A}_0(U^+)^{-\sigma_+(L)}\mathring{A}_0(U^-)^{-\sigma_-(L)}\mathring{A}_0(L)$$
is invariant under change of orientation and Kirby moves KI and KII. Here $U^{\pm}$  denotes the unknots with $\pm1$ framing and $\sigma_{\pm}(L)$ denotes the number of $\pm$ eigenvalues of the linking matrix of~$L$.

\item[$(c)$] Let $Y$ be a  a rational homology sphere. The value $\mathring{A}(Y):=\mathring{A}(L)\in \mathcal{A}^\wedge_0$, where $L\in\underline{\vec{\mathcal T}}(\emptyset,\emptyset)^{\mathrm{reg}}$ is a surgery presentation of  $Y$, is a homeomorphism invariant, called the \emph{Aarhus integral} of $Y$.  Therefore, there is a well defined map
$$\mathring{A}:\mathbb{Q}\text{-}\mathrm{HS}\to\mathcal A^{\wedge}_0.$$
\end{itemize}

\end{theorem}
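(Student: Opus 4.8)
The plan is to follow \cite{BNGRT}, reorganising the argument around the analogues, for regular links, of the Kirby monoid and of the construction of multiplicative $3$-manifold invariants of Section~\ref{sec::2}. First I would check that $\mathring{A}_0$ is well-defined on isotopy classes: for $L\in\underline{\vec{\mathcal T}}(\emptyset,\emptyset)^{\mathrm{reg}}$ the element $\mathrm{cs}^\nu\circ Z_{\mathcal A}(L)$ depends only on the isotopy class of $L$ by Theorem~\ref{thmkontsevichintegral}, and on the set $\pi_0(L)$, which is an isotopy invariant by Lemma~\ref{r:2022-10-24-2}; applying $\sigma_{\pi_0(L)}$ yields an element of $\mathcal B^{\mathrm{links}}(\pi_0(L))$ lying in $\mathcal B^{\mathrm{links}}(\pi_0(L))^{\mathrm{ndg}}$ precisely because $\mathrm{Lk}(L)$ is invertible, the symmetric matrix $\Lambda$ of \eqref{eq:gaussian} being $\mathrm{Lk}(L)$ (this is \cite[Claim~1.10]{BNGRTI}), and $\int^{\mathrm{FG}}$ descends to $\mathcal B^{\mathrm{links}}(\pi_0(L))^{\mathrm{ndg}}$ by \cite[Prop.~2.2]{BNL}. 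Multiplicativity $\mathring{A}_0(L\,\dot{\sqcup}\,L')=\mathring{A}_0(L)\,\mathring{A}_0(L')$ then follows from the multiplicativity of $Z_{\mathcal A}$ (axiom (LMO4)$(b)$), of $\mathrm{cs}^\nu$ (Lemma~\ref{themapcsalpha}$(c)$), of the maps $\sigma_\bullet$, and of $\int^{\mathrm{FG}}$ on block-diagonal Gaussians.

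Next I would prove the two invariances giving $(a)$. For invariance under change of orientation of a component $c$: by axiom (LMO4)$(d)$ this reverses the cyclic order on the legs coloured by $c$ and multiplies by $(-1)$ to the power the number of such legs, which after applying $\sigma_{\pi_0(L)}$ amounts to the transformation $l_{x,c}\mapsto -l_{x,c}$ for $x\neq c$ of the Gaussian part (the diagonal entry $l_{c,c}=\mathrm{Lk}(c)$ being unchanged) together with a sign change on the $c$-coloured legs of the substantial part $P$; since $\sum l^{x,y}\,{}^x\!-{}^y$ transforms in the same way as $\sum l_{x,y}\,{}^x\!-{}^y$, the formal Gaussian integral is left unchanged. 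Invariance under the second Kirby move is the heart of the matter: one invokes the Le--Murakami formula for the behaviour of $\mathrm{cs}^\nu\circ Z_{\mathcal A}$ under KII (axiom (LMO4)$(e)$, i.e. \cite[Thm.~4.2]{LeMu97}), which after applying $\sigma_{\pi_0(L)}$ expresses a KII move as a prescribed move on the non-degenerate Gaussian, and then uses that $\int^{\mathrm{FG}}$ is invariant under that move, which is proved in \cite{BNGRT} via the iterated integration-by-parts (sliding) properties of the diagrammatic Gaussian integral. This establishes $(a)$.

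For $(b)$, the maps $\sigma_\pm$ are invariant under change of orientation and under KII and satisfy $\sigma_\epsilon(U^{\epsilon'})=\delta_{\epsilon,\epsilon'}$ (Lemma~\ref{sec:1.8lemmalinking}), while a KI move with $U^\epsilon$ multiplies $\mathring{A}_0$ by $\mathring{A}_0(U^\epsilon)$; the latter is invertible in the complete graded algebra $\mathcal A^\wedge_0$ since its degree-$0$ component is the class of the empty Jacobi diagram. Hence the normalisation appearing in $(b)$ is exactly the $\widetilde{f}$-construction of Proposition~\ref{cor:invt:3-vars:alg-version1} carried out in this setting, so $\mathring{A}$ becomes invariant under change of orientation and under all of KI, KI', KII. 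For $(c)$, the Lickorish--Wallace theorem gives every $\mathbb Q$-homology sphere a surgery presentation $L$, which is regular because $\sigma_0(L)=\dim_{\mathbb Q}H_1(S^3_L;\mathbb Q)=0$; and by the refinement of Kirby's theorem (Theorem~\ref{thm:kirby}) to regular links, any two regular surgery presentations of the same $\mathbb Q$-homology sphere are connected by a finite sequence of changes of orientation and KI, KI', KII moves passing only through regular links. Invariance of $\mathring{A}$ under these moves then shows that $\mathring{A}(L)$ depends only on $S^3_L$, which yields the well-defined map $\mathbb Q\text{-}\mathrm{HS}\to\mathcal A^\wedge_0$.

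The main obstacle I expect is twofold. First, the KII-invariance of $\int^{\mathrm{FG}}$, together with the verification that $\sigma_{\pi_0(L)}\circ\mathrm{cs}^\nu\circ Z_{\mathcal A}(L)$ is genuinely a non-degenerate Gaussian: this is the technical core of the construction, resting on the delicate integration-by-parts identities for the formal Gaussian integral and on the precise form of the Le--Murakami formula. Second, the point used in $(c)$ that the Kirby moves relating two regular presentations can be kept within regular links; this requires a version of Kirby calculus adapted to nondegenerate linking matrices rather than the bare statement of Theorem~\ref{thm:kirby}, and is where one must be most careful.
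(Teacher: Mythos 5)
Your outline is correct and follows essentially the same route as the paper, which states this theorem as a citation of \cite{BNGRTI,BNGRT,BNGRTIII} and offers no independent proof: the well-definedness via \cite[Claim~1.10]{BNGRTI} and \cite[Prop.~2.2]{BNL}, the orientation and KII invariance of the formal Gaussian integral via the Le--Murakami behaviour of $\mathrm{cs}^{\nu}\circ Z_{\mathcal A}$ and the integration-by-parts arguments of \cite{BNGRT}, and the normalisation of part $(b)$ as an instance of the $\widetilde{f}$-construction of Proposition~\ref{cor:invt:3-vars:alg-version1} all match the intended argument. The only caveat is that the second ``obstacle'' you flag in $(c)$ is not one: each move of Theorem~\ref{thm:kirby} preserves invertibility of the linking matrix (KII and orientation change act by congruence $m\mapsto {}^tPmP$ with $P\in\mathrm{GL}_k(\mathbb{Z})$ as in Lemma~\ref{sec:1.8lemmalinking}, while KI and KI$'$ add or delete a $\pm1$ diagonal block), so any Kirby sequence joining two regular presentations automatically stays within regular links and no refined Kirby calculus is needed.
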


\subsubsection{Relation with the LMO invariant}

Let $Y$ be a rational homology sphere and denote by $\mathring{A}(Y)[\leq n]$  \index[notation]{\mathring{A}(Y)[\leq n]@$\mathring{A}(Y)[\leq n]$}  the image of $\mathring{A}(Y)$ under the projection $\mathcal{A}^\wedge_0\to \frac{\mathcal{A}^\wedge_0}{(\mathrm{deg}> n)} =\mathfrak{e}(n)$. The following is one of the main results of \cite{BNGRTIII} which gives the precise relation between the Aarhus integral and the LMO invariant.

\begin{theorem}[{\cite{BNGRTIII}}] We have $\mathring{A}(Y)[\leq n] = |H^1(Y;\mathbb{Z})|^{-n}\Omega_n^{\mathfrak{e}}(Y)$ and therefore
$$\mathring{A}(Y)[\leq n] = \tilde{Z}^{\mathrm{LMO}}(Y)[\leq n].$$
\end{theorem}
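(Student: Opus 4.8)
The plan is to reduce the two displayed identities to a single comparison between the two integration procedures — the iterated diagrammatic trace $j^{\mathrm{LMO}}_n$ underlying $\Omega^{\mathfrak{e}}_n$ and the formal Gaussian integral $\int^{\mathrm{FG}}$ underlying $\mathring A$ — applied to the normalised Kontsevich integral of a fixed surgery link, and then to prove that comparison degree by degree. First I would dispose of the second equality: it follows from the first together with \eqref{deglegntildeZLMO}, upon identifying $|H^1(Y;\mathbb{Z})|$ with $|H_1(Y;\mathbb{Z})|$ (both finite since $Y$ is a $\mathbb{Q}$-homology sphere). So the whole content is the first equality, an identity in $\mathfrak{e}(n)=\mathcal{A}^{\wedge}_0/(\mathrm{deg}>n)$.

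Next I would fix a regular surgery link $L$ with $S^3_L\cong Y$ and set $S:=\pi_0(L)$, $k:=|S|$, $B:=\mathrm{Lk}(L)$, so that $B$ is invertible with $|\det B|=|H_1(Y;\mathbb{Z})|$ and $\det B=(-1)^{\sigma_-(L)}|\det B|$, $\sigma_+(L)+\sigma_-(L)=k$. Unwinding the definitions, $\Omega^{\mathfrak{e}}_n(Y)$ is the image in $\mathfrak{e}(n)$, under the isomorphism $\mathfrak{d}(n)\cong\mathfrak{e}(n)$ of Lemma~\ref{r2022-04-12-ohtlemma}, of $\mathrm{pr}_n\circ\overline{j}_n\circ\varphi_n\circ\overline{\mathrm{cs}}^{\nu}\circ Z_{\mathcal A}$ evaluated on the class of $L$, normalised by the same expression evaluated on $U^{\pm}$; concretely it is governed by the class of $j^{\mathrm{LMO}}_n(\mathrm{cs}^{\nu}Z_{\mathcal A}(L))$ modulo $P^{n+1}_0+O^n+\mathcal{A}^{\wedge}_0[>n]$ (the ideals $L^{<2n}$ and $P^{n+1}$ having already been killed by $\varphi_n$), normalised by the classes of $j^{\mathrm{LMO}}_n(\mathrm{cs}^{\nu}Z_{\mathcal A}(U^{\pm}))$. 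On the other side, $\mathring A(Y)=\mathring A_0(U^+)^{-\sigma_+(L)}\mathring A_0(U^-)^{-\sigma_-(L)}\int^{\mathrm{FG}}\big(\sigma_S(\mathrm{cs}^{\nu}Z_{\mathcal A}(L))\big)\,dS$. Since $\mathrm{cs}^{\nu}Z_{\mathcal A}(L)$ is group-like (Lemma~\ref{r:imageofhatZcontainedinGL}) and corresponds, under the formal PBW isomorphism $\sigma_S$, to a non-degenerate Gaussian in $S$ with covariance matrix $B$, everything has been reduced to comparing $j^{\mathrm{LMO}}_n$ and $\int^{\mathrm{FG}}$ on Gaussian-type elements.

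The core step is then a diagrammatic Gaussian-reciprocity lemma: for $x\in\mathring{\mathcal A}^{\wedge}_k$ whose image $\sigma_S(x)$ is a non-degenerate Gaussian of covariance $B$, one has in $\mathcal{A}^{\wedge}_0/(\mathrm{deg}>n)$ an identity
$$ j^{\mathrm{LMO}}_n(x)\;\equiv\;c_n(B)\,\Big[\textstyle\int^{\mathrm{FG}}\sigma_S(x)\,dS\Big], $$
where $c_n(B)$ is an explicit scalar built from $\det B$ and from the value $-2n$ that the relation $O^n$ assigns to the dashed loop. To establish it I would: (i) use Lemma~\ref{r:jLMOanddoubling} to replace $j^{\mathrm{LMO}}_n$ by $\tfrac{1}{(n!)^k}j^{\mathrm{LMO}}_1$ after an $n$-fold doubling of the $k$ circles; (ii) decompose $\sigma_S(x)=P\sqcup\mathrm{exp}_{\sqcup}\!\big(\tfrac12\sum_{x,y\in S}B_{xy}\,{}^x\!-{}^y\big)$ and carry this through $\chi_S$; (iii) analyse the surviving leg-distributions on the $nk$ doubled circles — the truncation $L^{<2n}$ forces each original circle to carry $\ge 2n$ legs, and bounded degree restricts how much more it may carry, so that in degree $\le n$ the iterated trace connects the strut-legs precisely as the Gaussian propagator $B^{-1}$ does, the higher tree-pieces $T^{\mathrm{LMO}}_m$ ($m\ge 3$) either being absorbed into propagator contractions via STU/IHX or raising the degree; (iv) check that the legs of the non-strut part $P$ are contracted in the same way on both sides, and that chains of struts closing into dashed loops produce exactly the determinant and $(-2n)$-factors packaged in $c_n(B)$. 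Applying this lemma to $L$ and to $U^{\pm}$ (for which $B=(\pm1)$, $\det B=\pm1$) and combining the resulting identities, the factors $c_n(B)$ and the unknot normalisations collapse — using $\det B=(-1)^{\sigma_-(L)}|\det B|$ and $\sigma_+(L)+\sigma_-(L)=k$ for the sign bookkeeping — to exactly $|\det B|^{-n}=|H_1(Y;\mathbb{Z})|^{-n}$, giving $\mathring A(Y)[\le n]=|H_1(Y;\mathbb{Z})|^{-n}\Omega^{\mathfrak{e}}_n(Y)$, and then the second equality by \eqref{deglegntildeZLMO}.

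The main obstacle is items (iii)--(iv) of the core lemma, that is, showing with all scalar factors correct that the iterated LMO trace — a priori a sum over arbitrarily intricate tree contractions — degenerates, in degree $\le n$, to ordinary Wick contraction with propagator $B^{-1}$, together with the reciprocity bookkeeping that turns $\det B$, $\det B^{-1}$ and the dashed-loop value $-2n$ into the single normalising constant $|H_1(Y;\mathbb{Z})|^{-n}$. This is precisely the technical heart of \cite{BNGRTIII}, and it requires a careful interplay of the degree filtration with the doubling maps and with the detailed shape of the diagrams $T^{\mathrm{LMO}}_m$.
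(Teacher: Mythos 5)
You have chosen the right reduction and, in fact, the only reasonable one: the paper itself does not prove this theorem but quotes it from \cite{BNGRTIII}, and your outline — dispose of the second identity via \eqref{deglegntildeZLMO}, fix a regular surgery presentation $L$ with linking matrix $B$, observe that $\sigma_{\pi_0(L)}(\mathrm{cs}^{\nu}Z_{\mathcal A}(L))$ is a non-degenerate Gaussian with covariance $B$, and then compare $j^{\mathrm{LMO}}_n$ with $\int^{\mathrm{FG}}$ on such elements, normalising by the unknots so that the determinant factors collapse to $|\det B|^{-n}=|H_1(Y;\mathbb{Z})|^{-n}$ — is precisely the strategy of the cited source. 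The peripheral bookkeeping you set up (the identification $|H^1|=|H_1|$, $\det B=(-1)^{\sigma_-(L)}|\det B|$, $\sigma_+(L)+\sigma_-(L)=k$, the degree-zero values of $j^{\mathrm{LMO}}_n(\mathrm{cs}^\nu Z_{\mathcal A}(U^{\pm}))$, and the role of the relation $O^n$ in eliminating dashed loops) is consistent with the constructions in \S\ref{sec:3:4}--\S\ref{nsec:5:6}.

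However, as a proof your proposal has a genuine gap, and you name it yourself: the ``Gaussian-reciprocity lemma'' of steps (iii)--(iv) is asserted, not proved. The claim that, modulo $\mathrm{deg}>n$, the $n$-fold iterated diagrammatic trace (equivalently $\tfrac{1}{(n!)^k}j^{\mathrm{LMO}}_1$ after $n$-fold doubling, via Lemma~\ref{r:jLMOanddoubling}) degenerates to Wick contraction of the legs of the non-strut part $P$ with propagator $B^{-1}$, with the closed strut-cycles producing exactly the constant $c_n(B)$ after the substitution $X\mapsto -2n$, is the entire content of the theorem; in particular the sentence that the tree pieces $T^{\mathrm{LMO}}_m$ for $m\geq 3$ are ``either absorbed into propagator contractions via STU/IHX or raise the degree'' is exactly the nontrivial statement, and no argument is offered for it. Without that lemma the two normalisation constants never get compared, so the identity $\mathring{A}(Y)[\leq n]=|H_1(Y;\mathbb{Z})|^{-n}\Omega^{\mathfrak{e}}_n(Y)$ is not established. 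In other words, what you have written is a faithful plan of the proof in \cite{BNGRTIII} together with an honest deferral of its technical heart; to make it a proof you would need to carry out the leg-counting analysis (each surgery circle must carry at least $2n$ legs because of $L^{<2n}$, while the degree bound $\mathrm{deg}\leq n+nk$ after the shift \eqref{equ:2023-10-10} caps the excess), the STU/IHX reduction of the higher $T^{\mathrm{LMO}}_m$ contributions, and the determinant computation for strut cycles in the spirit of Lemma~\ref{equinFPFI} and Lemma~\ref{calcul-of-j-n-theta-n}, none of which is yet on the page.
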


\printindex[notation] 
\printindex

\bibliographystyle{plain} 
\bibliography{bibliography_survey} 

\begin{thebibliography}{10}

\bibitem{AltD}
Daniel Altsch\"{u}ler and Laurent Freidel.
\newblock On universal {V}assiliev invariants.
\newblock {\em Comm. Math. Phys.}, 170(1):41--62, 1995.

\bibitem{BN}
Dror Bar-Natan.
\newblock On the {V}assiliev knot invariants.
\newblock {\em Topology}, 34(2):423--472, 1995.

\bibitem{BN2}
Dror Bar-Natan.
\newblock Non-associative tangles.
\newblock In {\em Geometric topology ({A}thens, {GA}, 1993)}, volume~2 of {\em
  AMS/IP Stud. Adv. Math.}, pages 139--183. Amer. Math. Soc., Providence, RI,
  1997.

\bibitem{BN3}
Dror Bar-Natan.
\newblock On associators and the {G}rothendieck-{T}eichmuller group. {I}.
\newblock {\em Selecta Math. (N.S.)}, 4(2):183--212, 1998.

\bibitem{BGRTw}
Dror Bar-Natan, Stavros Garoufalidis, Lev Rozansky, and Dylan~P. Thurston.
\newblock Wheels, wheeling, and the {K}ontsevich integral of the unknot.
\newblock {\em Israel J. Math.}, 119:217--237, 2000.

\bibitem{BNGRTI}
Dror Bar-Natan, Stavros Garoufalidis, Lev Rozansky, and Dylan~P. Thurston.
\newblock The {A}arhus integral of rational homology 3-spheres. {I}. {A} highly
  non trivial flat connection on {$S^3$}.
\newblock {\em Selecta Math. (N.S.)}, 8(3):315--339, 2002.

\bibitem{BNGRT}
Dror Bar-Natan, Stavros Garoufalidis, Lev Rozansky, and Dylan~P. Thurston.
\newblock The {A}arhus integral of rational homology 3-spheres. {II}.
  {I}nvariance and universality.
\newblock {\em Selecta Math. (N.S.)}, 8(3):341--371, 2002.

\bibitem{BNGRTIII}
Dror Bar-Natan, Stavros Garoufalidis, Lev Rozansky, and Dylan~P. Thurston.
\newblock The \aa rhus integral of rational homology 3-spheres. {III}.
  {R}elation with the {L}e-{M}urakami-{O}htsuki invariant.
\newblock {\em Selecta Math. (N.S.)}, 10(3):305--324, 2004.

\bibitem{BNL}
Dror Bar-Natan and Ruth Lawrence.
\newblock A rational surgery formula for the {LMO} invariant.
\newblock {\em Israel J. Math.}, 140:29--60, 2004.

\bibitem{BLT}
Dror Bar-Natan, Thang T.~Q. Le, and Dylan~P. Thurston.
\newblock Two applications of elementary knot theory to {L}ie algebras and
  {V}assiliev invariants.
\newblock {\em Geom. Topol.}, 7:1--31, 2003.

\bibitem{BirLin}
Joan~S. Birman and Xiao-Song Lin.
\newblock Knot polynomials and {V}assiliev's invariants.
\newblock {\em Invent. Math.}, 111(2):225--270, 1993.

\bibitem{BuZie}
Gerhard Burde, Heiner Zieschang, and Michael Heusener.
\newblock {\em Knots}, volume~5 of {\em De Gruyter Studies in Mathematics}.
\newblock De Gruyter, Berlin, extended edition, 2014.

\bibitem{Ca}
Pierre Cartier.
\newblock Construction combinatoire des invariants de {V}assiliev-{K}ontsevich
  des n\oe uds.
\newblock {\em C. R. Acad. Sci. Paris S\'{e}r. I Math.}, 316(11):1205--1210,
  1993.

\bibitem{CHM}
Dorin Cheptea, Kazuo Habiro, and Gw\'{e}na\"{e}l Massuyeau.
\newblock A functorial {LMO} invariant for {L}agrangian cobordisms.
\newblock {\em Geom. Topol.}, 12(2):1091--1170, 2008.

\bibitem{ChLe}
Dorin Cheptea and Thang T.~Q. Le.
\newblock A {TQFT} associated to the {LMO} invariant of three-dimensional
  manifolds.
\newblock {\em Comm. Math. Phys.}, 272(3):601--634, 2007.

\bibitem{CDM}
S.~Chmutov, S.~Duzhin, and J.~Mostovoy.
\newblock {\em Introduction to {V}assiliev knot invariants}.
\newblock Cambridge University Press, Cambridge, 2012.

\bibitem{ComOs}
Jonathan Comes and Victor Ostrik.
\newblock On blocks of {D}eligne's category {$\underline{\rm Re}{\rm p}(S_t)$}.
\newblock {\em Adv. Math.}, 226(2):1331--1377, 2011.

\bibitem{Del}
P.~Deligne.
\newblock La cat\'{e}gorie des repr\'{e}sentations du groupe sym\'{e}trique
  {$S_t$}, lorsque {$t$} n'est pas un entier naturel.
\newblock In {\em Algebraic groups and homogeneous spaces}, volume~19 of {\em
  Tata Inst. Fund. Res. Stud. Math.}, pages 209--273. Tata Inst. Fund. Res.,
  Mumbai, 2007.

\bibitem{Dri}
V.~G. Drinfel'd.
\newblock On quasitriangular quasi-{H}opf algebras and on a group that is
  closely connected with {${\rm Gal}(\overline{\bf Q}/{\bf Q})$}.
\newblock {\em Algebra i Analiz}, 2(4):149--181, 1990.

\bibitem{FYHLMO}
P.~Freyd, D.~Yetter, J.~Hoste, W.~B.~R. Lickorish, K.~Millett, and A.~Ocneanu.
\newblock A new polynomial invariant of knots and links.
\newblock {\em Bull. Amer. Math. Soc. (N.S.)}, 12(2):239--246, 1985.

\bibitem{Gou1}
M.~N. Gusarov.
\newblock A new form of the {C}onway-{J}ones polynomial of oriented links.
\newblock {\em Zap. Nauchn. Sem. Leningrad. Otdel. Mat. Inst. Steklov. (LOMI)},
  193:4--9, 161, 1991.

\bibitem{Gou2}
M.~N. Gusarov.
\newblock On the {$n$}-equivalence of knots and invariants of finite degree.
\newblock {\em Zap. Nauchn. Sem. S.-Peterburg. Otdel. Mat. Inst. Steklov.
  (POMI)}, 208:152--173, 221--222, 1993.

\bibitem{Hat:notes}
Allen Hatcher.
\newblock Notes on {B}asic 3-{M}anifold {T}opology.
\newblock {\em Avalaible online at
  https://pi.math.cornell.edu/~hatcher/3M/3M.pdf}, 2007.

\bibitem{JacksonMoffat}
David~M. Jackson and Iain Moffatt.
\newblock {\em An introduction to quantum and {V}assiliev knot invariants}.
\newblock CMS Books in Mathematics/Ouvrages de Math\'{e}matiques de la SMC.
  Springer, Cham, 2019.

\bibitem{Jon}
Vaughan F.~R. Jones.
\newblock A polynomial invariant for knots via von {N}eumann algebras.
\newblock {\em Bull. Amer. Math. Soc. (N.S.)}, 12(1):103--111, 1985.

\bibitem{Kau1}
Louis~H. Kauffman.
\newblock New invariants in the theory of knots.
\newblock {\em Ast\'{e}risque}, (163-164):6, 137--219, 282 (1989), 1988.
\newblock On the geometry of differentiable manifolds (Rome, 1986).

\bibitem{Kir}
Robion Kirby.
\newblock A calculus for framed links in {$S^{3}$}.
\newblock {\em Invent. Math.}, 45(1):35--56, 1978.

\bibitem{K}
Maxim Kontsevich.
\newblock Vassiliev's knot invariants.
\newblock In {\em I. {M}. {G}elfand {S}eminar}, volume~16 of {\em Adv. Soviet
  Math.}, pages 137--150. Amer. Math. Soc., Providence, RI, 1993.

\bibitem{LMMO95}
Thang Q.~T. Le, Hitoshi Murakami, Jun Murakami, and Tomotada Ohtsuki.
\newblock A three-manifold invariant derived from the universal
  {V}assiliev-{K}ontsevich invariant.
\newblock {\em Proc. Japan Acad. Ser. A Math. Sci.}, 71(6):125--127, 1995.

\bibitem{LeMu97}
Thang T.~Q. Le and Jun Murakami.
\newblock Parallel version of the universal {V}assiliev-{K}ontsevich invariant.
\newblock {\em J. Pure Appl. Algebra}, 121(3):271--291, 1997.

\bibitem{LMO98}
Thang T.~Q. Le, Jun Murakami, and Tomotada Ohtsuki.
\newblock On a universal perturbative invariant of {$3$}-manifolds.
\newblock {\em Topology}, 37(3):539--574, 1998.

\bibitem{LMMO99}
Thang Tu~Quoc Le, Hitoshi Murakami, Jun Murakami, and Tomotada Ohtsuki.
\newblock A three-manifold invariant via the {K}ontsevich integral.
\newblock {\em Osaka J. Math.}, 36(2):365--395, 1999.

\bibitem{Le:notes}
Tu~Quoc~Thang Le.
\newblock The {LMO} invariant.
\newblock {\em \emph{Lecture notes from}: ``Ecole d'Et\'e de Math\'ematiques:
  Invariants de noeuds et de vari\'et\'es de dimension trois",
  http://people.math.gatech.edu/~letu/Papers/GRENOBLE1.pdf}, Institut Fourier,
  1999.

\bibitem{LM1}
Tu~Quoc~Thang Le and Jun Murakami.
\newblock Representation of the category of tangles by {K}ontsevich's iterated
  integral.
\newblock {\em Comm. Math. Phys.}, 168(3):535--562, 1995.

\bibitem{LM2}
Tu~Quoc~Thang Le and Jun Murakami.
\newblock The universal {V}assiliev-{K}ontsevich invariant for framed oriented
  links.
\newblock {\em Compositio Math.}, 102(1):41--64, 1996.

\bibitem{Leh}
G.~I. Lehrer and R.~B. Zhang.
\newblock The {B}rauer category and invariant theory.
\newblock {\em J. Eur. Math. Soc. (JEMS)}, 17(9):2311--2351, 2015.

\bibitem{Lick}
W.~B.~R. Lickorish.
\newblock A representation of orientable combinatorial {$3$}-manifolds.
\newblock {\em Ann. of Math. (2)}, 76:531--540, 1962.

\bibitem{LickBook}
W.~B.~Raymond Lickorish.
\newblock {\em An introduction to knot theory}, volume 175 of {\em Graduate
  Texts in Mathematics}.
\newblock Springer-Verlag, New York, 1997.

\bibitem{Lu}
Ning Lu.
\newblock A simple proof of the fundamental theorem of {K}irby calculus on
  links.
\newblock {\em Trans. Amer. Math. Soc.}, 331(1):143--156, 1992.

\bibitem{MuOht}
J.~Murakami and T.~Ohtsuki.
\newblock Topological quantum field theory for the universal quantum invariant.
\newblock {\em Comm. Math. Phys.}, 188(3):501--520, 1997.

\bibitem{Ohts}
Tomotada Ohtsuki.
\newblock {\em Quantum invariants}, volume~29 of {\em Series on Knots and
  Everything}.
\newblock World Scientific Publishing Co., Inc., River Edge, NJ, 2002.
\newblock A study of knots, 3-manifolds, and their sets.

\bibitem{Piu}
Sergey Piunikhin.
\newblock Combinatorial expression for universal {V}assiliev link invariant.
\newblock {\em Comm. Math. Phys.}, 168(1):1--22, 1995.

\bibitem{PS}
V.~V. Prasolov and A.~B. Sossinsky.
\newblock {\em Knots, links, braids and 3-manifolds}, volume 154 of {\em
  Translations of Mathematical Monographs}.
\newblock American Mathematical Society, Providence, RI, 1997.
\newblock An introduction to the new invariants in low-dimensional topology,
  Translated from the Russian manuscript by Sossinsky [Sosinski\u{\i}].

\bibitem{PT}
J\'{o}zef~H. Przytycki and Pawe\l Traczyk.
\newblock Invariants of links of {C}onway type.
\newblock {\em Kobe J. Math.}, 4(2):115--139, 1988.

\bibitem{RT}
N.~Reshetikhin and V.~G. Turaev.
\newblock Invariants of {$3$}-manifolds via link polynomials and quantum
  groups.
\newblock {\em Invent. Math.}, 103(3):547--597, 1991.

\bibitem{RT2}
N.~Reshetikhin and V.~G. Turaev.
\newblock Invariants of {$3$}-manifolds via link polynomials and quantum
  groups.
\newblock {\em Invent. Math.}, 103(3):547--597, 1991.

\bibitem{RT1}
N.~Yu. Reshetikhin and V.~G. Turaev.
\newblock Ribbon graphs and their invariants derived from quantum groups.
\newblock {\em Comm. Math. Phys.}, 127(1):1--26, 1990.

\bibitem{Rolf}
Dale Rolfsen.
\newblock {\em Knots and links}, volume~7 of {\em Mathematics Lecture Series}.
\newblock Publish or Perish, Inc., Houston, TX, 1990.
\newblock Corrected reprint of the 1976 original.

\bibitem{Sav}
Nikolai Saveliev.
\newblock {\em Lectures on the topology of 3-manifolds}.
\newblock De Gruyter Textbook. Walter de Gruyter \& Co., Berlin, revised
  edition, 2012.
\newblock An introduction to the Casson invariant.

\bibitem{Stanley}
Richard~P. Stanley.
\newblock {\em Enumerative combinatorics. {V}olume 1}, volume~49 of {\em
  Cambridge Studies in Advanced Mathematics}.
\newblock Cambridge University Press, Cambridge, second edition, 2012.

\bibitem{Tur1988}
V.~G. Turaev.
\newblock The {Y}ang-{B}axter equation and invariants of links.
\newblock {\em Invent. Math.}, 92(3):527--553, 1988.

\bibitem{Tur1989}
V.~G. Turaev.
\newblock Operator invariants of tangles, and {$R$}-matrices.
\newblock {\em Izv. Akad. Nauk SSSR Ser. Mat.}, 53(5):1073--1107, 1135, 1989.

\bibitem{Tur1}
V.~G. Turaev.
\newblock Operator invariants of tangles, and {$R$}-matrices.
\newblock {\em Izv. Akad. Nauk SSSR Ser. Mat.}, 53(5):1073--1107, 1135, 1989.

\bibitem{Tur}
Vladimir~G. Turaev.
\newblock {\em Quantum invariants of knots and 3-manifolds}, volume~18 of {\em
  De Gruyter Studies in Mathematics}.
\newblock De Gruyter, Berlin, 2016.
\newblock Third edition [of MR1292673].

\bibitem{TurBook}
Vladimir~G. Turaev.
\newblock {\em Quantum invariants of knots and 3-manifolds}, volume~18 of {\em
  De Gruyter Studies in Mathematics}.
\newblock De Gruyter, Berlin, third edition, 2016.

\bibitem{Vas}
V.~A. Vassiliev.
\newblock Cohomology of spaces of knots.
\newblock {\em Akad. Nauk SSSR Inst. Prikl. Mat. Preprint}, (91):29, 1990.

\bibitem{viro2008elementary}
O.~Ya. Viro, O.~A. Ivanov, N.Yu. Netsvetaev, and V.~M. Kharlamov.
\newblock {\em Elementary Topology: Problem Textbook}.
\newblock American Mathematical Society, 2008.

\bibitem{Wal}
Andrew~H. Wallace.
\newblock Modifications and cobounding manifolds.
\newblock {\em Canadian J. Math.}, 12:503--528, 1960.

\bibitem{Wit}
Edward Witten.
\newblock Quantum field theory and the {J}ones polynomial.
\newblock {\em Comm. Math. Phys.}, 121(3):351--399, 1989.

\end{thebibliography}

\end{document}